\def\thickhrulefill{\leavevmode \leaders \hrule height 1ex \hfill \kern \z@}
\def\@makechapterhead#1{%
  \vspace*{10\p@}%
  {\parindent \z@ \raggedleft \reset@font
            {\scshape \large \@chapapp{} \thechapter}
        \par\nobreak
        \interlinepenalty\@M
    \vspace{12pt}
    \huge \bfseries #1\par\nobreak
    \hrulefill
    \par\nobreak
    \vskip 40\p@
  }}
\def\@makeschapterhead#1{%
  \vspace*{10\p@}%
  {\parindent \z@ \raggedleft \reset@font
            {\scshape \large \vphantom{\@chapapp{} \thechapter}}
        \par\nobreak
        \interlinepenalty\@M
    \vspace{12pt}
    \huge \bfseries #1\par\nobreak
    \hrulefill
    \par\nobreak
    \vskip 40\p@
  }}
\newcommand{\K}{{\mathbb{K}}}
\newcommand{\Z}{{\mathbb{Z}}}
\newcommand{\C}{{\mathbb{C}}}
\newcommand{\R}{{\mathbb{R}}}
\newcommand{\Q}{{\mathbb{Q}}}
\newcommand{\N}{{\mathbb{N}}}
\newcommand{\CP}{{\mathbb{CP}}}
\newcommand{\T}{{\mathbb{T}}}
\newcommand{\st}{\;:\;}
\newcommand{\pf}{pure-and-full}
\newcommand{\p}{pure}
\newcommand{\f}{full}
\newcommand{\Cpf}{$\mathcal{C}^\infty$-\pf}
\newcommand{\Cp}{$\mathcal{C}^\infty$-\p}
\newcommand{\Cf}{$\mathcal{C}^\infty$-\f}
\newcommand{\module}[1]{\left|#1\right|}
\newcommand{\phit}[1]{\varphi^{#1}_{\mathbf{t}}}
\newcommand{\bphit}[1]{\bar{\varphi}^{#1}_{\mathbf{t}}}
\newcommand{\supp}{\textrm{supp}\,}
\newcommand{\card}{\textrm{card}\,}
\newcommand{\sspace}{\cdot}
\newcommand{\ssspace}{{\cdot\cdot}}
\def\Re{{\mathfrak{Re}}}
\def\Im{{\mathfrak{Im}}}
\def\Hom{{\mathrm{Hom}}}
\newcommand{\Nij}{{\mathrm{Nij}}}
\newcommand{\Cinf}{{\mathcal{C}^{\infty}}}
\newcommand{\correnti}{{\mathcal{D}}}
\newcommand{\scalar}[2]{\left\langle #1 \;|\; #2 \right\rangle}
\newcommand{\del}{\partial}
\newcommand{\delbar}{{\overline\partial}}
\newcommand{\duale}[1]{{#1}^{*}}
\newcommand{\trasposta}[1] {{{#1}^\mathrm{t}}}
\newcommand{\Lie}[1] {{\mathcal{L}_{#1}}}
\newcommand{\g}{\mathfrak{g}}
\newcommand{\h}{\mathfrak{h}}
\DeclareMathOperator{\id}{id}
\DeclareMathOperator{\tr}{tr}
\DeclareMathOperator{\End}{End}
\DeclareMathOperator{\rk}{rk}
\DeclareMathOperator{\im}{i}
\DeclareMathOperator{\imm}{im}
\DeclareMathOperator{\vol}{vol}
\DeclareMathOperator{\SL}{SL}
\DeclareMathOperator{\GL}{GL}
\DeclareMathOperator{\ad}{ad}
\DeclareMathOperator{\esp}{e}
\DeclareMathOperator{\pr}{pr}
\DeclareMathOperator{\de}{d}
\DeclareMathOperator{\Vol}{Vol}
\DeclareMathOperator{\interno}{int}
\DeclareMathOperator{\Sing}{Sing}
\def\zero{\mathbf{0}}
\theoremstyle{plain}
\newtheorem{thm}{Theorem}[chapter]
\newtheorem{lem}[thm]{Lemma}
\newtheorem{prop}[thm]{Proposition}
\newtheorem{cor}[thm]{Corollary}
\newtheorem{conj}[thm]{Conjecture}
\newtheorem{quest}[thm]{Question}
\theoremstyle{definition} 
\newtheorem{defi}[thm]{Definition}
\newtheorem{ex}[thm]{Example}
\newtheorem{rem}[thm]{Remark}
\theoremstyle{remark} 
\newtheorem{question}[thm]{Question}
\newcommand{\I}{\mathbb{I}}
\newcommand{\para}{$\mathbf{D}$}
\newcommand{\cp}{\smile}
\newcommand{\kal}{K\"{a}hler}
\newcommand{\proj}{\pi}
\newcommand{\kth}[1]{\ifthenelse{\equal{#1}{1}}{$#1^\text{st}$}{\ifthenelse{\equal{#1}{2}}{$#1^\text{nd}$}{\ifthenelse{\equal{#1}{3}}{$#1^\text{rd}$}{$#1^\text{th}$}}}}
\newcommand{\formepm}[2]{\wedge^{#1,\,#2}_{+\,-}\,}
\newcommand{\correntipm}[2]{\correnti_{#1,\,#2}^{+\,-}\,}
\newcommand{\correntipmalto}[2]{\correnti^{#1,\,#2}_{+\,-}\,}
\newcommand{\modulo}{\,\textrm{mod}\,}
\newcommand{\opiccolo}[1]{\mathrm{o}\left(\left|#1\right|\right)}
\newcommand{\opiccolouno}{\mathrm{o}\left(1\right)}
\newcommand{\tempo}{\mathbf{t}}
\newcommand{\Prim}{{\mathrm{P}\wedge}}
\newcommand{\Primcorrenti}{{\mathrm{P}\correnti}}
\newcommand{\PH}{\mathrm{P}H}
\newcommand{\normaL}[2]{\left\|#1\right\|_{\mathrm{L}^2_{#2}}}
\newcommand{\normazeroL}[1]{\left\|#1\right\|_{\mathrm{L}^2}}
\newcommand{\scalarL}[3]{\left\langle  #1 \,\left|\, #2  \right. \right\rangle_{\mathrm{L}^2_{#3}}}
\newcommand{\scalardL}[2]{\left\langle #1 \,\left|\, #2 \right. \right\rangle}
\newcommand{\ssum}[1]{\widetilde{\sum_{#1}}}
\newcommand{\Leb}[2]{\ifthenelse{\equal{#2}{0}}{\mathrm{L}^{2}\left(X;\wedge^{#1}T^*X\right)}{\mathrm{L}^{2}_{#2}\left(X;\wedge^{#1}T^*X\right)}}
\newcommand{\LebK}[2]{\ifthenelse{\equal{#2}{0}}{\mathrm{L}^{2}\left(K;\wedge^{#1}T^*K\right)}{\mathrm{L}^{2}_{#2}\left(K;\wedge^{#1}T^*K\right)}}
\newcommand{\Lebloc}[1]{\mathrm{L}^{2}_{\text{loc}}\left(X;\wedge^{#1}T^*X\right)}
\newcommand{\Cinfk}[1]{\ifthenelse{\equal{#1}{0}}{\mathcal{C}^\infty\left(X;\R\right)}{\mathcal{C}^\infty\left(X;\wedge^{#1}T^*X\right)}}
\newcommand{\Cinfkc}[1]{\ifthenelse{\equal{#1}{0}}{\mathcal{C}^\infty_{\mathrm{c}}\left(X;\R\right)}{\mathcal{C}^\infty_{\mathrm{c}}\left(X;\wedge^{#1}T^*X\right)}}
\newcommand{\Sob}[3]{\ifthenelse{\equal{#2}{0}}{\mathrm{W}^{#3, 2}\left(X;\wedge^{#1}T^*X\right)}{\mathrm{W}^{#3, 2}_{#2}\left(X;\wedge^{#1}T^*X\right)}}
\newcommand{\Sobloc}[2]{\mathrm{W}^{#1, 2}_{\textrm{loc}}\left(X;\wedge^{#2}T^*X\right)}
\newcommand{\SobK}[3]{\ifthenelse{\equal{#2}{0}}{\mathrm{W}^{#3, 2}\left(K;\wedge^{#1}T^*K\right)}{\mathrm{W}^{#3, 2}_{#2}\left(K;\wedge^{#1}T^*K\right)}}
\newcommand{\sign}[2]{
 \mathrm{sign}
 \left(
 \begin{array}{c}
   #1 \\
   #2
 \end{array}
 \right)
\,}
\newcommand{\der}[2]{\frac{\del #1}{\del x^{#2}}\,}
\newcommand{\expp}[1]{\exp\left(#1\right)\,}
\newcommand{\In}[1]{\left\{1,\ldots,#1\right\}}
\newcommand{\Endom}[1]{\mathrm{End}\left(#1\right)}
\newcommand{\Grass}[3]{\mathrm{G}_{#1}\left(#2,\,#3\right)}
\newcommand{\Homom}[2]{\mathrm{Hom}\left(#1,\,#2\right)}
\newcommand{\sgn}[1]{\mathrm{sgn}\left(#1\right)}
\newcommand{\Sym}[1]{\mathrm{Sym}^{2}\left(#1\right)}
\newcommand{\destar}[2]{\duale{\de}_{#1,\,#2}}
\DeclareMathOperator{\dom}{dom}
\newcommand{\intern}[1]{\mathrm{int}\left({#1}\right)}
\newcommand{\paragrafod}[2]{\smallskip \noindent \texttt{Step {#1}} -- {\itshape #2}.\ }
\newcommand{\paragrafodclaim}[2]{\smallskip \noindent \texttt{Claim {#1}} -- {\itshape #2}.\ }
\DeclareMathOperator{\Hess}{Hess}
\DeclareMathOperator{\PSH}{PSH}
\DeclareMathOperator{\Levi}{L}
\DeclareMathOperator{\bordo}{b}
\DeclareMathOperator{\ord}{ord}
\DeclareMathOperator{\spec}{spec}
\DeclareMathOperator{\Span}{span}
\renewcommand{\theequation}%
  {\thesection.\arabic{equation}}
\begin{document}
\selectlanguage{english}

\normalfont\selectfont

\frontmatter

\maketitle

\tableofcontents

\chapter*{Introduction}
\addcontentsline{toc}{chapter}{Introduction}
\markboth{Introduction}{Introduction}
\vspace{-12pt}

By a remarkable result by W.~L. Chow, \cite[Theorem V]{chow} (see also \cite{serre-gaga}), \emph{projective manifolds} (that is, compact complex submanifolds of $\CP^n:=\left. \left(\C^{n+1}\setminus\{0\}\right) \right\slash \left(\C\setminus\{0\}\right)$, for $n\in\N$) are in fact \emph{algebraic} (that is, they can be described as the zero set of finitely many homogeneous holomorphic polynomials). One is hence interested in relaxing the projective condition, looking for special properties on compact manifolds sharing a weaker structure than projective manifolds. For example, a large amount of developed analytic techniques allows to prove strong cohomological properties for compact \emph{K\"ahler manifolds} (that is, compact complex manifolds endowed with a K\"ahler metric, namely, a Hermitian metric admitting a local potential function), \cite{schouten-vandantzig, kahler}, see also \cite{weil}, which are, in a certain sense, the ``analytic-versus-algebraic'', \cite[Theorem V]{chow}, or the
``$\R$-versus-$\Q$'', \cite[Theorem 4]{kodaira-embedding}, version of projective manifolds.
K\"ahler manifolds are in fact endowed with three different structures, interacting each other: a \emph{complex} structure, a \emph{symplectic} structure, and a \emph{metric} structure; it is the strong linking between them that allows to develop many analytic tools and hence to derive the very special properties of K\"ahler manifolds. In order to better understand any of such properties, it is natural to ask what of these three structures is actually involved and required. Therefore, one is led to study complex, symplectic, and metric contribution separately, possibly weakening either the interactions between them, or one of these structures.
For example, by relaxing the metric condition, one could ask what properties of a compact complex manifold can be deduced by the existence of special Hermitian metrics defined by conditions similar to, but weaker than, the defining condition of the K\"ahler metrics (for example, metrics being \emph{balanced} in the sense of M.~L. Michelsohn \cite{michelsohn}, \emph{pluriclosed} \cite{bismut--math-ann-1989}, \emph{astheno-K\"ahler} \cite{jost-yau, jost-yau-correction}, \emph{Gauduchon} \cite{gauduchon}, \emph{strongly-Gauduchon} \cite{popovici-proj}); by relaxing the complex structure, one is led to study properties of \emph{almost-complex} manifolds, possibly endowed with compatible symplectic structures.

\medskip

In particular, we are concerned with studying cohomological properties of compact (almost-)complex manifolds, and of manifolds endowed with special structures, e.g., symplectic structures, \para-complex structures in the sense of F.~R. Harvey and H.~B. Lawson, exhaustion functions satisfying positivity conditions. Part of the original results have been published or will appear in \cite{angella-tomassini-1, angella-tomassini-2, angella,  angella-tomassini-3, angella-rossi, angella-tomassini-zhang, angella-calamai}; some other results have been collected in a preprint, see \cite{angella-tomassini-4}; some more results have not yet been submitted for publication.

\medskip

We recall that a \emph{complex} manifold $X$ is endowed with a natural \emph{almost-complex} structure, that is, an endomorphism $J\in\End(TX)$ of the tangent bundle of $X$ such that $J^2=-\id_{TX}$, which actually satisfies a further integrability condition, \cite[Theorem 1.1]{newlander-nirenberg}. By considering the decomposition into eigen-spaces, just the datum of the almost-complex structure yields a splitting of the complexified tangent bundle, namely,
$$ TX\otimes \C \;=\; T^{1,0}X \oplus T^{0,1}X \;, $$
and hence it induces also a splitting of the bundle of complex differential forms, namely,
$$ \wedge^\bullet X \otimes_\R \C \;=\; \bigoplus_{p+q=\bullet} \wedge^{p,q} X \;. $$
Furthermore, on a complex manifold, the integrability condition of such an almost-complex structure yields a further structure on $\wedge^{\bullet,\bullet}X$, namely, a structure of double complex $\left(\wedge^{\bullet,\bullet}X,\, \del,\, \delbar\right)$, where $\del$ and $\delbar$ are the components of the $\C$-linear extension of the exterior differential $\de$.

Hence, on a complex manifold $X$, one can consider both the de Rham cohomology
$$ H^\bullet_{dR}\left(X;\C\right) \;:=\; \frac{\ker\de}{\imm\de} $$
and the Dolbeault cohomology
$$ H^{\bullet,\bullet}_{\delbar}(X) \;:=\; \frac{\ker\delbar}{\imm\delbar} \;;$$
whenever $X$ is compact, the Hodge theory assures that they have finite-dimension as $\C$-vector spaces.
On a compact complex manifold, in general, no natural map between $H^{\bullet,\bullet}_{\delbar}(X)$ and $H^\bullet_{dR}\left(X;\C\right)$ exists; on the other hand, the structure of double complex of $\left(\wedge^{\bullet,\bullet}X,\, \del,\, \delbar\right)$ gives rise to a spectral sequence
$$ E^{\bullet,\bullet}_1 \;\simeq\; H^{\bullet,\bullet}_{\delbar}(X) \;\Rightarrow\; H^{\bullet}_{dR}(X;\C) \;, $$
from which one gets the Fr\"olicher inequality, \cite[Theorem 2]{frolicher}: for every $k\in\N$,
$$ \dim_\C H^k_{dR}(X;\C) \;\leq\; \sum_{p+q=k} \dim_\C H^{p,q}_{\delbar}(X) \;.$$

On a complex manifold, a ``bridge'' between the Dolbeault and the de Rham cohomology is provided, in a sense, by the \emph{Bott-Chern cohomology},
$$ H^{\bullet,\bullet}_{BC}(X) \;:=\; \frac{\ker\del\cap\ker\delbar}{\imm\del\delbar} \;,$$
and the \emph{Aeppli cohomology},
$$ H^{\bullet,\bullet}_{A}(X) \;:=\; \frac{\ker\del\delbar}{\imm\del+\imm\delbar} \;.$$
In fact, the identity induces the maps of (bi-)graded $\C$-vector spaces
$$
\xymatrix{
 & H^{\bullet,\bullet}_{BC}(X) \ar[d]\ar[ld]\ar[rd] & \\
 H^{\bullet,\bullet}_{\del}(X) \ar[rd] & H^{\bullet}_{dR}(X;\C) \ar[d] & H^{\bullet,\bullet}_{\delbar}(X) \ar[ld] \\
 & H^{\bullet,\bullet}_{A}(X) & 
}
$$
which, in general, are neither injective nor surjective.

We recall that, whenever $X$ is compact, the Hodge theory can be performed also for Bott-Chern and Aeppli cohomologies, \cite[\S2]{schweitzer}, yielding their finite-dimensionality; more precisely, one has that, on a compact complex manifold $X$ of complex dimension $n$ endowed with a Hermitian metric,
$$ H^{\bullet,\bullet}_{BC}(X) \;\simeq\; \tilde\Delta_{BC} \qquad \text{ and } \qquad H^{\bullet,\bullet}_{A}(X) \;\simeq\; \tilde\Delta_{A} \;, $$
where $\tilde \Delta_{BC}$ and $\tilde \Delta_{A}$ are \kth{4} order self-adjoint elliptic differential operators; furthermore, the Hodge-$*$-operator associated to any Hermitian metric on $X$ induces an isomorphism $H^{p,q}_{BC}(X) \simeq H^{n-q,n-p}_{A}(X)$, for every $p,q\in\N$.

By the definitions, the map $H_{BC}^{\bullet,\bullet}(X)\to H^{\bullet}_{dR}(X;\C)$ is injective if and only if every $\del$-closed $\delbar$-closed $\de$-exact form is $\del\delbar$-exact: a compact complex manifold fulfilling this property is said to satisfy the \emph{$\del\delbar$-Lemma}; see \cite{deligne-griffiths-morgan-sullivan} by P. Deligne, Ph.~A. Griffiths, J. Morgan, and D.~P. Sullivan, where consequences of the validity of the $\del\delbar$-Lemma on the real homotopy type of a compact complex manifold are investigated. When the $\del\delbar$-Lemma holds, it turns out that actually all the above maps are isomorphisms, \cite[Lemma 5.15, Remark 5.16, 5.21]{deligne-griffiths-morgan-sullivan}: in particular, one gets a decomposition
$$ H^\bullet_{dR}(X;\C) \;\simeq\; \bigoplus H^{\bullet,\bullet}_\delbar(X) \qquad \text{ such that } \qquad H^{\bullet_1,\bullet_2}_\delbar(X) \;\simeq\; \overline{H^{\bullet_2,\bullet_1}_\delbar(X)} \;. $$

A very remarkable property of compact K\"ahler manifolds is that they satisfy the $\del\delbar$-Lemma, \cite[Lemma 5.11]{deligne-griffiths-morgan-sullivan}: this follows from the K\"ahler identities, which can be proven as a consequence of the fact that the K\"ahler metrics osculate to order $2$ the standard Hermitian metric of $\C^n$ at every point. Therefore, the above decomposition holds true, in particular, for compact K\"ahler manifolds, \cite[Théorème IV.3]{weil}.

In particular, if $X$ is a compact complex manifold satisfying the $\del\delbar$-Lemma, then, for every $k\in\N$,
$$ \dim_\C H^k_{dR}(X;\C) \;=\; \sum_{p+q=k} \dim_\C H^{p,q}_{BC}(X) \;. $$

\medskip

In the first chapter, we study cohomological properties of compact complex manifolds, studying in particular the Bott-Chern and Aeppli cohomologies, and their relation with the $\del\delbar$-Lemma.

In fact, the first result we prove states a Fr\"olicher-type inequality for the Bott-Chern and Aeppli cohomologies, which provides also a characterization of the compact complex manifolds satisfying the $\del\delbar$-Lemma just in terms of the dimensions of the Bott-Chern cohomology groups, \cite[Theorem A, Theorem B]{angella-tomassini-3}; a key tool in the proof of the Fr\"olicher-type inequality relies on exact sequences by J. Varouchas, \cite{varouchas}. More precisely, we state the following result.

\smallskip
\noindent{\bfseries Theorem (see Theorem \ref{thm:frol-bc} and Theorem \ref{thm:caratterizzazione-bc-numbers}).}
{\itshape  Let $X$ be a compact complex manifold. Then, for every $k\in\N$, the following inequality holds:
$$ \sum_{p+q=k} \left( \dim_\C H^{p,q}_{BC}(X) + \dim_\C H^{p,q}_{A}(X)\right) \;\geq\; 2\, \dim_\C H^k_{dR}(X;\C) \;. $$
The equality
$$ \dim_\C H^k_{BC}(X) + \dim_\C H^k_{A}(X) \;=\; 2\, \dim_\C H^k_{dR}(X;\C) $$
holds for every $k\in\N$ if and only if $X$ satisfies the $\del\delbar$-Lemma.
}
\smallskip

Note that the equality $\sum_{p+q=k} \dim_\C H^{p,q}_{\delbar}(X) = \dim_\C H^k_{dR}(X;\C)$ for every $k\in\N$ (which is equivalent to the degeneration of the Hodge and Fr\"olicher spectral sequence at the first step, $E_1\simeq E_{\infty}$) is not sufficient to let $X$ satisfy the $\del\delbar$-Lemma: in some sense, the above result states that the Bott-Chern cohomology, together with its dual, the Aeppli cohomology, encodes ``more informations'' on the double complex $\left(\wedge^{\bullet,\bullet}X,\, \del,\, \delbar\right)$ than just the Dolbeault cohomology.

As a straightforward consequence of the previous theorem, we obtain another proof, see \cite[Corollary 2.7]{angella-tomassini-3}, of the following result, see \cite[Proposition 9.21]{voisin}, \cite[Theorem 5.12]{wu}, \cite[\S B]{tomasiello}.

\smallskip
\noindent{\bfseries Corollary (see Corollary \ref{cor:stab-del-delbar-lemma}).}
{\itshape
 Satisfying the $\del\delbar$-Lemma is a stable property under small deformations of the complex structure, that is, if $\left\{X_t\right\}_{t\in B}$ is a complex-analytic family of compact complex manifolds and $X_{t_0}$ satisfies the $\del\delbar$-Lemma for some $t_0\in B$, then $X_t$ satisfies the $\del\delbar$-Lemma for every $t$ in an open neighbourhood of $t_0$ in $B$.
}
\smallskip

\medskip

A class of manifolds that turns out to be particularly interesting in non-K\"ahler geometry, as a fruitful source of examples, is provided by the class of \emph{nilmanifolds}, and, more in general, of \emph{solvmanifolds}, namely, compact quotients of connected simply-connected nilpotent, respectively solvable, Lie groups by co-compact discrete subgroups. In fact, on the one hand, non-tori nilmanifolds admit no
K\"ahler structure, \cite[Theorem A]{benson-gordon-nilmanifolds}, \cite[Theorem 1, Corollary]{hasegawa_pams}, and, on the other hand, focusing on left-invariant geometric structures on solvmanifolds, one can often reduce their study at the level of the associated Lie algebra; this turns out to hold true, in particular, for the de Rham cohomology of completely-solvable solvmanifolds, \cite{nomizu, hattori}, and for the Dolbeault cohomology of nilmanifolds endowed with certain left-invariant complex structures, \cite{sakane, cordero-fernandez-gray-ugarte, console-fino, rollenske, rollenske-survey}, see, 
e.g., \cite{console, rollenske-survey}.

More precisely, on a nilmanifold $X=\left. \Gamma \right\backslash G$, the inclusion of the subcomplex composed of the $G$-left-invariant forms on $X$ (which is isomorphic to the complex $\left(\wedge^\bullet\duale{\mathfrak{g}},\,\de\right)$, where $\mathfrak{g}$ is the associated Lie algebra) turns out to be a quasi-isomorphism, \cite[Theorem 1]{nomizu}, that is,
$$ i\colon H_{dR}^\bullet\left(\mathfrak{g};\R\right) \;:=\; H^\bullet\left(\wedge^\bullet\duale{\mathfrak{g}},\, \de\right) \;\stackrel{\simeq}{\to}\; H^\bullet_{dR}(X;\R) \;;$$
a similar result holds true also for completely-solvable solvmanifolds, \cite[Corollary 4.2]{hattori}, and for the Dolbeault cohomology of nilmanifolds endowed with left-invariant complex structures belonging to certain classes, \cite[Theorem 1]{sakane}, \cite[Main Theorem]{cordero-fernandez-gray-ugarte}, \cite[Theorem 2, Remark 4]{console-fino}, \cite[Theorem 1.10]{rollenske}, \cite[Corollary 3.10]{rollenske-survey}.

As a matter of notation, denote by $H^{\bullet,\bullet}_{\sharp}\left(\mathfrak{g}_\C\right)$, for $\sharp\in\left\{\delbar,\, \del,\, BC,\, A\right\}$, the cohomology of the corresponding subcomplex of $G$-left-invariant forms on a solvmanifold $X=\left.\Gamma \right\backslash G$, with Lie algebra $\mathfrak{g}$, endowed with a $G$-left-invariant complex structure.
The following result states a Nomizu-type theorem also for the Bott-Chern and Aeppli cohomologies, \cite[Theorem 3.7, Theorem 3.8, Theorem 3.9]{angella}.

\smallskip
\noindent{\bfseries Theorem (see Theorem \ref{thm:bc-invariant}, Theorem \ref{thm:bc-invariant-cor}, Remark \ref{rem:aeppli-inv}, and Theorem \ref{thm:bc-invariant-open}).}
{\itshape
 Let $X=\left.\Gamma\right\backslash G$ be a solvmanifold endowed with a $G$-left-invariant complex structure $J$, and denote the Lie algebra naturally associated to $G$ by $\mathfrak{g}$. Suppose that the inclusions of the subcomplexes of $G$-left-invariant forms on $X$ into the corresponding complexes of differential forms on $X$ yield the isomorphisms
$$ i\colon H^\bullet_{dR}(\mathfrak{g};\C)\stackrel{\simeq}{\to} H^\bullet_{dR}(X;\C) \qquad \text{ and } \qquad i\colon H^{\bullet,\bullet}_{\delbar}\left(\mathfrak{g}_\C\right)\stackrel{\simeq}{\to} H^{\bullet,\bullet}_{\delbar}(X) \;;$$
 in particular, this holds true if one of the following conditions holds:
\begin{itemize}
 \item $X$ is holomorphically parallelizable;
 \item $J$ is an Abelian complex structure;
 \item $J$ is a nilpotent complex structure;
 \item $J$ is a rational complex structure;
 \item $\mathfrak{g}$ admits a torus-bundle series compatible with $J$ and with the rational structure induced by $\Gamma$;
 \item $\dim_\R\mathfrak{g}=6$ and $\mathfrak{g}$ is not isomorphic to $\mathfrak{h}_7:=\left(0^3,\, 12,\, 13,\, 23\right)$.
\end{itemize}
 Then also
 $$ i\colon H^{\bullet,\bullet}_{BC}\left(\mathfrak{g}_\C\right) \stackrel{\simeq}{\to} H^{\bullet,\bullet}_{BC}(X)
    \qquad \text{ and } \qquad
    i\colon H^{\bullet,\bullet}_{A}\left(\mathfrak{g}_\C\right) \stackrel{\simeq}{\to} H^{\bullet,\bullet}_{A}(X)$$
 are isomorphisms.

 Furthermore, if $\mathcal{C}\left(\mathfrak{g}\right)$ denotes the set of $G$-left-invariant complex structures on $X$, then the set
 $$ \mathcal{U} \;:=\; \left\{ J\in\mathcal{C}\left(\mathfrak{g}\right) \st i\colon H^{\bullet,\bullet}_{\sharp_J}\left(\mathfrak{g}_\C\right) \;\stackrel{\simeq}{\hookrightarrow}\; H^{\bullet,\bullet}_{\sharp_J}\left(X\right) \right\} $$
 is open in $\mathcal{C}\left(\mathfrak{g}\right)$, for $\sharp\in\{\del,\,\delbar,\,BC,\,A\}$.
}
\smallskip

The above result allows to explicitly compute the Bott-Chern cohomology for the \emph{Iwasawa manifold}
$$ \mathbb{I}_3 \;:=\; \left. \mathbb{H}\left(3;\Z\left[\im\right]\right) \right\backslash \mathbb{H}(3;\C) $$
and for its small deformations, where
$$ \mathbb{H}(3;\mathbb{C}) \;:=\; \left\{
\left(
\begin{array}{ccc}
 1 & z^1 & z^3 \\
 0 &  1  & z^2 \\
 0 &  0  &  1
\end{array}
\right) \in \mathrm{GL}(3;\mathbb{C}) \st z^1,\,z^2,\,z^3 \in\C \right\}
\quad \text{ and } \quad
\mathbb{H}\left(3;\Z\left[\im\right]\right) \;:=\; \mathbb{H}(3;\C)\cap\GL\left(3;\Z\left[\im\right]\right) \;.
$$
The Iwasawa manifold is one of the simplest example of compact non-K\"ahler complex manifold: as an example of a complex-parallelizable manifold, it has been studied by I. Nakamura, \cite{nakamura}, who computed its Kuranishi space and classified the small deformations of $\mathbb{I}_3$ by means of the dimensions of their Dolbeault cohomology groups.

In \S\ref{sec:bott-chern-iwasawa}, \cite[\S5.3]{angella}, we explicitly compute the Bott-Chern cohomology of the small deformations of the Iwasawa manifold, showing that it makes possible to give a finer classification of the small deformations $\left\{X_\tempo\right\}_{\tempo\in\Delta(\zero,\varepsilon)\subset \C^6}$ of $\I_3$ than the Dolbeault cohomology: more precisely, classes {\itshape (ii)} and {\itshape (iii)} in I. Nakamura's classification \cite[\S3]{nakamura} are further subdivided into subclasses {\itshape (ii.a)} and {\itshape (ii.b)}, respectively {\itshape (iii.a)} and {\itshape (iii.b)}, according to the value of $\dim_\C H^{2,2}_{BC}\left(X_\tempo\right)$.

\medskip

Another class that could provide several interesting  examples is given by complex orbifolds of the type $\tilde X=\left. X \right\slash G$, where $X$ is a complex manifold and $G$ is a finite group of biholomorphisms of $X$. Orbifolds of such a global-quotient-type have been considered and studied, e.g., by D.~D. Joyce in constructing examples of compact $7$-dimensional manifolds with holonomy $G_2$, \cite{joyce-jdg-1-2-g2} and \cite[Chapters 11-12]{joyce-ext}, and examples of compact $8$-dimensional manifolds with holonomy ${\rm Spin}(7)$, \cite{joyce-invent, joyce-jdg-spin7} and \cite[Chapters 13-14]{joyce-ext}.

One can define the space of differential forms $\wedge^{\bullet,\bullet}\tilde X$ on a complex orbifold of the type $\tilde X=\left. X \right\slash G$ as the space of $G$-invariant differential forms on $X$; hence, one can define the de Rham, Dolbeault, Bott-Chern, and Aeppli cohomologies also for $\tilde X$. Analogously, one can define the space of currents $\mathcal{D}^{\bullet,\bullet}\tilde X$ on $\tilde X$ as the space of $G$-invariant currents on $X$, as well as a Hermitian metric on $\tilde X$ as a $G$-invariant Hermitian metric on $X$.

As a first tool to investigate the Bott-Chern and Aeppli cohomologies of compact complex orbifolds of global-quotient-type, we obtain the following result.

\smallskip
\noindent{\bfseries Theorem (see Theorem \ref{thm:bc}).}
{\itshape
 Let $\tilde X=\left.X \right\slash G$ be a compact complex orbifold of complex dimension $n$, where $X$ is a complex manifold and $G$ is a finite group of biholomorphisms of $X$.
 Then, for any $p,q\in\N$, there are canonical isomorphisms
 $$
  H^{p,q}_{BC}(\tilde X) \;\simeq\; \frac{\ker\left(\del \colon \correnti^{p,q}\tilde X \to \correnti^{p+1,q}\tilde X\right) \cap \ker \left(\delbar\colon \correnti^{p,q}\tilde X \to \correnti^{p,q+1}\tilde X\right)}{\imm\left(\del\delbar \colon \correnti^{p-1,q-1}\tilde X \to \correnti^{p,q}\tilde X\right)} \;.
 $$

 Furthermore, given a Hermitian metric on $X$, there are canonical isomorphisms
 $$ H^{\bullet,\bullet}_{BC}(\tilde X) \;\simeq\; \ker \tilde\Delta_{BC} \qquad \text{ and } \qquad H^{\bullet,\bullet}_{A}(\tilde X) \;\simeq\; \ker\tilde\Delta_A \;. $$

 In particular, the Hodge-$*$-operator induces an isomorphism
 $$ H^{\bullet_1,\bullet_2}_{BC}(\tilde X) \;\simeq\; H^{n-\bullet_2,n-\bullet_1}_{A}(\tilde X) \;.$$
}
\smallskip

\medskip

In the second chapter, we do not require the integrability of the almost-complex structure, and we study cohomological properties of almost-complex manifolds, that is, differentiable manifolds endowed with a (possibly non-integrable) almost-complex structure. In this case, the Dolbeault cohomology is not defined. However, following T.-J. Li and W. Zhang \cite{li-zhang}, one can consider, for every $p,q\in\N$, the subgroups
$$ H^{(p,q),(q,p)}_J(X;\R) \;:=\; \left\{ \left[\alpha\right]\in H^{p+q}_{dR}(X;\R) \st \alpha \in \left(\wedge^{p,q}X\oplus\wedge^{q,p}X\right) \cap \wedge^{p+q}X \right\} \subseteq H^{p+q}_{dR}(X;\R) \;, $$
and their complex counterpart
$$ H^{(p,q)}_J(X;\C) \;:=\; \left\{ \left[\alpha\right]\in H^{p+q}_{dR}(X;\C) \st \alpha \in \wedge^{p,q}X \right\} \subseteq H^{p+q}_{dR}(X;\C) \;.$$
If $X$ is a compact K\"ahler manifold, then $H^{(p,q)}_J(X;\C)\simeq H^{p,q}_{\delbar}(X)$ for every $p,q\in\N$, \cite[Lemma 2.15, Theorem 2.16]{draghici-li-zhang}; therefore these subgroups can be considered, in a sense, as a generalization of the Dolbeault cohomology groups to the non-K\"ahler, or to the non-integrable, case.

Two remarks need to be pointed out. Firstly, note that, in general, neither the equality in
$$ \sum_{\substack{p+q=k\\p\leq q}} H^{(p,q),(q,p)}_J(X;\R) \;\subseteq\; H^{p+q}_{dR}(X;\R) \;, \qquad \text{ or } \qquad \sum_{p+q=k} H^{(p,q)}_J(X;\C) \;\subseteq\; H^{p+q}_{dR}(X;\C) \;, $$
holds, nor the sum is direct, nor there are relations between the equality holding and the sum being direct, see, e.g., Proposition \ref{prop:Cf-Cp-non-related}. Hence, one may be interested in studying compact almost-complex manifolds for which one of the above properties holds, at least for a fixed $k\in\N$, see \cite{li-zhang, draghici-li-zhang, draghici-li-zhang-2, fino-tomassini, angella-tomassini-1, angella-tomassini-2, zhang, angella-tomassini-zhang, draghici-zhang, tan-wang-zhang-zhu, hind-medori-tomassini, li-tomassini, draghici-li-zhang-survey}. A remarkable result by T. Dr\v{a}ghici, T.-J. Li, and W. Zhang, \cite[Theorem 2.3]{draghici-li-zhang}, states that every almost-complex structure $J$ on a compact $4$-dimensional manifold $X^4$ satisfies the cohomological decomposition
$$ H^2_{dR}\left(X^4;\R\right) \;=\; H^{(2,0),(0,2)}_J\left(X^4;\R\right) \oplus H^{(1,1)}_J\left(X^4;\R\right) \;.$$

Secondly, note that $J\lfloor_{\wedge^2X}$ satisfies $\left(J\lfloor_{\wedge^2X}\right)^2=\id_{\wedge^2X}$, therefore the above subgroups of $H^2_{dR}(X;\R)$ can be interpreted as the subgroup represented by $J$-invariant forms,
$$ H^+_J(X) \;:=\; H^{(1,1)}_J(X;\R) \;=\;\left\{ \left[\alpha\right]\in H^2_{dR}(X;\R) \st J\alpha=\alpha \right\} \;, $$
and the subgroup represented by $J$-anti-invariant forms,
$$ H^-_J(X) \;:=\; H^{(2,0),(0,2)}_J(X;\R) \;=\;\left\{ \left[\alpha\right]\in H^2_{dR}(X;\R) \st J\alpha=-\alpha \right\} \;. $$
Note also that, if $g$ is any Hermitian metric on $X$ whose associated $(1,1)$-form $\omega:=g(J\sspace,\, \ssspace)\in\wedge^{1,1}X\cap\wedge^2X$ is $\de$-closed (namely, $g$ is an \emph{almost-K\"ahler} metric on $X$), then $\left[\omega\right]\in H^+_J(X)$.

In fact, T.-J. Li and W. Zhang's interest in studying such subgroups and \emph{\Cpf} almost-complex structures (that is, almost-complex structures for which the decomposition
$$ H^2_{dR}(X;\R) \;=\; H^+_J(X) \oplus H^-_J(X) $$
holds, \cite[Definition 2.2, Definition 2.3, Lemma 2.2]{li-zhang}) arises in investigating the symplectic cones of an almost-complex manifolds, that is, the \emph{$J$-tamed cone}
$$ \mathcal{K}^t_J \;:=\; \left\{ \left[\omega\right]\in H^2_{dR}(X;\R) \st \omega_x\left(v_x, \, J_xv_x\right)>0 \text{ for every } v_x\in T_xX\setminus\{0\} \text{ and for every }x\in X\right\} $$
and the \emph{$J$-compatible cone}
$$ \mathcal{K}^c_J \;:=\; \left\{ \left[\omega\right]\in H^2_{dR}(X;\R) \st \omega_x\left(v_x, \, J_xv_x\right)>0 \text{ for every } v_x\in T_xX\setminus\{0\} \text{ and for every }x\in X, \text{ and } J\omega=\omega\right\} \;. $$
Indeed, they proved in \cite[Theorem 1.1]{li-zhang} that, given a \Cpf\ almost-K\"ahler structure on a compact manifold $X$, the $J$-anti-invariant subgroup $H^-_J(X)$ of $H^2_{dR}(X;\R)$ measures the quantitative difference between the $J$-tamed cone and the $J$-compatible cone, namely,
$$ \mathcal{K}^t_J \;=\; \mathcal{K}^c_J \oplus H^{-}_J(X) \;. $$
A natural question concerns the qualitative comparison between the tamed cone and the compatible cone: more precisely, one could ask whether, whenever an almost-complex structure $J$ admits a $J$-tamed symplectic form, there exists also a $J$-compatible symplectic form. This turns out to be false, in general, for non-integrable almost-complex structures in dimension greater than $4$, \cite{migliorini-tomassini, tomassini-forummath}; on the other hand, it is not known whether, for almost-complex structures on compact $4$-dimensional manifolds, as asked by S.~K. Donaldson, \cite[Question 2]{donaldson}, or for complex structures on compact manifolds of complex dimension greater than or equal to $3$, as asked by T.-J. Li and W. Zhang, \cite[page 678]{li-zhang}, and by J. Streets and G. Tian, \cite[Question 1.7]{streets-tian}, it holds that $\mathcal{K}^c_J$ is non-empty if and only if $\mathcal{K}^t_J$ is non-empty. We prove the following result, stating that no counterexample can be found among $6$-dimensional 
non-tori nilmanifolds endowed with left-invariant complex structures, \cite[Theorem 3.3]{angella-tomassini-1}; note that the same holds true, more in general, for higher dimensional nilmanifolds, as proven by N. Enrietti, A. Fino, and L. Vezzoni, \cite[Theorem 1.3]{enrietti-fino-vezzoni}.

\smallskip
\noindent{\bfseries Theorem (see Theorem \ref{thm:nilmfd-are-not-tamed}).}
{\itshape
Let $X=\Gamma\backslash G$ be a $6$-dimensional nilmanifold endowed with a $G$-left-invariant complex structure $J$.
If $X$ is not a torus, then there is no $J$-tamed symplectic structure on $X$.
}
\smallskip

One can study further cones in cohomology, which are related to special metrics, other than K\"ahler metrics; a key tool is provided by the theory of cone structures on differentiable manifolds developed by D.~P. Sullivan, \cite{sullivan}.
In order to compare, in particular, the cone associated to \emph{balanced metrics} (that is, Hermitian metrics whose associated $(1,1)$-form is co-closed, \cite[Definition 1.4, Theorem 1.6]{michelsohn}) and the cone associated to \emph{strongly-Gauduchon metrics} (that is, Hermitian metrics whose associated $(1,1)$-form $\omega$ satisfies the condition that $\del\left(\omega^{\dim_\C X-1}\right)$ is $\delbar$-exact \cite[Definition 3.1]{popovici-proj}), we give the following result, \cite[Theorem 2.9]{angella-tomassini-2}, which is the semi-K\"ahler counterpart of \cite[Theorem 1.1]{li-zhang}. (We refer to \S\ref{subsec:balanced-cones} for the definitions of the cones $\mathcal{K}b^t_J$ and $\mathcal{K}b^c_J$ on a manifold $X$ endowed with an almost-complex structure $J$.)

\smallskip
\noindent{\bfseries Theorem (see Theorem \ref{thm:kbc-kbt}).}
{\itshape
 Let $X$ be a compact $2n$-dimensional manifold endowed with an almost-complex structure $J$.
 Assume that $\mathcal{K}b^c_J\neq\varnothing$ (that is, there exists a semi-K\"ahler structure on $X$) and that $0\not\in\mathcal{K}b^t_J$. Then
 \begin{equation*}
  \mathcal{K}b^t_J\cap H^{(n-1,n-1)}_J(X;\R) \;=\; \mathcal{K}b^c_J
 \end{equation*}
 and
 \begin{equation*}
  \mathcal{K}b^c_J+H^{(n,n-2),(n-2,n)}_J(X;\R) \;\subseteq\; \mathcal{K}b^t_J \;. 
 \end{equation*}
 Moreover, if the equality $H^{2n-2}_{dR}(X;\R) = H^{(n,n-2),(n-2,n)}_J(X;\R) + H^{(n-1,n-1)}_J(X;\R)$ holds, then
 \begin{equation*}
 \mathcal{K}b^c_J+H^{(n,n-2),(n-2,n)}_J(X;\R) \;=\; \mathcal{K}b^t_J \;.
 \end{equation*}
}
\smallskip

\medskip

In order to better understand cohomological properties of compact almost-complex manifolds, and in view of the Hodge decomposition theorem for compact K\"ahler manifolds, it could be interesting to investigate the subgroups $H^{(p,q),(q,p)}_J(X;\R)$ for almost-complex manifolds endowed with special structures. For example, we prove the following result, \cite[Proposition 4.1]{angella-tomassini-zhang}, providing a strong difference between the K\"ahler case and the almost-K\"ahler case.

\smallskip
\noindent{\bfseries Proposition (see Proposition \ref{prop:almost-kahler-non-cf}).}
{\itshape
The differentiable manifold $X$ underlying the Iwasawa manifold $\mathbb{I}_3 := \left. \mathbb{H}\left(3;\Z\left[\im\right]\right) \right\backslash \mathbb{H}(3;\C)$ admits a non-\Cpf\ almost-K\"ahler structure.
}
\smallskip

A further study on almost-K\"ahler structures $\left(J,\, \omega,\, g\right)$ on a compact $2n$-dimensional manifold $X$ yields the following result, \cite[Theorem 2.3]{angella-tomassini-zhang}, which relates \Cpf ness with the \emph{Lefschetz-type property on $2$-forms} firstly considered by W. Zhang, that is, the property that the Lefschetz operator
$$ \omega^{n-2}\wedge\sspace \colon \wedge^2X\to\wedge^{2n-2}X $$
takes $g$-harmonic $2$-forms to $g$-harmonic $(2n-2)$-forms.

\smallskip
\noindent{\bfseries Theorem (see Theorem \ref{thm:almost-kahler-cpf}).}
{\itshape
 Let $X$ be a compact manifold endowed with an almost-K\"ahler structure $\left(J,\,\omega,\,g\right)$. Suppose that there exists a basis of $H^2_{dR}(X;\R)$ represented by $g$-harmonic $2$-forms which are of pure type with respect to $J$. Then the Lefschetz-type property on $2$-forms holds on $X$.
}
\smallskip

As a tool to study explicit examples, we provide a Nomizu-type theorem for the subgroups $H^{(p,q),(q,p)}_J(X;\R)$ of a completely-solvable solvmanifold $X = \left. \Gamma \right\backslash G$ endowed with a $G$-left-invariant almost-complex structure $J$, \cite[Theorem 5.4]{angella-tomassini-zhang}, see Proposition \ref{prop:linear-cpf-invariant-cpf-J}, and Corollary \ref{cor:linear-cpf-cpf}.

\medskip

A remarkable result by K. Kodaira and D.~C. Spencer states that the K\"ahler property on compact complex manifolds is stable under small deformations of the complex structure, \cite[Theorem 15]{kodaira-spencer-3}: more precisely, it states that, given a compact complex manifold admitting a K\"ahler structure, every small deformation still admits a K\"ahler structure; it can be proven as a consequence of the semi-continuity properties for the dimensions of the cohomology groups of a compact K\"ahler manifold. Hence, a natural question in non-K\"ahler geometry is to investigate the (in)stability of weaker properties than being K\"ahler.
As a first result in this direction, L. Alessandrini and G. Bassanelli proved that, given a compact complex manifold, the property of admitting a \emph{balanced metric} (that is, a Hermitian metric whose associated $(1,1)$-form is co-closed) is not stable under small deformations of the complex structure, \cite[Proposition 4.1]{alessandrini-bassanelli}; on the other hand, they proved that the class of balanced manifolds is stable under modifications, \cite[Corollary 5.7]{alessandrini-bassanelli--complex-geometry-trento}.
Another result in this context is the stability of the property of satisfying the $\del\delbar$-Lemma under small deformations of the complex structure, as already recalled, see Corollary \ref{cor:stab-del-delbar-lemma}.

Therefore, it is natural to investigate stability properties for the cohomological decomposition by means of the subgroups $H^{(p,q),(q,p)}_J(X;\R)$ on (almost-)complex manifolds $\left(X,\, J\right)$. More precisely, we consider the Iwasawa manifold $\mathbb{I}_3 := \left. \mathbb{H}\left(3;\Z\left[\im\right]\right) \right\backslash \mathbb{H}(3;\C)$, showing that the subgroups $H^{(p,q),(q,p)}_J(X;\R)$ provide a cohomological decomposition for $\mathbb{I}_3$ but not for some of its small deformations, Theorem \ref{thm:instability-iwasawa}. We prove the following result, \cite[Theorem 3.2]{angella-tomassini-1}.

\smallskip
\noindent{\bfseries Theorem (see Theorem \ref{thm:instability}).}
{\itshape
 The properties of being \Cpf\ is not stable under small deformations of the complex structure.
}
\smallskip

More in general, one could try to study directions along which the curves of almost-complex structures on a differentiable manifold preserve the property of being \Cpf. Using a procedure by J. Lee, \cite[\S1]{lee}, to construct curves of almost-complex structures through an almost-complex structure $J$, by means of $J$-anti-invariant real $2$-forms, we provide the following result, \cite[Theorem 4.1]{angella-tomassini-1}.

\smallskip
\noindent{\bfseries Theorem (see Theorem \ref{thm:lee-curves}).}
{\itshape
 There exists a compact manifold $N^6(c)$ endowed with an almost-complex structure $J$ and a $J$-Hermitian metric $g$ such that:
\begin{enumerate}
\item[(i)] $J$ is \Cpf;
\item[(ii)] each $J$-anti-invariant $g$-harmonic form gives rise to a curve
$\left\{J_t\right\}_{t\in\left(-\varepsilon,\varepsilon\right)}$ of \Cpf\ almost-complex structures on $N^6(c)$ (where $\varepsilon>0$ is small enough);
\item[(iii)] furthermore, the function
$$
\left(-\varepsilon,\varepsilon\right) \;\ni\; t\mapsto \dim_\R H^{(2,0),(0,2)}_{J_t}\left(N^6(c);\R\right) \;\in\; \N
$$
is upper-semi-continuous at $0$.
\end{enumerate}
}
\smallskip

Another problem in deformation theory is the study of semi-continuity properties for the dimensions of the subgroups $H^+_J(X)$ and $H^-_J(X)$. As a consequence of the Hodge theory for compact $4$-dimensional manifolds, T. Dr\v{a}ghici, T.-J. Li, and W. Zhang proved in \cite[Theorem 2.6]{draghici-li-zhang-2} that, given a curve $\{J_t\}_{t\in I\subseteq \R}$ of (\Cpf) almost-complex structures on a compact $4$-dimensional manifold $X$, the functions
$$ I \;\ni\; t\mapsto \dim_\R H^{-}_{J_t}(X) \;\in\; \N  \qquad \text{ and } \qquad I \;\ni\; t\mapsto \dim_\R H^{+}_{J_t}(X) \;\in\; \N $$
are, respectively, upper-semi-continuous and lower-semi-continuous.

In higher dimension this fails to be true, as we show in explicit examples. We provide hence the following result, \cite[Proposition 4.1, Proposition 4.3]{angella-tomassini-2}.

\smallskip
\noindent{\bfseries Proposition (see Proposition \ref{prop:no-scs-h-} and Proposition \ref{prop:no-sci-h+}).}
{\itshape
 In dimension higher than $4$, there exist compact manifolds $X$ endowed with families $\left\{J_t\right\}_{t\in I}$ of almost-complex structures such that either the function $I \ni t \mapsto \dim_\R H^-_{J_t}\left(X\right) \in \N$ is not upper-semi-continuous, or the function $I \ni t \mapsto \dim_\R H^+_{J_t}\left(X\right) \in \N$ is not lower-semi-continuous.
}
\smallskip

Motivated by such counterexamples, we study a stronger semi-continuity property on almost-complex manifolds (namely, that, for every $\de$-closed $J$-invariant real $2$-form $\alpha$, there exists a $\de$-closed $J_t$-invariant real $2$-form $\eta_t=\alpha+\opiccolouno$, depending real-analytically in $t$, for $t\in\left(-\varepsilon,\,\varepsilon\right)$ with $\varepsilon>0$ small enough): we give a formal characterization of the curves of almost-complex structures satisfying such a property, see Proposition \ref{prop:semicont-forte}, and we provide also a counterexample to such a stronger semi-continuity property, see Proposition \ref{prop:semi-cont-strong-counterex}.

\medskip

In the third chapter, motivated by the problem to study cohomological obstructions induced by special structures on differentiable manifolds, we investigate cohomological properties of symplectic manifolds, \para-complex manifolds, and strictly $p$-convex domains.

\medskip

We recall that compact K\"ahler manifolds have special cohomological properties not just in the complex framework, but also from the symplectic viewpoint. More precisely, another important result, other than the Hodge decomposition theorem, \cite[Théorème IV.3]{weil}, is the Lefschetz decomposition theorem, \cite[Théorème IV.5]{weil}, which states a decomposition in terms of \emph{primitive} subgroups of the cohomology, namely,
$$ H^{\bullet}_{dR}(X;\C) \;=\; \bigoplus_{r\in\N} L^r \left(\ker\left(\Lambda\colon H^{\bullet-2r}_{dR}(X;\C)\to H^{\bullet-2r-2}_{dR}(X;\C)\right)\right) \;, $$
where $\Lambda$ is the adjoint operator of the Lefschetz operator $L:=\omega\wedge\sspace \colon \wedge^\bullet X \to \wedge^{\bullet+2}X$ with respect to the pairing induced by $\omega$. Hence, after having investigated cohomological properties of almost-complex manifolds, we turn our attention to cohomological properties of symplectic manifolds.

In particular, in \S\ref{sec:sympl}, we provide a symplectic counterpart of T.-J. Li and W. Zhang's cohomological theory for almost-complex-manifolds, studying compact symplectic manifolds $\left(X,\, \omega\right)$ for which the Lefschetz decomposition on differential forms,
$$ \wedge^\bullet X \;=\; \bigoplus_{r\in\N} L^r \, \Prim^{\bullet-2r}X \;, $$
(where $\Prim^\bullet X := \ker\Lambda$ is the space of \emph{primitive} forms,) gives rise to a decomposition of the de Rham cohomology by means of the subgroups
$$ H^{(r,s)}_\omega(X;\R) \;:=\; \left\{\left[L^r\,\beta^{(s)}\right]\in H^{2r+s}_{dR}(X;\R) \st \beta^{(s)}\in \Prim^sX \right\} \;\subseteq\; H^{2r+s}_{dR}(X;\R) \;.$$

In particular, we provide the following result, \cite[Theorem 2.6]{angella-tomassini-4}, which gives a symplectic counterpart to T. Dr\v{a}ghici, T.-J. Li, and W. Zhang's decomposition theorem \cite[Theorem 2.3]{draghici-li-zhang} in the almost-complex setting (in fact, without the restriction to dimension $4$).

\smallskip
\noindent{\bfseries Theorem (see Theorem \ref{thm:sympl-decomp-H2}).}
{\itshape
 Let $X$ be a compact manifold endowed with a symplectic structure $\omega$. Then
 \begin{eqnarray*}
  H^2_{dR}(X;\R) &=& H^{(1,0)}_\omega(X;\R) \oplus H^{(0,2)}_\omega(X;\R) \;.
 \end{eqnarray*}
}
\smallskip

A Nomizu-type theorem for the subgroups $H^{(r,s)}_{\omega}(X;\R)$ of a completely-solvable solvmanifold $X = \left.\Gamma \right\backslash G$ endowed with a $G$-left-invariant symplectic structure $\omega$ is provided, see Proposition \ref{prop:linear-cpf-invariant-cpf-omega}, giving an useful tool in order to investigate explicit examples.

\medskip

In a sense, \para-complex Geometry provides a ``hyperbolic analogue'' of Complex Geometry. An \emph{almost-\para-complex} structure is, by definition, the datum of an endomorphism $K\in\End(TX)$ of the tangent bundle of a differentiable manifold $X$ such that $K^2=\id_{TX}$ and with the additional property that the eigen-bundles $T^+X$ and $T^-X$ have the same rank; a natural notion of \emph{integrability} can be defined by requiring that the two distributions $T^+X$ and $T^-X$ are involutive. Many connections between \para-complex Geometry and other problems both in Mathematics and Physics (in particular, concerning product structures, bi-Lagrangian geometry, and optimal transport theory) have been investigated in the last years: see, e.g., \cite{harvey-lawson, alekseevsky-medori-tomassini, cortes-mayer-mohuapt-saueressig-1, cortes-mayer-mohuapt-saueressig-2, cortes-mayer-mohuapt-saueressig-3, cruceanu-fortuny-gadea, kim-mccann-warren, andrada-barberis-dotti-ovando, andrada-salamon, krahe, rossi-1, rossi-2}
 and the references therein for further details on \para-complex structures and motivations for their study.

We study cohomological decomposition for compact manifolds $X$ endowed with (almost-)\para-complex structures $K$. Note that the elliptic theory in the complex setting has not a \para-complex counterpart: for example, a \para-complex counterpart of the Dolbeault cohomology is possibly infinite-dimensional, even if the manifold is compact. This fact makes natural to consider the \para-complex counterpart $H^{(p,q)}_K\left(X;\R\right)$ of T.-J. Li and W. Zhang's subgroups $H^{(p,q),(q,p)}_J(X;\R)$ as a possible substitute of the \para-Dolbeault cohomology, and hence to study the subgroups
$$ H^{2\,+}_K(X;\R) \;:=\; \left\{\left[\alpha\right]\in H^2_{dR}\left(X;\R\right) \st K\alpha=\alpha\right\} \;\subseteq\; H^2_{dR}(X;\R) $$
and
$$ H^{2\,-}_K(X;\R) \;:=\; \left\{\left[\alpha\right]\in H^2_{dR}\left(X;\R\right) \st K\alpha=-\alpha\right\} \;\subseteq\; H^2_{dR}(X;\R) \;. $$

Nevertheless, several important differences arise between the complex and the \para-complex cases. For example, after having stated and proved a Nomizu-type result for the subgroups $H^{(p,q)}_K\left(X;\R\right)$ of a completely-solvable solvmanifold $X = \left. \Gamma \right\backslash G$ endowed with a $G$-left-invariant \para-complex structure $K$, we are able to prove the following result, \cite[Proposition 3.3]{angella-rossi}, which turns out to be very different from the complex case, see \cite[Proposition 2.1]{li-zhang}, or \cite[Lemma 2.15, Theorem 2.16]{draghici-li-zhang}. (Recall that a \emph{\para-K\"ahler} structure on a \para-complex manifold is the datum of an anti-invariant symplectic form with respect to the \para-complex structure.)

\smallskip
\noindent{\bfseries Proposition (see Proposition \ref{prop:para-kahler-non-cpf}).}
{\itshape
 Admitting a \para-K\"ahler structure is not a sufficient condition for either the sum
 $$ H^{2\,+}_K(X;\R) + H^{2\,-}_K(X;\R) \;\subseteq\; H^2_{dR}\left(X;\R\right) $$
 being direct, or the equality holding.
}
\smallskip

A partial \para-complex counterpart of T. Dr\v{a}ghici, T.-J. Li, and W. Zhang's decomposition theorem \cite[Theorem 2.3]{draghici-li-zhang} is provided by the following result, \cite[Theorem 3.17]{angella-rossi}.

\smallskip
\noindent{\bfseries Theorem (see Theorem \ref{thm:4-dim}).}
{\itshape
 Every left-invariant \para-complex structure on a $4$-dimensional nilmanifold satisfies the cohomological decomposition
 $$ H^2_{dR}\left(X;\R\right) \;=\; H^{2\,+}_K(X;\R) \oplus H^{2\,-}_K(X;\R) \;. $$
}
\smallskip

Note that the hypothesis in Theorem \ref{thm:4-dim} can not be weakened, as Example \ref{es 2.5} and Example \ref{es 2.6}, Example \ref{es 2.17}, and Example \ref{es 2.8} show.

\medskip

Concerning deformations of the \para-complex structure, we provide another strong difference with the complex case: in contrast with the stability theorem of K. Kodaira and D.~C. Spencer, \cite[Theorem 15]{kodaira-spencer-3}, we prove the following result in the \para-complex context, \cite[Theorem 4.2]{angella-rossi}.

\smallskip
\noindent{\bfseries Theorem (see Theorem \ref{thm:para-kahler-deformations}).}
{\itshape
 The property of being \para-K\"ahler is not stable under small deformations of the \para-complex structure.
}
\smallskip

Analogously to Theorem \ref{thm:instability} for almost-complex structures, we provide also the following instability result, \cite[Proposition 4.3]{angella-rossi}.

\smallskip
\noindent{\bfseries Proposition (see Proposition \ref{prop:para-cpf-instability}).}
{\itshape
 The properties of either the sum
 $$ H^{2\,+}_K(X;\R) + H^{2\,-}_K(X;\R) \;\subseteq\; H^2_{dR}\left(X;\R\right) $$
 being direct, or the equality holding are not stable under small deformations of the \para-complex structure.
}
\smallskip

Finally, we prove that, even in the \para-complex case, no general result on semi-continuity holds for the dimensions of the $K$-(anti-)invariant subgroups of the de Rham cohomology, \cite[Proposition 4.6]{angella-rossi}.

\smallskip
\noindent{\bfseries Proposition (see Proposition \ref{prop:para-semi-cont}).}
{\itshape
 Let $X$ be a compact manifold and let $\left\{K_t\right\}_{t\in I \subseteq \R}$ be a curve of \para-complex structures on $X$. Then, in general, the functions
$$ I\ni t \mapsto \dim_\R H^{2\,+}_{K_t}(X;\R)\in\N \qquad \text{ and } \qquad  I\ni t \mapsto \dim_\R H^{2\,-}_{K_t}(X;\R)\in\N $$
are not upper-semi-continuous or lower-semi-continuous.
}
\smallskip

\medskip

Finally, motivated by A. Andreotti and H. Grauert's vanishing result for the higher Dolbeault cohomology groups of a \emph{$q$-complete} domain in $\C^n$ (that is, a domain in $\C^n$ admitting a smooth proper exhaustion function whose Levi form has at least $n-q+1$ positive eigen-values), we turn our interest to study cohomological properties of Riemannian manifolds endowed with exhaustion functions whose Hessian satisfies positivity conditions.

In particular, a first case to be considered is the case of \emph{strictly $p$-convex} domains in $\R^n$ in the sense of F.~R. Harvey and H.~B. Lawson, \cite{harvey-lawson-1, harvey-lawson-2}, that is, domains in $\R^n$ admitting a smooth proper exhaustion function $u$ such that, at every point, every sum of $p$ different eigenvalues of the Hessian of $u$ is positive.
Adapting the $\mathrm{L}^2$-techniques developed by L. H\"ormander, \cite{hormander-acta}, and used also by A. Andreotti and E. Vesentini, \cite{andreotti-vesentini, andreotti-vesentini-erratum}, (and which could be hopefully applied in a wider context,) we give a different proof of a vanishing result following from J.-P. Sha's theorem \cite[Theorem 1]{sha}, and from H. Wu's theorem \cite[Theorem 1]{wu-indiana}, for the de Rham cohomology of strictly $p$-convex domains in $\R^n$ in the sense of F.~R. Harvey and H.~B. Lawson; more precisely, the following result holds, \cite[Theorem 3.1]{angella-calamai}, see \cite[Theorem 1]{sha}, \cite[Theorem 1]{wu-indiana}, \cite[Proposition 5.7]{harvey-lawson-2}.

\smallskip
\noindent{\bfseries Theorem (see Theorem \ref{thm:cauchy} and Theorem \ref{thm:vanishing}).}
{\itshape
 Let $X$ be a strictly $p$-convex domain in $\R^n$, and fix $k\in\N$ such that $k\geq p$. Then, every $\de$-closed $k$-form is $\de$-exact, that is,
 $$ H^k_{dR}(X;\R) \;=\; \{0\} $$
 for every $k\geq p$.
}
\smallskip

\bigskip

The plan of the thesis is as follows.

In Chapter \ref{chapt:preliminaries}, which contains no original material, we collect the basic notions concerning almost-complex, complex, and symplectic structures, we recall the main results on Hodge theory for K\"ahler manifolds, and we summarize the classical results on deformations of complex structures, on currents and de Rham homology, and on solvmanifolds.

In Chapter \ref{chapt:complex}, we study cohomological properties of compact complex manifolds, and in particular the Bott-Chern cohomology, \cite{angella-tomassini-3, angella}.
By using exact sequences introduced by J. Varouchas, \cite{varouchas}, we prove a Fr\"olicher-type inequality for the Bott-Chern cohomology, Theorem \ref{thm:frol-bc}, which also provides a characterization of the validity of the $\del\delbar$-Lemma in terms of the dimensions of the Bott-Chern cohomology groups, Theorem \ref{thm:caratterizzazione-bc-numbers}. We then prove a Nomizu-type result for the Bott-Chern cohomology, showing that, for certain classes of complex structures on nilmanifolds, the Bott-Chern cohomology is completely determined by the associated Lie algebra endowed with the induced linear complex structure, Theorem \ref{thm:bc-invariant}, Theorem \ref{thm:bc-invariant-cor}, and Theorem \ref{thm:bc-invariant-open}.
As an application, in \S\ref{sec:computations-iwasawa}, we explicitly study the Bott-Chern and Aeppli cohomologies of the Iwasawa manifold and of its small deformations. Finally, we study the Bott-Chern cohomology of complex orbifolds of the type $\left. X \right\slash G$, where $X$ is a compact complex manifold and $G$ a finite group of biholomorphisms of $X$, Theorem \ref{thm:bc}.

In Chapter \ref{chapt:almost-complex}, we study cohomological properties of almost-complex manifolds, \cite{angella-tomassini-1, angella-tomassini-2, angella-tomassini-zhang}. Firstly, in \S\ref{sec:definition}, we recall the notion of \Cpf\ almost-complex structure, which has been introduced by T.-J. Li and W. Zhang in \cite{li-zhang} in order to investigate the relations between the compatible and the tamed symplectic cones on a compact almost-complex manifold and with the aim to throw light on a question by S.~K. Donaldson, \cite[Question 2]{donaldson}. In particular, we are interested in studying when certain subgroups, related to the almost-complex structure, let a splitting of the de Rham cohomology of an almost-complex manifold, and their relations with cones of metric structures. In \S\ref{sec:classes-Cpf}, we focus on \Cpf ness on several classes of (almost-)complex manifolds, e.g., solvmanifolds endowed with left-invariant almost-complex structures, semi-K\"ahler manifolds, almost-K\"ahler 
manifolds. In \S\ref{sec:deformations-cpf}, we study the behaviour of \Cpf ness under small deformations of the complex structure and along curves of almost-complex structures, investigating properties of stability, Theorem \ref{thm:instability}, Theorem \ref{thm:lee-curves}, and of semi-continuity for the dimensions of the invariant and anti-invariant subgroups of the de Rham cohomology with respect to the almost-complex structure, Proposition \ref{prop:no-scs-h-}, Proposition \ref{prop:no-sci-h+}, Proposition \ref{prop:semicont-forte}, Proposition \ref{prop:semi-cont-strong-counterex}. In \S\ref{sec:cones}, we consider the cone of semi-K\"ahler structures on a compact almost-complex manifold and, in particular, by adapting the results by D.~P. Sullivan on cone structures, \cite{sullivan}, we compare the cones of balanced metrics and of strongly-Gauduchon metrics on a compact complex manifold, Theorem \ref{thm:kbc-kbt}.

In Chapter \ref{chapt:special}, we study the cohomological properties of (differentiable) manifolds endowed with special structures, other than (almost-)complex structures, \cite{angella-tomassini-4, angella-rossi, angella-calamai}. More precisely, in Section \ref{sec:sympl}, we investigate the cohomology of symplectic manifolds; in Section \ref{sec:paracomplex}, we study cohomological decompositions on \para-complex manifolds in the sense of F.~R. Harvey and H.~B. Lawson; finally, in Section \ref{sec:p-convex}, we consider domains in $\R^n$ endowed with a smooth proper strictly $p$-convex exhaustion function, and, using $\mathrm{L}^2$-techniques, we give another proof of a consequence of J.-P. Sha's theorem \cite[Theorem 1]{sha}, and of H. Wu's theorem \cite[Theorem 1]{wu-indiana}, on the vanishing of the higher degree de Rham cohomology groups.

\clearpage
{\fontsize{9}{10}\selectfont
\section*{Acknowledgments}

My first thanks goes to Adriano Tomassini: he has been my adviser thrice, and it is thanks to him if I have understood something in --- and not only in --- Mathematics. He supported and stood me during the past years, and his patience, guidance, suggestions, together with all the helpful discussions with him, have made me grow a lot.

I also wish to thank Jean-Pierre Demailly for his hospitality at Institut Fourier: his encouragement has been as important as his explanations and his kindly answers to my questions.

I wish to thank Marco Abate for all his kindness and for his very useful suggestions, which improved a lot the presentation of this thesis.

A particular thanks goes to the director of the PhD school, Fabrizio Broglia, for his support and his help, and for loving his work much more than complaining about it.

\medskip

I think Mathematics is better when played in two, or more, and I wish to thank all my collaborators, starting from my \emph{fratello accademico} Federico Alberto Rossi (with whom I shared a lot of conferences and workshops --- and several soccer matches ---, and to whom I am also indebted for my understanding of $\mathbf{D}$-complex Geometry), continuing with Simone Calamai (who has been always friendly, both in talking about Mathematics and in walking along the Lungarni) and Weiyi Zhang, and finishing with my \emph{sorellina} Maria Giovanna Franzini: needless to say, I hope this may be the starting point for enjoying new projects all together.

My growth as a mathematician is due to very many useful conversations and discussions (on Mathematics, and beyond), sometimes really brief, sometimes everlasting, but always very inspiring, especially with (in an almost-completely random list) Lucia Alessandrini, S\"onke Rollenske, Greg Kuperberg, Junyan Cao, Cristiano Spotti, Daniele Marconi, Valentina Disarlo, Costantino Medori, Jasmin Raissy, Isaia Nisoli, Serena Guarino Lo Bianco, Luca Battistella, Maria Beatrice Pozzetti, Pietro Tortella, Marco Pasquali, Paolo Baroni, Leonardo Biliotti, Matteo Serventi, Amedeo Altavilla, Laura Cremaschi, Alberto Gioia, Alberto Della Vedova, Anna Fino, Nicola Enrietti, Maura Macrì, Tedi Dr\v{a}ghici, Gunnar \TH{}ór Magnússon, Mickael Bordonaro, Samuele Mongodi, Carlo Perrone, Marco Spinaci, Roberto Mossa, Chih-Wei Chen, Herman Stel, Minh Nguyet Mach, David Petrecca, Andrea Villa, Matteo Ruggiero, John Mandereau, Carlo Collari, Andrea Marchese, Luis Ugarte, \dots (and many others: I apologize for forgetting them!).

I would also like to mention how inspiring have been all the teachings by Fulvio Lazzeri, Giuseppe Tomassini, Marco Abate, Angelo Vistoli, Luigi Ambrosio, Adriano Tomassini, Claudio Arezzo, Leonardo Biliotti José Francisco Fernando Galv\'{a}n, Luca Lorenzi, Mario Servi, Cristina Reggiani, Giovanni Ferrero, Celestina Cotti Ferrero, Fabio Zuddas, Alberto Arosio, Pietro Celada, Gianluca Crippa, Domenico Mucci, Carlo Marchini, Alessandra Lunardi, Alessandro Zaccagnini.

The talks at the ``Seminario di Geometria'' organized in Parma in the academic year 2011/2012 revealed to be a very fruitful source of inspiration, and I wish also to thank the Geometry
research group in Parma.

In Pisa, I had the opportunity to attend, not only to the very interesting talks in ``Seminario di Geometria'' and ``Seminario di Calcolo delle Variazioni e Analisi Geometrica'', but also to the PhD students' informal seminars: a particular thanks goes to Abramo Bertucco, Cristina Pagliantini, Valentina Disarlo, Flavia Poma, Alessandro Cobbe, Gian Maria Dall'Ara for having accepted the task to organize ``Seminario dei baby-geometri'' and ``Seminari Informali di Matematica dei Dottorandi''.

I also thank Carlo Petronio, who gave me the opportunity to start knowing the ``teaching side'' of Mathematics, and has been a great example of teacher; my thanks go also to Elena Bucchioni and Valentina Bigini of Liceo Scientifico ``Leonardo da Vinci'' in Villafranca in Lunigiana, and Carlo, Irene, Giovanni, Michele, Giorgio, Luisa, Ilaria, who let me learn how to convey the beauty of Mathematics.

\medskip

Many thanks are due to all the members and the staff of the three Departments of Mathematics where I spend part of my life: the Dipartimento di Matematica ``Leonida Tonelli'' of the University of Pisa; the Dipartimento di Matematica of the University of Parma (our \emph{condominio}); the Institut Fourier in Grenoble. Many thanks to the secretary of the Scuola di Dottorato ``Galileo Galilei'' of the Università di Pisa, Ilaria Fierro.

\medskip

I wish to thank all my PhD colleagues and friends in Pisa, and in particular: Serena (for her nice \emph{serenità}, for her patience in standing my ideas, and \emph{gracias} also for having taught me, among many other things, how to fold \emph{origàmi}, without knowing how to do them!), Jasmin (no word is better than this sentence by Isaia: for ``teaching me how to behave''), Andrea (for his likability, for liking \emph{le renard}, and for his teachings --- and also for his coffees!), Maria Beatrice (for as many ping-pong matches as Tomassini's and Lazzeri's classes attended together, and for having taught me how to properly tumble down Monte Forato) and Luca (for believing in the beauty of silence, for appreciating always my efforts, and especially for the jam!), Minh (for not hating me, even if I starved most of her plants\dots), Isaia, John, Andrea, David, Ana, Tiziano, Andrea, Maria Rosaria, Giovanni, Marco, Sara, Giuseppe, Matteo, Valentina, Laura, Paolo, Laura, Flavia, Stefano, Cristina, Abramo, \dots

For my friends in Parma, with whom I started and (sometimes) continued studying Mathematics, I reserve special thanks to: Daniele, Matteo, Michele, Chiara, Alberto, Marco, Cristiano, Paolo, Simone, Laura, Pietro, Fabio, Amedeo, Eridano, Martino, Alessandra, Giandomenico, Matteo, Luigi, Magda, Francesca, Sara, Michele, Marcello, Emiliano, Luca, Yuri, and everybody I am forgetting to recall. See you soon at the ``Seminario degli ex-studenti''!

My best wishes to the boys and girls working in Mathematics and Physics and living in our beautiful town, Pontremoli, \emph{la Città del Libro}: I hope Francesca, Sara, Michelangelo, Sara, Daniele could reveal and teach to anybody their passion for Science.

\medskip

I wish to end these acknowledgments recalling how important has been the support of my family and, in particular, of my parents Angelo and Antonella. My last thought goes to the memory of my grandfather, \emph{il maestro} Giovanni: he was my first teacher, in letting me learn that even if you may feel frustrating when having no word, Mathematics is always an enjoying game!

}

\mainmatter

\setcounter{chapter}{-1}

\chapter{Preliminaries on (almost-)complex manifolds}\label{chapt:preliminaries}

In this preliminary chapter (which contains no original material), we summarize the basic notions and the classical results concerning (almost-)complex and symplectic structures. In particular, we start by setting some definitions and notation concerning (almost-)complex structures, \S\ref{sec:alm-cplx}, and symplectic structures, \S\ref{sec:symplectic}; then we recall the main results in the Hodge theory for K\"ahler manifolds, \S\ref{sec:hodge-kahler}, and in the Kodaira, Spencer, Nirenberg, and Kuranishi theory of deformations of complex structures, \S\ref{sec:deformations}; furthermore, we summarize the basic definitions and some useful facts about currents and de Rham homology, \S\ref{sec:currents}, and about solvmanifolds, \S\ref{sec:solvmanifolds}, in order to set the notation for the following chapters. (As a matter of notation, unless otherwise stated, by ``manifold'' we mean ``connected differentiable manifold'', and by ``compact manifold'' we mean ``closed manifold''.)

\section{Almost-complex structures and integrability}\label{sec:alm-cplx}
The tangent bundle of a complex manifold $X$ is naturally endowed with an endomorphism $J\in\End(TX)$ such that $J^2=-\id_{TX}$, satisfying a further integrability property. It is hence natural to study differentiable manifolds endowed with such an endomorphism, the so-called \emph{almost-complex} manifolds. It turns out that the vanishing of the \emph{Nijenhuis tensor} $\Nij_J$ characterizes the almost-complex structures $J$ on $X$ naturally induced by a structure of complex manifold, \cite[Theorem 1.1]{newlander-nirenberg}.

In this section, we recall the notions of almost-complex structure, complex manifold, and Dolbeault cohomology, and some of their properties.

\subsection{Almost-complex structures}
Let $X$ be a (differentiable) manifold endowed with an \emph{almost-complex structure} $J$, namely, an endomorphism $J\in\End(TX)$ such that $J^2=-\id_{TX}$.

Extending $J$ by $\C$-linearity to $TX\otimes \C$, we get the decomposition
$$ TX\otimes \C \;=\; T^{1,0}X\oplus T^{0,1}X \;,$$
where $T^{1,0}X$ (respectively, $T^{0,1}X$) is the sub-bundle of $TX\otimes \C$ given by the $\im$-eigen-spaces (respectively, the $\left(-\im\right)$-eigen-spaces) of $J\in\End\left(TX\otimes\C\right)$: that is, for every $x\in X$,
$$ \left(T^{1,0}X\right)_x \;=\; \left\{v_x-\im\, J_x v_x \st v_x\in T_xX \right\} \;, \qquad \left(T^{0,1}X\right)_x \;=\; \left\{v_x+\im\, J_x v_x \st v_x \in T_xX \right\} \;. $$
Considering the dual of $J$, again denoted by $J\in\End\left(T^*X\right)$, we get analogously a decomposition at the level of the cotangent bundle:
$$ T^*X\otimes \C \;=\; \duale{\left(T^{1,0}X\right)}\oplus \duale{\left(T^{0,1}X\right)} \;,$$
where $\duale{\left(T^{1,0}X\right)}$ (respectively, $\duale{\left(T^{0,1}X\right)}$) is the sub-bundle of $T^*X\otimes \C$ given by the $\im$-eigen-spaces (respectively, the $\left(-\im\right)$-eigen-spaces) of the $\C$-linear extension $J\in\End\left(T^*X\otimes\C\right)$.
Extending the endomorphism $J$ to the bundle $\wedge^\bullet \left(T^*X\right)\otimes \C$ of complex-valued differential forms, we get, for every $k\in\N$, the bundle decomposition
$$ \wedge^k \left(T^*X\right) \otimes \C \;=\; \bigoplus_{p+q=k} \wedge^p \duale{\left(T^{1,0}X\right)} \otimes \wedge^{q} \duale{\left(T^{0,1}X\right)} \;.$$
As a matter of notation, we will denote by $\mathcal{C}^\infty\left(X;F\right)$ the space of smooth sections of a vector bundle $F$ over $X$, and, for every $k\in\N$ and $p,q\in\N$, we will denote by $\wedge^kX:=\mathcal{C}^\infty \left(X;\,\wedge^k\left(T^*X\right)\right)$ the space of smooth sections of $\wedge^k\left(T^*X\right)$ over $X$ and by $\wedge^{p,q} X:=:\wedge^{p,q}_JX:=\mathcal{C}^\infty \left(X;\,\wedge^p\duale{\left(T^{1,0}X\right)}\otimes \wedge^q\duale{\left(T^{0,1}X\right)}\right)$ the space of smooth sections of $\wedge^p\duale{\left(T^{1,0}X\right)}\otimes \wedge^q\duale{\left(T^{0,1}X\right)}$ over $X$.

\medskip

Since $\de \left(\wedge^0X \otimes_\R \C \right) \subseteq \wedge^{1,0}X\oplus \wedge^{0,1}X$ and $\de \left(\wedge^1X \otimes_\R \C \right) \subseteq \wedge^{2,0}X\oplus \wedge^{1,1}X \oplus \wedge^{0,2}X$, since every differential form is locally a finite sum of decomposable differential forms, and by the Leibniz rule, the $\C$-linear extension of the exterior differential, $\de\colon\wedge^{\bullet}X\otimes\C\to\wedge^{\bullet+1}X\otimes\C$, splits into four components:
$$ \de \;=\; A + \del + \delbar + \bar A \,$$
where
$$
A \colon \wedge^{\bullet,\bullet}X \to \wedge^{\bullet+2,\bullet-1}X \;, \qquad
\del \colon \wedge^{\bullet,\bullet}X \to \wedge^{\bullet+1,\bullet}X \;, \qquad 
\delbar \colon \wedge^{\bullet,\bullet}X \to \wedge^{\bullet,\bullet+1}X \;, \qquad
\bar A \colon \wedge^{\bullet,\bullet}X \to \wedge^{\bullet-1,\bullet+2}X \;;
$$
in terms of these components, the condition $\de^2=0$ is written as
$$
\left\{
\begin{array}{rcl}
 A^2 &=& 0 \\[5pt]
 A\,\del+\del\,A &=& 0 \\[5pt]
 A\,\delbar+\del^2+\delbar\,A &=& 0 \\[5pt]
 A\,\bar A+\del\,\delbar+\delbar\,\del+A\,\bar A &=& 0 \\[5pt]
 \del\,\bar A+\delbar^2+\bar A\,\del &=& 0 \\[5pt]
 \bar A\,\delbar+\delbar\,\bar A &=& 0 \\[5pt]
 \bar A^2 &=& 0
\end{array}
\right.
\;.
$$

\subsection{Complex structures, and Dolbeault cohomology}
If $X$ is a complex manifold, then there is a natural almost-complex structure on $X$: locally, in a holomorphic coordinate chart $\left(U,\, \left\{z^\alpha=:x^{2\alpha-1}+\im \, x^{2\alpha}\right\}_{\alpha\in\{1,\ldots,\dim_\C X\}}\right)$, with $\left(U,\, \left\{x^\alpha\right\}_{\alpha\in\left\{1,\ldots,2 \dim_\C X\right\}}\right)$ a (differential) coordinate chart, one defines, for every $\alpha\in\left\{1,\ldots,\dim_\C X\right\}$,
$$ J\left(\frac{\del}{\del x^{2\alpha-1}}\right) \;\stackrel{\text{loc}}{:=}\; \frac{\del}{\del x^{2\alpha}} \;, \qquad J\left(\frac{\del}{\del x^{2\alpha}}\right) \;\stackrel{\text{loc}}{:=}\; -\frac{\del}{\del x^{2\alpha-1}} \;;$$
note that this local definition does not depend on the coordinate chart, by the Cauchy and Riemann equations.

Conversely, an almost-complex structure on a manifold $X$ is called \emph{integrable} if it is the natural almost-complex structure induced by a structure of complex manifold on $X$. The following theorem by A. Newlander and L. Nirenberg characterizes the integrable almost-complex structures on a manifold $X$ in terms of the \emph{Nijenhuis tensor} $\Nij_J$, defined as
$$ \Nij_J (\sspace,\,\ssspace) \;:=\; \left[\sspace,\,\ssspace\right]+J\left[J\,{\sspace},\,\ssspace\right]+J\left[\sspace,\,J\,{\ssspace}\right]-\left[J\sspace,\,J\,{\ssspace}\right] \;. $$

\begin{thm}[{\cite[Theorem 1.1]{newlander-nirenberg}}]
 Let $X$ be a manifold. An almost-complex structure $J$ on $X$ is integrable if and only if $\Nij_J=0$.
\end{thm}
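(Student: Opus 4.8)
The plan is to prove the two implications separately, after first recasting the vanishing of $\Nij_J$ in the operator-theoretic language set up above. First I would verify the algebraic fact that $\Nij_J=0$ is equivalent to the involutivity of the distribution $T^{0,1}X$, and hence --- comparing with the bidegree decomposition $\de=A+\del+\delbar+\bar A$ --- to the vanishing of the two ``extremal'' components $A=\bar A=0$. Concretely, for $(0,1)$-vector fields $Z,W$ and a $(1,0)$-form $\alpha$ one has $\de\alpha(Z,W)=-\alpha([Z,W])$, so the $(0,2)$-component $\bar A\alpha$ of $\de\alpha$ measures exactly the $T^{1,0}X$-part of $[Z,W]$; since $A=\overline{\bar A}$, either vanishes iff the other does, and both vanish iff $[T^{0,1}X,T^{0,1}X]\subseteq T^{0,1}X$. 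Feeding $A=\bar A=0$ into the displayed system of relations equivalent to $\de^2=0$ then yields $\delbar^2=0$, the structural fact I will exploit below.

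The easy implication is then immediate: if $J$ is induced by a complex structure, in a holomorphic chart the fields $\del/\del\bar z^\alpha$ span $T^{0,1}X$ and commute, so $T^{0,1}X$ is involutive and $\Nij_J=0$. For the converse I would reduce the global statement to a local existence problem: it suffices to produce, near each point, $n:=\dim_\C X$ smooth complex functions $\zeta^1,\dots,\zeta^n$ with $\delbar\zeta^j=0$ and $\de\zeta^1\wedge\cdots\wedge\de\zeta^n\neq 0$. Indeed, the $\delbar$-closedness forces the overlap maps of any two such charts to be holomorphic, so these charts equip $X$ with a complex-manifold structure whose induced almost-complex structure is $J$. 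After choosing real coordinates in which $J$ agrees with the standard structure of $\C^n$ at the center $p$, the equation $\delbar\zeta=0$ becomes a first-order system $\del\zeta/\del\bar z^k+\sum_l a^k_l\,\del\zeta/\del z^l=0$ with coefficients $a^k_l$ vanishing at $p$, and the relation $\delbar^2=0$ obtained above is precisely the compatibility (cocycle) condition for this overdetermined system.

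It remains to solve this system, and this is where the real difficulty lies. In the real-analytic category I would simply complexify and invoke the holomorphic Frobenius theorem, the involutivity of $T^{0,1}X$ providing the required integrability and the integral leaves furnishing the level sets of the sought functions. In the smooth category, however, Frobenius is unavailable and one must resort to hard analysis: I would set up a Newton-type iteration in which each step solves a linear $\delbar$-equation by means of the Dolbeault--Grothendieck ($\delbar$-Poincar\'e) lemma --- whose very validity rests on $\delbar^2=0$ --- together with elliptic a priori estimates in H\"older spaces $\mathcal{C}^{k,\alpha}$. The integrability condition guarantees that the nonlinear error at each stage is quadratically small, so that, with suitable control of the loss of derivatives, the scheme converges to a genuine solution with linearly independent differentials near $p$.

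The main obstacle throughout is precisely this smooth-category analysis --- establishing the estimates and the convergence of the iteration --- rather than the formal and algebraic parts, which become essentially bookkeeping once the reformulation $\Nij_J=0 \Leftrightarrow A=\bar A=0 \Rightarrow \delbar^2=0$ is in place; the latter is what upgrades $\delbar$ to a genuine differential and makes both the Frobenius reduction and the $\delbar$-solvability at each Newton step available.
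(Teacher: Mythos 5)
The paper does not actually prove this statement: it is quoted as the Newlander--Nirenberg theorem, with a citation to \cite[Theorem 1.1]{newlander-nirenberg}, so there is no in-text proof to compare yours against. Judged on its own terms, your outline follows the classical route, and most of it is correct and standard: the equivalence $\Nij_J=0 \Leftrightarrow \left[T^{0,1}X,\,T^{0,1}X\right]\subseteq T^{0,1}X \Leftrightarrow A=\bar A=0$ (which the paper itself records right after the theorem), the easy implication from holomorphic charts, the reduction of integrability to the local existence of $n$ functions $\zeta^j$ with $\delbar\zeta^j=0$ and independent differentials, the holomorphy of transition maps by $\delbar$-closedness, and the real-analytic case via complexification and holomorphic Frobenius.

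There is, however, one genuine circularity in the analytic core as you describe it. You propose to solve each linear step of the Newton iteration ``by means of the Dolbeault--Grothendieck lemma, whose very validity rests on $\delbar^2=0$''. But the Dolbeault--Grothendieck lemma is a statement about the Cauchy--Riemann operator of an \emph{integrable} complex structure; local solvability of $\delbar_J u=f$ in degree $(0,1)$ for the almost-complex operator $\delbar_J$ is essentially equivalent to the integrability of $J$, i.e.\ to the conclusion you are trying to prove, and the formal identity $\delbar^2=0$ alone does not yield solvability of an overdetermined system (formal integrability does not imply existence in the smooth category --- that is precisely why the theorem is hard). What the actual proofs do (Newlander--Nirenberg, Malgrange, Nijenhuis--Woolf, Webster) is solve at each step the \emph{standard flat} Cauchy--Riemann system on $\C^n$, for which the Poincar\'e-type lemma and H\"older estimates are available through explicit integral kernels (Cauchy--Pompeiu, Bochner--Martinelli), treating the coefficients $a^k_l$ --- which vanish at $p$ and can be made uniformly small after a dilation of coordinates --- as a perturbation; it is this smallness after rescaling, not $\delbar^2=0$, that makes the iteration contract. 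Relatedly, your final paragraph defers exactly the estimates, the loss-of-derivatives bookkeeping, and the convergence argument, which in the smooth category constitute the entire content of the theorem; as written, the proposal asserts the hard part rather than supplying it. With the linear step re-anchored to the flat $\delbar$ of $\C^n$ and the quadratic-error estimate actually carried out, your plan becomes the standard proof.
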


By a straightforward computation, the integrability of an almost-complex structure $J$ turns out to be equivalent to the vanishing of the components $A$ and $\bar A$ of the exterior differential, equivalently, to $\left(\wedge^{\bullet,\bullet}X,\,\del,\,\delbar\right)$ being a double complex of $\Cinf\left(X;\C\right)$-modules (see, e.g., \cite[\S2.6]{wells}, \cite[Proposition 8.2]{moroianu}).

\medskip

Therefore, for a complex manifold $X$, one can consider, for every $p\in\N$, the differential complex $\left( \wedge^{p,\bullet}X,\, \delbar\right)$ and its cohomology, defining the \emph{Dolbeault cohomology}, as the bi-graded $\C$-vector space
$$ H^{\bullet,\bullet}_{\delbar}(X) \;:=\; \frac{\ker\delbar}{\imm\delbar} \;.$$

For every $p, q\in\N$, denote by $\mathcal{A}^{p,q}_X$ the (fine) sheaf of germs of $(p,q)$-forms on $X$.
For every $p\in\N$, denote by $\Omega^p_X$ the sheaf of germs of \emph{holomorphic $p$-forms} on $X$, that is, the kernel sheaf of the map $\delbar\colon \mathcal{A}^{p,0}_X\to \mathcal{A}^{p,1}_X$. By the Dolbeault and Grothendieck Lemma, see, e.g., \cite[I.3.29]{demailly-agbook}, one has that
$$ 0 \to \Omega^p_X \to \mathcal{A}^{p,\bullet}_X $$
is a fine resolution of $\Omega^p_X$; hence, one gets the following result.

\begin{thm}[{Dolbeault theorem, \cite{dolbeault}}]
 Let $X$ be a complex manifold. For every $p,\, q\in\N$,
 $$ H^{p,q}_{\delbar}(X) \;\simeq\; \check{H}^q\left(X;\Omega^p\right) \;.$$
\end{thm}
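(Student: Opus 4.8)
The plan is to establish the Dolbeault theorem by exhibiting a fine resolution of the sheaf $\Omega^p_X$ of holomorphic $p$-forms and invoking the abstract de Rham theorem, which computes sheaf cohomology of a sheaf via any acyclic (in particular, fine) resolution. Since the excerpt has already stated that
$$ 0 \to \Omega^p_X \to \mathcal{A}^{p,\bullet}_X $$
is a fine resolution of $\Omega^p_X$, the bulk of the conceptual work is packaged into that input; my task reduces to explaining why such a resolution computes $\check H^q(X;\Omega^p)$ and why its global sections compute $H^{p,q}_{\delbar}(X)$.

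First I would recall the exactness of the complex of sheaves. The map $\delbar\colon \mathcal{A}^{p,q}_X \to \mathcal{A}^{p,q+1}_X$ satisfies $\delbar^2=0$, and the kernel of $\delbar$ in degree $0$ is by definition $\Omega^p_X$. The crucial local statement is the $\delbar$-Poincar\'e lemma (the Dolbeault--Grothendieck lemma, \cite[I.3.29]{demailly-agbook}): on a polydisc, every $\delbar$-closed $(p,q)$-form with $q\geq 1$ is $\delbar$-exact. This gives exactness of the sheaf sequence in positive degrees, so that $0 \to \Omega^p_X \to \mathcal{A}^{p,0}_X \to \mathcal{A}^{p,1}_X \to \cdots$ is indeed a resolution.

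Next I would use that each $\mathcal{A}^{p,q}_X$ is a fine sheaf, hence acyclic: fine sheaves admit partitions of unity, so their higher \v{C}ech (equivalently, derived-functor) cohomology vanishes, $\check H^k(X;\mathcal{A}^{p,q}_X)=0$ for all $k\geq 1$. The abstract de Rham theorem then asserts that for any resolution of a sheaf $\mathcal{F}$ by acyclic sheaves $\mathcal{L}^\bullet$, one has a canonical isomorphism between $\check H^q(X;\mathcal{F})$ and the $q$-th cohomology of the complex of global sections $\Gamma(X;\mathcal{L}^\bullet)$. Applying this with $\mathcal{F}=\Omega^p_X$ and $\mathcal{L}^\bullet=\mathcal{A}^{p,\bullet}_X$, and observing that $\Gamma(X;\mathcal{A}^{p,q}_X)=\wedge^{p,q}X$ with the induced differential being precisely $\delbar$, the cohomology of the global sections complex is by definition $H^{p,q}_{\delbar}(X)$. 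Combining these identifications yields the desired canonical isomorphism $H^{p,q}_{\delbar}(X)\simeq \check H^q(X;\Omega^p)$.

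The main obstacle, and the only genuinely analytic ingredient, is the $\delbar$-Poincar\'e lemma establishing local exactness; everything else is formal homological algebra about acyclic resolutions. Since the excerpt already grants the resolution as known (citing \cite[I.3.29]{demailly-agbook}), in writing the proof I would simply cite this local solvability result and then invoke the abstract de Rham isomorphism, taking care only to note explicitly that the global sections of the fine sheaves recover $\wedge^{p,q}X$ with differential $\delbar$, so that the abstract cohomology groups coincide with the Dolbeault cohomology groups as \emph{defined} in the excerpt.
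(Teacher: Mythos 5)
Your proof is correct and follows exactly the route the paper itself sketches: the Dolbeault--Grothendieck lemma gives local exactness, so $0 \to \Omega^p_X \to \mathcal{A}^{p,\bullet}_X$ is a fine (hence acyclic) resolution, and the abstract de Rham theorem applied to its global sections complex $\left(\wedge^{p,\bullet}X,\, \delbar\right)$ yields $H^{p,q}_{\delbar}(X) \simeq \check{H}^q\left(X;\Omega^p\right)$. There is no gap and no divergence from the paper's argument.
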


This gives a sheaf-theoretic interpretation of the Dolbeault cohomology. On the other hand, also an analytic interpretation can be provided.

Suppose $X$ is a compact complex manifold of complex dimension $n$, and fix $g$ a Hermitian metric on $X$ and $\vol$ the induced volume form on $X$ (recall that every complex manifold is orientable, see, e.g., \cite[pages 17--18]{griffiths-harris}); denote by $\omega:=g(J\sspace,\, \ssspace)\in\wedge^{1,1}X\cap\wedge^2X$ the associated $(1,1)$-form to $g$. Recall that $g$ induces a Hermitian inner product $\left\langle \sspace,\, \ssspace \right\rangle$ on the space $\wedge^{\bullet,\bullet}X$ of global differential forms on $X$, and that the \emph{Hodge-$*$-operator} associated to $g$ is the $\C$-linear map
$$ *\lfloor_{\wedge^{p,q}X}\colon \wedge^{p,q}X\to \wedge^{n-q,n-p}X $$
defined requiring that, for every $\alpha,\beta\in\wedge^{p,q}X$,
$$ \alpha\wedge * \bar\beta \;=\; \left\langle \alpha,\, \beta\right\rangle \, \vol \;.$$

Define
$$ \delbar^* \;:=\; -*\,\del\,* \colon \wedge^{\bullet,\bullet}X\to \wedge^{\bullet,\bullet-1}X \;; $$
the operator $\delbar^* \colon \wedge^{\bullet,\bullet}X\to \wedge^{\bullet,\bullet-1}X$ is the adjoint of $\delbar \colon \wedge^{\bullet,\bullet}X\to \wedge^{\bullet,\bullet+1}X$ with respect to $\left\langle \sspace,\, \ssspace\right\rangle$. Define
$$ \overline\square \;:=\; \left[\delbar,\,\delbar^*\right] \;:=\; \delbar\,\delbar^*+\delbar^*\,\delbar \colon \wedge^{\bullet,\bullet}X\to \wedge^{\bullet,\bullet}X \;; $$
$\overline\square$ being a \kth{2} order self-adjoint elliptic differential operator, (see, e.g., \cite[Theorem 3.16]{kodaira}), one gets the following result.

\begin{thm}[{Hodge theorem, \cite{hodge}}]
 Let $X$ be a compact complex manifold endowed with a Hermitian metric. There is an orthogonal decomposition
 $$ \wedge^{\bullet,\bullet}X \;=\; \ker \overline\square \,\stackrel{\perp}{\oplus}\, \delbar\wedge^{\bullet,\bullet-1}X \,\stackrel{\perp}{\oplus}\, \delbar^* \wedge^{\bullet,\bullet+1}X \;,$$
 and hence an isomorphism
 $$ H^{\bullet,\bullet}_{\delbar}(X) \;\simeq\; \ker\overline\square \;.$$
 In particular, $\dim_\C H^{\bullet,\bullet}_{\delbar}(X)<+\infty$.
\end{thm}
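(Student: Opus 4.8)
The plan is to establish the Hodge theorem for the $\delbar$-complex by treating $\overline\square$ as a self-adjoint elliptic operator and invoking the general theory of elliptic operators on compact manifolds. The statement asserts three things: the orthogonal decomposition of $\wedge^{\bullet,\bullet}X$, the identification $H^{\bullet,\bullet}_\delbar(X)\simeq\ker\overline\square$, and finite-dimensionality of the cohomology. Since the excerpt explicitly grants that $\overline\square$ is a second-order self-adjoint elliptic differential operator, the heart of the argument is really the general elliptic decomposition theorem for such operators, followed by a short piece of homological bookkeeping.

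First I would record the formal algebraic properties of the Laplacian. Because $g$ induces a Hermitian inner product $\langle\sspace,\,\ssspace\rangle$ on the finite-looking but infinite-dimensional space $\wedge^{\bullet,\bullet}X$ (via integration against $\vol$), and because $\delbar^*$ is by construction the formal adjoint of $\delbar$, one checks that $\overline\square$ is formally self-adjoint and that $\ker\overline\square=\ker\delbar\cap\ker\delbar^*$: indeed, for any smooth form $\alpha$, the identity $\langle\overline\square\,\alpha,\,\alpha\rangle=\|\delbar\alpha\|^2+\|\delbar^*\alpha\|^2$ shows $\overline\square\,\alpha=0$ if and only if both $\delbar\alpha=0$ and $\delbar^*\alpha=0$. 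This is the purely algebraic input and requires no analysis.

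Next I would apply the analytic core, namely the decomposition theorem for a self-adjoint elliptic operator $L$ acting on sections of a vector bundle over a \emph{compact} manifold: there is an $\mathrm{L}^2$-orthogonal decomposition $\wedge^{\bullet,\bullet}X=\ker L\stackrel{\perp}{\oplus}\imm L$, with $\ker L$ finite-dimensional, and elliptic regularity guaranteeing that harmonic sections (and indeed everything in the decomposition) are smooth. Applying this with $L=\overline\square$, and using $\imm\overline\square=\delbar\,\delbar^*\wedge^{\bullet,\bullet-1}X+\delbar^*\delbar\wedge^{\bullet,\bullet+1}X$ together with the fact that the three summands $\ker\overline\square$, $\imm\delbar$, and $\imm\delbar^*$ are mutually orthogonal (orthogonality of $\imm\delbar$ and $\imm\delbar^*$ follows from $\delbar^2=0$, i.e. $\langle\delbar\xi,\,\delbar^*\eta\rangle=\langle\delbar^2\xi,\,\eta\rangle=0$; orthogonality with $\ker\overline\square$ follows from the previous step), I obtain exactly the claimed three-fold orthogonal splitting.

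Finally I would derive the cohomological identification. Given a $\delbar$-closed form $\alpha$, decompose $\alpha=h+\delbar\beta+\delbar^*\gamma$ according to the splitting; applying $\delbar$ and using $\delbar h=0$ and $\delbar\delbar\beta=0$ gives $\delbar\delbar^*\gamma=0$, whence $\|\delbar^*\gamma\|^2=\langle\gamma,\,\delbar\delbar^*\gamma\rangle=0$, so $\delbar^*\gamma=0$ and $\alpha=h+\delbar\beta$. Thus every $\delbar$-cohomology class has a harmonic representative, and uniqueness of that representative follows because a harmonic form that is $\delbar$-exact must vanish (by orthogonality of $\ker\overline\square$ and $\imm\delbar$). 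This yields the isomorphism $H^{\bullet,\bullet}_\delbar(X)\simeq\ker\overline\square$, and finite-dimensionality is inherited from finite-dimensionality of $\ker\overline\square$. The main obstacle is genuinely the elliptic decomposition theorem itself — the existence of a parametrix (Green's operator) and the compactness/regularity estimates for $\overline\square$ on a compact manifold — but since the excerpt cites \cite[Theorem 3.16]{kodaira} for ellipticity and self-adjointness, I would invoke the standard elliptic package as a black box and keep the proof at the level of the algebraic and homological manipulations above.
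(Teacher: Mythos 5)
Your proof is correct and follows exactly the route the paper intends: the paper states this as a recalled classical result, reducing it (via the citation of \cite[Theorem 3.16]{kodaira} for ellipticity and self-adjointness of $\overline\square$) to the standard elliptic decomposition theorem on compact manifolds, which is precisely the black box you invoke. Your algebraic steps — the identity $\ker\overline\square=\ker\delbar\cap\ker\delbar^*$, the mutual orthogonality of the three summands, and the harmonic-representative argument killing the $\delbar^*\gamma$ term — are the standard and correct way to fill in the bookkeeping the paper leaves implicit.
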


Note that, for any $p,q\in\N$, the Hodge-$*$-operator $*\colon \wedge^{p,q}X\to \wedge^{n-q,n-p}X$ sends a $\overline\square$-harmonic $(p,q)$-form $\psi$ (that is, $\psi\in\wedge^{p,q}X$ is such that $\overline\square\psi=0$) to a $\square$-harmonic $(n-q,n-p)$-form $*\psi$, where $\square:=\left[\del,\,\del^*\right]:=\del\del^*+\del^*\del\in\End\left(\wedge^{\bullet,\bullet}X\right)$ is the conjugate operator to $\overline\square$, and hence, by conjugating, one gets a $\overline\square$-harmonic $(n-p,n-q)$-form $\overline{*\psi}$. Hence, one gets the following result.

\begin{thm}[{Serre duality, \cite[Théorème 4]{serre-duality}}]
Let $X$ be a compact complex manifold of complex dimension $n$, endowed with a Hermitian metric. For every $p,\, q\in\N$, the Hodge-$*$-operator induces an isomorphism
$$ * \colon H^{p,q}_\delbar (X) \stackrel{\simeq}{\to} \overline{H^{n-p,n-q}_{\delbar}(X)} \;.$$
\end{thm}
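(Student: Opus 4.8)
The plan is to reduce everything to harmonic representatives via the Hodge theorem, and then to observe that the Hodge-$*$-operator, combined with complex conjugation, intertwines the two relevant Laplacians. Concretely, write $\mathcal{H}^{p,q}_{\overline\square}:=\ker\overline\square\cap\wedge^{p,q}X$ for the space of $\overline\square$-harmonic $(p,q)$-forms and $\mathcal{H}^{p,q}_{\square}:=\ker\square\cap\wedge^{p,q}X$ for the $\square$-harmonic ones, where $\square:=\del\,\del^*+\del^*\,\del$ is the conjugate Laplacian already introduced. The Hodge theorem provides canonical isomorphisms $H^{p,q}_\delbar(X)\simeq\mathcal{H}^{p,q}_{\overline\square}$ and $H^{n-p,n-q}_\delbar(X)\simeq\mathcal{H}^{n-p,n-q}_{\overline\square}$, hence $\overline{H^{n-p,n-q}_\delbar(X)}\simeq\overline{\mathcal{H}^{n-p,n-q}_{\overline\square}}$. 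It therefore suffices to produce, at the level of harmonic forms, a $\C$-linear isomorphism $\mathcal{H}^{p,q}_{\overline\square}\stackrel{\simeq}{\to}\overline{\mathcal{H}^{n-p,n-q}_{\overline\square}}$ induced by $*$; its compatibility with the Hodge isomorphisms is automatic, since it is defined directly on the harmonic spaces.

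The technical heart is the commutation relation $*\,\overline\square=\square\,*$ (up to a harmless overall sign) as maps $\wedge^{p,q}X\to\wedge^{n-q,n-p}X$. I would derive it from the defining formula $\delbar^*=-*\,\del\,*$, its bidegree-consistent conjugate analogue $\del^*=-*\,\delbar\,*$, and the identity $*\,*=(-1)^{k}\,\id$ on $k$-forms, by substituting these into $\overline\square=\delbar\,\delbar^*+\delbar^*\,\delbar$ and pushing $*$ through term by term until it reads $\square\,*$. Granting this, and noting that $*\colon\wedge^{p,q}X\to\wedge^{n-q,n-p}X$ is a linear bijection with inverse $\pm*$, the operator $*$ carries $\ker\overline\square$ onto $\ker\square$ in the correct bidegree, i.e. it restricts to an isomorphism $\mathcal{H}^{p,q}_{\overline\square}\stackrel{\simeq}{\to}\mathcal{H}^{n-q,n-p}_{\square}$. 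Observe that only the preservation of the kernels is needed here, so any sign in the commutation relation is irrelevant to the conclusion.

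It remains to identify $\mathcal{H}^{n-q,n-p}_{\square}$ with $\overline{\mathcal{H}^{n-p,n-q}_{\overline\square}}$, which is the second, purely formal, ingredient. Since $\square$ is the complex conjugate operator of $\overline\square$, one has $\overline{\overline\square\,\beta}=\square\,\overline\beta$ for every form $\beta$, so complex conjugation carries $\overline\square$-harmonic $(n-p,n-q)$-forms bijectively to $\square$-harmonic $(n-q,n-p)$-forms; that is, $\overline{\mathcal{H}^{n-p,n-q}_{\overline\square}}=\mathcal{H}^{n-q,n-p}_{\square}$. Composing the three bijections
\[
H^{p,q}_\delbar(X)\;\simeq\;\mathcal{H}^{p,q}_{\overline\square}\;\stackrel{*}{\longrightarrow}\;\mathcal{H}^{n-q,n-p}_{\square}\;=\;\overline{\mathcal{H}^{n-p,n-q}_{\overline\square}}\;\simeq\;\overline{H^{n-p,n-q}_\delbar(X)}
\]
yields the asserted isomorphism $*\colon H^{p,q}_\delbar(X)\stackrel{\simeq}{\to}\overline{H^{n-p,n-q}_\delbar(X)}$.

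I expect the only real obstacle to be verifying the commutation $*\,\overline\square=\square\,*$ cleanly, namely bookkeeping the signs arising from $*\,*=(-1)^{k}$ and from the bidegrees as $*$ is moved past $\del$, $\delbar$, $\del^*$, $\delbar^*$; all remaining steps are formal consequences of the Hodge theorem together with the conjugation-equivariance $\overline{\overline\square\,\beta}=\square\,\overline\beta$. Alternatively, one could bypass the Laplacian computation by working directly with the $\mathrm{L}^2$-type pairing $(\alpha,\beta)\mapsto\int_X\alpha\wedge\beta$ between $\wedge^{p,q}X$ and $\wedge^{n-p,n-q}X$, checking that $\delbar$ and its transpose are adjoint under it and that the induced pairing on cohomology is nondegenerate; but the harmonic-forms route above is the most direct given the tools already assembled in the excerpt.
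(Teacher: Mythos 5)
Your proposal is correct and coincides with the paper's argument: the paper likewise observes that the Hodge-$*$-operator sends a $\overline\square$-harmonic $(p,q)$-form $\psi$ to a $\square$-harmonic $(n-q,n-p)$-form $*\psi$, where $\square:=\del\del^*+\del^*\del$ is the conjugate operator to $\overline\square$, and that conjugation then yields the $\overline\square$-harmonic $(n-p,n-q)$-form $\overline{*\psi}$, after which the Hodge theorem identifies harmonic spaces with Dolbeault cohomology. Your sign bookkeeping for $*\,\overline\square=\square\,*$ works out exactly as you anticipate (in fact the relation holds with no sign), so no gap remains.
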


\medskip

Since a $\delbar$-closed form is not necessarily $\de$-closed, Dolbeault cohomology classes do not define, in general, de Rham cohomology classes, that is, in general, on a compact complex manifold, there is no natural map between the Dolbeault cohomology and the de Rham cohomology (as we will see, in the special case of compact K\"ahler manifolds, or more in general of compact complex manifolds satisfying the $\del\delbar$-Lemma, the de Rham cohomology actually can be decomposed by means of the Dolbeault cohomology groups, \cite[Théorème IV.3]{weil}, \cite[Lemma 5.15, Remark 5.16, 5.21]{deligne-griffiths-morgan-sullivan}). Nevertheless, the Fr\"olicher inequality provides a relation between the dimension of the Dolbeault cohomology and the dimension of the de Rham cohomology; it follows by considering the Hodge and Fr\"olicher spectral sequence, which we recall here.

The structure of double complex of $\left(\wedge^{\bullet,\bullet}X,\,\del,\,\delbar\right)$ gives rise to two natural filtrations of $\wedge^{\bullet}X\otimes\C$, namely, (for $p,q\in N$ and for $k\in\N$,)
$$ 'F^p\left(\wedge^{k}X\otimes\C\right) \;:=\; \bigoplus_{\substack{r+s=k\\r\geq p}}\wedge^{r,s}X \quad \text{ and }\quad
  ''F^q\left(\wedge^{k}X\otimes\C\right) \;:=\; \bigoplus_{\substack{r+s=k\\s\geq q}}\wedge^{r,s}X \;; $$
these filtrations induce two spectral sequences (see, e.g., \cite[\S2.4]{mccleary}, \cite[\S3.5]{griffiths-harris}),
$$ \left\{\left(E^{\bullet,\bullet}_r,\, \de_r\right) \;:=:\; \left('E^{\bullet,\bullet}_r,\, {'\de}_r \right)\right\}_{r\in\N} \qquad \text{ and, respectively, } \qquad \left\{\left(''E^{\bullet,\bullet}_r,\, {''\de}_r \right)\right\}_{r\in\N} \;,$$
called \emph{Hodge and Fr\"olicher spectral sequences} (or \emph{Hodge to de Rham spectral sequences}): one has
$$ 'E^{\bullet,\bullet}_1 \;\simeq\; H^{\bullet,\bullet}_{\delbar}(X)\;\Rightarrow\; H^{\bullet}_{dR}(X;\C) \quad\text{ and }\quad
  ''E^{\bullet,\bullet}_1 \;\simeq\; H^{\bullet,\bullet}_{\del}(X)\;\Rightarrow\; H^{\bullet}_{dR}(X;\C) \;.$$
An explicit description of $\left\{\left(E_r,\, \de_r\right)\right\}_{r\in\N}$ is given in \cite{cordero-fernandez-ugarte-gray_spectral}: for any $p,\,q\in\N$ and $r\in\N$, its terms are
$$ E^{p,q}_r \;\simeq\; \frac{\mathcal{X}^{p,q}_r}{\mathcal{Y}^{p,q}_r} \;,$$
where, for $r=1$,
$$
 \mathcal{X}^{p,q}_1 \;:=\; \left\{\alpha\in\wedge^{p,q}X \st \delbar \alpha=0 \right\}\;,
 \qquad
 \mathcal{Y}^{p,q}_1 \;:=\; \delbar \wedge^{p,q-1}X \;,
$$
and, for $r\geq 2$,
\begin{eqnarray*}
 \mathcal{X}^{p,q}_r &:=& \left\{\alpha^{p,q}\in\wedge^{p,q}X \st \delbar \alpha^{p,q}=0 \text{ and, for any }i\in\{1, \ldots, r-1\}\text{, there exists } \alpha^{p+i,q-i}\in\wedge^{p+i,q-i}X \right.\\[5pt]
 && \left. \text{ such that }\del\alpha^{p+i-1,q-i+1}+\delbar\alpha^{p+i,q-i}=0 \right\}\;, \\[10pt]
 \mathcal{Y}^{p,q}_r &:=& \left\{\del\beta^{p-1,q}+\delbar\beta^{p,q-1}\in\wedge^{p,q}X \st \text{for any }i\in\{2, \ldots, r-1\}\text{, there exists } \beta^{p-i,q+i-1}\in\wedge^{p-i,q+i-1}X \right. \\[5pt]
 && \left. \text{ such that } \del\beta^{p-i,q+i-1}+\delbar\beta^{p-i+1,q+i-2}=0 \text{ and }\delbar\beta^{p-r+1, q+r-2}=0\right\} \;,
\end{eqnarray*}
see \cite[Theorem 1]{cordero-fernandez-ugarte-gray_spectral}, and, for any $r\geq 1$, the map $\de_r\colon E^{\bullet,\bullet}_r\to E^{\bullet+r, \bullet-r+1}_r$ is given by
$$ \de_r\colon \left\{\left[\alpha^{p,q}\right] \in E^{p,q}_r\right\}_{p,q\in\N} \mapsto \left\{\left[\del\alpha^{p+r-1, q-r+1}\right] \in E^{p+r, q-r+1}_r\right\}_{p,q\in\N} \;,$$
see \cite[Theorem 3]{cordero-fernandez-ugarte-gray_spectral}.

As a consequence of $'E^{\bullet,\bullet}_1 \simeq H^{\bullet,\bullet}_{\delbar}(X)\;\Rightarrow\; H^{\bullet}_{dR}(X;\C)$, one gets the following  inequality by A. Fr\"olicher.

\begin{thm}[{Fr\"olicher inequality, \cite[Theorem 2]{frolicher}}]
Let $X$ be a compact complex manifold. Then, for every $k\in\N$,
$$ \dim_\C H^k_{dR}(X;\C) \;\leq\; \sum_{p+q=k} \dim_\C H^{p,q}_{\delbar}(X) \;. $$
\end{thm}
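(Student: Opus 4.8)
The plan is to read off the inequality directly from the convergence of the Hodge and Fr\"olicher spectral sequence $E_1^{\bullet,\bullet}\simeq H^{\bullet,\bullet}_{\delbar}(X)\Rightarrow H^\bullet_{dR}(X;\C)$ recalled above. The organizing principle is that each page of a spectral sequence is a subquotient of the preceding one, so its total dimension in a fixed degree can only decrease, while the limiting page reconstitutes the abutment along a finite filtration.

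First I would fix $k\in\N$ and record finiteness: since $X$ is compact, the Hodge theorem gives $\dim_\C H^{p,q}_{\delbar}(X)<+\infty$, so each $E_1^{p,q}$ is finite-dimensional, and since the bidegrees satisfy $0\leq p,q\leq n:=\dim_\C X$ there are only finitely many nonzero terms in each total degree. As the differential $\de_r\colon E_r^{p,q}\to E_r^{p+r,q-r+1}$ shifts bidegrees out of this range once $r\geq n+1$, it vanishes there, so the spectral sequence degenerates at a finite page and $E_\infty^{p,q}=E_{n+1}^{p,q}$ for all $p,q$.

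Next I would establish the monotonicity. For every $r\geq 1$ the identification $E_{r+1}^{p,q}=\ker\de_r\big/\imm\de_r$ presents $E_{r+1}^{p,q}$ as a subquotient of $E_r^{p,q}$, hence $\dim_\C E_{r+1}^{p,q}\leq\dim_\C E_r^{p,q}$. Summing over the finitely many bidegrees with $p+q=k$ and iterating from $r=1$ to $r=n$ yields
$$ \sum_{p+q=k}\dim_\C E_\infty^{p,q} \;\leq\; \sum_{p+q=k}\dim_\C E_1^{p,q} \;=\; \sum_{p+q=k}\dim_\C H^{p,q}_{\delbar}(X) \;. $$

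Finally I would invoke convergence to evaluate the left-hand side. The Hodge filtration $'F^\bullet$ on $H^k_{dR}(X;\C)$ is finite, and by definition of the abutment $E_\infty^{p,q}$ is isomorphic to the graded piece $'F^p H^{p+q}_{dR}(X;\C)\big/\,{'F^{p+1}}H^{p+q}_{dR}(X;\C)$; since dimension is additive along a finite filtration, $\dim_\C H^k_{dR}(X;\C)=\sum_{p+q=k}\dim_\C E_\infty^{p,q}$. Chaining this equality with the displayed inequality gives the Fr\"olicher inequality. I expect the only delicate point to be the convergence bookkeeping---verifying cleanly that $E_\infty$ computes the associated graded of the finite Hodge filtration on de Rham cohomology and that dimensions add along it---whereas the subquotient dimension estimate itself is immediate once finiteness is secured.
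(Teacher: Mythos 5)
Your proposal is correct and takes essentially the same route as the paper, which deduces the inequality in one line from the Hodge and Fr\"olicher spectral sequence $E_1^{\bullet,\bullet}\simeq H^{\bullet,\bullet}_{\delbar}(X)\Rightarrow H^{\bullet}_{dR}(X;\C)$ (citing Fr\"olicher): your three steps---finite-dimensionality of $E_1$ via Hodge theory, the subquotient estimate $\dim_\C E_{r+1}^{p,q}\leq\dim_\C E_r^{p,q}$, and the identification of $\sum_{p+q=k}\dim_\C E_\infty^{p,q}$ with $\dim_\C H^k_{dR}(X;\C)$ through the finite Hodge filtration---are exactly the standard bookkeeping behind that citation. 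Nothing is missing; the degeneration at a finite page is a harmless strengthening, since boundedness of the bidegrees alone already makes the iterated subquotient argument terminate.
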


As a matter of notation, for $k\in\N$ and $p,q\in\N$, we will denote by $b_k := \dim_\R H^k_{dR}(X;\R)$, respectively $h^{p,q}_{\delbar} := \dim_\C H^{p,q}_{\delbar}(X)$, the \emph{\kth{k} Betti number}, respectively the \emph{\kth{(p,q)} Hodge number} of $X$.

In the next chapter, we will provide a Fr\"olicher-type inequality also for the Bott-Chern cohomology, Theorem \ref{thm:frol-bc}, showing that it allows to characterize the compact complex manifolds satisfying the $\del\delbar$-Lemma just in terms of the dimensions of the Bott-Chern cohomology and of the de Rham cohomology, Theorem \ref{thm:caratterizzazione-bc-numbers}.

\begin{rem}
 Other than the Dolbeault cohomology, other cohomologies can be defined for a complex manifold $X$; more precisely, since, for every $p,q\in\N$,
 $$ \wedge^{p-1,q-1}X \stackrel{\del\delbar}{\to} \wedge^{p,q}X \stackrel{\del+\delbar}{\to} \wedge^{p+1,q}X\oplus\wedge^{p,q+1}X \qquad \text{ and } \qquad \wedge^{p-1,q}X \oplus \wedge^{p,q-1}X \stackrel{\left(\del,\, \delbar\right)}{\to} \wedge^{p,q}X \stackrel{\del\delbar}{\to} \wedge^{p+1,q+1}X $$
 are complexes, one can define the \emph{Bott-Chern cohomology} $H^{\bullet,\bullet}_{BC}(X)$ and the \emph{Aeppli cohomology} $H^{\bullet,\bullet}_{A}(X)$ of $X$ as
 $$ H^{\bullet,\bullet}_{BC}(X) \;:=\; \frac{\ker\del\cap\ker\delbar}{\imm\del\delbar} \qquad \text{ and }\qquad H^{\bullet,\bullet}_{A}(X) \;:=\; \frac{\ker\del\delbar}{\imm\del+\imm\delbar} \;;$$
 we refer to \S\ref{sec:preliminaries-bott-chern-deldelbar} for further details.
\end{rem}

\section{Symplectic structures}\label{sec:symplectic}
In this section, we recall some definitions and results concerning symplectic manifolds, that is, differentiable manifolds endowed with a non-degenerate $\de$-closed $2$-form. An interesting class of examples of symplectic manifolds is provided by the K\"ahler manifolds. Moreover, given a differentiable manifold $X$, its cotangent bundle $T^*X$ is endowed with a natural symplectic structure (see, e.g., \cite[\S2]{cannasdasilva}): in fact, Symplectic Geometry has applications and motivations in the study of Hamiltonian Mechanics, see, e.g., \cite[Part VII]{cannasdasilva}.

\medskip

Let $X$ be a compact $2n$-dimensional manifold endowed with a \emph{symplectic form}, namely, a non-degenerate $\de$-closed $2$-form $\omega\in\wedge^2X$.

The main difference between Symplectic Geometry and Riemannian Geometry is provided by G. Darboux's theorem.

\begin{thm}[{Darboux theorem, \cite{darboux}}]
Let $X$ be a $2n$-dimensional manifold endowed with a symplectic form $\omega$. Then, for every $x\in X$, there exists a coordinate chart $\left(U, \, \left\{x^j\right\}_{j\in\{1,\ldots,2n\}}\right)$, with $x\in U$, such that
 $$ \omega \;\stackrel{\text{loc}}{=}\; \sum_{j=1}^{n} \de x^{2j-1}\wedge \de x^{2j} \;.$$
 \end{thm}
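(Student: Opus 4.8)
The plan is to use the \emph{Moser deformation argument}, which converts the problem into a linear normal form at the base point followed by the integration of a time-dependent vector field. Fix $x_0\in X$. First I would invoke the purely linear-algebraic normal form for a non-degenerate alternating bilinear form: on the symplectic vector space $\left(T_{x_0}X,\,\omega_{x_0}\right)$ there is a basis in which $\omega_{x_0}$ is standard, so one can choose local coordinates $\left(U,\,\left\{y^j\right\}_{j\in\{1,\ldots,2n\}}\right)$ centred at $x_0$ such that $\omega$ and the constant-coefficient form $\omega_0:=\sum_{j=1}^n \de y^{2j-1}\wedge\de y^{2j}$ agree at the single point $x_0$. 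Writing $\omega_1:=\omega$, both forms are $\de$-closed, both are non-degenerate at $x_0$ (hence on a neighbourhood of it), and they coincide at $x_0$.

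Next I would consider the affine family $\omega_t:=\omega_0+t\,(\omega_1-\omega_0)$ for $t\in[0,1]$. Each $\omega_t$ is $\de$-closed, and since $\omega_t|_{x_0}=\omega_0|_{x_0}$ is non-degenerate for every $t$, compactness of $[0,1]$ yields a single neighbourhood $U$ of $x_0$ on which every $\omega_t$ is non-degenerate. Applying the Poincaré Lemma to the $\de$-closed form $\omega_1-\omega_0$ on a ball around $x_0$, by means of the explicit radial homotopy operator, produces a $1$-form $\sigma$ with $\de\sigma=\omega_1-\omega_0$ and, crucially, $\sigma_{x_0}=0$ (the radial vector field used in the operator vanishes at the centre). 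Using non-degeneracy I would then define a time-dependent vector field $X_t$ on $U$ by the pointwise-linear equation $\iota_{X_t}\omega_t=-\sigma$; since $\sigma_{x_0}=0$ and $\omega_t|_{x_0}$ is non-degenerate, this forces $X_t(x_0)=0$ for every $t$.

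Finally I would let $\left\{\psi_t\right\}$ be the isotopy generated by $X_t$ with $\psi_0=\id$, and check that $\psi_t^*\omega_t$ is constant in $t$: by Cartan's formula and $\de\omega_t=0$ one computes $\frac{\de}{\de t}\left(\psi_t^*\omega_t\right)=\psi_t^*\left(\Lie{X_t}\omega_t+(\omega_1-\omega_0)\right)=\psi_t^*\,\de\left(\iota_{X_t}\omega_t+\sigma\right)=0$ by the choice of $X_t$, whence $\psi_1^*\omega=\omega_0$; the coordinates $x^j:=y^j\circ\psi_1$ then put $\omega$ in the desired form. The main obstacle is dynamical rather than algebraic: the flow of a time-dependent vector field need not exist up to time $t=1$. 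This is exactly where $X_t(x_0)=0$ is indispensable, since it makes $x_0$ a stationary point of the isotopy, so on a sufficiently small neighbourhood of $x_0$ the flow $\psi_t$ remains inside $U$ and is defined for all $t\in[0,1]$; guaranteeing this uniform existence time is the delicate point of the argument.
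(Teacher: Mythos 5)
The paper states Darboux's theorem in its preliminary chapter as a classical result, citing \cite{darboux}, and gives no proof at all, so there is no internal argument to compare yours against. Your proposal is the standard Moser--Weinstein deformation proof, and it is essentially correct: the linear normal form at $x_0$, the affine path $\omega_t$ together with the common non-degeneracy neighbourhood obtained from compactness of $[0,1]$, the primitive $\sigma$ with $\sigma_{x_0}=0$ coming from the radial homotopy operator in the Poincar\'e lemma, the consequent vanishing $X_t\left(x_0\right)=0$, the Cartan-formula computation showing $\psi_t^*\omega_t$ is constant, and your identification of the uniform existence time of the isotopy as the genuinely delicate point are all exactly right. (Darboux's original argument proceeds instead by induction on $n$; your deformation argument is the modern route and has the added benefit of yielding Moser-type stability statements, but since the paper proves nothing here, no comparison is forced.)

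One small correction at the very end: you arrange $\psi_1^*\omega=\omega_0$, i.e.\ $\psi_1$ pulls $\omega$ back to the standard form, so the coordinates putting $\omega$ itself in normal form are $x^j:=y^j\circ\psi_1^{-1}$; indeed then $\de x^j=\left(\psi_1^{-1}\right)^*\de y^j$, whence $\sum_{j=1}^{n}\de x^{2j-1}\wedge\de x^{2j}=\left(\psi_1^{-1}\right)^*\omega_0=\left(\psi_1^{-1}\right)^*\psi_1^*\omega=\omega$ on a neighbourhood of $x_0$. With your choice $x^j:=y^j\circ\psi_1$ one instead expresses $\psi_1^*\omega_0$, which is not $\omega$ in general. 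This is a one-line fix and does not affect the structure of the argument.
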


\medskip

By exploiting the parallelism with Riemannian Geometry, one can try to develop a Hodge theory also for compact symplectic manifolds, \cite{brylinski}. The first tool that can be introduced is an analogue of the Hodge-$*$-operator.

Note that every symplectic manifold is orientable, $\frac{\omega^n}{n!}$ giving a canonical orientation.

Denote by $I\colon TX\to T^*X$ the natural isomorphism of vector bundles induced by $\omega$, namely, $I(v)(\sspace):=\omega(v,\sspace)\in\Hom\left(T_xX;\R\right)$, for every $v\in T_xX$ and $x\in X$. Then, for every $k\in\N$, the form $\omega$ gives rise to a bi-$\mathcal{C}^\infty(X;\R)$-linear form on $\wedge^k X$ denoted by $\left(\omega^{-1}\right)^k$, which is skew-symmetric, respectively symmetric, according that $k$ is odd, respectively even, and defined on the simple elements $\alpha^1\wedge\ldots \wedge \alpha^k,\, \beta^1\wedge\ldots \wedge \beta^k\in\wedge^kX$ as
$$
\left(\omega^{-1}\right)^k\left(\alpha^1\wedge\ldots \wedge \alpha^k,\beta^1\wedge\ldots \wedge \beta^k\right) \;:=\; \det\left(\omega^{-1}\left(\alpha^\ell,\beta^m\right)\right)_{\ell,m\in\{1,\ldots,k\}} \;,
$$
where $\omega^{-1}\left(\alpha^\ell,\beta^m\right) \;:=\; \omega\left(I^{-1}\left(\alpha^\ell\right),I^{-1}\left(\beta^m\right)\right)$ for every $\ell,m\in\{1,\ldots,k\}$.
In a Darboux coordinate chart $\left(U,\, \left\{x^j\right\}_{j\in\{1,\ldots, 2n\}}\right)$, the canonical \emph{Poisson bi-vector} $\Pi:=\omega^{-1}\in\wedge^2TX$ associated to $\omega$ is written as $\omega^{-1}\stackrel{\text{loc}}{=}\sum_{j=1}^{n}\frac{\del}{\del x^{2j-1}}\wedge\frac{\del}{\del x^{2j}}$.

The \emph{symplectic-$\star$-operator}
$$
\star_\omega\colon \wedge^\bullet X\to \wedge^{2n-\bullet}X \;,
$$
introduced by J.-L. Brylinski, \cite[\S2]{brylinski}, is defined requiring that, for every $k\in\N$, and for every $\alpha,\,\beta\in\wedge^kX$,
$$ \alpha\wedge\star_\omega \beta \;=\; \left(\omega^{-1}\right)^k\left(\alpha,\beta\right)\,\frac{\omega^n}{n!} \;.$$

\medskip

As for (almost-)complex manifolds, on a symplectic manifold $X$ one has a decomposition of differential forms in symplectic-type components, the so-called Lefschetz decomposition; it is a consequence of a $\mathfrak{sl}(2;\R)$-representation on $\wedge^2X$ by means of operators related to the symplectic structure.

More precisely, define the operators $L,\, \Lambda,\, H\in\End^\bullet\left(\wedge^\bullet X\right)$ as
\begin{eqnarray*}
L\colon \wedge^\bullet X\to \wedge^{\bullet+2}X\;, \quad && \alpha\mapsto \omega\wedge\alpha \;,\\[5pt]
\Lambda\colon \wedge^\bullet X\to \wedge^{\bullet-2}X\;, \quad && \alpha \mapsto -\iota_\Pi\alpha \;,\\[5pt]
H\colon \wedge^\bullet X\to \wedge^\bullet X\;, \quad && \alpha\mapsto \sum_k \left(n-k\right)\,\pi_{\wedge^kX}\alpha
\end{eqnarray*}
(where $\iota_{\xi}\colon\wedge^{\bullet}X\to\wedge^{\bullet-2}X$ denotes the interior product with $\xi\in\wedge^2\left(TX\right)$, and, for $k\in\N$, the map $\pi_{\wedge^kX}\colon\wedge^\bullet X\to\wedge^kX$ denotes the natural projection onto $\wedge^kX$).
Note that, using the symplectic-$\star$-operator $\star_\omega$, one can write, \cite[Lemma 1.5]{yan},
$$
\Lambda \;=\; -\star_\omega\,L\,\star_\omega \;.
$$

The following result holds.

\begin{thm}[{\cite[Corollary 1.6]{yan}}]
Let $X$ be a manifold endowed with a symplectic structure. Then
$$ \left[L,\, H\right] \;=\; 2\, L \;, \qquad \left[\Lambda,\, H\right] \;=\; -2\, \Lambda \;, \qquad  \left[L,\, \Lambda\right] \;=\; H \;, $$
and hence
$$ \mathfrak{sl}(2;\R) \;\simeq\; \left\langle L, \, \Lambda, \, H \right\rangle \to \End^\bullet\left(\wedge^\bullet X\right) $$
gives a $\mathfrak{sl}(2;\R)$-representation on $\wedge^\bullet X$.
\end{thm}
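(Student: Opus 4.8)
The plan is to observe that $L$, $\Lambda$, and $H$ are all \emph{pointwise algebraic} operators on $\wedge^\bullet X$ (none of them involves differentiation), so each of the three commutators is $\mathcal{C}^\infty(X;\R)$-linear and may be checked fibrewise, on the exterior algebra $\wedge^\bullet\left(T_x^*X\right)$ of a single symplectic vector space at each $x\in X$. This reduces the theorem to a statement in symplectic linear algebra.

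First I would dispose of the two relations involving $H$, which are immediate from degree-counting. Since $H$ acts on $\wedge^k X$ as multiplication by the scalar $n-k$, while $L$ raises the degree by $2$ and $\Lambda$ lowers it by $2$, for $\alpha\in\wedge^kX$ one computes
\[
[L,H]\alpha = \big((n-k)-(n-k-2)\big)\,L\alpha = 2\,L\alpha, \qquad
[\Lambda,H]\alpha = \big((n-k)-(n-k+2)\big)\,\Lambda\alpha = -2\,\Lambda\alpha,
\]
giving $[L,H]=2L$ and $[\Lambda,H]=-2\Lambda$ with no further input.

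The substantive relation is $[L,\Lambda]=H$; this is the classical Lefschetz $\mathfrak{sl}(2;\R)$-identity and is where the real work lies. By the Darboux theorem I would choose, at the fixed point $x$, a symplectic coframe $\{e^1,\dots,e^{2n}\}$ with $\omega=\sum_{j=1}^n e^{2j-1}\wedge e^{2j}$, so that the fibre exterior algebra factorises as the graded tensor product $\bigotimes_{j=1}^n \wedge^\bullet\langle e^{2j-1},e^{2j}\rangle$ of $n$ rank-two blocks. Under this factorisation $L=\sum_j L_j$ and $\Lambda=\sum_j\Lambda_j$, where $L_j$, $\Lambda_j$ act only on the $j$-th block. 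Since $L_j$ and $\Lambda_k$ are both even-degree operators acting on distinct factors, they commute whenever $j\neq k$, so $[L,\Lambda]=\sum_j [L_j,\Lambda_j]$, and it remains to verify the identity on a single block. There, a direct check on the basis $\{1,\,e^{2j-1},\,e^{2j},\,e^{2j-1}\wedge e^{2j}\}$ (fixing the sign in $\Lambda=-\iota_\Pi$ via $\Lambda=-\star_\omega L\star_\omega$) gives $[L_j,\Lambda_j]=H_j$, the operator sending a form of block-degree $k_j$ to $(1-k_j)$ times itself. Summing over $j$ yields $\sum_j(1-k_j)=n-\sum_j k_j=n-k$ on $\wedge^k X$, which is exactly $H$.

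The main obstacle I anticipate is precisely the relation $[L,\Lambda]=H$: the reduction to blocks must be done with care because of the Koszul signs in the graded tensor product (to be sure the cross-terms $[L_j,\Lambda_k]$, $j\neq k$, genuinely vanish), and the single-block computation requires pinning down the sign convention for the interior product $\iota_\Pi$ with the Poisson bivector $\Pi=\omega^{-1}$. Once these three identities hold, they are exactly the defining relations of $\mathfrak{sl}(2;\R)$ (with $H$, $L$, $\Lambda$ in the roles of the neutral, raising, and lowering elements), so the assignment $\mathfrak{sl}(2;\R)\to\End^\bullet\left(\wedge^\bullet X\right)$ is a Lie-algebra homomorphism, completing the proof.
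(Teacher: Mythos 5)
Your argument is correct, and there is nothing in the paper to compare it against: the statement appears in the preliminary chapter, which the author explicitly declares to contain no original material, and it is quoted from Yan's \cite[Corollary 1.6]{yan} without proof. Your route is in fact the classical one — the same fibrewise tensor-factorization used for the analogous Lefschetz identity in the K\"ahler setting (Weil, Huybrechts), which is essentially what Yan's computation amounts to. Two small remarks. First, you invoke Darboux's theorem, but for a pointwise identity between tensorial operators you only need the \emph{linear} statement that $\left(T_x^*X,\omega_x\right)$ admits a symplectic basis; Darboux is sufficient but stronger than necessary. Second, your sign bookkeeping does close up under the paper's conventions: with $\Lambda=-\star_\omega L\,\star_\omega$ one checks on a single block that $\star_\omega 1=e^{2j-1}\wedge e^{2j}$, $\star_\omega\left(e^{2j-1}\wedge e^{2j}\right)=1$, and $\star_\omega$ is the identity on block-degree-one elements, whence $\Lambda_j\left(e^{2j-1}\wedge e^{2j}\right)=-1$ and $[L_j,\Lambda_j]$ has eigenvalues $1,0,-1$ on block-degrees $0,1,2$, exactly your $(1-k_j)$; and your cross-term argument is sound precisely because $e^{2j-1}\wedge e^{2j}$ has even degree, so $L_j$ commutes with each interior product from a different block. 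The one cosmetic slip is in your closing sentence: with $H=\sum_k(n-k)\,\pi_{\wedge^kX}$ the relation $[L,H]=2L$ means $[H,L]=-2L$, so relative to this $H$ the operator $L$ is the \emph{lowering} element and $\Lambda$ the raising one (a standard $\mathfrak{sl}(2;\R)$-triple is, e.g., $\left(\Lambda,\,-L,\,H\right)$); this relabeling does not affect the conclusion, since the three bracket relations already force the span $\left\langle L,\,\Lambda,\,H\right\rangle$ to be closed and isomorphic to $\mathfrak{sl}(2;\R)$.
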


(See, e.g., \cite[\S7]{humphreys} for general results concerning $\mathfrak{sl}(2;\R)$-representations.)

The above $\mathfrak{sl}(2;\R)$-representation, having finite $H$-spectrum, induces a decomposition of the space of the differential forms.

\begin{thm}[{\cite[Corollary 2.6]{yan}}]
Let $X$ be a manifold endowed with a symplectic structure. Then one has the \emph{Lefschetz decomposition} on differential forms,
$$ \wedge^\bullet X \;=\; \bigoplus_{r\in\N} L^r \, \Prim^{\bullet-2r}X \;, $$
where
$$ \Prim^\bullet X \;:=\; \ker\Lambda $$
is the space of \emph{primitive forms}.
\end{thm}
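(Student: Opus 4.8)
The plan is to derive the decomposition from the $\mathfrak{sl}(2;\R)$-representation of the previous theorem by reducing to fibrewise linear algebra and then globalizing. First I would fix a point $x\in X$ and view the finite-dimensional real vector space $\wedge^\bullet(T^*_xX):=\bigoplus_k\wedge^k(T^*_xX)$ as a representation of $\mathfrak{sl}(2;\R)$ via the endomorphisms $L$, $\Lambda$, $H$, whose brackets $[L,H]=2L$, $[\Lambda,H]=-2\Lambda$, $[L,\Lambda]=H$ were just recorded. Since $\mathfrak{sl}(2;\R)$ is semisimple, complete reducibility (see, e.g., \cite[\S7]{humphreys}) lets me write $\wedge^\bullet(T^*_xX)$ as a direct sum of irreducible submodules.

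Next I would invoke the classification of the finite-dimensional irreducible $\mathfrak{sl}(2;\R)$-modules: each is determined by a nonnegative integral highest weight $m$, its highest-weight line is annihilated by the raising operator, and it is spanned by the iterated images of that line under the lowering operator. In the normalization at hand, $H$ acts on $\wedge^k(T^*_xX)$ as the scalar $n-k$, so $\Lambda$ (lowering the degree by two) raises the $H$-weight while $L$ (raising the degree by two) lowers it; hence the highest-weight vectors are exactly the elements of $\ker\Lambda$, that is, the primitive forms, and an irreducible summand generated by a primitive vector $p$ of degree $s$ (so of $H$-weight $n-s\geq 0$) is precisely $\bigoplus_{r=0}^{n-s}\R\,L^r p$. (The sign ambiguity in identifying $L$ with the lowering operator is immaterial, since only the images $L^r\ker\Lambda$ enter.) Collecting the primitive vectors over all summands gives $\wedge^\bullet(T^*_xX)=\bigoplus_{r\in\N}L^r\left(\ker\Lambda\right)$, the sum being direct because within each irreducible the vectors $p,Lp,\dots,L^{n-s}p$ lie in distinct degrees. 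Sorting by degree, using $L^r\left(\ker\Lambda\cap\wedge^{s}(T^*_xX)\right)\subseteq\wedge^{s+2r}(T^*_xX)$ together with the vanishing of primitive forms in degrees $>n$, yields the pointwise identity $\wedge^k(T^*_xX)=\bigoplus_{r}L^r\,\Prim^{k-2r}(T^*_xX)$.

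Finally I would globalize. Because $L=\omega\wedge\sspace$, $\Lambda=-\iota_\Pi$, and $H$ are $\mathcal{C}^\infty(X;\R)$-linear bundle endomorphisms, the projectors onto the summands $L^r\,\Prim^{\bullet-2r}$ can be expressed as universal polynomials in $L$, $\Lambda$, $H$, hence depend smoothly on the point; the fibrewise decomposition therefore passes to smooth sections, giving $\wedge^\bullet X=\bigoplus_{r}L^r\,\Prim^{\bullet-2r}X$. I expect the main obstacle to be the careful bookkeeping of the two gradings: one must pin down the dictionary between the $\mathfrak{sl}(2;\R)$-weight $n-k$ and the form-degree $k$, check that primitivity $\Lambda\alpha=0$ really coincides with being a highest-weight vector, and verify that although the splitting into irreducibles is not canonical, the resulting degree-respecting subspaces $L^r\,\Prim^{\bullet-2r}$ are canonical and their sum is both direct and exhaustive.
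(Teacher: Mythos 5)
Your proof is correct, but it takes a different route from the one the paper indicates. The paper states the theorem as a citation of D. Yan and sketches his mechanism: the triple $\left\langle L,\,\Lambda,\,H\right\rangle$ makes the full, infinite-dimensional space $\wedge^\bullet X$ of global forms into a single $\mathfrak{sl}(2;\R)$-module with \emph{finite $H$-spectrum}, and Yan's structure theory for such modules — a substitute for complete reducibility, which is not available off the shelf in infinite dimensions — yields the decomposition directly at the level of global forms. You instead argue fibrewise: complete reducibility of the finite-dimensional $\mathfrak{sl}(2;\R)$-module $\wedge^\bullet\left(T^*_xX\right)$, identification of $\ker\Lambda$ with the highest-weight vectors (your sign bookkeeping is right — with the paper's relations the honest triple is $\left(\Lambda,\, H,\, -L\right)$, and, as you note, the sign on $L$ is harmless since only the subspaces $L^r\ker\Lambda$ enter), and then globalization via projectors that are universal polynomials in $L$ and $\Lambda$. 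The one step you leave as an assertion — the existence of such universal polynomial projectors — is exactly what the explicit Tseng and Yau formulas recorded in the paper immediately after the theorem supply: the primitive component $B^{(k-2r)}$ of a $k$-form $A^{(k)}$ is $\left(\sum_{\ell\in\N}a_{r,\ell,(n,k)}\,\frac{1}{\ell!}\,L^\ell\,\Lambda^{r+\ell}\right)A^{(k)}$ with rational constants $a_{r,\ell,(n,k)}$ depending only on $(n,k,r,\ell)$, so the fibrewise splitting is implemented by smooth bundle maps and does pass to smooth sections. As for what each approach buys: Yan's argument is degree-free and applies verbatim to any module of finite $H$-spectrum (which is what he needs later, e.g., for symplectically-harmonic forms), while yours is more elementary, reduces everything to classical finite-dimensional highest-weight theory, and makes transparent that the subspaces $L^r\,\Prim^{\bullet-2r}X$ are canonical even though the splitting into irreducibles is not.
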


Note (see, e.g., \cite[Proposition 1.2.30(v)]{huybrechts}) that, for every $k\in\N$,
$$ \Prim^kX \;=\; \ker L^{n-k+1}\lfloor_{\wedge^kX} \;.$$
In general, see, e.g., \cite[pages 7--8]{tseng-yau-2}, the Lefschetz decomposition of $A^{(k)}\in\wedge^kX$ reads as
$$ A^{(k)} \;=\; \sum_{r\geq \max\left\{k-n,\,0\right\}}\frac{1}{r!}\, L^r\, B^{(k-2r)} $$
where, for $r\geq\max\left\{k-n,\,0\right\}$,
$$ B^{(k-2r)} \;:=\; \left(\sum_{\ell\in\N}a_{r,\ell,(n,k)}\,\frac{1}{\ell!}\, L^\ell\, \Lambda^{r+\ell}\right)\, A^{(k)} \;\in\; \Prim^{k-2r}X $$
and, for $r\geq\max\left\{k-n,\,0\right\}$ and $\ell\in\N$,
$$ a_{r,\ell,(n,k)} \;:=\; \left(-1\right)^{\ell} \cdot \left(n-k+2r+1\right)^2 \cdot \prod_{i=0}^{r}\frac{1}{n-k+2r+1-i} \cdot \prod_{j=0}^{\ell}\frac{1}{n-k+2r+1+j} \in \Q \;. $$

\medskip

We recall that
$$ L\lfloor_{\bigoplus_{k=-1}^{n-2}\wedge^{n-k-2}X} \colon \bigoplus_{k=-1}^{n-2}\wedge^{n-k-2}X \to \wedge^{n-k}X $$
is injective, \cite[Corollary 2.8]{yan}, and that, for every $k\in\N$,
$$ L^k\colon \wedge^{n-k}X\to\wedge^{n+k}X $$
is an isomorphism, \cite[Corollary 2.7]{yan}.

\medskip

Since $\left[L,\, \de\right]=0$, for any $k\in\N$, the map $L^k\colon \wedge^{n-k}X\to\wedge^{n+k}X$ induces a map $L^k\colon H^{n-k}_{dR}(X;\R)\to H^{n+k}_{dR}(X;\R)$ in cohomology. One says that $X$ satisfies the \emph{Hard Lefschetz Condition}, shortly \emph{\textsc{hlc}}, if
\begin{equation}
\tag{HLC}
 \text{for every } k\in\N\;, \qquad L^k\colon H^{n-k}_{dR}(X;\R) \stackrel{\simeq}{\to} H^{n+k}_{dR}(X;\R) \;.
\end{equation}

\medskip

By continuing in the parallelism between Riemannian Geometry and Symplectic Geometry, one can introduce the $\de^\Lambda$ operator with respect to a symplectic structure $\omega$ as
$$ \de^\Lambda\lfloor_{\wedge^kX} \;:=\; (-1)^{k+1}\star_\omega \de \star_\omega $$
for any $k\in\N$, and interpret it as the symplectic counterpart of the Riemannian $\de^*$ operator with respect to a Riemannian metric. In light of this, J.-L. Brylinski proposed in \cite{brylinski} a Hodge theory for compact symplectic manifolds, conjecturing that, on a compact manifold endowed with a symplectic structure $\omega$, every de Rham cohomology class admits a (possibly non-unique) \emph{$\omega$-symplectically-harmonic representative}, namely, a $\de$-closed $\de^\Lambda$-closed representative, \cite[Conjecture 2.2.7]{brylinski}. (Note that $\de\de^\Lambda+\de^\Lambda\de=0$, \cite[Theorem 1.3.1]{brylinski}, \cite[page 265]{koszul}, provides a strong difference in the parallelism between Symplectic Geometry and Riemannian geometry; in particular, it follows that a $\omega$-symplectically-harmonic representative, whenever it exists, is not unique.)

For an almost-K\"ahler structure $\left(J,\,\omega,\,g\right)$ on a compact manifold $X$ (that is, $\omega\in\wedge^2X$ is a symplectic form on $X$, $J\in\End\left(TX\right)$ is an almost-complex structure on $X$, and $g$ is a $J$-Hermitian metric on $X$ such that $\omega$ is the associated $(1,1)$-form to $g$), the symplectic-$\star$-operator $\star_\omega$ and the Hodge-$*$-operator $*_g$ are related by
$$ \star_\omega \;=\; J\,*_g \;,$$
and hence
$$ \de^\Lambda \;=\; -\left(\de^c\right)^{*_g} $$
where $\de^c:=J^{-1}\,\de\,J$ and $\left(\de^c\right)^{*_g}\lfloor_{\wedge^kX}:=\left(-1\right)^{k+1}*_g\,\de\,*_g$ for every $k\in\N$ (note that, when $J$ is integrable, then $\de^c=-\im\left(\del-\delbar\right)$). Moreover, on a compact manifold $X$ endowed with a K\"ahler structure $\left(J,\, \omega,\, g\right)$, by the Hodge decomposition theorem, \cite[Théorème IV.3]{weil}, the pure-type components with respect to $J$ of the harmonic representatives of the de Rham cohomology classes are themselves harmonic. Hence, it follows that Brylinski's conjecture holds true for compact K\"ahler manifolds, \cite[Corollary 2.4.3]{brylinski}.

O. Mathieu in \cite{mathieu}, and D. Yan in \cite{yan}, provided counterexamples to Brylinski's conjecture, characterizing the compact symplectic manifolds satisfying Brylinski's conjecture in terms of the validity of the Hard Lefschetz Condition. Furthermore, S.~A. Merkulov in \cite{merkulov}, see also \cite{cavalcanti}, and V. Guillemin in \cite{guillemin}, proved that the Hard Lefschetz Condition on compact symplectic manifolds is equivalent to satisfying the $\de\de^\Lambda$-Lemma, namely, to every $\de$-exact $\de^\Lambda$-closed form being $\de\de^\Lambda$-exact. Summarizing, we recall the following result. 
\begin{thm}[{\cite[Corollary 2]{mathieu}, \cite[Theorem 0.1]{yan}, \cite[Proposition 1.4]{merkulov}, \cite{guillemin}, \cite[Theorem 5.4]{cavalcanti}}]
 Let $X$ be a compact manifold endowed with a symplectic structure $\omega$. The following conditions are equivalent:
 \begin{enumerate}[\itshape (i)]
  \item every de Rham cohomology class admits a representative being both $\de$-closed and $\de^\Lambda$-closed (i.e., Brylinski's conjecture \cite[Conjecture 2.2.7]{brylinski} holds true on $X$);
  \item $X$ satisfies the Hard Lefschetz Condition;
  \item $X$ satisfies the $\de\de^\Lambda$-Lemma.
 \end{enumerate}
\end{thm}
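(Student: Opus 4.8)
The plan is to set up the symplectic Kähler identities and then close the logical cycle $(ii)\Rightarrow(iii)\Rightarrow(i)\Rightarrow(ii)$, so that each arrow isolates a single ingredient. First I would record the commutation relations that play the role of the Kähler identities: from $\left[L,\,\de\right]=0$ together with the $\mathfrak{sl}(2;\R)$-relations recalled above one gets $\de^\Lambda=\left[\de,\,\Lambda\right]$, the dual relation $\left[L,\,\de^\Lambda\right]=\de$, and the anticommutation $\de\,\de^\Lambda=-\de^\Lambda\,\de$. Two consequences are crucial. On a primitive form $\beta$ one has $\Lambda\beta=0$, whence $\de^\Lambda\beta=-\Lambda\,\de\beta$; in particular a $\de$-closed primitive form is automatically $\de^\Lambda$-closed, i.e. symplectically-harmonic. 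By induction on $r$ using $\left[L,\,\de^\Lambda\right]=\de$, each $L^r\beta$ of such a $\beta$ is then both $\de$-closed and $\de^\Lambda$-closed, since every term in the expansion of $\de^\Lambda L^r\beta$ carries a factor $\de\beta$ or $\de^\Lambda\beta$.

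The soft step is $(iii)\Rightarrow(i)$. Given a $\de$-closed $\alpha$, I would seek $\alpha':=\alpha+\de\eta$ in the same de Rham class with $\de^\Lambda\alpha'=0$; using $\de^\Lambda\de=-\de\,\de^\Lambda$ this reduces to solving $\de\,\de^\Lambda\eta=\de^\Lambda\alpha$. The form $\de^\Lambda\alpha$ is $\de^\Lambda$-exact and, because $\de\,\de^\Lambda\alpha=-\de^\Lambda\de\alpha=0$, also $\de$-closed; the symmetric version of the $\de\de^\Lambda$-Lemma (which follows from the stated one by conjugating with $\star_\omega$, since $\star_\omega^2=\id$ intertwines $\de$ and $\de^\Lambda$ up to sign) then produces $\eta$, and $\alpha'$ is the desired symplectically-harmonic representative.

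The algebraic core is $(ii)\Rightarrow(iii)$, which I expect to be the main obstacle. Under \hlc\ the maps $L^k\colon H^{n-k}_{dR}(X;\R)\to H^{n+k}_{dR}(X;\R)$ are isomorphisms, so the Lefschetz decomposition descends to cohomology and every class is $\sum_r L^r\left[\beta_r\right]$ with $\left[\beta_r\right]$ primitive. The plan is to exploit this compatibility, together with the reduction $\de^\Lambda=-\Lambda\,\de$ on $\Prim^\bullet X$, to show that a $\de$-exact $\de^\Lambda$-closed form is $\de\,\de^\Lambda$-exact: I would filter such a form by its Lefschetz components and argue degree by degree on primitive pieces, at each stage invoking the surjectivity of the appropriate $L^k$ provided by \hlc\ to construct the potential. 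This is precisely where the $\mathfrak{sl}(2;\R)$-structure of the cohomology is indispensable, and the delicate point is the bookkeeping of the primitive components so that the potentials assemble coherently; this is the substance of the Merkulov--Guillemin--Cavalcanti argument.

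Finally, $(i)\Rightarrow(ii)$ closes the cycle and is Yan's harder analytic direction. Assuming every class has a symplectically-harmonic representative, I would prove \hlc\ by establishing surjectivity of $L^{k}$ in the relevant degree: starting from a class in $H^{n+k}_{dR}(X;\R)$, take a symplectically-harmonic representative, decompose it à la Lefschetz, and use Weil's identity expressing $\star_\omega$ on a primitive $s$-form as a multiple of $L^{n-s}$ together with the fact that $\star_\omega$ preserves symplectic-harmonicity, so as to extract a $\de$-closed primitive form whose $L^{k}$-image represents the given class. Injectivity of $L^{k}$ then follows from Poincaré duality and the nondegeneracy of the induced pairing, and the two together yield the isomorphism, completing the equivalence.
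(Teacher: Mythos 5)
The paper does not actually prove this theorem: it is recalled verbatim from the cited literature (Mathieu, Yan, Merkulov, Guillemin, Cavalcanti), and the only supporting in-text material is the commutation table $\left[\de,\,L\right]=0$, $\left[\de,\,\Lambda\right]=\de^\Lambda$, $\left[\de^\Lambda,\,L\right]=-\de$, $\left[\de^\Lambda,\,\Lambda\right]=0$, together with the Remark that $\ker\de^\Lambda\cap\imm\de=\imm\de\de^\Lambda$ if and only if $\ker\de\cap\imm\de^\Lambda=\imm\de\de^\Lambda$, via $\star_\omega$-conjugation. Measured against the standard proofs, your architecture is the right one and two of your three arrows are essentially complete. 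Your signs all check against the table; your \emph{(iii)}$\Rightarrow$\emph{(i)} is exactly the paper's Remark plus solving $\de\de^\Lambda\eta=\de^\Lambda\alpha$ for $\de^\Lambda\alpha\in\ker\de\cap\imm\de^\Lambda$; and your \emph{(i)}$\Rightarrow$\emph{(ii)} is Yan's argument, correctly assembled — though it can be streamlined: the commutators you already wrote show directly that $\ker\de\cap\ker\de^\Lambda$ is an $\mathfrak{sl}(2;\R)$-submodule of $\wedge^\bullet X$, so the primitive components of a symplectically-harmonic $(n+k)$-form are themselves harmonic, and since $L^r\,\Prim^{n+k-2r}X\neq\{0\}$ forces $r\geq k$, the form factors as $L^k$ of a $\de$-closed $(n-k)$-form; surjectivity of $L^k$ plus $b_{n-k}=b_{n+k}$ then gives the isomorphism, with no need for the Weil identity or for a separate injectivity argument via the pairing.

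The genuine gap is \emph{(ii)}$\Rightarrow$\emph{(iii)}, and it is not merely bookkeeping. Your reduction is fine as far as it goes: given $\alpha\in\imm\de\cap\ker\de^\Lambda$, the $\mathfrak{sl}(2;\R)$-stability of harmonic forms gives $\alpha=\sum_r L^r\beta_r$ with each $\beta_r$ harmonic and primitive, and \hlc\ (injectivity of the relevant $L^r$ on primitive classes) yields $\left[\beta_r\right]=0$, i.e.\ $\beta_r=\de\sigma_r$; moreover $\left[L,\,\de\de^\Lambda\right]=0$, so it suffices to show each $\beta_r\in\imm\de\de^\Lambda$. But at this point you invoke ``the surjectivity of the appropriate $L^k$ provided by \hlc\ to construct the potential'', and surjectivity of $L^k$ in \emph{cohomology} operates on classes, not on forms: it cannot by itself manufacture an $\eta_r$ with $\beta_r=\de\de^\Lambda\eta_r$. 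That an exact, $\de^\Lambda$-closed \emph{primitive} form is $\de\de^\Lambda$-exact is precisely the non-formal content of the Merkulov--Guillemin--Cavalcanti proofs, established by a descending induction on the Lefschetz filtration in which $\de$-potentials are repeatedly traded for $\de\de^\Lambda$-potentials using $\left[\de^\Lambda,\,L\right]=-\de$ and \hlc. As written, your paragraph names this destination without supplying the induction; you should either carry it out or explicitly quote \cite[Theorem 5.4]{cavalcanti} (or \cite[Proposition 1.4]{merkulov}, \cite{guillemin}) for this single arrow, which is in effect what the paper itself does by stating the theorem with citations only.
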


Note that, by the Lefschetz decomposition theorem, \cite[Théorème IV.5]{weil} (see \S\ref{sec:hodge-kahler}), compact K\"ahler manifolds satisfy the Hard Lefschetz Condition.

\medskip

\begin{rem}
The Complex Generalized Geometry, introduced by N.~J. Hitchin in \cite{hitchin} and developed, among others, by M. Gualtieri, \cite{gualtieri-phd, gualtieri}, and G.~R. Cavalcanti, \cite{cavalcanti}, see also \cite{hitchin-introduction, cavalcanti-impa}, allows to frame symplectic structures and complex structures in the same context (in a sense, this add more significance to the term ``symplectic'', which was invented by H. Weyl, \cite[\S VI]{weyl}, substituting the Greek root in the term ``complex'' with the corresponding Latin root).
In such a framework, the $\de^\Lambda$ operator associated to a symplectic structure should be interpreted as the symplectic counterpart of the operator $\de^c:=-\im\,\left(\del-\delbar\right)$ associated to a complex structure, \cite{cavalcanti}.
\end{rem}

\section{K\"ahler structures and cohomological decomposition}\label{sec:hodge-kahler}
Note that, given a manifold $X$ endowed with a symplectic form $\omega$, there is always a (possibly non-integrable) almost-complex structure $J$ on $X$ such that $g:=\omega(\sspace, \, J\ssspace)$ is a Hermitian metric on $X$ with $\omega$ as the associated $(1,1)$-form, see, e.g., \cite[Corollary 12.7]{cannasdasilva} (in fact, the set of such almost-complex structures is contractible, see, e.g., \cite[Corollary II.1.1.7]{audin-lafontaine}, \cite[Proposition 13.1]{cannasdasilva}; see also \cite[Corollary 2.3.C$_2'$]{gromov}, which proves that the space of almost-complex structures on $X$ tamed by a given $2$-form on $X$ is contractible). Instead, the datum of an integrable almost-complex structure with the above property yields a K\"ahler structure on $X$. The notion of K\"ahler manifold has been studied for the first time by J.~A. Schouten and D. van Dantzig \cite{schouten-vandantzig}, see also \cite{schouten}, and by E. K\"ahler \cite{kahler}, and the terminology has been fixed by A. Weil \cite{weil}.

K\"ahler structures can be defined in different ways, according to the point of view which is stressed, \S\ref{subsec:kahler-def}. The presence of three different structures (complex, symplectic, and Riemannian) allows to make use of the tools available for any of them; in addition, the relations between such structures make available further tools, which yield many interesting results on Hodge theory, \S\ref{subsec:kahler-hodge}. Finally, we will study a cohomological property of compact K\"ahler manifolds, namely, the $\del\delbar$-Lemma, \S\ref{subsec:kahler-deldelbar}: other than being a very useful tool in K\"ahler Geometry (compare, e.g., its role in S.-T. Yau's proof \cite{yau-proc, yau} of E. Calabi's conjecture \cite{calabi}), it provides obstructions to the existence of K\"ahler structures on differentiable manifolds, by means of the notion of formality introduced by D.~P. Sullivan, \cite[\S12]{sullivan-ihes}.

\subsection{K\"ahler metrics}\label{subsec:kahler-def}

Let $X$ be a compact complex manifold of complex dimension $n$, and denote by $J$ its natural integrable almost-complex structure.

A \emph{K\"ahler metric} on $X$ is a Hermitian metric $g$ such that the associated $(1,1)$-form $\omega:=g(J\sspace,\, \ssspace)$ is $\de$-closed (that is, $\omega$ is a symplectic form on $X$).

\begin{rem}
 Let $X$ be a complex manifold endowed with a K\"ahler metric $g$, and denote the associated $(1,1)$-form to $g$ by $\omega$. By the Poincaré lemma, see, e.g., \cite[I.1.22, Theorem I.2.24]{demailly-agbook}, and the Dolbeault and Grothendieck lemma, see, e.g., \cite[I.3.29]{demailly-agbook}, the property that $\de\omega=0$ is equivalent to ask that, for every $x\in X$, there exist an open neighbourhood $U$ in $X$ with $x\in U$ and a smooth function $u\in\mathcal{C}^\infty(U;\R)$ such that $\omega \stackrel{\text{loc}}{=} \im \del\delbar u$ in $U$, that is, the metric has a local potential, \cite{kahler} (see, e.g., \cite[Proposition 8.8]{moroianu}).
\end{rem}

\begin{rem}
For every $n\in\N$, the complex projective space $\CP^n$ admits a K\"ahler metric, the so-called \emph{Fubini and Study metric}, \cite{fubini, study}, which is induced by the fibration $\mathbb{S}^1 \hookrightarrow \mathbb{S}^{2n+1} \to \CP^n$; more precisely, by using the homogeneous coordinates $\left[z_0 \,:\, \cdots \,:\, z_n\right]$, one has that the associated $(1,1)$-form $\omega_{\text{FS}}$ to the Fubini and Study metric is
$$ \omega_{\text{FS}} \;=\; \frac{\im}{2\pi} \, \del\delbar \log\left(\sum_{\ell=0}^{n} \left|z^\ell\right|^2\right) \;.$$
It follows that complex projective manifolds provide examples of K\"ahler manifolds. Conversely, by the Kodaira embedding theorem \cite[Theorem 4]{kodaira-embedding}, if $X$ is a compact complex manifold endowed with a K\"ahler metric $\omega$ such that $\left[\omega\right]\in H^2_{dR}(X;\R) \cap \imm\left(H^2(X;\Z)\to H^2_{dR}(X;\R)\right)$, then there exists a complex-analytic embedding of $X$ into a complex projective space $\CP^N$ for some $N\in\N$. In a sense, this suggest that projective manifolds are to K\"ahler manifolds as $\Q$ is to $\R$. Hence, it is natural to ask if every compact K\"ahler manifold is a deformation of a projective manifold (which is known as the \emph{Kodaira problem}). Since Riemann surfaces are projective, this is trivially true in complex dimension $1$. Furthermore, K. Kodaira proved in \cite[Theorem 16.1]{kodaira-surfaces-3} that every compact K\"ahler surface is a deformation of an algebraic  surface, as conjectured by W. Hodge; another proof, which does not make use of the 
classification of elliptic surfaces, has been given by N. Buchdahl, \cite[Theorem]{buchdahl-deformations}. In higher dimension, a negative answer to the Kodaira problem has been given by C. Voisin, who constructed examples of compact K\"ahler manifolds, of any complex dimension greater than or equal to $4$, which do not have the homotopy type of a complex projective manifold, \cite[Theorem 2]{voisin-invent} (indeed, recall that, by Ehresmann's theorem, if two compact complex manifolds can be obtained by deformation, then they are homeomorphic, and hence they have the same homotopy type). The examples in \cite{voisin-invent} being, by construction, bimeromorphic to manifolds that can be deformed to projective manifolds, one could ask (as done by N. Buchdahl, F. Campana, S.-T. Yau) whether, in higher dimension, a birational version of the Kodaira problem may hold true; in \cite[Theorem 3]{voisin-jdg}, C. Voisin provided a negative answer to the birational version of the Kodaira problem, proving that, in any 
even complex 
dimension greater that or equal to $10$, there exist compact K\"ahler manifolds $X$ such that, for any compact K\"ahler manifold $X'$ bimeromorphic to $X$, $X'$ does not have the homotopy type of a projective complex manifold.
\end{rem}

\medskip

In the definition of a K\"ahler manifold, three different structures are involved: a complex structure, a symplectic structure, and a metric structure. Therefore, changing the point of view allows to give several equivalent definitions of K\"ahler structure (see, e.g., \cite[Theorem 4.17]{ballmann}): we review here two of these characterizations.

Firstly, it is straightforward to prove that a Hermitian metric $g$ on a compact complex manifold $X$ is a K\"ahler metric if and only if, for every point $x\in X$, there exists a holomorphic coordinate chart $\left(U,\,\left\{z^j\right\}_{j\in\{1,\ldots, n\}}\right)$, with $x\in U$, such that
$$ g \;=\; \sum_{\alpha,\beta=1}^{n} \left(\delta_{\alpha\beta}+\opiccolo{z}\right) \,\de z^\alpha\odot\de \bar z^\beta \qquad \text{ at } x \;,$$
that is, $g$ \emph{osculates to order $2$} the standard Hermitian metric of $\C^n$ (see, e.g., \cite[pages 107--108]{griffiths-harris}, \cite[Proposition 1.3.12]{huybrechts}, \cite[Theorem 11.6]{moroianu}).

As regards the second characterization, we recall that, on a compact complex manifold $X$ endowed with a Hermitian metric $g$, there is a unique connection $\nabla^{C}$ such that
\begin{enumerate}[\itshape (i)]
 \item $\nabla^Cg=0$,
 \item $\nabla^CJ=0$, and
 \item $\pi_{\wedge^{0,1}X}\nabla^C\lfloor_{\mathcal{C}^\infty\left(X;\C\right)}=\delbar\lfloor_{\mathcal{C}^\infty\left(X;\C\right)}$;
\end{enumerate}
such a connection is called the \emph{Chern connection} of $X$ (see, e.g., \cite[Proposition 4.2.14]{huybrechts}, \cite[Theorem 3.18]{ballmann}, \cite[Theorem 10.3]{moroianu}).
Let $g$ be a Hermitian metric on a compact complex manifold $X$, and set $\omega:=g(J\sspace,\, \ssspace)$ its associated $(1,1)$-form, where $J$ is the natural integrable almost-complex structure on $X$; consider the Levi Civita connection $\nabla^{LC}$. One can prove that, for every $x,y,z\in\mathcal{C}^\infty\left(X;TX\right)$,
$$ \de\omega(x,y,z) \;=\; g\left(\left(\nabla^{LC}_xJ\right)y,\, z\right) + g\left(\left(\nabla^{LC}_yJ\right)z,\, x\right) + g\left(\left(\nabla^{LC}_zJ\right)x,\, y\right) \;, $$
and
$$ 2\, g\left(\left(\nabla^{LC}_xJ\right)y,\, z\right) \;=\; \de\omega\left(x,\, y,\, z\right) - \de\omega\left(x, Jy,\, Jz\right) - g\left(\Nij_J\left(y,\,Jz\right),\, x\right) \;;$$
(see, e.g., \cite[Theorem 4.16]{ballmann}, \cite[Proposition 1.5]{tian}); in particular, it follows that $g$ is a K\"ahler metric if and only if $\nabla^{LC}J=0$ if and only if the Chern connection is the Levi Civita connection (see, e.g., \cite[Theorem 4.17]{ballmann}, \cite[Proposition 11.8]{moroianu}).

\subsection{Hodge theory for K\"ahler manifolds}\label{subsec:kahler-hodge}

The complex, symplectic, and metric structures being related on a K\"ahler manifold, one gets the following identities concerning the corresponding operators (see, e.g., \cite[Proposition 3.1.12]{huybrechts}); see also \cite{hodge-plms, hodge}. (In \cite[Theorem 1.1, Theorem 2.12]{demailly-lnm}, commutation relations on arbitrary Hermitian manifolds are provided; see also \cite{griffiths-ajm}, \cite[\S VI.6.2]{demailly-agbook}.)

\begin{thm}[{K\"ahler identities, \cite[Théorème II.1, Théorème II.2, Corollaire II.1]{weil}}]
 Let $X$ be a compact K\"ahler manifold. Consider the differential operators $\del$ and $\delbar$ associated to the complex structure, the symplectic operators $L$ and $\Lambda$ associated to the symplectic structure, and the Hodge-$*$-operator associated to the Hermitian metric. Then, these operators are related as follows:
 \begin{enumerate}[\itshape (i)]
  \item $\left[\delbar,\, L\right]=\left[\del,\, L\right]=0$ and $\left[\Lambda,\, \delbar^*\right]=\left[\Lambda,\, \del^*\right]=0$;
  \item $\left[\delbar^*,\, L\right]=\im\,\del$ and $\left[\del^*,\, L\right]=-\im\,\delbar$, and $\left[\Lambda,\, \delbar\right]=-\im\,\del^*$ and $\left[\Lambda,\, \del\right]=\im\,\delbar^*$.
 \end{enumerate}
Therefore, considering the \kth{2} order self-adjoint elliptic differential operators $\square:=\left[\del,\, \del^*\right]$, $\overline\square:=\left[\delbar,\, \delbar^*\right]$, and $\Delta:=\left[\de,\, \de^*\right]$, one gets that
 \begin{enumerate}[\itshape (i)] \setcounter{enumi}{2}
  \item $\square=\overline\square=\frac{1}{2}\Delta$, and $\Delta$ commutes with $*$, $\del$, $\delbar$, $\del^*$, $\delbar^*$, $L$, $\Lambda$.
 \end{enumerate}
\end{thm}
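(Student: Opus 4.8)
The plan is to reduce all the identities to a single commutation relation, and to prove that relation pointwise by exploiting the fact, recorded in the preliminary remark above, that a K\"ahler metric osculates the standard Hermitian metric of $\C^n$ to order $2$ at every point.

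First I would dispose of part \textit{(i)}. Since $\omega$ is $\de$-closed we have $\del\omega=\delbar\omega=0$, so for every form $\alpha$ the Leibniz rule gives $[\delbar,\,L]\alpha=\delbar(\omega\wedge\alpha)-\omega\wedge\delbar\alpha=(\delbar\omega)\wedge\alpha=0$, and likewise $[\del,\,L]=0$; taking Hermitian adjoints (recall $\Lambda=L^*$) then yields $[\Lambda,\,\delbar^*]=[\Lambda,\,\del^*]=0$. Next, I claim it suffices to establish the single relation
\[ [\Lambda,\,\delbar] \;=\; -\im\,\del^* \;. \]
Indeed, the remaining identities in \textit{(ii)} follow formally: taking the Hermitian adjoint converts it into $[\delbar^*,\,L]=\im\,\del$; applying complex conjugation, under which $\Lambda$ is real, $\overline{\delbar}=\del$ and $\overline{-\im\,\del^*}=\im\,\delbar^*$, converts it into $[\Lambda,\,\del]=\im\,\delbar^*$; and the adjoint of this last relation gives $[\del^*,\,L]=-\im\,\delbar$.

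To prove the core relation I would argue at an arbitrary point $x\in X$, choosing holomorphic coordinates centered at $x$ in which $g_{\alpha\bar\beta}(x)=\delta_{\alpha\beta}$ and all first derivatives of $g_{\alpha\bar\beta}$ vanish at $x$ (osculation to order $2$). Expressing $L$, $\Lambda$, $\del$, $\delbar$, $\del^*$, $\delbar^*$ through the exterior multiplications by $\de z^\alpha$ and $\de\bar z^\alpha$, their adjoint contractions, and the coordinate derivatives, one observes that $L$ and $\Lambda$ are zeroth-order operators whose coefficients involve only $g(x)$, while the metric enters both $[\Lambda,\,\delbar]$ and $\del^*$ only through $g$ and its \emph{first} derivatives at $x$. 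Because those first derivatives vanish, each side of the identity, evaluated at $x$, coincides with the corresponding constant-coefficient operator on flat $\C^n$. On flat $\C^n$ the identity becomes a purely algebraic statement in the Clifford-type algebra generated by the exterior multiplications and contractions, which I would verify by a direct computation using their canonical anticommutation relations. Since $x$ is arbitrary, the identity holds on all of $X$. The main obstacle is exactly this localization bookkeeping: one must confirm that no second-order derivative of the metric survives in $[\Lambda,\,\delbar]$ or in $\del^*$ at $x$, so that osculation to order $2$ is precisely the hypothesis needed, and one must keep the constants $\pm\im$ straight through the adjoint and conjugation steps.

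Finally, part \textit{(iii)} is a formal consequence of \textit{(i)} and \textit{(ii)}. Writing $\de=\del+\delbar$ and $\de^*=\del^*+\delbar^*$ and expanding gives $\Delta=\square+\overline\square+[\del,\,\delbar^*]+[\delbar,\,\del^*]$. Substituting $\delbar^*=-\im\,[\Lambda,\,\del]$ and $\del^*=\im\,[\Lambda,\,\delbar]$ from \textit{(ii)}, and using $\del^2=\delbar^2=0$ together with the graded Jacobi identity, shows that the two cross-terms vanish and that $\square=\overline\square$, whence $\Delta=2\,\square=2\,\overline\square$. The commutation of $\Delta$ with $*$, $\del$, $\delbar$, $\del^*$, $\delbar^*$, $L$, $\Lambda$ then follows from \textit{(i)}, \textit{(ii)}, and the expression of $\Delta$ as twice the complex Laplacian.
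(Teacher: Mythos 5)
Your proof is correct, and it takes exactly the second of the two routes that the paper itself indicates immediately after the statement: reducing, via the order-$2$ osculation of the K\"ahler metric to the standard Hermitian metric of $\C^n$, to the constant-coefficient Akizuki--Nakano identities on flat space, with \textit{(i)} and \textit{(iii)} obtained by the standard formal manipulations. The adjoint/conjugation bookkeeping by which you reduce all of \textit{(ii)} to the single relation $\left[\Lambda,\,\delbar\right]=-\im\,\del^*$, as well as the first-order-coefficient argument justifying the pointwise localization, checks out.
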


The previous identities can be proven either using the $\mathfrak{sl}\left(2;\C\right)$ representation $\left\langle L,\, \Lambda,\, H \right\rangle \to \End^\bullet\left(\wedge^\bullet X\otimes \C\right)$, or reducing to prove the corresponding identities on $\C^n$ with the standard K\"ahler structure (which are known as Y. Akizuki and S. Nakano's identities, \cite[\S3]{akizuki-nakano}) and hence using that every K\"ahler metric osculates to order $2$ the standard Hermitian metric on $\C^n$.

As a consequence, one gets the following theorems, stating a decomposition of the de Rham cohomology of a K\"ahler manifold related to the complex, respectively symplectic, structure (see, e.g., \cite[Corollary 3.2.12]{huybrechts}, respectively \cite[Proposition 3.2.13]{huybrechts}).

\begin{thm}[{Hodge decomposition theorem, \cite[Théorème IV.3]{weil}}]
 Let $X$ be a compact complex manifold endowed with a K\"ahler structure. Then there exist a decomposition
 $$ H^\bullet_{dR}(X;\C) \;\simeq\; \bigoplus_{p+q=\bullet} H^{p,q}_\delbar(X) \;,$$
 and, for every $p,q\in\N$, an isomorphism
 $$ H^{p,q}_\delbar(X) \;\simeq\; \overline{H^{q,p}_\delbar(X)} \;.$$
\end{thm}

\begin{thm}[{Lefschetz decomposition theorem, \cite[Théorème IV.5]{weil}}]
 Let $X$ be a compact complex manifold, of complex dimension $n$, endowed with a K\"ahler structure. Then there exist a decomposition
 $$ H^{\bullet}_{dR}(X;\C) \;=\; \bigoplus_{r\in\N} L^r \left(\ker\left(\Lambda\colon H^{\bullet-2r}_{dR}(X;\C)\to H^{\bullet-2r-2}_{dR}(X;\C)\right)\right)\;, $$
 and, for every $k\in\N$, an isomorphism
 $$ L^k\colon H^{n-k}_{dR}(X;\C) \stackrel{\simeq}{\to} H^{n+k}_{dR}(X;\C) \;.$$
\end{thm}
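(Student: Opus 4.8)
The plan is to transport the purely algebraic Lefschetz decomposition on forms---that is, the $\mathfrak{sl}(2;\R)$-representation $\langle L,\Lambda,H\rangle$ of the previous subsection---down to the level of cohomology, using Hodge theory together with the K\"ahler identities to ensure that this algebra is compatible with passing to harmonic representatives.

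First I would invoke the Hodge theorem for the de Rham complex to identify $H^\bullet_{dR}(X;\C)$ with the space $\ker\Delta$ of $\Delta$-harmonic complex forms. The decisive input is then part \textit{(iii)} of the K\"ahler identities: on a compact K\"ahler manifold $\Delta$ commutes with $L$ and $\Lambda$ (and, trivially by degree-preservation, with $H$). Hence these three operators preserve $\ker\Delta$, so the $\mathfrak{sl}(2;\R)$-representation on $\wedge^\bullet X$ restricts to an $\mathfrak{sl}(2;\R)$-representation on the finite-dimensional space $\ker\Delta$, and therefore on $H^\bullet_{dR}(X;\C)$. This is precisely the point where the K\"ahler hypothesis is genuinely used: without the commutation relation, $\Lambda$ would not even descend to de Rham classes, since $\Lambda$ does not commute with $\de$ in general.

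Next I would restrict the Lefschetz decomposition on differential forms, \cite[Corollary 2.6]{yan}, to harmonic forms. Given a harmonic form $\alpha\in\ker\Delta$, its primitive components are recovered from $\alpha$ by applying the universal polynomials in $L$ and $\Lambda$ recalled earlier (with coefficients $a_{r,\ell,(n,k)}$); since these polynomials commute with $\Delta$, each primitive component of $\alpha$ is again harmonic. Thus the primitive decomposition respects harmonicity, and transporting it through the Hodge isomorphism yields the asserted splitting
$$ H^\bullet_{dR}(X;\C) \;=\; \bigoplus_{r\in\N} L^r\left(\ker\left(\Lambda\colon H^{\bullet-2r}_{dR}(X;\C)\to H^{\bullet-2r-2}_{dR}(X;\C)\right)\right) \;. $$

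Finally, for the Hard Lefschetz isomorphism $L^k\colon H^{n-k}_{dR}(X;\C)\to H^{n+k}_{dR}(X;\C)$, I would use that $L$ commutes with $\Delta$, so $L^k$ carries $\ker\Delta\cap\wedge^{n-k}X$ into $\ker\Delta\cap\wedge^{n+k}X$; its bijectivity is then inherited from the isomorphism $L^k\colon\wedge^{n-k}X\to\wedge^{n+k}X$ on forms, \cite[Corollary 2.7]{yan}, restricted to harmonic representatives, or read off directly from $\mathfrak{sl}(2;\R)$-representation theory, in which $L^k$ is an isomorphism between the weight-$k$ and weight-$(-k)$ eigenspaces of $H$. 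The main obstacle is therefore not a single hard computation but the careful verification that the algebraic decomposition descends to cohomology---equivalently, that harmonicity is preserved by the primitive projections---which is exactly what the commutation relations of the K\"ahler identities furnish.
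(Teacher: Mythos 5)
Your proof is correct and follows essentially the same route as the paper, which presents the theorem precisely as a consequence of the K\"ahler identities (part \textit{(iii)}, that $\Delta$ commutes with $L$ and $\Lambda$), so that the $\mathfrak{sl}(2;\R)$-representation and the primitive decomposition on forms restrict to $\ker\Delta$ and descend to $H^\bullet_{dR}(X;\C)$ via the Hodge isomorphism (cf.\ \cite[Proposition 3.2.13]{huybrechts}). Your observations that the primitive projections are universal polynomials in $L$ and $\Lambda$, hence preserve harmonicity, and that the Hard Lefschetz isomorphism is inherited from $L^k\colon\wedge^{n-k}X\stackrel{\simeq}{\to}\wedge^{n+k}X$ restricted to harmonic forms, are exactly the standard verifications this argument requires.
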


\subsection{$\partial\overline{\partial}$-Lemma and formality for compact K\"ahler manifolds}\label{subsec:kahler-deldelbar}

The Hodge decomposition theorem and the Lefschetz decomposition theorem provide obstructions to the existence of K\"ahler structures on a compact complex manifold. In this section, we study another property of compact K\"ahler manifolds, namely, formality, which provides an obstruction to the existence of a K\"ahler structure on a compact (differentiable) manifold. Such a property turns out to be a consequence of the validity of the $\del\delbar$-Lemma on compact complex manifolds.

Firstly, we need to recall some general notions regarding homotopy theory of differential algebras; we will then summarize some results concerning the homotopy type of K\"ahler manifolds: by the classical result by P. Deligne, Ph.~A. Griffiths, J. Morgan, and D.~P. Sullivan, \cite[Main Theorem]{deligne-griffiths-morgan-sullivan}, the real homotopy type of a K\"ahler manifold $X$ is a formal consequence of its cohomology ring $H^\bullet_{dR}(X;\R)$.

\medskip

We recall that a \emph{differential graded algebra} (shortly, \emph{dga}) over a field $\mathbb{K}$ is a graded $\mathbb{K}$-algebra $A^\bullet$ (where the structure of $\mathbb{K}$-algebra is induced by an inclusion $\mathbb{K}\subseteq A^0$) being graded-commutative (that is, for every $x\in A^{\deg x}$ and $y\in A^{\deg y}$, it holds $x\cdot y = \left(-1\right)^{\deg x\cdot \deg y}\, y\cdot x$) and endowed with a differential $\de\colon A^\bullet\to A^{\bullet+1}$ satisfying the graded-Leibniz rule (that is, for every $x\in A^{\deg x}$ and $y\in A^{\deg y}$, it holds $\de\left(x\cdot y\right) = \de x \cdot y + \left(-1\right)^{\deg x} \, x \cdot \de y$). A \emph{morphism of differential graded algebras} $F\colon \left(A^\bullet, \, \de_{A^\bullet}\right) \to \left(B^\bullet, \, \de_{B^\bullet}\right)$ is a morphism $A^\bullet\to B^\bullet$ of $\mathbb{K}$-algebras such that $F\circ \de_{A^\bullet}=\de_{B^\bullet} \circ F$.

Given a dga $\left(A^\bullet,\, \de\right)$ over $\mathbb{K}$, the cohomology $H^\bullet\left(A^\bullet,\, \de\right):=\frac{\ker\de}{\imm\de}$ endowed with the zero differential has a natural structure of dga over $\mathbb{K}$; furthermore, every morphism $F\colon \left(A^\bullet, \, \de_{A^\bullet}\right) \to \left(B^\bullet, \, \de_{B^\bullet}\right)$ of dgas induces a morphism $F^*\colon \left(H^\bullet\left(A^\bullet, \, \de_{A^\bullet}\right), \, 0\right) \to \left(H^\bullet\left(B^\bullet, \, \de_{B^\bullet}\right),\, 0\right)$ of dgas in cohomology; a morphism $F\colon \left(A^\bullet, \, \de_{A^\bullet}\right) \to \left(B^\bullet, \, \de_{B^\bullet}\right)$ of dgas is called a \emph{quasi-isomorphism} (shortly, \emph{qis}) if the corresponding morphism $F^*\colon \left(H^\bullet\left(A^\bullet, \, \de_{A^\bullet}\right), \, 0\right) \to \left(H^\bullet\left(B^\bullet, \, \de_{B^\bullet}\right),\, 0\right)$ is an isomorphism.

The de Rham complex $\left(\wedge^\bullet X,\, \de\right)$ of a compact (differentiable) manifold $X$ has a structure of dga over $\R$, whose cohomology is the dga $\left(H^\bullet_{dR}(X;\R),\, 0\right)$.

\medskip

Given a dga $\left(A^\bullet, \, \de_{A^\bullet}\right)$ over $\mathbb{K}$, the differential $\de_{A^\bullet}$ is called \emph{decomposable} if
$$ \de_{A^\bullet}\left(A^{\bullet}\right)\subseteq \left(\bigoplus_{k\in\N\setminus\{0\}}A^k\right) \cdot \left(\bigoplus_{k\in\N\setminus\{0\}}A^k\right) \;.$$
Given a dga $\left(A^\bullet, \, \de_{A^\bullet}\right)$ over $\mathbb{K}$, an \emph{elementary extension} of $\left(A^\bullet, \, \de_{A^\bullet}\right)$ is a dga $\left(B^\bullet, \, \de_{B^\bullet}\right)$ over $\mathbb{K}$ such that
\begin{enumerate}[\itshape (i)]
 \item $B^\bullet=A^\bullet \otimes_\K \wedge^\bullet V_k$ for $V_k$ a finite-dimensional $\mathbb{K}$-vector space and $k>0$, where $\wedge^\bullet V_k$ is the free graded $\mathbb{K}$-algebra generated by $V_k$, the elements of $V_k$ having degree $k$, and
 \item $\de_{B^\bullet}\lfloor_{A^\bullet}=\de_{A^\bullet}$ and $\de_{B^\bullet}\left(V_k\right)\subseteq A^\bullet$.
\end{enumerate}

A dga $\left(M^\bullet, \, \de_{M^\bullet}\right)$ over $\mathbb{K}$ is called \emph{minimal} if it can be written as an increasing union of sub-dga,
$$ \left(\mathbb{K},\, 0\right) \;=\; \left(M^\bullet_0, \, \de_{M^\bullet_0}\right) \;\subset\; \left(M^\bullet_1, \, \de_{M^\bullet_1}\right) \;\subset\; \left(M^\bullet_2, \, \de_{M^\bullet_2}\right) \;\subseteq\; \cdots \;, \qquad \left(M^\bullet, \, \de_{M^\bullet}\right) \;=\; \bigcup_{j\in\N} \left(M^\bullet_j, \, \de_{M^\bullet_j}\right) \;, $$
such that
\begin{enumerate}[\itshape (i)]
 \item for any $j\in\N$, the dga $\left(M^\bullet_{j+1}, \, \de_{M^\bullet_{j+1}}\right)$ is an elementary extension of the dga $\left(M^\bullet_j, \, \de_{M^\bullet_j}\right)$, and
 \item $\de_{M^\bullet}$ is decomposable.
\end{enumerate}

A \emph{minimal model} for a dga $\left(A^\bullet, \, \de_{A^\bullet}\right)$ over $\mathbb{K}$ is the datum of a minimal dga $\left(M^\bullet, \, \de_{M^\bullet}\right)$ over $\mathbb{K}$ and a quasi-isomorphism $\rho\colon \left(M^\bullet, \, \de_{M^\bullet}\right) \stackrel{\text{qis}}{\to} \left(A^\bullet, \, \de_{A^\bullet}\right)$ of dgas.

\medskip

Two dgas $\left(A^\bullet, \, \de_{A^\bullet}\right)$ and $\left(B^\bullet, \, \de_{B^\bullet}\right)$ over $\mathbb{K}$ are \emph{equivalent} if there exist an integer $n\in\N\setminus\{0\}$, a family $\left\{\left(C^\bullet_j, \, \de_{C^\bullet_j}\right)\right\}_{j\in\{0,\ldots,2n\}}$ of dgas over $\mathbb{K}$ with $\left(C^\bullet_0, \, \de_{C^\bullet_0}\right)=\left(A^\bullet, \, \de_{A^\bullet}\right)$ and $\left(C^\bullet_{2n}, \, \de_{C^\bullet_{2n}}\right)=\left(B^\bullet, \, \de_{B^\bullet}\right)$, and a family
$$ \left\{\left(C^\bullet_{2j+1}, \, \de_{C^\bullet_{2j+1}}\right) \stackrel{\text{qis}}{\to} \left(C^\bullet_{2j}, \, \de_{C^\bullet_{2j}}\right),\; \left(C^\bullet_{2j+1}, \, \de_{C^\bullet_{2j+1}}\right) \stackrel{\text{qis}}{\to} \left(C^\bullet_{2j+2}, \, \de_{C^\bullet_{2j+2}}\right)\right\}_{j\in\{0,\ldots,n-1\}} $$
of quasi-isomorphisms.
A dga $\left(A^\bullet, \, \de_{A^\bullet}\right)$ over $\mathbb{K}$ is called \emph{formal} if it is equivalent to a dga $\left(B^\bullet,\, 0\right)$ over $\mathbb{K}$ with zero differential, that is, if it is equivalent to $\left(H^\bullet\left(A^\bullet,\, \de_{A^\bullet}\right),\, 0\right)$.

A compact manifold $X$ is called \emph{formal} if its de Rham complex $\left(\wedge^\bullet X,\, \de\right)$ is a formal dga over $\R$.

\medskip

Let $\left(A^\bullet, \, \de_{A^\bullet}\right)$ be a dga over $\mathbb{K}$. Given
$$ \left[\alpha_{12}\right]\in H^{\deg \alpha_{12}}\left(A^\bullet,\, \de_{A^\bullet}\right) \;, \qquad \left[\alpha_{23}\right]\in H^{\deg \alpha_{23}}\left(A^\bullet,\, \de_{A^\bullet}\right) \;, \qquad \text{ and } \qquad \left[\alpha_{34}\right]\in H^{\deg \alpha_{34}}\left(A^\bullet,\, \de_{A^\bullet}\right) $$
such that
$$ \left[\alpha_{12}\right] \cdot \left[\alpha_{23}\right] \;=\; 0 \qquad \text{ and } \qquad \left[\alpha_{23}\right] \cdot \left[\alpha_{34}\right] \;=\;0 \;, $$
let $\alpha_{13}\in A^{\deg \alpha_{12}+\deg \alpha_{23}-1}$ and $\alpha_{24}\in A^{\deg \alpha_{23}+\deg\alpha_{34}-1}$
be such that
$$ \left(-1\right)^{\deg \alpha_{12}}\, \alpha_{12}\cdot \alpha_{23} \;=\; \de_{A\bullet} \alpha_{13} \qquad \text{ and } \qquad  \left(-1\right)^{\deg\alpha_{23}}\, \alpha_{23} \cdot \alpha_{34} \;=\; \de_{A\bullet} \alpha_{24} \;;$$
one can then define the \emph{triple Massey product} $\left\langle \left[\alpha_{12}\right],\, \left[\alpha_{23}\right],\, \left[\alpha_{34}\right] \right\rangle$ as
\begin{eqnarray*}
\lefteqn{\left\langle \left[\alpha_{12}\right],\, \left[\alpha_{23}\right],\, \left[\alpha_{34}\right] \right\rangle \;:=\; \left[ \left(-1\right)^{\deg\alpha_{12}}\, \alpha_{12}\cdot\alpha_{24}+\left(-1\right)^{\deg\alpha_{13}}\, \alpha_{13}\cdot\alpha_{34} \right]} \\[5pt]
&\in& \frac{H^{\deg \alpha_{12}+\deg \alpha_{23}+\deg \alpha_{34}-1}\left(A^\bullet,\, \de_{A^\bullet}\right)}{H^{\deg \alpha_{12}}\left(A^\bullet,\, \de_{A^\bullet}\right) \cdot H^{\deg \alpha_{23}+\deg\alpha_{34}-1}\left(A^\bullet,\, \de_{A^\bullet}\right) + H^{\deg\alpha_{34}}\left(A^\bullet,\, \de_{A^\bullet}\right) \cdot H^{\deg\alpha_{12}+\deg \alpha_{23}-1}\left(A^\bullet,\, \de_{A^\bullet}\right)} \;.
\end{eqnarray*}

One can define the higher order Massey product by induction. Fixed $m\in\N$ such that $m\geq4$, and given
$$ \left[\alpha_{12}\right]\in H^{\deg\alpha_{12}}\left(A^\bullet,\, \de_{A^\bullet}\right) \;, \qquad \dots \;, \qquad \left[\alpha_{m,m+1}\right]\in H^{\deg\alpha_{m,m+1}}\left(A^\bullet,\, \de_{A^\bullet}\right) $$
such that all the Massey products of order lower than or equal to $m-1$ vanish, let $\left\{\alpha_{rs}\right\}_{1\leq r<s\leq m+1}\subseteq A^{\bullet}$ be such that
$$ \sum_{h<\ell<k}\left(-1\right)^{\deg  \alpha_{h\ell}}\, \alpha_{h\ell}\cdot  \alpha_{\ell k} \;=\; \de \alpha_{hk} \;,$$
for any $h,k\in\{1,\ldots, m+1\}$ with $k-h<m$. Then define the \emph{\kth{m} order Massey product} as
$$ \left\langle \left[\alpha_{12}\right],\, \ldots, \left[\alpha_{m,m+1}\right] \right\rangle \;:=\; \left[ \sum_{1<\ell<m+1} \left(-1\right)^{\deg \alpha_{1k}}\, \alpha_{1k}\cdot \alpha_{k,m+1} \right] $$
belonging to a quotient of $H^\bullet\left(A^\bullet,\, \de_{A^\bullet}\right)$.

As a direct consequence of the definitions, the Massey products (of any order) on a formal dga are zero.

\medskip

Now, let $X$ be a compact manifold endowed with a K\"ahler structure.

The K\"ahler identities allow to prove the following result, known as \emph{$\partial\overline{\partial}$-Lemma} (see, e.g., \cite[Corollary 3.2.10]{huybrechts}), which, in a sense, summarizes many of the cohomological properties of compact K\"ahler manifolds.

\begin{thm}[{$\partial\overline{\partial}$-Lemma for compact K\"ahler manifolds, \cite[Lemma 5.11]{deligne-griffiths-morgan-sullivan}}]
 Let $X$ be a compact K\"ahler manifold. Then every $\del$-closed, $\delbar$-closed, $\de$-exact form is also $\del\delbar$-exact.
\end{thm}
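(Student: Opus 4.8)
The plan is to deduce everything from the K\"ahler identities recalled above, whose essential payoff is the identity $\square = \overline\square = \frac{1}{2}\Delta$ together with the fact that $\Delta$ commutes with $\del$, $\delbar$, $\del^*$, $\delbar^*$. From the coincidence of the three Laplacians two facts follow immediately and will be used repeatedly: first, the spaces of harmonic forms $\ker\square$, $\ker\overline\square$, $\ker\Delta$ all coincide, so a harmonic form is simultaneously $\del$-closed and $\delbar$-closed; second, the Green operator $G$ associated to this common Laplacian commutes with each of $\del$, $\delbar$, $\del^*$, $\delbar^*$ (this is standard elliptic theory, since $G$ is a function of $\Delta$ on the orthogonal complement of the harmonic forms, and $\Delta$ commutes with these operators). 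I would record both facts at the start.

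First I would use $\de$-exactness to discard harmonic parts. If $\alpha = \de\gamma$ and $\eta \in \ker\Delta = \ker\de\cap\ker\de^*$, then $\langle\alpha,\eta\rangle = \langle\gamma,\de^*\eta\rangle = 0$, so the harmonic projection of $\alpha$ vanishes. Applying the Hodge decomposition for $\overline\square$, namely $\alpha = H_{\overline\square}\alpha + \delbar\,\delbar^* G\alpha + \delbar^*\,\delbar G\alpha$, the harmonic term is zero by the previous remark, and the last term vanishes because $\delbar^*\,\delbar G\alpha = \delbar^* G\,\delbar\alpha = 0$ using $\delbar\alpha = 0$ and $[G,\delbar]=0$. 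Hence $\alpha = \delbar\beta'$ with $\beta' := \delbar^* G\alpha$.

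Next I would show $\beta'$ is $\del$-closed and decompose it again. Among the consequences of the K\"ahler identities is the anticommutation relation $\del\,\delbar^* + \delbar^*\,\del = 0$; therefore $\del\beta' = \del\,\delbar^* G\alpha = -\delbar^*\,\del G\alpha = -\delbar^* G\,\del\alpha = 0$, using $\del\alpha = 0$ and $[G,\del]=0$. Now the Hodge decomposition for $\square$ gives $\beta' = H_{\square}\beta' + \del\,\del^* G\beta' + \del^*\,\del G\beta'$, where the last term vanishes because $\del\beta'=0$, so $\beta' = h' + \del\gamma''$ with $h'$ harmonic and $\gamma'' := \del^* G\beta'$. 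Since $h'$ is harmonic it is $\delbar$-closed, whence $\alpha = \delbar\beta' = \delbar h' + \delbar\del\gamma'' = -\del\delbar\gamma''$, exhibiting $\alpha$ as $\del\delbar$-exact, as desired.

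The steps above work verbatim for forms that are not of pure bidegree, since every operator involved is defined on the full space $\wedge^\bullet X \otimes \C$; no separate reduction to pure type is needed. The only genuine input is the K\"ahler identities, so the real content of the argument --- and the step I expect to require the most care --- is establishing that the Green operator commutes with $\del$, $\delbar$, $\del^*$, $\delbar^*$ and that $\del\,\delbar^* = -\delbar^*\,\del$, both of which rest on $\Delta = 2\square = 2\overline\square$ and on $\Delta$ commuting with all four operators; once these are in hand, the two successive Hodge decompositions assemble the conclusion mechanically.
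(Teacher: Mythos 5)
Your proof is correct, and it is precisely the standard Hodge-theoretic argument that the paper alludes to: the paper itself gives no proof, stating only that ``the K\"ahler identities allow to prove'' the result and citing \cite[Lemma 5.11]{deligne-griffiths-morgan-sullivan}, whose proof (as in \cite[Corollary 3.2.10]{huybrechts}) runs exactly as yours --- kill the harmonic part using $\de$-exactness, apply the $\overline\square$-Hodge decomposition to write $\alpha=\delbar\beta'$ with $\beta'=\delbar^*G\alpha$, use $\del\delbar^*+\delbar^*\del=0$ (an identity the paper records explicitly in \S\ref{sec:preliminaries-bott-chern-deldelbar}) together with $[G,\del]=0$ to see $\del\beta'=0$, and then apply the $\square$-Hodge decomposition to $\beta'$. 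All the inputs you flag as needing care ($\square=\overline\square=\frac{1}{2}\Delta$, $\Delta$ commuting with $\del$, $\delbar$, $\del^*$, $\delbar^*$, hence $G$ commuting with them) are exactly the consequences of the K\"ahler identities stated in the paper's Theorem in \S\ref{subsec:kahler-hodge}, so there is no gap.
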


Using the differential operator $\de^c:=J^{-1}\, \de\, J=-\im\, \left(\del-\delbar\right)$ (where $J$ is the integrable almost-complex structure naturally associated to the structure of complex manifold on $X$), and noting that $\ker \del \cap \ker\delbar=\ker\de\cap \ker \de^c$ and $\imm\del\delbar=\imm\de\de^c$, the following equivalent formulation can be provided.

\begin{thm}[{$\de\de^c$-Lemma for compact K\"ahler manifolds, \cite[Lemma 5.11]{deligne-griffiths-morgan-sullivan}}]
 Let $X$ be a compact K\"ahler manifold. Then every $\de$-closed, $\de^c$-closed, $\de$-exact form is also $\de\de^c$-exact.
\end{thm}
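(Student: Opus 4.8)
The plan is to observe that the $\de\de^c$-Lemma is nothing but a reformulation of the $\del\delbar$-Lemma, already established immediately above, read through the dictionary relating the operators $\del,\delbar$ to $\de,\de^c$. No new analytic input is therefore needed: the entire substance sits in the K\"ahler identities feeding the $\del\delbar$-Lemma, and what remains is purely algebraic bookkeeping on the operators, exactly as anticipated in the text preceding the statement.

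First I would record the elementary linear dictionary between the two pairs of operators. Writing $\de=\del+\delbar$ and $\de^c=-\im\left(\del-\delbar\right)$, one inverts these relations to obtain $\del=\frac{1}{2}\left(\de+\im\,\de^c\right)$ and $\delbar=\frac{1}{2}\left(\de-\im\,\de^c\right)$. From this dictionary it follows at once that a form is simultaneously $\de$-closed and $\de^c$-closed if and only if it is simultaneously $\del$-closed and $\delbar$-closed; that is,
\[
\ker\de\cap\ker\de^c \;=\; \ker\del\cap\ker\delbar \;.
\]

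Next I would compute the composition $\de\de^c$. Expanding and using the integrability relations $\del^2=0$, $\delbar^2=0$, and $\del\delbar+\delbar\del=0$ (which hold precisely because the almost-complex structure is integrable, so that $\left(\wedge^{\bullet,\bullet}X,\,\del,\,\delbar\right)$ is a double complex), one finds
\[
\de\de^c \;=\; -\im\,\left(\del+\delbar\right)\left(\del-\delbar\right) \;=\; 2\,\im\,\del\delbar \;.
\]
Since $2\,\im$ is a nonzero scalar, this yields $\imm\de\de^c=\imm\del\delbar$.

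Finally I would assemble the two lemmas. Let $\alpha$ be $\de$-closed, $\de^c$-closed, and $\de$-exact. By the first identity $\alpha$ is then $\del$-closed and $\delbar$-closed, while the hypothesis of $\de$-exactness is left unchanged; hence the $\del\delbar$-Lemma applies and $\alpha\in\imm\del\delbar$. By the second identity $\imm\del\delbar=\imm\de\de^c$, so $\alpha$ is $\de\de^c$-exact, as desired. The only point requiring care is keeping track of the scalar factors and invoking the integrability relations correctly; there is no genuine obstacle here, since the substantive work is entirely contained in the $\del\delbar$-Lemma assumed above.
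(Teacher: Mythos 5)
Your proposal is correct and coincides with the paper's own argument: the paper likewise derives the $\de\de^c$-Lemma from the $\del\delbar$-Lemma purely algebraically, noting $\ker\de\cap\ker\de^c=\ker\del\cap\ker\delbar$ and $\de\de^c=2\,\im\,\del\delbar$ (hence $\imm\de\de^c=\imm\del\delbar$), exactly as you do via the dictionary $\del=\frac{1}{2}\left(\de+\im\,\de^c\right)$, $\delbar=\frac{1}{2}\left(\de-\im\,\de^c\right)$. Your computations, including the scalar factor and the use of the integrability relations, are accurate, so nothing is missing.
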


Actually, the $\del\delbar$-Lemma holds true for a larger class of compact complex manifolds than the compact K\"ahler manifolds: indeed, it holds, for examples, for any compact complex manifold that can be blown up to a K\"ahler manifold, \cite[Theorem 5.22]{deligne-griffiths-morgan-sullivan}, e.g., for compact complex manifolds in class $\mathcal{C}$ of Fujiki, or for Mo\v{\i}{\v{s}}ezon manifolds; we refer to \S\ref{subsec:deldelbar-lemma} for further results concerning the $\del\delbar$-Lemma for compact complex manifolds.

\medskip

If $X$ is a compact K\"ahler manifold (or, more in general, any compact complex manifold for which the $\del\delbar$-Lemma, equivalently the $\de\de^c$-Lemma, holds), then one has the following quasi-isomorphisms of dgas:
$$
\xymatrix{
 & \left(\ker \de^c,\, \de\lfloor_{\ker \de^c}\right) \ar[ld]^{\text{qis}} \ar[rd]_{\text{qis}} & \\
 \left(\wedge^\bullet X,\, \de\right) & & \left(\frac{\ker \de^c}{\imm \de^c},\, 0\right) \;;
}
$$
in particular, the dga $\left(\wedge^\bullet X,\, \de\right)$ is equivalent to a dga with zero differential, and hence it is formal. This proves the following result by P. Deligne, Ph.~A. Griffiths, J. Morgan, and D.~P. Sullivan.

\begin{thm}[{\cite[Main Theorem]{deligne-griffiths-morgan-sullivan}}]
 Let $X$ be a compact complex manifold for which the $\del\delbar$-Lemma holds (e.g., a compact K\"ahler manifold, or a manifold in class $\mathcal{C}$ of Fujiki). Then the differentiable manifold underlying $X$ is formal (that is, the differential graded algebra $\left(\wedge^\bullet X,\, \de\right)$ is formal).
\end{thm}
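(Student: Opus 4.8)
The plan is to justify the span of differential graded algebras displayed just above the statement, thereby exhibiting $\left(\wedge^\bullet X,\, \de\right)$ as equivalent — in the zig-zag sense of the definition of formality — to $\left(\ker\de^c/\imm\de^c,\, 0\right)$, a dga with zero differential, which is formal tautologically. Here $\de^c := J^{-1}\,\de\,J = -\im\left(\del-\delbar\right)$. Before touching cohomology I would record the purely algebraic facts. Since $J$ acts as a graded algebra automorphism of $\wedge^\bullet X\otimes\C$ and $\de$ is a derivation, $\de^c=J^{-1}\de J$ is again a derivation; hence $\ker\de^c$ is a graded subalgebra. From $\de\,\de^c=-\de^c\,\de$ (equivalently $\de\de^c=2\im\,\del\delbar$) one gets $\de\left(\ker\de^c\right)\subseteq\ker\de^c$, so $\left(\ker\de^c,\, \de\right)$ is a sub-dga of $\left(\wedge^\bullet X,\,\de\right)$, and $\imm\de^c$ is a graded ideal of $\ker\de^c$ stable under $\de$. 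Thus the inclusion $i$ and the projection $\pi$ are algebra morphisms, provided the differential induced by $\de$ on the quotient $\ker\de^c/\imm\de^c$ vanishes — and this last point is exactly where the $\de\de^c$-Lemma enters.

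The entire argument then reduces to the two identities
\begin{equation*}
 \imm\de\cap\ker\de^c \;=\; \imm\de\de^c \;=\; \imm\de^c\cap\ker\de \;.
\end{equation*}
The inclusions $\imm\de\de^c\subseteq\imm\de\cap\ker\de^c$ and $\imm\de\de^c\subseteq\imm\de^c\cap\ker\de$ are immediate. Conversely, a form $\beta\in\imm\de\cap\ker\de^c$ is automatically $\de$-closed, hence it is $\de$-closed, $\de^c$-closed and $\de$-exact, and the $\de\de^c$-Lemma yields $\beta\in\imm\de\de^c$; the equality $\imm\de^c\cap\ker\de=\imm\de\de^c$ follows identically from the symmetric form of the Lemma (with the roles of $\de$ and $\de^c$ exchanged). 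In particular, for $\alpha\in\ker\de^c$ the form $\de\alpha$ lies in $\imm\de\cap\ker\de^c=\imm\de\de^c\subseteq\imm\de^c$, so $\de$ descends to the zero map on $\ker\de^c/\imm\de^c$; this completes the check that $\pi$ is a morphism of dgas.

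It remains to show that $i$ and $\pi$ are quasi-isomorphisms, each verification being a one-line consequence of the two identities. For surjectivity of $i^*$: a $\de$-closed $\gamma$ satisfies $\de^c\gamma\in\imm\de^c\cap\ker\de=\imm\de\de^c$, so writing $\de^c\gamma=\de\de^c\eta$ and correcting by the $\de$-exact term $\de\eta$ produces a $\de^c$-closed representative of $\left[\gamma\right]$. For injectivity of $i^*$: an $\alpha\in\ker\de\cap\ker\de^c$ that is $\de$-exact lies in $\imm\de\de^c$, so $\alpha=\de\left(\de^c\xi\right)$ with $\de^c\xi\in\ker\de^c$, i.e.\ $\alpha$ is a boundary inside $\ker\de^c$. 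For surjectivity of $\pi^*$: given $\omega\in\ker\de^c$, the form $\de\omega\in\imm\de\cap\ker\de^c=\imm\de\de^c$ equals $\de\de^c\zeta$, and $\omega-\de^c\zeta$ is a $\de$- and $\de^c$-closed representative of the same class in the quotient. For injectivity of $\pi^*$: an $\alpha\in\ker\de\cap\ker\de^c$ lying in $\imm\de^c$ belongs to $\imm\de^c\cap\ker\de=\imm\de\de^c$, hence $\alpha=\de\left(\de^c\nu\right)$ is a boundary in $\ker\de^c$. Both maps being quasi-isomorphisms, the span is established and $X$ is formal.

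The one genuinely delicate ingredient is the symmetric version of the $\de\de^c$-Lemma needed for $\imm\de^c\cap\ker\de=\imm\de\de^c$: the Lemma is stated only for $\de$-exact forms, so I would have to justify that $\de$ and $\de^c$ play interchangeable roles. This is standard and follows from the symmetry of the $\del\delbar$-Lemma under $\del\leftrightarrow\delbar$ together with the expression of $\de^c$ in terms of $\del$ and $\delbar$ (see \cite[Lemma 5.15]{deligne-griffiths-morgan-sullivan}); everything else is routine bookkeeping of degrees and signs.
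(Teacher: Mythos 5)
Your proposal is correct and follows exactly the paper's route: the paper proves formality by exhibiting the same zig-zag $\left(\wedge^\bullet X,\,\de\right) \leftarrow \left(\ker\de^c,\,\de\lfloor_{\ker\de^c}\right) \rightarrow \left(\ker\de^c/\imm\de^c,\,0\right)$ and asserting that both maps are quasi-isomorphisms of dgas via the $\de\de^c$-Lemma, which is precisely what you verify in detail through the identities $\imm\de\cap\ker\de^c=\imm\de\de^c=\imm\de^c\cap\ker\de$. Your appeal to \cite[Lemma 5.15]{deligne-griffiths-morgan-sullivan} for the symmetric form of the Lemma is the standard and correct justification (the paper reproduces that proposition, whose condition $(b)_h$ gives both symmetric statements), so apart from harmless sign choices in the correction terms your argument is a complete fleshing-out of the paper's sketch.
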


In particular, all Massey products (of any order) on a compact complex manifold satisfying the $\del\delbar$-Lemma are zero, \cite[Corollary 1]{deligne-griffiths-morgan-sullivan}. This provide an obstruction to the existence of K\"ahler structures on compact differentiable manifolds.

\section{Deformations of complex structures}\label{sec:deformations}

A natural way to construct new complex structures on a manifold is by ``deforming'' a given complex structure. Natural questions arise naturally from this construction, concerning, for example, what properties (e.g., the existence of some special metric) remain still valid after such a small deformation.

We recall in this section the basic notions and the classical results concerning the K. Kodaira, D.~C. Spencer, L. Nirenberg, and M. Kuranishi theory of deformations of complex manifolds, \cite{kodaira-spencer-1-2, kodaira-spencer-3, kodaira-nirenberg-spencer, kuranishi-annals}, referring to \cite{huybrechts}, see also, e.g., \cite{kodaira, kodaira-morrow}.

\medskip

Let $B$ be a complex (respectively, differentiable) manifold. A family $\left\{X_t\right\}_{t\in B}$ of compact complex manifolds is said to be a \emph{complex-analytic} (respectively, \emph{differentiable}) \emph{family of compact complex manifolds} if there exist a complex (respectively, differentiable) manifold $\mathcal{X}$ and a surjective holomorphic (respectively, smooth) map $\pi\colon \mathcal{X}\to B$ such that
\begin{inparaenum}[\itshape (i)]
 \item $\pi^{-1}(t)=X_t$ for any $t\in B$, and
 \item $\pi$ is a proper holomorphic (respectively, smooth) submersion.
\end{inparaenum}
A compact complex manifold $X$ is said to be a \emph{deformation} of a compact complex manifold $Y$ if there exist a complex-analytic family $\left\{X_t\right\}_{t\in B}$ of compact complex manifolds, and $b_0,\, b_1 \in B$ such that $X_{b_0}=X_s$ and $X_{b_1}=X_t$.

A complex-analytic (respectively, differentiable) family $\mathcal{X}\stackrel{\pi}{\to}B$ of compact complex manifolds is said to be \emph{trivial} if $\mathcal{X}$ is bi-holomorphic (respectively, diffeomorphic) to $B\times X_{b}\stackrel{\pi_B}{\to}B$ for some $b\in B$ (where $\pi_B\colon B\times X_{b}\to B$ denotes the natural projection onto $B$); it is said to be \emph{locally trivial} if, for any $b\in B$, there exists an open neighbourhood $U$ of $b$ in $B$ such that $\pi^{-1}(U) \stackrel{\pi\lfloor_{\pi^{-1}(U)}}{\to} U$ is trivial.
The following theorem by C. Ehresmann states the local triviality of a differentiable family of compact complex manifolds (see, e.g., \cite[Theorem 2.3, Theorem 2.5]{kodaira}, \cite[Theorem 1.4.1]{kodaira-morrow}).

\begin{thm}[{Ehresmann theorem, \cite{ehresmann-2}}]
 Let $\left\{X_t\right\}_{t\in B}$ be a differentiable family of compact complex manifolds. For any $s,\, t\in B$, the manifolds $X_s$ and $X_t$ are diffeomorphic.
\end{thm}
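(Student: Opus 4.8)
The plan is to prove the stronger \emph{local triviality} statement and deduce the theorem from connectedness of $B$. First I would observe that the complex structures play no essential role in the conclusion: the claim is purely Ehresmann's fibration theorem for the proper smooth submersion $\pi\colon \mathcal{X} \to B$, whose fibers $X_t = \pi^{-1}(t)$ are compact. Since $B$ is connected (hence, being a manifold, path-connected), it suffices to show that the diffeomorphism type of $X_t$ is locally constant in $t$; then the subset of $B$ consisting of those $t$ for which $X_t$ is diffeomorphic to a fixed $X_b$ is open and closed, hence all of $B$, which yields the assertion after connecting $s$ and $t$ by a path and covering it by finitely many trivializing neighbourhoods.

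To prove local constancy, I would fix $b\in B$, pass to a chart, and reduce to the case where $B$ is a cube $U = (-\varepsilon,\varepsilon)^m \subseteq \R^m$ centred at $b=0$; the goal becomes to produce a diffeomorphism $\Phi\colon U\times X_0 \to \pi^{-1}(U)$ over $U$, i.e.\ with $\pi\circ\Phi = \mathrm{pr}_U$. The key device is the lifting of vector fields: since $\pi$ is a submersion, its normal form shows that near each point of $\pi^{-1}(U)$ the map $\pi$ is a standard projection, so each coordinate field $\partial/\partial t^i$ on $U$ admits local lifts. Patching these by a partition of unity subordinate to a locally finite cover yields global smooth vector fields $V_1,\dots,V_m$ on $\pi^{-1}(U)$ with $\de\pi(V_i) = \partial/\partial t^i$; the defining relation is affine in the lift, so it survives the convex combinations coming from the partition of unity.

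Next I would integrate these fields. The flow $\phi^i_\tau$ of $V_i$ satisfies $\pi(\phi^i_\tau(x)) = \pi(x)+\tau\, e_i$, so trajectories project to straight segments in $U$. The crucial point --- and the step I expect to be the main obstacle --- is completeness of the flows for the times needed to sweep across $U$, since a priori an integral curve could escape to infinity in finite time. Here properness of $\pi$ together with compactness of the fibers saves the day: a trajectory whose base point stays in a compact $K\subseteq U$ is confined to the compact set $\pi^{-1}(K)$, hence cannot leave in finite time, so $\phi^i_\tau(x)$ is defined whenever the segment from $\pi(x)$ to $\pi(x)+\tau\, e_i$ lies in $U$. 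With this in hand I would set
$$ \Phi(t^1,\dots,t^m,x) \;:=\; \phi^m_{t^m}\circ\cdots\circ\phi^1_{t^1}(x) \qquad \text{for } (t,x)\in U\times X_0 \;, $$
which is smooth, satisfies $\pi\circ\Phi = \mathrm{pr}_U$ (the successive flows move the base point $0\mapsto(t^1,0,\dots,0)\mapsto\cdots\mapsto t$, all intermediate points lying in the cube $U$), and is a diffeomorphism whose inverse is obtained by running the flows backwards.

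Finally, restricting $\Phi$ to a fixed parameter $t\in U$ gives a diffeomorphism $X_0 \stackrel{\simeq}{\to} X_t$; hence every fiber over $U$ is diffeomorphic to $X_b$, which is precisely the local constancy of the diffeomorphism type required above. Composing the finitely many diffeomorphisms obtained along a path joining $s$ to $t$ then shows that $X_s$ and $X_t$ are diffeomorphic, completing the proof.
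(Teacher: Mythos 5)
Your proof is correct and is precisely the classical argument behind this statement: lift the coordinate vector fields of a chart through the proper submersion $\pi$ by a partition of unity, integrate the flows (properness of $\pi$ confining trajectories over compact sets and thereby guaranteeing completeness), compose them to obtain local triviality, and conclude by connectedness of $B$. The paper gives no proof of its own, deferring to \cite{ehresmann-2} and the expositions in \cite[Theorem 2.3, Theorem 2.5]{kodaira} and \cite[Theorem 1.4.1]{kodaira-morrow}, which argue exactly along these lines, so your proposal matches the cited approach.
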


As a consequence of Ehresmann's theorem, a complex-analytic family $\left\{X_t\right\}_{t\in B}$ of compact complex manifolds with $B$ contractible can be viewed as a family of complex structures on a compact differentiable manifold.

\medskip

We recall some other useful definitions, see, e.g., \cite[\S6.2]{huybrechts}. Let $\pi\colon \mathcal{X}\to B$ be a complex-analytic family of compact complex manifolds, deformations of $X:=\pi^{-1}(0)$. We recall that, given $f\colon \left(B',\, 0'\right) \to \left(B,\, 0\right)$ a morphism of germs with a distinguished point, the pull-back $f^*\mathcal{X} := \mathcal{X} \times_B B'$ gives a complex-analytic family of deformations of $X$. The complex-analytic family $\pi\colon \mathcal{X}\to B$ of deformations of $X$ is called \emph{complete} if, for any complex-analytic family $\pi'\colon \mathcal{X}'\to B'$ of deformations of $X$, there exists a morphism $f\colon B'\to B$ of germs with a distinguished point such that $\mathcal{X'} = f^*\mathcal{X}$. The complex-analytic family $\pi\colon \mathcal{X}\to B$ of deformations of $X$ is called \emph{universal} if, for any complex-analytic family $\pi'\colon \mathcal{X}'\to B'$ of deformations of $X$, there exists a unique morphism $f\colon B'\to B$ of germs 
with a distinguished point such that $\mathcal{X'} = f^*\mathcal{X}$.
The complex-analytic family $\pi\colon \mathcal{X}\to B$ of deformations of $X$ is called \emph{versal} if, for any complex-analytic family $\pi'\colon \mathcal{X}'\to B'$ of deformations of $X$, there exists a morphism $f\colon B'\to B$ of germs with a distinguished point such that $\mathcal{X'} = f^*\mathcal{X}$ and such that $\de f\colon T_{0'}B'\to T_0S$ is uniquely determined.

\medskip

The theory of complex-analytic deformations of compact complex manifolds has been introduced by K. Kodaira and D.~C. Spencer, \cite{kodaira-spencer-1-2, kodaira-spencer-3}, and developed also by L. Nirenberg, \cite{kodaira-nirenberg-spencer}, and M. Kuranishi, \cite{kuranishi-annals, kuranishi-proceedings}, see also \cite{kodaira, kodaira-morrow}.
In recalling the main results of this theory, we follow the approach in \cite{huybrechts}, based on the construction of a differential graded Lie algebra structure on $\mathcal{C}^\infty\left(X;\, T^{1,0}X\otimes \wedge^{0,\bullet}X\right)$, see also \cite{manetti-lectures}.

Let $X$ be a compact manifold endowed with an integrable almost-complex structure $J$. Every section $s\in \mathcal{C}^\infty\left(X; T^{1,0}_JX\otimes \wedge^{0,1}_JX\right)$
near to the zero section determines an almost-complex structure $J'$, defined in such a way that $\wedge^{1,0}_{J'}X$ is the graph of $-s\colon \wedge^{1,0}_JX\to\wedge^{0,1}_JX$; it turns out that $J'$ is integrable if and only if the \emph{Maurer and Cartan equation}
\begin{equation}\tag{\text{MC}}\label{eq:MC}
\delbar s+\frac{1}{2}\, \left[s,s\right] \;=\; 0
\end{equation}
holds (see, e.g., \cite[Lemma 6.1.2]{huybrechts}), where
\begin{itemize}
 \item $\left[\sspace, \ssspace\right]\colon \mathcal{C}^\infty\left(X;\, T^{1,0}_JX\otimes\wedge^{0,p}_JX\right) \times \mathcal{C}^\infty\left(X;\, T^{1,0}_JX\otimes\wedge^{0,q}_JX\right) \to \mathcal{C}^\infty\left(X;\, T^{1,0}_JX\otimes\wedge^{0,p+q}_JX\right)$ is defined as
$$ \left[X\otimes \bar\alpha,\, Y\otimes\bar\beta\right] \;:=\; X \otimes \left(\bar \beta \wedge \Lie{Y}\bar\alpha\right) + Y \otimes  \left(\bar\alpha\wedge\Lie{X}\bar\beta\right) + \left[X,\, Y\right] \otimes \left(\bar\alpha\wedge\bar\beta\right) \;,$$
where $\Lie{W}\varphi:=\iota_W\de\varphi+\de\left(\iota_W\varphi\right)$ is the Lie derivative of $\varphi$ along $W$;
locally, in a chart with holomorphic coordinates $\left\{z^j\right\}_j$, one has
$$ \left[w\otimes \de\bar z^{\ell_1}\wedge\cdots\wedge \de\bar z^{\ell_p},\, w'\otimes \de\bar z^{m_1}\wedge\cdots\wedge \de\bar z^{m_q}\wedge\right] \;\stackrel{\text{loc}}{=}\; \left[w, \, w'\right] \otimes \de \bar z^{\ell_1}\wedge\cdots\wedge \de\bar z^{\ell_p}\wedge \de\bar z^{m_1}\wedge\cdots\wedge \de\bar z^{m_q} \;;$$
 \item $\delbar \colon \mathcal{C}^\infty\left(X;\, T^{1,0}_JX\otimes\wedge^{0,p}_JX\right) \to \mathcal{C}^\infty\left(X;\, T^{1,0}_JX \otimes \wedge^{0,p+1}_JX \right)$ is defined as
$$ \delbar\varphi\left(\bar Z, \, \bar W\right) \;:=\; \left[\bar Z, \, \varphi\left(\bar W\right)\right]^{1,0}-\left[\bar W, \, \varphi\left(\bar Z\right)\right]^{1,0}-\varphi\left(\left[\bar Z, \, \bar W\right]\right) \;,$$
where $X^{1,0}:=X-\im \, J\, X$ is the $(1,0)$-component of $X$; locally, in a chart with holomorphic coordinates $\left\{z^j\right\}_j$, one has
$$ \delbar \left(\frac{\del}{\del z^\ell}\otimes\alpha\right) \;\stackrel{\text{loc}}{=}\; \frac{\del}{\del z^\ell}\otimes\delbar\alpha \;.$$
\end{itemize}

Hence, to study complex-analytic families of infinitesimal deformations of a compact complex manifold $X$, it suffices to study complex-analytic families $\left\{s(\tempo)\right\}_{\tempo\in\Delta(0,\varepsilon)\subset \C^m}\subseteq \mathcal{C}^\infty\left(X;\; T^{1,0}X\otimes\wedge^{0,1}X\right)$ (where $\varepsilon>0$ is small enough) with $s(0)=0$. Consider the power series expansion in $\tempo$ of $s(\tempo)$,
 $$ s(\tempo) \;=:\; \sum_{k\in\N}s_k(\tempo) \;,$$
where $s_k(\tempo)\in\mathcal{C}^\infty\left(X;\, T^{1,0}X\otimes\wedge^{0,1}X\right)$ is homogeneous of degree $k$ in $\tempo$, and $s_0(\tempo)=0$. Then the Maurer and Cartan equation \eqref{eq:MC} can be rewritten, for every $\tempo\in\Delta(0,\varepsilon)$, as the system
$$
\left\{
\begin{array}{lclcl}
 \delbar s_1(\tempo) &=& 0 && \\[5pt]
 \delbar s_k(\tempo) &=& -\sum_{1\leq j \leq k-1}\left[s_j(\tempo),\, s_{k-j}(\tempo)\right] & \text{ for } & k\geq2
\end{array}
\right. \;;
$$
in particular, $s_1(\tempo)$ defines a class in $H^{0,1}\left(X;\, \Theta_X\right)$, where $\Theta_X$ denotes the sheaf of the germs of holomorphic vector fields on $X$; up to the action of $\mathrm{Diff}(X)$, one has that $s_1(\tempo)$ is uniquely determined by its class in $H^{0,1}\left(X;\; \Theta_X\right)$ (see, e.g., \cite[Lemma 6.14]{huybrechts}).

Fix now a Hermitian metric $g$ on $X$. Consider the decomposition
$$ T^{1,0}X\otimes \wedge^{0,1}X \;=\; \left(T^{1,0}X\otimes \ker\overline\square\lfloor_{\wedge^{0,1}X}\right) \,\oplus\, \left(T^{1,0}X\otimes\delbar\wedge^{0,0}X\right) \,\oplus\, \left(T^{1,0}X\otimes\delbar^*\wedge^{0,2}X\right) \;,$$
and the corresponding projections
$$ H_\delbar\colon T^{1,0}X\otimes \wedge^{0,1}X \to T^{1,0}X\otimes \ker\overline\square\lfloor_{\wedge^{0,1}X} \;, \qquad P_\delbar\colon T^{1,0}X\otimes \wedge^{0,1}X \to T^{1,0}X\otimes \delbar\wedge^{0,0}X \;.$$
In order that $s(\tempo)$ satisfies \eqref{eq:MC}, for every $\tempo\in\Delta(0,\varepsilon)$, one should have
$$ \delbar s_k(\tempo) \;=\; -P_\delbar \left(\sum_{1\leq j\leq k-1}\left[s_j(\tempo),\, s_{k-j}(\tempo)\right]\right) \;.$$
Hence, one gets
$$ \delbar s(\tempo) + \left[s(\tempo),\, s(\tempo)\right] \;=\; H_\delbar\left(\left[s(\tempo),\, s(\tempo)\right]\right) \;.$$

Therefore, define the map
$$ \mathrm{obs}\colon H^{0,1}\left(X;\, \Theta_X\right) \to H^{0,2}\left(X;\, \Theta_X\right) $$
as follows. Let $\left\{X_j\otimes \bar\omega^k\right\}_{\substack{j\in\{1,\ldots,n\}\\k\in\{1,\ldots,m\}}}$ be a basis of $H^{0,1}\left(X;\, \Theta_X\right)$. Given $\mu:=:\sum_{\substack{j\in\{1,\ldots,n\}\\k\in\{1,\ldots,m\}}} t^j_k \, X_j\otimes\bar\omega^k$, denote $\tempo:=:\left(t^j_k\right)_{\substack{j\in\{1,\ldots,n\}\\k\in\{1,\ldots,m\}}}$, and define $s_1(\tempo):=\mu$ and $s_k(\tempo)$ such that $\delbar s_k(\tempo):=-P_\delbar\left(\sum_{1\leq j\leq k-1}\left[s_j(\tempo),\, s_{k-j}(\tempo)\right]\right)$ for $k\geq 2$; hence, define the formal power series $s(\tempo):=\sum_{k\in\N}s_k(\tempo)$. Define
$$ \mathrm{obs}\left(\mu\right) \;:=\; H_\delbar\left(\left[s(\tempo),\, s(\tempo)\right]\right) \;.$$

Hence, one has then that $\left\{s(\tempo)\right\}_{\tempo\in\Delta(0,\varepsilon)\subset\C^m}\subseteq \mathcal{C}^\infty\left(X; T^{1,0}_JX\otimes \wedge^{0,1}_JX\right)$ (where $\varepsilon>0$ is small enough) defines an infinitesimal family of compact complex manifolds if $\mathrm{obs}\left(s_1\left(\tempo\right)\right)=0$ for every $\tempo\in\Delta(0,\varepsilon)$ (indeed, for $\varepsilon>0$ small enough, the formal power series converges, see, e.g., \cite[\S5.3]{kodaira}, \cite[\S2.3]{kodaira-morrow}).

One gets the following result by M. Kuranishi.

\begin{thm}[{\cite[Theorem 2]{kuranishi-annals}}]
 Let $X$ be a compact complex manifold. Then $X$ admits a versal complex-analytic family of deformations.
\end{thm}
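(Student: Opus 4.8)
The plan is to realise the versal family as the restriction of the tautological family of integrable almost-complex structures to the analytic germ cut out by the obstruction map, using the differential graded Lie algebra machinery already prepared above. First I would fix the Hermitian metric $g$ and exploit the Hodge decomposition of $\mathcal{C}^\infty\left(X;\, T^{1,0}X\otimes\wedge^{0,1}X\right)$ together with the Green operator $G$ of the elliptic operator $\overline\square$ acting on $T^{1,0}X$-valued $(0,\bullet)$-forms. Writing the harmonic projection $H_\delbar$ and using $\delbar\,\delbar^*G+\delbar^*\,\delbar\,G=\id-H_\delbar$, the Maurer and Cartan equation \eqref{eq:MC} for a section $s$ becomes equivalent to the pair of conditions
\begin{equation*}
 s \;=\; \mu + \frac{1}{2}\, \delbar^*\, G\left[s,\, s\right] \qquad \text{ and } \qquad H_\delbar\left[s,\, s\right] \;=\; 0 \;,
\end{equation*}
where $\mu:=H_\delbar s\in H^{0,1}\left(X;\,\Theta_X\right)$ is the harmonic part: the first equation normalises $s$, while the second is the genuine integrability obstruction.

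The core analytic step is to solve the non-linear, non-local equation $s=\mu+\frac{1}{2}\delbar^*G\left[s,\,s\right]$ for every small harmonic datum $\mu$. I would work in a suitable Banach completion --- e.g. the Hölder spaces $\mathcal{C}^{k,\alpha}$ or the Sobolev spaces used throughout --- and apply the implicit function theorem (equivalently, a contraction argument producing the convergent power series $s(\tempo)=\sum_k s_k(\tempo)$ introduced above), relying on the elliptic estimate $\norma{\delbar^*G\,\eta}{k+1,\alpha}\lesssim\norma{\eta}{k,\alpha}$ for $\overline\square$ and on the continuity of the bracket $\left[\sspace,\,\ssspace\right]$ in these norms. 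This yields, for $\mu$ in a small ball, a unique solution $s(\mu)$ depending holomorphically on $\mu$, with $s(0)=0$ and linear part $\mu$; elliptic regularity (bootstrapping on $\overline\square s = \cdots$) then upgrades $s(\mu)$ to a smooth section. A short computation, using the Hodge identity and the fact that $\left[s,\,s\right]$ is $\delbar$-closed once $s$ solves the normalised equation, shows that $s(\mu)$ actually satisfies \eqref{eq:MC} precisely when $H_\delbar\left[s(\mu),\,s(\mu)\right]=0$, that is, exactly when $\mathrm{obs}(\mu)=0$.

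Having this, I would define the Kuranishi germ $B:=\left\{\mu\in H^{0,1}\left(X;\,\Theta_X\right) \st \norma{\mu}{}<\varepsilon,\ \mathrm{obs}(\mu)=0\right\}$, an analytic subset of a neighbourhood of $0$ (the zero locus of the convergent holomorphic map $\mathrm{obs}$ valued in $H^{0,2}\left(X;\,\Theta_X\right)$), and let $\mathcal{X}\to B$ be the family whose fibre over $\mu$ is $X$ equipped with the integrable almost-complex structure determined by $s(\mu)$; properness and smoothness of the projection follow from the construction and Ehresmann's theorem. To prove versality I would compute the Kodaira--Spencer map of this family at $0$: since the linear part of $s(\mu)$ is $\mu$ itself, this map is the identity of $H^{0,1}\left(X;\,\Theta_X\right)\simeq T_0B$, in particular an isomorphism. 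Versality then follows from the completeness criterion of the Kodaira--Spencer theory (surjectivity of the Kodaira--Spencer map forces completeness, and the isomorphism pins down the differential of the classifying map), as developed in \cite{kodaira-spencer-3, kuranishi-annals}.

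The main obstacle is entirely the analytic one: establishing the convergence of the formal series $s(\tempo)$ --- equivalently the applicability of the implicit function theorem --- in a Banach space adapted to $\overline\square$, which rests on delicate Schauder (or $\mathrm{L}^2$-Sobolev) estimates and on controlling the quadratic non-linearity $\left[\sspace,\,\ssspace\right]$ composed with $\delbar^*G$ uniformly in $\mu$. By contrast, the subsequent elliptic regularity and the identification of the integrable locus with $\left\{\mathrm{obs}=0\right\}$ are formal consequences of the Hodge decomposition, and versality reduces to the functoriality of the Kodaira--Spencer map.
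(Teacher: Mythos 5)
Your construction of the Kuranishi family is, up to repackaging, exactly the route the paper itself sketches before stating the theorem: your fixed-point equation $s=\mu+\frac{1}{2}\,\delbar^*\,G\left[s,\,s\right]$ (sign conventions aside) is the resummed form of the paper's recursion $\delbar s_k(\tempo)=-P_\delbar\left(\sum_{1\leq j\leq k-1}\left[s_j(\tempo),\,s_{k-j}(\tempo)\right]\right)$, since $P_\delbar=\delbar\,\delbar^*\,G$ on this complex, and the convergence and regularity step you propose via the implicit function theorem in H\"older or Sobolev norms is precisely the step the paper delegates to \cite[\S5.3]{kodaira}, \cite[\S2.3]{kodaira-morrow}. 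One imprecision worth fixing: $\left[s,\,s\right]$ is \emph{not} $\delbar$-closed merely because $s$ solves the normalised equation; the correct argument sets $\delta:=\delbar s+\frac{1}{2}\left[s,\,s\right]$, uses the graded Jacobi identity ($\left[\left[s,\,s\right],\,s\right]=0$) to obtain $\delta=\frac{1}{2}\,H_\delbar\left[s,\,s\right]+\delbar^*\,G\left[\delta,\,s\right]$, and then a smallness estimate to conclude that $\delta=0$ if and only if $\mathrm{obs}(\mu)=0$. This becomes your claimed equivalence only a posteriori.

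The genuine gap is in the versality step, which is exactly the content of Kuranishi's theorem beyond the construction of the family. The Kodaira--Spencer theorem of completeness, which you invoke, applies to complex-analytic families over a \emph{smooth} base; the germ $B=\left\{\mu \st \left\|\mu\right\|\ll 1,\ \mathrm{obs}(\mu)=0\right\}$ is in general a singular analytic subspace of $H^{0,1}\left(X;\,\Theta_X\right)$ (it is smooth only when, e.g., $H^{0,2}\left(X;\,\Theta_X\right)=\{0\}$ or $X$ is otherwise non-obstructed), so surjectivity of the Kodaira--Spencer map at $0$ does not by itself yield completeness, let alone versality, of the family over $B$. What is needed --- and what Kuranishi actually proves --- is a direct construction of the classifying map: given any complex-analytic family $\pi'\colon\mathcal{X}'\to B'$ of deformations of $X$, one represents its fibres by sections $s'(t)$ solving the Maurer and Cartan equation and then gauge-fixes by the diffeomorphism group (an implicit-function-theorem argument on a Banach completion of $\mathrm{Diff}(X)$) to bring $s'(t)$ into the slice $\delbar^*s=0$; the normalised small solutions of Maurer--Cartan being exactly the $s(\mu)$ with $\mu\in B$, the assignment $f(t):=H_\delbar s'(t)$ lands in $B$, pulls back the Kuranishi family, and the slice condition pins down $\de f$ at the distinguished point. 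Without this gauge-fixing argument your plan establishes the existence of the family parametrized by $\left\{\mathrm{obs}=0\right\}$, but not its versality.
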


Fixed a Hermitian metric on $X$, such a family of deformations, which is called the \emph{Kuranishi space} $\mathrm{Kur}(X)$ of $X$, is parametrized by
 $$ \mathrm{Kur}(X) \;=\; \left\{\mu \in H^{0,1}\left(X;\, \Theta_X\right) \st \left\|\mu\right\| \ll 1,\; \mathrm{obs}(\mu)=0 \right\} \;.$$

\begin{rem}
 A compact complex manifold $X$ is called \emph{non-obstructed} if $\mathrm{Kur}(X)$ is non-singular.
 In particular, if $H^{0,2}\left(X;\, \Theta_X\right)=\{0\}$, then $X$ is non-obstructed.
 There are other interesting cases in which the Kuranishi space turns out to be non-singular: as announced by F.~A. Bogomolov, \cite{bogomolov-cy}, and proven by G. Tian, \cite{tian-cy}, and, independently, by A.~N. Todorov, \cite[Theorem 1]{todorov-cy}, this happens for \emph{Calabi-Yau manifolds} (that is, compact complex manifolds $X$ of complex dimension $n$ endowed with a K\"ahler structure $\left(J,\, \omega,\, g\right)$ and with a nowhere vanishing $\epsilon\in\wedge^{n,0}X$ such that
 \begin{inparaenum}[\itshape (i)]
   \item $\nabla^{LC}\epsilon=0$,where $\nabla^{LC}$ denotes the Levi Civita connection associated to $g$, and
   \item $\epsilon\wedge\bar\epsilon=\left(-1\right)^{\frac{n(n+1)}{2}}\,\im^n\,\frac{\omega^n}{n!}$
 \end{inparaenum}).
 In \cite{debartolomeis-tomassini-2}, P. de Bartolomeis and A. Tomassini introduced the notion of \emph{quantum inner state manifold}, \cite[Definition 2.2]{debartolomeis-tomassini-2}, as a possible generalization of Calabi-Yau manifolds, proving that, under a suitable hypothesis, the moduli space of quantum inner state deformations of a compact Calabi-Yau manifold is totally unobstructed, \cite[Theorem 3.6]{debartolomeis-tomassini-2}.
 On the other hand, in \cite{rollenske-jems}, S. Rollenske studied the Kuranishi space of holomorphically parallelizable nilmanifolds, proving that it is cut out by polynomial equations of degree at most equal to the step of nilpotency of the nilmanifold, \cite[Theorem 4.5]{rollenske-jems}, and it is smooth if and only if the associated Lie algebra is a free $2$-step nilpotent Lie algebra, \cite[Corollary 4.9]{rollenske-jems}.
\end{rem}

\medskip

It could be interesting to study what properties are, in a sense, compatible with the construction of small deformations of the complex structure. In such a context, a property $\mathcal{P}$ concerning compact complex manifolds is called \emph{open under (holomorphic) deformations of the complex structure} (or \emph{stable under small deformations of the complex structure}) if, for every complex-analytic family $\left\{X_t\right\}_{t\in B}$ of compact complex manifolds, and for every $b_0\in B$, if $X_{b_0}$ has the property $\mathcal{P}$, then $X_b$ has the property $\mathcal{P}$ for every $b$ in an open neighbourhood of $b_0$; it is called \emph{closed under (holomorphic) deformations of the complex structure} if, for every complex-analytic family $\left\{X_t\right\}_{t\in B}$ of compact complex manifolds, and for every converging sequence $\left\{b_k\right\}_{k\in\N} \subset B$ with $b_\infty:=\lim_{k\to+\infty}b_k\in B$, if $X_{b_k}$ has the property $\mathcal{P}$ for every $k\in\N$, then $X_{b_\infty}$ 
has the property $\mathcal{P}$.

We recall here the following classical result by K. Kodaira and D.~C. Spencer, stating that admitting a K\"ahler metric is a stable property under deformations of the complex structure.

\begin{thm}[{\cite[Theorem 15]{kodaira-spencer-3}}]
 Let $\left\{X_t\right\}_{t\in B}$ be a differentiable family of compact complex manifolds. If $X_{t}$ admits a K\"ahler metric for some $t\in B$, then $X_s$ admits a K\"ahler metric for every $s$ in an open neighbourhood of $t$ in $B$. Moreover, given any K\"ahler metric $\omega$ on $X_t$, one can choose an open neighbourhood $U$ of $t$ in $B$ and a K\"ahler metric $\omega_s$ on $X_s$ for any $s\in U$ such that $\omega_s$ depends differentiably in $s$ and $\omega_t=\omega$.
\end{thm}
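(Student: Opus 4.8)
The plan is to follow the Hodge-theoretic strategy of K.~Kodaira and D.~C. Spencer. By the Ehresmann theorem recalled above, after shrinking $B$ we may regard $\left\{X_t\right\}$ as a family of integrable almost-complex structures $\left\{J_t\right\}$ on a single compact differentiable manifold $M$, so that the Betti numbers $b_k$ are independent of $t$. Fix a $\mathcal{C}^\infty$ family of Hermitian metrics $g_t$ on $X_t$ with $g_{t_0}$ the chosen K\"ahler metric, write $\overline\square_t$, $\square_t$, $\Delta_t$ for the associated Laplacians, and set $\mathcal{H}^{p,q}_t:=\ker\overline\square_t\lfloor_{\wedge^{p,q}X_t}$, so that $\dim_\C\mathcal{H}^{p,q}_t=h^{p,q}_{\delbar}(X_t)$.

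First I would prove that the Hodge numbers are locally constant in the relevant degrees. Upper-semi-continuity of $t\mapsto h^{p,q}_{\delbar}(X_t)$ (from the theory of elliptic operators depending on a parameter) gives $h^{p,q}_{\delbar}(X_t)\leq h^{p,q}_{\delbar}(X_{t_0})$ for $t$ near $t_0$; the Fr\"olicher inequality gives $b_2=b_2(X_t)\leq\sum_{p+q=2}h^{p,q}_{\delbar}(X_t)$; and since $X_{t_0}$ is K\"ahler the Hodge decomposition theorem gives $b_2=\sum_{p+q=2}h^{p,q}_{\delbar}(X_{t_0})$. Chaining these forces all inequalities to be equalities, so $h^{p,q}_{\delbar}(X_t)=h^{p,q}_{\delbar}(X_{t_0})$ for $p+q=2$, and the same argument in the adjacent degrees yields $E_1$-degeneration of the Fr\"olicher spectral sequence near $t_0$. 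The decisive analytic consequence --- and the step I expect to be the main obstacle --- is that, the dimensions $\dim_\C\mathcal{H}^{p,q}_t$ being now constant, the $\overline\square_t$-harmonic projection and the Green operator $G_t$ depend smoothly on $t$; this is Kodaira and Spencer's regularity result, and it fails without constancy of the dimensions (exactly where semicontinuity is allowed to jump).

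Next I would construct the deformed K\"ahler form. The K\"ahler form $\omega:=\omega_{t_0}$ is a $\de$-closed real $(1,1)$-form, hence $\overline\square_{t_0}$-harmonic for $g_{t_0}$; using the smooth variation of $\mathcal{H}^{1,1}_t$ I would extend it to a $\mathcal{C}^\infty$ family $\alpha_t\in\mathcal{H}^{1,1}_t$ with $\alpha_{t_0}=\omega$. Such an $\alpha_t$ is $\delbar_t$-closed but in general neither real nor $\del_t$-closed; the key observation is that a real $(1,1)$-form $\beta=\overline{\beta}$ satisfies $\del_t\beta=\overline{\delbar_t\overline{\beta}}=\overline{\delbar_t\beta}$, so for such forms $\delbar_t$-closedness already forces $\del_t$-closedness. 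It therefore suffices to repair $\alpha_t$ into a real, $\de_t$-closed $(1,1)$-form: I would pass to $\beta_t:=\frac{1}{2}\left(\alpha_t+\overline{\alpha_t}\right)$ and correct its failure to be closed, using that the $E_1$-degeneration makes $\del_t\alpha_t$ be $\delbar_t$-exact, so that applying $G_t$ produces a smooth primitive vanishing at $t_0$. Assembling these into a real exact correction $\de_t\eta_t$ (with $\eta_t$ smooth, $\eta_{t_0}=0$) gives a $\de_t$-closed real $(1,1)$-form $\omega_t:=\beta_t+\de_t\eta_t$ depending smoothly on $t$ with $\omega_{t_0}=\omega$.

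Finally, positivity is an open condition: since $\omega_{t_0}=\omega$ is positive and $t\mapsto\omega_t$ is $\mathcal{C}^0$-continuous, $\omega_t$ remains positive for $t$ in a neighbourhood $U$ of $t_0$, so $\omega_t$ is a K\"ahler form on $X_t$ for every $t\in U$, smoothly dependent on $t$ and reducing to $\omega$ at $t_0$, as required. The genuine difficulties are concentrated in the smooth dependence of the harmonic projections --- which rests entirely on the constancy of the Hodge numbers established in the second step --- and in the simultaneous achievement of closedness, reality, smoothness, and positivity for the corrected form.
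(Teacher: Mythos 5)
Your first step is sound and matches the classical setup: near a K\"ahler fibre the chain $b_2 \leq \sum_{p+q=2} h^{p,q}_{\delbar}(X_t) \leq \sum_{p+q=2} h^{p,q}_{\delbar}(X_{t_0}) = b_2$ (upper-semi-continuity plus Fr\"olicher plus Hodge decomposition) does force the Hodge numbers to be locally constant, and then the harmonic projectors and Green operators for $\overline\square_t$ vary smoothly by Kodaira--Spencer's theory of differentiable families of elliptic operators. The proof breaks down at the repair step. First, a computational error: adding an exact correction cannot restore closedness, since $\de\left(\beta_t + \de\eta_t\right) = \de\beta_t$, which is nonzero exactly when $\beta_t$ failed to be closed; moreover $\de\eta_t$ has $(2,0)$ and $(0,2)$ components, so $\omega_t$ would not even stay of type $(1,1)$. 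Second, and more fundamentally, $E_1$-degeneration is too weak for what you need. It does give you, via the Green operator, a smooth $\gamma_t \in \wedge^{2,0}_{J_t}X$ with $\delbar_t\gamma_t = \del_t\alpha_t$ (and, using degeneration again at $E_2$, one may arrange $\del_t\gamma_t = 0$), but the best closed object you can then build is $\left(\alpha_t - \gamma_t\right) + \overline{\left(\alpha_t - \gamma_t\right)}$: a real $\de$-closed $2$-form whose $J_t$-anti-invariant tail $-\gamma_t - \overline{\gamma_t}$ does not vanish. Such a form \emph{tames} $J_t$ (anti-invariant forms vanish on pairs $\left(v,\, J_t v\right)$) but is not of type $(1,1)$, so it is not a K\"ahler form --- and passing from a taming to a compatible form is precisely the open ``tamed versus compatible'' problem discussed elsewhere in this paper, so no soft argument can close this gap.

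What is actually needed, and what Kodaira and Spencer use (this is why the paper recalls that the \kth{4} order self-adjoint elliptic operator $\tilde\Delta_{BC}$ of \cite[Proposition 5]{kodaira-spencer-3} was introduced exactly to prove this theorem), is control of the \emph{Bott-Chern} theory rather than the Dolbeault theory in bidegree $(1,1)$: the kernel of $\tilde\Delta_{BC,t}$ on $(1,1)$-forms consists of forms that are simultaneously $\del_t$-closed and $\delbar_t$-closed, and on the K\"ahler fibre $\tilde\Delta_{BC} = \overline\square^2 + \del^*\del + \delbar^*\delbar$, so $\omega \in \ker\tilde\Delta_{BC,t_0}$. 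One then shows $\dim_\C\ker\tilde\Delta_{BC,t}\lfloor_{\wedge^{1,1}}$ is locally constant near $t_0$ --- for instance, within the machinery of this paper, the $\del\delbar$-Lemma holds on $X_{t_0}$ and is stable (Corollary \ref{cor:stab-del-delbar-lemma}, proved independently via Theorem \ref{thm:caratterizzazione-bc-numbers} and Schweitzer's semi-continuity), whence $h^{1,1}_{BC}(X_t) = h^{1,1}_{\delbar}(X_t)$ is constant --- so the $\tilde\Delta_{BC,t}$-harmonic projector (or the projector onto low eigenspaces) varies smoothly, and projecting $\omega$ yields directly a smooth family of real $\de$-closed $(1,1)$-forms equal to $\omega$ at $t_0$; only then is your final observation, that positivity is an open condition, enough to conclude. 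Equivalently, with the $\del\delbar$-Lemma on nearby fibres you may solve $\del_t\alpha_t = \del_t\delbar_t\mu_t$ and replace $\alpha_t$ by $\alpha_t - \delbar_t\mu_t$, which is $\de$-closed of type $(1,1)$; your purely $\overline\square_t$-harmonic extension plus an exact correction cannot achieve this.
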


\begin{rem}
 In \cite{hironaka}, it is proven that admitting a K\"ahler structure is not a closed property under deformations of the complex structure: in fact, H. Hironaka provided an explicit example of a complex-analytic family of compact complex manifolds of complex dimension $3$ such that 
  \begin{inparaenum}[\itshape (i)]
    \item one of the complex manifold is non-K\"ahler (indeed, it carries a positive $1$-cycle algebraically equivalent to zero), and
    \item the others are K\"ahler and, in fact, bi-regularly embedded in a projective space (and hence projective, \cite[Theorem 11]{moishezon-transl}),
  \end{inparaenum}
\cite[Theorem]{hironaka}.
(Note that, in complex dimension $2$, the K\"ahler property is also closed under small deformations of the complex structure, since a compact complex surface is K\"ahler if and only if its \kth{1} Betti number is even, by \cite{kodaira-structure-I, miyaoka, siu}, or \cite[Corollaire 5.7]{lamari}, or \cite[Theorem 11]{buchdahl}.)
It is not known whether the limit of compact K\"ahler manifolds admits some special structure;
J.-P. Demailly and M. P\v{a}un conjectured that, given a complex-analytic family $\left\{X_t\right\}_{t\in S}$ of compact complex manifolds such that one of the fibers, $X_{t_0}$, is endowed with a K\"ahler structure, then there exists a countable union $S'\subsetneq S$ of analytic subsets in the base such that $X_t$ admits a K\"ahler structure for $t\in S\setminus S'$, \cite[Conjecture 5.1]{demailly-paun}; they also guessed that a ``natural expectation'' is that the remaining fibres, $X_t$ for $t\in S'$, are in class $\mathcal{C}$ of Fujiki, \cite[page 1272]{demailly-paun}.
In \cite{popovici-proj, popovici-moish}, D. Popovici studied limits of projective, respectively Mo\v{\i}\v{s}ezon manifolds under holomorphic deformations of complex structures, stating, in particular, (by means of a class of Hermitian metrics called strongly-Gauduchon metrics,) that the limit of Mo\v{\i}\v{s}ezon manifolds is still Mo\v{\i}\v{s}ezon. C. LeBrun and Y.~S. Poon \cite{lebrun-poon}, and F. Campana \cite{campana} showed that being in class $\mathcal{C}$ of Fujiki is not a stable property under small deformations of the complex structures, \cite[Theorem 1]{lebrun-poon}, \cite[Corollary 3.13]{campana}, studying twistor spaces. It is conjectured that being in class $\mathcal{C}$ of Fujiki is a closed property under deformations of the complex structure, see, e.g., \cite[Standard Conjecture 1.17]{popovici-openness}.
\end{rem}

We refer to \cite{popovici-openness} for a review on the behaviour under holomorphic deformations of properties concerning, e.g., the existence of various types of Hermitian metrics on compact complex manifolds. See also Corollary \ref{cor:stab-del-delbar-lemma}, Theorem \ref{thm:instability} for some results concerning stability or instability of special properties of complex manifolds, and Theorem \ref{thm:instability-almK}, Theorem \ref{thm:para-kahler-deformations} for other instability results for almost-complex or \para-complex manifolds.

\section{Currents and de Rham homology}\label{sec:currents}
In this section, we recall the basic notions and results concerning currents on (differentiable) manifolds and de Rham homology: they turn out to be a useful tool to study the geometry of complex manifolds (as an example, we recall F.~R. Harvey and H.~B. Lawson's intrinsic characterization of K\"ahler manifolds by means of currents, \cite[Proposition 12, Theorem 14]{harvey-lawson}, or M.~L. Michelsohn's intrinsic characterization of balanced manifolds by means of currents \cite[Theorem 4.7]{michelsohn}, see also Theorem \ref{thm:caratterizzazione-intrinseca}, or J.~P. Demailly and M. P\v{a}un's characterization of compact complex manifolds in class $\mathcal{C}$ of Fujiki by means of K\"ahler currents \cite[Theorem 3.4]{demailly-paun}). We refer, e.g., to \cite[Chapter 3]{derham}, \cite[\S I.2]{demailly-agbook}, and \cite{federer} (see also \cite{alessandrini-correnti-1, alessandrini-correnti-2}) for further details.

\medskip

Let $X$ be a $m$-dimensional oriented differentiable manifold.

For every compact set $L\subseteq X$ and for every $s\in\N$, define the semi-norm $\rho^s_L$ on $\wedge^\bullet X$ as follows:
chosen $\left(U,\, \left\{x^j\right\}_{j\in\{1,\ldots,m\}}\right)$ a coordinate chart with $U\supset L$, and given
$$ \varphi \;\stackrel{\text{loc}}{:=:}\; \sum_{\substack{\left\{i_1,\ldots,i_k\right\}\subseteq\left\{1,\ldots,m\right\}\\i_1<\cdots<i_k}} \varphi_I\,\de x^{i_1}\wedge\cdots\wedge\de x^{i_k} \;\in\; \wedge^\bullet X\;, $$
set
$$ \rho^s_L(\varphi) \;:=\; \sup_L \, \sup_{\substack{\left\{i_1,\ldots,i_k\right\}\subseteq\left\{1,\ldots,m\right\}\\i_1<\cdots<i_k}} \, \sup_{\substack{\left(\alpha_1,\ldots,\alpha_m\right)\in\N^m\\\alpha_1+\cdots+\alpha_m\leq s}} \, \left|\frac{\del^{\alpha_1+\cdots+\alpha_m}\varphi_I}{\left(\del x^{1}\right)^{\alpha_1}\cdots \left(\del x^{m}\right)^{\alpha_m}}\right| \;\in\; \R \;. $$
Consider $\wedge^{\bullet}X$ endowed with the topology induced by the family of semi-norms $\rho_{L}^s$, varying $L$ among the compact sets in $X$, and $s\in\N$: the manifold $X$ being second-countable, $\wedge^\bullet X$ has a structure of a Fréchet space.
Let $\wedge^\bullet_{\text{c}}X$ be the topological subspace of $\wedge^\bullet X$ consisting of differential forms with compact support in $X$.

\medskip

For any $k\in\N$, the space of \emph{currents} of dimension $k$ (or degree $m-k$), denoted by
$$ \correnti_kX\;:=:\;\correnti^{m-k}X \;,$$
is defined as the topological dual space of $\wedge^k_{\text{c}}X$; the space $\correnti_\bullet X$ is endowed with the weak-$*$ topology.

Two basic examples of currents are the following.
\begin{itemize}
 \item If $Z$ is a (possibly non-closed) $k$-dimensional oriented compact submanifold of $X$, then
  $$ [Z] \;:=\; \int_Z \sspace \;\in\; \correnti_k X $$
  is a current of dimension $k$.
 \item If $\varphi\in\wedge^kX$, then
  $$ T_\varphi \;:=\; \int_X \varphi \wedge \sspace \;\in\; \correnti^kX $$
  is a current of degree $k$.
\end{itemize}

\medskip

The exterior differential $\de\colon\wedge^\bullet X\to \wedge^{\bullet+1}X$ induces a differential on $\correnti_\bullet X$ by duality:
$$ \de\colon \correnti_{\bullet}X\to\correnti_{\bullet-1}X $$
is defined, for every $T\in\correnti^kX$, as
$$ \de T \;:=\; \left(-1\right)^{k+1} T\left(\de\sspace\right) \;.$$

In particular, if $Z$ is a $k$-dimensional oriented closed submanifold of $X$, then $\de\left[Z\right]=\left(-1\right)^{m-k+1}\left[\bordo Z\right]$, where $\bordo$ is the boundary operator; if $\varphi\in\wedge^kX$, then $\de T_\varphi=T_{\de\varphi}$.

\medskip

By definition, the \emph{de Rham homology} $H^{dR}_\bullet(X;\R)$ of $X$ is the homology of the differential complex $\left(\correnti_\bullet X,\,\de\right)$.
By means of a regularization process, \cite[Theorem 12]{derham}, (see also \cite[\S 2.D.3, \S 2.D.4]{demailly-agbook},) one can prove, \cite[Theorem 14]{derham}, that
$$ H^\bullet_{dR}(X;\R) \;\simeq\; H^{dR}_{2n-\bullet}(X;\R) \;.$$

Since, for every $k\in\N$, the sheaf $\mathcal{A}^k_X$ is a sheaf of $\mathcal{C}^\infty_X$-module over a paracompact space (where $\mathcal{C}^\infty_X$ denotes the sheaf of germs of smooth functions over $X$), and by the Poincaré lemma for forms, see, e.g., \cite[I.1.22]{demailly-agbook}, one has that
$$ 0 \to \underline{\R}_{X} \to \left(\mathcal{A}^\bullet_{X},\, \de\right) $$
is a fine (and hence acyclic, see, e.g., \cite[Corollary IV.4.19]{demailly-agbook}) resolution of the constant sheaf $\underline{\R}_{X}$, and hence
$$ \check H^\bullet\left(X;\underline{\R}_X\right) \;\simeq\; \frac{\ker\left(\de\colon \wedge^\bullet X \to \wedge^{\bullet+1}X\right)}{\imm\left(\de\colon \wedge^{\bullet-1} X \to \wedge^{\bullet}X\right)} \;=:\; H^\bullet_{dR}\left(X;\R\right) \;, $$
see, e.g., \cite[IV.6.4]{demailly-agbook}.

Analogously, the regularization process \cite[Theorem 12]{derham} allows to prove the analogue of the Poincaré lemma for currents, see, e.g., \cite[Theorem I.2.24]{demailly-agbook}, and hence, the sheaf $\mathcal{D}^k_X$ being fine for every $k\in\N$ since it is a sheaf of $\mathcal{C}^\infty_X$-module over a paracompact space, one has that
$$ 0 \to \underline{\R}_{X} \to \left(\mathcal{D}^\bullet_{X},\, \de\right) $$
is a fine (and hence acyclic, see, e.g., \cite[Corollary IV.4.19]{demailly-agbook}) resolution of the constant sheaf $\underline{\R}_{X}$ over $X$, and hence
$$ \check H^\bullet\left(X;\underline{\R}_X\right) \;\simeq\; \frac{\ker\left(\de\colon \correnti^\bullet X \to \correnti^{\bullet+1}X\right)}{\imm\left(\de\colon \correnti^{\bullet-1} X \to \correnti^{\bullet}X\right)} \;=:\; H_{2n-\bullet}^{dR}\left(X;\R\right) \;, $$
see, e.g., \cite[IV.6.4]{demailly-agbook}.

If $X$ is compact, then it follows that the map $T_\sspace \colon \wedge^\bullet X \to \correnti^\bullet X$ is injective and a quasi-isomorphism of differential complexes: indeed, fixed a Riemannian metric $g$ on $X$, if $\alpha$ is a $\Delta$-harmonic form (i.e., a $\de$-closed $\de^*$-closed form), then $T_\alpha(*\alpha)=\left\|\alpha\right\|^2$.

\medskip

Suppose now that $X$ is a $2n$-dimensional manifold endowed with an almost-complex structure $J\in\End(TX)$. Considering the induced endomorphisms $J\in\End\left(\wedge^\bullet X\right)$ and $J\in\End\left(\wedge^\bullet_{\text{c}} X\right)$, one can define $J\in\End\left(\correnti^\bullet X\right)$ by duality. In the same way as $J\in\End\left(\wedge^\bullet X\right)$ defines a bi-graduation on $\wedge^\bullet X\otimes\C$, one has that $J\in\End\left(\correnti^\bullet X\right)$ defines the splitting
$$ \correnti_\bullet X \otimes \C \;=\; \bigoplus_{p,q\in\N} \correnti_{p,q}X \;;$$
note that $\correnti_{p,q}X:=:\correnti^{n-p,n-q}X$ is the topological dual of $\wedge^{p,q}X\cap \left(\wedge^\bullet_{\text{c}}X\otimes_\R \C\right)$, for every $p,q\in\N$.

\section{Solvmanifolds}\label{sec:solvmanifolds}
Nilmanifolds and solvmanifolds provide an important class of examples in non-K\"ahler geometry. Indeed, on the one hand, in studying their properties, one often can reduce to study left-invariant objects on them, which is the same to study linear objects on the corresponding Lie algebra (this allows, for example, to reduce the study of the de Rham cohomology of a nilmanifold to the study of the cohomology of a complex of finite-dimensional vector spaces, \cite[Theorem 1]{nomizu}); on the other hand, they do not admit too strong structures, e.g., they do not admit any K\"ahler structure. 

In this section, we recall the main definitions and results concerning the theory of nilmanifolds and solvmanifolds, setting also the notation for the following chapters.

\medskip

A \emph{nilmanifold}, respectively \emph{solvmanifold}, $X=\left.\Gamma\right\backslash G$ is a compact quotient of a connected simply-connected nilpotent, respectively solvable, Lie group $G$ by a co-compact discrete subgroup $\Gamma$. A solvmanifold $X=\left.\Gamma\right\backslash G$ is called \emph{completely-solvable} if, for any $g\in G$, all the eigenvalues of $\mathrm{Ad} g\in\mathrm{End}(\mathfrak{g})$ are real, equivalently, if, for any $X\in\mathfrak{g}$, all the eigenvalues of $\mathrm{ad} X\in\mathrm{End}(\mathfrak{g})$ are real.

\medskip

Given a $2n$-dimensional solvmanifold $X=\left.\Gamma\right\backslash G$, consider $\left(\mathfrak{g},\left[\sspace,\ssspace\right]\right)$ the Lie algebra naturally associated to the Lie group $G$; given a basis $\left\{e_1,\ldots,e_{2n}\right\}$ of $\mathfrak{g}$, the Lie algebra structure of $\mathfrak{g}$ is characterized by the \emph{structure constants} $\left\{c_{\ell m}^k\right\}_{\ell,m,k\in\{1,\ldots,2n\}}\subset\R$: namely, for any $k\in\left\{1,\ldots,2n\right\}$,
$$ \de_{\mathfrak{g}} e^k =: \sum_{\ell,m} c_{\ell m}^k \, e^\ell\wedge e^m \;, $$
where $\left\{e^1,\ldots,e^{2n}\right\}$ is the dual basis of $\duale{\mathfrak{g}}$ of $\left\{e_1,\ldots,e_{2n}\right\}$ and $\de_{\mathfrak{g}}\colon\duale{\mathfrak{g}}\to \wedge^2\duale{\mathfrak{g}}$ is defined by
$$ \duale{\mathfrak{g}} \; \ni \; \alpha \mapsto \de_{\mathfrak{g}}\alpha(\sspace,\,\ssspace):=-\alpha\left(\left[\sspace,\,\ssspace\right]\right) \;\in\; \wedge^2\duale{\mathfrak{g}} \;.$$

To shorten the notation, as in \cite{salamon}, we will refer to a given solvmanifold $X=\left.\Gamma\right\backslash G$ writing the structure equations of its Lie algebra: for example, writing
$$ X \;:=\; \left(0^4,\; 12,\; 13\right)\;, \qquad \text{ (or } \mathfrak{g} \;:=\; \left(0^4,\; 12,\; 13\right)\;, \text{)} $$
we mean that $X=\left.\Gamma\right\backslash G$ and there exists a basis of the Lie algebra $\mathfrak{g}$ naturally associated to $G$, let us say $\left\{e_1,\,\ldots,\,e_6\right\}$, whose dual will be denoted by $\left\{e^1,\,\ldots,\,e^6\right\}$, such that the structure equations with respect to such basis are
$$
\left\{
\begin{array}{l}
 \de e^1 \;=\; \de e^2 \;=\; \de e^3 \;=\; \de e^4 \;=\; 0 \\[5pt]
 \de e^5 \;=\; e^{1}\wedge e^{2} \;=:\; e^{12} \\[5pt]
 \de e^6 \;=\; e^{1}\wedge e^{3} \;=:\; e^{13}
\end{array}
\right. \;,
$$
where we also shorten $e^{AB}:=e^A\wedge e^B$.

\medskip

The following theorem by A.~I. Mal'tsev characterizes the nilpotent Lie algebras $\mathfrak{g}$ for which the naturally associated connected simply-connected Lie group admits a co-compact discrete subgroup, and hence such that there exists a nilmanifold with $\mathfrak{g}$ as Lie algebra.

\begin{thm}[{\cite[Theorem 7]{malcev}}]
 In order that a simply-connected connected nilpotent Lie group contain a discrete co-compact Lie group it is necessary and sufficient that the Lie algebra of this group have rational constant structures with respect to an appropriate basis.
\end{thm}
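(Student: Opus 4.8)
The plan is to prove both implications by exploiting two structural features of a simply-connected connected nilpotent Lie group $G$ with Lie algebra $\mathfrak{g}$: the exponential map $\exp\colon\mathfrak{g}\to G$ is a diffeomorphism, and, by nilpotency, the Baker--Campbell--Hausdorff series
$$ \exp(X)\cdot\exp(Y) \;=\; \exp\left(X+Y+\tfrac{1}{2}[X,Y]+\cdots\right) $$
terminates after finitely many terms and has rational coefficients. Writing $X*Y$ for the argument of $\exp$ on the right-hand side, this transports the group law of $G$ to a polynomial group law $*$ on $\mathfrak{g}$ whose coefficients are rational precisely when the structure constants of $\mathfrak{g}$ are rational.

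For sufficiency, suppose $\mathfrak{g}$ admits a basis $\left\{X_1,\dots,X_N\right\}$ (where $N:=\dim_\R\mathfrak{g}$) with rational structure constants; I would moreover choose it adapted to the lower central series, which is rationally defined. First I would clear denominators: since $X*Y$ is a finite polynomial with rational coefficients and each monomial of positive degree $j$ carries $j$ brackets, rescaling the basis by a sufficiently divisible integer $m$ produces a full $\Z$-lattice $L:=\bigoplus_k \Z\,(m\,X_k)\subset\mathfrak{g}$ closed under $*$, because the factor $m^j$ attached to a degree-$j$ term eventually dominates the bounded denominators. Then $\Gamma:=\exp(L)$ is a subgroup of $G$; it is discrete because $\exp$ is a diffeomorphism and $L$ is discrete, and it is co-compact by introducing Mal'cev coordinates of the second kind (writing each $g\in G$ uniquely as $\exp(a_1 X_1)\cdots\exp(a_N X_N)$), which exhibit a compact fundamental domain $\left\{0\leq a_k\leq 1\right\}$ for $\Gamma$ in $G$.

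For necessity, suppose $\Gamma\subset G$ is a co-compact discrete subgroup. I would argue along the lower central series $\mathfrak{g}=\mathfrak{g}^{(1)}\supset\mathfrak{g}^{(2)}\supset\cdots\supset\mathfrak{g}^{(s+1)}=0$, with $\mathfrak{g}^{(i+1)}=\left[\mathfrak{g},\mathfrak{g}^{(i)}\right]$, and the corresponding connected subgroups $G=G_1\supset G_2\supset\cdots$. The key lemma is that $\Gamma_i:=\Gamma\cap G_i$ is co-compact in $G_i$ for every $i$; this is proved by downward induction, using that each quotient $G_i/G_{i+1}$ is a vector group in which the image of $\Gamma_i$ is a lattice. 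Choosing $\gamma_1,\dots,\gamma_N\in\Gamma$ adapted to this filtration (a Mal'cev basis) and setting $X_k:=\log\gamma_k$ gives a basis of $\mathfrak{g}$. Inverting the Baker--Campbell--Hausdorff formula on the group commutators $\left[\gamma_j,\gamma_k\right]\in\Gamma$---whose logarithms therefore lie in the $\Q$-span $\mathfrak{g}_\Q$ of the $X_\ell$---and inducting on the central series shows that each bracket $\left[X_j,X_k\right]$ is a $\Q$-linear combination of the $X_\ell$; hence $\mathfrak{g}_\Q$ is a rational form of $\mathfrak{g}$, and the structure constants in this basis are rational.

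The main obstacle lies in the necessity direction, precisely in establishing that $\Gamma\cap G_i$ is co-compact in each term $G_i$ of the central series: without this, the logarithms of the chosen lattice elements could span only a proper rational subspace of $\mathfrak{g}$, and the inductive Baker--Campbell--Hausdorff bookkeeping that converts the integral group-commutator relations into rational Lie brackets---and simultaneously guarantees that $\mathfrak{g}_\Q$ has full dimension over $\Q$---would break down.
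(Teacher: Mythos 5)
The paper itself does not prove this statement: it is quoted verbatim from Mal'tsev's paper \cite[Theorem 7]{malcev} and used as a black box, so your proposal must be measured against the classical argument, which your outline indeed follows (Baker--Campbell--Hausdorff with rational coefficients plus denominator clearing and coordinates of the second kind for sufficiency; central-series heredity, a Mal'cev basis inside $\Gamma$, and inversion of the Baker--Campbell--Hausdorff formula for necessity). Your sufficiency half is essentially sound as written: nilpotency truncates the series, so only finitely many rational denominators occur; a single rescaling $X_k\mapsto m\,X_k$ with $m$ sufficiently divisible makes $L$ closed under $*$, since the degree-$j$ term, measured in the rescaled basis, carries a factor $m^{j-1}$ with $j\geq 2$; and reducing the second-kind coordinates of an arbitrary element modulo $\exp(L)$, working down an adapted basis, yields a compact fundamental set. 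These are routine verifications.

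The genuine gap is in the necessity direction, at exactly the point you flag as the crux: your justification of the key lemma, that $\Gamma_i:=\Gamma\cap G_i$ is co-compact in $G_i$, is circular. The inductive step as you state it invokes ``the image of $\Gamma_i$ in the vector group $G_i/G_{i+1}$ is a lattice''; co-compactness of that image is inherited from $\Gamma_i$, but its \emph{discreteness} is not automatic --- images of discrete subgroups in quotient groups are in general not discrete --- and discreteness of $\Gamma_i G_{i+1}/G_{i+1}$ is logically equivalent to the co-compactness of $\Gamma_{i+1}$ in $G_{i+1}$ that the induction is supposed to produce at the next stage (if $\Gamma_{i+1}$ is co-compact in $G_{i+1}$, any sequence $\gamma_n G_{i+1}\to e$ can be corrected by elements of $\Gamma_{i+1}$ into a bounded, hence finite, subset of $\Gamma_i$; conversely, discreteness of the image makes $\Gamma_i\backslash\Gamma_i G_{i+1}\simeq \Gamma_{i+1}\backslash G_{i+1}$ closed in the compact space $\Gamma_i\backslash G_i$). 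So the induction does not close as described. The classical repair (due to Mal'cev; see also Raghunathan's book, Theorem 2.3) reverses the logical order: one first proves directly that the subgroup generated by the commutators $\left[\Gamma,\Gamma\right]$ is co-compact in $\left[G,G\right]$ --- writing arbitrary elements of $G$ as $\gamma\,c$ with $\gamma\in\Gamma$ and $c$ in a fixed compact fundamental set, and expanding commutators using nilpotency, shows that commutators of $\Gamma$ exhaust $\left[G,G\right]$ up to a compact error --- whence $\Gamma\cap\left[G,G\right]$ is a lattice in $\left[G,G\right]$, and only then does one deduce that the abelianized image is discrete; iterating down the series gives your lemma. Once this is in place, the remainder of your necessity argument (choosing $\log\gamma_k$ as a basis, inverting Baker--Campbell--Hausdorff on group commutators, and inducting to obtain rational structure constants together with $\dim_\Q\mathfrak{g}_\Q=\dim_\R\mathfrak{g}$) is the standard one and goes through.
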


\medskip

Dealing with \emph{$G$-left-invariant} objects on $X$, we mean objects induced by objects on $G$ being invariant under the left-action of $G$ on itself given by left-translations. By means of left-translations, $G$-left-invariant objects will be identified with objects on the Lie algebra $\mathfrak{g}$.

For example, a $G$-left-invariant complex structure $J\in\End(TX)$ on $X$ is uniquely determined by a linear complex structure $J\in\End(\mathfrak{g})$ on $\mathfrak{g}$ satisfying the integrability condition $\Nij_J=0$, \cite[Theorem 1.1]{newlander-nirenberg}, where
$$ \Nij_J(\sspace,\, \ssspace) \;:=\; \left[\sspace,\, \ssspace\right] + J\left[J\sspace,\, \ssspace\right] + J\left[\sspace,\, J\ssspace\right] - \left[J\sspace,\, J\ssspace\right] \;\in\; \wedge^2\duale{\mathfrak{g}}\otimes_\R \mathfrak{g} \;;$$
we will denote the set of $G$-left-invariant complex structures on $X$ by
$$ \mathcal{C}\left(\mathfrak{g}\right) := \left\{ J\in\End\left(\mathfrak{g}\right) \st J^2=-\id_{\mathfrak{g}} \;\text{ and }\;\Nij_J=0 \right\} \;. $$

\medskip

By the Leibniz rule, the map $\de_\mathfrak{g}\colon\wedge^1\duale{\mathfrak{g}}\to\wedge^2\duale{\mathfrak{g}}$ induces a differential operator $\de\colon \wedge^{\bullet}\duale{\mathfrak{g}} \to \wedge^{\bullet+1}\duale{\mathfrak{g}}$ giving a graded differential algebra $\left(\wedge^\bullet\duale{\mathfrak{g}},\, \de\right)$, and hence a differential complex $\left(\wedge^\bullet\duale{\mathfrak{g}},\,\de\right)$; we will denote by $H_{dR}^\bullet\left(\mathfrak{g};\R\right):=H^\bullet\left(\wedge^\bullet\duale{\mathfrak{g}},\, \de\right)$ the cohomology of such a differential complex.

In general, on a solvmanifold, the inclusion $\left(\wedge^\bullet\duale{\mathfrak{g}},\,\de\right) \hookrightarrow \left(\wedge^\bullet X,\, \de\right)$ induces an injective map in cohomology, $i\colon H_{dR}^{\bullet}\left(\mathfrak{g};\R\right)\hookrightarrow H^{\bullet}_{dR}(X;\R)$ (compare \cite[Lemma 9]{console-fino} and Lemma \ref{lemma:inj}, for the Dolbeault, respectively Bott-Chern, cohomology), which is not always an isomorphism, as the example in \cite[Corollary 4.2, Remark 4.3]{debartolomeis-tomassini} shows.
On the other hand, the following theorem by K. Nomizu says that the de Rham cohomology of a nilmanifold can be computed as the cohomology of the subcomplex of left-invariant forms (some results in this direction have been provided also by Y. Matsushima in \cite[Theorem 5, Theorem 6]{matsushima}).

\begin{thm}[{\cite[Theorem 1]{nomizu}}]
 Let $X=\left. \Gamma \right\backslash G$ be a nilmanifold and denote the Lie algebra naturally associated to $G$ by $\mathfrak{g}$. The complex $\left(\wedge^{\bullet}\mathfrak{g}^*,\de\right)$ is a minimal model for $\left(\wedge^\bullet X,\, \de\right)$. In particular, the map $\left(\wedge^\bullet\mathfrak{g}^*,\,\de\right)\to\left(\wedge^\bullet X,\,\de\right)$ of differential complexes is a quasi-isomorphism:
$$ i\colon H^{\bullet}_{dR}(\mathfrak{g};\R) \stackrel{\simeq}{\to} H^\bullet_{dR}(X;\R) \;.$$
\end{thm}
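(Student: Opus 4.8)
The plan is to reduce everything to the statement that the inclusion $i\colon(\wedge^\bullet\duale{\mathfrak{g}},\de)\hookrightarrow(\wedge^\bullet X,\de)$ of left-invariant forms is a quasi-isomorphism; the minimal-model claim will then follow from the algebraic structure of the Chevalley--Eilenberg complex. The naive idea of \emph{averaging} a form over $G$ (which proves the analogous statement for homogeneous spaces of compact groups) fails here, since $G$ is non-compact and $X$ is not a group; so I would instead exploit the fact that a nilmanifold is an iterated principal torus bundle and argue by induction on $\dim_\R\mathfrak{g}$, with the torus as base case, where the statement is immediate.

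First I would produce the fibration. Since $G$ is nilpotent, its center $Z:=\exp\mathfrak{z}$, with $\mathfrak{z}:=Z(\mathfrak{g})\neq\{0\}$, is a nontrivial closed connected subgroup; by Mal'cev's rationality theory (cf. \cite[Theorem 7]{malcev}) the lattice $\Gamma$ meets $Z$ in a lattice $\Gamma_Z:=\Gamma\cap Z$ and projects to a lattice $\Gamma'$ in the nilpotent simply-connected group $G/Z$. This yields a locally trivial fibration
\[
 T^k \;=\; \left.\Gamma_Z\right\backslash Z \;\longrightarrow\; X \;=\; \left.\Gamma\right\backslash G \;\longrightarrow\; X' \;:=\; \left.\Gamma'\right\backslash\left(G/Z\right),
\]
which is in fact a \emph{principal} $T^k$-bundle, $Z$ being central; hence the structure group $T^k$ acts on the fibre by translations, so trivially on $H^\bullet(T^k;\R)$, and the associated local system is trivial. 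On the Lie-algebra side this corresponds to the central extension $0\to\mathfrak{z}\to\mathfrak{g}\to\mathfrak{h}:=\mathfrak{g}/\mathfrak{z}\to 0$, with $\dim_\R\mathfrak{h}<\dim_\R\mathfrak{g}$.

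The core step is a comparison of two spectral sequences. The Leray--Serre spectral sequence of the above fibration has $E_2^{p,q}\simeq H^p_{dR}(X';\R)\otimes H^q(T^k;\R)$ (the local system being trivial), converging to $H^\bullet_{dR}(X;\R)$; the filtration of $(\wedge^\bullet\duale{\mathfrak{g}},\de)$ associated with the central ideal $\mathfrak{z}$ produces the Hochschild--Serre spectral sequence with $\tilde E_2^{p,q}\simeq H^p_{dR}(\mathfrak{h};\R)\otimes\wedge^q\duale{\mathfrak{z}}$, converging to $H^\bullet_{dR}(\mathfrak{g};\R)$. The inclusion $i$ is filtered and therefore induces a morphism $\tilde E_\bullet^{\bullet,\bullet}\to E_\bullet^{\bullet,\bullet}$. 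On the $E_2$ page this is, in the fibre direction, the identification $\wedge^\bullet\duale{\mathfrak{z}}\stackrel{\simeq}{\to} H^\bullet(T^k;\R)$ of invariant forms on the torus with its cohomology (immediate, the torus being a compact abelian group), and, in the base direction, the inclusion of invariant forms on $X'$, which is an isomorphism $H^\bullet_{dR}(\mathfrak{h};\R)\stackrel{\simeq}{\to}H^\bullet_{dR}(X';\R)$ by the inductive hypothesis. Thus $i$ is an isomorphism on $E_2$, and by Zeeman's comparison theorem it is an isomorphism on $E_\infty$, whence on the abutments: $i\colon H^\bullet_{dR}(\mathfrak{g};\R)\stackrel{\simeq}{\to}H^\bullet_{dR}(X;\R)$. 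I expect this comparison to be the main obstacle: one must check that $i$ genuinely respects the two filtrations, induces the claimed map on $E_2$, and that the coefficient system is trivial --- all of which rest on the centrality of $Z$ and on the Mal'cev rationality ensuring that $\Gamma_Z$ and $\Gamma'$ are honest lattices, so that the fibration and its torus fibre are well defined.

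The minimal-model assertion is then formal. The dga $(\wedge^\bullet\duale{\mathfrak{g}},\de)$ is free graded-commutative (an exterior algebra on the degree-one space $\duale{\mathfrak{g}}$), and its differential is decomposable, since $\de e^k=\sum_{\ell,m}c^k_{\ell m}\,e^\ell\wedge e^m$ lands in products of positive-degree generators. Nilpotency of $\mathfrak{g}$ lets one choose an ordered basis $\{e^1,\dots,e^{2n}\}$ of $\duale{\mathfrak{g}}$ for which $\de e^k\in\wedge^2\langle e^1,\dots,e^{k-1}\rangle$ for every $k$ (a ``triangular'' structure equivalent to nilpotency); the resulting increasing chain of sub-dgas exhibits $(\wedge^\bullet\duale{\mathfrak{g}},\de)$ as a union of elementary extensions, i.e. as a minimal dga. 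Together with the quasi-isomorphism $i$, this makes it a minimal model for $(\wedge^\bullet X,\de)$.
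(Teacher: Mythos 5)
Your proposal is correct and follows essentially the same route the paper indicates for this theorem: an induction on dimension using the fact that every nilmanifold is a principal torus-bundle over a lower-dimensional nilmanifold (via Mal'cev rationality for $\Gamma\cap Z$ and $\Gamma'$, as in \cite[Lemma 4]{malcev} and \cite[Theorem 3]{matsushima}), the inclusion of invariant forms being handled fibrewise. Your comparison of the Leray--Serre and Hochschild--Serre spectral sequences, together with the triangular structure equations giving the minimal-model claim, is a sound and standard way of carrying out exactly that inductive step.
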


The proof rests on an inductive argument, which can be performed since every nilmanifold can be seen as a principal torus-bundle over a lower dimensional nilmanifold, see \cite[Lemma 4]{malcev}, \cite[Theorem 3]{matsushima}.

A similar result holds also in the case of completely-solvable solvmanifolds, as proven by A. Hattori, as a consequence of the Mostow structure theorem, \cite[Theorem 2]{mostow, mostow-errata}.

\begin{thm}[{\cite[Corollary 4.2]{hattori}}]
 Let $X=\left. \Gamma \right\backslash G$ be a completely-solvable solvmanifold and denote the Lie algebra naturally associated to $G$ by $\mathfrak{g}$. The map $\left(\wedge^\bullet\mathfrak{g}^*,\,\de\right)\to\left(\wedge^\bullet X,\,\de\right)$ of differential complexes is a quasi-isomorphism:
$$ i\colon H^{\bullet}_{dR}(\mathfrak{g};\R) \stackrel{\simeq}{\to} H^\bullet_{dR}(X;\R) \;.$$
\end{thm}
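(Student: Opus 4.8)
The plan is to prove the isomorphism by reducing to the nilmanifold case, where Nomizu's theorem applies, via the Mostow fibration and a comparison of spectral sequences. Throughout, injectivity of $i$ is automatic and holds for every solvmanifold: since $G$ admits a lattice it is unimodular, so the bi-invariant Haar measure descends to a finite $G$-invariant volume on $X$, and integration against it defines a symmetrization map $\mu\colon \left(\wedge^\bullet X,\, \de\right) \to \left(\wedge^\bullet\duale{\g},\, \de\right)$ of differential complexes satisfying $\mu\circ i = \id$. Hence $i$ is injective in cohomology, and it suffices to prove surjectivity, equivalently that $\alpha - \mu(\alpha)$ is $\de$-exact for every $\de$-closed $\alpha\in\wedge^\bullet X$.

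For the surjectivity I would invoke the Mostow structure theorem. If $\mathfrak{n}$ denotes the nilradical of $\g$ and $N$ the corresponding connected normal subgroup, then $\Gamma\cap N$ is a lattice in $N$, the quotient $F := \left.(\Gamma\cap N)\right\backslash N$ is a nilmanifold, and $X$ is the total space of a fibre bundle $F \hookrightarrow X \to T$ over a torus $T := \left.\left(\Gamma/(\Gamma\cap N)\right)\right\backslash\left(G/N\right)$. First I would set up the Leray--Serre spectral sequence of this fibration, whose $E_2$-page is $H^\bullet\left(T;\, \mathcal{H}^\bullet(F)\right)$, the cohomology of the base with coefficients in the local system determined by the monodromy action on $H^\bullet_{dR}(F;\R)$. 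In parallel I would set up the Hochschild--Serre spectral sequence of the pair $(\g,\, \mathfrak{n})$, computing $H^\bullet_{dR}(\g;\R)$ from $H^\bullet\left(\g/\mathfrak{n};\, H^\bullet(\mathfrak{n})\right)$, where $\g/\mathfrak{n}$ is abelian. By Nomizu's theorem applied to the fibre $F$, the coefficient systems are identified, $H^\bullet_{dR}(F;\R)\simeq H^\bullet(\mathfrak{n})$, compatibly with the actions; the symmetrization map $\mu$ then induces a morphism from the topological to the algebraic spectral sequence, and the claim follows if this is an isomorphism on $E_2$.

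The heart of the matter, and the step where complete solvability is essential, is the $E_2$-comparison
$$ H^\bullet\left(T;\, \mathcal{H}^\bullet(F)\right) \;\simeq\; H^\bullet\left(\g/\mathfrak{n};\, H^\bullet(\mathfrak{n})\right) \;. $$
Here the base is a torus $T = \left.\Lambda\right\backslash \R^k$ and the local system is governed by the monodromy representation $\rho\colon \Lambda \to \GL\left(H^\bullet(\mathfrak{n})\right)$ of the lattice $\Lambda\simeq\Z^k$ induced by $\Ad$, while the algebraic side computes the cohomology of the abelian Lie algebra $\g/\mathfrak{n}\simeq\R^k$ acting on $H^\bullet(\mathfrak{n})$ through the infinitesimal derivation $\ad$, the two being related by $\rho = \exp(\ad)$ on lattice generators. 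The completely-solvable hypothesis forces all eigenvalues of $\ad$ to be real, hence all eigenvalues of $\rho$ to be real and positive; consequently a monodromy eigenvalue equals $1$ precisely when the corresponding derivation eigenvalue equals $0$. This coincidence is exactly what makes the group cohomology $H^\bullet(\Z^k;\, H^\bullet(\mathfrak{n}))$ and the Lie algebra cohomology $H^\bullet(\R^k;\, H^\bullet(\mathfrak{n}))$ agree term by term, yielding the $E_2$-isomorphism. (When $\Ad$ admits eigenvalues off the positive real axis, in particular nontrivial roots of unity, a monodromy eigenvalue can equal $1$ with nonzero infinitesimal eigenvalue, producing topological classes with no invariant representative; this is the source of the known counterexamples for merely solvable, non-completely-solvable solvmanifolds.) I expect this eigenvalue comparison, together with checking that $\mu$ realises the $E_2$-level map, to be the main obstacle; once it is in place, the comparison theorem for spectral sequences upgrades the $E_2$-isomorphism to the desired $i\colon H^\bullet_{dR}(\g;\R)\stackrel{\simeq}{\to} H^\bullet_{dR}(X;\R)$.
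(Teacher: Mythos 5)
Your proposal is correct and follows essentially the same route the paper indicates: the paper states this result as Hattori's theorem and explicitly attributes it to the Mostow structure theorem, and your argument --- injectivity via unimodular averaging, the Mostow fibration of $X$ over a torus with nilmanifold fibre, Nomizu's theorem on the fibre, and the comparison of the Leray--Serre and Hochschild--Serre spectral sequences using the fact that complete solvability forces the monodromy eigenvalues to be exponentials of real numbers --- is precisely Hattori's classical proof. The two points you rightly flag as needing verification (that the averaging map preserves the Leray filtration, which holds because the vertical distribution is the left-invariant distribution corresponding to the nilradical, and that the $E_2$-comparison is realised by the induced map rather than by an abstract isomorphism, which your injectivity-plus-equal-dimensions count settles) are indeed routine in this setting.
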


(For some results concerning the de Rham cohomology of (non-necessarily completely-solvable) solvmanifolds, see \cite{guan, console-fino--ann-sns-2011}.)

\medskip

In some cases, we will see that the study of (properties of) geometric structures on a solvmanifold is reduced to the study of the corresponding (properties of) geometric structures on the associated Lie algebra (see, e.g., Theorem \ref{thm:nilmfd-are-not-tamed}, Proposition \ref{prop:linear-cpf-invariant-cpf-J}, Proposition \ref{prop:linear-cpf-invariant-cpf-omega}, Proposition \ref{prop:linear-cpf-invariant-cpf-K}, Theorem \ref{thm:instability-almK}). To this aim, we need the following lemma by J. Milnor. (Recall that a Lie group $G$, with associated Lie algebra $\mathfrak{g}$, is called \emph{unimodular} if, for all $X\in \mathfrak{g}$, it holds $\tr\ad X=0$.)

\begin{lem}[{\cite[Lemma 6.2]{milnor}}]
 Any connected Lie group that admits a discrete subgroup with compact quotient is unimodular and in particular admits a bi-invariant volume form $\eta$.
\end{lem}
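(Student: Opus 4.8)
The plan is to prove the statement in two movements: first I would show that the hypothesis forces the left-invariant volume form of $G$ to be right-invariant as well, and then deduce unimodularity in the stated form $\tr\ad X=0$ by differentiation.

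First I would fix a basis of $\mathfrak{g}$ and pick a nonzero element $\omega_e\in\wedge^{\dim G}\duale{\mathfrak{g}}$, extending it by left-translations to a left-invariant top-degree form $\omega$ on $G$; this is a volume form. Since right-translations commute with left-translations, $R_g^*\omega$ is again left-invariant, hence $R_g^*\omega=c(g)\,\omega$ for a scalar $c(g)$; a direct computation at the identity, using that $L_{g^{-1}}\circ R_g$ is conjugation by $g^{-1}$, whose differential at $e$ is $\Ad(g^{-1})$, gives $c(g)=\det\Ad(g^{-1})$. As $G$ is connected, $g\mapsto\det\Ad(g)$ is a continuous homomorphism into the connected group $\R_{>0}$, so $c(g)>0$ for every $g$.

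The crucial step uses the lattice. Because $\Gamma$ is discrete it acts freely and properly discontinuously on $G$ by left-translations, so $\Gamma\backslash G$ is a smooth manifold; as $L_\gamma^*\omega=\omega$ for every $\gamma\in\Gamma$, the form $\omega$ descends to a volume form $\bar\omega$ on $\Gamma\backslash G$ (in particular $\Gamma\backslash G$ is orientable), and compactness yields a finite, strictly positive total volume $V:=\int_{\Gamma\backslash G}\bar\omega$. Right-translation by $g$ descends to a diffeomorphism $\bar R_g\colon\Gamma\backslash G\to\Gamma\backslash G$, $\Gamma x\mapsto\Gamma xg$, which is orientation-preserving since, $G$ being connected, a path from $e$ to $g$ induces an isotopy from the identity to $\bar R_g$, and $\bar R_g^*\bar\omega=c(g)\,\bar\omega$. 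The change-of-variables formula then gives $V=\int_{\Gamma\backslash G}\bar R_g^*\bar\omega=c(g)\,V$, and since $0<V<+\infty$ I conclude $c(g)=1$ for all $g\in G$.

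Hence $\det\Ad(g)=1$ for every $g$; evaluating along one-parameter subgroups, $\det\Ad(\exp(tX))=\det\exp(t\,\ad X)=\exp(t\,\tr\ad X)=1$ for all $t$ forces $\tr\ad X=0$ for every $X\in\mathfrak{g}$, which is unimodularity. Moreover $c\equiv1$ means $R_g^*\omega=\omega$, so the left-invariant $\omega$ is also right-invariant, i.e.\ it is the desired bi-invariant volume form $\eta$. I expect the main obstacle to be the careful justification of the descent and the volume-preservation argument: one must verify that $\Gamma\backslash G$ is genuinely a manifold onto which both $\omega$ and the right $G$-action pass to the quotient, and that the total mass is preserved under $\bar R_g$ --- this is precisely where connectedness of $G$ (for orientation-preservation) and compactness of $\Gamma\backslash G$ (for finiteness and positivity of $V$) enter decisively.
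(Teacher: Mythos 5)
Your proof is correct, and it is precisely the classical argument behind this result: the paper states the lemma as a citation to Milnor without reproducing a proof, and Milnor's own proof proceeds exactly as yours does, by descending the left-invariant volume form to the compact quotient $\left.\Gamma\right\backslash G$, using the change-of-variables formula under the descended right-translations to force the modular function $c(g)=\det\Ad\left(g^{-1}\right)$ to be identically $1$, and then differentiating along one-parameter subgroups to obtain $\tr\ad X=0$. All the delicate points you flag (smoothness and orientability of the quotient, well-definedness of $\bar R_g$, finiteness and positivity of the total volume) are handled correctly, so nothing is missing.
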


We will also need the following trick by F.~A. Belgun (see also \cite[Theorem 2.1]{fino-grantcharov}).

\begin{lem}[{F.~A. Belgun's symmetrization trick, \cite[Theorem 7]{belgun}}]
Let $X=\left.\Gamma\right\backslash G$ be a solvmanifold, and denote the Lie algebra naturally associated to $G$ by $\mathfrak{g}$. Let $\eta$ be a $G$-bi-invariant volume form on $G$ such that $\int_X\eta=1$, whose existence follows from J. Milnor's lemma \cite[Lemma 6.2]{milnor}. Up to identifying $G$-left-invariant forms on $X$ and linear forms over $\duale{\mathfrak{g}}$ through left-translations, define the \emph{F.~A. Belgun's symmetrization map}
$$ \mu\colon \wedge^\bullet X \to \wedge^\bullet \duale{\mathfrak{g}}\;,\qquad \mu(\alpha)\;:=\;\int_X \alpha\lfloor_m \, \eta(m) \;.$$
One has that
$$ \mu\lfloor_{\wedge^\bullet \duale{\mathfrak{g}}}\;=\;\id\lfloor_{\wedge^\bullet \duale{\mathfrak{g}}} \;, $$
and that
$$  \de \circ \mu \;=\; \mu \circ \de \;.$$
\end{lem}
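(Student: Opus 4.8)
The plan is to work throughout in the globally defined left-invariant coframe $\left\{e^1,\ldots,e^N\right\}$ of $\duale{\mathfrak{g}}$ (where $N=\dim_\R\mathfrak{g}$), with dual frame $\left\{e_1,\ldots,e_N\right\}$ of left-invariant vector fields; these trivialize $\wedge^\bullet T^*X$ because $G$ is parallelizable and the left-invariant objects descend to $X=\left.\Gamma\right\backslash G$. First I would write an arbitrary $\alpha\in\wedge^\bullet X$ as $\alpha=\sum_I \alpha_I\,e^I$ with $\alpha_I\in\mathcal{C}^\infty(X;\R)$ and multi-indices $I$, where $e^I:=e^{i_1}\wedge\cdots\wedge e^{i_k}$. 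Transporting the value of $\alpha$ at each point to the identity via left-translation gives, since the $e^I$ are left-invariant, the concrete formula
$$ \mu(\alpha) \;=\; \sum_I \left(\int_X \alpha_I\,\eta\right)\, e^I \;\in\; \wedge^\bullet\duale{\mathfrak{g}} \;, $$
which is manifestly well-defined (the $\alpha_I$ are genuine functions on $X$ and $\eta$ descends to a volume form on the compact $X$).

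With this formula the first assertion is immediate: if $\alpha\in\wedge^\bullet\duale{\mathfrak{g}}$ is already left-invariant, its coefficients $\alpha_I$ are constants, so $\int_X\alpha_I\,\eta=\alpha_I\int_X\eta=\alpha_I$ by the normalization $\int_X\eta=1$, whence $\mu(\alpha)=\alpha$. Thus $\mu\lfloor_{\wedge^\bullet\duale{\mathfrak{g}}}=\id\lfloor_{\wedge^\bullet\duale{\mathfrak{g}}}$.

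For the chain-map property I would split the exterior derivative according to the Leibniz rule into a ``structure'' part and a ``function-derivative'' part: writing $\de\alpha=\sum_I \de\alpha_I\wedge e^I+\sum_I \alpha_I\,\de(e^I)$, note that $\de(e^I)=\de_\mathfrak{g}(e^I)$ is again left-invariant (its coefficients in the coframe are the structure constants), so applying $\mu$ to the second sum and using that $\de$ restricted to invariant forms is exactly $\de_\mathfrak{g}$ gives $\mu\bigl(\sum_I\alpha_I\,\de(e^I)\bigr)=\sum_I\bigl(\int_X\alpha_I\,\eta\bigr)\,\de_\mathfrak{g}(e^I)=\de\,\mu(\alpha)$. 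Hence everything reduces to showing that the symmetrization of the first sum vanishes, i.e. $\mu\bigl(\sum_I\de\alpha_I\wedge e^I\bigr)=0$. Expanding $\de\alpha_I=\sum_j (e_j\alpha_I)\,e^j$, this amounts to the single scalar identity $\int_X (e_j f)\,\eta=0$ for every $f\in\mathcal{C}^\infty(X;\R)$ and every $j$.

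The hard part — and the only place where bi-invariance is actually used — is precisely this vanishing, and I would obtain it by integration by parts. Since $e_j$ is left-invariant its flow consists of right-translations, and because $\eta$ is bi-invariant it is in particular right-invariant, so $\Lie{e_j}\eta=0$; this is exactly where Milnor's lemma (unimodularity, guaranteeing a bi-invariant $\eta$) enters. Then, using that $f\eta$ is a top-degree form so $\de(f\eta)=0$, Cartan's formula gives
$$ (e_j f)\,\eta \;=\; (\Lie{e_j}f)\,\eta \;=\; \Lie{e_j}(f\,\eta) \;=\; \de\bigl(\iota_{e_j}(f\,\eta)\bigr) \;, $$
and integrating over the compact boundaryless manifold $X$ yields $\int_X (e_j f)\,\eta=\int_X \de\bigl(\iota_{e_j}(f\,\eta)\bigr)=0$ by Stokes' theorem. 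Combining this with the structure-part computation gives $\mu\circ\de=\de\circ\mu$, completing the proof. I expect the bookkeeping of the Leibniz splitting to be routine, while the conceptual crux is recognizing that right-invariance of $\eta$ (hence bi-invariance) is exactly what makes the averaged directional derivatives integrate to zero.
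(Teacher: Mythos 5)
Your proof is correct, and it is essentially the standard argument behind \cite[Theorem 7]{belgun}, which the paper cites rather than reproves: writing $\mu$ coefficient-wise in a left-invariant coframe, reducing the chain-map property via the Leibniz splitting to the scalar identity $\int_X (e_j f)\,\eta = 0$, and obtaining that from $\Lie{e_j}\eta = 0$ (right-invariance of $\eta$, i.e.\ unimodularity via Milnor's lemma) together with Cartan's formula and Stokes' theorem on the compact quotient. The only implicit point, which is fine, is that the right-translation flow of the left-invariant field $e_j$ commutes with the left $\Gamma$-action and hence descends to $X$, so the Lie-derivative computation on $G$ passes to the quotient.
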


In particular, the symmetrization map $\mu$ induces a map $\mu\colon \left(\wedge^\bullet X,\, \de\right)\to \left(\wedge^\bullet\duale{\mathfrak{g}},\, \de\right)$ of differential complexes, and hence a map $\mu\colon H^\bullet_{dR}(X;\R)\to H_{dR}^\bullet\left(\mathfrak{g};\R\right)$ in cohomology. Since $\mu\lfloor_{\wedge^\bullet \duale{\mathfrak{g}}} = \id\lfloor_{\wedge^\bullet \duale{\mathfrak{g}}}$, if the inclusion $\left(\wedge^\bullet\duale{\mathfrak{g}},\, \de\right)\hookrightarrow \left(\wedge^\bullet X,\, \de\right)$ is a quasi-isomorphism (for example, if $X$ is a nilmanifold, by \cite[Theorem 1]{nomizu}, or a completely-solvable solvmanifold, by \cite[Corollary 4.2]{hattori}), then the map $\mu\colon \left(\wedge^\bullet X,\, \de\right)\to \left(\wedge^\bullet\duale{\mathfrak{g}},\, \de\right)$ turns out to be a quasi-isomorphism.

\medskip

K. Nomizu's theorem \cite[Theorem 1]{nomizu}, A. Hattori's theorem \cite[Corollary 4.2]{hattori}, and F.~A. Belgun's theorem \cite[Theorem 7]{belgun} suggest that nilmanifolds, and, more in general, solvmanifolds, may provide a very useful and interesting class of examples in non-K\"ahler geometry. On the other hand, another reason for this statement is given by the following results by Ch. Benson and C.~S. Gordon, and by K. Hasegawa.

\begin{thm}[{\cite[Theorem A]{benson-gordon-nilmanifolds}}]
 Let $X$ be a nilmanifold endowed with a symplectic structure $\omega$ such that the Hard Lefschetz Condition holds. Then $X$ is diffeomorphic to a torus.
\end{thm}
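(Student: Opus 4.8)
The plan is to reduce everything to the Lie algebra $\g$ associated to $G$ and to show that the Hard Lefschetz Condition forces $\g$ to be abelian; since a connected simply-connected abelian Lie group is $\R^{2n}$, the quotient $X=\left.\Gamma\right\backslash G$ is then a torus. First I would replace $\omega$ by a left-invariant representative. By K. Nomizu's theorem, the inclusion $\left(\wedge^\bullet\duale{\g},\,\de\right)\hookrightarrow\left(\wedge^\bullet X,\,\de\right)$ is a quasi-isomorphism, and F.~A. Belgun's symmetrization map $\mu$ satisfies $\mu\circ\de=\de\circ\mu$ and restricts to the identity on invariant forms; hence $\omega_0:=\mu(\omega)$ is a $G$-left-invariant closed $2$-form with $[\omega_0]=[\omega]$. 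Since $\int_X\omega^n\neq0$ we have $[\omega]^n\neq0$ in $H^{2n}_{dR}(X;\R)$, so the invariant top form $\omega_0^n$ is a nonzero multiple of the volume form and therefore $\omega_0$ is itself non-degenerate, i.e. an invariant symplectic form. As the Lefschetz operator on cohomology depends only on the class $[\omega_0]=[\omega]$, the Hard Lefschetz Condition for $\omega$ is the same as for $\omega_0$, and I only need its instance $L^{n-1}\colon H^1_{dR}(X;\R)\stackrel{\simeq}{\to}H^{2n-1}_{dR}(X;\R)$ (the case $k=n-1$). From now on $\omega$ denotes the invariant form and, via Nomizu, all cohomology is computed on $\g$.

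Next I would convert this one Lefschetz isomorphism into a purely Lie-algebraic non-degeneracy statement. Composing $L^{n-1}$ with Poincaré duality $H^{2n-1}_{dR}(X;\R)\simeq\left(H^1_{dR}(X;\R)\right)^*$ identifies it, up to sign, with the skew-symmetric bilinear form on $H^1_{dR}(X;\R)$ given by
$$ \sigma(\alpha,\gamma) \;:=\; \int_X \alpha\wedge\gamma\wedge\omega^{n-1} \;; $$
thus the Hard Lefschetz Condition holds (in this degree) if and only if $\sigma$ is non-degenerate. Using the primitive-form identity $\alpha\wedge\gamma\wedge\omega^{n-1}=c\,\omega^{-1}(\alpha,\gamma)\,\omega^n$ for a nonzero constant $c$, I get that $\sigma$ is a nonzero multiple of the restriction to $H^1_{dR}(X;\R)=\mathrm{Ann}\left([\g,\g]\right)\subseteq\duale{\g}$ of the induced symplectic form $\omega^{-1}$ on $\duale{\g}$. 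Finally, the musical isomorphism $\flat\colon\g\to\duale{\g}$, $X\mapsto \iota_X\omega$, carries $[\g,\g]^{\perp_\omega}$ onto $\mathrm{Ann}([\g,\g])$ and $\omega$ onto $\omega^{-1}$; since the symplectic orthogonal of a subspace is symplectic exactly when the subspace is, I conclude that $\sigma$ is non-degenerate if and only if $\omega\lfloor_{[\g,\g]}$ is non-degenerate.

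The contradiction then comes from nilpotency. If $\g$ is not abelian, the last nonzero term $\g^{(c)}$ of its lower central series is a nonzero subspace contained in both the centre $\mathfrak{z}$ of $\g$ and in $[\g,\g]$; pick $0\neq\zeta\in\mathfrak{z}\cap[\g,\g]$. For any $X,Y\in\g$, the cocycle condition $\de\omega=0$ reads $\omega([X,Y],\zeta)=\omega([\zeta,X],Y)-\omega([\zeta,Y],X)$, and centrality of $\zeta$ kills the right-hand side, so $\omega(\zeta,[X,Y])=0$. As $[\g,\g]$ is spanned by such brackets, $\zeta$ lies in the radical of $\omega\lfloor_{[\g,\g]}$, which is therefore degenerate. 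By the previous paragraph this means the Hard Lefschetz Condition fails; equivalently, if it holds then $[\g,\g]=0$, i.e. $\g$ is abelian and $X$ is a torus. I expect the main obstacle to be the middle paragraph: correctly threading the Poincaré-duality identification and the annihilator-versus-symplectic-orthogonal bookkeeping so that the cohomological Hard Lefschetz Condition becomes the clean linear-algebra condition ``$\omega\lfloor_{[\g,\g]}$ non-degenerate'', after which the central-element trick finishes the proof almost immediately.
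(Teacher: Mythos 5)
Your proof is correct, but note that the paper itself offers no proof of this statement: it simply cites Benson--Gordon's Theorem~A, and in the parenthetical remark following it records (citing Lupton--Oprea) that the single Lefschetz isomorphism $\left[\omega\right]^{n-1}\colon H^1_{dR}(X;\R)\to H^{2n-1}_{dR}(X;\R)$ already forces the torus conclusion. Your argument is a correct, self-contained proof of exactly that stronger parenthetical statement, and each step checks out: the Belgun/Nomizu reduction to an invariant symplectic form is sound (an invariant representative of $[\omega]$ has $\omega_0^n$ a nonzero constant multiple of the invariant volume, hence is nondegenerate); the identity $\alpha\wedge\gamma\wedge\omega^{n-1}=\frac{1}{n}\,\omega^{-1}(\alpha,\gamma)\,\omega^n$ holds for \emph{all} $1$-forms since every $1$-form is primitive; $H^1_{dR}(\g;\R)=\mathrm{Ann}\left(\left[\g,\g\right]\right)$ with no quotient to worry about; and the symplectic-orthogonal bookkeeping via the musical isomorphism is standard linear algebra. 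Structurally your route coincides with the classical Benson--Gordon argument rather than with the Lupton--Oprea minimal-model proof the paper mentions: in their formulation one takes $0\neq\zeta\in\mathfrak{z}(\g)\cap\left[\g,\g\right]$ and observes directly that $\iota_\zeta\omega$ is a closed invariant $1$-form with $\left[\iota_\zeta\omega\right]\neq 0$ whose image $L^{n-1}\left[\iota_\zeta\omega\right]$ pairs to zero against all of $H^1_{dR}(X;\R)$ (because $\alpha(\zeta)=0$ for $\alpha\in\mathrm{Ann}\left(\left[\g,\g\right]\right)$), killing injectivity of $L^{n-1}$ by Poincaré duality. Under your isomorphism $\flat$ this is precisely your radical element $\zeta$ of $\omega\lfloor_{\left[\g,\g\right]}$, so the two arguments are the same up to packaging; the direct version shortcuts your middle paragraph, which is indeed the only place where care with duality and annihilators is needed, while your version has the merit of isolating the clean equivalence ``Hard Lefschetz in degree $n-1$ $\Leftrightarrow$ $\omega\lfloor_{\left[\g,\g\right]}$ nondegenerate.''
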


(Actually, one can prove that any $2n$-dimensional nilmanifold $X$ endowed with a symplectic structure $\omega$ such that the map $\left[\omega\right]^{n-1}\colon H^1_{dR}(X;\R)\to H^{2n-1}_{dR}(X;\R)$ is an isomorphism is diffeomorphic to a torus, \cite{lupton-oprea}, see, e.g., \cite[Theorem 4.98]{felix-oprea-tanre}. A minimal model proof of Ch. Benson and C.~S. Gordon's theorem \cite[Theorem A]{benson-gordon-nilmanifolds} is due to G. Lupton and J. Oprea, \cite[Theorem 3.5]{lupton-oprea}.)

\begin{thm}[{\cite[Theorem 1, Corollary]{hasegawa_pams}}]
 Let $X$ be a nilmanifold. If $X$ is formal, then $X$ is diffeomorphic to a torus.
\end{thm}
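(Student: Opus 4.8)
The plan is to prove the contrapositive: if the nilmanifold $X=\left.\Gamma\right\backslash G$ is not diffeomorphic to a torus, then $X$ is not formal. Write $\mathfrak{g}$ for the Lie algebra of $G$. Not being a torus means that $G$ is non-abelian, equivalently that $\mathfrak{g}$ is a non-abelian nilpotent Lie algebra, equivalently that the Chevalley--Eilenberg differential $\de\colon\duale{\mathfrak{g}}\to\wedge^2\duale{\mathfrak{g}}$ does not vanish identically.

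First I would invoke K. Nomizu's theorem, as recalled above, which states that $\left(\wedge^\bullet\duale{\mathfrak{g}},\,\de\right)$ is a minimal model for the de Rham complex $\left(\wedge^\bullet X,\,\de\right)$. Since the minimal model of a space is unique up to isomorphism, and formality is a property of (any representative of) the minimal model, it suffices to show that the minimal dga $\left(\wedge^\bullet\duale{\mathfrak{g}},\,\de\right)$ --- which is generated entirely in degree $1$, with decomposable differential by nilpotency --- is not formal when $\de\neq 0$.

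The core step is to exhibit a nonvanishing Massey product, which, by the vanishing of all Massey products on a formal dga, obstructs formality. To produce such a product I would choose a Mal'cev--Nomizu basis $\left\{e^1,\ldots,e^n\right\}$ of $\duale{\mathfrak{g}}$ adapted to the descending central series, so that $\de e^i\in\wedge^2\left\langle e^1,\ldots,e^{i-1}\right\rangle$ for every $i$; let $e^s$ be the first basis element with $\de e^s\neq 0$. By minimality of $s$, the closed $2$-form $\de e^s$ is a combination of products of closed generators, $\de e^s=\sum_{a,b<s}c^s_{ab}\,e^a\wedge e^b$ with $\de e^a=\de e^b=0$. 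This relation says precisely that a nonzero combination of the cup products $[e^a]\smile[e^b]\in H^2_{dR}(\mathfrak{g};\R)$ is exact, with $e^s$ a transgressing primitive whose differential realizes it; such a defect in the ring structure of $H^\bullet_{dR}(X;\R)$ is exactly the input needed to define a (possibly higher-order) Massey product built from the classes $\left\{[e^a]\right\}$ and the element $e^s$.

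The main obstacle is to guarantee that some such Massey product is genuinely nonzero modulo its indeterminacy; in general a single relation need not yield a nonzero triple product, and one may have to ascend the central series and iterate the construction to reach the first nonvanishing higher Massey product. I would instead handle this via the Deligne--Griffiths--Morgan--Sullivan formality criterion: writing $\duale{\mathfrak{g}}=C\oplus N$ with $C=\ker\de$ and $N$ a complement, formality would force every closed element of the ideal generated by $N$ to be exact; the transgression relation above, together with the filtration by the central series, lets me construct a closed element of this ideal that is not exact, contradicting the criterion. Either route yields non-formality, and hence the contrapositive, completing the proof.
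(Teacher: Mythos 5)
First, a point of reference: the paper does not prove this statement at all --- it is quoted from K.~Hasegawa with the citation \cite[Theorem 1, Corollary]{hasegawa_pams} --- so your proposal has to be measured against Hasegawa's own argument. Its architecture you have in fact reproduced correctly: pass to the contrapositive, use Nomizu's theorem \cite[Theorem 1]{nomizu} to identify $\left(\wedge^\bullet\duale{\mathfrak{g}},\,\de\right)$ as the minimal model of $X$, and refute formality of this minimal model via the complement criterion of \cite{deligne-griffiths-morgan-sullivan} rather than via Massey products; your decision to abandon the Massey-product route was sound, since a single transgression relation need not produce a triple product that is nonzero modulo indeterminacy.

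The genuine gap is that the whole content of the theorem is concentrated in the one sentence you leave unproven: ``the transgression relation above, together with the filtration by the central series, lets me construct a closed element of this ideal that is not exact.'' You give no such construction, and the mechanism you gesture at --- propagating $\de e^s=\sum_{a,b<s}c^s_{ab}\,e^a\wedge e^b$ up the central series --- is precisely the Massey-style bookkeeping you had just conceded may fail. The element that actually makes the argument work is simpler and lies elsewhere: the top-degree form $e^1\wedge\cdots\wedge e^n$, where $n=\dim_\R\mathfrak{g}$. It is closed for trivial degree reasons; it is not exact, either because $H^n_{dR}\left(X;\R\right)\simeq\R$ for the compact orientable manifold $X$ (via Nomizu), or purely algebraically because nilpotent Lie algebras are unimodular, so that $\de$ vanishes on $\wedge^{n-1}\duale{\mathfrak{g}}$ (Lemma \ref{lemma:unimod}, \cite{koszul-homologie}); and whenever $\de\neq0$, so that any complement $N$ of $C=\ker\de$ in $\duale{\mathfrak{g}}$ is nonzero, it lies in the ideal $I(N)$, since any nonzero $\xi\in N$ extends to a basis of $\duale{\mathfrak{g}}$ and hence divides a nonzero multiple of the volume form. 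Note also a quantifier you get wrong: the criterion of \cite{deligne-griffiths-morgan-sullivan} asserts formality if and only if \emph{some} complement $N$ exists with every closed element of $I(N)$ exact, so to refute formality you must exhibit a closed non-exact element of $I(N)$ for \emph{every} choice of $N$; your argument fixes one decomposition $\duale{\mathfrak{g}}=C\oplus N$, which does not suffice, whereas the volume-form argument handles all complements uniformly. Unimodularity --- equivalently, the nonvanishing of the top cohomology --- is thus an essential ingredient entirely absent from your sketch, and without it the proof does not close.
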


In particular, since compact K\"ahler manifolds satisfy the Hard Lefschetz Condition, \cite[Théorème IV.5]{weil}, and are formal, \cite[Main Theorem]{deligne-griffiths-morgan-sullivan}, it follows that a nilmanifold admits a K\"ahler structure if and only if it is diffeomorphic to a torus (compare also \cite[Theorem II, Footnote 1]{hano}). More in general, compact completely-solvable K\"ahler solvmanifolds are tori, as proven by A. Tralle and J. Kedra in \cite[Theorem 1]{tralle-kedra}, solving a conjecture by Ch. Benson and C.~S. Gordon, \cite[page 972]{benson-gordon-solvmanifolds}. In fact, the following result by K. Hasegawa gives a complete characterization of K\"ahler solvmanifolds.

\begin{thm}[{\cite[Main Theorem]{hasegawa-3}}] 
Let $X$ be a compact homogeneous space of solvable Lie group, that is, a compact differentiable manifold on which a connected solvable Lie group acts transitively.
Then $X$ admits a K\"ahler structure if and only if it is a finite quotient of a complex torus which has a structure of a complex torus-bundle over a complex torus.
In particular, a completely-solvable solvmanifold has a K\"ahler structure if and only if it is a complex torus.
\end{thm}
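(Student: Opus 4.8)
The plan is to prove the two implications separately, with essentially all the work concentrated in the ``only if'' direction. For the ``if'' direction, recall that a complex torus carries a flat Hermitian metric whose fundamental form is translation-invariant, hence $\de$-closed, so it is K\"ahler; if $F$ is a finite group of biholomorphisms acting (freely) on a complex torus $T$, then averaging any K\"ahler metric over $F$ produces an $F$-invariant K\"ahler metric which descends to $\left.T\right\slash F$. Thus every finite quotient of a complex torus, and in particular every complex torus-bundle over a complex torus of the asserted type, admits a K\"ahler structure.

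For the converse, let $X$ be a compact homogeneous space of a connected solvable Lie group admitting a K\"ahler structure. First I would reduce to a solvmanifold: by the Mostow structure theory \cite{mostow}, $X$ is, up to a finite covering, diffeomorphic to a solvmanifold $\left.\Gamma\right\backslash G$, and a finite covering of a K\"ahler manifold is again K\"ahler, so I may assume $X=\left.\Gamma\right\backslash G$. By J. Milnor's lemma \cite{milnor}, $G$ is unimodular, so F.~A. Belgun's symmetrization \cite{belgun} is available; this will let me transfer cohomological computations to the subcomplex $\left(\wedge^\bullet\duale{\mathfrak{g}},\,\de\right)$ of $G$-left-invariant forms. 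The three structural obstructions coming from the K\"ahler hypothesis that I would exploit are: the first Betti number $b_1$ is even; $X$ is formal, by \cite[Main Theorem]{deligne-griffiths-morgan-sullivan}; and $X$ satisfies the Hard Lefschetz Condition, by \cite[Th\'eor\`eme IV.5]{weil}.

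The ``in particular'' statement for completely-solvable solvmanifolds I would obtain directly: by A. Tralle and J. Kedra \cite{tralle-kedra}, a compact completely-solvable K\"ahler solvmanifold is diffeomorphic to a torus (this also follows from A. Hattori's theorem \cite{hattori}, which identifies $H^\bullet_{dR}(X;\R)$ with $H^\bullet_{dR}(\mathfrak{g};\R)$, together with the nilmanifold obstructions of Ch. Benson--C.~S. Gordon \cite{benson-gordon-nilmanifolds} and K. Hasegawa \cite{hasegawa_pams}). Once $X$ is a real torus carrying a K\"ahler structure, one has $b_1=2\dim_\C X$, so the Albanese map $X\to\mathrm{Alb}(X)$ is a holomorphic map between complex tori of equal dimension inducing an isomorphism on $\pi_1$; being a degree-one holomorphic map onto a complex torus it is a biholomorphism, so $X$ is a complex torus, as claimed. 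For the general (not necessarily completely-solvable) case I would use the Mostow fibration $\Gamma_N\backslash N \hookrightarrow \Gamma\backslash G \to T^k$ exhibiting the solvmanifold as a nilmanifold bundle over a torus (with $N$ the nilradical); the plan is to show that the K\"ahler hypothesis forces the fiber nilmanifold to be a torus (via \cite{benson-gordon-nilmanifolds}, \cite{hasegawa_pams}), so that $\Gamma\backslash G$ is a torus bundle over a torus, and then to upgrade this real fibration to a \emph{holomorphic} complex-torus-bundle-over-complex-torus structure using the holomorphic Albanese fibration and Hodge theory, the finite quotient accounting for any non-free part of the residual symmetry.

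The main obstacle is precisely the non-completely-solvable case, where $\mathrm{Ad}\,g$ has eigenvalues on the unit circle that are roots of unity: here genuinely non-abelian examples persist (e.g.\ hyperelliptic surfaces, which are finite quotients of complex tori), so one cannot hope to conclude that $\mathfrak{g}$ is abelian. The crux is to show that $\pi_1(X)=\Gamma$ is virtually abelian --- equivalently, that the solvable K\"ahler group $\Gamma$ is crystallographic --- and that the integrable complex structure is compatible with the Mostow fibration so as to make it holomorphic. This step cannot be handled by the invariant-cohomology toolkit alone; it requires either a structure theorem on solvable K\"ahler groups (forcing virtual nilpotence and then, with polycyclicity and Hard Lefschetz, virtual abelianness) or a delicate Hodge-theoretic analysis of the Albanese map, and this is where I expect the real difficulty to lie.
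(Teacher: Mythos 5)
You should first note what you are up against: the paper does not prove this statement at all --- it is quoted verbatim as K. Hasegawa's result \cite[Main Theorem]{hasegawa-3} in the preliminary section on solvmanifolds, and the text around it (A. Hattori's theorem \cite[Corollary 4.2]{hattori}, the nilmanifold obstructions of Ch. Benson and C.~S. Gordon \cite[Theorem A]{benson-gordon-nilmanifolds} and of K. Hasegawa \cite[Theorem 1, Corollary]{hasegawa_pams}, and A. Tralle and J. Kedra's theorem \cite[Theorem 1]{tralle-kedra}) is exactly the toolkit you invoke. So the comparison can only be against what the statement actually requires, and there your proposal has a genuine, self-acknowledged gap: in the ``only if'' direction the general (non-completely-solvable) case is a plan, not a proof, and the unproved steps are precisely the content of Hasegawa's theorem. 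Concretely: (a) you need that the solvable K\"ahler group $\Gamma$ is virtually nilpotent --- this is a deep structure theorem about K\"ahler groups (of Arapura--Nori/Delzant type) which cannot be extracted from the invariant-cohomology toolkit you set up, since for non-completely-solvable solvmanifolds left-invariant forms do not even compute $H^\bullet_{dR}$ (the paper itself records the counterexample of P. de Bartolomeis and A. Tomassini on the Nakamura manifold, \cite[Corollary 4.2, Remark 4.3]{debartolomeis-tomassini}), so Belgun symmetrization, Hattori, formality and Hard Lefschetz do not engage; and (b) even granted that $X$ is a real torus bundle over a torus, upgrading this to a \emph{holomorphic} complex-torus-bundle structure and to the finite-quotient-of-a-complex-torus conclusion is the second hard half, which you only name (and correctly observe is unavoidable, hyperelliptic surfaces showing the complex structure need not be invariant). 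Naming the difficulty is not resolving it.

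Two smaller points. Your parenthetical for the ``in particular'' clause --- that the completely-solvable case ``also follows from Hattori together with Benson--Gordon and Hasegawa'' --- is imprecise as stated, since those two obstructions are theorems about \emph{nilmanifolds}; the completely-solvable extension is exactly Tralle--Kedra (answering Benson--Gordon's conjecture), which you do cite as the primary route, so this is repairable but should not be presented as automatic. And in the Albanese step, ``a degree-one holomorphic map onto a complex torus is a biholomorphism'' needs justification (one must rule out a nontrivial exceptional locus, e.g.\ via the classical argument that the Albanese map of a compact K\"ahler manifold diffeomorphic to a torus is unramified of degree one); the underlying fact is classical and true, but as written it is asserted rather than proved. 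In summary: the ``if'' direction and the completely-solvable ``in particular'' are essentially correct, but the theorem's main assertion is not established by your proposal.
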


\chapter{Cohomology of complex manifolds}\label{chapt:complex}

In this chapter, we study cohomological properties of compact complex manifolds. In particular, we are concerned with studying the \emph{Bott-Chern cohomology}, which, in a sense, constitutes a bridge between the de Rham cohomology and the Dolbeault cohomology of a complex manifold.

In \S\ref{sec:preliminaries-bott-chern-deldelbar}, we recall some definitions and results on the \emph{Bott-Chern} and \emph{Aeppli} cohomologies, see, e.g., \cite{schweitzer}, and on the \emph{$\del\delbar$-Lemma}, referring to \cite{deligne-griffiths-morgan-sullivan}.
In \S\ref{sec:cohomology-complex}, we provide a Fr\"olicher-type inequality for the Bott-Chern cohomology, Theorem \ref{thm:frol-bc}, which also allows to characterize the validity of the \emph{$\del\delbar$-Lemma} in terms of the dimensions of the Bott-Chern cohomology groups, Theorem \ref{thm:caratterizzazione-bc-numbers}; the proof of such inequality is based on two exact sequences, firstly considered by J. Varouchas in \cite{varouchas}.
In \S\ref{sec:computations-nilmfds}, we show that, for certain classes of complex structures on \emph{nilmanifolds} (that is, compact quotients of connected simply-connected nilpotent Lie groups by co-compact discrete subgroups), the Bott-Chern cohomology is completely determined by the associated Lie algebra endowed with the induced linear complex structure, Theorem \ref{thm:bc-invariant-cor}, giving a sort of Nomizu-type result for the Bott-Chern cohomology.
This will allow us to explicitly study the Bott-Chern and Aeppli cohomologies of the \emph{Iwasawa manifold} and of its small deformations, in \S\ref{sec:computations-iwasawa}.
In \S\ref{sec:orbifolds}, we investigate the Bott-Chern cohomology of complex \emph{orbifolds} of the type $\left. X \right\slash G$, where $X$ is a compact complex manifold and $G$ a finite group of biholomorphisms of $X$, Theorem \ref{thm:bc}.

Some of the original results of this chapter have been obtained in \cite{angella}, and jointly with A. Tomassini in \cite{angella-tomassini-3}; \S\ref{sec:orbifolds} contains some original results that have not yet been submitted for publication.

\section{Cohomologies of complex manifolds}\label{sec:preliminaries-bott-chern-deldelbar}
The \emph{Bott-Chern cohomology} and the \emph{Aeppli cohomology} provide important invariants for the study of the geometry of compact (especially, non-K\"ahler) complex manifolds. These cohomology groups have been introduced by R. Bott and S.~S. Chern in \cite{bott-chern}, and by A. Aeppli in \cite{aeppli}, and hence studied by many authors, e.g., B. Bigolin \cite{bigolin, bigolin-2} (both from the sheaf-theoretic and from the analytic viewpoints), A. Andreotti and F. Norguet \cite{andreotti-norguet} (to study cycles of algebraic manifolds), J. Varouchas \cite{varouchas} (to study the cohomological properties of a certain class of compact complex manifolds), M. Abate \cite{abate} (to study annular bundles), L. Alessandrini and G. Bassanelli \cite{alessandrini-bassanelli--complex-geometry-trento} (to investigate the properties of balanced metrics), S. Ofman \cite{ofman-6, ofman-5, ofman-3} (in view of applications to integration on analytic cycles),
S. Boucksom \cite{boucksom} (in order to extend divisorial Zariski decompositions to compact complex manifolds),
J.-P. Demailly \cite{demailly-agbook} (as a tool in Complex Geometry), M. Schweitzer \cite{schweitzer} (in the context of cohomology theories), L. Lussardi \cite{lussardi} (in the non-compact K\"ahler case), R. Kooistra \cite{kooistra} (in the framework of cohomology theories), J.-M. Bismut \cite{bismut-cras11, bismut-preprint} (in the context of Chern characters), L.-S. Tseng and S.-T. Yau \cite{tseng-yau-3} (in the framework of Generalized Geometry and type II String Theory).

In this preliminary section, we recall the basic notions and classical results concerning cohomologies of complex manifolds. More precisely, we recall the definitions of the Bott-Chern and Aeppli cohomologies, and some results on Hodge theory, referring to \cite{schweitzer}; then, we recall the notion of $\del\delbar$-Lemma, referring to \cite{deligne-griffiths-morgan-sullivan}.

\subsection{The Bott-Chern cohomology}
Let $X$ be a complex manifold.
The \emph{Bott-Chern cohomology} of $X$ is the bi-graded algebra
$$ H^{\bullet,\bullet}_{BC}(X) \;:=\; \frac{\ker \del \cap \ker \delbar}{\imm \del\delbar} \;.$$

\medskip

Unlike in the case of the Dolbeault cohomology groups, for every $p,q\in\N$, the conjugation induces an isomorphism
$$ H^{p,q}_{BC}(X)\;\simeq\; H^{q,p}_{BC}(X) \;.$$
Furthermore, since $\ker\del\cap\ker\delbar\subseteq\ker\de$ and $\imm\del\delbar\subseteq\imm\de$, one has the natural map of graded $\C$-vector spaces
$$ \bigoplus_{p+q=\bullet}H^{p,q}_{BC}(X)\to H^{\bullet}_{dR}(X;\C) \;, $$
and, since $\ker\del\cap\ker\delbar\subseteq\ker\delbar$ and $\imm\del\delbar\subseteq\imm\delbar$, one has the natural map of bi-graded $\C$-vector spaces
$$ H^{\bullet,\bullet}_{BC}(X)\to H^{\bullet,\bullet}_{\delbar}(X) \;.$$
In general, even for compact complex manifolds, these maps are neither injective nor surjective: see, e.g., the examples in \cite[\S1.c]{schweitzer} or in \S\ref{sec:bott-chern-iwasawa}. A case of special interest is when $X$ is a compact complex manifold satisfying the $\del\delbar$-Lemma, namely, the property that every $\del$-closed $\delbar$-closed $\de$-exact form is also $\del\delbar$-exact, \cite{deligne-griffiths-morgan-sullivan}, that is, the natural map $H^{\bullet,\bullet}_{BC}(X)\to H^\bullet_{dR}(X;\C)$ is injective (we recall that compact K\"ahler manifolds and, more in general, manifolds in class $\mathcal{C}$ of Fujiki, \cite{fujiki}, that is, compact complex manifolds admitting a proper modification from a K\"ahler manifold, satisfy the $\del\delbar$-Lemma, \cite[Lemma 5.11, Corollary 5.23]{deligne-griffiths-morgan-sullivan}; we refer to \S\ref{subsec:deldelbar-lemma} for further details). In fact, we recall the following result.

\begin{thm}[{\cite[Lemma 5.15, Remark 5.16, 5.21]{deligne-griffiths-morgan-sullivan}}]
 Let $X$ be a compact complex manifold. If $X$ satisfies the $\del\delbar$-Lemma, then the natural maps
 $$ \bigoplus_{p+q=\bullet}H^{p,q}_{BC}(X)\to H^{\bullet}_{dR}(X;\C) \qquad \text{ and } \qquad H^{\bullet,\bullet}_{BC}(X)\to H^{\bullet,\bullet}_{\delbar}(X) $$
 induced by the identity are isomorphisms.
\end{thm}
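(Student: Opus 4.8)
The plan is to reduce the whole statement to a symmetric reformulation of the $\del\delbar$-Lemma and then to check injectivity and surjectivity of the two maps separately. First I would record the $\del\delbar$-Lemma for \emph{arbitrary} (not necessarily pure-type) forms: if $\alpha$ is $\del$-closed, $\delbar$-closed and $\de$-exact, then $\alpha\in\imm\del\delbar$ (equivalently, each of its pure-type components is, since $\del$, $\delbar$, and $\del\delbar$ preserve bidegree). From this I would deduce the two one-sided versions
$$ \ker\del\cap\imm\delbar \;=\; \imm\del\delbar \qquad\text{and}\qquad \ker\delbar\cap\imm\del \;=\; \imm\del\delbar \;. $$
Indeed, if $\alpha=\delbar w$ with $\del\alpha=0$, then $\eta:=\del w$ satisfies $\del\eta=0$ and $\delbar\eta=-\del\delbar w=-\del\alpha=0$, while $\alpha+\eta=\de w$; thus $\alpha+\eta$ is $\del$-closed, $\delbar$-closed and $\de$-exact, so each pure-type component, in particular $\alpha$, lies in $\imm\del\delbar$. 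The second identity is the complex-conjugate statement, available through the symmetry $H^{p,q}_{BC}(X)\simeq H^{q,p}_{BC}(X)$.

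With these at hand the injectivity of both maps is immediate. For $H^{p,q}_{BC}(X)\to H^{p,q}_{\delbar}(X)$, a class in the kernel is represented by a $\del$-closed, $\delbar$-closed form $\alpha$ with $\alpha=\delbar\beta$; by $\ker\del\cap\imm\delbar=\imm\del\delbar$ it is $\del\delbar$-exact, hence zero in $H^{p,q}_{BC}(X)$. For $\bigoplus_{p+q=k}H^{p,q}_{BC}(X)\to H^k_{dR}(X;\C)$, an element of the kernel is $\sum_{p+q=k}[\alpha^{p,q}]_{BC}$ with each $\alpha^{p,q}$ both $\del$- and $\delbar$-closed and with $\sum_{p+q=k}\alpha^{p,q}=\de\beta$; the sum is then $\del$-closed, $\delbar$-closed and $\de$-exact, so the $\del\delbar$-Lemma forces each $\alpha^{p,q}\in\imm\del\delbar$, i.e. $[\alpha^{p,q}]_{BC}=0$.

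For surjectivity I would argue as follows. Given $[\gamma]_{\delbar}$ with $\delbar\gamma=0$, the form $\del\gamma$ is $\del$-exact and $\delbar$-closed, hence $\del\gamma=\del\delbar\delta$ by $\ker\delbar\cap\imm\del=\imm\del\delbar$; replacing $\gamma$ by $\gamma-\delbar\delta$ yields a representative of the same Dolbeault class that is now both $\del$-closed and $\delbar$-closed, producing a Bott-Chern class mapping onto $[\gamma]_{\delbar}$. For the de Rham map I would show that every $\de$-closed form is de Rham-cohomologous to one whose pure-type components are all $\del$-closed and $\delbar$-closed. Writing a $\de$-closed $k$-form as $\sum_{p=a}^{b}\omega^{p,k-p}$, the relation $\de\omega=0$ in bidegree $(a,k-a+1)$ gives $\delbar\omega^{a,k-a}=0$; then $\del\omega^{a,k-a}$ is $\del$-exact and $\delbar$-closed, so $\del\omega^{a,k-a}=\del\delbar\sigma$, and subtracting $\de\sigma$ (with $\sigma$ of type $(a,k-a-1)$) makes the $(a,k-a)$-component $\del$-closed while altering only the $(a,k-a)$- and $(a+1,k-a-1)$-components and creating nothing outside the range $[a,b]$. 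Iterating upward in $p$ purifies every component, the top one becoming closed automatically; this produces the decomposition $H^k_{dR}(X;\C)\simeq\bigoplus_{p+q=k}H^{p,q}_{BC}(X)$.

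The hard part is twofold. The conceptual step is the passage from the given form of the $\del\delbar$-Lemma to its one-sided versions, where reading ``form'' as an arbitrary sum of pure-type pieces is exactly what makes the short argument $\alpha+\eta=\de w$ work. The technical step is the bookkeeping in the purification induction: one must track bidegrees carefully to guarantee that correcting a lower-degree component leaves the previously corrected ones $\del$- and $\delbar$-closed and introduces no component of a new bidegree, so that the induction actually terminates with all components simultaneously closed.
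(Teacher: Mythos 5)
Your proof is correct. There is nothing in the paper to compare it against line by line: the paper states this theorem with a citation to \cite[Lemma 5.15, Remark 5.16, 5.21]{deligne-griffiths-morgan-sullivan} and gives no proof of its own, so your argument is in effect a self-contained reconstruction of exactly that route. Your derivation of the one-sided identities $\ker\del\cap\imm\delbar=\imm\del\delbar$ and $\ker\delbar\cap\imm\del=\imm\del\delbar$ from the stated form of the Lemma is precisely the implication $(a)_h\Rightarrow(b)_h$ of the general lemma on bounded double complexes that the paper recalls without proof in \S\ref{subsec:deldelbar-lemma}; and your purification induction for surjectivity onto de Rham is the same device the paper does spell out elsewhere, namely in \texttt{Claim 2} of the proof of Theorem \ref{thm:caratterizzazione-bc-numbers}, the only difference being that there the correction of the mixed-type representative is driven by the condition $\imm\del\cap\imm\delbar=\imm\del\delbar$ (vanishing of $A^{\bullet,\bullet}$), whereas you drive it with $\ker\delbar\cap\imm\del=\imm\del\delbar$ --- under the full $\del\delbar$-Lemma both are available and the bidegree bookkeeping is identical, including your correct observations that the previously purified components are untouched and that the top component closes automatically from the $(b+1,k-b)$ and $(b,k-b+1)$ components of $\de\omega'=0$. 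Two presentational points, neither a gap: the second one-sided identity comes from conjugation of forms (conjugation interchanges $\del$ and $\delbar$ and preserves $\imm\del\delbar$), rather than from the isomorphism $H^{p,q}_{BC}(X)\simeq H^{q,p}_{BC}(X)$, which is a consequence of that symmetry, not a tool for it; and when applying $\ker\del\cap\imm\delbar=\imm\del\delbar$ bidegree-wise you may take the potential $w$ pure of type $(p,q-1)$, since all other bidegree components of $\delbar w$ must vanish when $\alpha=\delbar w$ is pure, which makes the identification of $\alpha$ and $\eta=\del w$ as the two pure-type components of $\de w$ immediate.
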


\medskip

As for the de Rham and the Dolbeault cohomologies, a Hodge theory can be developed also for the Bott-Chern cohomology for compact complex manifolds: we recall here some results, referring to \cite[\S2]{schweitzer} (see also \cite[\S5]{bigolin}, and \cite{lussardi}).

Suppose that $X$ is a compact complex manifold.
Fix a Hermitian metric on $X$, and define the differential operator
$$ \tilde\Delta_{BC} \;:=\;
\left(\del\delbar\right)\left(\del\delbar\right)^*+\left(\del\delbar\right)^*\left(\del\delbar\right)+\left(\delbar^*\del\right)\left(\delbar^*\del\right)^*+\left(\delbar^*\del\right)^*\left(\delbar^*\del\right)+\delbar^*\delbar+\del^*\del \;, $$
see \cite[Proposition 5]{kodaira-spencer-3} (where it is used to prove the stability of the K\"ahler property under small deformations of the complex structure), and also \cite[\S2.b]{schweitzer}, \cite[\S5.1]{bigolin}. One has the following result.

\begin{thm}[{\cite[Proposition 5]{kodaira-spencer-3}, see also \cite[\S2.b]{schweitzer}}]
Let $X$ be a compact complex manifold endowed with a Hermitian metric. The operator $\tilde\Delta_{BC}$ is a \kth{4} order self-adjoint elliptic differential operator, and
$$ \ker \tilde\Delta_{BC} \; = \; \ker\del \cap \ker \delbar \cap \ker \delbar^*\del^* \;. $$
\end{thm}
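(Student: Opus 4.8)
The plan is to treat the three assertions separately. Self-adjointness and the kernel computation are formal, while the only genuinely analytic input is a principal-symbol calculation establishing ellipticity.

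First, self-adjointness is immediate from the shape of the operator: each of the six summands has the form $A A^*$ or $A^* A$ with $A\in\{\del\delbar,\ \delbar^*\del,\ \delbar,\ \del\}$, and on a compact $X$ the formal adjoints $\del^*,\delbar^*$ coincide with the genuine $L^2$-adjoints (integration by parts produces no boundary terms). Pairing $\tilde\Delta_{BC}\alpha$ against $\alpha$ and integrating by parts term by term yields the sum-of-squares identity $\langle \tilde\Delta_{BC}\alpha,\alpha\rangle = \|(\del\delbar)^*\alpha\|^2 + \|\del\delbar\alpha\|^2 + \|(\delbar^*\del)^*\alpha\|^2 + \|\delbar^*\del\alpha\|^2 + \|\delbar\alpha\|^2 + \|\del\alpha\|^2$, which exhibits $\tilde\Delta_{BC}$ as non-negative and self-adjoint.

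This identity also settles the kernel. If $\tilde\Delta_{BC}\alpha=0$, then the left-hand pairing vanishes, so each squared norm is zero; reading off $\del\alpha=0$, $\delbar\alpha=0$, and $(\del\delbar)^*\alpha=\delbar^*\del^*\alpha=0$ gives $\ker\tilde\Delta_{BC}\subseteq \ker\del\cap\ker\delbar\cap\ker\delbar^*\del^*$. Conversely, if $\del\alpha=\delbar\alpha=\delbar^*\del^*\alpha=0$, then each of the six summands annihilates $\alpha$ directly: $\del\delbar\alpha=0$ and $(\delbar^*\del)^*\alpha=\del^*\delbar\alpha=0$ because $\delbar\alpha=0$; $\delbar^*\del\alpha=0$ and $\del^*\del\alpha=0$ because $\del\alpha=0$; and $(\del\delbar)^*\alpha=\delbar^*\del^*\alpha=0$ by hypothesis. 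Hence $\tilde\Delta_{BC}\alpha=0$, and the two sets coincide.

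The substantive step is ellipticity, which I would obtain by computing the fourth-order principal symbol. The terms $\delbar^*\delbar$ and $\del^*\del$ are of order $2$ and drop out, so the degree-$4$ symbol comes only from the first four summands. Writing a real covector as $\xi=\xi^{1,0}+\xi^{0,1}$, the symbols of $\del,\delbar$ are, up to a harmless constant, exterior multiplications $e:=\xi^{1,0}\wedge\,\cdot\,$ and $\bar e:=\xi^{0,1}\wedge\,\cdot\,$, and those of $\del^*,\delbar^*$ are the adjoint contractions $e^*,\bar e^*$. Setting $\lambda:=|\xi^{1,0}|^2=|\xi^{0,1}|^2=\tfrac12|\xi|^2$, the Hermitian metric makes forms of different bidegree orthogonal, which yields the Clifford-type relations $ee^*+e^*e=\lambda\,\id$, $\bar e\bar e^*+\bar e^*\bar e=\lambda\,\id$, together with the vanishing of every anticommutator between an operator in $\{e,e^*\}$ and one in $\{\bar e,\bar e^*\}$. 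Substituting $ee^*=\lambda-e^*e$ and its analogues and using these anticommutators to cancel the mixed contributions in pairs collapses the four symbols to $\sigma_4(\tilde\Delta_{BC})(\xi)=\lambda\,(ee^*+e^*e)=\lambda^2\,\id$. Since $\lambda>0$ for $\xi\neq0$, the principal symbol is a positive multiple of the identity on each fibre $\wedge^{p,q}$, which is precisely ellipticity of order $4$; combined with self-adjointness this is the claim. I expect the sign bookkeeping in this cancellation to be the only place demanding care, but it is a finite-dimensional linear-algebra computation rather than a conceptual difficulty; alternatively one may invoke the original computation of \cite[Proposition 5]{kodaira-spencer-3}.
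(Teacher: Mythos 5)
Your proof is correct, and since the paper states this theorem without proof (citing Kodaira--Spencer's Proposition 5 and Schweitzer's \S 2.b), it coincides with the standard argument of those references: the sum-of-squares identity $\langle \tilde\Delta_{BC}\alpha,\alpha\rangle = \|(\del\delbar)^*\alpha\|^2+\|\del\delbar\alpha\|^2+\|\del^*\delbar\alpha\|^2+\|\delbar^*\del\alpha\|^2+\|\delbar\alpha\|^2+\|\del\alpha\|^2$ settles self-adjointness and both inclusions for the kernel, and your symbol computation does close as claimed, since grouping the four quartic symbols as $e\bar e\bar e^*e^*+\bar e^*ee^*\bar e=\lambda\, ee^*$ and $\bar e^*e^*e\bar e+e^*\bar e\bar e^*e=\lambda\, e^*e$ via the Clifford relations yields $\sigma_4(\tilde\Delta_{BC})(\xi)=\lambda^2\,\id$ with $\lambda=\frac{1}{2}|\xi|^2>0$ for $\xi\neq 0$. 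No gap.
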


Therefore, as a consequence of the general theory of self-adjoint elliptic differential operators, see, e.g., \cite[page 450]{kodaira}, the following result holds.
\begin{thm}[{\cite[Th\'eor\`eme 2.2]{schweitzer}, \cite[Corollaire 2.3]{schweitzer}}]
 Let $X$ be a compact complex manifold, endowed with a Hermitian metric. Then there exist an orthogonal decomposition
$$ \wedge^{\bullet,\bullet}X \;=\; \ker\tilde\Delta_{BC} \,\oplus\, \imm\del\delbar \,\oplus\, \left(\imm\del^*\,+\,\imm\delbar^*\right) $$
and an isomorphism
$$ H^{\bullet,\bullet}_{BC}(X) \;\simeq\; \ker\tilde\Delta_{BC} \;.$$
In particular, the Bott-Chern cohomology groups of $X$ are finite-dimensional $\C$-vector spaces.
\end{thm}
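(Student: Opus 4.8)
The plan is to run the standard Hodge-theoretic machine for the self-adjoint elliptic operator $\tilde\Delta_{BC}$, exactly as one does for $\overline\square$, taking as the two essential inputs the facts just recorded: that $\tilde\Delta_{BC}$ is a \kth{4} order self-adjoint elliptic differential operator, and that $\ker\tilde\Delta_{BC}=\ker\del\cap\ker\delbar\cap\ker\delbar^*\del^*$. First I would invoke the general theory of self-adjoint elliptic operators on the compact manifold $X$ to obtain the orthogonal decomposition $\wedge^{\bullet,\bullet}X=\ker\tilde\Delta_{BC}\oplus\imm\tilde\Delta_{BC}$, with $\ker\tilde\Delta_{BC}$ finite-dimensional and $\imm\tilde\Delta_{BC}=\left(\ker\tilde\Delta_{BC}\right)^\perp$; this already yields the finite-dimensionality assertion. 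The remaining work is to identify $\imm\tilde\Delta_{BC}$ with $\imm\del\delbar\oplus\left(\imm\del^*+\imm\delbar^*\right)$, and then to read off the cohomological isomorphism.

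Next I would establish that the three subspaces $\ker\tilde\Delta_{BC}$, $\imm\del\delbar$ and $\imm\del^*+\imm\delbar^*$ are mutually orthogonal, using only $\del^2=\delbar^2=\del\delbar+\delbar\del=0$, the adjunction defining $\del^*,\delbar^*$, and the characterization of $\ker\tilde\Delta_{BC}$. Concretely, $\langle\del\delbar\alpha,\del^*\beta\rangle=\langle\del\del\delbar\alpha,\beta\rangle=0$ and $\langle\del\delbar\alpha,\delbar^*\beta\rangle=\langle\delbar\del\delbar\alpha,\beta\rangle=0$ give $\imm\del\delbar\perp\left(\imm\del^*+\imm\delbar^*\right)$, while for $h\in\ker\tilde\Delta_{BC}$ the identities $\langle h,\del\delbar\alpha\rangle=\langle\delbar^*\del^*h,\alpha\rangle=0$, $\langle h,\del^*\beta\rangle=\langle\del h,\beta\rangle=0$ and $\langle h,\delbar^*\beta\rangle=\langle\delbar h,\beta\rangle=0$ give $\ker\tilde\Delta_{BC}\perp\left(\imm\del\delbar+\imm\del^*+\imm\delbar^*\right)$.

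To see that the sum is the whole space I would argue by a sandwich rather than by a naive orthogonal-complement computation. Reading off the explicit formula for $\tilde\Delta_{BC}$ and using $\left(\del\delbar\right)^*=\delbar^*\del^*$ and $\left(\delbar^*\del\right)^*=\del^*\delbar$, every summand has image inside $\imm\del\delbar+\imm\del^*+\imm\delbar^*$, whence $\imm\tilde\Delta_{BC}\subseteq\imm\del\delbar+\imm\del^*+\imm\delbar^*$. Conversely, the orthogonality just proved shows $\imm\del\delbar+\imm\del^*+\imm\delbar^*\subseteq\left(\ker\tilde\Delta_{BC}\right)^\perp=\imm\tilde\Delta_{BC}$. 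The two containments force equality, and combined with the mutual orthogonality this realizes $\imm\tilde\Delta_{BC}$ as the internal orthogonal direct sum $\imm\del\delbar\oplus\left(\imm\del^*+\imm\delbar^*\right)$, giving the first displayed decomposition.

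For the isomorphism I would prove $\ker\del\cap\ker\delbar=\ker\tilde\Delta_{BC}\oplus\imm\del\delbar$: the inclusion $\supseteq$ is immediate, and for $\subseteq$, given $\alpha\in\ker\del\cap\ker\delbar$ I would write $\alpha=h+\del\delbar\beta+r$ with $h\in\ker\tilde\Delta_{BC}$ and $r\in\imm\del^*+\imm\delbar^*$ via the decomposition, observe $\del r=\delbar r=0$ since $\del\alpha=\delbar\alpha=\del h=\delbar h=0$ and $\del\del\delbar\beta=\delbar\del\delbar\beta=0$, and then writing $r=\del^*\gamma+\delbar^*\delta$ compute $\langle r,r\rangle=\langle\del r,\gamma\rangle+\langle\delbar r,\delta\rangle=0$, forcing $r=0$. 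Passing to the quotient by $\imm\del\delbar$ then gives $H^{\bullet,\bullet}_{BC}(X)\simeq\ker\tilde\Delta_{BC}$. The only delicate point, and the reason the argument must be routed through the genuine elliptic decomposition $\wedge^{\bullet,\bullet}X=\ker\tilde\Delta_{BC}\oplus\imm\tilde\Delta_{BC}$ rather than through a bare "orthogonal complement is trivial" statement, is precisely the passage from density to equality of the algebraic sum in the Fréchet space of smooth forms: this is exactly what the sandwich $\imm\tilde\Delta_{BC}\subseteq\imm\del\delbar+\imm\del^*+\imm\delbar^*\subseteq\left(\ker\tilde\Delta_{BC}\right)^\perp=\imm\tilde\Delta_{BC}$ supplies, and I expect it to be the crux of the proof.
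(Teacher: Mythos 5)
Your proposal is correct and follows essentially the same route as the paper, which simply invokes the general theory of self-adjoint elliptic operators (citing \cite[page 450]{kodaira} and \cite[\S2]{schweitzer}) applied to $\tilde\Delta_{BC}$ together with the kernel characterization $\ker\tilde\Delta_{BC}=\ker\del\cap\ker\delbar\cap\ker\delbar^*\del^*$: your sandwich argument identifying $\imm\tilde\Delta_{BC}$ with $\imm\del\delbar\oplus\left(\imm\del^*+\imm\delbar^*\right)$, and the step $\ker\del\cap\ker\delbar=\ker\tilde\Delta_{BC}\oplus\imm\del\delbar$, are exactly the details left implicit there (and carried out in M. Schweitzer's original proof). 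In particular, you correctly locate the crux in the Fredholm-alternative fact that, within smooth forms, the orthogonal complement of $\ker\tilde\Delta_{BC}$ equals $\imm\tilde\Delta_{BC}$, which is what the cited elliptic theory supplies.
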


\medskip

Another consequence of general results in spectral theory, see, e.g., \cite[Theorem 4]{kodaira-spencer-3}, \cite[Theorem 7.3]{kodaira}, is the semi-continuity property for the dimensions of the Bott-Chern cohomology.

\begin{thm}[{\cite[Lemme 3.2]{schweitzer}}]
 Let $\left\{X_t\right\}_{t\in B}$ a complex-analytic family of compact complex manifolds. Then, for every $p,q\in\N$, the function
 $$ B \ni t \mapsto \dim_\C H^{p,q}_{BC}\left(X_t\right) \in \N $$
 is upper-semi-continuous.
\end{thm}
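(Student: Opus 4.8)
The plan is to reduce the statement to the classical upper-semi-continuity of the dimension of the kernel of a differentiable family of self-adjoint elliptic operators, exactly as one does for the Dolbeault cohomology, replacing the Hodge--Dolbeault Laplacian $\overline\square$ with the fourth order Bott--Chern operator $\tilde\Delta_{BC}$. First I would use Ehresmann's theorem to set up a fixed background: locally over $B$ the family $\mathcal{X}\stackrel{\pi}{\to}B$ is differentiably trivial, so, fixing $t_0\in B$ and shrinking $B$, I identify all the fibres $X_t$ with a single compact differentiable manifold $M$; under this identification the complex structures $J_t$, the induced bigradings $\wedge^{p,q}_{J_t}M$, and the operators $\del_t$, $\delbar_t$ form a differentiable family on the fixed manifold $M$. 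Choosing a Hermitian metric on the total space $\mathcal{X}$ and restricting to the fibres yields a differentiable family of $J_t$-Hermitian metrics $g_t$ on $M$.

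Next, by Schweitzer's Hodge theory for the Bott--Chern cohomology (the theorem recalled just above, \cite[Th\'eor\`eme 2.2]{schweitzer}), for each $t$ one has
$$ \dim_\C H^{p,q}_{BC}(X_t) \;=\; \dim_\C \ker\left(\tilde\Delta_{BC,t}\colon \wedge^{p,q}_{J_t}M \to \wedge^{p,q}_{J_t}M\right) \;, $$
where $\tilde\Delta_{BC,t}$ is the fourth order self-adjoint elliptic operator associated to $(J_t,g_t)$. Since $\tilde\Delta_{BC,t}$ is built out of $\del_t$, $\delbar_t$ and their $g_t$-adjoints, all of which vary differentiably with $t$, the operators $\tilde\Delta_{BC,t}$ constitute a differentiable family of fourth order self-adjoint elliptic operators acting on a fixed scale of Sobolev spaces of sections over $M$ (using $g_{t_0}$ to fix the underlying $\mathrm{L}^2$-structures). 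As ellipticity is an open condition, after shrinking $B$ the whole family is uniformly elliptic.

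Finally I would invoke the general semi-continuity principle for families of elliptic self-adjoint operators, \cite[Theorem 4]{kodaira-spencer-3}, \cite[Theorem 7.3]{kodaira}. Each $\tilde\Delta_{BC,t}$ is non-negative with discrete spectrum, and its eigenvalues depend continuously on $t$ through the resolvent. Fix $\varepsilon>0$ smaller than the least positive eigenvalue of $\tilde\Delta_{BC,t_0}$ and with $\varepsilon\notin\spec\tilde\Delta_{BC,t_0}$. Then the spectral projector onto the eigenvalues in $[0,\varepsilon)$ depends continuously on $t$ near $t_0$ and hence has locally constant rank, equal at $t_0$ to $\dim_\C\ker\tilde\Delta_{BC,t_0}$; since $\ker\tilde\Delta_{BC,t}$ is contained in the range of this projector, one obtains $\dim_\C\ker\tilde\Delta_{BC,t}\le\dim_\C\ker\tilde\Delta_{BC,t_0}$ for $t$ in a neighbourhood of $t_0$. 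Combined with the isomorphism above, this proves that $B\ni t\mapsto\dim_\C H^{p,q}_{BC}(X_t)$ is upper-semi-continuous.

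I expect the main obstacle to be the bookkeeping of the first step: verifying that, after the Ehresmann trivialization, the varying bigrading $\wedge^{p,q}_{J_t}M$ together with the varying metric $g_t$ genuinely assemble into a differentiable family of elliptic operators on one fixed manifold, so that the continuity of eigenvalues — the technical heart of the argument — actually applies. Once this is in place, the spectral argument is formally identical to the Dolbeault case, the only difference being that $\tilde\Delta_{BC}$ is of order four rather than two, which is immaterial for the general theory.
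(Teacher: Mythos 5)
Your proposal is correct and follows exactly the route the paper indicates: it recalls this statement as ``a consequence of general results in spectral theory'' citing \cite[Theorem 4]{kodaira-spencer-3} and \cite[Theorem 7.3]{kodaira}, combined with the Hodge-theoretic isomorphism $H^{p,q}_{BC}(X_t)\simeq\ker\tilde\Delta_{BC,t}$ from \cite[Th\'eor\`eme 2.2, Corollaire 2.3]{schweitzer} --- precisely your Ehresmann trivialization plus continuity-of-eigenvalues argument for the differentiable family of fourth order self-adjoint elliptic operators $\tilde\Delta_{BC,t}$ (the same spectral-projector scheme the author spells out in the proof of Theorem \ref{thm:bc-invariant-open}). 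Your closing remark is also accurate: the order of the operator is immaterial for the general theory, so no changes are needed.
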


\medskip

By using the K\"ahler identities (in particular, the fact that $\overline\square=\square$ and that $\del^*\delbar+\delbar\del^*=0=\delbar^*\del+\del\delbar^*$), one can prove that, on a compact K\"ahler manifold,
$$ \tilde\Delta_{BC} \;=\; \overline{\square}^2+\del^*\,\del+\delbar^*\,\delbar \;,$$
\cite[Proposition 6]{kodaira-spencer-3}, \cite[Proposition 2.4]{schweitzer}, and hence $\ker\tilde\Delta_{BC}=\ker\overline{\square}=\ker\Delta$; in particular, it follows that, on a compact K\"ahler manifold, the de Rham cohomology, the Dolbeault cohomology, and the Bott-Chern cohomology are isomorphic (actually, since the $\del\delbar$-Lemma holds on every compact K\"ahler manifold, one gets an isomorphism that does not depend on the choice of the Hermitian metric).

\subsection{The Aeppli cohomology}
Let $X$ be a complex manifold.
Dualizing the definition of the Bott-Chern cohomology, one can define another cohomology on $X$, the \emph{Aeppli cohomology}: it is the bi-graded $H^{\bullet, \bullet}_{BC}(X)$-module
$$ H^{\bullet,\bullet}_{A}(X) \;:=\; \frac{\ker \del\delbar}{\imm\del \,+\, \imm\delbar} \;.$$

\medskip

As for the Bott-Chern cohomology, the conjugation induces, for every $p,q\in\N$, the isomorphism
$$ H^{p,q}_{A}(X)\;\simeq\; H^{q,p}_{A}(X) \;. $$
Furthermore, since $\ker\de\subseteq\ker\del\delbar$ and $\imm\de\subseteq\imm\del+\imm\delbar$, one has the natural map of graded $\C$-vector spaces
$$ H^{\bullet}_{dR}(X;\C)\to \bigoplus_{p+q=\bullet}H^{p,q}_{A}(X) \;,$$
and, since $\ker\delbar\subseteq\ker\del\delbar$ and $\imm\delbar\subseteq\imm\del+\imm\delbar$, one has the natural map of bi-graded $\C$-vector spaces
$$ H^{\bullet,\bullet}_{\delbar}(X)\to H^{\bullet,\bullet}_{A}(X) \;;$$
as we have noted for the Bott-Chern cohomology, such maps are, in general, neither injective nor surjective, but they are isomorphisms whenever $X$ is compact and satisfies the $\del\delbar$-Lemma, \cite[Lemma 5.15, Remark 5.16, 5.21]{deligne-griffiths-morgan-sullivan}, and hence, in particular, if $X$ is a compact complex manifold admitting a K\"ahler structure, \cite[Lemma 5.11]{deligne-griffiths-morgan-sullivan}, or if $X$ is a compact complex manifold in class $\mathcal{C}$ of Fujiki, \cite[Corollary 5.23]{deligne-griffiths-morgan-sullivan}.

\begin{rem}
 On a compact K\"ahler manifold $X$, the associated $(1,1)$-form $\omega$ of the K\"ahler metric defines a non-zero class in $H^2_{dR}(X;\R)$. For general Hermitian manifolds, special classes of metrics are often defined in terms of closedness of powers of $\omega$, so they define classes in the Bott-Chern or Aeppli cohomology groups (e.g., a Hermitian metric on a complex manifold of complex dimension $n$ is said \emph{balanced} if $\de\omega^{n-1}=0$ \cite{michelsohn}, \emph{pluriclosed} if $\del\delbar\omega=0$ \cite{bismut--math-ann-1989}, \emph{astheno-K\"ahler} if $\del\delbar\omega^{n-2}=0$ \cite{jost-yau, jost-yau-correction}, \emph{Gauduchon} if $\del\delbar\omega^{n-1}=0$ \cite{gauduchon}). (Note that, they define possibly the zero class in the Bott-Chern or Aeppli cohomologies: for the balanced case, see \cite[Corollary 1.3]{fu-li-yau}, where it is shown that, for $k\geq 2$, the complex structures on $\sharp_{j=1}^{k} \left(\mathbb{S}^3\times\mathbb{S}^3\right)$ constructed from the conifold 
transitions admit balanced metrics.)
\end{rem}

\medskip

We refer to \cite[\S2.c]{schweitzer} for the following results, concerning Hodge theory for the Aeppli cohomology on compact complex manifolds.

Suppose that $X$ is a compact complex manifold. Once fixed a Hermitian metric on $X$, one defines the differential operator
$$ \tilde\Delta_{A} \;:=\; \del\del^*+\delbar\delbar^*+\left(\del\delbar\right)^*\left(\del\delbar\right)+\left(\del\delbar\right)\left(\del\delbar\right)^*+\left(\delbar\del^*\right)^*\left(\delbar\del^*\right)+\left(\delbar\del^*\right)\left(\delbar\del^*\right)^* \;, $$
which turns out to be a \kth{4} order self-adjoint elliptic differential operator such that
$$ \ker \tilde\Delta_{A} \;=\; \ker\del\delbar \cap \ker\del^* \cap \ker\delbar^* \;.$$
Hence one has an orthogonal decomposition
$$ \wedge^{\bullet,\bullet}X \;=\; \ker\tilde\Delta_{A} \,\oplus\, \left(\imm\del \,+\, \imm\delbar\right)
\,\oplus\, \imm\left(\del\delbar\right)^* $$
from which one gets an isomorphism
$$ H^{\bullet,\bullet}_{A}(X) \;\simeq\; \ker\tilde\Delta_{A} \;;$$
in particular, this proves that the Aeppli cohomology groups of a compact complex manifold are finite-dimensional $\C$-vector spaces.

Furthermore, as for the Bott-Chern cohomology, if $\left\{X_t\right\}_{t\in B}$ is a complex-analytic family of compact complex manifolds, with $B$ a complex manifold, then, for every $p,q\in\N$, the function $B \ni t \mapsto \dim_\C H^{p,q}_{A}\left(X_t\right) \in \N$ is upper-semi-continuous.

Once again, whenever $X$ is a compact K\"ahler manifold, by using the K\"ahler identities, one has
$$ \tilde\Delta_A \;=\; \overline\square^2 + \del\del^* + \delbar\delbar^* \;; $$
indeed, recall that $\overline\square=\square$ and that $\del^*\delbar=\im\,\left[\Lambda,\delbar\right]\,\delbar=-\im\,\delbar\,\Lambda\,\delbar=-\im\,\delbar\,\left[\Lambda,\delbar\right]=-\delbar\del^*$, and hence $\delbar^*\del=-\del\delbar^*$; therefore
\begin{eqnarray*}
\overline\square^2 \;=\; \overline\square \, \square &=& \phantom{+}\delbar\delbar^*\del\del^*+\delbar\delbar^*\del^*\del+\delbar^*\delbar\del\del^*+\delbar^*\delbar\del^*\del \\[5pt]
 &=& -\delbar\del\delbar^*\del^*-\delbar\del^*\delbar^*\del-\delbar^*\del\delbar\del^*-\delbar^*\del^*\delbar\del \\[5pt]
 &=& \phantom{+}\del\delbar\delbar^*\del^*+\delbar\del^*\del\delbar^*+\del\delbar^*\delbar\del^*+\delbar^*\del^*\del\delbar \\[5pt]
 &=& \tilde\Delta_A - \del\del^*-\delbar\delbar^* \;.
\end{eqnarray*}
In particular, it follows that, on a compact K\"ahler manifold, $\ker\tilde\Delta_{A}=\ker\overline{\square}=\ker\Delta$, and hence the de Rham cohomology, the Dolbeault cohomology, and the Aeppli cohomology are isomorphic (actually, since the $\del\delbar$-Lemma holds on every compact K\"ahler manifold, one gets an isomorphism that does not depend on the choice of the Hermitian metric).

\medskip

In fact, since $\ker \tilde\Delta_{BC} = \ker\del \cap \ker \delbar \cap \ker \delbar^*\del^*$ and  $\ker \tilde\Delta_{A} = \ker\del\delbar \cap \ker\del^* \cap \ker\delbar^*$, one has the following isomorphism between the Bott-Chern cohomology and the Aeppli cohomology.

\begin{thm}[{\cite[\S2.c]{schweitzer}}]
 Let $X$ be a compact complex manifold of complex dimension $n$. For any $p,q\in\N$, the Hodge-$*$-operator associated to a Hermitian metric on $X$ induces an isomorphism,
 $$ *\colon H^{p,q}_{BC}(X) \simeq H^{n-q,n-p}_{A}(X) \;, $$
 between the Bott-Chern and the Aeppli cohomologies.
\end{thm}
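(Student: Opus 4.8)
The plan is to reduce the statement to the level of harmonic forms, where the Hodge-$*$-operator can be exhibited as an explicit isomorphism. By the two Hodge-decomposition theorems recalled above, once a Hermitian metric is fixed there are isomorphisms
$$ H^{p,q}_{BC}(X) \;\simeq\; \ker\tilde\Delta_{BC}\cap\wedge^{p,q}X \qquad \text{ and } \qquad H^{n-q,n-p}_{A}(X)\;\simeq\;\ker\tilde\Delta_{A}\cap\wedge^{n-q,n-p}X \;, $$
together with the explicit descriptions $\ker\tilde\Delta_{BC}=\ker\del\cap\ker\delbar\cap\ker\left(\delbar^*\del^*\right)$ and $\ker\tilde\Delta_{A}=\ker\left(\del\delbar\right)\cap\ker\del^*\cap\ker\delbar^*$. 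Since $*$ is a $\C$-linear isomorphism $\wedge^{p,q}X\stackrel{\simeq}{\to}\wedge^{n-q,n-p}X$, it suffices to prove that it carries the first harmonic space bijectively onto the second; composing with the Hodge isomorphisms then yields the asserted map on cohomology.

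The verification rests on the relations $\delbar^*=-*\del*$ and $\del^*=-*\delbar*$ between the metric adjoints and the Hodge-$*$-operator, and on the fact that $**=\pm\id$ on each $\wedge^{p,q}X$, with sign depending only on the total degree. Let $\alpha\in\wedge^{p,q}X$. The two first-order conditions transform immediately: from $\delbar^*=-*\del*$ and $\del^*=-*\delbar*$ one obtains $\delbar^*\left(*\alpha\right)=\pm *\del\alpha$ and $\del^*\left(*\alpha\right)=\pm *\delbar\alpha$, so that $\del\alpha=0$ is equivalent to $\delbar^*\left(*\alpha\right)=0$, and $\delbar\alpha=0$ is equivalent to $\del^*\left(*\alpha\right)=0$. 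For the fourth-order condition one substitutes both adjoint relations and commutes the resulting $**$ past the differentials to get $\del\delbar\left(*\alpha\right)=\pm *\left(\delbar^*\del^*\alpha\right)$; hence $\delbar^*\del^*\alpha=0$ is equivalent to $\del\delbar\left(*\alpha\right)=0$. Thus $\alpha\in\ker\tilde\Delta_{BC}$ precisely when $*\alpha\in\ker\tilde\Delta_{A}$.

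Since $*$ is invertible (with inverse $\pm *$) and the three defining conditions are interchanged symmetrically, the restriction of $*$ to $\ker\tilde\Delta_{BC}\cap\wedge^{p,q}X$ is a bijection onto $\ker\tilde\Delta_{A}\cap\wedge^{n-q,n-p}X$, and the claimed isomorphism follows. Equivalently, and more conceptually, one may verify directly that $*\,\tilde\Delta_{BC}=\tilde\Delta_{A}\,*$: using $\delbar^*=-*\del*$ and $\del^*=-*\delbar*$ together with their conjugate relations, conjugation by $*$ sends each of the six summands of $\tilde\Delta_{BC}$ to one of the six summands of $\tilde\Delta_{A}$, whence $*\left(\ker\tilde\Delta_{BC}\right)=\ker\tilde\Delta_{A}$ at once. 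I expect the only genuine obstacle to be the bookkeeping of signs in the identity $\del\delbar\left(*\alpha\right)=\pm *\left(\delbar^*\del^*\alpha\right)$, where two applications of $**=\pm\id$ on forms of different total degree must be tracked carefully; the first-order parts and the overall structure of the argument are routine.
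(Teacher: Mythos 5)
Your proposal is correct and follows essentially the same route as the paper, which likewise deduces the isomorphism from the Hodge-theoretic identifications $H^{\bullet,\bullet}_{BC}(X)\simeq\ker\tilde\Delta_{BC}=\ker\del\cap\ker\delbar\cap\ker\left(\delbar^*\del^*\right)$ and $H^{\bullet,\bullet}_{A}(X)\simeq\ker\tilde\Delta_{A}=\ker\left(\del\delbar\right)\cap\ker\del^*\cap\ker\delbar^*$, together with the observation that $*$, via $\delbar^*=-*\,\del\,*$ and $\del^*=-*\,\delbar\,*$, interchanges the three defining conditions (this is exactly M.~Schweitzer's argument in \cite[\S2.c]{schweitzer}, to which the paper refers). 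Your sign bookkeeping is harmless precisely because each condition is carried to $\pm$ the corresponding condition and $\ker(\pm D)=\ker D$, so nothing further is needed.
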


\begin{rem}
We refer to \cite[\S VI.12]{demailly-agbook}, \cite[\S4]{schweitzer}, \cite[\S3.2, \S3.5]{kooistra} for a sheaf cohomology interpretation of the Bott-Chern and Aeppli cohomologies (see also Remark \ref{rem:hypoercoh-bc-orbifolds}).
\end{rem}

\subsection{The $\partial\overline{\partial}$-Lemma}\label{subsec:deldelbar-lemma}
Let $X$ be a compact complex manifold, and consider its complex de Rham $H^{\bullet}_{dR}(X;\C)$, Dolbeault $H^{\bullet,\bullet}_{\delbar}(X)$, conjugate Dolbeault $H^{\bullet,\bullet}_{\del}(X)$, Bott-Chern $H^{\bullet,\bullet}_{BC}(X)$, and Aeppli $H^{\bullet,\bullet}_{A}(X)$ cohomologies.

The identity map induces the following natural maps of (bi-)graded $\C$-vector spaces:
$$
\xymatrix{
 & H^{\bullet,\bullet}_{BC}(X) \ar[d]\ar[ld]\ar[rd] & \\
 H^{\bullet,\bullet}_{\del}(X) \ar[rd] & H^{\bullet}_{dR}(X;\C) \ar[d] & H^{\bullet,\bullet}_{\delbar}(X) \ar[ld] \\
 & H^{\bullet,\bullet}_{A}(X)  & 
}
$$
In general, these maps are neither injective nor surjective: see, e.g., the examples in \cite[\S1.c]{schweitzer} or in \S\ref{sec:bott-chern-iwasawa}.

\medskip

By \cite[Lemma 5.15, Proposition 5.17]{deligne-griffiths-morgan-sullivan}, it turns out that, if one of the above map is an isomorphism, then all the maps are isomorphisms, \cite[Remark 5.16]{deligne-griffiths-morgan-sullivan}; this is encoded in the notion of \emph{$\del\delbar$-Lemma}, which can be introduced in the more general setting of bounded double complexes of vector spaces. We start by recalling the following general result by P. Deligne, Ph.~A. Griffiths, J. Morgan, and D.~P. Sullivan, \cite{deligne-griffiths-morgan-sullivan}.

\begin{prop}[{\cite[Lemma 5.15]{deligne-griffiths-morgan-sullivan}}]
 Let $\left(K^{\bullet,\bullet},\, \de',\, \de''\right)$ be a bounded double complex of vector spaces (or, more in general, of objects of any Abelian category), and let $\left(K^\bullet,\, \de\right)$ be the associated simple complex, where $\de:=\de'+\de''$. For each $h\in\N$, the following conditions are equivalent:
\begin{enumerate}[$(a)_{h}$]
 \item $\ker\de'\cap\ker\de''\cap\imm\de=\imm\de'\de''$ in $K^h$;
 \item $\ker\de''\cap\imm\de'=\imm\de'\de''$ and $\ker\de'\cap\imm\de''=\imm\de'\de''$ in $K^h$;
 \item $\ker\de'\cap\ker\de''\cap\left(\imm\de'+\imm\de''\right)=\imm\de'\de''$ in $K^h$;
\end{enumerate}
\begin{enumerate}[$(a^*)_{h-1}$]
 \item $\imm\de'+\imm\de''+\ker\de=\ker\de'\de''$ in $K^{h-1}$;
 \item $\imm\de''+\ker\de'=\ker\de'\de''$ and $\imm\de'+\ker\de''=\ker\de'\de''$ in $K^{h-1}$;
 \item $\imm\de'+\imm\de''+\left(\ker\de'\cap\ker\de''\right)=\ker\de'\de''$ in $K^{h-1}$.
\end{enumerate}
\end{prop}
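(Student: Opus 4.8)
The plan is to split the six conditions into those that follow by formal linear algebra and the single implication that genuinely needs the boundedness hypothesis. First I would record the inclusions valid in \emph{every} double complex: from $\de'^2=\de''^2=0$ and $\de'\de''+\de''\de'=0$ one gets $\imm\de'\de''\subseteq\ker\de'\cap\ker\de''$, and since $\de'\de'' x=\de(\de'' x)$ also $\imm\de'\de''\subseteq\imm\de'\cap\imm\de''\cap\imm\de\cap\ker\de$, while $\imm\de\subseteq\imm\de'+\imm\de''$. Consequently, in each of $(a)_h$ through $(c^*)_{h-1}$ exactly one inclusion is automatic, so only the reverse inclusion carries content, and the task reduces to comparing these ``content'' inclusions.

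Next I would prove, by elementary chases and \emph{without} boundedness, a web of equivalences tying four of the conditions together. Writing a general element of $\ker\de'\cap\ker\de''\cap(\imm\de'+\imm\de'')$ as $\de' u+\de'' v$ and using $\de' x=\de'' x=0$ to force $\de'\de'' u=\de'\de'' v=0$, one checks $(b)_h\Leftrightarrow(c)_h$ and $(c)_h\Rightarrow(a)_h$ (the latter because $\imm\de\subseteq\imm\de'+\imm\de''$, so the $(a)$-content set sits inside the $(c)$-content set). The same style of argument --- decomposing a $\de'\de''$-closed $u$ as $\de' a+\de'' b+z$ with $z\in\ker\de'\cap\ker\de''$, and conversely killing the defects $\de' u,\de'' v$ by $(c)_h$ --- yields the cross-degree equivalence $(c)_h\Leftrightarrow(c^*)_{h-1}$. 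Feeding the $\de$-exact, $\de'\de''$-split element $\de u=\de' u+\de'' u$ into $(a)_h$ gives $(a)_h\Rightarrow(a^*)_{h-1}$, and symmetrically $(a^*)_{h-1}\Rightarrow(a)_h$. Dualizing each such chase --- concretely, applying the ones above to the algebraic dual double complex $K^\vee$, for which $(\ker f)^\perp=\imm f^\vee$, $(\imm f)^\perp=\ker f^\vee$ and $(A\cap B)^\perp=A^\perp+B^\perp$ hold because over a field every subspace splits --- produces $(b^*)_{h-1}\Leftrightarrow(c^*)_{h-1}$ and $(c^*)_{h-1}\Rightarrow(a^*)_{h-1}$. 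At this point $(b)_h,(c)_h,(c^*)_{h-1},(b^*)_{h-1}$ are mutually equivalent, they all imply $(a)_h$, and $(a)_h\Leftrightarrow(a^*)_{h-1}$; the only missing link is the converse arrow from the $(a)$-type conditions back into this cluster.

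The hard part, and the unique place where boundedness is indispensable, is therefore the single implication $(a)_h\Rightarrow(b)_h$ (its dual settling the starred side). Here the obstruction is that an element $x=\de' u$ with $\de'' x=0$ need not be $\de$-exact --- were it $\de$-exact, $(a)_h$ would at once give $x\in\imm\de'\de''$ --- and its defect $\de'' u\in\ker\de'\cap\ker\de''$ cannot in general be removed in one step. The plan is to run a staircase descent: iteratively correct $u$ by $\de'$-closed terms chosen to push the defect along the anti-diagonal into ever higher bidegree, and invoke boundedness to guarantee that after finitely many steps the staircase runs off the support of $K$, so the defect vanishes, $x$ becomes genuinely $\de$-exact, and $(a)_h$ applies. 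Conceptually this is the fact that a bounded double complex over a field decomposes into squares and zigzags, on each of which the six conditions hold or fail simultaneously. I expect the bookkeeping of this descent --- tracking bidegrees and proving termination --- to be the main technical obstacle, everything else being formal.
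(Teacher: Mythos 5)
Your web of elementary chases --- $(b)_h\Leftrightarrow(c)_h$, $(c)_h\Rightarrow(a)_h$, $(c)_h\Leftrightarrow(c^*)_{h-1}$, $(a)_h\Leftrightarrow(a^*)_{h-1}$, and the dualizations --- is correct, and it is in the same elementary-diagram-chase spirit as the argument the paper implicitly defers to (the proposition is quoted from \cite[Lemma 5.15]{deligne-griffiths-morgan-sullivan} with no proof reproduced in the text). The genuine gap is exactly the step you defer: you assert that $(a)_h\Rightarrow(b)_h$ is ``the unique place where boundedness is indispensable'' and propose a staircase descent, but that premise is false and the descent has no engine. At each step of your staircase you must produce a $\de'$-closed correction $c$ with $\de''c$ equal to the current defect; condition $(a)_h$ supplies no such $c$, and solvability of $\de''c=e$ inside $\ker\de'$ is precisely the content of the conditions $(b^*)/(c^*)$ you are in the middle of proving --- so the bookkeeping you flag as ``the main technical obstacle'' is not bookkeeping but circularity, and the plan to ``run off the support'' never gets started.

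The implication is in fact formal and needs neither boundedness nor any descent: it follows from the very device you already used to prove $(a)_h\Rightarrow(a^*)_{h-1}$, combined with a bidegree projection. Since $\ker\de'$, $\ker\de''$, $\imm\de'$, $\imm\de''$, and $\imm\de'\de''$ are all bigraded subspaces, $(b)_h$ may be checked on a bihomogeneous element: take $x=\de'u\in K^{p,q}$ with $\de''x=0$ and $u\in K^{p-1,q}$. Then $\de u=x+\de''u\in\imm\de$, and $\de'\left(\de u\right)=\de'\de''u=-\de''\de'u=-\de''x=0$, $\de''\left(\de u\right)=\de''x=0$; so $(a)_h$ yields $\de u=\de'\de''w$, and since $x$ and $\de''u$ lie in the distinct bidegrees $(p,q)$ and $(p-1,q+1)$, comparing $(p,q)$-components gives $x=\de'\de''w^{p-1,q-1}$. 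The same trick applied to $\de\left(u+v\right)=x+\de''u+\de'v$ proves $(a)_h\Rightarrow(c)_h$ (and, dually, $(a^*)_{h-1}\Rightarrow(c^*)_{h-1}$) in one stroke. Note, too, that boundedness plays no role anywhere in this lemma --- elements of $K^h=\bigoplus_{p+q=h}K^{p,q}$ have finite antidiagonal support automatically, so even the component-chases terminate for unbounded complexes --- whereas the squares-and-zigzags decomposition you invoke ``conceptually'' is the substance of the subsequent statement \cite[Proposition 5.17]{deligne-griffiths-morgan-sullivan}, not of this one; you have conflated the two.
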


The above equivalent conditions define the validity of the $\de'\de''$-Lemma for a double complex.

\begin{defi}[{\cite{deligne-griffiths-morgan-sullivan}}]
 Let $\left(K^{\bullet,\bullet},\, \de',\, \de''\right)$ be a bounded double complex of vector spaces (or, more in general, of objects of any Abelian category), and let $\left(K^\bullet,\, \de\right)$ be the associated simple complex, where $\de:=\de'+\de''$. One says that $\left(K^{\bullet,\bullet},\, \de',\, \de''\right)$ \emph{satisfies the $\de'\de''$-Lemma} if, for every $h\in\N$, the equivalent conditions in \cite[Lemma 5.15]{deligne-griffiths-morgan-sullivan} hold.
\end{defi}

The following result by P. Deligne, Ph.~A. Griffiths, J. Morgan, and D.~P. Sullivan, \cite{deligne-griffiths-morgan-sullivan}, gives a characterization for the validity of the $\de'\de''$-Lemma.

\begin{thm}[{\cite[Proposition 5.17]{deligne-griffiths-morgan-sullivan}}]
 Let $\left(K^{\bullet,\bullet},\, \de',\, \de''\right)$ be a bounded double complex of vector spaces, and let $\left(K^\bullet,\, \de\right)$ be the associated simple complex, where $\de:=\de'+\de$. The following conditions are equivalent:
 \begin{enumerate}[(\itshape i\upshape)]
  \item $\left(K^{\bullet,\bullet},\, \de',\, \de''\right)$ satisfies the $\de'\de''$-Lemma;
  \item $K^{\bullet,\bullet}$ is a sum of double complexes of the following two types:
    \begin{description}
     \item[(dots)] complexes which have only a single component, with $\de'=0$ and $\de''=0$;
     \item[(squares)] complexes which are a square of isomorphisms,
         $$
         \xymatrix{
          K^{p-1,q} \ar[r]^{\de'} & K^{p,q} \\
          K^{p-1,q-1} \ar[u]_{\simeq}^{\de''} \ar[r]^{\de'} & K^{p,q-1} \ar[u]_{\simeq}^{\de''}
         }
         $$
    \end{description}
    \item the spectral sequence defined by the filtration associated to either degree (denoted by ${{'}F}$ or ${{''}F}$) degenerates at $E_1$ (namely, $E_1=E_\infty$) and, for every $h\in\N$, the two induced filtrations are $h$-opposite on $H^h_{dR}(X;\C)$, i.e., ${{'}F}^p\oplus{{''}F}^q\stackrel{\simeq}{\to}H^h_{dR}(X;\C)$ for $p+q-1=h$.
 \end{enumerate}
\end{thm}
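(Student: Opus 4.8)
The plan is to prove the three conditions mutually equivalent by routing all of them through the decomposition statement (ii), using as structural backbone the fact that every bounded double complex of vector spaces over a field splits, as a double complex, into a direct sum of indecomposable summands. First I would establish this decomposition: arguing one total degree at a time and exploiting that we work over a field, one shows that $\left(K^{\bullet,\bullet},\,\de',\,\de''\right)$ is isomorphic to a direct sum of \emph{zigzag} complexes, i.e. strings of the form $\cdots\to K^{p,q}\to K^{p+1,q}\to\cdots$ whose arrows alternate between $\de'$ and $\de''$ and each of which is indecomposable. The dots and squares of condition (ii) are exactly the zigzags of length one and the closed length-four zigzags (squares of isomorphisms); every remaining indecomposable is an \emph{open} string with one or two free ends.

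The easy implications (ii)$\Rightarrow$(i) and (ii)$\Rightarrow$(iii) then reduce to a per-summand check, since all the relevant data — the set-theoretic identities $(a)_h$–$(c)_h$ and $(a^*)_{h-1}$–$(c^*)_{h-1}$ defining the $\de'\de''$-Lemma, the terms of the spectral sequences, and the filtrations ${'F}$, ${''F}$ on $H^\bullet_{dR}$ — are additive over direct sums of double complexes. On a single dot both $\de'$ and $\de''$ vanish, so the $\de'\de''$-Lemma holds trivially, the dot contributes one class lying in ${'F}^p\cap{''F}^q$, and its spectral sequence visibly degenerates. On a single square of isomorphisms the total complex is acyclic, so it contributes nothing to $H^\bullet_{dR}$, the identities $\ker\de''\cap\imm\de'=\imm\de'\de''$ and their symmetric partners collapse to bijectivity of the edge maps, and the first page is already zero. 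Summing over summands, one reads off simultaneously the $\de'\de''$-Lemma and the degeneration-plus-opposite-filtration statement.

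The hard part will be the converse — showing that either (i) or (iii) forces the zigzag decomposition to involve only dots and squares, i.e. that no longer open zigzag can occur. I would argue contrapositively: a length-$\geq 2$ open zigzag always manufactures a counterexample to the $\de'\de''$-Lemma, since at a ``bent'' end of the string one finds a $\de$-exact element that is both $\de'$-closed and $\de''$-closed yet not $\de'\de''$-exact, violating $(a)_h$; the same zigzag simultaneously either supports a nonzero higher differential $\de_r$ (so that $E_1\neq E_\infty$) or places a surviving class in ${'F}^p\cap{''F}^{q'}$ with $p+q'\neq h+1$, breaking the $h$-opposite condition. Turning this into a clean proof requires careful bookkeeping of the five vector-space positions occurring in each open zigzag type against the three cohomologies $H^\bullet_{dR}$, $H^{\bullet,\bullet}_{\del}$, and $H^{\bullet,\bullet}_{\delbar}$; this exclusion of the longer zigzags is the genuine technical heart of the argument and the step I expect to be the main obstacle.

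Finally, to close the loop without leaning on the decomposition I would also check (i)$\Leftrightarrow$(iii) directly. Degeneration at $E_1$ is the numerical identity $\sum_{p+q=h}\dim E_1^{p,q}=\dim H^h_{dR}$, while the $h$-opposite condition upgrades it to the honest direct sum $H^h_{dR}=\bigoplus_{p+q=h}\left({'F}^p\cap{''F}^q\right)$; combining these with the description of $E_1$ as the $\delbar$-cohomology subquotient recovers precisely the equalities $\ker\de''\cap\imm\de'=\imm\de'\de''$ and their conjugates that constitute the $\de'\de''$-Lemma, completing the cycle of implications.
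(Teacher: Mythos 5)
Your plan is sound, but note first that the paper itself contains no proof of this statement: it is quoted verbatim from Deligne--Griffiths--Morgan--Sullivan, \cite[Proposition 5.17]{deligne-griffiths-morgan-sullivan}, so the comparison is really with the DGMS argument. That argument runs in the opposite logical direction from yours: DGMS do not invoke any general structure theorem, but instead prove (i)$\Rightarrow$(ii) by an inductive splitting construction carried out \emph{under} the hypothesis of the $\de'\de''$-Lemma (choosing complements antidiagonal by antidiagonal, where the Lemma guarantees that only dots and squares can be split off), then verify (ii)$\Rightarrow$(iii) by direct inspection of dots and squares, and (iii)$\Rightarrow$(i) by using degeneration plus the $h$-opposite filtrations to produce the bigrading $H^h=\bigoplus_{p+q=h}\left({'F}^p\cap{''F}^q\right)$ on cohomology. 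You instead front-load the unconditional structure theorem --- every bounded double complex over a field decomposes into squares and zigzags (the result made precise by Stelzig) --- and then characterize all three conditions as ``only dots and squares occur.'' This buys genuine extra content: the zigzag calculus cleanly separates the failure modes (even-length zigzags kill $E_1$-degeneration of one of the two spectral sequences; odd-length zigzags of length at least $3$ survive to $E_\infty$ but violate the $h$-opposite condition), and it is exactly the bookkeeping that underlies quantitative refinements such as the Fr\"olicher-type inequality of Theorem \ref{thm:frol-bc} and the characterization of Theorem \ref{thm:caratterizzazione-bc-numbers}. What it costs is that you must prove the decomposition for \emph{arbitrary} vector spaces --- the statement makes no finite-dimensionality assumption --- which is true but is itself essentially the same inductive splitting DGMS perform, now without the Lemma to prune long zigzags; so your route is not shorter, only more modular.

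Two caveats. Your closing paragraph proving (i)$\Leftrightarrow$(iii) directly via the numerical identity $\sum_{p+q=h}\dim E_1^{p,q}=\dim H^h_{dR}$ is both redundant (the cycle already closes through (ii)) and invalid when the components are infinite-dimensional: degeneration must there be phrased via the filtrations, not by dimension count; either drop that paragraph or restrict it to the finite-dimensional case. Also, in your exclusion step, the counterexample to $(a)_h$ at the end of a length-$2$ zigzag is correct but your phrase ``bent end'' is misleading for that case --- the relevant element is simply the image of the single isomorphism, which is $\de$-exact and $\de'$- and $\de''$-closed while $\imm\de'\de''=\{0\}$ on any zigzag; make sure the argument is written so it covers length $2$ as well as the genuinely bent strings.
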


\medskip

In particular, we are interested in dealing with compact complex manifolds $X$, where one considers the double complex $\left(\wedge^{\bullet,\bullet}X,\, \del,\, \delbar\right)$.

\begin{defi}[{\cite{deligne-griffiths-morgan-sullivan}}]
 A compact complex manifold $X$ is said to \emph{satisfy the $\del\delbar$-Lemma} if $\left(\wedge^{\bullet,\bullet}X,\, \del,\, \delbar\right)$ satisfies the $\del\delbar$-Lemma, namely, if
 $$ \ker \del \cap \ker \delbar \cap \imm \de \;=\; \imm \del\delbar \;, $$
 that is, in other words, if the natural map $H^{\bullet,\bullet}_{BC}(X)\to H^\bullet_{dR}(X;\C)$ of graded $\C$-vector spaces induced by the identity is injective.
\end{defi}

\begin{rem}
 Let $X$ be a compact complex manifold. By considering the differential operator
 $$ \de^c \;:=\; -\im\,\left(\del-\delbar\right) \;,$$
 one can say that $X$ \emph{satisfies the $\de\de^c$}, by definition, if
 $$ \imm\de\cap\ker\de^c \;=\; \imm\de\de^c \;.$$
 Since $\de\de^c=2\,\im\,\del\delbar$, and $\del=\frac{1}{2}\left(\de+\im\de^c\right)$ and $\delbar=\frac{1}{2}\left(\de-\im\de^c\right)$, one has
 $$ \ker \de \cap \ker\de^c \;=\; \ker\del\cap\ker\delbar \qquad \text{ and } \qquad \imm\de\de^c=\imm\del\delbar \;;$$
 and hence $X$ satisfies the $\de\de^c$-Lemma if and only if $X$ satisfies the $\del\delbar$-Lemma.
\end{rem}

\medskip

For compact complex manifolds, P. Deligne, Ph.~A. Griffiths, J. Morgan, and D.~P. Sullivan's characterization \cite[Proposition 5.17]{deligne-griffiths-morgan-sullivan} is rewritten as follows.

\begin{thm}[{\cite[5.21]{deligne-griffiths-morgan-sullivan}}]
 A compact complex manifold $X$ satisfies the $\del\delbar$-Lemma if and only if
 \begin{inparaenum}[\itshape (i)]
  \item the Hodge and Fr\"olicher spectral sequence degenerates at the first step (that is, $E_1\simeq E_\infty$), and
  \item the natural filtration on $\left(\wedge^{\bullet,\bullet}X,\, \del,\, \delbar\right)$ induces, for every $k\in\N$, a Hodge structure of weight $k$ on $H^k_{dR}(X;\C)$ (that is, $H^k_{dR}(X;\C)=\bigoplus_{p+q=k} F^pH^k_{dR}(X;\C) \cap \bar{F}^qH^k_{dR}(X;\C)$, where $F^\bullet H^\bullet_{dR}(X;\C)$ is the filtration induced by $F^\bullet\wedge^{\bullet_1,\bullet_2}X:=\bigoplus_{p\geq \bullet ,\; q} \wedge^{p,q}X$ on $H^\bullet_{dR}(X;\C)$ and $\bar{F}^\bullet H^\bullet_{dR}(X;\C)$ is the conjugated filtration to $F^\bullet H^\bullet_{dR}(X;\C)$).
 \end{inparaenum}
\end{thm}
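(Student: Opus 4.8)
The plan is to derive the statement as a direct specialization of the general characterization \cite[Proposition 5.17]{deligne-griffiths-morgan-sullivan} recalled just above, applied to the double complex $\left(\wedge^{\bullet,\bullet}X,\, \del,\, \delbar\right)$ associated to the compact complex manifold $X$. By the very definition of the $\del\delbar$-Lemma, $X$ satisfies it if and only if the \emph{bounded} double complex $\left(\wedge^{\bullet,\bullet}X,\, \del,\, \delbar\right)$ satisfies the abstract $\de'\de''$-Lemma; here boundedness refers only to the bigrading, since $\wedge^{p,q}X=\{0\}$ whenever $p>\dim_\C X$ or $q>\dim_\C X$, the possible infinite-dimensionality of each bidegree being irrelevant because \cite[Proposition 5.17]{deligne-griffiths-morgan-sullivan} is stated for arbitrary vector spaces. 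Hence the whole task reduces to translating condition \textit{(iii)} of \cite[Proposition 5.17]{deligne-griffiths-morgan-sullivan}---that the spectral sequence associated to either degree degenerates at $E_1$ and that the two induced filtrations are $h$-opposite on $H^h_{dR}(X;\C)$ for every $h\in\N$---into conditions \textit{(i)} and \textit{(ii)} of the present statement.

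First I would observe that the spectral sequence induced by the filtration ${}'F^\bullet$ is, by construction, precisely the Hodge and Fr\"olicher spectral sequence $\left\{\left({}'E^{\bullet,\bullet}_r,\, {}'\de_r\right)\right\}_{r\in\N}$; therefore its degeneration at the first step, $E_1\simeq E_\infty$, is exactly condition \textit{(i)}. Since the spectral sequences of the two filtrations play symmetric roles in \textit{(iii)}, it is immaterial whether one filters by the first or by the second degree.

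The key step is then to identify the $h$-opposedness of the two filtrations with the existence of a Hodge structure. Since complex conjugation interchanges $\wedge^{p,q}X$ and $\wedge^{q,p}X$, one reads off directly from the definitions ${}'F^p\left(\wedge^kX\otimes\C\right)=\bigoplus_{r+s=k,\,r\geq p}\wedge^{r,s}X$ and ${}''F^q\left(\wedge^kX\otimes\C\right)=\bigoplus_{r+s=k,\,s\geq q}\wedge^{r,s}X$ that $\overline{{}'F^\bullet}={}''F^\bullet$ at the level of forms. As conjugation preserves both $\de$-closed and $\de$-exact forms, this relation descends to the induced filtrations on cohomology, giving ${}''F^\bullet H^\bullet_{dR}(X;\C)=\overline{{}'F^\bullet H^\bullet_{dR}(X;\C)}$. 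Writing $F:={}'F$ and $\bar F:={}''F$, the $h$-opposedness condition ${}'F^p\oplus{}''F^q\stackrel{\simeq}{\to}H^h_{dR}(X;\C)$ for $p+q-1=h$ thus becomes $F^pH^h_{dR}(X;\C)\oplus\bar F^qH^h_{dR}(X;\C)\stackrel{\simeq}{\to}H^h_{dR}(X;\C)$ for $p+q-1=h$. Finally I would invoke the standard dictionary between opposite filtrations and gradings: two finite decreasing filtrations $F^\bullet$ and $\bar F^\bullet$ on a vector space $V$ are $h$-opposite, in the sense that $F^p\oplus\bar F^q=V$ whenever $p+q-1=h$, if and only if the subspaces $V^{p,q}:=F^pV\cap\bar F^qV$ (with $p+q=h$) give a direct-sum decomposition $V=\bigoplus_{p+q=h}V^{p,q}$. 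Applied to $V=H^h_{dR}(X;\C)$ this is exactly the assertion that the natural filtration induces a Hodge structure of weight $h$, i.e.\ condition \textit{(ii)}.

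The hard part will be the bookkeeping at the cohomological level: one must verify that the filtrations $F^\bullet H^h_{dR}(X;\C)$ and $\bar F^\bullet H^h_{dR}(X;\C)$ occurring in \textit{(ii)} are genuinely the filtrations \emph{induced} on $H^h_{dR}(X;\C)$ by ${}'F^\bullet$ and ${}''F^\bullet$ (rather than mere images of filtered spaces of forms), and that both the conjugation relation $\overline{{}'F^\bullet}={}''F^\bullet$ and the opposite-versus-grading equivalence survive the passage to cohomology. For this it is reassuring that, $X$ being compact, all the cohomologies involved are finite-dimensional by Hodge theory, so the linear-algebra dictionary applies with no topological subtleties and the translation of \textit{(iii)} into \textit{(i)} plus \textit{(ii)} is complete.
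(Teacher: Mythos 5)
Your proposal is correct and follows exactly the route the paper takes: the statement is presented there as the rewriting, for the double complex $\left(\wedge^{\bullet,\bullet}X,\,\del,\,\delbar\right)$, of the abstract characterization \cite[Proposition 5.17]{deligne-griffiths-morgan-sullivan}, and your translation of condition \textit{(iii)} into \textit{(i)} and \textit{(ii)}---via the conjugation identity $\overline{{'F}^{\bullet}}={''F}^{\bullet}$ descending to cohomology, together with the standard equivalence between $h$-opposite filtrations and the splitting $H^h_{dR}(X;\C)=\bigoplus_{p+q=h}F^pH^h_{dR}(X;\C)\cap\bar{F}^qH^h_{dR}(X;\C)$---is precisely the intended argument. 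The points you flag as delicate (boundedness in bidegree only, the induced filtrations on cohomology, and the fact that conjugation makes degeneration of either spectral sequence equivalent to that of the other) are all handled correctly.
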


Another characterization for the validity of the $\del\delbar$-Lemma, in terms of the dimensions of the Bott-Chern cohomology, will be given in Theorem \ref{thm:caratterizzazione-bc-numbers}.

Actually, as already mentioned, if a compact complex manifold satisfies the $\del\delbar$-Lemma, then all the natural maps between cohomologies induced by the identity turn out to be isomorphisms.

\begin{thm}[{\cite[Lemma 5.15, Remark 5.16, 5.21]{deligne-griffiths-morgan-sullivan}}]
 A compact complex manifold $X$ satisfies the $\del\delbar$-Lemma if and only if all the natural maps
 $$
 \xymatrix{
  & H^{\bullet,\bullet}_{BC}(X) \ar[d]\ar[ld]\ar[rd] & \\
  H^{\bullet,\bullet}_{\del}(X) \ar[rd] & H^{\bullet}_{dR}(X;\C) \ar[d] & H^{\bullet,\bullet}_{\delbar}(X) \ar[ld] \\
  & H^{\bullet,\bullet}_{A}(X)  & 
 }
 $$
 induced by the identity are isomorphisms.
\end{thm}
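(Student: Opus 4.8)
The plan is to treat the two implications asymmetrically. The implication ``all six maps are isomorphisms $\Rightarrow$ $X$ satisfies the $\del\delbar$-Lemma'' is immediate: by definition $X$ satisfies the $\del\delbar$-Lemma exactly when the map $H^{\bullet,\bullet}_{BC}(X)\to H^{\bullet}_{dR}(X;\C)$ induced by the identity is injective, and an isomorphism is in particular injective. So all the substance lies in the converse.

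For the converse I would exploit the structure result already recalled in \cite[Proposition 5.17]{deligne-griffiths-morgan-sullivan}: assuming the $\del\delbar$-Lemma, the double complex $\left(\wedge^{\bullet,\bullet}X,\,\del,\,\delbar\right)$ decomposes as a direct sum of elementary double complexes of two kinds, namely \emph{dots} (a single component $K^{p,q}$ with $\del=\delbar=0$) and \emph{squares} (four components linked by $\del$- and $\delbar$-isomorphisms). The key observation is that each of the five cohomologies $H_{dR}$, $H_{\del}$, $H_{\delbar}$, $H_{BC}$, $H_{A}$ is an additive functor of the double complex: all are defined bidegreewise as a kernel modulo an image of the operators $\del$, $\delbar$, $\del\delbar$, and these commute with the direct-sum decomposition (which is harmless even if infinite, being internal to each bidegree $\wedge^{p,q}X$). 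Likewise the six natural maps, being induced by inclusions of kernels into kernels and images into images, respect the decomposition. Hence it suffices to verify that each of the six maps is an isomorphism on every dot and every square summand separately.

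This verification is a direct computation. On a dot $K^{p,q}$ one checks immediately that $H_{\del}=H_{\delbar}=H_{BC}=H_{A}=K^{p,q}$ in bidegree $(p,q)$, that the total complex contributes $K^{p,q}$ to $H^{p+q}_{dR}$, and that every natural map is the identity of $K^{p,q}$. On a square the four $\del$- and $\delbar$-maps are isomorphisms, which forces $\ker\del$ or $\ker\delbar$ to vanish at the two ``source'' corners and forces $\imm\del\delbar$ (respectively $\imm\del$, $\imm\delbar$) to fill the ``sink'' corner; consequently $H_{\del}=H_{\delbar}=H_{BC}=H_{A}=0$, and the associated total complex is acyclic, so $H_{dR}=0$ as well. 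In either case all six maps are isomorphisms ($K^{p,q}\to K^{p,q}$ the identity, or $0\to0$), and summing over all summands completes the converse.

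The genuine content is thus entirely packaged in \cite[Proposition 5.17]{deligne-griffiths-morgan-sullivan}, which I am assuming. If one instead wanted a self-contained argument, the four ``outer'' maps are the easy part: using the equivalent forms $\ker\delbar\cap\imm\del=\imm\del\delbar$ and $\ker\del\cap\imm\delbar=\imm\del\delbar$ from \cite[Lemma 5.15]{deligne-griffiths-morgan-sullivan}, together with their conjugates, one checks directly that $H_{BC}\to H_{\delbar}$, $H_{BC}\to H_{\del}$, $H_{\delbar}\to H_{A}$ and $H_{\del}\to H_{A}$ are isomorphisms by an elementary correction trick (for surjectivity of $H_{BC}\to H_{\delbar}$, replace a $\delbar$-closed $\alpha$ by $\alpha-\delbar\gamma$, where $\del\alpha=\del\delbar\gamma$ comes from $\del\alpha\in\ker\delbar\cap\imm\del=\imm\del\delbar$). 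The hard part is precisely the two maps involving $H_{dR}$: surjectivity of $H_{BC}\to H_{dR}$ and, dually, injectivity of $H_{dR}\to H_{A}$ are \emph{not} formal consequences of the kernel/image identities, because they mix bidegrees; they require the degeneration of the Fr\"olicher spectral sequence at $E_1$ together with the opposedness of the two Hodge filtrations, which is the extra information encoded in \cite[5.21]{deligne-griffiths-morgan-sullivan}. This is exactly why routing the whole proof through the dots-and-squares decomposition is the cleanest path.
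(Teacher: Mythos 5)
Your proof is correct, and it takes essentially the same route as the paper, which establishes this statement by invoking \cite[Lemma 5.15, Remark 5.16, 5.21]{deligne-griffiths-morgan-sullivan}: the dots-and-squares decomposition of \cite[Proposition 5.17]{deligne-griffiths-morgan-sullivan} that you use is precisely the mechanism behind those references, and your additivity check and dot/square computations are sound (the decomposition applies since $\left(\wedge^{\bullet,\bullet}X,\,\del,\,\delbar\right)$ is a bounded double complex of vector spaces, and all five cohomologies commute with the possibly infinite direct sums involved, these being taken bidegreewise).

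One correction to your closing aside, though: the two maps involving $H^{\bullet}_{dR}(X;\C)$ \emph{are} formal consequences of the kernel/image identities, contrary to what you assert. Since $\imm\delbar\subseteq\ker\delbar$, the identities you list already yield $\imm\del\cap\imm\delbar=\imm\del\delbar$, and the componentwise zig-zag correction then gives surjectivity of $\bigoplus_{p+q=k}H^{p,q}_{BC}(X)\to H^{k}_{dR}(X;\C)$ --- this is exactly Claim 2 in the paper's proof of Theorem \ref{thm:caratterizzazione-bc-numbers}; injectivity (including triviality of the overlaps between different bidegrees) follows by the same kind of zig-zag: if $\sum_{p+q=k}\pm\alpha^{p,q}=\de\beta$ with each $\alpha^{p,q}\in\ker\del\cap\ker\delbar$, then matching bidegrees and applying $\del$, respectively $\delbar$, puts $\delbar\beta^{p,q-1}\in\ker\del\cap\imm\delbar=\imm\del\delbar$ and $\del\beta^{p-1,q}\in\ker\delbar\cap\imm\del=\imm\del\delbar$, whence $\alpha^{p,q}\in\imm\del\delbar$; the maps through $H^{\bullet,\bullet}_{A}(X)$ are handled dually. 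So a self-contained argument needs no spectral-sequence input: the characterization via $E_1$-degeneration and opposed filtrations in \cite[5.21]{deligne-griffiths-morgan-sullivan} is a consequence of the $\del\delbar$-Lemma, not a prerequisite for these isomorphisms.
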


\medskip

We recall that if $X$ is a compact complex manifold endowed with a K\"ahler structure, then $X$ satisfies the $\del\delbar$-Lemma, \cite[Lemma 5.11]{deligne-griffiths-morgan-sullivan}. Moreover, one has the following result.

\begin{thm}[{\cite[Theorem 5.22]{deligne-griffiths-morgan-sullivan}}]
 Let $X$ and $Y$ be compact complex manifolds of the same dimension, and let $f\colon X\to Y$ be a holomorphic birational map. If $X$ satisfies the $\del\delbar$-Lemma, then also $Y$ satisfies the $\del\delbar$-Lemma.
\end{thm}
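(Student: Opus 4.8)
The plan is to use the fact that, since $X$ and $Y$ are compact complex manifolds of the same dimension, a holomorphic birational map $f\colon X\to Y$ is necessarily a \emph{modification}: it is proper, surjective, and restricts to a biholomorphism over a dense open set whose complement is an analytic set (of measure zero), so $\deg f = 1$. I would then produce maps relating the cohomologies of $X$ and $Y$ and conclude by the characterization that $Y$ satisfies the $\del\delbar$-Lemma if and only if the natural map $\iota_Y\colon H^{\bullet,\bullet}_{BC}(Y)\to H^{\bullet}_{dR}(Y;\C)$ is injective. First I would record that pullback of forms $f^*\colon\wedge^{\bullet,\bullet}Y\to\wedge^{\bullet,\bullet}X$ is a morphism of double complexes: being $f$ holomorphic, $f^*$ commutes with $\del$ and with $\delbar$. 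Hence it induces maps $f^*\colon H^{\bullet,\bullet}_{\sharp}(Y)\to H^{\bullet,\bullet}_{\sharp}(X)$ for $\sharp\in\{\del,\delbar,BC,A\}$ and $f^*\colon H^\bullet_{dR}(Y;\C)\to H^\bullet_{dR}(X;\C)$, all compatible with the natural maps of the diagram in the previous theorem; in particular $f^*_{dR}\circ\iota_Y=\iota_X\circ f^*_{BC}$.

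Next I would construct a one-sided inverse by pushing forward currents. Since $f$ is proper and generically one-to-one, the pushforward $f_*\colon\correnti^{\bullet,\bullet}X\to\correnti^{\bullet,\bullet}Y$ is defined, preserves the bidegree (as $f$ is holomorphic and of relative dimension zero), and commutes with $\del$ and $\delbar$. Computing the Bott-Chern (and the de Rham) cohomology with currents rather than forms — the inclusion $\wedge^{\bullet,\bullet}\hookrightarrow\correnti^{\bullet,\bullet}$ being a quasi-isomorphism for the Bott-Chern complex, see \cite{schweitzer} or the smooth case of Theorem \ref{thm:bc} — the pushforward therefore induces $f_*\colon H^{\bullet,\bullet}_{\sharp}(X)\to H^{\bullet,\bullet}_{\sharp}(Y)$. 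The crucial point is the identity $f_*\circ f^*=\id$ at the level of cohomology: for a smooth form $\alpha$ on $Y$, the current $f_*(f^*\alpha)$ coincides with the current associated to $\alpha$, because $f$ is a biholomorphism off an analytic set of measure zero and has degree one (projection formula). Consequently, each $f^*\colon H^{\bullet,\bullet}_{\sharp}(Y)\to H^{\bullet,\bullet}_{\sharp}(X)$ is \emph{injective}.

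Then I would conclude by a short diagram chase. Let $[\alpha]\in H^{p,q}_{BC}(Y)$ lie in $\ker\iota_Y$. By naturality, $\iota_X\bigl(f^*_{BC}[\alpha]\bigr)=f^*_{dR}\bigl(\iota_Y[\alpha]\bigr)=0$; since $X$ satisfies the $\del\delbar$-Lemma, the map $\iota_X$ is injective, whence $f^*_{BC}[\alpha]=0$ in $H^{p,q}_{BC}(X)$; by injectivity of $f^*_{BC}$ (second step) this forces $[\alpha]=0$. Thus $\iota_Y$ is injective, and $Y$ satisfies the $\del\delbar$-Lemma.

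The main obstacle is the current-theoretic input of the second step. One must justify carefully that $f_*$ is well defined on currents, lowers no bidegree and commutes with $\del,\delbar$; that the Bott-Chern cohomology is indeed computed by the complex of currents; and — most delicately — that $f_*\circ f^*=\id$ in cohomology, which is precisely where birationality (degree one, equivalently the dense open locus of biholomorphism) enters and where the contribution of the exceptional locus to the pushed-forward current must be controlled. I would note that only this ``easy'' direction is needed here, because $f$ is assumed holomorphic, so $X$ sits ``above'' $Y$; the full bimeromorphic invariance (lifting the $\del\delbar$-Lemma from a base to the source of a modification) is stronger and would require, in addition, a blow-up formula for Bott-Chern cohomology.
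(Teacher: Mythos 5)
Your argument is correct, but it follows a genuinely different route from the paper's. The paper disposes of the statement in two lines: it invokes the injectivity of $\pi^*\colon H^\bullet_{dR}(Y;\C)\to H^\bullet_{dR}(X;\C)$ and $\pi^*\colon H^{\bullet,\bullet}_{\delbar}(Y)\to H^{\bullet,\bullet}_{\delbar}(X)$ for a proper surjective holomorphic map between equidimensional compact complex manifolds (citing R.~O. Wells), and then applies the characterization \cite[5.21]{deligne-griffiths-morgan-sullivan} of the $\del\delbar$-Lemma in terms of degeneration of the Hodge and Fr\"olicher spectral sequence at $E_1$ together with the existence of Hodge structures of weight $k$ on $H^k_{dR}$: both conditions transfer from $X$ to $Y$ along the compatible injections $f^*$. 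You instead work directly with the Bott-Chern side: you compute $H^{\bullet,\bullet}_{BC}$ by currents (legitimate, by \cite[\S4]{schweitzer}, and in effect the smooth case of Theorem \ref{thm:bc} and Remark \ref{rem:hypoercoh-bc-orbifolds}), build the transfer map $f_*$ on currents --- which preserves bidegree precisely because $\dim_\C X=\dim_\C Y$, so bidimension and bidegree match, and commutes with $\del$ and $\delbar$ since it commutes with $\de$ and respects the bigrading --- and use the degree-one projection formula $f_*\circ f^*=\id$ to get injectivity of $f^*_{BC}$; the conclusion then drops out of the paper's very definition of the $\del\delbar$-Lemma as injectivity of $H^{\bullet,\bullet}_{BC}(Y)\to H^\bullet_{dR}(Y;\C)$, with no spectral sequences or Hodge structures needed. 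What each approach buys: the paper's is shorter on the page but black-boxes both Wells' theorem and the nontrivial equivalence \cite[5.21]{deligne-griffiths-morgan-sullivan}, while yours is more self-contained relative to the paper's own toolkit, avoids the equivalence of definitions entirely, and establishes along the way a statement of independent interest (injectivity of $f^*$ on Bott-Chern cohomology under modifications). It is worth noting that your mechanism is in spirit the same transfer-map device underlying Wells' proof for de Rham and Dolbeault --- you have simply run it through a cohomology theory where the $\del\delbar$-Lemma is tautologically visible. Your closing caveats are well placed: all the current-theoretic points you flag (well-definedness and bidegree behaviour of $f_*$, currents computing $H_{BC}$, control of the exceptional locus via the measure-zero change of variables in $\int_X f^*(\alpha\wedge\beta)=\int_Y\alpha\wedge\beta$) do hold and are exactly what must be checked, and you correctly observe that only this ``easy'' direction is needed since $f$ itself is holomorphic.
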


Indeed, one has that, if $X$ and $Y$ are complex manifolds of the same dimension, and $\pi\colon X\to Y$ is a proper surjective holomorphic map, then the maps
$$ \pi^*\colon H^\bullet_{dR}\left(Y;\C\right) \to H^\bullet_{dR}\left(X;\C\right) \qquad \text{ and } \qquad \pi^*\colon H^{\bullet,\bullet}_{\delbar}\left(Y\right) \to H^{\bullet,\bullet}_{\delbar}\left(X\right) $$
induced by $\pi\colon X\to Y$ are injective, see, e.g., \cite[Theorem 3.1]{wells-pacific}; then one can use the characterization in \cite[5.21]{deligne-griffiths-morgan-sullivan}.

In particular, it follows that \emph{Mo\v{\i}{\v{s}}ezon manifolds} (that is, compact complex manifolds $X$ such that the degree of transcendence over $\C$ of the field of meromorphic functions over $X$ is equal to the complex dimension of $X$, \cite{moishezon-transl}, equivalently, compact complex manifolds admitting a proper modification from a projective manifold, \cite[Theorem 1]{moishezon-transl}), and, more in general, manifolds \emph{in class $\mathcal{C}$ of Fujiki} (that is, compact complex manifolds admitting a proper modification from a K\"ahler manifold, \cite{fujiki}) satisfy the $\del\delbar$-Lemma. (We recall that a proper holomorphic map $f\colon X\to Y$ from the complex manifold $X$ to the complex manifold $Y$ is called a \emph{modification} if there exists a nowhere dense closed analytic subset $B\subset Y$ such that $f\lfloor_{X\setminus f^{-1}(B)}\colon X\setminus f^{-1}(B) \to Y\setminus B$ is a biholomorphism.)

\begin{cor}[{\cite[Lemma 5.11, Corollary 5.23]{deligne-griffiths-morgan-sullivan}}]
 The $\del\delbar$-Lemma holds for compact K\"ahler manifolds, for Mo\v{\i}{\v{s}}ezon manifolds, and for manifolds in class $\mathcal{C}$ of Fujiki.
\end{cor}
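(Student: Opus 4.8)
The plan is to reduce all three assertions to the single fact that compact K\"ahler manifolds satisfy the $\del\delbar$-Lemma, and then transport that property along proper modifications. The three classes are nested in a convenient way: projective $\subseteq$ K\"ahler when viewed as \emph{sources} of modifications, and Mo\v{\i}\v{s}ezon $\subseteq$ class $\mathcal{C}$ of Fujiki when viewed as \emph{targets}. So the substantive input is only the K\"ahler case together with the invariance of the $\del\delbar$-Lemma under bimeromorphic modification.

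First, the K\"ahler case requires nothing new: it is exactly the K\"ahler $\del\delbar$-Lemma \cite[Lemma 5.11]{deligne-griffiths-morgan-sullivan} already proved above as a consequence of the K\"ahler identities. Next, for a manifold $X$ in class $\mathcal{C}$ of Fujiki I would unwind the definition: there exist a compact K\"ahler manifold $\tilde X$ and a proper modification $\pi\colon \tilde X\to X$. A modification is in particular a birational holomorphic map between manifolds of the same complex dimension, since the restriction of $\pi$ to the complement of a nowhere dense analytic subset is a biholomorphism, which forces $\dim_\C\tilde X=\dim_\C X$. As $\tilde X$ is K\"ahler it satisfies the $\del\delbar$-Lemma, and the modification-invariance theorem \cite[Theorem 5.22]{deligne-griffiths-morgan-sullivan} then transfers the property to the target $X$.

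For a Mo\v{\i}\v{s}ezon manifold $X$ I would argue identically, observing that $X$ admits a proper modification from a \emph{projective} manifold \cite[Theorem 1]{moishezon-transl}, and that projective manifolds are K\"ahler: the Fubini--Study metric of $\CP^N$ restricts to a K\"ahler metric on any complex submanifold, cf.\ the discussion of $\CP^n$ above. Thus every Mo\v{\i}\v{s}ezon manifold lies in class $\mathcal{C}$, and the previous step applies verbatim.

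The only genuine obstacle is the modification-invariance step itself, which I am entitled to assume as \cite[Theorem 5.22]{deligne-griffiths-morgan-sullivan}; for completeness I would reconstruct its mechanism. For a proper surjective holomorphic map $\pi\colon \tilde X\to X$ between equidimensional compact complex manifolds, the pullbacks
$$ \pi^*\colon H^\bullet_{dR}(X;\C)\to H^\bullet_{dR}(\tilde X;\C) \qquad\text{and}\qquad \pi^*\colon H^{\bullet,\bullet}_{\delbar}(X)\to H^{\bullet,\bullet}_{\delbar}(\tilde X) $$
are injective \cite[Theorem 3.1]{wells-pacific}. These maps respect the Hodge filtration $F^\bullet$ and commute with conjugation, so I would invoke the characterization \cite[5.21]{deligne-griffiths-morgan-sullivan}: the $\del\delbar$-Lemma on $\tilde X$ is equivalent to degeneration of the Fr\"olicher spectral sequence at $E_1$ together with $h$-oppositeness of the two conjugate filtrations on each $H^h_{dR}(\tilde X;\C)$. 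Both properties hold on the injectively embedded filtered subspace $\pi^*H^\bullet_{dR}(X;\C)$, and injectivity then forces the same degeneration and $h$-oppositeness on $X$, i.e.\ $X$ satisfies the $\del\delbar$-Lemma. The delicate point is verifying that $\pi^*$ is \emph{strict} for the filtrations, so that the $E_1$-degeneration genuinely descends rather than merely bounding dimensions; this strictness is what makes the spectral-sequence and Hodge-structure conditions transfer.
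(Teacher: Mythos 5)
Your proposal is correct and follows essentially the same route as the paper: the K\"ahler case via \cite[Lemma 5.11]{deligne-griffiths-morgan-sullivan}, then transport along proper modifications via \cite[Theorem 5.22]{deligne-griffiths-morgan-sullivan}, whose mechanism you reconstruct exactly as the paper does, through injectivity of $\pi^*$ on de Rham and Dolbeault cohomology \cite[Theorem 3.1]{wells-pacific} combined with the characterization \cite[5.21]{deligne-griffiths-morgan-sullivan}, and the Mo\v{\i}\v{s}ezon case reduced to class $\mathcal{C}$ via \cite[Theorem 1]{moishezon-transl} and the Fubini--Study metric. Your closing remark on strictness of $\pi^*$ with respect to the filtrations is a sensible flag of the one point the paper's sketch leaves implicit, but it does not change the argument.
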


\begin{rem}
 In \cite{hironaka}, H. Hironaka provided an example of a non-K\"ahler Mo\v{\i}\v{s}ezon manifold of complex dimension $3$ with arbitrary small deformations being projective (in fact, as stated by D. Popovici, the limit of projective manifolds under holomorphic deformations is Mo\v{\i}\v{s}ezon, \cite[Theorem 1.1]{popovici-proj}, and, more in general, the limit of Mo\v{\i}\v{s}ezon manifolds under holomorphic deformations is Mo\v{\i}\v{s}ezon, \cite[Theorem 1.1]{popovici-moish}); in particular, H. Hironaka's manifold provides an example of a non-K\"ahler manifold satisfying the $\del\delbar$-Lemma.
 Studying twistor spaces, C. LeBrun and Y.~S. Poon, and F. Campana, showed that being in class $\mathcal{C}$ of Fujiki is not a stable property under small deformations of the complex structures, \cite[Theorem 1]{lebrun-poon}, \cite[Corollary 3.13]{campana}; since the property of satisfying the $\del\delbar$-Lemma is stable under small deformations of the complex structure, Corollary \ref{cor:stab-del-delbar-lemma}, or \cite[Proposition 9.21]{voisin}, or \cite[Theorem 5.12]{wu}, or \cite[\S B]{tomasiello}, C. LeBrun and Y.~S. Poon's, and F. Campana's, result yields examples of compact complex manifolds satisfying the $\del\delbar$-Lemma and not belonging to class $\mathcal{C}$ of Fujiki.
\end{rem}

\medskip

Finally, we recall the following obstructions to the existence of complex structures satisfying the $\del\delbar$-Lemma on a compact (differentiable) manifold.

\begin{thm}[{\cite[Main Theorem, Corollary 1]{deligne-griffiths-morgan-sullivan}}]
 Let $X$ be a compact manifold. If $X$ admits a complex structure such that the $\del\delbar$-Lemma holds, then the differential graded algebra $\left(\wedge^{\bullet}X,\, \de\right)$ is formal. In particular, all the Massey products of any order are zero.
\end{thm}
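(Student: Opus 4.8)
The plan is to exhibit $\left(\wedge^\bullet X,\,\de\right)$ as the apex of a zig-zag of quasi-isomorphisms terminating in a dga with zero differential, which is exactly the definition of formality. The governing tool is the operator $\de^c := -\im\left(\del-\delbar\right) = J^{-1}\,\de\,J$, for which the $\del\delbar$-Lemma is equivalent to the $\de\de^c$-Lemma, since $\ker\del\cap\ker\delbar=\ker\de\cap\ker\de^c$ and $\imm\del\delbar=\imm\de\de^c$, as recalled above. The crucial structural facts are that $\de^c$ is a derivation of $\wedge^\bullet X$ and that it anticommutes with $\de$, i.e. $\de\,\de^c+\de^c\,\de=0$; the first makes $\ker\de^c$ a subalgebra, and the second makes it $\de$-stable, so that $\left(\ker\de^c,\,\de\lfloor_{\ker\de^c}\right)$ is a genuine sub-dga. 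I would then work with the diagram
\[
\xymatrix{
 & \left(\ker\de^c,\,\de\lfloor_{\ker\de^c}\right) \ar[ld]_{\iota}\ar[rd]^{\pi} & \\
 \left(\wedge^\bullet X,\,\de\right) & & \left(\frac{\ker\de^c}{\imm\de^c},\,0\right)
}
\]
where $\iota$ is the inclusion and $\pi$ the projection, and reduce everything to verifying that $\pi$ genuinely lands in a zero-differential dga and that both $\iota$ and $\pi$ induce isomorphisms in cohomology. This is precisely the argument already sketched before the K\"ahler-case statement of this theorem; what remains is to fill in the quasi-isomorphism checks.

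Each verification flows from the symmetric forms of the $\de\de^c$-Lemma, namely $\imm\de\cap\ker\de^c=\imm\de\de^c$ and $\imm\de^c\cap\ker\de=\imm\de\de^c$ (conditions $(b)$ of \cite[Lemma 5.15]{deligne-griffiths-morgan-sullivan}, transported along the $\del\delbar\leftrightarrow\de\de^c$ dictionary), together with $\imm\de\de^c\subseteq\imm\de^c$. First, to see that $\pi$ carries $\de$ to the zero differential I would show $\de\left(\ker\de^c\right)\subseteq\imm\de^c$: for $\alpha\in\ker\de^c$ the form $\de\alpha$ is $\de$-exact and $\de^c$-closed (because $\de^c\de\alpha=-\de\de^c\alpha=0$), hence $\de\alpha\in\imm\de\cap\ker\de^c=\imm\de\de^c\subseteq\imm\de^c$. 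For injectivity of $\iota_*$, a class $\alpha\in\ker\de\cap\ker\de^c$ that is de Rham exact lies in $\imm\de\cap\ker\de^c=\imm\de\de^c$, so $\alpha=\de\de^c\gamma=\de\left(\de^c\gamma\right)$ is already a boundary inside $\ker\de^c$; the injectivity of $\pi_*$ is dual, using $\imm\de^c\cap\ker\de=\imm\de\de^c$. For surjectivity of $\iota_*$, given a $\de$-closed $\omega$ one notes $\de^c\omega$ is $\de$-closed and $\de^c$-exact, hence $\de^c\omega=\de\de^c\psi\in\imm\de^c\cap\ker\de=\imm\de\de^c$; correcting by the $\de$-exact term $\de\psi$ produces $\omega'=\omega+\de\psi\in\ker\de^c$ with $\de\omega'=0$ in the same de Rham class, and the analogous correction by a $\de^c$-exact term settles $\pi_*$.

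Once both arrows are quasi-isomorphisms, $\left(\wedge^\bullet X,\,\de\right)$ is equivalent to $\left(\ker\de^c/\imm\de^c,\,0\right)$, whose cohomology is the space itself and is canonically $H^\bullet_{dR}(X;\C)$; thus the de Rham dga is equivalent to $\left(H^\bullet_{dR}(X;\C),\,0\right)$, i.e. it is formal. The assertion about Massey products is then immediate from the remark recorded earlier, that all Massey products of every order vanish on a formal dga. I expect the main obstacle to be precisely the surjectivity of $\iota_*$ and $\pi_*$: it is there that one must invoke the sharp (starred) forms of the $\de\de^c$-Lemma and keep careful track of signs produced by the anticommutation $\de\,\de^c=-\de^c\,\de$. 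By contrast, the multiplicativity of all three vertices and maps is automatic once one observes that $\de^c$ is a derivation, so that $\ker\de^c$ is closed under the wedge product and $\iota$, $\pi$ are morphisms of dgas rather than merely of complexes.
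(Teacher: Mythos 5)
Your proposal is correct and follows essentially the same route as the paper, which exhibits the identical zig-zag $\left(\wedge^\bullet X,\,\de\right) \leftarrow \left(\ker\de^c,\,\de\right) \rightarrow \left(\ker\de^c/\imm\de^c,\,0\right)$ and invokes the $\de\de^c$-Lemma to make both arrows quasi-isomorphisms of dgas. The only difference is that you carry out explicitly the quasi-isomorphism verifications (via the equivalent conditions $\imm\de\cap\ker\de^c=\imm\de\de^c=\imm\de^c\cap\ker\de$) that the paper delegates to \cite{deligne-griffiths-morgan-sullivan}, and your checks are accurate.
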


Indeed, if $X$ satisfies the $\del\delbar$-Lemma, equivalently, the $\de\de^c$-Lemma, then the inclusion $\ker\de^c\to \wedge^\bullet X$ and the projection $\ker\de^c\to\frac{\ker\de^c}{\imm\de^c}$ induce the quasi-isomorphisms
$$
\xymatrix{
 & \left(\ker \de^c,\, \de\right) \ar[dl]^{\text{qis}} \ar[dr]_{\text{qis}} & \\
 \left(\wedge^\bullet X,\, \de\right) & & \left(\frac{\ker \de^c}{\imm\de^c},\, 0\right)
}
$$
of differential graded algebras, proving that $\left(\wedge^{\bullet}X,\, \de\right)$ is equivalent to $\left(\frac{\ker \de^c}{\imm\de^c},\, 0\right)$, and hence formal.

\section{Cohomological properties of compact complex manifolds and the $\partial\overline{\partial}$-Lemma}\label{sec:cohomology-complex}
In this section, we study some cohomological properties of compact complex manifolds, especially in relation with the $\del\delbar$-Lemma. More precisely, we prove a Fr\"olicher-type inequality for the Bott-Chern cohomology, Theorem \ref{thm:frol-bc}, and we characterize the validity of the $\del\delbar$-Lemma in terms of the dimensions of the Bott-Chern cohomology groups, Theorem \ref{thm:caratterizzazione-bc-numbers}. This has been the matter of a joint work with A. Tomassini, \cite{angella-tomassini-3}.

\medskip

Let $X$ be a compact complex manifold of complex dimension $n$.

As a matter of notation, for every $p,q\in\N$, for every $k\in\N$, and for $\sharp\in\left\{\delbar,\,\del,\,BC,\,A\right\}$, we will denote
$$ h^{p,q}_{\sharp} \;:=\; \dim_\C H^{p,q}_{\sharp}(X) \;<\; +\infty \qquad \text{ and }\qquad h^{k}_{\sharp} \;:=\; \sum_{p+q=k}h^{p,q}_{\sharp} \;<\; +\infty \;, $$
while recall that the Betti numbers are denoted by
$$ b_k \;:=\; \dim_\C H^{k}_{dR}(X;\C) \;<\; +\infty \;.$$
Recall that, for every $p,q\in\N$, the conjugation induces the isomorphisms $H^{p,q}_{BC}(X)\stackrel{\simeq}{\to}H^{q,p}_{BC}(X)$, $H^{p,q}_{A}(X)\stackrel{\simeq}{\to}H^{q,p}_{A}(X)$, and $H^{p,q}_{\delbar}(X)\stackrel{\simeq}{\to}H^{q,p}_{\del}(X)$, and the Hodge-$*$-operator associated to any given Hermitian metric induces the isomorphisms $H^{p,q}_{BC}(X)\stackrel{\simeq}{\to}H^{n-q,n-p}_{A}(X)$ and $H^{p,q}_{\delbar}(X)\stackrel{\simeq}{\to}H^{n-q,n-p}_{\del}(X)$; hence, for every $p,q\in\N$, one has the equalities
$$ h^{p,q}_{BC} \;=\; h^{q,p}_{BC} \;=\; h^{n-p,n-q}_{A} \;=\; h^{n-q,n-p}_{A} \quad \text{ and }\quad h^{p,q}_{\delbar} \;=\; h^{q,p}
_{\del} \;=\; h^{n-p,n-q}_{\delbar} \;=\; h^{n-q,n-p}_{\del} \;,$$
and therefore, for every $k\in\N$, one has the equalities
$$ h^k_{BC} \;=\; h^{2n-k}_A \quad \text{ and } \quad h^k_{\delbar} \;=\; h^k_{\del} \;=\; h^{2n-k}_{\delbar} \;=\; h^{2n-k}_{\del} \;;$$
Finally, recall that the Hodge-$*$-operator (of any given Riemannian metric and volume form on $X$) yields, for every $k\in\N$, the isomorphism $H^k_{dR}(X;\R)\stackrel{\simeq}{\to}H^{2n-k}_{dR}(X;\R)$, and hence the equality
$$ b_k \;=\; b_{2n-k} \;.$$

\subsection{J. Varouchas' exact sequences}

In order to prove a Fr\"olicher-type inequality for the Bott-Chern and Aeppli cohomologies and to give therefore a characterization of compact complex manifolds satisfying the $\del\delbar$-Lemma in terms of the dimensions of their Bott-Chern cohomology groups, we need to recall two exact sequences from \cite{varouchas}.

\medskip

Following J. Varouchas, one defines the (finite-dimensional) bi-graded $\C$-vector spaces
$$ A^{\bullet,\bullet} \;:=\; \frac{\imm\delbar\cap\imm\del}{\imm\del\delbar} \;,\qquad B^{\bullet,\bullet}\;:=\; \frac{\ker\delbar\cap\imm\del}{\imm\del\delbar} \;,\qquad C^{\bullet,\bullet}\;:=\; \frac{\ker\del\delbar}{\ker\delbar+\imm\del} $$
and
$$ D^{\bullet,\bullet} \;:=\; \frac{\imm\delbar\cap\ker\del}{\imm\del\delbar} \;,\qquad E^{\bullet,\bullet}\;:=\; \frac{\ker\del\delbar}{\ker\del+\imm\delbar} \;,\qquad F^{\bullet,\bullet}\;:=\; \frac{\ker\del\delbar}{\ker\delbar+\ker\del} \;.$$
For every $p,q\in\N$ and $k\in\N$, we will denote their dimensions by
\begin{eqnarray*}
 a^{p,q}\;:=\; \dim_\C A^{p,q} \;,\qquad & \displaystyle b^{p,q}\;:=\; \dim_\C B^{p,q} \;,\qquad & c^{p,q}\;:=\; \dim_\C C^{p,q} \;, \\[5pt]
 d^{p,q}\;:=\; \dim_\C D^{p,q} \;,\qquad & \displaystyle e^{p,q}\;:=\; \dim_\C E^{p,q} \;,\qquad & f^{p,q}\;:=\; \dim_\C F^{p,q} \;,
\end{eqnarray*}
and
\begin{eqnarray*}
 a^k\;:=\; \sum_{p+q=k}a^{p,q} \;,\qquad & \displaystyle b^k\;:=\; \sum_{p+q=k}b^{p,q} \;,\qquad & c^k\;:=\; \sum_{p+q=k}c^{p,q} \;, \\[5pt]
 d^k\;:=\; \sum_{p+q=k}d^{p,q} \;,\qquad & \displaystyle e^k\;:=\; \sum_{p+q=k}e^{p,q} \;,\qquad & f^k\;:=\; \sum_{p+q=k}f^{p,q} \;.
\end{eqnarray*}

The previous vector spaces give the following exact sequences, by J. Varouchas.

\begin{thm}[{\cite[\S3.1]{varouchas}}]
The sequences
\begin{equation}\label{eq:succesatta-1}
0 \to A^{\bullet,\bullet} \to B^{\bullet,\bullet} \to H^{\bullet,\bullet}_{\delbar}(X) \to H^{\bullet,\bullet}_{A}(X) \to C^{\bullet,\bullet} \to 0
\end{equation}
and
\begin{equation}\label{eq:succesatta-2}
0 \to D^{\bullet,\bullet} \to H^{\bullet,\bullet}_{BC}(X) \to H^{\bullet,\bullet}_{\delbar}(X) \to E^{\bullet,\bullet} \to F^{\bullet,\bullet} \to 0
\end{equation}
are exact sequences of finite-dimensional bi-graded $\C$-vector spaces.
\end{thm}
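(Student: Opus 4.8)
The plan is to realize every arrow in both sequences as the map induced by the identity of $\wedge^{\bullet,\bullet}X$ — equivalently, by an inclusion of one subspace into a larger one that is compatible with the quotients — and then to verify exactness node by node by elementary representative chasing. First I would record that all the maps are well defined: for the first sequence, $A^{\bullet,\bullet}\to B^{\bullet,\bullet}$ comes from $\imm\delbar\cap\imm\del\subseteq\ker\delbar\cap\imm\del$ (using $\imm\delbar\subseteq\ker\delbar$); the map $B^{\bullet,\bullet}\to H^{\bullet,\bullet}_{\delbar}(X)$ sends $[\alpha]$ to its $\delbar$-class; $H^{\bullet,\bullet}_{\delbar}(X)\to H^{\bullet,\bullet}_{A}(X)$ is the natural map recalled in the introduction; and $H^{\bullet,\bullet}_{A}(X)\to C^{\bullet,\bullet}$ is induced by $\imm\del+\imm\delbar\subseteq\ker\delbar+\imm\del$. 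Analogous inclusions handle the second sequence. In each case the vanishing of two consecutive compositions is immediate from the definitions, so both lines are already complexes, and the injectivity of the two leftmost maps and surjectivity of the two rightmost maps follow at once from the shape of the numerators and denominators; it remains only to check exactness at the three interior nodes of each sequence.

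These verifications are all of the same flavour, and I would carry them out by choosing convenient representatives. For instance, exactness of the first sequence at $H^{\bullet,\bullet}_{\delbar}(X)$ amounts to saying that a $\delbar$-closed form $\alpha$ lies in $\imm\del+\imm\delbar$ (i.e. its $\delbar$-class dies in $H^{\bullet,\bullet}_{A}(X)$) if and only if it is $\delbar$-cohomologous to an element of $\ker\delbar\cap\imm\del$; given $\alpha=\del\beta+\delbar\gamma$ with $\delbar\alpha=0$, the form $\alpha':=\alpha-\delbar\gamma=\del\beta$ is $\delbar$-closed, lies in $\imm\del$, and satisfies $[\alpha']_{\delbar}=[\alpha]_{\delbar}$, which is exactly what is needed, while the reverse inclusion is trivial. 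Exactness at $B^{\bullet,\bullet}$ and at $H^{\bullet,\bullet}_{A}(X)$ is even more direct (the kernel of $B^{\bullet,\bullet}\to H^{\bullet,\bullet}_{\delbar}(X)$ consists of classes represented in $\imm\delbar\cap\imm\del$, i.e. the image of $A^{\bullet,\bullet}$; a class in $\ker(H_A\to C)$ has a $\delbar$-closed representative and hence comes from $H^{\bullet,\bullet}_{\delbar}(X)$). The second sequence is handled identically: exactness at $H^{\bullet,\bullet}_{\delbar}(X)$ reduces to replacing a $\delbar$-closed $\alpha=\beta+\delbar\gamma$ with $\del\beta=0$ by $\beta$, which is automatically $\delbar$-closed and so defines a Bott--Chern class mapping to $[\alpha]_{\delbar}$, and exactness at $E^{\bullet,\bullet}$ similarly replaces a representative lying in $\ker\delbar+\ker\del$ by its $\ker\delbar$-component. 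I would also note that conjugation — which exchanges $\del$ and $\delbar$, fixes $A^{\bullet,\bullet}$ and $F^{\bullet,\bullet}$, and swaps $B^{\bullet,\bullet}\leftrightarrow D^{\bullet,\bullet}$, $C^{\bullet,\bullet}\leftrightarrow E^{\bullet,\bullet}$, $H^{\bullet,\bullet}_{\delbar}(X)\leftrightarrow H^{\bullet,\bullet}_{\del}(X)$ — identifies several of these checks and roughly halves the work.

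For finite-dimensionality I would not argue purely formally but invoke Hodge theory. Since $\imm\delbar\cap\ker\del\subseteq\ker\del\cap\ker\delbar$ with the common denominator $\imm\del\delbar$, the spaces $A^{\bullet,\bullet}\subseteq D^{\bullet,\bullet}$ inject into $H^{\bullet,\bullet}_{BC}(X)$, and dually $C^{\bullet,\bullet}$, $E^{\bullet,\bullet}$, $F^{\bullet,\bullet}$ are quotients of $H^{\bullet,\bullet}_{A}(X)$, because $\imm\del+\imm\delbar$ is contained in each of the larger denominators $\ker\delbar+\imm\del$, $\ker\del+\imm\delbar$, $\ker\delbar+\ker\del$. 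Both $H^{\bullet,\bullet}_{BC}(X)$ and $H^{\bullet,\bullet}_{A}(X)$ are finite-dimensional by Schweitzer's Hodge theory recalled above (via the elliptic operators $\tilde\Delta_{BC}$ and $\tilde\Delta_{A}$), so the four endpoints are finite-dimensional; the middle term $B^{\bullet,\bullet}$ then sits in the short exact sequence $0\to A^{\bullet,\bullet}\to B^{\bullet,\bullet}\to\ker(H_{\delbar}\to H_A)\to 0$ and is finite-dimensional as well.

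I expect the only genuinely delicate point to be the bookkeeping: keeping straight which inclusions of numerators and of denominators induce which arrow, and phrasing each exactness statement so that the representative produced lands in the correct subspace with the correct closedness properties. There is no analytic difficulty beyond the single appeal to finite-dimensionality of the Bott--Chern and Aeppli groups; everything else is formal homological algebra carried out inside the double complex $\left(\wedge^{\bullet,\bullet}X,\, \del,\, \delbar\right)$.
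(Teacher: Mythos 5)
Your proposal is correct and follows essentially the same route as the paper: identity-induced maps between the quotients, node-by-node exactness checks by the same representative replacements (e.g.\ trading $\alpha=\del\beta+\delbar\gamma$ for $\del\beta$, and a representative in $\ker\delbar+\ker\del$ for its $\ker\delbar$-component), and finite-dimensionality via injections into $H^{\bullet,\bullet}_{BC}(X)$ and surjections from $H^{\bullet,\bullet}_{A}(X)$ together with Schweitzer's Hodge theory. The only (harmless) deviations are cosmetic: the paper gets $B^{\bullet,\bullet}$ finite-dimensional by the direct inclusion $\ker\delbar\cap\imm\del\subseteq\ker\delbar\cap\ker\del$ into $H^{\bullet,\bullet}_{BC}(X)$ rather than your short exact sequence through $\ker\bigl(H^{\bullet,\bullet}_{\delbar}(X)\to H^{\bullet,\bullet}_{A}(X)\bigr)$, and your use of conjugation to halve the verifications is an observation the paper records only afterwards.
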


\begin{proof}
 We first prove the exactness of \eqref{eq:succesatta-1}.
 Since $\imm\delbar\subseteq\ker\delbar$, the map $A^{\bullet,\bullet} \to B^{\bullet,\bullet}$ is injective. The kernel of the map $B^{\bullet,\bullet} \to H^{\bullet,\bullet}_{\delbar}(X)$ is $\frac{\ker\delbar\cap\imm\del\cap\imm\delbar}{\imm\del\delbar}=\frac{\imm\delbar\cap\imm\del}{\imm\del\delbar}$, that is, the image of the map $A^{\bullet,\bullet} \to B^{\bullet,\bullet}$. The kernel of the map $H^{\bullet,\bullet}_{\delbar}(X) \to H^{\bullet,\bullet}_{A}(X)$ is $\frac{\ker\delbar\cap\imm\del}{\imm\delbar}$, that is, the image of the map $B^{\bullet,\bullet} \to H^{\bullet,\bullet}_{\delbar}(X)$. The kernel of the map $H^{\bullet,\bullet}_{A}(X) \to C^{\bullet,\bullet}$ is $\frac{\ker\delbar\cap\ker\del\delbar}{\imm\del+\imm\delbar}=\frac{\ker\delbar}{\imm\del+\imm\delbar}$, that is, the image of the map $H^{\bullet,\bullet}_{\delbar}(X) \to H^{\bullet,\bullet}_{A}(X)$. Finally, since $\imm\del+\imm\delbar\subseteq\ker\delbar+\imm\del$, the map $H^{\bullet,\bullet}_{A}(X) \to C^{\bullet,\bullet}$ is 
surjective.
 In particular, since $H^{\bullet,\bullet}_{A}(X)\to C^{\bullet,\bullet}$ is surjective, then $C^{\bullet,\bullet}$ has finite dimension; since the identity induces an injective map $B^{\bullet,\bullet}\to H^{\bullet,\bullet}_{BC}(X)$, then $B^{\bullet,\bullet}$ has finite dimension; hence, since $A^{\bullet,\bullet}\to B^{\bullet,\bullet}$ is injective, then also $A^{\bullet,\bullet}$ has finite dimension.

 We prove now the exactness of \eqref{eq:succesatta-2}.
 Since $\imm\delbar\subseteq\ker\delbar$, the map $D^{\bullet,\bullet} \to H^{\bullet,\bullet}_{BC}(X)$ is injective. The kernel of the map $H^{\bullet,\bullet}_{BC}(X) \to H^{\bullet,\bullet}_{\delbar}(X)$ is $\frac{\ker\del\cap\ker\delbar\cap\imm\delbar}{\imm\del\delbar}=\frac{\imm\delbar\cap\ker\del}{\imm\del\delbar}$, that is, the image of the map $D^{\bullet,\bullet} \to H^{\bullet,\bullet}_{BC}(X)$. The kernel of the map $H^{\bullet,\bullet}_{\delbar}(X) \to E^{\bullet,\bullet}$ is $\frac{\ker\delbar\cap \left(\ker\del+\imm\delbar\right)}{\imm\delbar}=\frac{\ker\delbar\cap \ker\del}{\imm\delbar}$, that is, the image of the map $H^{\bullet,\bullet}_{BC}(X) \to H^{\bullet,\bullet}_{\delbar}(X)$. The kernel of the map $E^{\bullet,\bullet} \to F^{\bullet,\bullet}$ is $\frac{\ker\del\delbar\cap\left(\ker\delbar+\ker\del\right)}{\ker\del+\imm\delbar}=\frac{\ker\del\delbar \cap \ker\delbar}{\ker\del+\imm\delbar}$, that is, the image of the map $H^{\bullet,\bullet}_{\delbar}(X) \to E^{\bullet,\bullet}$. 
Finally, since $\ker\del+\imm\delbar \subseteq \ker\delbar + \ker\del$, the map $E^{\bullet,\bullet} \to F^{\bullet,\bullet}$ is surjective.
 In particular, since $D^{\bullet,\bullet}\to H^{\bullet,\bullet}_{BC}(X)$ is injective, then $D^{\bullet,\bullet}$ has finite dimension; since the identity induces a surjective map $H^{\bullet,\bullet}_{A}(X)\to E^{\bullet,\bullet}$, then $E^{\bullet,\bullet}$ has finite dimension; hence, since $E^{\bullet,\bullet}\to F^{\bullet,\bullet}$ is surjective, then also $F^{\bullet,\bullet}$ has finite dimension.

\end{proof}

Note, \cite[\S3.1]{varouchas},
that the conjugation yields, for every $p,q\in\N$, the equalities
\begin{equation}\label{eq:apq=aqp}
a^{p,q}\;=\;a^{q,p}\;, \qquad f^{p,q} \;=\; f^{q,p}\;, \qquad d^{p,q}\;=\;b^{q,p}\;, \qquad e^{p,q}\;=\;c^{q,p} \;,
\end{equation}
and the isomorphisms $\delbar\colon C^{\bullet,\bullet}\stackrel{\simeq}{\to}D^{\bullet,\bullet+1}$ and $\del\colon E^{\bullet,\bullet}\stackrel{\simeq}{\to} B^{\bullet+1,\bullet}$ yield the equalities
$$ c^{p,q}\;=\;d^{p,q+1}\;,\qquad e^{p,q}\;=\;b^{p+1,q}\;;$$
hence, for every $k\in\N$, one gets the equalities
$$ d^k\;=\;b^k\;,\qquad e^k\;=\;c^k \;, \qquad \text{ and } \qquad c^k\;=\; d^{k+1}\;,\qquad e^k\;=\;b^{k+1}\;.$$

\begin{rem}
 Following the same argument used in \cite{schweitzer} to prove the duality between Bott-Chern and Aeppli cohomology groups, we can prove the duality between $A^{\bullet,\bullet}$ and $F^{\bullet,\bullet}$, and, similarly, between $C^{\bullet,\bullet}$ and $\overline{D^{\bullet,\bullet}}$.

 Indeed, note that the pairing
 $$ A^{\bullet,\bullet}\times F^{\bullet, \bullet} \to \C \;, \qquad \left(\left[\alpha\right],\, \left[\beta\right]\right) \mapsto \int_X \alpha\wedge \overline \beta \;, $$
 is non-degenerate: choose a Hermitian metric $g$ on $X$; if $\left[\alpha\right]\in A^{\bullet,\bullet}\subseteq H^{\bullet,\bullet}_{BC}(X)$, then there exists a $\tilde\Delta_{BC}$-harmonic representative $\tilde\alpha$ in $\left[\alpha\right]\in A^{\bullet,\bullet}$, by \cite[Corollaire 2.3]{schweitzer}, that is, $\del\tilde\alpha=\delbar\tilde\alpha=\del\delbar*\tilde\alpha=0$; hence, $\left[*\tilde\alpha\right]\in F^{\bullet,\bullet}$, and $\left(\left[\tilde\alpha\right],\, \left[*\tilde\alpha\right]\right)=\int_X \tilde\alpha\wedge \overline{*\tilde\alpha}$ is zero if and only if $\tilde\alpha$ is zero if and only if $\left[\alpha\right]\in A^{\bullet,\bullet}$ is zero.

 Analogously, the pairing
 $$ C^{\bullet,\bullet}\times \overline{D^{\bullet, \bullet}} \to \C \;, \qquad \left(\left[\alpha\right],\, \left[\beta\right]\right) \mapsto \int_X \alpha\wedge \overline \beta \;, $$
 is non-degenerate: indeed, choose a Hermitian metric $g$ on $X$; if $\left[\alpha\right]\in \overline{D^{\bullet,\bullet}} \subseteq \overline{H^{\bullet,\bullet}_{BC}(X)}$, then there exists a $\tilde\Delta_{BC}$-harmonic representative $\tilde\alpha$ in $\left[\alpha\right]\in \overline{D^{\bullet,\bullet}}$, by \cite[Corollaire 2.3]{schweitzer}, that is, $\del\tilde\alpha=\delbar\tilde\alpha=\del\delbar*\tilde\alpha=0$; hence, $\left[*\tilde\alpha\right]\in C^{\bullet,\bullet}$, and $\left(\left[\tilde\alpha\right],\, \left[*\tilde\alpha\right]\right)=\int_X \tilde\alpha\wedge \overline{*\tilde\alpha}$ is zero if and only if $\tilde\alpha$ is zero if and only if $\left[\alpha\right]\in \overline{D^{\bullet,\bullet}}$ is zero.
\end{rem}

\subsection{A Fr\"olicher-type inequality for the Bott-Chern cohomology}

We can now state and prove a Fr\"olicher-type inequality for the Bott-Chern and Aeppli cohomologies, Theorem \ref{thm:frol-bc}.

\medskip

Firstly, we recall that, on a compact complex manifold $X$, the \emph{Fr\"olicher inequality} \cite[Theorem 2]{frolicher} relates the Hodge numbers and the Betti numbers.

\begin{thm}[{\cite[Theorem 2]{frolicher}}]
 Let $X$ be a compact complex manifold. Then, for every $k\in\N$, the following inequality holds:
 $$ \sum_{p+q=k} \dim_\C H^{p,q}_{\delbar}(X) \;\geq\; \dim_\C H^k_{dR}(X;\C) \;.$$ 
\end{thm}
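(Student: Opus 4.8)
The plan is to deduce the inequality from the convergence of the Hodge and Fr\"olicher spectral sequence $\left\{\left(E^{\bullet,\bullet}_r,\, \de_r\right)\right\}_{r\in\N}$ recalled above, for which $E^{p,q}_1\simeq H^{p,q}_{\delbar}(X)$ and $E^{\bullet,\bullet}_1\Rightarrow H^{\bullet}_{dR}(X;\C)$. Set $n:=\dim_\C X$. Since $X$ is compact, each $E^{p,q}_1\simeq H^{p,q}_{\delbar}(X)$ is a finite-dimensional $\C$-vector space by the Hodge theorem; moreover, the double complex $\left(\wedge^{\bullet,\bullet}X,\, \del,\, \delbar\right)$ being concentrated in bidegrees $(p,q)$ with $0\leq p,q\leq n$, for $r$ large enough either the source or the target of each $\de_r$ lies outside this range, so the spectral sequence degenerates at a finite page: for every $p,q\in\N$ there exists $r_0$ with $E^{p,q}_r\simeq E^{p,q}_\infty$ for all $r\geq r_0$.

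First I would record the elementary dimension estimate at each page. By definition, for every $r\geq 1$,
$$ E^{p,q}_{r+1} \;\simeq\; \frac{\ker\left(\de_r\colon E^{p,q}_r\to E^{p+r,\,q-r+1}_r\right)}{\imm\left(\de_r\colon E^{p-r,\,q+r-1}_r\to E^{p,q}_r\right)} \;, $$
so that $E^{p,q}_{r+1}$ is a subquotient of $E^{p,q}_r$, whence $\dim_\C E^{p,q}_{r+1}\leq \dim_\C E^{p,q}_r$. Iterating from $r=1$ gives, for every $p,q\in\N$,
$$ \dim_\C E^{p,q}_\infty \;\leq\; \dim_\C E^{p,q}_1 \;=\; \dim_\C H^{p,q}_{\delbar}(X) \;. $$

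Next I would invoke the abutment. The filtration ${'}F^\bullet$ induces a finite filtration on each $H^k_{dR}(X;\C)$ whose associated graded is $\bigoplus_{p+q=k} E^{p,q}_\infty$; since the dimension of a finite-dimensional filtered vector space is the sum of the dimensions of its graded pieces, this yields $\dim_\C H^k_{dR}(X;\C) = \sum_{p+q=k}\dim_\C E^{p,q}_\infty$. Combining with the previous estimate, for every $k\in\N$,
$$ \dim_\C H^k_{dR}(X;\C) \;=\; \sum_{p+q=k}\dim_\C E^{p,q}_\infty \;\leq\; \sum_{p+q=k}\dim_\C E^{p,q}_1 \;=\; \sum_{p+q=k}\dim_\C H^{p,q}_{\delbar}(X) \;, $$
which is exactly the asserted inequality.

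The only genuinely non-formal ingredient — and hence the point to justify with care — is the abutment statement $\Gr_{{'}F} H^k_{dR}(X;\C)\simeq\bigoplus_{p+q=k}E^{p,q}_\infty$, that is, the convergence of the spectral sequence attached to the bounded filtered complex $\left(\wedge^\bullet X\otimes\C,\, \de\right)$ to its own cohomology. This is the standard convergence theorem for spectral sequences of complexes equipped with a finite exhaustive filtration (see, e.g., \cite[\S2.4]{mccleary}, \cite[\S3.5]{griffiths-harris}); everything else reduces to the purely formal remark that passing from one page to the next, being a passage to a subquotient, can only decrease dimensions.
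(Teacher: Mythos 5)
Your argument is correct and coincides with the paper's own route: the text states that the inequality follows from the Hodge and Fr\"olicher spectral sequence $E^{\bullet,\bullet}_1 \simeq H^{\bullet,\bullet}_{\delbar}(X) \Rightarrow H^{\bullet}_{dR}(X;\C)$, and your proof simply makes explicit the two standard ingredients behind that implication, namely that each page is a subquotient of the previous one (so $\dim_\C E^{p,q}_\infty \leq \dim_\C E^{p,q}_1$) and that convergence of the spectral sequence of the bounded filtered complex gives $\dim_\C H^k_{dR}(X;\C) = \sum_{p+q=k}\dim_\C E^{p,q}_\infty$. Nothing further is needed.
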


The equality $\sum_{p+q=k} \dim_\C H^{p,q}_{\delbar}(X) = \dim_\C H^k_{dR}(X;\C)$ holds for every $k\in\N$ if and only if the Hodge and Fr\"olicher spectral sequence $\left\{\left(E_r,\, \de_r\right)\right\}_{r\in\N}$ degenerates at the first step.

\medskip

It is in general not true that $h^k_{BC}$ (respectively, $h^k_{A}$) is higher than the \kth{k} Betti number of $X$ for every $k\in\N$: an example is provided by the small deformations of the Iwasawa manifold $\mathbb{I}_3 := \left. \mathbb{H}\left(3;\Z\left[\im\right]\right) \right\backslash \mathbb{H}(3;\C)$ (see \S\ref{subsec:iwasawa}). In the following table, we summarize the dimensions of the Bott-Chern and Aeppli cohomology groups for $\mathbb{I}_3$ (which have been computed in \cite[Proposition 1.2]{schweitzer}) and for the small deformations of $\mathbb{I}_3$ (see \S\ref{sec:bott-chern-iwasawa}). We recall that the small deformations of the Iwasawa manifold, according to I. Nakamura's classification, \cite[\S3]{nakamura}, are divided into three classes, {\itshape (i)}, {\itshape (ii)}, and {\itshape (iii)}, in terms of their Hodge numbers; it turns out that the Bott-Chern cohomology yields a finer classification of the Kuranishi space of $\mathbb{I}_3$, allowing a further subdivision of class
{\itshape (ii)}, respectively class {\itshape (iii)}, into subclasses {\itshape (ii.a)}, {\itshape (ii.b)}, respectively {\itshape (iii.a)}, {\itshape (iii.b)}, see \S\ref{sec:deformations-iwasawa}.

\smallskip
\begin{center}
\begin{small}
\begin{tabular}{c||*{3}{c}|*{3}{c}|*{3}{c}|*{3}{c}|*{3}{c}||}
\toprule
  {\bfseries classes} & $\mathbf{h^1_{\delbar}}$ & $\mathbf{h^1_{BC}}$ & $\mathbf{h^1_{A}}$ & $\mathbf{h^2_{\delbar}}$ & $\mathbf{h^2_{BC}}$ & $\mathbf{h^2_{A}}$ & $\mathbf{h^3_{\delbar}}$ & $\mathbf{h^3_{BC}}$ & $\mathbf{h^3_{A}}$ & $\mathbf{h^4_{\delbar}}$ & $\mathbf{h^4_{BC}}$ & $\mathbf{h^4_{A}}$ & $\mathbf{h^5_{\delbar}}$ & $\mathbf{h^5_{BC}}$ & $\mathbf{h^5_{A}}$\\
\midrule[0.02em]\midrule[0.02em]
{\itshape (i)} & 5 & 4 & 6 & 11 & 10 & 12 & 14 & 14 & 14 & 11 & 12 & 10 & 5 & 6 & 4\\
\midrule[0.02em]
{\itshape (ii.a)} & 4 & 4 & 6 & 9 & 8 & 11 & 12 & 14 & 14 & 9 & 11 & 8 & 4 & 6 & 4\\
{\itshape (ii.b)} & 4 & 4 & 6 & 9 & 8 & 10 & 12 & 14 & 14 & 9 & 10 & 8 & 4 & 6 & 4\\
\midrule[0.02em]
{\itshape (iii.a)} & 4 & 4 & 6 & 8 & 6 & 11 & 10 & 14 & 14 & 8 & 11 & 6 & 4 & 6 & 4\\
{\itshape (iii.b)} & 4 & 4 & 6 & 8 & 6 & 10 & 10 & 14 & 14 & 8 & 10 & 6 & 4 & 6 & 4\\
\midrule[0.02em]\midrule[0.02em]
 & \multicolumn{3}{c|}{$\mathbf{b_1=4}$} & \multicolumn{3}{c|}{$\mathbf{b_2=8}$} & \multicolumn{3}{c|}{$\mathbf{b_3=10}$} & \multicolumn{3}{c|}{$\mathbf{b_4=8}$} & \multicolumn{3}{c||}{$\mathbf{b_5=4}$}\\
\bottomrule
\end{tabular}
\end{small}
\end{center}
\smallskip

\medskip

The following result, \cite[Theorem A]{angella-tomassini-3}, gives a Fr\"olicher-type inequality for the Bott-Chern cohomology. (We recall that, on a compact complex manifold $X$ of complex dimension $n$, for any $p,q\in\N$, one has the equality $\dim_\C H^{p,q}_{BC}(X)=\dim_\C H^{n-q,n-p}_A(X)$, and, for any $k\in\N$, the equality $\sum_{p+q=k}\dim_\C H^{p,q}_{BC}(X) = \sum_{r+s=2n-k} \dim_\C H^{r,s}_{A}(X)$, \cite[\S2.c]{schweitzer}.)

\begin{thm}
\label{thm:frol-bc}
 Let $X$ be a compact complex manifold. Then, for every $p,q\in\N$, the following inequality holds:
\begin{equation}\label{eq:frol-bc-1}
\dim_\C H^{p,q}_{BC}(X) + \dim_\C H^{p,q}_{A}(X) \;\geq\; \dim_\C H^{p,q}_{\delbar}(X) + \dim_\C H^{p,q}_\del(X) \;.
\end{equation}
In particular, for every $k\in\N$, the following inequality holds:
\begin{equation}\label{eq:frol-bc-2}
\sum_{p+q=k} \left( \dim_\C H^{p,q}_{BC}(X) + \dim_\C H^{p,q}_{A}(X) \right) \;\geq\; 2\, \dim_\C H^k_{dR}(X;\C) \;.
\end{equation}
\end{thm}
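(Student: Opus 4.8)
The plan is to extract numerical consequences from Varouchas' two exact sequences \eqref{eq:succesatta-1} and \eqref{eq:succesatta-2} by taking alternating sums of dimensions, and then to feed the outcome into the classical Fr\"olicher inequality. Since each is an exact sequence of finite-dimensional bi-graded $\C$-vector spaces, in every fixed bidegree the alternating sum of the dimensions of its terms vanishes. From \eqref{eq:succesatta-1} in bidegree $(p,q)$ I would read off
$$ h^{p,q}_{A} - h^{p,q}_{\delbar} \;=\; a^{p,q} - b^{p,q} + c^{p,q} \;, $$
and from \eqref{eq:succesatta-2} in bidegree $(p,q)$
$$ h^{p,q}_{BC} - h^{p,q}_{\delbar} \;=\; d^{p,q} - e^{p,q} + f^{p,q} \;. $$

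Next I would apply the \emph{same} two sequences in the conjugate bidegree $(q,p)$ and translate the resulting identities back to bidegree $(p,q)$ using the conjugation symmetries recorded in \eqref{eq:apq=aqp}, together with $h^{p,q}_{BC}=h^{q,p}_{BC}$, $h^{p,q}_{A}=h^{q,p}_{A}$, and $h^{p,q}_{\del}=h^{q,p}_{\delbar}$. Concretely, \eqref{eq:succesatta-1} in bidegree $(q,p)$ yields $h^{p,q}_{A}-h^{p,q}_{\del} = a^{p,q}-d^{p,q}+e^{p,q}$ (using $a^{q,p}=a^{p,q}$, $b^{q,p}=d^{p,q}$, $c^{q,p}=e^{p,q}$), while \eqref{eq:succesatta-2} in bidegree $(q,p)$ yields $h^{p,q}_{BC}-h^{p,q}_{\del}=b^{p,q}-c^{p,q}+f^{p,q}$. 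Adding the second displayed identity above to the first of these two conjugate identities cancels $d^{p,q}$ and $e^{p,q}$ and produces the clean formula
$$ h^{p,q}_{BC} + h^{p,q}_{A} - h^{p,q}_{\delbar} - h^{p,q}_{\del} \;=\; a^{p,q} + f^{p,q} \;. $$
Because $a^{p,q}=\dim_\C A^{p,q}\geq 0$ and $f^{p,q}=\dim_\C F^{p,q}\geq 0$, the right-hand side is nonnegative, which is exactly the pointwise inequality \eqref{eq:frol-bc-1}.

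Finally, to obtain \eqref{eq:frol-bc-2} I would sum the last identity over all $(p,q)$ with $p+q=k$, giving
$$ \sum_{p+q=k}\left(h^{p,q}_{BC}+h^{p,q}_{A}\right) \;=\; h^k_{\delbar} + h^k_{\del} + \sum_{p+q=k}\left(a^{p,q}+f^{p,q}\right) \;\geq\; h^k_{\delbar}+h^k_{\del} \;. $$
The argument then closes by invoking the Fr\"olicher inequality $h^k_{\delbar}\geq b_k$ together with its conjugate $h^k_{\del}=h^k_{\delbar}\geq b_k$, so that the right-hand side is at least $2\,b_k = 2\dim_\C H^k_{dR}(X;\C)$.

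I do not anticipate a serious obstacle: once Varouchas' exactness is granted, the whole argument is linear-algebraic bookkeeping. The only point requiring genuine care is the transport of the two sequences to the conjugate bidegree $(q,p)$ and the correct matching of the auxiliary spaces under conjugation --- namely that $A^{\bullet,\bullet}$ and $F^{\bullet,\bullet}$ are conjugation-symmetric while $B^{\bullet,\bullet}\leftrightarrow D^{\bullet,\bullet}$ and $C^{\bullet,\bullet}\leftrightarrow E^{\bullet,\bullet}$ are interchanged, which is precisely what \eqref{eq:apq=aqp} encodes. A sign slip or an index transposition there would be the most likely source of error, so I would verify each of the four dimension counts independently before combining them.
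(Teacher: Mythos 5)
Your proposal is correct and follows essentially the same route as the paper: both extract the identities $h^{p,q}_{A}-h^{p,q}_{\delbar}=a^{p,q}-b^{p,q}+c^{p,q}$ and $h^{p,q}_{BC}-h^{p,q}_{\delbar}=d^{p,q}-e^{p,q}+f^{p,q}$ from Varouchas' sequences, transport via the conjugation symmetries of \eqref{eq:apq=aqp} to cancel the $d$- and $e$-terms, arrive at $h^{p,q}_{BC}+h^{p,q}_{A}=h^{p,q}_{\delbar}+h^{p,q}_{\del}+a^{p,q}+f^{p,q}$, and conclude by summing over $p+q=k$ and invoking the Fr\"olicher inequality. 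Your bookkeeping of the conjugation pairings ($A$ and $F$ self-paired, $B\leftrightarrow D$, $C\leftrightarrow E$) matches the paper's exactly.
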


\begin{proof}
Fix $p,q\in\N$. The exact sequences \eqref{eq:succesatta-1}, respectively \eqref{eq:succesatta-2}, yield the equality
$$ h^{p,q}_{A}  \;=\; h^{p,q}_\delbar + c^{p,q} + a^{p,q} - b^{p,q} \;,$$
respectively
$$ h^{p,q}_{BC} \;=\; h^{p,q}_\delbar + d^{p,q} + f^{p,q} - e^{p,q} \;;$$
using also the symmetries $h^{p,q}_A=h^{q,p}_A$ and $h^{p,q}_{\delbar}=h^{q,p}_{\del}$, and the equalities \eqref{eq:apq=aqp}, we get
\begin{eqnarray*}
h^{p,q}_{BC}+h^{p,q}_{A} &=& h^{p,q}_{BC}+h^{q,p}_{A} \\[5pt]
&=& h^{p,q}_{\delbar} + h^{q,p}_{\delbar} + f^{p,q} + a^{q,p} + d^{p,q} - b^{q,p} - e^{p,q} + c^{q,p} \\[5pt]
&=& h^{p,q}_{\delbar} + h^{p,q}_{\del} + f^{p,q} + a^{p,q}  \\[5pt]
&\geq& h^{p,q}_{\delbar} + h^{p,q}_{\del} \;,
\end{eqnarray*}
which proves \eqref{eq:frol-bc-1}.

Now, fix $k\in\N$; summing over $\left(p,q\right)\in\N\times\N$ such that $p+q=k$, we get
\begin{eqnarray*}
 h^k_{BC} + h^k_{A} &=& \sum_{p+q=k} \left( h^{p,q}_{BC} + h^{p,q}_{A} \right) \\[5pt]
&\geq& \sum_{p+q=k} \left( h^{p,q}_{\delbar} + h^{p,q}_{\del}\right) \;=\; h^k_{\delbar} + h^k_{\del} \\[5pt]
&\geq& 2\, b_k \;,
\end{eqnarray*}
from which we get \eqref{eq:frol-bc-2}.
\end{proof}

\begin{rem}
 Note that small deformations of the Iwasawa manifold show that both the inequalities \eqref{eq:frol-bc-1} and \eqref{eq:frol-bc-2} can be strict.

 For example, for small deformations of $\mathbb{I}_3$ in class {\itshape (i)}, one has,
 $$        h^1_{BC} + h^1_{A} \;=\; 10 \;>\;  8 \;=\; 2 \cdot b_1 \;,
    \qquad h^2_{BC} + h^2_{A} \;=\; 22 \;>\; 16 \;=\; 2 \cdot b_2 \;,
    \qquad h^3_{BC} + h^3_{A} \;=\; 28 \;>\; 20 \;=\; 2 \cdot b_3 \;,
 $$
 showing that \eqref{eq:frol-bc-2} is strict for every $k\in\{1,\, 2,\, 3,\, 4,\, 5\}$.

 On the other hand, for small deformations of $\mathbb{I}_3$ in class {\itshape (ii)} or in class {\itshape (iii)}, one has
 $$ h^{1,0}_{BC} + h^{1,0}_{A} \;=\; \frac{1}{2} \left(h^1_{BC} + h^1_{A} \right) \;=\; 5 \; > \; 4 \;=\; h^1_\delbar \;=\; h^{1,0}_\delbar + h^{0,1}_\delbar \;=\; h^{1,0}_\delbar + h^{1,0}_\del \;,$$
 showing that \eqref{eq:frol-bc-1} is strict, for example, for $(p,q)=(1,0)$.

 (For further examples among the small deformations of the Iwasawa manifold, compare the computations in \S\ref{sec:bott-chern-iwasawa}, which are summarized in \S\ref{subsec:chart}.)
\end{rem}

\begin{rem}
 Note that, in the proof of Theorem \ref{thm:frol-bc}, we have actually shown that, for every $k\in\N$,
\begin{equation*}
 h^k_{BC}+h^k_{A} \;=\; 2\,h^k_{\delbar}+a^k+f^k \;.
\end{equation*}
\end{rem}

\subsection{A characterization of the $\partial\overline{\partial}$-Lemma in terms of the Bott-Chern cohomology}

This section is devoted to give a characterization of the validity of the $\del\delbar$-Lemma in terms of the Bott-Chern cohomology.

Note that, if a compact complex manifold $X$ satisfies the $\del\delbar$-Lemma, then, for every $k\in\N$, it holds $h^k_{BC}=h^k_{A}=h^k_{\delbar}=h^k_{\del}=b_k$, and hence \eqref{eq:frol-bc-2} is actually an equality. In fact, we prove now that also the converse holds true: more precisely, the equality in \eqref{eq:frol-bc-2} holds for every $k\in\N$ if and only if the $\del\delbar$-Lemma holds; in particular, this gives a characterization of the validity of the $\del\delbar$-Lemma just in terms of $\left\{h^k_{BC}\right\}_{k\in\N}$, \cite[Theorem B]{angella-tomassini-3}.

\begin{thm}
\label{thm:caratterizzazione-bc-numbers}
 Let $X$ be a compact complex manifold. The equality
$$ \sum_{p+q=k} \left(\dim_\C H^{p,q}_{BC}(X) + \dim_\C H^{p,q}_{A}(X)\right) \;=\; 2\, \dim_\C H^k_{dR}(X;\C) $$
holds for every $k\in\N$ if and only if $X$ satisfies the $\del\delbar$-Lemma.
\end{thm}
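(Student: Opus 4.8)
The plan is to prove both implications, the substantive one being that the dimensional equality forces the $\del\delbar$-Lemma. The implication ``$\del\delbar$-Lemma $\Rightarrow$ equality'' is immediate: if $X$ satisfies the $\del\delbar$-Lemma, then by \cite[Lemma 5.15, Remark 5.16, 5.21]{deligne-griffiths-morgan-sullivan} all the natural maps between cohomologies are isomorphisms, so $h^k_{BC}=h^k_{A}=b_k$ for every $k$ and both sides equal $2\,b_k$. For the converse I would start from the identity
$$ h^k_{BC}+h^k_{A} \;=\; 2\,h^k_{\delbar}+a^k+f^k \;, $$
obtained in the course of proving Theorem \ref{thm:frol-bc} by summing the two equalities coming from \eqref{eq:succesatta-1} and \eqref{eq:succesatta-2} and using the symmetries \eqref{eq:apq=aqp}.

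First I would combine this identity with the Fr\"olicher inequality $h^k_{\delbar}\geq b_k$ and with $a^k\geq 0$, $f^k\geq 0$. The hypothesis $h^k_{BC}+h^k_{A}=2\,b_k$ then forces, for every $k$,
$$ h^k_{\delbar} \;=\; b_k \qquad\text{and}\qquad a^k \;=\; f^k \;=\; 0 \;. $$
The first equality says that the Hodge and Fr\"olicher spectral sequence degenerates at $E_1$. Since the bigraded dimensions are nonnegative, vanishing of the total dimensions gives $a^{p,q}=f^{p,q}=0$ for all $p,q$, that is $A^{\bullet,\bullet}=0$ and $F^{\bullet,\bullet}=0$ (consistently, these two are dual under the pairing of the previous Remark).

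Next I would reduce the Lemma to a single vanishing. By \cite[Lemma 5.15]{deligne-griffiths-morgan-sullivan}, condition $(b)$, the $\del\delbar$-Lemma is equivalent to $\ker\delbar\cap\imm\del=\imm\del\delbar$ together with $\ker\del\cap\imm\delbar=\imm\del\delbar$, i.e. to $B^{\bullet,\bullet}=0$ and $D^{\bullet,\bullet}=0$; by \eqref{eq:apq=aqp} one has $d^{p,q}=b^{q,p}$, so it suffices to prove $B^{\bullet,\bullet}=0$. Moreover $A^{\bullet,\bullet}=0$ means $\imm\del\cap\imm\delbar=\imm\del\delbar$, so $B^{\bullet,\bullet}=0$ becomes the assertion $\ker\delbar\cap\imm\del\subseteq\imm\delbar$: \emph{every $\delbar$-closed, $\del$-exact form is $\delbar$-exact}.

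The crux is then to deduce this inclusion from degeneration $E_1\simeq E_\infty$. Let $\eta=\del\gamma$ be $\delbar$-closed; it is automatically $\de$-closed, and if $\gamma$ could be chosen $\delbar$-closed, then degeneration at the first step (which gives $\del(\ker\delbar)\subseteq\imm\delbar$) would immediately yield $\eta\in\imm\delbar$, and then $\eta\in\imm\del\cap\imm\delbar=\imm\del\delbar$ by $A^{\bullet,\bullet}=0$. The main obstacle is precisely this upgrade of $\gamma$ to a $\delbar$-closed potential with the same $\del\gamma$: the obstruction is $\delbar\gamma$, a $\del$-closed $\delbar$-exact form whose triviality in $H^{\bullet,\bullet}_{\del}(X)$ is measured by $D^{\bullet,\bullet}$ in a neighbouring bidegree, creating a circular coupling between $B^{p,q}$ and $D^{p-1,q+1}$ at the same filtration level. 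I expect to resolve it by an inductive completion argument along the Fr\"olicher filtration, using $E_1\simeq E_\infty$ to complete $\delbar$-cohomology classes to $\de$-closed forms modulo higher holomorphic degree and using $A^{\bullet,\bullet}=0$ to absorb the resulting mixed terms into $\imm\del\delbar$. Equivalently, one may decompose the bounded double complex $\left(\wedge^{\bullet,\bullet}X,\,\del,\,\delbar\right)$ into squares and zigzags: degeneration excludes the even zigzags and $A^{\bullet,\bullet}=F^{\bullet,\bullet}=0$ excludes the long odd ones, so only dots and squares remain, which by \cite[Proposition 5.17]{deligne-griffiths-morgan-sullivan} is exactly the $\del\delbar$-Lemma.
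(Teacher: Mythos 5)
Your treatment of the easy direction and of the first reduction is correct and coincides with the paper's: from the identity $h^k_{BC}+h^k_{A}=2\,h^k_{\delbar}+a^k+f^k$ (the paper's Theorem \ref{thm:frol-bc} via \eqref{eq:succesatta-1}, \eqref{eq:succesatta-2} and \eqref{eq:apq=aqp}) together with $h^k_{\delbar}\geq b_k$, the hypothesis forces $h^k_{\delbar}=b_k$ and $a^k=f^k=0$; and reducing the $\del\delbar$-Lemma to $B^{\bullet,\bullet}=0$ via condition $(b)$ of \cite[Lemma 5.15]{deligne-griffiths-morgan-sullivan} and $d^{p,q}=b^{q,p}$ is also fine. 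The gap is exactly where you flag it: the implication ``degeneration and $A^{\bullet,\bullet}=F^{\bullet,\bullet}=0$ imply $B^{\bullet,\bullet}=0$'' is never proved — you write that you \emph{expect} an inductive completion argument to work, and the mechanism you actually sketch invokes only degeneration and $A^{\bullet,\bullet}=0$. Those two inputs alone are insufficient: consider the three-dimensional ``wedge'' double complex $K^{p+1,q}\xleftarrow{\del}K^{p,q}\xrightarrow{\delbar}K^{p,q+1}$, each space spanned by $a$, $\del a$, $\delbar a$ respectively. Both spectral sequences degenerate at $E_1$ and $A^{\bullet,\bullet}=0$, yet $0\neq\left[\del a\right]\in B^{p+1,q}$ (here $F^{p,q}\neq0$: the class of $a$). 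So any successful completion of your $\gamma$ to a $\delbar$-closed potential must use $F^{\bullet,\bullet}=0$ in an essential way, and your sketch supplies no mechanism for that — the ``circular coupling'' between $B^{p,q}$ and $D^{p-1,q+1}$ you name is precisely the unresolved point, not a technicality.

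Your fallback via indecomposables reaches the right conclusion but is not a proof either: the decomposition of a bounded double complex into dots, squares and zigzags is a nontrivial structure theorem established neither in the paper nor in \cite{deligne-griffiths-morgan-sullivan} — \cite[Proposition 5.17]{deligne-griffiths-morgan-sullivan} only characterizes when the complex \emph{is} a sum of dots and squares, it does not assert a general decomposition — and your bookkeeping is asserted rather than checked (killing even zigzags needs degeneration of \emph{both} spectral sequences, here supplied by conjugation; and one must verify that every odd zigzag of length at least $3$ has $a+f>0$, while long even zigzags are in fact also seen by $A$ and $F$). The paper sidesteps all of this with two elementary steps you could adopt: its \texttt{Claim 2} shows, by an explicit pure-type correction of a closed form using only $a^{k+1}=0$ (this is your hoped-for completion argument, but aimed at a weaker target), that the natural map $\bigoplus_{p+q=k}H^{p,q}_{BC}(X)\to H^k_{dR}(X;\C)$ is \emph{surjective}; then \texttt{Claim 3} combines $h^k_{BC}=h^{2n-k}_{A}$ and $b_k=b_{2n-k}$ with the hypothesis to get $h^k_{BC}=b_k$, so the surjection is an isomorphism, and injectivity of $H^{\bullet,\bullet}_{BC}(X)\to H^{\bullet}_{dR}(X;\C)$ \emph{is} the $\del\delbar$-Lemma by definition — no $B^{\bullet,\bullet}$, no zigzags. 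Redirecting your induction at surjectivity of the Bott-Chern-to-de-Rham map and closing with this duality count would repair your proof.
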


\begin{proof}
 If $X$ satisfies the $\del\delbar$-Lemma, then the natural maps $H^{\bullet,\bullet}_{BC}(X)\to H^\bullet_{dR}(X;\C)$, $H^{\bullet,\bullet}_{BC}(X) \to H^{\bullet,\bullet}_{\delbar}(X)$, and $H^{\bullet,\bullet}_{\delbar}(X) \to H^{\bullet,\bullet}_{A}(X)$, $H^\bullet_{dR}(X;\C)\to H^{\bullet,\bullet}_{A}(X)$ induced by the identity are isomorphisms, \cite[Remark 5.16]{deligne-griffiths-morgan-sullivan}, and hence, for every $k\in\N$, one has
$$ h^k_{BC} \;=\; h^k_{A} \;=\; h^k_{\delbar} \;=\; b_k $$
and hence, in particular,
$$ h^k_{BC}+h^k_{A} \;=\; 2\,b_k \;.$$
 We split the proof of the converse into the following claims.

 \paragrafodclaim{1}{If $h^k_{BC}+h^k_{A}=2\,b_k$ holds for every $k\in\N$, then the Hodge and Fr\"olicher spectral sequences degenerate at the first step (namely, ${E_1}\simeq {E_{\infty}}$, that is, $h^k_{\delbar}=b_k$ for every $k\in\N$) and $a^k=0=f^k$ for every $k\in\N$}\\
Since, for every $k\in\N$, we have
$$ 2\,b_k \;=\; h^k_{BC}+h^k_{A} \;=\; 2\,h^k_{\delbar}+a^k+f^k \;\geq\; 2\,b_k \;,$$
then $h^k_{\delbar}=b_k$ and $a^k=0=f^k$ for every $k\in\N$.

 \paragrafodclaim{2}{Fix $k\in\N$. If $a^{k+1}:=\sum_{p+q=k+1}\dim_\C A^{p,q}=0$, then the natural map
$$
\bigoplus_{p+q=k}H^{p,q}_{BC}(X)\to H^{k}_{dR}(X;\C)
$$ 
is surjective}\\
 Let $\mathfrak{a}=\left[\alpha\right]\in H^k_{dR}(X;\C)$. We have to prove that $\mathfrak{a}$ admits a representative whose pure-type components are $\de$-closed. Consider the pure-type decomposition of $\alpha$:
$$ \alpha \;=:\; \sum_{j=0}^{k} \left(-1\right)^j\, \alpha^{k-j,j} \;,$$
where $\alpha^{k-j,j}\in \wedge^{k-j,j}X$. Since $\de\alpha=0$, we get that
$$ \del\alpha^{k,0}=0\;,\qquad \delbar\alpha^{k-j,j}-\del\alpha^{k-j-1,j+1}=0\text{ for }j\in\{0,\ldots,k-1\}\;,\qquad \delbar\alpha^{0.k}=0 \;; $$
by the hypothesis $a^{k+1}=0$, for every $j\in\{0,\ldots,k-1\}$, we get that,
$$ \delbar\alpha^{k-j,j}\;=\;\del\alpha^{k-j-1,j+1}\;\in\;\left(\imm\delbar\cap\imm\del\right)\cap\wedge^{k-j, j+1}X \;=\; \imm\del\delbar\cap\wedge^{k-j, j+1}X $$
and hence there exists $\eta^{k-j-1,j}\in\wedge^{k-j-1,j}X$ such that
$$ \delbar\alpha^{k-j,j} \;=\; \del\delbar\eta^{k-j-1,j} \;=\; \del\alpha^{k-j-1,j+1} \;.$$
Define
$$ \eta \;:=\; \sum_{j=0}^{k-1}\left(-1\right)^j\,\eta^{k-j-1,j} \;\in\;\wedge^{k-1}X \otimes_\R \C \;.$$
The claim follows noting that
\begin{eqnarray*}
\mathfrak{a} &=& \left[\alpha\right] \;=\; \left[\alpha+\de\eta\right] \\[5pt]
&=& \left[\left(\alpha^{k,0}+\del\eta^{k-1,0}\right)+\sum_{j=1}^{k-1}
\left(-1\right)^{j}\,\left(\alpha^{k-j,j}+\del\eta^{k-j-1,j}-\delbar\eta^{k-j,j-1}\right) + \left(-1\right)^k\,\left(\alpha^{0,k}-\delbar\eta^{0,k-1}\right)\right] \\[5pt]
&=& \left[\alpha^{k,0}+\del\eta^{k-1,0}\right]+\sum_{j=1}^{k-1}\left(-1\right)^{j}\,\left[\alpha^{k-j,j}+\del\eta^{k-j-1,j}-\delbar\eta^{k-j,j-1}\right] + \left(-1\right)^k\,\left[\alpha^{0,k}-\delbar\eta^{0,k-1}\right] \;,
\end{eqnarray*}
that is, each of the pure-type components of $\alpha+\de\eta$ is both $\del$-closed and $\delbar$-closed.

 \paragrafodclaim{3}{If $h^k_{BC}\geq b_k$ and $h^k_{BC}+h^k_{A}=2\,b_k$ for every $k\in\N$, then $h^k_{BC}=b_k$ for every $k\in\N$}\\
If $n$ is the complex dimension of $X$, then, for every $k\in\N$, we have
$$ b_k \;\leq\; h^k_{BC} \;=\; h^{2n-k}_{A} \;=\; 2\,b_{2n-k}-h^{2n-k}_{BC} \;\leq\; b_{2n-k} \;=\; b_k$$
and hence $h^k_{BC}=b_k$ for every $k\in\N$.

\smallskip

Now, by \texttt{Claim 1}, we get that $a^k=0$ for each $k\in\N$; hence, by \texttt{Claim 2}, for every $k\in\N$ the natural map
$$ \bigoplus_{p+q=k}H^{p,q}_{BC}(X)\to H^k_{dR}(X;\C) $$
induced by the identity is surjective, and hence, in particular, $h^k_{BC}\geq b_k$. By \texttt{Claim 3} we get therefore that $h^k_{BC}=b_k$ for every $k\in\N$. Hence, the natural map $H^{\bullet,\bullet}_{BC}(X)\to H^{\bullet}_{dR}(X;\C)$ is actually an isomorphism, which is equivalent to say that $X$ satisfies the $\del\delbar$-Lemma.
\end{proof}

\begin{rem}
We note that, using the exact sequences \eqref{eq:succesatta-2} and \eqref{eq:succesatta-1}, one can prove that, on a compact complex manifold $X$ and for every $k\in\N$,
\begin{eqnarray*}
e^k &=& \left(h^{k}_{\delbar}-h^{k}_{BC}\right)+f^k+c^{k-1} \\[5pt]
&=& \left(h^{k}_{\delbar}-h^{k}_{BC}\right)-\left(h^{k-1}_{\delbar}-h^{k-1}_{A}\right) + f^k-a^{k-1}+e^{k-2} \;.
\end{eqnarray*}
\end{rem}

\begin{rem}
 Note that ${E_1} \simeq {E_\infty}$ is not sufficient to have the equality $h^k_{BC}+h^k_{A}=2\,b_k$ for every $k\in\N$ (and hence the $\del\delbar$-Lemma): a counterexample is provided by small deformations of the Iwasawa manifold.

 Indeed, for small deformations of $\mathbb{I}_3$ in class {\itshape (iii)}, since
 $$ h^1_\delbar \;=\; 4 \;=\; b_1 \;, \qquad h^2_\delbar \;=\; 8 \;=\; b_2 \;, h^3_\delbar \;=\; 10 \;=\; b_3 \;, $$
 the Hodge and Fr\"olicher spectral sequences degenerate at the first step, but
 $$ h^1_{BC}+h^1_{A} \;=\; 10 \;>\; 8 \;=\; 2\,b_1 \;, \qquad h^2_{BC}+h^2_{A} \;=\; 16 \;=\; 2\,b_2 \;, \qquad h^3_{BC}+h^3_{A} \;=\; 28 \;>\; 20 \;=\; 2\,b_3 \;. $$
\end{rem}

\medskip

Using Theorem \ref{thm:caratterizzazione-bc-numbers}, we get another proof of the stability of the $\del\delbar$-Lemma under small deformations of the complex structure, \cite[Corollary 2.7]{angella-tomassini-3}; for different proofs of the same result by means of other techniques see, e.g., \cite[Proposition 9.21]{voisin}, \cite[Theorem 5.12]{wu}, \cite[\S B]{tomasiello}.

\begin{cor}
\label{cor:stab-del-delbar-lemma}
 Satisfying the $\del\delbar$-Lemma is a stable property under small deformations of the complex structure.
\end{cor}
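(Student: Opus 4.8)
The plan is to reduce the stability statement to the purely numerical characterisation of the $\del\delbar$-Lemma obtained in Theorem \ref{thm:caratterizzazione-bc-numbers}, and then to exploit the semi-continuity of the Bott-Chern and Aeppli numbers together with the topological invariance of the Betti numbers. Concretely, let $\left\{X_t\right\}_{t\in B}$ be a complex-analytic family of compact complex manifolds with $X_{t_0}$ satisfying the $\del\delbar$-Lemma for some $t_0\in B$; I want to show that the characterising equality of Theorem \ref{thm:caratterizzazione-bc-numbers} persists for all $t$ in a neighbourhood of $t_0$.

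First I would record the three ingredients. By Theorem \ref{thm:caratterizzazione-bc-numbers} applied at $t_0$, for every $k\in\N$ one has the equality $\sum_{p+q=k}\left(\dim_\C H^{p,q}_{BC}(X_{t_0})+\dim_\C H^{p,q}_{A}(X_{t_0})\right)=2\,\dim_\C H^k_{dR}(X_{t_0};\C)$. By the Fr\"olicher-type inequality of Theorem \ref{thm:frol-bc}, for every $t\in B$ and every $k\in\N$ the inequality $\sum_{p+q=k}\left(\dim_\C H^{p,q}_{BC}(X_t)+\dim_\C H^{p,q}_{A}(X_t)\right)\geq 2\,\dim_\C H^k_{dR}(X_t;\C)$ holds. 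Finally, since a complex-analytic family is in particular a differentiable family, Ehresmann's theorem \cite{ehresmann-2} gives that the fibres $X_t$ are all diffeomorphic near $t_0$, so that the Betti numbers $b_k=\dim_\C H^k_{dR}(X_t;\C)$ are locally constant in $t$.

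The decisive step is then the upper-semi-continuity of $t\mapsto \dim_\C H^{p,q}_{BC}(X_t)$ (and of $t\mapsto \dim_\C H^{p,q}_{A}(X_t)$), which is \cite[Lemme 3.2]{schweitzer} together with its Aeppli analogue recalled above: for each fixed $(p,q)$ there is a neighbourhood of $t_0$ on which $\dim_\C H^{p,q}_{BC}(X_t)\leq \dim_\C H^{p,q}_{BC}(X_{t_0})$, and similarly for the Aeppli numbers. Intersecting the finitely many such neighbourhoods (those with $0\leq p,q\leq n$) produces a single neighbourhood $U$ of $t_0$ on which, summing over $p+q=k$, the reverse inequality $\sum_{p+q=k}\left(\dim_\C H^{p,q}_{BC}(X_t)+\dim_\C H^{p,q}_{A}(X_t)\right)\leq 2\,b_k$ holds for every $k$. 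Combined with the Fr\"olicher-type inequality and the constancy of the Betti numbers, this forces equality for all $k$ and all $t\in U$, whence Theorem \ref{thm:caratterizzazione-bc-numbers} yields that $X_t$ satisfies the $\del\delbar$-Lemma on $U$.

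I do not expect a genuine obstacle here: the argument is essentially a semi-continuity squeeze. The only points requiring care are bookkeeping ones — ensuring a common neighbourhood works simultaneously in all bidegrees (handled by finiteness, since only $0\leq p,q\leq n$ occur) and justifying that the Betti numbers are truly constant rather than merely semi-continuous (handled by the diffeomorphism invariance supplied by Ehresmann's theorem). In effect, all the analytic content has already been absorbed into Theorem \ref{thm:frol-bc} and Theorem \ref{thm:caratterizzazione-bc-numbers}, so that the corollary becomes a formal consequence of these two results.
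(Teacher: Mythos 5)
Your proof is correct and follows essentially the same route as the paper's: both reduce stability to the numerical characterisation of Theorem \ref{thm:caratterizzazione-bc-numbers}, then squeeze using the Fr\"olicher-type inequality of Theorem \ref{thm:frol-bc}, the upper-semi-continuity of the Bott-Chern and Aeppli dimensions from \cite[Lemme 3.2]{schweitzer}, and the local constancy of the Betti numbers supplied by Ehresmann's theorem. The only detail you add beyond the paper's terse argument --- intersecting the finitely many neighbourhoods over the bidegrees $0\leq p,q\leq n$ --- is routine bookkeeping that the paper leaves implicit.
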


\begin{proof}
 Let $\left\{X_t\right\}_{t\in B}$ be a complex-analytic family of compact complex manifolds. Since, for every $k\in\N$, the dimensions $h^{k}_{BC}(X_t)$ and $h^{k}_{A}(X_t)$ are upper-semi-continuous functions in $t$, \cite[Lemme 3.2]{schweitzer}, while the dimensions $b_k(X_t)$ are constant in $t$ by Ehresmann's theorem, one gets that, if $X_{t_0}$ satisfies the equality $h^k_{BC}\left(X_{t_0}\right) + h^k_{A}\left(X_{t_0}\right) = 2\,b_k\left(X_{t_0}\right)$ for every $k\in\N$, the same holds true for $X_t$ with $t$ near $t_0$.
\end{proof}

\medskip

We recall that \cite[5.21]{deligne-griffiths-morgan-sullivan} by P. Deligne, Ph.~A. Griffiths, J. Morgan, and D.~P. Sullivan characterizes  the validity of the $\del\delbar$-Lemma on a compact complex manifold in terms of the degeneracy of the Hodge and Fr\"olicher spectral sequence and of the existence of Hodge structures in cohomology.
In particular, if follows that, on a compact complex manifold satisfying the $\del\delbar$-Lemma, one has the equality $b_k=\sum_{p+q=k} h^{p,q}_\delbar$ for every $k\in\N$ (which is equivalent to the degeneracy of the Hodge and Fr\"olicher spectral sequence) and the symmetry $h^{p,q}_\delbar=h^{q,p}_\delbar$ for every $p,q\in\N$.

Note that, on a compact complex surface $X$, since the Hodge and Fr\"olicher spectral sequence degenerates at the first step (see, e.g., \cite[Theorem IV.2.8]{barth-hulek-peters-vandeven}) if $h^{1,0}_\delbar=h^{0,1}_\delbar$ then $b_1=2\,h^{1,0}_\delbar$ is even, and hence $X$ is K\"ahler, by \cite{kodaira-structure-I, miyaoka, siu}, or \cite[Corollaire 5.7]{lamari}, or \cite[Theorem 11]{buchdahl}.
As already remarked, the small deformations of $\mathbb{I}_3$ in class {\itshape (iii)} satisfy the degeneracy condition of the Hodge and Fr\"olicher spectral sequence, but they do not satisfy either the $\del\delbar$-Lemma, or the symmetry of the Hodge numbers.

It could hence be interesting to construct a compact complex manifold (of any complex dimension greater than or equal to $3$) such that ${E_1} \simeq  {E_{\infty}}$ and $h^{p,q}_{\delbar}=h^{p,q}_{\del}$ for every $p,q\in\N$ but for which the $\del\delbar$-Lemma does not hold.
A compact complex manifold $X$ whose double complex $\left(\wedge^{\bullet,\bullet}X,\,\del,\,\delbar\right)$ has the form in Figure \ref{fig:conj} (where dots denote generators of the $\mathcal{C}^\infty(X;\R)$-module $\wedge^{\bullet,\bullet}X$, horizontal arrows are meant as $\del$, vertical ones as $\delbar$ and zero arrows are not depicted) provides such an example.

\smallskip
\begin{figure}[ht]
 \centering
 \includegraphics[width=6cm,natwidth=800,natheight=600]{conj.eps}
 \caption{An abstract example}
 \label{fig:conj}
\end{figure}
\smallskip

\begin{rem}
 L. Ugarte informed us that M. Ceballos, A. Otal, he himself, and R. Villacampa have found such an example among the $6$-dimensional nilmanifolds endowed with left-invariant complex structures: they provided a complete classification, up to equivalence, of the linear integrable complex structures on $6$-dimensional nilpotent Lie algebras in \cite{ceballos-otal-ugarte-villacampa}, where they also studied some applications of their classification.
\end{rem}

\section{Cohomology computations for special nilmanifolds}\label{sec:computations-nilmfds}

We are now interested in studying the Bott-Chern and Aeppli cohomologies in the special case of left-invariant complex structures on nilmanifolds and solvmanifolds.

In this section, we firstly recall some results concerning the computation of the de Rham cohomology and of the Dolbeault cohomology, for nilmanifolds and solvmanifolds, endowed with left-invariant complex structures, \S\ref{subsec:cohomology-computation-derham-dolbeault}, referring to \cite{nomizu, hattori}, respectively \cite{sakane, cordero-fernandez-gray-ugarte, console-fino, rollenske, rollenske-survey}; then, we state and prove the results obtained in \cite{angella} about the computation of the Bott-Chern and Aeppli cohomologies, Theorem \ref{thm:bc-invariant}, Theorem \ref{thm:bc-invariant-open}. Using these tools, one can compute the de Rham, Dolbeault, Bott-Chern and Aeppli cohomologies for the Iwasawa manifold and for its small deformations, \S\ref{sec:derham-iwasawa}, \S\ref{sec:dolbeault-iwasawa}, \S\ref{sec:bott-chern-iwasawa}.

\subsection{Left-invariant complex structures on solvmanifolds}

We start by recalling some facts and notations concerning left-invariant complex structures on solvmanifolds.

\medskip

Let $X=\left. \Gamma \right\backslash G$ be a solvmanifold, that is, a compact quotient of a connected simply-connected solvable Lie group $G$ by a discrete and co-compact subgroup $\Gamma$; the Lie algebra naturally associated to $G$ will be denoted by $\mathfrak{g}$ and its complexification by $\mathfrak{g}_\C:=\mathfrak{g}\otimes_\R\C$. We recall that, dealing with $G$-left-invariant objects on $X$, we mean objects on $X$ obtained by objects on $G$ that are invariant under the action of $G$ on itself given by left-translations; note that $G$-left-invariant objects on $X$ are uniquely determined by objects on $\mathfrak{g}$. In particular, a $G$-left-invariant complex structure $J$ on $X$ is uniquely determined by a linear complex structure $J$ on $\mathfrak{g}$ satisfying the integrability condition $\Nij_J=0$, \cite[Theorem 1.1]{newlander-nirenberg}; the set of $G$-left-invariant complex structures on $X$ is denoted by
$$ \mathcal{C}\left(\mathfrak{g}\right) := \left\{ J\in\End\left(\mathfrak{g}\right) \st J^2=-\id_{\mathfrak{g}} \;\text{ and }\;\Nij_J=0 \right\} \;. $$

\medskip

Recall that the exterior differential $\de$ on $X$ can be written using only the action of $\Gamma(X;\,TX)$ on $\mathcal{C}^\infty(X)$ and the Lie bracket of the Lie algebra of vector fields on $X$: more precisely, recall that, if $\varphi\in\wedge^k X$ and $X_0,\ldots,X_k\in\mathcal{C}^\infty\left(X;TX\right)$, then
\begin{eqnarray*}
\lefteqn{\de\varphi\left(X_0,\ldots,X_k\right) \;=\; \sum_{j=0}^k \left(-1\right)^j\, X_j\,\varphi\left(X_0,\ldots,X_{j-1},X_{j+1},\ldots, X_k\right)}\\[5pt]
 && + \sum_{0\leq j<h\leq k} \left(-1\right)^{j+h-1}\, \varphi\left(\left[X_j,X_h\right],X_0,\ldots,X_{j-1},X_{j+1},\ldots,X_{h-1},X_{h+1},\ldots,X_k\right) \;.
\end{eqnarray*}
Hence one has a differential complex $\left(\wedge^\bullet\mathfrak{g}^*,\,\de\right)$, which is isomorphic, as a differential complex, to the differential subcomplex $\left(\wedge^\bullet_{\text{inv}}X,\,\de\lfloor_{\wedge^\bullet_{\text{inv}}X}\right)$ of $\left(\wedge^\bullet X,\,\de\right)$ given by the $G$-left-invariant forms on $X$.

If a $G$-left-invariant complex structure on $X$ is given, then one also has the double complex $\left(\wedge^{\bullet,\bullet}\mathfrak{g}_{\C}^*,\,\del,\,\delbar\right)$, which is isomorphic, as a double complex, to the double subcomplex $\left(\wedge^{\bullet,\bullet}_{\text{inv}}X,\,\del\lfloor_{\wedge^{\bullet,\bullet}_{\text{inv}}X},\,\delbar\lfloor_{\wedge^{\bullet,\bullet}_{\text{inv}}X}\right)$ of $\left(\wedge^{\bullet,\bullet}X,\,\del,\,\delbar\right)$ given by the $G$-left-invariant forms on $X$.

Finally, given a $G$-left-invariant complex structure on $G$ and fixed $p,q\in\N$, one also has the following complexes and the following maps of complexes:
\begin{equation}\label{eq:bc-complessi}
\begin{gathered}
\xymatrix{
\wedge^{p-1,q-1}\mathfrak{g}_\C^* \ar[r]^{\del\delbar} \ar[d]^{\simeq} & \wedge^{p,q}\mathfrak{g}_\C^* \ar[r]^{\de} \ar[d]^{\simeq} & \wedge^{p+q+1}\mathfrak{g}_\C^* \ar[d]^{\simeq} \\
\wedge^{p-1,q-1}_{\text{inv}}X \ar[r]^{\del\delbar} \ar@{^{(}->}[d]^{i} & \wedge^{p,q}_{\text{inv}}X \ar[r]^{\de} \ar@{^{(}->}[d]^{i} & \wedge^{p+q+1}_{\text{inv}}(X;\C) \ar@{^{(}->}[d]^{i} \\
\wedge^{p-1,q-1}X \ar[r]^{\del\delbar} & \wedge^{p,q}X \ar[r]^{\de} & \wedge^{p+q+1}(X;\C)
}
\end{gathered}
\;,
\end{equation}
and
\begin{equation}\label{eq:a-complessi}
\begin{gathered}
\xymatrix{
\wedge^{p-1,q}\mathfrak{g}_\C^* \oplus \wedge^{p,q-1}\mathfrak{g}_\C^* \ar[r]^{\qquad\del+\delbar} \ar[d]^{\simeq} & \wedge^{p,q}\mathfrak{g}_\C^* \ar[r]^{\del\delbar} \ar[d]^{\simeq} & \wedge^{p+1,q+1}\mathfrak{g}_\C^* \ar[d]^{\simeq} \\
\wedge^{p-1,q}_{\text{inv}}X \oplus \wedge^{p,q-1}_{\text{inv}}X \ar[r]^{\qquad\del+\delbar} \ar@{^{(}->}[d]^{i} & \wedge^{p,q}_{\text{inv}}X \ar[r]^{\del\delbar} \ar@{^{(}->}[d]^{i} & \wedge^{p+1,q+1}_{\text{inv}}X \ar@{^{(}->}[d]^{i} \\
\wedge^{p-1,q}X \oplus \wedge^{p,q-1}X \ar[r]^{\qquad \del+\delbar} & \wedge^{p,q}X \ar[r]^{\del\delbar} & \wedge^{p+1,q+1}X
}
\end{gathered}
\;.
\end{equation}

For $\sharp\in\left\{\delbar,\,\del,\,BC,\,A\right\}$ and $\mathbb{K}\in\left\{\R,\C\right\}$, we will write $H_{dR}^\bullet\left(\mathfrak{g};\mathbb{K}\right):=:H^\bullet_{dR}\left(\mathfrak{g};\mathbb{K}\right)$ and $H^{\bullet,\bullet}_{\sharp}\left(\mathfrak{g}_\C\right)$ to denote the cohomology groups of the corresponding complexes of forms on $\mathfrak{g}$, which are isomorphic to the cohomology groups of the corresponding complexes of $G$-left-invariant forms on $X$. The rest of this section is devoted to the problem whether these cohomologies are isomorphic to the corresponding cohomologies on $X$.

\subsection{Classical results on computations of the de Rham and Dolbeault cohomologies}\label{subsec:cohomology-computation-derham-dolbeault}

In this section, we collect some results, by K. Nomizu \cite{nomizu}, A. Hattori \cite{hattori}, S. Console and A. Fino \cite{console-fino}, Y. Sakane \cite{sakane}, L.~A. Cordero, M. Fern\'{a}ndez, A. Gray, and L. Ugarte \cite{cordero-fernandez-gray-ugarte}, S. Rollenske \cite{rollenske, rollenske-survey, rollenske-jlms}, concerning the computation of the de Rham cohomology and the Dolbeault cohomology for nilmanifolds and solvmanifolds, endowed with left-invariant complex structures.

\medskip

First of all, we recall the following result, concerning the de Rham cohomology: it was firstly proven by K. Nomizu for nilmanifolds, and then generalized by A. Hattori to the case of completely-solvable solvmanifolds.

\begin{thm}[{\cite[Theorem 1]{nomizu}, \cite[Corollary 4.2]{hattori}}]
 Let $X=\left. \Gamma \right\backslash G$ be a nilmanifold, or, more in general, a completely-solvable solvmanifold, and denote the Lie algebra naturally associated to $G$ by $\mathfrak{g}$. The map of differential complexes $\left(\wedge^\bullet\mathfrak{g}^*,\,\de\right)\to\left(\wedge^\bullet X,\,\de\right)$ is a quasi-isomorphism:
$$ i\colon H^{\bullet}_{dR}(\mathfrak{g};\R) \stackrel{\simeq}{\to} H^\bullet_{dR}(X;\R) \;.$$
\end{thm}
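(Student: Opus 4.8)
The statement splits naturally into injectivity and surjectivity of the map $i$ induced by the inclusion $\struttura{\wedge^\bullet\duale{\g},\,\de} \hookrightarrow \struttura{\wedge^\bullet X,\,\de}$, and the injectivity is essentially free given the tools already recalled. Indeed, by J.~Milnor's lemma the group $G$ is unimodular and carries a bi-invariant volume form $\eta$ normalized so that $\int_X\eta = 1$; hence F.~A. Belgun's symmetrization map $\mu\colon \wedge^\bullet X \to \wedge^\bullet\duale{\g}$ is defined, commutes with $\de$, and restricts to the identity on $\wedge^\bullet\duale{\g}$. Passing to cohomology, the relation $\mu\circ i = \id$ on invariant classes shows at once that $i\colon H^\bullet_{dR}(\g;\R)\to H^\bullet_{dR}(X;\R)$ is injective. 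The genuine content of the theorem is therefore surjectivity: every de Rham class must admit a $G$-left-invariant representative, equivalently $\dim_\R H^\bullet_{dR}(\g;\R)\geq \dim_\R H^\bullet_{dR}(X;\R)$.

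For surjectivity the plan is to induct on $\dim G$, exploiting the fibered structure of $X$. In the nilpotent case, A.~I. Mal'cev's theorem lets one realize $X$ as a principal torus bundle
\[
\T^k \hookrightarrow X \stackrel{\pi}{\to} X' \;=\; \left.\Gamma'\right\backslash G'
\]
over a lower-dimensional nilmanifold $X'$, obtained by quotienting by a rational central subgroup; on the Lie algebra side this corresponds to a central extension $0\to\mathfrak{z}\to\g\to\g'\to 0$. The base case is the torus, whose de Rham cohomology is classically generated by the constant-coefficient (hence invariant) forms, so that $H^\bullet_{dR}(\T^k;\R)\isom\wedge^\bullet\duale{\mathfrak{z}}$.

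The inductive step I would carry out as a comparison, through the morphism induced by $i$, of the Leray and Serre spectral sequence of the fibration $\pi$ (abutting to $H^\bullet_{dR}(X;\R)$) with the Hochschild and Serre spectral sequence of the central extension (abutting to $H^\bullet_{dR}(\g;\R)$). Since $\pi$ is a principal bundle with connected structure group $\T^k$, the monodromy action of $\pi_1(X')$ on $H^\bullet_{dR}(\T^k;\R)$ is trivial, so the topological $E_2$-page factors as $E_2^{p,q}\isom H^p_{dR}(X';\R)\tensor H^q_{dR}(\T^k;\R)$, while centrality of $\mathfrak{z}$ makes the algebraic $E_2$-page read $H^p_{dR}(\g';\R)\tensor \wedge^q\duale{\mathfrak{z}}$. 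The inductive hypothesis identifies the first tensor factors and the torus computation identifies the second, so $i$ is an isomorphism on $E_2$, hence on every later page and on the abutments; this yields surjectivity and closes the nilpotent case.

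For completely-solvable solvmanifolds I would follow A.~Hattori and replace Mal'cev's decomposition by the Mostow structure theorem, which provides an analogous fibration of $X$ with a nilmanifold fiber over a torus base, compatible with a short exact sequence of Lie algebras; the fiber is then handled by the already-proven nilpotent case, and the same spectral-sequence comparison applies. \emph{The main obstacle lies precisely here}: the topological $E_2$-term factors as a tensor product only when the local system on the base is cohomologically trivial, and for solvmanifolds this is exactly where complete solvability is indispensable. The delicate point is to verify that the eigenvalues of $\ad$ being real forces the monodromy representation to act trivially on the fiber cohomology — so that the fiber cohomology remains computed by invariant forms — rather than the merely formal spectral-sequence bookkeeping.
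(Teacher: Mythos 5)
Your nilpotent half is sound, and it is exactly the route the paper gestures at (the paper does not reprove the theorem: it recalls that Nomizu's argument is an induction over the Mal'tsev--Matsushima principal torus-bundle, and that Hattori's half rests on the Mostow structure theorem). The injectivity shortcut via Belgun's symmetrization is legitimate, the triviality of the monodromy for a principal bundle with connected structure group $\T^k$ is correct, and the comparison of the Serre spectral sequence with the Hochschild--Serre sequence of the central extension $0\to\mathfrak{z}\to\g\to\g'\to 0$, isomorphic on $E_2$ by induction, does close the nilpotent case.

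The completely-solvable half, however, contains a genuine error precisely at the step you flag as delicate: real eigenvalues of $\ad$ do \emph{not} force the monodromy of the Mostow bundle to be trivial, and trivial monodromy is not the relevant condition. The paper's own building block $M(c)=\Gamma(c)\backslash\mathrm{Sol}(3)$, with structure equations $\de e^1=c\,e^1\wedge e^3$, $\de e^2=-c\,e^2\wedge e^3$, $\de e^3=0$, is completely solvable, yet its Mostow bundle $\T^2\hookrightarrow M(c)\to \mathbb{S}^1$ has hyperbolic monodromy conjugate to $\diag\left(\esp^{c\,t_0},\,\esp^{-c\,t_0}\right)$ on $H^1\left(\T^2;\R\right)$ --- while Hattori's theorem holds ($b_1=1=\dim_\R H^1_{dR}\left(\g;\R\right)$). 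Conversely, trivial monodromy does not rescue your comparison: for $G=\R\ltimes_\rho\R^2$ with $\rho(t)$ the rotation by angle $t$ and a lattice with $t_0=2\pi$, the quotient is a torus and the monodromy is the identity, yet $\dim_\R H^1_{dR}(X;\R)=3>1=\dim_\R H^1_{dR}\left(\g;\R\right)$, because the algebraic $E_2$-term $H^p\left(\left.\g\right\slash\n;\,\wedge^q\duale{\n}\right)$ carries a non-trivial module structure that your tensor-product $E_2$ ignores. The correct mechanism --- and Hattori's actual proof --- requires strengthening the inductive statement to de Rham cohomology with values in the local systems attached to finite-dimensional triangular representations with real eigenvalues, so that \emph{both} $E_2$-pages are coefficient-laden; complete solvability then enters through the elementary matching lemma that, for an endomorphism $D$ with only real eigenvalues, $\ker\left(\esp^{t_0 D}-\id\right)=\ker D$ (and likewise on cokernels), equivalently $H^\bullet\left(\mathbb{S}^1;\,L_{\esp^{t_0 D}}\right)\simeq H^\bullet\left(\R;\,(V,D)\right)$ compatibly with the inclusion of invariant forms --- an identification that fails exactly for purely imaginary eigenvalues, as the rotation example shows. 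Without this coefficient-level formulation your spectral-sequence comparison cannot even be set up, since neither $E_2$-page factors as a plain tensor product.
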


A counterexample in the non-completely-solvable case was provided by P. de Bartolomeis and A. Tomassini in \cite[Corollary 4.2, Remark 4.3]{debartolomeis-tomassini}, studying the Nakamura manifold, \cite[\S2]{nakamura}.

\medskip

Similar results hold for the Dolbeault cohomology of nilmanifolds endowed with certain left-invariant complex structures; \cite{console} and \cite{rollenske-survey} are recent surveys on the known results. (Some results about the Dolbeault cohomology of solvmanifolds have been recently proven by H. Kasuya, see \cite{kasuya}.)

First of all, we recall the following lemma by S. Console and A. Fino, \cite{console-fino}: the argument used in the proof can be generalized to Bott-Chern and Aeppli cohomologies, see Lemma \ref{lemma:inj}.
\begin{lem}[{\cite[Lemma 9]{console-fino}}]
Let $X=\left.\Gamma\right\backslash G$ be a nilmanifold endowed with a $G$-left-invariant complex structure $J$, and denote the Lie algebra naturally associated to $G$ by $\mathfrak{g}$. For any $p\in\N$, the map of complexes $\left(\wedge^{p,\bullet}\mathfrak{g}_{\C}^*,\,\delbar\right) 
\hookrightarrow\left(\wedge^{p,\bullet}X,\,\delbar\right)$ induces an injective homomorphism $i$ in cohomology:
$$ i\colon H^{\bullet,\bullet}_{\delbar}\left(\mathfrak{g}_\C\right) \hookrightarrow H^{\bullet,\bullet}_{\delbar}(X) \;.$$
\end{lem}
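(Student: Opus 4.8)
The plan is to prove injectivity by a symmetrization argument, exactly parallel to the de Rham case treated by F.~A. Belgun's trick recalled above. Since $X=\left.\Gamma\right\backslash G$ is a nilmanifold, the group $G$ admits a discrete cocompact subgroup, so by J. Milnor's lemma it is unimodular and carries a $G$-bi-invariant volume form $\eta$, which we normalize so that $\int_X\eta=1$. Extending the Belgun symmetrization map $\mu\colon\wedge^\bullet X\to\wedge^\bullet\mathfrak{g}^*$ by $\C$-linearity to complex-valued forms, we already know from Belgun's lemma that $\mu\lfloor_{\wedge^\bullet\mathfrak{g}_\C^*}=\id$ and that $\de\circ\mu=\mu\circ\de$.

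The key point to add is that $\mu$ is compatible with the bigrading induced by $J$. Indeed, $\mu(\alpha)$ is obtained by averaging, against $\eta$, the left-translates of $\alpha$ carried back to the identity; because the complex structure $J$ is $G$-left-invariant, each left translation $L_g$ is a biholomorphism of $X$, so its pullback $L_g^*$ preserves the type decomposition $\wedge^{p,q}X$. Consequently $\mu$ maps $\wedge^{p,q}X$ into $\wedge^{p,q}\mathfrak{g}_\C^*$ (identified with the $G$-left-invariant $(p,q)$-forms). Combining this bidegree preservation with $\mu\circ\de=\de\circ\mu$ and the splitting $\de=\del+\delbar$ valid for an integrable $J$, I would match bidegrees on both sides of $\mu(\de\alpha)=\de\mu(\alpha)$ to obtain $\mu\circ\delbar=\delbar\circ\mu$ (and, symmetrically, $\mu\circ\del=\del\circ\mu$). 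Thus $\mu$ induces a morphism of complexes $\left(\wedge^{p,\bullet}X,\,\delbar\right)\to\left(\wedge^{p,\bullet}\mathfrak{g}_\C^*,\,\delbar\right)$ splitting the inclusion $i$.

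Injectivity then follows formally. Let $\alpha\in\wedge^{p,q}\mathfrak{g}_\C^*$ be $\delbar$-closed and suppose its class is sent to zero in $H^{p,q}_{\delbar}(X)$, that is, $\alpha=\delbar\beta$ for some $\beta\in\wedge^{p,q-1}X$ not necessarily invariant. Applying $\mu$ and using $\mu\lfloor_{\wedge^{p,q}\mathfrak{g}_\C^*}=\id$ together with $\mu\circ\delbar=\delbar\circ\mu$, one gets $\alpha=\mu(\alpha)=\mu(\delbar\beta)=\delbar\mu(\beta)$, where $\mu(\beta)\in\wedge^{p,q-1}\mathfrak{g}_\C^*$ is $G$-left-invariant; hence $\left[\alpha\right]=0$ already in $H^{p,q}_{\delbar}\left(\mathfrak{g}_\C\right)$. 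The only genuinely substantive step is the second paragraph: one must check that the averaging, realized as the quotient integral against $\eta$, is well defined on $X$ and that it respects the $(p,q)$-decomposition. I expect no difficulty beyond this bookkeeping, since the integrability of $J$ ensures $\de=\del+\delbar$ with no $A$, $\bar A$ components to spoil the bidegree matching, after which the conclusion is purely algebraic.
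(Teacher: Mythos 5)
Your proof is correct, but it takes a genuinely different route from the one the paper follows (which is S. Console and A. Fino's original argument, reproduced in the proof of Lemma \ref{lemma:inj} for the Bott-Chern case). The paper's proof is Hodge-theoretic: fix a $G$-left-invariant $J$-Hermitian metric, observe that $\delbar$ and $\delbar^*$ preserve the finite-dimensional space of left-invariant forms, reduce a hypothetical primitive $\eta$ with $\omega=\delbar\eta$ to one that is $\mathrm{L}^2$-orthogonal to the invariant forms (modifying $\omega$ only by invariant $\delbar$-exact terms), and then compute $0=\left\langle\left. \delbar^*\delbar\eta\right|\eta\right\rangle=\left\|\delbar\eta\right\|^2=\left\|\omega\right\|^2$, since $\delbar^*\omega$ is invariant. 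You instead build a chain-level retraction: the Belgun symmetrization $\mu$, which satisfies $\mu\circ\de=\de\circ\mu$ and $\mu\lfloor_{\wedge^\bullet\duale{\mathfrak{g}}_\C}=\id$, and which — as you correctly argue from the $G$-left-invariance of $J$, or equivalently from the identity $J\left(\mu(\sspace)\right)=\mu\left(J\sspace\right)$ of Lemma \ref{lemma:belgun-J} — preserves the $(p,q)$-bigrading, so that projecting $\mu\de=\de\mu$ onto bidegrees (using integrability, i.e.\ $\de=\del+\delbar$) yields $\mu\circ\delbar=\delbar\circ\mu$; the formal retraction argument then gives injectivity. Your well-definedness worry is harmless: the function $g\mapsto\left(L_g^*\tilde\alpha\right)\lfloor_e$ on $G$ is $\Gamma$-invariant because $\tilde\alpha$ is, so the integral against the bi-invariant $\eta$ descends to $X$ — this is exactly the content of Belgun's theorem as recorded in the paper. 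As for what each approach buys: your averaging argument is metric-free (it only needs Milnor's unimodularity lemma), commutes with $\del$ and $\delbar$ separately, and so proves injectivity simultaneously for the Dolbeault, conjugate-Dolbeault, Bott-Chern, and Aeppli cohomologies in one stroke — indeed the paper uses this very symmetrization elsewhere, e.g.\ in Proposition \ref{prop:linear-cpf-invariant-cpf-J}; the paper's metric argument, on the other hand, sets up the invariant Hodge decomposition and the elliptic-operator framework that is reused later, for instance in Remark \ref{rem:inv-harmonic} and in the openness result of Theorem \ref{thm:bc-invariant-open}, which your route does not provide.
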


For an arbitrary $G$-left-invariant complex structure on a nilmanifold $X=\left. \Gamma \right\backslash G$, it is not known whether $i\colon H^{\bullet,\bullet}_{\delbar}\left(\mathfrak{g}_\C\right) \hookrightarrow H^{\bullet,\bullet}_{\delbar}(X)$ actually is an isomorphism, but some results are known for certain classes of $G$-left-invariant complex structures.

\begin{thm}[{\cite[Theorem 1]{sakane}, \cite[Main Theorem]{cordero-fernandez-gray-ugarte}, \cite[Theorem 2, Remark 4]{console-fino}, \cite[Theorem 1.10]{rollenske}, \cite[Corollary 3.10]{rollenske-survey}}]
Let $X=\left. \Gamma \right\backslash G$ be a nilmanifold endowed with a $G$-left-invariant complex structure $J$, and denote the Lie algebra naturally associated to $G$ by $\mathfrak{g}$.
Then, for every $p\in\N$, the map of complexes
\begin{equation}\label{eq:dolb-complessi}
\left(\wedge^{p,\bullet}\mathfrak{g}_{\C}^*,\,\delbar\right) 
\hookrightarrow\left(\wedge^{p,\bullet}X,\,\delbar\right)
\end{equation}
is a quasi-isomorphism, namely,
$$
i \colon H^{\bullet,\bullet}_{\delbar}\left(\mathfrak{g}_\C\right) \stackrel{\simeq}{\to} H^{\bullet,\bullet}_{\delbar}(X) \;,$$
provided one of the following conditions holds:
\begin{itemize}
 \item $X$ is holomorphically parallelizable;
 \item $J$ is an Abelian complex structure;
 \item $J$ is a nilpotent complex structure;
 \item $J$ is a rational complex structure;
 \item $\mathfrak{g}$ admits a torus-bundle series compatible with $J$ and with the rational structure induced by $\Gamma$;
 \item $\dim_\R\mathfrak{g}=6$ and $\mathfrak{g}$ is not isomorphic to $\mathfrak{h}_7:=\left(0^3,\, 12,\, 13,\, 23\right)$.
\end{itemize}
\end{thm}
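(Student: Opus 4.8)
The plan is to prove the two opposite dimension inequalities separately, the easy one holding in full generality and the hard one encoding the structural hypotheses. First I would observe that, whatever the $G$-left-invariant complex structure, the inclusion induces an \emph{injective} map $i\colon H^{p,q}_{\delbar}\left(\mathfrak{g}_\C\right)\to H^{p,q}_{\delbar}(X)$: this is precisely Console and Fino's lemma recalled above, and it can be obtained concretely through a symmetrization following F.~A. Belgun. Indeed, since $J$ is $G$-left-invariant, the bidegree decomposition is preserved by left-translations, so the symmetrization map $\mu\colon\wedge^{\bullet,\bullet}X\to\wedge^{\bullet,\bullet}\mathfrak{g}_\C^*$ (averaging a form against a bi-invariant volume form) preserves bidegree and commutes with both $\del$ and $\delbar$; as $\mu\lfloor_{\wedge^{\bullet,\bullet}\mathfrak{g}_\C^*}=\id$, it descends to a cohomology retraction with $\mu\circ i=\id$, forcing $i$ injective. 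Consequently the whole statement reduces to the numerical inequality $\dim_\C H^{p,q}_{\delbar}(X)\leq\dim_\C H^{p,q}_{\delbar}\left(\mathfrak{g}_\C\right)$ for every $p,q$, that is, to the surjectivity of $i$.

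The main tool for surjectivity is the iterated principal-torus-bundle structure of a nilmanifold, rendered holomorphic by the hypotheses. Under each listed condition one produces a tower of \emph{holomorphic} fibrations
$$ T \;\hookrightarrow\; X \;\stackrel{\pi}{\to}\; X' \;, $$
where $T$ is a complex torus and $X'$ is a lower-dimensional nilmanifold carrying the induced invariant complex structure, with $\pi$ compatible with both $J$ and the rational structure coming from $\Gamma$. To such a fibration I would attach the Borel (Dolbeault) spectral sequence
$$ E_2^{\bullet,\bullet}\;\cong\; H^{\bullet,\bullet}_{\delbar}(X')\otimes H^{\bullet,\bullet}_{\delbar}(T)\;\Longrightarrow\; H^{\bullet,\bullet}_{\delbar}(X) \;, $$
which applies since the fibre $T$ is Kähler, together with its invariant analogue built from $\left(\wedge^{\bullet,\bullet}\mathfrak{g}_\C^*,\,\delbar\right)$. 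The inclusion of invariant forms is a morphism of spectral sequences, so it suffices to check it is an isomorphism on $E_2$; but there it factors through the fibre term (a torus, where invariant forms trivially compute $H_{\delbar}$) and the base term, to which the inductive hypothesis applies. An induction on $\dim_\R\mathfrak{g}$, equivalently on the length of the torus-bundle series, then yields the isomorphism on the abutment.

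The remaining work is to verify that each named class actually furnishes such a $J$- and $\Gamma$-compatible holomorphic torus-bundle series. For a holomorphically parallelizable $X$ the group $G$ is a complex Lie group and its descending central series gives the tower directly; for Abelian and nilpotent complex structures the defining condition on $\delbar$ of invariant $(1,0)$-forms guarantees that the central filtration is $J$-stable, producing the fibrations; rationality of $J$ ensures the fibres are genuine complex tori admitting lattices; and the fifth bullet is the hypothesis verbatim. The six-dimensional case is then settled using the classification of $6$-dimensional nilpotent Lie algebras admitting an integrable complex structure (via $\Nij_J=0$), checking bullet-by-bullet that every such algebra except $\mathfrak{h}_7$ carries an adapted series.

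The hard part will be exactly this compatibility check together with the $E_2$-comparison of the two spectral sequences: one must ensure that the purely algebraic (invariant) spectral sequence maps isomorphically, page by page, onto the analytic one, and it is here that the restrictions on $J$ are genuinely used rather than merely invoked. The algebra $\mathfrak{h}_7=\left(0^3,\, 12,\, 13,\, 23\right)$ is precisely the obstruction showing that, absent such compatibility, the comparison can fail, so the exceptional case in the final bullet is not an artifact of the method but a real phenomenon.
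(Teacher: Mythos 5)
Your plan is essentially correct, but note that the paper does not reprove this theorem: it recalls it from the cited sources (Sakane; Cordero--Fern\'andez--Gray--Ugarte; Console--Fino; Rollenske), and the only ingredient it actually proves is the injectivity half, namely Console--Fino's Lemma 9, whose argument is reproduced in Lemma \ref{lemma:inj} for the Bott--Chern case. There the mechanism differs from yours: instead of Belgun averaging, one fixes a left-invariant Hermitian metric, observes that $\overline\square$ preserves the invariant forms, and tests an invariant $\delbar$-exact form against the invariant Hodge decomposition via an orthogonality computation. Your symmetrization route ($\mu\circ i=\id$ in cohomology, with $\mu$ bidegree-preserving because it commutes with $\de$ and with the left-invariant $J$) is a legitimate alternative and is exactly the device the paper uses elsewhere, in Proposition \ref{prop:linear-cpf-invariant-cpf-J}; it buys injectivity with no elliptic theory, while the Hodge-theoretic route generalizes verbatim to the Bott--Chern and Aeppli complexes. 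Your surjectivity scheme --- induction along holomorphic principal torus fibrations supplied by the hypotheses, comparison of the Borel spectral sequence with its invariant Lie-algebra analogue, isomorphism on $E_2$ by the K\"unneth identification on the torus fibre and the inductive hypothesis on the base --- is precisely the strategy of the cited proofs, in Rollenske's formulation via torus-bundle series. As you yourself note, the real labour sits in the per-bullet verification that the series is $J$-stable, rational with respect to $\Gamma$, and induces on each quotient a structure of the same class (e.g., in the rational case one must manufacture a nonzero $J$-invariant rational ideal, typically from $\mathfrak{z}\cap J\mathfrak{z}$, and check the quotient structure is again rational); the last bullet then follows from Rollenske's classification result that every six-dimensional nilpotent Lie algebra other than $\mathfrak{h}_7$ admits a stable torus-bundle series.

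One genuine error in your closing remark: the claim that $\mathfrak{h}_7$ shows the comparison ``can fail'' is false as stated. No counterexample is known; $\mathfrak{h}_7$ is excluded only because it admits no stable torus-bundle series, so that this particular method does not apply, and it is in fact conjectured (Conjecture \ref{conj:dolbeault}, i.e. Rollenske's Conjecture 1, recalled in the paper) that left-invariant forms compute the Dolbeault cohomology for \emph{every} left-invariant complex structure on any nilmanifold. Since the theorem as stated excludes $\mathfrak{h}_7$, this slip does not damage your proof, but the exceptional case marks a limitation of the method, not a known failure of the conclusion.
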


We recall, (see, e.g., \cite[Definition 1.5]{rollenske},) that, given a nilpotent Lie algebra $\mathfrak{g}$, a \emph{rational structure} for $\mathfrak{g}$ is a $\Q$-vector space $\mathfrak{g}_\Q$ such that $\mathfrak{g}_\Q \otimes_\Q \R = \mathfrak{g}$. A sub-algebra $\mathfrak{h}$ of $\mathfrak{g}$ is said to be \emph{rational with respect to a rational structure $\mathfrak{g}_\Q$} if the $\Q$-vector space
$\mathfrak{h} \cap \mathfrak{g}_\Q$ of $\mathfrak{h}$ is a rational structure for $\mathfrak{h}$.
If $G$ is the connected simply-connected Lie group associated to $\mathfrak{g}$, then any discrete co-compact subgroup $\Gamma$ of $G$ induces a rational structure for $\mathfrak{g}$, given by $\Q \log \Gamma$.

Consider a $G$-left-invariant complex structure on a nilmanifold $X=\left. \Gamma \right\backslash G$  with associated Lie algebra $\mathfrak{g}$; we recall that:
\begin{itemize}
 \item $J$ is called \emph{holomorphically parallelizable} if the the holomorphic tangent bundle is holomorphically trivial, see, e.g., \cite{wang, nakamura};
 \item $J$ is called \emph{Abelian} if $\left[Jx,\,Jy\right]=\left[x,\,y\right]$ for any $x,y\in\mathfrak{g}$, see, e.g., \cite{barberis-dotti-miatello, andrada-barberis-dotti};
 \item $J$ is called \emph{nilpotent} if there exists a $G$-left-invariant co-frame $\left\{\omega^1,\ldots,\omega^n\right\}$ for $\left(T^{1,0}X\right)^*$ with respect to which the structure equations of $X$ are of the form
$$ \de\omega^j\;=\; \sum_{h<k<j}A_{hk}^j\,\omega^h\wedge\omega^k+\sum_{h,k<j}B_{h k}^j\,\omega^h\wedge\bar\omega^k $$
with $\left\{A_{hk}^j,\,B_{h k}^j\right\}_{j,h,k}\subset\C$, see, e.g, \cite{cordero-fernandez-gray-ugarte};
 \item $J$ is called \emph{rational} if $J\left(\mathfrak{g}_\Q\right)\subseteq \mathfrak{g}_\Q$ where $\mathfrak{g}_\Q$ is the rational structure for $\mathfrak{g}$ induced by $\Gamma$, see, e.g., \cite{console-fino}.
\end{itemize}
 
 We recall also the following definitions, \cite[Definition 1.8]{rollenske}. An ascending filtration $\left\{\mathcal{S}^j\mathfrak{g}\right\}_{j\in\{0,\ldots,k\}}$ on $\mathfrak{g}$ is called a \emph{torus-bundle series compatible with a linear complex structure $J$ on $\mathfrak{g}$ and a rational structure $\mathfrak{g}_\Q$ for $\mathfrak{g}$} if, for every $j\in\{1,\ldots,k\}$, it holds that
\begin{inparaenum}[\itshape (i)]
 \item $\mathcal{S}^j\mathfrak{g}$ is rational with respect to $\mathfrak{g}_\Q$ and an ideal in $\mathcal{S}^{j+1}\mathfrak{g}$,
 \item $J\mathcal{S}^j\mathfrak{g}=\mathcal{S}^j\mathfrak{g}$, and
 \item $\left. \mathcal{S}^{j+1}\mathfrak{g} \right\slash \mathcal{S}^j\mathfrak{g}$ is Abelian.
\end{inparaenum}
If, in addition, it holds that
\begin{inparaenum}[\itshape (iv)]
 \item $\left. \mathcal{S}^{j+1}\mathfrak{g} \right\slash \mathcal{S}^j\mathfrak{g}$ is contained in the center of $\left. \mathfrak{g} \right\slash \mathcal{S}^j\mathfrak{g}$,
\end{inparaenum}
then $\left\{\mathcal{S}^j\mathfrak{g}\right\}_{j\in\{0,\ldots,k\}}$ is called a \emph{principal torus-bundle series compatible with $J$ and $\mathfrak{g}_\Q$}. Finally, an ascending filtration $\left\{\mathcal{S}^j\mathfrak{g}\right\}_{j\in\{0,\ldots,k\}}$ on $\mathfrak{g}$ is called a \emph{stable (principal) torus-bundle series} if it is a (principal) torus-bundle series compatible with $J$ and $\mathfrak{g}_\Q$ for any complex structure $J$ and for any rational structure $\mathfrak{g}_\Q$.
By S. Rollenske's theorem \cite[Theorem B]{rollenske}, every $6$-dimensional nilpotent Lie algebra except $\h_7:=\left(0^3,\, 12,\, 13,\, 23\right)$ admits a stable torus-bundle series.

\medskip

The property of computing the Dolbeault cohomology using just left-invariant forms turns out to be open along curves of left-invariant complex structures: this was proven by S. Console and A. Fino, \cite{console-fino}.

\begin{thm}[{\cite[Theorem 1]{console-fino}}]\label{thm:dolbeault-invariant-open}
Let $X=\left.\Gamma\right\backslash G$ be a nilmanifold endowed with a $G$-left-invariant complex structure $J$, and denote the Lie algebra naturally associated to $G$ by $\mathfrak{g}$.
Let $\mathcal{U}\subseteq\mathcal{C}(\mathfrak{g})$ be the subset containing the $G$-left-invariant complex structures $J$ on $X$ such that the inclusion $i$ is an isomorphism:
$$ \mathcal{U} \;:=\; \left\{ J\in\mathcal{C}\left(\mathfrak{g}\right) \st i\colon H^{\bullet,\bullet}_{\delbar}\left(\mathfrak{g}_\C\right)\stackrel{\simeq}{\hookrightarrow}H^{\bullet,\bullet}_{\delbar}(X)\right\} \;\subseteq\; \mathcal{C}\left(\mathfrak{g}\right) \;. $$
Then $\mathcal{U}$ is an open set in $\mathcal{C}\left(\mathfrak{g}\right)$.
\end{thm}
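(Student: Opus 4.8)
The plan is to reduce the isomorphism statement to the vanishing of a single nonnegative integer in each bidegree, and then to recognize that integer as the dimension of the kernel of an elliptic operator depending continuously on $J$, so that upper semicontinuity finishes the argument. First I would record that, by \cite[Lemma 9]{console-fino} (the Dolbeault analogue of Lemma \ref{lemma:inj}), the inclusion $i\colon H^{p,q}_{\delbar}\left(\mathfrak{g}_\C\right)\hookrightarrow H^{p,q}_{\delbar}(X)$ is injective for \emph{every} $J\in\mathcal{C}(\mathfrak{g})$. Hence $i$ is an isomorphism in bidegree $(p,q)$ if and only if
$$ D^{p,q}(J) \;:=\; \dim_\C H^{p,q}_{\delbar}(X_J) - \dim_\C H^{p,q}_{\delbar}\left(\mathfrak{g}_\C\right) \;\geq\; 0 $$
vanishes, so that $\mathcal{U}=\left\{J\in\mathcal{C}(\mathfrak{g}) \st D^{p,q}(J)=0 \text{ for all } p,q\right\}$. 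Since only finitely many bidegrees occur, it suffices to prove that each set $\left\{D^{p,q}=0\right\}$ is open.

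Next I would set up the Hodge theory so as to express $D^{p,q}(J)$ as a kernel dimension. Choosing a smooth family of $G$-invariant $J$-Hermitian metrics $g_J$ (built from a bi-invariant volume form, which exists by \cite[Lemma 6.2]{milnor}), F.~A.~Belgun's symmetrization $\mu$ \cite[Theorem 7]{belgun} is a chain map restricting to the identity on invariant forms; since $J$ is $G$-left-invariant the left translations are holomorphic, so $\mu$ preserves the bidegree and commutes with $\delbar_J$, and being the $\mathrm{L}^2$-orthogonal projection onto the finite-dimensional space $\wedge^{p,q}_{\text{inv}}X$ it is self-adjoint and commutes with $\delbar_J^*$ and with $\overline\square_J$. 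Consequently $\overline\square_J$ preserves the orthogonal splitting $\wedge^{p,q}X=\wedge^{p,q}_{\text{inv}}X\oplus\ker\mu$, the $\overline\square_J$-harmonic space splits accordingly, and one gets $\dim_\C\left(\ker\overline\square_J\cap\ker\mu\right)=D^{p,q}(J)$.

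Now comes the crucial step. I would introduce the operator $\tilde\Box_J:=\overline\square_J+\mu$ acting on $\wedge^{p,q}X$. It is self-adjoint and nonnegative, and since $\mu$ is a finite-rank (hence order-zero smoothing) operator it has the same principal symbol as $\overline\square_J$, so $\tilde\Box_J$ is elliptic; moreover it depends continuously, indeed real-analytically, on $J\in\mathcal{C}(\mathfrak{g})$. As $\overline\square_J$ and $\mu$ are commuting nonnegative self-adjoint operators, $\ker\tilde\Box_J=\ker\overline\square_J\cap\ker\mu$, whence $\dim_\C\ker\tilde\Box_J=D^{p,q}(J)$. The upper semicontinuity of the dimension of the kernel of a continuous family of self-adjoint elliptic operators (the same Kodaira–Spencer semicontinuity underlying \cite[Lemme 3.2]{schweitzer}) then shows that $D^{p,q}$ is upper semicontinuous on $\mathcal{C}(\mathfrak{g})$. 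Fixing $J_0\in\mathcal{U}$, we have $D^{p,q}(J_0)=0$, the minimal value, so upper semicontinuity forces $D^{p,q}(J)\leq 0$, hence $D^{p,q}(J)=0$, on a neighbourhood of $J_0$; intersecting the finitely many such neighbourhoods proves that $\mathcal{U}$ is open.

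The hard part, conceptually, is precisely this last manoeuvre. Both $\dim_\C H^{p,q}_{\delbar}(X_J)$ and $\dim_\C H^{p,q}_{\delbar}\left(\mathfrak{g}_\C\right)$ are only \emph{upper} semicontinuous in $J$, so a naive comparison of these two functions does not control their difference (the invariant cohomology could a priori drop faster than the full one, preserving injectivity yet destroying surjectivity). The point is therefore to realise the difference $D^{p,q}$ itself as the kernel dimension of one genuine elliptic operator $\tilde\Box_J$, which is made possible by the fact that the invariant forms form a finite-dimensional, hence complemented and $\overline\square_J$-stable, subcomplex. The remaining care is to verify that $\mu$ is simultaneously a chain map, bidegree-preserving, and $g_J$-self-adjoint, as this is exactly what yields the clean orthogonal splitting of the harmonic space used above.
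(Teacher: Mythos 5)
Your argument is correct, and its skeleton is the same as the one the paper uses (the paper only sketches the proof of this statement, referring to \cite{console-fino}, but writes out the identical argument in full for the Bott--Chern analogue, Theorem \ref{thm:bc-invariant-open}): since injectivity of $i$ holds for every $J$ by \cite[Lemma 9]{console-fino}, openness reduces to upper semicontinuity of the dimension of the space of $\overline\square_J$-harmonic forms orthogonal to the invariant ones, which is then extracted from Kodaira--Spencer spectral continuity. Where you genuinely diverge is in the device making that dimension spectrally visible. The paper keeps the elliptic \emph{differential} operator $\overline\square_{J_t}$, restricts it to the $\overline\square_{J_t}$-stable closed subspace $\left(\wedge^{\bullet,\bullet}_{J_t}\duale{\mathfrak{g}}_\C\right)^\perp$, and counts eigenvalues below a threshold $c$ chosen outside the closure of the spectrum: the counting function $\Psi_c$ is locally constant, dominates the defect dimension, and discreteness of the spectrum lets one pick $c$ so that $\Psi_c(0)$ equals the defect at $J_0$. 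You instead add the Belgun projection $\mu$ to the Laplacian, obtaining a single globally defined operator $\overline\square_J+\mu$ whose kernel is exactly the defect space; this sidesteps the (glossed-over, in the paper) issue of restricting to a $J$-dependent subspace and makes structurally transparent the point you rightly flag, namely that the two separately upper-semicontinuous dimensions cannot be compared naively. The cost is that $\overline\square_J+\mu$ is no longer a differential operator, whereas \cite[Theorem 1, Theorem 2]{kodaira-spencer-3} — the results underlying both your semicontinuity claim and \cite[Lemme 3.2]{schweitzer} — are stated for elliptic differential operators; you should add a line observing that a finite-rank smoothing perturbation changes neither the principal symbol, nor the discreteness of the spectrum, nor the resolvent-continuity arguments (or else retreat to the restricted-operator formulation at this one step). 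Two minor points: commutation of $\overline\square_J$ with $\mu$ is not needed for $\ker\left(\overline\square_J+\mu\right)=\ker\overline\square_J\cap\ker\mu$, since nonnegativity of both summands suffices via $\left\langle \mu\alpha \left| \alpha \right.\right\rangle=\left\|\mu\alpha\right\|^2$ (you do, however, need $\delbar_J^*$ to preserve invariant forms for the orthogonal splitting of the harmonic space identifying $\dim_\C\ker\left(\overline\square_J+\mu\right)$ with $D^{p,q}(J)$, exactly as in the proof of Lemma \ref{lemma:inj}); and your identification of $\mu$ with the $\mathrm{L}^2$-orthogonal projection deserves the one-line check that the volume form of any left-invariant metric is a constant multiple of the bi-invariant volume form $\eta$ of \cite[Theorem 7]{belgun}.
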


The strategy of the proof consists in proving that the dimension of the orthogonal of $H^{\bullet,\bullet}_{\delbar}\left(\mathfrak{g}_\C\right)$ in $H^{\bullet,\bullet}_{\delbar}(X)$ with respect to a given $J$-Hermitian $G$-left-invariant metric on $X=\left.\Gamma\right\backslash G$ is an upper-semi-continuous function in $J\in\mathcal{C}\left(\mathfrak{g}\right)$ and thus, if it is zero for a given $J\in\mathcal{C}\left(\mathfrak{g}\right)$, then it remains equal to zero in an open neighbourhood of $J$ in $\mathcal{C}\left(\mathfrak{g}\right)$.
We will use the same argument in proving Theorem \ref{thm:bc-invariant-open}, which is a slight modification of the previous result in the case of the Bott-Chern cohomology.

\medskip

The aforementioned results suggest the following conjecture.

\begin{conj}[{\cite[Conjecture 1]{rollenske-survey}; see also \cite[page 5406]{cordero-fernandez-gray-ugarte}, \cite[page 112]{console-fino}}]\label{conj:dolbeault}
 Let $X=\left. \Gamma \right\backslash G$ be a nilmanifold endowed with a $G$-left-invariant complex structure $J$, and denote the Lie algebra naturally associated to $G$ by $\mathfrak{g}$.
Then, for any $p\in\N$, the map of complexes \eqref{eq:dolb-complessi} is a quasi-isomorphisms, that is,
$$ i\colon H^{\bullet,\bullet}_{\delbar}\left(\mathfrak{g}_\C\right) \stackrel{\simeq}{\to} H^{\bullet,\bullet}_{\delbar}(X) \;.$$
\end{conj}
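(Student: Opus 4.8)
Since this is stated as a \emph{conjecture} rather than a theorem, what follows is a research strategy rather than a complete argument, and I will indicate precisely where the known techniques stop. The natural starting point is to split the desired quasi-isomorphism into injectivity and surjectivity. Injectivity of $i\colon H^{\bullet,\bullet}_{\delbar}(\mathfrak{g}_\C)\hookrightarrow H^{\bullet,\bullet}_{\delbar}(X)$ holds in complete generality, and I would recover it as in \cite[Lemma 9]{console-fino} by means of F.~A. Belgun's symmetrization map $\mu$ (\cite[Theorem 7]{belgun}): since $J$ is $G$-left-invariant, $\mu$ preserves the bidegree and hence commutes with $\delbar$ (not merely with $\de$), while $\mu\lfloor_{\wedge^{\bullet,\bullet}_{\text{inv}}X}=\id$. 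Thus any invariant $\delbar$-closed form that is $\delbar$-exact on $X$, say $\alpha=\delbar\beta$, satisfies $\alpha=\mu\alpha=\delbar\mu\beta$, so it is already exact through an invariant form. This simultaneously shows that $i_*$ is injective and that $\mu_*\colon H^{\bullet,\bullet}_{\delbar}(X)\to H^{\bullet,\bullet}_{\delbar}(\mathfrak{g}_\C)$ is a left inverse; hence the whole content of the conjecture is the \emph{injectivity of $\mu_*$}, equivalently the surjectivity of $i_*$.

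To attack surjectivity I would fix a $G$-left-invariant $J$-Hermitian metric on $X$. Then $\mu$ is the $L^2$-orthogonal projection onto invariant forms and commutes with $\delbar$, $\delbar^*$, hence with $\overline\square$; therefore it carries $\overline\square$-harmonic forms to invariant harmonic forms, i.e.\ to $\overline\square$-harmonic forms of the finite-dimensional complex $(\wedge^{\bullet,\bullet}\mathfrak{g}_\C^*,\delbar)$. Surjectivity of $i_*$ is then equivalent to the vanishing of the orthogonal complement of the invariant harmonic forms inside the space of harmonic $(p,q)$-forms on $X$. For complex structures compatible with the bundle structure of $X$ this can be proved by induction on the step of nilpotency: every nilmanifold is a principal torus bundle over a lower-dimensional nilmanifold \cite{nomizu}, and when $J$ is nilpotent — more generally when $\mathfrak{g}$ carries a torus-bundle series compatible with $J$ and with the rational structure in the sense of \cite{rollenske} — these fibrations are principal holomorphic torus bundles. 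One then runs the Borel/Hirsch spectral sequence of the fibration, as in \cite{sakane, cordero-fernandez-gray-ugarte, rollenske}, with fiber a complex torus (whose Dolbeault cohomology is invariant) and base handled by the inductive hypothesis, checking at each stage that the spectral sequence is compatible with the inclusion of invariant forms. This is exactly the range of cases covered by the theorem preceding the conjecture.

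The genuinely hard part is an arbitrary $J\in\mathcal{C}(\mathfrak{g})$, which need not preserve any rational or ascending filtration, so no holomorphic fibration is available and the inductive scheme breaks down. The plan here would be a deformation argument: the set $\mathcal{U}$ of complex structures for which $i_*$ is an isomorphism is \emph{open} in $\mathcal{C}(\mathfrak{g})$ (\cite[Theorem 1]{console-fino}, recalled as Theorem \ref{thm:dolbeault-invariant-open}) and contains the dense set of rational complex structures by the compatible-filtration case; one would like to conclude $\mathcal{U}=\mathcal{C}(\mathfrak{g})$. The main obstacle is that this does not follow formally: the dimension of the non-invariant part of $H^{p,q}_{\delbar}(X)$ is only known to be \emph{upper}-semicontinuous in $J$ (this is precisely what yields openness of $\mathcal{U}$), and an upper-semicontinuous function that vanishes on a dense set may still jump to a positive value at special, non-generic structures. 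To close the gap one would need an independent \emph{a priori} bound — for instance a proof that $\dim_\C H^{p,q}_{\delbar}(X)\leq \dim_\C H^{p,q}_{\delbar}(\mathfrak{g}_\C)$ for every $J$, or a lower-semicontinuity statement for the invariant part — and producing such a bound with no structural hypothesis on $J$ is exactly the point at which the problem remains open.
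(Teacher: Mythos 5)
The statement you are proving is recorded in the paper as a \emph{conjecture} (Conjecture \ref{conj:dolbeault}, quoted from \cite[Conjecture 1]{rollenske-survey}): the paper offers no proof, only the surrounding discussion, so your proposal can only be measured against that. Measured so, it is faithful and honest. You correctly split the problem into injectivity (always true) and surjectivity, and your reformulation of surjectivity as the vanishing of the orthogonal complement of the invariant $\overline\square$-harmonic forms is exactly the reformulation the paper records immediately after the conjecture, namely $\dim_\C \left(H^{\bullet,\bullet}_{\delbar}\left(\mathfrak{g}_\C\right)\right)^\perp=0$ with respect to an invariant Hermitian metric. Your account of the settled cases (torus-bundle inductions following \cite{sakane, cordero-fernandez-gray-ugarte, console-fino, rollenske}), of the openness of $\mathcal{U}$ (\cite[Theorem 1]{console-fino}, Theorem \ref{thm:dolbeault-invariant-open}), and of the genuine obstruction — the dimension of the non-invariant part is only \emph{upper}-semicontinuous in $J$, so vanishing on a dense subset would not propagate to all of $\mathcal{C}\left(\mathfrak{g}\right)$ — is an accurate diagnosis of why the problem is open, and matches the strategy implicit in the paper.

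Two caveats. First, you assert that $\mathcal{U}$ "contains the dense set of rational complex structures": density of rational complex structures in $\mathcal{C}\left(\mathfrak{g}\right)$ is not established in general — rationality depends on the rational structure induced by the lattice $\Gamma$, and rational points of the real algebraic set $\mathcal{C}\left(\mathfrak{g}\right)$ need not be dense — so your deformation plan actually has \emph{two} gaps, not one, although your conclusion that it fails regardless (because upper-semicontinuity points the wrong way) is correct. Second, a methodological remark: your route to injectivity via F.~A. Belgun's symmetrization $\mu$ differs from the Hodge-theoretic argument of \cite[Lemma 9]{console-fino} (the one the paper adapts to Bott-Chern cohomology in Lemma \ref{lemma:inj}, testing a potential against its invariant image). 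Your variant is valid — $\mu$ commutes with $\de$ and with $J$, hence, preserving the bigrading, with $\delbar$, and it fixes invariant forms — but your later claim that $\mu$ is the $\mathrm{L}^2$-orthogonal projection commuting with $\delbar^*$ and $\overline\square$ needs the one-line justification that, the metric being left-invariant, $\mu$ also commutes with the Hodge-$*$-operator; since that commutation is what your harmonic-theoretic reformulation rests on, it should be written out rather than asserted.
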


Note that, since $i$ is always injective by \cite[Lemma 9]{console-fino}, this is equivalent to asking that
$$ \dim_\C \left(H^{\bullet,\bullet}_{\delbar}\left(\mathfrak{g}_\C\right)\right)^\perp=0 \;,$$
where the orthogonality is meant with respect to the inner product induced by a given $J$-Hermitian $G$-left-invariant metric $g$ on $X$.

\medskip

Finally, as an application of the previous results, we recall the following theorem by S. Rollenske, concerning the deformations of left-invariant complex structures on nilmanifolds.

\begin{thm}[{\cite[Theorem 2.6]{rollenske-jlms}}]
 Let $X=\left.\Gamma\right\backslash G$ be a nilmanifold endowed with a $G$-left-invariant complex structure $J$, and denote the Lie algebra naturally associated to $G$ by $\mathfrak{g}$. Suppose that, for $p=1$, the map of complexes \eqref{eq:dolb-complessi} is a quasi-isomorphism: $i\colon H^{1,q}_\delbar\left(\mathfrak{g}_\C\right) \stackrel{\simeq}{\to} H^{1,q}_\delbar(X)$ for every $q\in\N$. Then all small deformations of the complex structure $J$ are again $G$-left-invariant complex structures. More precisely, the Kuranishi family of $X$ contains  only $G$-left-invariant complex structures.
\end{thm}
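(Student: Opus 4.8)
The plan is to prove the statement of Rollenske's theorem \cite[Theorem 2.6]{rollenske-jlms}, namely that if the inclusion of left-invariant forms computes the Dolbeault cohomology in bidegree $(1,q)$ for all $q$, then the entire Kuranishi family of $X=\left.\Gamma\right\backslash G$ consists of $G$-left-invariant complex structures. First I would set up the deformation theory exactly as recalled in \S\ref{sec:deformations}: infinitesimal deformations are governed by the differential graded Lie algebra $\left(\mathcal{C}^\infty\left(X;\, T^{1,0}X\otimes\wedge^{0,\bullet}X\right),\, \delbar,\, \left[\sspace,\, \ssspace\right]\right)$, and the Kuranishi space is cut out inside $H^{0,1}\left(X;\, \Theta_X\right)$ by the obstruction map. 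The key observation is that $T^{1,0}X$ is a $G$-left-invariant holomorphic vector bundle, so that the subcomplex $\left(\mathfrak{g}_\C^{1,0}\otimes\wedge^{0,\bullet}\mathfrak{g}_\C^*,\, \delbar\right)$ of left-invariant $\Theta_X$-valued forms sits inside the full Dolbeault complex $\left(\mathcal{C}^\infty\left(X;\, T^{1,0}X\otimes\wedge^{0,\bullet}X\right),\, \delbar\right)$. The hypothesis $i\colon H^{1,q}_\delbar\left(\mathfrak{g}_\C\right) \stackrel{\simeq}{\to} H^{1,q}_\delbar(X)$ for every $q$ is, after identifying $\Theta_X$-valued forms with ordinary $(1,\bullet)$-forms via the holomorphic framing coming from left-invariance, precisely the statement that this inclusion is a quasi-isomorphism; hence the cohomology $H^{0,\bullet}\left(X;\, \Theta_X\right)$ is computed entirely by left-invariant representatives.

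Next I would carry out the Kuranishi recursion $s(\tempo)=\sum_{k}s_k(\tempo)$ by induction on $k$, using a $G$-left-invariant Hermitian metric $g$ on $X$ so that the harmonic projector $H_\delbar$, the Green operator $G_\delbar$, and the projector $P_\delbar$ all preserve the space of left-invariant forms. The base case is immediate: $s_1(\tempo)$ is a harmonic representative of a class in $H^{0,1}\left(X;\, \Theta_X\right)$, which by hypothesis can be taken left-invariant. For the inductive step one has the Kuranishi formula in the schematic form $s_k(\tempo) = -\frac{1}{2}\, \delbar^*\, G_\delbar\left(\sum_{1\leq j\leq k-1}\left[s_j(\tempo),\, s_{k-j}(\tempo)\right]\right)$; since the bracket of two left-invariant $\Theta_X$-valued forms is again left-invariant (this is clear from the local coordinate description of $\left[\sspace,\, \ssspace\right]$ given in \S\ref{sec:deformations}, which reduces to the Lie bracket on $\mathfrak{g}$ in the framed picture), and since $\delbar^*$ and $G_\delbar$ preserve left-invariance for a left-invariant metric, each $s_k(\tempo)$ is left-invariant. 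By convergence of the Kuranishi series for $\left\|\tempo\right\|$ small, \cite[\S5.3]{kodaira}, the limit $s(\tempo)$ is a uniform limit of left-invariant forms, hence itself left-invariant. The associated almost-complex structure $J_\tempo$, defined through the graph of $-s(\tempo)$, is then determined by a left-invariant tensor, so $J_\tempo\in\mathcal{C}\left(\mathfrak{g}\right)$.

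The main obstacle I anticipate is the careful verification that all three analytic operators $H_\delbar$, $\delbar^*$, and $G_\delbar$ genuinely restrict to the finite-dimensional space of left-invariant forms, rather than merely commuting with the inclusion up to lower-order terms. The cleanest route is to observe that, under the quasi-isomorphism hypothesis together with the fact that $i$ is always injective (the $\Theta_X$-valued analogue of \cite[Lemma 9]{console-fino}), the orthogonal complement of the left-invariant harmonic forms inside the full harmonic space vanishes in each relevant bidegree, so that the harmonic space itself is spanned by left-invariant forms; the left-invariance of $g$ then guarantees that $\overline\square$ commutes with the symmetrization map $\mu$ of F.~A. Belgun \cite[Theorem 7]{belgun}, forcing $G_\delbar$ and $\delbar^*$ to preserve the left-invariant subspace. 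A secondary subtlety is that the hypothesis is stated only for $p=1$, i.e.\ only for $H^{1,q}_\delbar$; one must check that this suffices, which it does precisely because deformations are governed by $\Theta_X=\left(T^{1,0}X\right)^*{}^*$-valued $(0,\bullet)$-forms, and the holomorphic triviality of $T^{1,0}X$ as a left-invariant bundle identifies $H^{0,q}\left(X;\, \Theta_X\right)$ with a sum of copies of $H^{1,q}_\delbar(X)$ up to the framing, so that no higher-$p$ information is required. Finally, one concludes by invoking the versality and completeness of the Kuranishi family: since every member of the family arises from a left-invariant Maurer--Cartan solution, the entire family lies in $\mathcal{C}\left(\mathfrak{g}\right)$, and in particular all small deformations of $J$ are again $G$-left-invariant.
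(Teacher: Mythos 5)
Your overall skeleton — run the Kuranishi recursion with a $G$-left-invariant Hermitian metric, observe that the bracket and the operators $\delbar^*$, $G_\delbar$, $H_\delbar$ preserve the (finite-dimensional) subspace of left-invariant $\Theta_X$-valued forms, and conclude that every $s(\tempo)$ is left-invariant — is the right strategy, and the analytic worries you raise in your third paragraph are in fact the easy part: with an invariant metric, $\overline\square$ preserves both the invariant subspace and its orthogonal complement, so $G_\delbar$ and $\delbar^*$ restrict automatically, with no need of the cohomological hypothesis (this is exactly the mechanism of Lemma \ref{lemma:inj} and Remark \ref{rem:inv-harmonic}). The genuine gap is the bridge from the hypothesis to the deformation space: you claim that ``the holomorphic triviality of $T^{1,0}X$ as a left-invariant bundle identifies $H^{0,q}\left(X;\,\Theta_X\right)$ with a sum of copies of $H^{1,q}_\delbar(X)$ up to the framing.'' This is false on two counts. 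First, $T^{1,0}X$ is only \emph{smoothly} trivialized by left-invariant fields; the frame is $\delbar$-holomorphic only when $J$ is holomorphically parallelizable, which is a very restrictive hypothesis — indeed the small deformations of the Iwasawa manifold in classes \emph{(ii)} and \emph{(iii)} are left-invariant but not holomorphically parallelizable, so the theorem must (and does) apply beyond that case. Second, even granting holomorphic triviality, a framing would identify $H^{0,q}\left(X;\,\Theta_X\right)$ with copies of $H^{0,q}_{\delbar}(X)$, not of $H^{1,q}_{\delbar}(X)$: a framing cannot explain why the hypothesis is stated for $p=1$.

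The missing idea is Serre duality together with the holomorphic triviality of the \emph{canonical} bundle. By S. Salamon's theorem, on a nilpotent Lie algebra endowed with an integrable linear complex structure one can choose the invariant co-frame $\left\{\varphi^1,\ldots,\varphi^n\right\}$ of $(1,0)$-forms so that $\varphi^1\wedge\cdots\wedge\varphi^n$ is $\de$-closed; hence $K_X\simeq\mathcal{O}_X$, trivialized by an \emph{invariant} holomorphic section. Serre duality then gives $H^{q}\left(X;\,\Theta_X\right)\simeq H^{n-q}\left(X;\,\Omega^1_X\otimes K_X\right)^* \simeq H^{1,n-q}_{\delbar}(X)^*$, and the same identification holds at the level of invariant forms, since the conjugate-linear Hodge-$*$-operator of an invariant metric, as well as the trivialization of $K_X$, preserve invariance (alternatively, use the linear Serre/Poincaré duality for the Lie-algebra Dolbeault complex, which is available because nilpotent Lie algebras are unimodular). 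Combined with the injectivity of $i$ on $\Theta_X$-valued cohomology (the $\Theta_X$-valued analogue of \cite[Lemma 9]{console-fino}, proved exactly as in Lemma \ref{lemma:inj}), the hypothesis $i\colon H^{1,q}_{\delbar}\left(\mathfrak{g}_\C\right)\stackrel{\simeq}{\to} H^{1,q}_{\delbar}(X)$ for \emph{all} $q$ forces the full harmonic space of $\Theta_X$-valued $(0,q)$-forms to consist of invariant forms; in particular $s_1(\tempo)$ may be taken invariant, and your recursion and closedness-of-invariance argument then go through verbatim. With this correction your proof becomes essentially the argument of \cite{rollenske-jlms}; as written, the step linking $p=1$ to $\Theta_X$ fails.
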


\subsection{The Bott-Chern cohomology on solvmanifolds}\label{subsec:cohomology-computation-bott-chern}
We recall here the results obtained in \cite{angella}, concerning the computation of the Bott-Chern cohomology for nilmanifolds and solvmanifolds.

\medskip

Firstly, we prove a slight modification of \cite[Lemma 9]{console-fino} proven by S. Console and A. Fino for the Dolbeault cohomology: we repeat here their argument for the case of the Bott-Chern cohomology, \cite[Lemma 3.6]{angella}.

\begin{lem}
\label{lemma:inj}
 Let $X=\left. \Gamma \right\backslash G$ be a solvmanifold endowed with a $G$-left-invariant complex structure $J$, and denote the Lie algebra naturally associated to $G$ by $\mathfrak{g}$.
 The map of complexes \eqref{eq:bc-complessi} induces an injective homomorphism
$$ i\colon H^{\bullet,\bullet}_{BC}\left(\mathfrak{g}_\C\right) \hookrightarrow H^{\bullet,\bullet}_{BC}(X) \;.$$
\end{lem}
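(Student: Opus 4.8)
The plan is to adapt the averaging argument of Console--Fino \cite[Lemma 9]{console-fino} to the Bott-Chern setting. The essential tool is F.~A. Belgun's symmetrization map $\mu\colon \wedge^\bullet X \to \wedge^\bullet\duale{\mathfrak{g}}$ from \cite[Theorem 7]{belgun}, built from a $G$-bi-invariant volume form $\eta$ with $\int_X\eta=1$ (whose existence is guaranteed by J. Milnor's lemma \cite[Lemma 6.2]{milnor}, since $X=\left.\Gamma\right\backslash G$ being a solvmanifold forces $G$ to be unimodular). First I would observe that $\mu$ restricts to the identity on $G$-left-invariant forms, $\mu\lfloor_{\wedge^\bullet\duale{\mathfrak{g}}}=\id\lfloor_{\wedge^\bullet\duale{\mathfrak{g}}}$, and commutes with $\de$.

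The key step is to check that $\mu$ is \emph{bi-degree preserving} and commutes separately with $\del$ and $\delbar$, not merely with $\de$. Since the complex structure $J$ is itself $G$-left-invariant, averaging against the bi-invariant form $\eta$ sends $\wedge^{p,q}X$ into $\wedge^{p,q}\duale{\mathfrak{g}_\C}$ for every $p,q$; hence $\mu\circ J = J\circ\mu$, and decomposing the identity $\de\circ\mu=\mu\circ\de$ into its $(+1,0)$- and $(0,+1)$-components yields $\del\circ\mu=\mu\circ\del$ and $\delbar\circ\mu=\mu\circ\delbar$. Consequently $\mu$ restricts to a morphism of the Bott-Chern complexes in diagram \eqref{eq:bc-complessi}, i.e. $\mu\left(\ker\del\cap\ker\delbar\right)\subseteq\ker\del\cap\ker\delbar$ on the invariant side and $\mu\left(\imm\del\delbar\right)\subseteq\imm\del\delbar$, so that $\mu$ descends to a well-defined map in Bott-Chern cohomology $\mu\colon H^{\bullet,\bullet}_{BC}(X)\to H^{\bullet,\bullet}_{BC}\left(\mathfrak{g}_\C\right)$.

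With this in hand, injectivity of $i$ is immediate. Given an invariant class $\left[\alpha\right]\in H^{\bullet,\bullet}_{BC}\left(\mathfrak{g}_\C\right)$ with invariant representative $\alpha\in\wedge^{\bullet,\bullet}\duale{\mathfrak{g}_\C}$, suppose $i\left[\alpha\right]=0$ in $H^{\bullet,\bullet}_{BC}(X)$, meaning $\alpha=\del\delbar\beta$ for some $\beta\in\wedge^{\bullet,\bullet}X$ (not necessarily invariant). Applying $\mu$ and using that it is the identity on invariant forms together with $\mu\circ\del\delbar=\del\delbar\circ\mu$, I get $\alpha=\mu(\alpha)=\mu(\del\delbar\beta)=\del\delbar\,\mu(\beta)$ with $\mu(\beta)\in\wedge^{\bullet,\bullet}\duale{\mathfrak{g}_\C}$ invariant; hence $\left[\alpha\right]=0$ already in $H^{\bullet,\bullet}_{BC}\left(\mathfrak{g}_\C\right)$. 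This proves $i$ is injective, equivalently that $\mu\circ i=\id$ on invariant Bott-Chern cohomology.

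The main obstacle I anticipate is the verification that $\mu$ genuinely commutes with $\del$ and $\delbar$ individually, rather than only with $\de$: this rests on $\mu$ preserving the $(p,q)$-bigrading, which in turn relies on $J$ being $G$-left-invariant so that averaging against a $G$-bi-invariant form does not mix Hodge types. Once the bi-degree preservation is established cleanly, the commutation with each operator and with the composite $\del\delbar$ follows by projecting the identity $\mu\circ\de=\de\circ\mu$ onto the appropriate bidegree components, and the rest of the argument is the standard formal diagram chase above.
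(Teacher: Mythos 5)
Your argument is correct, but it is not the route the paper takes for Lemma \ref{lemma:inj}. The paper transplants the Hodge-theoretic argument of S. Console and A. Fino \cite[Lemma 9]{console-fino}: one fixes a $J$-Hermitian $G$-left-invariant metric $g$, observes that $\del$, $\delbar$, $\del^*$, $\delbar^*$, and hence $\tilde\Delta_{BC}$, preserve the space of left-invariant forms, obtains a Hodge decomposition of $\wedge^{\bullet,\bullet}\duale{\mathfrak{g}}_\C$, and then, given an invariant $\omega=\del\delbar\eta$ with $\eta$ chosen (up to invariant contributions) orthogonal to the invariant forms, computes $0=\left\langle \left.\delbar^*\del^*\del\delbar\eta\right|\eta\right\rangle=\left\|\del\delbar\eta\right\|^2=\left\|\omega\right\|^2$, using that $\delbar^*\del^*\del\delbar\eta$ is invariant because $\del\delbar\eta=\omega$ is. Your proof instead runs the F.~A. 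Belgun symmetrization $\mu$ \cite[Theorem 7]{belgun} through the Bott-Chern complex, and your one delicate point — that $\mu$ preserves the bigrading because the left-translation identification intertwines the pointwise type decomposition with the fixed linear one (this uses the $G$-left-invariance of $J$, and then integrability gives $\de=\del+\delbar$, so commutation with $\de$ splits into commutation with $\del$ and with $\delbar$ separately) — is handled correctly; it is in fact the same mechanism the paper itself uses in Lemma \ref{lemma:belgun-J} and Proposition \ref{prop:linear-cpf-invariant-cpf-J} in the almost-complex setting. The two approaches buy different things: yours is metric-free, exhibits an explicit retraction $\mu$ with $\mu\circ i=\id$ on $H^{\bullet,\bullet}_{BC}\left(\mathfrak{g}_\C\right)$, and applies verbatim to the Aeppli cohomology (where $\ker\del\delbar$ and $\imm\del+\imm\delbar$ are likewise preserved by $\mu$), whereas the paper deduces the Aeppli case separately via Hodge-$*$-duality in Remark \ref{rem:aeppli-inv}; note only that your route needs the bi-invariant volume form, i.e., unimodularity of $G$ via J. Milnor's lemma \cite[Lemma 6.2]{milnor}, which you correctly invoke, while the paper's route needs no unimodularity at all, only a left-invariant Hermitian metric. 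On the other hand, the paper's Hodge-theoretic setup is not wasted effort: the invariant Hodge decomposition it establishes is reused in Remark \ref{rem:inv-harmonic} and, through the spectral-continuity argument, in the openness result Theorem \ref{thm:bc-invariant-open}, so within the paper's development the metric argument carries additional structural weight that the symmetrization shortcut does not provide.
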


\begin{proof}
 Fix $p,q\in\N$. Let $g$ be a $J$-Hermitian $G$-left-invariant metric on $X$ and consider the induced inner product $\left\langle\left.\sspace\right|\ssspace\right\rangle$ on $\wedge^{\bullet,\bullet} X$. Hence, both $\del$, $\delbar$, and their adjoints $\del^*$, $\delbar^*$ preserve the $G$-left-invariant forms on $X$ and therefore also $\tilde\Delta_{BC}$ does. In such a way, we get a Hodge decomposition also at the level of $G$-left-invariant forms:
 $$
 \wedge^{p,q}\mathfrak{g}_\C^* = \ker\tilde\Delta_{BC}\lfloor_{\wedge^{p,q}\mathfrak{g}_\C^*} \,\oplus\, \imm\del\delbar\lfloor_{\wedge^{p-1,q-1}\mathfrak{g}_\C^*} \oplus\, \left(\imm\del^*\lfloor_{\wedge^{p+1,q}\mathfrak{g}_\C^*} \,+\, \imm\delbar^*\lfloor_{\wedge^{p,q+1}\mathfrak{g}_\C^*}\right) \;.
 $$
 Now, take $[\omega]\in H^{p,q}_{BC}\left(\mathfrak{g}_\C\right)$ such that $i\left[\omega\right]=0$ in $H^{p,q}_{BC}(X)$, that is, $\omega$ is a $G$-left-invariant $(p,q)$-form on $X$ and there exists a (possibly non-$G$-left-invariant) $(p-1,q-1)$-form $\eta$ on $X$ such that $\omega=\del\delbar\,\eta$. Up to zero terms in $H^{p,q}_{BC}\left(\mathfrak{g}_\C\right)$, we may assume that $\eta\in \left(i\left(\wedge^{p,q}\mathfrak{g}_\C^*\right)\right)^\perp\subseteq \wedge^{p,q}X$. Therefore, since $\delbar^*\del^*\del\delbar\eta$ is a $G$-left-invariant form (being $\del\delbar\eta$ a $G$-left-invariant form), we have that
$$ 0=\left\langle \left. \delbar^*\del^*\del\delbar\eta \right| \eta\right\rangle = \left\|\del\delbar\eta\right\|^2=\left\|\omega\right\|^2 $$
and therefore $\omega=0$.
\end{proof}

The second general result says that, if the Dolbeault and de Rham cohomologies of a solvmanifold are computed using just left-invariant forms, then also the Bott-Chern cohomology is computed using just left-invariant forms, \cite[Theorem 3.7]{angella}. The idea of the proof is inspired by \cite[\S1.c]{schweitzer}, where M. Schweitzer used a similar argument to explicitly compute the Bott-Chern cohomology in the special case of the Iwasawa manifold.

\begin{thm}
\label{thm:bc-invariant}
 Let $X=\left.\Gamma\right\backslash G$ be a solvmanifold endowed with a $G$-left-invariant complex structure $J$, and denote the Lie algebra naturally associated to $G$ by $\mathfrak{g}$. Suppose that
$$ i\colon H^\bullet_{dR}(\mathfrak{g};\C)\stackrel{\simeq}{\hookrightarrow} H^\bullet_{dR}(X;\C) \qquad \text{ and } \qquad i\colon H^{\bullet,\bullet}_{\delbar}\left(\mathfrak{g}_\C\right)\stackrel{\simeq}{\hookrightarrow} H^{\bullet,\bullet}_{\delbar}(X) \;.$$
Then also
$$ i\colon H^{\bullet,\bullet}_{BC}\left(\mathfrak{g}_\C\right) \stackrel{\simeq}{\hookrightarrow}H^{\bullet,\bullet}_{BC}(X) \;. $$
\end{thm}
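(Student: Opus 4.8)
The plan is to reduce the statement to a purely homological fact about bounded double complexes, using F.~A. Belgun's symmetrization as the organizing tool. First I would record that, by Lemma \ref{lemma:inj}, the map $i\colon H^{\bullet,\bullet}_{BC}\left(\g_\C\right)\to H^{\bullet,\bullet}_{BC}(X)$ is injective, so only surjectivity is at stake. The key observation is that the symmetrization map $\mu\colon \wedge^{\bullet,\bullet}X\to\wedge^{\bullet,\bullet}\g_\C^{*}$ is in fact a morphism of \emph{double} complexes: since $G$ is unimodular (J. Milnor's lemma) it carries a bi-invariant volume form, so $\mu$ commutes with $\de$ by F.~A. Belgun's symmetrization lemma, and since $J$ is $G$-left-invariant the map $\mu$ preserves the bidegree; comparing bidegrees in $\mu\de=\de\mu$ then yields $\mu\del=\del\mu$ and $\mu\delbar=\delbar\mu$. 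As $\mu\circ i=\id$ on invariant forms, the inclusion $i$ is split by $\mu$ in the category of double complexes, whence an internal direct-sum decomposition $\wedge^{\bullet,\bullet}X\isom \wedge^{\bullet,\bullet}\g_\C^{*}\oplus N^{\bullet,\bullet}$, where $N:=\ker\mu$ is a bounded sub-double-complex (bounded in bidegree by $0\leq\bullet\leq n$, though infinite-dimensional).

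Next I would transport the hypotheses through this splitting. Every cohomology of $X$ splits accordingly as the cohomology of the invariant part plus that of $N$, with $i$ identified with the inclusion of the first summand and $\mu$ with the projection onto it. Hence the assumption that $i\colon H^{\bullet,\bullet}_{\delbar}\left(\g_\C\right)\to H^{\bullet,\bullet}_{\delbar}(X)$ is an isomorphism is exactly the vanishing $H^{\bullet,\bullet}_{\delbar}(N)=0$; since $\mu$ is real, $N$ is stable under conjugation, and the conjugation isomorphism $H^{p,q}_{\del}(N)\isom\overline{H^{q,p}_{\delbar}(N)}$ then forces $H^{\bullet,\bullet}_{\del}(N)=0$ as well (the de Rham hypothesis enters symmetrically, forcing $H^{\bullet}_{dR}(N)=0$, although the Dolbeault vanishing together with its conjugate already suffices for the chase below). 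By the same token, the target conclusion that $i\colon H^{\bullet,\bullet}_{BC}\left(\g_\C\right)\to H^{\bullet,\bullet}_{BC}(X)$ be an isomorphism is equivalent to $H^{\bullet,\bullet}_{BC}(N)=0$.

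It therefore remains to prove the homological lemma, which is the heart of the argument: a bounded double complex $N^{\bullet,\bullet}$ with exact rows and exact columns (that is, $H_{\del}(N)=0$ and $H_{\delbar}(N)=0$) has vanishing Bott-Chern cohomology. I would prove it by a staircase diagram chase. Given $x\in N^{p,q}$ with $\del x=\delbar x=0$, exactness of the columns gives $x=\delbar y$; then $\delbar(\del y)=-\del\delbar y=-\del x=0$, so column-exactness again gives $\del y=\delbar z$, and iterating produces a down-right staircase $z_0:=y,\ z_1:=z,\ \ldots$ with $\del z_k=\delbar z_{k+1}$ and $z_k\in N^{p+k,q-1-k}$, which terminates (with $\del z_K=0$) because $N^{r,s}=0$ once $s<0$. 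I would then collapse the staircase from its lower end upward: row-exactness writes the terminal $\del$-closed entry as $z_K=\del w_K$, and replacing $z_{K-1}$ by $z_{K-1}+\delbar w_K$ makes it $\del$-closed without disturbing $\delbar z_{K-1}=\del z_{K-2}$ nor the identity $x=\delbar z_0$; iterating up the staircase finally exhibits $z_0$ (modulo a $\delbar$-exact term) as $\del w_0$, so that $x=\delbar z_0=-\del\delbar w_0\in\imm\del\delbar$. The only delicacies are the bookkeeping of signs and the edge behaviour of the complex, both controlled by boundedness in the bidegree. Combined with the previous two paragraphs, this gives $H^{\bullet,\bullet}_{BC}(N)=0$ and hence the desired isomorphism $i\colon H^{\bullet,\bullet}_{BC}\left(\g_\C\right)\stackrel{\simeq}{\to}H^{\bullet,\bullet}_{BC}(X)$.
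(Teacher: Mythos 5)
Your proof is correct, and it takes a genuinely different route from the paper's. The paper never splits the complex of forms: it first uses the exact sequence $0 \to \frac{\imm\de\cap\wedge^{p,q}X}{\imm\del\delbar} \to H^{p,q}_{BC}(X) \to H^{p+q}_{dR}(X;\C)$ together with the de Rham hypothesis to reduce surjectivity to showing that $\frac{\imm\de\cap\wedge^{p,q}X}{\imm\del\delbar}$ is computed by invariant forms, and then runs the staircase directly on the manifold, \emph{modulo} $\imm\del\delbar$: it writes $\omega^{p,q}=\de\eta \modulo \imm\del\delbar$, decomposes $\eta$ into pure-type components, and repeatedly invokes its key observation (Claim 2: under the Dolbeault hypothesis, any solution of $\delbar\phi=\psi$ with $\psi$ invariant is invariant up to $\delbar$-exact terms) together with its conjugate. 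Your Belgun splitting $\wedge^{\bullet,\bullet}X \simeq \wedge^{\bullet,\bullet}\duale{\mathfrak{g}}_\C \oplus N$ is legitimate — $\mu$ commutes with $\de$, preserves bidegree because $J$ is left-invariant, and retracts $i$ — and it converts the same staircase into an honest chase inside $N$, with equalities instead of congruences mod $\imm\del\delbar$. What your route buys: (i) injectivity is free from the retraction, whereas the paper proves it separately via Hodge theory in Lemma \ref{lemma:inj}; (ii) the de Rham hypothesis becomes visibly redundant, since conjugation-stability of $N=\ker\mu$ turns the Dolbeault vanishing into exactness of both rows and columns — the paper records this redundancy only in a separate remark, via comparison of Fr\"olicher spectral sequences; (iii) the dual chase in your abstract lemma also gives $H^{\bullet,\bullet}_{A}(N)=0$, so the Aeppli statement of Remark \ref{rem:aeppli-inv} would follow without the Hodge-$*$-duality. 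What the paper's route buys: it needs no retraction at all, only the two cohomological hypotheses on the inclusion, so the same Claim-2/Claim-3 template applies verbatim to inclusions that are \emph{not} split — exactly what happens in Theorem \ref{thm:bc}, where the inclusion of forms into currents on an orbifold admits no projection and the identical argument is re-used. One pedantic point in your chase: termination is not literally forced by $N^{r,s}=0$ for $s<0$, since $\del z_K$ lives in $N^{p+K+1,0}$, which need not vanish; rather, one further application of column exactness at the bottom row (where $H^{\bullet,0}_{\delbar}(N)=0$ means $\delbar$ is injective on $N^{\bullet,0}$) gives $\del z_K=0$, which is what your argument implicitly uses.
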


\begin{proof}
 Fix $p,q\in\N$. We prove the theorem as a consequence of the following claims.

 \paragrafodclaim{1}{\itshape It suffices to prove that $\frac{\imm\de\cap\wedge^{p,q}X}{\imm\del\delbar}$ can be computed using just $G$-left-invariant forms}\\
Indeed, we have the exact sequence
$$ 0 \to \frac{\imm\de\cap\wedge^{p,q}X}{\imm\del\delbar} \to H^{p,q}_{BC}(X) \to H^{p+q}_{dR}(X;\C) $$
and, by hypothesis, $H^\bullet_{dR}(X;\C)$ can be computed using just $G$-left-invariant forms.

 \paragrafodclaim{2}{\itshape Under the hypothesis that the Dolbeault cohomology is computed using just $G$-left-invariant forms, if $\psi$ is a $G$-left-invariant $\delbar$-closed form, then every solution $\phi$ of $\delbar\phi=\psi$ is $G$-left-invariant up to $\delbar$-exact terms}\\
Indeed, since $[\psi]=0$ in $H^{\bullet,\bullet}_{\delbar}(X)$, there is a $G$-left-invariant form $\alpha$ such that $\psi=\delbar\alpha$. Hence, $\phi-\alpha$ defines a class in $H^{\bullet,\bullet}_{\delbar}(X)$ and hence $\phi-\alpha$ is $G$-left-invariant up to a $\delbar$-exact form, and so $\phi$ is.

 \paragrafodclaim{3}{\itshape Under the hypothesis that the Dolbeault cohomology is computed using just $G$-left-invariant forms, the space $\frac{\imm\de\cap\wedge^{p,q}X}{\imm\del\delbar}$ can be computed using just $G$-left-invariant forms}\\
Consider
\begin{equation}\label{eq:proof-step-3}
\omega^{p,q}\;=\;\de\eta\mod\imm\del\delbar\;\in\;\frac{\imm\de\cap\wedge^{p,q}X}{\imm\del\delbar} \;.
\end{equation}
Decomposing $\eta=:\sum_{p,q}\eta^{p,q}$ in pure-type components, the equality \eqref{eq:proof-step-3} is equivalent to the system
$$
\left\{
\begin{array}{cccccccc}
 && \del\eta^{p+q-1,0} &=&0 & \mod \imm\del\delbar && \\[5pt]
\delbar\eta^{p+q-\ell,\ell-1} &+& \del\eta^{p+q-\ell-1,\ell} &=& 0 &\mod\imm\del\delbar & \text{ for } & \ell\in\{1,\ldots,q-1\} \\[5pt]
\delbar\eta^{p,q-1} &+& \del\eta^{p-1,q} &=& \omega^{p,q} & \mod\imm\del\delbar && \\[5pt]
\delbar\eta^{\ell,p+q-\ell-1} &+& \del\eta^{\ell-1,p+q-\ell} &=& 0 &\mod\imm\del\delbar & \text{ for } & \ell\in\{1,\ldots,p-1\} \\[5pt]
\delbar\eta^{0,p+q-1} &&&=&0 &\mod\imm\del\delbar &&
\end{array}
\right. \;.
$$
Applying several times \texttt{Claim 2}, we may suppose that the forms $\eta^{\ell,p+q-\ell-1}$, with $\ell\in\{0,\ldots, p-1\}$, are $G$-left-invariant: indeed, they are $G$-left-invariant up to $\delbar$-exact terms, but $\delbar$-exact terms give no contribution in the system, since it is modulo $\imm\del\delbar$. Analogously, using the conjugate version of \texttt{Claim 2}, we may suppose that the forms $\eta^{p+q-\ell-1,\ell}$, with $\ell\in\{0,\ldots,q-1\}$, are $G$-left-invariant. Then we may suppose that $\omega^{p,q}=\delbar\eta^{p,q-1}+\del\eta^{p-1,q}$ is $G$-left-invariant.
\end{proof}

\begin{rem}
 Let $X=\left.\Gamma\right\backslash G$ be a solvmanifold endowed with a $G$-left-invariant complex structure $J$, and denote the Lie algebra naturally associated to $G$ by $\mathfrak{g}$. Note that, if the map of complexes $i\colon \left(\wedge^{p,\bullet}\duale{\mathfrak{g}}_\C,\, \delbar\right)\to \left(\wedge^{p,\bullet}X,\, \delbar\right)$ is a quasi-isomorphism for every $p\in\N$, that is,
 $$ i\colon H^{\bullet,\bullet}_{\delbar}\left(\mathfrak{g}_\C\right)\stackrel{\simeq}{\hookrightarrow} H^{\bullet,\bullet}_{\delbar}(X) \;,$$
 then also the map of complexes $i\colon \left(\wedge^\bullet\duale{\mathfrak{g}},\, \de\right) \to \left(\wedge^\bullet X,\, \de\right)$ is a quasi-isomorphism, that is,
 $$ i\colon H^\bullet_{dR}(\mathfrak{g};\C)\stackrel{\simeq}{\hookrightarrow} H^\bullet_{dR}(X;\C) \;.$$

 Indeed, the map of double complexes $i\colon \left(\wedge^{\bullet,\bullet}\duale{\mathfrak{g}}_\C,\, \del,\, \delbar\right)\to \left(\wedge^{\bullet,\bullet}X,\, \del,\, \delbar\right)$ induces a map between the corresponding Hodge and Fr\"olicher spectral sequences:
 $$ i \colon \left\{\left(E^{\bullet,\bullet}_r\left(\mathfrak{g}_\C\right), \de_r\right)\right\}_{r\in\N} \to \left\{\left(E^{\bullet,\bullet}_r\left(X\right), \de_r\right)\right\}_{r\in\N} \;.$$
 Since, see, e.g., \cite[Theorem 2.15]{mccleary},
 $$ E^{\bullet,\bullet}_1\left(\mathfrak{g}_\C\right) \;\simeq\; H^{\bullet, \bullet}_{\delbar}\left(\mathfrak{g}\right) \;\Rightarrow\; H^\bullet_{dR}\left(\mathfrak{g}_\C\right) \qquad \text{ and } \qquad E^{\bullet,\bullet}_1\left(X\right) \;\simeq\; H^{\bullet,\bullet}_{\delbar}(X) \;\Rightarrow\; H^{\bullet}_{dR}(X;\C) \;, $$
 one gets that, if $i\colon E^{\bullet,\bullet}_1\left(\mathfrak{g}_\C\right) \to E^{\bullet,\bullet}_1\left(X\right)$ is an isomorphism, then also $i \colon H^\bullet_{dR}\left(\mathfrak{g}_\C\right) \to H^{\bullet}_{dR}(X;\C)$ is an isomorphism, see, e.g., \cite[Theorem 3.5]{mccleary}.
\end{rem}

As a corollary of \cite[Theorem 1]{nomizu}, \cite[Theorem 1]{sakane}, \cite[Main Theorem]{cordero-fernandez-gray-ugarte}, \cite[Theorem 2, Remark 4]{console-fino}, \cite[Theorem 1.10]{rollenske}, \cite[Corollary 3.10]{rollenske-survey}, and Theorem \ref{thm:bc-invariant}, we get the following result, \cite[Theorem 3.8]{angella}.

\begin{thm}
\label{thm:bc-invariant-cor}
 Let $X=\left.\Gamma\right\backslash G$ be a nilmanifold endowed with a $G$-left-invariant complex structure $J$, and denote the Lie algebra naturally associated to $G$ by $\mathfrak{g}$. Suppose that one of the following conditions holds:
\begin{itemize}
 \item $X$ is holomorphically parallelizable;
 \item $J$ is an Abelian complex structure;
 \item $J$ is a nilpotent complex structure;
 \item $J$ is a rational complex structure;
 \item $\mathfrak{g}$ admits a torus-bundle series compatible with $J$ and with the rational structure induced by $\Gamma$;
 \item $\dim_\R\mathfrak{g}=6$ and $\mathfrak{g}$ is not isomorphic to $\mathfrak{h}_7:=\left(0^3,\, 12,\, 13,\, 23\right)$.
\end{itemize}
Then the de Rham, Dolbeault, Bott-Chern and Aeppli cohomologies can be computed as the cohomologies of the corresponding subcomplexes given by the space of $G$-left-invariant forms on $X$; in other words, the inclusions of the several subcomplexes of $G$-left-invariant forms on $X$ into the corresponding complexes of forms on $X$ are quasi-isomorphisms:
$$ i\colon H^\bullet_{dR}\left(\mathfrak{g};\R\right) \;\stackrel{\simeq}{\hookrightarrow}\; H^\bullet_{dR}(X;\R) \qquad \text{ and }\qquad i\colon H^{\bullet,\bullet}_{\sharp}\left(\mathfrak{g}_\C\right) \;\stackrel{\simeq}{\hookrightarrow}\; H^{\bullet,\bullet}_{\sharp}\left(X\right) \;,$$
for $\sharp\in\{\del,\,\delbar,\,BC,\,A\}$.
\end{thm}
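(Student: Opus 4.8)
The plan is to assemble the statement from the three facts about invariant cohomology that are already in hand, treating the four cohomologies in turn; the hypotheses in the bulleted list serve only to guarantee invariance of the \emph{Dolbeault} cohomology, and everything else then follows formally. First I would invoke the cited Dolbeault results: under any one of the listed conditions --- holomorphic parallelizability, $J$ Abelian, nilpotent, or rational, the existence of a compatible torus-bundle series, or $\dim_\R\mathfrak{g}=6$ with $\mathfrak{g}\not\simeq\mathfrak{h}_7$ --- the theorems of Sakane, Cordero--Fern\'andez--Gray--Ugarte, Console--Fino, and Rollenske say that the map of complexes \eqref{eq:dolb-complessi} is a quasi-isomorphism, that is, $i\colon H^{\bullet,\bullet}_{\delbar}\left(\mathfrak{g}_\C\right)\stackrel{\simeq}{\hookrightarrow}H^{\bullet,\bullet}_{\delbar}(X)$.

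Next I would address the de Rham case. For real coefficients, Nomizu's theorem gives $i\colon H^\bullet_{dR}(\mathfrak{g};\R)\stackrel{\simeq}{\hookrightarrow}H^\bullet_{dR}(X;\R)$ unconditionally for nilmanifolds, and tensoring with $\C$ yields the complex version; alternatively, as noted in the Remark following Theorem \ref{thm:bc-invariant}, the Dolbeault invariance already forces the de Rham invariance through the comparison of Hodge and Fr\"olicher spectral sequences. With both hypotheses of Theorem \ref{thm:bc-invariant} now in force, that theorem applies verbatim and yields the Bott-Chern statement $i\colon H^{\bullet,\bullet}_{BC}\left(\mathfrak{g}_\C\right)\stackrel{\simeq}{\hookrightarrow}H^{\bullet,\bullet}_{BC}(X)$.

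Finally I would deduce the Aeppli case from the Bott-Chern case by duality, rather than re-running the previous argument. Fix a $G$-left-invariant $J$-Hermitian metric $g$ on $X$, obtained by choosing a $J$-compatible inner product on $\mathfrak{g}$ and extending it by left-translations. As in the proof of Lemma \ref{lemma:inj}, the operators $\del$, $\delbar$, $\del^*$, $\delbar^*$, and hence $\tilde\Delta_{BC}$ and $\tilde\Delta_A$, all preserve the space of $G$-left-invariant forms, so one has invariant Hodge decompositions identifying $H^{p,q}_{BC}\left(\mathfrak{g}_\C\right)\simeq\ker\tilde\Delta_{BC}\lfloor_{\wedge^{p,q}\mathfrak{g}_\C^*}$ and $H^{p,q}_{A}\left(\mathfrak{g}_\C\right)\simeq\ker\tilde\Delta_{A}\lfloor_{\wedge^{p,q}\mathfrak{g}_\C^*}$. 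The Hodge-$*$-operator of $g$ also preserves invariance and, exactly as in Schweitzer's duality $*\colon H^{p,q}_{BC}(X)\simeq H^{n-q,n-p}_{A}(X)$, intertwines these harmonic spaces; this produces a commutative square relating the invariant and the full $*$-isomorphisms with the inclusions $i$. Since the vertical arrow $i\colon H^{p,q}_{BC}\left(\mathfrak{g}_\C\right)\to H^{p,q}_{BC}(X)$ is an isomorphism by the preceding step, the opposite vertical arrow $i\colon H^{n-q,n-p}_{A}\left(\mathfrak{g}_\C\right)\to H^{n-q,n-p}_{A}(X)$ is an isomorphism as well.

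The proof is therefore essentially bookkeeping, the genuine content having already been isolated in Theorem \ref{thm:bc-invariant}. The only point that demands care is the last one: one must check that the invariant Hodge theory and Schweitzer's $*$-duality are compatible with the inclusion of $G$-left-invariant forms, which is precisely why the Hermitian metric $g$ must be chosen left-invariant.
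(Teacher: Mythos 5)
Your proposal is correct and follows essentially the same route as the paper: the hypotheses are used only to secure the Dolbeault isomorphism via the cited results, the de Rham hypothesis comes from Nomizu's theorem (or the spectral-sequence remark), Theorem \ref{thm:bc-invariant} then gives the Bott-Chern case, and your Hodge-$*$-duality argument for Aeppli is precisely the content of Remark \ref{rem:aeppli-inv}, including the care taken to choose the Hermitian metric $G$-left-invariant. The only cosmetic omission is the conjugate Dolbeault case $\sharp=\del$, which follows immediately from the $\delbar$ case by conjugation.
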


\begin{rem}\label{rem:inv-harmonic}
 Note that Theorem \ref{thm:bc-invariant-cor}, and \cite[Corollary 4.2]{hattori}, allow to straightforwardly compute the de Rham, Dolbeault, Bott-Chern, and Aeppli cohomologies of nilmanifolds, endowed with certain left-invariant complex structures, respectively the de Rham cohomology of completely-solvable solvmanifolds, just by computing the space of left-invariant ($\Delta$, or $\overline\square$, or $\tilde\Delta_{BC}$, or $\tilde \Delta_A$-)harmonic forms with respect to a left-invariant Riemannian, or Hermitian, metric.

 Indeed, suppose that $X$ is a nilmanifold, endowed with a left-invariant complex structure, or a completely-solvable solvmanifold, satisfying $i\colon H^\bullet_{dR}\left(\mathfrak{g};\R\right) \stackrel{\simeq}{\hookrightarrow}\; H^\bullet_{dR}(X;\R)$, or $i\colon H^{\bullet,\bullet}_{\sharp}\left(\mathfrak{g}_\C\right) \;\stackrel{\simeq}{\hookrightarrow}\; H^{\bullet,\bullet}_{\sharp}\left(X\right)$, for some $\sharp\in\{\del,\,\delbar,\,BC,\,A\}$. Let $g$ be a left-invariant Riemannian, or Hermitian, metric on $X$. Hence, the operators $\Delta$, $\overline\square$, $\tilde\Delta_{BC}$, $\tilde\Delta_{A}$ send the subspace of left-invariant forms to the subspace of left-invariant forms, and induce the self-adjoint operators
 $$
   \Delta \;\in\; \End\left(\wedge^\bullet\duale{\mathfrak{g}}\right) \;,
   \qquad
   \overline\square \;\in\; \End\left(\wedge^{\bullet,\bullet} \duale{\mathfrak{g}}_\C\right) \;,
   \qquad
   \tilde\Delta_{BC} \;\in\; \End\left(\wedge^{\bullet,\bullet} \duale{\mathfrak{g}}_\C\right) \;,
   \qquad
   \tilde\Delta_{A} \;\in\; \End\left(\wedge^{\bullet,\bullet} \duale{\mathfrak{g}}_\C\right) \;,$$
with respect to the inner products $\left\langle \sspace,\, \ssspace \right\rangle$ induced by $g$ on the space $\wedge^{\bullet} \duale{\mathfrak{g}}$ and on the space $\wedge^{\bullet,\bullet} \duale{\mathfrak{g}}_\C$. Hence, one gets the orthogonal decompositions
\begin{align*}
  &\wedge^\bullet\duale{\mathfrak{g}} \;=\; \ker\Delta\oplus \imm\Delta \;,
  &\quad
  &\wedge^{\bullet,\bullet}\duale{\mathfrak{g}}_\C \;=\; \ker\overline\square \oplus \imm\overline\square \;,\\[5pt]
  &\wedge^{\bullet,\bullet}\duale{\mathfrak{g}}_\C \;=\; \ker\tilde\Delta_{BC} \oplus \imm\tilde\Delta_{BC} \;,
  &\quad
  &\wedge^{\bullet,\bullet}\duale{\mathfrak{g}}_\C \;=\; \ker\tilde\Delta_{A} \oplus \imm\tilde\Delta_{A}
\end{align*}
(one could argue also by using the F.~A. Belgun symmetrization trick \cite[Theorem 7]{belgun}). It follows that
 $$
  H^\bullet_{dR}\left(\mathfrak{g};\R\right) \;\simeq\; \ker\Delta \;,
  \qquad
  H^{\bullet,\bullet}_{\delbar}\left(\mathfrak{g}_\C\right) \;\simeq\; \ker\overline\square \;,
  \qquad
  H^{\bullet,\bullet}_{BC}\left(\mathfrak{g}_\C\right) \;\simeq\; \ker\tilde\Delta_{BC} \;,
  \qquad
  H^{\bullet,\bullet}_{A}\left(\mathfrak{g}_\C\right) \;\simeq\; \ker\tilde\Delta_{A} \;.
$$
\end{rem}

\begin{rem}\label{rem:aeppli-inv}
 Let $X=\left.\Gamma\right\backslash G$ be a $2n$-dimensional solvmanifold endowed with a $G$-left-invariant complex structure $J$, and denote the Lie algebra naturally associated to $G$ by $\mathfrak{g}$.
 The map of complexes \eqref{eq:a-complessi} induces an injective homomorphism
 $$ i\colon H^{\bullet,\bullet}_{A}\left(\mathfrak{g}_\C\right) \hookrightarrow H^{\bullet,\bullet}_{A}(X) \;.$$
 Furthermore, if $i\colon H^{\bullet,\bullet}_{BC}\left(\mathfrak{g}_\C\right) \stackrel{\simeq}{\hookrightarrow}H^{\bullet,\bullet}_{BC}(X)$,
 then the map of complexes \eqref{eq:a-complessi} is a quasi-isomorphism, that is,
 $$ i\colon H^{\bullet,\bullet}_{A}\left(\mathfrak{g}_\C\right) \stackrel{\simeq}{\hookrightarrow}H^{\bullet,\bullet}_{A}(X) \;. $$

 Indeed, fix a $G$-left-invariant Hermitian metric $g$ on $X$.
 Recall that
 $$ *\colon H^{\bullet_1,\bullet_2}_{A}(X)\stackrel{\simeq}{\to} H^{n-\bullet_2,n-\bullet_1}_{BC}(X) $$
 is an isomorphism, \cite[\S2.c]{schweitzer}.
 Analogously, note that, by Remark \ref{rem:inv-harmonic} and since $g$ is $G$-left-invariant, the map $*\colon \wedge^{\bullet_1,\bullet_2}\duale{\mathfrak{g}}_\C\stackrel{\simeq}{\to} \wedge^{n-\bullet_2,n-\bullet_1}\duale{\mathfrak{g}}_\C$ induces an isomorphism
 $$ *\colon H^{\bullet_1,\bullet_2}_{A}\left(\mathfrak{g}_\C\right)\stackrel{\simeq}{\to} H^{n-\bullet_2,n-\bullet_1}_{BC}\left(\mathfrak{g}_\C\right) \;.$$
 Note also that the diagram
 $$
  \xymatrix{
   H^{\bullet_1,\bullet_2}_{A}\left(\mathfrak{g}_\C\right) \ar[r]^{i} \ar[d]^{\simeq}_{*} & H^{\bullet_1,\bullet_2}_{A}(X) \ar[d]^{*}_{\simeq} \\
   H^{n-\bullet_2,n-\bullet_1}_{BC}\left(\mathfrak{g}_\C\right) \ar[r]_{i} & H^{n-\bullet_2,n-\bullet_1}_{BC}(X)
  }
 $$
 commutes, since $g$ is $G$-left-invariant. Since the map $i\colon H^{n-\bullet_2,n-\bullet_1}_{BC}\left(\mathfrak{g}_\C\right) \hookrightarrow H^{n-\bullet_2,n-\bullet_1}_{BC}(X)$ is injective by Lemma \ref{lemma:inj}, then also the map $H^{\bullet_1,\bullet_2}_{A}\left(\mathfrak{g}_\C\right)\to H^{\bullet_1,\bullet_2}_{A}(X)$ is injective. If $i\colon H^{n-\bullet_2,n-\bullet_1}_{BC}\left(\mathfrak{g}_\C\right) \hookrightarrow H^{n-\bullet_2,n-\bullet_1}_{BC}(X)$ is actually an isomorphism, then also $i\colon H^{\bullet_1,\bullet_2}_{A}\left(\mathfrak{g}_\C\right) \to H^{\bullet_1,\bullet_2}_{A}(X)$ is an isomorphism.
\end{rem}

\medskip

A slight modification of \cite[Theorem 1]{console-fino} by S. Console and A. Fino gives the following result, which says that the property of computing the Bott-Chern cohomology using just left-invariant forms is open in the space of left-invariant complex structures on solvmanifolds, \cite[Theorem 3.9]{angella}.

\begin{thm}
\label{thm:bc-invariant-open}
 Let $X=\left. \Gamma \right\backslash G$ be a solvmanifold endowed with a $G$-left-invariant complex structure $J$, and denote the Lie algebra naturally associated to $G$ by $\mathfrak{g}$. Let $\sharp\in\{\del,\,\delbar,\,BC,\,A\}$. Suppose that
$$ i\colon H^{\bullet,\bullet}_{\sharp_J}\left(\mathfrak{g}_\C\right) \;\stackrel{\simeq}{\hookrightarrow}\; H^{\bullet,\bullet}_{\sharp_J}\left(X\right) \;.$$
Then there exists an open neighbourhood $\mathcal{U}$ of $J$ in $\mathcal{C}\left(\mathfrak{g}\right)$ such that any $\tilde J\in \mathcal{U}$ still satisfies
$$ i\colon H^{\bullet,\bullet}_{\sharp_{\tilde{J}}}\left(\mathfrak{g}_\C\right) \;\stackrel{\simeq}{\hookrightarrow}\; H^{\bullet,\bullet}_{\sharp_{\tilde{J}}}\left(X\right) \;.$$
In other words, the set
$$ \mathcal{U} \;:=\; \left\{ J\in\mathcal{C}\left(\mathfrak{g}\right) \st i\colon H^{\bullet,\bullet}_{\sharp_J}\left(\mathfrak{g}_\C\right) \;\stackrel{\simeq}{\hookrightarrow}\; H^{\bullet,\bullet}_{\sharp_J}\left(X\right) \right\} $$
is open in $\mathcal{C}\left(\mathfrak{g}\right)$.
\end{thm}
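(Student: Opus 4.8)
The plan is to follow the strategy of S. Console and A. Fino's Theorem \ref{thm:dolbeault-invariant-open}, \cite[Theorem 1]{console-fino}, adapting it from the Dolbeault setting to each of the four cohomologies by working with the relevant self-adjoint elliptic operator in place of $\overline\square$. Fix $\sharp\in\{\del,\,\delbar,\,BC,\,A\}$, and denote by $\Delta_{\sharp,J}$ the associated \kth{2}- or \kth{4}-order self-adjoint elliptic operator (that is, $\square$, $\overline\square$, $\tilde\Delta_{BC}$, or $\tilde\Delta_{A}$) determined by a $G$-left-invariant $J$-Hermitian metric $g_J$; one may and does choose $g_J$ to depend continuously (indeed real-analytically) on $J\in\mathcal{C}(\mathfrak{g})$, for instance by symmetrizing a fixed background inner product through $J$. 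Since $g_J$ is $G$-left-invariant, $\Delta_{\sharp,J}$ preserves the finite-dimensional space $\wedge^{\bullet,\bullet}\mathfrak{g}_\C^*$ of $G$-left-invariant forms, and hence, being self-adjoint, also its $L^2$-orthogonal complement.

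First I would record, as in Remark \ref{rem:inv-harmonic}, that the invariant $\Delta_{\sharp,J}$-harmonic forms $\mathcal{H}^{\mathrm{inv}}_J:=\ker\Delta_{\sharp,J}\cap\wedge^{\bullet,\bullet}\mathfrak{g}_\C^*$ realize $H^{\bullet,\bullet}_{\sharp_J}(\mathfrak{g}_\C)$, while $\mathcal{H}_J:=\ker\Delta_{\sharp,J}$ realizes $H^{\bullet,\bullet}_{\sharp_J}(X)$; since $\Delta_{\sharp,J}$ restricted to invariant forms is the restriction of the operator on $X$, one has the inclusion $\mathcal{H}^{\mathrm{inv}}_J\subseteq\mathcal{H}_J$, and under the Hodge isomorphisms this inclusion is exactly the image of $i$. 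Because $i$ is always injective (Lemma \ref{lemma:inj}, \cite[Lemma 9]{console-fino}, and Remark \ref{rem:aeppli-inv}), the map $i$ is an isomorphism precisely when $\mathcal{H}^{\mathrm{inv}}_J=\mathcal{H}_J$, equivalently when $\mathcal{H}_J\cap\left(\mathcal{H}^{\mathrm{inv}}_J\right)^\perp=\{0\}$. A short computation, using that $\Delta_{\sharp,J}$-harmonic forms are $L^2$-orthogonal to the relevant images ($\imm\del+\imm\delbar$, $\imm\del\delbar$, $\imm(\del\delbar)^*$, as appropriate), shows that a harmonic form is orthogonal to $\mathcal{H}^{\mathrm{inv}}_J$ if and only if it is orthogonal to the whole invariant subspace $\wedge^{\bullet,\bullet}\mathfrak{g}_\C^*$; hence $\mathcal{H}_J\cap\left(\mathcal{H}^{\mathrm{inv}}_J\right)^\perp=\ker\Delta_{\sharp,J}\cap\left(\wedge^{\bullet,\bullet}\mathfrak{g}_\C^*\right)^\perp$.

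The key step is then to realize this last space as the kernel of a single elliptic operator whose kernel dimension is upper-semi-continuous in $J$. Let $\pi_J$ be the $L^2$-orthogonal projection onto $\wedge^{\bullet,\bullet}\mathfrak{g}_\C^*$, a finite-rank smoothing operator depending continuously on $J$, and set
$$ \Delta'_{\sharp,J} \;:=\; \Delta_{\sharp,J}+\pi_J \;. $$
This operator is self-adjoint and non-negative, has the same principal symbol as $\Delta_{\sharp,J}$ and is therefore still elliptic, and satisfies $\ker\Delta'_{\sharp,J}=\ker\Delta_{\sharp,J}\cap\ker\pi_J=\mathcal{H}_J\cap\left(\wedge^{\bullet,\bullet}\mathfrak{g}_\C^*\right)^\perp$, since $\Delta'_{\sharp,J}u=0$ forces $\langle\Delta_{\sharp,J}u,\,u\rangle+\|\pi_J u\|^2=0$ and hence both non-negative terms to vanish. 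By the standard upper-semi-continuity of the dimension of the kernel for a continuous family of self-adjoint elliptic operators, \cite[Theorem 4]{kodaira-spencer-3}, \cite[Theorem 7.3]{kodaira}, the function
$$ \mathcal{C}(\mathfrak{g})\;\ni\; J\;\mapsto\; \dim_\C\ker\Delta'_{\sharp,J}\;=\;\dim_\C\left(\mathcal{H}_J\cap\left(\mathcal{H}^{\mathrm{inv}}_J\right)^\perp\right)\;\in\;\N $$
is upper-semi-continuous. Consequently the set where it vanishes is open, and by the equivalence established above this is exactly the set $\mathcal{U}$ on which $i\colon H^{\bullet,\bullet}_{\sharp_J}(\mathfrak{g}_\C)\stackrel{\simeq}{\hookrightarrow}H^{\bullet,\bullet}_{\sharp_J}(X)$; this yields the openness of $\mathcal{U}$.

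The main obstacle I anticipate is analytic bookkeeping rather than conceptual: one must check carefully that the family $J\mapsto\Delta'_{\sharp,J}$ is continuous in the sense required by the semi-continuity theorem, which in turn requires that the invariant metric $g_J$, the bigrading, the operators $\del_J,\,\delbar_J$ and their $g_J$-adjoints, and the projection $\pi_J$ all depend continuously (indeed real-analytically) on $J\in\mathcal{C}(\mathfrak{g})$ — the point where the smooth dependence of all these data on $J$ must be spelled out. The only additional care compared with the $\delbar$-case of \cite{console-fino} is that for $\sharp\in\{BC,\,A\}$ the operators $\tilde\Delta_{BC}$ and $\tilde\Delta_A$ are of fourth order; but since the argument uses only their ellipticity and self-adjointness, adding the smoothing term $\pi_J$ and invoking the same semi-continuity principle goes through verbatim.
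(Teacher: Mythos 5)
Your argument is correct, and its core reduction is exactly the paper's (both follow S.~Console and A.~Fino's strategy from \cite[Theorem 1]{console-fino}): injectivity of $i$ plus the Hodge-theoretic identifications show that $i$ fails to be surjective precisely on $\ker\Delta_{\sharp,J}\cap\left(\wedge^{\bullet,\bullet}\mathfrak{g}_\C^*\right)^\perp$, so everything reduces to the upper-semi-continuity in $J$ of the dimension of that space. Where you genuinely differ is in how that semi-continuity is obtained. The paper leaves the operator untouched: since $\Delta_{\sharp,J_t}$ preserves the orthogonal complement of the left-invariant forms, it takes a complete orthonormal eigenbasis $\left\{e_i(t)\right\}_{i}$ of $\Delta_{\sharp,J_t}$ spanning that complement, invokes the continuity of the eigenvalues $t\mapsto a_i(t)$ and the discreteness of the spectrum, \cite[Theorem 1, Theorem 2]{kodaira-spencer-3}, and shows that the counting function $\Psi_c(t):=\dim\Span\left\{e_i(t)\st a_i(t)<c\right\}$ is locally constant, dominates the obstruction dimension for every $c>0$, and equals it at the base point once $c$ is chosen below the bottom of the positive spectrum there. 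You instead absorb the invariance constraint into the single operator $\Delta'_{\sharp,J}=\Delta_{\sharp,J}+\pi_J$ and quote kernel semi-continuity: this is conceptually cleaner, but with one caveat --- \cite[Theorem 4]{kodaira-spencer-3} and \cite[Theorem 7.3]{kodaira} are stated for families of \emph{differential} operators, while $\Delta'_{\sharp,J}$ is a differential operator plus a finite-rank smoothing term, so the citation does not apply verbatim; you must either extend it (routine: upper-semi-continuity of $\dim_\C\ker$ holds for any norm-continuous family of self-adjoint Fredholm operators, and the needed continuity follows once $g_J$, the bigrading, the adjoints, and $\pi_J$ depend continuously on $J$, as you arrange) or argue on the invariant complement directly, as the paper does, which sidesteps the issue. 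Two small points deserve to be made explicit in your write-up: the implication $\left\langle \Delta_{\sharp,J}u \left| u \right.\right\rangle=0 \Rightarrow \Delta_{\sharp,J}u=0$, which you use to compute $\ker\Delta'_{\sharp,J}$, holds because each of $\square$, $\overline\square$, $\tilde\Delta_{BC}$, $\tilde\Delta_{A}$ is a sum of terms of the form $A^*A$ or $AA^*$; and your claim that a global harmonic form orthogonal to the invariant harmonic forms is orthogonal to all invariant forms is correct (harmonic forms are orthogonal to $\imm\Delta_{\sharp,J}$, and the invariant forms split as invariant-harmonic forms plus $\Delta_{\sharp,J}$ of invariant forms, by Remark \ref{rem:inv-harmonic}); this is implicitly the same identification the paper makes when it writes $\left(H^{\bullet,\bullet}_{\sharp_{J_t}}\left(\mathfrak{g}_\C\right)\right)^\perp$.
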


\begin{proof}
 As a matter of notation, for $\varepsilon>0$ small enough, we consider
 $$ \left\{\left(X,\,J_t\right) \st t\in \Delta(0,\varepsilon)\right\} \twoheadrightarrow \Delta(0,\varepsilon) $$
 a complex-analytic family of $G$-left-invariant complex structures on $X$, where $\Delta(0,\varepsilon):=\left\{t\in\C^m\st \left|t\right|<\varepsilon\right\}$ for some $m\in\N\setminus\{0\}$; moreover, let $\left\{g_t\right\}_{t\in \Delta(0,\varepsilon)}$ be a family of $J_t$-Hermitian $G$-left-invariant metrics on $X$ depending smoothly on $t$. We will denote by $\delbar_t:=\delbar_{J_t}$ and $\delbar_t^*:=-*_{g_t}\del_{J_t}*_{g_t}$ the $\delbar$ operator and its $g_t$-adjoint respectively for the Hermitian structure $\left(J_t,\,g_t\right)$ and we set $\Delta_t:=\Delta_{\sharp_{J_t}}$ one of the differential operators involved in the definition of the Dolbeault, conjugate Dolbeault, Bott-Chern or Aeppli cohomologies with respect to $\left(J_t,\,g_t\right)$; we remark that $\Delta_t$ is a self-adjoint elliptic differential operator for all the considered cohomologies.

 By hypothesis, we have that $\left(H^{\bullet,\bullet}_{\sharp_{J_0}}\left(\mathfrak{g}_\C\right)\right)^\perp=\{0\}$, where the orthogonality is meant with respect to the inner product induced by $g_0$, and we have to prove the same replacing $0$ with $t\in\Delta(0,\varepsilon)$. Therefore, it will suffice to prove that
$$ \Delta(0,\varepsilon)\ni t \mapsto \dim_\C\left(H^{\bullet,\bullet}_{\sharp_{J_t}}\left(\mathfrak{g}_\C\right)\right)^\perp\in\N $$
is an upper-semi-continuous function at $0$.
 For any $t\in\Delta(0,\varepsilon)$, being $\Delta_t$ a self-adjoint elliptic differential operator, there exists a complete orthonormal basis $\{e_i(t)\}_{i\in I}$ of eigen-forms for $\Delta_t$ spanning $\left(\wedge^{\bullet,\bullet}_{J_t}\mathfrak{g}_\C^*\right)^\perp$, the orthogonal complement of the space of $G$-left-invariant forms, see \cite[Theorem 1]{kodaira-spencer-3}. For any $i\in I$ and $t\in\Delta(0,\varepsilon)$, let $a_i(t)$ be the eigen-value corresponding to $e_i(t)$; $\Delta_t$ depending differentiably on $t\in\Delta(0,\varepsilon)$, for any $i\in I$, the function $\Delta(0,\varepsilon)\ni t \mapsto a_i(t)\in\C$ is continuous, see \cite[Theorem 2]{kodaira-spencer-3}. Therefore, for any $t_0\in\Delta(0,\varepsilon)$, choosing a constant $c>0$ such that $c\not\in \overline{\left\{a_i(t_0) \st i\in I\right\}}$, the function
 $$ \Psi_c\colon\Delta(0,\varepsilon)\to\N \;,\qquad t \mapsto \dim \Span\left\{e_i(t) \st a_i(t)<c\right\} $$
is locally constant at $t_0$; moreover, for any $t\in\Delta(0,\varepsilon)$ and for any $c>0$, we have
$$ \Psi_c(t) \;\geq\; \dim_\C \left(H^{\bullet,\bullet}_{\sharp_{J_t}}\left(\mathfrak{g}_\C\right)\right)^\perp \;.$$
Since the spectrum of $\Delta_{t_0}$ has no accumulation point for any $t_0\in\Delta(0,\varepsilon)$, see \cite[Theorem 1]{kodaira-spencer-3}, the theorem follows choosing $c>0$ small enough so that $\Psi_c(0)=\dim_\C\left(H^{\bullet,\bullet}_{\sharp_{J_0}}\left(\mathfrak{g}_\C\right)\right)^\perp$.
\end{proof}

\medskip

In particular, the left-invariant complex structures on nilmanifolds belonging to the classes of Theorem \ref{thm:bc-invariant-cor}, and their small deformations satisfy the following conjecture, \cite[Conjecture 3.10]{angella}, which generalizes Conjecture \ref{conj:dolbeault}.

\begin{conj}
\label{conj:BC}
 Let $X=\left. \Gamma \right\backslash G$ be a nilmanifold endowed with a $G$-left-invariant complex structure $J$, and denote the Lie algebra naturally associated to $G$ by $\mathfrak{g}$. Then the de Rham, Dolbeault, Bott-Chern and Aeppli cohomologies can be computed as the cohomologies of the corresponding subcomplexes given by the space of $G$-left-invariant forms on $X$, that is,
$$ \dim_\R \left(H^\bullet_{dR}\left(\mathfrak{g};\R\right)\right)^{\perp} \;=\; 0 \qquad \text{ and }\qquad \dim_\C \left(H^{\bullet,\bullet}_{\sharp}\left(\mathfrak{g}_\C\right)\right)^{\perp}\;=\;0 \;,$$
where $\sharp\in\{\del,\,\delbar,\,BC,\,A\}$, and the orthogonality is meant with respect to the inner product induced by a given $J$-Hermitian $G$-left-invariant metric $g$ on $X$.
\end{conj}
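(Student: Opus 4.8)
The plan is to reduce Conjecture \ref{conj:BC} to its Dolbeault case, Conjecture \ref{conj:dolbeault}, and then to attack the latter. The reduction is immediate from the machinery already assembled. First, the de Rham part $\dim_\R\left(H^\bullet_{dR}(\mathfrak{g};\R)\right)^\perp=0$ is exactly K. Nomizu's theorem \cite[Theorem 1]{nomizu}, which holds for every nilmanifold, so nothing remains there. Next, the conjugate-Dolbeault statement ($\sharp=\del$) follows from the Dolbeault statement ($\sharp=\delbar$) by conjugation, which interchanges $H^{p,q}_{\delbar}$ and $H^{q,p}_{\del}$ and preserves the subspace of $G$-left-invariant forms. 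Granting the Dolbeault isomorphism $i\colon H^{\bullet,\bullet}_{\delbar}\left(\mathfrak{g}_\C\right)\stackrel{\simeq}{\to} H^{\bullet,\bullet}_{\delbar}(X)$ together with the de Rham one, Theorem \ref{thm:bc-invariant} yields the Bott-Chern isomorphism $i\colon H^{\bullet,\bullet}_{BC}\left(\mathfrak{g}_\C\right)\stackrel{\simeq}{\to} H^{\bullet,\bullet}_{BC}(X)$, and Remark \ref{rem:aeppli-inv} upgrades this to the Aeppli isomorphism by Hodge-$*$ duality with respect to an invariant Hermitian metric. Hence the whole conjecture collapses onto Conjecture \ref{conj:dolbeault}.

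For the Dolbeault case I would combine two complementary techniques. The first is the fibration-based induction: every nilmanifold is an iterated principal torus bundle (\cite[Lemma 4]{malcev}, \cite[Theorem 3]{matsushima}), and when $J$ is compatible with a torus-bundle series one runs a Borel-type spectral sequence for $T\to X\to X'$, computing $H^{\bullet,\bullet}_{\delbar}(X)$ from the (invariant-computable) fiber and the (invariant-computable, by induction) base, as in \cite{cordero-fernandez-gray-ugarte}, \cite{console-fino}, \cite{rollenske}. The second is a deformation argument: by \cite[Lemma 9]{console-fino} the map $i$ is always injective, so it suffices to show $\dim_\C\left(H^{\bullet,\bullet}_{\delbar}\left(\mathfrak{g}_\C\right)\right)^\perp=0$; this holds for rational complex structures by \cite[Theorem 2]{console-fino}, and by the openness Theorem \ref{thm:bc-invariant-open} (with $\sharp=\delbar$, equivalently Theorem \ref{thm:dolbeault-invariant-open}) the good locus $\mathcal{U}$ is open in $\mathcal{C}\left(\mathfrak{g}\right)$, so every $J$ lying in the closure of the rational locus inherits the isomorphism by continuity of the jumping dimension.

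The hard part, and the reason Conjecture \ref{conj:dolbeault} remains open in full generality, is precisely that neither technique covers an arbitrary invariant $J$: a general $J$ need not admit any $J$-compatible torus-bundle series (the obstruction in dimension six being exactly $\mathfrak{h}_7:=\left(0^3,\,12,\,13,\,23\right)$), so the spectral-sequence induction breaks down, while the connected component of $\mathcal{C}\left(\mathfrak{g}\right)$ containing $J$ may contain no rational, nilpotent, nor Abelian structure, so that openness-plus-density cannot reach $J$. Overcoming this would require either a uniform semi-continuity estimate controlling $\dim_\C\left(H^{\bullet,\bullet}_{\delbar}\left(\mathfrak{g}_\C\right)\right)^\perp$ across all of $\mathcal{C}\left(\mathfrak{g}\right)$, or a direct harmonic argument showing that, for an invariant Hermitian metric, every Dolbeault class admits an invariant $\overline\square$-harmonic representative, for instance by averaging against the bi-invariant volume form via the Belgun symmetrization trick \cite[Theorem 7]{belgun}. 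The genuine obstacle is establishing that symmetrization commutes with the $\overline\square$-harmonic projection, since $\overline\square$ need not preserve invariant forms unless controlled carefully; this commutation is where a new idea is needed.
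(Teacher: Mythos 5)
The statement you were asked to prove is stated as a \emph{Conjecture} in the paper, and the paper contains no proof of it: it only records that the left-invariant complex structures covered by Theorem \ref{thm:bc-invariant-cor}, and their small deformations via Theorem \ref{thm:bc-invariant-open}, satisfy it. Your reduction is precisely the paper's own chain of results: the de Rham part is K. Nomizu's theorem \cite[Theorem 1]{nomizu}; conjugation handles $\sharp=\del$; Theorem \ref{thm:bc-invariant} promotes the de Rham and Dolbeault isomorphisms to the Bott-Chern one; Remark \ref{rem:aeppli-inv} yields the Aeppli case by Hodge-$*$ duality; and the injectivity statements (Lemma \ref{lemma:inj} and \cite[Lemma 9]{console-fino}) make the $\perp$-formulation equivalent to surjectivity of $i$. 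So you have correctly collapsed Conjecture \ref{conj:BC} onto Conjecture \ref{conj:dolbeault}, which is open, exactly as the paper intends; in this sense your proposal matches the paper, and no complete proof could be expected. One caveat in your partial-results paragraph: Theorem \ref{thm:bc-invariant-open} (equivalently Theorem \ref{thm:dolbeault-invariant-open}) gives that the good locus $\mathcal{U}$ is \emph{open}, so the isomorphism propagates to a neighbourhood of each rational structure; it does \emph{not} propagate to the \emph{closure} of the rational locus, since upper-semi-continuity of $t\mapsto\dim_\C\left(H^{\bullet,\bullet}_{\delbar}\left(\mathfrak{g}_\C\right)\right)^\perp$ at a limit point $J$ bounds nearby values by the value at $J$, which is vacuous when the nearby values are $0$.

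A second technical correction to your closing diagnosis: for a $G$-left-invariant Hermitian metric, the operators $\del$, $\delbar$, $\del^*$, $\delbar^*$, and hence $\overline\square$, $\tilde\Delta_{BC}$, $\tilde\Delta_{A}$, \emph{do} preserve the space of invariant forms --- this is used explicitly in the proof of Lemma \ref{lemma:inj} and in Remark \ref{rem:inv-harmonic}. Moreover, since $J$, $g$, and the bigrading are invariant, the Belgun map $\mu$ of \cite[Theorem 7]{belgun} commutes with $\de$, with the type decomposition, and with the Hodge-$*$-operator, hence with $\delbar$, $\delbar^*$, and $\overline\square$; indeed, nilpotent groups being unimodular, $\mu$ is the $\mathrm{L}^2$-orthogonal projection onto invariant forms and sends $\overline\square$-harmonic forms to invariant $\overline\square$-harmonic forms. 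So there is no commutation obstacle. The genuine gap sits elsewhere: $\mu$ annihilates exactly the orthogonal complement $\left(H^{\bullet,\bullet}_{\delbar}\left(\mathfrak{g}_\C\right)\right)^\perp$ inside the harmonic space, and nothing forces a harmonic form lying in that complement to vanish; equivalently, one cannot show that symmetrizing a harmonic representative preserves its Dolbeault class --- for de Rham this follows from Nomizu's quasi-isomorphism, but for Dolbeault it is precisely the content of Conjecture \ref{conj:dolbeault}. That circularity, rather than any failure of commutation, is where a new idea is needed.
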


\section{The cohomologies of the Iwasawa manifold and of its small deformations}\label{sec:computations-iwasawa}

The Iwasawa manifold is one of the simplest example of non-K\"ahler complex manifold: as such, it has been studied by several authors, and it has turned out to be a fruitful source of interesting behaviours, see, e.g., \cite{fernandez-gray, nakamura, alessandrini-bassanelli, bassanelli, abbena-garbiero-salamon, ketsetzis-salamon, ye, schweitzer, angella-tomassini-1, angella, franzini}.

In this section, we recall the construction of the Iwasawa manifold \S\ref{sec:iwasawa}, see, e.g., \cite{fernandez-gray}, \cite[\S2]{nakamura}, and of its Kuranishi space, \S\ref{sec:deformations-iwasawa}, see \cite[\S3]{nakamura}; then we write down the de Rham cohomology, \S\ref{sec:derham-iwasawa}, and the Dolbeault cohomology, \S\ref{sec:dolbeault-iwasawa}, (using \cite[Theorem 1]{nomizu}, and \cite[Theorem 1]{sakane} and \cite[Theorem 1]{console-fino}), and we compute the Bott-Chern and Aeppli cohomologies, \S\ref{sec:bott-chern-iwasawa}, (using Theorem \ref{thm:bc-invariant-cor} and Theorem \ref{thm:bc-invariant-open}), of the Iwasawa manifold and of its small deformations.

\subsection{The Iwasawa manifold and its small deformations}\label{subsec:iwasawa}

\subsubsection{The Iwasawa manifold}\label{sec:iwasawa}
Let $\mathbb{H}(3;\mathbb{C})$ be the $3$-dimensional \emph{Heisenberg group} over $\mathbb{C}$ defined by
$$
\mathbb{H}(3;\mathbb{C}) \;:=\; \left\{
\left(
\begin{array}{ccc}
 1 & z^1 & z^3 \\
 0 &  1  & z^2 \\
 0 &  0  &  1
\end{array}
\right) \in \mathrm{GL}(3;\mathbb{C}) \st z^1,\,z^2,\,z^3 \in\C \right\}
\;,
$$
where the product is the one induced by matrix multiplication.
(Equivalently, one can consider $\mathbb{H}(3;\mathbb{C})$ as isomorphic to $\left(\C^3,\,*\right)$, where the group structure $*$ on $\C^3$ is defined as
$$ \left. \left( z_1, \; z_2, \; z_3 \right) \;*\; \left( w_1, \; w_2, \; w_3 \right) \;:=\;
\left( z_1+w_1, \; z_2+w_2, \; z_3+z_1 w_2+w_3 \right) \;. \right) $$
It is straightforward to prove that $\mathbb{H}(3;\C)$ is a connected simply-connected complex $2$-step nilpotent Lie group, that is, the Lie algebra $\left(\mathfrak{h}_3,\,\left[\sspace,\ssspace\right]\right)$ naturally associated to $\mathbb{H}(3;\C)$ satisfies $\left[\mathfrak{h}_3,\mathfrak{h}_3\right]\neq 0$ and $\left[\mathfrak{h}_3,\left[\mathfrak{h}_3,\mathfrak{h}_3\right]\right]=0$.

One finds that
$$
\left\{
\begin{array}{rcl}
 \varphi^1 &:=& \de z^1 \\[5pt]
 \varphi^2 &:=& \de z^2 \\[5pt]
 \varphi^3 &:=& \de z^3-z^1\,\de z^2
\end{array}
\right.
$$
is a $\mathbb{H}(3;\C)$-left-invariant co-frame for the space of $(1,0)$-forms on $\mathbb{H}(3;\C)$, and that the structure equations with respect to this co-frame are
$$
\left\{
\begin{array}{rcl}
 \de\varphi^1 &=& 0 \\[5pt]
 \de\varphi^2 &=& 0 \\[5pt]
 \de\varphi^3 &=& -\varphi^1\wedge\varphi^2
\end{array}
\right. \;.
$$

\smallskip

Consider the action on the left of $\mathbb{H}\left(3;\Z\left[\im\right]\right):=\mathbb{H}(3;\C)\cap\GL\left(3;\Z\left[\im\right]\right)$ on $\mathbb{H}\left(3;\C\right)$ and take the compact quotient
$$ \mathbb{I}_3 := \left. \mathbb{H}\left(3;\Z\left[\im\right]\right) \right\backslash \mathbb{H}(3;\C)\;. $$
One gets that $\mathbb{I}_3$ is a $3$-dimensional complex nilmanifold, whose ($\mathbb{H}(3;\C)$-left-invariant) complex structure $J_{\zero}$ is the one inherited by the standard complex structure on $\C^3$; $\mathbb{I}_3$ is called the \emph{Iwasawa manifold}.

The forms $\varphi^1$, $\varphi^2$ and $\varphi^3$, being $\mathbb{H}(3;\C)$-left-invariant, define a co-frame also for $\left(T^{1,0}\I_3\right)^*$. Note that $\I_3$ is a holomorphically parallelizable manifold, that is, its holomorphic tangent bundle is holomorphically trivial. Since, for example, $\varphi^3$ is a non-closed holomorphic form, it follows that $\I_3$ admits no K\"ahler metric. In fact, one can show that $\I_3$ is not formal, having a non-zero Massey triple product, see \cite[page 158]{fernandez-gray}; therefore the underlying differentiable manifold of $\I_3$ has no complex structure admitting K\"ahler metrics, see \cite[Main Theorem]{deligne-griffiths-morgan-sullivan}, even though all the topological obstructions concerning the Betti numbers are satisfied. Nevertheless, $\mathbb{I}_3$ admits the balanced metric $\omega:=\sum_{j=1}^{3}\varphi^j\wedge\bar\varphi^j$.

We sketch in Figure \ref{fig:iwasawa} the structure of the finite-dimensional double complex $\left(\wedge^{\bullet,\bullet}\left(\mathfrak{h}_3\otimes_\R\C\right)^*,\,\del,\,\delbar\right)$: the dots denote a basis of $\wedge^{\bullet,\bullet}\left(\mathfrak{h}_3\otimes_\R\C\right)^*$, horizontal arrows are meant as $\del$, vertical ones as $\delbar$ and zero arrows are not depicted.

\smallskip
\begin{figure}[ht]
\begin{center}
 \includegraphics[width=8cm,natwidth=800,natheight=600]{iwasawa.eps}
 \caption{The double complex $\left(\wedge^{\bullet,\bullet}\left(\mathfrak{h}_3\otimes_\R \C\right)^*,\,\del,\,\delbar\right)$.}
 \label{fig:iwasawa}
\end{center}
\end{figure}
\smallskip

\subsubsection{Small deformations of the Iwasawa manifold}\label{sec:deformations-iwasawa}
I. Nakamura classified in \cite[\S2]{nakamura} the three-dimensional holomorphically parallelizable solvmanifolds into four classes by numerical invariants, giving the Iwasawa manifold $\mathbb{I}_3$ as an example in the second class. Moreover, he explicitly constructed the Kuranishi family of deformations of $\I_3$, showing that it is smooth and depends on $6$ effective parameters, \cite[pages 94--95]{nakamura}, compare also \cite[Corollary 4.9]{rollenske-jems}. In particular, he computed the Hodge numbers of the small deformations of $\mathbb{I}_3$ proving that they have not to remain invariant along a complex-analytic family of complex structures, \cite[Theorem 2]{nakamura}, compare also \cite[\S4]{ye}; moreover, he proved in this way that the property of being holomorphically parallelizable is not stable under small deformations, \cite[page 86]{nakamura}, compare also \cite[Theorem 5.1, Corollary 5.2]{rollenske-jems}.

\medskip

Firstly, we recall in the following theorem the results by I. Nakamura concerning the Kuranishi space of the Iwasawa manifold.

\begin{thm}[{\cite[pages 94--96]{nakamura}}]
Consider the Iwasawa manifold $\mathbb{I}_3 := \left. \mathbb{H}\left(3;\Z\left[\im\right]\right) \right\backslash \mathbb{H}(3;\C)$. There exists a locally complete complex-analytic family of complex structures $\left\{X_\tempo=\left(\I_3,\,J_\tempo\right)\right\}_{\tempo\in \Delta(\zero,\varepsilon)}$, deformations of $\I_3$, depending on six parameters
$$ \tempo\;=\;\left(t_{11},\,t_{12},\,t_{21},\,t_{22},\,t_{31},\,t_{32}\right)\;\in\;\Delta(\zero,\varepsilon)\;\subset\;\C^6 \;,$$
where $\varepsilon>0$ is small enough, $\Delta(\zero,\varepsilon):=\left\{\mathbf{s}\in\C^6 \st \left|\mathbf{s}\right|<\varepsilon\right\}$, and $X_\zero=\I_3$.

A set of holomorphic coordinates for $X_\tempo$ is given by
$$
\left\{
\begin{array}{rcccl}
 \zeta^1 &:=& \zeta^1(\tempo) &:=& z^1+\sum_{k=1}^{2}t_{1k}\,\bar z^k \\[5pt]
 \zeta^2 &:=& \zeta^2(\tempo) &:=& z^2+\sum_{k=1}^{2}t_{2k}\,\bar z^k \\[5pt]
 \zeta^3 &:=& \zeta^3(\tempo) &:=& z^3+\sum_{k=1}^{2}\left(t_{3k}+t_{2k}\,z^1\right)\bar z^k+A\left(\bar z^1,\,\bar z^2\right)-D\left(\tempo\right)\,\bar z^3
\end{array}
\right.
$$
where
$$ D\left(\tempo\right):=\det\left(
\begin{array}{cc}
 t_{11} & t_{12} \\
 t_{21} & t_{22}
\end{array}
\right)
\qquad
$$
and
$$ A\left(\bar z^1,\,\bar z^2\right):=\frac{1}{2}\left(t_{11}\,t_{21}\,\left(\bar z^1\right)^2+2\,t_{11}\,t_{22}\,\bar z^1\,\bar z^2+t_{12}\,t_{22}\,\left(\bar z^2\right)^2\right) \;. $$
For every $\tempo\in \Delta(\zero,\varepsilon)$, the universal covering of $X_\tempo$ is $\C^3$; more precisely,
$$ X_\tempo \;=\; \left. \Gamma_\tempo \right \backslash\C^3 \;,$$
where $\Gamma_\tempo$ is the subgroup generated by the transformations
$$ \left(\zeta^1,\,\zeta^2,\,\zeta^3\right) \stackrel{\left(\omega^1,\,\omega^2,\,\omega^3\right)}{\mapsto} \left(\tilde\zeta^1,\,\tilde\zeta^2,\,\tilde\zeta^3\right) \;,$$
varying $\left(\omega^1,\,\omega^2,\,\omega^3\right)\in\left(\Z\left[\im\right]\right)^3$, where
$$
\left\{
\begin{array}{rcl} 
 \tilde\zeta^1 &:=& \zeta^1+\left(\omega^1+t_{11}\,\bar\omega^1+t_{12}\,\bar\omega^2\right) \\[10pt]
 \tilde\zeta^2 &:=& \zeta^2+\left(\omega^2+t_{21}\,\bar\omega^1+t_{22}\,\bar\omega^2\right) \\[10pt]
 \tilde\zeta^3 &:=& \zeta^3+\left(\omega^3+t_{31}\,\bar\omega^1+t_{32}\,\bar\omega^2\right)+\omega^1\,\zeta^2\\[5pt]
&&+\left(t_{21}\,\bar\omega^1+ t_{22}\,\bar\omega^2\right)\left(\zeta^1+\omega^1\right)+A\left(\bar \omega^1,\,\bar\omega^2\right)-D\left(\tempo\right)\,\bar\omega^3
\end{array}
\right. \;.
$$
\end{thm}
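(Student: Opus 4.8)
The plan is to realise $\left\{X_\tempo\right\}_{\tempo\in\Delta(\zero,\varepsilon)}$ as an explicit integration of the Kuranishi family of $\I_3$, whose existence is guaranteed by \cite[Theorem 2]{kuranishi-annals}. First I would identify the space $H^{0,1}\left(\I_3;\,\Theta_{\I_3}\right)$ of infinitesimal deformations. Since $\I_3$ is holomorphically parallelizable, the holomorphic tangent bundle $\Theta_{\I_3}$ is holomorphically trivial, with global frame $\left\{\theta_1,\theta_2,\theta_3\right\}$ dual to the coframe $\left\{\varphi^1,\varphi^2,\varphi^3\right\}$; hence $\Theta_{\I_3}\simeq\mathcal{O}_{\I_3}^{\oplus 3}$ and $H^{0,1}\left(\I_3;\,\Theta_{\I_3}\right)\simeq H^{0,1}_\delbar(\I_3)\otimes_\C\C^3$. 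By the Nomizu- and Sakane-type results recalled above (\cite[Theorem 1]{nomizu}, \cite[Theorem 1]{sakane}), the Dolbeault cohomology of $\I_3$ is computed from invariant forms, and since $\delbar\bar\varphi^1=\delbar\bar\varphi^2=0$ while $\delbar\bar\varphi^3=-\bar\varphi^1\wedge\bar\varphi^2\neq0$, one gets $H^{0,1}_\delbar(\I_3)=\left\langle[\bar\varphi^1],[\bar\varphi^2]\right\rangle$ of dimension $2$, whence $\dim_\C H^{0,1}\left(\I_3;\,\Theta_{\I_3}\right)=6$. This produces the six parameters: I would set the linear term of the Maurer and Cartan solution to be $s_1(\tempo)=\sum_{i=1}^{3}\sum_{k=1}^{2}t_{ik}\,\theta_i\otimes\bar\varphi^k$.

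Next I would solve the Maurer and Cartan equation \eqref{eq:MC} in closed form. Since the only nonzero bracket of the frame is $[\theta_1,\theta_2]=\theta_3$ (dual to $\de\varphi^3=-\varphi^1\wedge\varphi^2$), a direct computation gives $\frac{1}{2}\left[s_1(\tempo),\,s_1(\tempo)\right]=c\,D(\tempo)\,\theta_3\otimes\bar\varphi^1\wedge\bar\varphi^2$ for a universal constant $c$, where $D(\tempo)=t_{11}t_{22}-t_{12}t_{21}$. As this is $\delbar$-exact (precisely because $\delbar\bar\varphi^3=-\bar\varphi^1\wedge\bar\varphi^2$), the obstruction $\mathrm{obs}$ vanishes and the quadratic equation $\delbar s_2(\tempo)=-\left[s_1(\tempo),\,s_1(\tempo)\right]$ is solved by $s_2(\tempo)\propto D(\tempo)\,\theta_3\otimes\bar\varphi^3$; all higher equations are solved by $s_j(\tempo)=0$ for $j\geq3$, since $\theta_3$ is central. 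Thus $s(\tempo)=s_1(\tempo)+s_2(\tempo)$ solves \eqref{eq:MC} genuinely on all of $\Delta(\zero,\varepsilon)$, the Kuranishi space is smooth of dimension $6$, the family is locally complete (indeed versal) by \cite[Theorem 2]{kuranishi-annals}, and the six parameters are effective because the Kodaira and Spencer map is, by construction, the identification $T_\zero\Delta(\zero,\varepsilon)\stackrel{\simeq}{\to}H^{0,1}\left(\I_3;\,\Theta_{\I_3}\right)$; specialising $\tempo=\zero$ recovers $X_\zero=\I_3$.

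The main work is to integrate the deformed almost-complex structure $J_\tempo$, defined by declaring $\wedge^{1,0}_{J_\tempo}$ to be the graph of $-s(\tempo)$, into the explicit global coordinates $\zeta^1,\zeta^2,\zeta^3$ on the universal cover $\C^3$. Here I would solve $\delbar_{J_\tempo}\zeta^j=0$ directly: the linear parts of $\zeta^1,\zeta^2$ give $z^i+\sum_k t_{ik}\bar z^k$, while the equation for $\zeta^3$ is inhomogeneous on account of the non-closed form $\varphi^3$, and its integration forces exactly the quadratic term $A\left(\bar z^1,\bar z^2\right)$ together with the antiholomorphic correction $-D(\tempo)\bar z^3$ coming from $s_2(\tempo)$. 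The delicate point, and the one I expect to be the principal obstacle, is to check that these $\zeta^j$ are genuine $J_\tempo$-holomorphic coordinates \emph{that descend to the quotient}: one must verify that the deck transformations of $\mathbb{H}\left(3;\Z[\im]\right)$, rewritten in the coordinates $\zeta(\tempo)$, generate a group $\Gamma_\tempo$ of $J_\tempo$-biholomorphisms acting freely and cocompactly, with the cocycle $A\left(\bar\omega^1,\bar\omega^2\right)-D(\tempo)\bar\omega^3$ matching precisely the ambiguity in $\zeta^3$. This compatibility is what pins down the exact shape of $A$ and $D$, and completing it identifies $X_\tempo=\left.\Gamma_\tempo\right\backslash\C^3$ as claimed.
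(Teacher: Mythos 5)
Your proposal is correct and follows essentially the same route as the proof the paper relies on: the theorem is quoted verbatim from Nakamura, whose argument is exactly yours --- use holomorphic parallelizability to get $H^{0,1}\left(\I_3;\,\Theta_{\I_3}\right)\simeq H^{0,1}_{\delbar}\left(\I_3\right)\otimes_\C\C^3$ of dimension $6$, solve Maurer--Cartan exactly by $s_1(\tempo)+D(\tempo)\,\theta_3\otimes\bar\varphi^3$ since $\frac{1}{2}\left[s_1(\tempo),\,s_1(\tempo)\right]=D(\tempo)\,\theta_3\otimes\bar\varphi^1\wedge\bar\varphi^2=-\delbar\left(D(\tempo)\,\theta_3\otimes\bar\varphi^3\right)$ and $\theta_3$ is central, then integrate $J_\tempo$ to the explicit coordinates $\zeta^j$ and verify equivariance of the deck transformations, which pins down $A$ and $D(\tempo)$. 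All your intermediate computations (the two-dimensionality of $H^{0,1}_{\delbar}\left(\I_3\right)$, the vanishing of the obstruction and of all terms $s_j(\tempo)$ for $j\geq 3$, and the resulting smoothness, local completeness, and effectiveness of the six parameters) check out.
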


\begin{rem}
Note that, by \cite[Theorem 4.5]{rollenske-jems}, if $X = \left.\Gamma \right\backslash G$ is a holomorphically parallelizable nilmanifold and $G$ is $\nu$-step nilpotent, then $\mathrm{Kur}(X)$ is cut out by polynomial equations of degree at most $\nu$; furthermore, by \cite[Corollary 4.9]{rollenske-jems}, the Kuranishi space of $X$ is smooth if and only if the associated Lie algebra $\mathfrak{g}$ to $G$ is a free $2$-step nilpotent Lie algebra, i.e., $\mathfrak{g} \simeq \mathfrak{b}_m$ with $m = \dim_\C H^{0,1}_\delbar (X)$, where $\mathfrak{b}_m:=\C^m\oplus\wedge^2\C^m$ with Lie bracket $\left[a_1+b_1\wedge c_1,\, a_2+b_2\wedge c_2\right]:=a_1\wedge a_2$ for $a_1,b_1,c_1,a_2,b_2,c_2\in\C^m$.
\end{rem}

\medskip

According to the classification by I. Nakamura, the small deformations of $\mathbb{I}_3$ are divided into three classes, {\itshape (i)}, {\itshape (ii)}, and {\itshape (iii)}, in terms of their Hodge numbers: such classes are explicitly described by means of polynomial relations in the parameters, see \cite[\S 3]{nakamura}. As we will see in \S\ref{sec:bott-chern-iwasawa}, it turns out that the Bott-Chern cohomology yields a finer classification of the Kuranishi space of $ \mathbb{I}_3$; more precisely, $h^{2,2}_{BC}$ assumes different values within class {\itshape (ii)}, respectively class {\itshape (iii)}, according to the rank of a certain matrix whose entries are related to the complex structure equations with respect to a suitable co-frame, whereas the numbers corresponding to class {\itshape (i)} coincide with those for $ \mathbb{I}_3$: this allows a further subdivision of  classes {\itshape (ii)} and {\itshape (iii)} into subclasses {\itshape (ii.a)}, {\itshape (ii.b)}, and {\itshape (iii.a)},
{\itshape (iii.b)}.

More precisely, the classes and subclasses of this classification are characterized by the following values of the parameters:
\begin{description}
 \item[class {\itshape (i)}] $t_{11}=t_{12}=t_{21}=t_{22}=0$;
 \item[class {\itshape (ii)}] $D\left(\tempo\right)=0$ and $\left(t_{11},\,t_{12},\,t_{21},\,t_{22}\right)\neq \left(0,\,0,\,0,\,0\right)$: 
    \begin{description}
     \item[subclass {\itshape (ii.a)}] $D\left(\tempo\right)=0$ and $\rk S=1$;
     \item[subclass {\itshape (ii.b)}] $D\left(\tempo\right)=0$ and $\rk S=2$;
    \end{description}
 \item[class {\itshape (iii)}] $D\left(\tempo\right)\neq 0$:
    \begin{description}
     \item[subclass {\itshape (iii.a)}] $D\left(\tempo\right)\neq 0$ and $\rk S=1$;
     \item[subclass {\itshape (iii.b)}] $D\left(\tempo\right)\neq 0$ and $\rk S=2$.
    \end{description}
\end{description}
The matrix $S$ is defined by
$$ S \;:=\;
\left(
\begin{array}{cccc}
 \overline{\sigma_{1\bar1}} & \overline{\sigma_{2\bar2}} & \overline{\sigma_{1\bar2}} & \overline{\sigma_{2\bar1}} \\
 \sigma_{1\bar1} & \sigma_{2\bar2} & \sigma_{2\bar1} & \sigma_{1\bar2}
\end{array}
\right)
$$
where $\sigma_{1\bar1},\,\sigma_{1\bar2},\,\sigma_{2\bar1},\,\sigma_{2\bar2}\in\C$ and $\sigma_{12}\in\C$ are complex numbers depending only on $\tempo$ such that
$$ \de\varphi^3_\tempo \;=:\; \sigma_{12}\,\varphi^1_{\tempo}\wedge\varphi^2_{\tempo}+\sigma_{1\bar1}\,\varphi^1_{\tempo}\wedge\bar\varphi^1_{\tempo}+\sigma_{1\bar2}\,\varphi^1_{\tempo}\wedge\bar\varphi^2_{\tempo}+\sigma_{2\bar1}\,\varphi^2_{\tempo}\wedge\bar\varphi^1_{\tempo}+\sigma_{2\bar2}\,\varphi^2_{\tempo}\wedge\bar\varphi^2_{\tempo} \;,$$
being
$$ \varphi^1_\tempo \;:=\; \de\zeta^1_{\tempo}\;,\quad \varphi^2_\tempo \;:=\; \de\zeta^2_{\tempo}\;,\quad \varphi^3_{\tempo} \;:=\; \de\zeta^3_{\tempo}-z_1\,\de\zeta^2_{\tempo}-\left(t_{21}\,\bar z^1+t_{22}\,\bar z^2\right)\de\zeta^1_{\tempo} \;,$$
see \S\ref{sec:structure-equations-iwasawa}.
As we will show, see \S\ref{sec:structure-equations-iwasawa}, the first order asymptotic behaviour of $\sigma_{12},\,\sigma_{1\bar1},\,\sigma_{1\bar2},\,\sigma_{2\bar1},\,\sigma_{2\bar2}$ for $\tempo$ near $0$ is the following:
\begin{equation}
\left\{
\begin{array}{rcl}
\sigma_{12} &=& -1 +\opiccolo{\tempo} \\[5pt]
\sigma_{1\bar1} &=& t_{21} +\opiccolo{\tempo}  \\[5pt]
\sigma_{1\bar2} &=& t_{22} +\opiccolo{\tempo}  \\[5pt]
\sigma_{2\bar1} &=& -t_{11} +\opiccolo{\tempo} \\[5pt]
\sigma_{2\bar2} &=& -t_{12} +\opiccolo{\tempo}
\end{array}
\right.
\qquad \text{ for } \qquad \tempo\in\text{ classes {\itshape (i)}, {\itshape (ii)} and {\itshape (iii)}}
\;,
\end{equation}
and, more precisely, for deformations in class {\itshape (ii)} we actually have that
\begin{equation}
\left\{
\begin{array}{rcl}
\sigma_{12} &=& -1 +\opiccolo{\tempo} \\[5pt]
\sigma_{1\bar1} &=& t_{21} \left(1+\opiccolouno\right)  \\[5pt]
\sigma_{1\bar2} &=& t_{22} \left(1+\opiccolouno\right)  \\[5pt]
\sigma_{2\bar1} &=& -t_{11} \left(1+\opiccolouno\right) \\[5pt]
\sigma_{2\bar2} &=& -t_{12} \left(1+\opiccolouno\right)
\end{array}
\right. \qquad \text{ for } \qquad \tempo\in\text{ class {\itshape (ii)}}\;.
\end{equation}

\medskip

The complex manifold $X_\tempo$ is endowed with the $J_\tempo$-Hermitian $\mathbb{H}(3;\C)$-left-invariant metric $g_\tempo$, which is defined as follows:
$$ g_\tempo \;:=\; \sum_{j=1}^3\varphi_\tempo^j\odot\bar\varphi_\tempo^j \;. $$

\subsubsection{Structure equations for small deformations of the Iwasawa manifold}\label{sec:structure-equations-iwasawa}
In this section, we give the structure equations for the small deformations of the Iwasawa manifold; we will use these computations in \S\ref{sec:dolbeault-iwasawa} and \S\ref{sec:bott-chern-iwasawa} to write the Bott-Chern cohomology of $X_\tempo$, and in Theorem \ref{thm:instability-iwasawa} to prove that the cohomological property of being \Cpf\ is not stable under small deformations of the complex structure.

\medskip

Fix $\tempo \in \Delta(\zero,\varepsilon)\subset \C^6$, and consider the small deformation $X_\tempo$ of the Iwasawa manifold $\mathbb{I}_3$.
Consider the system of complex coordinates on $X_\tempo$ given by
$$
\left\{
\begin{array}{rcl}
 \zeta^1_\mathbf{t} &:=& z^1 + \sum_{\lambda=1}^{2}t_{1\lambda}\bar{z}^{\lambda} \\[10pt]
 \zeta^2_\mathbf{t} &:=& z^2 + \sum_{\lambda=1}^{2}t_{2\lambda}\bar{z}^{\lambda} \\[10pt]
 \zeta^3_\mathbf{t} &:=& z^3 + \sum_{\lambda=1}^{2}(t_{3\lambda}+t_{2\lambda}z^1)\bar{z}^{\lambda}+
 A\left(\bar{z}\right)
\end{array}
\right. \;.
$$
Consider
$$
\left\{
\begin{array}{rcl}
 \varphi^1_{\tempo} &:=& \de\zeta^1_{\tempo} \\[10pt]
 \varphi^2_{\tempo} &:=& \de\zeta^2_{\tempo} \\[10pt]
 \varphi^3_{\tempo} &:=& \de\zeta^3_{\tempo}-z_1\,\de\zeta^2_{\tempo}-\left(t_{21}\,\bar z^1+t_{22}\,\bar z^2\right)\de\zeta^1_{\tempo}
\end{array}
\right.
$$
as a co-frame of $(1,0)$-forms on $X_{\tempo}$ (that is, as a $\Gamma_\tempo$-invariant co-frame of $(1,0)$-forms on $\C^3$). We want to write the structure equations for $X_{\tempo}$ with respect to this co-frame.

A straightforward computation gives
$$
\left\{
\begin{array}{rcl}
 z^1 &=& \gamma\,\left(\zeta^1_\mathbf{t}+\lambda_1\, \bar{\zeta}^1_\mathbf{t}+\lambda_2\, \zeta^2_\mathbf{t}+
 \lambda_3\, \bar{\zeta}^2_\mathbf{t}\right) \\[10pt]
 z^2 &=& \alpha\,\left(\mu_0\, \zeta^1_\mathbf{t}+\mu_1\, \bar{\zeta}^1_\mathbf{t}+\mu_2\, \zeta^2_\mathbf{t}+
 \mu_3\, \bar{\zeta}^2_\mathbf{t}\right)
\end{array}
\right. \;,
$$
where $\alpha$, $\beta$, $\gamma$, $\lambda_i$ (for $i\in\{1,2,3\}$), $\mu_j$ (for $j\in\{0,1,2,3\}$)
are complex numbers depending just on $\mathbf{t}$, and defined as follows:
$$
\left\{
\begin{array}{rcl}
 \alpha &:=& \displaystyle \frac{1}{1-\module{t_{22}}^2-t_{21}\, \bar{t}_{12}}\\[10pt]
 \beta  &:=& \displaystyle t_{21}\, \bar{t}_{11}+t_{22}\, \bar{t}_{21} \\[10pt]
 \gamma &:=& \displaystyle \frac{1}{1-\module{t_{11}}^2-\alpha\, \beta\, \left(t_{11}\, \bar{t}_{12}+
 t_{12}\, \bar{t}_{22}\right)-t_{12}\, \bar{t}_{21}} \\[10pt]
 \lambda_1 &:=& -t_{11}\, \left(1+\alpha\, \bar{t}_{12}\, t_{21}+\alpha\, \module{t_{22}}^2\right) \\[10pt]
 \lambda_2 &:=& \displaystyle \alpha\,  \left(t_{11}\, \bar{t}_{12}+ t_{12}\, \bar{t}_{22}\right) \\[10pt]
 \lambda_3 &:=& \displaystyle -t_{12}\, \left(1+\alpha\, \bar{t}_{12}\, t_{21}+\alpha\, \module{t_{22}}^2\right) \\[10pt]
 \mu_0 &:=& \displaystyle \beta\, \gamma \\[10pt]
 \mu_1 &:=& \displaystyle \lambda_1\, \beta\, \gamma-t_{21} \\[10pt]
 \mu_2 &:=& \displaystyle 1+\lambda_2\, \beta\, \gamma \\[10pt]
 \mu_3 &:=& \displaystyle \lambda_3\, \beta\, \gamma-t_{22}
\end{array}
\right. \;.
$$

For the complex structures in the class {\itshape (i)}, one checks that the structure equations (with respect to the co-frame $\left\{\varphi^1_\tempo,\, \varphi^2_\tempo,\, \varphi^3_\tempo\right\}$) are the same as the ones for $\I_3$, that is,
$$
\left\{
\begin{array}{rcl}
 \de\varphi^1_{\tempo} &=& 0 \\[10pt]
 \de\varphi^2_{\tempo} &=& 0 \\[10pt]
 \de\varphi^3_{\tempo} &=& -\varphi^1_{\tempo}\wedge\varphi^2_{\tempo}
\end{array}
\right. \qquad \text{ for } \quad \tempo\in\text{ class {\itshape (i)}}
\;.
$$
For small deformations in classes {\itshape (ii)} and {\itshape (iii)}, we have that
$$
\left\{
\begin{array}{rcl}
 \de\varphi^1_{\tempo} &=& 0 \\[10pt]
 \de\varphi^2_{\tempo} &=& 0 \\[10pt]
 \de\varphi^3_{\tempo} &=& \sigma_{12}\,\varphi^1_{\tempo}\wedge\varphi^2_{\tempo}\\[5pt]
 &&+\sigma_{1\bar1}\,\varphi^1_{\tempo}\wedge\bar\varphi^1_{\tempo}+\sigma_{1\bar2}\,\varphi^1_{\tempo}\wedge\bar\varphi^2_{\tempo}\\[5pt]
 &&+\sigma_{2\bar1}\,\varphi^2_{\tempo}\wedge\bar\varphi^1_{\tempo}+\sigma_{2\bar2}\,\varphi^2_{\tempo}\wedge\bar\varphi^2_{\tempo}
\end{array}
\right. \qquad \text{ for } \qquad \tempo\in\text{ classes {\itshape (ii)} and {\itshape (iii)}} \;,
$$
where $\sigma_{12},\,\sigma_{1\bar1},\,\sigma_{1\bar2},\,\sigma_{2\bar1},\,\sigma_{2\bar2}\in\C$ are complex numbers depending just on $\tempo$.
The asymptotic behaviour of $\sigma_{12},\,\sigma_{1\bar1},\,\sigma_{1\bar2},\,\sigma_{2\bar1},\,\sigma_{2\bar2}\in\C$ is the following:
\begin{equation}
\left\{
\begin{array}{rcl}
\sigma_{12} &=& -1 +\opiccolo{\tempo} \\[5pt]
\sigma_{1\bar1} &=& t_{21} +\opiccolo{\tempo}  \\[5pt]
\sigma_{1\bar2} &=& t_{22} +\opiccolo{\tempo}  \\[5pt]
\sigma_{2\bar1} &=& -t_{11} +\opiccolo{\tempo} \\[5pt]
\sigma_{2\bar2} &=& -t_{12} +\opiccolo{\tempo}
\end{array}
\right.
\qquad \text{ for } \qquad \tempo\in\text{ classes {\itshape (i)}, {\itshape (ii)} and {\itshape (iii)}}
\;,
\end{equation}
more precisely, for deformations in class {\itshape (ii)} we actually have that
\begin{equation}
\left\{
\begin{array}{rcl}
\sigma_{12} &=& -1 +\opiccolo{\tempo} \\[5pt]
\sigma_{1\bar1} &=& t_{21} \left(1+\opiccolouno\right)  \\[5pt]
\sigma_{1\bar2} &=& t_{22} \left(1+\opiccolouno\right)  \\[5pt]
\sigma_{2\bar1} &=& -t_{11} \left(1+\opiccolouno\right) \\[5pt]
\sigma_{2\bar2} &=& -t_{12} \left(1+\opiccolouno\right)
\end{array}
\right. \qquad \text{ for } \qquad \tempo\in\text{ class {\itshape (ii)}}\;.
\end{equation}
The explicit values of $\sigma_{12},\,\sigma_{1\bar1},\,\sigma_{1\bar2},\,\sigma_{2\bar1},\,\sigma_{2\bar2}\in\C$ in the case of class {\itshape (ii)} are the following, \cite[page 416]{angella-tomassini-1}:
$$
\left\{
\begin{array}{rcl}
 \sigma_{12} &:=& -\gamma+t_{21}\bar{\lambda}_3\bar{\gamma}+t_{22}\bar{\alpha}\bar{\mu}_3 \\[10pt]
 \sigma_{1\bar{1}} &:=& t_{21}\,\overline{\gamma\left(1+t_{21}\bar{t}_{12}\alpha+
 \module{t_{22}}^2\alpha\right)} \\[10pt]
 \sigma_{1\bar{2}} &:=& t_{22}\,\overline{\gamma\left(1+t_{21}\bar{t}_{12}\alpha+
 \module{t_{22}}^2\alpha\right)} \\[10pt]
 \sigma_{2\bar{1}} &:=& -t_{11}\,\gamma\left(1+t_{21}\bar{t}_{12}\alpha+
 \module{t_{22}}^2\alpha\right) \\[10pt]
 \sigma_{2\bar{2}} &:=& -t_{12}\,\gamma\left(1+t_{21}\bar{t}_{12}\alpha+
 \module{t_{22}}^2\alpha\right)
\end{array}
\right. \qquad \text{ for } \qquad \tempo\in\text{ class {\itshape (ii)}}\;.
$$
Note that, for small deformations in class {\itshape (ii)}, one has $\sigma_{12}\,\neq\, 0$ and
$\left(\sigma_{1\bar{1}},\,\sigma_{1\bar{2}},\,\sigma_{2\bar{1}},\,\sigma_{2\bar{2}}\right)\,\neq\,
\left(0,\,0,\,0,\,0\right)$.

\subsection{The de Rham cohomology of the Iwasawa manifold and of its small deformations}\label{sec:derham-iwasawa}
Recall that, by Ehresmann's theorem, every complex-analytic family of compact complex manifolds is locally trivial as a differentiable family of compact differentiable manifolds, see, e.g., \cite[Theorem 4.1]{kodaira-morrow}. Therefore the de Rham cohomology of small deformations of the Iwasawa manifold is the same as the de Rham cohomology of $\I_3$, which can be computed by using K. Nomizu's theorem \cite[Theorem 1]{nomizu}.

In the table below, we list the harmonic representatives with respect to the metric $g_\zero$ instead of their classes and, as usually, we shorten the notation writing, for example, $\varphi^{A\bar B}:=\varphi^A\wedge\bar\varphi^B$.

\smallskip
\begin{center}
\begin{tabular}{c||c|c}
\toprule
$H^k_{dR}\left(\I_3;\C\right)$ & $g_{\zero}$-harmonic representatives & $\dim_\C H^k_{dR}\left(\I_3;\C\right)$\\[5pt]
\midrule[0.02em]\addlinespace[1.5pt]\midrule[0.02em]
$k=1$ & $\varphi^1$, $\varphi^2$, $\bar{\varphi}^1$, $\bar{\varphi}^2$ & $4$\\[5pt]
$k=2$ & $\varphi^{13}$, $\varphi^{23}$, $\varphi^{1\bar{1}}$, $\varphi^{1\bar{2}}$, $\varphi^{2\bar{1}}$, $\varphi^{2\bar{2}}$, $\varphi^{\bar{1}\bar{3}}$, $\varphi^{\bar{2}\bar{3}}$ & $8$\\[5pt]
$k=3$ & $\varphi^{123}$, $\varphi^{13\bar{1}}$, $\varphi^{13\bar{2}}$, $\varphi^{23\bar{1}}$, $\varphi^{23\bar{2}}$, $\varphi^{1\bar{1}\bar{3}}$, $\varphi^{1\bar{2}\bar{3}}$, $\varphi^{2\bar{1}\bar{3}}$, $\varphi^{2\bar{2}\bar{3}}$, $\varphi^{\bar{1}\bar{2}\bar{3}}$ & $10$\\[5pt]
$k=4$ & $\varphi^{123\bar{1}}$, $\varphi^{123\bar{2}}$, $\varphi^{13\bar{1}\bar{3}}$, $\varphi^{13\bar{2}\bar{3}}$, $\varphi^{23\bar{1}\bar{3}}$, $\varphi^{23\bar{2}\bar{3}}$, $\varphi^{1\bar{1}\bar{2}\bar{3}}$, $\varphi^{2\bar{1}\bar{2}\bar{3}}$ & $8$\\[5pt]
$k=5$ & $\varphi^{123\bar{1}\bar{3}}$, $\varphi^{123\bar{2}\bar{3}}$, $\varphi^{13\bar{1}\bar{2}\bar{3}}$, $\varphi^{23\bar{1}\bar{2}\bar{3}}$ & $4$\\
\bottomrule
\end{tabular}
\end{center}
\smallskip

\begin{rem}
Note that all the $g_\zero$-harmonic representatives of $H^\bullet_{dR}(\mathbb{I}_3;\R)$ are of pure type with respect to $J_{\zero}$, that is, they are in $\left(\wedge^{p,q}\mathbb{I}_3\oplus \wedge^{q,p}\mathbb{I}_3\right) \cap \wedge^{p+q}\mathbb{I}_3$ for some $p,q\in\{0,\,1,\,2,\,3\}$; this is no more true for $J_\tempo$ with $\tempo\neq\zero$ small enough, see Theorem \ref{thm:instability-iwasawa}.
\end{rem}

\subsection{The Dolbeault cohomology of the Iwasawa manifold and of its small deformations}\label{sec:dolbeault-iwasawa}
The Hodge numbers of the Iwasawa manifold and of its small deformations have been computed by I. Nakamura in \cite[page 96]{nakamura}. The $g_\tempo$-harmonic representatives for $H^{\bullet,\bullet}_{\delbar}\left(X_{\tempo}\right)$, for $\tempo$ small enough, can be computed using the considerations in \S\ref{subsec:cohomology-computation-derham-dolbeault} and the structure equations given in \S\ref{sec:structure-equations-iwasawa}. We collect here the results of the computations.

In order to reduce the number of cases under consideration, recall that, on a compact complex Hermitian manifold $X$ of complex dimension $n$, for any $p,q\in\N$, the Hodge-$*$-operator and the conjugation induce an isomorphism
$$ H^{p,q}_{\delbar}(X) \stackrel{\simeq}{\to} H^{n-q,n-p}_{\del}(X) \stackrel{\simeq}{\to} \overline{H^{n-p,n-q}_{\delbar}(X)} \;.$$

\medskip

\begin{itemize}
 \item \textbf{$1$-forms.}
It is straightforward to check that
$$ H^{1,0}_{\delbar}(X_\tempo)\;=\;\C\left\langle \phit{1},\,\phit{2},\,\phit{3} \right\rangle \quad \text{ for } \quad \tempo\in\text{ class {\itshape (i)}} $$
and
$$ H^{0,1}_{\delbar}(X_\tempo)\;=\;\C\left\langle \bphit{1},\,\bphit{2}\right\rangle \quad \text{ for } \quad \tempo\in\text{ classes {\itshape (i)}, {\itshape (ii)} and {\itshape (iii)}} \;.$$
Since $\delbar\,\phit{3}\neq 0$ for $X_\tempo$ in class {\itshape (ii)} or in class {\itshape (iii)}, one has
$$ H^{1,0}_{\delbar}(X_\tempo)\;=\;\C\left\langle \phit{1},\,\phit{2} \right\rangle \quad \text{ for } \quad \tempo\in\text{ classes {\itshape (ii)} and {\itshape (iii)}} \;: $$
this means in particular that $X_{\tempo}$ is not holomorphically parallelizable for $\tempo$ in classes {\itshape (ii)} and {\itshape (iii)}, \cite[pages 86, 96]{nakamura}.

Summarizing,
$$
\dim_\C H^{1,0}_{\delbar}(X_\tempo)\;=\;
\left\{
\begin{array}{lcl}
3 & \quad \text{ for } \quad & \tempo\in\text{ class {\itshape (i)}}\\[5pt]
2 & \quad \text{ for } \quad & \tempo\in\text{ classes {\itshape (ii)} and {\itshape (iii)}}
\end{array}
\right. \;,
$$
and
$$
\dim_\C H^{0,1}_{\delbar}(X_\tempo)\;=\;2 \quad \text{ for } \quad \tempo\in\text{ classes {\itshape (i)}, {\itshape (ii)} and {\itshape (iii)}} \;.
$$

 \item \textbf{$2$-forms.}
A straightforward computation yields
$$ H^{2,0}_{\delbar}(X_\tempo)\;=\;\C\left\langle \phit{12},\, \phit{13},\,\phit{23} \right\rangle  \quad \text{ for } \quad \tempo\in\text{ class {\itshape (i)}}\;,$$
$$ H^{1,1}_{\delbar}(X_\tempo)\;=\;\C\left\langle \phit{1\bar1},\, \phit{1\bar2},\, \phit{2\bar1},\, \phit{2\bar2},\, \phit{3\bar1},\, \phit{3\bar2} \right\rangle \quad \text{ for } \quad \tempo\in\text{ class {\itshape (i)}} \;, $$
and
$$ H^{0,2}_{\delbar}(X_\tempo)\;=\;\C\left\langle \phit{\bar1\bar3},\, \phit{\bar2\bar3} \right\rangle  \quad \text{ for } \quad \tempo\in\text{ classes {\itshape (i)}, {\itshape (ii)} and {\itshape (iii)}} \;.$$

We now compute $H^{2,0}_{\delbar}(X_\tempo)$ for $\tempo \in \text{ classes {\itshape (ii)} and {\itshape (iii)}}$. The $\mathbb{H}(3;\C)$-left-invariant $(2,0)$-forms are of the type $A\,\phit{12}+B\,\phit{13}+C\,\phit{23}$ with $A,B,C\in\C$, so one has to solve the linear system
$$
\left(
\begin{array}{ccc}
 0 & 0 & 0 \\
 0 & -\sigma_{2\bar1} & \sigma_{1\bar1} \\
 0 & -\sigma_{2\bar2} & \sigma_{1\bar2}
\end{array}
\right)
\cdot
\left(
\begin{array}{c}
 A \\
 B \\
 C
\end{array}
\right)
=
\left(
\begin{array}{c}
 0 \\
 0 \\
 0
\end{array}
\right) \;;
$$
since the associated matrix to the system has rank $0$ for $\tempo \in \text{ class {\itshape (i)}}$, rank $1$ for $\tempo \in \text{ class {\itshape (ii)}}$ and rank $2$ for $\tempo \in \text{ class {\itshape (iii)}}$, one concludes that
$$ \dim_\C H^{2,0}_{\delbar}(X_\tempo) \;=\; 2 \quad \text{ for } \quad \tempo\in\text{ class {\itshape (ii)}} $$
(the generators being $\phit{12}$ and a linear combination of $\phit{13}$ and $\phit{23}$) and
$$ \dim_\C H^{2,0}_{\delbar}(X_{\tempo}) \;=\; 1 \quad \text{ for } \quad \tempo\in\text{ class {\itshape (iii)}} $$
(the generator being $\phit{12}$).

It remains to compute $H^{1,1}_{\delbar}(X_\tempo)$ for $\tempo \in \text{ classes {\itshape (ii)} and {\itshape (iii)}}$. For such $\tempo$, one has that: three independent $\overline\square_{J_\tempo}$-harmonic $(1,1)$-forms are of the type $\psi_1:=:A\,\phit{1\bar1}+B\,\phit{1\bar2}+C\,\phit{2\bar1}+D\,\phit{2\bar2}$ where $A,\,B,\,C,\,D\in\C$ satisfy the equation
$$ \left(
\begin{array}{cccc}
 \overline{\sigma_{1\bar1}} & -\overline{\sigma_{1\bar2}} & -\overline{\sigma_{2\bar1}} & \overline{\sigma_{2\bar2}}
\end{array}
\right)\cdot
\left(
\begin{array}{c}
 A \\
 B \\
 C \\
 D
\end{array}
\right)
\;=\;
0 \;,
$$
whose matrix has rank $1$ for $\tempo\in\text{ classes {\itshape (ii)} and {\itshape (iii)}}$ (while its rank is $0$ for $\tempo\in\text{ class {\itshape (i)}}$); two other independent $\overline\square_{J_\tempo}$-harmonic $(1,1)$-forms are of the type $\psi_2:=:E\,\phit{1\bar3}+F\,\phit{2\bar3}+G\,\phit{3\bar1}+H\,\phit{3\bar2}$ where $E,\,F,\,G,\,H\in\C$ are solution of the system
$$ \left(
\begin{array}{cccc}
 -\overline{\sigma_{12}} & 0 & -\overline{\sigma_{1\bar2}} & \overline{\sigma_{1\bar1}} \\
 0 & -\overline{\sigma_{12}} & -\overline{\sigma_{2\bar1}} & \overline{\sigma_{2\bar2}}
\end{array}
\right)\cdot
\left(
\begin{array}{c}
 E \\
 F \\
 G \\
 H
\end{array}
\right)
\;=\;
 \left(
\begin{array}{c}
 0 \\
 0
\end{array}
\right)
\;,
$$
whose matrix has rank $2$ for $\tempo\in\text{ classes {\itshape (i)}, {\itshape (ii)} and {\itshape (iii)}}$; note also that no $(1,1)$-form with a non-zero component in $\phit{3\bar3}$ can be $\overline\square_{J_t}$-harmonic. Hence, one can conclude that
$$ \dim_\C H^{1,1}_{\delbar}(X_{\tempo}) \;=\; 5 \quad \text{ for } \quad \tempo\in\text{ classes {\itshape (ii)} and {\itshape (iii)}} \;. $$

Summarizing,
$$
\dim_\C H^{2,0}_{\delbar}(X_\tempo)\;=\;
\left\{
\begin{array}{lcl}
 3 & \quad \text{ for } \quad & \tempo\in\text{ class {\itshape (i)}} \\[5pt]
 2 & \quad \text{ for } \quad & \tempo\in\text{ class {\itshape (ii)}} \\[5pt]
 1 & \quad \text{ for } \quad & \tempo\in\text{ class {\itshape (iii)}}
\end{array}
\right. \;,
$$
and
$$
\dim_\C H^{1,1}_{\delbar}(X_\tempo)\;=\;
\left\{
\begin{array}{lcl}
 6 & \quad \text{ for } \quad & \tempo\in\text{ class {\itshape (i)}} \\[5pt]
 5 & \quad \text{ for } \quad & \tempo\in\text{ classes {\itshape (ii)} and {\itshape (iii)}}
\end{array}
\right. \;,
$$
and
$$
\dim_\C H^{0,2}_{\delbar}(X_\tempo)\;=\;2 \quad \text{ for } \quad \tempo\in\text{ classes {\itshape (i)}, {\itshape (ii)} and {\itshape (iii)}} \;.
$$

 \item \textbf{$3$-forms.}
Finally, we have to compute $H^{3,0}_{\delbar}\left(X_\tempo\right)$ and $H^{2,1}_{\delbar}\left(X_\tempo\right)$.
A straightforward linear algebra computation yields to
$$ H^{3,0}_{\delbar}(X_{\tempo})\;=\;\C\left\langle \phit{123} \right\rangle \quad \text{ for } \quad \tempo\in\text{ classes {\itshape (i)}, {\itshape (ii)} and {\itshape (iii)}}$$
and
$$ H^{2,1}_{\delbar}(X_{\tempo})\;=\;\C\left\langle \phit{12\bar1},\, \phit{12\bar2},\, \phit{13\bar1},\, \phit{13\bar2},\, \phit{23\bar1},\, \phit{23\bar2} \right\rangle  \quad \text{ for } \quad \tempo\in\text{ class {\itshape (i)}}\;.$$

It remains to compute $H^{2,1}_{\delbar}(X_{\tempo})$ for $\tempo \in \text{ classes {\itshape (ii)} and {\itshape (iii)}}$. Firstly, one notes that four of the six generators of the space of $\mathbb{H}(3;\C)$-left-invariant $(2,1)$-forms that are $\overline\square_{J_\tempo}$-harmonic for $\tempo\in\text{ class {\itshape (i)}}$ can be slightly modified to get four $\delbar_{J_\tempo}$-holomorphic $(2,1)$-forms for $\tempo\in\text{ class {\itshape (ii)} or class {\itshape (iii)}}$: more precisely, one has
$$
H^{2,1}_{\delbar}(X_{\tempo}) \;\supseteq\; \C\left\langle \phit{13\bar1}-\frac{\sigma_{2\bar2}}{\overline{\sigma_{12}}}\phit{12\bar3},\, \phit{13\bar2}-\frac{\sigma_{2\bar1}}{\overline{\sigma_{12}}}\phit{12\bar3},\, \phit{23\bar1}-\frac{\sigma_{1\bar2}}{\overline{\sigma_{12}}}\phit{12\bar3}, \, \phit{23\bar2}-\frac{\sigma_{1\bar1}}{\overline{\sigma_{12}}}\phit{12\bar3} \right\rangle \;;
$$
in other words, four independent $\overline\square_{J_\tempo}$-harmonic $(2,1)$-forms are of the type $\psi_2:=:C\,\phit{12\bar3}+D\,\phit{13\bar1}+E\,\phit{13\bar2}+F\,\phit{23\bar1}+G\,\phit{23\bar2}$, where $C,\,D,\,E,\,F,\,G\in\C$ are solution of the linear system
$$
\left(
\begin{array}{ccccc}
 \overline{\sigma_{12}} & \sigma_{2\bar2} & -\sigma_{2\bar1} & -\sigma_{1\bar2} & \sigma_{1\bar1}
\end{array}
\right) \cdot
\left(
\begin{array}{c}
 C \\
 D \\
 E \\
 F \\
 G
\end{array}
\right)
\;=\; 0 \;,
$$
whose matrix has rank $1$ for every $\tempo\in\text{ classes {\itshape (i)}, {\itshape (ii)} and {\itshape (iii)}}$.
Note that one can reduce to study the $\overline\square$-harmonicity of the $(2,1)$-forms of the type $\psi_1:=:A\,\phit{12\bar1}+B\,\phit{12\bar2}$: indeed, a $(2,1)$-form $\psi:=:\psi_1+\psi_2+H\, \phit{13\bar3} + L\, \phit{23\bar3}$, where $H,L\in\C$, is $\overline\square$-harmonic if and only if $H=0=L$ and both $\psi_1$ and $\psi_2$ are $\overline\square$-harmonic. A $(2,1)$-form of the type $\psi_1$ is $\overline\square$-harmonic if and only if $A,\,B\in\C$ solve the linear system
$$
\left(
\begin{array}{cc}
 -\overline{\sigma_{1\bar1}} & \overline{\sigma_{1\bar2}} \\
 -\overline{\sigma_{2\bar1}} & \overline{\sigma_{2\bar2}}
\end{array}
\right)
\cdot
\left(
\begin{array}{c}
 A \\
 B
\end{array}
\right)
=
\left(
\begin{array}{c}
 0 \\
 0
\end{array}
\right) \;,
$$
whose matrix has rank $0$ for $\tempo\in\text{ class {\itshape (i)}}$, rank $1$ for $\tempo\in\text{ class {\itshape (ii)}}$ and rank $2$ for $\tempo\in\text{ class {\itshape (iii)}}$. In particular, one gets that
$$ \dim_\C H^{2,1}_{\delbar}(X_{\tempo}) \;=\; 5 \quad \text{ for } \quad \tempo\in\text{ class {\itshape (ii)}} $$
and
$$ \dim_\C H^{2,1}_{\delbar}(X_{\tempo}) \;=\; 4 \quad \text{ for } \quad \tempo\in\text{ class {\itshape (iii)}} \;. $$

Summarizing,
$$
\dim_\C H^{3,0}_{\delbar}(X_{\tempo}) \;=\; 1 \quad \text{ for } \quad \tempo\in\text{ classes {\itshape (i)}, {\itshape (ii)} and {\itshape (iii)}} \;,
$$
and
$$
\dim_\C H^{2,1}_{\delbar}(X_{\tempo})\;=\;
\left\{
\begin{array}{lcl}
6 & \quad \text{ for } \quad & \tempo\in\text{ class {\itshape (i)}} \\[5pt]
5 & \quad \text{ for } \quad & \tempo\in\text{ class {\itshape (ii)}} \\[5pt]
4 & \quad \text{ for } \quad & \tempo\in\text{ class {\itshape (iii)}}
\end{array}
\right. \;.
$$
\end{itemize}

\subsection{The Bott-Chern and Aeppli cohomologies of the Iwasawa manifold and of its small deformations}\label{sec:bott-chern-iwasawa}
In this section, using Theorem \ref{thm:bc-invariant-cor} and Theorem \ref{thm:bc-invariant-open}, we explicitly compute the dimensions of $H_{BC}^{\bullet,\bullet}(X_{\tempo})$, for $\tempo$ small enough, \cite[\S5.3]{angella}: such numbers are summarized in the tables in \S\ref{subsec:chart}.

\medskip

In order to reduce the number of cases under consideration, recall that, on a compact complex Hermitian manifold $X$ of complex dimension $n$, for every $p,q\in\N$, the conjugation induces an isomorphism $H^{p,q}_{BC}(X) \stackrel{\simeq}{\to} H^{q,p}_{BC}(X)$, and the Hodge-$*$-operator induces an isomorphism $H^{p,q}_{BC}(X) \stackrel{\simeq}{\to} H^{n-q,n-p}_{A}(X)$; furthermore, note that
$$ H^{p,0}_{BC}(X) \;\simeq\; \ker\left(\de\colon \wedge^{p,0}X\to\wedge^{p+1}(X;\C)\right) $$
and that
$$ H^{n,0}_{BC}(X) \;\simeq\; H^{n,0}_{\delbar}(X) \;.$$

\medskip

\begin{itemize}
\item \textbf{$1$-forms}
It is straightforward to check that
$$ H^{1,0}_{BC}(X_{\tempo})\;=\:\C\left\langle \phit{1},\,\phit{2}\right\rangle \quad \text{ for } \quad \tempo\in\text{ classes {\itshape (i)}, {\itshape (ii)} and {\itshape (iii)}} \;. $$

\item \textbf{$2$-forms}
It is straightforward to compute
$$ H^{2,0}_{BC}(X_{\tempo}) \;=\; \C\left\langle \phit{12},\,\phit{13},\,\phit{23} \right\rangle \quad \text{ for }\quad \tempo\in\text{ class {\itshape (i)}} \;.$$
The computations for $H^{2,0}_{BC}(X_{\tempo})$ reduce to find $\psi=A\,\phit{12}+B\,\phit{13}+C\,\phit{23}$ where $A,\,B,\,C\in\C$ satisfy the linear system
$$
\left(
\begin{array}{ccc}
 0 & 0 & 0 \\
 0 & -\sigma_{2\bar1} & \sigma_{1\bar1} \\
 0 & -\sigma_{2\bar2} & \sigma_{1\bar2}
\end{array}
\right)
\cdot
\left(
\begin{array}{c}
 A \\
 B \\
 C
\end{array}
\right)
=
\left(
\begin{array}{c}
 0 \\
 0 \\
 0
\end{array}
\right) \;,
$$
whose matrix has rank $0$ for $\tempo\in\text{ class {\itshape (i)}}$, rank $1$ for $\tempo\in\text{ class {\itshape (ii)}}$ and rank $2$ for $\tempo\in\text{ class {\itshape (iii)}}$; so, in particular, we get that
$$ \dim_\C H^{2,0}_{BC}(X_{\tempo}) \;=\; 2 \quad \text{ for } \quad \tempo\in\text{ class {\itshape (ii)}} $$
and
$$ \dim_\C H^{2,0}_{BC}(X_{\tempo}) \;=\; 1 \quad \text{ for } \quad \tempo\in\text{ class {\itshape (iii)}} $$
(more precisely, for $\tempo\in\text{ class {\itshape (iii)}}$ we have $H^{2,0}_{BC}(X_{\tempo})=\C\left\langle \phit{12}\right\rangle$).

It remains to compute $H^{1,1}_{BC}(X_\tempo)$ for $\tempo\in\text{ classes {\itshape (i)}, {\itshape (ii)} and {\itshape (iii)}}$. First of all, it is easy to check that
$$ H^{1,1}_{BC}(X_{\tempo}) \;\supseteq\; \C\left\langle \phit{1\bar1},\,\phit{1\bar2},\,\phit{2\bar1},\,\phit{2\bar2} \right\rangle \quad \text{ for } \quad \tempo\in\text{ classes {\itshape (i)}, {\itshape (ii)} and {\itshape (iii)}} \;,$$
and equality holds if $\tempo\in\text{ class {\itshape (i)}}$, hence, in particular, if $\tempo=\zero$. This immediately implies that
$$ H^{1,1}_{BC}(X_{\tempo}) \;=\; \C\left\langle \phit{1\bar1},\,\phit{1\bar2},\,\phit{2\bar1},\,\phit{2\bar2} \right\rangle \quad \text{ for } \quad \tempo\in\text{ classes {\itshape (i)}, {\itshape (ii)} and {\itshape (iii)}} \;;$$
indeed, the function $\tempo\mapsto \dim_\C H^{1,1}_{BC}(X_\tempo)$ is upper-semi-continuous at $0$, since $H^{1,1}_{BC}(X_\tempo)$ is isomorphic to the kernel of the self-adjoint elliptic differential operator $\tilde\Delta_{BC_{J_\tempo}}\lfloor_{\wedge^{1,1}X_{\tempo}}$. (One can explain this argument saying that the new parts appearing in the computations for $\tempo\neq \zero$ are ``too small'' to balance out the lack for the $\del$-closure or the $\delbar$-closure.) From another point of view, we can note that $(1,1)$-forms of the type $\psi=A\,\phit{1\bar3}+B\,\phit{2\bar3}+C\,\phit{3\bar1}+D\,\phit{3\bar2}+E\,\phit{3\bar3}$ are $\tilde\Delta_{BC_{J_\tempo}}$-harmonic if and only if $E=0$ and $A,\,B,\,C,\,D\in\C$ satisfy the linear system
$$
\left(
\begin{array}{cccc}
 -\overline{\sigma_{12}} & 0 & -\sigma_{1\bar2} & -\sigma_{1\bar1} \\
 0 & -\overline{\sigma_{12}} & -\sigma_{2\bar2} & -\sigma_{2\bar1} \\
\hline
 \overline{\sigma_{1\bar2}} & -\overline{\sigma_{1\bar1}} & \sigma_{12} & 0 \\
 \overline{\sigma_{2\bar2}} & -\overline{\sigma_{2\bar1}} & 0 & \sigma_{12} 
\end{array}
\right)
\cdot
\left(
\begin{array}{c}
 A \\
 B \\
 C \\
 D
\end{array}
\right)
\;=\;
\left(
\begin{array}{c}
 0 \\
 0 \\
 0 \\
 0
\end{array}
\right) \;,
$$
whose matrix has rank $4$ for every $\tempo\in\text{ classes {\itshape (i)}, {\itshape (ii)} and {\itshape (iii)}}$.

\item \textbf{$3$-forms}
It is straightforward to compute
$$ H^{3,0}_{BC}(X_{\tempo})\;=\;\C \left\langle \phit{123} \right\rangle \quad \text{ for } \quad \tempo\in\text{ classes {\itshape (i)}, {\itshape (ii)} and {\itshape (iii)}} \;.$$
Moreover,
\begin{eqnarray*}
H^{2,1}_{BC}(X_{\tempo}) &=& \C \left\langle \phit{12\bar1},\, \phit{12\bar2},\, \phit{13\bar1}-\frac{\sigma_{2\bar2}}{\overline{\sigma_{12}}}\,\phit{12\bar3},\, \phit{13\bar2}+\frac{\sigma_{2\bar1}}{\overline{\sigma_{12}}}\,\phit{12\bar3},\, \phit{23\bar1}+\frac{\sigma_{1\bar2}}{\overline{\sigma_{12}}}\,\phit{12\bar3},\, \phit{23\bar2}-\frac{\sigma_{1\bar1}}{\overline{\sigma_{12}}}\,\phit{12\bar3}  \right\rangle \\[5pt]
 && \text{ for } \quad \tempo\in\text{ classes {\itshape (i)}, {\itshape (ii)} and {\itshape (iii)}}\;;
\end{eqnarray*}
in particular,
$$ H^{2,1}_{BC}(X_{\tempo})\;=\;\C \left\langle \phit{12\bar1},\, \phit{12\bar2},\, \phit{13\bar1},\, \phit{13\bar2},\, \phit{23\bar1},\, \phit{23\bar2} \right\rangle  \quad \text{ for } \quad \tempo\in\text{ class {\itshape (i)}} \;.$$
From another point of view, one can easily check that
$$
H^{2,1}_{BC}(X_{\tempo}) \;\supseteq\; \C \left\langle \phit{12\bar1},\, \phit{12\bar2} \right\rangle \quad \text{ for } \quad \tempo\in\text{ classes {\itshape (i)}, {\itshape (ii)} and {\itshape (iii)}} \;,
$$
and that the $(2,1)$-forms of the type $\psi=A\,\phit{12\bar3}+B\,\phit{13\bar1}+C\,\phit{13\bar2}+D\,\phit{23\bar1}+E\,\phit{23\bar2}+F\,\phit{13\bar3}+G\,\phit{23\bar3}$ are $\tilde\Delta_{BC_{J_\tempo}}$-harmonic if and only if $F=0=G$ and $A,\,B,\,C,\,D,\,E\in\C$ satisfy the equation
$$
\left(
\begin{array}{ccccc}
 \overline{\sigma_{12}} & \sigma_{2\bar2} & -\sigma_{2\bar1} & \sigma_{1\bar2} & \sigma_{1\bar1}
\end{array}
\right)
\cdot
\left(
\begin{array}{c}
 A \\
 B \\
 C \\
 D \\
 E
\end{array}
\right)
\;=\;
 0 \;,
$$
whose matrix has rank $1$ for every $\tempo\in\text{ classes {\itshape (i)}, {\itshape (ii)} and {\itshape (iii)}}$. Note in particular that the dimensions of $H^{3,0}_{BC}(X_{\tempo})$ and of $H^{2,1}_{BC}(X_{\tempo})$ do not depend on $\tempo$.

\item \textbf{$4$-forms}
It is straightforward to compute
$$ H^{3,1}_{BC}(X_\tempo) \;=\; \C \left\langle \phit{123\bar1},\,\phit{123\bar2} \right\rangle \quad \text{ for } \quad \tempo\in\text{ classes {\itshape (i)}, {\itshape (ii)} and {\itshape (iii)}} $$
and
$$ H^{2,2}_{BC}(X_\tempo) \;=\; \C\left\langle \phit{12\bar1\bar3},\, \phit{12\bar2\bar3},\, \phit{13\bar1\bar2},\, \phit{13\bar1\bar3},\, \phit{13\bar2\bar3},\, \phit{23\bar1\bar2}, \, \phit{23\bar1\bar3},\, \phit{23\bar2\bar3} \right\rangle \quad \text{ for } \quad \tempo\in\text{ class {\itshape (i)}} \;. $$
Moreover, one can check that
$$ H^{2,2}_{BC}(X_\tempo) \;\supseteq\; \C\left\langle \phit{12\bar1\bar3},\, \phit{12\bar2\bar3},\, \phit{13\bar1\bar2},\, \phit{23\bar1\bar2} \right\rangle \quad \text{ for } \quad \tempo\in\text{ classes {\itshape (i)}, {\itshape (ii)} and {\itshape (iii)}} \;,$$
and that no $(2,2)$-form with a non-zero component in $\phit{12\bar1\bar2}$ can be $\tilde\Delta_{{BC}_{J_t}}$-harmonic.
For $H^{2,2}_{BC}(X_\tempo)$ with $\tempo\in\text{ classes {\itshape (ii)} and {\itshape (iii)}}$, we get a new behaviour: there are subclasses in both class {\itshape (ii)} and class {\itshape (iii)}, which can be distinguished by the dimension of $H^{2,2}_{BC}(X_\tempo)$. Indeed, consider $(2,2)$-forms of the type $\psi=A\,\phit{13\bar1\bar3}+B\,\phit{13\bar2\bar3}+C\,\phit{23\bar1\bar3}+D\,\phit{23\bar2\bar3}$; a straightforward computation shows that such a $\psi$ is $\tilde\Delta_{BC_{J_\tempo}}$-harmonic if and only if $A,\,B,\,C,\,D\in\C$ satisfy the linear system
$$
\left(
\begin{array}{cccc}
 \overline{\sigma_{2\bar2}} & -\overline{\sigma_{1\bar2}} & -\overline{\sigma_{2\bar1}} & \overline{\sigma_{1\bar1}} \\
 \sigma_{2\bar2} & -\sigma_{2\bar1} & -\sigma_{1\bar2} & \sigma_{1\bar1}
\end{array}
\right)
\cdot
\left(
\begin{array}{c}
 A \\
 B \\
 C \\
 D
\end{array}
\right)
=
\left(
\begin{array}{c}
 0 \\
 0
\end{array}
\right) \;.
$$
As one can straightforwardly note, the rank of the matrix involved is $0$ for $\tempo\in\text{ class {\itshape (i)}}$, while it is $1$ or $2$ depending on the values of the parameters within class {\itshape (ii)}, or within class {\itshape (iii)}. Therefore
$$ \dim_\C H^{2,2}_{BC}(X_\tempo) \;=\; 7 \quad \text{ for } \quad \tempo\in\text{ subclasses {\itshape (ii.a)} and {\itshape (iii.a)}} $$
and
$$ \dim_\C H^{2,2}_{BC}(X_\tempo) \;=\; 6 \quad \text{ for } \quad \tempo\in\text{ subclasses {\itshape (ii.b)} and {\itshape (iii.b)}} \;.$$

\item \textbf{$5$-forms}
Finally, let us compute $H^{3,2}_{BC}(X_\tempo)$.
It is straightforward to check that
$$ H^{3,2}_{BC}(X_{\tempo}) \;=\; \C\left\langle \phit{123\bar1\bar2},\, \phit{123\bar1\bar3},\, \phit{123\bar2\bar3} \right\rangle \quad \text{ for } \quad \tempo\in\text{ classes {\itshape (i)}, {\itshape (ii)} and {\itshape (iii)}} \;:$$
in particular, it does not depend on $\tempo\in\Delta(\zero,\varepsilon)$.
\end{itemize}

\medskip

We summarize the results of the computations above in the following theorem, \cite[Theorem 5.1]{angella}.
\begin{thm}
 Consider the Iwasawa manifold $\mathbb{I}_3 := \left. \mathbb{H}\left(3;\Z\left[\im\right]\right) \right\backslash \mathbb{H}(3;\C)$ and the family $\left\{X_\tempo=\left(\I_3,\,J_\tempo\right)\right\}_{\tempo\in\Delta(\zero,\varepsilon)}$ of its small deformations, where $\varepsilon>0$ is small enough and $X_\zero=\I_3$. Then the dimensions $h^{p,q}_{BC}:=h^{p,q}_{BC}\left(X_\tempo\right):=\dim_\C H^{p,q}_{BC}\left(X_\tempo\right)=\dim_\C H^{3-p,3-q}_{A}\left(X_\tempo\right)$ does not depend on $\tempo\in\Delta(\zero,\varepsilon)$ whenever $p+q$ is odd or $(p,q)\in\left\{(1,1),\,(3,1),\,(1,3)\right\}$, and they are equal to
$$
\begin{array}{ccccccccccc}
 h^{1,0}_{BC} &=& h^{0,1}_{BC} &=& 2 \;, & & & & & & \\[5pt]
 h^{2,0}_{BC} &=& h^{0,2}_{BC} &\in& \left\{1,\,2,\,3\right\}\;, & \qquad & & & h^{1,1}_{BC} & = & 4\;, \\[5pt]
 h^{3,0}_{BC} &=& h^{0,3}_{BC} &=& 1 \;, & \qquad & h^{2,1}_{BC} & = & h^{1,2}_{BC} & = & 6\;, \\[5pt]
 h^{3,1}_{BC} &=& h^{1,3}_{BC} &=& 2 \;, & \qquad & & & h^{2,2}_{BC} & \in & \left\{6,\,7,\,8\right\} \;, \\[5pt]
 h^{3,2}_{BC} &=& h^{2,3}_{BC} &=& 3 \;. & & & & & &
\end{array}
$$
\end{thm}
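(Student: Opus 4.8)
The plan is to reduce the entire computation to finite-dimensional linear algebra on the complex $\left(\wedge^{\bullet,\bullet}\left(\h_3\right)_\C^*,\, \del,\, \delbar\right)$ of left-invariant forms, and then to run the relevant rank computations bidegree by bidegree using the structure equations of \S\ref{sec:structure-equations-iwasawa}. First I would justify this reduction. The real Lie algebra $\h_3$ underlying the nilmanifold $\I_3 = \left.\Gamma\right\backslash G$ has real dimension $6$ and is not isomorphic to $\h_7 = \left(0^3,\, 12,\, 13,\, 23\right)$ (for instance, because $b_1\left(\I_3\right)=4\neq 3$), and every $J_\tempo$ in Nakamura's family is a $G$-left-invariant complex structure on this same nilmanifold. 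Hence the last hypothesis of Theorem \ref{thm:bc-invariant-cor} holds uniformly in $\tempo$, yielding isomorphisms
$$ H^{\bullet,\bullet}_{BC}\left(X_\tempo\right) \;\simeq\; H^{\bullet,\bullet}_{BC}\left(\left(\h_3\right)_\C, J_\tempo\right) \;; $$
by Remark \ref{rem:inv-harmonic} these may be realized as spaces of $\tilde\Delta_{BC}$-harmonic invariant forms for the left-invariant metric $g_\tempo$. From this point the problem is the study of $\ker\del\cap\ker\delbar$ modulo $\imm\del\delbar$ on the finite-dimensional spaces $\wedge^{p,q}\left(\h_3\right)_\C^*$.

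Next I would exploit the symmetries to shorten the list of bidegrees. For each $p,q$ the conjugation gives $h^{p,q}_{BC}=h^{q,p}_{BC}$ and the Hodge-$*$-operator of $g_\tempo$ gives $h^{p,q}_{BC}=h^{3-q,3-p}_A$; moreover $H^{p,0}_{BC}\simeq\ker\left(\de\colon\wedge^{p,0}\to\wedge^{p+1}\right)$ and $H^{3,0}_{BC}\simeq H^{3,0}_\delbar$. These identities reduce everything to the few bidegrees $(1,0)$, $(2,0)$, $(1,1)$, $(3,0)$, $(2,1)$, $(3,1)$, $(2,2)$, $(3,2)$, the remaining values being read off by duality. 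The easiest are $(1,0)$ and $(3,0)$, where $H^{1,0}_{BC}\simeq\ker\left(\de\colon\wedge^{1,0}\to\wedge^{2}\right)=\C\left\langle\phit{1},\,\phit{2}\right\rangle$ and $H^{3,0}_{BC}\simeq H^{3,0}_\delbar=\C\left\langle\phit{123}\right\rangle$ are immediate; the odd-degree bidegrees $(2,1)$ and $(3,2)$ reduce to homogeneous systems whose ranks are constant in $\tempo$, yielding the $\tempo$-independent values $6$ and $3$.

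Then I would carry out the genuinely computational bidegrees. Writing an invariant $(p,q)$-form with undetermined coefficients and imposing $\del$- and $\delbar$-closedness yields, in each bidegree, a homogeneous linear system whose matrix has entries among the structure constants $\sigma_{12},\sigma_{1\bar1},\sigma_{1\bar2},\sigma_{2\bar1},\sigma_{2\bar2}$ of \S\ref{sec:structure-equations-iwasawa}; passing to the quotient by $\imm\del\delbar$ removes a further controlled subspace. The key point is that the rank of each such matrix is governed only by $D\left(\tempo\right)$ and by the rank of the matrix $S$, which is precisely what produces Nakamura's trichotomy (i)/(ii)/(iii) together with the finer split into (ii.a)/(ii.b) and (iii.a)/(iii.b). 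For $(2,0)$ the associated $3\times 3$ matrix has rank $0$, $1$, $2$ in classes (i), (ii), (iii), giving $h^{2,0}_{BC}\in\{1,2,3\}$; for $(2,2)$ the $2\times 4$ matrix attached to the forms $\phit{13\bar1\bar3},\phit{13\bar2\bar3},\phit{23\bar1\bar3},\phit{23\bar2\bar3}$ has rank $0$, $1$ or $2$, giving $h^{2,2}_{BC}\in\{6,7,8\}$; for $(3,1)$ the relevant rank is constant, giving the $\tempo$-independent value $2$.

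Finally, the $(1,1)$ bidegree requires a semicontinuity argument rather than a bare rank count. I would first check directly that $H^{1,1}_{BC}\left(X_\zero\right)=\C\left\langle \phit{1\bar1},\,\phit{1\bar2},\,\phit{2\bar1},\,\phit{2\bar2}\right\rangle$ has dimension $4$, observe that this four-dimensional space injects into $H^{1,1}_{BC}\left(X_\tempo\right)$ for all $\tempo$, and then invoke the upper-semicontinuity of $\tempo\mapsto\dim_\C H^{1,1}_{BC}\left(X_\tempo\right)$ (Theorem \ref{thm:bc-invariant-open}, or \cite[Lemme 3.2]{schweitzer}) to force equality near $\zero$; equivalently, one checks that the $4\times 4$ harmonicity matrix attached to the forms $\phit{1\bar3},\phit{2\bar3},\phit{3\bar1},\phit{3\bar2}$ has full rank for every $\tempo$. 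The main obstacle I anticipate is bookkeeping: keeping the many linear systems and their rank conditions organized so that the combined dependence on class (i)/(ii)/(iii), on $\rk S$, and on $D\left(\tempo\right)$ is tracked consistently across all bidegrees, and in particular verifying that the $(2,2)$ system genuinely realizes all three values $6$, $7$, $8$ as the parameters vary.
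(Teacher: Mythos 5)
Your proposal is correct and follows essentially the same route as the paper's proof in \S\ref{sec:bott-chern-iwasawa}: reduction to left-invariant forms via Theorem \ref{thm:bc-invariant-cor} (together with Theorem \ref{thm:bc-invariant-open}), the conjugation and Hodge-$*$ symmetries to cut down the list of bidegrees, bidegree-by-bidegree rank computations on matrices in the structure constants $\sigma_{12},\sigma_{1\bar1},\sigma_{1\bar2},\sigma_{2\bar1},\sigma_{2\bar2}$ producing the trichotomy for $(2,0)$ and the $2\times 4$ system for $(2,2)$, and the upper-semicontinuity argument (equivalently the full-rank $4\times4$ check) for $h^{1,1}_{BC}$. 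The only minor nuance is that the semicontinuity of $\tempo\mapsto \dim_\C H^{1,1}_{BC}\left(X_\tempo\right)$ comes from the ellipticity of $\tilde\Delta_{BC}$, that is, from \cite[Lemme 3.2]{schweitzer}, rather than from Theorem \ref{thm:bc-invariant-open} itself, but you cite the correct source as the alternative.
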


\begin{rem}
 As a consequence of the computations above, we notice that the Bott-Chern cohomology yields  a finer classification of the small deformations of $\I_3$ than the Dolbeault cohomology: indeed, note that $\dim_\C H^{2,2}_{BC}(X_\tempo)$ assumes different values according to different parameters in class {\itshape (ii)}, respectively in class {\itshape (iii)}; in a sense, this says that the Bott-Chern cohomology ``carries more informations'' about the complex structure that the Dolbeault one. Note also that most of the dimensions of Bott-Chern cohomology groups are invariant under small deformations: this happens for example for the odd-degree Bott-Chern cohomology groups.
\end{rem}

\begin{center}

\begin{landscape}
\subsection{Dimensions of the cohomologies of the Iwasawa manifold and of its small deformations}\label{subsec:chart}

\smallskip
\begin{center}
\begin{tabular}{c|ccccc}
\toprule
$\mathbf{H^\bullet_{dR}}$ & $\mathbf{b_1}$ & $\mathbf{b_2}$ & $\mathbf{b_3}$ & $\mathbf{b_4}$ & $\mathbf{b_5}$ \\[5pt]
\midrule[0.02em]\addlinespace[1.5pt]\midrule[0.02em]
$\mathbb{I}_3$ and {\itshape (i)}, {\itshape (ii)}, {\itshape (iii)} & $4$ & $8$ & $10$ & $8$ & $4$ \\
\bottomrule
\end{tabular} 
\end{center}
\smallskip

\medskip

\smallskip
\begin{center}
\begin{tabular*}{16.9cm}{c|cc|ccc|cccc|ccc|cc}
\toprule
$\mathbf{H^{\bullet,\bullet}_{\delbar}}$ & $\mathbf{h^{1,0}_{\delbar}}$ & $\mathbf{h^{0,1}_{\delbar}}$ & $\mathbf{h^{2,0}_{\delbar}}$ & $\mathbf{h^{1,1}_{\delbar}}$ & $\mathbf{h^{0,2}_{\delbar}}$ & $\mathbf{h^{3,0}_{\delbar}}$ & $\mathbf{h^{2,1}_{\delbar}}$ & $\mathbf{h^{1,2}_{\delbar}}$ & $\mathbf{h^{0,3}_{\delbar}}$ & $\mathbf{h^{3,1}_{\delbar}}$ & $\mathbf{h^{2,2}_{\delbar}}$ & $\mathbf{h^{1,3}_{\delbar}}$ & $\mathbf{h^{3,2}_{\delbar}}$ & $\mathbf{h^{2,3}_{\delbar}}$ \\[5pt]
\midrule[0.02em]\addlinespace[1.5pt]\midrule[0.02em]
$\I_3$ and \itshape{(i)} & $3$ & $2$ & $3$ & $6$ & $2$ & $1$ & $6$ & $6$ & $1$ & $2$ & $6$ & $3$ & $2$ & $3$ \\[5pt]
\itshape{(ii)} & $2$ & $2$ & $2$ & $5$ & $2$ & $1$ & $5$ & $5$ & $1$ & $2$ & $5$ & $2$ & $2$ & $2$ \\[5pt]
\itshape{(iii)} & $2$ & $2$ & $1$ & $5$ & $2$ & $1$ & $4$ & $4$ & $1$ & $2$ & $5$ & $1$ & $2$ & $2$ \\
\bottomrule
\end{tabular*}
\end{center}
\smallskip

\medskip

\smallskip
\begin{center}
\begin{tabular*}{16.9cm}{c|cc|ccc|cccc|ccc|cc}
\toprule
$\mathbf{H^{\bullet,\bullet}_{\textrm{BC}}}$ & $\mathbf{h^{1,0}_{\textrm{BC}}}$ & $\mathbf{h^{0,1}_{\textrm{BC}}}$ & $\mathbf{h^{2,0}_{\textrm{BC}}}$ & $\mathbf{h^{1,1}_{\textrm{BC}}}$ & $\mathbf{h^{0,2}_{\textrm{BC}}}$ & $\mathbf{h^{3,0}_{\textrm{BC}}}$ & $\mathbf{h^{2,1}_{\textrm{BC}}}$ & $\mathbf{h^{1,2}_{\textrm{BC}}}$ & $\mathbf{h^{0,3}_{\textrm{BC}}}$ & $\mathbf{h^{3,1}_{\textrm{BC}}}$ & $\mathbf{h^{2,2}_{\textrm{BC}}}$ & $\mathbf{h^{1,3}_{\textrm{BC}}}$ & $\mathbf{h^{3,2}_{\textrm{BC}}}$ & $\mathbf{h^{2,3}_{\textrm{BC}}}$ \\[5pt]
\midrule[0.02em]\addlinespace[1.5pt]\midrule[0.02em]
$\I_3$ and \itshape{(i)} & $2$ & $2$ & $3$ & $4$ & $3$ & $1$ & $6$ & $6$ & $1$ & $2$ & $8$ & $2$ & $3$ & $3$ \\[5pt]
\itshape{(ii.a)} & $2$ & $2$ & $2$ & $4$ & $2$ & $1$ & $6$ & $6$ & $1$ & $2$ & $7$ & $2$ & $3$ & $3$ \\[5pt]
\itshape{(ii.b)} & $2$ & $2$ & $2$ & $4$ & $2$ & $1$ & $6$ & $6$ & $1$ & $2$ & $6$ & $2$ & $3$ & $3$ \\[5pt]
\itshape{(iii.a)} & $2$ & $2$ & $1$ & $4$ & $1$ & $1$ & $6$ & $6$ & $1$ & $2$ & $7$ & $2$ & $3$ & $3$ \\[5pt]
\itshape{(iii.b)} & $2$ & $2$ & $1$ & $4$ & $1$ & $1$ & $6$ & $6$ & $1$ & $2$ & $6$ & $2$ & $3$ & $3$ \\
\bottomrule
\end{tabular*}
\end{center}
\smallskip

\medskip

\smallskip
\begin{center}
\begin{tabular*}{16.9cm}{c|cc|ccc|cccc|ccc|cc}
\toprule
$\mathbf{H^{\bullet,\bullet}_{\textrm{A}}}$ & $\mathbf{h^{1,0}_{\textrm{A}}}$ & $\mathbf{h^{0,1}_{\textrm{A}}}$ & $\mathbf{h^{2,0}_{\textrm{A}}}$ & $\mathbf{h^{1,1}_{\textrm{A}}}$ & $\mathbf{h^{0,2}_{\textrm{A}}}$ & $\mathbf{h^{3,0}_{\textrm{A}}}$ & $\mathbf{h^{2,1}_{\textrm{A}}}$ & $\mathbf{h^{1,2}_{\textrm{A}}}$ & $\mathbf{h^{0,3}_{\textrm{A}}}$ & $\mathbf{h^{3,1}_{\textrm{A}}}$ & $\mathbf{h^{2,2}_{\textrm{A}}}$ & $\mathbf{h^{1,3}_{\textrm{A}}}$ & $\mathbf{h^{3,2}_{\textrm{A}}}$ & $\mathbf{h^{2,3}_{\textrm{A}}}$ \\[5pt]
\midrule[0.02em]\addlinespace[1.5pt]\midrule[0.02em]
$\I_3$ and \itshape{(i)} & $3$ & $3$ & $2$ & $8$ & $2$ & $1$ & $6$ & $6$ & $1$ & $3$ & $4$ & $3$ & $2$ & $2$ \\[5pt]
\itshape{(ii.a)} & $3$ & $3$ & $2$ & $7$ & $2$ & $1$ & $6$ & $6$ & $1$ & $2$ & $4$ & $2$ & $2$ & $2$ \\[5pt]
\itshape{(ii.b)} & $3$ & $3$ & $2$ & $6$ & $2$ & $1$ & $6$ & $6$ & $1$ & $2$ & $4$ & $2$ & $2$ & $2$ \\[5pt]
\itshape{(iii.a)} & $3$ & $3$ & $2$ & $7$ & $2$ & $1$ & $6$ & $6$ & $1$ & $1$ & $4$ & $1$ & $2$ & $2$ \\[5pt]
\itshape{(iii.b)} & $3$ & $3$ & $2$ & $6$ & $2$ & $1$ & $6$ & $6$ & $1$ & $1$ & $4$ & $1$ & $2$ & $2$ \\
\bottomrule
\end{tabular*}
\end{center}
\smallskip

\end{landscape}

\end{center}

\section{Cohomology of orbifolds}\label{sec:orbifolds}

The notion of orbifold has been introduced by I. Satake in \cite{satake}, with the name of \emph{V-manifold}, and has been studied, among others, by W.~L. Baily, \cite{baily, baily-2}.

In this section, we start by recalling the main definitions and some classical results concerning complex orbifolds and their cohomology, and we are then interested in their Bott-Chern cohomology. Compact complex orbifolds of the type $\tilde X = \left. X \right\slash G$, where $X$ is a compact complex manifold and $G$ is a finite group of biholomorphisms of $X$, constitute one of the simplest examples of singular spaces: more precisely, we study the Bott-Chern cohomology for such orbifolds, proving that it can be defined using either currents or forms, or also by computing the $G$-invariant $\tilde \Delta_{BC}$-harmonic forms on $X$, Theorem \ref{thm:bc}.

\subsection{Orbifolds and cohomologies}

We first recall some classical definitions and results about orbifolds and their cohomologies, referring to \cite{joyce-red, joyce-ext, satake, baily, baily-2} (see, e.g., \cite[Definition 7.4.3]{joyce-red}).

\begin{defi}[{\cite[Definition 2]{satake}}]
 A \emph{complex orbifold of complex dimension $n$} is a singular complex space of complex dimension $n$ whose singularities are locally isomorphic to quotient singularities $\left. \C^n\right\slash G$, for finite subgroups $G\subset \GL(n;\C)$.
\end{defi}

By definition, an object (e.g., a \emph{differential form}, a \emph{Riemannian metric}, a \emph{Hermitian metric}) \emph{on a complex orbifold $\tilde X$} is defined locally at $x\in\tilde X$ as a $G_x$-invariant object on $\C^n$, where $G_x\subseteq\GL(n;\C)$ is such that $\tilde X$ is locally isomorphic to $\left. \C^n \right\slash G_x$ at $x$.

\medskip

In particular, one gets a differential complex $\left(\wedge^\bullet \tilde X, \, \de\right)$, and a double complex $\left(\wedge^{\bullet,\bullet}\tilde X,\, \del,\, \delbar\right)$. Define the de Rham, Dolbeault, Bott-Chern, and Aeppli cohomology groups of $\tilde X$ respectively as
\begin{eqnarray*}
   H^\bullet_{dR}\left(\tilde X;\R\right) \;:=\; \frac{\ker \de}{\imm\de} \;, &\qquad&
   H^{\bullet,\bullet}_{\delbar}\left(\tilde X\right) \;:=\; \frac{\ker \delbar}{\imm \delbar} \;, \\[5pt]
   H^{\bullet,\bullet}_{BC}\left(\tilde X\right) \;:=\; \frac{\ker \del \cap \ker \delbar}{\imm \del\delbar} \;, &\qquad&
   H^{\bullet,\bullet}_{A}\left(\tilde X\right) \;:=\; \frac{\ker \del\delbar}{\imm \del + \imm \delbar} \;.
\end{eqnarray*}
The structure of double complex of $\left(\wedge^{\bullet, \bullet}\tilde X,\, \del,\, \delbar\right)$ induces naturally a spectral sequence $\left\{\left(E_r^{\bullet,\bullet},\, \de_r\right)\right\}_{r\in\N}$, called \emph{Hodge and Fr\"olicher spectral sequence of $\tilde X$}, such that $E_1^{\bullet,\bullet}\simeq H^{\bullet,\bullet}_{\delbar}\left(\tilde X\right)$ (see, e.g., \cite[\S2.4]{mccleary}). Hence, one has the \emph{Fr\"olicher inequality}, see \cite[Theorem 2]{frolicher},
$$ \sum_{p+q=k} \dim_\C H^{p,q}_\delbar\left(\tilde X \right) \;\geq\; \dim_\C H^k_{dR}\left(\tilde X;\C\right) \;,$$
for any $k\in\N$.

\medskip

Given a Riemannian metric on a complex orbifold $\tilde X$ of complex dimension $n$, one can consider the $\R$-linear Hodge-$*$-operator $*_g\colon \wedge^\bullet \tilde X \to \wedge^{2n-\bullet}\tilde X$, and hence the \kth{2} order self-adjoint elliptic differential operator $\Delta:=\left[\de,\, \de^*\right]:=\de\,\de^*+\de^*\,\de \in \End{\wedge^\bullet \tilde X}$.

Analogously, given a Hermitian metric on a complex orbifold $\tilde X$ of complex dimension $n$, one can consider the $\C$-linear Hodge-$*$-operator $*_g\colon \wedge^{\bullet_1, \bullet_2} \tilde X \to \wedge^{n-\bullet_2, n-\bullet_1}\tilde X$, and hence the \kth{2} order self-adjoint elliptic differential operator $\overline\square:=\left[\delbar,\, \delbar^*\right]:=\delbar\,\delbar^*+\delbar^*\,\delbar \in \End{\wedge^{\bullet,\bullet}\tilde X}$. Furthermore, following \cite[\S2]{schweitzer}, see also \cite[Proposition 5]{kodaira-spencer-3}, one can define the \kth{4} order self-adjoint elliptic differential operators
$$ \tilde \Delta_{BC} \;:=\; \left(\del\delbar\right)\left(\del\delbar\right)^*+\left(\del\delbar\right)^*\left(\del\delbar\right)+\left(\delbar^*\del\right)\left(\delbar^*\del\right)^*+\left(\delbar^*\del\right)^*\left(\delbar^*\del\right)+\delbar^*\delbar+\del^*\del \;\in\; \End{\wedge^{\bullet, \bullet}\tilde X} $$
and
$$ \tilde\Delta_{A} \;:=\; \del\del^*+\delbar\delbar^*+\left(\del\delbar\right)^*\left(\del\delbar\right)+\left(\del\delbar\right)\left(\del\delbar\right)^*+\left(\delbar\del^*\right)^*\left(\delbar\del^*\right)+\left(\delbar\del^*\right)\left(\delbar\del^*\right)^* \;\in\; \End{\wedge^{\bullet, \bullet}\tilde X} \;. $$

As a matter of notation, given a compact complex orbifold $\tilde X$ of complex dimension $n$, denote the constant sheaf with coefficients in $\R$ over $\tilde X$ by $\underline{\R}_{\tilde X}$, the sheaf of germs of smooth functions over $\tilde X$ by $\mathcal{C}^{\infty}_{\tilde X}$, the sheaf of germs of $(p,q)$-forms (for $p,q\in\N$) over $\tilde X$ by $\mathcal{A}^{p,q}_{\tilde X}$, the sheaf of germs of $k$-forms (for $k\in\N$) over $\tilde X$ by $\mathcal{A}^{k}_{\tilde X}$, the sheaf of germs of bidimension-$(p,q)$-currents (for $p,q\in\N$) over $\tilde X$ by $\mathcal{D}_{\tilde X\, p,q}:=:\mathcal{D}^{n-p,n-q}_{\tilde X}$, the sheaf of germs of dimension-$k$-currents (for $k\in\N$) over $\tilde X$ by $\mathcal{D}_{\tilde X\, k}:=:\mathcal{D}^{2n-k}_{\tilde X}$, and the sheaf of holomorphic $p$-forms (for $p\in\N$) over $\tilde X$ by $\Omega^p_{\tilde X}$.

\medskip

The following result, concerning the de Rham cohomology of a complex orbifold, was proven by I. Satake, \cite{satake}, and by W.~L. Baily, \cite{baily}.

\begin{thm}[{\cite[Theorem 1]{satake}, \cite[Theorem H]{baily}}]
 Let $\tilde X$ be a compact complex orbifold of complex dimension $n$.
 There is a canonical isomorphism
 $$ H^\bullet_{dR}\left(\tilde X;\R\right) \;\simeq\; \check H^\bullet\left(\tilde X; \underline{\R}_{\tilde X}\right) \;,$$
 where $\underline{\R}_{\tilde X}$ is the constant sheaf with coefficients in $\R$ over $\tilde X$.

 Furthermore, given a Riemannian metric on $\tilde X$, there is a canonical isomorphism
 $$ H^\bullet_{dR}\left(\tilde X;\R\right) \;\simeq\; \ker \Delta \;.$$
 In particular, the Hodge-$*$-operator induces an isomorphism
 $$ H^\bullet_{dR}\left(\tilde X;\R\right) \;\simeq\; H^{2n-\bullet}_{dR}\left(\tilde X;\R\right) \;.$$
\end{thm}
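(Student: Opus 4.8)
The plan is to prove the theorem in three stages, following the classical strategy for smooth manifolds (as in de Rham's theorem plus Hodge theory) but keeping track of the finite group actions that encode the orbifold structure. The crucial technical point is that every construction on $\tilde X$ is, by definition, a local $G_x$-invariant construction on $\C^n$, so the whole proof reduces to combining sheaf-theoretic acyclicity with the averaging operator over the finite groups $G_x$.

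First I would establish the sheaf-theoretic isomorphism $H^\bullet_{dR}(\tilde X;\R)\simeq\check H^\bullet(\tilde X;\underline\R_{\tilde X})$. The key step is to verify that
$$ 0 \to \underline\R_{\tilde X} \to \left(\mathcal{A}^\bullet_{\tilde X},\, \de\right) $$
is a fine resolution of the constant sheaf, exactly as in the manifold case recalled in \S\ref{sec:currents}. Fineness of each $\mathcal{A}^k_{\tilde X}$ follows because it is a sheaf of $\mathcal{C}^\infty_{\tilde X}$-modules over a paracompact base (partitions of unity descend to the orbifold since one may average local bump functions over the finite groups $G_x$). Exactness of the resolution is the orbifold Poincaré lemma: locally a closed $G_x$-invariant form on $\C^n$ is $\de$ of a form, and averaging the primitive over $G_x$ (the homotopy operator in the Poincaré lemma is linear and commutes with the $G_x$-action, up to the standard averaging) produces a $G_x$-invariant primitive. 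Then $\check H^\bullet(\tilde X;\underline\R_{\tilde X})\simeq H^\bullet(\Gamma(\mathcal{A}^\bullet_{\tilde X}),\de)=H^\bullet_{dR}(\tilde X;\R)$ by the abstract de Rham theorem for acyclic resolutions, exactly as cited in the excerpt.

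Next I would prove the harmonic representation $H^\bullet_{dR}(\tilde X;\R)\simeq\ker\Delta$. Fix a Riemannian metric and form $\Delta=[\de,\de^*]$, which the excerpt already records as a second-order self-adjoint elliptic operator in $\End(\wedge^\bullet\tilde X)$. The plan is to observe that $\wedge^\bullet\tilde X$ consists precisely of the global $G$-invariant forms, so one may run Hodge theory equivariantly: lift to a finite smooth cover or work locally where $\tilde X\cong\C^n/G_x$, apply the standard elliptic Hodge decomposition, and average the decomposition over the finite groups. Because $\de$, $\de^*$ and $\Delta$ all commute with the finite group actions, the averaging projector $\frac{1}{|G_x|}\sum_{g\in G_x}g^*$ preserves the Hodge decomposition, giving the $G$-invariant orthogonal splitting $\wedge^\bullet\tilde X=\ker\Delta\oplus\imm\Delta$ and hence $\ker\Delta\simeq H^\bullet_{dR}(\tilde X;\R)$.

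Finally, Poincaré duality $H^\bullet_{dR}(\tilde X;\R)\simeq H^{2n-\bullet}_{dR}(\tilde X;\R)$ is immediate from the previous step: the Hodge-$*$-operator $*_g\colon\wedge^\bullet\tilde X\to\wedge^{2n-\bullet}\tilde X$ is $\R$-linear, $G$-equivariant (hence well-defined on the orbifold), and satisfies $\Delta *_g=*_g\Delta$, so it restricts to an isomorphism $\ker\Delta\lfloor_{\wedge^\bullet\tilde X}\stackrel{\simeq}{\to}\ker\Delta\lfloor_{\wedge^{2n-\bullet}\tilde X}$. I expect the main obstacle to be the careful verification that averaging over the local groups $G_x$ is compatible across chart overlaps, i.e.\ that the local $G_x$-invariant objects genuinely glue into global sheaves with the acyclicity and ellipticity properties used; once the formalism of orbifold sheaves and invariant differential operators is set up (as in \cite{satake, baily}), each individual step is a routine equivariant upgrade of the smooth case, with the finite-group averaging operator doing all the real work.
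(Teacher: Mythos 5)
Your Stage~1 (the fine resolution $0 \to \underline{\R}_{\tilde X} \to \left(\mathcal{A}^\bullet_{\tilde X},\, \de\right)$, with exactness obtained from the Poincaré lemma plus averaging primitives over the local groups $G_x$, and fineness from the $\mathcal{C}^\infty_{\tilde X}$-module structure over a paracompact base) and your Stage~3 (duality via $*_g\,\Delta = \Delta\, *_g$ on harmonic forms) are correct, and they coincide with the arguments used in this thesis. The genuine gap is in Stage~2, and it sits exactly where the statement is hard. You propose to ``lift to a finite smooth cover or work locally where $\tilde X \cong \left. \C^n \right\slash G_x$, apply the standard elliptic Hodge decomposition, and average''. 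Neither branch works for a general compact complex orbifold. First, not every orbifold is a global quotient $\left. X \right\slash G$ of a manifold by a finite group --- weighted projective spaces are a counterexample, as observed in \S\ref{sec:orbifolds} --- so there is in general no finite smooth cover to lift to. Second, the Hodge decomposition is an irreducibly global statement: on the non-compact local models $\C^n$ (or open pieces thereof) there is no decomposition $\wedge^\bullet = \ker\Delta \oplus \imm\Delta$ to average in the first place, and local decompositions cannot be glued, since $\Delta$ does not commute with multiplication by cut-off functions and harmonicity is not preserved by partitions of unity. The averaging projector $\frac{1}{\ord G_x}\sum_{g\in G_x}g^*$ does commute with $\Delta$, but it acts chart by chart and by itself produces nothing global; it is not ``doing all the real work'' here.

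What the general case actually requires is Baily's decomposition theorem for self-adjoint elliptic differential operators on V-manifolds, \cite[Theorem D]{baily}: one sets up the global $\mathrm{L}^2$-theory on the compact orbifold by integrating the locally invariant data, checks that the local elliptic estimates for $G_x$-invariant forms on $\C^n$ descend, and then invokes the global Fredholm and spectral machinery on the compact orbifold --- a genuinely analytic step with no equivariant-averaging shortcut. Note that your argument, as written, does prove the theorem in the special case treated in \S\ref{sec:orbifolds}, namely $\tilde X = \left. X \right\slash G$ with $X$ a complex manifold (compact, since $\tilde X$ is compact and $G$ finite) and $G$ a single global finite group of biholomorphisms: this is precisely the proof of Theorem \ref{thm:derham-orbifold} given in the thesis, where one takes the honest Hodge decomposition $\alpha = h_\alpha + \de\beta + \de^*\gamma$ on the compact manifold $X$ and averages each component over $G$. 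For the statement as you were asked to prove it --- which the thesis quotes from \cite{satake} and \cite{baily} without proof --- your Stage~2 must be replaced by the orbifold elliptic theory.
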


The isomorphism $H^\bullet_{dR}\left(\tilde X;\R\right) \simeq \ker \Delta$ can be seen as a consequence of a more general decomposition theorem on orbifolds, \cite[Theorem D]{baily}, which holds for \kth{2} order self-adjoint elliptic differential operators. In particular, as regards the Dolbeault cohomology, the following result holds.

\begin{thm}[{\cite[page 807]{baily-2}, \cite[Theorem K]{baily}}]
 Let $\tilde X$ be a compact complex orbifold of complex dimension $n$.
 There is a canonical isomorphism
 $$ H^{\bullet_1,\bullet_2}_{\delbar}\left(\tilde X\right) \;\simeq\; \check H^{\bullet_2} \left(\tilde X; \Omega^{\bullet_1}_{\tilde X}\right) \;,$$
 where $\Omega^p_{\tilde X}$ is the sheaf of holomorphic $p$-forms over $\tilde X$, for $p\in\N$.

 Furthermore, given a Hermitian metric on $X$, there is a canonical isomorphism
 $$ H^{\bullet,\bullet}_{\delbar}\left(\tilde X\right) \;\simeq\; \ker \overline\square \;.$$
 In particular, the Hodge-$*$-operator induces an isomorphism
 $$ H^{\bullet_1,\bullet_2}_{\delbar}\left(\tilde X\right) \;\simeq\; H^{n-\bullet_1,n-\bullet_2}_{\delbar}\left(\tilde X\right) \;.$$
\end{thm}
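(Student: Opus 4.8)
The plan is to follow, \emph{mutatis mutandis}, the two standard proofs of the Dolbeault theorem and of the Hodge isomorphism for compact complex manifolds recalled in \S\ref{sec:alm-cplx}, replacing each ingredient by its $G$-invariant counterpart on a local uniformizing system. The only genuinely orbifold-specific devices I will need are, first, averaging over the finite local groups $G_x\subseteq\GL(n;\C)$ to promote the classical local lemmas to the invariant setting, and second, W.~L. Baily's decomposition theorem for second-order self-adjoint elliptic operators on orbifolds, \cite[Theorem D]{baily}, which is already invoked in the excerpt for the de Rham case and which I will apply verbatim to $\overline\square$.

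For the sheaf-theoretic isomorphism $H^{p,q}_\delbar(\tilde X)\simeq\check H^q(\tilde X;\Omega^p_{\tilde X})$ I would first record the local model: a germ of $(p,q)$-form at $x\in\tilde X$ is, by definition, a $G_x$-invariant germ of $(p,q)$-form on the uniformizing chart, and $\Omega^p_{\tilde X}=\ker(\delbar\colon\mathcal{A}^{p,0}_{\tilde X}\to\mathcal{A}^{p,1}_{\tilde X})$. The key step is an orbifold Dolbeault--Grothendieck lemma: on a $G_x$-invariant polydisc, every $G_x$-invariant $\delbar$-closed $(p,q)$-form $\alpha$ with $q\geq 1$ is $\delbar\tilde\beta$ for some $G_x$-invariant $\tilde\beta$. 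This I obtain by applying the classical lemma, see, e.g., \cite[I.3.29]{demailly-agbook}, to get $\beta$ with $\delbar\beta=\alpha$, and then symmetrizing, $\tilde\beta:=\frac{1}{|G_x|}\sum_{g\in G_x}g^*\beta$; since each $g$ is a biholomorphism, $\delbar$ commutes with $g^*$, and the $G_x$-invariance of $\alpha$ gives $\delbar\tilde\beta=\frac{1}{|G_x|}\sum_{g}g^*\alpha=\alpha$. Consequently $0\to\Omega^p_{\tilde X}\to\mathcal{A}^{p,\bullet}_{\tilde X}$ is an exact sequence of sheaves. Each $\mathcal{A}^{p,q}_{\tilde X}$ is a sheaf of $\mathcal{C}^\infty_{\tilde X}$-modules and is fine, as orbifolds carry smooth partitions of unity subordinate to any open cover, \cite{satake, baily}; fine sheaves on the paracompact space $\tilde X$ being acyclic, the abstract de Rham theorem yields $\check H^q(\tilde X;\Omega^p_{\tilde X})\simeq H^q(\wedge^{p,\bullet}\tilde X,\delbar)=H^{p,q}_\delbar(\tilde X)$.

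For the Hodge isomorphism, I fix a Hermitian metric on $\tilde X$ and form the $G$-invariant, second-order self-adjoint elliptic operator $\overline\square=\delbar\,\delbar^*+\delbar^*\,\delbar$ on $\wedge^{\bullet,\bullet}\tilde X$. Baily's decomposition theorem \cite[Theorem D]{baily} then provides the orthogonal splitting $\wedge^{p,q}\tilde X=\ker\overline\square\oplus\delbar\wedge^{p,q-1}\tilde X\oplus\delbar^*\wedge^{p,q+1}\tilde X$, exactly as in the manifold Hodge theorem, whence each $\delbar$-class has a unique $\overline\square$-harmonic representative, $H^{\bullet,\bullet}_\delbar(\tilde X)\simeq\ker\overline\square$, and finite-dimensionality follows at once. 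The duality statement I would deduce from the $\C$-linear Hodge-$*$-operator $*_g\colon\wedge^{p,q}\tilde X\to\wedge^{n-q,n-p}\tilde X$: it intertwines $\overline\square$ with its conjugate $\square$, so composing $*_g$ with complex conjugation (which carries $\square$-harmonic $(n-q,n-p)$-forms to $\overline\square$-harmonic $(n-p,n-q)$-forms) furnishes a conjugate-linear isomorphism $\ker\overline\square\lfloor_{\wedge^{p,q}\tilde X}\simeq\ker\overline\square\lfloor_{\wedge^{n-p,n-q}\tilde X}$, i.e. $H^{p,q}_\delbar(\tilde X)\simeq H^{n-p,n-q}_\delbar(\tilde X)$.

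I expect the main obstacle to lie not in the averaging argument, which is elementary, but in setting up the orbifold formalism rigorously: namely, verifying that the presheaves $U\mapsto\{\text{locally }G\text{-invariant }(p,q)\text{-forms}\}$ glue into genuine fine sheaves on the singular space $\tilde X$ compatibly across overlapping uniformizing charts, and that \v{C}ech cohomology with respect to this orbifold structure is the correct target. Equally, applying \cite[Theorem D]{baily} requires checking that $\overline\square$, $\delbar^*$, and $*_g$ descend from $G_x$-invariant operators in each chart and that ellipticity is preserved; these are the points where the orbifold bookkeeping, rather than any new analytic idea, carries the weight.
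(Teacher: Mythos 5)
Your proof is correct, and it is worth observing that it establishes the theorem in the generality in which it is stated --- for an arbitrary compact complex orbifold --- whereas the proof actually written out in the paper (Theorem \ref{thm:dolbeault-orbifold}) treats only the global-quotient case $\tilde X = \left. X \right\slash G$. The two arguments agree in the sheaf-theoretic half up to the scope of the averaging: you symmetrize over the finite local groups $G_x$ chart by chart to get the orbifold Dolbeault--Grothendieck lemma, while the paper averages once over the single global group $G$ acting on the covering manifold $X$; the mechanism is identical, yours being the localization that makes it work without a global covering. The Hodge-theoretic half is where the routes genuinely diverge: you invoke W.~L. Baily's decomposition theorem for second-order self-adjoint elliptic operators on orbifolds, \cite[Theorem D]{baily}, applied to $\overline\square$ --- which is precisely the citation route the paper indicates (without proof) for the general statement --- whereas the paper's own argument avoids orbifold elliptic analysis altogether: it applies the classical Hodge theorem on the smooth manifold $X$ and then averages the three summands $h_\alpha$, $\delbar\beta$, $\delbar^*\gamma$ over $G$, using that $G$ commutes with $\delbar$, $\delbar^*$, and $\overline\square$ for a $G$-invariant metric. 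The paper's version is more elementary but intrinsically tied to the existence of a global quotient presentation; yours buys the full generality of the statement at the price of the black box \cite[Theorem D]{baily}. Two small remarks: your formulation of the duality as the conjugate-linear map $\alpha \mapsto \overline{*_g\,\alpha}$ from $(p,q)$ to $(n-p,n-q)$ is in fact more precise than the paper's phrasing, since $*_g$ alone carries $\overline\square$-harmonic $(p,q)$-forms to $\square$-harmonic $(n-q,n-p)$-forms and the conjugation is genuinely needed; and a polydisc need not be $G_x$-invariant, so in your local lemma one should shrink to $\bigcap_{g\in G_x} g(U)$ (or to a ball for a $G_x$-invariant Hermitian metric) before averaging --- a cosmetic fix, since the argument is at the level of germs and $G_x$ is finite.
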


\subsection{Bott-Chern cohomology of orbifolds of global-quotient-type}

Now, we will reduce to study complex orbifolds of the special type
$$ \tilde X \;=\; \left. X \right\slash G \;,$$
where $X$ is a complex manifold and $G$ is a finite group of biholomorphisms of $X$. Indeed, note that, by the S. Bochner linearization theorem \cite[Theorem 1]{bochner}, see, e.g., \cite[Theorem 2.2.1]{duistermaat-kolk}, see also \cite[Theorem 1.7.2]{raissy-master-thesis}, $\tilde X = \left. X \right\slash G$ is an orbifold according to the above definition.

Orbifolds of global-quotient-type have been considered and studied by D.~D. Joyce in constructing examples of compact $7$-dimensional manifolds with holonomy $G_2$, \cite{joyce-jdg-1-2-g2} and \cite[Chapters 11-12]{joyce-ext}, and examples of compact $8$-dimensional manifolds with holonomy ${\rm Spin}(7)$, \cite{joyce-invent, joyce-jdg-spin7} and \cite[Chapters 13-14]{joyce-ext}. See also \cite{fernandez-munoz, cavalcanti-fernandez-munoz} for the use of orbifolds of global-quotient-type to construct compact $8$-dimensional simply-connected non-formal symplectic manifolds (which do not satisfy, respectively satisfy, the Hard Lefschetz condition), answering to a question by I.~K. Babenko and I.~A. Ta\u{\i}manov, \cite[Problem]{babenko-taimanov}.

Since $G$ is a finite group of biholomorphisms, the singular set of $\tilde X$ is
$$ \Sing\left(\tilde X\right) \;=\; \left\{x\,G\in\left.X\right\slash G 
\st x\in X \text{ and }g\cdot x=x \text{ for some }g\in G\setminus\{\id_X\}\right\} \;.$$

\begin{rem}
Not all orbifolds are global quotients $\left. X \right\slash G$: a counterexample is provided by considering weighted projective spaces, see, e.g., \cite[Definition 6.5.4]{joyce-red}.
\end{rem}

\medskip

In particular, for the sake of completeness, we provide in this special case a straightforward proof of \cite[Theorem 1]{satake} and \cite[Theorem H]{baily} for the de Rham cohomology, and of \cite[page 807]{baily-2} and \cite[Theorem K]{baily} for the Dolbeault cohomology; furthermore, we extend these results to Bott-Chern and Aeppli cohomologies.

\begin{thm}[{\cite[Theorem 1]{satake}, \cite[Theorem H]{baily}}]\label{thm:derham-orbifold}
 Let $\tilde X=\left.X \right\slash G$ be a compact complex orbifold of complex dimension $n$, where $X$ is a complex manifold and $G$ is a finite group of biholomorphisms of $X$.
 There are canonical isomorphisms
 $$ H^\bullet_{dR}\left(\tilde X;\R\right) \;\simeq\; \check H^\bullet\left(\tilde X; \underline{\R}_{\tilde X}\right) \;\simeq\; \frac{\ker \left(\de \colon \correnti^\bullet \tilde X \to \correnti^{\bullet+1}\tilde X\right)}{\imm \left(\de \colon \correnti^{\bullet-1} \tilde X \to \correnti^{\bullet}\tilde X\right)} \;.$$

 Furthermore, given a Riemannian metric on $\tilde X$, there is a canonical isomorphism
 $$ H^\bullet_{dR}\left(\tilde X;\R\right) \;\simeq\; \ker \Delta \;.$$

 In particular, the Hodge-$*$-operator induces an isomorphism
 $$ H^\bullet_{dR}\left(\tilde X;\R\right) \;\simeq\; H^{2n-\bullet}_{dR}\left(\tilde X;\R\right) \;.$$
\end{thm}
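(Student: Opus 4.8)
The guiding principle is that, by definition, every object on $\tilde X=\left.X\right\slash G$ is the $G$-invariant part of the corresponding object on the manifold $X$, and that, since $G$ is finite, the averaging operator $\pi_G:=\frac{1}{\module{G}}\sum_{g\in G}g^*$ is a projection onto the $G$-invariant subspace. The plan is to exploit that $\pi_G$ commutes with $\de$ (and, preserving bidegrees, with $\del$ and $\delbar$), because each $g\in G$ is a biholomorphism; hence $\pi_G$ is a morphism of (double) complexes. Over a field of characteristic zero the functor of $G$-invariants is exact (Maschke), so it commutes with cohomology: whenever a $G$-equivariant map on $X$ is a quasi-isomorphism, its restriction to the $G$-invariant subcomplexes is again a quasi-isomorphism, and $G$-equivariant orthogonal decompositions descend to orthogonal decompositions of the invariant parts. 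All the assertions will then follow by applying this principle to the results already recalled on $X$.

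For the first chain of isomorphisms I would prove an orbifold de Rham theorem by exhibiting both $\left(\mathcal{A}^\bullet_{\tilde X},\de\right)$ and $\left(\mathcal{D}^\bullet_{\tilde X},\de\right)$ as fine resolutions of the constant sheaf $\underline{\R}_{\tilde X}$, exactly as in the smooth case recalled for currents and de Rham homology. Fineness follows because the sheaves of orbifold forms and currents are modules over the sheaf of smooth orbifold functions, which admits partitions of unity obtained by averaging ordinary ones. Exactness is the $G$-invariant Poincaré lemma on a model chart $\left.\C^n\right\slash G$: the standard radial homotopy operator is natural under the linear $G$-action, so its restriction to $G$-invariant forms (and, via the regularization process, to $G$-invariant currents) still furnishes a contraction. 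Consequently both complexes compute $\check H^\bullet\left(\tilde X;\underline{\R}_{\tilde X}\right)$, which yields simultaneously $H^\bullet_{dR}(\tilde X;\R)\simeq\check H^\bullet\left(\tilde X;\underline{\R}_{\tilde X}\right)\simeq\frac{\ker\de}{\imm\de}$ on $\correnti^\bullet\tilde X$, the inclusion of forms into currents inducing the canonical identification.

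For the harmonic description, fix a Riemannian metric on $\tilde X$, that is, a $G$-invariant metric $g$ on $X$ (one exists after averaging any metric). Since biholomorphisms preserve the canonical orientation, each $g\in G$ acts as an orientation-preserving isometry, so $g^*$ commutes with the Hodge-$*$-operator, with $\de^*$, and with $\Delta$; hence $\pi_G$ commutes with $\Delta$. Applying the exact functor of $G$-invariants to the Hodge decomposition $\wedge^\bullet X=\ker\Delta\oplus\imm\de\oplus\imm\de^*$, and using $\pi_G\de=\de\pi_G$ to identify $\left(\imm\de\right)^G=\de\left(\wedge^\bullet_{G}X\right)$, I obtain an orthogonal decomposition of $\wedge^\bullet\tilde X$ and the isomorphism $H^\bullet_{dR}(\tilde X;\R)\simeq\ker\Delta$. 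The duality statement then follows because $*$ commutes with $\pi_G$ and with $\Delta$, hence restricts to an isomorphism $\ker\Delta\lfloor_{\wedge^\bullet\tilde X}\stackrel{\simeq}{\to}\ker\Delta\lfloor_{\wedge^{2n-\bullet}\tilde X}$, giving $H^\bullet_{dR}(\tilde X;\R)\simeq H^{2n-\bullet}_{dR}(\tilde X;\R)$.

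The genuine obstacle is the orbifold de Rham theorem, namely verifying that the $G$-invariant Poincaré and regularization arguments carry over on the singular model $\left.\C^n\right\slash G$; everything else is a routine but essential bookkeeping of the $G$-equivariance of $\de$, $*$, $\de^*$ and $\Delta$, together with the exactness of the invariants functor. Once these equivariance properties are in place, the three isomorphisms are formal consequences of the corresponding results on the manifold $X$ recalled earlier, and the same averaging scheme will also serve for the Bott-Chern and Aeppli statements in the next part.
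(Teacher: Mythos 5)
Your proposal is correct and follows essentially the same route as the paper: fine resolutions $0\to\underline{\R}_{\tilde X}\to\left(\mathcal{A}^\bullet_{\tilde X},\,\de\right)$ and $0\to\underline{\R}_{\tilde X}\to\left(\mathcal{D}^\bullet_{\tilde X},\,\de\right)$ with exactness obtained from the Poincaré lemma (for forms and for currents) combined with $G$-averaging, and the Hodge decomposition and Poincaré duality descended to the $G$-invariant subcomplex via the averaging projection $\frac{1}{\ord G}\sum_{g\in G}g^*$, which commutes with $\de$, $\de^*$, $\Delta$, and $*$ for a $G$-invariant metric. The only cosmetic difference is that you invoke the $G$-equivariance of the radial homotopy operator on the linearized chart, while the paper averages an arbitrary primitive $\psi$ after applying the ordinary Poincaré lemma — the two devices are interchangeable.
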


\begin{proof}
We claim that
$$ 0 \to \underline{\R}_{\tilde X} \to \left(\mathcal{A}^\bullet_{\tilde X},\, \de\right) \qquad \text{ and } \qquad 0 \to \underline{\R}_{\tilde X} \to \left(\mathcal{D}^\bullet_{\tilde X},\, \de\right) $$
are fine resolutions of the constant sheaf $\underline{\R}_{\tilde X}$. Indeed, take $\phi$ a germ of a $\de$-closed $k$-form on $\tilde X$, with $k\in\N\setminus\{0\}$, that is, a germ of a $G$-invariant $k$-form on $X$; by the Poincaré lemma, see, e.g., \cite[I.1.22]{demailly-agbook}, there exists $\psi$ a germ of a $(k-1)$-form on $X$ such that $\phi=\de\psi$; since $\phi$ is $G$-invariant, one has
$$ \phi \;=\; \frac{1}{\ord G} \sum_{g\in G} g^*\phi \;=\; \frac{1}{\ord G} \sum_{g\in G} g^*\left(\de\psi\right) \;=\; \de\left(\frac{1}{\ord G} \sum_{g\in G} g^*\psi\right) \;, $$
that is, taking the germ of the $G$-invariant $(k-1)$-form
$$ \tilde\psi \;:=\; \frac{1}{\ord G} \sum_{g\in G} g^*\psi $$
on $X$, one gets a germ of a $(k-1)$-form on $\tilde X$ such that $\phi=\de\tilde\psi$. As regards the case $k=0$, it follows straightforwardly since every ($G$-invariant) $\de$-closed function on $X$ is locally constant. The same argument applies for the sheaves of currents, by using the Poincaré lemma for currents, see, e.g., \cite[Theorem I.2.24]{demailly-agbook}. Finally, note that, for every $k\in\N$, the sheaves $\mathcal{A}^k_{\tilde X}$ and $\mathcal{D}^k_{\tilde X}$ are fine: indeed, they are sheaves of $\mathcal{C}^{\infty}_{\tilde X}$-modules over a para-compact space.

Hence, one gets that
\begin{eqnarray*}
\check H^\bullet\left(\tilde X; \underline{\R}_{\tilde X}\right) &\simeq& \underbrace{\frac{\ker \left(\de \colon \wedge^\bullet \tilde X \to \wedge^{\bullet+1}\tilde X\right)}{\imm \left(\de \colon \wedge^{\bullet-1} \tilde X \to \wedge^{\bullet}\tilde X\right)}}_{=:\; H^\bullet_{dR}\left(\tilde X;\R\right)} \\[5pt]
&\simeq& \frac{\ker \left(\de \colon \correnti^\bullet \tilde X \to \correnti^{\bullet+1}\tilde X\right)}{\imm \left(\de \colon \correnti^{\bullet-1} \tilde X \to \correnti^{\bullet}\tilde X\right)} \;,
\end{eqnarray*}
see, e.g., \cite[Corollary IV.4.19, IV.6.4]{demailly-agbook}.

\smallskip

Consider now a Riemannian metric on $\tilde X$, that is, a $G$-invariant Riemannian metric on $X$. Since the elements of $G$ commute with both $\de$ and $\de^*$ (the Riemannian metric being $G$-invariant), and hence with $\Delta$, the decomposition
$$ \wedge^\bullet X \;=\; \ker \Delta \oplus \de \wedge^{\bullet-1} X \oplus \de^* \wedge^{\bullet+1} X $$
induces a decomposition of the space of $G$-invariant forms, namely,
$$ \wedge^\bullet \tilde X \;=\; \ker \Delta \oplus \de \wedge^{\bullet-1} \tilde X \oplus \de^* \wedge^{\bullet+1} \tilde X \;. $$
More precisely, let $\alpha$ be a $G$-invariant form on $X$; considering the decomposition $\alpha:=: h_\alpha + \de\beta+\de^*\gamma$ with $h_\alpha, \beta,\gamma\in\wedge^\bullet X$ such that $\Delta h_\alpha=0$, one has
$$ \alpha \;=\; \frac{1}{\ord G} \sum_{g\in G} g^*\alpha \;=\; \left(\frac{1}{\ord G} \sum_{g\in G} g^*h_\alpha\right) + \de\left(\frac{1}{\ord G} \sum_{g\in G} g^*\beta\right) + \de^*\left(\frac{1}{\ord G} \sum_{g\in G} g^*\gamma\right) \;, $$
where $\frac{1}{\ord G} \sum_{g\in G} g^*h_\alpha,\, \frac{1}{\ord G} \sum_{g\in G} g^*\beta,\, \frac{1}{\ord G} \sum_{g\in G} g^*\gamma \in\wedge^\bullet \tilde X$ and
$$ \Delta\left(\frac{1}{\ord G} \sum_{g\in G} g^*h_\alpha\right) \;=\; \frac{1}{\ord G} \sum_{g\in G} g^*\left(\Delta h_\alpha\right) \;=\; 0 \;.$$

\smallskip

Finally, note that the Hodge-$*$-operator $*\colon \wedge^{\bullet}\tilde X \to \wedge^{2n-\bullet}\tilde X$ sends $\Delta$-harmonic forms to $\Delta$-harmonic forms, and hence it induces an isomorphism
$$ *\colon H^\bullet_{dR}\left(\tilde X;\R\right)\stackrel{\simeq}{\to} H^{2n-\bullet}_{dR}\left(\tilde X;\R\right) \;, $$
concluding the proof.
\end{proof}

A similar argument can be repeated for the Dolbeault cohomology; more precisely, the following result holds.

\begin{thm}[{\cite[page 807]{baily-2}, \cite[Theorem K]{baily}}]\label{thm:dolbeault-orbifold}
 Let $\tilde X=\left.X \right\slash G$ be a compact complex orbifold of complex dimension $n$, where $X$ is a complex manifold and $G$ is a finite group of biholomorphisms of $X$.
 There are canonical isomorphisms
 $$ H^{\bullet_1,\bullet_2}_{\delbar}\left(\tilde X\right) \;\simeq\; \check H^{\bullet_2} \left(\tilde X; \Omega^{\bullet_1}_{\tilde X}\right) \;\simeq\; \frac{\ker\left(\delbar\colon \correnti^{\bullet_1, \bullet_2}\tilde X \to \correnti^{\bullet_1,\bullet_2+1}\tilde X\right)}{\imm\left(\delbar\colon \correnti^{\bullet_1, \bullet_2-1}\tilde X \to \correnti^{\bullet_1,\bullet_2}\tilde X\right)} \;. $$

 Furthermore, given a Hermitian metric on $\tilde X$, there is a canonical isomorphism
 $$ H^{\bullet,\bullet}_{\delbar}\left(\tilde X\right) \;\simeq\; \ker \overline\square \;.$$

 In particular, the Hodge-$*$-operator induces an isomorphism
 $$ H^{\bullet_1,\bullet_2}_{\delbar}\left(\tilde X\right) \;\simeq\; H^{n-\bullet_1,n-\bullet_2}_{\delbar}\left(\tilde X\right) \;.$$
\end{thm}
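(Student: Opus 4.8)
The plan is to mirror verbatim the argument used for the de Rham cohomology in Theorem \ref{thm:derham-orbifold}, replacing the Poincaré lemma by the Dolbeault and Grothendieck lemma and the operator $\de$ by $\delbar$. Concretely, for each fixed $p\in\N$ I would show that
$$ 0 \to \Omega^p_{\tilde X} \to \left(\mathcal{A}^{p,\bullet}_{\tilde X},\, \delbar\right) \qquad \text{ and } \qquad 0 \to \Omega^p_{\tilde X} \to \left(\mathcal{D}^{p,\bullet}_{\tilde X},\, \delbar\right) $$
are fine resolutions of the sheaf $\Omega^p_{\tilde X}$ of holomorphic $p$-forms over $\tilde X$; the three claimed isomorphisms then follow from the abstract de Rham theorem for acyclic resolutions (\cite[Corollary IV.4.19, IV.6.4]{demailly-agbook}), exactly as in the de Rham case.

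The key point, which I would carry out first, is the exactness of these sheaf sequences. Let $\phi$ be the germ of a $\delbar$-closed $(p,q)$-form on $\tilde X$ with $q\geq 1$, that is, a germ of a $G$-invariant $(p,q)$-form on $X$. By the Dolbeault and Grothendieck lemma (\cite[I.3.29]{demailly-agbook}, resp.\ its analogue for currents following the regularization \cite[Theorem I.2.24]{demailly-agbook}) there is a germ $\psi$ of a $(p,q-1)$-form (resp.\ current) on $X$ with $\phi=\delbar\psi$. Here I would use crucially that $G$ consists of \emph{biholomorphisms}: each $g\in G$ acts on $X$ preserving the bidegree and commuting with $\delbar$, so that, since $\phi$ is $G$-invariant,
$$ \phi \;=\; \frac{1}{\ord G}\sum_{g\in G} g^*\phi \;=\; \delbar\left(\frac{1}{\ord G}\sum_{g\in G} g^*\psi\right) \;=\; \delbar\tilde\psi \;, $$
where $\tilde\psi:=\frac{1}{\ord G}\sum_{g\in G} g^*\psi$ is now $G$-invariant, hence a germ of a form (resp.\ current) on $\tilde X$. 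For $q=0$ the kernel sheaf is $\Omega^p_{\tilde X}$ by definition, and the fineness of $\mathcal{A}^{p,q}_{\tilde X}$ and $\mathcal{D}^{p,q}_{\tilde X}$ follows, as before, from their being sheaves of $\mathcal{C}^\infty_{\tilde X}$-modules over a paracompact space. This yields $H^{\bullet_1,\bullet_2}_{\delbar}(\tilde X)\simeq \check H^{\bullet_2}(\tilde X;\Omega^{\bullet_1}_{\tilde X})$ together with the description by currents.

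For the Hodge-theoretic statement I would fix a Hermitian metric on $\tilde X$, i.e.\ a $G$-invariant Hermitian metric on $X$. Since each $g\in G$ is a biholomorphic isometry, $g^*$ commutes with $\delbar$, with $\delbar^*=-*\del*$ (the metric being $G$-invariant, so that $g^*$ commutes with the Hodge-$*$-operator), and hence with $\overline\square$. Averaging the orthogonal decomposition $\wedge^{\bullet,\bullet}X=\ker\overline\square\oplus\delbar\wedge^{\bullet,\bullet-1}X\oplus\delbar^*\wedge^{\bullet,\bullet+1}X$ over $G$, exactly as in Theorem \ref{thm:derham-orbifold}, gives the corresponding decomposition of the $G$-invariant forms and thus $H^{\bullet,\bullet}_{\delbar}(\tilde X)\simeq\ker\overline\square$. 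Finally, the operator $\overline{*\,\cdot\,}$ obtained by composing the Hodge-$*$-operator with conjugation sends $G$-invariant $\overline\square$-harmonic $(p,q)$-forms to $G$-invariant $\overline\square$-harmonic $(n-p,n-q)$-forms (using $*\overline\square=\square *$ and that $g^*$ commutes with $*$ and with conjugation), which induces the duality isomorphism $H^{\bullet_1,\bullet_2}_{\delbar}(\tilde X)\simeq H^{n-\bullet_1,n-\bullet_2}_{\delbar}(\tilde X)$.

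I expect the only genuine obstacle to be bookkeeping rather than depth: one must check carefully that the averaging operator $\frac{1}{\ord G}\sum_{g}g^*$ commutes with all the operators in sight. The essential input, absent in a purely differentiable quotient, is that $G$ acts by biholomorphisms, so that $g^*$ respects the bigrading and commutes with $\delbar$; the $G$-invariance of the metric then propagates this to $\delbar^*$, $\overline\square$, and $*$. Once this is established, every step is formally identical to the de Rham computation in Theorem \ref{thm:derham-orbifold}.
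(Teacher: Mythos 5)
Your proposal is correct and follows essentially the same route as the paper's proof: fine resolutions $0\to\Omega^p_{\tilde X}\to(\mathcal{A}^{p,\bullet}_{\tilde X},\,\delbar)$ and $0\to\Omega^p_{\tilde X}\to(\mathcal{D}^{p,\bullet}_{\tilde X},\,\delbar)$ established via the Dolbeault--Grothendieck lemma together with the averaging operator $\frac{1}{\ord G}\sum_{g\in G}g^*$, then the averaged Hodge decomposition for $\overline\square$ with respect to a $G$-invariant Hermitian metric, and finally duality via the Hodge-$*$-operator. The only point to tighten is the $q=0$ step for the resolution by currents, where the kernel sheaf equals $\Omega^p_{\tilde X}$ not ``by definition'' but by the regularity fact that every $\delbar$-closed bidimension-$(p,0)$-current is locally a holomorphic $p$-form (the paper handles this with one sentence, citing the same reference you invoke); your explicit composition of $*$ with conjugation in the duality step is, if anything, slightly more careful than the paper's phrasing, which only mentions that $*$ sends $\overline\square$-harmonic forms to $\square$-harmonic forms.
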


\begin{proof}
We claim that, for every $p\in\N$,
$$ 0 \to \Omega^p_{\tilde X} \to \left(\mathcal{A}^{p,\bullet}_{\tilde X},\, \delbar\right) \qquad \text{ and } \qquad 0 \to \Omega^p_{\tilde X} \to \left(\mathcal{D}^{p,\bullet}_{\tilde X},\, \delbar\right) $$
are fine resolutions of the constant sheaf $\Omega^p_{\tilde X}$. Indeed, take $\phi$ a germ of a $\delbar$-closed $(p,q)$-form (respectively, bidimension-$(p,q)$-current) on $\tilde X$, with $q\in\N\setminus\{0\}$, that is, a germ of a $G$-invariant $(p,q)$-form (respectively, bidimension-$(p,q)$-current) on $X$; by the Dolbeault and Grothendieck lemma, see, e.g., \cite[I.3.29]{demailly-agbook}, there exists $\psi$ a germ of a $(p,q-1)$-form (respectively, bidimension-$(p,q-1)$-current) on $X$ such that $\phi=\delbar\psi$; since $\phi$ is $G$-invariant, one has
$$ \phi \;=\; \frac{1}{\ord G} \sum_{g\in G} g^*\phi \;=\; \frac{1}{\ord G} \sum_{g\in G} g^*\left(\delbar\psi\right) \;=\; \delbar\left(\frac{1}{\ord G} \sum_{g\in G} g^*\psi\right) \;, $$
that is, taking the germ of the $G$-invariant $(p,q-1)$-form (respectively, bidimension-$(p,q-1)$-current)
$$ \tilde\psi \;:=\; \frac{1}{\ord G} \sum_{g\in G} g^*\psi $$
on $X$, one gets a germ of a $(p,q-1)$-form (respectively, bidimension-$(p,q-1)$-current) on $\tilde X$ such that $\phi=\delbar\tilde\psi$. As regards the case $q=0$, it follows by the fact that every ($G$-invariant) $\delbar$-closed bidimension-$(p,0)$-current on $X$ is locally a holomorphic $p$-form, see, e.g., \cite[I.3.29]{demailly-agbook}. Finally, note that, for every $q\in\N$, the sheaves $\mathcal{A}^{p,q}_{\tilde X}$ and $\mathcal{D}^{p,q}_{\tilde X}$ are fine: indeed, they are sheaves of $\left(\mathcal{C}^{\infty}_{\tilde X}\otimes_\R\C\right)$-modules over a para-compact space.

Hence, one gets that
\begin{eqnarray*}
\check H^{p,\bullet}\left(\tilde X; \Omega^p_{\tilde X}\right) &\simeq& \frac{\ker \left(\delbar \colon \wedge^{p,\bullet} \tilde X \to \wedge^{p,\bullet+1}\tilde X\right)}{\imm \left(\delbar \colon \wedge^{p,\bullet-1} \tilde X \to \wedge^{p,\bullet}\tilde X\right)} \\[5pt]
&\simeq& \frac{\ker \left(\delbar \colon \correnti^{p,\bullet} \tilde X \to \correnti^{p,\bullet+1}\tilde X\right)}{\imm \left(\delbar \colon \correnti^{p,\bullet-1} \tilde X \to \correnti^{p,\bullet}\tilde X\right)} \;,
\end{eqnarray*}
see, e.g., \cite[Corollary IV.4.19, IV.6.4]{demailly-agbook}.

\smallskip

Consider now a Hermitian metric on $\tilde X$, that is, a $G$-invariant Hermitian metric on $X$. Since the elements of $G$ commute with both $\delbar$ and $\delbar^*$ (the Hermitian metric being $G$-invariant), and hence with $\overline\square$, the decomposition
$$ \wedge^{\bullet,\bullet} X \;=\; \ker \overline\square \oplus \delbar \wedge^{\bullet,\bullet-1} X \oplus \delbar^* \wedge^{\bullet,\bullet+1} X $$
induces a decomposition on the space of $G$-invariant forms, namely,
$$ \wedge^{\bullet,\bullet} \tilde X \;=\; \ker \overline\square \oplus \delbar \wedge^{\bullet,\bullet-1} \tilde X \oplus \delbar^* \wedge^{\bullet,\bullet+1} \tilde X \;. $$
More precisely, let $\alpha$ be a $G$-invariant form on $X$; considering the decomposition $\alpha:=: h_\alpha + \delbar\beta+\delbar^*\gamma$ with $h_\alpha, \beta,\gamma\in\wedge^{\bullet,\bullet} X$ such that $\overline\square h_\alpha=0$, one has
$$ \alpha \;=\; \frac{1}{\ord G} \sum_{g\in G} g^*\alpha \;=\; \left(\frac{1}{\ord G} \sum_{g\in G} g^*h_\alpha\right) + \delbar\left(\frac{1}{\ord G} \sum_{g\in G} g^*\beta\right) + \delbar^*\left(\frac{1}{\ord G} \sum_{g\in G} g^*\gamma\right) \;, $$
where $\frac{1}{\ord G} \sum_{g\in G} g^*h_\alpha,\, \frac{1}{\ord G} \sum_{g\in G} g^*\beta,\, \frac{1}{\ord G} \sum_{g\in G} g^*\gamma \in\wedge^{\bullet,\bullet} \tilde X$ and
$$ \overline\square \left(\frac{1}{\ord G} \sum_{g\in G} g^*h_\alpha\right) \;=\; \frac{1}{\ord G} \sum_{g\in G} g^*\left(\overline\square h_\alpha\right) \;=\; 0 \;.$$

\smallskip

Finally, note that the Hodge-$*$-operator $*\colon \wedge^{\bullet_1,\bullet_2}\tilde X \to \wedge^{n-\bullet_2,n-\bullet_1}\tilde X$ sends $\overline\square$-harmonic forms to $\square$-harmonic forms, where $\square:=\left[\del,\,\del^*\right]:=\del\del^*+\del^*\del\in\End{\wedge^{\bullet,\bullet}\tilde X}$, and hence it induces an isomorphism
$$ *\colon H^{\bullet_1,\bullet_2}_{\delbar}\left(\tilde X\right)\stackrel{\simeq}{\to} H^{n-\bullet_1,n-\bullet_2}_{\delbar}\left(\tilde X\right) \;,$$
concluding the proof.
\end{proof}

\medskip

Finally, as done in Theorem \ref{thm:derham-orbifold} and Theorem \ref{thm:dolbeault-orbifold} for the de Rham cohomology and, respectively, the Dolbeault cohomology, we provide the following result, concerning Bott-Chern and Aeppli cohomologies of compact complex orbifolds of global-quotient-type.

\begin{thm}\label{thm:bc}
 Let $\tilde X=\left.X \right\slash G$ be a compact complex orbifold of complex dimension $n$, where $X$ is a complex manifold and $G$ is a finite group of biholomorphisms of $X$.
 For any $p,q\in\N$, there are canonical isomorphisms
 \begin{equation}\label{eq:bc-orbifolds}
  H^{p,q}_{BC}\left(\tilde X\right) \;\simeq\; \frac{\ker\left(\del \colon \correnti^{p,q}\tilde X \to \correnti^{p+1,q}\tilde X\right) \cap \ker \left(\delbar\colon \correnti^{p,q}\tilde X \to \correnti^{p,q+1}\tilde X\right)}{\imm\left(\del\delbar \colon \correnti^{p-1,q-1}\tilde X \to \correnti^{p,q}\tilde X\right)} \;.
 \end{equation}

 Furthermore, given a Hermitian metric on $\tilde X$, there are canonical isomorphisms
 $$ H^{\bullet,\bullet}_{BC}\left(\tilde X\right) \;\simeq\; \ker \tilde\Delta_{BC} \qquad \text{ and } \qquad H^{\bullet,\bullet}_{A}\left(\tilde X\right) \;\simeq\; \ker\tilde\Delta_A \;. $$

 In particular, the Hodge-$*$-operator induces an isomorphism
 $$ H^{\bullet_1,\bullet_2}_{BC}\left(\tilde X\right) \;\simeq\; H^{n-\bullet_2,n-\bullet_1}_{A}\left(\tilde X\right) \;.$$
\end{thm}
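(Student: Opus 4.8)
The plan is to follow the strategy of Theorem \ref{thm:derham-orbifold} and Theorem \ref{thm:dolbeault-orbifold}, replacing the single-sheaf fine resolutions used there (of $\underline{\R}_{\tilde X}$, respectively $\Omega^p_{\tilde X}$) by the Hodge-theoretic description of the Bott-Chern and Aeppli cohomologies together with the $G$-averaging trick. First I would fix a $G$-invariant Hermitian metric on $X$, obtained by averaging any Hermitian metric $g_0$ via $\frac{1}{\ord G}\sum_{g\in G}g^*g_0$; this is legitimate since $G$ acts by biholomorphisms and $\ord G<+\infty$. Being $G$-invariant, such a metric makes each of the operators $\del$, $\delbar$, $\del^*$, $\delbar^*$, $\tilde\Delta_{BC}$, $\tilde\Delta_A$ and the Hodge-$*$-operator commute with the action $g\mapsto g^*$ of $G$, which is what makes the averaging projector $\frac{1}{\ord G}\sum_{g\in G}g^*$ a chain map.

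Next I would prove the isomorphisms $H^{\bullet,\bullet}_{BC}(\tilde X)\simeq\ker\tilde\Delta_{BC}$ and $H^{\bullet,\bullet}_{A}(\tilde X)\simeq\ker\tilde\Delta_A$. On the manifold $X$, Schweitzer's theory, \cite[Corollaire 2.3]{schweitzer}, gives the orthogonal decomposition $\wedge^{\bullet,\bullet}X=\ker\tilde\Delta_{BC}\oplus\imm\del\delbar\oplus\left(\imm\del^*+\imm\delbar^*\right)$. Since the metric is $G$-invariant this decomposition is $G$-equivariant, so applying $\frac{1}{\ord G}\sum_{g\in G}g^*$ termwise, exactly as in the proof of Theorem \ref{thm:derham-orbifold}, yields the analogous decomposition of the space $\wedge^{\bullet,\bullet}\tilde X$ of $G$-invariant forms; hence $H^{\bullet,\bullet}_{BC}(\tilde X)\simeq\ker\tilde\Delta_{BC}$, the kernel being understood on $G$-invariant forms (equivalently the $G$-invariant part of the harmonic space on $X$). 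The same argument with $\tilde\Delta_A$ and the decomposition $\wedge^{\bullet,\bullet}X=\ker\tilde\Delta_A\oplus\left(\imm\del+\imm\delbar\right)\oplus\imm\left(\del\delbar\right)^*$ gives $H^{\bullet,\bullet}_{A}(\tilde X)\simeq\ker\tilde\Delta_A$.

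The main obstacle is the currents description \eqref{eq:bc-orbifolds}: unlike the de Rham and Dolbeault cases, the Bott-Chern cohomology is not the cohomology of a fine resolution of a single sheaf, so the \v{C}ech-theoretic argument of Theorem \ref{thm:derham-orbifold} does not transfer verbatim. Instead I would argue through ellipticity. The operator $\tilde\Delta_{BC}$ is a $4$-th order self-adjoint elliptic operator, \cite[\S2.b]{schweitzer}, \cite[Proposition 5]{kodaira-spencer-3}; by elliptic regularity its kernel on the space of ($G$-invariant) currents consists of smooth forms and coincides with $\ker\tilde\Delta_{BC}$ computed on forms, and the associated Hodge-type decomposition extends from forms to currents. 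Averaging over $G$ as above, the canonical map $T\colon\wedge^{p,q}\tilde X\to\correnti^{p,q}\tilde X$ induced by the inclusion of forms into currents carries the space of $G$-invariant $\tilde\Delta_{BC}$-harmonic forms isomorphically onto the harmonic representatives of the right-hand side of \eqref{eq:bc-orbifolds}. Since both sides of \eqref{eq:bc-orbifolds} are thereby canonically identified, via $T$, with the same space of $G$-invariant harmonic forms, the inclusion $T$ induces the asserted canonical isomorphism, which is independent of the auxiliary metric.

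Finally, for the duality $H^{\bullet_1,\bullet_2}_{BC}(\tilde X)\simeq H^{n-\bullet_2,n-\bullet_1}_{A}(\tilde X)$, I would use that the Hodge-$*$-operator of the $G$-invariant metric sends $\wedge^{p,q}\tilde X$ to $\wedge^{n-q,n-p}\tilde X$, preserves $G$-invariance (the metric being $G$-invariant), and intertwines $\tilde\Delta_{BC}$ with $\tilde\Delta_{A}$, \cite[\S2.c]{schweitzer}. It therefore restricts to an isomorphism between the corresponding spaces of $G$-invariant harmonic forms, which by the previous step are identified with $H^{\bullet_1,\bullet_2}_{BC}(\tilde X)$ and $H^{n-\bullet_2,n-\bullet_1}_{A}(\tilde X)$, giving the claim.
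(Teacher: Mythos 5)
Your second and third steps coincide with the paper's proof: the paper also obtains $H^{\bullet,\bullet}_{BC}(\tilde X)\simeq\ker\tilde\Delta_{BC}$ and $H^{\bullet,\bullet}_{A}(\tilde X)\simeq\ker\tilde\Delta_{A}$ by applying the averaging projector $\frac{1}{\ord G}\sum_{g\in G}g^*$ to Schweitzer's decompositions (all the operators commuting with $g^*$ for a $G$-invariant metric), and deduces the duality from the equivariance of the Hodge-$*$-operator. Where you genuinely diverge is the currents isomorphism \eqref{eq:bc-orbifolds}: the paper never extends the Bott-Chern Hodge decomposition to currents. It instead considers the exact sequence $0\to\frac{\imm\de\,\cap\,\correnti^{p,q}\tilde X}{\imm\del\delbar}\to\frac{\ker\de\,\cap\,\correnti^{p,q}\tilde X}{\imm\del\delbar}\to H^{p+q}_{dR}$, uses Theorem \ref{thm:derham-orbifold} to handle the de Rham term, and then runs the pure-type ladder argument of \cite[Theorem 3.7]{angella}: writing $\omega^{p,q}=\de\eta$ modulo $\imm\del\delbar$ and decomposing $\eta$ into bidegree components, Theorem \ref{thm:dolbeault-orbifold} lets one replace each component by a differential form up to $\delbar$- (respectively $\del$-) exact terms, which are invisible modulo $\imm\del\delbar$. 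So the paper reduces \eqref{eq:bc-orbifolds} entirely to the already-established de Rham and Dolbeault comparisons, with no elliptic input beyond those.

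The gap in your route is the sentence ``the associated Hodge-type decomposition extends from forms to currents'': this is exactly the nontrivial point, and it is not a formal consequence of ellipticity in the way it is for de Rham and Dolbeault. There, $\Delta$ (respectively $\overline\square$) commutes with $\de$ (respectively $\delbar$); by contrast $\tilde\Delta_{BC}$ commutes with neither $\del$ nor $\delbar$, and the orthogonality argument proving Schweitzer's smooth decomposition has no verbatim analogue for currents, since two currents cannot be paired. The claim is true, but it needs an argument such as the following. Elliptic theory on the compact manifold $X$ does give $\correnti^{p,q}X=\ker\tilde\Delta_{BC}\oplus\tilde\Delta_{BC}\left(\correnti^{p,q}X\right)$ (the Green operator $G$ extends to distributional sections, and harmonic currents are smooth), and each of the six summands of $\tilde\Delta_{BC}S$ visibly lies in $\imm\del\delbar$, $\imm\del^*$, or $\imm\delbar^*$ on currents. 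Given a $\del$-closed $\delbar$-closed current $T$, the residual term $u:=T-H(T)-\del\delbar\left(\del\delbar\right)^*GT$, where $H$ denotes the harmonic projector, lies in $\imm\del^*+\imm\delbar^*$ and satisfies $\del u=\delbar u=0$; pairing $u$ against the three summands of Schweitzer's \emph{smooth} decomposition (using $\delbar\del\delbar=\del\del\delbar=0$, the closedness of $u$, and $\delbar^*\del^*h=0$ for $h\in\ker\tilde\Delta_{BC}$) shows that $u$ annihilates every smooth form, hence $u=0$, so $T$ is harmonic plus $\del\delbar$-exact; injectivity is the easy pairing $\left\langle\del\delbar S,\, h\right\rangle=\pm\left\langle S,\,\delbar^*\del^*h\right\rangle=0$. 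Alternatively, you may invoke the sheaf-theoretic quasi-isomorphism of Schweitzer recalled in Remark \ref{rem:hypoercoh-bc-orbifolds}, which is the clean substitute for the missing single-sheaf fine resolution that you correctly identified as the obstacle. With this step supplied, your $G$-averaging and the rest of your argument go through.
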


\begin{proof}
We use the same argument as in the proof of \cite[Theorem 3.7]{angella} to show that, since the de Rham cohomology and the Dolbeault cohomology of $\tilde X$ can be computed using either differential forms or currents, the same holds true for the Bott-Chern and the Aeppli cohomologies.

Indeed, note that, for any $p,q\in\N$, one has the exact sequence
\begin{eqnarray*}
\lefteqn{0 \to \frac{\imm \left(\de \colon \left(\correnti^{p+q-1}\tilde X\otimes_\R\C\right)\to \left(\correnti^{p+q}\tilde X\otimes_\R\C\right)\right)\cap \correnti^{p,q}\tilde X}{\imm \left(\del\delbar\colon \correnti^{p-1,q-1}\tilde X \to \correnti^{p,q}\tilde X\right)} } \\[5pt]
 && \to \frac{\ker \left(\de\colon \correnti^{p,q}\tilde X\to \correnti^{p+1,q+1}\tilde X\right)}{\imm \left(\del\delbar\colon \correnti^{p-1,q-1}\tilde X\to \correnti^{p,q}\tilde X\right)}\to \frac{\ker \left(\de \colon \left(\correnti^{p+q}\tilde X\otimes_\R\C\right)\to \left(\correnti^{p+q+1}\tilde X \otimes_\R\C\right)\right)}{\imm \left(\de\colon \left(\correnti^{p+q-1}\tilde X \otimes_\R\C\right) \to \left(\correnti^{p+q}\tilde X \otimes_\R \C\right)\right)} \;,
\end{eqnarray*}
where the maps are induced by the identity.
By \cite[Theorem 1]{satake}, see Theorem \ref{thm:derham-orbifold}, one has
$$ \frac{\ker \left(\de \colon \left(\correnti^{p+q}\tilde X\otimes_\R\C\right)\to \left(\correnti^{p+q+1}\tilde X \otimes_\R\C\right)\right)}{\imm \left(\de\colon \left(\correnti^{p+q-1}\tilde X \otimes_\R\C\right) \to \left(\correnti^{p+q}\tilde X \otimes_\R \C\right)\right)} \;\simeq\; \frac{\ker \left(\de \colon \left(\wedge^{p+q}\tilde X\otimes_\R\C\right)\to \left(\wedge^{p+q+1}\tilde X \otimes_\R\C\right)\right)}{\imm \left(\de\colon \left(\wedge^{p+q-1}\tilde X \otimes_\R\C\right) \to \left(\wedge^{p+q}\tilde X \otimes_\R \C\right)\right)} \;,$$
therefore it suffices to prove that the space
$$ \frac{\imm \left(\de \colon \left(\correnti^{p+q-1}\tilde X\otimes_\R\C\right)\to \left(\correnti^{p+q}\tilde X\otimes_\R\C\right)\right)\cap \correnti^{p,q}\tilde X}{\imm \left(\del\delbar\colon \correnti^{p-1,q-1}\tilde X \to \correnti^{p,q}\tilde X\right)}  $$
can be computed using just differential forms on $\tilde X$.

Firstly, we note that, since, by \cite[page 807]{baily-2}, see Theorem \ref{thm:dolbeault-orbifold},
$$ \frac{\ker \left(\delbar \colon \correnti^{p,q}\tilde X \to \correnti^{p,q+1}\tilde X\right)}{\imm \left(\delbar\colon \correnti^{p,q-1}\tilde X \to \correnti^{p,q}\tilde X \right)} \;\simeq\; \frac{\ker \left(\delbar \colon \wedge^{p,q}\tilde X \to \wedge^{p,q+1}\tilde X\right)}{\imm \left(\delbar\colon \wedge^{p,q-1}\tilde X \to \wedge^{p,q}\tilde X \right)} \;,$$
one has that, if $\psi\in\wedge^{r,s}\tilde X$ is a $\delbar$-closed differential form, then every solution $\phi\in\correnti^{r,s-1}$ of $\delbar\phi=\psi$ is a differential form up to $\delbar$-exact terms.
Indeed, since $[\psi]=0$ in $\frac{\ker\delbar\cap \correnti^{r,s}\tilde X}{\imm\delbar}$ and hence in $\frac{\ker\delbar\cap \wedge^{r,s}\tilde X}{\imm\delbar}$, there is a differential form $\alpha\in\wedge^{r,s-1}\tilde X$ such that $\psi=\delbar\alpha$. Hence, $\phi-\alpha\in\correnti^{r,s-1}\tilde X$ defines a class in $\frac{\ker\delbar \cap \correnti^{r,s-1}\tilde X}{\imm\delbar}\simeq \frac{\ker\delbar \cap \wedge^{r,s-1}\tilde X}{\imm\delbar}$, and hence $\phi-\alpha$ is a differential form up to a $\delbar$-exact form, and so $\phi$ is.

By conjugation, if $\psi\in\wedge^{r,s}\tilde X$ is a $\del$-closed differential form, then every solution $\phi\in\correnti^{r-1,s}$ of $\del\phi=\psi$ is a differential form up to $\del$-exact terms.

Now, let
$$ \omega^{p,q}\;=\;\de\eta\mod\imm\del\delbar\;\in\;\frac{\imm\de\cap\correnti^{p,q}X}{\imm\del\delbar} \;. $$
Decomposing $\eta=:\sum_{p,q}\eta^{p,q}$ in pure-type components, where $\eta^{p,q}\in\correnti^{p,q}\tilde X$, the previous equality is equivalent to the system
$$
\left\{
\begin{array}{cccccccc}
 && \del\eta^{p+q-1,0} &=&0 & \mod \imm\del\delbar && \\[5pt]
\delbar\eta^{p+q-\ell,\ell-1} &+& \del\eta^{p+q-\ell-1,\ell} &=& 0 &\mod\imm\del\delbar & \text{ for } & \ell\in\{1,\ldots,q-1\} \\[5pt]
\delbar\eta^{p,q-1} &+& \del\eta^{p-1,q} &=& \omega^{p,q} & \mod\imm\del\delbar && \\[5pt]
\delbar\eta^{\ell,p+q-\ell-1} &+& \del\eta^{\ell-1,p+q-\ell} &=& 0 &\mod\imm\del\delbar & \text{ for } & \ell\in\{1,\ldots,p-1\} \\[5pt]
\delbar\eta^{0,p+q-1} &&&=&0 &\mod\imm\del\delbar &&
\end{array}
\right. \;.
$$
By the above argument, we may suppose that, for $\ell\in\{0,\ldots, p-1\}$, the currents $\eta^{\ell,p+q-\ell-1}$ are differential forms: indeed, they are differential forms up to $\delbar$-exact terms, but $\delbar$-exact terms give no contribution in the system, which is modulo $\imm\del\delbar$. Analogously, we may suppose that, for $\ell\in\{0,\ldots,q-1\}$, the currents $\eta^{p+q-\ell-1,\ell}$ are differential forms. Then we may suppose that $\omega^{p,q}=\delbar\eta^{p,q-1}+\del\eta^{p-1,q}$ is a differential form. Hence \eqref{eq:bc-orbifolds} is proven.

\smallskip

Now, we prove that, fixed a $G$-invariant Hermitian metric on $\tilde X$, the Bott-Chern cohomology of $\tilde X$ is isomorphic to the space of $\tilde\Delta_{BC}$-harmonic $G$-invariant forms on $X$. Indeed, since the elements of $G$ commute with $\del$, $\delbar$, $\del^*$, and $\delbar^*$, and hence with $\tilde\Delta_{BC}$, the following decomposition, \cite[Théorème 2.2]{schweitzer},
$$ \wedge^{\bullet,\bullet} X \;=\; \ker \tilde\Delta_{BC} \oplus \del\delbar\wedge^{\bullet-1,\bullet-1}X \oplus \left(\del^*\wedge^{\bullet+1,\bullet}X + \delbar^*\wedge^{\bullet,\bullet+1}X \right) $$
induces a decomposition
$$ \wedge^{\bullet,\bullet} \tilde X \;=\; \ker \tilde\Delta_{BC} \oplus \del\delbar\wedge^{\bullet-1,\bullet-1}\tilde X \oplus \left(\del^*\wedge^{\bullet+1,\bullet}\tilde X + \delbar^*\wedge^{\bullet,\bullet+1}\tilde X \right) \;. $$
More precisely, let $\alpha\in\wedge^{\bullet,\bullet}\tilde X$, that is, $\alpha$ is a $G$-invariant form on $X$; if $\alpha$ has a decomposition $\alpha=h_{\alpha}+\del\delbar\beta+\left(\del^*\gamma+\delbar^*\eta\right)$ with $h_\alpha,\beta,\gamma,\eta\in\wedge^{\bullet,\bullet}X$ such that $\tilde\Delta_{BC}h_\alpha=0$, then one has
\begin{eqnarray*}
\alpha \;=\; \frac{1}{\ord G}\sum_{g\in G}g^*\alpha &=& \left(\frac{1}{\ord G}\sum_{g\in G}g^*h_{\alpha}\right) +\del\delbar\left(\frac{1}{\ord G}\sum_{g\in G}g^*\beta\right) \\[5pt]
 && + \left(\del^*\left(\frac{1}{\ord G}\sum_{g\in G}g^*\gamma\right)+\delbar^*\left(\eta\frac{1}{\ord G}\sum_{g\in G}g^*\right)\right) \;,
\end{eqnarray*}
where $\frac{1}{\ord G}\sum_{g\in G}g^*h_{\alpha},\, \frac{1}{\ord G}\sum_{g\in G}g^*\beta,\, \frac{1}{\ord G}\sum_{g\in G}g^*\gamma,\, \eta\frac{1}{\ord G}\sum_{g\in G}g^*\in\wedge^{\bullet,\bullet}\tilde X$ and
$$ \tilde\Delta_{BC}\left(\frac{1}{\ord G}\sum_{g\in G}g^*h_{\alpha}\right) \;=\; \frac{1}{\ord G}\sum_{g\in G}g^*\left(\tilde\Delta_{BC}h_{\alpha}\right) \;=\; 0 \;.$$

As regards the Aeppli cohomology, one has the decomposition, \cite[\S2.c]{schweitzer},
$$ \wedge^{\bullet,\bullet}X \;=\; \ker\tilde\Delta_A \oplus \left(\del\wedge^{\bullet-1,\bullet}X + \delbar\wedge^{\bullet,\bullet-1} X\right) \oplus \left(\del\delbar\right)^*\wedge^{\bullet+1,\bullet+1} X \;,$$
and hence the decomposition
$$ \wedge^{\bullet,\bullet} \tilde X \;=\; \ker\tilde\Delta_A \oplus \left(\del\wedge^{\bullet-1,\bullet}\tilde X + \delbar\wedge^{\bullet,\bullet-1} \tilde X\right) \oplus \left(\del\delbar\right)^*\wedge^{\bullet+1,\bullet+1} \tilde X \;,$$
from which one gets the isomorphism $H^{\bullet,\bullet}_A\left(\tilde X\right)\simeq\ker\tilde\Delta_A$.

\smallskip

Finally, note that the Hodge-$*$-operator $*\colon \wedge^{\bullet_1,\bullet_2}\tilde X \to \wedge^{n-\bullet_2,n-\bullet_1}\tilde X$ sends $\tilde\Delta_{BC}$-harmonic forms to $\tilde\Delta_{A}$-harmonic forms, and hence it induces an isomorphism
$$ *\colon H^{\bullet_1,\bullet_2}_{BC}\left(\tilde X\right)\stackrel{\simeq}{\to} H^{n-\bullet_2,n-\bullet_1}_{A}\left(\tilde X\right) \;, $$
concluding the proof.
\end{proof}

\begin{rem}\label{rem:hypoercoh-bc-orbifolds}
 We note that another proof of the isomorphism
 $$ H^{p,q}_{BC}\left(\tilde X\right) \;\simeq\; \frac{\ker\left(\del \colon \correnti^{p,q}\tilde X \to \correnti^{p+1,q}\tilde X\right) \cap \ker \left(\delbar\colon \correnti^{p,q}\tilde X \to \correnti^{p,q+1}\tilde X\right)}{\imm\left(\del\delbar \colon \correnti^{p-1,q-1}\tilde X \to \correnti^{p,q}\tilde X\right)} \;, $$
 and a proof of the isomorphism
 $$ H^{p,q}_{A}\left(\tilde X\right) \;\simeq\; \frac{\ker\left(\del\delbar \colon \correnti^{p,q}\tilde X \to \correnti^{p+1,q+1}\tilde X\right)}{\imm\left(\del \colon \correnti^{p-1,q}\tilde X \to \correnti^{p,q}\tilde X\right) + \imm\left(\delbar \colon \correnti^{p,q-1}\tilde X \to \correnti^{p,q}\tilde X\right)} $$
 follow from the sheaf-theoretic interpretation of the Bott-Chern and Aeppli cohomologies, developed by J.-P. Demailly,
 \cite[\S VI.12.1]{demailly-agbook} and M. Schweitzer, \cite[\S4]{schweitzer}, see also \cite[\S3.2]{kooistra}.

 We recall that, for any $p,q\in\N$, the complex $\left(\mathcal{L}^\bullet_{\tilde X\, p,q},\, \de_{\mathcal{L}^\bullet_{\tilde X\, p,q}}\right)$ of sheaves is defined as
$$
\left(\mathcal{L}^\bullet_{\tilde X\, p,q},\, \de_{\mathcal{L}^\bullet_{\tilde X\, p,q}}\right) \;: \quad
   \mathcal{A}^{0,0}_{\tilde X}
     \stackrel{\pr \circ \de}{\to}
   \bigoplus_{\substack{r+s=1 \\ r<p,\, s<q}} \mathcal{A}^{r,s}_{\tilde X}
     \to
   \cdots
     \stackrel{\pr \circ \de}{\to}
   \bigoplus_{\substack{r+s=p+q-2\\ r<p,\, s<q}} \mathcal{A}^{r,s}_{\tilde X}
     \stackrel{\del\delbar}{\to}
   \bigoplus_{\substack{r+s=p+q\\ r\geq p,\, s\geq q}} \mathcal{A}^{r,s}_{\tilde X}
     \stackrel{\de}{\to}
   \bigoplus_{\substack{r+s=p+q\\ r\geq p,\, s\geq q}} \mathcal{A}^{r,s}_{\tilde X}
     \to
   \cdots \;,
$$
and the complex $\left(\mathcal{M}^\bullet_{\tilde X\, p,q},\, \de_{\mathcal{M}^\bullet_{\tilde X\, p,q}}\right)$ of sheaves is defined as
$$
\left(\mathcal{M}^\bullet_{\tilde X\, p,q},\, \de_{\mathcal{M}^\bullet_{\tilde X\, p,q}}\right) \;: \quad
   \mathcal{D}^{0,0}_{\tilde X}
     \stackrel{\pr \circ \de}{\to}
   \bigoplus_{\substack{r+s=1 \\ r<p,\, s<q}} \mathcal{D}^{r,s}_{\tilde X}
     \to
   \cdots
     \stackrel{\pr \circ \de}{\to}
   \bigoplus_{\substack{r+s=p+q-2\\ r<p,\, s<q}} \mathcal{D}^{r,s}_{\tilde X}
     \stackrel{\del\delbar}{\to}
   \bigoplus_{\substack{r+s=p+q\\ r\geq p,\, s\geq q}} \mathcal{D}^{r,s}_{\tilde X}
     \stackrel{\de}{\to}
   \bigoplus_{\substack{r+s=p+q\\ r\geq p,\, s\geq q}} \mathcal{D}^{r,s}_{\tilde X}
     \to
   \cdots \;,
$$
where $\pr$ denotes the projection onto the appropriate space.

By the Poincaré lemma (see, e.g., \cite[I.1.22, Theorem I.2.24]{demailly-agbook}) and the Dolbeault and Grothendieck lemma (see, e.g., \cite[I.3.29]{demailly-agbook}), one gets M. Schweitzer's lemma \cite[Lemme 4.1]{schweitzer}, which can be extended also to the context of orbifolds by using the same trick as in the proof of Theorem \ref{thm:derham-orbifold} and Theorem \ref{thm:dolbeault-orbifold}; this allows to prove that the map
$$ \left(\mathcal{L}^\bullet_{\tilde X\, p,q},\, \de_{\mathcal{L}^\bullet_{\tilde X\, p,q}}\right) \to \left(\mathcal{M}^\bullet_{\tilde X\, p,q},\, \de_{\mathcal{M}^\bullet_{\tilde X\, p,q}}\right) $$
of complexes of sheaves is a quasi-isomorphism, and hence, see, e.g., \cite[Corollary IV.12.6]{demailly-agbook}, for every $\ell\in\N$,
$$ \mathbb{H}^{\ell} \left(\tilde X;\,\left(\mathcal{L}^{\bullet}_{\tilde X\, p,q},\, \de_{\mathcal{L}^{\bullet}_{\tilde X\, p,q}}\right)\right) \;\simeq\; \mathbb{H}^{\ell} \left(\tilde X;\,\left(\mathcal{M}^{\bullet}_{\tilde X\, p,q},\, \de_{\mathcal{L}^{\bullet}_{\tilde X\, p,q}}\right)\right) \;.$$

Since, for every $k\in\N$, the sheaves $\mathcal{L}^k_{\tilde X\, p,q}$ and $\mathcal{M}^k_{\tilde X\, p,q}$ are fine (indeed, they are sheaves of $\left(\mathcal{C}^{\infty}_{\tilde X}\otimes_\R\C\right)$-modules over a para-compact space), one has, see, e.g., \cite[Corollary IV.4.19, (IV.12.9)]{demailly-agbook},
$$
\mathbb{H}^{p+q-1} \left(\tilde X;\,\left(\mathcal{L}^{\bullet}_{\tilde X\, p,q},\, \de_{\mathcal{L}^{\bullet}_{\tilde X\, p,q}}\right)\right) 
\;\simeq\;
\frac{\ker \left(\del\colon \wedge^{p,q}\tilde X \to \wedge^{p+1,q}\tilde X\right) \cap \ker \left(\delbar \colon \wedge^{p,q}\tilde X \to \wedge^{p,q+1}\tilde X \right)}{\imm\left(\del\delbar \colon \wedge^{p-1,q-1}\tilde X \to \wedge^{p,q}\tilde X\right)}
$$
and
$$
\mathbb{H}^{p+q-1} \left(\tilde X;\,\left(\mathcal{M}^{\bullet}_{\tilde X\, p,q},\, \de_{\mathcal{L}^{\bullet}_{\tilde X\, p,q}}\right)\right)
\;\simeq\;
\frac{\ker \left(\del\colon \correnti^{p,q}\tilde X \to \correnti^{p+1,q}\tilde X\right) \cap \ker \left(\delbar \colon \correnti^{p,q}\tilde X \to \correnti^{p,q+1}\tilde X \right)}{\imm\left(\del\delbar \colon \correnti^{p-1,q-1}\tilde X \to \correnti^{p,q}\tilde X\right)} \;,
$$
and
$$
\mathbb{H}^{p+q-2} \left(\tilde X;\,\left(\mathcal{L}^{\bullet}_{\tilde X\, p,q},\, \de_{\mathcal{L}^{\bullet}_{\tilde X\, p,q}}\right)\right)
\;\simeq\;
\frac{\ker \left(\del\delbar\colon \wedge^{p-1,q-1}\tilde X \to \wedge^{p,q}\tilde X\right)}{\imm\left(\del \colon \wedge^{p-2,q-1}\tilde X \to \wedge^{p-1,q-1}\tilde X\right) + \imm\left(\delbar \colon \wedge^{p-1,q-2}\tilde X \to \wedge^{p-1,q-1}\tilde X\right)} $$
and
$$
\mathbb{H}^{p+q-2} \left(\tilde X;\,\left(\mathcal{M}^{\bullet}_{\tilde X\, p,q},\, \de_{\mathcal{L}^{\bullet}_{\tilde X\, p,q}}\right)\right)
\;\simeq\;
\frac{\ker \left(\del\delbar\colon \correnti^{p-1,q-1}\tilde X \to \correnti^{p,q}\tilde X\right)}{\imm\left(\del \colon \correnti^{p-2,q-1}\tilde X \to \correnti^{p-1,q-1}\tilde X\right) + \imm\left(\delbar \colon \correnti^{p-1,q-2}\tilde X \to \correnti^{p-1,q-1}\tilde X\right)} \;,
$$
proving the stated isomorphisms.
\end{rem}

\chapter{Cohomology of almost-complex manifolds}\label{chapt:almost-complex}

Let $X$ be a $2n$-dimensional (differentiable) manifold endowed with an almost-complex structure $J$. Note that if $J$ is not integrable, then the Dolbeault cohomology is not defined. In this section, we are concerned with studying some subgroups of the de Rham cohomology related to the almost-complex structure: these subgroups have been introduced by T.-J. Li and W. Zhang in \cite{li-zhang}, in order to study the relation between the compatible and the tamed symplectic cones on a compact almost-complex manifold, with the aim to throw light on a question by S.~K. Donaldson, \cite[Question 2]{donaldson} (see \S\ref{subsec:symplectic-cones}), and it would be interesting to consider them as a sort of counterpart of the Dolbeault cohomology groups in the non-integrable (or at least in the non-K\"ahler) case, see \cite[Lemma 2.15, Theorem 2.16]{draghici-li-zhang}. In particular, we are interested in studying when they let a splitting of the de Rham cohomology, and their relations with cones of metric structures.

More precisely, in \S\ref{sec:definition} we introduce the notions of \Cpf\ and \pf\ almost-complex structures, setting the notation and proving some useful relations between them. In \S\ref{sec:classes-Cpf}, we study \Cpf ness on several classes of (almost-)complex manifolds, e.g., solvmanifolds, semi-K\"ahler manifolds, almost-K\"ahler manifolds. In \S\ref{sec:deformations-cpf}, we study the behaviour of \Cpf ness under small deformations of the complex structure and along curves of almost-complex structures, investigating properties of stability, and of semi-continuity for the dimensions of the invariant, and anti-invariant subgroups of the de Rham cohomology with respect to the almost-complex structure. In \S\ref{sec:cones}, we study the cone of semi-K\"ahler structures on a compact almost-complex manifold and, in particular, we compare the cones of balanced metrics and of strongly-Gauduchon metrics on a compact complex manifold.

The results of this chapter have been obtained jointly with A. Tomassini, in \cite{angella-tomassini-1, angella-tomassini-2}, and with A. Tomassini and W. Zhang, in \cite{angella-tomassini-zhang}.

\section{Subgroups of the de Rham (co)homology of an almost-complex manifold}\label{sec:definition}
In this section, we set the notation concerning \Cpf\ and \pf\ almost-complex structures, as introduced in \cite{li-zhang}, and we study the relations between \Cpf ness and \pf ness.

\subsection{\Cpf\ and \pf\ almost-complex structures}\label{subsec:cpf-almost-complex}
In this section, we start by fixing some preliminary notation and recalling some definitions; then we briefly review some results to motivate the study of these topics, see Remark \ref{rem:storia}, which will be further discussed in the next sections.

\medskip

Let $S\subseteq \N\times\N$ and define
$$
H^S_J(X;\R) \;:=\; \left\{ \left[\alpha\right]\in H^{\bullet}_{dR}(X;\R) \st \alpha\in\left(\bigoplus_{(p,q)\in S} \wedge^{p,q}X\right)\cap\wedge^\bullet X \right\} \;;
$$
note that a real differential form $\alpha$ with a component of type $(p,q)$ has also a component of type $(q,p)$, and hence we are interested in studying the sets $S$ such that whenever $(p,q)\in S$, also $(q,p)\in S$.
As a matter of notation, we will usually list the elements of $S$ instead of writing $S$ itself.

Note that, for every $k\in\N$, one has
$$ \sum_{\substack{p+q=k\\p\leq q}} H^{(p,q),(q,p)}_J(X;\R) \;\subseteq\; H^k_{dR}(X;\R) \;, $$
but, in general, the sum is neither direct nor the equality holds: several examples of these facts will be provided in the sequel.

\medskip

The subgroups $H^{(2,0),(0,2)}_J(X;\R)$ and $H^{(1,1)}_J(X;\R)$ of $H^2_{dR}(X;\R)$ are of special interest for their interpretation as the $J$-anti-invariant, respectively, $J$-invariant part of the second de Rham cohomology group. Indeed, note that the endomorphism $J\lfloor_{\wedge^2X}\in\End\left(\wedge^\bullet X\right)$ naturally extending $J\in\End(TX)$ (that is, $J\alpha:=\alpha\left(J\sspace,\,J\ssspace\right)$ for every $\alpha\in\wedge^2X$) satisfies $\left(J\lfloor_{\wedge^2X}\right)^2=\id_{\wedge^2X}$; hence, one has the splitting
$$ \wedge^2 X \;=\; \wedge^+_JX \oplus \wedge^-_JX \;,$$
where, for $\pm\in\{+,\, -\}$,
$$ \wedge^\pm_JX \;:=\; \left\{ \alpha \in \wedge^2X \st J\alpha=\pm\alpha \right\} \;.$$
Since $H^2_{dR}(X;\R)$ contains, in particular, the classes represented by the symplectic forms, and $H^{(1,1)}_{J}(X;\R)$ contains, in particular, the classes represented by the $(1,1)$-forms associated to the Hermitian metrics on $X$, in \cite{li-zhang}, T.-J. Li and W. Zhang were interested in studying the \emph{$J$-invariant subgroup} of $H^2_{dR}(X;\R)$, namely,
$$ H^+_J(X) \;:=\; H^{(1,1)}_J(X;\R) \;=\; \left\{\left[\alpha\right]\in H^2_{dR}(X;\R) \st J\alpha=\alpha\right\} \;, $$
and the \emph{$J$-anti-invariant subgroup} of $H^2_{dR}(X;\R)$, namely,
$$ H^-_J(X) \;:=\; H^{(2,0),(0,2)}_J(X;\R) \;=\; \left\{\left[\alpha\right]\in H^2_{dR}(X;\R) \st J\alpha=-\alpha\right\} \;. $$
Note that, as in the general case, one has that
$$ H^+_J(X) + H^-_J(X) \;\subseteq\; H^2_{dR}(X;\R) $$
but, in general, the sum is neither direct nor equal to $H^2_{dR}(X;\R)$. The following definition, by T.-J. Li and W. Zhang, singles out the almost-complex structures whose subgroups $H^+_J(X)$ and $H^-_J(X)$ provide a decomposition of $H^2_{dR}(X;\R)$.

\begin{defi}[{\cite[Definition 2.2, Definition 2.3, Lemma 2.2]{li-zhang}}]
 An almost-complex structure $J$ on a manifold $X$ is said to be
\begin{itemize}
 \item \emph{\Cp} if $H^-_J(X) \cap H^+_J(X) = \left\{0\right\}$;
 \item \emph{\Cf} if $H^-_J(X) + H^+_J(X) = H^2_{dR}(X;\R)$;
 \item \emph{\Cpf} if it is both \Cp\ and \Cf, i.e., if the following cohomology decomposition holds:
$$ H^2_{dR}(X;\R) \;=\; H^-_J(X) \oplus H^+_J(X) \;.$$
\end{itemize}
\end{defi}

\medskip

We will also use the following definition, which is a natural generalization of the notion of \Cpf ness to higher degree cohomology groups.

\begin{defi}
 Let $X$ be a manifold endowed with an almost-complex structure $J$, and fix $k\in\N$. Consider $H^k_{dR}(X;\R)\supseteq\sum_{\substack{p+q=k\\p\leq q}}H^{(p,q),(q,p)}_J(X;\R)$:
  \begin{itemize}
   \item if
     $$ \bigoplus_{\substack{p+q=k\\p\leq q}} H^{(p,q),(q,p)}_J(X;\R) \;\subseteq\; H^k_{dR}(X;\R) $$
     (namely, the sum is direct), then $J$ is called \emph{\Cp\ at the \kth{k} stage};
   \item if
     $$ H^k_{dR}(X;\R) \;=\; \sum_{\substack{p+q=k\\p\leq q}}H^{(p,q),(q,p)}_J(X;\R) \;,$$
     then $J$ is called \emph{\Cf\ at the \kth{k} stage};
   \item if $J$ is both \Cp\ at the \kth{k} stage and \Cf\ at the \kth{k} stage, that is,
     $$ H^k_{dR}(X;\R) \;=\; \bigoplus_{\substack{p+q=k\\p\leq q}}H^{(p,q),(q,p)}_J(X;\R) \;;$$
     then $J$ is called \emph{\Cpf\ at the \kth{k} stage}.
  \end{itemize}
\end{defi}

\medskip

Analogous definitions can be given for the de Rham cohomology with complex coefficients.
More precisely, let $S\subseteq \N\times\N$ and define
$$ H^S_J(X;\C) \;:=\; \left\{ \left[\alpha\right]\in H^{\bullet}_{dR}(X;\C) \st \alpha\in\bigoplus_{(p,q)\in S} \wedge^{p,q}X \right\} $$
(as previously, we will usually list the elements of $S$ instead of writing $S$ itself); with such notation, one has in particular that $H^S_J(X;\R) = H^S_J(X;\C)\cap H^{\bullet}_{dR}(X;\R)$.

\begin{rem}
 Note that, when $X$ is a compact manifold endowed with an integrable almost-complex structure $J$, then, for any $(p,q)\in\N\times\N$,
 $$ H^{(p,q)}_J(X;\C) \;=\; \imm\left(H^{p,q}_{BC}\left(X\right)\to H^{p+q}_{dR}\left(X;\C\right)\right) \;,$$
 where the map $H^{p,q}_{BC}(X)\to H^{p+q}_{dR}(X;\C)$ is the one induced by the identity (note that $\ker\del\cap\ker\delbar\subseteq\ker\de$ and $\imm\del\delbar\subseteq\imm\de$). Indeed, any $\de$-closed $(p,q)$-form is both $\del$-closed and $\delbar$-closed.
\end{rem}

Note that, for every $k\in\N$, one has
$$ \sum_{p+q=k} H^{(p,q)}_J(X;\C) \;\subseteq\; H^k_{dR}(X;\C) \;,$$
but, in general, the sum is neither direct nor the equality holds. We can then give the following definition.

\begin{defi}
 Let $X$ be a manifold endowed with an almost-complex structure $J$, and fix $k\in\N$. Consider $H^k_{dR}(X;\C)\supseteq\sum_{p+q=k}H^{(p,q)}_J(X;\C)$:
  \begin{itemize}
   \item if
     $$ \bigoplus_{p+q=k} H^{(p,q)}_J(X;\C) \;\subseteq\; H^k_{dR}(X;\C) $$
     (namely, the sum is direct), then $J$ is called \emph{complex-\Cp\ at the \kth{k} stage};
   \item if
     $$ H^k_{dR}(X;\C) \;=\; \sum_{p+q=k}H^{(p,q)}_J(X;\C) \;,$$
     then $J$ is called \emph{complex-\Cf\ at the \kth{k} stage};
   \item if $J$ is both complex-\Cp\ at the \kth{k} stage and complex-\Cf\ at the \kth{k} stage, that is,
     $$ H^k_{dR}(X;\C) \;=\; \bigoplus_{p+q=k}H^{(p,q)}_J(X;\C) \;;$$
     then $J$ is called \emph{complex-\Cpf\ at the \kth{k} stage}.
  \end{itemize}
\end{defi}

\begin{rem}\label{rem:cpf-complex-cpf}
In general, being complex-\Cf\ at the \kth{2} stage is a stronger condition that being \Cf. Furthermore, if $J$ is integrable, then being complex-\Cpf\ at the \kth{2} stage is stronger than being \Cpf.
More precisely, for any (possibly non-integrable) almost-complex structure $J$, it holds, \cite[Lemma 2.11]{draghici-li-zhang},
$$
\left\{
\begin{array}{rcl}
 H^+_J(X) &=& H^{(1,1)}_J(X;\C)\cap H^2_{dR}(X;\R) \\[5pt]
 H^{(1,1)}_J(X;\C) &=& H^{+}_J(X) \otimes_\R \C
\end{array}
\right. \;,
$$
and
$$ H^{(2,0)}_J(X;\C)+H^{(0,2)}_J(X;\C) \;\subseteq\; H^{-}_J(X) \otimes_\R \C \;, $$
and, if $J$ is integrable, it holds
$$
\left\{
\begin{array}{l}
 H^-_J(X) \;=\; \left(H^{(2,0)}_J(X;\C)+H^{(0,2)}_J(X;\C)\right)\cap H^2_{dR}(X;\R) \\[5pt]
 H^{(2,0)}_J(X;\C)+H^{(0,2)}_J(X;\C) \;=\; H^{-}_J(X) \otimes_\R \C
\end{array}
\right. \;;
$$
indeed, $\de\wedge^{2,0}X\subseteq \wedge^{3,0}X\oplus \wedge^{2,1}X$ and $\de\wedge^{0,2}X\subseteq \wedge^{1,2}X\oplus\wedge^{0,3}X$.
(Compare also \cite[Lemma 2.12]{draghici-li-zhang} for further results in the case of $4$-dimensional manifolds.)

Note also that, if $J$ is \Cp, then
$$
H^{(1,1)}_J(X;\R)\,\cap\,\left(H^{(2,0)}_J(X;\R) \,+\, H^{(0,2)}_J(X;\R)\right)\,=\,\left\{0\right\} \;.
$$
\end{rem}

\medskip

The construction of the subgroups $H^{S}_J(X;\R)\subseteq H^\bullet_{dR}(X;\R)$ and the notion of \Cpf\ almost-complex structures can be repeated using the complex of currents $\left(\correnti_\bullet X:=:\correnti^{2n-\bullet}X,\, \de\right)$ instead of the complex of differential forms $\left(\wedge^\bullet X,\, \de\right)$ and the de Rham homology $H_\bullet^{dR}(X;\R)$ instead of the de Rham cohomology $H^\bullet_{dR}(X;\R)$. (We refer to \S\ref{sec:currents} for notations and references concerning currents and de Rham homology.)

As in the smooth case, accordingly to T.-J. Li and W. Zhang, \cite{li-zhang}, given $S\subseteq \N\times\N$, let
$$ H_S^J(X;\R) \;:=\; \left\{ \left[\alpha\right]\in H^{dR}_\bullet(X;\C) \st \alpha\in \left(\bigoplus_{(p,q)\in S}\correnti_{p,q}X\right) \cap \correnti_\bullet X\right\} \;.$$

In particular, the almost-complex structures on $X$ for which $H_{(2,0),(0,2)}^J(X;\R)$ and $H_{(1,1)}^J(X;\R)$ provide a decomposition of $H^{dR}_{2}(X;\R)$ are emphasized by the following definition by T.-J. Li and W. Zhang.

\begin{defi}[{\cite[Definition 2.15, Lemma 2.16]{li-zhang}}]
 An almost-complex structure $J$ on a manifold $X$ is said to be:
\begin{itemize}
 \item \emph{\p} if
$$ H_{(2,0),(0,2)}^J(X;\R)\;\cap\; H_{(1,1)}^J(X;\R) \;=\; \left\{0\right\} \;; $$
 \item \emph{\f} if
$$ H_{(2,0),(0,2)}^J(X;\R)\;+\; H_{(1,1)}^J(X;\R) \;=\; H^{dR}_2(X;\R)\;; $$
 \item \emph{\pf} if it is both \p\ and \f, i.e., if the following decomposition holds:
$$ H_{(2,0),(0,2)}^J(X;\R)\,\oplus\, H_{(1,1)}^J(X;\R) \,=\, H^{dR}_2(X;\R) \;.$$
\end{itemize}
\end{defi}

\medskip

The following are natural generalizations of the notion of \pf ness.

\begin{defi}
 Let $X$ be a manifold endowed with an almost-complex structure $J$, and fix $k\in\N$. Consider $H^k_{dR}(X;\R)\supseteq\sum_{\substack{p+q=k\\p\leq q}}H_{(p,q),(q,p)}^J(X;\R)$:
  \begin{itemize}
   \item if
     $$ \bigoplus_{\substack{p+q=k\\p\leq q}} H_{(p,q),(q,p)}^J(X;\R) \;\subseteq\; H_k^{dR}(X;\R) $$
     (namely, the sum is direct), then $J$ is called \emph{\p\ at the \kth{k} stage};
   \item if
     $$ H_k^{dR}(X;\R) \;=\; \sum_{\substack{p+q=k\\p\leq q}}H_{(p,q),(q,p)}^J(X;\R) \;,$$
     then $J$ is called \emph{\f\ at the \kth{k} stage};
   \item if $J$ is both \p\ at the \kth{k} stage and \f\ at the \kth{k} stage, that is,
     $$ H_k^{dR}(X;\R) \;=\; \bigoplus_{\substack{p+q=k\\p\leq q}}H_{(p,q),(q,p)}^J(X;\R) \;;$$
     then $J$ is called \emph{\pf\ at the \kth{k} stage}.
  \end{itemize}
\end{defi}

As regards de Rham homology with complex coefficients, given $S\subseteq \N\times\N$, let
$$ H_S^J(X;\C) \;:=\; \left\{ \left[\alpha\right]\in H^{dR}_\bullet(X;\C) \st \alpha\in\bigoplus_{(p,q)\in S}\correnti_{p,q}X\right\} \;,$$
so that $H_S^J(X;\R) = H_S^J(X;\C)\cap H^{dR}_\bullet(X;\R)$.

\begin{defi}
 Let $X$ be a manifold endowed with an almost-complex structure $J$, and fix $k\in\N$. Consider $H_k^{dR}(X;\C)\supseteq\sum_{p+q=k}H_{(p,q)}^J(X;\C)$:
  \begin{itemize}
   \item if
     $$ \bigoplus_{p+q=k} H_{(p,q)}^J(X;\C) \;\subseteq\; H_k^{dR}(X;\C) $$
     (namely, the sum is direct), then $J$ is called \emph{complex-\p\ at the \kth{k} stage};
   \item if
     $$ H_k^{dR}(X;\C) \;=\; \sum_{p+q=k}H_{(p,q)}^J(X;\C) \;,$$
     then $J$ is called \emph{complex-\f\ at the \kth{k} stage};
   \item if $J$ is both complex-\p\ at the \kth{k} stage and complex-\f\ at the \kth{k} stage, that is,
     $$ H_k^{dR}(X;\C) \;=\; \bigoplus_{p+q=k}H_{(p,q)}^J(X;\C) \;;$$
     then $J$ is called \emph{complex-\pf\ at the \kth{k} stage}.
  \end{itemize}
\end{defi}

\medskip

\begin{rem}\label{rem:storia}
 The study of the subgroups $H^{(p,q),(q,p)}_J(X;\R)$ and the notion of \Cpf\ almost-complex structure have been introduced by T.-J. Li and W. Zhang in \cite{li-zhang}, in order to study the relations between the compatible and the tamed symplectic cones on a compact almost-complex manifold, and inspired by a question by S.~K. Donaldson, \cite[Question 2]{donaldson}: whether, on a compact $4$-dimensional manifold endowed with an almost-complex structure $J$ tamed by a symplectic form, there exists also a symplectic form compatible with $J$, see \S\ref{subsec:symplectic-cones}. In \cite{draghici-li-zhang}, T. Dr\v{a}ghici, T.-J. Li, and W. Zhang investigated the $4$-dimensional case, proving, in particular, that every almost-complex structure on a compact $4$-dimensional manifold is \Cpf; they also obtained further results for $4$-dimensional almost-complex manifolds in \cite{draghici-li-zhang-2}, where they studied the dimensions of the subgroups $H^+_J(X)$ and $H^-_J(X)$. In \cite{fino-tomassini}, A. Fino 
and 
A. Tomassini studied the \Cpf ness in connection with other properties on almost-complex manifolds: in particular, by studying almost-complex solvmanifolds, they provided the first explicit example of a non-\Cpf\ almost-complex structure. Jointly with A. Tomassini, we studied in \cite{angella-tomassini-1} the behaviour of \Cpf ness under small deformations of the complex structure or along curves of almost-complex structures, proving in particular its instability. In \cite{angella-tomassini-2} we continued the study of the cohomological properties related to the existence of an almost-complex structure, focusing, in particular, on the study of the cone of semi-K\"ahler structures on a compact semi-K\"ahler manifold. In \cite{angella-tomassini-zhang}, jointly with A. Tomassini and W. Zhang, we further studied cohomological properties of almost-K\"ahler manifolds, especially in relation with W. Zhang's Lefschetz-type property; in particular, an example of a non-\Cf\ almost-K\"ahler structure on a compact 
manifold is provided. In \cite{draghici-zhang}, T. Dr\v{a}ghici and W. Zhang reformulated the S.~K. Donaldson ``tamed to compatible'' question in terms of spaces of exact forms, proving, in particular, that an almost-complex structure $J$ on a compact $4$-dimensional manifold admits a compatible symplectic form if and only if it admits tamed symplectic forms with any arbitrarily given $J$-anti-invariant component. Q. Tan, H. Wang, Y. Zhang, and P. Zhu, in \cite{tan-wang-zhang-zhu}, continued the study of the dimension of the $J$-anti-invariant subgroup $H^-_J(X)$ of the de Rham cohomology of a compact almost-complex manifold, considering almost-complex structures being metric related or fundamental form 
related, showing, for example, that $\dim_\R H^-_J(X)=0$ for a generic almost-complex structure $J$ on a compact $4$-dimensional manifold, as conjectured by T. Dr\v{a}ghici, T.-J. Li, and W. Zhang, \cite[Conjecture 2.4]{draghici-li-zhang-2}. For further results on the study of $J$-anti-invariant forms and $J$-anti-invariant de Rham cohomology classes on a (possibly non-compact) manifold endowed with an almost-complex structure $J$, see  \cite{hind-medori-tomassini} by R.~K. Hind, C. Medori, and A. Tomassini, where a result concerning analytic continuation for $J$-anti-invariant forms is proven.
In \cite{li-tomassini}, T.-J. Li and A. Tomassini studied the analogue of the above problems for linear
(possibly non-integrable) complex structures on $4$-dimensional unimodular Lie algebras; in particular, they proved that an analogue of the decomposition in \cite[Theorem 2.3]{draghici-li-zhang} holds for every $4$-dimensional unimodular Lie algebra endowed with a linear (possibly non-integrable) complex structure; furthermore, they considered the linear counterpart of Donaldson's ``tamed to compatible'' question, and of the tamed and compatible symplectic cones, studying, in particular, a sufficient condition on a $4$-dimensional Lie algebra $\mathfrak{g}$ (which holds, for example, for $4$-dimensional unimodular Lie algebras) in order that a linear (possibly non-integrable) complex structure admits a taming linear symplectic form if and only if it admits a compatible linear symplectic form. The paper \cite{draghici-li-zhang-survey} by T. Dr\v{a}ghici, T.-J. Li, and W. Zhang furnishes a survey on the known results concerning the subgroups $H^+_J(X)$ and $H^-_J(X)$, especially in dimension $4$, and their 
application to S.~K. Donaldson's ``tamed to compatible'' question.
\end{rem}

\subsection{Relations between \Cpf ness and \pf ness}
The following result summarizes the relations between \Cpf ness and \pf ness, see \cite[Theorem 2.1]{angella-tomassini-1}, see also \cite[Proposition 2.5]{li-zhang}, and between complex-\Cpf ness and complex-\pf ness. (Analogous results will be proven in Proposition \ref{prop:para-cpf-pf} for almost-$\mathbf{D}$-complex structures in the sense of F.~R. Harvey and H.~B. Lawson, and in Proposition \ref{prop:full-k-pure-2n-k} for symplectic structures.)

\begin{thm}[{see \cite[Proposition 2.5]{li-zhang}}]\label{thm:implicazioni}
 Let $J$ be an almost-complex structure on a compact $2n$-dimensional manifold $X$.
 The following relations between (complex-)\Cpf\ and (complex-)\pf\ notions hold: for any $k\in\N$,
$$
\xymatrix{
\text{\Cf\ at the \kth{k} stage}\ar@{=>}[r]\ar@{=>}[d] & \text{\p\ at the \kth{k} stage}\ar@{=>}[d] \\
\text{\f\ at the \kth{\left(2n-k\right)} stage}\ar@{=>}[r] & \text{\Cp\ at the \kth{\left(2n-k\right)} stage} \;,
}
$$
and
$$
\xymatrix{
\text{complex-\Cf\ at the \kth{k} stage}\ar@{=>}[r]\ar@{=>}[d] & \text{complex-\p\ at the \kth{k} stage}\ar@{=>}[d] \\
\text{complex-\f\ at the \kth{\left(2n-k\right)} stage}\ar@{=>}[r] & \text{complex-\Cp\ at the \kth{\left(2n-k\right)} stage} \;.
}
$$
\end{thm}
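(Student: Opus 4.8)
The plan is to deduce the whole diagram from two structural facts about the duality between forms and currents, together with one elementary lemma on non-degenerate pairings. First I would fix, for each $j$, the duality pairing
$$H^j_{dR}(X;\R) \times H_j^{dR}(X;\R) \to \R, \qquad \left([\varphi],\, [T]\right) \mapsto T(\varphi),$$
which is well-defined on (co)homology classes (since $T(\de\eta)=\pm(\de T)(\eta)=0$ when $T$ is $\de$-closed, and symmetrically when $\varphi$ is $\de$-closed) and is perfect on the compact $X$ by de Rham duality $H^\bullet_{dR}(X;\R)\simeq H^{dR}_{2n-\bullet}(X;\R)$. The crucial point is the \emph{bidegree orthogonality}: a current of bidimension $(p',q')$ annihilates a form of bidegree $(p,q)$ unless $(p',q')=(p,q)$, and since the pairing is independent of the chosen representatives, for unordered pairs $(p,q)\neq(p',q')$ the subspace $H^{(p,q),(q,p)}_J(X;\R)\subseteq H^k_{dR}(X;\R)$ is orthogonal to $H_{(p',q'),(q',p')}^J(X;\R)\subseteq H_k^{dR}(X;\R)$.

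With this in hand, the two horizontal implications follow from an abstract lemma I would state and prove first: if $V\times W\to\R$ is a non-degenerate pairing and $\{A_i\subseteq V\}_i$, $\{B_i\subseteq W\}_i$ satisfy $\langle A_i, B_j\rangle=0$ for $i\neq j$, then $\sum_i A_i = V$ forces $\sum_i B_i$ to be a direct sum. Indeed, from $\sum_i b_i = 0$ with $b_i\in B_i$, each $b_{i_0}$ is orthogonal to every $A_i$ with $i\neq i_0$ by hypothesis, and pairing the relation against an arbitrary $a\in A_{i_0}$ shows $b_{i_0}\perp A_{i_0}$ as well; hence $b_{i_0}\perp\sum_i A_i = V$, so $b_{i_0}=0$ by non-degeneracy. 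Taking $(A_i,B_i)=\left(H^{(p,q),(q,p)}_J(X;\R),\, H_{(p,q),(q,p)}^J(X;\R)\right)$ at stage $k$ gives \Cf\ at the \kth{k} stage $\Rightarrow$ \p\ at the \kth{k} stage, and the same lemma with the roles of $V$ and $W$ exchanged at stage $2n-k$ gives \f\ at the \kth{\left(2n-k\right)} stage $\Rightarrow$ \Cp\ at the \kth{\left(2n-k\right)} stage.

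For the two vertical implications I would instead use the quasi-isomorphism $T_\bullet\colon \wedge^\bullet X\to \correnti^\bullet X$, which induces $T_\bullet\colon H^j_{dR}(X;\R)\stackrel{\simeq}{\to} H_{2n-j}^{dR}(X;\R)$ and, on bidegrees, sends $\wedge^{p,q}X$ into $\correnti_{n-p,n-q}X$. Thus $T_\bullet$ maps $H^{(p,q),(q,p)}_J(X;\R)$ isomorphically onto $H_{(n-p,n-q),(n-q,n-p)}^J(X;\R)$, and as $(p,q)$ runs over bidegrees with $p+q=k$ the shifted indices $(n-p,n-q)$ run over all bidimensions of sum $2n-k$. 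Hence $T_\bullet$ carries $\sum_{p+q=k}H^{(p,q),(q,p)}_J(X;\R)$ isomorphically onto $\sum_{p+q=2n-k}H_{(p,q),(q,p)}^J(X;\R)$, preserving both spanning and directness; this yields at once \Cf\ at the \kth{k} stage $\Rightarrow$ \f\ at the \kth{\left(2n-k\right)} stage and \p\ at the \kth{k} stage $\Rightarrow$ \Cp\ at the \kth{\left(2n-k\right)} stage. The complex-coefficient diagram is proved verbatim, replacing the symmetric pairs $\{(p,q),(q,p)\}$ by single bidegrees $(p,q)$ and $\R$ by $\C$ throughout.

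The main obstacle I anticipate is bookkeeping rather than conceptual: I must keep the dimension-versus-degree conventions for currents scrupulously straight so that the shift $(p,q)\mapsto(n-p,n-q)$ induced by $T_\bullet$ lands the stage-$k$ cohomology types \emph{exactly} onto the stage-$(2n-k)$ homology types, and I must confirm that the perfect pairing really does restrict to the stated orthogonality on the type-subspaces (which uses that its value is computed on representatives of the appropriate pure bidegree and is representative-independent). Once these conventions are pinned down, the four arrows are immediate.
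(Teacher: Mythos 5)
Your proposal is correct and follows essentially the same route as the paper's proof: the horizontal implications come from the non-degenerate evaluation pairing between $H^k_{dR}(X;\R)$ and $H_k^{dR}(X;\R)$ together with the bidegree orthogonality of the type subgroups (your abstract lemma on orthogonal families is precisely the computation the paper carries out explicitly in the case $k=2$), and the vertical implications come from the map $T_\sspace \colon \alpha \mapsto \int_X \alpha\wedge\sspace$, with the same bookkeeping $(p,q)\mapsto(n-p,n-q)$, which does land the stage-$k$ types exactly on the stage-$(2n-k)$ types as you hoped.

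One correction is needed, though it does not break the argument: your claim that $T_\sspace$ maps $H^{(p,q),(q,p)}_J(X;\R)$ \emph{isomorphically onto} $H_{(n-p,n-q),(n-q,n-p)}^J(X;\R)$ (and hence carries the stage-$k$ sum onto the stage-$(2n-k)$ sum) is too strong; the paper asserts only that the induced map on these subgroups is \emph{injective}. Surjectivity would require every class admitting a pure-type current representative to admit a pure-type smooth form representative, which the regularization isomorphism $H^\bullet_{dR}(X;\R)\simeq H^{dR}_{2n-\bullet}(X;\R)$ does not provide: the homology type subgroups may be strictly larger than the images of the cohomology ones. Fortunately, your two vertical arrows only use the true half of your claim: spanning upstairs pushes forward to spanning downstairs because the images are \emph{contained in} the homology type subgroups and $T_\sspace$ is an isomorphism on total cohomology (giving \Cf\ at the \kth{k} stage $\Rightarrow$ \f\ at the \kth{\left(2n-k\right)} stage), while directness downstairs pulls back to directness upstairs because $T_\sspace$ is injective and type-preserving (giving \p\ at the \kth{k} stage $\Rightarrow$ \Cp\ at the \kth{\left(2n-k\right)} stage). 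So the proof stands once ``isomorphically onto'' is replaced by ``injectively into''; in particular, ``preserving both spanning and directness'' must not be read as an equivalence, since the converse directions (e.g., \Cp\ at the \kth{\left(2n-k\right)} stage $\Rightarrow$ \p\ at the \kth{k} stage) do not follow from this argument and are not claimed by the theorem.
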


\begin{proof}
 The horizontal implications follow by considering the non-degenerate duality pairing
 $$ \left\langle \sspace,\,\ssspace \right\rangle\colon H^\bullet_{dR}(X;\R) \times H_\bullet^{dR}(X;\R) \to \R \;,
    \qquad \text{ respectively }\;
    \left\langle \sspace,\,\ssspace \right\rangle\colon H^\bullet_{dR}(X;\C) \times H_\bullet^{dR}(X;\C) \to \C \;,
 $$
 and noting that, for any $p,q\in\N$,
 \begin{eqnarray*}
 \lefteqn{\ker \left\langle H^{(p,q),(p,q)}_J(X;\R), \, \sspace \right\rangle \;\supseteq\; \sum_{\left\{(r,s),(s,r)\right\}\neq\left\{(p,q),(q,p)\right\}} H_{(r,s),(s,r)}^J(X;\R)} \\[5pt]
 &&\text{and }\qquad
 \ker \left\langle \sspace, \, H_{(p,q),(q,p)}^J(X;\R) \right\rangle \;\supseteq\; \sum_{\left\{(r,s),(s,r)\right\}\neq\left\{(p,q),(q,p)\right\}} H^{(r,s),(s,r)}_J(X;\R) \;,
 \end{eqnarray*}
 respectively
 $$ \ker \left\langle H^{(p,q)}_J(X;\C), \, \sspace \right\rangle \;\supseteq\; \sum_{(r,s)\neq(p,q)} H_{(r,s)}^J(X;\C) \qquad \text{ and } \qquad \ker \left\langle \sspace, \, H_{(p,q)}^J(X;\C) \right\rangle \;\supseteq\; \sum_{(r,s)\neq(p,q)} H^{(r,s)}_J(X;\C) \;.$$

 As an example, we give the details to prove that if $J$ is \Cf\ at the \kth{k} stage then it is also pure at the \kth{k} stage, when $k=2$. Let
 $$ \mathfrak{c} \;\in\;  H_{(2,0),(0,2)}^J(X;\R) \,\cap\, H_{(1,1)}^J(X;\R) \;,$$
 with $\mathfrak{c}\,\neq\,\left[0\right]$.
 Hence,
 $$ \left\langle \mathfrak{c},\, \sspace\right\rangle\lfloor_{H^{(2,0),(0,2)}_J(X;\R)} \;=\;0 \qquad \text{ and } \qquad \left\langle \mathfrak{c},\, \sspace\right\rangle\lfloor_{H^{(1,1)}_J(X;\R)} \;=\; 0 \;;$$
 since $J$ is \Cf, it follows that $\left\langle \mathfrak{c},\, \sspace\right\rangle\lfloor_{H^{2}_{dR}(X;\R)}=0$, and hence $\mathfrak{c}\,=\,\left[0\right]$.

 To prove the vertical implications, it is enough to note that the quasi-isomorphism $T_{\sspace}\colon \wedge^\bullet X \to \correnti_{2n-\bullet}X$ defined as $T_\varphi:=\int_X \varphi\wedge\sspace$ (see \S\ref{sec:currents}) induces an injective map
 $$ H^{(p,q),(q,p)}_J(X;\R) \to H_{(n-p,n-q), (n-q,n-p)}^J(X;\R) \;, \qquad \text{ respectively }\qquad  H^{(p,q)}_J(X;\C) \to H_{(n-p,n-q)}^J(X;\C) \;,$$
 for any $p,q\in\N$.
\end{proof}

\begin{rem}
 On a compact $2n$-dimensional manifold $X$ endowed with an almost-complex structure $J$,
 further linkings between $H^2_{dR}(X;\R)$ and $H^{2n-2}_{dR}(X;\R)$ could provide further relations between \Cpf\ and \pf\ notions: for example, A. Fino and A. Tomassini proved in \cite[Theorem 3.7]{fino-tomassini} that, given a $J$-Hermitian metric $g$ on $X$, if there exists a basis of $g$-harmonic representatives for $H^2_{dR}(X;\R)$ being of pure type with respect to $J$, then $J$ is both \Cpf\ and \pf. Furthermore, A. Fino and A. Tomassini proved in \cite[Theorem 4.1]{fino-tomassini} that, given a $J$-compatible symplectic form $\omega$ on $X$ satisfying the Hard Lefschetz Condition (that is, the map $\left[\omega^k\right]\cp\sspace \colon \; H^{n-k}_{dR}(X;\R) \stackrel{\simeq}{\to} H^{n+k}_{dR}(X;\R)$ is an isomorphism for every $k\in\N$), if $J$ is \Cpf, then $J$ is also \pf\ (compare Proposition \ref{prop:duality-semi-kahler} for a similar result).
\end{rem}

\medskip

Setting $2n=4$ and $k=2$ in Theorem \ref{thm:implicazioni}, it follows that, on compact $4$-dimensional almost-complex manifolds, \Cf ness implies \Cp ness. The following result states that, for higher dimensional manifolds, \Cp ness and \Cf ness are not, in general, related properties, \cite[Proposition 1.4]{angella-tomassini-2}.

\begin{prop}
\label{prop:Cf-Cp-non-related}
 There exist both examples of compact manifolds endowed with almost-complex structures being \Cf\ and non-\Cp, and examples of compact manifolds endowed with almost-complex structures being \Cp\ and non-\Cf.
\end{prop}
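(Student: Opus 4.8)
The plan is to produce the two examples separately, working throughout with left-invariant almost-complex structures $J$ on suitable nilmanifolds $X=\Gamma\backslash G$ of dimension at least $6$; the restriction to dimension $\geq 6$ is forced, since by \cite[Theorem 2.3]{draghici-li-zhang} every almost-complex structure on a compact $4$-dimensional manifold is already \Cpf. The key reduction is that, for the classes of left-invariant structures under consideration, K.~Nomizu's theorem \cite[Theorem 1]{nomizu} together with the Nomizu-type result for the subgroups $H^{(p,q),(q,p)}_J(X;\R)$ (Proposition \ref{prop:linear-cpf-invariant-cpf-J}) lets one compute $H^2_{dR}(X;\R)$, $H^+_J(X)$ and $H^-_J(X)$ using only $G$-left-invariant forms, i.e. forms on the Lie algebra $\mathfrak{g}$. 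Thus both assertions become finite-dimensional linear-algebra statements about the $J$-induced splitting $\wedge^2\duale{\mathfrak{g}}=\wedge^+_J\duale{\mathfrak{g}}\oplus\wedge^-_J\duale{\mathfrak{g}}$ and the action of $\de$ on it.

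For the \Cf, non-\Cp\ example I would exploit the following mechanism. Suppose $\xi\in\wedge^2\duale{\mathfrak{g}}$ is a closed, non-exact invariant $2$-form whose image $J\xi$ is $\de$-exact. Since $\left(J\lfloor_{\wedge^2}\right)^2=\id$, the forms $\alpha^+:=\frac{1}{2}\left(\xi+J\xi\right)$ and $\alpha^-:=\frac{1}{2}\left(\xi-J\xi\right)$ are closed and respectively $J$-invariant and $J$-anti-invariant, and both represent $\frac{1}{2}\left[\xi\right]\neq 0$ because $\left[J\xi\right]=0$; hence $\frac{1}{2}\left[\xi\right]\in H^+_J(X)\cap H^-_J(X)$ is a nonzero class, so $J$ fails to be \Cp. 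Concretely, I would take $\mathfrak{g}$ a $6$-dimensional nilpotent Lie algebra whose space of exact invariant $2$-forms is \emph{not} $J$-invariant — for instance with the single nontrivial structure equation $\de e^6=e^{13}$ and $J$ interchanging $e^1\leftrightarrow e^2$, $e^3\leftrightarrow e^4$, $e^5\leftrightarrow e^6$ — so that $\xi:=e^{24}=J\left(\de e^6\right)$ is closed, non-exact and satisfies $J\xi=\de e^6$. It then remains to check, by the same computation, that $H^+_J(X)+H^-_J(X)$ fills the whole of $H^2_{dR}(X;\R)$, so that $J$ is \Cf.

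For the \Cp, non-\Cf\ example I would reverse the emphasis: I want purity $H^+_J(X)\cap H^-_J(X)=\{0\}$ to hold while some de Rham class of genuinely mixed type admits no closed representative of pure type, so that $\dim_\R\left(H^+_J(X)+H^-_J(X)\right)<b_2$. At the Lie algebra level this amounts to choosing $\mathfrak{g}$ and $J$ so that the closed $J$-invariant and closed $J$-anti-invariant invariant $2$-forms span subspaces of $H^2_{dR}(\mathfrak{g};\R)$ which meet only in $\{0\}$ but do not span; I would search among $6$-dimensional nilpotent Lie algebras with several structure equations, where the presence of a closed invariant $2$-form whose $J$-invariant and $J$-anti-invariant components are each non-closed obstructs fullness, and then verify triviality of the intersection directly. (Note that this is consistent with Theorem \ref{thm:implicazioni}, which makes \Cf ness imply purity for the dual currents and \Cp ness at the complementary stage $2n-2$, but does not force \Cp ness at the same stage.)

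The main obstacle will be the search-and-verification step rather than any conceptual difficulty: the reduction to $\mathfrak{g}$ is guaranteed by the Nomizu-type theorems and the two mechanisms above are transparent, but one must exhibit explicit Lie algebras and almost-complex structures for which the relevant ranks come out correctly — for the \Cf, non-\Cp\ example, simultaneously guaranteeing non-purity (via the exact-but-$J$-moved form $\xi$) and fullness, and for the \Cp, non-\Cf\ example, simultaneously guaranteeing the deficiency of $H^+_J(X)+H^-_J(X)$ and the triviality of $H^+_J(X)\cap H^-_J(X)$. These are finite computations on $\wedge^2\duale{\mathfrak{g}}$, and I expect most candidate structures to fail one of the two conditions, so the effort lies in selecting the right pair $\left(\mathfrak{g},J\right)$.
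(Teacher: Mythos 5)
Your overall route is the paper's route: both examples are built as left-invariant structures on $6$-dimensional nilmanifolds, computed at the Lie-algebra level via K.~Nomizu's theorem and Proposition \ref{prop:linear-cpf-invariant-cpf-J}, and your ``exact-but-$J$-moved form'' mechanism is exactly what drives the paper's first example (there $\varphi^{12}-\varphi^{1\bar2}=2\im\,\de\varphi^3$ is exact, so $\left[\varphi^{12}+\varphi^{1\bar2}\right]$ admits pure representatives of both types). However, there are two genuine gaps. First, your concrete candidate for the \Cf\ non-\Cp\ half fails. Take $\mathfrak{g}=\left(0^5,\,13\right)$ with $J$ pairing $\left(e^1,e^2\right)$, $\left(e^3,e^4\right)$, $\left(e^5,e^6\right)$ as you propose. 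Your mechanism does give non-\Cp ness: $\left[e^{13}+e^{24}\right]=\left[e^{24}\right]=-\left[e^{13}-e^{24}\right]\neq 0$ lies in $H^+_J(X)\cap H^-_J(X)$. But $J$ is also non-\Cf: the closed invariant $2$-forms are the ten $e^{ij}$ with $i,j\leq 5$ together with $e^{16}$ and $e^{36}$ (so $b_2=11$, with $e^{13}$ the only exact form), whereas the closed forms of pure type with respect to $J$ span only
$$ \R\left\langle e^{12},\; e^{34},\; e^{13},\; e^{24},\; e^{14},\; e^{23},\; e^{16},\; e^{25},\; e^{36},\; e^{45}\right\rangle \;, $$
a $10$-dimensional space of forms giving a $9$-dimensional subspace of $H^2_{dR}(X;\R)$: the classes $\left[e^{15}\right]$ and $\left[e^{35}\right]$ have no pure-type representative, since their $J$-partners $e^{26}$, $e^{46}$ are not closed and adding multiples of the exact form $e^{13}$ cannot fix this; by Proposition \ref{prop:linear-cpf-invariant-cpf-J}, non-invariant representatives cannot help either. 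So your example establishes non-\Cp ness but not \Cf ness, and a different pair is needed --- the paper uses $\mathfrak{h}_{16}=\left(0^3,\,12,\,14,\,24\right)$ with the complex structure satisfying $2\,\de\varphi^3=-\im\,\varphi^{12}+\im\,\varphi^{1\bar2}$, for which \emph{every} class in $H^2_{dR}$ does admit pure representatives.

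Second, for the \Cp\ non-\Cf\ half you exhibit no example at all: you only describe a search strategy (``I would search among $6$-dimensional nilpotent Lie algebras\dots and then verify triviality of the intersection directly''), so nothing is proved. The paper takes $\mathfrak{h}_2=\left(0^4,\,12,\,34\right)$ with the complex structure satisfying $2\,\de\varphi^3=\im\,\varphi^{1\bar1}-\im\,\varphi^{2\bar2}$; there the mixed classes such as $\left[\varphi^{13}+\varphi^{1\bar3}\right]$ obstruct \Cf ness, and \Cp ness is verified by checking that the candidate pure representatives ($\varphi^{12}$, $\varphi^{13}$, $\varphi^{23}$ and $\varphi^{1\bar2}$, $\varphi^{1\bar3}$, \dots) are $\del^*$- and $\delbar^*$-orthogonal to the exact forms with respect to the invariant metric $\sum_j\varphi^j\odot\bar\varphi^j$, so that the two pure subgroups are as listed and meet only in $\{0\}$. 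Your plan is the right shape, but until both the fullness check in the first example and the existence of the second example are actually carried out, the proposition is not established; as your failed candidate shows, ``most candidate structures fail one of the two conditions'' is not a hypothetical caveat but the generic situation.
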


\begin{proof}
The proof follows from the following examples, \cite[Example 1.2, Example 1.3]{angella-tomassini-2}.

\paragrafod{1}{Being \Cf\ does not imply being \Cp}
Take a nilmanifold $N_1$ with associated Lie algebra
$$ \mathfrak{h}_{16} \;:=\; \left( 0^3,\;12,\;14,\;24 \right) \;. $$
Consider the left-invariant complex structure on $N_1$ whose space of $(1,0)$-forms is generated, as a $\mathcal{C}^\infty\left(N_1;\C\right)$-module, by
$$
\left\{
\begin{array}{rcl}
 \varphi^1 &:=& e^1+\im e^2 \\[5pt]
 \varphi^2 &:=& e^3+\im e^4 \\[5pt]
 \varphi^3 &:=& e^5+\im e^6
\end{array}
\right. \;.
$$
Writing the structure equations in terms of $\left\{\varphi^1,\,\varphi^2,\,\varphi^3\right\}$,
$$
\left\{
\begin{array}{rcl}
 2\,\de\varphi^1 &=& 0 \\[5pt]
 2\,\de\varphi^2 &=& \varphi^{1\bar1} \\[5pt]
 2\,\de\varphi^3 &=& -\im \varphi^{12}+\im\varphi^{1\bar2}
\end{array}
\right. \;,
$$
the integrability condition is easily verified.

K. Nomizu's theorem \cite[Theorem 1]{nomizu} makes the computation of the cohomology straightforward: in fact, listing the harmonic representatives with respect to the left-invariant Hermitian metric $g:=\sum_j\varphi^j\odot \bar\varphi^j$ instead of their classes, one finds
$$ H^2_{dR}(N_1;\C) \;=\; \C \left\langle \varphi^{13},\;\varphi^{\bar1\bar3}\right\rangle \,\oplus\, \C\left\langle \varphi^{1\bar3}-
\varphi^{3\bar1} \right\rangle \,\oplus\, \C \left\langle \varphi^{12}+\varphi^{1\bar2},\;\varphi^{2\bar1}-\varphi^{\bar1\bar2}
\right\rangle \;,$$
where
$$ H^{(2,0),(0,2)}_J\left(N_1;\C\right) \;=\; \C \left\langle \varphi^{13},\;\varphi^{\bar1\bar3}\right\rangle \,\oplus\, \C \left\langle \varphi^{12}+\varphi^{1\bar2},\;
\varphi^{2\bar1}-\varphi^{\bar1\bar2} \right\rangle $$
and
$$ H^{(1,1)}_J\left(N_1;\C\right) \;=\; \C\left\langle \varphi^{1\bar3}-\varphi^{3\bar1} \right\rangle \,\oplus\, \C \left\langle \varphi^{12}+\varphi^{1\bar2},\;
\varphi^{2\bar1}-\varphi^{\bar1\bar2} \right\rangle \;.$$
In particular, $J$ is a \Cf, non-\Cp\ complex structure.

\paragrafod{2}{Being \Cp\ does not imply being \Cf}
 Take a nilmanifold $N_2$ with associated Lie algebra
$$ \mathfrak{h}_{2} \;:=\; \left( 0^4,\;12,\;34 \right) \;. $$
and consider on it the left-invariant complex structure given requiring that the forms
$$
\left\{
\begin{array}{rcl}
 \varphi^1 &:=& e^1+\im e^2 \\[5pt]
 \varphi^2 &:=& e^3+\im e^4 \\[5pt]
 \varphi^3 &:=& e^5+\im e^6
\end{array}
\right.
$$
are of type $(1,0)$.

The integrability condition follows from the structure equations
$$
\left\{
\begin{array}{rcl}
 2\,\de\varphi^1 &=& 0 \\[5pt]
 2\,\de\varphi^2 &=& 0 \\[5pt]
 2\,\de\varphi^3 &=& \im \varphi^{1\bar1}-\im\varphi^{2\bar2}
\end{array}
\right. \;.
$$

K. Nomizu's theorem \cite[Theorem 1]{nomizu} gives
$$
H^2_{dR}\left(N_2;\C\right) \;=\; \C \left\langle \varphi^{12},\;\varphi^{\bar1\bar2} \right\rangle \,\oplus\, \C \left\langle \varphi^{1\bar 2},\;
\varphi^{2\bar1} \right\rangle \oplus \C\left\langle \varphi^{13}+\varphi^{1\bar3},\;\varphi^{3\bar1}-\varphi^{\bar1\bar3},\;\varphi^{3\bar2}-\varphi^{\bar2\bar3},\; \varphi^{23}-\varphi^{2\bar3} \right\rangle \;,
$$
where
$$ H^{(2,0),(0,2)}_J\left(N_2;\C\right) \;=\; \C \left\langle \varphi^{12},\;\varphi^{\bar1\bar2} \right\rangle $$
and
$$ H^{(1,1)}_J\left(N_2;\C\right) \;=\; \C \left\langle \varphi^{1\bar 2},\;\varphi^{2\bar1} \right\rangle \;; $$
this can be proven arguing as follows: with respect to the left-invariant Hermitian metric $g:=\sum_j \varphi^j\odot \bar\varphi^j$, one computes
$$ \del^*\,\varphi^{13}\;=\;\del^*\,\varphi^{23}\;=\;\del^*\,\varphi^{12}\;=\;0 \;,$$
that is, $\varphi^{13}$, $\varphi^{12}$ and $\varphi^{23}$ are $g$-orthogonal to the space $\del\wedge^{1,0}N_2$; in the same way,
one computes
$$ \del^*\,\varphi^{1\bar2}\;=\;\delbar^*\,\varphi^{1\bar2}\;=\;\del^*\,\varphi^{1\bar3}\;=\;\delbar^*\,\varphi^{1\bar3}\;=\;0 $$
(compare also Proposition \ref{prop:linear-cpf-invariant-cpf-J}).
In particular, $J$ is a \Cp, non-\Cf\ complex structure.
\end{proof}

\section{\Cpf ness for special manifolds}\label{sec:classes-Cpf}

In this section, we study the property of being \Cpf\ on special classes of (almost-)complex manifolds. After recalling some motivating results by T. Dr\v{a}ghici, T.-J. Li, and W. Zhang, we study \Cpf ness for left-invariant complex-structures on solvmanifolds, providing some examples in dimension $4$ or higher; furthermore, we consider almost-complex manifolds endowed with special metric structures, namely, \emph{semi-K\"ahler}, and \emph{almost-K\"ahler} structures.

\subsection{Special classes of \Cpf\ (almost-)complex manifolds}
In this section, we recall some results by T. Dr\v{a}ghici, T.-J. Li, and W. Zhang, providing classes of \Cpf\ and \pf\ (almost-)complex manifolds. They could be considered as motivations to study \Cpf ness: in fact, \cite[Lemma 2.15, Theorem 2.16]{draghici-li-zhang} suggests that the subgroups $H^{(\bullet,\bullet)}_J(X;\C)$ can be viewed as a generalization of the Dolbeault cohomology groups for non-K\"ahler, and non-integrable, almost-complex manifolds $X$. On the other hand, \cite[Theorem 2.3]{draghici-li-zhang} states that, on a compact $4$-dimensional almost-complex manifold $X$, the subgroups $H^+_J(X)$ and $H^-_J(X)$ induce always a decomposition of $H^2_{dR}(X;\R)$: this could be intended as a generalization of the Hodge decomposition theorem for compact $4$-dimensional almost-complex manifolds.

\medskip

According to the following result, the groups $H^{(\bullet,\bullet)}_J(X;\C)$ can be considered as the counterpart of the Dolbeault cohomology groups in the non-K\"ahler and non-integrable cases.

\begin{thm}[{\cite[Lemma 2.15, Theorem 2.16]{draghici-li-zhang}}]\label{thm:frolicher-Cinf}
Let $X$ be a compact complex manifold. If the Hodge and Fr\"{o}licher spectral sequence degenerates at the first step and the natural filtration associated with the structure of double complex of $\left(\wedge^{\bullet,\bullet}X,\,\del,\,\delbar\right)$ induces a Hodge decomposition of weight $k$ on $H^k_{dR}(X;\C)$ for some $k\in\N$, then $X$ is complex-\Cpf\ at the \kth{k} stage, and
$$ H^{(p,q)}_J(X;\C) \simeq H^{p,q}_{\delbar}(X) $$
for every $p,q\in\N$ such that $p+q=k$.
\end{thm}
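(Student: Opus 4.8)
The plan is to show that the hypotheses---degeneration of the Hodge--Fr\"olicher spectral sequence at $E_1$ together with the existence of a weight-$k$ Hodge decomposition on $H^k_{dR}(X;\C)$---force the natural inclusions $H^{(p,q)}_J(X;\C)\hookrightarrow H^{p+q}_{dR}(X;\C)$ to recover exactly the Dolbeault pieces. First I would recall that, for an integrable $J$, a $\de$-closed $(p,q)$-form is automatically both $\del$-closed and $\delbar$-closed, so each class in $H^{(p,q)}_J(X;\C)$ has a representative defining a Dolbeault class; this gives a natural linear map $H^{(p,q)}_J(X;\C)\to H^{p,q}_{\delbar}(X)$. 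Dually, the filtration $F^\bullet\wedge^{\bullet,\bullet}X:=\bigoplus_{r\geq\bullet}\wedge^{r,s}X$ defined in the $\del\delbar$-Lemma discussion induces the filtration $F^\bullet H^k_{dR}(X;\C)$, and by definition $H^{(p,q)}_J(X;\C)$ sits inside $F^pH^k_{dR}(X;\C)\cap\overline{F^qH^k_{dR}(X;\C)}$.

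The key step is to identify the graded pieces of this filtration with the Dolbeault groups. Degeneration of the spectral sequence at $E_1$ means precisely that $E_1^{p,q}\simeq E_\infty^{p,q}$, and since $E_\infty^{p,q}\simeq \Gr^p_F H^{p+q}_{dR}(X;\C)=F^pH^{p+q}_{dR}(X;\C)/F^{p+1}H^{p+q}_{dR}(X;\C)$ while $E_1^{p,q}\simeq H^{p,q}_{\delbar}(X)$, I would obtain $\dim_\C F^pH^k_{dR}(X;\C)-\dim_\C F^{p+1}H^k_{dR}(X;\C)=h^{p,q}_{\delbar}$ for each $p+q=k$. Then I would invoke the weight-$k$ Hodge structure hypothesis, namely $H^k_{dR}(X;\C)=\bigoplus_{p+q=k}F^pH^k_{dR}(X;\C)\cap\overline{F^qH^k_{dR}(X;\C)}$, to conclude that the $(p,q)$-component of this decomposition has dimension $h^{p,q}_{\delbar}$ and coincides with $H^{(p,q)}_J(X;\C)$. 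This simultaneously yields the direct-sum decomposition $H^k_{dR}(X;\C)=\bigoplus_{p+q=k}H^{(p,q)}_J(X;\C)$, which is exactly complex-$\mathcal{C}^\infty$-\pf\ at the $k$-th stage, and the isomorphism $H^{(p,q)}_J(X;\C)\simeq H^{p,q}_{\delbar}(X)$.

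I expect the main obstacle to be the careful bookkeeping between the abstract filtration picture and the concrete subgroups $H^{(p,q)}_J(X;\C)$: one must verify that the natural map to $\Gr^p_F$ followed by the $E_\infty\simeq E_1$ identification is genuinely an isomorphism on $H^{(p,q)}_J(X;\C)$, rather than only on a subquotient. The delicate point is that a class in $H^{(p,q)}_J(X;\C)$ is represented by an honest $(p,q)$-form, so it lies in $F^pH^k_{dR}(X;\C)$ and, being real-conjugate to a $(q,p)$-form, in $\overline{F^qH^k_{dR}(X;\C)}$ as well; the Hodge-decomposition hypothesis then guarantees that the composite $H^{(p,q)}_J(X;\C)\to F^pH^k\cap\overline{F^qH^k}$ is an equality. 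To close the argument I would compare dimensions: the Fr\"olicher inequality together with degeneration gives $\sum_{p+q=k}h^{p,q}_{\delbar}=b_k$, and the weight-$k$ decomposition forces each summand to have the maximal possible dimension $h^{p,q}_{\delbar}$, leaving no room for the inclusions to be proper. This simultaneously pins down the isomorphism with $H^{p,q}_{\delbar}(X)$ and the stage-$k$ decomposition, completing the proof.
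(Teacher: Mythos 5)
Your opening two paragraphs are sound: the inclusion $H^{(p,q)}_J(X;\C)\subseteq F^pH^k_{dR}(X;\C)\cap\overline{F^qH^k_{dR}(X;\C)}$ is correct (a $\de$-closed $(p,q)$-form lies in $F^p\wedge^kX$ and, having exactly $q$ anti-holomorphic indices, also in $\overline{F^q}\wedge^kX$), and degeneration at $E_1$ does give $\dim_\C\Gr^p_FH^k_{dR}(X;\C)=h^{p,q}_{\delbar}$, whence, with the weight-$k$ hypothesis, $\dim_\C\left(F^pH^k_{dR}(X;\C)\cap\overline{F^qH^k_{dR}(X;\C)}\right)=h^{p,q}_{\delbar}$ for $p+q=k$. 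The gap is in your closing step. Knowing that the spaces $F^p\cap\overline{F^q}$ are in direct sum with the expected dimensions says nothing about the subspaces $H^{(p,q)}_J(X;\C)$ filling them: these data are perfectly consistent with $H^{(p,q)}_J(X;\C)$ being a \emph{proper} subspace of $F^p\cap\overline{F^q}$, in which case $J$ would simply fail to be complex-\Cf\ at the \kth{k} stage — which is exactly what must be excluded. Your sentence that the weight-$k$ decomposition "leaves no room for the inclusions to be proper" is circular: all your estimates bound $\dim_\C H^{(p,q)}_J(X;\C)$ from above, never from below. Nor does the "natural map" $H^{(p,q)}_J(X;\C)\to H^{p,q}_{\delbar}(X)$ of your first paragraph help as stated: its surjectivity would require every Dolbeault class to admit a $\de$-closed representative, and its injectivity that every $\de$-closed $\delbar$-exact $(p,q)$-form be $\de$-exact — both are consequences of the theorem, not available inputs.

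What is missing is precisely the content of \cite[Lemma 2.15]{draghici-li-zhang} (note that the present paper imports the result without reproving it, so the comparison is against the cited proof): every class in $F^pH^k_{dR}(X;\C)\cap\overline{F^qH^k_{dR}(X;\C)}$ with $p+q=k$ admits a $\de$-closed representative of pure type $(p,q)$, i.e., the reverse inclusion $F^p\cap\overline{F^q}\subseteq H^{(p,q)}_J(X;\C)$. This is proved by a representative-stripping induction that uses degeneration at $E_1$ in an essential way (degeneration is equivalent to strictness of $\de$ with respect to $F$): starting from a $\de$-closed representative $\alpha=\sum_{j\geq0}\alpha^{p+j,q-j}\in F^p\wedge^kX$ of a class which also lies in $\overline{F^q}$, one removes the components $\alpha^{p+j,q-j}$ with $j\geq1$ one at a time modulo $\imm\de$, the correction terms being supplied by the survival of Dolbeault classes to $E_\infty$. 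The base case $p=k$, $q=0$ is trivial, since $F^k\wedge^kX=\wedge^{k,0}X$ forces any representative in $F^k$ to be pure; this makes clear that the entire difficulty of the theorem sits in this reduction step. Once the equality $H^{(p,q)}_J(X;\C)=F^p\cap\overline{F^q}$ is in hand, your dimension bookkeeping does correctly deliver both the decomposition $H^k_{dR}(X;\C)=\bigoplus_{p+q=k}H^{(p,q)}_J(X;\C)$ and the isomorphisms $H^{(p,q)}_J(X;\C)\simeq H^{p,q}_{\delbar}(X)$.
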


A corollary of \cite[Lemma 2.15, Theorem 2.16]{draghici-li-zhang} is the following result.

\begin{cor}[{\cite[Proposition 2.1]{li-zhang}, \cite[Theorem 2.16, Proposition 2.17]{draghici-li-zhang}}]\label{cor:frolicher-Cinf}
One has that:
\begin{enumerate}[\itshape (i)]
 \item every compact complex surface is complex-\Cpf\ at the \kth{2} stage, and hence, in particular, \Cpf\ and \pf;
 \item every compact complex manifold satisfying the $\del\delbar$-Lemma is complex-\Cpf\ at every stage, and hence complex-\pf\ at every stage;
 \item every compact complex manifold admitting a K\"{a}hler structure is complex-\Cpf\ at every stage, and hence complex-\pf\ at every stage.
\end{enumerate}
\end{cor}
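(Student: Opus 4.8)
The plan is to deduce all three statements from Theorem \ref{thm:frolicher-Cinf} together with the duality relations of Theorem \ref{thm:implicazioni}, so that the substantive work in each case reduces to checking the two hypotheses of Theorem \ref{thm:frolicher-Cinf}: degeneration of the Hodge and Fr\"olicher spectral sequence at $E_1$, and the existence of a weight-$k$ Hodge structure on $H^k_{dR}(X;\C)$ induced by the natural filtration. Once these hold at stage $k$, one gets complex-\Cpf ness at the $k$-th stage directly, and the remaining (real and homological) conclusions follow formally.

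For (ii), if $X$ satisfies the $\del\delbar$-Lemma then by \cite[5.21]{deligne-griffiths-morgan-sullivan} both hypotheses hold for every $k\in\N$, so Theorem \ref{thm:frolicher-Cinf} yields that $X$ is complex-\Cpf\ at every stage. I would then run the implications of Theorem \ref{thm:implicazioni}: complex-\Cf\ at the $k$-th stage gives complex-\p\ at the $k$-th stage and complex-\f\ at the $(2n-k)$-th stage, and letting $k$ range over $\{0,\ldots,2n\}$ these produce complex-\p\ and complex-\f\ at every stage, i.e.\ complex-\pf\ at every stage. Statement (iii) is then immediate, since a compact complex manifold admitting a K\"ahler structure satisfies the $\del\delbar$-Lemma by \cite[Lemma 5.11]{deligne-griffiths-morgan-sullivan}, and (ii) applies.

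For (i), let $X$ be a compact complex surface, so $2n=4$. The $E_1$-degeneration of the Hodge and Fr\"olicher spectral sequence is classical, \cite[Theorem IV.2.8]{barth-hulek-peters-vandeven}. It remains to produce a weight-$2$ Hodge structure on $H^2_{dR}(X;\C)$, which I expect to be the only genuinely surface-specific step. The crucial observation is that every holomorphic $2$-form on $X$ is automatically $\de$-closed: it is $\delbar$-closed by definition, while its $\del$-image lies in $\wedge^{3,0}X=\{0\}$ because $\dim_\C X=2$. Hence the natural map $H^{2,0}_{\delbar}(X)\to H^2_{dR}(X;\C)$ is injective, and by conjugation the same holds for $H^{0,2}$; combined with the $E_1$-degeneration, which gives $b_2=h^{2,0}_{\delbar}+h^{1,1}_{\delbar}+h^{0,2}_{\delbar}$, this yields a decomposition $H^2_{dR}(X;\C)=F^2\oplus(F^1\cap\bar F^1)\oplus\bar F^2$ with $\overline{F^p\cap\bar F^q}=F^q\cap\bar F^p$, i.e.\ a Hodge structure of weight $2$. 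Theorem \ref{thm:frolicher-Cinf} then gives complex-\Cpf ness at the $2$nd stage.

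Finally I would translate the complex conclusions into the asserted real ones. Complex-\Cpf ness at the $2$nd stage implies \Cpf ness by the dictionary of Remark \ref{rem:cpf-complex-cpf}, valid since $J$ is integrable, whereby $H^-_J(X)=(H^{(2,0)}_J(X;\C)+H^{(0,2)}_J(X;\C))\cap H^2_{dR}(X;\R)$ and $H^{(1,1)}_J(X;\C)=H^+_J(X)\otimes_\R\C$; and complex-\pf ness at the $2$nd stage (obtained from Theorem \ref{thm:implicazioni} with $2n-k=2$) implies \pf ness by the analogous homological dictionary. The same passage applies in (ii) and (iii) at every stage. The main obstacle throughout is not these formal deductions but the verification of the weight-$k$ Hodge-structure hypothesis: for (ii) and (iii) it is packaged inside the $\del\delbar$-Lemma characterization, while for (i) it rests precisely on the closedness of holomorphic top-degree forms on a surface, which is where all the surface geometry enters.
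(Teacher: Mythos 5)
Your proposal is correct and follows essentially the same route as the paper: in all three cases the paper simply verifies the two hypotheses of Theorem \ref{thm:frolicher-Cinf} --- via \cite[Theorem IV.2.8, Proposition IV.2.9]{barth-hulek-peters-vandeven} for surfaces, and via \cite[5.21]{deligne-griffiths-morgan-sullivan} together with \cite[Lemma 5.11]{deligne-griffiths-morgan-sullivan} for the $\del\delbar$-Lemma and K\"ahler cases --- and then invokes Remark \ref{rem:cpf-complex-cpf} and Theorem \ref{thm:implicazioni} for the real and homological statements, exactly as you do. The only divergence is that where the paper cites \cite[Proposition IV.2.9]{barth-hulek-peters-vandeven} for the weight-$2$ Hodge structure on a surface you sketch its proof in-line, and there your ``hence injective'' is slightly too quick: closedness of holomorphic $2$-forms only makes the map $H^{2,0}_{\delbar}(X)\to H^{2}_{dR}(X;\C)$ well-defined, while injectivity (and the directness of the sum with $\bar F^{2}$) requires the positivity of $\int_X\omega\wedge\bar\omega$ for a non-zero holomorphic $2$-form $\omega$.
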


\begin{proof}
As regards the complex-\Cf ness at the \kth{2} stage for compact complex surfaces, one has that the assumptions of Theorem \ref{thm:frolicher-Cinf} with $k=2$ hold by \cite[Theorem IV.2.8, Proposition IV.2.9]{barth-hulek-peters-vandeven}.

As regards the complex-\Cf ness at every stage for compact complex manifolds satisfying the $\del\delbar$-Lemma, one has that the assumptions of Theorem \ref{thm:frolicher-Cinf} for any $k\in\N$ are satisfied by \cite[5.21]{deligne-griffiths-morgan-sullivan}.

As regards the complex-\Cf ness at every stage for compact K\"ahler manifolds, one has that a compact complex manifold admitting a K\"{a}hler metric satisfies the $\del\delbar$-Lemma, \cite[Lemma 5.11]{deligne-griffiths-morgan-sullivan}.

Finally, the other statements follow from Remark \ref{rem:cpf-complex-cpf} and Theorem \ref{thm:implicazioni}.
\end{proof}

Actually, T. Dr\v{a}ghici, T.-J. Li, and W. Zhang proved in \cite{draghici-li-zhang} the following result, which one can consider as a sort of Hodge decomposition theorem in the non-K\"ahler case.

\begin{thm}[{\cite[Theorem 2.3]{draghici-li-zhang}}]
 Every almost-complex structure on a compact $4$-dimensional manifold is \Cpf\ and \pf.
\end{thm}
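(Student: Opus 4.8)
The plan is to fix a $J$-compatible Riemannian metric $g$ on $X$ (obtained by averaging an arbitrary metric), with fundamental form $\omega$, and to exploit the fact that in real dimension four the Hodge-$*$-operator is an involution on $\wedge^2X$, giving the pointwise splitting $\wedge^2X = \mathcal{S}^+\oplus\mathcal{S}^-$ into self-dual and anti-self-dual $2$-forms. The starting point is the elementary linear-algebra observation, which I would check on the standard local model, that every $J$-anti-invariant real $2$-form is self-dual, that every anti-self-dual $2$-form is $J$-invariant, and that a self-dual $J$-invariant form is pointwise a multiple of $\omega$; equivalently $\mathcal{S}^+ = \R\,\omega\oplus\wedge^-_JX$ while $\mathcal{S}^-\subseteq\wedge^+_JX$. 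Since $*$ commutes with the Hodge Laplacian, the space of $g$-harmonic $2$-forms splits as $\mathcal{H}^2_g = \mathcal{H}^+_g\oplus\mathcal{H}^-_g$ into self-dual and anti-self-dual harmonic forms; moreover, in dimension four a $2$-form is self-dual (resp.\ anti-self-dual) and closed if and only if it is harmonic and self-dual (resp.\ anti-self-dual), because $d^*\beta = -*d*\beta = \mp *d\beta$ on $\mathcal{S}^\pm$.

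For purity (that is, $H^+_J(X)\cap H^-_J(X) = \{0\}$) I would argue as follows. A class $\mathfrak{a}\in H^-_J(X)$ admits a closed $J$-anti-invariant representative $\beta$; being $J$-anti-invariant, $\beta$ is self-dual, hence closed-and-self-dual, hence the unique harmonic representative of $\mathfrak{a}$. If moreover $\mathfrak{a}\in H^+_J(X)$, then $\mathfrak{a}$ also has a closed $J$-invariant representative $\alpha$, and
\[
0 \;\leq\; \left\|\beta\right\|^2_{\mathrm{L}^2} \;=\; \int_X\beta\wedge *\beta \;=\; \int_X\beta\wedge\beta \;=\; \int_X\beta\wedge\alpha \;=\; 0 \;,
\]
where the third equality uses self-duality of $\beta$, the fourth uses that $\alpha$ and $\beta$ are cohomologous and closed, and the final vanishing is pointwise: a $(1,1)$-form wedged with a form of type $(2,0)+(0,2)$ lands in $\wedge^{3,1}X\oplus\wedge^{1,3}X = \{0\}$, since $\dim_\C X = 2$. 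Hence $\beta = 0$ and $\mathfrak{a} = 0$.

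Fullness is the heart of the matter. Given an arbitrary class, I represent it by its harmonic form $h = h^+ + h^-$ with $h^\pm\in\mathcal{H}^\pm_g$. The anti-self-dual part $h^-$ is $J$-invariant and closed, so $[h^-]\in H^+_J(X)$, and it remains to treat a self-dual harmonic form $\psi\in\mathcal{H}^+_g$. I would decompose $\mathcal{H}^+_g = \mathcal{H}^-_J\oplus^{\perp}W$, where $\mathcal{H}^-_J$ is the space of $J$-anti-invariant (self-dual, closed) harmonic forms, which maps isomorphically onto $H^-_J(X)$. If $\psi\in\mathcal{H}^-_J$ then $[\psi]\in H^-_J(X)$, so the essential point is to show that every $\psi\in W$, i.e.\ every self-dual harmonic form $\mathrm{L}^2$-orthogonal to $\mathcal{H}^-_J$, represents a class in $H^+_J(X)$. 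Writing $\psi = \psi_+ + \psi_-$ with $\psi_+$ $J$-invariant and $\psi_-$ $J$-anti-invariant (both self-dual), it suffices to find a $1$-form $\gamma$ with $P_-(d\gamma) = \psi_-$, where $P_-$ is the pointwise projection onto $\wedge^-_JX$: indeed $\psi - d\gamma$ is then a closed $J$-invariant representative of $[\psi]$.

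The main obstacle, and the technical core of the argument, is solving this equation, which I would do by analysing the first-order operator $D := P_-\circ d$ sending $1$-forms to $J$-anti-invariant $2$-forms. The plan is a symbol computation: for $\xi\neq0$ the principal symbol $v\mapsto P_-(\xi\wedge v)$ should be surjective onto $\wedge^-_{J}X$, which I would verify on the flat model (there $P_-(\xi\wedge\wedge^1)$ already exhausts the two-dimensional $J$-anti-invariant self-dual space, with one-dimensional kernel). Granting surjectivity of the symbol, $D$ is overdetermined-elliptic, its range is closed, and $\operatorname{im}D = (\ker D^*)^{\perp}$; and since for self-dual $\beta\in\wedge^-_JX$ one has $d^*\beta = -*d\beta$, the kernel $\ker D^*$ consists exactly of the closed $J$-anti-invariant forms, namely $\mathcal{H}^-_J$. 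As $\psi_-\perp\mathcal{H}^-_J$, the equation $P_-(d\gamma) = \psi_-$ is solvable, completing fullness. Together with purity this yields the decomposition $H^2_{dR}(X;\R) = H^+_J(X)\oplus H^-_J(X)$, i.e.\ that $J$ is \Cpf; finally, the \pf\ (homology) statement follows formally from Theorem \ref{thm:implicazioni} applied with $2n = 4$ and $k = 2$, since being \Cf\ at the second stage yields both the \p\ and the \f\ property at the second stage.
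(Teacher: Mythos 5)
Your argument is correct, but for the crucial \Cf ness step it takes a genuinely different route from the proof the paper recalls from \cite[Theorem 2.3]{draghici-li-zhang}. The paper argues dually and by contradiction: after reducing everything to \Cf ness via Theorem \ref{thm:implicazioni}, it takes a class $\mathfrak{a}$ orthogonal to $H^+_J(X)+H^-_J(X)$ under the non-degenerate intersection pairing, reduces to a self-dual $g$-harmonic representative $\alpha$, writes the $L\left(\mathcal{C}^\infty(X;\R)\right)$-component of $\alpha$ as $f\,\omega$, and tests $\mathfrak{a}$ against the closed $J$-invariant form $f\,\omega+2\,\pi_{\wedge^-_gX}\left(\de\vartheta\right)$ extracted from the Hodge decomposition of $f\,\omega$; this forces $\int_X f^2\,\omega^2=0$, hence $f=0$, hence $\alpha$ is closed and $J$-anti-invariant, and finally $\alpha=0$, so fullness holds, and all purity statements (including the \pf\ one) come for free from Theorem \ref{thm:implicazioni}. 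You instead construct pure-type representatives directly: after splitting off the anti-self-dual (hence $J$-invariant) harmonic part, you solve $P_-\left(\de\gamma\right)=\psi_-$ by noting that $D:=P_-\circ\de$ has surjective symbol, so $D\,D^*$ is elliptic and the obstruction space $\ker D^*=\ker\left(\de^*\lfloor_{\wedge^-_JX}\right)$ is exactly the space of $g$-harmonic $J$-anti-invariant forms, which your orthogonality hypothesis kills (here one should note explicitly that $J$-invariant and $J$-anti-invariant forms are pointwise $g$-orthogonal, so $\psi\perp\mathcal{H}^-_J$ does give $\psi_-\perp\mathcal{H}^-_J$; and one should solve $D\,D^* u=\psi_-$ by elliptic theory to get a \emph{smooth} $\gamma=D^*u$). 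This is precisely the operator-theoretic mechanism behind M. Lejmi's lemma \cite[Lemma 4.1]{lejmi-mrl}, which the paper invokes only later, for the semi-continuity results of \cite{draghici-li-zhang-2}. Your route buys an explicit pure-type representative for every class and the natural starting point for the semi-continuity theory, at the price of elliptic machinery; the paper's pairing argument is shorter and softer, but purely existential, and it obtains \Cp ness only through the duality implications, whereas your wedge/type computation proves it directly.

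Two cosmetic slips, neither affecting the proof: with \emph{surjective} (rather than injective) symbol, $D$ is underdetermined-elliptic, not overdetermined-elliptic --- though closedness of the range and $\imm D=\left(\ker D^*\right)^{\perp}$ are exactly what surjectivity of the symbol yields, via ellipticity of $D\,D^*$; and the symbol $v\mapsto P_-\left(\xi\wedge v\right)$ has two-dimensional kernel $\R\left\langle \xi,\, J\xi\right\rangle$, not one-dimensional, by rank-nullity ($4=2+2$), which is consistent with surjectivity onto the rank-$2$ bundle $\wedge^-_JX$.
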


\begin{proof}
The proof of the previous theorem rests on the very special properties of $4$-dimensional manifolds. For the sake of completeness, we recall here the argument by T. Dr\v{a}ghici, T.-J. Li, and W. Zhang in \cite{draghici-li-zhang}. Firstly, note that, by Theorem \ref{thm:implicazioni}, it suffices to prove that an almost-complex structure $J$ on a compact $4$-dimensional manifold is \Cf. Suppose that $J$ is not \Cf. Fix a Hermitian metric $g$ on $X$, and denote its associated $(1,1)$-form by $\omega$. Recall that the Hodge-$*$-operator $*_g\lfloor_{\wedge^2 X}\colon \wedge^2 X\to \wedge^2X$ satisfies $\left(*_g\lfloor_{\wedge^2 X}\right)^2=\id_{\wedge^2 X}$, hence it induces a splitting
$$ \wedge^2X \;=\; \wedge^+_gX \oplus \wedge^-_gX \;,$$
where $\wedge^\pm_gX := \left\{\varphi\in\wedge^2X \st *_g\varphi=\pm\varphi \right\}$, for $\pm\in\{+,-\}$. Setting $\Prim^\bullet X:=\ker\Lambda=\ker L^{2-\bullet+1}\lfloor_{\wedge^\bullet X}$ the space of primitive forms, where $\Lambda$ is the adjoint operator of the Lefschetz operator $L:=\omega\wedge\sspace \colon \wedge^\bullet X \to \wedge^{\bullet+2}X$ with respect to the pairing induced by $\omega$ (see \S\ref{sec:symplectic}), one has
$$ \wedge^+_gX \;=\; L\left(\mathcal{C}^\infty\left(X;\R\right)\right) \oplus \left(\left(\wedge^{2,0}X\oplus\wedge^{0,2}X\right)\cap\wedge^2X\right) \qquad \text{ and } \qquad \wedge^-_gX \;=\; \Prim^2 X \cap \wedge^{1,1}X \;; $$
indeed, recall that, on a compact $2n$-dimensional manifold $X$ endowed with an almost-complex structure $J$ and a Hermitian metric $g$ with associated $(1,1)$-form $\omega$, one has, for every $j\in\N$, for every $k\in\N$, the Weil identity, \cite[Théorème 2]{weil},
$$ *_g\, L^j\lfloor_{\Prim^kX} \;=\; \left(-1\right)^{\frac{k(k+1)}{2}} \, \frac{j!}{\left(n-k-j\right)!} \, L^{n-k-j}\, J \;, $$
see, e.g., \cite[Proposition 1.2.31]{huybrechts}. Since the Laplacian operator $\Delta$ and the Hodge-$*$-operator $*_g$ commute, the splitting $\wedge^2X=\wedge^+_gX\oplus\wedge^-_gX$ induces a decomposition in cohomology,
$$ H^2_{dR}(X;\R) \;=\; H^+_g(X) \oplus H^-_g(X) \;,$$
where $H^{\pm}_g(X):=\left\{\left[\varphi\right]\in H^2_{dR}(X;\R) \st \varphi\in\wedge^\pm_gX\right\}$ for $\pm\in\{+,-\}$.
Consider the non-degenerate pairing
$$ \left\langle \sspace, \, \ssspace \right\rangle \colon H^2_{dR}(X;\R) \times H^2_{dR}(X;\R) \to \R\;, \qquad \left\langle \varphi,\, \psi\right\rangle \;:=\; \int_X \varphi\wedge \psi \;,$$
and take $\mathfrak{a} \in \left(H^+_J(X)+H^-_J(X)\right)^\perp\subseteq H^2_{dR}(X;\R)$.
Since $\wedge^-_gX\subseteq \wedge^{1,1}X$, one can reduce to consider $\mathfrak{a}\in H^+_g(X)$; let $\alpha\in\wedge^+_gX$ be such that $\mathfrak{a}=\left[\alpha\right]$. According to the decomposition $\wedge^+_gX = L\left(\mathcal{C}^\infty\left(X;\R\right)\right) \oplus \left(\left(\wedge^{2,0}X\oplus\wedge^{0,2}X\right)\cap\wedge^2X\right)$, let $f\, \omega$ be the component of $\alpha$ in $L\left(\mathcal{C}^\infty\left(X;\R\right)\right)$. Consider the Hodge decomposition
$$ f\, \omega \;=\; h_{f\,\omega} + \de\vartheta + \de^*\eta $$
of $f\,\omega\in\wedge^2X$, where $h_{f \,\omega}\in\ker\Delta\cap\wedge^2X$, $\vartheta\in\wedge^1X$, and $\eta\in\wedge^3X$. Since $f\,\omega\in\wedge^+_gX$ and by the uniqueness of the Hodge decomposition, one has
$$ h_{f\, \omega} + 2\,\de\vartheta \;=\; f\, \omega + 2\, \pi_{\wedge^-_gX}\left(\de\vartheta\right) \;\in\; \wedge^{1,1}X\cap\wedge^2 X $$
(where $\pi_{\wedge^\pm_gX}\colon \wedge^2X\to \wedge^\pm_gX$ denotes the natural projection onto $\wedge^{\pm}_gX$, for $\pm\in\{+,-\}$). Therefore, noting also that $H^+_g(X)$ is orthogonal to $H^-_g(X)$ with respect to $\left\langle \sspace ,\, \ssspace\right\rangle$, one has
$$ 0 \;=\; \left\langle \mathfrak{a} ,\, \left[h_{f\,\omega} + 2\, \de\vartheta\right] \right\rangle \;=\; \left\langle \mathfrak{a} ,\, \left[f\, \omega + 2\pi_{\wedge^-_gX}\left(\de\vartheta\right)\right] \right\rangle \;=\; \int_X f^2\,\omega^2 \;, $$
from which it follows that $f=0$, and hence $\mathfrak{a}=0$.
\end{proof}

\begin{rem}
 The result in \cite[Theorem 2.3]{draghici-li-zhang} does not hold anymore true in dimension greater than or equal to $6$, or without the compactness assumption: the first example of a non-\Cp\ almost-complex structure has been provided by A. Fino and A. Tomassini in \cite[Example 3.3]{fino-tomassini} using a $6$-dimensional nilmanifold (for other examples, even in the integrable case, see Proposition \ref{prop:Cf-Cp-non-related}, Example \ref{ex:almost-kahler-non-Cf}, Theorem \ref{thm:instability-iwasawa}, Proposition \ref{prop:no-scs-h-}, Proposition \ref{prop:no-sci-h+}), while non-\Cpf\ almost-complex structures on non-compact $4$-dimensional manifolds arise from \cite[Theorem 3.24]{draghici-li-zhang-2} by T. Dr\v{a}ghici, T.-J. Li, and W. Zhang.
\end{rem}

\subsection{\Cpf\ solvmanifolds}
Let $X = \left. \Gamma \right\backslash G$ be a solvmanifold, and denote the Lie algebra naturally associated to $G$ by $\mathfrak{g}$, and its complexification by $\mathfrak{g}_\C:=\mathfrak{g}\otimes_\R\C$. (We refer to \S\ref{sec:solvmanifolds} for notations and results concerning solvmanifolds.)

We recall that if $X$ is a nilmanifold or, more in general, a completely-solvable solvmanifold, the inclusion of the sub-complex given by the $G$-left-invariant differential forms, which is isomorphic to the complex $\wedge^\bullet\duale{\mathfrak{g}}$ of linear forms on the dual of the Lie algebra $\mathfrak{g}$ associated to $G$, into the de Rham complex of $X$ turns out to be a quasi-isomorphism, in view of K. Nomizu's theorem \cite[Theorem 1]{nomizu}, respectively A. Hattori's theorem \cite[Corollary 4.2]{hattori}.

Let $J$ be a $G$-left-invariant almost-complex structure on $X$. In this case, one can study the problem of cohomological decomposition both on $X$ and on $\mathfrak{g}$: in this section, we investigate the relations between the cohomological decompositions at the level of the solvmanifold and at the level of the associated Lie algebra, Proposition \ref{prop:linear-cpf-invariant-cpf-J}, Corollary \ref{cor:linear-cpf-cpf}.

\medskip

Firstly, we set some notations. Consider $H_{dR}^\bullet\left(\mathfrak{g};\R\right):=H^\bullet\left(\wedge^\bullet\duale{\mathfrak{g}},\,\de\right)$. Being $J$ a $G$-left-invariant almost-complex structure, it induces a bi-graded splitting also on the vector space $\wedge^\bullet\duale{\mathfrak{g}}_\C$. For every $S\subset\N\times\N$, and for $\K\in\{\R,\, \C\}$, set
$$
H^{S}_J\left(\mathfrak{g};\K\right) \;:=\; \left\{ \left[\alpha\right]\in H_{dR}^{\bullet}\left(\mathfrak{g};\K\right) \st \alpha\in\bigoplus_{(p,q)\in S} \wedge^{p,q}\duale{\mathfrak{g}}_\C \cap \left(\wedge^\bullet \duale{\mathfrak{g}} \otimes_\R \K\right) \right\} \;,
$$
see \cite[Definition 0.3]{li-tomassini}.

The following are the natural linear counterparts of the corresponding definitions for manifolds.

\begin{defi}
 Let $X=\left.\Gamma\right\backslash G$ be a solvmanifold, and denote the Lie algebra naturally associated to $G$ by $\mathfrak{g}$. Fixed $k\in\N$, a $G$-left-invariant almost-complex structure $J$ on $X$ is called
\begin{itemize}
 \item \emph{linear-\Cp\ at the \kth{k} stage} if $$ \bigoplus_{\substack{p+q=k\\p\leq q}} H^{(p,q),(q,p)}_J\left(\mathfrak{g};\R\right) \;\subseteq\; H^k_{dR}\left(\mathfrak{g};\R\right) \;,$$
     namely, if the sum is direct;
 \item \emph{linear-\Cf\ at the \kth{k} stage} if $$ H^k_{dR}\left(\mathfrak{g};\R\right) \;=\; \sum_{\substack{p+q=k\\p\leq q}}H^{(p,q),(q,p)}_J\left(\mathfrak{g};\R\right) \;,$$
 \item \emph{linear-\Cpf\ at the \kth{k} stage} if $J$ is both linear-\Cp\ at the \kth{k} stage and linear-\Cf\ at the \kth{k} stage, that is, if the cohomological decomposition $$ H^k_{dR}\left(\mathfrak{g};\R\right) \;=\; \bigoplus_{\substack{p+q=k\\p\leq q}}H^{(p,q),(q,p)}_J\left(\mathfrak{g};\R\right) $$
 holds.
\end{itemize}

Furthermore, $J$ is called
\begin{itemize}
 \item \emph{linear-complex-\Cp\ at the \kth{k} stage} if $$ \bigoplus_{p+q=k} H^{(p,q)}_J\left(\mathfrak{g};\C\right) \;\subseteq\; H^k_{dR}\left(\mathfrak{g};\C\right) \;,$$
     namely, if the sum is direct;
 \item \emph{linear-complex-\Cf\ at the \kth{k} stage} if $$ H^k_{dR}\left(\mathfrak{g};\C\right) \;=\; \sum_{p+q=k}H^{(p,q)}_J\left(\mathfrak{g};\C\right) \;,$$
 \item \emph{linear-complex-\Cpf\ at the \kth{k} stage} if $J$ is both linear-complex-\Cp\ at the \kth{k} stage and linear-complex-\Cf\ at the \kth{k} stage, that is, if the cohomological decomposition $$ H^k_{dR}\left(\mathfrak{g};\C\right) \;=\; \bigoplus_{p+q=k}H^{(p,q)}_J\left(\mathfrak{g};\C\right) $$
 holds.
\end{itemize}

(In any case, when $k=2$, the specification ``at the \kth{2} stage'' will be understood.)
\end{defi}

\medskip

It is natural to ask what relations link the subgroups $H^{(\bullet,\bullet)}_J(X;\R)$ and the subgroups $H^{(\bullet,\bullet)}_J\left(\mathfrak{g};\R\right)$, and whether a $G$-left-invariant linear-\Cpf\ almost-complex structure on $X=\left.\Gamma\right\backslash G$ is also \Cpf.

The following lemma is the F.~A. Belgun symmetrization trick, \cite[Theorem 7]{belgun}, in the almost-complex setting.

\begin{lem}[{\cite[Theorem 7]{belgun}}]
\label{lemma:belgun-J}
Let $X=\left.\Gamma\right\backslash G$ be a solvmanifold, and denote the Lie algebra naturally associated to $G$ by $\mathfrak{g}$. Let $J$ be a $G$-left-invariant almost-complex structure on $X$.
Let $\eta$ be the $G$-bi-invariant volume form on $G$ given by J. Milnor's Lemma, \cite[Lemma 6.2]{milnor}, and such that $\int_X\eta=1$. Up to identifying $G$-left-invariant forms on $X$ and linear forms over $\duale{\mathfrak{g}}$ through left-translations, consider the Belgun symmetrization map
$$ \mu\colon \wedge^\bullet X \to \wedge^\bullet \duale{\mathfrak{g}}\;,\qquad \mu(\alpha)\;:=\;\int_X \alpha\lfloor_m \, \eta(m) \;.$$
Then one has that
$$ \mu\lfloor_{\wedge^\bullet \duale{\mathfrak{g}}}\;=\;\id\lfloor_{\wedge^\bullet \duale{\mathfrak{g}}} \;, $$
and that
$$ \de\left(\mu(\sspace)\right) \;=\; \mu\left(\de\sspace\right) \qquad \text{ and }\qquad  J\left(\mu(\sspace)\right) \;=\; \mu\left(J\sspace\right) \;.$$
\end{lem}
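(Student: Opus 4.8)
The plan is to derive the three identities by reducing them to properties of the $G$-left-invariant data. First I would note that the first two assertions, namely $\mu\lfloor_{\wedge^\bullet\duale{\mathfrak{g}}} = \id\lfloor_{\wedge^\bullet\duale{\mathfrak{g}}}$ and $\de\circ\mu = \mu\circ\de$, are exactly the content of the previously recalled F.~A.~Belgun symmetrization trick \cite[Theorem 7]{belgun}, so they may be invoked directly; the only genuinely new statement is the commutation $J\circ\mu = \mu\circ J$.

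To prepare for this, I would first make the definition of $\mu$ explicit. Writing $\lambda_m\colon G\to G$ for the left-translation by $m$, a form $\alpha\in\wedge^\bullet X$ corresponds to a left-$\Gamma$-invariant form $\tilde\alpha$ on $G$, and $\mu(\alpha)$ is the $G$-left-invariant form whose value at the identity is $\mu(\alpha)\lfloor_e = \int_{\Gamma\backslash G}\left(\pullback{\lambda_m}\tilde\alpha\right)\lfloor_e\,\eta(m)$. This integrand descends to $\Gamma\backslash G$ since $\eta$ is $G$-bi-invariant and $\tilde\alpha$ is left-$\Gamma$-invariant, so that $\left(\pullback{\lambda_{\gamma m}}\tilde\alpha\right)\lfloor_e = \left(\pullback{\lambda_m}\pullback{\lambda_\gamma}\tilde\alpha\right)\lfloor_e = \left(\pullback{\lambda_m}\tilde\alpha\right)\lfloor_e$ for every $\gamma\in\Gamma$.

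The key step is the observation that, $J$ being $G$-left-invariant, its tangent action commutes with the differentials of the left-translations; hence the induced endomorphism of $\wedge^\bullet X$ satisfies $\pullback{\lambda_m}\left(J\beta\right) = J\left(\pullback{\lambda_m}\beta\right)$ for every form $\beta$ on $G$, and moreover the action of $J$ on forms is pointwise, coinciding at the identity with the purely algebraic action of $J\lfloor_{\mathfrak{g}}\in\End(\mathfrak{g})$ on $\wedge^\bullet\duale{\mathfrak{g}}$. Combining these with the linearity of $J\lfloor_{\mathfrak{g}}$, I would compute
$$ \mu(J\alpha)\lfloor_e = \int_{\Gamma\backslash G}\left(\pullback{\lambda_m}J\tilde\alpha\right)\lfloor_e\,\eta(m) = \int_{\Gamma\backslash G}J\lfloor_{\mathfrak{g}}\left(\left(\pullback{\lambda_m}\tilde\alpha\right)\lfloor_e\right)\eta(m) = J\lfloor_{\mathfrak{g}}\left(\mu(\alpha)\lfloor_e\right) = \left(J\,\mu(\alpha)\right)\lfloor_e \;. $$
Since $J$ preserves the $G$-left-invariant forms (again by its commuting with $\pullback{\lambda_g}$), both $\mu(J\alpha)$ and $J\,\mu(\alpha)$ are elements of $\wedge^\bullet\duale{\mathfrak{g}}$ agreeing at $e$, and therefore coincide on all of $X$.

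I do not expect a serious obstacle here: the whole argument rests on the left-invariance of $J$, which forces $J$ to commute with left-translations, together with the fact that $J$ acts algebraically at each point and hence commutes both with restriction to the identity and with integration over the parameter $m$. The only points requiring care are the well-definedness of the integrand on the quotient (handled by the bi-invariance of $\eta$) and the remark that $J$ sends $G$-left-invariant forms to $G$-left-invariant forms, which is what makes the final ``agree at $e$ $\Rightarrow$ agree everywhere'' step legitimate.
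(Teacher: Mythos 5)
Your proof is correct, and it fills in an argument the paper itself omits: the lemma is stated with the whole content attributed to Belgun's \cite[Theorem 7]{belgun}, with no proof given in the text. Your strategy — invoke Belgun for $\mu\lfloor_{\wedge^\bullet\duale{\mathfrak{g}}}=\id$ and $\de\circ\mu=\mu\circ\de$, then verify only the new commutation $J\circ\mu=\mu\circ J$ from the $G$-left-invariance of $J$ (so that $J$ commutes with $\pullback{\lambda_m}$ and acts algebraically at each point, hence passes through restriction at $e$ and through the integral) — is exactly the reduction the paper uses for the symplectic analogue, Lemma \ref{lemma:belgun-sympl}, where only $\mu\circ L = L\circ\mu$ is checked by the same kind of pointwise computation.
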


Using the previous lemma, we can prove the following Nomizu-type result, which relates the subgroups $H^{(r,s)}_J(X;\R)$ with their left-invariant part $H^{(r,s)}_J\left(\mathfrak{g};\R\right)$. (Analogous results will be proven in Proposition \ref{prop:linear-cpf-invariant-cpf-K} for almost-$\mathbf{D}$-complex structures in the sense of F.~R. Harvey and H.~B. Lawson, and in Proposition \ref{prop:linear-cpf-invariant-cpf-omega} for symplectic structures; compare also with \cite[Theorem 3.4]{fino-tomassini}, by A. Fino and A. Tomassini, for almost-complex structures.)

\begin{prop}[{\cite[Theorem 5.4]{angella-tomassini-zhang}}]\label{prop:linear-cpf-invariant-cpf-J}
Let $X=\left.\Gamma\right\backslash G$ be a solvmanifold endowed with a $G$-left-invariant almost-complex structure $J$, and denote the Lie algebra naturally associated to $G$ by $\mathfrak{g}$. For any $S\subset \N\times\N$, and for $\K\in\{\R,\,\C\}$, the map
$$ j\colon H^{S}_J(\mathfrak{g};\K) \to H^{S}_J(X;\K) $$
induced by left-translations is injective, and, if $H_{dR}^\bullet\left(\mathfrak{g};\K\right) \simeq H^\bullet_{dR}(X;\K)$ (for instance, if $X$ is a completely-solvable solvmanifold), then $j\colon H^{S}_J(\mathfrak{g};\K) \to H^{S}_J(X;\K)$ is in fact an isomorphism.
\end{prop}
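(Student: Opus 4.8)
The plan is to derive both assertions from the almost-complex symmetrization map $\mu\colon\wedge^\bullet X\to\wedge^\bullet\duale{\mathfrak{g}}$ of Lemma \ref{lemma:belgun-J}, which restricts to the identity on left-invariant forms and satisfies $\de\circ\mu=\mu\circ\de$ and $J\circ\mu=\mu\circ J$. The one structural point that is not purely formal, and which I would establish first, is that $\mu$ preserves the pure-type decomposition. Since $\mu$ commutes with $J$, its $\C$-linear extension commutes with the induced endomorphism of $\wedge^\bullet X\otimes\C$; the bidegree spaces $\wedge^{p,q}$ are precisely the eigenspaces of this endomorphism, with eigenvalue $\im^{p-q}$ on $\wedge^{p,q}$, and for a fixed total degree $k$ the integers $p-q$ (for $p+q=k$) are pairwise distinct, so each $\wedge^{p,q}$ is a single distinct eigenspace. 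Hence $\mu\left(\bigoplus_{(p,q)\in S}\wedge^{p,q}X\right)\subseteq\bigoplus_{(p,q)\in S}\wedge^{p,q}\duale{\mathfrak{g}}_\C$. Moreover $\mu$ carries closed forms to closed forms and real forms to real forms, being an average of pullbacks by left-translations, so $\mu$ sends a representative of a class in $H^S_J(X;\K)$ to a left-invariant representative of a class in $H^S_J(\mathfrak{g};\K)$.

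Injectivity of $j$ is then immediate. Denoting by $i$ the inclusion of left-invariant forms into $\wedge^\bullet X$, at the level of complexes one has $\mu\circ i=\id$ on $\wedge^\bullet\duale{\mathfrak{g}}$, and passing to de Rham cohomology gives $\mu_\ast\circ i_\ast=\id$ on $H^\bullet_{dR}(\mathfrak{g};\K)$; thus $i_\ast$ is injective. Since $j$ is the restriction of $i_\ast$ to the subspace $H^S_J(\mathfrak{g};\K)\subseteq H^\bullet_{dR}(\mathfrak{g};\K)$ (landing in $H^S_J(X;\K)$ because a left-invariant form of pure type in $S$ is a differential form of pure type in $S$), it is injective as well.

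For surjectivity I would invoke the standing hypothesis that $i_\ast\colon H^\bullet_{dR}(\mathfrak{g};\K)\to H^\bullet_{dR}(X;\K)$ is an isomorphism, which holds for nilmanifolds by \cite[Theorem 1]{nomizu} and for completely-solvable solvmanifolds by \cite[Corollary 4.2]{hattori}. From $\mu_\ast\circ i_\ast=\id$ together with the invertibility of $i_\ast$ one gets $\mu_\ast=i_\ast^{-1}$, hence also $i_\ast\circ\mu_\ast=\id$. Given a class $\mathfrak{a}=[\alpha]\in H^S_J(X;\K)$, with $\alpha$ closed and of pure type in $S$, the first paragraph shows that $\mu_\ast\mathfrak{a}=[\mu\alpha]$ lies in $H^S_J(\mathfrak{g};\K)$, and $j\left(\mu_\ast\mathfrak{a}\right)=i_\ast\mu_\ast\mathfrak{a}=\mathfrak{a}$. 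Therefore $j$ is surjective, and combined with the previous paragraph it is an isomorphism.

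The only step demanding genuine attention is the preservation of the pure-type components by $\mu$; once that is secured, the rest is a formal consequence of the fact that $\mu$ is a cochain left-inverse of $i$ compatible with the bigrading. The hypothesis $H^\bullet_{dR}(\mathfrak{g};\K)\simeq H^\bullet_{dR}(X;\K)$ is used precisely to promote the left-inverse $\mu_\ast$ to a two-sided inverse, which is exactly what surjectivity requires; injectivity needs no such hypothesis.
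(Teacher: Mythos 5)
Your overall strategy --- the Belgun symmetrization $\mu$ as a cochain left-inverse of the inclusion of left-invariant forms, injectivity of $j$ from $\mu_\ast\circ i_\ast=\id$, and surjectivity by promoting $\mu_\ast$ to $i_\ast^{-1}$ under the hypothesis $H^\bullet_{dR}\left(\mathfrak{g};\K\right)\simeq H^\bullet_{dR}(X;\K)$ --- is exactly the paper's argument. However, the one step you yourself single out as non-formal, the preservation of the bigrading by $\mu$, is justified incorrectly. With the extension of $J$ to forms used here (namely $J\alpha:=\alpha\left(J\sspace,\ldots,J\sspace\right)$), the eigenvalue on $\wedge^{p,q}X$ is indeed $\im^{p-q}$, but this depends on $p-q$ only modulo $4$: the integers $p-q$ with $p+q=k$ are pairwise distinct, while the eigenvalues $\im^{p-q}$ are not. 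Already in degree $2$ one has $J\lfloor_{\wedge^{2,0}X}=J\lfloor_{\wedge^{0,2}X}=-\id$, and in degree $4$ the spaces $\wedge^{4,0}X$, $\wedge^{2,2}X$, $\wedge^{0,4}X$ all carry the eigenvalue $1$. Hence commutation of $\mu$ with $J$ yields only the preservation of the sums $\bigoplus_{p-q\equiv s\modulo 4}\wedge^{p,q}X$ inside each total degree, not of the individual bidegree components. This is insufficient for the proposition as stated, which concerns arbitrary $S\subset\N\times\N$: for instance $S=\left\{(2,0)\right\}$ with $\K=\C$, or $S=\left\{(2,2)\right\}$ with $\K=\R$ (a case the paper actually uses, e.g.\ for $H^{(2,2)}_{J}(X;\R)$), is not separated from the other bidegrees by your eigenvalue count; nor does conjugation-invariance of $\mu$ repair this, since conjugation merely permutes the offending summands.

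The gap closes easily if you argue pointwise instead of spectrally: by definition $\mu(\alpha)$ is the average over $m\in X$ of the values $\alpha\lfloor_m$, identified with elements of $\wedge^\bullet\duale{\mathfrak{g}}\otimes_\R\C$ through left-translations; since $J$ is $G$-left-invariant, every left-translation is $J$-linear, so this identification carries $(p,q)$-covectors at $m$ into $\wedge^{p,q}\duale{\mathfrak{g}}_\C$, and therefore $\mu$ maps $\wedge^{p,q}X$ into $\wedge^{p,q}\duale{\mathfrak{g}}_\C$ for each $(p,q)$ separately. (Equivalently, one could run your eigenvalue argument with the derivation-type extension of $J$, whose eigenvalue on $\wedge^{p,q}X$ is $\left(p-q\right)\im$ and hence genuinely separates the bidegrees in each total degree --- but the commutation of $\mu$ with that extension again rests on the same pointwise observation.) With this replacement, the remainder of your proof coincides with the paper's: injectivity needs no hypothesis on the de Rham cohomology, and the isomorphism $H^\bullet_{dR}\left(\mathfrak{g};\K\right)\simeq H^\bullet_{dR}(X;\K)$ is used exactly, and only, to make $\mu_\ast$ a two-sided inverse of $i_\ast$, giving surjectivity of $j$.
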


\begin{proof}
Since $J$ is $G$-left-invariant, left-translations induce the map $j\colon H^{S}_J(\mathfrak{g};\K) \to H^{S}_J(X;\K)$.
Consider the Belgun symmetrization map $\mu\colon\wedge^\bullet X \otimes \K \to \wedge^\bullet \duale{\mathfrak{g}} \otimes_\R \K$, \cite[Theorem 7]{belgun}: since $\mu$ commutes with $\de$ by \cite[Theorem 7]{belgun}, it induces the map $\mu\colon H^\bullet_{dR}(X;\K) \to H_{dR}^\bullet\left(\mathfrak{g};\K\right)$, and, since $\mu$ commutes with $J$, it preserves the bi-graduation; therefore it induces the map $\mu\colon H^{S}_J(X;\K) \to H^{S}_J(\mathfrak{g};\K)$. Moreover, since $\mu$ is the identity on the space of $G$-left-invariant forms by \cite[Theorem 7]{belgun}, we get the commutative diagram
$$
\xymatrix{
H^{S}_{J}(\mathfrak{g};\K) \ar[r]^{j} \ar@/_1.5pc/[rr]_{\id} & H^{S}_{J}(X;\K) \ar[r]^{\mu} & H^{S}_{J}(\mathfrak{g};\K)
}
$$
hence $j\colon H^{S}_J(\mathfrak{g};\K) \to H^{S}_J(X;\K)$ is injective, and $\mu\colon H^{S}_J(X;\K) \to H^{S}_J(\mathfrak{g};\K)$ is surjective.

Furthermore, when $H_{dR}^\bullet\left(\mathfrak{g};\K\right) \simeq H^\bullet_{dR}(X;\K)$ (for instance, when $X$ is a completely-solvable solvmanifold, by A. Hattori's theorem \cite[Theorem 4.2]{hattori}), since
$$ \mu\lfloor_{\wedge^\bullet\duale{\mathfrak{g}}\otimes_\R\K} \;=\; \id\lfloor_{\wedge^\bullet\duale{\mathfrak{g}}\otimes_\R\K} $$
by \cite[Theorem 7]{belgun}, we get that $\mu\colon H^\bullet_{dR}(X;\K) \to H_{dR}^\bullet\left(\mathfrak{g};\K\right)$ is the identity map, and hence $\mu\colon H^{S}_J(X;\K) \to H^{S}_J(\mathfrak{g};\K)$ is also injective, and hence an isomorphism.
\end{proof}

As a straightforward consequence, we get the following result.

\begin{cor}\label{cor:linear-cpf-cpf}
Let $X=\left.\Gamma\right\backslash G$ be a solvmanifold endowed with a $G$-left-invariant almost-complex structure $J$, and denote the Lie algebra naturally associated to $G$ by $\mathfrak{g}$.
Suppose that $H_{dR}^\bullet\left(\mathfrak{g};\R\right) \simeq H^\bullet_{dR}(X;\R)$ (for instance, suppose that $X$ is a completely-solvable solvmanifold). For every $k\in\N$, the almost-complex structure $J$ is linear-\Cp\ (respectively, linear-\Cf, linear-\Cpf, linear-complex-\Cp, linear-complex-\Cf, linear-complex-\Cpf) at the \kth{k} stage if and only if it is \Cp\ (respectively, \Cf, \Cpf, complex-\Cp, complex-\Cf, complex-\Cpf) at the \kth{k} stage.
\end{cor}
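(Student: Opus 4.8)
The plan is to deduce the statement directly from Proposition \ref{prop:linear-cpf-invariant-cpf-J}, transporting both the directness and the fullness of the relevant sums of subgroups through a single cohomological isomorphism. First I would invoke Proposition \ref{prop:linear-cpf-invariant-cpf-J} under the standing hypothesis $H^\bullet_{dR}(\g;\R)\simeq H^\bullet_{dR}(X;\R)$ (which holds, e.g., for completely-solvable solvmanifolds by A.~Hattori's theorem \cite[Corollary 4.2]{hattori}): for every $S\subseteq\N\times\N$ and every $\K\in\{\R,\,\C\}$, the map $j\colon H^S_J(\g;\K)\to H^S_J(X;\K)$ induced by left-translations is an isomorphism. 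Applying this with $S=\left\{(p,q),(q,p)\right\}$, respectively $S=\left\{(p,q)\right\}$, yields isomorphisms onto the single pure-type (pair) subgroups; applying it with $S=\left\{(r,s)\st r+s=k\right\}$ recovers the isomorphism $j\colon H^k_{dR}(\g;\K)\stackrel{\simeq}{\to}H^k_{dR}(X;\K)$ on the whole degree-$k$ cohomology.

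The key observation I would stress is that all these maps are restrictions of one and the same isomorphism. Indeed, $j$ is induced by the inclusion of $G$-left-invariant forms: a class in $H^S_J(\g;\K)$ is represented by an invariant form whose pure-type components lie in the bidegrees of $S$, and $j$ sends it to its de Rham class on $X$, which still has components only in those bidegrees. Hence, for any $S\subseteq\left\{(r,s)\st r+s=k\right\}$, the square
\[
\xymatrix{
H^S_J(\g;\K) \ar@{^{(}->}[r] \ar[d]^{\simeq} & H^k_{dR}(\g;\K) \ar[d]^{\simeq} \\
H^S_J(X;\K) \ar@{^{(}->}[r] & H^k_{dR}(X;\K)
}
\]
commutes, the vertical arrows being the isomorphisms $j$ and the horizontal ones the natural inclusions of subgroups.

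Finally, the transfer is pure linear algebra. Since the isomorphism $j\colon H^k_{dR}(\g;\K)\to H^k_{dR}(X;\K)$ carries each subgroup $H^{(p,q),(q,p)}_J(\g;\R)$ (respectively $H^{(p,q)}_J(\g;\C)$) isomorphically onto its counterpart on $X$, it preserves both the directness of their sum and whether that sum exhausts the ambient space: a finite family of subspaces of a vector space has internal direct sum if and only if the family of their images under a linear isomorphism does, and their sum equals the ambient space if and only if the sum of the images equals the image of the ambient space. Running this equivalence on the sums $\sum_{p\leq q}H^{(p,q),(q,p)}_J(\sspace;\R)$ gives the equivalence of linear-\Cp\ (respectively linear-\Cf, linear-\Cpf) with \Cp\ (respectively \Cf, \Cpf) at the \kth{k} stage, and running it on $\sum_{p+q=k}H^{(p,q)}_J(\sspace;\C)$ gives the complex counterparts. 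The only point deserving care is the compatibility noted above — that the maps $j$ attached to different $S$ are all restrictions of a single isomorphism — since this is exactly what makes the subgroup inclusions commute with $j$ and thereby lets directness and fullness be read off simultaneously on both sides; beyond this bookkeeping I expect no genuine obstacle.
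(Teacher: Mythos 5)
Your proposal is correct and coincides with the paper's intended argument: the paper states the corollary as ``a straightforward consequence'' of Proposition \ref{prop:linear-cpf-invariant-cpf-J}, and your argument --- that the maps $j\colon H^S_J(\mathfrak{g};\K)\to H^S_J(X;\K)$ for the various $S$ are all restrictions of the single isomorphism on de Rham cohomology (obtained via left-translations and the Belgun symmetrization map $\mu$), so that directness and fullness of the sums of subgroups transfer by elementary linear algebra --- is exactly the bookkeeping that the paper leaves implicit. No gap; the one point needing care, the compatibility of the $j$'s for different $S$ with the subgroup inclusions, is correctly identified and handled by your commutative square.
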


\medskip

As an example, we provide here an explicit \Cpf\ almost-complex structure on a $6$-dimensional solvmanifold, \cite[Example 2.1]{angella-tomassini-1}.

\begin{ex}{\itshape A \Cpf\ and \pf\ almost-complex structure on a compact $6$-dimensional completely-solvable solvmanifold.}\\
 Let $G$ be the $6$-dimensional simply-connected completely-solvable Lie group defined by
$$
G \;:=\; \left\{\left(
\begin{array}{cccccc}
\esp^{x^1}  &  0               &  x^2\,\esp^{x^1}        &  0                 &  0    &  x^3\\
0           &  \esp^{-x^1}     &  0                      &  x^2\,\esp^{-x^1}  &  0    &  x^4\\
0           &  0               &  \esp^{x^1}             &  0                 &  0    &  x^5\\
0           &  0               &  0                      &  \esp^{-x^1}       &  0    &  x^6\\
0           &  0               &  0                      &  0                 &  1    &  x^1\\
0           &  0               &  0                      &  0                 &  0    &  1
\end{array}
\right)\,\in\,\GL(6;\R) \st x^1,\,\ldots,\,x^6\in\R \right\} \;.
$$
According to \cite[\S3]{fernandez-deleon-saralegui}, there exists a discrete co-compact subgroup $\Gamma\subset G$: therefore
$X:=\Gamma\backslash G$ is a $6$-dimensional completely-solvable solvmanifold.

The $G$-left-invariant $1$-forms on $G$ defined as
$$
\begin{array}{lcl}
e^1 \;:=\; \de x^1\;,& \qquad\qquad & e^2 \;:=\; \de x^2\;,\\[5pt]
e^3 \;:=\; \exp\left(-x^1\right) \cdot \left(\de x^3-x^2\,\de x^5\right) \;, & \qquad\qquad & e^4 \;:=\; \exp\left(x^1\right) \cdot \left(\de x^4-x^2\,\de x^6\right)\;, \\[5pt]
e^5 \;:=\; \exp\left(-x^1\right) \cdot \de x^5\,; & \qquad\qquad & e^6 \;:=\; \exp\left(x^1\right) \cdot \de x^6
\end{array}
$$
give rise to $G$-left-invariant $1$-forms on $X$. With respect to the co-frame $\left\{e^1,\ldots,e^6\right\}$, the structure equations are given by
$$
\left\{
\begin{array}{rcl}
 \de e^1 &=& 0 \\[5pt]
 \de e^2 &=& 0 \\[5pt]
 \de e^3 &=& -e^{1}\wedge e^3-e^{2}\wedge e^5 \\[5pt]
 \de e^4 &=&  e^{1}\wedge e^4-e^{2}\wedge e^6 \\[5pt]
 \de e^5 &=& -e^{1}\wedge e^5 \\[5pt]
 \de e^6 &=&  e^{1}\wedge e^6
\end{array}
\right. \;.
$$
Since $G$ is completely-solvable, by A. Hattori's theorem \cite[Corollary 4.2]{hattori}, it is straightforward to compute	
$$ H^2(X;\R)\;=\; \R\left\langle e^{1}\wedge e^{2},\; e^{5}\wedge e^{6},\; e^{3}\wedge e^{6}+e^{4}\wedge e^{5}\right\rangle \;. $$
Therefore, setting
$$
\left\{
\begin{array}{l}
 \varphi^1 \;:=\; e^1+\im e^2 \\[5pt]
 \varphi^2 \;:=\; e^3+\im e^4 \\[5pt]
 \varphi^3 \;:=\; e^5+\im e^6
\end{array}
\right. \;,
$$
we have that the almost-complex structure $J$ whose $\mathcal{C}^\infty\left(X;\C\right)$-module of complex $(1,0)$-forms is generated by $\left\{\varphi^1,\,\varphi^2,\,\varphi^3\right\}$ is \Cf: indeed,
$$
\begin{array}{rcl}
H^{(1,1)}_J\left(X;\R\right) &=& \R\left\langle-\frac{1}{2\im}\,\varphi^1\wedge\bar{\varphi}^1,\;-\frac{1}{2\im}\,\varphi^3\wedge\bar{\varphi}^3\right\rangle\;,\\[5pt]
H^{(2,0),(0,2)}_J\left(X;\R\right) &=& \R\left\langle \frac{1}{2\im}\, \left(\varphi^2\wedge\varphi^3-\bar{\varphi}^2\wedge\bar{\varphi}^3\right)\right\rangle\;.
\end{array}
$$
Since
$$ \de\wedge^1\duale{\mathfrak{g}}_\C \;=\; \C\left\langle \varphi^{13}-\varphi^{1\bar3},\, \varphi^{3\bar1}+\varphi^{\bar1\bar3},\, \varphi^{13}+\varphi^{1\bar3},\, \varphi^{3\bar1}-\varphi^{\bar1\bar3},\, \varphi^{12}-\varphi^{2\bar1},\, \varphi^{1\bar2}+\varphi^{\bar1\bar2} \right\rangle \;,$$
then $J$ is linear-\Cpf. Since $X$ is a completely-solvable solvmanifold, one gets that $J$ is also \Cp\ by Corollary \ref{cor:linear-cpf-cpf}. (Note that the \Cp ness of $J$ can be proven also by using a different argument: according to \cite[Theorem 3.7]{fino-tomassini}, since the above basis of harmonic representatives with respect to the $G$-left-invariant Hermitian metric $\sum_{j=1}^{3} \varphi^j\odot\bar\varphi^j$ consists of pure type forms with respect to the almost-complex structure, $J$ is both \Cpf\ and \pf.)
\end{ex}

\medskip

Further results concerning linear (possibly non-integrable) complex structures on $4$-dimensional unimodular Lie algebra and their cohomological properties have been obtained by T.-J. Li and A. Tomassini in \cite{li-tomassini}. In particular, they proved an analogous of \cite[Theorem 2.3]{draghici-li-zhang}, namely, that for every $4$-dimensional unimodular Lie algebra $\mathfrak{g}$ endowed with a linear (possibly non-integrable) complex structure $J$, one has the cohomological decomposition $H^2_{dR}\left(\mathfrak{g};\R\right)=H^{(2,0),(0,2)}_J\left(\mathfrak{g};\R\right)\oplus H^{(1,1)}_J\left(\mathfrak{g};\R\right)$, \cite[Theorem 3.3]{li-tomassini}. Furthermore, they studied the linear counterpart of S.~K. Donaldson's question \cite[Question 2]{donaldson} (see \S\ref{subsubsec:donaldson-question}), proving that, on a $4$-dimensional Lie algebra $\mathfrak{g}$ satisfying the condition $B\wedge B=0$, where $B\subseteq \wedge^2\mathfrak{g}$ denotes the space of boundary $2$-vectors, a linear
(possibly non-integrable) complex structure admits a taming linear symplectic form if and only if it admits a compatible linear symplectic form, \cite[Theorem 2.5]{li-tomassini}; note that $4$-dimensional unimodular Lie algebras satisfy the assumption $B\wedge B=0$. Finally, given a linear  (possibly non-integrable) complex structure on a $4$-dimensional Lie algebra, they studied the convex cones composed of the classes of $J$-taming, respectively $J$-compatible, linear symplectic forms, comparing them by means of $H^{(2,0),(0,2)}_J\left(\mathfrak{g};\R\right)$, \cite[Theorem 3.10]{li-tomassini}: this result is the linear counterpart of \cite[Theorem 1.1]{li-zhang}.

\subsection{Complex-\Cpf ness for $4$-dimensional manifolds}\label{subsec:cpf-solvmfds}

By \cite[Lemma 2.15, Theorem 2.16]{draghici-li-zhang}, or \cite[Proposition 2.1]{li-zhang}, every compact complex surface is complex-\Cpf\ at the \kth{2} stage; on the other hand, a compact complex surface is complex-\Cpf\ at the \kth{1} stage if and only if its first Betti number $b_1$ is even, that is, if and only if it admits a K\"ahler structure, see \cite{kodaira-structure-I, miyaoka, siu}, or \cite[Corollaire 5.7]{lamari}, or \cite[Theorem 11]{buchdahl}.

One may wonder about the relations between being complex-\Cpf\ and being integrable for an almost-complex structure on a compact $4$-dimensional manifold; this is the matter of the following result, \cite[Proposition 1.7]{angella-tomassini-2}.

\begin{prop}
\label{prop:complex-Cpf-4}
 There exist
\begin{itemize}
 \item non-complex-\Cpf\ at the \kth{1} stage non-integrable almost-complex structures, and
 \item complex-\Cpf\ at the \kth{1} stage non-integrable almost-complex structures
\end{itemize}
on compact $4$-dimensional manifolds with $b_1$ even.
\end{prop}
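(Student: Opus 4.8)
The plan is to prove the statement by exhibiting two explicit examples, both living on a single compact $4$-dimensional solvmanifold with even first Betti number. The natural first guesses are unsuitable: on the torus $\mathbb{T}^4$ (where $b_1=4$) every left-invariant almost-complex structure is integrable, since the associated Lie algebra is Abelian and hence $\Nij_J=0$; on the other hand, the only non-Abelian $4$-dimensional nilmanifold, the Kodaira--Thurston manifold $\left(0,\,0,\,0,\,12\right)$, has $b_1=3$ odd. I would therefore pass to a completely-solvable solvmanifold. Concretely, let $X=\left.\Gamma\right\backslash G$ be the $4$-dimensional solvmanifold whose associated Lie algebra is $\mathfrak{g}=\left(0,\,0,\,13,\,-14\right)$, i.e.\ $\de e^1=\de e^2=0$, $\de e^3=e^{13}$, $\de e^4=-e^{14}$; one checks $\de^2=0$, and $G\cong S^1\times\mathrm{Sol}^3$ is completely-solvable and admits a lattice (the $\mathrm{Sol}^3$-factor being realized by a hyperbolic toral automorphism with real eigenvalues $\frac{3\pm\sqrt5}{2}$). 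By A.~Hattori's theorem \cite[Corollary 4.2]{hattori}, $H^\bullet_{dR}(X;\C)\simeq H^\bullet_{dR}(\mathfrak{g};\C)$; the closed invariant $1$-forms are exactly $\left\langle e^1,\,e^2\right\rangle$, so $b_1(X)=2$ is \emph{even}, as required. Crucially, Corollary \ref{cor:linear-cpf-cpf} (together with Proposition \ref{prop:linear-cpf-invariant-cpf-J}) lets me reduce the verification of complex-\Cpf ness at the \kth{1} stage for any $G$-left-invariant almost-complex structure to a finite-dimensional computation on $\mathfrak{g}_\C$, and non-integrability to the condition $\Nij_J\neq 0$ on $\mathfrak{g}$.

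Next I would write down the two left-invariant almost-complex structures by prescribing their $(1,0)$-coframes. For the \emph{complex-\Cpf\ at the \kth{1} stage, non-integrable} example (second bullet), take $J_1$ with $(1,0)$-coframe $\varphi^1:=e^1+\im\, e^2$, $\varphi^2:=e^3+\im\, e^4$. Then $\de\varphi^1=0$ while $\de\varphi^2=\frac12\,\varphi^1\wedge\bar\varphi^2+\frac12\,\bar\varphi^1\wedge\bar\varphi^2$ has a nonzero $(0,2)$-component, so $\bar A\neq 0$ and $J_1$ is non-integrable by the integrability criterion recalled in the excerpt. The closed invariant $(1,0)$-forms are precisely $\left\langle\varphi^1\right\rangle$, so $H^{(1,0)}_{J_1}(\mathfrak{g};\C)=\left\langle[\varphi^1]\right\rangle$ and, by conjugation, $H^{(0,1)}_{J_1}(\mathfrak{g};\C)=\left\langle[\bar\varphi^1]\right\rangle$; these two lines are independent and span $H^1_{dR}(\mathfrak{g};\C)=\left\langle[\varphi^1],[\bar\varphi^1]\right\rangle$, giving $H^1_{dR}(\mathfrak{g};\C)=H^{(1,0)}_{J_1}(\mathfrak{g};\C)\oplus H^{(0,1)}_{J_1}(\mathfrak{g};\C)$, hence complex-\Cpf ness at the \kth{1} stage by Corollary \ref{cor:linear-cpf-cpf}. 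For the \emph{non-complex-\Cpf\ at the \kth{1} stage, non-integrable} example (first bullet), take $J_2$ with $(1,0)$-coframe $\psi^1:=e^1+\im\, e^3$, $\psi^2:=e^2+\im\, e^4$. A direct computation gives $\de\psi^1=-\frac12\,\psi^1\wedge\bar\psi^1$ and $\de\psi^2=-\frac14\left(\psi^1\wedge\psi^2-\psi^1\wedge\bar\psi^2+\bar\psi^1\wedge\psi^2-\bar\psi^1\wedge\bar\psi^2\right)$, whose $(0,2)$-part is nonzero, so $J_2$ is again non-integrable; moreover the only closed invariant $(1,0)$-form is $0$ (the $\psi^1\wedge\psi^2$ and $\psi^1\wedge\bar\psi^1$ terms force both coefficients to vanish), whence $H^{(1,0)}_{J_2}(\mathfrak{g};\C)=H^{(0,1)}_{J_2}(\mathfrak{g};\C)=\{0\}$ and the sum cannot fill $H^1_{dR}(\mathfrak{g};\C)$, which is $2$-dimensional. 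Thus $J_2$ is not complex-\Cf\ (a fortiori not complex-\Cpf) at the \kth{1} stage.

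The computations themselves are routine linear algebra on $\mathfrak{g}_\C$, so the main obstacle is not computational but structural: one must simultaneously satisfy three tensions — having $b_1$ even, admitting a \emph{non-integrable} left-invariant almost-complex structure, and having the Lie-algebra reduction valid. The even-$b_1$ requirement rules out the Kodaira--Thurston nilmanifold, while the non-integrability requirement rules out the torus; reconciling them forces the choice of a non-nilpotent, completely-solvable solvmanifold, and complete solvability is exactly what is needed for Hattori's theorem and Corollary \ref{cor:linear-cpf-cpf} to apply. The subtle point to check carefully is the existence of a co-compact lattice in $G\cong S^1\times\mathrm{Sol}^3$ (via an integral hyperbolic monodromy) and the complete solvability of $G$ (all eigenvalues of $\ad$ being real). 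As an alternative route, should one prefer to stay on $\mathbb{T}^4$ for its transparent even $b_1$, the two examples can instead be produced as position-dependent (non-left-invariant) non-integrable deformations of a complex-torus structure; there, however, the reduction to linear algebra is lost and $H^{(1,0)}_J$ must be computed by deciding, for each de Rham class, whether it admits a closed $(1,0)$-representative $\alpha+\de f$, a genuine first-order PDE with no Hodge-theoretic shortcut in the non-integrable setting — which is precisely why the solvmanifold realization is the cleaner path.
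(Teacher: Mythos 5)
Your argument is correct --- I verified your structure equations for $\mathfrak{g}=(0,\,0,\,13,\,-14)$, the computations $\de\varphi^2=\frac{1}{2}\,\varphi^1\wedge\bar\varphi^2+\frac{1}{2}\,\bar\varphi^1\wedge\bar\varphi^2$ and $\de\psi^1=-\frac{1}{2}\,\psi^1\wedge\bar\psi^1$, and the resulting (non)existence of closed invariant $(1,0)$-forms --- but it takes a genuinely different route from the paper's. The paper also proceeds by two examples, chosen differently: for the first bullet it stays on the torus $\T^4$ (so $b_1=4$) and uses the \emph{non-left-invariant} curve $J_{t,\,\ell}=\left(\id-t\,L\right)J_0\left(\id-t\,L\right)^{-1}$ through the standard K\"ahler structure, with $L$ built from a non-constant $\Z^4$-periodic function $\ell=\ell(x^2)$; a direct computation (no Lie-algebra reduction being available, which is exactly the difficulty your last paragraph anticipates) gives $H^{(1,0)}_{J_t}=\C\left\langle \de x^2+\im\,\de x^4\right\rangle$ and $H^{(0,1)}_{J_t}=\C\left\langle \de x^2-\im\,\de x^4\right\rangle$ for $t\neq0$, so the sum has dimension $2<4=b_1$. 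For the second bullet the paper takes the nilmanifold with Lie algebra $\left(0^2,\,14,\,12\right)$ and the left-invariant non-integrable structure $Je^1:=-e^2$, $Je^3:=-e^4$, computing $H^1_{dR}(X;\C)=\C\left\langle\varphi^1,\,\bar\varphi^1\right\rangle$ with $\varphi^1=e^1+\im\,e^2$ via Nomizu's theorem. This corrects a factual slip in your narrative: the Kodaira--Thurston manifold is \emph{not} the only non-Abelian $4$-dimensional nilmanifold; the $3$-step filiform one (structure equations $(0,\,0,\,12,\,13)$, isomorphic to the paper's $\left(0^2,\,14,\,12\right)$) has $b_1=2$ even, so nilmanifolds are not in fact ruled out and you were not forced to leave the nilpotent world --- though your completely-solvable choice $X=\left.\Gamma\right\backslash\left(\mathbb{S}^1\times\mathrm{Sol}^3\right)$ is perfectly legitimate, and the lattice and complete-solvability checks you flag go through as you describe. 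As for what each approach buys: yours keeps both bullets on a single manifold, with both verifications strictly linear-algebraic via Hattori's theorem, Proposition \ref{prop:linear-cpf-invariant-cpf-J}, and Corollary \ref{cor:linear-cpf-cpf}; the paper's torus example additionally realizes the failure along a deformation curve starting at a K\"ahler structure, in line with the chapter's instability theme, and its nilmanifold example enjoys the extra feature that the underlying manifold admits no integrable almost-complex structure at all (even $b_1$ plus a complex structure would force a K\"ahler metric, impossible on a non-torus nilmanifold) --- a feature your solvmanifold in fact shares, since by Hasegawa's theorem a non-torus completely-solvable solvmanifold carries no K\"ahler structure.
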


\begin{proof}
The proof follows from the following examples, \cite[Example 1.5, Example 1.6]{angella-tomassini-2}.

\paragrafod{1}{There exists a non-complex-\Cpf\ at the {$1^\text{st}$} stage non-integrable almost-complex structure on a $4$-dimensional manifold}
 Consider the standard K\"ahler structure $\left(J_0,\,\omega_0\right)$ on the $4$-dimensional torus $\T^4$ with coordinates $\left\{x^j\right\}_{j\in\{1,\ldots,4\}}$, that is,
$$ J_0 \;:=\;
\left(
\begin{array}{cc|cc}
 && -1 &\\
&&& -1 \\
\hline
1 &&&\\
&1&&
\end{array}
\right)
\;\in\; \End\left(\T^4\right)
\qquad \text{ and } \qquad
\omega_0 \;:=\; \de x^1 \wedge \de x^3 + \de x^2 \wedge \de x^4 \;\in\;\wedge^2\T^4 \;,
$$
and, for $\varepsilon>0$ small enough, let $\left\{J_t\right\}_{t\in\left(-\varepsilon,\, \varepsilon\right)}$ be the curve of almost-complex
 structures defined by
$$ J_t \;:=:\; J_{t,\,\ell} \;:=\; \left(\id - t\, L\right) \, J_0\, \left(\id-t\,L\right)^{-1} \;=\;
\left(
\begin{array}{cc|cc}
 && -\frac{1-t\,\ell}{1+t\,\ell} &\\
&&& -1 \\
\hline
\frac{1+t\,\ell}{1-t\,\ell} &&&\\
&1&&
\end{array}
\right)
\;\in\; \End\left(\T^4\right) \;,
$$
where
$$
L\;=\;
\left(
\begin{array}{cc|cc}
 \ell &&& \\
 & 0 &&\\
\hline
&&-\ell &\\
&&&0
\end{array}
\right)
\;\in\; \End\left(\T^4\right)
$$
and $\ell=\ell(x_2)\in\mathcal{C}^\infty(\R^4;\,\R)$ is a $\Z^4$-periodic non-constant function.

For $t\in\left(-\varepsilon,\, \varepsilon\right)\setminus\{0\}$, a straightforward computation yields
$$ H^{(1,0)}_{J_t} \left(\T^2_\C;\C\right) \;=\; \C\left\langle \de x^2+\im \de x^4 \right\rangle\;,\qquad H^{(0,1)}_{J_t}
\left(\T^2_\C;\C\right) \;=\; \C\left\langle \de x^2-\im \de x^4 \right\rangle $$
therefore
$$ \dim_\C H^{(1,0)}_{J_t}\left(\T^2_\C;\C\right)+\dim_\C H^{(0,1)}_{J_t}\left(\T^2_\C;\C\right) \;=\; 2 \;<\; 4\;=\; b_1\left(\T^2_\C\right) \;,
$$
that is, $J_t$ is not complex-\Cpf\ at the \kth{1} stage.

\paragrafod{2}{There exists a complex-\Cpf\ at the $1^\text{st}$ stage non-integrable almost-complex structure on a $4$-dimensional manifold}
 Consider a compact $4$-dimensional nilmanifold $X=\Gamma\backslash G$, quotient of the simply-connected nilpotent Lie group $G$ whose associated Lie algebra is
$$ \mathfrak{g} \;:=\; \left(0^2,\;14,\;12\right) \;; $$
let $J$ be the $G$-left-invariant almost-complex structure defined by
$$ J e^1 \;:=\; -e^2\; ,\qquad Je^3 \;:=\; -e^4 \;;$$
note that $J$ is not integrable, since $\textrm{Nij}(e_1,e_3)\neq 0$,
where $\left\{e_i\right\}_{i\in\{1,2,3,4\}}$ is the dual basis of $\left\{e^i\right\}_{i\in\{1,2,3,4\}}$. In fact, $X$ has no integrable almost-complex structure: indeed, since $b_1(X)=2$ is even, if there were a complex structure on $X$, then $X$ should carry a K\"ahler metric; this is not possible for compact non-tori nilmanifolds, by \cite[Theorem 1, Corollary]{hasegawa_pams}, or \cite[Theorem A]{benson-gordon-nilmanifolds}.

By K. Nomizu's theorem \cite[Theorem 1]{nomizu}, one computes
$$ H^1_{dR}(X;\C) \;=\; \C\left\langle \varphi^1,\,\bar\varphi^1 \right\rangle \qquad \text{ and } \qquad H^2_{dR}(X;\C) \;=\; \C\left\langle \varphi^{12} + \varphi^{\bar1\bar 2},\,\varphi^{1\bar2}-\varphi^{2\bar1} \right\rangle \;;$$
in particular, it follows that $J$ is complex-\Cpf\ at the $1^\text{st}$ stage.
Note that $J$ is not complex-\Cpf\ at the \kth{2} stage but just \Cpf: indeed, using Proposition \ref{prop:linear-cpf-invariant-cpf-J}, one can prove that the class $\left[\varphi^{12}+\varphi^{\bar1\bar2}\right]$ admits no pure type representative with respect to $J$. Moreover, observe that the $G$-left-invariant almost-complex structure
$$ J' e^1 \;:=\; -e^3\; ,\qquad J'e^2 \;:=\; -e^4 \;,$$
is complex-\Cpf\ at the \kth{2} stage and non-complex-\Cpf\ at the \kth{1} stage (obviously, in this case, $h^-_{J'}=0$, according to \cite[Corollary 2.14]{draghici-li-zhang}).
\end{proof}

\begin{rem}
 T. Dr\v{a}ghici, T.-J. Li, and W. Zhang proved in \cite[Corollary 2.14]{draghici-li-zhang} that an almost-complex structure on a compact $4$-dimensional manifold $X$ is complex-\Cpf\ at the \kth{2} stage if and only if $J$ is integrable or $\dim_\R H^-_J(X)=0$.
\end{rem}

\subsection{Almost-complex manifolds with large anti-invariant cohomology}
Given an almost-complex structure $J$ on a compact manifold $X$, it is natural to ask how large the cohomology subgroup $H^{-}_J(X)$ can be.

In \cite[Theorem 1.1]{draghici-li-zhang-2}, T. Dr\v{a}ghici, T.-J. Li, and W. Zhang, starting with a compact complex surface $X$ endowed with the complex structure $J$, proved that the dimension $h^-_{\tilde J}:=\dim_\R H^-_{\tilde{J}}(X)$ of the ${\tilde J}$-anti-invariant subgroup $H^-_{\tilde{J}}(X)$ of $H^2_{dR}(X;\R)$ associated to any {\em metric related} almost-complex structures $\tilde{J}$ on $X$ (that is, the almost-complex structures ${\tilde J}$ on $X$ inducing the same orientation as $J$ and with a common compatible metric with $J$), such that $\tilde{J}\neq\pm J$, satisfies $h^-_{\tilde J}\in\{0,\, 1,\, 2\}$, and they provided a description of such almost-complex structures $\tilde{J}$ having $h^-_{\tilde J}\in\{1,\,2\}$.

In this direction, T. Dr\v{a}ghici, T.-J. Li, and W. Zhang proposed the following conjecture.

\begin{conj}[{\cite[Conjecture 2.5]{draghici-li-zhang-2}}]
On a compact $4$-dimensional manifold endowed with an almost-complex structure $J$, if $\dim_\R H^-_J(X)\geq 3$, then $J$ is integrable.
\end{conj}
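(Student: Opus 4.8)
The plan is to fix a $J$-Hermitian metric $g$ on $X$ and to reduce the vanishing of the Nijenhuis tensor to an overdetermination phenomenon for a generalized Cauchy--Riemann equation. By \cite[Theorem 2.3]{draghici-li-zhang} one has $H^2_{dR}(X;\R)=H^+_J(X)\oplus H^-_J(X)$, and, as recalled in the proof of that theorem, $\left(\wedge^{2,0}X\oplus\wedge^{0,2}X\right)\cap\wedge^2X\subseteq\wedge^+_gX$; hence every closed $J$-anti-invariant $2$-form is automatically \emph{self-dual} and therefore $g$-harmonic, so $H^-_J(X)$ is exactly represented by closed self-dual forms $\alpha=\phi+\bar\phi$ with $\phi$ a section of $K_X:=\wedge^{2,0}X$. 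Since $\dim_\C X=2$, the splitting $\de=A+\del+\delbar+\bar A$ acting on $\wedge^{2,0}X$ collapses to $\de\phi=\delbar\phi+\bar A\phi$, so closedness of $\alpha$ is equivalent to the Vekua-type equation
$$ \delbar\phi+A\bar\phi \;=\; 0 \;, $$
whose antilinear zeroth-order term $A$ encodes $\Nij_J$ and vanishes identically precisely when $J$ is integrable. The goal is thus reduced to proving: if $\dim_\R H^-_J(X)\geq 3$, then $A\equiv 0$.

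First I would record the structural constraints. The cup product $Q(\mathfrak a,\mathfrak b)=\int_X\alpha\wedge\beta$ restricted to $H^-_J(X)\subseteq H^+_g(X)$ is positive definite, since for self-dual $\alpha=\phi+\bar\phi$ one has $\alpha\wedge\alpha=2\,\phi\wedge\bar\phi\geq 0$ pointwise, with strict positivity where $\phi\neq 0$; in particular $\dim_\R H^-_J(X)\leq b^+$. Moreover, for an \emph{integrable} $J$ on a compact complex surface the identification $H^-_J(X)\otimes_\R\C\simeq H^{2,0}_{\delbar}(X)\oplus H^{0,2}_{\delbar}(X)$ from Corollary \ref{cor:frolicher-Cinf} forces $\dim_\R H^-_J(X)=2\,h^{2,0}_{\delbar}$ to be \emph{even}. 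Consequently every odd value $\dim_\R H^-_J(X)\geq 3$ is already incompatible with integrability, so the genuinely delicate regime is $A\neq 0$ with $\dim_\R H^-_J(X)$ large; this parity remark both supports the conjecture and, together with \cite[Theorem 1.1]{draghici-li-zhang-2} (which bounds the anti-invariant dimension by $2$ for almost-complex structures metric-related to an integrable one), localizes where the analysis must bite.

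The heart of the argument is a zero-counting scheme for $\delbar\phi=-A\bar\phi$. As this is a first-order elliptic operator with a real-linear (antilinear) zeroth-order term, the Bers--Vekua similarity principle (equivalently, Aronszajn unique continuation) applies: a nonzero solution $\phi\in\Gamma(K_X)$ has only isolated zeros, each of finite positive order, and in a local trivialization factors as a nonvanishing function times a genuinely holomorphic germ. Given three $\R$-linearly independent solutions $\phi_1,\phi_2,\phi_3$, at any point where $\phi_1\neq 0$ the ratios $u_2=\phi_2/\phi_1$ and $u_3=\phi_3/\phi_1$ satisfy scalar Vekua equations and are hence pseudoholomorphic. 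I would then attempt to show that the simultaneous existence of several independent such sections of the \emph{line} bundle $K_X$, together with the positivity of $Q$ on their span, is incompatible with $A\not\equiv 0$: concretely, by comparing the ``degree'' measured by the self-intersections $\int_X\alpha_i\wedge\alpha_i>0$ with a Taubes-type count of the common zero locus of the $\phi_i$, viewed as a pseudoholomorphic subvariety, and aiming at a numerical contradiction once $\dim_\R H^-_J(X)\geq 3$.

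The main obstacle is precisely this last, global, step. The similarity principle is purely local and the equation $\delbar\phi=-A\bar\phi$ admits an infinite-dimensional space of local solutions, so no local computation alone can force $A\equiv 0$; one must convert ``many global anti-invariant harmonic forms'' into a rigidity statement about the common zero sets and about $K_X$ itself. Controlling the interaction of the zeros of the $\phi_i$, ruling out degenerations of the associated pseudoholomorphic subvarieties (where Gromov-type compactness and Taubes' $\mathrm{SW}=\mathrm{Gr}$ analysis would enter), and transporting the even-ness obstruction from the integrable case into the non-integrable perturbation, are the hard analytic and geometric inputs I expect to be decisive. It is exactly the absence of such a global bridge that keeps \cite[Conjecture 2.5]{draghici-li-zhang-2} open, and any complete proof along these lines would have to supply it.
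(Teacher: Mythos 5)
The statement you were asked to prove is not a theorem of this paper: it is \cite[Conjecture 2.5]{draghici-li-zhang-2}, quoted verbatim and left open (the paper only records partial evidence around it, namely \cite[Theorem 1.1]{draghici-li-zhang-2} for metric-related structures and the results of \cite{tan-wang-zhang-zhu} on generic vanishing of $\dim_\R H^-_J(X)$). So there is no proof in the paper to compare your argument against, and your own text, to its credit, does not actually claim to supply one. Your preliminary reductions are correct and standard: closed $J$-anti-invariant $2$-forms are self-dual, hence $g$-harmonic, so $H^-_J(X)$ injects into the space of self-dual harmonic forms and $\dim_\R H^-_J(X)\leq b^+$; closedness of $\alpha=\phi+\bar\phi$ is equivalent to the equation $\delbar\phi+A\bar\phi=0$, whose zeroth-order antilinear term vanishes identically exactly when $\Nij_J=0$; and in the integrable case $\dim_\R H^-_J(X)=2\,h^{2,0}_{\delbar}$ is even. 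This is precisely the framework of \cite{draghici-li-zhang-2}, and none of it is in dispute.

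The genuine gap is the one you name yourself, and it is fatal to the proposal as a proof. The similarity principle for $\delbar\phi=-A\bar\phi$ is purely local: it gives isolated zeros of finite order and local pseudoanalytic ratios, but the space of local solutions is infinite-dimensional for \emph{any} $A$, so no amount of local analysis distinguishes $A\equiv 0$ from $A\not\equiv 0$. The proposed global mechanism --- comparing the positive self-intersections $\int_X\alpha_i\wedge\alpha_i$ with a Taubes-type count of the common zero locus of three independent solutions --- is never formulated as a well-defined invariant, and nothing in the sketch explains why such a count would become contradictory exactly at the threshold $\dim_\R H^-_J(X)\geq 3$ while remaining consistent at $\dim_\R H^-_J(X)\in\{1,2\}$, where non-integrable examples genuinely exist by \cite[Theorem 1.1]{draghici-li-zhang-2}; any correct argument must see this threshold, and yours has no term that does. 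Note also that the parity remark does not simplify matters: for odd values $\geq 3$ the conjecture amounts to the \emph{nonexistence} of such almost-complex structures (since integrability would force even dimension), a statement your zero-counting scheme does not address by any separate mechanism. In short, the proposal is a reasonable research program consistent with the literature the paper cites, but the decisive global step is missing, and the statement remains, in the paper as in your text, a conjecture.
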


In \cite{tan-wang-zhang-zhu}, Q. Tan, H. Wang, Y. Zhang, and P. Zhu proved that, on a compact $4$-dimensional manifold endowed with an almost-complex structure $J$ and a $J$-Hermitian metric $g$, the dimension $\dim_\R H^-_{\tilde J}(X)$ is constant for all almost-complex structures $\tilde J$ being \emph{fundamental form related} to $J$, namely, such that $\omega\in\wedge^{1,1}_{\tilde J}X \cap \wedge^2X$, where $\omega:=g\left(J\,\sspace, \, \ssspace\right)\in\wedge^{1,1}_JX\cap\wedge^2X$ is the $(1,1)$-form with respect to $J$ associated to the $J$-Hermitian metric $g$, \cite[Theorem 1.2]{tan-wang-zhang-zhu}. Then, they proposed to modify \cite[Conjecture 2.5]{draghici-li-zhang-2} as follows.

\begin{conj}[{\cite[Question 1.5]{tan-wang-zhang-zhu}}]
Let $X$ be a compact $4$-dimensional manifold endowed with an almost-complex structure $J$ and a $J$-Hermitian metric $g$, and denote by $\omega:=g\left(J\,\sspace, \, \ssspace\right)$ the $(1,1)$-form associated to $g$. Suppose that $\dim_\R H^-_J(X)\geq 3$. Does there exist an integrable almost-complex structure $\tilde J$ such that $\omega\in\wedge^{1,1}_{\tilde J}X$?
\end{conj}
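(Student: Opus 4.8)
The statement is an open question, refining T.~Dr\v{a}ghici, T.-J. Li, and W. Zhang's \cite[Conjecture 2.5]{draghici-li-zhang-2}; I do not expect a complete resolution, but I can describe the two natural routes and isolate the difficulty. First, observe that the question is a genuine \emph{weakening} of \cite[Conjecture 2.5]{draghici-li-zhang-2}: if the latter holds, so that $\dim_\R H^-_J(X)\geq 3$ already forces $J$ itself to be integrable, then one may simply take $\tilde J:=J$, since $\omega\in\wedge^{1,1}_J X\cap\wedge^2 X$ by construction. Hence the first route is to prove \cite[Conjecture 2.5]{draghici-li-zhang-2} directly: the point would be to convert the global hypothesis that the $J$-anti-invariant subgroup $H^-_J(X)\subseteq H^2_{dR}(X;\R)$ is at least $3$-dimensional into the pointwise vanishing $\Nij_J=0$ of the Nijenhuis tensor, exploiting the rigid $4$-dimensional Hodge theory (the self-dual/anti-self-dual splitting of $\wedge^2 X$, and the fact, recalled in \cite[Theorem 2.3]{draghici-li-zhang}, that every almost-complex structure on a compact $4$-manifold is \Cpf).

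The second, weaker, route targets the question as literally stated, \emph{without} proving $J$ integrable. Here I would study the family $\mathcal{F}_\omega$ of all almost-complex structures $\tilde J$ that are fundamental-form related to $(J,g)$, that is, such that $\omega\in\wedge^{1,1}_{\tilde J}X\cap\wedge^2 X$; the members of $\mathcal{F}_\omega$ need not be $g$-compatible, so $\mathcal{F}_\omega$ is large. By \cite[Theorem 1.2]{tan-wang-zhang-zhu}, the function $\tilde J\mapsto\dim_\R H^-_{\tilde J}(X)$ is constant on $\mathcal{F}_\omega$, so the hypothesis $\dim_\R H^-_J(X)\geq 3$ propagates to every $\tilde J\in\mathcal{F}_\omega$, and each such $\tilde J$ is again \Cpf\ by \cite[Theorem 2.3]{draghici-li-zhang}. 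The plan would then be: first, combine the largeness of $H^-_{\tilde J}(X)$ with the decomposition $H^2_{dR}(X;\R)=H^+_{\tilde J}(X)\oplus H^-_{\tilde J}(X)$ and the signature and Betti-number constraints on $4$-manifolds to restrict the diffeomorphism type of $X$, so as to bring Kodaira's classification of the complex surfaces that $X$ could carry into play; second, on such an $X$, exhibit an explicit integrable member of $\mathcal{F}_\omega$ by deforming a known complex structure until $\omega$ becomes of type $(1,1)$, verifying $\Nij_{\tilde J}=0$ through the Newlander--Nirenberg criterion \cite[Theorem 1.1]{newlander-nirenberg}.

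The main obstacle, on either route, is the same, and is precisely what keeps the problem open: there is no soft mechanism turning the cohomological information ``$H^-_J(X)$ is at least $3$-dimensional'' into the analytic condition $\Nij=0$. The only available bridge runs through the classification of compact complex surfaces, which presupposes that one \emph{already} has some complex structure to classify; producing such a structure from the cohomological hypothesis, and then deforming it inside $\mathcal{F}_\omega$, are both unresolved. What makes a positive answer plausible is that $\dim_\R H^-_J(X)\geq 3$ is highly non-generic: it is constant along $\mathcal{F}_\omega$ by \cite[Theorem 1.2]{tan-wang-zhang-zhu}, while $\dim_\R H^-_J(X)=0$ for a generic almost-complex structure (conjectured in \cite[Conjecture 2.4]{draghici-li-zhang-2} and proved in \cite{tan-wang-zhang-zhu}), so the hypothesis already selects a measure-zero set of almost-complex structures, consistent with---but far from proving---the expected integrability.
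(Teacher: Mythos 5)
The statement you were given is not a theorem of the paper but an open question (Question 1.5 of \cite{tan-wang-zhang-zhu}, recorded in the paper as a conjecture), so there is no proof in the paper to compare against; your proposal correctly recognizes this and refrains from claiming a resolution. Your supporting assertions all match the paper: Question 1.5 is indeed a weakening of \cite[Conjecture 2.5]{draghici-li-zhang-2} (if $\dim_\R H^-_J(X)\geq 3$ forced $\Nij_J=0$, then $\tilde J:=J$ works, since $\omega=g(J\sspace,\,\ssspace)\in\wedge^{1,1}_JX\cap\wedge^2X$ by construction); the constancy of $\tilde J\mapsto\dim_\R H^-_{\tilde J}(X)$ on fundamental-form-related structures is exactly \cite[Theorem 1.2]{tan-wang-zhang-zhu} as quoted in the paper; and every such $\tilde J$ is \Cpf\ by \cite[Theorem 2.3]{draghici-li-zhang}. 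One small inaccuracy: \cite[Theorem 1.1]{tan-wang-zhang-zhu} states that the set of almost-complex structures with $\dim_\R H^-_J(X)=0$ is open and dense, which does not literally make the complement ``measure-zero'' (no natural measure is in play on this infinite-dimensional space); your heuristic conclusion of non-genericity stands, but the phrasing should be weakened accordingly.
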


Furthermore, in \cite{draghici-li-zhang-2}, it was conjectured that $h^-_J=0$ for a generic almost-complex structure $J$ on a compact $4$-dimensional manifold, \cite[Conjecture 2.4]{draghici-li-zhang-2}. In \cite[Theorem 1.1]{tan-wang-zhang-zhu}, Q. Tan, H. Wang, Y. Zhang, and P. Zhu proved that this holds true, showing that, on a compact $4$-dimensional manifold $X$ admitting almost-complex structures, the set of almost-complex structures $J$ on $X$ with $\dim_\R H^-_J(X)=0$ is an open dense subset of the set of almost-complex structures on $X$.

\medskip

In \cite[\S5]{angella-tomassini-zhang}, a $1$-parameter family $\left\{J_t\right\}_{t\in\left(-\varepsilon,\, \varepsilon\right)}$ of (non-integrable) almost-complex structures on the $6$-dimensional torus $\T^6$, where $\varepsilon>0$ is small enough, having $\dim_\R H^{-}_{J_t}\left(\T^6\right)$ greater than $3$ has been provided. We recall here the construction, see also \cite[\S4]{angella-tomassini-1}.

\begin{ex}
\label{ex:toro-6-large-invariant}
\textit{A family of almost-complex structures on the $6$-dimensional torus with anti-invariant cohomology of dimension larger than $3$.}\\
 Consider the $6$-dimensional torus $\T^6$, with coordinates $\left\{x^j\right\}_{j\in\{1,\ldots,6\}}$. For $\varepsilon>0$ small enough, choose a function $\alpha\colon \left(-\varepsilon,\, \varepsilon\right)\times \T^6\to \R$ such that $\alpha_t:=:\alpha\left(t, \sspace\right)\in\mathcal{C}^\infty\left(\T^6\right)$ depends just on $x^3$ for any $t\in\left(-\varepsilon,\, \varepsilon\right)$, namely $\alpha_t=\alpha_t(x^3)$, and that $\alpha_0(x^3) = 1$. Define the almost-complex structure $J_t$ in such a way that
$$
\left\{
\begin{array}{l}
 \varphi^1_t \;:=\; \de x^1\,+\,\im\,\alpha_t\,\de x^4 \\[5pt]
 \varphi^2_t \;:=\; \de x^2\,+\,\im\,\de x^5 \\[5pt]
 \varphi^3_t \;:=\; \de x^3\,+\,\im\,\de x^6
\end{array}
\right.
$$
provides a co-frame for the $\mathcal{C}^\infty\left(\T^6;\C\right)$-module of $(1,0)$-forms on $\T^6$ with respect to $J_t$. In terms of this co-frame, the structure equations are
$$
\left\{
\begin{array}{l}
 \de\varphi^1_t \;=\; \im\,\de\alpha_t\,\wedge\,\de x^4 \\[5pt]
 \de\varphi^2_t \;=\; 0 \\[5pt]
 \de\varphi^3_t \;=\; 0
\end{array}
\right. \;.
$$
Straightforward computations give that the $J_t$-anti-invariant real closed $2$-forms are of the type
$$ 
\psi \;=\; \frac{C}{\alpha_t}\,\left(\de x^{13}-\alpha_t\,\de x^{46}\right) + D\,\left(\de x^{16} - \alpha_t\, \de x^{34}\right) + E\,\left(\de x^{23} - \de x^{56}\right) + F\,\left(\de x^{26} - \de x^{35}\right) \;,
$$
where $C,\;D,\;E,\;F\in\R$ (we shorten $\de x^{jk}:=\de x^j\wedge \de x^k$). Moreover, the forms $\de x^{23} - \de x^{56}$ and $\de x^{26} - \de x^{35}$ are clearly harmonic with respect to the standard Riemannian metric $\sum_{j=1}^{6} \de x^j\otimes \de x^j$, while the classes of $\de x^{16} - \alpha_t\, \de x^{34}$ and $\de x^{13}-\alpha_t\,\de x^{46}$ are non-zero, being their harmonic parts non-zero. Therefore, we get that
$$ h^-_{J_t} \;=\; 4 \qquad \text{ for small }t\neq0 \;,$$
while $h^-_{J_0}=6$.
\end{ex}

\medskip

The natural generalization of \cite[Conjecture 2.5]{draghici-li-zhang} to higher dimensional manifolds yields the following question, \cite[Question 5.2]{angella-tomassini-zhang}.
\begin{question}
 Are there compact $2n$-dimensional manifolds $X$ endowed with non-integrable almost-complex structures $J$ with $\dim_\R H^-_J(X) > n\,\left(n-1\right)$?
\end{question}

Note that, when $X = \left.\Gamma\right\backslash G$ is a $2n$-dimensional completely-solvable solvmanifold endowed with a $G$-left-invariant almost-complex structure $J$, then, by Proposition \ref{prop:linear-cpf-invariant-cpf-J}, it follows that
$$ \dim_\R H^-_J(X) \;\leq\; n\, (n-1) \qquad \text{ and } \qquad \dim_\R H^+_J(X) \;\leq\; n^2 \;. $$

\subsection{Semi-K\"ahler manifolds}
As already recalled, A. Fino and A. Tomassini's \cite[Theorem 4.1]{fino-tomassini} proves that, given an almost-K\"ahler structure on a compact manifold, if the almost-complex structure is \Cpf\ and the symplectic structure satisfies the Hard Lefschetz Condition, then the almost-complex structure is \pf\ too; moreover, by \cite[Proposition 3.2]{fino-tomassini}, see also \cite[Proposition 2.8]{draghici-li-zhang}, the almost-complex structure of every almost-K\"ahler structure on a compact manifold is \Cp.

To study the cohomology of balanced manifolds $X$ and the duality between $H^{(\bullet,\bullet)}_J(X;\C)$ and $H^J_{(\bullet,\bullet)}(X;\C)$, we get the following result, \cite[Proposition 3.1]{angella-tomassini-2}, which can be considered as the semi-K\"ahler counterpart of \cite[Theorem 4.1]{fino-tomassini}.

\begin{prop}
\label{prop:duality-semi-kahler}
 Let $X$ be a compact $2n$-dimensional manifold endowed with an almost-complex structure $J$ and a semi-K\"ahler form $\omega$. Suppose that $\left[\omega^{n-1}\right]\cp\sspace\colon H^1_{dR}(X;\R) \to H^{2n-1}_{dR}(X;\R)$ is an isomorphism. If $J$ is complex-\Cpf\ at the \kth{1} stage, then it is also complex-\pf\ at the \kth{1} stage, and
 $$ H^{(1,0)}_J(X;\C)\simeq H_{(0,1)}^J(X;\C) \;.$$
\end{prop}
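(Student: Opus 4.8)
The plan is to realise the desired comparison through the single operator $\Phi := T_{\sspace}\circ\left(\omega^{n-1}\wedge\sspace\right)$, where $T_\varphi:=\int_X\varphi\wedge\sspace$ is the quasi-isomorphism of \S\ref{sec:currents} attaching to a form the associated current. First I would check that $\Phi$ is well defined on cohomology and that it permutes the type components. Given $\alpha\in\wedge^{1,0}X$ with $\de\alpha=0$, the form $\omega^{n-1}\wedge\alpha$ is of type $(n,n-1)$ and is $\de$-closed, since $\de\omega^{n-1}=0$ by the semi-K\"ahler hypothesis and $\de\alpha=0$; hence $T_{\omega^{n-1}\wedge\alpha}$ is a $\de$-closed current lying in $\correnti^{n,n-1}X=\correnti_{0,1}X$. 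Thus $\Phi$ induces a map $H^{(1,0)}_J(X;\C)\to H_{(0,1)}^J(X;\C)$, and symmetrically $H^{(0,1)}_J(X;\C)\to H_{(1,0)}^J(X;\C)$. This step is exactly where the assumption $\de\omega^{n-1}=0$ is used, and it is the only place the metric enters.

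Next I would observe that on all of $H^1_{dR}(X;\C)$ the operator $\Phi$ is an isomorphism onto $H_1^{dR}(X;\C)$: indeed $\omega^{n-1}\wedge\sspace\colon H^1_{dR}(X;\C)\to H^{2n-1}_{dR}(X;\C)$ is an isomorphism by hypothesis (the stated real assumption extends to complex coefficients), while $T_{\sspace}\colon H^{2n-1}_{dR}(X;\C)\to H_1^{dR}(X;\C)$ is an isomorphism by the de Rham duality of \S\ref{sec:currents}. Restricting the global isomorphism $\Phi$ to the subspace $H^{(1,0)}_J(X;\C)$ already shows that $\Phi\colon H^{(1,0)}_J(X;\C)\to H_{(0,1)}^J(X;\C)$ is \emph{injective}.

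To obtain complex-fullness at the \kth{1} stage in homology, I would invoke the complex-\Cpf\ hypothesis, which gives $H^1_{dR}(X;\C)=H^{(1,0)}_J(X;\C)\oplus H^{(0,1)}_J(X;\C)$; applying the surjection $\Phi$ together with the crosswise behaviour of types yields
$$H_1^{dR}(X;\C)=\Phi\left(H^{(1,0)}_J(X;\C)\right)+\Phi\left(H^{(0,1)}_J(X;\C)\right)\subseteq H_{(0,1)}^J(X;\C)+H_{(1,0)}^J(X;\C)\subseteq H_1^{dR}(X;\C)\;,$$
so equality holds throughout and $J$ is complex-\f\ at the \kth{1} stage. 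On the other hand, complex-\Cf ness at the \kth{1} stage implies, via the horizontal implication of Theorem \ref{thm:implicazioni}, that $J$ is complex-\p\ at the \kth{1} stage. Together these give $H_1^{dR}(X;\C)=H_{(1,0)}^J(X;\C)\oplus H_{(0,1)}^J(X;\C)$, i.e.\ complex-\pf ness at the \kth{1} stage.

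Finally, the isomorphism $H^{(1,0)}_J(X;\C)\simeq H_{(0,1)}^J(X;\C)$ follows by a dimension count. Since $\Phi$ is injective on each summand with $\Phi(H^{(1,0)}_J)\subseteq H_{(0,1)}^J$ and $\Phi(H^{(0,1)}_J)\subseteq H_{(1,0)}^J$, we get $\dim_\C H^{(1,0)}_J\le\dim_\C H_{(0,1)}^J$ and $\dim_\C H^{(0,1)}_J\le\dim_\C H_{(1,0)}^J$; but the two sides sum respectively to $\dim_\C H^1_{dR}(X;\C)$ and $\dim_\C H_1^{dR}(X;\C)$, which are equal by de Rham duality, so both inequalities are equalities. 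Hence the injection $\Phi\colon H^{(1,0)}_J(X;\C)\to H_{(0,1)}^J(X;\C)$ is also surjective, giving the claimed isomorphism. The only genuine care required is the bookkeeping of bidegrees for currents (recalling $\correnti_{p,q}X=\correnti^{n-p,n-q}X$) and verifying that $T_{\sspace}$ and $\omega^{n-1}\wedge\sspace$ interact with the type decomposition exactly as above; the remainder is a diagram chase. The main obstacle, such as it is, lies precisely in tracking these type shifts correctly, since the comparison links degree $1$ in cohomology with degree $1$ in homology only after routing through degree $2n-1$.
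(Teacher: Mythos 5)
Your proposal is correct and follows essentially the same route as the paper: the paper likewise composes the cup product with $\left[\omega^{n-1}\right]$ and the form-to-current quasi-isomorphism $T_\sspace$, notes that $\left[\omega^{n-1}\right]\cp H^{(1,0)}_{J}(X;\C)\subseteq H^{(n,n-1)}_{J}(X;\C)$ (and symmetrically) to obtain the injections $H^{(1,0)}_J(X;\C)\hookrightarrow H_{(0,1)}^J(X;\C)$ and $H^{(0,1)}_J(X;\C)\hookrightarrow H_{(1,0)}^J(X;\C)$, and concludes with the complex-\Cpf\ hypothesis. Your appeal to Theorem \ref{thm:implicazioni} for complex-\p ness is the same duality-pairing argument the paper spells out inline, and your explicit fullness and dimension-count steps merely fill in what the paper compresses into ``we get the proof.''
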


\begin{proof}
Firstly, note that $J$ is complex-\p\ at the \kth{1} stage. Indeed, if
$$ \mathfrak{a} \;\in\; H^J_{(1,0)}(X;\C)\cap H^J_{(0,1)}(X;\C) \;, $$
then
$$ \mathfrak{a}\lfloor_{H^{(1,0)}_J(X;\C)} \;=\; 0 \;=\; \mathfrak{a}\lfloor_{H^{(0,1)}_J(X;\C)} \;. $$
Therefore, by the assumption
$$ H^{1}_{dR}(X;\C) \;=\; H^{(1,0)}_J(X;\C)\oplus H^{(0,1)}_J(X;\C) \;, $$
we get that
$$ \mathfrak{a} \;=\; 0 \;. $$

Now, note that, since
$$
   \left[\omega^{n-1}\right]\cp H^{(1,0)}_{J}(X;\C)\subseteq H^{(n,n-1)}_{J}(X;\C)
   \qquad \text{ and } \qquad
   \left[\omega^{n-1}\right]\cp H^{(0,1)}_{J}(X;\C)\subseteq H^{(n-1,n)}_{J}(X;\C) \;,
$$
the isomorphism
$$ H^{1}_{dR}(X;\C) \stackrel{\left[\omega^{n-1}\right]\cp\sspace}{\longrightarrow} H^{2n-1}_{dR}(X;\C) \stackrel{T_\sspace}{\longrightarrow} H^1_{dR}(X;\C) $$
yields the injective maps
$$ H^{(1,0)}_J(X;\C)\hookrightarrow H^J_{(0,1)}(X;\C) \qquad \text{ and } \qquad  H^{(0,1)}_J(X;\C)\hookrightarrow H^J_{(1,0)}(X;\C) \;.$$
Since, by hypothesis, $J$ is complex-\Cpf\ at the \kth{1} stage, namely, $H^1_{dR}(X;\C) = H^{(1,0)}_J(X;\C)\oplus H^{(0,1)}_J(X;\C)$, we get the proof.
\end{proof}

\medskip

We provide here some explicit examples, \cite[Example 3.2, Example 3.3]{angella-tomassini-2}, checking the validity of the hypothesis of $\left[\omega^{n-1}\right]\cp\sspace: H^1_{dR}(X;\R) \to H^{2n-1}_{dR}(X;\R)$ being an isomorphism in Proposition \ref{prop:duality-semi-kahler}.

\begin{ex}
 {\itshape A balanced structure on the Iwasawa manifold.}\\
On the Iwasawa manifold $\mathbb{I}_3$ (see \S\ref{subsec:iwasawa}), consider the balanced structure
$$ \omega \;:=\; \frac{\im}{2}\left(\varphi^1\wedge\bar\varphi^1+\varphi^2\wedge\bar\varphi^2+\varphi^3\wedge\bar\varphi^3\right) \;. $$
Since
$$ H^1_{dR}\left(\mathbb{I}_3;\C\right) \;=\; \C\left\langle \varphi^1,\;\varphi^2,\;\bar\varphi^1,\;\bar\varphi^2 \right\rangle \qquad \text{ and } \qquad  H^5_{dR}\left(\mathbb{I}_3;\C\right) \;=\; \C\left\langle \varphi^{123\bar1\bar3},\;\varphi^{123\bar2\bar3},\;\varphi^{13\bar1\bar2\bar3},
\;\varphi^{23\bar1\bar2\bar3} \right\rangle\;, $$
it is straightforward to check that
$$ \left[\omega^2\right]\cp\sspace \colon H^1_{dR}\left(\mathbb{I}_3;\C\right)\to H^5_{dR}\left(\mathbb{I}_3;\C\right) $$
is an isomorphism.
Therefore, by Proposition \ref{prop:duality-semi-kahler}, $\mathbb{I}_3$ is complex-\Cpf\ at the \kth{1} stage and complex-\pf\ at the \kth{1} stage (the same result follows also arguing as in \cite[Theorem 3.7]{fino-tomassini}, the above harmonic representatives of $H^1_{dR}\left(\mathbb{I}_3;\C\right)$, with respect to the Hermitian metric $\sum_{j=1}^3\varphi^j\odot\bar\varphi^j$, being of pure type with respect to the complex structure).
\end{ex}

\begin{ex}
 {\itshape A $6$-dimensional manifold endowed with a semi-K\"ahler structure not inducing an isomorphism in cohomology.}\\
Consider the $6$-dimensional nilmanifold
$$ X \;=\; \Gamma\backslash G \;:=\; \left(0^4,\; 12,\; 13\right)\;. $$ 
In \cite[Example 3.3]{fino-tomassini}, the almost-complex structure
$$ J'\,e^1 \;:=\;-e^2\;, \qquad J'\,e^3 \;:=\; -e^4\;, \qquad J'\,e^5 \;:=\; -e^6 $$
is provided as a first example of non-\Cp\ almost-complex structure. Note that $J'$ is not even \Cf: indeed, the cohomology class $\left[e^{15}+e^{16}\right]$ admits neither $J'$-invariant nor $J'$-anti-invariant $G$-left-invariant representatives, and hence, by Proposition \ref{prop:linear-cpf-invariant-cpf-J}, it admits neither $J'$-invariant nor $J'$-anti-invariant representatives.

Consider now the almost-complex structure
$$ J\,e^1 \;:=\; -e^5\;, \qquad J\,e^2 \;:=\; -e^3\;, \qquad J\,e^4 \;:=\; -e^6 $$
and the non-degenerate $J$-invariant $2$-form
$$ \omega \;:=\; e^{15}+e^{23}+e^{46} \;. $$
A straightforward computation shows that
$$ \de\omega \;=\; -e^{134} \;\neq\; 0 \quad \text{ and } \quad \de\omega^2 \;=\; \de\left(e^{1235}-e^{1456}+e^{2346}\right) \;=\; 0\;.$$
By K. Nomizu's theorem \cite[Theorem 1]{nomizu}, it is straightforward to compute
$$ H^{1}_{dR}(X;\R) \;=\; \R\left\langle e^1,\; e^2,\; e^3,\; e^4 \right\rangle \;.$$
Since
$$ \omega^2\,e^1 \;=\; e^{12346} \;=\; \de e^{3456} \;, $$
we get that $\left[\omega^2\right]\cp\sspace\colon H^{1}_{dR}(X;\R)\to H^{5}_{dR}(X;\R)$ is not injective.
\end{ex}

We give two explicit examples of $2n$-dimensional complex manifolds endowed with a balanced structure, with $2n=10$, respectively $2n=6$, such that the \kth{\left(n-1\right)} power of the associated $(1,1)$-form induces an isomorphism in cohomology, and admitting small balanced deformations, \cite[Example 3.4, Example 3.5]{angella-tomassini-2}.

\begin{ex}
\label{ex:etabeta5}
 {\itshape A curve of balanced structures on $\eta\beta_5$ inducing an isomorphism in cohomology.}\\
We recall the construction of the $10$-dimensional nilmanifold $\eta\beta_5$, introduced and studied in \cite{alessandrini-bassanelli} by L. Alessandrini and G. Bassanelli to prove that being $p$-K\"ahler is not a stable property under small deformations of the complex structure; more in general, in \cite{alessandrini-bassanelli-p-kahler}, the manifold $\eta\beta_{2n+1}$, for any $n\in\N\setminus\{0\}$, has been provided as a generalization of the Iwasawa manifold $\mathbb{I}_3$, and the existence of $p$-K\"ahler structures on $\eta\beta_{2n+1}$ has been investigated. (For definitions and results concerning $p$-K\"ahler structures, see \cite{alessandrini-bassanelli-p-kahler}, or, e.g., \cite{silva, alessandrini-cras}.)

For $n\in\N\setminus\{0\}$, consider the complex Lie group
$$
G_{2n+1} \;:=\; \left\{ A\in \GL(n+2;\C) \st 
A \;=\;
\left(
\begin{array}{c|ccc|c}
 1 & x^1 & \cdots & x^n & z \\
 \hline
 0      & 1 &        &   & y^1    \\
 \vdots &   & \ddots &   & \vdots \\
 0      &   &        & 1 & y^n    \\
 \hline
 0 & 0 & \cdots & 0 & 1
\end{array}
\right)
 \right\} \;;
$$
equivalently, one can identify $G_{2n+1}$ with $\left(\C^{2n+1}, \, *\right)$, where the group structure $*$ is defined as
\begin{eqnarray*}
\lefteqn{\left(x^1,\, \ldots,\, x^n,\, y^1,\, \ldots,\, y^n,\, z\right) \,*\, \left(u^1,\, \ldots,\, u^n,\, v^1,\, \ldots,\, v^n,\, w\right)} \\[5pt]
 && :=\; \left(x^1+u^1,\, \ldots,\, x^n+u^n,\, y^1+v^1,\, \ldots,\, y^n+v^n,\, z+w+x^1\cdot v^1+\cdots+x^n\cdot v^n \right) \;.
\end{eqnarray*}
Since the subgroup
$$ \Gamma_{2n+1} \;:=\; G_{2n+1} \cap \GL\left(n+2;\Z\left[\im\right]\right) \;\subset\; G_{2n+1} $$
is a discrete co-compact subgroup of the nilpotent Lie group $G_{2n+1}$, one gets a compact complex manifold, of complex dimension $2n+1$,
$$ \eta\beta_{2n+1} \;:=\; \left. \Gamma_{2n+1} \right\backslash G_{2n+1} \;,$$
which is a holomorphically parallelizable nilmanifold and admits no K\"ahler metric, \cite[Corollary 2]{wang}, or \cite[Theorem A]{benson-gordon-nilmanifolds}, or \cite[Theorem 1, Corollary]{hasegawa_pams}; note that $\eta\beta_3=\mathbb{I}_3$ is the Iwasawa manifold (see \S\ref{subsec:iwasawa}). In fact, one has that $\eta\beta_{2n+1}$ is not $p$-K\"ahler for $1<p\leq n$ and it is $p$-K\"ahler for $n+1\leq p\leq 2n+1$, \cite[Theorem 4.2]{alessandrini-bassanelli-p-kahler}; furthermore, $\eta\beta_{2n+1}$ has complex submanifolds of any complex dimension less than or equal to $2n+1$, and hence it follows that the $p$-K\"ahler forms on $\eta\beta_{2n+1}$ can never be exact, \cite[\S4.4]{alessandrini-bassanelli-p-kahler}.

Setting
$$
 \left\{
 \begin{array}{lclcl}
 \varphi^{2j-1} &:=& \de x^j \;, &\text{ for }& j\in\{1,\ldots,n\} \;, \\[5pt]
 \varphi^{2j}   &:=& \de y^j \;, &\text{ for }& j\in\{1,\ldots,n\} \;, \\[5pt]
 \varphi^{2n+1} &:=& \de z-\sum_{j=1}^{n}x^j\,\de y^j \;, &&
 \end{array}
 \right.
$$
one gets the global co-frame $\left\{\varphi^j\right\}_{j\in\{1,\ldots,2n+1\}}$ for the space of holomorphic $1$-forms, with respect to which the structure equations are
$$
\left\{
\begin{array}{l}
 \de\varphi^1 \;=\; \cdots \;=\; \de\varphi^{2n} \;=\; 0 \\[5pt]
 \de\varphi^{2n+1} \;=\; -\sum_{j=1}^{n}\varphi^{2j-1}\wedge\varphi^{2j}
\end{array}
\right. \;.
$$

Now, take $2n+1=5$. With respect to the co-frame $\left\{\varphi^j\right\}_{j\in\{1,\ldots,5\}}$ for the space of holomorphic $1$-forms on $\eta\beta_5$, the structure equations are
$$
\left\{
\begin{array}{l}
 \de\varphi^1 \;=\; \de\varphi^2 \;=\; \de\varphi^3 \;=\; \de\varphi^4 \;=\; 0 \\[5pt]
 \de\varphi^5 \;=\; -\varphi^{12}-\varphi^{34}
\end{array}
\right.
$$
(where, as usually, we shorten, e.g., $\varphi^{12}:=\varphi^1\wedge\varphi^2$).

Consider on $\eta\beta_5$ the balanced structure
$$ \omega \;:=\; \frac{\im}{2}\,\sum_{j=1}^5\varphi^j\wedge\bar\varphi^j \;.$$

By K. Nomizu's theorem \cite[Theorem 1]{nomizu}, it is straightforward to compute
$$ H^1_{dR}\left(\eta\beta_5;\C\right) \;=\; \C\left\langle \varphi^1,\; \varphi^2,\; \varphi^3,\; \varphi^4,\; \bar\varphi^1,\; \bar\varphi^2,\; \bar\varphi^3,\; \bar\varphi^4\right\rangle $$
and
\begin{eqnarray*}
H^9_{dR}\left(\eta\beta_5;\C\right) &=& \C\left\langle \varphi^{12345\bar2\bar3\bar4\bar5},\; \varphi^{12345\bar1\bar3\bar4\bar5},\; \varphi^{12345\bar1\bar2\bar4\bar5},\; \varphi^{12345\bar1\bar2\bar3\bar5},\right.\\[5pt]
&& \left.\varphi^{2345\bar1\bar2\bar3\bar4\bar5},\; \varphi^{1345\bar1\bar2\bar3\bar4\bar5},\; \varphi^{1245\bar1\bar2\bar3\bar4\bar5},\; \varphi^{1235\bar1\bar2\bar3\bar4\bar5}\right\rangle \;;
\end{eqnarray*}
therefore, $\eta\beta_5$ is complex-\Cpf\ at the \kth{1} stage and
$$ \left[\omega^4\right]\cp\sspace\colon H^1_{dR}\left(\eta\beta_5;\R\right)\to H^9_{dR}\left(\eta\beta_5;\R\right) $$
is an isomorphism, and so $\eta\beta_5$ is also complex-\pf\ at the \kth{1} stage by Proposition \ref{prop:duality-semi-kahler} (note that, the above pure type representatives being harmonic with respect to the metric $\sum_{j=1}^5\varphi^j\odot\bar\varphi^j$, the same result follows also arguing as in \cite[Theorem 3.7]{fino-tomassini}).

Now, let $\left\{J_t\right\}_{t\in\Delta\left(0,\varepsilon\right)\subset\C}$, where $\varepsilon>0$ is small enough, be a family of small deformations of the complex structure such that
$$
\left\{
\begin{array}{rcl}
 \varphi^1_t &:=& \varphi^1+t\,\bar\varphi^1 \\[5pt]
 \varphi^2_t &:=& \varphi^2 \\[5pt]
 \varphi^3_t &:=& \varphi^3 \\[5pt]
 \varphi^4_t &:=& \varphi^4 \\[5pt]
 \varphi^5_t &:=& \varphi^5
\end{array}
\right.
$$
is a co-frame for the $J_t$-holomorphic cotangent bundle.
With respect to $\left\{\varphi^j_t\right\}_{j\in\{1,\ldots,5\}}$, the structure equations are written as
$$
\left\{
\begin{array}{l}
 \de\varphi^1_t \;=\; \de\varphi^2_t \;=\; \de\varphi^3_t \;=\; \de\varphi^4_t \;=\; 0 \\[5pt]
 \de\varphi^5_t \;=\; -\frac{1}{1-\left|t\right|^2}\,\varphi^{12}_t-\varphi^{34}_t-\frac{t}{1-\left|t\right|^2}\,
\varphi^{2\bar1}_t
\end{array}
\right. \;.
$$
Setting, for $t\in\Delta\left(0,\varepsilon\right)\subset \C$,
$$ \omega_t \;:=\; \frac{\im}{2} \,\sum_{j=1}^5\varphi^j_t\wedge\overline{\varphi}^j_t \;, $$
one gets a curve of balanced structures $\left\{\left(J_t,\,\omega_t\right)\right\}_{t\in\Delta\left(0,\varepsilon\right)}$ on the smooth manifold underlying $\eta\beta_5$. Furthermore, for every $t\in\Delta\left(0,\varepsilon\right)$, the complex structure $J_t$ is complex-\Cpf\ at the \kth{1} stage and
$$ \left[\omega_t^{4}\right]\cp\sspace \colon H^1_{dR}\left(\eta\beta_5;\R\right) \to H^{9}_{dR}\left(\eta\beta_5;\R\right) $$
is an isomorphism. Therefore, according to Proposition \ref{prop:duality-semi-kahler}, it follows that, for every $t\in\Delta\left(0,\varepsilon\right)$, the complex structure $J_t$ is complex-\pf\ at the \kth{1} stage, and that $H^{(1,0)}_{J_t}\left(\eta\beta_5;\C\right)\simeq H_{(0,1)}^{J_t}\left(\eta\beta_5;\C\right)$.
\end{ex}

\begin{ex}
 {\itshape A curve of semi-K\"ahler structures on a $6$-dimensional completely-solvable solvmanifold inducing an isomorphism in cohomology.}\\
Consider a completely-solvable solvmanifold
$$ X \;=\; \Gamma\backslash G \;:=\; \left(0,\; -12,\; 34,\; 0,\; 15,\; 46\right) $$
endowed with the almost-complex structure $J_0$ whose holomorphic cotangent bundle has co-frame generated by
$$
\left\{
\begin{array}{rcl}
 \varphi^1 &:=& e^1+\im e^4 \\[5pt]
 \varphi^2 &:=& e^2+\im e^5 \\[5pt]
 \varphi^3 &:=& e^3+\im e^6
\end{array}
\right.
$$
and with the $J_0$-compatible symplectic form
$$ \omega_0 \;:=\; e^{14}+e^{25}+e^{36} $$
(see also \cite[\S6.3]{fino-tomassini}).
The structure equations with respect to $\left\{\varphi^1,\, \varphi^2,\, \varphi^3\right\}$ are
$$
\left\{
\begin{array}{rcl}
 \de\varphi^1 &=& 0 \\[5pt]
 2\,\de\varphi^2 &=& -\varphi^{1\bar2}-\varphi^{\bar1\bar2} \\[5pt]
 2\im \,\de\varphi^3 &=& -\varphi^{1\bar3}+\varphi^{\bar1\bar3}
\end{array}
\right. \;;
$$
using A. Hattori's theorem \cite[Corollary 4.2]{hattori}, one computes
\begin{eqnarray*}
H^1_{dR}(X;\R) &=& \R\left\langle e^1,\; e^4\right\rangle\;, \\[5pt]
H^5_{dR}(X;\R) &=& \R\left\langle *_{g_0}\,e^1,\; *_{g_0}\,e^4 \right\rangle \;=\; \R\left\langle e^{23456},\; e^{12356} \right\rangle\;,
\end{eqnarray*}
where $g_0$ is the $J_0$-Hermitian metric induced by $\left(J_0,\,\omega_0\right)$.

Now, consider the curve $\left\{J_t\right\}_{t\in\left(-\varepsilon,\,\varepsilon\right)\subset\R}$ of almost-complex structures on $X$, where $\varepsilon>0$ is small enough and $J_t$ is defined requiring that the $J_t$-holomorphic cotangent bundle is generated by
$$
\left\{
\begin{array}{rcl}
 \varphi^1_t &:=& \varphi^1 \\[5pt]
 \varphi^2_t &:=& \varphi^2+\im\,t\,e^6 \\[5pt]
 \varphi^3_t &:=& \varphi^3
\end{array}
\right. \;;
$$
for every $t\in\left(-\varepsilon,\,\varepsilon\right)$, consider also the non-degenerate $J_t$-compatible $2$-form
$$ \omega_t \;:=\; e^{14}+e^{25}+e^{36}+t\,e^{26} \;;$$
for $t\neq0$, one has that $\de\omega\neq 0$, but
$$ \de \omega_t^2 \;=\; \de\left(\omega_0^2-t\,e^{1246}\right) \;=\; 0 \;,$$
hence $\left\{\left(J_t,\,\omega_t\right)\right\}_{t\in\left(-\varepsilon,\,\varepsilon\right)}$ gives rise to a curve of semi-K\"ahler structures on $X$. Moreover, note that
$$ \omega_t^2 \wedge e^1\;=\;e^{12356}\;,\qquad \omega_t^2 \wedge e^4\;=\;e^{23456}\;,$$
therefore $\left[\omega_t^2\right]\cp\sspace\colon H^1_{dR}(X;\R)\to H^5_{dR}(X;\R)$ is an isomorphism, for every $t\in\left(-\varepsilon,\,\varepsilon\right)$.
\end{ex}

\subsection{Almost-K\"ahler manifolds and Lefschetz-type property}\label{subsec:alm-kahler-lefschetz-type}
Recall that every compact manifold $X$ endowed with a K\"ahler structure $\left(J,\,\omega\right)$ is \Cpf, in fact, complex-\Cpf\ at every stage, \cite[Lemma 2.15, Theorem 2.16]{draghici-li-zhang}, or \cite[Proposition 2.1]{li-zhang}. A natural question is whether or not the same holds true even for almost-K\"ahler structures, namely, without the integrability assumption on $J$.

In this section, we study cohomological properties for almost-K\"ahler structures, in connection with a Lefschetz-type property, Theorem \ref{thm:almost-kahler-cpf}, and we describe some explicit examples.

The results in this section have been obtained in a joint work with A. Tomassini and W. Zhang, \cite{angella-tomassini-zhang}.

\medskip

Let $X$ be a compact $2n$-dimensional manifold endowed with an \emph{almost-K\"ahler structure} $\left(J,\,\omega,\,g\right)$, that is, $J$ is an almost-complex structure on $X$ and $g$ is a $J$-Hermitian metric whose associated $(1,1)$-form $\omega:=g\left(J\,\sspace, \, \ssspace\right)\in\wedge^{1,1}X\cap\wedge^2X$ is $\de$-closed.

\medskip

Firstly, we recall the following result on decomposition in cohomology for almost-K\"ahler manifolds, proven by T. Dr\v{a}ghici, T.-J. Li, and W. Zhang in \cite{draghici-li-zhang} and, in a different way, by A. Fino and A. Tomassini in \cite{fino-tomassini}.

\begin{prop}[{\cite[Proposition 2.8]{draghici-li-zhang}, \cite[Proposition 3.2]{fino-tomassini}}] 
 Let $X$ be a compact manifold and let $\left(J,\,\omega,\,g\right)$ be an almost-K\"ahler structure on $X$. Then $J$ is \Cp.
\end{prop}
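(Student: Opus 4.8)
The plan is to prove directly that $J$ is \Cp, i.e.\ that $H^+_J(X)\cap H^-_J(X)=\{0\}$. So I would start with a class $\mathfrak{a}\in H^+_J(X)\cap H^-_J(X)$ and produce, from the two descriptions of $\mathfrak{a}$, a $\de$-closed $J$-invariant representative $\alpha^+\in\wedge^{1,1}X\cap\wedge^2X$ and a $\de$-closed $J$-anti-invariant representative $\alpha^-\in\left(\wedge^{2,0}X\oplus\wedge^{0,2}X\right)\cap\wedge^2X$, so that $\alpha^+-\alpha^-=\de\beta$ for some $\beta\in\wedge^1X$. The whole problem then reduces to showing that $\alpha^-=0$, for then $\mathfrak{a}=\left[\alpha^-\right]=0$. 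The quantity I would use to detect this is the integral $\int_X \alpha^-\wedge\alpha^-\wedge\omega^{n-2}$, which I plan to compute in two complementary ways.

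First I would evaluate it cohomologically. Since $\alpha^-$ is $\de$-closed and $\omega^{n-2}$ is $\de$-closed (as $\de\omega=0$), replacing one factor $\alpha^-$ by the cohomologous form $\alpha^+=\alpha^-+\de\beta$ changes the integrand by the exact form $\pm\,\de\left(\beta\wedge\alpha^-\wedge\omega^{n-2}\right)$, which integrates to zero by Stokes' theorem on the closed manifold $X$. Hence $\int_X \alpha^-\wedge\alpha^-\wedge\omega^{n-2}=\int_X \alpha^-\wedge\alpha^+\wedge\omega^{n-2}$. Now I would invoke a pure bidegree count: $\alpha^-$ has components of type $(2,0)$ and $(0,2)$, $\alpha^+$ is of type $(1,1)$, and $\omega^{n-2}$ is of type $(n-2,n-2)$; every term of $\alpha^-\wedge\alpha^+\wedge\omega^{n-2}$ therefore has bidegree $(n+1,n-1)$ or $(n-1,n+1)$, each carrying a holomorphic or antiholomorphic degree exceeding $n$, so the form vanishes pointwise. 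Thus $\int_X \alpha^-\wedge\alpha^-\wedge\omega^{n-2}=0$.

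Second, I would show this integral is a positive multiple of $\norm{\alpha^-}{}^2$. The key point is that $\alpha^-$ is \emph{primitive}: the operator $\Lambda$ shifts bidegree by $(-1,-1)$, so it annihilates forms of type $(2,0)$ and $(0,2)$, giving $\Lambda\alpha^-=0$, i.e.\ $\alpha^-\in\Prim^2X$. Applying the Weil identity \cite[Théorème 2]{weil} with $k=2$ and $j=0$, and using $J\alpha^-=-\alpha^-$, yields
\[
 *_g\,\alpha^- \;=\; -\frac{1}{(n-2)!}\,\omega^{n-2}\wedge\left(J\alpha^-\right) \;=\; \frac{1}{(n-2)!}\,\omega^{n-2}\wedge\alpha^- \;.
\]
Consequently $\alpha^-\wedge\alpha^-\wedge\omega^{n-2}=(n-2)!\,\alpha^-\wedge *_g\alpha^-$, whose integral is $(n-2)!\,\norm{\alpha^-}{}^2\geq 0$, with equality if and only if $\alpha^-=0$. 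Combining the two computations forces $\alpha^-=0$, hence $\mathfrak{a}=0$, which is exactly \Cp ness.

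The only step requiring genuine care — and the place where one must confirm that integrability of $J$ is nowhere used — is the positivity step: both the Lefschetz $\mathfrak{sl}(2;\R)$-decomposition and the Weil identity are pointwise linear-algebraic statements on $\wedge^2 T^*_xX$, depending only on the compatible triple $(J,\omega,g)$ and not on $\Nij_J$. Since the almost-K\"ahler hypothesis supplies exactly $\de\omega=0$ and the compatibility $\omega=g(J\,\cdot\,,\,\cdot\,)$, all the ingredients are available, and the argument goes through verbatim in the non-integrable setting. I do not expect any serious obstacle beyond bookkeeping of the constant and sign in the Weil identity.
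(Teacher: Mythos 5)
Your argument is correct and is essentially the same as the proof in the cited references, which the paper recalls without reproducing: one pits the cohomological vanishing of $\int_X \alpha^-\wedge\alpha^+\wedge\omega^{n-2}$ (Stokes plus the bidegree count, since $\wedge^{p,q}X=\{0\}$ for $p>n$ or $q>n$) against the positivity $\int_X\alpha^-\wedge\alpha^-\wedge\omega^{n-2}=\left(n-2\right)!\,\normazeroL{\alpha^-}^2$ coming from primitivity of $J$-anti-invariant $2$-forms and the Weil identity, whose constant and sign you have quoted correctly. Your closing remark is also sound: the paper itself invokes the Weil identity for a general almost-Hermitian (non-integrable) structure in its proof of \cite[Theorem 2.3]{draghici-li-zhang}, confirming that the identity is pointwise linear-algebraic and that $\Nij_J$ plays no role.
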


Hence, one is brought to study the \Cf ness of almost-K\"ahler structures.

\medskip

Note that $\omega$ is in particular a symplectic form on $X$. We recall that, given a compact $2n$-dimensional manifold $X$ endowed with a symplectic form $\omega$, and fixed $k\in\N$, the Lefschetz-type operator on $(n-k)$-forms associated with $\omega$ is the operator
$$
L^k \;:=:\; L^k_\omega \colon\wedge^{n-k} X \to\wedge^{n+k} X\,,\qquad L^k(\alpha)\;:=\;\omega^{k}\wedge \alpha
$$
(see \S\ref{sec:symplectic} for notations concerning symplectic structures); since $\de\omega=0$, the map $L^k \colon\wedge^{n-k} X \to\wedge^{n+k} X$ induces a map in cohomology, namely,
$$
L^k \colon H^{n-k}_{dR}(X;\R) \to H^{n+k}_{dR}(X;\R)\,,\qquad L^k \left(\mathfrak{a}\right)\;:=\;\left[\omega^{k}\right]\cp\mathfrak{a}\,,
$$

Initially motivated by studying, in \cite{zhang}, Taubes currents, which have been introduced by C.~H. Taubes in \cite{taubes} in order to study S.~K. Donaldson's ``tamed to compatible'' question, \cite[Question 2]{donaldson}, W. Zhang considered the following Lefschetz-type property, see also \cite[\S2.2]{draghici-li-zhang-survey}.

\begin{defi}
 Let $X$ be a compact $2n$-dimensional manifold endowed with an almost-complex structure $J$ and with a $J$-Hermitian metric $g$; denote by $\omega$ the $(1,1)$-form associated to $g$. One says that the {\em Lefschetz-type property (on $2$-forms)} holds on $X$ if
 $$ L^{n-2}_\omega \colon \wedge^2X\to\wedge^{2n-2}X $$
 takes $g$-harmonic $2$-forms to $g$-harmonic $(2n-2)$-forms.
\end{defi}

Since the map $L^k \colon \wedge^{n-k} X \to \wedge^{n+k} X$ is an isomorphism for every $k\in\N$, \cite[Corollary 2.7]{yan}, it follows that the Lefschetz-type property on $2$-forms is stronger than the Hard Lefschetz Condition on $2$-classes, namely, the property that $\left[\omega\right]^{n-2} \cp \sspace \colon H^2_{dR}(X;\R) \to H^{2n-2}_{dR}(X;\R)$ is an isomorphism.

\medskip

In order to study the relation between the Lefschetz-type property on $2$-forms and the \Cf ness, we prove here the following result, \cite[Theorem 2.3]{angella-tomassini-zhang}, which states that the Lefschetz-type property on $2$-forms is satisfied provided that the almost-K\"ahler structure admits a basis of pure type harmonic representatives for the second de Rham cohomology group. (Recall that A. Fino and A. Tomassini proved in \cite[Theorem 3.7]{fino-tomassini} that an almost-K\"ahler manifold admitting a basis of harmonic $2$-forms of pure type with respect to the almost-complex structure is \Cpf\ and \pf; they also described several examples of non-K\"ahler solvmanifolds satisfying the above assumption, \cite[\S5, \S6]{fino-tomassini}.)

\begin{thm}\label{thm:almost-kahler-cpf}
 Let $X$ be a compact manifold endowed with an almost-K\"ahler structure $\left(J,\,\omega,\,g\right)$. Suppose that there exists a basis of $H^2_{dR}(X;\R)$ represented by $g$-harmonic $2$-forms which are of pure type with respect to $J$. Then the Lefschetz-type property on $2$-forms holds on $X$.
\end{thm}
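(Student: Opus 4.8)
The plan is to exploit the finite-dimensionality of the space $\mathcal{H}^2$ of $g$-harmonic $2$-forms together with the linearity of $\omega^{n-2}\wedge\sspace$. Since $\mathcal{H}^2 \simeq H^2_{dR}(X;\R)$ and, by hypothesis, $\mathcal{H}^2$ admits a basis consisting of forms of pure type with respect to $J$, it suffices to show that $\omega^{n-2}\wedge\alpha$ is $g$-harmonic for every $g$-harmonic pure-type $2$-form $\alpha$. A real pure-type $2$-form is either $J$-anti-invariant (of type $(2,0)+(0,2)$) or $J$-invariant (of type $(1,1)$), so I would split the argument into these two cases. In either case, the $\de$-closedness of $\omega^{n-2}\wedge\alpha$ is immediate: since $\de\omega=0$ and $\de\alpha=0$, one has $\de\left(\omega^{n-2}\wedge\alpha\right)=\omega^{n-2}\wedge\de\alpha=0$. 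Hence the whole content lies in establishing $\de^*\left(\omega^{n-2}\wedge\alpha\right)=0$ which, the dimension $2n$ being even so that $\de^*=-*_g\,\de\,*_g$ on all degrees, is equivalent to $*_g\left(\omega^{n-2}\wedge\alpha\right)$ being $\de$-closed.

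The key computational tool is the Weil identity recalled in the excerpt, namely $*_g\,L^j\lfloor_{\Prim^k X} = (-1)^{\frac{k(k+1)}{2}}\frac{j!}{(n-k-j)!}\,L^{n-k-j}\,J$. Writing the Lefschetz decomposition $\alpha = f\,\omega + \beta$ with $f\in\mathcal{C}^\infty\left(X;\R\right)$ and $\beta\in\Prim^2 X$, I would apply this identity with $(k,j)=(0,\,n-1)$ to the term $f\,\omega^{n-1}=L^{n-1}f$ and with $(k,j)=(2,\,n-2)$ to $\omega^{n-2}\wedge\beta=L^{n-2}\beta$, obtaining $*_g\left(\omega^{n-2}\wedge\alpha\right) = (n-1)!\,f\,\omega - (n-2)!\,J\beta$. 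In the anti-invariant case $\alpha$ is automatically primitive (so $f=0$ and $\beta=\alpha$) and $J\alpha=-\alpha$, whence $*_g\left(\omega^{n-2}\wedge\alpha\right)=(n-2)!\,\alpha$ is $\de$-closed, and this case is settled.

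The invariant case is where the pure-type hypothesis must be combined with harmonicity, and this is the step I expect to be the crux. Here $J\beta=\beta$, so $*_g\left(\omega^{n-2}\wedge\alpha\right)=(n-1)!\,f\,\omega-(n-2)!\,\beta$, and closedness of this form reduces to showing that $f$ is constant, since then $\de(f\,\omega)=0$ and (as shown below) $\de\beta=0$. To force $f$ constant I would use both harmonicity conditions: from $\de\alpha=0$ one gets $\de\beta=-\de f\wedge\omega$, while $\de^*\alpha=0$ translates, via the Weil identity applied to $*_g\alpha = \frac{1}{(n-1)!}f\,\omega^{n-1}-\frac{1}{(n-2)!}\omega^{n-2}\wedge\beta$, into $\frac{1}{(n-1)!}\de f\wedge\omega^{n-1}-\frac{1}{(n-2)!}\omega^{n-2}\wedge\de\beta=0$. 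Substituting $\de\beta=-\de f\wedge\omega$ collapses this to $\left(\frac{1}{(n-1)!}+\frac{1}{(n-2)!}\right)\de f\wedge\omega^{n-1}=0$; since the coefficient is nonzero and $L^{n-1}\colon\wedge^1 X\to\wedge^{2n-1}X$ is an isomorphism by \cite[Corollary 2.7]{yan}, it follows that $\de f=0$, so $f$ is constant because $X$ is connected. Then $\de\beta=-\de f\wedge\omega=0$, hence $*_g\left(\omega^{n-2}\wedge\alpha\right)$ is $\de$-closed and $\de^*\left(\omega^{n-2}\wedge\alpha\right)=0$, completing the invariant case.

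Combining the two cases, $\omega^{n-2}\wedge\sspace$ sends every harmonic pure-type $2$-form to a harmonic $(2n-2)$-form, and by linearity over the pure-type basis it sends all of $\mathcal{H}^2$ to harmonic forms, which is the asserted Lefschetz-type property. (The low-dimensional cases $n\le 2$ are trivial, since then $\omega^{n-2}\wedge\sspace$ is the identity or the statement is vacuous, and one need not invoke the factorial identities.)
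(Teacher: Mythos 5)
Your proof is correct, but it takes a genuinely different route from the paper's. The paper argues through Brylinski--Yan symplectic Hodge theory: since $\star_\omega = *_g\,J$ on an almost-K\"ahler manifold, a pure-type $2$-form is $g$-harmonic if and only if it is $\omega$-symplectically-harmonic (both $\de$-closed and $\de^\Lambda$-closed), and the commutation relations $\left[\de,\,L\right]=0$ and $\left[\de^\Lambda,\,L\right]=-\de$ then show at once that $L$, hence $L^{n-2}$, preserves symplectic harmonicity; as $L$ also preserves pure type, one converts back to $g$-harmonicity and concludes by linearity over the pure-type basis. You never introduce $\de^\Lambda$: you unwind everything through the Weil identity and the Hodge star directly, splitting into the anti-invariant case (where primitivity is automatic and $*_g\left(\omega^{n-2}\wedge\alpha\right)=(n-2)!\,\alpha$ settles matters using only $\de\alpha=0$) and the invariant case, whose crux --- the constancy of $f$ in the Lefschetz decomposition $\alpha=f\,\omega+\beta$ --- you extract from the two harmonicity equations via $\bigl(\frac{1}{(n-1)!}+\frac{1}{(n-2)!}\bigr)\,\de f\wedge\omega^{n-1}=0$ and the injectivity of $L^{n-1}$ on $1$-forms (\cite[Corollary 2.7]{yan}, plus connectedness of $X$, which is the paper's standing convention). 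That step is precisely the content of $\left[\de^\Lambda,\,L\right]=-\de$ in the one instance you need, so both proofs ultimately rest on the same linear algebra (the Weil identity is also what underlies $\star_\omega=*_g\,J$); what the paper's route buys is brevity and degree-independence --- the same three lines handle any power of $L$ acting in any degree --- while yours buys self-containedness, invoking only the pointwise Weil identity and Yan's isomorphism, and yields the extra byproduct that for a $g$-harmonic $J$-invariant $2$-form the trace part $f$ is constant and the primitive part $\beta$ is itself closed, hence harmonic. Your computations, including the signs $(-1)^{\frac{k(k+1)}{2}}$ in the Weil identity, the identity $\de^*=-*_g\,\de\,*_g$ in even dimension, and the dismissal of $n\le 2$, all check out.
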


\begin{proof}
We recall that, on a compact $2n$-dimensional symplectic manifold, using the symplectic form $\omega$ instead of a Riemannian metric and miming the Hodge theory for Riemannian manifolds, one can define a symplectic-$\star$-operator $\star_\omega\colon \wedge^\bullet X \to \wedge^{2n-\bullet}X$ such that $\alpha\wedge\star_\omega \beta = \left(\omega^{-1}\right)^k\left(\alpha,\beta\right)\,\frac{\omega^n}{n!}$ for every $\alpha,\,\beta\in\wedge^kX$,
see \cite[\S2]{brylinski}. (See \S\ref{sec:symplectic} for further details on symplectic structures, and see \S\ref{subsec:symplectic-hodge-theory} for definitions and results concerning the Hodge theory for symplectic manifolds.) In particular, on a compact manifold $X$ endowed with an almost-K\"ahler structure $\left(J,\, \omega,\, g\right)$, the Hodge-$*$-operator $*_g$ and the symplectic-$\star$-operator $\star_\omega$ are related by
$$ \star_\omega \;=\; *_g\,J \;,$$
see \cite[Theorem 2.4.1, Remark 2.4.4]{brylinski}. Therefore, for forms of pure type with respect to $J$, the properties of being $g$-harmonic and of being $\omega$-symplectically-harmonic (that is, both $\de$-closed and $\de^\Lambda$-closed, where $\de^\Lambda$ is the symplectic co-differential operator, which is defined, for every $k\in\N$, as $\de^\Lambda\lfloor_{\wedge^kX}:=\left(-1\right)^{k+1}\,\star_\omega\,\de\,\star_\omega$) are equivalent. The statement follows noting that
$$ \left[\de,\, L \right]\;=\;0 \qquad \text{ and }\qquad \left[\de^\Lambda,\, L\right]\;=\;-\de \;,$$
see, e.g., \cite[Lemma 1.2]{yan}: hence $L$ sends $\omega$-symplectically-harmonic $2$-forms (of pure type with respect to $J$) to $\omega$-symplectically-harmonic $(2n-2)$-forms (of pure type with respect to $J$).
\end{proof}

\begin{rem} Note that, if $X$ is a compact $2n$-dimensional manifold endowed with an almost-K\"ahler structure $\left(J,\, \omega,\, g\right)$ satisfying the Lefschetz-type property on $2$-forms and $J$ is \Cf, then $J$ is \Cpf\ and \pf, too, \cite[Remark 2.4]{angella-tomassini-zhang}.

Indeed, we have already noticed that $J$ is \Cp\ by \cite[Proposition 2.8]{draghici-li-zhang} or \cite[Proposition 3.2]{fino-tomassini}.
Moreover, since $J$ is \Cf, $J$ is also \p\ by \cite[Proposition 2.5]{li-zhang}. We recall now the argument in \cite[Theorem 4.1]{fino-tomassini} to prove that $J$ is also \f.
Firstly, note that if the Lefschetz-type property on $2$-forms holds, then 
$\left[\omega^{n-2}\right] \cp \sspace\colon H^{2}_{dR}\left(X;\R\right)\to H^{2n-2}_{dR}\left(X;\R\right)$ 
is an isomorphism.
Therefore, we get that
$$ H^{2n-2}_{dR}(X;\R) \;=\; H^{(n,n-2),(n-2,n)}_J(X;\R) + H^{(n-1,n-1)}_J(X;\R) \;; $$
indeed, (following the argument in \cite[Theorem 4.1]{fino-tomassini},) since 
$\left[\omega^{n-2}\right] \cp \sspace\colon H^2_{dR}(X;\R)\to H^{2n-2}_{dR}(X;\R)$ is in particular surjective, we have
 \begin{eqnarray*}
  H^{2n-2}_{dR}(X;\R) &=& \left[\omega^{n-2}\right]\cp H^{2}_{dR}(X;\R) \\[5pt]
  &=& \left[\omega^{n-2}\right] \cp \left(H^{(2,0),(0,2)}_J(X;\R)\oplus H^{(1,1)}_J(X;\R)\right) \\[5pt]
  &\subseteq& H^{(n,n-2),(n-2,n)}_J(X;\R) + H^{(n-1,n-1)}_J(X;\R) \;,
 \end{eqnarray*}
yielding the above decomposition of $H^{2n-2}_{dR}(X;\R)$.
Then, it follows that $J$ is also \f\ by Theorem \ref{thm:implicazioni}.
\end{rem}

\medskip

We describe here some examples, from \cite{angella-tomassini-zhang}, of almost-K\"ahler manifolds, studying Lefschetz-type property and \Cf ness on them.

In the following example, we give a family of \Cf\ almost-K\"ahler manifolds satisfying the Lefschetz-type property on $2$-forms, \cite[\S2.2]{angella-tomassini-zhang}.

\begin{ex}
{\itshape A family of \Cf\ almost-K\"ahler manifolds satisfying the Lefschetz-type property on $2$-forms.}\\
Consider the $6$-dimensional Lie algebra
$$ \mathfrak{h}_7 \;:=\; \left(0^3,\, 23,\,13,\,12\right) \;.$$
By Mal'tsev's theorem \cite[Theorem 7]{malcev}, the connected simply-connected Lie group $G$ associated with $\mathfrak{h}_7$ admits a discrete co-compact subgroup $\Gamma$: let $N:=\left.\Gamma\right\backslash G$ be the nilmanifold obtained as a quotient of $G$ by $\Gamma$. Note that $N$ is not formal by K. Hasegawa's theorem \cite[Theorem 1, Corollary]{hasegawa_pams}.

Fix $\alpha>1$ and consider the $G$-left-invariant symplectic form $\omega_\alpha$ on $N$ defined by
$$ \omega_\alpha \;:=\; e^{14}+\alpha\cdot e^{25}+\left(\alpha-1\right)\cdot e^{36} \;. $$
Consider the left-invariant almost-complex structure $J$ on $N$ defined by
$$
\begin{array}{lll}
J_\alpha \, e_1 \;:=\; e_4 \;, &\qquad J_\alpha\, e_2\;:=\;\alpha\, e_5\;, &\qquad
J_\alpha \, e_3\;:=\;(\alpha -1)\,e_6\;,\,\,\,\,\vspace{.2cm}\\
J_\alpha \,e_4\;:=\;-e_1\;,&\qquad
J_\alpha \,e_5\;:=\;-\frac{1}{\alpha}\,e_2\;, &\qquad
J_\alpha\, e_6\;:=\;-\frac{1}{\alpha -1}\,e_3\;,
\end{array}
$$
where $\{e_1,\, \ldots,\, e_6\}$ denotes the global dual frame of the $G$-left-invariant co-frame $\{e^1,\, \ldots,\, e^6\}$ associated to the structure equations.

Finally, define the $G$-left-invariant symmetric tensor
$$ g_{\alpha}(\sspace,\, \ssspace) \;:=\; \omega_\alpha\left(\sspace,\, J_\alpha \ssspace\right) \;. $$

It is straightforward to check that $\left\{\left(J_\alpha,\,\omega_\alpha,\,g_{\alpha}\right)\right\}_{\alpha>1}$ is a family of $G$-left-invariant almost-K\"ahler structures on $N$; moreover, setting
\begin{eqnarray*}
E_\alpha^1 \;:=\; e^1\;, &\qquad E_\alpha^2\;:=\; \alpha\, e^2\;, &\qquad E_\alpha^3 \;:=\; (\alpha-1)\, e^3\;, \\[5pt]
E_\alpha^4 \;:=\; e^4\;, &\qquad E_\alpha^5\;:=\; e^5\;,          &\qquad E_\alpha^6 \;:=\; e^6\;,
\end{eqnarray*}
we get the $G$-left-invariant $g_{\alpha}$-orthonormal co-frame $\left\{E_\alpha^1,\, \ldots,\, E_\alpha^6\right\}$ on $N$.
The structure equations with respect to the co-frame $\left\{E_\alpha^1,\, \ldots,\, E_\alpha^6\right\}$ read as follows:
$$
\left\{
\begin{array}{rcl}
 \de E_\alpha^1 &=& 0 \\[5pt]
 \de E_\alpha^2 &=& 0 \\[5pt]
 \de E_\alpha^3 &=& 0 \\[5pt]
 \de E_\alpha^4 &=& \frac{1}{\alpha(\alpha -1)}\, E_\alpha^{23} \\[5pt]
 \de E_\alpha^5 &=& \frac{1}{\alpha -1}\, E_\alpha^{13} \\[5pt]
 \de E_\alpha^6 &=& \frac{1}{\alpha}\,E_\alpha^{12}
\end{array}
\right. \;.
$$
Then
$$
 \varphi_\alpha^1 \;:=\; E_\alpha^1+\im E_\alpha^4 \;, \qquad
 \varphi_\alpha^2 \;:=\; E_\alpha^2+\im E_\alpha^5 \;, \qquad
 \varphi_\alpha^3 \;:=\; E_\alpha^3+\im E_\alpha^6
$$
are $(1,0)$-forms for the almost-complex structure $J_\alpha$, and
$$
\omega_\alpha \;=\; E_\alpha^1\wedge E_\alpha^4 + E_\alpha^2\wedge E_\alpha^5 +E_\alpha^3\wedge E_\alpha^6\,.
$$
By K. Nomizu's theorem \cite[Theorem 1]{nomizu}, the de Rham cohomology of $N$ is straightforwardly computed:
\begin{eqnarray*}
H^2_{dR}(N;\R) &=& \R\left\langle E_\alpha^{15},\; E_\alpha^{16},\; E_\alpha^{24},\; E_\alpha^{26},\; E_\alpha^{34},\; E_\alpha^{35},\; E_\alpha^{14}+\frac{1}{\alpha}\,E_\alpha^{25},\; \frac{1}{\alpha}\,E_\alpha^{25}+\frac{1}{\alpha-1}\,E_\alpha^{36} \right\rangle \\[5pt]
& = & \R\left\langle \im\,\alpha\,\varphi_\alpha^{1\bar1}+\im\,\varphi_\alpha^{2\bar2},\; \im\,(\alpha-1)\,\varphi_\alpha^{2\bar2}+
 \im\,\alpha\,\varphi_\alpha^{3\bar3},\;\Im\,\varphi_\alpha^{1\bar2},\;\Im\,\varphi_\alpha^{1\bar3},\;\Im\,\varphi_\alpha^{3\bar2} \right\rangle \\[5pt]
&& \oplus \left\langle \Im\,\varphi_\alpha^{12},\;\Im\,\varphi_\alpha^{13},\;\Im\,\varphi_\alpha^{23} \right\rangle \;.
\end{eqnarray*}
Note that the $g_{\alpha}$-harmonic representatives of the above basis of $H^2_{dR}(N;\R)$ are of pure type with respect to $J_\alpha$: hence, the almost-complex structure $J_\alpha$ is \Cpf\ and \pf\ by \cite[Theorem 3.7]{fino-tomassini};
furthermore, by Theorem \ref{thm:almost-kahler-cpf}, the Lefschetz-type property on $2$-forms holds on $N$ endowed with the almost-K\"ahler structure $\left(J_\alpha,\,  \omega_\alpha,\, g_\alpha\right)$, where $\alpha>1$.
Moreover, we get
$$ h^+_{J_\alpha}(N)\;=\; 5\;, \qquad h^-_{J_\alpha}(N)\;=\; 3\;.$$
On the other hand, one can explicitly note that
$$
\begin{array}{rcccl}
 L_{\omega_\alpha}E_\alpha^{15} &=& E_\alpha^{1536} &=& *_{g_{\alpha}}\, E_\alpha^{24} \;,\\[5pt]
 L_{\omega_\alpha}E_\alpha^{16} &=& E_\alpha^{1625} &=& *_{g_{\alpha}}\, E_\alpha^{34} \;,\\[5pt]
 L_{\omega_\alpha}E_\alpha^{24} &=& E_\alpha^{2436} &=& *_{g_{\alpha}}\, E_\alpha^{15} \;,\\[5pt]
 L_{\omega_\alpha}E_\alpha^{26} &=& E_\alpha^{2614} &=& *_{g_{\alpha}}\, E_\alpha^{35} \;,\\[5pt]
 L_{\omega_\alpha}E_\alpha^{34} &=& E_\alpha^{3425} &=& *_{g_{\alpha}}\, E_\alpha^{16} \;,\\[5pt]
 L_{\omega_\alpha}E_\alpha^{35} &=& E_\alpha^{3514} &=& *_{g_{\alpha}}\, E_\alpha^{26} \;,
\end{array}
$$
and
$$
\de\, *_{g_{\alpha}}\, L_{\omega_\alpha}\,\left(E_\alpha^{14} + \frac{1}{\alpha}\,E_\alpha^{25}\right) \;=\; \de\left(-\frac{\alpha+1}{\alpha}\, E_\alpha^{36} - E_\alpha^{25} - \frac{1}{\alpha}\,E_\alpha^{14}\right) \;=\; 0 \;,
$$
and
$$
\de\, *_{g_{\alpha}}\, L_{\omega_\alpha}\,\left(e^{25}+e^{36}\right) =0 \;;
$$
this proves explicitly that the the Lefschetz-type property on $2$-forms holds on $N$ endowed with the almost-K\"ahler structure $\left(J_\alpha,\,  \omega_\alpha,\, g_\alpha\right)$, where $\alpha>1$.

Note that, while $\omega_\alpha \wedge \sspace \colon \wedge^2N \to\wedge^4N$ induces an isomorphism $\left[\omega_\alpha\right]\cp\sspace \colon H^2_{dR}(N;\R)\stackrel{\simeq}{\to} H^4_{dR}(N;\R)$ in cohomology, the map $\left[\omega_\alpha\right]^2 \cp \sspace \colon H^1_{dR}(N;\R)\to H^5_{dR}(N;\R)$ is not an isomorphism, according to \cite[Theorem A]{benson-gordon-nilmanifolds}.

We show explicitly that the nilmanifold $N$ is not formal, without using K. Hasegawa's theorem \cite[Theorem 1, Corollary]{hasegawa_pams}. By \cite[Corollary 1]{deligne-griffiths-morgan-sullivan}, every Massey product on a formal manifold is zero. Since
$$
\left[E_\alpha^1\right]\cp \left[E_\alpha^3\right] \;=\; \left(\alpha-1\right)\,\left[\de E_\alpha^5\right]\;=\; 0
\quad \text{ and }\quad
\left[E_\alpha^3\right]\cp \left[E_\alpha^2\right] \;=\; -\alpha\,\left(\alpha-1\right)\,\left[\de E_\alpha^4\right]\;=\; 0 \;,
$$
the triple Massey product
$$
\left\langle [E_\alpha^1],\, [E_\alpha^3],\, [E_\alpha^2] \right\rangle \;=\; -\left(\alpha-1\right) \, \left[ E_\alpha^{25}+\alpha\,E_\alpha^{14}\right]
$$
is not zero, and hence $N$ is not formal.
\end{ex}

Summarizing, we state the following result, \cite[Proposition 2.5]{angella-tomassini-zhang}.
\begin{prop}
There exists a non-formal $6$-dimensional nilmanifold endowed with an $1$-parameter family $\left\{\left(J_\alpha,\, \omega_\alpha,\, g_{\alpha}\right)\right\}_{\alpha>1}$ of left-invariant almost-K\"ahler structures, such that $J_\alpha$ is \Cpf\ and \pf, and for which the Lefschetz-type property on $2$-forms holds.
\end{prop}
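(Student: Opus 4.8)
The plan is to prove the proposition by exhibiting a single explicit example realizing all the listed properties at once, since the statement is purely existential. I would take the $6$-dimensional nilpotent Lie algebra $\mathfrak{h}_7 := \left(0^3,\, 23,\, 13,\, 12\right)$, whose associated connected simply-connected Lie group $G$ admits a co-compact lattice $\Gamma$ by A.~I. Mal'tsev's theorem \cite[Theorem 7]{malcev} (the structure constants being rational), and set $N := \left.\Gamma\right\backslash G$. First I would fix a real parameter $\alpha>1$ and write down the $G$-left-invariant $2$-form $\omega_\alpha := e^{14}+\alpha\, e^{25}+\left(\alpha-1\right)\, e^{36}$, the $G$-left-invariant almost-complex structure $J_\alpha$ prescribed on the frame dual to the structure equations, and the tensor $g_\alpha := \omega_\alpha\left(\sspace,\, J_\alpha\ssspace\right)$. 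A direct check gives $\de\omega_\alpha=0$ and shows that $g_\alpha$ is a $J_\alpha$-Hermitian metric whose associated $(1,1)$-form is $\omega_\alpha$, so $\left(J_\alpha,\, \omega_\alpha,\, g_\alpha\right)$ is an almost-K\"ahler structure for every $\alpha>1$.

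Next I would compute $H^2_{dR}(N;\R)$. Since $N$ is a nilmanifold, K.~Nomizu's theorem \cite[Theorem 1]{nomizu} reduces this to the cohomology of the finite-dimensional complex $\left(\wedge^\bullet\duale{\mathfrak{h}_7},\, \de\right)$, a linear algebra computation. The decisive point to record is that $H^2_{dR}(N;\R)$ admits a basis whose $g_\alpha$-harmonic representatives are of pure type with respect to $J_\alpha$; passing to the co-frame $\left\{\varphi^1_\alpha,\, \varphi^2_\alpha,\, \varphi^3_\alpha\right\}$ this produces five classes of type $(1,1)$ and three of type $(2,0)+(0,2)$. With such a pure-type harmonic basis in hand, \cite[Theorem 3.7]{fino-tomassini} immediately yields that $J_\alpha$ is both \Cpf\ and \pf, and Theorem \ref{thm:almost-kahler-cpf} applies verbatim to give the Lefschetz-type property on $2$-forms. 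For a self-contained verification of the latter, I would instead check directly, on the $g_\alpha$-orthonormal co-frame $\left\{E^1_\alpha,\,\ldots,\,E^6_\alpha\right\}$, that the Lefschetz operator $L_{\omega_\alpha}$ sends each harmonic basis element to a $g_\alpha$-harmonic $4$-form, using relations of the form $L_{\omega_\alpha}E^{jk}_\alpha = *_{g_\alpha}E^{\ell m}_\alpha$ and the vanishing of $\de *_{g_\alpha} L_{\omega_\alpha}$ on the two remaining $(1,1)$-combinations.

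Finally, for non-formality the quickest route is K.~Hasegawa's theorem \cite[Theorem 1, Corollary]{hasegawa_pams}, by which a formal nilmanifold is diffeomorphic to a torus, whereas $N$ is not. For an explicit argument I would exhibit a non-vanishing triple Massey product: from the structure equations, $\left[E^1_\alpha\right]\cp\left[E^3_\alpha\right]=0$ and $\left[E^3_\alpha\right]\cp\left[E^2_\alpha\right]=0$ (both products being $\de$-exact via $\de E^5_\alpha$ and $\de E^4_\alpha$), while the Massey product $\left\langle \left[E^1_\alpha\right],\, \left[E^3_\alpha\right],\, \left[E^2_\alpha\right]\right\rangle = -\left(\alpha-1\right)\left[E^{25}_\alpha+\alpha\, E^{14}_\alpha\right]$ is nonzero, so $N$ cannot be formal by \cite[Corollary 1]{deligne-griffiths-morgan-sullivan}. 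I expect the main subtlety to lie not in any isolated computation but in the simultaneous compatibility: arranging that the \emph{same} triple $\left(J_\alpha,\, \omega_\alpha,\, g_\alpha\right)$ produces a pure-type harmonic basis—so that both the decomposition results and the Lefschetz-type property descend from the earlier theorems—while the underlying manifold remains non-formal; the choice of $\mathfrak{h}_7$ together with the $\alpha$-dependent normalization $E^j_\alpha$ is exactly what reconciles these demands.
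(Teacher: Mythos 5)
Your proposal is correct and follows essentially the same route as the paper's own proof: the identical family $\left(J_\alpha,\,\omega_\alpha,\,g_\alpha\right)$ on the nilmanifold associated with $\mathfrak{h}_7=\left(0^3,\,23,\,13,\,12\right)$, the computation of $H^2_{dR}(N;\R)$ via K.~Nomizu's theorem producing the same pure-type $g_\alpha$-harmonic basis (with $h^+_{J_\alpha}=5$ and $h^-_{J_\alpha}=3$), the appeal to \cite[Theorem 3.7]{fino-tomassini} and to Theorem \ref{thm:almost-kahler-cpf} together with the explicit verification of the Lefschetz-type property on the orthonormal co-frame $\left\{E^1_\alpha,\ldots,E^6_\alpha\right\}$, and non-formality via K.~Hasegawa's theorem supplemented by the same non-vanishing triple Massey product $\left\langle \left[E^1_\alpha\right],\,\left[E^3_\alpha\right],\,\left[E^2_\alpha\right]\right\rangle=-\left(\alpha-1\right)\left[E^{25}_\alpha+\alpha\,E^{14}_\alpha\right]$. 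There is nothing to correct.
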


In the following example, we give a \Cpf\ almost-K\"ahler structure on the completely-solvable Nakamura manifold, \cite[\S3]{angella-tomassini-zhang}.

\begin{ex}
{\itshape A \Cpf\ almost-K\"ahler structure on the completely-solvable Nakamura manifold.}\\
Firstly, we recall the construction of the \emph{completely-solvable Nakamura manifold}: it is a completely-solvable solvmanifold diffeomorphic to the \emph{Nakamura manifold} studied by I. Nakamura in \cite[page 90]{nakamura}, and it is an example of a cohomologically-K\"ahler non-K\"ahler manifold, \cite{deandres-fernandez-deleon-mencia}, \cite[Example 3.1]{fernandez-munoz-santisteban}, \cite[\S3]{debartolomeis-tomassini}.

Take $A\in\SL(2;\Z)$ with two different real positive eigenvalues $\esp^\lambda$ and $\esp^{-\lambda}$ with $\lambda>0$,
and fix $P\in\GL(2;\R)$ such that $PAP^{-1}=\mathrm{diag}\left(\esp^{\lambda},\,\esp^{-\lambda}\right)$. For example, take
$$ A \;:=\;
\left(
\begin{array}{cc}
 2 & 1 \\
 1 & 1
\end{array}
\right) \;,
\quad \text{ and } \quad
P \;:=\;
\left(
\begin{array}{cc}
 \frac{1-\sqrt{5}}{2} & 1 \\
 1 & \frac{\sqrt{5}-1}{2}
\end{array}
\right) \;,
$$
and consequently $\lambda=\log\frac{3+\sqrt{5}}{2}$.

Let $M^6:=:M^6(\lambda)$ be the $6$-dimensional completely-solvable solvmanifold
$$ M^6 := \mathbb{S}^1_{x^2} \times \frac{\R_{x^1}\times\T^2_{\C,\,\left(x^3,\,x^4,\,x^5,\,x^6\right)}}
{\left\langle T_1\right\rangle} $$
where $\T^2_\C$ is the $2$-dimensional complex torus
$$
\T^2_\C := \frac{\C^2}{P\Z\left[\im\right]^2}
$$
and $T_1$ acts on $\R\times \T^2_\C$ as
$$ T_1\left(x^1,\,x^3,\,x^4,\,x^5,\,x^6\right) :=\left(x^1+\lambda,\, \esp^{-\lambda}x^3,\, \esp^{\lambda}x^4,\,
\esp^{-\lambda}x^5,\, \esp^{\lambda}x^6\right) \;.$$
Using coordinates $x^2$ on $\mathbb{S}^1$, $x^1$ on $\R$ and $\left(x^3,x^4,x^5,x^6\right)$ on $\T^2_\C$, we set
$$
\begin{array}{ll}
 e^1 \;:=\; \de x^1\;,           \qquad & \qquad e^2 \;:=\; \de x^2\;,\\[5pt]
 e^3 \;:=\; \esp^{x^1}\de x^3\;, \qquad & \qquad e^4 \;:=\; \esp^{-x^1}\de x^4\;,\\[5pt]
 e^5 \;:=\; \esp^{x^1}\de x^5\;, \qquad & \qquad e^6 \;:=\; \esp^{-x^1}\de x^6\;.
\end{array}
$$
as a basis for $\duale{\mathfrak{g}}$, where $\mathfrak{g}$ denotes the Lie algebra naturally associated to $M^6$; therefore, with respect to $\{e^i\}_{i\in\{1,\ldots,6\}}$, the structure equations are the following:
$$
\left\{
\begin{array}{rcl}
 \de e^1 &=& 0 \\[5pt]
 \de e^2 &=& 0 \\[5pt]
 \de e^3 &=& e^1\wedge e^3 \\[5pt]
 \de e^4 &=& -e^1\wedge e^4 \\[5pt]
 \de e^5 &=& e^1\wedge e^5 \\[5pt]
 \de e^6 &=& -e^1\wedge e^6
\end{array}
\right. \;.
$$

Let $J$ be the almost-complex structure on $M^6$ defined requiring that a co-frame for the space of complex $(1,0)$-forms is given by
$$
\left\{
\begin{array}{rcl}
 \varphi^1 &:=& \frac{1}{2}\left(e^1+\im e^2\right) \\[5pt]
 \varphi^2 &:=& e^3+\im e^5 \\[5pt]
 \varphi^3 &:=& e^4+\im e^6
\end{array}
\right.\;.
$$
It is straightforward to check that $J$ is integrable.

Being $M^6$ a compact quotient of a completely-solvable Lie group, one computes the de Rham cohomology of $M^6$ by
A. Hattori's theorem \cite[Corollary 4.2]{hattori}:
\begin{eqnarray*}
H^0_{dR}\left(M^6;\C\right) &=& \C\left\langle 1 \right\rangle \;, \\[5pt]
H^1_{dR}\left(M^6;\C\right) &=& \C\left\langle \varphi^1, \, \bar\varphi^1 \right\rangle \;, \\[5pt]
H^2_{dR}\left(M^6;\C\right) &=& \C\left\langle \varphi^{1\bar1}, \, \varphi^{2\bar3}, \, \varphi^{3\bar2}, \,
\varphi^{23}, \, \varphi^{\bar2\bar3}  \right\rangle \;, \\[5pt]
H^3_{dR}\left(M^6;\C\right) &=& \C\left\langle \varphi^{12\bar3}, \, \varphi^{13\bar2}, \, \varphi^{123}, \,
\varphi^{1\bar2\bar3}, \, \varphi^{2\bar1\bar3}, \, \varphi^{3\bar1\bar2}, \, \varphi^{23\bar1}, \, \varphi^{\bar1\bar2\bar3}
\right\rangle
\end{eqnarray*}
(as usually, for the sake of clearness, we write, for example, $\varphi^{A\bar B}$ in place of $\varphi^A\wedge\bar\varphi^B$, and we list
the harmonic representatives with respect to the metric $g:=\sum_{j=1}^{3} \varphi^j\odot\bar\varphi^j$ instead of their classes).
Therefore, \cite[Proposition 3.2]{fernandez-munoz-santisteban}:
\begin{inparaenum}[\itshape (i)]
 \item $M^6$ is {\em geometrically formal}, that is, the product of $g$-harmonic forms is still $g$-harmonic, and therefore it
is formal;
 \item furthermore,
$$
\omega \;:=\; e^{12}+e^{34}+e^{56}
$$
is a symplectic form on $M^6$ satisfying the Hard Lefschetz Condition.
\end{inparaenum}

Note also that $\tilde\omega:=\frac{\im}{2}\left(\varphi^{1\bar1}+\varphi^{2\bar2}+\varphi^{3\bar3}\right)$
is not closed but $\de\tilde\omega^{2}=0$, from which it follows that the manifold $M^6$ admits a balanced metric.

Since $M^6$ is a compact quotient of a completely-solvable Lie group, by K. Hasegawa's theorem \cite[Main Theorem]{hasegawa-3},
the manifold $M^6$, endowed with any integrable almost-complex structure (e.g., the $J$ defined above), admits no K\"ahler structure, and it is not in class $\mathcal{C}$ of Fujiki, see also \cite[Theorem 3.3]{fernandez-munoz-santisteban}.

Therefore, we consider the (non-integrable) almost-complex structure $J'$ defined by
$$ J'\, e^1 \;:=\; -e^2\;, \qquad J'\, e^3 \;:=\; -e^4\;, \qquad J'\, e^5 \;:=\; -e^6 \;.$$
Set
$$
\left\{
\begin{array}{rcl}
 \psi^1 &:=& \frac{1}{2}\left(e^1+\im e^2\right) \\[5pt]
 \psi^2 &:=& e^3+\im e^4 \\[5pt]
 \psi^3 &:=& e^5+\im e^6
\end{array}
\right.
$$
as a co-frame for the space of $(1,0)$-forms on $M^6$ with respect to $J'$; the structure equations with respect to this co-frame are
$$
\left\{
\begin{array}{rcl}
 \de\psi^1 &=& 0 \\[5pt]
 \de\psi^2 &=& \psi^{1\bar2}+\psi^{\bar1\bar2} \\[5pt]
 \de\psi^3 &=& \psi^{1\bar3}+\psi^{\bar1\bar3}
\end{array}
\right. \;,
$$
from which it is clear that $J'$ is not integrable.

The $J'$-compatible $2$-form
$$ \omega' \;:=\; e^{12}+e^{34}+e^{56} $$
is $\de$-closed; hence $\left(J',\,\omega'\right)$ is an almost-K\"ahler structure on $M^6$.

Moreover, as already remarked, using A. Hattori's theorem \cite[Corollary 4.2]{hattori}, one gets
\begin{eqnarray*}
H^2_{dR}\left(M^6;\R\right) &=& \R\left\langle e^{12},\, e^{34},\, e^{56},\, -e^{36}+ e^{45},\, e^{36}+e^{45} \right\rangle \\[5pt]
&=& \underbrace{\R\left\langle \im\psi^{1\bar1},\, \im\psi^{2\bar2},\, \im\psi^{3\bar3},\, \im\left(\psi^{2\bar3}+\psi^{3\bar2}\right) \right\rangle}_{\subseteq H^+_{J'}\left(M^6\right)} \oplus \underbrace{\R\left\langle \im\left(\psi^{23}-\psi^{\bar2\bar3}\right)\right\rangle}_{\subseteq H^-_{J'}\left(M^6\right)} \;,
\end{eqnarray*}
where we have listed the harmonic representatives with respect to the metric $g':=\sum_{j=1}^6 e^{j}\odot e^{j}$
instead of their classes; note that the above $g'$-harmonic representatives are of pure type with respect to $J'$. Therefore, $J'$ is obviously \Cf; it is also \Cp\ by \cite[Proposition 3.2]{fino-tomassini}, or \cite[Proposition 2.8]{draghici-li-zhang}. Moreover, since any cohomology class in $H^+_{J'}\left(M^6\right)$, respectively in $H^-_{J'}\left(M^6\right)$, has a $\de$-closed $g'$-harmonic representative in $\wedge^{1,1}_{J'}M^6\cap\wedge^2 M^6$, respectively in $\left(\wedge^{2,0}_{J'}M^6\oplus\wedge^{0,2}_{J'}M^6\right)\cap\wedge^2 M^6$, then $J'$ is also \pf, by \cite[Theorem 3.7]{fino-tomassini}, and the Lefschetz-type property on $2$-forms holds, by Theorem \ref{thm:almost-kahler-cpf}.

One can explicitly check that the Lefschetz-type operator
$$
L_{\omega'}\colon\wedge^2M^6\to\wedge^4M^6
$$
takes $g'$-harmonic $2$-forms to $g'$-harmonic $4$-forms, since
$$
\begin{array}{ccccc}
 L_{\omega'}\,e^{12} &=& e^{1234}+e^{1256} &=& *_{g'}\,\left(e^{34}+e^{56}\right) \;,\\[5pt]
 L_{\omega'}\,e^{34} &=& e^{1234}+e^{3456} &=& *_{g'}\,\left(e^{12}+e^{56}\right) \;,\\[5pt]
 L_{\omega'}\,e^{56} &=& e^{1256}+e^{3456} &=& *_{g'}\,\left(e^{12}+e^{34}\right) \;,\\[5pt]
 L_{\omega'}\,e^{36} &=& e^{1236} &=& *_{g'}\,e^{45} \;,\\[5pt]
 L_{\omega'}\,e^{45} &=& e^{1245} &=& *_{g'}\,e^{36} \;.
\end{array}
$$
\end{ex}

Summarizing, the content of the last example yields the following result, \cite[Proposition 3.3]{angella-tomassini-zhang}.
\begin{prop}
The completely-solvable Nakamura manifold $M^6$ admits
\begin{itemize}
 \item both a \Cpf\ and \pf\ complex structure $J$, and
 \item a \Cpf\ and \pf\ almost-K\"ahler structure $\left(J',\,\omega',\,g'\right)$, for which the Lefschetz-type property on $2$-forms holds.
\end{itemize}
\end{prop}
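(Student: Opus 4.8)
The plan is to verify both assertions by explicit computation on the completely-solvable solvmanifold $M^6$, the statement being essentially a summary of the construction carried out in the preceding example. First I would fix the invariant coframe $\left\{e^1,\ldots,e^6\right\}$ for $\duale{\mathfrak{g}}$, with structure equations $\de e^1=\de e^2=0$, $\de e^3 = e^{13}$, $\de e^4 = -e^{14}$, $\de e^5 = e^{15}$, $\de e^6 = -e^{16}$. Since $M^6$ is a compact quotient of a completely-solvable Lie group, A. Hattori's theorem \cite[Corollary 4.2]{hattori} reduces the computation of $H^\bullet_{dR}\left(M^6;\C\right)$ to the cohomology of the finite-dimensional invariant complex $\left(\wedge^\bullet\duale{\mathfrak{g}},\,\de\right)$, turning everything into linear algebra.

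For the integrable part, I would introduce the complex structure $J$ via the $(1,0)$-coframe $\varphi^1 = \frac{1}{2}\left(e^1+\im e^2\right)$, $\varphi^2 = e^3+\im e^5$, $\varphi^3 = e^4+\im e^6$, check integrability directly from the structure equations, and then exhibit the $g$-harmonic representatives of $H^2_{dR}\left(M^6;\C\right)$ for the metric $g:=\sum_j \varphi^j\odot\bar\varphi^j$. The crucial observation is that the resulting basis $\left\{\varphi^{1\bar1},\,\varphi^{2\bar3},\,\varphi^{3\bar2},\,\varphi^{23},\,\varphi^{\bar2\bar3}\right\}$ consists entirely of forms of pure type with respect to $J$. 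By A. Fino and A. Tomassini's criterion \cite[Theorem 3.7]{fino-tomassini}, the existence of such a pure-type $g$-harmonic basis forces $J$ to be both \Cpf\ and \pf. That $M^6$ carries no K\"ahler structure follows from K. Hasegawa's classification \cite[Main Theorem]{hasegawa-3}, so $J$ genuinely lies in the non-K\"ahler setting.

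For the almost-K\"ahler part, I would take the non-integrable $J'$ defined by $J'e^1 = -e^2$, $J'e^3 = -e^4$, $J'e^5 = -e^6$, together with $\omega' := e^{12}+e^{34}+e^{56}$; verifying $\de\omega'=0$ (while $\de$ applied to the associated $(1,0)$-coframe $\psi^1=\frac{1}{2}\left(e^1+\im e^2\right)$, $\psi^2=e^3+\im e^4$, $\psi^3=e^5+\im e^6$ is nonzero) produces an almost-K\"ahler structure $\left(J',\,\omega',\,g'\right)$ with $g':=\sum_j e^j\odot e^j$. Computing again through Hattori's theorem, I would display the $g'$-harmonic representatives of $H^2_{dR}\left(M^6;\R\right)$ and verify that they split into pure-type $(1,1)$-forms spanning $H^+_{J'}\left(M^6\right)$ and a pure-type $(2,0)+(0,2)$-form spanning $H^-_{J'}\left(M^6\right)$. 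Then \Cf ness is immediate, \Cp ness follows from \cite[Proposition 3.2]{fino-tomassini} (or \cite[Proposition 2.8]{draghici-li-zhang}) since $\left(J',\,\omega'\right)$ is almost-K\"ahler, and \pf ness follows again from \cite[Theorem 3.7]{fino-tomassini}; finally the pure-type harmonic basis feeds directly into Theorem \ref{thm:almost-kahler-cpf} to yield the Lefschetz-type property on $2$-forms.

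The main obstacle I anticipate is not conceptual but computational: correctly identifying the $g$- and $g'$-harmonic representatives among the invariant closed forms and confirming that they are of pure type, since it is precisely this pure-type property that activates the whole machinery of \cite[Theorem 3.7]{fino-tomassini} and Theorem \ref{thm:almost-kahler-cpf}. As an independent confirmation, the Lefschetz-type property can also be checked by hand, verifying directly that $L_{\omega'} = \omega'\wedge\sspace$ carries each harmonic $2$-form to $*_{g'}$ of another harmonic $2$-form.
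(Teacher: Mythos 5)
Your proposal is correct and takes essentially the same route as the paper's proof: the same coframe and structure equations, the same $J$, $J'$, $\omega'$, $g'$, the reduction to invariant forms via A.~Hattori's theorem \cite[Corollary 4.2]{hattori}, the pure-type harmonic bases of $H^2_{dR}\left(M^6;\C\right)$ and $H^2_{dR}\left(M^6;\R\right)$, and the identical chain of citations (\cite[Theorem 3.7]{fino-tomassini} for \Cpf ness and \pf ness, \cite[Proposition 3.2]{fino-tomassini} or \cite[Proposition 2.8]{draghici-li-zhang} for \Cp ness, and Theorem \ref{thm:almost-kahler-cpf} for the Lefschetz-type property on $2$-forms). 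Even your final hand-check that $L_{\omega'}$ carries each $g'$-harmonic $2$-form to $*_{g'}$ of a harmonic $2$-form reproduces the explicit verification given in the paper.
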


\medskip

Finally, in the following example, we give a non-\Cf\ almost-K\"ahler structure, \cite[\S4]{angella-tomassini-zhang}. In particular, this provides another strong difference between the (non-integrable) almost-K\"ahler case and the (integrable) K\"ahler case, all the compact K\"ahler manifolds being \Cpf\ by \cite[Lemma 2.15, Theorem 2.16]{draghici-li-zhang}, or \cite[Proposition 2.1]{li-zhang}.

\begin{ex}
\label{ex:almost-kahler-non-Cf}
{\itshape An almost-K\"ahler non-\Cf\ structure for which the Lefschetz-type property on $2$-forms does not hold.}\\
Consider the Iwasawa manifold $\mathbb{I}_3 := \left. \mathbb{H}\left(3;\Z\left[\im\right]\right) \right\backslash \mathbb{H}(3;\C)$, see \S\ref{subsec:iwasawa}.
Recall that, given the standard complex structure induced by the one on $\C^3$ and setting $\left\{\varphi^1,\,\varphi^2,\,\varphi^3\right\}$ as a global co-frame for the $(1,0)$-forms on $\mathbb{I}_3$, by K. Nomizu's theorem \cite[Theorem 1]{nomizu} one gets
\begin{eqnarray*}
 H^2_{dR}\left(\mathbb{I}_3;\C\right) &=& \R\left\langle \varphi^{13}+\varphi^{\bar{1}\bar{3}},\; \im\left(\varphi^{13}-\varphi^{\bar{1}\bar{3}}\right),\;\varphi^{23}+\varphi^{\bar{2}\bar{3}},\; \im\left(\varphi^{23}-\varphi^{\bar{2}\bar{3}}\right),\; \varphi^{1\bar{2}}-\varphi^{2\bar{1}}, \right. \\[5pt]
 && \left. \im\left(\varphi^{1\bar{2}}+\varphi^{2\bar{1}}\right),\;\im\varphi^{1\bar{1}},\;\im\varphi^{2\bar{2}} \right\rangle \,\otimes_\R\, \C \;,
\end{eqnarray*}
where we have listed the harmonic representatives with respect to the metric $g:=\sum_{h=1}^3\varphi^h\odot\bar\varphi^h$ instead of their classes.
Using the co-frame $\left\{e^1,\, \ldots,\, e^6\right\}$ of the cotangent bundle defined by
$$
\varphi^{1}=:e^1+\im e^2\;,\qquad \varphi^{2}=:e^3+\im e^4\;,\qquad \varphi^{3}=:e^5+\im e^6\;,
$$
one computes the structure equations
$$
\de e^1 \;=\; \de e^2 \;=\; \de e^3 \;=\; \de e^4 \;=\; 0\;, \qquad \de e^5 \;=\; -e^{13}+e^{24}\;, \qquad \de e^6 \;=\; -e^{14}-e^{23}\;.
$$
Therefore
\begin{eqnarray*}
H^2_{dR}\left(\mathbb{I}_3;\R\right) &=& \R\left\langle e^{15}-e^{26},\; e^{16}+e^{25},\; e^{35}-e^{46},\; e^{36}+e^{45},\; e^{13}+e^{24},\; e^{23}-e^{14},\; e^{12},\; e^{34} \right\rangle \;.
\end{eqnarray*}
Consider the almost-complex structure $J$ on $X$ defined by
$$
J\,e^1\;:=\; -e^6\;,\qquad  J\,e^2\;:=\; -e^5\;,\qquad J\,e^3\;:=\; -e^4 \;,
$$
and set
$$
\omega \;:=\; e^{16}+e^{25}+e^{34}\;.
$$
Then $\left(J,\,\omega,\,g\right)$ is an almost-K\"ahler structure on the Iwasawa manifold $\mathbb{I}_3$. We easily get that
$$
 \R\left\langle e^{16}+e^{25},\, \left(e^{35}-e^{46}\right)+\left(e^{13}+e^{24}\right),\,\left(e^{36}+e^{45}\right)-\left(e^{23}-e^{14}\right),\,e^{34}\right\rangle \subseteq H^+_J\left(\mathbb{I}_3\right)
$$
and
$$
 \R\left\langle e^{15}-e^{26},\,\left(e^{35}-e^{46}\right)-\left(e^{13}+e^{24}\right),\,\left(e^{36}+e^{45}\right)+\left(e^{23}-e^{14}\right)\right\rangle \subseteq H^-_J\left(\mathbb{I}_3\right)\;.
$$

We claim that the previous inclusions are actually equalities, and in particular that $J$ is a non-\Cf\ almost-K\"ahler structure on $\mathbb{I}_3$.
Indeed, we firstly note that, by \cite[Proposition 3.2]{fino-tomassini} or \cite[Proposition 2.8]{draghici-li-zhang}, $J$ is \Cp, since it admits a symplectic structure compatible with it. Moreover, we recall that a \Cf\ almost-complex structure is also \p\ by \cite[Proposition 2.5]{li-zhang}, and therefore it is also \Cp\ at the \kth{4} stage, by Theorem \ref{thm:implicazioni}, that is,
$$
H^{(3,1),(1,3)}_J\left(\mathbb{I}_3;\R\right)\cap H^{(2,2)}_J\left(\mathbb{I}_3;\R\right) \;=\; \{0\} \;.
$$
Therefore, our claim reduces to prove that $J$ is not \Cp\ at the \kth{4} stage. Note that
\begin{eqnarray*}
 0 \;\neq\; \left[e^{3456}\right] &=& \left[ e^{3456}-\de e^{135}\right]=\left[e^{3456}+e^{1234}\right] \\[5pt]
                                  &=& \left[ e^{3456}+\de e^{135}\right]=\left[e^{3456}-e^{1234}\right] \;,
\end{eqnarray*}
and that $e^{3456}+e^{1234}\in\left(\wedge^{3,1}_J\mathbb{I}_3\oplus\wedge^{1,3}_J\mathbb{I}_3\right)\cap \wedge^4\mathbb{I}_3$, while $e^{3456}-e^{1234}\in\wedge^{2,2}_J\mathbb{I}_3\cap \wedge^4\mathbb{I}_3$, and so
$H^{(3,1),(1,3)}_J\left(\mathbb{I}_3;\R\right) \cap H^{(2,2)}_J\left(\mathbb{I}_3;\R\right) \ni \left[e^{3456}\right]$, therefore $J$ is not \Cp\ at the \kth{4} stage, and hence it is not \Cf.

Let $L_\omega$ be the Lefschetz-type operator associated to the almost-K\"ahler structure $\left(J,\, \omega,\, g\right)$. Then, we have
$$ L_\omega \left(e^{12}\right) \;=\; e^{1234} \;=\; \de\left(e^{245}\right) \;,$$
namely, $L_\omega$ does not take $g$-harmonic $2$-forms to $g$-harmonic $4$-forms.
\end{ex}

The previous example proves the following result, \cite[Proposition 4.1]{angella-tomassini-zhang}.
\begin{prop}\label{prop:almost-kahler-non-cf}
The differentiable manifold $X$ underlying the Iwasawa manifold $\mathbb{I}_3 := \left. \mathbb{H}\left(3;\Z\left[\im\right]\right) \right\backslash \mathbb{H}(3;\C)$ admits an almost-K\"ahler structure $\left(J,\,\omega,\,g\right)$ which is \Cp\ and non-\Cf, and for which the Lefschetz-type property on $2$-forms does not hold.
\end{prop}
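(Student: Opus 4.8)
The plan is to exhibit the almost-K\"ahler structure explicitly on the nilmanifold $\I_3$, as in Example \ref{ex:almost-kahler-non-Cf}, and then verify the three assertions in turn: \Cp ness, failure of \Cf ness, and failure of the Lefschetz-type property. First I would fix the real coframe $\{e^1,\dots,e^6\}$ of $\I_3$ adapted to the standard complex structure, for which the structure equations read $\de e^1=\cdots=\de e^4=0$, $\de e^5=-e^{13}+e^{24}$, $\de e^6=-e^{14}-e^{23}$, and define the (non-integrable) almost-complex structure $J$ by $Je^1:=-e^6$, $Je^2:=-e^5$, $Je^3:=-e^4$, together with the $2$-form $\omega:=e^{16}+e^{25}+e^{34}$. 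Since $\omega$ is a combination of closed generators it is $\de$-closed, and it is $J$-invariant and positive, so $(J,\omega,g)$ is almost-K\"ahler with $g=\sum_{i}e^i\odot e^i$. It is convenient to pass to the unitary coframe $\psi^1:=e^1+\im e^6$, $\psi^2:=e^2+\im e^5$, $\psi^3:=e^3+\im e^4$ of $(1,0)$-forms for $J$, in which $\omega=\frac{\im}{2}\left(\psi^{1\bar1}+\psi^{2\bar2}+\psi^{3\bar3}\right)$; the cohomology of $\I_3$ is computed once and for all by K. Nomizu's theorem \cite[Theorem 1]{nomizu}.

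The \Cp ness is immediate: every almost-K\"ahler structure is \Cp\ by \cite[Proposition 3.2]{fino-tomassini} (or \cite[Proposition 2.8]{draghici-li-zhang}). To prove that $J$ is \emph{not} \Cf, I would argue by contradiction, using the duality relations of Theorem \ref{thm:implicazioni}: a \Cf\ structure is \p\ by \cite[Proposition 2.5]{li-zhang}, hence (applying Theorem \ref{thm:implicazioni} with $k=2$ and $2n=6$) it is \Cp\ at the \kth{4} stage, i.e.
\[
H^{(3,1),(1,3)}_J(\I_3;\R)\cap H^{(2,2)}_J(\I_3;\R)=\{0\}\,.
\]
It therefore suffices to produce a nonzero class lying in this intersection. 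The candidate is $\left[e^{3456}\right]$: it is $\de$-closed, and it is nonzero because $\int_{\I_3}e^{3456}\wedge e^{12}=\pm\int_{\I_3}e^{123456}\neq0$ while $e^{12}$ is closed. The key identity is $\de e^{135}=-e^{1234}$, which gives $\left[e^{3456}\right]=\left[e^{3456}+e^{1234}\right]=\left[e^{3456}-e^{1234}\right]$.

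The heart of the matter — and the step I expect to be the main obstacle — is the type computation showing that $e^{3456}+e^{1234}\in\left(\wedge^{3,1}\I_3\oplus\wedge^{1,3}\I_3\right)\cap\wedge^4\I_3$ while $e^{3456}-e^{1234}\in\wedge^{2,2}\I_3\cap\wedge^4\I_3$. I would carry this out by expanding the $e^i$ in the unitary coframe, noting $e^{34}=\frac{\im}{2}\psi^{3\bar3}$ and that $e^{56}$ and $e^{12}$ each split into $(2,0)+(1,1)+(0,2)$ parts, and then checking that the $(2,2)$-components of $e^{3456}$ and $e^{1234}$ are opposite (so they cancel in the sum) whereas the $(3,1)$- and $(1,3)$-components coincide (so they cancel in the difference). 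This places $\left[e^{3456}\right]$ in $H^{(3,1),(1,3)}_J(\I_3;\R)\cap H^{(2,2)}_J(\I_3;\R)\setminus\{0\}$, contradicting \Cp ness at the \kth{4} stage and hence ruling out \Cf ness. Finally, for the Lefschetz-type property I would observe that $e^{12}$ is $g$-harmonic (it is closed and $*e^{12}=e^{3456}$ is closed too, whence $\de^*e^{12}=0$), while $L_\omega\,e^{12}=\omega\wedge e^{12}=e^{1234}=\de e^{245}$ is a nonzero exact form and so cannot be $g$-harmonic; thus $L_\omega=L^{n-2}_\omega$ fails to send $g$-harmonic $2$-forms to $g$-harmonic $(2n-2)$-forms, completing the proof.
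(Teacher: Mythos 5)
Your proposal is correct and follows essentially the same route as the paper's own proof (Example \ref{ex:almost-kahler-non-Cf}): the same structure $\left(J,\,\omega,\,g\right)$ on $\mathbb{I}_3$, \Cp ness from \cite[Proposition 3.2]{fino-tomassini} or \cite[Proposition 2.8]{draghici-li-zhang}, non-\Cf ness by contradicting \Cp ness at the \kth{4} stage via Theorem \ref{thm:implicazioni} with the class $\left[e^{3456}\right]=\left[e^{3456}+e^{1234}\right]=\left[e^{3456}-e^{1234}\right]$ coming from $\de e^{135}=-e^{1234}$, and the failure of the Lefschetz-type property from $L_\omega\left(e^{12}\right)=e^{1234}=\de e^{245}$ with $e^{12}$ being $g$-harmonic; your type computation in the coframe $\left\{\psi^1,\,\psi^2,\,\psi^3\right\}$ does verify the claims the paper only asserts, since $e^{56}+e^{12}=\frac{1}{2}\left(\psi^{12}+\psi^{\bar1\bar2}\right)$ and $e^{56}-e^{12}=\frac{1}{2}\left(\psi^{2\bar1}-\psi^{1\bar2}\right)$. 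One small correction: $\omega$ is not ``a combination of closed generators'', since $\de e^{16}=e^{123}=-\de e^{25}$; it is $\de$-closed only because these two terms cancel (and $\de e^{34}=0$), a one-line computation you should substitute for that justification.
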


\medskip

The argument of the proof of \cite[Theorem 2.3]{draghici-li-zhang} suggests the following question, \cite[Question 3.4]{angella-tomassini-zhang}, compare also \cite[\S2]{draghici-li-zhang-survey}, in accordance with Proposition \ref{prop:almost-kahler-non-cf}.

\begin{question}
 Let $X$ be a compact $2n$-dimensional manifold endowed with an almost-K\"ahler structure $\left(J,\,\omega,\,g\right)$ such that the Lefschetz-type property on $2$-forms holds. Is $J$ \Cf?
\end{question}

\section{\Cpf ness and deformations of (almost-)complex structures}\label{sec:deformations-cpf}

In this section, we are interested in studying the behaviour of the cohomological decomposition of the de Rham cohomology of an (almost-)complex manifold under small deformations of the complex structure and along curves of almost-complex structures.

More precisely, we prove that being \Cpf\ is not a stable property under small deformations of the complex structure, Theorem \ref{thm:instability-iwasawa}, as a consequence of the study of the \Cpf ness for small deformations of the Iwasawa manifold, Theorem \ref{thm:instability-iwasawa}. Then we study some explicit examples of curves of almost-complex structures on compact manifolds: by using a construction introduced by J. Lee, \cite[\S1]{lee}, we construct a curve of almost-complex structures along which the property of being \Cpf\ remains satisfied, Theorem \ref{thm:lee-curves}. In \S\ref{subsec:semicontinuity}, we provide counterexamples to the upper-semi-continuity of $t\mapsto H^-_{J_t}(X)$, Proposition \ref{prop:no-scs-h-}, and to the lower-semi-continuity of $t\mapsto H^+_{J_t}(X)$, Proposition \ref{prop:no-sci-h+}, where $\left\{J_t\right\}_t$ is a curve of almost-complex structures on a compact manifold $X$ of dimension greater than $4$; we also study a stronger semi-continuity problem,
\S\ref{subsubsec:stronger-semicont}.

The results in this section have been obtained in joint work with A. Tomassini, \cite{angella-tomassini-1, angella-tomassini-2}.

\subsection{Deformations of \Cpf\ almost-complex structures}

In this section, we consider the problem of the stability of the \Cpf ness under small deformations of the complex structure and along curves of almost-complex structures.

\subsubsection{Instability of \Cpf\ property}

We recall that a property concerning compact complex (respectively, almost-complex) manifolds (e.g., admitting K\"ahler metrics, admitting balanced metrics, satisfying the $\del\delbar$-Lemma, admitting compatible symplectic structures) is called \emph{stable under small deformations of the complex} (respectively, \emph{almost-complex}) \emph{structure} if, for every complex-analytic family $\left\{X_t:=:\left(X,\, J_t\right)\right\}_{t\in B}$ of compact complex manifolds (respectively, for every smooth curve $\left\{J_t\right\}_{t\in B}$ of almost-complex structures on a compact differentiable manifold $X$), whenever the property holds for $\left(X,\, J_t\right)$ for some $t\in B$, it holds also for $\left(X,\, J_s\right)$ for any $s$ in a neighbourhood of $t$ in $B$.

\medskip

The main result in the context of stability under small deformations of the complex structure is the following classical theorem by K. Kodaira and D.~C. Spencer, \cite{kodaira-spencer-3}, which actually holds for differentiable families of compact complex manifolds.

\begin{thm}[{\cite[Theorem 15]{kodaira-spencer-3}}]
 For a compact manifold, admitting a K\"ahler structure is a stable property under small deformations of the complex structure.
\end{thm}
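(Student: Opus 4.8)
The plan is to follow the classical route through Hodge theory and the semi-continuity of the dimensions of the Dolbeault cohomology groups, as the statement suggests. First I would invoke Ehresmann's theorem to regard the differentiable family $\left\{X_t\right\}_{t\in B}$ as a fixed differentiable manifold $X$ equipped with a smooth family $\left\{J_t\right\}_{t\in B}$ of complex structures; in particular the Betti numbers $b_k=\dim_\C H^k_{dR}(X;\C)$ do not depend on $t$. Fix a Kähler metric on $X_{t_0}$ with associated $(1,1)$-form $\omega_0$, extend it to a smooth family of Hermitian metrics $g_t$ on $X_t$, and let $\overline\square_t$ be the corresponding $\delbar$-Laplacian. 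The goal is to produce, for $t$ in a neighbourhood of $t_0$, a real $\de$-closed $J_t$-$(1,1)$-form $\omega_t$ depending smoothly on $t$, with $\omega_{t_0}=\omega_0$; once $\omega_t$ is closed, of type $(1,1)$ and smooth in $t$, its positivity for $t$ close to $t_0$ is automatic, positivity being an open condition and $\omega_{t_0}=\omega_0$ being positive.

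The first key step is to show that the Hodge numbers are locally constant. By the general theory of self-adjoint elliptic operators, $t\mapsto h^{p,q}_{\delbar}(X_t)=\dim_\C\ker\overline\square_t\lfloor_{\wedge^{p,q}X}$ is upper-semi-continuous. Since $X_{t_0}$ is Kähler, the Hodge decomposition theorem gives $\sum_{p+q=k}h^{p,q}_{\delbar}(X_{t_0})=b_k$, while the Frölicher inequality gives $\sum_{p+q=k}h^{p,q}_{\delbar}(X_t)\geq b_k$ for every $t$. Combining upper-semi-continuity with these two facts yields, for $t$ near $t_0$,
$$ b_k \;\leq\; \sum_{p+q=k}h^{p,q}_{\delbar}(X_t) \;\leq\; \sum_{p+q=k}h^{p,q}_{\delbar}(X_{t_0}) \;=\; b_k \;, $$
so that each $h^{p,q}_{\delbar}(X_t)$ is constant near $t_0$ and the Hodge and Frölicher spectral sequence of $X_t$ degenerates at the first step. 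Because the dimensions $\dim_\C\ker\overline\square_t$ are locally constant, the harmonic projections and the Green operators $G_t$ of $\overline\square_t$ depend smoothly on $t$.

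Next I would correct $\omega_0$ within its de Rham class. Decomposing the fixed real $\de$-closed form $\omega_0$ with respect to $J_t$ as $\omega_0=\alpha_t+\mu_t+\beta_t$, with $\alpha_t$ of type $(2,0)_t$, $\mu_t$ real of type $(1,1)_t$, and $\beta_t=\overline{\alpha_t}$ of type $(0,2)_t$, the condition $\de\omega_0=0$ forces $\delbar_t\beta_t=0$, so that $\beta_t$ determines a class in $H^{0,2}_{\delbar}(X_t)$ with $\beta_{t_0}=0$. Looking for $\omega_t$ of the shape $\omega_t=\omega_0-\de\left(v_t+\overline{v_t}\right)$, with $v_t$ a $(0,1)_t$-form, produces a real form cohomologous to $\omega_0$ whose $(0,2)_t$- and $(2,0)_t$-components vanish as soon as $\delbar_t v_t=\beta_t$; this equation is solvable precisely when $\left[\beta_t\right]=0$ in $H^{0,2}_{\delbar}(X_t)$, in which case one may take $v_t:=\delbar_t^*G_t\beta_t$. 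Thus everything reduces to the vanishing of this obstruction class, after which $\omega_t$ is a smooth family of real $\de$-closed $J_t$-$(1,1)$-forms.

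The main obstacle is exactly the vanishing of the Dolbeault class $\left[\beta_t\right]\in H^{0,2}_{\delbar}(X_t)$, equivalently the vanishing of its $\overline\square_t$-harmonic part. This is where the degeneration of the spectral sequence, that is, the constancy of the Hodge numbers established above, enters in an essential way: $\left[\beta_t\right]$ is the image of the fixed real class $\left[\omega_0\right]\in H^2_{dR}(X;\R)$ under the projection $H^2_{dR}(X_t;\C)\to H^{0,2}_{\delbar}(X_t)$ identified, via $E_1\simeq E_\infty$, with $F^0H^2/F^1H^2$, and the point is that $\left[\omega_0\right]$ must be shown to stay representable by a form without $(0,2)_t$-component for $t$ near $t_0$. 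I expect this to be the genuinely delicate part, since a priori a fixed cohomology class can drift out of the smoothly varying filtration $F^1_t$; controlling it requires combining the constancy of $\dim F^1_tH^2=h^{2,0}(t)+h^{1,1}(t)$ with the reality of $\left[\omega_0\right]$ and the smooth dependence of the harmonic projection on $t$, so as to force the small harmonic part $H_t\beta_t$ to be exactly zero rather than merely close to zero. Once this is secured, smoothness of $\omega_t$ in $t$ follows from that of $G_t$ and $\delbar_t^*$, and positivity from openness, completing the argument.
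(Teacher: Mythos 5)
Your preparatory steps are sound and do match the ingredient the thesis alludes to (the thesis itself states this theorem only with the citation \cite[Theorem 15]{kodaira-spencer-3}; the actual Kodaira--Spencer proof, via the fourth-order elliptic operator $\tilde\Delta_{BC}$ of \cite[Proposition 5]{kodaira-spencer-3}, is recalled in \S\ref{sec:preliminaries-bott-chern-deldelbar}): Ehresmann's theorem, the constancy of the Hodge numbers near $t_0$ from upper-semi-continuity plus the Fr\"olicher inequality, and the smooth dependence of the Green operators once $\dim_\C\ker\overline\square_t$ is locally constant are all correct. The genuine gap is the step you yourself flag: the vanishing of the obstruction $\left[\beta_t\right]\in H^{0,2}_{\delbar}(X_t)$, equivalently of the harmonic part $H_t\beta_t$, is \emph{false} in general, so no combination of the constancy of $\dim_\C F^1_tH^2$, reality, and smoothness can force it. A fixed real class that is of type $(1,1)$ at $t_0$ generically leaves the filtration: already on a complex $2$-torus with the standard K\"ahler form $\omega_0=\frac{\im}{2}\left(\de z^1\wedge\de\bar z^1+\de z^2\wedge\de\bar z^2\right)$ and the deformation determined by $\de z^1_t:=\de z^1+t\,\de\bar z^2$, the $(0,2)_t$-component of $\omega_0$ is a non-zero translation-invariant, hence harmonic, form, so $H_t\beta_t\neq0$ and $\left[\omega_0\right]$ admits \emph{no} closed representative of pure type $(1,1)_t$ for $t\neq0$. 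In variational Hodge theory this is the Noether--Lefschetz phenomenon: when $h^{2,0}\neq0$, the locus of $t$ where a fixed real class stays in $F^1_tH^2$ is a proper analytic subset. Constancy of $\dim_\C F^1_tH^2$ only says the subspace $F^1_t$ moves continuously, and reality only forces the $(2,0)$- and $(0,2)$-components of the class to be conjugate, not zero; so your equation $\delbar_t v_t=\beta_t$ is in general unsolvable and the construction of $\omega_t$ collapses.

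The repair is to abandon the requirement $\left[\omega_t\right]=\left[\omega_0\right]$ and let the K\"ahler class move with $t$. Since the Hodge numbers are constant near $t_0$ and the direct-sum condition defining a weight-$2$ Hodge structure is open (equivalently, by the stability of the $\del\delbar$-Lemma, Corollary \ref{cor:stab-del-delbar-lemma}), for $t$ near $t_0$ one has $H^2_{dR}(X;\C)=H^{2,0}_t\oplus H^{1,1}_t\oplus H^{0,2}_t$ varying continuously; projecting $\left[\omega_0\right]$ onto the real part of $H^{1,1}_t$ yields a real class, equal to $\left[\omega_0\right]$ at $t_0$, which by the $\del\delbar$-Lemma admits a $\de$-closed representative of type $(1,1)_t$, chosen smoothly in $t$ via the Green operators; positivity then follows by openness, as you argued. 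Alternatively, Kodaira and Spencer's original argument applies the spectral projection onto the small-eigenvalue eigenspaces of the fourth-order self-adjoint elliptic operator $\tilde\Delta_{BC}$ (computed with respect to the smoothly varying metrics $g_t$) to (the $(1,1)_t$-part of) $\omega_0$, and proves the resulting real $(1,1)_t$-form is $\de$-closed and close to $\omega_0$; there too the de Rham class of $\omega_t$ depends on $t$, which is exactly what your proposal forbids.
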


\begin{rem}
 Conditions under which the property of admitting a balanced metric is stable under small deformations of the complex structure have been studied by C.-C. Wu \cite[\S5]{wu}, and by J. Fu and S.-T. Yau \cite{fu-yau}.
\end{rem}

\medskip

Note that, by \cite[Theorem 5.4]{draghici-li-zhang-2}, see also \cite{donaldson}, on compact almost-complex manifolds of dimension $4$, the property of admitting an almost-K\"ahler structure is stable under small deformations of the almost-complex structure. This result stands on the very special properties of $4$-dimensional manifolds, and does not hold true in higher dimension. More precisely, we provide here an explicit example, in dimension $6$, showing that, relaxing the integrability condition in the previous theorem (namely, starting with an almost-K\"ahler structure), we lose the stability under small deformations of the almost-complex structure.

\begin{ex}\label{ex:instability-alm-kahler}
\textit{A curve $\left\{J_t\right\}_t$ of almost-complex structures on a compact $6$-dimensional manifold such that $J_0$ admits an almost-K\"ahler structure and $J_t$, for $t\neq 0$, admits no almost-K\"ahler structure.}\\
For $c\in\R$, consider the completely-solvable Lie group
$$
\mathrm{Sol}(3)_{(x^1,y^1,z^1)} \;:=\;
\left\{
\left(
\begin{array}{cccc}
 \mathrm{e}^{c\,z^1} &   &   & x^1 \\
   & \mathrm{e}^{-c\,z^1} &   & y^1 \\
   &   & 1 & z^1 \\
   &   &   & 1
\end{array}
\right)\in \GL(4;\,\R)
\;\st\; x^1,\,y^1,\,z^1\in\R
\right\} \;.
$$
Choose a suitable $c\in\R$, for which there exists a co-compact discrete subgroup $\Gamma(c)\subset \mathrm{Sol}(3)$ such that
$$ M(c)_{(x^1,y^1,z^1)} \;:=\; \Gamma(c) \left\backslash \mathrm{Sol}(3)_{(x^1,y^1,z^1)} \right. $$
is a compact $3$-dimensional completely-solvable solvmanifold, \cite[\S3]{auslander-green-hahn}.

The manifold
$$ N^6(c) \;:=\; M(c)_{(x^1,y^1,z^1)} \,\times\, M(c)_{(x^2,y^2,z^2)} \;.$$
is cohomologically-K\"ahler, see \cite[Example 1]{benson-gordon-solvmanifolds}, is formal and has a symplectic structure satisfying the Hard Lefschetz Condition, but it admits no K\"{a}hler structure, see \cite[Theorem 3.5]{fernandez-munoz-santisteban}.

Consider $\left\{e^i\right\}_{i\in\{1,\ldots,6\}}$ as a $\left(\mathrm{Sol}(3)\times\mathrm{Sol}(3)\right)$-left-invariant co-frame for $N^6(c)$, where
\begin{eqnarray*}
 e^1 \,:=\, \mathrm{e}^{-c\,z^1}\,\de x^1 ,&\quad e^2 \,:=\, \mathrm{e}^{-c\,z^1}\,\de y^1 ,&\quad e^3 \,:=\, \de z^1 , \\[5pt]
 e^4 \,:=\, \mathrm{e}^{-c\,z^2}\,\de x^2 ,&\quad e^5 \,:=\, \mathrm{e}^{-c\,z^2}\,\de y^2 ,&\quad e^6 \,:=\, \de z^2 ;
\end{eqnarray*}
with respect to it, the structure equations are
$$
\left\{
\begin{array}{l}
 \de e^1 \;=\; \phantom{+}c\, e^1\wedge e^3 \\[5pt]
 \de e^2 \;=\; -c\, e^2\wedge e^3\\[5pt]
 \de e^3 \;=\; \phantom{+}0\\[5pt]
 \de e^4 \;=\; \phantom{+}c\, e^4\wedge e^6 \\[5pt]
 \de e^5 \;=\; -c\, e^5\wedge e^6 \\[5pt]
 \de e^6 \;=\; \phantom{+}0
\end{array}
\right. \;.
$$

By A. Hattori's theorem \cite[Corollary 4.2]{hattori}, it is straightforward to compute
$$ H^2_{dR}\left(N^6(c);\R\right) \;=\; \R\left\langle e^{1}\wedge e^{2},\; e^{3}\wedge e^{6},\; e^{4}\wedge e^{5} \right\rangle \;,$$
hence the space of $\left(\mathrm{Sol}(3)\times\mathrm{Sol}(3)\right)$-left-invariant $\de$-closed $2$-forms is
$$
\R\left\langle e^{12},\, e^{36},\, e^{45} \right\rangle \oplus \R\left\langle e^{13},\,e^{23},\,e^{45},\,e^{46} \right\rangle $$
(where, as usually, we shorten $e^{AB}:=e^A\wedge e^B$).

Let $J_0\in\End\left(TN^6(c)\right)$ be the almost-complex structure given, with respect to the frame $\left\{e_1,\ldots,e_6\right\}$ dual to $\left\{e^1,\ldots,e^6\right\}$, by
$$
J_0 \;:=\;
\left(
\begin{array}{cc|cccc}
 & -1 & & & & \\
 \phantom{+}1 & & & & & \\
\hline
 & & & & & -1 \\
 & & & & -1 & \\
 & & & \phantom{+}1 & & \\
 & & \phantom{+}1 & & &
\end{array}
\right) \;\in\; \End\left(TN^6(c)\right) \;.
$$
It is straightforward to check that $J_0$ admits almost-K\"ahler structures: more precisely, the cone $\mathcal{K}^c_{J_0,\,\textrm{inv}}$ of $\left(\mathrm{Sol}(3)\times\mathrm{Sol}(3)\right)$-left-invariant almost-K\"ahler structures on $\left(N^6(c),\,J_0\right)$ is
$$ \mathcal{K}^c_{J_0,\,\textrm{inv}} \;=\; \left\{ \alpha\,e^1\wedge e^2+\beta\,e^3\wedge e^6+\gamma\,e^4\wedge e^5 \st \alpha,\,\beta,\,\gamma >0 \right\} \;.$$

Take now
$$
L \;:=\;
\left(
\begin{array}{cccccc}
0 & 0 & 1 & 0 & 0 & 0 \\
0 & 0 & 0 & 0 & 0 & -1 \\
0 & 0 & 0 & 1 & 0 & 0 \\
0 & 0 & 0 & 0 & 0 & 0 \\
0 & 0 & 0 & 0 & 0 & 0 \\
0 & 0 & 0 & 0 & -1 & 0
\end{array}
\right) \;\in\; \End\left(TN^6(c)\right)
$$
and define, for $t\in\R$, the almost-complex structure
$$
J_t \;:=\; \left(\id_{TN^6(c)}-t\,L\right)\,J_0\,\left(\id_{TN^6(c)}-t\,L\right)^{-1} \;=\;
\left(
\begin{array}{cc|cccc}
 & 1 & & & 2\,t^2 & -2\,t \\
 -1 & & -2\,t & -2\,t^2 & & \\
\hline
 & & & & -2\,t & 1 \\
 & & & & 1 & \\
 & & & -1 & & \\
 & & -1 & -2\,t & &
\end{array}
\right)
\;\in\; \End\left(T^*N^6(c)\right) 
\;.
$$
We first prove that $J_t$ admits no $\left(\mathrm{Sol}(3)\times\mathrm{Sol}(3)\right)$-left-invariant almost-K\"ahler structure for $t\neq 0$. Indeed, for $t\neq 0$, the space of $\left(\mathrm{Sol}(3)\times\mathrm{Sol}(3)\right)$-left-invariant $\de$-closed $J_t$-invariant $2$-forms is
$$ \R\left\langle e^{36}+2t\,e^{46},\, e^{45} \right\rangle $$
and
$$ \left(\beta\,e^3\wedge e^6+\gamma\, e^4\wedge e^5+2t\,\beta\,e^4\wedge e^6\right)^3 \;=\; 0 \quad \text{ for every } \beta,\,\gamma\in\R  \;,$$
hence
$$ \mathcal{K}^c_{J_t,\,\textrm{inv}} \;=\; \varnothing  \qquad \text{ for } t\neq 0\;.$$
Now, using F.~A. Belgun's symmetrization trick, \cite[Theorem 7]{belgun}, we get that, if $J_t$ admits an almost-K\"ahler structure $\omega$, then it should admits a $\left(\mathrm{Sol}(3)\times\mathrm{Sol}(3)\right)$-left-invariant almost-K\"ahler structure
$$ \mu(\omega)\;:=\;\int_{N^6(c)} \omega\lfloor_m \, \eta(m) \;,$$
where $\eta$ is a $\left(\mathrm{Sol}(3)\times\mathrm{Sol}(3)\right)$-bi-invariant volume form on $N^6(c)$, whose existence is guaranteed by \cite[Lemma 6.2]{milnor}.
\end{ex}

We resume the content of the previous example in the following result.
\begin{thm}\label{thm:instability-almK}
 Being almost-K\"ahler is not a stable property along curves of almost-complex structures.
\end{thm}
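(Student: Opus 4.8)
The plan is to prove instability by producing an explicit counterexample: a compact manifold equipped with a smooth curve $\{J_t\}_t$ of almost-complex structures for which $J_0$ admits an almost-K\"ahler structure while $J_t$ admits none for $t\neq 0$. This is exactly what Example \ref{ex:instability-alm-kahler} furnishes, so the theorem follows immediately from it; what follows describes how I would assemble such a construction from scratch, and which ingredient carries the weight.

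First I would search for a compact manifold whose de Rham cohomology is accessible through left-invariant forms and which is cohomologically close to K\"ahler without being K\"ahler. A product $N^6(c)=M(c)\times M(c)$ of two copies of a $3$-dimensional completely-solvable solvmanifold is a natural candidate: by A.~Hattori's theorem the cohomology is computed on the associated Lie algebra, the manifold is formal and cohomologically-K\"ahler, yet carries no K\"ahler structure. I would then fix a left-invariant almost-complex structure $J_0$ and verify directly that it admits an almost-K\"ahler structure by exhibiting the non-empty cone of left-invariant $\de$-closed non-degenerate $J_0$-invariant $2$-forms.

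To generate the curve I would use the standard device $J_t:=(\id-tL)\,J_0\,(\id-tL)^{-1}$ for a carefully chosen endomorphism $L$ of the tangent bundle; conjugation preserves the relation $J^2=-\id$, so each $J_t$ is genuinely an almost-complex structure depending polynomially on $t$. The crux of the argument, and the main obstacle, is the linear-algebra computation showing that for $t\neq 0$ every left-invariant $\de$-closed $J_t$-invariant $2$-form has vanishing top exterior power $\omega^3$, i.e.\ is degenerate. The endomorphism $L$ must be tuned so that the deformation annihilates the non-degeneracy of all invariant compatible forms, while the structure equations keep the space of such forms explicitly computable; this is where the example's specific choices of $J_0$ and $L$ do the real work.

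Finally, to upgrade "no invariant almost-K\"ahler form'' to "no almost-K\"ahler form at all'', I would invoke F.~A.~Belgun's symmetrization trick (Lemma \ref{lemma:belgun-J}): if $\omega$ were an almost-K\"ahler form for $J_t$, then $\mu(\omega)$ would be a left-invariant $2$-form which is $\de$-closed (since $\mu$ commutes with $\de$), $J_t$-invariant (since $\mu$ commutes with $J_t$), and cohomologous to $\omega$. As $\omega$ is symplectic, $[\mu(\omega)^3]=[\omega^3]\neq 0$, so $\mu(\omega)$ is non-degenerate, contradicting the degeneracy established above. Hence no almost-K\"ahler structure exists for $t\neq 0$, which shows that being almost-K\"ahler is not stable along curves of almost-complex structures.
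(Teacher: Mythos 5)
Your proposal is correct and follows essentially the same route as the paper's own proof (Example \ref{ex:instability-alm-kahler}): the product solvmanifold $N^6(c)$, the conjugation curve $J_t=(\id-tL)\,J_0\,(\id-tL)^{-1}$, the computation that every left-invariant $\de$-closed $J_t$-invariant $2$-form is degenerate for $t\neq 0$, and F.~A.~Belgun's symmetrization to pass from left-invariant forms to arbitrary ones. Your justification that $\mu(\omega)$ is non-degenerate via $[\mu(\omega)]^3=[\omega]^3\neq 0$ (using that $\mu$ is the identity on cohomology by Hattori's theorem, and that a left-invariant form with non-exact top power is nowhere degenerate) is a valid, slightly more explicit variant of the paper's assertion that $\mu(\omega)$ is again almost-K\"ahler.
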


\medskip

In view of K. Kodaira and D.~C. Spencer's theorem \cite[Theorem 15]{kodaira-spencer-3}, a natural question in non-K\"ahler geometry is what properties, weaker that the property of being K\"ahler, still remain stable under small deformations of the complex structure. This does not hold true, for example, for the balanced property, as proven in \cite[Proposition 4.1]{alessandrini-bassanelli} by L. Alessandrini and G. Bassanelli; on the other hand, the cohomological property of satisfying the $\del\delbar$-Lemma is stable under small deformations of the complex structure, as we have seen in Corollary \ref{cor:stab-del-delbar-lemma}, see also \cite[Proposition 9.21]{voisin}, or \cite[Theorem 5.12]{wu}, or \cite[\S B]{tomasiello}. We show now that the cohomological property of \Cpf ness turns out to be non-stable under small deformations of the complex structure, \cite[Theorem 3.2]{angella-tomassini-1}.

\begin{thm}
\label{thm:instability}
The properties of being \Cpf, or \Cp, or \Cf, or \pf, or \p, or \f\ are not stable under small deformations of the complex structure.
\end{thm}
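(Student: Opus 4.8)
The plan is to exhibit an explicit complex-analytic family $\{X_\tempo\}_{\tempo\in\Delta(\zero,\varepsilon)}$ of compact complex manifolds and a distinguished fiber $X_{\tempo_0}$ that enjoys each of the cohomological decomposition properties (\Cpf, \Cp, \Cf, \pf, \p, \f), while arbitrarily close fibers fail (at least one of) them. The natural candidate is the Iwasawa manifold $\I_3$ together with its Kuranishi family, whose structure equations and cohomology have been set up in detail in \S\ref{sec:computations-iwasawa}. The strategy is to take $\tempo_0=\zero$, show that $J_{\zero}$ (the standard complex structure on $\I_3$) is \Cpf\ (in fact has all the listed properties), and then show that for $\tempo$ in class \emph{(ii)} or \emph{(iii)}, arbitrarily near $\zero$, the almost-complex structure $J_\tempo$ ceases to be \Cpf. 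I expect this to be stated formally as Theorem \ref{thm:instability-iwasawa}, from which Theorem \ref{thm:instability} follows immediately.

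First I would verify the claim at $\tempo=\zero$. From the de Rham computation in \S\ref{sec:derham-iwasawa}, every $g_{\zero}$-harmonic representative of $H^2_{dR}(\I_3;\R)$ is of pure type with respect to $J_{\zero}$; the explicit list shows $H^2$ is spanned by the invariant classes $\{\varphi^{1\bar1},\varphi^{1\bar2},\varphi^{2\bar1},\varphi^{2\bar2}\}$ together with the anti-invariant classes $\{\varphi^{13},\varphi^{23},\varphi^{\bar1\bar3},\varphi^{\bar2\bar3}\}$. Hence $H^2_{dR}(\I_3;\R)=H^+_{J_{\zero}}(\I_3)\oplus H^-_{J_{\zero}}(\I_3)$, so $J_{\zero}$ is \Cpf; by Theorem \ref{thm:implicazioni} (or by the pure-type harmonic basis argument of \cite[Theorem 3.7]{fino-tomassini}) it is then \Cp, \Cf, \pf, \p, \f\ as well. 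This is the routine half.

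The heart of the matter, and the main obstacle, is the failure for $\tempo\neq\zero$. Here I would use the structure equations of \S\ref{sec:structure-equations-iwasawa}: for $\tempo$ in classes \emph{(ii)} and \emph{(iii)} one has $\de\varphi^3_\tempo=\sigma_{12}\,\varphi^{12}_\tempo+\sigma_{1\bar1}\,\varphi^{1\bar1}_\tempo+\sigma_{1\bar2}\,\varphi^{1\bar2}_\tempo+\sigma_{2\bar1}\,\varphi^{2\bar1}_\tempo+\sigma_{2\bar2}\,\varphi^{2\bar2}_\tempo$ with $\sigma_{12}\neq0$ and $(\sigma_{1\bar1},\sigma_{1\bar2},\sigma_{2\bar1},\sigma_{2\bar2})\neq(0,0,0,0)$, and the asymptotics $\sigma_{1\bar1}=t_{21}+\opiccolo{\tempo}$, etc. Since $J_\tempo$ is $\mathbb{H}(3;\C)$-left-invariant, by Proposition \ref{prop:linear-cpf-invariant-cpf-J} the subgroups $H^{(p,q),(q,p)}_{J_\tempo}(\I_3;\R)$ and $H^{\pm}_{J_\tempo}(\I_3)$ are computed at the level of the Lie algebra using only invariant forms. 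I would then exhibit a specific real $\de$-closed invariant $2$-form, namely a class involving $\varphi^{1\bar3}_\tempo+\varphi^{\bar1 3}_\tempo$ or the appropriate combination dictated by the $\sigma$'s, and show by direct linear algebra on the finite-dimensional complex $(\wedge^{\bullet,\bullet}\mathfrak{g}_\C^*,\del,\delbar)$ that it admits no decomposition into $J_\tempo$-invariant plus $J_\tempo$-anti-invariant closed representatives modulo exact forms. Concretely, the nonvanishing mixed terms in $\de\varphi^3_\tempo$ obstruct splitting a certain cohomology class into pure-type pieces, so either the sum $H^+_{J_\tempo}+H^-_{J_\tempo}$ fails to be direct or fails to be all of $H^2_{dR}$; checking which of \Cp/\Cf\ fails (and correspondingly \p/\f\ via the duality in Theorem \ref{thm:implicazioni}) is the delicate bookkeeping step.

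Finally, to conclude Theorem \ref{thm:instability} in full, I would note that the failure persists for $\tempo$ in an \emph{open} dense subset (classes \emph{(ii)} and \emph{(iii)} are characterized by the non-degeneracy conditions $D(\tempo)=0$ with $(t_{1k},t_{2k})\neq0$, respectively $D(\tempo)\neq0$) accumulating at $\zero$, which lies in class \emph{(i)}. Thus $J_{\zero}$ has all six properties while every neighborhood of $\zero$ in $\Delta(\zero,\varepsilon)$ contains parameters $\tempo$ for which $J_\tempo$ fails at least one of them; this exhibits the required instability for each property simultaneously. The main obstacle remains the explicit linear-algebra verification that the obstructing class has no pure-type closed representative, which rests essentially on the asymptotic behaviour of the structure constants $\sigma_{\bullet\bullet}$ recorded in \S\ref{sec:structure-equations-iwasawa}.
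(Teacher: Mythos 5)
Your overall strategy coincides with the paper's: Theorem \ref{thm:instability} is deduced exactly from a statement of the form of Theorem \ref{thm:instability-iwasawa}, with $\I_3$ as central fiber, the pure-type $g_\zero$-harmonic basis handling $\tempo=\zero$ (and class \emph{(i)}), and the structure constants $\sigma_{\bullet\bullet}$ for classes \emph{(ii)} and \emph{(iii)} providing the obstruction. The routine half of your argument is fine. But the part you defer as ``delicate bookkeeping'' is precisely where the proof lives, and as sketched it has a genuine gap. First, the obstructing degree-$2$ class is misidentified: it does not involve $\phit{1\bar3}$ at all, and no search through closed invariant forms is needed. The point is simply that $\de\varphi^3_\tempo$ is exact, so
$$ 0 \;\neq\; \left[\sigma_{12}\,\phit{12}\right] \;=\; -\left[\sigma_{1\bar1}\,\phit{1\bar1}+\sigma_{1\bar2}\,\phit{1\bar2}+\sigma_{2\bar1}\,\phit{2\bar1}+\sigma_{2\bar2}\,\phit{2\bar2}\right] \;\in\; H^{(2,0)}_{J_\tempo}\left(\I_3;\C\right)\cap H^{(1,1)}_{J_\tempo}\left(\I_3;\C\right) $$
(non-vanishing by Nomizu's theorem, using $\sigma_{12}\neq0$ and $\left(\sigma_{1\bar1},\sigma_{1\bar2},\sigma_{2\bar1},\sigma_{2\bar2}\right)\neq\left(0,0,0,0\right)$ on classes \emph{(ii)} and \emph{(iii)}). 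This kills complex-\Cp ness, hence \Cp ness by Remark \ref{rem:cpf-complex-cpf}, and hence \f ness by Theorem \ref{thm:implicazioni} (\f\ at the \kth{2} stage implies \Cp\ at the \kth{2} stage). Your plan to show that some class ``admits no pure-type decomposition'' would, even if completed, only establish that \Cp\ or \Cf\ fails without determining which; that suffices for the instability of \Cpf, but the theorem asserts instability of each of the six properties separately.

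Second, and more seriously, killing \Cp\ and \f\ at the \kth{2} stage says nothing about \p\ and \Cf: by Theorem \ref{thm:implicazioni} the failure of \p\ at the \kth{2} stage (and a fortiori of \Cf, since \Cf\ implies \p) must be detected by the failure of \Cp\ \emph{at the \kth{4} stage}. The paper does this by a second, independent computation: choosing $A,B\in\C$ with $\left(A\,\sigma_{1\bar2}-B\,\sigma_{1\bar1},\,A\,\sigma_{2\bar2}-B\,\sigma_{2\bar1}\right)\neq\left(0,0\right)$ and expanding $-\de\left(A\,\phit{13\bar3}+B\,\phit{23\bar3}\right)$, one obtains a non-zero class in $H^4_{dR}\left(\I_3;\C\right)$ represented both by a $(2,2)$-form and by a $(3,1)+(1,3)$-form, whence $J_\tempo$ is not \Cp\ at the \kth{4} stage, and therefore neither \p\ nor \Cf. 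This degree-$4$ step is entirely absent from your proposal, and nothing in your degree-$2$ analysis can substitute for it; without it you cannot conclude instability of \Cf, \p, or \pf.
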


The proof of Theorem \ref{thm:instability} follows studying explicitly \Cpf ness for small deformations of the standard complex structure on the Iwasawa manifold $\mathbb{I}_3$, \cite[Theorem 3.1]{angella-tomassini-1}. (We refer to \S\ref{subsec:iwasawa} for notations and results concerning the Iwasawa manifold and its Kuranishi space; we recall here that $\mathbb{I}_3$ is a holomorphically parallelizable nilmanifold of complex dimension $3$, and its Kuranishi space is smooth and depends on $6$ effective parameters; the small deformations of $\mathbb{I}_3$ can be divided into three classes, {\itshape (i)}, {\itshape (ii)}, and {\itshape (iii)}, according to their Hodge numbers; in particular, the Hodge numbers of the small deformations in class {\itshape (i)} are equal to the Hodge numbers of $\mathbb{I}_3$.)

\begin{thm}
\label{thm:instability-iwasawa}
 Let $\mathbb{I}_3 := \left. \mathbb{H}\left(3;\Z\left[\im\right]\right) \right\backslash \mathbb{H}(3;\C)$
be the Iwasawa manifold, endowed with the complex structure inherited by the standard complex structure on $\C^3$, and consider the small deformations in its Kuranishi space. Then:
\begin{itemize}
 \item the natural complex structure on $\I_3$ is \Cpf\ at every stage and \pf\ at every stage;
 \item the small deformations in class {\itshape (i)} are \Cpf\ at every stage and \pf\ at every stage;
 \item the small deformations in classes {\itshape (ii)} and {\itshape (iii)} are neither \Cp\ nor \Cf\ nor \p\ nor \f.
\end{itemize}
\end{thm}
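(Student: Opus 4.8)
The plan is to handle the positive cases ($\I_3$ and class {\itshape (i)}) and the negative cases (classes {\itshape (ii)} and {\itshape (iii)}) by completely different mechanisms, in both cases reducing the computation of the subgroups $H^{(p,q),(q,p)}_{J_\tempo}(X;\R)$ to finite-dimensional linear algebra on the Lie algebra through K. Nomizu's theorem \cite{nomizu} and Proposition \ref{prop:linear-cpf-invariant-cpf-J}, and then propagating the information across degrees by means of the duality implications of Theorem \ref{thm:implicazioni}. The idea throughout is that the cohomological decomposition is controlled by whether or not the structure equation for $\de\varphi^3_\tempo$ mixes forms of different pure type.

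For the positive cases I would first observe, from \S\ref{sec:structure-equations-iwasawa}, that the deformations in class {\itshape (i)} satisfy the same structure equations $\de\varphi^1_\tempo=\de\varphi^2_\tempo=0$, $\de\varphi^3_\tempo=-\varphi^1_\tempo\wedge\varphi^2_\tempo$ as $\I_3$, so it suffices to argue for $\I_3$ itself. The decisive input is the table in \S\ref{sec:derham-iwasawa}: with respect to the metric $g_\zero=\sum_j\varphi^j\odot\bar\varphi^j$ every de Rham class of $\I_3$ has a $g_\zero$-harmonic representative of pure type with respect to $J_\zero$. Since the list of harmonic representatives is conjugation-closed, the real and imaginary parts of these forms are real forms of pure type spanning $H^k_{dR}(X;\R)$, so $J_\zero$ is \Cf\ at every stage $k$. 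I would then invoke Theorem \ref{thm:implicazioni}: \Cf\ at stage $k$ implies \p\ at stage $k$ and \f\ at stage $2n-k$, and composing the two vertical/horizontal arrows, \Cf\ at stage $k$ implies \Cp\ at stage $2n-k$. As \Cf\ holds at every stage, it follows that \Cp, \p, \f\ also hold at every stage, hence $J_\zero$ (and class {\itshape (i)}) is \Cpf\ and \pf\ at every stage.

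For the negative cases the mechanism is the mixing in the deformed equation $\de\varphi^3_\tempo=\sigma_{12}\,\varphi^1_\tempo\wedge\varphi^2_\tempo+\eta$, where $\eta:=\sum_{i,j}\sigma_{i\bar\jmath}\,\varphi^i_\tempo\wedge\bar\varphi^j_\tempo$ is the $(1,1)$-part; by the asymptotics of \S\ref{sec:structure-equations-iwasawa} one has $\sigma_{12}\neq 0$ (asymptotically $-1$) and $\left(\sigma_{1\bar1},\sigma_{1\bar2},\sigma_{2\bar1},\sigma_{2\bar2}\right)\neq 0$ for every $\tempo$ in classes {\itshape (ii)} and {\itshape (iii)}. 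Working with left-invariant forms (legitimate by \cite{nomizu} and Proposition \ref{prop:linear-cpf-invariant-cpf-J}), both $\varphi^1_\tempo\wedge\varphi^2_\tempo$ and $\eta$ are $\de$-closed and cohomologous (their difference is $\de\varphi^3_\tempo$), and a short look at the left-invariant coboundaries shows $[\eta]\neq 0$: were $\eta$ exact, i.e. a combination of $\de\varphi^3_\tempo$ and $\de\bar\varphi^3_\tempo$, matching the $(2,0)$- and $(0,2)$-components would force the two coefficients to vanish since $\sigma_{12}\neq 0$, whence $\eta=0$, contradicting $\left(\sigma_{i\bar\jmath}\right)\neq 0$. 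Consequently at least one of the real classes $[\eta+\bar\eta]$ and $[\im(\eta-\bar\eta)]$ is nonzero, and any such class lies in $H^{(1,1)}_{J_\tempo}(X;\R)\cap H^{(2,0),(0,2)}_{J_\tempo}(X;\R)$, being represented both by a closed real $(1,1)$-form and by the closed real $(2,0)+(0,2)$-form $\sigma_{12}\,\varphi^1_\tempo\wedge\varphi^2_\tempo+\bar\sigma_{12}\,\bar\varphi^1_\tempo\wedge\bar\varphi^2_\tempo$. This shows that \Cp\ fails at stage $2$.

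The remaining three failures I would then extract by one further direct computation together with Theorem \ref{thm:implicazioni}. From the failure of \Cp\ at stage $2$, the implication ``\f\ at stage $2$ $\Rightarrow$ \Cp\ at stage $2$'' gives that \f\ fails at stage $2$. Next I would exhibit the degree-four analogue of the mixing by computing $\de(\varphi^1_\tempo\wedge\varphi^3_\tempo\wedge\bar\varphi^3_\tempo)$ (and, in the degenerate subcase, $\de(\varphi^2_\tempo\wedge\varphi^3_\tempo\wedge\bar\varphi^3_\tempo)$), whose differential splits into a $(2,2)$- and a $(3,1)$-component tied together by the non-vanishing of the $\sigma_{i\bar\jmath}$; this produces a nonzero class in $H^{(2,2)}_{J_\tempo}(X;\R)\cap H^{(3,1),(1,3)}_{J_\tempo}(X;\R)$, so that \Cp\ fails at stage $4$. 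The implications ``\p\ at stage $2$ $\Rightarrow$ \Cp\ at stage $4$'' and ``\Cf\ at stage $2$ $\Rightarrow$ \p\ at stage $2$'' then yield, successively, that \p\ fails at stage $2$ and that \Cf\ fails at stage $2$; thus none of \Cp, \Cf, \p, \f\ holds. The hard part will be the explicit bookkeeping behind the stage-$2$ and stage-$4$ mixings, namely verifying that the pure-type forms involved represent genuinely nonzero cohomology classes and doing this uniformly over all parameters in classes {\itshape (ii)} and {\itshape (iii)}; this is exactly where the first-order asymptotics $\sigma_{12}\to-1$ and $\left(\sigma_{i\bar\jmath}\right)\neq 0$ of \S\ref{sec:structure-equations-iwasawa} are indispensable. (Alternatively, one may bypass Theorem \ref{thm:implicazioni} and compute $H^{\pm}_{J_\tempo}$ and their current counterparts directly from left-invariant representatives, at the cost of a longer but entirely parallel calculation.)
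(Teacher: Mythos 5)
Your proposal is correct and is essentially the paper's own proof: for $\I_3$ and class {\itshape (i)} it uses the conjugation-closed table of pure-type $g_\zero$-harmonic representatives together with Theorem \ref{thm:implicazioni} to get \Cpf ness and \pf ness at every stage, and for classes {\itshape (ii)} and {\itshape (iii)} it uses the mixing in $\de\varphi^3_{\tempo}$ (the nonzero class $\left[\sigma_{12}\,\varphi^{12}_{\tempo}\right]$, represented both by a $(1,1)$-form and by a $(2,0)+(0,2)$-form) to defeat \Cp ness and \f ness at the \kth{2} stage, and the differential of $A\,\varphi^{13\bar{3}}_{\tempo}+B\,\varphi^{23\bar{3}}_{\tempo}$ to defeat \Cp ness at the \kth{4} stage, whence \p ness and \Cf ness fail by Theorem \ref{thm:implicazioni}. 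Your only deviations are cosmetic: you phrase the degree-two obstruction through the real classes $\left[\eta+\bar\eta\right]$ and $\left[\im\left(\eta-\bar\eta\right)\right]$ rather than through complex-\Cp ness and Remark \ref{rem:cpf-complex-cpf}, and your case distinction between $\varphi^{13\bar{3}}_{\tempo}$ and $\varphi^{23\bar{3}}_{\tempo}$ is exactly the paper's choice of coefficients $A,\,B$ with $\left(A\,\sigma_{1\bar{2}}-B\,\sigma_{1\bar{1}},\, A\,\sigma_{2\bar{2}}-B\,\sigma_{2\bar{1}}\right)\neq\left(0,\,0\right)$, both resting on the same fact $\left(\sigma_{1\bar{1}},\,\sigma_{1\bar{2}},\,\sigma_{2\bar{1}},\,\sigma_{2\bar{2}}\right)\neq\left(0,\,0,\,0,\,0\right)$ in classes {\itshape (ii)} and {\itshape (iii)}.
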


\begin{proof}
We follow the notation introduced in \S\ref{subsec:iwasawa}; in particular, we recall that the structure equations with respect to a certain co-frame $\left\{\varphi^1_\tempo,\, \varphi^2_\tempo,\, \varphi^3_\tempo\right\}$ of the space of $(1,0)$-forms on $X_\tempo:=:\left(\mathbb{I}_3,\, J_\tempo\right)$, for $\tempo\in \Delta(0,\varepsilon)\subset\C^6$ with $\varepsilon>0$ small enough, are the following:
$$
\left\{
\begin{array}{rcl}
 \de\varphi^1_{\tempo} &=& 0 \\[10pt]
 \de\varphi^2_{\tempo} &=& 0 \\[10pt]
 \de\varphi^3_{\tempo} &=& \sigma_{12}\,\varphi^1_{\tempo}\wedge\varphi^2_{\tempo} + \sigma_{1\bar1}\,\varphi^1_{\tempo}\wedge\bar\varphi^1_{\tempo} + \sigma_{1\bar2}\,\varphi^1_{\tempo}\wedge\bar\varphi^2_{\tempo} + \sigma_{2\bar1}\,\varphi^2_{\tempo}\wedge\bar\varphi^1_{\tempo} + \sigma_{2\bar2}\,\varphi^2_{\tempo}\wedge\bar\varphi^2_{\tempo}
\end{array}
\right. \;,
$$
where $\sigma_{12},\,\sigma_{1\bar1},\,\sigma_{1\bar2},\,\sigma_{2\bar1},\,\sigma_{2\bar2}\in\C$ are complex numbers depending just on $\tempo$. The asymptotic behaviour of $\sigma_{12}$, $\sigma_{1\bar1}$, $\sigma_{1\bar2}$, $\sigma_{2\bar1}$, and $\sigma_{2\bar2}$ for $\tempo$ near $\zero$  is the following, see \S\ref{sec:structure-equations-iwasawa}:
$$
\left\{
\begin{array}{rcl}
\sigma_{12} &=& -1 +\opiccolo{\tempo} \\[5pt]
\sigma_{1\bar1} &=& t_{21} +\opiccolo{\tempo}  \\[5pt]
\sigma_{1\bar2} &=& t_{22} +\opiccolo{\tempo}  \\[5pt]
\sigma_{2\bar1} &=& -t_{11} +\opiccolo{\tempo} \\[5pt]
\sigma_{2\bar2} &=& -t_{12} +\opiccolo{\tempo}
\end{array}
\right.
\;;
$$
more precisely, for $\tempo$ in class {\itshape (i)}, respectively class {\itshape (ii)}, we actually have
$$
\left\{
\begin{array}{rcl}
\sigma_{12} &=& -1 \\[5pt]
\sigma_{1\bar1} &=& 0 \\[5pt]
\sigma_{1\bar2} &=& 0 \\[5pt]
\sigma_{2\bar1} &=& 0 \\[5pt]
\sigma_{2\bar2} &=& 0
\end{array}
\right. \qquad \text{ for } \qquad \tempo\in\text{ class {\itshape (i)}}\;,
$$
and
$$
\left\{
\begin{array}{rcl}
\sigma_{12} &=& -1 +\opiccolo{\tempo} \\[5pt]
\sigma_{1\bar1} &=& t_{21} \left(1+\opiccolouno\right)  \\[5pt]
\sigma_{1\bar2} &=& t_{22} \left(1+\opiccolouno\right)  \\[5pt]
\sigma_{2\bar1} &=& -t_{11} \left(1+\opiccolouno\right) \\[5pt]
\sigma_{2\bar2} &=& -t_{12} \left(1+\opiccolouno\right)
\end{array}
\right. \qquad \text{ for } \qquad \tempo\in\text{ class {\itshape (ii)}}\;.
$$

By K. Nomizu's theorem \cite[Theorem 1]{nomizu}, one computes straightforwardly the de Rham cohomology of $\mathbb{I}_3$ and of its small deformations; for the sake of clearness, we recall in the following table a basis of the space of the harmonic representatives of the de Rham cohomology classes with respect to the metric $g_\zero:=\sum_{j=1}^{3} \varphi^j_\zero\odot\bar\varphi^j_\zero$.

\smallskip
\begin{scriptsize}
\begin{center}
\begin{tabular}{>{$}c<{$} >{$}c<{$} | >{$}c<{$}}
\toprule
k & \K & g_\zero \text{\bfseries-harmonic representatives of } H^k_{dR}\left(\I_3;\K\right) \\
\midrule[0.02em]\addlinespace[1.5pt]\midrule[0.02em]
1 & \C & \varphi^1,\; \varphi^2,\; \bar{\varphi}^1,\; \bar{\varphi}^2 \\[10pt]
  & \R & \varphi^1+\bar{\varphi}^1,\; \im\left(\varphi^1-\bar{\varphi}^1\right),\; \varphi^2+\bar{\varphi}^2,\; \im\left(\varphi^2-\bar{\varphi}^2\right) \\
\midrule
2 & \C & \varphi^{13},\; \varphi^{23},\;\varphi^{1\bar{1}},\; \varphi^{1\bar{2}},\; \varphi^{2\bar{1}},\;\varphi^{2\bar{2}},\; \varphi^{\bar{1}\bar{3}},\; \varphi^{\bar{2}\bar{3}} \\[10pt]
  & \R & \varphi^{13}+\varphi^{\bar{1}\bar{3}},\; \im\left(\varphi^{13}-\varphi^{\bar{1}\bar{3}}\right),\;\varphi^{23}+\varphi^{\bar{2}\bar{3}},\; \im\left(\varphi^{23}-\varphi^{\bar{2}\bar{3}}\right),\, \varphi^{1\bar{2}}-\varphi^{2\bar{1}},\; \im\left(\varphi^{1\bar{2}}+\varphi^{2\bar{1}}\right),\;\im\varphi^{1\bar{1}},\;\im\varphi^{2\bar{2}} \\
\midrule
3 & \C & \varphi^{123},\; \varphi^{13\bar{1}},\; \varphi^{13\bar{2}},\; \varphi^{23\bar{1}},\; \varphi^{23\bar{2}},\; \varphi^{1\bar{1}\bar{3}},\; \varphi^{1\bar{2}\bar{3}},\; \varphi^{2\bar{1}\bar{3}},\; \varphi^{2\bar{2}\bar{3}},\; \varphi^{\bar{1}\bar{2}\bar{3}} \\[10pt]
  & \R & \varphi^{123}+\varphi^{\bar{1}\bar{2}\bar{3}},\; \im\left(\varphi^{123}-\varphi^{\bar{1}\bar{2}\bar{3}}\right),\;\varphi^{13\bar{1}}+\varphi^{1\bar{1}\bar{3}},\; \im\left(\varphi^{13\bar{1}}-\varphi^{1\bar{1}\bar{3}}\right),\, \varphi^{13\bar{2}}+\varphi^{2\bar{1}\bar{3}},\; \im\left(\varphi^{13\bar{2}}-\varphi^{2\bar{1}\bar{3}}\right), \\[5pt]
&&  \varphi^{23\bar{1}}+\varphi^{1\bar{2}\bar{3}},\; \im\left(\varphi^{23\bar{1}}-\varphi^{1\bar{2}\bar{3}}\right),\, \varphi^{23\bar{2}}+\varphi^{2\bar{2}\bar{3}},\; \im\left(\varphi^{23\bar{2}}-\varphi^{2\bar{2}\bar{3}}\right) \\
\midrule
4 & \C & \varphi^{123\bar{1}},\; \varphi^{123\bar{2}},\; \varphi^{13\bar{1}\bar{3}},\; \varphi^{13\bar{2}\bar{3}},\; \varphi^{23\bar{1}\bar{3}},\; \varphi^{23\bar{2}\bar{3}},\; \varphi^{1\bar{1}\bar{2}\bar{3}},\; \varphi^{2\bar{1}\bar{2}\bar{3}} \\[10pt]
  & \R & \varphi^{123\bar{1}}-\varphi^{1\bar{1}\bar{2}\bar{3}},\; \im\left(\varphi^{123\bar{1}}+\varphi^{1\bar{1}\bar{2}\bar{3}}\right),\;\varphi^{123\bar{2}}-\varphi^{2\bar{1}\bar{2}\bar{3}},\; \im\left(\varphi^{123\bar{2}}+\varphi^{2\bar{1}\bar{2}\bar{3}}\right),\, \varphi^{13\bar{1}\bar{3}},\;\varphi^{13\bar{2}\bar{3}}+\varphi^{23\bar{1}\bar{3}},\; \im\left(\varphi^{13\bar{2}\bar{3}}-\varphi^{23\bar{1}\bar{3}}\right),\;\varphi^{23\bar{2}\bar{3}} \\
\midrule
5 & \C & \varphi^{123\bar{1}\bar{3}},\; \varphi^{123\bar{2}\bar{3}},\; \varphi^{13\bar{1}\bar{2}\bar{3}},\; \varphi^{23\bar{1}\bar{2}\bar{3}} \\[10pt]
  & \R & \varphi^{123\bar{1}\bar{3}}+\varphi^{13\bar{1}\bar{2}\bar{3}},\; \im\left(\varphi^{123\bar{1}\bar{3}}-\varphi^{13\bar{1}\bar{2}\bar{3}}\right),\,\varphi^{123\bar{2}\bar{3}}+\varphi^{23\bar{1}\bar{2}\bar{3}},\; \im\left(\varphi^{123\bar{2}\bar{3}}-\varphi^{23\bar{1}\bar{2}\bar{3}}\right) \\
\bottomrule
\end{tabular}
\end{center}
\end{scriptsize}
\smallskip

Note that the above harmonic representatives of the classes in $H^\bullet_{dR}\left(\mathbb{I}_3;\R\right)$ are of pure type with respect to $J_\zero$ and to $J_\tempo$ with $\tempo$ in class {\itshape (i)}: hence, by Theorem \ref{thm:implicazioni} (or arguing as in \cite[Theorem 3.7]{fino-tomassini}), one gets that $\I_3$ and its small deformations in class {\itshape (i)} are \Cpf\ at every stage and \pf\ at every stage.

Concerning small deformations $J_{\mathbf{t}}$ in class {\itshape (ii)} and in class {\itshape (iii)}, using the asymptotic behaviour of the structure equations, we obtain that
$$ \left[\sigma_{12}\,\phit{12}\right] \;=\; \left[\sigma_{1\bar{1}}\,\phit{1\bar{1}}+
\sigma_{1\bar{2}}\,\phit{1\bar{2}}+\sigma_{2\bar{1}}\,\phit{2\bar{1}}+\sigma_{2\bar{2}}\,\phit{2\bar{2}}\right] \;
\neq\; 0 $$
in $H^2_{dR}\left(\I_3;\C\right)$. Therefore
$$ H^{(1,1)}_{J_\mathbf{t}}\left(\I_3;\C\right) \cap\left(H^{(2,0)}_{J_\mathbf{t}}\left(\I_3;\C\right) 
+ H^{(0,2)}_{J_\mathbf{t}}\left(\I_3;\C\right)\right) \;\neq\; \left\{0\right\} \;, $$
and in particular $J_\mathbf{t}$ is not complex-\Cp. It follows from Remark \ref{rem:cpf-complex-cpf} that $J_\mathbf{t}$ cannot be \Cp; from \cite[Proposition 2.30]{li-zhang}, or Theorem \ref{thm:implicazioni}, it follows that $J_\mathbf{t}$ cannot be \f.

To prove that small deformations in class {\itshape (ii)} and in class {\itshape (iii)} are non-\p\ and non-\Cf, fix $\mathbf{t}$ small enough and choose two positive complex numbers $A:=:A\left(\mathbf{t}\right)\in\C$ and $B:=:B\left(\mathbf{t}\right)\in\C$, depending just on $\mathbf{t}$, such that
$$ \left(A\,\sigma_{1\bar{2}}-B\,\sigma_{1\bar{1}},\,A\,\sigma_{2\bar{2}}-B\,\sigma_{2\bar{1}}\right) \;
\neq\; \left(0,\,0\right) \;;$$
computing $-\de\left(A\,\varphi_{\mathbf{t}}^{13\bar{3}}+B\,\varphi_{\mathbf{t}}^{23\bar{3}}\right)$,
note that
\begin{eqnarray*}
\lefteqn{\left[\left(A\,\sigma_{2\bar{1}}-B\,\sigma_{1\bar{1}}\right)\phit{12\bar{1}\bar{3}}+
\left(A\,\sigma_{2\bar{2}}-B\,\sigma_{1\bar{2}}\right)\phit{12\bar{2}\bar{3}}
-A\,\bar{\sigma}_{12}\phit{13\bar{1}\bar{2}}-B\,\bar{\sigma}_{12}\phit{23\bar{1}\bar{2}}\right]} \\[5pt]
&=& \left[\left(A\,\bar{\sigma}_{1\bar{2}}-B\,\bar{\sigma}_{1\bar{1}}\right)\phit{123\bar{1}}+
\left(A\,\bar{\sigma}_{2\bar{2}}-B\,\bar{\sigma}_{2\bar{1}}\right)\phit{123\bar{2}}\right] \;\neq\; 0 \;,
\end{eqnarray*}
in $H^4_{dR}\left(\I_3;\C\right)$. As before, it follows that $J_\mathbf{t}$ is not \Cp\ at the \kth{4} stage,
and consequently it is neither \p\ nor \Cf, by Theorem \ref{thm:implicazioni}.
\end{proof}

\subsubsection{Curves of \Cpf\ almost-complex structures}

We study here some explicit examples of curves of almost-complex structures on compact manifolds, along which the property of being \Cpf\ remains satisfied. The aim of this section is to better understand the behaviour of \Cpf ness along curves of almost-complex structures.

\medskip

Firstly, we recall some general results concerning curves of almost-complex structures on compact manifolds, referring, e.g., to \cite{audin-lafontaine}.

Let $J$ be an almost-complex structure on a compact $2n$-dimensional manifold $X$. Every curve $\left\{J_t\right\}_{t\in \left(-\varepsilon,\varepsilon\right)\subset \R}$ of almost-complex structures on $X$ such that $J_0=J$ can be written, for $\varepsilon>0$ small enough, as
$$ J_t \;=\; \left(\id\,-\,L_t\right)\,J\,\left(\id\,-\,L_t\right)^{-1} \;\in\; \End\left(TX\right) \;, $$
where $L_t\in\End\left(TX\right)$, see, e.g., \cite[Proposition 1.1.6]{audin-lafontaine}; the endomorphism $L_t$ is uniquely determined further requiring that $L_t\in T^{1,0}_JX\otimes \duale{\left(T^{0,1}_JX\right)}$, namely,
$$ L_t\,J\,+\,J\,L_t \;=\; 0 \;;$$
furthermore, set $L_t\,=:\, t\,L+\mathrm{o}(t)$: if $J$ is compatible with a symplectic form $\omega$, then the curves consisting of $\omega$-compatible almost-complex structures $J_t$ are exactly those ones for which $\trasposta{L}=L$.

In \cite[Proposition 3.3]{debartolomeis-meylan}, P. de Bartolomeis and F. Meylan computed $\left.\frac{\de}{\de t}\right\lfloor_{t=0} \Nij_J$, getting a characterization in terms of $L$ of the curves of complex structures starting at a given integrable almost-complex structure $J$. 

\medskip

A. Fino and A. Tomassini, in \cite[\S6, \S7]{fino-tomassini}, studied several examples of families of almost-complex structures constructed in such a way. We provide here some further examples, starting with a curve of almost-complex structures on the $4$-dimensional torus, \cite[pages 420--422]{angella-tomassini-1}.

\begin{ex}
\textit{A curve of almost-complex structures through the standard K\"{a}hler structure on the $4$-dimensional torus.}\\
Let $\left(J_0,\, \omega_0\right)$ be the standard K\"ahler structure on the $4$-dimensional torus $\T^4$ with coordinates $\{x^j\}_{j\in\{1,\ldots,4\}}$, that is,
$$
 J_{0} \;:=\;
\left(
\begin{array}{cc|cc}
 & & -1 & \\
 &  &  & -1 \\
\hline
 1 &  &  & \\
 & 1 &  & \\
\end{array}
\right)
\;\in\; \End\left(T\T^4\right)
\qquad \text{ and } \qquad
 \omega_0 \;:=\; \de x^1 \wedge \de x^3 + \de x^2 \wedge \de x^4 \;\in\; \wedge^2\T^4 \;.
$$

Set
$$ L\;:=\;\left(
\begin{array}{cc|cc}
 \phantom{+}\ell &  &  &  \\
 & \phantom{+}0 &  &  \\
\hline
  & & -\ell &\\
 &  &  & \phantom{+}0
\end{array}
\right)
\;\in\; \End\left(T\T^4\right) \;,$$
where $\ell\in\mathcal{C}^\infty(\T^4;\,\R)$, that is, $\ell\in\mathcal{C}^\infty(\R^4;\,\R)$ is a $\Z^4$-periodic function. For $t\in \left(-\varepsilon,\, \varepsilon\right)$ with $\varepsilon>0$ small enough, define
$$
 J_{t,\,\ell} \;:=\; \left(\id\,-\,t\,L\right)\,J_0\,\left(\id\,-\,t\,L\right)^{-1}
 \;=\; \left(
\begin{array}{cc|cc}
 & & -\frac{1\,-\,t\,\ell}{1\,+\,t\,\ell} & \\
 &  &  & -1 \\
\hline
 \phantom{+}\frac{1\,+\,t\,\ell}{1\,-\,t\,\ell} &  &  & \\
 & \phantom{+}1 &  & \\
\end{array}
\right)
\;\in\; \End\left(T\T^4\right) \;,
$$
obtaining a curve of $\omega_0$-compatible almost-complex structures on $\T^4$, see also Proposition \ref{prop:complex-Cpf-4}. To simplify the notation, set
$$ \alpha\;:=:\;\alpha(t,\ell)\;:=\; \frac{1\,-\,t\,\ell}{1\,+\,t\,\ell} \;. $$
A co-frame for the holomorphic cotangent bundle of $\T^4$ with respect to $J_{t,\,\ell}$ is given by
$$
\left\{
\begin{array}{l}
 \varphi^1_{t,\ell} \;:=\; \de x^1\,+\,\im\,\alpha\,\de x^3 \\[5pt]
 \varphi^2_{t,\ell} \;:=\; \de x^2\,+\,\im\,\de x^4
\end{array}
\right. \;,
$$
with respect to which we compute the structure equations
$$
\left\{
\begin{array}{l}
 \de\varphi^1_{t,\ell} \;=\; \im\,\de\alpha\,\wedge\,\de x^3 \\[5pt]
 \de\varphi^2_{t,\ell} \;=\; 0
\end{array}
\right. \;.
$$
Note that, taking $\ell\,=\,\ell\left(x^1,x^3\right)$, the corresponding almost-complex structure $J_{t,\,\ell}$
is integrable, in fact, $\left(J_{t,\,\ell},\,\omega_0\right)$ is a K\"{a}hler structure on $\T^4$.
Recall that, $\T^4$ being $4$-dimensional, $J_{t,\,\ell}$ is \Cpf\ by \cite[Theorem 2.3]{draghici-li-zhang}. For the sake of
simplicity, assume $\ell\,=\,\ell\left(x^2\right)$ depending just on $x^2$ and non-constant.
Set
\begin{eqnarray*}
 v_1 &:=& \de x^1\wedge \de x^2 - \alpha\, \de x^3 \wedge \de x^4 \;, \\[5pt]
 v_2 &:=& \de x^1\wedge \de x^4 - \alpha\, \de x^2 \wedge \de x^3 \;, \\[10pt]
 w_1 &:=& \alpha\,\de x^1\wedge \de x^3 \;, \\[5pt]
 w_2 &:=& \de x^2\wedge \de x^4 \;, \\[5pt]
 w_3 &:=& \de x^1\wedge \de x^2 + \alpha\, \de x^3 \wedge \de x^4 \;, \\[5pt]
 w_4 &:=& \de x^1\wedge \de x^4 + \alpha\, \de x^2 \wedge \de x^3 \;.
\end{eqnarray*}
Using this notation, an arbitrary $J_{t,\,\ell}$-anti-invariant real $2$-form $\psi \,:=:\, A\,v_1+ B\, v_2$, with $A,B\in\mathcal{C}^\infty\left(\T^4;\R\right)$, is $\de$-closed if and only if
\begin{equation}\label{eq:system-T^2}
\left\{
\begin{array}{rcl}
 \frac{\del A}{\del x^3}-\frac{\del B}{\del x^1}\,\alpha &=& 0 \\[5pt]
 \frac{\del A}{\del x^4}-\frac{\del B}{\del x^2} &=& 0 \\[5pt]
 -\frac{\del A}{\del x^1}\,\alpha-\frac{\del B}{\del x^3} &=& 0 \\[5pt]
 -\frac{\del B}{\del x^4}\,\alpha-\frac{\del A}{\del x^2}\,\alpha-A\,\frac{\del \alpha}{\del x^2} &=& 0
\end{array}
\right. \;.
\end{equation}
By solving \eqref{eq:system-T^2}, we obtain the solutions
$$ \psi \;=\; \frac{A}{\alpha}\,v_1 + B\, v_2 \qquad \text{ where } \qquad A,\, B\in\R \;.$$
Therefore, for $t\in \left(-\varepsilon,\, \varepsilon\right)$ with $\varepsilon>0$ small enough, we have
$$ \dim_\R H^{(2,0),(0,2)}_{J_{t,\,\ell}}\left(\T^4;\R\right) \;\leq\; 2 \;=\; \dim_\R H^{(2,0),(0,2)}_{J_0}
\left(\T^4;\R\right)\;, $$
and hence
$$
\dim_\R H^{(1,1)}_{J_{t,\,\ell}}\left(\T^4;\R\right) \;\geq\; 4 \;=\; \dim_\R H^{(1,1)}_{J_0}\left(\T^4;\R\right)\;,
$$
accordingly to the upper-semi-continuity, respectively lower-semi-continuity, property proven in \cite[Theorem 2.6]{draghici-li-zhang-2} for $4$-dimensional almost-complex manifolds.
\end{ex}

Now, we turn our attention to the case of dimension greater than $4$, \cite[pages 422--423]{angella-tomassini-1}.

\begin{ex}
\textit{A curve of almost-complex structures through the standard K\"{a}hler structure on the $6$-dimensional torus.}\\
Let $\left(J_0,\, \omega_0\right)$ be the standard K\"ahler structure on the $6$-dimensional torus $\T^6$ with coordinates $\{x^j\}_{j\in\{1,\ldots,6\}}$, that is,
$$
  J_{0} \;:=\;
 \left(
\begin{array}{ccc|ccc}
 & & & -1 & & \\
 &  &  &  & -1 & \\
 &  &  &  &  & -1 \\
\hline
 1 &  &  &  &  & \\
 & 1 &  &  &  & \\
 &  & 1 &  &  &
\end{array}
\right)
\;\in\; \End\left(T\T^6\right)
\qquad \text{ and } \qquad
\omega_0 \;:=\; \de x^1 \wedge \de x^4 + \de x^2 \wedge \de x^5 + \de x^3 \wedge \de x^6 \;.
$$

Set
$$ L\;=\;\left(
\begin{array}{ccc|ccc}
 \phantom{+}\ell &  &  &  &  &  \\
 & \phantom{+}0 &  &  &  &  \\
 &  & \phantom{+}0 &  &  &  \\
\hline
  & & & -\ell & & \\
 &  &  &  & \phantom{+}0 & \\
 &  &  &  &  & \phantom{+}0
\end{array}
\right)
\;\in\; \End\left(T\T^6\right) \;,$$
where $\ell\in\mathcal{C}^\infty(\T^6;\,\R)$, that is, $\ell\in\mathcal{C}^\infty(\R^6;\,\R)$ is a $\Z^6$-periodic function. For $t\in \left(-\varepsilon,\, \varepsilon\right)$ with $\varepsilon>0$ small enough, define
$$
  J_{t,\,\ell} \;:=\; \left(\id\,-\,t\,L\right)\,J_0\,\left(\id\,-\,t\,L\right)^{-1}
 \;=\; \left(
\begin{array}{ccc|ccc}
 & & & -\frac{1\,-\,t\,\ell}{1\,+\,t\,\ell} & & \\
 &  &  &  & -1 & \\
 &  &  &  &  & -1 \\
\hline
 \phantom{+}\frac{1\,+\,t\,\ell}{1\,-\,t\,\ell} &  &  &  &  & \\
 & \phantom{+}1 &  &  &  & \\
 &  & \phantom{+}1 &  &  &
\end{array}
\right)
\;\in\; \End\left(T\T^6\right) \;,
$$
obtaining a curve of $\omega_0$-compatible almost-complex structures on $\T^6$, see also Example \ref{ex:toro-6-large-invariant}. Setting
$$
\alpha\;:=:\;\alpha(t,\ell) \;:=\; \frac{1\,-\,t\,\ell}{1\,+\,t\,\ell} \;,
$$
a co-frame for the holomorphic cotangent bundle of $\T^6$ with respect to $J_{t,\,\ell}$ is given by
$$
\left\{
\begin{array}{l}
 \varphi^1_{t,\ell} \;:=\; \de x^1\,+\,\im\,\alpha\,\de x^4 \\[5pt]
 \varphi^2_{t,\ell} \;:=\; \de x^2\,+\,\im\,\de x^5 \\[5pt]
 \varphi^3_{t,\ell} \;:=\; \de x^3\,+\,\im\,\de x^6
\end{array}
\right. \;,
$$
with respect to which the structure equations are
$$
\left\{
\begin{array}{l}
 \de\varphi^1_{t,\ell} \;=\; \im\,\de\alpha\,\wedge\,\de x^4 \\[5pt]
 \de\varphi^2_{t,\ell} \;=\; 0 \\[5pt]
 \de\varphi^3_{t,\ell} \;=\; 0
\end{array}
\right. \;.
$$
Note that if $\ell\,=\,\ell\left(x^1,x^4\right)$, then we get a curve of integrable almost-complex structures, in fact, of K\"{a}hler structures, on $\T^6$: in particular, in such a case, $J_{t,\,\ell}$ is \Cpf. Therefore, as an example, assume that
$\ell\,=\,\ell\left(x^3\right)$ depends just on $x^3$ and is non-constant.

An arbitrary $J_{t,\,\ell}$-anti-invariant real $2$-form
\begin{eqnarray*}
\psi &:=:& A\,\left(\de x^1\wedge \de x^2-\alpha\,\de x^{4}\wedge \de x^5\right) + B\, \left(\de x^1\wedge\de x^5-\alpha\,\de x^2\wedge\de x^4\right) + C\, \left(\de x^1\wedge \de x^3-\alpha\, \de x^4\wedge\de x^6\right) \\[5pt]
 && + D\, \left(\de x^1\wedge\de x^6-\alpha\,\de x^3\wedge\de x^4\right) + E\, \left(\de x^2\wedge\de x^3-\de x^5\wedge\de x^6\right) + F\, \left(\de x^2\wedge\de x^6-\de x^3\wedge \de x^5\right) \;,
\end{eqnarray*}
with $A,B,C,D,E,F\in\mathcal{C}^\infty\left(\T^6;\R\right)$, is $\de$-closed if and only if
\begin{equation}\label{eq:system-T^4}
\left\{
\begin{array}{rcl}
 \frac{\del A}{\del x^3} - \frac{\del C}{\del x^2} + \frac{\del E}{\del x^1} &=& 0 \\[5pt]
 \frac{\del A}{\del x^4} - \frac{\del B}{\del x^1}\, \alpha &=& 0 \\[5pt]
 \frac{\del A}{\del x^5} - \frac{\del B}{\del x^2} &=& 0 \\[5pt]
 \frac{\del A}{\del x^6} - \frac{\del D}{\del x^2} + \frac{\del F}{\del x^1} &=& 0 \\[5pt]
 \frac{\del C}{\del x^4} - \frac{\del D}{\del x^1}\, \alpha &=& 0 \\[5pt]
 -\frac{\del B}{\del x^3} + \frac{\del C}{\del x^5} - \frac{\del F}{\del x^1} &=& 0 \\[5pt]
 \frac{\del C}{\del x^6} - \frac{\del D}{\del x^3} &=& 0 \\[5pt]
 -\frac{\del A}{\del x^1}\, \alpha - \frac{\del B}{\del x^4} &=& 0 \\[5pt]
 -\frac{\del C}{\del x^1}\, \alpha - \frac{\del D}{\del x^4} &=& 0 \\[5pt]
 \frac{\del B}{\del x^6} - \frac{\del D}{\del x^5} - \frac{\del E}{\del x^1} &=& 0 \\[5pt]
 \frac{\del \left(B\,\alpha\right)}{\del x^3} - \frac{\del D}{\del x^2}\, \alpha + \frac{\del E}{\del x^4} &=& 0 \\[5pt]
 \frac{\del E}{\del x^5} - \frac{\del F}{\del x^2} &=& 0 \\[5pt]
 \frac{\del E}{\del x^6} - \frac{\del F}{\del x^3} &=& 0 \\[5pt]
 -\frac{\del A}{\del x^2}\, \alpha - \frac{\del B}{\del x^5}\, \alpha &=& 0 \\[5pt]
 -\frac{\del B}{\del x^6}\, \alpha - \frac{\del C}{\del x^2}\, \alpha - \frac{\del F}{\del x^4} &=& 0 \\[5pt]
 -\frac{\del E}{\del x^2} - \frac{\del F}{\del x^5} &=& 0 \\[5pt]
 -\frac{\del \left(A\, \alpha\right)}{\del x^3} - \frac{\del D}{\del x^5}\, \alpha + \frac{\del F}{\del x^4} &=& 0 \\[5pt]
 -\frac{\del \left(C\, \alpha\right)}{\del x^3} - \frac{\del D}{\del x^6}\, \alpha &=& 0 \\[5pt]
 -\frac{\del E}{\del x^3} - \frac{\del F}{\del x^6} &=& 0 \\[5pt]
 -\frac{\del A}{\del x^6}\, \alpha + \frac{\del C}{\del x^5}\, \alpha - \frac{\del E}{\del x^4} &=& 0
\end{array}
\right. \;.
\end{equation}
For $t \neq 0$ small enough, by solving \eqref{eq:system-T^4}, we obtain that the $J_{t,\,\ell}$-anti-invariant real $\de$-closed $2$-forms are
$$
\psi \;=\; \frac{C}{\alpha}\left(\de x^{13}-\alpha\,\de x^{46}\right) + D\,\left(\de x^{16} - \alpha\, \de x^{34}\right) + E\, \left(\de x^{23} - \de x^{56}\right) + F\,\left(\de x^{26} - \de x^{35}\right) \;,
$$
where $C,\, D,\, E,\, F\in\R$.

For $t\neq0$ small enough, we have
$$ \dim_\R H^{(2,0),(0,2)}_{J_{t,\,\ell}}\left(\T^6;\R\right) \;\leq\; 4 \;<\; 6 \;=\; \dim_\R H^{(2,0),(0,2)}_{J_0}\left(\T^6;\R\right) \;, $$
and hence the function $t\mapsto \dim_\R H^{(2,0),(0,2)}_{J_{t,\,\ell}}\left(\T^6;\R\right)$ is upper-semi-continuous at $0$. On the other hand, the explicit computations for $H^{(1,1)}_{J_{t,\,\ell}}\left(\T^6;\R\right)$ are not so straightforward. In particular, it is not clear if $J_{t,\,\ell}$ remains still \Cf; note that $J_{t,\,\ell}$ is \Cp\ by \cite[Proposition 2.7]{draghici-li-zhang} or \cite[Proposition 3.2]{fino-tomassini}.
\end{ex}

\medskip

We recall here the construction of curves of almost-complex structures through an almost-complex structure $J$ by means of a $J$-anti-invariant real $2$-form, as introduced by J. Lee in \cite[\S1]{lee}, in the context of holomorphic curves on symplectic manifolds and Gromov and Witten invariants.

Let $J$ be an almost-complex structure on a compact manifold $X$; let $g$ be a $J$-Hermitian metric on $X$ and fix $\gamma\in \left(\wedge^{2,0}X\oplus\wedge^{0,2}X\right)\cap\wedge^2X$. Define $V_\gamma\in\End\left(TX\right)$ such that
\begin{equation}\label{eq:representation}
\gamma\left(\sspace,\,\ssspace\right) \;=\; g\left(V_\gamma\, \sspace,\, \ssspace\right) \;;
\end{equation}
a direct computation shows that $V_\gamma\,J\,+\,J\,V_\gamma\,=\,0$. Therefore, setting
$$ L_\gamma \;:=\; \frac{1}{2}\,V_\gamma\,J\; \in \; \End\left(TX\right) \;,$$
one gets that $L_\gamma\,J\,+\,J\,L_\gamma\,=\,0$. For $t\in \left(-\varepsilon,\, \varepsilon\right)$ with $\varepsilon>0$ small enough, define
$$
J_{t,\,\gamma} \;:=\; \left(\id\,-\,t\,L_\gamma\right)\,J\,\left(\id\,-\,t\,L_\gamma\right)^{-1} \;\in\; \End\left(TX\right) \;,
$$
obtaining a curve $\left\{J_{t,\,\gamma}\right\}_{t\in\left(-\varepsilon,\varepsilon\right)}$ of almost-complex structures associated with $\gamma$.

\medskip

We give an example of a \Cpf\ structure on a non-K\"{a}hler manifold such that the stability property of the \Cpf ness holds along a curve obtained using the construction by J. Lee, \cite[pages 423--425]{angella-tomassini-1}.

\begin{ex}
\label{ex:n6c}
\textit{A curve of \Cpf\ almost-complex structures on the completely-solvable solvmanifold $N^6(c)$.}\\
We recall that the manifold $N^6(c)$ is a compact $6$-dimensional completely-solvable solvmanifold defined, for suitable $c\in\R$, as the product
$$ N^6(c) \;:=\; \left(\Gamma(c) \left\backslash \mathrm{Sol}(3)\right.\right) \,\times\, \left(\Gamma(c) \left\backslash \mathrm{Sol}(3)\right.\right) \;,$$
where $\mathrm{Sol}(3)$ is a completely-solvable Lie group and $\Gamma(c)$ is a co-compact discrete subgroup of $\mathrm{Sol}(3)$, \cite[\S3]{auslander-green-hahn}, see Example \ref{ex:instability-alm-kahler}. It has been studied in \cite[Example 1]{benson-gordon-solvmanifolds} as an example of a cohomologically K\"{a}hler manifold, and in \cite[Example 3.4]{fernandez-munoz-santisteban} by M. Fern\'{a}ndez, V. Mu\~{n}oz, and J. A. Santisteban, as an example of a formal manifold admitting a symplectic structure satisfying the Hard Lefschetz Condition and with no K\"{a}hler structure, \cite[Theorem 3.5]{fernandez-munoz-santisteban}. A. Fino and A. Tomassini provided in \cite[\S6.3]{fino-tomassini} a family of \Cpf\ structures on $N^6(c)$. We construct here a curve of \Cpf\ almost-complex structures on $N^6(c)$ using the construction by J. Lee, \cite[\S1]{lee}.

Let $\left\{e^i\right\}_{i\in\{1,\ldots,6\}}$ be a co-frame for $N^6(c)$ such that the structure equations are
$$
\left\{
\begin{array}{l}
 \de e^1 \;=\; \phantom{+}c\, e^1\wedge e^3 \\[5pt]
 \de e^2 \;=\; -c\, e^2\wedge e^3\\[5pt]
 \de e^3 \;=\; \phantom{+}0\\[5pt]
 \de e^4 \;=\; \phantom{+}c\, e^4\wedge e^6 \\[5pt]
 \de e^5 \;=\; -c\, e^5\wedge e^6 \\[5pt]
 \de e^6 \;=\; \phantom{+}0
\end{array}
\right. \;.
$$
Take the almost-complex structure
$$
J\;=\;
\left(
\begin{array}{cc|cc|cc}
 & -1 & & & & \\
 \phantom{+}1 & & & & & \\
\hline
 & & & -1& & \\
 & & \phantom{+}1& & & \\
\hline
 & & & & &-1 \\
 & & & &\phantom{+}1 &
\end{array}
\right)
\;\in\; \End\left(TN^6(c)\right) \;.
$$

By A. Hattori's theorem \cite[Corollary 4.2]{hattori}, one computes
$$ H^2_{dR}\left(N^6(c);\R\right) \;=\; \R\left\langle e^{1}\wedge e^{2},\; e^{3}\wedge e^6-e^{4}\wedge e^{5},\; e^{3}\wedge e^{6}+e^{4}\wedge e^{5}\right\rangle \;,$$
proving that $\left(N^6(c),\,J\right)$ is \Cpf\ and \pf: indeed, the above harmonic representatives with respect to the $\left(\mathrm{Sol}(3)\times\mathrm{Sol}(3)\right)$-left-invariant metric $g:=\sum_{j=1}^{6} e^j\odot e^j$ are of pure type with respect to $J$, and hence \cite[Theorem 3.7]{fino-tomassini} assures the \Cpf ness and the \pf ness. Note that
$$
H^{(2,0),(0,2)}_J\left(N^6(c);\R\right) \;=\; \R\left\langle e^{3}\wedge e^{6}+e^{4}\wedge e^{5}\right\rangle \;;
$$
apply J. Lee's construction \cite[\S1]{lee} to the real $J$-anti-invariant $2$-form
$$ \gamma\;:=\;e^{3}\wedge e^{6}+e^{4}\wedge e^{5} \;:$$
the linear map $V\in\End(TX)$ representing $\gamma$ as in \eqref{eq:representation} is
$$
V\;=\;
\left(
\begin{array}{cc|cc|cc}
 \phantom{+}0& & & & & \\
 &\phantom{+}0 & & & & \\
\hline
 & & & & & -1 \\
 & & & & -1 & \\
\hline
 & & & \phantom{+}1 & & \\
 & & \phantom{+}1 & & &
\end{array}
\right)
\;\in\; \End\left(TN^6(c)\right) \;,
$$
and then it is straightforward to compute
$$
L\;=\;
\left(
\begin{array}{cc|cc|cc}
 \phantom{+}0& & & & & \\
 &\phantom{+}0 & & & & \\
\hline
 & & & & -\frac{1}{2} & \\
 & & & & & \phantom{+}\frac{1}{2} \\
\hline
 & & \phantom{+}\frac{1}{2} & & & \\
 & & & -\frac{1}{2} & &
\end{array}
\right)
\;\in\; \End\left(TN^6(c)\right) \;,
$$
and
$$
J_t \;:=:\; J_{t,\,\gamma} \;=\;
\left(
\begin{array}{cc|cc|cc}
 & -1 & & & & \\
 \phantom{+}1 & & & & & \\
\hline
 & & & -\frac{4-t^2}{4+t^2} & & -\frac{4t}{4+t^2} \\
 & & \phantom{+}\frac{4-t^2}{4+t^2} & & -\frac{4t}{4+t^2} & \\
\hline
 & & & \frac{4t}{4+t^2} & &-\frac{4-t^2}{4+t^2} \\
 & & \frac{4t}{4+t^2} & &\frac{4-t^2}{4+t^2} &
\end{array}
\right)
\;\in\; \End\left(TN^6(c)\right) \;.
$$
To shorten the notation, set
$$ \alpha \;:=:\; \alpha(t) \;:=\; \frac{4-t^2}{4+t^2}\;,\qquad \beta\;:=:\;\beta(t) \;:=\;
\frac{4t}{4+t^2}\;.$$
A co-frame for the $J_t$-holomorphic cotangent bundle is given by
$$
\left\{
\begin{array}{l}
 \varphi^1_t \;:=\; e^1+\im e^2 \\[5pt]
 \varphi^2_t \;:=\; e^3+\im \left(\alpha\, e^4+\beta\, e^6\right) \\[5pt]
 \varphi^3_t \;:=\; e^5+\im \left(-\beta\, e^4+\alpha\, e^6\right)
\end{array}
\right. \;.
$$
Since the real $\de$-closed $2$-forms
$$ \frac{1}{2\im}\,\varphi_t^{1\bar{1}}\;,\qquad \frac{1}{2\im}\varphi_t^{3\bar{3}}-\frac{\alpha}{c}\,\de e^{5}\;,
\qquad \frac{1}{2\im}\left(\beta\,\varphi_t^{2\bar{2}}+\alpha\left(\varphi_t^{2\bar{3}}-\varphi_t^{\bar{2}3}\right)
\right)+\frac{1}{2\im}\varphi_t^{3\bar{3}} $$
generate three different cohomology classes, we get that, for $t\neq0$ small enough,
$$
H^{2}_{dR}\left(N^6(c);\R\right) \;=\; H^{(1,1)}_{J_t}\left(N^6(c);\R\right) \;,
$$
and so, in particular, $J$ is \Cf\ and \p. A straightforward computation yields
\begin{eqnarray*}
H^4_{dR}\left(N^6(c);\R\right) &=& \R\left\langle *_g\left(\frac{1}{2\im}\,\varphi_t^{1\bar{1}}\right), \; *_g\left(\varphi_t^{3\bar{3}}-\frac{\alpha}{c}\,
\de e^{5}\right)+\frac{\alpha}{c}\,\de \left(e^{125}\right), \right. \\[5pt]
 && \left. \frac{\alpha}{4}\left(\varphi_t^{12\bar{1}\bar{3}}+
\varphi_t^{\bar{1}\bar{2}13}\right)+\frac{\beta}{4}\,\varphi_t^{12\bar{1}\bar{2}}+\frac{\alpha\,\beta}{c}\,
\de\left(e^{125}\right)\right\rangle \\[5pt]
&=& H^{(2,2)}_{J_t}\left(N^6(c);\R\right) \;,
\end{eqnarray*}
therefore $N^6(c)$ is also \Cf\ at the \kth{4} stage and hence \f\ and \Cp.
\end{ex}

We resume the content of the last example in the following theorem, \cite[Theorem 4.1]{angella-tomassini-1}.

\begin{thm}\label{thm:lee-curves}
 There exists a compact manifold $N^6(c)$ endowed with an almost-complex structure $J$ and a $J$-Hermitian metric $g$ such that:
\begin{enumerate}
\item[(i)] $J$ is \Cpf;
\item[(ii)] each $J$-anti-invariant $g$-harmonic form gives rise to a curve
$\left\{J_t\right\}_{t\in\left(-\varepsilon,\varepsilon\right)}$ of \Cpf\ almost-complex structures on $N^6(c)$, where $\varepsilon>0$ is small enough, using J. Lee's construction;
\item[(iii)] furthermore, the function
$$
\left(-\varepsilon,\varepsilon\right) \;\ni\; t\mapsto \dim_\R H^{(2,0),(0,2)}_{J_t}\left(N^6(c);\R\right) \;\in\; \N
$$
is upper-semi-continuous at $0$.
\end{enumerate}
\end{thm}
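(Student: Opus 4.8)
The plan is to prove the theorem by exhibiting the explicit example of Example \ref{ex:n6c}, and I would organize the argument in three steps. First I would fix the compact $6$-dimensional completely-solvable solvmanifold $N^6(c) = \left(\Gamma(c)\backslash \mathrm{Sol}(3)\right) \times \left(\Gamma(c)\backslash \mathrm{Sol}(3)\right)$ together with the left-invariant co-frame $\left\{e^1,\ldots,e^6\right\}$ satisfying $\de e^1 = c\, e^{13}$, $\de e^2 = -c\, e^{23}$, $\de e^3 = 0$, $\de e^4 = c\, e^{46}$, $\de e^5 = -c\, e^{56}$, $\de e^6 = 0$ (where $e^{ij}:=e^i\wedge e^j$). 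Since $N^6(c)$ is a compact quotient of a completely-solvable Lie group, A. Hattori's theorem \cite[Corollary 4.2]{hattori} lets me compute all de Rham cohomology using only left-invariant forms. I would then take the $G$-left-invariant almost-complex structure $J$ pairing $e^1\leftrightarrow e^2$, $e^3\leftrightarrow e^4$, $e^5\leftrightarrow e^6$, and the left-invariant Hermitian metric $g := \sum_{j=1}^{6} e^j\odot e^j$, and compute $H^2_{dR}\left(N^6(c);\R\right) = \R\left\langle e^{12},\, e^{36}-e^{45},\, e^{36}+e^{45}\right\rangle$. The key observation is that these $g$-harmonic generators are all of pure type with respect to $J$, so A. Fino and A. Tomassini's \cite[Theorem 3.7]{fino-tomassini} immediately gives that $J$ is \Cpf\ (and \pf), proving \textit{(i)}; moreover one reads off $H^{(2,0),(0,2)}_J\left(N^6(c);\R\right) = \R\left\langle e^{36}+e^{45}\right\rangle$.

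For \textit{(ii)} I would apply J. Lee's construction \cite{lee} to the $J$-anti-invariant $g$-harmonic $2$-form $\gamma := e^{36}+e^{45}$. Concretely, I would define $V_\gamma\in\End\left(TN^6(c)\right)$ by $\gamma\left(\sspace,\,\ssspace\right)=g\left(V_\gamma\sspace,\,\ssspace\right)$, verify the anticommutation $V_\gamma\, J + J\, V_\gamma = 0$, set $L := \frac{1}{2}\,V_\gamma\, J$ (so that $L\,J+J\,L=0$), and form the curve $J_t := \left(\id - t\,L\right)\,J\,\left(\id - t\,L\right)^{-1}$ for $t\in\left(-\varepsilon,\varepsilon\right)$ with $\varepsilon>0$ small enough. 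As the whole construction is left-invariant, the cohomological decompositions of $J_t$ are again governed by Hattori's theorem. I would then write down the explicit $J_t$-holomorphic co-frame $\left\{\varphi^1_t,\varphi^2_t,\varphi^3_t\right\}$ (with $\varphi^1_t=e^1+\im e^2$, $\varphi^2_t=e^3+\im\left(\alpha\, e^4+\beta\, e^6\right)$, $\varphi^3_t=e^5+\im\left(-\beta\, e^4+\alpha\, e^6\right)$, where $\alpha=\frac{4-t^2}{4+t^2}$ and $\beta=\frac{4t}{4+t^2}$), and exhibit explicit $J_t$-pure-type representatives showing that, for $t\neq0$ small enough, $H^2_{dR}\left(N^6(c);\R\right) = H^{(1,1)}_{J_t}\left(N^6(c);\R\right)$. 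This equality alone forces $H^{-}_{J_t}=\{0\}$, hence $J_t$ is \Cpf. (Computing in addition $H^4_{dR}\left(N^6(c);\R\right)=H^{(2,2)}_{J_t}\left(N^6(c);\R\right)$ yields the stronger \f\ and \Cp\ statements via Theorem \ref{thm:implicazioni}, but for the assertion of the theorem the second-stage computation suffices.)

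Finally, part \textit{(iii)} will be immediate from these computations: $\dim_\R H^{(2,0),(0,2)}_{J_0}\left(N^6(c);\R\right)=1$ while $\dim_\R H^{(2,0),(0,2)}_{J_t}\left(N^6(c);\R\right)=0$ for $t\neq0$, so the function drops at $0$ and is therefore upper-semi-continuous there. The main obstacle will be the verification in step \textit{(ii)}: to prove that $J_t$ stays \Cpf\ I must produce $J_t$-pure-type representatives for \emph{every} de Rham class of degree $2$ (and, for the stronger statements, degree $4$), and this is delicate because the fixed $g$-harmonic generators acquire mixed $J_t$-type as $t$ varies. The trick is to correct them by suitable $\de$-exact terms — for instance replacing $\frac{1}{2\im}\varphi^{3\bar 3}_t$ by $\frac{1}{2\im}\varphi^{3\bar 3}_t-\frac{\alpha}{c}\,\de e^5$, and analogously using multiples of $\de\left(e^{125}\right)$ in degree $4$ — to restore pure type without changing the cohomology class. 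Carrying these corrections consistently through the explicit co-frame $\left\{\varphi^j_t\right\}$, while using that exact forms are cohomologically irrelevant and that everything reduces to left-invariant forms by \cite[Corollary 4.2]{hattori}, is where the genuine work lies.
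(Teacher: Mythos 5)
Your route is the same as the paper's: same manifold $N^6(c)$, same $J$ and $g$, Hattori's theorem plus \cite[Theorem 3.7]{fino-tomassini} for \textit{(i)}, Lee's construction applied to $\gamma=e^{36}+e^{45}$ with the same deformed co-frame $\left\{\varphi^j_t\right\}$, and the same exact-term corrections of the harmonic generators. However, there is a genuine gap in your step \textit{(ii)}: you assert that the equality $H^2_{dR}\left(N^6(c);\R\right)=H^{(1,1)}_{J_t}\left(N^6(c);\R\right)$ ``alone forces $H^-_{J_t}=\{0\}$, hence $J_t$ is \Cpf.'' This is a non sequitur. That equality says $H^+_{J_t}=H^2_{dR}\left(N^6(c);\R\right)$, which gives \Cf ness trivially, but it does not exclude that some nonzero class also admits a $J_t$-anti-invariant representative; in that case $H^-_{J_t}=H^+_{J_t}\cap H^-_{J_t}\neq\{0\}$, so \Cp ness — and with it \Cpf ness — fails. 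The implication ``\Cf\ implies \Cp'' is a genuinely $4$-dimensional phenomenon (Theorem \ref{thm:implicazioni} with $2n=4$, $k=2$): in dimension $6$ it is false in general, as Proposition \ref{prop:Cf-Cp-non-related} shows with a \Cf\ non-\Cp\ complex structure on a $6$-dimensional nilmanifold, and as the deformed Iwasawa manifolds illustrate with classes lying simultaneously in the invariant and the anti-invariant subgroups.

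Consequently, the degree-$4$ computation you dismiss as ``only needed for the stronger \f\ and \Cp\ statements'' is in fact essential to the theorem itself. In the paper's Example \ref{ex:n6c}, the second-stage computation yields only that $J_t$ is \Cf\ (and, by Theorem \ref{thm:implicazioni}, \p); the \Cp\ half of \Cpf ness at the \kth{2} stage is obtained precisely from $H^4_{dR}\left(N^6(c);\R\right)=H^{(2,2)}_{J_t}\left(N^6(c);\R\right)$, i.e., \Cf ness at the \kth{4} stage, which by Theorem \ref{thm:implicazioni} gives \f ness at the \kth{2} stage and hence \Cp ness at the \kth{2} stage. Alternatively you could establish $H^-_{J_t}=\{0\}$ directly — for instance by F.~A. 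Belgun's symmetrization trick \cite[Theorem 7]{belgun}, reducing to left-invariant forms and checking that every left-invariant $\de$-closed $J_t$-anti-invariant $2$-form is $\de$-exact — but some such argument must be supplied. As written, part \textit{(ii)} is not proven, and part \textit{(iii)} inherits the gap, since the claimed value $\dim_\R H^{(2,0),(0,2)}_{J_t}\left(N^6(c);\R\right)=0$ for $t\neq0$ rests on the unestablished \Cp ness (once \Cp ness is in place, $H^-_{J_t}=H^+_{J_t}\cap H^-_{J_t}=\{0\}$ does follow from your second-stage computation, and the upper-semi-continuity at $0$ is then immediate). Everything else in your proposal matches the paper's proof.
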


\subsection{The semi-continuity problem}\label{subsec:semicontinuity}
Given a compact $4$-dimensional manifold $X$ and a family $\left\{J_t\right\}_{t}$ of almost-complex structures on $X$, T. Dr\v{a}ghici, T.-J. Li, and W. Zhang studied in \cite{draghici-li-zhang-2} the semi-continuity properties of the functions $t\mapsto \dim_\R H^+_{J_t}(X)$ and $t\mapsto \dim_\R H^-_{J_t}(X)$. They proved the following result.

\begin{thm}[{\cite[Theorem 2.6]{draghici-li-zhang-2}}]
 Let $X$ be a compact $4$-dimensional manifold and let $\{J_t\}_{t\in I\subseteq \R}$ be a family of (\Cpf) almost-complex structures on $X$, for $I\subseteq\R$ an interval. Then the function
$$ I \;\ni\; t\mapsto \dim_\R H^{-}_{J_t}(X) \;\in\; \N $$
is upper-semi-continuous, and therefore the function
$$ I \;\ni\; t\mapsto \dim_\R H^{+}_{J_t}(X) \;\in\; \N $$
is lower-semi-continuous.
\end{thm}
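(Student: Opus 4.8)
The plan is to deduce the lower-semi-continuity of $t\mapsto \dim_\R H^+_{J_t}(X)$ from the upper-semi-continuity of $t\mapsto \dim_\R H^-_{J_t}(X)$, and to concentrate the work on the latter. By \cite[Theorem 2.3]{draghici-li-zhang}, every almost-complex structure $J_t$ on the compact $4$-dimensional manifold $X$ is \Cpf, so $H^2_{dR}(X;\R)=H^+_{J_t}(X)\oplus H^-_{J_t}(X)$ and hence $\dim_\R H^+_{J_t}(X)=b_2-\dim_\R H^-_{J_t}(X)$; since $b_2$ is a topological invariant, it is constant in $t$, and the two semi-continuity statements are equivalent. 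Thus it suffices to prove that $h^-_{J_t}:=\dim_\R H^-_{J_t}(X)$ is upper-semi-continuous.

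The crux is the special behaviour of anti-invariant forms in dimension $4$. First I would fix a smooth family $\left\{g_t\right\}_t$ of $J_t$-Hermitian metrics on $X$, with associated fundamental forms $\omega_t$, and recall from the Weil identity \cite[Théorème 2]{weil} (see also \cite[Proposition 1.2.31]{huybrechts}) the $g_t$-orthogonal splitting $\wedge^+_{g_t}X=\R\,\omega_t\oplus\left(\left(\wedge^{2,0}_{J_t}X\oplus\wedge^{0,2}_{J_t}X\right)\cap\wedge^2X\right)$ of the bundle of $g_t$-self-dual $2$-forms; in particular every real $J_t$-anti-invariant $2$-form is $g_t$-self-dual. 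Since for a $g_t$-self-dual $2$-form $\alpha$ on a $4$-manifold one has $\de^*_{g_t}\alpha=-*_{g_t}\de\,*_{g_t}\alpha=-*_{g_t}\de\alpha$, such an $\alpha$ is $\de$-closed if and only if it is $g_t$-harmonic (this implication is pointwise and needs no compactness). Consequently, each class in $H^-_{J_t}(X)$ has a unique $g_t$-harmonic representative, automatically $J_t$-anti-invariant and self-dual, so $\alpha\mapsto[\alpha]$ identifies $h^-_{J_t}$ with the dimension of the space $\mathcal{H}^-_{J_t}$ of $g_t$-harmonic $J_t$-anti-invariant $2$-forms, equivalently of the $g_t$-self-dual harmonic $2$-forms $\alpha$ whose $\omega_t$-component vanishes pointwise, i.e. with $\left\langle\alpha,\,\omega_t\right\rangle_{g_t}\equiv 0$.

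Next I would recast this into a form amenable to semi-continuity of the rank. By Hodge theory and the constancy of $b_2=\dim_\R\ker\Delta_{g_t}$, the harmonic spaces $\mathcal{H}_{g_t}=\ker\Delta_{g_t}$ form a continuous (indeed smooth) family of subspaces of $\wedge^2X$ of constant dimension $b_2$; since $*_{g_t}$ commutes with $\Delta_{g_t}$ and is an involution on $\mathcal{H}_{g_t}$ whose eigenspace dimensions $b^+$ and $b^-$ are fixed by the signature of the intersection form, the self-dual harmonic space $\mathcal{H}^+_{g_t}$ varies continuously and has constant dimension $b^+$. Fixing a continuous frame of $\mathcal{H}^+_{g_t}$ and considering the linear map $\pi_t\colon\mathcal{H}^+_{g_t}\to\mathrm{L}^2(X;\R)$, $\alpha\mapsto\left\langle\alpha,\,\omega_t\right\rangle_{g_t}$, I obtain $\mathcal{H}^-_{J_t}=\ker\pi_t$ and therefore $h^-_{J_t}=b^+-\rk\pi_t$. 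As $\left\{\pi_t\right\}$ is a continuous family of linear maps out of a fixed finite-dimensional space, its rank is lower-semi-continuous (a nonvanishing Gram determinant of finitely many images persists under small perturbations), whence $h^-_{J_t}=b^+-\rk\pi_t$ is upper-semi-continuous, as desired.

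The main obstacle, and the genuinely $4$-dimensional input, is the decoupling in the second step: the fact that $J_t$-anti-invariance forces self-duality, so that the \emph{a priori} non-elliptic, $t$-dependent constraint of being $\de$-closed and of pure degree $(2,0)+(0,2)$ collapses to the $t$-independent, Hodge-theoretic condition of harmonicity supplemented by a single pointwise scalar equation. Once this is available, the remaining analytic content is the standard Kodaira–Spencer-type continuity of $\ker\Delta_{g_t}$ (here in the simplest, constant-dimension case) together with elementary lower-semi-continuity of the rank; checking that $\mathcal{H}^+_{g_t}$, the chosen frame, and $\pi_t$ all depend continuously on $t$ is routine but is the step requiring the most care.
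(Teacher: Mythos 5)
Your argument is correct, but it proceeds by a different mechanism than the one the paper indicates. The paper does not reprove this statement: it cites \cite[Theorem 2.6]{draghici-li-zhang-2} and sketches the route through M. Lejmi's lemma \cite[Lemma 4.1]{lejmi-mrl}, by which the space of closed $J_t$-anti-invariant $2$-forms is realized as the kernel of a self-adjoint strongly elliptic operator on the ($t$-dependent) bundle $\wedge^-_{J_t}X$, so that upper-semi-continuity of $t\mapsto \dim_\R H^-_{J_t}(X)$ follows from the Kodaira--Spencer semi-continuity theory for kernels of families of elliptic operators, \cite[Theorem 4]{kodaira-spencer-3}. You rely on the same genuinely four-dimensional input --- $J_t$-anti-invariance forces $g_t$-self-duality, and a closed self-dual form is harmonic, whence $h^-_{J_t}$ equals the dimension of the space $\mathcal{Z}^-_{J_t}$ of closed anti-invariant forms (injectivity of $\mathcal{Z}^-_{J_t}\to H^-_{J_t}(X)$ holds because a nonzero harmonic form is never exact, a point you use implicitly and should record) --- but you then avoid constructing any new elliptic operator: you embed $\mathcal{Z}^-_{J_t}$ into the continuously varying, constant-dimension space $\mathcal{H}^+_{g_t}$ of self-dual $g_t$-harmonic forms, whose dimension $b^+$ is fixed by the intersection form, and cut it out as $\ker\pi_t$ for the continuous family of linear maps $\pi_t(\alpha)=\left\langle \alpha,\,\omega_t\right\rangle_{g_t}$, so that $h^-_{J_t}=b^+-\rk\pi_t$ and lower-semi-continuity of the rank (your Gram-determinant argument, valid also with infinite-dimensional target) concludes. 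What the elliptic-operator route buys is a formulation intrinsic to $J_t$ alone, with the semi-continuity delegated wholesale to the standard theory; what your route buys is elementariness, the only analytic ingredients being the constancy of $b_2$ and $b^+$ and the continuity of the harmonic projection for the metric family $\left\{g_t\right\}_t$ --- the constant-dimension case of Kodaira--Spencer, or, for merely continuous families, continuity of the spectral projection of $\Delta_{g_t}$ at $0$, which is indeed the step requiring the care you flag. Finally, your deduction of the lower-semi-continuity of $t\mapsto\dim_\R H^+_{J_t}(X)$ from $h^+_{J_t}=b_2-h^-_{J_t}$ correctly invokes \cite[Theorem 2.3]{draghici-li-zhang}, namely that every almost-complex structure on a compact $4$-dimensional manifold is \Cpf, which is exactly the role of the parenthetical hypothesis in the statement.
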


The previous result is closely related to the geometry of $4$-dimensional manifolds; more precisely, it follows from M. Lejmi's result in \cite[Lemma 4.1]{lejmi-mrl} that a certain operator is a self-adjoint strongly elliptic linear operator with kernel the harmonic $J$-anti-invariant $2$-forms. In this section, we are concerned with establishing if a similar semi-continuity result could occur in dimension higher than $4$, possibly assuming further hypotheses.

\subsubsection{Counterexamples to semi-continuity}

First of all, we provide two examples showing that, in general, no semi-continuity property holds in dimension higher than $4$.

\medskip

The following result provides a counterexample to the upper-semi-continuity of $t\mapsto\dim_\R H^-_{J_t}$ in dimension greater than $4$, \cite[Proposition 4.1]{angella-tomassini-2}.

\begin{prop}
\label{prop:no-scs-h-}
 The compact $10$-dimensional manifold $\eta\beta_5$ is endowed with a \Cpf\ complex structure $J$ and a curve
$\left\{J_t\right\}_{t\in\Delta\left(0,\varepsilon\right)\subset\C}$ of complex structures (which are non-\Cp\ for $t\neq 0$), with $J_0=J$, and $\varepsilon>0$, such that the function
$$ \Delta\left(0,\varepsilon\right) \;\ni\; t \mapsto \dim_\R H^-_{J_t}\left(\eta\beta_5\right) \;\in\; \N $$
is not upper-semi-continuous.
\end{prop}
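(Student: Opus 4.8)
The plan is to exhibit the manifold $\eta\beta_5$, which was already studied in Example~\ref{ex:etabeta5}, together with an explicit curve of complex structures along which the dimension of the $J_t$-anti-invariant second cohomology subgroup \emph{increases} at $t=0$, thereby violating upper-semi-continuity. First I would recall from Example~\ref{ex:etabeta5} the global coframe $\left\{\varphi^j\right\}_{j\in\{1,\ldots,5\}}$ of holomorphic $1$-forms on $\eta\beta_5$, whose structure equations are $\de\varphi^1=\cdots=\de\varphi^4=0$ and $\de\varphi^5=-\varphi^{12}-\varphi^{34}$. Since $\eta\beta_5$ is a holomorphically parallelizable nilmanifold, K.~Nomizu's theorem \cite[Theorem 1]{nomizu} and Theorem~\ref{thm:bc-invariant-cor} reduce the computation of $H^2_{dR}$ and of its $J_t$-invariant and $J_t$-anti-invariant subgroups to the level of the associated Lie algebra; more precisely, Proposition~\ref{prop:linear-cpf-invariant-cpf-J} (with $S=\{(2,0),(0,2)\}$) guarantees that $H^-_{J_t}\left(\eta\beta_5\right)$ is computed using only left-invariant forms, which turns the whole problem into finite-dimensional linear algebra.

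The key step is to construct the curve. Following the deformation procedure described in \S\ref{sec:deformations} and mimicking Example~\ref{ex:etabeta5}, I would set $\varphi^1_t:=\varphi^1+t\,\bar\varphi^1$ and $\varphi^j_t:=\varphi^j$ for $j\in\{2,3,4,5\}$ as a coframe for the $J_t$-holomorphic cotangent bundle, for $t\in\Delta(0,\varepsilon)$ with $\varepsilon>0$ small enough, and rewrite the structure equations with respect to $\left\{\varphi^j_t\right\}_j$; one finds $\de\varphi^1_t=\cdots=\de\varphi^4_t=0$ and $\de\varphi^5_t=-\frac{1}{1-|t|^2}\varphi^{12}_t-\varphi^{34}_t-\frac{t}{1-|t|^2}\varphi^{2\bar1}_t$, exactly as in Example~\ref{ex:etabeta5}. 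At $t=0$ the structure is the standard one, and, as in the analysis of the Iwasawa manifold in Theorem~\ref{thm:instability-iwasawa}, the harmonic representatives of $H^2_{dR}$ are of pure type with respect to $J_0$, so $J_0$ is \Cpf\ and one computes $\dim_\R H^-_{J_0}\left(\eta\beta_5\right)$ directly from the list of pure-type $(2,0)$- and $(0,2)$-harmonic classes. For $t\neq0$, the appearance of the mixed term $\frac{t}{1-|t|^2}\varphi^{2\bar1}_t$ in $\de\varphi^5_t$ is what destroys \Cp ness: it forces a class of type $(2,0)+(0,2)$ to coincide cohomologically with a $(1,1)$-class, precisely as in the classes~{\itshape (ii)} and~{\itshape (iii)} of the Iwasawa deformations.

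The heart of the argument, and the place where the main obstacle lies, is the explicit determination of $\dim_\R H^-_{J_t}\left(\eta\beta_5\right)$ for $t\neq0$ small, and the verification that this number is \emph{strictly larger} than $\dim_\R H^-_{J_0}\left(\eta\beta_5\right)$; this is what makes the function fail to be upper-semi-continuous (upper-semi-continuity would require the generic value to be no larger than the value at the special point). Concretely, I would solve, over the Lie algebra, the linear system characterizing left-invariant $\de$-closed real $2$-forms lying in $\left(\wedge^{2,0}_{J_t}\oplus\wedge^{0,2}_{J_t}\right)\cap\wedge^2$, counting how many independent cohomology classes survive. The delicate point is that the extra $(2,0),(0,2)$-cohomology appearing for $t\neq 0$ comes from closed forms that at $t=0$ were exact or of mixed type; tracking which closed anti-invariant forms become genuinely non-exact away from $t=0$, and checking that no compensating drop occurs, is the computational crux. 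Since $J_t$ is integrable for all $t$, I can double-check the count against the Bott-Chern/Dolbeault data via Remark~\ref{rem:cpf-complex-cpf} and the relation $H^{(2,0)}_J+H^{(0,2)}_J=H^-_J\otimes_\R\C$ valid in the integrable case. Once the strict inequality $\dim_\R H^-_{J_t}>\dim_\R H^-_{J_0}$ is established for all $t\neq 0$ in a punctured neighbourhood of $0$, the failure of upper-semi-continuity at $t=0$ follows immediately, completing the proof.
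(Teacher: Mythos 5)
Your proposal follows essentially the same route as the paper's proof: the same manifold $\eta\beta_5$ with the same deformation $\varphi^1_t:=\varphi^1+t\,\bar\varphi^1$, the same reduction to left-invariant forms via K.~Nomizu's theorem and Proposition~\ref{prop:linear-cpf-invariant-cpf-J}, the same mechanism for non-\Cp ness (the mixed term $\frac{t}{1-\left|t\right|^2}\,\varphi^{2\bar1}_t$ in $\de\varphi^5_t$ forcing a $(1,1)$-class to equal a $(2,0)$-class), and the same conclusion from the jump of $\dim_\R H^-_{J_t}$ at $t=0$. The only simplification the paper makes at your ``computational crux'' is that no full linear system needs to be solved: it suffices to exhibit twelve independent anti-invariant classes for $t\neq 0$ (namely $\varphi^{13}_t,\,\varphi^{14}_t,\,\varphi^{23}_t,\,\varphi^{24}_t,\,\varphi^{12}_t,\,\varphi^{34}_t$ and their conjugates, independent because the only left-invariant exact $2$-forms, $\de\varphi^5_t$ and its conjugate, are no longer of pure type for $t\neq0$), so that the lower bound $\dim_\R H^-_{J_t}\geq 12 > 10 = \dim_\R H^-_{J_0}$ — where at $t=0$ one has $\left[\varphi^{12}\right]=-\left[\varphi^{34}\right]$ since $\varphi^{12}+\varphi^{34}=-\de\varphi^5$ is exact — already rules out upper-semi-continuity.
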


\begin{proof}
The proof follows from the following example, \cite[Example 4.2]{angella-tomassini-2}.

Consider the nilmanifold $\eta\beta_5$ endowed with its natural complex structure $J$, as described in Example \ref{ex:etabeta5}.
We recall that, chosen a suitable co-frame $\left\{\varphi^j\right\}_{j\in\{1,\ldots,5\}}$ of the holomorphic cotangent bundle, the complex structure equations are
$$
\left\{
\begin{array}{l}
 \de\varphi^1 \;=\; \de\varphi^2 \;=\; \de\varphi^3 \;=\; \de\varphi^4 \;=\; 0 \\[5pt]
 \de\varphi^5 \;=\; -\varphi^{12}-\varphi^{34}
\end{array}
\right. \;.
$$

By K. Nomizu's theorem \cite[Theorem 1]{nomizu}, it is straightforward to compute
\begin{eqnarray*}
 H^2_{dR}\left(\eta\beta_5;\C\right) &=& \C\left\langle \varphi^{13},\; \varphi^{14},\; \varphi^{23},\; \varphi^{24},\; \varphi^{\bar1\bar3},\; \varphi^{\bar1\bar4},\; \varphi^{\bar2\bar3},\; \varphi^{\bar2\bar4},\; \varphi^{12}-\varphi^{34},\; \varphi^{\bar1\bar2}-\varphi^{\bar3\bar4} \right\rangle\\[5pt]
 && \oplus\; \C\left\langle \varphi^{1\bar1},\; \varphi^{1\bar2},\; \varphi^{1\bar3},\; \varphi^{1\bar4},\; \varphi^{2\bar1},\; \varphi^{2\bar2},\; \varphi^{2\bar3},\; \varphi^{2\bar4}, \; \varphi^{3\bar1},\; \varphi^{3\bar2},\; \varphi^{3\bar3},\; \varphi^{3\bar4},\; \varphi^{4\bar1},\; \varphi^{4\bar2},\; \varphi^{4\bar3},\; \varphi^{4\bar4} \right\rangle
\end{eqnarray*}
(where, as usually, we have listed the harmonic representatives with respect to the left-invariant Hermitian metric $\sum_{j=1}^{5}\varphi^j\odot\bar\varphi^j$ instead of their classes, and we have shortened, e.g.,  $\varphi^{A\bar B}:=\varphi^A\wedge\bar\varphi^B$). Hence the complex structure $J$ is \Cpf\ by \cite[Theorem 3.7]{fino-tomassini}, and
$$ \dim_\R H^-_J\left(\eta\beta_5\right) \;=\; 10\;,\qquad\qquad \dim_\R H^+_J\left(\eta\beta_5\right)\;=\;16\;.$$

Now, for $\varepsilon>0$ small enough, consider the curve $\left\{J_t\right\}_{t\in\Delta\left(0,\varepsilon\right)}$ of complex structures such that a co-frame for the $J_t$-holomorphic cotangent bundle is given by $\left\{\varphi^j_t\right\}_{j\in\{1,\ldots,5\}}$, where, for any $t\in\Delta\left(0,\varepsilon\right)$,
$$
\left\{
\begin{array}{rcl}
 \varphi^1_t &:=& \varphi^1+t\,\bar\varphi^1 \\[5pt]
 \varphi^2_t &:=& \varphi^2 \\[5pt]
 \varphi^3_t &:=& \varphi^3 \\[5pt]
 \varphi^4_t &:=& \varphi^4 \\[5pt]
 \varphi^5_t &:=& \varphi^5
\end{array}
\right. \;,
$$
see Example \ref{ex:etabeta5}. The structure equations with respect to $\left\{\varphi^j_t\right\}_{j\in\{1,\ldots,5\}}$ are
$$
\left\{
\begin{array}{l}
 \de\varphi^1_t \;=\; \de\varphi^2_t \;=\; \de\varphi^3_t \;=\; \de\varphi^4_t \;=\; 0 \\[5pt]
 \de\varphi^5_t \;=\; -\frac{1}{1-\left|t\right|^2}\,\varphi^{12}_t-\varphi^{34}_t-\frac{t}{1-\left|t\right|^2}\,
\varphi^{2\bar1}_t
\end{array}
\right. \;.
$$
When $\varepsilon>0$ is small enough, for $t\in\Delta\left(0,\varepsilon\right)\setminus\{0\}$, the complex structure $J_t$ is not \Cp: indeed,
$$ H^{(1,1)}_{J_t}\left(\eta\beta_5;\C\right) \;\ni\;
\left[\frac{t}{1-\left|t\right|^2}\,\varphi^{2\bar1}_t+\de\varphi^5_t\right] \;=\; \left[-\frac{1}{1-\left|t\right|^2}\,
\varphi^{12}_t-\varphi^{34}_t\right]\;\in\; H^{(2,0)}_{J_t}\left(\eta\beta_5;\C\right) \;, $$
where $\left[\frac{t}{1-\left|t\right|^2}\,\varphi^{2\bar1}_t\right]\in H^2_{dR}\left(\eta\beta_5;\C\right)$ is a non-zero cohomology class by K. Nomizu's theorem \cite[Theorem 1]{nomizu}.
Moreover, note that
$$ H^{(2,0),(0,2)}_{J_t}\left(\eta\beta_5;\C\right) \;\supseteq \; \C\left\langle \varphi^{13}_t,\; \varphi^{14}_t,\; \varphi^{23}_t,\; \varphi^{24}_t,\;  \varphi^{\bar1\bar3}_t,\; \varphi^{\bar1\bar4}_t,\; \varphi^{\bar2\bar3}_t,\; \varphi^{\bar2\bar4}_t,\; \varphi^{12}_t,\; \varphi^{34}_t,\; \varphi^{\bar1\bar2}_t,\; \varphi^{\bar3\bar4}_t \right\rangle \;, $$
hence, for every $t\in\Delta\left(0,\varepsilon\right)\setminus\{0\}$,
$$ \dim_\R H^-_{J_0}\left(\eta\beta_5\right) \;=\; 10 \;<\; 12 \;\leq\; \dim_\R H^-_{J_t}\left(\eta\beta_5\right) \;,$$
and in particular $t\mapsto h^-_{J_t}$ is not upper-semi-continuous at $0$.
\end{proof}

\medskip

The following result provides a counterexample to the lower-semi-continuity of $t\mapsto\dim_\R H^+_{J_t}$ in dimension greater than $4$, \cite[Proposition 4.3]{angella-tomassini-2}.

\begin{prop}
\label{prop:no-sci-h+}
 The compact $6$-dimensional manifold $\mathbb{S}^3\times\T^3$ is endowed with a \Cf\ (non-integrable) almost-complex structure $J$ and a curve $\left\{J_t\right\}_{t\in\Delta\left(0,\varepsilon\right)\subset\C}$, where $\varepsilon>0$, of (non-integrable) almost-complex structures (which are not \Cp), with $J_0=J$, such that
$$ \Delta\left(0,\varepsilon\right) \;\ni\; t \mapsto \dim_\R H^+_{J_t}\left(\mathbb{S}^3\times\T^3\right) \;\in\; \N $$
is not lower-semi-continuous.
\end{prop}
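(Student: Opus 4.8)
The plan is to exploit that $\Sfera^3\times\T^3\cong \mathrm{SU}(2)\times T^3$ is a compact connected Lie group, so that (exactly as for nilmanifolds and completely-solvable solvmanifolds) its de Rham cohomology, and the subgroups $H^\pm_{J_t}$ attached to a left-invariant almost-complex structure, are computed at the level of left-invariant forms. Concretely, I would fix a left-invariant coframe $\{e^1,\dots,e^6\}$ with structure equations (writing $e^{AB}:=e^A\wedge e^B$)
$$\de e^1 = e^{23},\quad \de e^2 = -e^{13},\quad \de e^3 = e^{12},\quad \de e^4=\de e^5=\de e^6=0,$$
so that $\{e^4,e^5,e^6\}$ spans the $\T^3$-directions and $e^{12},e^{13},e^{23}$ are exact. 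Averaging over the group (the compact-group analogue of Belgun's symmetrization, which commutes with $\de$ and with any left-invariant $J_t$, cf.\ Lemma \ref{lemma:belgun-J} and Proposition \ref{prop:linear-cpf-invariant-cpf-J}) gives $H^2_{dR}(\Sfera^3\times\T^3)=\langle[e^{45}],[e^{46}],[e^{56}]\rangle$ and shows that every closed $J_t$-invariant class admits a left-invariant such representative; this reduces the whole argument to finite-dimensional linear algebra.

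For $J_0$ I would take the left-invariant almost-complex structure with $(1,0)$-coframe $\varphi^j:=e^j+\im\,e^{j+3}$, for $j\in\{1,2,3\}$. Then $J_0$ interchanges $e^{45}\leftrightarrow e^{12}$, $e^{46}\leftrightarrow e^{13}$, $e^{56}\leftrightarrow e^{23}$; since each partner is exact, the closed invariant $(1,1)$-forms $e^{45}+e^{12}$, $e^{46}+e^{13}$, $e^{56}+e^{23}$ represent the three generators, whence $H^+_{J_0}=H^2_{dR}(\Sfera^3\times\T^3)$ and $J_0$ is \Cf. It is non-integrable, since $\de\varphi^1=e^{23}$ has a non-trivial $(0,2)$-component; and it is not even \Cp, since $e^{45}-e^{12}$ is a closed $J_0$-anti-invariant representative of $[e^{45}]$, so $[e^{45}]\in H^+_{J_0}\cap H^-_{J_0}$.

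The deformation I would use is $\varphi^1_t:=\varphi^1$, $\varphi^2_t:=\varphi^2$, $\varphi^3_t:=\varphi^3+t\,\bar\varphi^1$, for $t=t_1+\im\,t_2\in\Delta(0,\varepsilon)$ with $\varepsilon>0$ small. Solving $J_t\varphi^j_t=\im\,\varphi^j_t$ gives $J_t=J_0$ on $\langle e^1,e^2,e^4,e^5\rangle$, while $J_t e^3=-e^6-2t_2\,e^1+2t_1\,e^4$ and $J_t e^6=e^3+2t_1\,e^1+2t_2\,e^4$. The decisive computation is then
$$J_t e^{56}=(J_te^5)\wedge(J_te^6)=e^{23}-2t_1\,e^{12}+2t_2\,e^{24},$$
where the mixed term $e^{24}$ is \emph{non-closed} ($\de e^{24}=-e^{134}\neq 0$). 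Imposing $J_t$-invariance on the generic left-invariant representative $e^{56}+a\,e^{12}+b\,e^{13}+c\,e^{23}$ of $[e^{56}]$ and comparing the $e^{24}$-coefficients (no other invariant closed form produces $e^{24}$) forces $t_2=0$. Hence $[e^{56}]\notin H^+_{J_t}$ as soon as $t_2\neq0$, whereas $e^{45}+e^{12}$ and $e^{46}+e^{13}$ stay closed and $J_t$-invariant, so $[e^{45}],[e^{46}]\in H^+_{J_t}$. Thus $\dim_\R H^+_{J_t}=2<3=\dim_\R H^+_{J_0}$ for $t_2\neq0$, and since such $t$ accumulate at $0$, the function $t\mapsto\dim_\R H^+_{J_t}$ fails to be lower-semi-continuous at $0$. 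Finally $J_t$ is not \Cp, since $e^{45}-e^{12}$ still represents $[e^{45}]$ by a closed $J_t$-anti-invariant form, and it is non-integrable.

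The main obstacle is the last step: proving that \emph{no} closed $J_t$-invariant representative of $[e^{56}]$ exists, which a priori would require controlling arbitrary (not just invariant) representatives. This is precisely where the averaging reduction is essential — it lets one restrict to the finitely many invariant representatives $e^{56}+a\,e^{12}+b\,e^{13}+c\,e^{23}$ — after which the obstruction is the single uncancellable non-closed component $e^{24}$ (together with $e^{15}$, which arises symmetrically from $J_t e^{23}$). The remaining points to verify are routine: that $J_t^2=-\id$ for complex $t$, that $J_t$ is genuinely non-integrable, and the bookkeeping ensuring $[e^{45}]$ and $[e^{46}]$ persist in $H^+_{J_t}$.
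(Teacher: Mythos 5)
Your proof is correct, and it follows essentially the same strategy as the paper's own proof (taken from \cite[Example 4.4]{angella-tomassini-2}): the same manifold with the same structure equations $\left(23,\,-13,\,12,\,0^3\right)$, the same starting structure $J_0$ with $(1,0)$-coframe $\varphi^j=e^j+\im\,e^{j+3}$, and the same two-step scheme of Belgun-type symmetrization followed by finite-dimensional linear algebra on invariant forms. The one genuine difference is the deformation: the paper perturbs the first form, $\varphi^1_t:=\varphi^1+t\,\bar\varphi^1$ with $\varphi^2_t:=\varphi^2$, $\varphi^3_t:=\varphi^3$, and shows that for $t\in\Delta(0,\varepsilon)\setminus\R$ both $[\varphi^{1\bar2}-\varphi^{2\bar1}]=2[e^{12}+e^{45}]$ and $[\varphi^{1\bar3}-\varphi^{3\bar1}]=2[e^{13}+e^{46}]$ leave $H^+_{J_t}$, so $\dim_\R H^+_{J_t}=1<3$; there the obstruction is read off from the explicit computation of $\de\wedge^1\duale{\mathfrak{g}}_\C$ in the deformed coframe, which shows that the required anti-invariant corrections $\bar t\,\varphi^{12}_t+t\,\varphi^{\bar1\bar2}_t$ and $\bar t\,\varphi^{13}_t+t\,\varphi^{\bar1\bar3}_t$ are exact only when $t\in\R$. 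You perturb $\varphi^3$ instead and kill only $[e^{56}]$, via the uncancellable $e^{24}$- and $e^{15}$-components, obtaining $\dim_\R H^+_{J_t}=2<3$ for $t\notin\R$ --- note that, amusingly, both deformations detect the drop precisely off the real axis. Your computations of $J_te^3$, $J_te^6$ and of the invariance system check out, and the logic is airtight: since the left-invariant closed $2$-forms are $\R\left\langle e^{12},\,e^{13},\,e^{23},\,e^{45},\,e^{46},\,e^{56}\right\rangle$ with exact part $\R\left\langle e^{12},\,e^{13},\,e^{23}\right\rangle$, your affine family $e^{56}+a\,e^{12}+b\,e^{13}+c\,e^{23}$ exhausts the invariant representatives of $[e^{56}]$, and $[e^{56}]\notin H^+_{J_t}$ alone already forces $\dim_\R H^+_{J_t}\leq 2$, the persistence of $[e^{45}]$ and $[e^{46}]$ then giving equality. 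One point worth making explicit in both versions: Proposition \ref{prop:linear-cpf-invariant-cpf-J} is stated for solvmanifolds, and $\mathbb{S}^3\times\T^3$ is not one; what the symmetrization actually needs --- and what holds here, exactly as you argue --- is a bi-invariant volume on $G=\mathrm{SU}(2)\times\R^3$ (Milnor's lemma applies, $G$ being unimodular with compact quotient) together with the fact that invariant forms compute the de Rham cohomology, which for this $X$ follows from $H^\bullet\left(\mathfrak{su}(2)\oplus\R^3\right)\simeq H^\bullet_{dR}\left(\mathbb{S}^3\times\T^3;\R\right)$; the paper invokes the trick with the same implicit justification, so your appeal to it is on the same footing.
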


\begin{proof}
 The proof follows from the following example, \cite[Example 4.4]{angella-tomassini-2}.

 Consider the compact $6$-dimensional manifold $\mathbb{S}^3\times\T^3$, and set a global co-frame $\left\{e^j\right\}_{j\in\{1,\ldots,6\}}$ with respect to which the structure equations are
$$ \left( 23, \; -13, \; 12, \; 0^3 \right) \;; $$
 consider the (non-integrable) almost-complex structure $J$ defined requiring that
$$
\left\{
\begin{array}{rcl}
 \varphi^1 &:=& e^1+\im e^4 \\[5pt]
 \varphi^2 &:=& e^2+\im e^5 \\[5pt]
 \varphi^3 &:=& e^3+\im e^6
\end{array}
\right.
$$
generate the $\mathcal{C}^\infty\left(\mathbb{S}^3\times\T^3;\C\right)$-module of $(1,0)$-forms on $\mathbb{S}^3\times\T^3$. By the K\"unneth formula, one computes
\begin{eqnarray*}
H^2_{dR}(\mathbb{S}^3\times\T^3;\C)&=&\C\left\langle e^{45},\; e^{46},\; e^{56} \right\rangle \\[5pt]
&=&  \left\langle \varphi^{12}+\varphi^{\bar1\bar2},\; \varphi^{13}+\varphi^{\bar1\bar3},\; \varphi^{23}+\varphi^{\bar2\bar3} \right\rangle =H^{-}_{J}\left(\mathbb{S}^3\times\T^3\right)\\[5pt]
&=&  \left\langle \varphi^{1\bar2}-\varphi^{2\bar1},\; \varphi^{1\bar3}-\varphi^{3\bar1},\; \varphi^{2\bar3}-\varphi^{3\bar2} \right\rangle =H^{+}_{J}\left(\mathbb{S}^3\times\T^3\right) \;.
\end{eqnarray*}
For $\varepsilon>0$ small enough, consider the curve $\left\{J_t\right\}_{t\in\Delta\left(0,\varepsilon\right)\subset\C}$ of (non-integrable) almost-complex structures defined requiring that, for any $t\in\Delta\left(0,\varepsilon\right)$, the $J_t$-holomorphic cotangent bundle has co-frame
$$
\left\{
\begin{array}{rcl}
 \varphi^1_t &:=& \varphi^1 + t \,\bar\varphi^1 \\[5pt]
 \varphi^2_t &:=& \varphi^2 \\[5pt]
 \varphi^3_t &:=& \varphi^3
\end{array}
\right. \;.
$$
By using the F.~A. Belgun symmetrization trick, \cite[Theorem 7]{belgun}, we have that, for $t\in\Delta\left(0,\varepsilon\right)\setminus\R$,
$$
\left[\varphi^{1\bar2}-\varphi^{2\bar1}\right] \;=\;
\left[\frac{1}{1-|t|^2}\left(\varphi_t^{1\bar2}-\varphi_t^{2\bar1}\right)
- \frac{1}{1-|t|^2}\left(\bar t\,\varphi_t^{12}+t\,\varphi_t^{\bar1\bar2}\right)\right]
\;\not\in\; H^+_{J_t}\left(\mathbb{S}^3\times\T^3\right)
$$
and
$$
\left[\varphi^{1\bar3}-\varphi^{3\bar1}\right] \;=\;
\left[\frac{1}{1-|t|^2}\left(\varphi_t^{1\bar3}-\varphi_t^{3\bar1}\right)
- \frac{1}{1-|t|^2}\left(\bar t\,\varphi_t^{13}+t\,\varphi_t^{\bar1\bar3}\right)\right]
\;\not\in\; H^+_{J_t}\left(\mathbb{S}^3\times\T^3\right) \;;
$$
indeed, the terms $\psi_1:=\bar t\,\varphi_t^{12}+t\,\varphi_t^{\bar1\bar2}$, respectively $\psi_2:=\bar t \, \varphi^{13}+ t\, \varphi^{\bar1\bar3}$, cannot be
written as the sum of a $J_t$-invariant form and a $\de$-exact form: on the contrary, since $\psi_1$, and $\psi_2$ are left-invariant, applying Belgun's symmetrization map, \cite[Theorem 7]{belgun}, we can suppose that the $J_t$-anti-invariant component of the $\de$-exact term is actually the $J_t$-anti-invariant component of the differential of a left-invariant $1$-form; but the image of the differential on the space of left-invariant $1$-forms is
\begin{eqnarray*}
\de\wedge^{1}\duale{\mathfrak{g}}_\C &=& \C \left\langle
      \varphi^{23}_t + \varphi^{2\bar3}_t - \varphi^{3\bar2}_t+\varphi^{\bar2\bar3}_t,\,
      (1-\bar t)\, \varphi^{13}_t + (1-\bar t)\, \varphi^{1\bar3} - (1-t)\, \varphi^{3\bar1} + (1-t)\, \varphi^{\bar1\bar3}, \right. \\[5pt]
      && \left. (1-\bar t)\, \varphi^{12}_t + (1-\bar t)\, \varphi^{1\bar2} - (1-t)\, \varphi^{2\bar1} + (1-t)\, \varphi^{\bar1\bar2}
   \right\rangle \;,
\end{eqnarray*}
and hence one should have $t\in\R$.
Hence, we have that, for $t\in\Delta\left(0,\varepsilon\right)\setminus\R$,
$$ \dim_\R H^+_{J_t}\left(\mathbb{S}^3\times\T^3\right) \;=\; 1 \;<\; 3 \;=\; \dim_\R H^+_{J_0}\left(\mathbb{S}^3\times\T^3\right) \;,$$
and consequently, in particular, $t\mapsto \dim_\R H^+_{J_t}\left(\mathbb{S}^3\times\T^3\right)$ is not lower-semi-continuous at $0$.
\end{proof}

\subsubsection{Semi-continuity in a stronger sense}\label{subsubsec:stronger-semicont}

Note that Proposition \ref{prop:no-scs-h-} and Proposition \ref{prop:no-sci-h+} force us to consider stronger conditions under which semi-continuity may occur, or to slightly modify the statement of the semi-continuity problem.

\medskip

We turn our attention to the aim of giving a more precise statement of the semi-continuity problem. We notice that, for a compact $4$-dimensional manifold $X$ endowed with a family $\left\{J_t\right\}_{t\in\Delta(0,\varepsilon)}$ of almost-complex structures, one does not have only the semi-continuity properties of $t\mapsto \dim_\R H^+_{J_t}(X)$ and $t\mapsto \dim_\R H^-_{J_t}(X)$, but one gets also that every $J_0$-invariant class admits a $J_t$-invariant class {\em close to it}. This is also a sufficient condition to assure that, if $\alpha$ is a $J_0$-compatible symplectic structure on $X$, then there is a $J_t$-compatible symplectic structure $\alpha_t$ on $X$ for $t$ small enough. Therefore, we are interested in the following problem.

\smallskip
\noindent{\itshape
 Let $X$ be a compact manifold endowed with an almost-complex structure $J$ and with a curve $\left\{J_t\right\}_{t\in \left(-\varepsilon,\,\varepsilon\right)\subset\R}$ of almost-complex structures, where $\varepsilon>0$ is small enough, such that $J_0=J$. Suppose that
$$ H^+_J(X) \;=\; \C\left\langle \left[\alpha^1\right],\; \ldots,\; \left[\alpha^k\right]\right\rangle \;, $$
where $\alpha^1,\,\ldots,\,\alpha^k$ are forms of type $(1,1)$ with respect to $J$. We look for further hypotheses assuring that, for every $t\in\left(-\varepsilon,\,\varepsilon\right)$,
$$ H^+_{J_t}(X) \;\supseteq\; \C\left\langle \left[\alpha^1_t\right],\; \ldots,\; \left[\alpha^k_t\right]\right\rangle $$
with 
$$ \alpha^j_t \;=\; \alpha^j+\opiccolouno \;.$$
In this case, $\left(-\varepsilon,\, \varepsilon\right) \ni t\mapsto \dim_\R H^+_{J_t}(X) \in \N$ is a lower-semi-continuous function at $0$.
}
\smallskip

\medskip

Concerning this problem, we have the following result, \cite[Proposition 4.5]{angella-tomassini-2}.

\begin{prop}
\label{prop:semicont-forte}
 Let $X$ be a compact manifold endowed with an almost-complex structure $J$.
 Take $L\in\End(TX)$ and consider the curve $\left\{J_t\right\}_{t\in\left(-\varepsilon,\, \varepsilon\right)\subset \R}$ of almost-complex structures defined by
 $$ J_t \;:=\; \left(\id-t\,L\right)\,J\,\left(\id-t\,L\right)^{-1} \;\in\; \End(TX) \;, $$
 where $\varepsilon>0$ is small enough. For every $\left[\alpha\right]\in H^+_J(X)$ with $\alpha\in\wedge^{1,1}_{J}(X)\cap\wedge^2X$, the following conditions are equivalent:
 \begin{enumerate}[\itshape (i)]
  \item there exists a family $\left\{ \eta_t=\alpha+\opiccolouno \right\}_{t\in\left(-\varepsilon,\,\varepsilon\right)} \subseteq \wedge^{1,1}_{J_t}(X)\cap\wedge^2X$ of real $2$-forms, with $\varepsilon>0$ small enough, depending real-analytically in $t$ and such that $\de\eta_t=0$, for every $t\in\left(-\varepsilon,\,\varepsilon\right)$;
  \item there exists $\left\{\beta_j\right\}_{j\in\N\setminus\{0\}}\subseteq \wedge^2X$ solution of the system
 \begin{eqnarray}\label{eq:condizione-generale}
  \nonumber
  \lefteqn{\de \left( \beta_j + 2\,\alpha\left(L^j\sspace,\,\ssspace\right) + 4\,\sum_{k=1}^{j-1}\alpha\left(L^{j-k}\sspace, \, L^k\ssspace\right) + 2\,\alpha \left(\sspace,\, L^j\ssspace\right) \right.} \\[5pt]
  && + \left. \sum_{h=1}^{j-1} \left(2\, \beta_h \left(L^{j-h} \sspace,\, \ssspace\right) + 4\, \sum_{k=1}^{j-h-1} \alpha \left(L^{j-h-k} \sspace,\, L^k\ssspace\right) + 2\, \alpha \left(\sspace,\, L^{j-h}\ssspace\right) \right) \right) \;=\; 0 \;,
 \end{eqnarray}
 varying $j\in\N\setminus\{0\}$, such that $\sum_{j\geq 1} t^j\,\beta_j$ converges.
\end{enumerate}
In particular, the first order obstruction to the existence of $\eta_t$ as in {\itshape (i)} reads: there exists $\beta_1\in\wedge^2X$ such that
 \begin{eqnarray}\label{eq:condizione-1}
  \de\left(\beta_1+2\,\alpha(L\sspace,\,\ssspace)+2\,\alpha(\sspace,\,L\ssspace)\right) \;=\;0 \;.
 \end{eqnarray}
\end{prop}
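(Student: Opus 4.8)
The statement asserts the equivalence of two conditions describing when a $J$-invariant class $[\alpha]$ survives, to first order and beyond, along the curve $\{J_t\}$ built from $L$. The natural strategy is to unwind what it means for a $2$-form to be $J_t$-invariant, expand everything as a power series in $t$, and match coefficients. The key computational input is the explicit formula $J_t = (\id - tL)\,J\,(\id - tL)^{-1}$, together with $LJ + JL = 0$, which is what allows $J_t$ to make sense as an almost-complex structure (this relation is part of the construction of such curves recalled just before the statement). First I would record the basic fact that a real $2$-form $\eta$ is $J_t$-invariant, i.e.\ $\eta \in \wedge^{1,1}_{J_t}X \cap \wedge^2 X$, exactly when $\eta(J_t\sspace,\,J_t\ssspace) = \eta$, equivalently $J_t\eta = \eta$ in the induced action of $J_t$ on $2$-forms.

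\textbf{Step 1: reduce $J_t$-invariance to a series condition.} Writing $\eta_t = \alpha + \sum_{j\geq 1} t^j\beta_j$, I would impose $\eta_t(J_t\sspace,\,J_t\ssspace) = \eta_t$ and expand both $J_t$ and $\eta_t$ in powers of $t$. Since $(\id - tL)^{-1} = \sum_{k\geq 0} t^k L^k$ for $t$ small, the conjugation $J_t = (\id - tL)\,J\,\sum_{k\geq 0} t^k L^k$ produces an explicit power series whose coefficients involve the iterates $L^k$ acting through $J$. Substituting into the invariance equation and collecting the coefficient of $t^j$ will, after using $\alpha(J\sspace,\,J\ssspace) = \alpha$ (the $J_0$-invariance of $\alpha$) to cancel the leading term, produce precisely the algebraic identity that the bracketed expression in \eqref{eq:condizione-generale} encodes — namely that the degree-$j$ term of $\eta_t(J_t\sspace,\,J_t\ssspace) - \eta_t$ equals the form appearing under $\de$ in \eqref{eq:condizione-generale}. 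The point of the lemma is that the \emph{closedness} requirement $\de\eta_t = 0$ for all $t$ is equivalent, coefficient by coefficient, to the closedness of exactly those bracketed combinations.

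\textbf{Step 2: match closedness coefficient-by-coefficient and assemble.} The form $\eta_t$ is $\de$-closed for all $t\in(-\varepsilon,\varepsilon)$ if and only if $\de\beta_j$ vanishes for each $j$ (since $\de\alpha = 0$ already, as $[\alpha]\in H^+_J(X)$). The subtlety is that I want to impose two conditions simultaneously: $\eta_t$ is $J_t$-invariant \emph{and} $\de$-closed. The cleanest route is to treat the $J_t$-invariance as determining the non-$\beta_j$ part of each coefficient and to read \eqref{eq:condizione-generale} as the statement that the $J_t$-invariant completion of $\beta_j$ (obtained by adding the $\alpha(L^{j-k}\sspace,\,L^k\ssspace)$-type correction terms dictated by Step 1) is closed. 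Thus (i) $\Rightarrow$ (ii) follows by reading off the coefficients of a given real-analytic closed $J_t$-invariant family, and (ii) $\Rightarrow$ (i) follows by defining $\eta_t := \alpha + \sum_{j\geq 1} t^j\beta_j$ from a solution of the system, checking that the convergence hypothesis makes $\eta_t$ a genuine smooth $2$-form, and verifying that the coefficient identities force $\eta_t$ to be both $J_t$-invariant and closed. Specializing to $j=1$ collapses \eqref{eq:condizione-generale} to \eqref{eq:condizione-1}, since the inner sums $\sum_{k=1}^{j-1}$ and $\sum_{h=1}^{j-1}$ are empty, leaving only $\de(\beta_1 + 2\alpha(L\sspace,\,\ssspace) + 2\alpha(\sspace,\,L\ssspace)) = 0$, which is the asserted first-order obstruction.

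\textbf{The main obstacle.} The hard part is purely bookkeeping: correctly expanding $J_t\eta_t(J_t\sspace,\,J_t\ssspace)$ and verifying that the degree-$j$ coefficient of the invariance defect reproduces \emph{exactly} the bracketed form in \eqref{eq:condizione-generale}, with the right binomial-type coefficients $2$ and $4$ on the mixed terms $\alpha(L^{j-k}\sspace,\,L^k\ssspace)$ and on the recursively appearing $\beta_h$-corrections. The factor $4$ on the genuinely mixed terms versus $2$ on the boundary terms $\alpha(L^j\sspace,\,\ssspace)$ and $\alpha(\sspace,\,L^j\ssspace)$ arises from the two symmetric ways $L$-powers can be distributed across the two slots; I would keep careful track of the combinatorics by grouping the expansion of $(\id - tL)^{-1} \otimes (\id - tL)^{-1}$ acting on the two arguments and using the anticommutation $LJ = -JL$ repeatedly to move all $J$'s to act on $\alpha$ and kill them against $\alpha$'s $J$-invariance. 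The convergence issue in (ii), by contrast, is imposed as a hypothesis rather than proved, so it presents no difficulty. I expect no genuine conceptual gap — the whole statement is a formal power-series reformulation — but the index-juggling in Step 1 is where an error would most easily creep in.
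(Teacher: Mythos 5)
Your overall framework---expanding $J_t$ through $\left(\id-t\,L\right)^{-1}=\sum_{k\geq0}t^k\,L^k$ and the anti-commutation $L\,J+J\,L=0$ (which yields $J_t=J+\sum_{j\geq1}2\,t^j\,J\,L^j$), matching coefficients in $t$, and collapsing to $j=1$ for \eqref{eq:condizione-1}---is the right computational setting and matches the paper's. But the core mechanism of your plan is wrong, and the error breaks both implications. The $\beta_j$ in condition \textit{(ii)} are \emph{not} the coefficients of the invariant family subject to an invariance constraint, as your Step 1 assumes. If your identification were correct---that the $t^j$-coefficient of $\eta_t\left(J_t\sspace,\,J_t\ssspace\right)-\eta_t$ equals the form appearing under $\de$ in \eqref{eq:condizione-generale}---then for any $J_t$-invariant family this defect vanishes identically, every bracketed form would be zero, and \textit{(ii)} would be vacuously satisfied by arbitrary $\left\{\beta_j\right\}_j$, destroying the equivalence. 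The identification also fails structurally: expanding $J_t\bigl(\alpha+\sum_h t^h\,\beta_h\bigr)-\bigl(\alpha+\sum_h t^h\,\beta_h\bigr)$ produces the term $J\,\beta_j-\beta_j$ and $J$-dressed cross terms of the type $\beta_h\left(J\,L^{j-h}\sspace,\, J\ssspace\right)$, whereas the system contains $+\beta_j$ and undressed terms $\beta_h\left(L^{j-h}\sspace,\,\ssspace\right)$. Symmetrically, your route for \textit{(ii)} $\Rightarrow$ \textit{(i)}---setting $\eta_t:=\alpha+\sum_{j\geq1}t^j\,\beta_j$ and ``verifying that the coefficient identities force $\eta_t$ to be $J_t$-invariant''---cannot work: \eqref{eq:condizione-generale} consists exclusively of $\de$-closedness conditions and imposes no constraint at all on the invariance defect of $\alpha+\sum_{j\geq1}t^j\,\beta_j$, so a solution of the system need not make that series $J_t$-invariant.

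The missing idea, which is the heart of the paper's proof, is the projection onto $J_t$-invariant forms: since $J_t\lfloor_{\wedge^2X}$ is an involution, $\frac{1}{2}\left(\id+J_t\right)$ projects $\wedge^2X$ onto $\wedge^{1,1}_{J_t}X\cap\wedge^2X$. Given an \emph{unconstrained} solution $\left\{\beta_j\right\}_j$ of the system, one sets $\alpha_t:=\alpha+\sum_{j\geq1}t^j\,\beta_j$ and $\eta_t:=\frac{\alpha_t+J_t\,\alpha_t}{2}$: invariance is then automatic, $\eta_t$ is real-analytic in $t$, and $\eta_t=\alpha+\opiccolouno$ because the constant term is $\frac{1}{2}\left(\alpha+J\,\alpha\right)=\alpha$ by the $J$-invariance of $\alpha$. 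The only condition left is $\de\eta_t=0$; the computation with $J_t=J+\sum_{j\geq1}2\,t^j\,J\,L^j$, absorbing all the $J$'s against the $J$-invariance of $\alpha$ (this is where the factors $2$ and $4$ you flagged actually arise), shows that $\de\eta_t$ is the sum over $j\geq1$ of $t^j$ times $\de$ applied to the bracketed expression of \eqref{eq:condizione-generale}, so the system gives exactly $\de\eta_t=0$. For \textit{(i)} $\Rightarrow$ \textit{(ii)}, one writes $\eta_t=:\alpha+\sum_{j\geq1}t^j\,\beta_j$ and uses the tautology $\eta_t=\frac{\eta_t+J_t\,\eta_t}{2}$, valid precisely because $\eta_t$ is $J_t$-invariant, to run the same computation and conclude that these $\beta_j$ solve the system. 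Your ``invariant completion'' remark in Step 2 gestures toward this, but without the explicit averaging the plan as written does not assemble into a proof.
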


\begin{proof}
Expanding $J_t$ in power series with respect to $t$, one gets
$$ J_t \;=\; J+\sum_{j\geq 1}2\,t^j\,J\,L^j \;, $$
and then, for every $\varphi\in\wedge^2X$, one computes
$$ J_t\,\varphi(\sspace,\,\ssspace) \;=\; J\,\varphi(\sspace,\,\ssspace)+2t\,J\,\left(\varphi(L\sspace,\, \ssspace)+\varphi(\sspace,\, L\ssspace)\right)+\opiccolo{t} $$
and
$$ \de^c_{J_t}\,\varphi \;=\; J_t^{-1}\,\de\,J_t \,\varphi\;=\; \de^c_{J}\,\varphi +2t\,J_t\,\de\,J\, \left(\varphi(L\sspace,\, \ssspace)+\varphi(\sspace,\,L\ssspace)\right)+\opiccolo{t} \;. $$

Now, given $\left[\alpha\right]\in H^+_J(X)$ with $\alpha\in\wedge^{1,1}_{J}(X)\cap\wedge^2X$, let $\left\{\beta_j\right\}_j$ be such that \eqref{eq:condizione-generale} holds and $\sum_{j\geq 1} t^j\,\beta_j$ converges, for $t\in\left(-\varepsilon,\, \varepsilon\right)$ with $\varepsilon>0$ small enough; we define
$$ \alpha_t \;:=\; \alpha+\sum_{j\geq 1} t^j\,\beta_j \;\in\; \wedge^2X $$
and
$$ \eta_t \;:=\; \frac{\alpha_t+J_t\,\alpha_t}{2} \;\in\; \wedge^{1,1}_{J_t}X \cap \wedge^2X \;.$$
By construction, $\eta_t$ is a $J_t$-invariant real $2$-form, real-analytic in $t$, and such that $\eta_t=\alpha+\opiccolouno$. A straightforward computation yields
\begin{eqnarray*}
\de\eta_t &=& \sum_{j\geq1} t^j\, \de\Biggl(\beta_j+2\,\alpha\left(L^j\sspace,\,\ssspace\right)+4\,
\sum_{k=1}^{j-1}\alpha\left(L^{j-k}\sspace,\, L^k\ssspace\right)+2\,\alpha\left(\sspace,\,L^j\ssspace\right)\\[5pt]
&&+\sum_{h=1}^{j-1}\left(2\,\beta_h\left(L^{j-h}\sspace,\,\ssspace\right)+4\,\sum_{k=1}^{j-h-1}\alpha\left(L^{j-h-k}
\sspace,\, L^k\ssspace\right)+2\,\alpha\left(\sspace,\,L^{j-h}\ssspace\right)\right)
\Biggr)
\end{eqnarray*}
therefore $\de\eta_t=0$.

Conversely, given $\left[\alpha\right]\in H^+_J(X)$ with $\alpha\in\wedge^{1,1}_{J}(X)\cap\wedge^2X$, let $\eta_t \in \wedge^{1,1}_{J_t}(X)\cap\wedge^2X$ be real-analytic in $t$ and such that $\eta_t=\alpha+\opiccolouno$ and $\de\eta_t=0$, for every $t\in\left(-\varepsilon,\,\varepsilon\right)$ with $\varepsilon>0$ small enough. Defining $\beta_j\in\wedge^2X$, for every $j\in\N\setminus\{0\}$, such that
$$ \eta_t \;=:\; \alpha+\sum_{j\geq 1}t^j\,\beta_j \;,$$
by the same computation we have that \eqref{eq:condizione-generale} holds, being $\de\eta_t=\de\left(\frac{\eta_t+J_t\,\eta_t}{2}\right)=0$.
\end{proof}

\begin{rem}
 We notice that, if $\de\, J_t \,=\, \pm J_t\,\de$ on $\wedge^2M$ for any $t$, then one can simply let
$$ \eta_t \;:=\; \frac{\alpha+J_t\,\alpha}{2} $$
so that $\eta_t\in\wedge^{1,1}_{J_t}$ and $\de\eta_t=0$. This is the case, for example, if any $J_t$
is an Abelian complex structure; C. Maclaughlin, H. Pedersen, Y.~S. Poon, and S. Salamon characterized in \cite[Theorem 6]{maclaughlin-pedersen-poon-salamon} the $2$-step nilmanifolds whose complex deformations are Abelian.
\end{rem}

\subsubsection{Counterexample to the stronger semi-continuity}\label{subsubsec:stronger-semicont-counterex}

In the following example, we provide an application of Proposition \ref{prop:semicont-forte}, showing a curve of almost-complex structures that does not have the semi-continuity property in the stronger sense described above, \cite[Example 4.8]{angella-tomassini-2}.

\begin{ex}
{\itshape A curve of almost-complex structures that does not satisfy \eqref{eq:condizione-1}.}\\
As in Example \ref{ex:instability-alm-kahler} and in Example \ref{ex:n6c}, consider, for suitable $c\in\R$, \cite[\S3]{auslander-green-hahn}, the solvmanifold
$$ N^6(c) \;:=\; \left(\Gamma(c) \left\backslash \mathrm{Sol}(3)\right.\right) \,\times\, \left(\Gamma(c) \left\backslash \mathrm{Sol}(3)\right.\right) \;,$$
which has been studied in \cite[Example 3.4]{fernandez-munoz-santisteban} as an example of a cohomologically K\"ahler manifold without K\"ahler structures, see also \cite[Example 1]{benson-gordon-solvmanifolds}. In the following, we consider $N^6:=N^6(1)$. We recall that, with respect to a suitable co-frame, the structure equations of $N^6$ are
$$ \left( 12,\; 0,\; -36,\; 24,\; 56,\; 0 \right) \;.$$	

We look for a curve $\left\{J_t\right\}_{t\in\left(-\varepsilon, \varepsilon\right)\subset \R}$ of almost-complex structures on $N^6$, where $\varepsilon>0$ is small enough, and for a $J_0$-invariant form $\alpha$ that do not satisfy the first-order obstruction \eqref{eq:condizione-1} to the stronger semi-continuity problem stated above: therefore, there will not be a $J_t$-invariant class close to $\alpha$, for any $t\in\left(-\varepsilon,\, \varepsilon\right)$.

Consider the almost-complex structure represented by
$$ J\;=\;
\left(
\begin{array}{ccc|ccc}
 &&& -1 && \\
 &&&& -1 & \\
 &&&&& -1 \\
 \hline
 1 &&&&& \\
 & 1 &&&& \\
 && 1 &&&
\end{array}
\right)
\;\in\; \End\left(TN^6\right) \;,$$
and
$$ L\;=\;
\left(
\begin{array}{c|c}
 \mathbf{A} & \mathbf{B} \\
 \hline
 \mathbf{B} & -\mathbf{A}
\end{array}
\right)
\;\in\; \End\left(TN^6\right) \;,$$
where
$$ \mathbf{A}\;=\; \left(a_{i}^{j}\right)_{i,j\in\{1,2,3\}}\;,\qquad\qquad\mathbf{B}\;=\;
\left(b_{i}^{j}\right)_{i,j\in\{1,2,3\}} $$
are constant matrices;
for
$$ \alpha \;=\; e^{14} $$
we have
$$
\de\left(\alpha(L\sspace,\,\ssspace)+\alpha(\sspace,\,L\ssspace)\right) \;=\; b_1^3\, e^{123} + a_1^2\, e^{125} - a_1^3\, e^{126} + b_1^3\, e^{136} - a_1^2\, e^{156}  + a_1^3\, e^{234} - b_1^2\, e^{245} - b_1^3\, e^{246} + a_1^3\, e^{346} + b_1^2\, e^{456} \;.
$$
Then, choosing
$$ L\;=\;
\left(
\begin{array}{ccc|ccc}
&&&&&b_1^3 \\
&&&&0& \\
&&&0&& \\
\hline
&&b_1^3&&& \\
&0&&&& \\
0&&&&&
\end{array}
\right)
\;\in\; \End\left(TN^6\right) $$
with $b_1^3\in\R\setminus\{0\}$, it is straightforward to check that there is no $\left(\mathrm{Sol}(3)\times\mathrm{Sol}(3)\right)$-left-invariant $\beta\in\wedge^2N^6$ such that
\begin{equation}\label{eq:counterex-strong-semicont}
\de \beta \;=\; b_1^3\, e^{123}+b_1^3\,e^{136}-b_1^3\,e^{246} \;;
\end{equation}
hence, by applying the F.~A. Belgun symmetrization trick, \cite[Theorem 7]{belgun}, there is no (possibly non-$\left(\mathrm{Sol}(3)\times\mathrm{Sol}(3)\right)$-left-invariant) $\beta\in\wedge^2N^6$ satisfying \eqref{eq:counterex-strong-semicont}.
\end{ex}

We resume the content of the last example in the following proposition, \cite[Proposition 4.9]{angella-tomassini-2}.

\begin{prop}\label{prop:semi-cont-strong-counterex}
 There exist a compact manifold $X$ endowed with a \Cpf\ almost-complex structure $J_0$, and a curve $\{J_t\}_{t\in\left(-\varepsilon, \varepsilon\right)}$ of almost-complex structures on $X$, where $\varepsilon>0$ is small enough, such that, for every $t\in\left(-\varepsilon, \varepsilon\right)$, there is no $J_t$-invariant class, real-analytic in $t$, close to any fixed $J_0$-invariant class.
\end{prop}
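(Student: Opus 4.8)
The plan is to prove Proposition \ref{prop:semi-cont-strong-counterex} by exhibiting an explicit example, namely the completely-solvable solvmanifold $N^6=N^6(1)$ of Example \ref{ex:n6c} and Example \ref{ex:instability-alm-kahler}, equipped with a suitable \Cpf\ almost-complex structure $J_0$ and a carefully chosen one-parameter family $\left\{J_t\right\}_{t\in\left(-\varepsilon,\varepsilon\right)}$. The idea is to apply the formal characterization of Proposition \ref{prop:semicont-forte}: if for a fixed $J_0$-invariant class $[\alpha]$ there were a real-analytic family $\left\{\eta_t=\alpha+\opiccolouno\right\}$ of $\de$-closed $J_t$-invariant $2$-forms, then in particular the \emph{first order obstruction} \eqref{eq:condizione-1} would have to be solvable, i.e.\ there would exist $\beta_1\in\wedge^2 N^6$ with $\de\left(\beta_1+2\,\alpha(L\sspace,\,\ssspace)+2\,\alpha(\sspace,\,L\ssspace)\right)=0$. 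Thus it suffices to produce a single $[\alpha]\in H^+_{J_0}(N^6)$ and an endomorphism $L\in\End(TN^6)$ (defining $J_t=(\id-tL)J_0(\id-tL)^{-1}$ as in Proposition \ref{prop:semicont-forte}) for which \eqref{eq:condizione-1} already fails.

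The concrete steps are as follows. First I would record the structure equations of $N^6$ with respect to a left-invariant coframe $\left\{e^1,\ldots,e^6\right\}$, which read $\left(12,\,0,\,-36,\,24,\,56,\,0\right)$, and fix the left-invariant almost-complex structure $J_0$ sending $e_j\mapsto -e_{j+3}$ and $e_{j+3}\mapsto e_j$ for $j\in\{1,2,3\}$, checking that $J_0$ is \Cpf\ (using Corollary \ref{cor:linear-cpf-cpf} together with A.~Hattori's theorem, since $N^6$ is completely-solvable, reducing the cohomological computation to the level of the associated Lie algebra). Second, I would take the $J_0$-invariant $(1,1)$-form $\alpha:=e^{14}$ and the left-invariant endomorphism $L$ with the single non-trivial entry $b^3_1\in\R\setminus\{0\}$ chosen as in Example \ref{ex:n6c}'s companion computation, so that
$$
\de\left(\alpha(L\sspace,\,\ssspace)+\alpha(\sspace,\,L\ssspace)\right) \;=\; b_1^3\,e^{123}+b_1^3\,e^{136}-b_1^3\,e^{246}\;.
$$
Third, the crux: I would verify that there is no left-invariant $\beta_1\in\wedge^2 N^6$ solving $\de\beta_1 = -\de\left(\alpha(L\sspace,\,\ssspace)+\alpha(\sspace,\,L\ssspace)\right)$, equivalently that $b_1^3\,e^{123}+b_1^3\,e^{136}-b_1^3\,e^{246}$ is not in the image of $\de\colon\wedge^2\duale{\mathfrak{g}}\to\wedge^3\duale{\mathfrak{g}}$. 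This is a finite linear-algebra check: one writes a generic left-invariant $2$-form $\beta_1=\sum_{i<j}c_{ij}\,e^{ij}$, computes $\de\beta_1$ from the structure equations, and shows the resulting linear system in the coefficients $c_{ij}$ is inconsistent with the prescribed right-hand side.

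Finally, I would remove the left-invariance restriction by invoking the F.~A.~Belgun symmetrization trick \cite[Theorem 7]{belgun}: since $\de$ commutes with the symmetrization map $\mu$, if some (possibly non-invariant) $\beta_1$ solved \eqref{eq:condizione-1}, then $\mu(\beta_1)$ would be a left-invariant solution, contradicting the previous step; hence \eqref{eq:condizione-1} has no solution at all. By Proposition \ref{prop:semicont-forte}, no real-analytic family $\left\{\eta_t=\alpha+\opiccolouno\right\}$ of $\de$-closed $J_t$-invariant forms exists, which establishes Proposition \ref{prop:semi-cont-strong-counterex}. The main obstacle I anticipate is the bookkeeping in the first-order computation of $\de\left(\alpha(L\sspace,\,\ssspace)+\alpha(\sspace,\,L\ssspace)\right)$ and in showing inconsistency of the resulting linear system: one must be careful to use precisely the structure constants $\left(12,\,0,\,-36,\,24,\,56,\,0\right)$ and to keep track of all the interior-product terms produced by $L$, since an arithmetic slip there could spuriously make the right-hand side exact. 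Everything else reduces to the formal machinery already set up in Proposition \ref{prop:semicont-forte} and the reduction-to-the-Lie-algebra principle for completely-solvable solvmanifolds.
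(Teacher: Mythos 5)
Your proposal is correct and is essentially identical to the paper's own proof: the paper proves Proposition \ref{prop:semi-cont-strong-counterex} precisely via the example on $N^6=N^6(1)$ with structure equations $\left(12,\,0,\,-36,\,24,\,56,\,0\right)$, the $J_0$-invariant closed form $\alpha=e^{14}$, the constant endomorphism $L=\left(\begin{smallmatrix}\mathbf{A} & \mathbf{B}\\ \mathbf{B} & -\mathbf{A}\end{smallmatrix}\right)$ specialized to the single non-trivial entry $b_1^3\neq 0$, the finite linear-algebra check that $b_1^3\,e^{123}+b_1^3\,e^{136}-b_1^3\,e^{246}$ is not $\de$-exact among left-invariant forms, and F.~A. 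Belgun's symmetrization trick to exclude non-invariant solutions, all feeding into the first-order obstruction \eqref{eq:condizione-1} of Proposition \ref{prop:semicont-forte}. Your explicit verification of the \Cpf ness of $J_0$ via Corollary \ref{cor:linear-cpf-cpf} and A.~Hattori's theorem is a harmless addition consistent with the paper's framework.
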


\section{Cones of metric structures}\label{sec:cones}

In introducing and studying the subgroups $H^{(\bullet,\bullet)}_J(X;\R)$ on a compact manifold $X$ endowed with an almost-complex structure $J$, T.-J. Li and W. Zhang were mainly aimed by the problem of investigating the relations between the $J$-taming and the $J$-compatible symplectic cones on $X$. As follows by their theorem \cite[Theorem 1.1]{li-zhang}, whenever $J$ is \Cf, then the subgroup $H^-_J(X)$ measures the difference between the $J$-taming cone and the $J$-compatible cone.

In this section, we discuss some results obtained in \cite{angella-tomassini-2}, jointly with A. Tomassini, giving a counterpart of T.-J. Li and W. Zhang's theorem \cite[Theorem 1.1]{li-zhang} in the semi-K\"ahler case, Theorem \ref{thm:kbc-kbt}, and, in particular, comparing the cones of balanced metrics and of strongly-Gauduchon metrics on a compact complex manifold. Furthermore, concerning the search of a holomorphic-tamed non-K\"ahler example, \cite[page 678]{li-zhang}, \cite[Question 1.7]{streets-tian}, we show that no such example can exist among $6$-dimensional nilmanifolds endowed with left-invariant complex structures, Theorem \ref{thm:nilmfd-are-not-tamed}, as proven in a joint work with A. Tomassini, \cite{angella-tomassini-1}.

\subsection{Sullivan's results on cone structures}

Firstly, we recall some results by D.~P. Sullivan, \cite[\S I.1]{sullivan}, concerning cone structures on a (differentiable) manifold $X$.

\medskip

Fixed $p\in\N$, a \emph{cone structure} of $p$-directions on $X$ is a continuous field $C:=:\{C(x)\}_{x\in X}$, with $C(x)$ a compact convex cone in $\wedge^p(T_xX)$ for every $x\in X$.

A $p$-form $\omega$ on $X$ is called \emph{transverse} to a cone structure $C$ if $\omega\lfloor_x(v)>0$ for all $v\in C(x)\setminus\{0\}$ and for all $x\in X$; using the partitions of unity, a transverse form could be constructed for any given $C$, \cite[Proposition I.4]{sullivan}.

Every cone structure $C$ gives rise to a cone $\mathfrak{C}$ of \emph{structure currents}, which are by definition the currents generated by the Dirac currents associated to the elements in $C(x)$, see \cite[Definition I.4]{sullivan}; the set $\mathfrak{C}$ is a compact convex cone in $\correnti_{p}X$.

The cone $\mathcal{Z}\mathfrak{C}$ of the \emph{structure cycles} is defined as the sub-cone of $\mathfrak{C}$ consisting of $\de$-closed currents; denote with $\mathcal{B}$ the set of $\de$-exact currents.

Define the cone $H\mathfrak{C}$ in $H^{dR}_{p}(X;\R)$ as the set of the classes of the structure cycles.

The dual cone of $H\mathfrak{C}$ in $H^p_{dR}(X;\R)$ is denoted by $\breve{H}\mathfrak{C}$ and is characterized by the relation
$$ \left(\breve{H}\mathfrak{C},H\mathfrak{C}\right) \;\geq\; 0 \;;$$
its interior is denoted by $\interno\breve{H}\mathfrak{C}$ and is characterized by the relation
$$ \left(\interno\breve{H}\mathfrak{C},H\mathfrak{C}\right) \;>\; 0 \;.$$
A cone structure of $2$-directions is said to be \emph{ample} if, for every $x\in X$, it satisfies that
$$ C(x)\cap \textrm{span}\{e\in S_\tau\st \tau \text{ is a }2\text{-plane}\} \;\neq\; \{0\} \;,$$
where $S_\tau$ is the Schubert variety, given by the set of $2$-planes intersecting $\tau$ in at least one line; by \cite[Theorem III.2]{sullivan}, an ample cone structure admits non-trivial structure cycles.

\medskip

When the $2n$-dimensional manifold $X$ is endowed with an almost-complex structure $J$, the following cone structures turn out to be particularly interesting.

For a fixed $p\in\{0,\ldots, n\}$, let $C_{p,J}$ be the cone $\left\{C_{p,J}(x)\right\}_{x\in X}$, where, for every $x\in X$, the compact convex cone $C_{p,J}(x)$ is generated by the positive combinations of $p$-dimensional complex subspaces of $T_xX\otimes_\R\C$ belonging to $\wedge^{2p}\left(T_xX\otimes_\R\C\right)$.

The cone $\mathfrak{C}_{p,J}$ of \emph{complex currents} is defined as the compact convex cone, see \cite[\S III.4]{sullivan}, of the structure currents.

The cone $\mathcal{Z}\mathfrak{C}_{p,J}$ of \emph{complex cycles} is defined as the compact convex cone, see \cite[\S III.7]{sullivan}, of the structure cycles.

The structure cone $C_{1,J}$ is ample, \cite[p. 249]{sullivan}, therefore it admits non-trivial cycles.

\medskip

We recall the following theorem by D.~P. Sullivan, which follows by Hahn and Banach's theorem.

\begin{thm}[{\cite[Theorem I.7]{sullivan}}]
Let $X$ be a compact differentiable manifold (with or without boundary) and let $C$ be a cone structure of $p$-vectors defined on a compact subspace $Y$ in the interior of $X$.
\begin{enumerate}[(\itshape i\upshape)]
 \item There are always non-trivial structure cycles in $Y$ or closed $p$-forms on $X$ transversal to the cone structure.
 \item If no closed transverse form exists, some non-trivial structure cycle in $Y$ is homologous to zero in $X$.
 \item If no non-trivial structure cycle exists, some transversal closed form is cohomologous to zero.
 \item If there are both structure cycles and transversal closed forms, then
  \begin{enumerate}[(\itshape a\upshape)]
   \item the natural map
    $$ \left\{\text{structure cycles on }Y \right\} \to \left\{\text{homology classes in }X\right\} $$
    is proper and the image is a compact cone $\mathcal{C}\subseteq H^{dR}_p(X;\R)$, and
    \item the interior of the dual cone $\breve{\mathcal{C}}\subseteq H_{dR}^p(X;\R)$ (that is, $\breve{\mathcal{C}}$ is the cone defined by the relation $\left(\breve{\mathcal{C}},\,\mathcal{C}\right)\geq 0$) consists precisely of the classes of closed forms transverse to $C$.
  \end{enumerate}
\end{enumerate}
\end{thm}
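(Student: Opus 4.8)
The plan is to derive all four assertions from a single application of the Hahn--Banach separation theorem in the locally convex topological vector space $\correnti_p X$ of $p$-currents, exploiting the duality between $p$-currents and $p$-forms. The objects to separate are the compact convex cone $\mathfrak{C}$ of structure currents supported on $Y$ (restricted to $\de$-closed elements, the structure cycles $\mathcal{Z}\mathfrak{C}$) on the one hand, and the linear subspace $\mathcal{B}$ of $\de$-exact $p$-currents on the other. First I would fix the functional-analytic setup: recall that $\wedge^p X$ is a Fr\'echet space and $\correnti_p X = \correnti^{2n-p}X$ is its dual with the weak-$*$ topology (this is exactly the framework recalled in \S\ref{sec:currents}), that the pairing $\left(\sspace,\sspace\right)$ between $H^p_{dR}(X;\R)$ and $H^{dR}_p(X;\R)$ is non-degenerate by the de Rham theorem $H^\bullet_{dR}(X;\R)\simeq H^{dR}_{2n-\bullet}(X;\R)$, and that $\mathfrak{C}$ is a compact convex cone by the construction of structure currents from the continuous compact convex field $C$.

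The dichotomy in part \textit{(i)} is the heart of the matter, and I expect it to be the main obstacle, because it is precisely where Hahn--Banach must be invoked in the correct topological form. The key step is: either there exists a non-trivial structure cycle on $Y$, or the subspace $\mathcal{B}$ of $\de$-exact currents fails to meet $\mathfrak{C}\setminus\{0\}$ in a way that lets one separate. Concretely, suppose there is no non-trivial structure cycle; then $\mathcal{Z}\mathfrak{C}=\{0\}$, and I would show that $\mathfrak{C}$ and the closed subspace $\mathcal{B}$ can be strictly separated by a continuous linear functional, i.e.\ by a $p$-form $\omega$. Because separating from $\mathcal{B}=\imm\de$ forces $\omega$ to annihilate all $\de$-exact currents, $\omega$ must be $\de$-closed; because it is strictly positive on $\mathfrak{C}\setminus\{0\}$, it is transverse to $C$. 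The delicate point is the compactness needed to upgrade weak separation to strict separation: one uses that $\mathfrak{C}$, being generated by Dirac currents over the compact set $Y$ with masses bounded via a transverse form, has a weak-$*$ compact base, so Hahn--Banach applies to a compact convex set disjoint from a closed subspace. This is the step where the ampleness/compactness hypotheses earn their keep.

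Once the separation dichotomy of \textit{(i)} is in hand, parts \textit{(ii)} and \textit{(iii)} follow by contraposition with bookkeeping on which object is forced to be trivial. If no closed transverse form exists, then by \textit{(i)} structure cycles exist, and the obstruction to separating $\mathcal{Z}\mathfrak{C}$ from $\mathcal{B}$ must come from $\mathcal{Z}\mathfrak{C}\cap\mathcal{B}\neq\{0\}$, yielding a non-trivial structure cycle that is $\de$-exact, hence homologous to zero in $X$. Dually, if no non-trivial structure cycle exists, then the separating closed transverse form produced in \textit{(i)} can be arranged to pair to zero against the (trivial) cone $H\mathfrak{C}$, so its class lies in the annihilator and is cohomologous to zero; here I would make precise the identification of ``transverse form cohomologous to zero'' with the orthogonality relation defining $\breve H\mathfrak{C}$.

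Part \textit{(iv)} is then essentially formal duality theory for convex cones. Assuming both structure cycles and transverse closed forms exist, I would first verify \textit{(iv.a)}: the map sending a structure cycle on $Y$ to its homology class in $X$ is proper because a transverse closed form $\omega$ provides a mass bound --- the value $(\omega, T)$ controls the mass of $T$, so preimages of bounded sets are bounded, hence relatively compact by weak-$*$ compactness, and the image $\mathcal{C}=H\mathfrak{C}$ is a closed convex cone. For \textit{(iv.b)} I would show that the interior of the dual cone $\breve{\mathcal{C}}$ consists exactly of classes of closed transverse forms: a class lies in $\interno\breve{\mathcal{C}}$ iff it pairs strictly positively with every non-zero element of $\mathcal{C}$, which by the definition of $H\mathfrak{C}$ and density/continuity of the pairing translates into strict positivity of a representing closed form on all of $C(x)\setminus\{0\}$, i.e.\ transversality. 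The only genuine care needed is ensuring that a cohomology class with the strict-positivity property on $H\mathfrak{C}$ admits a genuinely transverse (not merely weakly positive) closed representative, which again uses a partition-of-unity construction of transverse forms together with the compactness of the base of $\mathfrak{C}$.
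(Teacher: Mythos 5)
Your overall strategy is the right one, and in fact the only one available for comparison: the paper does not prove this statement at all --- it records it as Sullivan's \cite[Theorem I.7]{sullivan}, remarking only that it ``follows by Hahn and Banach's theorem'' --- and your proposal follows precisely that route (which is also Sullivan's original): give $\mathfrak{C}$ a weak-$*$ compact base $K=\left\{T\in\mathfrak{C} \st T(\varphi)=1\right\}$ by normalizing against a not-necessarily-closed transverse form $\varphi$ (whose existence is \cite[Proposition I.4]{sullivan}, recalled in the paper), separate $K$ from a suitable closed subspace of $\correnti_pX$, and read the separating functional as a smooth $p$-form via the duality between currents with the weak-$*$ topology and forms. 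Your parts \textit{(i)}, \textit{(ii)} and \textit{(iv.a)} are essentially sound, with one point you assert but do not justify: that the space $\mathcal{B}$ of $\de$-exact currents is weak-$*$ closed. This is needed both to apply the separation theorem in \textit{(i)} and to upgrade ``structure cycle in the closure of $\mathcal{B}$'' to ``homologous to zero'' in \textit{(ii)}; it is true on a compact manifold because $T\in\mathcal{B}$ if and only if $\de T=0$ and $T(\alpha)=0$ for every $\de$-closed form $\alpha$ (non-degeneracy of the de Rham pairing, recalled in \S\ref{sec:currents}), and the latter is a weak-$*$-closed condition --- but it must be said.

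The genuine gap is your part \textit{(iii)}. You take the closed transverse form produced in \textit{(i)} and argue it is exact because its class ``pairs to zero against the (trivial) cone $H\mathfrak{C}$''; but annihilating the zero cone is a vacuous condition satisfied by \emph{every} class, and the form from \textit{(i)} --- obtained by separating $K$ from $\mathcal{B}$ alone --- has no reason whatsoever to be cohomologous to zero. The correct move, still inside your framework, is to separate against a strictly larger subspace: when $\mathcal{Z}\mathfrak{C}=\{0\}$, the compact base $K$ is disjoint from the whole closed subspace $\mathcal{Z}:=\ker\de$ of $\de$-closed currents, not merely from $\mathcal{B}$, so Hahn--Banach yields a form $\omega$ strictly positive on $K$ and vanishing on $\mathcal{Z}$; vanishing on $\mathcal{B}\subseteq\mathcal{Z}$ makes $\omega$ closed, vanishing on all of $\mathcal{Z}$ kills every period of $\omega$, whence $\left[\omega\right]=0$ by de Rham duality, and positivity on $K$ gives transversality. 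A similar sharpening is needed in your \textit{(iv.b)}: a class in $\interno\breve{\mathcal{C}}$ does not ``translate by density'' into pointwise positivity of some closed representative on $C(x)\setminus\{0\}$ (and a partition of unity produces transverse forms, not closed ones); one must run Hahn--Banach a second time, extending the kernel hyperplane of the given functional --- a closed hyperplane in $\mathcal{Z}$ containing $\mathcal{B}$ and disjoint from $K$ --- to a hyperplane in the full current space disjoint from $K$, which is exactly the argument the paper itself reproduces when proving Theorem \ref{thm:characterization-kbc}.
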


\subsection{The cones of compatible, and tamed symplectic structures}\label{subsec:symplectic-cones}
Let $X$ be a manifold endowed with an almost-complex structure $J$.

We recall that a symplectic form $\omega$ is said to \emph{tame} $J$ if it is positive on the $J$-lines, that is, if $\omega_x\left(v_x, \, J_xv_x\right)>0$ for every $v_x\in T_xX\setminus\{0\}$ and for every $x\in X$, equivalently, if
$$ \tilde{g}_J\left(\sspace,\, \ssspace\right) \;:=\; \frac{1}{2}\,\left(\omega\left(\sspace,\, J\ssspace\right)-\omega\left(J\sspace,\, \ssspace\right)\right) $$
is a $J$-Hermitian metric on $X$ with $\pi_{\wedge^{1,1}X}\omega$ as the associated $(1,1)$-form (the map $\pi_{\wedge^{1,1}X}\colon\wedge^\bullet X\to\wedge^{1,1}X$ being the natural projection onto $\wedge^{1,1}X$). A symplectic form $\omega$ is called \emph{compatible} with $J$ if it tames $J$ and it is $J$-invariant, equivalently, if $\omega$ is the $(1,1)$-form associated to the $J$-Hermitian metric $g_J\left(\sspace,\, \ssspace\right) := \omega(\sspace,\, J\ssspace)$. In particular, an integrable almost-complex structure $J$ is called \emph{holomorphic-tamed} if it admits a taming symplectic form; on the other hand, the datum of an integrable almost-complex structure and a compatible symplectic form gives a K\"ahler structure.

\subsubsection{Symplectic cones and Donaldson's question}\label{subsubsec:donaldson-question}

Consider the \emph{$J$-tamed cone} $\mathcal{K}^t_J$, which is defined as the set of the cohomology classes of the $J$-taming symplectic forms, namely,
$$ \mathcal{K}^t_J \;:=\; \left\{ \left[\omega\right]\in H^2_{dR}(X;\R) \st \omega \text{ is a }J\text{-taming symplectic form on } X\right\} \;, $$
and the \emph{$J$-compatible cone} $\mathcal{K}^c_J$, which is defined as the set of the cohomology classes of the $J$-compatible symplectic forms, namely,
$$ \mathcal{K}^c_J \;:=\; \left\{ \left[\omega\right]\in H^2_{dR}(X;\R) \st \omega \text{ is a }J\text{-compatible symplectic form on } X\right\} \;. $$
The set $\mathcal{K}^t_J$ is an open convex cone in $H^2_{dR}(X;\R)$, and the set $\mathcal{K}^c_J$ is a convex sub-cone of $\mathcal{K}^t_J$ and it is contained in $H^{(1,1)}_J(X;\R)$; moreover, both the sets are sub-cones of the \emph{symplectic cone}
$$ \mathcal{S} \;:=\; \left\{\left[\omega\right]\in H^2_{dR}(X;\R) \st \omega \text{ is a symplectic form on }X\right\} $$
in $H^2_{dR}(X;\R)$. 

\medskip

T.-J. Li and W. Zhang proved the following result in \cite{li-zhang}, concerning the relation between the $J$-tamed and the $J$-compatible cones.

\begin{thm}[{\cite[Proposition 3.1, Theorem 1.1, Corollary 1.1]{li-zhang}}]
Let $X$ be a compact manifold endowed with an almost-K\"ahler structure $J$ (namely, $J$ is an almost-complex structure on $X$ such that $\mathcal{K}^c_J\neq\varnothing$). Then
$$ \mathcal{K}^t_J\cap H^{(1,1)}_J(X;\R) \;=\; \mathcal{K}^c_J \qquad \text{ and }\qquad \mathcal{K}^c_J+H^{(2,0),(0,2)}_J(X;\R) \subseteq \mathcal{K}^t_J \;. $$
Moreover, if $J$ is \Cf, then
$$ \mathcal{K}^t_J\;=\; \mathcal{K}^c_J+ H^{(2,0),(0,2)}_J(X;\R) \;. $$
In particular, if $\dim X=4$ and $b^+(X)=1$, then $\mathcal{K}^t_J=\mathcal{K}^c_J$.
\end{thm}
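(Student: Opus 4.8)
The plan is to reduce everything to one pointwise lemma plus one genuinely global inclusion. First I would record the \emph{foundational lemma}: for a real $2$-form $\omega=\omega^++\omega^-$, with $\omega^+\in\wedge^{1,1}X\cap\wedge^2X$ and $\omega^-\in\left(\wedge^{2,0}X\oplus\wedge^{0,2}X\right)\cap\wedge^2X$ the $J$-invariant and $J$-anti-invariant parts, one has $\omega^-(v,\,Jv)=0$ for every $v$ (apply $J$-anti-invariance to the pair $(v,\,Jv)$ and use antisymmetry), whence $\omega(v,\,Jv)=\omega^+(v,\,Jv)$. Thus $\omega$ tames $J$ if and only if $\omega^+$ is the fundamental form of a $J$-Hermitian metric, i.e. the taming condition depends only on the $(1,1)$-component; moreover a taming form is automatically non-degenerate, so ``closed and taming'' coincides with ``symplectic and taming''. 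From this the easy assertions are immediate: $\mathcal{K}^c_J\subseteq\mathcal{K}^t_J\cap H^{(1,1)}_J(X;\R)$ since a $J$-compatible symplectic form is closed, $J$-invariant, and taming; and for $[\rho]\in\mathcal{K}^c_J$, $[\gamma]\in H^{(2,0),(0,2)}_J(X;\R)$ with $\rho$ compatible and $\gamma$ closed $J$-anti-invariant, the form $\rho+\gamma$ is closed with $(\rho+\gamma)^+=\rho$ positive, hence taming and non-degenerate, giving $\mathcal{K}^c_J+H^{(2,0),(0,2)}_J(X;\R)\subseteq\mathcal{K}^t_J$.

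The heart of the matter, and the step I expect to be the main obstacle, is the reverse inclusion $\mathcal{K}^t_J\cap H^{(1,1)}_J(X;\R)\subseteq\mathcal{K}^c_J$. Given $\mathfrak{a}$ in the left-hand side, I would fix a taming symplectic $\omega$ with $[\omega]=\mathfrak{a}$ (so $\omega^+$ is positive) and a closed $J$-invariant representative $\alpha$ with $[\alpha]=\mathfrak{a}$. Writing $\omega=\alpha+\de\eta$, the anti-invariant part is $\omega^-=(\de\eta)^-$, and a $J$-compatible representative of $\mathfrak{a}$ would be produced exactly if one could find an exact $J$-invariant $2$-form $\xi$ with $\alpha+\xi$ positive. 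The difficulty is structural: the positive form $\omega^+=\alpha+(\de\eta)^+$ is built from the $(1,1)$-component $(\de\eta)^+$ of an exact form, which is in general \emph{not} itself exact, so one must remove the anti-invariant part $\omega^-$ inside the fixed cohomology class while preserving positivity. My plan is to exploit the almost-K\"ahler hypothesis $\mathcal{K}^c_J\neq\varnothing$: fix a compatible metric $g_0$ and run an openness/Hodge-theoretic argument, using that positivity is an open convex condition and that $\mathcal{K}^t_J$ is open, to perturb $\omega$ by exact forms of controlled $(1,1)$-size so as to drive the anti-invariant part to zero without losing positivity. This is where the special structure of $J$ genuinely enters and where the analytic work lies.

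Granting that equality, the \Cf\ statement follows formally. If $J$ is \Cf\ then $H^2_{dR}(X;\R)=H^{(1,1)}_J(X;\R)+H^{(2,0),(0,2)}_J(X;\R)$, so any $\mathfrak{a}\in\mathcal{K}^t_J$ splits as $\mathfrak{a}=\mathfrak{a}^++\mathfrak{a}^-$ with $\mathfrak{a}^{\pm}$ in the two subgroups. Choosing a closed $J$-anti-invariant representative $\gamma$ of $\mathfrak{a}^-$ and a taming representative $\omega$ of $\mathfrak{a}$, the form $\omega-\gamma$ is closed with $(\omega-\gamma)^+=\omega^+$ positive, hence taming, so $\mathfrak{a}^+=[\omega-\gamma]\in\mathcal{K}^t_J\cap H^{(1,1)}_J(X;\R)=\mathcal{K}^c_J$ by the first part. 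Therefore $\mathfrak{a}=\mathfrak{a}^++\mathfrak{a}^-\in\mathcal{K}^c_J+H^{(2,0),(0,2)}_J(X;\R)$, which together with the inclusion already proved yields the claimed equality.

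Finally, for $\dim X=4$ and $b^+(X)=1$: every almost-complex structure on a compact $4$-manifold is \Cpf\ by \cite[Theorem 2.3]{draghici-li-zhang}, so the previous part gives $\mathcal{K}^t_J=\mathcal{K}^c_J+H^-_J(X)$, and it remains to prove $H^-_J(X)=\{0\}$. I would fix $\rho\in\mathcal{K}^c_J$ and its compatible metric $g_0$: in dimension $4$ the $J$-anti-invariant $2$-forms are $g_0$-self-dual, so $H^-_J(X)\subseteq H^+_{g_0}(X)$, which is $1$-dimensional because $b^+(X)=1$ and is spanned by the self-dual class $[\rho]$. Since $\rho$ is $J$-invariant, $[\rho]\in H^+_J(X)$, and $J$ being \Cp\ gives $H^+_J(X)\cap H^-_J(X)=\{0\}$, so $[\rho]\notin H^-_J(X)$. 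As $H^-_J(X)$ is then a subspace of the line $\R[\rho]$ not containing $[\rho]$, it must be trivial, and hence $\mathcal{K}^t_J=\mathcal{K}^c_J$.
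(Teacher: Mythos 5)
Your proposal has a genuine gap, and it sits exactly at the step you yourself flag as ``the heart of the matter'': the inclusion $\mathcal{K}^t_J\cap H^{(1,1)}_J(X;\R)\subseteq\mathcal{K}^c_J$ is never proved, only announced as a plan to ``perturb $\omega$ by exact forms of controlled $(1,1)$-size so as to drive the anti-invariant part to zero without losing positivity''. This plan does not work as stated: since $J$ is in general non-integrable, $\de$ does not respect the bidegree splitting, so (as you notice) the $(1,1)$-part of an exact form need not be exact, and there is no Hodge-theoretic projection that simultaneously preserves closedness, $J$-invariance, and positivity. The obstruction is not perturbative but dual in nature, and the actual proof (which the paper sketches, following T.-J. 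Li and W. Zhang, and which it carries out in detail in the semi-K\"ahler analogue, Theorem \ref{thm:characterization-kbt} and Theorem \ref{thm:characterization-kbc}) goes through D.~P. Sullivan's theory of cone structures: one shows that $\mathcal{K}^t_J$ is the interior of the dual cone in $H^2_{dR}(X;\R)$ of the cone $H\mathfrak{C}_{1,J}$ of classes of structure cycles (closed positive currents of bidimension $(1,1)$), while $\mathcal{K}^c_J$ is the interior of the dual cone of $H\mathfrak{C}_{1,J}$ inside $H^{(1,1)}_J(X;\R)$; the latter uses the identification of $H^{(1,1)}_J(X;\R)$ with the dual of $\left.\overline{\pi_{\correnti_{1,1}X}\mathcal{Z}}\right\slash\overline{\pi_{\correnti_{1,1}X}\mathcal{B}}$ and a Hahn--Banach separation, in the space of currents, of the compact convex set of normalized structure currents from a closed hyperplane containing $\overline{\pi_{\correnti_{1,1}X}\mathcal{B}}$; the separating functional is then a $\de$-closed $J$-compatible positive $(1,1)$-form in the given class. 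With both dual-cone characterizations in hand, the equality $\mathcal{K}^t_J\cap H^{(1,1)}_J(X;\R)=\mathcal{K}^c_J$ drops out formally. Any correct completion of your argument has to produce, in effect, this separation statement; openness of $\mathcal{K}^t_J$ and convexity of positivity alone do not rule out a class that pairs non-negatively, but not strictly positively, against some limit of $(1,1)$-projections of boundaries.

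The remaining parts of your proposal are sound and agree with the known arguments: the pointwise lemma $\omega^-(v,Jv)=0$ and the resulting easy inclusions $\mathcal{K}^c_J\subseteq\mathcal{K}^t_J\cap H^{(1,1)}_J(X;\R)$ and $\mathcal{K}^c_J+H^{(2,0),(0,2)}_J(X;\R)\subseteq\mathcal{K}^t_J$ are correct; the deduction of $\mathcal{K}^t_J=\mathcal{K}^c_J+H^{(2,0),(0,2)}_J(X;\R)$ from \Cf ness is a correct formal consequence of the first equality (note that \Cf ness only gives some splitting $\mathfrak{a}=\mathfrak{a}^++\mathfrak{a}^-$, which is all you use); and your $4$-dimensional argument is fine: closed $J$-anti-invariant forms are self-dual, hence harmonic, so $H^-_J(X)\subseteq H^+_{g_0}(X)=\R\left[\rho\right]$ when $b^+(X)=1$, and $\left[\rho\right]\in H^+_J(X)\setminus\{0\}$ together with \Cp ness (valid by \cite[Theorem 2.3]{draghici-li-zhang}) forces $H^-_J(X)=\{0\}$, matching the paper's observation that $\wedge^+_gX=\R\left\langle\omega\right\rangle\oplus\wedge^-_JX$ yields $H^{(2,0),(0,2)}_J(X;\R)=\{0\}$ in this case.
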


The proof is essentially based on \cite[Theorem I.7]{sullivan}. Note indeed that the closed forms transverse to the cone $C_{1,J}$ are exactly the $J$-taming symplectic forms. By \cite[Theorem I.7(iv)(b)]{sullivan}, it follows that $\mathcal{K}^t_J$ is the interior of the dual cone $\breve{H}\mathfrak{C}_{1,J}\subseteq H_{dR}^2(X;\R)$ of $H\mathfrak{C}_{1,J}\subseteq H^{dR}_2(X;\R)$, \cite[Theorem 3.2]{li-zhang}. On the other hand, assumed that $\mathcal{K}^c_J$ is non-empty, by the Hahn and Banach separation theorem, $\mathcal{K}^c_J$ is the interior of the dual cone of $H\mathfrak{C}_{1,J}\subseteq H_{(1,1)}^J(X;\R)$, \cite[Theorem 3.4]{li-zhang}. Finally, when $\dim X=4$, chosen a $J$-Hermitian metric $g$ on $X$ with associated $(1,1)$-form $\omega$, one has $\wedge^+_gX=\R\left\langle\omega\right\rangle \oplus \wedge^-_JX$, hence, in the almost-K\"ahler case, if $b^+(X):=\dim_\R H^{+}_g(X)=1$, then $H^{(2,0),(0,2)}_J(X;\R)=\{0\}$, see \cite[Proposition 3.1]{draghici-li-zhang}.

\medskip

Whereas the previous theorem by T.-J. Li and W. Zhang could be intended as a ``quantitative comparison'' between the $J$-taming and the $J$-compatible symplectic cones on a compact manifold $X$ endowed with an almost-complex structure $J$, one could ask what about their ``qualitative comparison'', namely, one could ask whether $\mathcal{K}^c_J$ being empty implies $\mathcal{K}^t_J$ being empty, too. The following question has been arisen by S.~K. Donaldson in \cite{donaldson}.

\begin{quest}[{\cite[Question 2]{donaldson}}]
 Let $X$ be a compact $4$-dimensional manifold endowed with an almost-complex structure $J$. If $J$ is tamed by a symplectic form, is there a symplectic form compatible with $J$?
\end{quest}

\begin{rem}
S.~K. Donaldson's ``tamed to compatible'' question has a positive answer for $\CP^2$ by the works by M. Gromov \cite{gromov} and by C.~H. Taubes, \cite{taubes-seiberg-witten}. When $b^+(X)=1$ (where $b^+$ is the number of positive eigenvalues of the intersection pairing on $H_2(X;\R)$), a possible positive answer to \cite[Question 2]{donaldson}, see also \cite[Conjecture 1.2]{tosatti-weinkove-yau}, would be provided as a consequence of \cite[Conjecture 1]{donaldson}, see also \cite[Conjecture 1.1]{tosatti-weinkove-yau}, concerning the study of the symplectic Calabi and Yau equation, which aims to generalize S.-T. Yau's theorem \cite{yau-proc, yau}, solving the Calabi conjecture, \cite{calabi}, to the non-integrable case. Some results concerning this problem have been recently obtained by several authors, see, e.g., \cite{weinkove, tosatti-weinkove-yau, tosatti-weinkove_kodaira-thurston, taubes, zhang, li-tomassini, fino-li-salamon-vezzoni, buzano-fino-vezzoni}, see also \cite{tosatti-weinkove}. More 
precisely, in \cite{weinkove}, all the estimates for the closedness argument of the continuity method applied to the symplectic Calabi and Yau equation, \cite[Conjecture 1]{donaldson}, are reduced to a $\mathcal{C}^0$ {\itshape a priori} estimate of a scalar potential function, \cite[Theorem 1]{weinkove}; then, the existence of a solution of the symplectic Calabi and Yau equation is proven for compact $4$-dimensional manifolds $X$ endowed with an almost-K\"ahler structure $\left(J,\, \omega,\, g\right)$ satisfying $\left\|\Nij_J\right\|_{\mathrm{L}^1}<\varepsilon$, where $\varepsilon>0$ depends just on the data, \cite[Theorem 2]{weinkove}. In \cite{tosatti-weinkove-yau}, it is shown that the $\mathcal{C}^\infty$ {\itshape a priori} estimates can be reduced to an integral estimate of a scalar function potential, \cite[Theorem 1.3]{tosatti-weinkove-yau}; furthermore, it is shown that \cite[Conjecture 1]{donaldson} holds under a positive curvature assumption, \cite[Theorem 1.4]{tosatti-weinkove-yau}. In \cite{
tosatti-weinkove_kodaira-thurston}, the symplectic Calabi-Yau equation is solved on the Kodaira-Thurston manifold $\mathbb{S}^1\times\left(\left. \mathbb{H}(3;\Z) \right\backslash \mathbb{H}(3;\R)\right)$ for any given left-invariant volume form, \cite[Theorem 1.1]{tosatti-weinkove_kodaira-thurston}; further results on the Calabi-Yau equation for torus-bundles over a $2$-dimensional torus have been provided in \cite{fino-li-salamon-vezzoni, buzano-fino-vezzoni}. In \cite{taubes}, it is shown that, on a compact $4$-dimensional manifold with $b^+=1$ and endowed with a symplectic form $\omega$, a generic $\omega$-tamed almost-complex structure on $X$ is compatible with a symplectic form on $X$, \cite[Theorem 1]{taubes}, which is defined by integrating over a space of currents that are defined by pseudo-holomorphic curves. The Taubes currents have been studied, both in dimension $4$ and higher, also by W. Zhang in \cite{zhang}. In \cite{li-zhang-2}, T.-J. Li and W. Zhang were concerned with studying Donaldson's 
``tamed to compatible'' question for almost-complex structures on rational $4$-dimensional manifolds; they provided, in particular, an affirmative answer to \cite[Question 2]{donaldson} for $\mathbb{S}^2\times\mathbb{S}^2$ and for $\CP^2\sharp\overline{\CP^2}$, see \cite[Theorem 4.11]{draghici-li-zhang-survey}. In \cite{li-tomassini}, a positive answer to S.~K. Donaldson's question \cite[Question 2]{donaldson} is provided in the Lie algebra setting, proving that, given a $4$-dimensional Lie algebra $\mathfrak{g}$ such that $B\wedge B=0$ (where $B\subseteq \wedge^2\mathfrak{g}$ is the space of boundary $2$-vectors), e.g., a $4$-dimensional unimodular Lie algebra, a linear (possibly non-integrable) complex structure is tamed by a linear symplectic form if and only if it is compatible with a linear symplectic form, \cite[Theorem 0.2]{li-tomassini}.
\end{rem}

In a sense, \cite[Corollary 1.1]{li-zhang} provides evidences towards an affirmative answer for \cite[Question 2]{donaldson}, especially in the case $b^+=1$; confirmed in their opinion by the computations in \cite{draghici-li-zhang} in the case $b^+>1$, T.-J. Li and W. Zhang speculated in \cite[page 655]{li-zhang} that the equality $\mathcal{K}^t_J=\mathcal{K}^c_J$ holds for a generic almost-complex structure $J$ on a $4$-dimensional manifold.

\medskip

The analogous of \cite[Question 2]{donaldson} in dimension higher than $4$ has a negative answer: counterexamples in the (non-integrable) almost-complex case can be found in \cite{migliorini-tomassini} by M. Migliorini and A. Tomassini, and in \cite{tomassini-forummath} by A. Tomassini. Notwithstanding, since examples of non-K\"ahler holomorphic-tamed complex structures are not known, T.-J. Li and W. Zhang speculated a negative answer for the following question, also addressed by J. Streets and G. Tian in \cite{streets-tian}.

\begin{quest}[{\cite[page 678]{li-zhang}, \cite[Question 1.7]{streets-tian}}]
 Do there exist non-K\"ahler holomorphic-tamed complex manifolds, of complex dimension greater than $2$?
\end{quest}

\subsubsection{Tameness conjecture for $6$-dimensional nilmanifolds}

In view of the speculation in \cite[page 678]{li-zhang}, and of \cite[Question 1.7]{streets-tian}, one could ask whether small deformations of the Iwasawa manifold, see \S\ref{subsec:iwasawa}, may provide examples of non-K\"ahler holomorphic-tamed complex structures. In this section, we prove that this is not the case: more precisely, we prove that no example of left-invariant non-K\"ahler holomorphic-tamed complex structure can be found on $6$-dimensional nilmanifolds. The same holds true, more in general, for higher dimensional nilmanifolds, as proven by N. Enrietti, A. Fino, and L. Vezzoni, \cite[Theorem 1.3]{enrietti-fino-vezzoni}.

\medskip

We recall that a Hermitian metric $g$ on a complex manifold $X$ is called \emph{pluri-closed} (or \emph{strong K\"ahler with torsion}, shortly \emph{\textsc{skt}}), \cite{bismut--math-ann-1989}, if the $(1,1)$-form $\omega$ associated to $g$ satisfies $\del\delbar\omega=0$.

By the following result, holomorphic-tamed manifolds admit pluri-closed metrics, \cite[Proposition 3.1]{angella-tomassini-1}.

\begin{prop}
\label{prop:hol-tamed-skt}
Let $X$ be a manifold endowed with a symplectic structure $\omega$ and an $\omega$-tamed complex structure $J$.
Then the $(1,1)$-form $\tilde\omega := \tilde{g}_J\left(J\,\sspace, \, \ssspace\right)$ associated to the Hermitian metric $\tilde{g}_J\left(\sspace,\, \ssspace\right) := \frac{1}{2}\,\left(\omega\left(\sspace,\, J\ssspace\right)-\omega\left(J\sspace,\, \ssspace\right)\right)$ is $\del\delbar$-closed, namely, $\tilde{g}$ is a pluri-closed metric on $X$.
\end{prop}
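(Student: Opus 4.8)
The plan is to reduce the pluri-closedness of $\tilde\omega$ to an elementary bidegree bookkeeping for the closed form $\omega$, after first identifying $\tilde\omega$ with the $J$-invariant component of $\omega$. Before that, I would check that $\tilde{g}_J$ is genuinely a $J$-Hermitian metric: a direct computation from the definition shows that $\tilde{g}_J$ is symmetric and $J$-invariant (that is, $\tilde{g}_J\left(J\sspace,\, J\ssspace\right)=\tilde{g}_J\left(\sspace,\, \ssspace\right)$, using $J^2=-\id$), while positive-definiteness is exactly the taming hypothesis, since $\tilde{g}_J(v,v)=\omega(v,\, Jv)>0$ for every $v\neq 0$. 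This is the \emph{only} place where the taming of $J$ by $\omega$ enters; the pluri-closedness itself will follow from $\de\omega=0$ together with the integrability of $J$ alone.

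Next I would identify $\tilde\omega$ with a pure-type piece of $\omega$. Evaluating $\tilde\omega(u,v)=\tilde{g}_J(Ju,\, v)$ and using $J^2=-\id$ gives $\tilde\omega(u,v)=\frac{1}{2}\left(\omega(u,v)+\omega(Ju,\, Jv)\right)$, so that $\tilde\omega$ is precisely the $J$-invariant part of $\omega$ in the splitting $\wedge^2X=\wedge^+_JX\oplus\wedge^-_JX$. Equivalently, writing the type decomposition $\omega=\omega^{2,0}+\omega^{1,1}+\omega^{0,2}$ (with $\omega^{0,2}=\overline{\omega^{2,0}}$, since $\omega$ is real), one has $\tilde\omega=\omega^{1,1}\in\wedge^{1,1}X\cap\wedge^2X$ and $\omega^{2,0}+\omega^{0,2}=\omega^-$.

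Then, since $J$ is integrable, by the Newlander--Nirenberg theorem the components $A$ and $\bar A$ of $\de$ vanish, so $\de=\del+\delbar$, and the condition $\de\omega=0$ splits by bidegree into the four equations $\del\omega^{2,0}=0$ (type $(3,0)$), $\delbar\omega^{2,0}+\del\omega^{1,1}=0$ (type $(2,1)$), $\delbar\omega^{1,1}+\del\omega^{0,2}=0$ (type $(1,2)$), and $\delbar\omega^{0,2}=0$ (type $(0,3)$). From the $(2,1)$-equation I would read off $\del\omega^{1,1}=-\delbar\omega^{2,0}$; applying $\delbar$ and using $\delbar^2=0$ yields $\delbar\del\omega^{1,1}=-\delbar^2\omega^{2,0}=0$, and hence, since $\del\delbar=-\delbar\del$, one concludes $\del\delbar\tilde\omega=\del\delbar\omega^{1,1}=0$, which is the claim.

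The computation presents no genuine obstacle; the only points requiring care are the bookkeeping verifying $\tilde\omega=\omega^{1,1}$ and the correct pairing of the bidegree components in $\de\omega=0$, together with remembering that it is precisely the integrability of $J$ which kills the $A$ and $\bar A$ parts of $\de$, so that the $(2,1)$-component of $\de\omega$ is exactly $\delbar\omega^{2,0}+\del\omega^{1,1}$.
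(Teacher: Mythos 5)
Your proposal is correct and follows essentially the same route as the paper: identify $\tilde\omega$ with the $(1,1)$-component $\omega^{1,1}$ of $\omega$ (the paper writes this as $\tilde\omega=\frac{1}{2}\left(\omega+J\omega\right)$), read off the $(2,1)$-equation $\delbar\omega^{2,0}+\del\omega^{1,1}=0$ from $\de\omega=0$, and conclude $\del\delbar\tilde\omega=-\delbar\del\omega^{1,1}=\delbar^2\omega^{2,0}=0$. Your extra verification that $\tilde{g}_J$ is a genuine Hermitian metric, with taming supplying positive-definiteness, is a harmless (and correct) supplement that the paper leaves implicit.
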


\begin{proof}
Decomposing $\omega$ in pure type components, set
$$ 
\omega \;=:\; \omega^{2,0} + \omega^{1,1} + \overline{\omega^{2,0}} $$
where $\omega^{2,0}\in\wedge^{2,0}X$ and $\omega^{1,1}=\overline{\omega^{1,1}}\in\wedge^{1,1}X$.
Since, by definition, $\tilde{\omega} = \frac{1}{2}\,\left(\omega+ J\omega\right)$, we have $\tilde{\omega} = \omega^{1,1}$.
We get that
$$
\de\omega \;=\; 0 \quad\Leftrightarrow \quad \left\{
\begin{array}{l}
\del\omega^{2,0} \;=\;  0 \\[5pt]
\del\omega^{1,1} + \delbar\omega^{2,0} \;=\; 0
\end{array}
\right. \;,
$$
and hence
$$
 \del\delbar \tilde{\omega} \;=\; \del\delbar \omega^{1,1} \;=\; 
-\delbar\del\omega^{1,1} \;=\; \delbar^2\omega^{2,0} \;=\; 0\;,
$$
proving that $\tilde{g}$ is a pluri-closed metric on $X$.
\end{proof}

Now, we can prove the announced theorem, \cite[Theorem 3.3]{angella-tomassini-1}.

\begin{thm}\label{thm:nilmfd-are-not-tamed}
Let $X=\Gamma\backslash G$ be a $6$-dimensional nilmanifold endowed with a $G$-left-invariant complex structure $J$.
If $X$ is not a torus, then there is no symplectic structure $\omega$ on $X$ taming $J$.
\end{thm}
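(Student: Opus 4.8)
The plan is to combine the necessary condition for holomorphic-tameness established in Proposition \ref{prop:hol-tamed-skt} with the structural theory of $6$-dimensional nilmanifolds. If $\omega$ were a symplectic form taming a $G$-left-invariant complex structure $J$ on $X = \Gamma\backslash G$, then by Proposition \ref{prop:hol-tamed-skt} the manifold would carry a pluri-closed (\SKT) Hermitian metric, namely $\tilde g_J$. So the core of the argument reduces to the following two facts: first, one may assume $\omega$ (equivalently $\tilde\omega$) is $G$-left-invariant; second, a non-torus $6$-dimensional nilmanifold endowed with a left-invariant complex structure admits no left-invariant pluri-closed metric together with a taming condition forcing non-degeneracy in cohomology. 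The symmetrization step is where F.~A. Belgun's trick, Lemma \cite[Theorem 7]{belgun}, enters: averaging $\omega$ over the group produces a $G$-left-invariant $2$-form $\mu(\omega)$ that is still $\de$-closed (since $\mu$ commutes with $\de$) and still tames $J$ (since $\mu$ commutes with $J$ and tameness is a pointwise convex-positivity condition preserved under averaging against a probability measure). Thus I would first reduce, without loss of generality, to a $G$-left-invariant taming form.

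Next I would translate everything to the level of the Lie algebra $\mathfrak{g}$. A left-invariant taming symplectic form gives, via Proposition \ref{prop:hol-tamed-skt}, a left-invariant pluri-closed metric on $(\mathfrak{g}, J)$; moreover its class $[\omega]$ is nonzero in $H^2_{dR}(\mathfrak{g};\R)$, since $\omega^3$ is a nonzero multiple of the volume form and hence $[\omega^3]\neq 0$ by K.~Nomizu's theorem \cite[Theorem 1]{nomizu}. The key obstruction to exhibit is the incompatibility of these two requirements on a non-abelian $6$-dimensional nilpotent Lie algebra. The plan here is to exploit the nilpotency: writing $\omega = \omega^{2,0} + \omega^{1,1} + \overline{\omega^{2,0}}$ with $\tilde\omega = \omega^{1,1}$, the taming condition forces $\omega^{1,1}$ to be a positive $(1,1)$-form, and $\de\omega = 0$ couples $\del\omega^{1,1}$ with $\delbar\omega^{2,0}$. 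One then pairs the \SKT\ identity $\del\delbar\tilde\omega = 0$ against a suitable positive $(2,2)$-current supported on the derived ideal $[\mathfrak{g},\mathfrak{g}]$, or against $\tilde\omega^2$ itself after integrating by parts over $X$. The nilpotency of $\mathfrak{g}$ guarantees that $\de$ maps into the ideal generated by the brackets, and a Stokes-type/unimodularity argument (using J.~Milnor's lemma \cite[Lemma 6.2]{milnor} that $G$ is unimodular, so $\int_X \de(\cdot) = 0$) produces a sign contradiction unless all brackets vanish, i.e.\ unless $X$ is a torus.

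More concretely, the cleanest route I expect is to run the positivity argument of the type used in M.~L. Michelsohn's and related duality theorems for currents: a positive definite $(1,1)$-form $\tilde\omega$ with $\del\delbar\tilde\omega = 0$ on a compact complex manifold, combined with the existence of a nonzero $\del\delbar$-exact but positive piece coming from $\de\varphi$ for some $(1,0)$-form $\varphi$ in the (nontrivial, by non-abelianness) image of $\de$, yields $0 = \int_X \del\delbar\tilde\omega \wedge \varphi\wedge\bar\varphi > 0$ after suitable integration by parts, a contradiction. The main obstacle will be organizing this pairing so that the positivity is genuinely strict and matching it against the nilpotent structure equations: I would need to identify, using the classification of left-invariant complex structures on $6$-dimensional nilpotent Lie algebras and their structure equations (as in the Iwasawa case of \S\ref{sec:structure-equations-iwasawa}), a canonical $(1,0)$-form $\varphi$ with $\del\varphi$ and $\delbar\varphi$ contributing a term of definite sign. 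The alternative, more uniform approach — avoiding case analysis — is to invoke the general principle that a non-torus nilmanifold cannot carry a pluri-closed metric taming the complex structure because such a metric would force a nonzero class in $H^{1,1}_{BC}$ paired positively against the (exact, hence cohomologically trivial) torsion contribution; this is precisely the mechanism generalized by N.~Enrietti, A.~Fino, and L.~Vezzoni in \cite[Theorem 1.3]{enrietti-fino-vezzoni}, and I would model the $6$-dimensional proof on the ``no taming'' half of their more general statement.
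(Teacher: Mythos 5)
Your first two steps coincide exactly with the paper's proof: F.~A. Belgun's symmetrization trick \cite[Theorem 7]{belgun} (with the bi-invariant volume form from \cite[Lemma 6.2]{milnor}) reduces to a $G$-left-invariant taming symplectic form, and Proposition \ref{prop:hol-tamed-skt} then yields a $G$-left-invariant pluri-closed metric. The genuine gap lies in the decisive third step. The paper closes the argument by invoking the classification of A. Fino, M. Parton, and S. Salamon \cite[Theorem 1.2]{fino-parton-salamon}: on a $6$-dimensional nilmanifold carrying a left-invariant \SKT\ metric there is a co-frame $\left\{\varphi^1,\varphi^2,\varphi^3\right\}$ of $(1,0)$-forms with $\de\varphi^1=\de\varphi^2=0$ and $\de\varphi^3$ of a prescribed shape with coefficients $A,B,C,D,E$ satisfying $\left|A\right|^2+\left|D\right|^2+\left|E\right|^2+2\,\Re\left(\bar B C\right)=0$; writing the invariant taming form $\omega=\omega^{2,0}+\omega^{1,1}+\overline{\omega^{2,0}}$ in this co-frame, a short direct computation shows that $\de\omega=0$ forces either $A=B=C=D=E=0$ (i.e., $X$ is a torus) or $b_{3\bar 3}=0$, while taming forces $b_{3\bar3}\neq 0$. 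You never use this normal form, and your substitute pairing ``$0=\int_X\del\delbar\tilde\omega\wedge\varphi\wedge\bar\varphi>0$'' fails as stated: since $\del\delbar\tilde\omega=0$ pointwise, both sides of the integration by parts vanish identically, and the sign of $\int_X\tilde\omega\wedge\del\delbar\left(\varphi\wedge\bar\varphi\right)$ is governed by competing terms such as $\tilde\omega\wedge\del\varphi\wedge\overline{\del\varphi}$ and $\tilde\omega\wedge\delbar\varphi\wedge\overline{\delbar\varphi}$, which carry \emph{opposite} Hodge--Riemann signs (a $(2,0)\wedge(0,2)$ contribution versus a $(1,1)\wedge(1,1)$ one). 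Extracting a strict sign contradiction from this requires exactly the structural control over which left-invariant complex structures admit \SKT\ metrics that \cite[Theorem 1.2]{fino-parton-salamon} supplies; you acknowledge as much (``I would need to identify \dots\ a canonical $(1,0)$-form $\varphi$'') but do not carry it out, and there is no reason a single universal choice of $\varphi$ works across all non-torus $6$-dimensional nilpotent Lie algebras without case analysis.

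Your fallback route --- invoking or ``modelling on'' \cite[Theorem 1.3]{enrietti-fino-vezzoni} --- does not repair this: it assumes a strictly stronger theorem, which the paper itself presents as a \emph{subsequent generalization} of precisely the statement being proved, and the no-taming mechanism there (resting on a delicate analysis of the center of the \SKT\ Lie algebra) is substantial machinery that your sketch does not reproduce. So the proposal is correct up to and including the reduction to an invariant \SKT-plus-taming structure on the Lie algebra, but the heart of the proof --- the structure theorem for $6$-dimensional \SKT\ nilmanifolds and the resulting explicit contradiction between $\de\omega=0$ and $b_{3\bar3}\neq0$ --- is missing.
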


\begin{proof}
Let $\omega$ be a (non-necessarily $G$-left-invariant) symplectic form on $X$ taming $J$. By F.~A. Belgun's symmetrization trick, \cite[Theorem 7]{belgun}, setting
$$ \mu(\omega)\;:=\;\int_X \omega\lfloor_m \, \eta(m) \;,$$
where $\eta$ is a $G$-bi-invariant volume form on $G$ such that $\int_X\eta=1$, whose existence follows from J. Milnor's lemma \cite[Lemma 6.2]{milnor}, we get a $G$-left-invariant symplectic form on $X$ taming $J$. Then, it suffices to prove that, on a non-torus $6$-dimensional nilmanifold, there is no left-invariant symplectic structure taming a left-invariant complex structure.

Hence, let $\omega$ be such a $G$-left-invariant symplectic structure. Then, by Proposition \ref{prop:hol-tamed-skt}, $X$ should admit a $G$-left-invariant pluri-closed Hermitian metric $g$. Hence, by \cite[Theorem 1.2]{fino-parton-salamon}, there exists a co-frame $\{\varphi^1,\;\varphi^2,\;\varphi^3\}$ for the $J$-holomorphic cotangent bundle such that
\begin{equation*}
\left\{
\begin{array}{l}
\de\varphi^1 \;=\; 0 \\[5pt]
\de\varphi^{2} \;=\; 0 \\[5pt]
\de\varphi^3 \;=\; A\,\overline{\varphi}^1\wedge\varphi^2+B\,\overline{\varphi}^2\wedge\varphi^2+
C\,\varphi^1\wedge\overline{\varphi}^1+
D\,\varphi^1\wedge\overline{\varphi}^2+E\,\varphi^1\wedge\varphi^2
\end{array}
\right. \;,
\end{equation*}
where $A,\,B,\,C,\,D,\,E\in \C$ are complex numbers such that $\left|A\right|^2+\left|D\right|^2+\left|E\right|^2+2\,\Re\left(\bar B C\right)=0$.
Set
$$
\omega \;=:\; \omega^{2,0}+\omega^{1,1}+\overline{\omega^{2,0}}\;,
$$
where
$$
\omega^{2,0}\;=\;\sum_{i<j}a_{ij}\,\varphi^i\wedge\varphi^j\;,
\qquad
\omega^{1,1}\;=\;\frac{\im}{2}\,\sum_{i,j=1}^3b_{i\,\overline{j}}\,\varphi^{i}\wedge\overline{\varphi}^j\;,
$$
with $\left\{a_{ij},\, b_{i\,\overline{j}}\right\}_{i,j}\subset\C$ such that $\omega^{1,1}=\overline{\omega^{1,1}}$. A
straightforward computation yields
$$ \de\omega\;=\; 0 \qquad \Leftrightarrow \qquad \left(A\;=\;B\;=\;C\;=\;D\;=\;E\;=\;0 \quad \text{ or } \quad b_{3\overline{3}}\;=\;0\right)\;.$$
Since $b_{3\bar3}\neq0$, we get $A=B=C=D=E=0$, namely, $X$ is a torus.
\end{proof}

As a corollary, we get the following result, \cite[Theorem 3.4]{angella-tomassini-1}, concerning the speculation in \cite[p. 678]{li-zhang}, and \cite[Question 1.7]{streets-tian}.

\begin{thm}
No small deformation of the complex structure of the Iwasawa manifold $\mathbb{I}_3 := \left. \mathbb{H}\left(3;\Z\left[\im\right]\right) \right\backslash \mathbb{H}(3;\C)$ can be tamed by any symplectic form.
\end{thm}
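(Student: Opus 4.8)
The plan is to deduce the statement directly from Theorem \ref{thm:nilmfd-are-not-tamed}, the only real work being to check that its hypotheses apply to each member of the Kuranishi family of $\mathbb{I}_3$. Recall from \S\ref{sec:deformations-iwasawa} that the small deformations of the Iwasawa manifold form a smooth $6$-parameter family $\left\{X_\tempo=\left(\I_3,\,J_\tempo\right)\right\}_{\tempo\in\Delta(\zero,\varepsilon)}$, all sharing the same underlying differentiable manifold, namely the nilmanifold $\I_3=\left.\mathbb{H}\left(3;\Z\left[\im\right]\right)\right\backslash\mathbb{H}(3;\C)$. Thus it suffices to verify, for each fixed $\tempo\in\Delta(\zero,\varepsilon)$, that $J_\tempo$ is an $\mathbb{H}(3;\C)$-left-invariant complex structure on a non-torus $6$-dimensional nilmanifold, and then to quote Theorem \ref{thm:nilmfd-are-not-tamed} verbatim.

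First I would record that $\I_3$ is \emph{not} diffeomorphic to a torus: the Lie group $\mathbb{H}(3;\C)$ is non-abelian and $2$-step nilpotent, so $\I_3$ is a non-tori nilmanifold (equivalently, $\I_3$ is non-formal, having a non-trivial triple Massey product, see \cite[page 158]{fernandez-gray}). Hence the non-torus hypothesis of Theorem \ref{thm:nilmfd-are-not-tamed} is satisfied for every $\tempo$.

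The key step, and the only place where care is needed, is to check that each $J_\tempo$ is genuinely $\mathbb{H}(3;\C)$-left-invariant. This is where I would invoke the explicit structure of the Kuranishi family: by I.~Nakamura's construction (\S\ref{sec:deformations-iwasawa}, \S\ref{sec:structure-equations-iwasawa}), for every $\tempo$ there is a global coframe $\left\{\varphi^1_{\tempo},\,\varphi^2_{\tempo},\,\varphi^3_{\tempo}\right\}$ for the $J_\tempo$-holomorphic cotangent bundle whose structure equations have constant coefficients $\sigma_{12},\,\sigma_{1\bar1},\,\sigma_{1\bar2},\,\sigma_{2\bar1},\,\sigma_{2\bar2}\in\C$ depending only on $\tempo$; since these coefficients are constant, the coframe, and hence $J_\tempo$ itself, is $\mathbb{H}(3;\C)$-left-invariant. (Alternatively, one can avoid the explicit computation and argue structurally: $\I_3$ is holomorphically parallelizable, so by S.~Rollenske's theorem \cite[Theorem 2.6]{rollenske-jlms} its entire Kuranishi family consists of $G$-left-invariant complex structures.) I expect this to be the main obstacle only in the sense of bookkeeping — one must be sure that left-invariance is preserved along the whole family rather than at $\tempo=\zero$ alone — but it is fully settled by either reference.

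With these two facts in hand, the conclusion is immediate: for each $\tempo\in\Delta(\zero,\varepsilon)$ the complex structure $J_\tempo$ is an $\mathbb{H}(3;\C)$-left-invariant complex structure on the non-torus $6$-dimensional nilmanifold $\I_3$, so Theorem \ref{thm:nilmfd-are-not-tamed} yields that no symplectic form on $\I_3$ tames $J_\tempo$. Note that the taming symplectic form in Theorem \ref{thm:nilmfd-are-not-tamed} is \emph{not} assumed left-invariant — this is handled internally by F.~A. Belgun's symmetrization trick \cite[Theorem 7]{belgun} together with Proposition \ref{prop:hol-tamed-skt} — so the statement covers arbitrary (possibly non-invariant) symplectic forms on each $X_\tempo$, as required.
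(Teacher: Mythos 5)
Your proposal is correct and follows exactly the route the paper takes: the paper obtains this statement as an immediate corollary of Theorem \ref{thm:nilmfd-are-not-tamed}, the implicit verification being that each $J_\tempo$ in Nakamura's Kuranishi family is an $\mathbb{H}(3;\C)$-left-invariant complex structure on the non-torus nilmanifold $\I_3$ (which your appeal to the constant-coefficient structure equations of \S\ref{sec:structure-equations-iwasawa}, or alternatively to \cite[Theorem 2.6]{rollenske-jlms}, settles). Your closing remark that arbitrary, possibly non-invariant, taming forms are already handled inside Theorem \ref{thm:nilmfd-are-not-tamed} via F.~A. Belgun's symmetrization is precisely the point the paper's proof of that theorem makes, so nothing is missing.
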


\subsection{The cones of semi-K\"ahler, and strongly-Gauduchon metrics}\label{subsec:balanced-cones}
Let $X$ be a compact $2n$-dimensional manifold endowed with an almost-complex structure $J$. We recall that a non-degenerate $2$-form $\omega$ on $X$ is called \emph{semi-K\"ahler}, \cite[page 40]{gray-hervella}, if $\omega$ is the $(1,1)$-form associated to a $J$-Hermitian metric on $X$ (that is, $\omega(\sspace,\,J\sspace)>0$ and $\omega(J\sspace,\,\,J\ssspace)=\omega(\sspace,\,\ssspace)$) and $\de\left(\omega^{n-1}\right) = 0$;
when $J$ is integrable, a semi-K\"ahler structure is called \emph{balanced}, \cite[Definition 1.4, Theorem 1.6]{michelsohn}.

\medskip

We set
\begin{eqnarray*}
\mathcal{K}b^c_J &:=& \left\{\left[\Omega\right]\in H^{2n-2}_{dR}(X;\R) \st \Omega\in\wedge^{n-1,n-1}X \text{ is positive on the}\right.\\[5pt]
&& \left.\text{ complex }(n-1)\text{-subspaces of } T_xX\otimes_\R\C, \text{ for every }x\in X \right\} \;,
\end{eqnarray*}
and
\begin{eqnarray*}
\mathcal{K}b^t_J &:=& \left\{\left[\Omega\right]\in H^{2n-2}_{dR}(X;\R) \st \Omega\in\wedge^{2n-2}X \text{ is positive on the}\right.\\[5pt]
&& \left.\text{ complex }(n-1)\text{-subspaces of } T_xX\otimes_\R\C, \text{ for every }x\in X \right\} \;.
\end{eqnarray*}
We note that $\mathcal{K}b^c_J$ and $\mathcal{K}b^t_J$ are convex cones in $H^{2n-2}_{dR}(X;\R)$, and that $\mathcal{K}b^c_J$ is a sub-cone of $\mathcal{K}b^t_J$ and is contained in $H^{(n-1,n-1)}_J(X;\R)$.

We recall the following trick by M.~L. Michelsohn.

\begin{lem}[{\cite[pp. 279-280]{michelsohn}}]
 Let $X$ be a compact $2n$-dimensional manifold endowed with an almost-complex structure $J$.
 Let $\Phi$ be a real $(n-1,n-1)$-form such that it is positive on the complex $(n-1)$-subspaces of $T_xX\otimes_\R\C$, for every $x\in X$. Then $\Phi$ can be written as $\Phi=\varphi^{n-1}$, where $\varphi$ is a $J$-taming real $(1,1)$-form. In particular, if $\Phi$ is $\de$-closed, then $\varphi$ is a semi-K\"ahler form.
\end{lem}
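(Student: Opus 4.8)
The plan is to reduce the statement to a pointwise problem in linear algebra and then to solve that problem by exhibiting the $(n-1)$-st power map as a diffeomorphism between the relevant positivity cones. Since both positivity and the operation $\varphi\mapsto\varphi^{n-1}$ are pointwise and purely algebraic, it suffices to prove that, on a complex vector space $(V,J)\cong\C^n$, the map $P\colon\varphi\mapsto\varphi^{n-1}$ is a bijection from the cone $\mathcal{P}^{1,1}$ of positive real $(1,1)$-forms onto the cone $\mathcal{P}^{n-1,n-1}$ of real $(n-1,n-1)$-forms that are positive on the complex $(n-1)$-subspaces, and that its inverse is smooth. Granting this, applying the fixed smooth map $P^{-1}$ fibrewise to the given form $\Phi$ produces a smooth positive $(1,1)$-form $\varphi$ on $X$ with $\varphi^{n-1}=\Phi$; being a positive $(1,1)$-form, $\varphi$ is automatically $J$-taming, and the final sentence is then immediate, since $\de\Phi=\de(\varphi^{n-1})=0$ together with the definition of semi-K\"ahler recalled above makes $\varphi$ a semi-K\"ahler form.

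First I would diagonalise on the $(1,1)$-side. Every $\varphi\in\mathcal{P}^{1,1}$ can be written, in suitable $\C$-linear coordinates, as $\varphi=\sum_{j=1}^n\lambda_j\,\omega_j$ with $\lambda_j>0$, where $\omega_j:=\frac{\im}{2}\,\de z^j\wedge\de\bar z^j$; since $\omega_j\wedge\omega_j=0$, the multinomial expansion leaves only squarefree terms and gives $\varphi^{n-1}=(n-1)!\,\sum_{k=1}^n\bigl(\prod_{j\neq k}\lambda_j\bigr)\prod_{j\neq k}\omega_j$. Thus, in the corresponding basis of $(n-1,n-1)$-forms, $P$ acts on eigenvalues by $\lambda\mapsto\mu$ with $\mu_k=(n-1)!\prod_{j\neq k}\lambda_j$; this self-map of $(\R_{>0})^n$ is a bijection, inverted explicitly by $\lambda_k=(n-1)!\,\bigl(\prod_i\mu_i/((n-1)!)^n\bigr)^{1/(n-1)}/\mu_k$. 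This already yields injectivity of $P$ and shows that every \emph{diagonal} positive $(n-1,n-1)$-form lies in the image.

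The main obstacle is to see that \emph{every} element of $\mathcal{P}^{n-1,n-1}$ is diagonalisable, equivalently that $\mathcal{P}^{n-1,n-1}$ is a single $\GL(V)$-orbit, so that the previous computation covers all of it. Here I would use a contraction pairing: fixing a real generator $\Omega$ of $\wedge^{n,n}V^*$, a form $\Phi\in\wedge^{n-1,n-1}V^*$ determines a Hermitian form $B_\Phi$ on $\wedge^{1,0}V^*$ by the relation $\alpha\wedge\bar\beta\wedge\Phi=B_\Phi(\alpha,\bar\beta)\,\Omega$, and one checks that $\Phi$ is positive on the complex hyperplanes exactly when $B_\Phi$ is positive definite. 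Since positive definite Hermitian forms constitute a single $\GL$-orbit, and $\Phi\mapsto B_\Phi$ is a $\GL$-equivariant linear isomorphism (up to the harmless positive scalar $|\det|^2$ appearing on $\Omega$), the cone $\mathcal{P}^{n-1,n-1}$ is one orbit; the same fact for $\mathcal{P}^{1,1}$ is classical. As $P$ is $\GL$-equivariant and sends the standard $\omega_0$ to $\omega_0^{n-1}$, combining equivariance with the diagonal computation shows that $P$ is a bijection of cones. The delicate point on which I would spend most care is precisely the equivalence between positivity of $\Phi$ and positive-definiteness of $B_\Phi$, i.e.\ that in bidegree $(n-1,n-1)$ the positivity notion is faithfully captured by this single Hermitian form, together with the correct bookkeeping of the scalar factors.

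Finally, for the smoothness of $P^{-1}$ I would invoke the inverse function theorem. By equivariance it is enough to check that $\de P$ is nonsingular at $\omega_0$, where $\de P_{\omega_0}(\psi)=(n-1)\,\omega_0^{n-2}\wedge\psi=(n-1)\,L^{n-2}\psi$ with $L=\omega_0\wedge\sspace$. The pointwise Hard Lefschetz isomorphism $L^{n-2}\colon\wedge^2\to\wedge^{2n-2}$ (\cite[Corollary 2.7]{yan}) preserves bidegrees, hence restricts to a real isomorphism $\wedge^{1,1}_\R\to\wedge^{n-1,n-1}_\R$, so $\de P_{\omega_0}$ is invertible; propagating by equivariance, $P$ is a local diffeomorphism everywhere, and being a bijection it is a diffeomorphism of cones whose inverse is the desired smooth $(n-1)$-st root.
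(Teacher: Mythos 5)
Your proof is correct and follows essentially the same route as the argument of M.~L. Michelsohn (pp.~279--280 of the cited paper) which the text invokes in place of a proof: pointwise reduction, identification of positive $(n-1,n-1)$-forms with positive-definite Hermitian forms via contraction into $\wedge^{n,n}V^*$ (giving diagonalizability, i.e.\ a single $\GL$-orbit, modulo the factor of $\im$ needed to make your $B_\Phi$ Hermitian, which you rightly flag), and explicit inversion of the induced map on eigenvalues. The only step to make explicit is that injectivity of $P$ does not follow from the diagonal computation alone but from the classical simultaneous diagonalizability of two positive $(1,1)$-forms; granting that, your inverse-function-theorem argument is a clean way to obtain smoothness of the $(n-1)$-st root despite the non-smoothness of the diagonalizing frames.
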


The previous Lemma allows us to confuse the cone $\mathcal{K}b^{c}_J$ with the cone generated by the \kth{(n-1)} powers of the semi-K\"ahler forms, namely,
$$ \mathcal{K}b^c_J = \left\{\left[\omega^{n-1}\right] \st \omega \text{ is a semi-K\"ahler form on }X\right\} \;. $$

In particular, if $J$ is integrable, then the cone $\mathcal{K}b^c_J$ is just the cone of balanced structures on $X$. On the other hand, in the integrable case, $\mathcal{K}b^t_J$ is the cone of strongly-Gauduchon metrics on $X$. We recall that a \emph{strongly-Gauduchon metric} on $X$, \cite[Definition 3.1]{popovici-proj}, is a positive-definite $(1,1)$-form $\gamma$ on $X$ such that the $(n,n-1)$-form $\del\left(\gamma^{n-1}\right)$ is $\delbar$-exact. These metrics have been introduced by D. Popovici in \cite{popovici-proj} in studying the limits of projective manifolds under deformations of the complex structure, and they turn out to be special cases of \emph{Gauduchon metrics}, \cite{gauduchon}, for which $\del\left(\gamma^{n-1}\right)$ is just $\delbar$-closed; note that the notions of Gauduchon metric and of strongly-Gauduchon metric coincide if the $\del\delbar$-Lemma holds, \cite[page 15]{popovici-proj}. D. Popovici proved in \cite[Lemma 3.2]{popovici-proj} that a compact complex manifold $X$, of 
complex dimension $n$, carries a strongly-Gauduchon metric if and only if there exists a real $\de$-closed $(2n-2)$-form $\Omega$ such that its component $\Omega^{(n-1,n-1)}$ of type $(n-1,n-1)$ satisfies $\Omega^{(n-1,n-1)}>0$.

The aim of this section is to compare the cones $\mathcal{K}b^c_J$ and $\mathcal{K}b^t_J$, Theorem \ref{thm:kbc-kbt}, in the same way as \cite[Theorem 1.1]{li-zhang} does for $\mathcal{K}^c_J$ and $\mathcal{K}^t_J$ in the almost-K\"ahler case.

\medskip

Note that $\mathcal{K}b^t_J$ can be identified with the set of the classes of $\de$-closed $(2n-2)$-forms transverse to $C_{n-1,J}$.
On the other hand, we recall the following lemma.

\begin{lem}[{see, e.g., \cite[Proposition I.1.3]{silva}}]
 Let $X$ be a compact manifold endowed with an almost-complex structure $J$, and fix $p\in\N$.
 A structure current in $\mathfrak{C}_{p,J}$ is a positive current of bi-dimension $(p,p)$.
\end{lem}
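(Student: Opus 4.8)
The plan is to reduce the statement to a single Dirac current and then to invoke the classical duality between positive differential forms and complex directions. By definition $\mathfrak{C}_{p,J}$ is the weak-$*$ closed convex cone generated by the Dirac currents $[\xi]_x$ associated to $\xi\in C_{p,J}(x)$, and $C_{p,J}(x)$ is in turn generated by the decomposable $2p$-vectors $\xi_W$ determined (with their canonical complex orientation) by the complex $p$-dimensional subspaces $W\subseteq T_xX$. Since both being of bidimension $(p,p)$ and being a positive current are conditions preserved under positive linear combinations and under weak-$*$ limits, it suffices to prove that, for every $x\in X$ and every complex $p$-subspace $W\subseteq T_xX$, the Dirac current $[\xi_W]_x$ is a positive current of bidimension $(p,p)$. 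First I would fix these two reductions carefully, checking in particular that the cone of positive currents of a fixed bidimension is weak-$*$ closed, positivity being defined by the nonnegativity of a family of pairings.

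For the bidimension, I would choose a basis $\{\eta_1,\ldots,\eta_p\}$ of $\left(W\otimes_\R\C\right)\cap T^{1,0}X$ with $\eta_j=u_j-\im\,Ju_j$, so that $\{u_1,\,Ju_1,\,\ldots,\,u_p,\,Ju_p\}$ is a real basis of $W$ and $u_1\wedge Ju_1\wedge\cdots\wedge u_p\wedge Ju_p$ represents the complex orientation. Using $\eta_j\wedge\bar\eta_j=2\im\,u_j\wedge Ju_j$ one obtains
\begin{equation*}
\xi_W \;=\; \left(2\im\right)^{-p}\,\eta_1\wedge\bar\eta_1\wedge\cdots\wedge\eta_p\wedge\bar\eta_p \;,
\end{equation*}
and reordering the factors exhibits $\xi_W$ as an element of $\wedge^p T^{1,0}X\otimes\wedge^p T^{0,1}X$. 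Therefore $\xi_W$ pairs to zero with every form of bidegree $(r,s)$ with $r+s=2p$ and $(r,s)\neq(p,p)$; equivalently, in the decomposition $\correnti_\bullet X\otimes\C=\bigoplus_{p,q}\correnti_{p,q}X$ the current $[\xi_W]_x$ lies in $\correnti_{p,p}X$, that is, it is of bidimension $(p,p)$.

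For the positivity, I would recall that a current of bidimension $(p,p)$ is positive precisely when it takes nonnegative values on strongly positive $(p,p)$-test forms, and that by bilinearity it is enough to test against forms $u=\im\,\alpha_1\wedge\bar\alpha_1\wedge\cdots\wedge\im\,\alpha_p\wedge\bar\alpha_p$ with $\alpha_j\in\wedge^{1,0}X$. For such $u$ one has $\langle[\xi_W]_x,\,u\rangle=u(\xi_W)$, which is a positive multiple of the restriction of the positive form $u$ to the complex subspace $W$, evaluated against the complex orientation of $W$; since a strongly positive form is positive, and a positive $(p,p)$-form restricts to a nonnegative volume form on every complex $p$-subspace, this pairing is $\geq 0$ (a direct check for $p=1$ gives $\langle\im\,\alpha\wedge\bar\alpha,\,u_1\wedge Ju_1\rangle=\frac{1}{2}\,\left|\alpha(\eta_1)\right|^2\geq 0$, which also pins down the correct orientation). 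Hence $[\xi_W]_x$ is positive, and the lemma follows from the reductions of the first paragraph. I expect the only genuine difficulty to be bookkeeping: matching the orientation conventions so that $\xi_W$ comes out with the sign making it a strongly positive $(p,p)$-vector, and aligning the precise definition of positive current (nonnegativity against strongly positive, rather than merely positive, test forms) with the classical duality between positive forms and complex directions, for which I would appeal to the standard theory, e.g.\ \cite[\S III.1]{demailly-agbook}.
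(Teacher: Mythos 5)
Your argument is correct, but note that the paper itself offers no proof to compare against: the lemma is stated with the citation \cite[Proposition I.1.3]{silva} and used as a known fact from the theory of positive currents. Your proposal supplies precisely the standard argument behind that citation, and all its steps check out. The two reductions are legitimate: Sullivan's cone $\mathfrak{C}_{p,J}$ is the weak-$*$ closed convex cone generated by the Dirac currents of elements of $C_{p,J}(x)$, and both defining conditions --- vanishing against forms of bidegree $(r,s)\neq(p,p)$ and nonnegativity against strongly positive test forms --- are given by weak-$*$ closed linear, respectively convex, constraints, so they pass to positive combinations and limits. The identity $\eta_j\wedge\bar\eta_j=2\im\,u_j\wedge Ju_j$ gives
$$ \xi_W \;=\; \left(2\im\right)^{-p}\,\eta_1\wedge\bar\eta_1\wedge\cdots\wedge\eta_p\wedge\bar\eta_p \;, $$
which, after reordering (at the harmless cost of a sign $\left(-1\right)^{\frac{p(p-1)}{2}}$), lies in $\wedge^pT^{1,0}_xX\otimes\wedge^pT^{0,1}_xX$, so $\left[\xi_W\right]_x$ annihilates every form of bidegree $(r,s)\neq(p,p)$, as you claim. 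For positivity, your $p=1$ computation is exact, and the general case is confirmed by the determinant formula: for $u=\im\,\alpha_1\wedge\bar\alpha_1\wedge\cdots\wedge\im\,\alpha_p\wedge\bar\alpha_p$, using $\alpha_i\left(\bar\eta_j\right)=0=\bar\alpha_i\left(\eta_j\right)$ one gets
$$ u\left(\xi_W\right) \;=\; 2^{-p}\,\left|\det\left(\alpha_i\left(\eta_j\right)\right)_{i,j}\right|^2 \;\geq\; 0 \;, $$
which also settles the orientation bookkeeping you were worried about, with no hidden sign issues. The one point worth making explicit in a final write-up is the one you already flag implicitly: strongly positive \emph{test} forms need not be globally decomposable, but since positivity of a Dirac current is a pointwise condition and the strongly positive cone at a point is generated by the decomposable products, testing against those generators suffices, and the general case then follows from the closedness of the positive cone. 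So your proof is complete and is the natural self-contained replacement for the external reference.
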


As a direct consequence of \cite[Theorem I.7]{sullivan}, we get the following result, \cite[Theorem 2.6]{angella-tomassini-2}.

\begin{thm}
\label{thm:characterization-kbt}
 Let $X$ be a compact $2n$-dimensional manifold endowed with an almost-complex structure $J$.
 Then $\mathcal{K}b^t_J$ is non-empty if and only if there is no non-trivial $\de$-closed positive currents of bi-dimension $(n-1,n-1)$ that is a boundary, i.e.,
 $$ \mathcal{Z}\mathfrak{C}_{n-1,J} \cap \mathcal{B} \;=\; \{0\} \;. $$
 Furthermore, if we suppose that $0\not\in\mathcal{K}b^t_J$, then  $\mathcal{K}b^t_J\subseteq H^{2n-2}_{dR}(X;\R)$ is the interior of the dual cone $\breve{H}\mathfrak{C}_{n-1,J}\subseteq H^{2n-2}_{dR}(X;\R)$ of $H\mathfrak{C}_{n-1,J}\subseteq H_{2n-2}^{dR}(X;\R)$.
\end{thm}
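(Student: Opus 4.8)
The plan is to deduce everything from D.~P. Sullivan's Theorem I.7 applied to the cone structure $C_{n-1,J}$ of $(n-1)$-directions on the compact manifold $X$ (taking $Y=X$, which has no boundary), exactly as \cite[Theorem 3.2]{li-zhang} does for $C_{1,J}$ in the symplectic-taming setting. The first preliminary observation I would record is the dictionary between the two pictures: by definition a real $(2n-2)$-form $\omega$ is transverse to $C_{n-1,J}$ precisely when it is positive on every complex $(n-1)$-subspace of $T_xX\otimes_\R\C$, for every $x\in X$; hence the $\de$-closed forms transverse to $C_{n-1,J}$ are exactly the representatives of the classes in $\mathcal{K}b^t_J$. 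Likewise, by the lemma recalled above, the structure cycles $\mathcal{Z}\mathfrak{C}_{n-1,J}$ are exactly the $\de$-closed positive currents of bi-dimension $(n-1,n-1)$, and the boundaries among them form $\mathcal{Z}\mathfrak{C}_{n-1,J}\cap\mathcal{B}$.

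For the first equivalence I would argue both implications directly. If $\mathcal{Z}\mathfrak{C}_{n-1,J}\cap\mathcal{B}=\{0\}$, then no non-trivial structure cycle is homologous to zero, so by the contrapositive of Sullivan's Theorem I.7(ii) a closed transverse form must exist, i.e. $\mathcal{K}b^t_J\neq\varnothing$. Conversely, if $\omega$ is a $\de$-closed transverse $(2n-2)$-form representing a class of $\mathcal{K}b^t_J$ and $T=\de S\in\mathcal{Z}\mathfrak{C}_{n-1,J}\cap\mathcal{B}$, then the pairing gives $\langle\omega,T\rangle=\langle\omega,\de S\rangle=\pm\langle\de\omega,S\rangle=0$; but $T$ is a positive structure current and $\omega$ is transverse, so $\langle\omega,T\rangle>0$ unless $T=0$. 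Hence $T=0$ and $\mathcal{Z}\mathfrak{C}_{n-1,J}\cap\mathcal{B}=\{0\}$. This pairing step is the only genuinely new computation, and it is routine.

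For the second statement I would first extract the existence of non-trivial structure cycles from the hypothesis $0\notin\mathcal{K}b^t_J$: were there no non-trivial structure cycle, Sullivan's Theorem I.7(i) would force closed transverse forms to exist, and Theorem I.7(iii) would then produce one cohomologous to zero, placing $0$ in $\mathcal{K}b^t_J$, a contradiction. Once non-trivial structure cycles and transverse closed forms are both available, Theorem I.7(iv) applies: the image cone $H\mathfrak{C}_{n-1,J}\subseteq H_{2n-2}^{dR}(X;\R)$ is compact and, by (iv)(b), the interior of its dual cone $\breve{H}\mathfrak{C}_{n-1,J}\subseteq H^{2n-2}_{dR}(X;\R)$ consists precisely of the classes of $\de$-closed forms transverse to $C_{n-1,J}$, that is, of $\mathcal{K}b^t_J$.

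The main obstacle, and the reason the hypothesis $0\notin\mathcal{K}b^t_J$ appears here but not in the symplectic analogue, is that for $p=n-1\geq 2$ the cone structure $C_{n-1,J}$ need not be ample, so the automatic existence of non-trivial structure cycles guaranteed for $C_{1,J}$ (via \cite[p.~249]{sullivan}) is no longer available. The delicate point is therefore to justify that $0\notin\mathcal{K}b^t_J$ is exactly the substitute that forces structure cycles to exist, and to reconcile the two boundary regimes (transverse closed forms present versus absent) so that, in the degenerate case where $\mathcal{K}b^t_J=\varnothing$, the interior of $\breve{H}\mathfrak{C}_{n-1,J}$ is seen to be empty as well; the remainder is bookkeeping with the four parts of Sullivan's statement together with the positivity of structure currents.
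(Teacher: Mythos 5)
Your proposal matches the paper's proof essentially verbatim: the same pairing computation $0<\left(\eta,\,\omega\right)=\left(\de\xi,\,\omega\right)=\left(\xi,\,\de\omega\right)=0$ handles the implication $\mathcal{K}b^t_J\neq\varnothing\Rightarrow\mathcal{Z}\mathfrak{C}_{n-1,J}\cap\mathcal{B}=\{0\}$, Sullivan's Theorem I.7(ii) gives the converse, and the second statement is obtained exactly as you describe, using the contrapositive of I.7(iii) to extract a non-trivial structure cycle from $0\notin\mathcal{K}b^t_J$ and then applying I.7(iv)(b). The ``delicate point'' you flag about the degenerate regime $\mathcal{K}b^t_J=\varnothing$ is left equally implicit in the paper's own proof (in the intended application, Theorem \ref{thm:kbc-kbt}, the hypothesis $\mathcal{K}b^c_J\neq\varnothing$ forces $\mathcal{K}b^t_J\neq\varnothing$, so both structure cycles and transverse closed forms are available when I.7(iv) is invoked), so your account is, if anything, slightly more candid on this point than the original.
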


\begin{proof}
 Note that if $\omega\in\mathcal{K}b^t_J\neq\varnothing$, and if $\eta:=:\de\xi$ is a non-trivial $\de$-closed positive current of bi-dimension $(n-1,n-1)$ being a boundary, then
 $$ 0 \;<\; \left(\eta,\; \pi_{\wedge^{n-1,n-1}X}\omega\right) \;=\; \left(\eta,\; \omega\right) \;=\; \left(\de\xi,\; \omega\right) \;=\; \left(\xi,\; \de\omega\right) \;=\; 0 $$
(where $\pi_{\wedge^{n-1,n-1}X}\colon\wedge^\bullet X\to \wedge^{n-1,n-1}X$ is the natural projection onto $\wedge^{n-1,n-1}X$) yields an absurd.

To prove the converse, suppose that no non-trivial $\de$-closed positive currents of bi-dimension $(n-1,n-1)$ is a boundary; then, by \cite[Theorem I.7(ii)]{sullivan}, there exists a $\de$-closed form that is transverse to $C_{n-1,J}$, that is, $\mathcal{K}b^t_J$ is non-empty.

The last statement follows from \cite[Theorem I.7(iv)]{sullivan}: indeed, by the assumption $0\not\in\mathcal{K}b^t_J$, no $\de$-closed transverse form is cohomologous to zero, therefore, by \cite[Theorem I.7(iii)]{sullivan}, there exists a non-trivial structure cycle.
\end{proof}

We provide a similar characterization for $\mathcal{K}b^c_J$, \cite[Theorem 2.7]{angella-tomassini-2}.

\begin{thm}
\label{thm:characterization-kbc}
 Let $X$ be a compact $2n$-dimensional manifold endowed with an almost-complex structure $J$.
 Suppose that $\mathcal{K}b^c_J\neq\varnothing$ and that $0\not\in\mathcal{K}b^c_J$. Then  $\mathcal{K}b^c_J\subseteq H^{(n-1,n-1)}_J(X;\R)$ is the interior of the dual cone $\breve{H}\mathfrak{C}_{n-1,J}\subseteq H_J^{(n-1,n-1)}(X;\R)$ of $H\mathfrak{C}_{n-1,J}\subseteq H^J_{(n-1,n-1)}(X;\R)$.
\end{thm}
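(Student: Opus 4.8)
The plan is to mimic exactly the proof of Theorem \ref{thm:characterization-kbt}, replacing the full complex $\left(\wedge^\bullet X,\, \de\right)$ by its $J$-invariant refinement, and invoking D.~P. Sullivan's theorem \cite[Theorem I.7]{sullivan} applied to the cone structure $C_{n-1,J}$ of complex $(n-1)$-directions, but now reading off the \emph{compatible} cone $\mathcal{K}b^c_J$ rather than the tamed cone $\mathcal{K}b^t_J$. The key point distinguishing the two situations is that a class in $\mathcal{K}b^c_J$ is represented by a \emph{$J$-invariant} $(n-1,n-1)$-form (by M.~L. Michelsohn's trick \cite[pp. 279-280]{michelsohn}, such a form is an $(n-1)$-st power of a semi-K\"ahler $(1,1)$-form), so the duality pairing must be taken between $H^{(n-1,n-1)}_J(X;\R)$ and its homological counterpart $H^J_{(n-1,n-1)}(X;\R)$, where the latter is the space spanned by the structure cycles in $\mathfrak{C}_{n-1,J}$ (which, by the lemma \cite[Proposition I.1.3]{silva}, consist of positive currents of bi-dimension $(n-1,n-1)$, hence are automatically $J$-invariant and live in $\correnti_{n-1,n-1}X$).

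First I would observe that, by definition, a $\de$-closed $(n-1,n-1)$-form transverse to the cone structure $C_{n-1,J}$ is precisely a representative of a class in $\mathcal{K}b^c_J$; indeed transversality means positivity on the complex $(n-1)$-subspaces of each $T_xX\otimes_\R\C$, and for a $J$-invariant form this is exactly the defining condition of $\mathcal{K}b^c_J$. Conversely, by the above Michelsohn lemma, every class in $\mathcal{K}b^c_J$ has such a representative. Next I would note that the structure cycles of $C_{n-1,J}$ are the complex cycles of $\mathcal{Z}\mathfrak{C}_{n-1,J}$, which are positive currents of bi-dimension $(n-1,n-1)$, hence define classes in the homology subspace $H^J_{(n-1,n-1)}(X;\R)$; their images under the natural map to de Rham homology form the compact cone $H\mathfrak{C}_{n-1,J}$. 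Pairing a structure cycle with a transverse closed $(n-1,n-1)$-form then lands exactly in the duality between $H^{(n-1,n-1)}_J(X;\R)$ and $H^J_{(n-1,n-1)}(X;\R)$, since all objects involved are of pure bi-dimension/bi-degree $(n-1,n-1)$.

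With these identifications in place, I would apply \cite[Theorem I.7]{sullivan} with $p=n-1$ and $Y=X$. The hypotheses $\mathcal{K}b^c_J\neq\varnothing$ and $0\not\in\mathcal{K}b^c_J$ give, respectively, the existence of a transverse closed form and (via part \emph{(iii)}) the existence of non-trivial structure cycles: indeed, if no non-trivial structure cycle existed, some transverse closed form would be cohomologous to zero, contradicting $0\not\in\mathcal{K}b^c_J$. Hence we are in case \emph{(iv)} of \cite[Theorem I.7]{sullivan}, and part \emph{(iv)(b)} asserts that the interior of the dual cone of $H\mathfrak{C}_{n-1,J}$ consists precisely of the classes of closed forms transverse to $C_{n-1,J}$. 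Restricting the whole pairing to the $J$-invariant subspaces, this interior is $\mathcal{K}b^c_J$ and the dual is taken inside $H^{(n-1,n-1)}_J(X;\R)$, which is exactly the claim.

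The main obstacle I anticipate is bookkeeping the restriction of Sullivan's general duality (phrased for the full de Rham cohomology and homology) to the $J$-invariant subspaces $H^{(n-1,n-1)}_J(X;\R)$ and $H^J_{(n-1,n-1)}(X;\R)$, and checking that the Hahn--Banach separation underlying \cite[Theorem I.7]{sullivan} still yields the \emph{interior} of the dual cone when one works within these pure-type subspaces rather than in the ambient spaces. This is the analogue, in the semi-K\"ahler setting, of the passage in T.-J. Li and W. Zhang's \cite[Theorem 3.4]{li-zhang} from the full taming cone $\mathcal{K}^t_J$ to the compatible cone $\mathcal{K}^c_J$; the resolution is that the structure cycles, being positive currents, are automatically $J$-invariant, so the pairing between $\mathcal{K}b^c_J$ and $H\mathfrak{C}_{n-1,J}$ never leaves the $(n-1,n-1)$-pure-type subspaces, and the separation argument applies verbatim inside $H^{(n-1,n-1)}_J(X;\R)$.
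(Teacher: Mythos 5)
Your outline has the right ingredients (Michelsohn's trick, pure bidimension of structure currents, the analogy with \cite[Theorem 3.4]{li-zhang}), and the easy inclusion $\mathcal{K}b^c_J\subseteq\interno\breve{H}\mathfrak{C}_{n-1,J}$ goes through as you say. But the central step --- invoking \cite[Theorem I.7(iv)(b)]{sullivan} and then ``restricting'' to the pure-type subspaces --- contains a genuine gap, and it is exactly the gap the paper's proof is built to fill. Sullivan's duality is formulated for the full pairing $H^{2n-2}_{dR}(X;\R)\times H_{2n-2}^{dR}(X;\R)$: its transverse closed forms are arbitrary closed $(2n-2)$-forms, so what I.7(iv)(b) delivers is precisely Theorem \ref{thm:characterization-kbt}, i.e.\ the tamed cone $\mathcal{K}b^t_J$. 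Given $e\in H^{(n-1,n-1)}_J(X;\R)$ with $\left(e,\, H\mathfrak{C}_{n-1,J}\right)>0$, Sullivan produces a transverse closed representative of $e$ with no control on its bidegree; projecting onto the $(n-1,n-1)$-component destroys $\de$-closedness, so no semi-K\"ahler representative is obtained. The fact that structure cycles are positive currents of pure bidimension $(n-1,n-1)$ makes the pairing respect the subspaces, but it says nothing about the hard inclusion: ``the separation argument applies verbatim inside $H^{(n-1,n-1)}_J(X;\R)$'' is the assertion requiring proof, not its resolution. (A smaller slip: your appeal to I.7(iii) yields an \emph{exact} transverse $(2n-2)$-form, i.e.\ $0\in\mathcal{K}b^t_J$, which contradicts $0\notin\mathcal{K}b^t_J$ rather than the stated hypothesis $0\notin\mathcal{K}b^c_J$.)

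What the paper actually does is bypass Sullivan's theorem for the hard inclusion and rerun the Hahn--Banach separation directly in $\correnti_{n-1,n-1}X\cap\correnti_{2n-2}X$. The essential input missing from your proposal is T.-J. Li and W. Zhang's isomorphism $\overline{\sigma}^{n-1,n-1}\colon H^{(n-1,n-1)}_J(X;\R) \stackrel{\simeq}{\to} \duale{\left(\frac{\overline{\pi_{\correnti_{n-1,n-1}X}\mathcal{Z}}}{\overline{\pi_{\correnti_{n-1,n-1}X}\mathcal{B}}}\right)}$, \cite[Proposition 2.4]{li-zhang}: since the $(n-1,n-1)$-projection of a closed (respectively exact) current is in general neither closed nor exact, it is not automatic that a class $e$ defines a continuous functional on $\overline{\pi_{\correnti_{n-1,n-1}X}\mathcal{Z}}$ vanishing on $\overline{\pi_{\correnti_{n-1,n-1}X}\mathcal{B}}$ --- this is exactly what $\overline{\sigma}^{n-1,n-1}$ provides, and it is where ``verbatim'' fails. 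With it in hand, one sets $L:=\ker\overline{\sigma}^{n-1,n-1}(e)$, a closed hyperplane in $\overline{\pi_{\correnti_{n-1,n-1}X}\mathcal{Z}}$ containing $\overline{\pi_{\correnti_{n-1,n-1}X}\mathcal{B}}$ and, by $\left(e,\,H\mathfrak{C}_{n-1,J}\right)>0$, disjoint from $\mathfrak{C}_{n-1,J}\setminus\{0\}$; one fixes a Hermitian $(1,1)$-form $\varphi$ and the compact convex non-empty set $K:=\left\{T\in\mathfrak{C}_{n-1,J}\st T\left(\varphi^{n-1}\right)=1\right\}$; and one separates $L$ from $K$ by Hahn--Banach. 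The separating functional is then a real $(n-1,n-1)$-form, $\de$-closed because it annihilates $\overline{\pi_{\correnti_{n-1,n-1}X}\mathcal{B}}$, and positive on the complex $(n-1)$-subspaces, hence by Michelsohn's trick the $(n-1)$-st power of a semi-K\"ahler form representing (a positive multiple of) $e$. Your proposal names the right model but does not carry out this step, and without $\overline{\sigma}^{n-1,n-1}$ the passage to pure type is not bookkeeping.
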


\begin{proof}
 By the hypothesis $0\not\in\mathcal{K}^c_J$, we have that $\left(\mathcal{K}b^c_J,H\mathfrak{C}_{n-1,J}\right)>0$, and therefore the inclusion  $\mathcal{K}b^c_J\subseteq\interno\breve{H}\mathfrak{C}_{n-1,J}$ holds.

 To prove the other inclusion, let $e\in H^{(n-1,n-1)}_J(X;\R)$ be an element in the interior of the dual cone in $H_J^{(n-1,n-1)}$ of $H\mathfrak{C}_{n-1,J}$, i.e., $e$ is such that $\left(e,H\mathfrak{C}_{n-1,J}\right)>0$.
 Consider the isomorphism
 $$ \overline{\sigma}^{n-1,n-1}\colon H^{(n-1,n-1)}_J(X;\R) \stackrel{\simeq}{\to} \duale{\left(\frac{\overline{\pi_{\correnti_{n-1,n-1}X}\mathcal{Z}}}{\overline{\pi_{\correnti_{n-1,n-1}X}\mathcal{B}}} \right)} \;, $$
 \cite[Proposition 2.4]{li-zhang} (where $\pi_{\correnti_{n-1,n-1}X}\colon \correnti_\bullet X\to\correnti_{n-1,n-1}X$ denotes the natural projection onto $\correnti_{n-1,n-1}X$): hence, $\overline{\sigma}^{n-1,n-1}(e)$ gives rise to a functional on $\frac{\overline{\pi_{\correnti_{n-1,n-1}X}\mathcal{Z}}}{\overline{\pi_{\correnti_{n-1,n-1}X}\mathcal{B}}}$, namely, to a functional on $\overline{\pi_{\correnti_{n-1,n-1}X}\mathcal{Z}}$ vanishing on $\overline{\pi_{\correnti_{n-1,n-1}X}\mathcal{B}}$; such a functional, in turn, gives rise to a hyperplane $L$ in $\overline{\pi_{\correnti_{n-1,n-1}X}\mathcal{Z}}$ containing $\overline{\pi_{\correnti_{n-1,n-1}X}\mathcal{B}}$. Being a kernel hyperplane in a closed set, $L$ is closed in $\correnti_{n-1,n-1}X\cap\correnti_{2n-2}X$; furthermore, $L$ is disjoint from $\mathfrak{C}_{n-1,J}\setminus\{0\}$, by the choice made for $e$.
 Pick a $J$-Hermitian metric and let $\varphi$ be its associated $(1,1)$-form; consider
 $$ K \;:=\; \left\{ T\in \mathfrak{C}_{n-1,J} \st T\left(\varphi^{n-1}\right)=1 \right\} \;, $$
 which is a compact set.
 Now, in the space $\correnti_{n-1,n-1}X\cap\correnti_{2n-2}X$, consider the closed set $L$, and the compact convex non-empty set $K$, which have empty intersection. By the Hahn and Banach separation theorem, there exists a hyperplane containing $L$, and then containing also $\overline{\pi_{\correnti_{n-1,n-1}X}\mathcal{B}}$, and disjoint from $K$. The functional on $\correnti_{n-1,n-1}X\cap\correnti_{2n-2}X$ associated to this hyperplane is a real $(n-1,n-1)$-form being $\de$-closed, since it vanishes on $\overline{\pi_{\correnti_{n-1,n-1}X}\mathcal{B}}$, and positive on the complex $(n-1)$-subspaces of $T_xX\otimes_\R\C$, for every $x\in X$, that is, a $J$-compatible symplectic form.
\end{proof}

The same argument as in \cite[Proposition 12, Theorem 14]{harvey-lawson} yields the following result, \cite[Theorem 2.8]{angella-tomassini-2}, which generalizes \cite[Proposition 12, Theorem 14]{harvey-lawson}, \cite[page 671]{li-zhang}, see also \cite[Theorem 4.7]{michelsohn}.

\begin{thm}[{\cite[Proposition 12, Theorem 14]{harvey-lawson}, \cite[page 671]{li-zhang}, \cite[Theorem 2.8]{angella-tomassini-2}}]
\label{thm:caratterizzazione-intrinseca}
 Let $X$ be a compact $2n$-dimensional manifold endowed with an almost-complex structure $J$, and denote by $\pi_{\correnti_{k,k}X} \colon \correnti_\bullet X \to \correnti_{k,k}X$ the natural projection onto $\correnti_{k,k}X$, for every $k\in\N$.
\begin{enumerate}[(\itshape i\upshape)]
 \item If $J$ is integrable, then there exists a K\"ahler metric if and only if $\mathfrak{C}_{1,J}\cap\pi_{\correnti_{1,1}X}\mathcal{B}=\{0\}$.
 \item There exists an almost-K\"ahler metric if and only if $\mathfrak{C}_{1,J}\cap \overline{\pi_{\correnti_{1,1}X}\mathcal{B}}=\{0\}$.
 \item There exists a semi-K\"ahler metric if and only if $\mathfrak{C}_{n-1,J}\cap\overline{\pi_{\correnti_{n-1,n-1}X}\mathcal{B}}=\{0\}$.
\end{enumerate}
\end{thm}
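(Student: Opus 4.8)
<br>

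The plan is to deduce all three statements from the general duality framework developed in the previous theorems, treating them uniformly as instances of the Hahn--Banach separation principle applied to the cone structures $C_{1,J}$ (for parts \emph{(i)} and \emph{(ii)}) and $C_{n-1,J}$ (for part \emph{(iii)}). The key observation is that a $J$-Hermitian metric with associated $(1,1)$-form $\omega$ being K\"ahler, almost-K\"ahler, or semi-K\"ahler translates exactly into $\omega$ (respectively $\omega^{n-1}$) being a \emph{transverse} form to the relevant cone structure, so existence of the metric becomes existence of a closed transverse form, which is precisely the dichotomy governed by \cite[Theorem I.7]{sullivan}.

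First I would treat part \emph{(iii)}, the semi-K\"ahler case, since it is the one most directly parallel to Theorem \ref{thm:characterization-kbc} and Theorem \ref{thm:characterization-kbt}. By M.~L. Michelsohn's trick \cite[pp.~279--280]{michelsohn}, recalled above, a real $(n-1,n-1)$-form positive on complex $(n-1)$-subspaces is of the form $\omega^{n-1}$ for a $J$-taming $(1,1)$-form $\omega$, and such a form is $\de$-closed if and only if $\omega$ is semi-K\"ahler. Hence the existence of a semi-K\"ahler metric is equivalent to the existence of a $\de$-closed form transverse to the cone structure $C_{n-1,J}$. By \cite[Theorem I.7(i)]{sullivan}, either such a closed transverse form exists, or there is a non-trivial structure cycle homologous to zero; and by part \emph{(ii)} of the same theorem, if no closed transverse form exists then some non-trivial structure cycle is a boundary. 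Combining these with the identification (via the preceding lemma) of the structure currents in $\mathfrak{C}_{n-1,J}$ with positive currents of bi-dimension $(n-1,n-1)$, I obtain that a semi-K\"ahler metric exists if and only if no non-trivial $\de$-closed positive $(n-1,n-1)$-current is a boundary, i.e. $\mathfrak{C}_{n-1,J}\cap\overline{\pi_{\correnti_{n-1,n-1}X}\mathcal{B}}=\{0\}$, where the closure accounts for the fact that the boundary currents need not form a closed set. This is the assertion of \emph{(iii)}.

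Parts \emph{(i)} and \emph{(ii)} follow the same template with $C_{1,J}$ in place of $C_{n-1,J}$: a transverse $\de$-closed $2$-form is exactly a $J$-taming symplectic form, whose $(1,1)$-component is an almost-K\"ahler form in the compatible case. In the integrable case, a taming symplectic form yields a K\"ahler metric precisely when the tamed structure is actually compatible, and here the sharper closedness of $\pi_{\correnti_{1,1}X}\mathcal{B}$ (rather than its closure) enters, reflecting the rigidity afforded by integrability; this is the content of F.~R. Harvey and H.~B. Lawson's original argument \cite[Proposition 12, Theorem 14]{harvey-lawson}, which I would invoke essentially verbatim, noting only that the almost-complex (non-integrable) version in \emph{(ii)} requires passing to the closure $\overline{\pi_{\correnti_{1,1}X}\mathcal{B}}$.

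The main obstacle I anticipate is handling the two distinct flavours of the condition --- with closure versus without closure --- correctly across the three cases. The appearance of $\overline{\pi_{\correnti_{k,k}X}\mathcal{B}}$ rather than $\pi_{\correnti_{k,k}X}\mathcal{B}$ in parts \emph{(ii)} and \emph{(iii)} is not a cosmetic matter: it is exactly the distinction between the K\"ahler and the almost-K\"ahler/semi-K\"ahler situations, and it is what prevents a naive application of a single separation argument. The careful point is that, to apply the Hahn--Banach separation theorem one needs a compact set separated from a \emph{closed} set, and the boundary currents are genuinely closed only under the integrability assumption exploited in \cite[Theorem 14]{harvey-lawson}; otherwise one must separate from the closure, which is why the intrinsic characterization weakens accordingly. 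Making this dichotomy precise, and verifying that the projection $\pi_{\correnti_{k,k}X}$ interacts correctly with the positivity of the structure currents, is where the real work lies; the rest is a direct transcription of Theorem \ref{thm:characterization-kbt}, Theorem \ref{thm:characterization-kbc}, and \cite[Theorem I.7]{sullivan}.
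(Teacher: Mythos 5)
Your plan for part \emph{(iii)} has a genuine gap: the equivalence you assert between ``there exists a semi-K\"ahler metric'' and ``there exists a $\de$-closed form transverse to $C_{n-1,J}$'' is false in the direction you need. A closed transverse form is a $\de$-closed $(2n-2)$-form whose $(n-1,n-1)$-\emph{component} is positive on the complex $(n-1)$-subspaces, but the form itself need not be of pure type, so M.~L. Michelsohn's trick does not apply to it: Michelsohn's lemma produces a semi-K\"ahler $\omega$ only from a $\de$-closed form that is itself a positive $(n-1,n-1)$-form. In the paper's own terminology, closed transverse forms detect the cone $\mathcal{K}b^t_J$ (strongly-Gauduchon-type data, in the integrable case), whereas \emph{(iii)} concerns $\mathcal{K}b^c_J$, and these genuinely differ. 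Correspondingly, Sullivan's dichotomy \cite[Theorem I.7]{sullivan} applied to $C_{n-1,J}$ characterizes the condition $\mathcal{Z}\mathfrak{C}_{n-1,J}\cap\mathcal{B}=\{0\}$ --- this is exactly Theorem \ref{thm:characterization-kbt} --- which is strictly weaker than the condition $\mathfrak{C}_{n-1,J}\cap\overline{\pi_{\correnti_{n-1,n-1}X}\mathcal{B}}=\{0\}$ appearing in \emph{(iii)}: an element of $\overline{\pi_{\correnti_{n-1,n-1}X}\mathcal{B}}$ is not a cycle at all, so your gloss that ``the closure accounts for the boundary currents not being closed'' does not bridge the two statements. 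Your route would therefore prove the wrong theorem.

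The actual argument (this is what the paper does, transcribing F.~R. Harvey and H.~B. Lawson's proof of the K\"ahler case) bypasses Sullivan's dichotomy entirely. For the easy direction, if $\omega$ is semi-K\"ahler and $0\neq\eta=\lim_k\pi_{\correnti_{n-1,n-1}X}(\de\alpha_k)\in\mathfrak{C}_{n-1,J}$, one pairs directly: since $\omega^{n-1}$ is of pure type, $\left(\pi_{\correnti_{n-1,n-1}X}(\de\alpha_k),\,\omega^{n-1}\right)=\left(\de\alpha_k,\,\omega^{n-1}\right)=\left(\alpha_k,\,\de\omega^{n-1}\right)=0$, contradicting positivity of $\left(\eta,\,\omega^{n-1}\right)$ --- note that dropping the projection uses purity of $\omega^{n-1}$, a point absent from your write-up. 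For the converse, one fixes an auxiliary Hermitian metric with $(1,1)$-form $\varphi$, takes the compact convex base $K:=\left\{T\in\mathfrak{C}_{n-1,J}\st T\left(\varphi^{n-1}\right)=1\right\}$ of the structure cone, and applies Hahn--Banach \emph{inside the space of real $(n-1,n-1)$-currents} to separate $K$ from the closed subspace $\overline{\pi_{\correnti_{n-1,n-1}X}\mathcal{B}}$; the separating functional is then a real $(n-1,n-1)$-form, $\de$-closed because it annihilates every $\pi_{\correnti_{n-1,n-1}X}(\de\alpha)$, and positive on complex $(n-1)$-subspaces, i.e.\ semi-K\"ahler. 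Working in the pure-type current space is what forces the output form to have pure bidegree --- this is the step your Sullivan-based framing cannot deliver. You did correctly identify why \emph{(i)} drops the closure (integrability makes $\pi_{\correnti_{1,1}X}\mathcal{B}$ closed, \cite[Lemma 6]{harvey-lawson}, so \emph{(i)} follows from \emph{(ii)}), but the same purity issue infects your sketch of \emph{(ii)}: a taming symplectic form has positive $(1,1)$-component, yet that component need not be closed, so ``transverse closed $2$-form'' again gives only the tamed condition, not an almost-K\"ahler metric.
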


\begin{proof}
 Note that {\itshape(i)}, namely, \cite[Proposition 12, Theorem 14]{harvey-lawson}, is a consequence of {\itshape(ii)}: indeed, if $J$ is integrable, then $J$ is closed, \cite[Lemma 6]{harvey-lawson}, that is, $\pi_{\correnti_{1,1}X}\mathcal{B}$ is a closed set.

 The proof of {\itshape(ii)}, namely, \cite[page 671]{li-zhang}, being similar, we prove {\itshape(iii)}, following closely the proof of {\itshape(i)} in \cite[Proposition 12, Theorem 14]{harvey-lawson}.

 Firstly, note that if $\omega$ is a semi-K\"ahler form and
 $$ 0 \;\neq\; \eta \;:=:\; \lim_{k\to+\infty} \pi_{\correnti_{n-1,n-1}X}\left(\de\alpha_k\right) \;\in\; \mathfrak{C}_{n-1,J}\cap\overline{\pi_{\correnti_{n-1,n-1}X}\mathcal{B}} \;\neq\; \{0\} \;, $$
 where $\left\{\alpha_k\right\}_{k\in\N}\subset \correnti_{2n-1}X$, then
 \begin{eqnarray*}
  0 &<& \left(\eta,\; \omega^{n-1}\right) \;=\; \left(\lim_{k\to+\infty}\pi_{\correnti_{n-1,n-1}X}\left(\de\alpha_k\right),\; \omega^{n-1}\right) \;=\; \lim_{k\to+\infty}\left(\de\alpha_k,\; \omega^{n-1}\right) \;=\; \lim_{k\to+\infty}\left(\alpha_k,\; \de\omega^{n-1}\right) \;=\; 0 \;,
 \end{eqnarray*}
 yielding an absurd.

 For the converse, fix a $J$-Hermitian metric and let $\varphi$ be its associated $(1,1)$-form; the set
 $$ K \;:=\; \left\{T\in\mathfrak{C}_{n-1,J}\st T\left(\varphi^{n-1}\right)=1\right\} $$
 is a compact convex non-empty set in $\correnti_{n-1,n-1}X\cap\correnti_{2n-2}X$. By the Hahn and Banach separation theorem, there exists a hyperplane in $\correnti_{n-1,n-1}X\cap\correnti_{2n-2}X$ containing the closed subspace $\overline{\pi_{\correnti_{n-1,n-1}X}\mathcal{B}}$ and disjoint from $K$; hence, the real $(n-1,n-1)$-form associated to this hyperplane is a real $\de$-closed $(n-1,n-1)$-form and is positive on the complex $(n-1)$-subspaces, namely, it is a semi-K\"ahler form.
\end{proof}

Now, we can prove the semi-K\"ahler counterpart, \cite[Theorem 2.9]{angella-tomassini-2}, of T.-J. Li and W. Zhang's \cite[Proposition 3.1, Theorem 1.1]{li-zhang}.

\begin{thm}
\label{thm:kbc-kbt}
 Let $X$ be a compact $2n$-dimensional manifold endowed with an almost-complex structure $J$.
 Assume that $\mathcal{K}b^c_J\neq\varnothing$ (that is, there exists a semi-K\"ahler structure on $X$) and that $0\not\in\mathcal{K}b^t_J$. Then
 \begin{equation*}
  \mathcal{K}b^t_J\cap H^{(n-1,n-1)}_J(X;\R) \;=\; \mathcal{K}b^c_J
 \end{equation*}
 and
 \begin{equation*}
  \mathcal{K}b^c_J+H^{(n,n-2),(n-2,n)}_J(X;\R) \;\subseteq\; \mathcal{K}b^t_J \;. 
 \end{equation*}
 Moreover, if $J$ is \Cf\ at the \kth{(2n-2)} stage, then
 \begin{equation*}
 \mathcal{K}b^c_J+H^{(n,n-2),(n-2,n)}_J(X;\R) \;=\; \mathcal{K}b^t_J \;.
 \end{equation*}
\end{thm}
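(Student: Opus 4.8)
The plan is to deduce all three assertions directly from the two dual-cone characterizations already established, Theorem \ref{thm:characterization-kbt} and Theorem \ref{thm:characterization-kbc}, together with one type-theoretic observation about structure currents. First I would check that the hypotheses of both characterizations are in force. Since $\mathcal{K}b^c_J$ is a sub-cone of $\mathcal{K}b^t_J$, the assumption $0\not\in\mathcal{K}b^t_J$ forces $0\not\in\mathcal{K}b^c_J$, and the assumption $\mathcal{K}b^c_J\neq\varnothing$ gives $\mathcal{K}b^t_J\neq\varnothing$. Hence Theorem \ref{thm:characterization-kbt} identifies $\mathcal{K}b^t_J$ with the interior of $\breve{H}\mathfrak{C}_{n-1,J}$ inside $H^{2n-2}_{dR}(X;\R)$, while Theorem \ref{thm:characterization-kbc} identifies $\mathcal{K}b^c_J$ with the interior of $\breve{H}\mathfrak{C}_{n-1,J}$ inside $H^{(n-1,n-1)}_J(X;\R)$. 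In both cases membership is detected by strict positivity of the pairing $\left(\sspace,\, H\mathfrak{C}_{n-1,J}\right)$, the only difference being the ambient cohomology space in which the class is required to lie.

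The key type observation is that every structure current $T\in\mathfrak{C}_{n-1,J}$ is a positive current of bidimension $(n-1,n-1)$, by the Lemma recalled just before Theorem \ref{thm:characterization-kbt}; since $\correnti_{n-1,n-1}X$ is the dual of $\wedge^{n-1,n-1}X$, such a $T$ pairs to zero with every form of pure type $(n,n-2)$ or $(n-2,n)$. Consequently the pairing $\left(\sspace,\, H\mathfrak{C}_{n-1,J}\right)$ vanishes identically on $H^{(n,n-2),(n-2,n)}_J(X;\R)$.

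With these facts the three statements are short. For the first equality, the inclusion $\mathcal{K}b^c_J\subseteq\mathcal{K}b^t_J\cap H^{(n-1,n-1)}_J(X;\R)$ is immediate from the definitions; conversely, a class $[\Omega]$ in the right-hand side lies in $H^{(n-1,n-1)}_J(X;\R)$ and satisfies $\left([\Omega],\, H\mathfrak{C}_{n-1,J}\right)>0$ by Theorem \ref{thm:characterization-kbt}, hence lies in $\mathcal{K}b^c_J$ by Theorem \ref{thm:characterization-kbc}. For the inclusion in the second statement, given $[\Omega]\in\mathcal{K}b^c_J$ and $[\beta]\in H^{(n,n-2),(n-2,n)}_J(X;\R)$, the type observation gives $\left([\Omega]+[\beta],\, H\mathfrak{C}_{n-1,J}\right)=\left([\Omega],\, H\mathfrak{C}_{n-1,J}\right)>0$, so $[\Omega]+[\beta]\in\mathcal{K}b^t_J$ again by Theorem \ref{thm:characterization-kbt}.

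Finally, for the equality under the \Cf-at-the-$(2n-2)$-stage hypothesis I would use that the only bidegrees $(p,q)$ with $p+q=2n-2$ and $0\leq p\leq q\leq n$ are $(n-1,n-1)$ and $(n-2,n)$, so fullness reads $H^{2n-2}_{dR}(X;\R)=H^{(n-1,n-1)}_J(X;\R)+H^{(n,n-2),(n-2,n)}_J(X;\R)$. Writing a class $[\Omega]\in\mathcal{K}b^t_J$ as $[\Omega_1]+[\Omega_2]$ accordingly and once more invoking the type observation, I obtain $\left([\Omega_1],\, H\mathfrak{C}_{n-1,J}\right)=\left([\Omega],\, H\mathfrak{C}_{n-1,J}\right)>0$ with $[\Omega_1]\in H^{(n-1,n-1)}_J(X;\R)$, whence $[\Omega_1]\in\mathcal{K}b^c_J$ by Theorem \ref{thm:characterization-kbc}; this yields the reverse inclusion and completes the proof. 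The only genuinely delicate point is to verify that the pairing $\left(\sspace,\, H\mathfrak{C}_{n-1,J}\right)$ appearing in the two characterizations is literally the same bilinear pairing restricted to different subspaces — which it is, because $H\mathfrak{C}_{n-1,J}\subseteq H^J_{(n-1,n-1)}(X;\R)$ and the de Rham duality restricts compatibly — so that positivity against structure cycles may be transported freely between the ambient space $H^{2n-2}_{dR}(X;\R)$ and the subspace $H^{(n-1,n-1)}_J(X;\R)$.
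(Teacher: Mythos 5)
Your proof is correct, and its overall architecture is the same as the paper's: both rest entirely on the dual-cone characterizations of Theorem \ref{thm:characterization-kbt} and Theorem \ref{thm:characterization-kbc}, and your verification of the hypotheses ($0\notin\mathcal{K}b^t_J$ forces $0\notin\mathcal{K}b^c_J$, non-emptiness passes upward) is exactly what is needed. For the first equality and for the final one under the \Cf ness hypothesis, your argument coincides with the paper's, except that you make explicit two steps the paper leaves implicit: that positivity against $H\mathfrak{C}_{n-1,J}$ is one and the same pairing whether tested inside $H^{2n-2}_{dR}(X;\R)$ or inside the subspace $H^{(n-1,n-1)}_J(X;\R)$, and that in the fullness decomposition $[\Omega]=[\Omega_1]+[\Omega_2]$ the anti-invariant component pairs to zero with structure cycles (the paper simply writes $\interno\breve{H}\mathfrak{C}_{n-1,J}\cap\left(H^{(n-1,n-1)}_J(X;\R)+H^{(n,n-2),(n-2,n)}_J(X;\R)\right)\subseteq \mathcal{K}b^c_J+H^{(n,n-2),(n-2,n)}_J(X;\R)$ without spelling out the vanishing, which is precisely your type observation for bidimension-$(n-1,n-1)$ currents). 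The one local divergence is the middle inclusion $\mathcal{K}b^c_J+H^{(n,n-2),(n-2,n)}_J(X;\R)\subseteq\mathcal{K}b^t_J$: the paper proves it directly at the level of forms, noting that the sum of the \kth{(n-1)} power of a semi-K\"ahler form and a $\de$-closed $J$-anti-invariant $(2n-2)$-form is still $\de$-closed and positive on complex $(n-1)$-subspaces, an argument that needs neither characterization theorem nor the hypothesis $0\notin\mathcal{K}b^t_J$; you instead route it through the pairing with $H\mathfrak{C}_{n-1,J}$, which does invoke those hypotheses — harmless here, since they appear in the statement, but worth noting that this inclusion holds unconditionally, as the paper's form-level argument shows.
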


\begin{proof}
By Theorem \ref{thm:characterization-kbt}, $\mathcal{K}b^t_J \subseteq H^{2n-2}_{dR}(X;\R)$ is the interior of the dual cone $\breve{H}\mathfrak{C}_{n-1,J}\subseteq H^{2n-2}_{dR}(X;\R)$ of $H\mathfrak{C}_{n-1,J} \subseteq H_{2n-2}^{dR}(X;\R)$, and, by Theorem \ref{thm:characterization-kbc}, $\mathcal{K}b^c_J \subseteq H^{(n-1,n-1)}_J(X;\R)$ is the interior of the dual cone $\breve{H}\mathfrak{C}_{n-1,J} \subseteq H^{(n-1,n-1)}_J(X;\R)$ of $H\mathfrak{C}_{n-1,J} \subseteq H_{(n-1,n-1)}^J(X;\R)$; therefore $\mathcal{K}b^t_J\cap H^{(n-1,n-1)}_J(X;\R) = \mathcal{K}b^c_J$.

The inclusion $\mathcal{K}b^c_J+H^{(n,n-2),(n-2,n)}_J(X;\R) \subseteq \mathcal{K}b^t_J$ follows straightforwardly noting that the sum of a semi-K\"ahler form and a $J$-anti-invariant $(2n-2)$-form is still $\de$-closed and positive on the complex $(n-1)$-subspaces.

Finally, if $J$ is \Cf\ at the \kth{(2n-2)} stage, then
\begin{eqnarray*}
 \mathcal{K}b^t_J &=& \interno\breve{H}\mathfrak{C}_{n-1,J} \;=\; \interno\breve{H}\mathfrak{C}_{n-1,J} \cap H^{2n-2}_{dR}(X;\R) \\[5pt]
 &=&\interno\breve{H}\mathfrak{C}_{n-1,J} \cap \left(H^{(n-1,n-1)}_J(X;\R)+H^{(n,n-2),(n-2,n)}_J(X;\R)\right) \\[5pt]
 &\subseteq& \mathcal{K}b^c_J + H^{(n,n-2),(n-2,n)}_J(X;\R) \;,
\end{eqnarray*}
and hence $\mathcal{K}b^c_J+H^{(n,n-2),(n-2,n)}_J(X;\R) = \mathcal{K}b^t_J$.
\end{proof}

\begin{rem}
 We note that, while the de Rham cohomology class of an almost-K\"ahler metric cannot be trivial, the hypothesis $0\not\in\mathcal{K}b^t_J$ in Theorem \ref{thm:kbc-kbt} is not trivial: J. Fu, J. Li, and S.-T. Yau proved in \cite[Corollary 1.3]{fu-li-yau} that, for any $k\geq 2$, the connected sum $\left(\mathbb{S}^3\times\mathbb{S}^3\right)^{\sharp k}$, endowed with the complex structure constructed from the conifold transitions, admits balanced metrics.
\end{rem}

\chapter{Cohomology of manifolds with special structures}\label{chapt:special}

In this chapter, we continue in studying the cohomological properties of (differentiable) manifolds endowed with special structures, other than (almost-)complex structures. More precisely, in Section \ref{sec:sympl}, we recall the results obtained jointly with A. Tomassini in \cite{angella-tomassini-4}, concerning the cohomology of symplectic manifolds; in Section \ref{sec:paracomplex}, we study cohomological decompositions on \para-complex manifolds in the sense of F.~R. Harvey and H.~B. Lawson: this has been the matter of a joint work with F.~A. Rossi, \cite{angella-rossi}; finally, in Section \ref{sec:p-convex}, we consider domains in $\R^n$ endowed with a smooth proper strictly $p$-convex exhaustion function, and, using $\mathrm{L}^2$-techniques, we give another proof of a consequence of J.-P. Sha's theorem \cite[Theorem 1]{sha}, and H. Wu's theorem \cite[Theorem 1]{wu-indiana}, on the vanishing of the higher degree de Rham cohomology groups, which has been obtained in a joint work with S. Calamai,
\cite{angella-calamai}.

\section{Cohomology of symplectic manifolds}\label{sec:sympl}

The K\"ahler manifolds have special cohomological properties from both the complex and the symplectic point of view, the Hodge decomposition theorem providing a decomposition of the complex de Rham cohomology in terms of the Dolbeault cohomology groups, and the Lefschetz decomposition theorem providing a decomposition of the de Rham cohomology in terms of primitive cohomology groups. Then, in order to better understand the geometry of non-K\"ahler manifolds, it may be interesting to investigate both the contribution of the complex structure and the contribution of the symplectic structure.

In this section, we develop the symplectic counterpart of the theory introduced by T.-J. Li and W. Zhang in \cite{li-zhang} to study the cohomology of almost-complex manifolds. The results in this section have been obtained jointly with A. Tomassini in \cite{angella-tomassini-4}.

\subsection{Hodge theory on symplectic manifolds}\label{subsec:symplectic-hodge-theory}

Cohomological properties of symplectic manifolds have been studied starting from the works by J.-L. Koszul, \cite{koszul}, and by J.-L. Brylinski, \cite{brylinski}.
Drawing a parallel between the symplectic and the Riemannian cases, J.-L. Brylinski proposed in \cite{brylinski} a Hodge theory for compact symplectic manifolds $\left(X,\,\omega\right)$, introducing a symplectic Hodge-$\star$-operator $\star_\omega$ and the notion of \emph{$\omega$-symplectically-harmonic form} (i.e., a form being both $\de$-closed and $\de^\Lambda$-closed, where the symplectic co-differential is defined as $\de^\Lambda\lfloor_{\wedge^kX}:=(-1)^{k+1}\,\star_\omega\de\star_\omega$ for every $k\in\N$): in this context, O. Mathieu in \cite{mathieu}, and D. Yan in \cite{yan}, proved that any de Rham cohomology class admits an $\omega$-symplectically-harmonic representative if and only if the Hard Lefschetz Condition is satisfied.
Recently, L.-S. Tseng and S.-T. Yau, in \cite{tseng-yau-1, tseng-yau-2}, see also \cite{tseng-yau-3}, introduced new cohomologies for symplectic manifolds $\left(X,\,\omega\right)$: among them, in particular, they defined and studied
$$ H^\bullet_{\de+\de^\Lambda}(X;\R) \;:=\; \frac{\ker\left(\de+\de^\Lambda\right)}{\imm\de\de^\Lambda} \;,$$
developing a Hodge theory for this cohomology; furthermore, they studied the dual currents of Lagrangian and co-isotropic submanifolds, and they defined a homology theory on co-isotropic chains, which turns out to be naturally dual to a primitive cohomology. In the context of Generalized Geometry, \cite{gualtieri-phd, gualtieri, cavalcanti, cavalcanti-impa}, the cohomology $H^\bullet_{\de+\de^\Lambda}(X;\R)$ can be interpreted as the symplectic counterpart of the Bott-Chern cohomology of a complex manifold, see \cite{tseng-yau-3}. Inspired also by their works, Y. Lin developed in \cite{lin} a new approach to the symplectic Hodge theory, proving in particular that, on any compact symplectic manifold satisfying the Hard Lefschetz Condition, there is a Poincaré duality between the primitive homology on co-isotropic chains and the primitive cohomology.

In this section, we recall some notions and results concerning Hodge theory for compact symplectic manifolds; we refer to \cite{brylinski, mathieu, yan, cavalcanti, tseng-yau-1, tseng-yau-2, lin} for further details. (See \S\ref{sec:symplectic} for basic definitions and results on symplectic manifolds.)

\subsubsection{Symplectic cohomologies}

Let $X$ be a compact $2n$-dimensional manifold endowed with a symplectic structure $\omega$.

\medskip

We recall, see \S\ref{sec:symplectic}, that, $\omega$ being non-degenerate, it induces a natural isomorphism $I\colon TX\to T^*X$, namely, $I(\sspace)(\ssspace)=\omega(\sspace,\ssspace)$, and hence a bi-$\mathcal{C}^\infty(X;\R)$-linear form $\left(\omega^{-1}\right)^k\colon \wedge^k X\otimes \wedge^kX \to \mathcal{C}^\infty(X;\R)$; the \emph{symplectic-$\star$-operator}, is defined, for every $\alpha,\,\beta\in\wedge^kX$, by, \cite[\S2]{brylinski},
$$
\star_\omega\colon \wedge^\bullet X\to \wedge^{2n-\bullet}X \;, \qquad \alpha\wedge\star_\omega \beta \;=\; \left(\omega^{-1}\right)^k\left(\alpha,\beta\right)\,\frac{\omega^n}{n!} \;,
$$
and satisfies $\star_\omega^2=\id_{\wedge^\bullet X}$, \cite[Lemma 2.1.2]{brylinski}.

We recall that the operators
\begin{eqnarray*}
L &:=& \omega\wedge\sspace \colon \wedge^\bullet X\to \wedge^{\bullet+2}X \;, \\[5pt]
\Lambda &:=& -\iota_\Pi = -\star_\omega\,L\,\star_\omega \colon \wedge^\bullet X\to \wedge^{\bullet-2}X\;, \\[5pt]
H &:=& \sum_k \left(n-k\right)\,\pi_{\wedge^kX} \colon \wedge^\bullet X\to \wedge^\bullet X \;,
\end{eqnarray*}
(where $\Pi:=\omega^{-1}\in\wedge^2TX$ is the canonical Poisson bi-vector associated to $\omega$, the interior product with $\xi\in\wedge^2\left(TX\right)$ is denoted by $\iota_{\xi}\colon\wedge^{\bullet}X\to\wedge^{\bullet-2}X$, and, for $k\in\N$, the map $\pi_{\wedge^kX}\colon\wedge^\bullet X\to\wedge^kX$ denotes the natural projection onto $\wedge^kX$) yields an $\mathfrak{sl}(2;\R)$-representation on $\wedge^\bullet X$ having finite $H$-spectrum, and hence one has the \emph{Lefschetz decomposition} on differential forms, \cite[Corollary 2.6]{yan},
$$ \wedge^\bullet X \;=\; \bigoplus_{r\in\N} L^r \, \Prim^{\bullet-2r}X \;, $$
where the space of \emph{primitive forms} is
$$ \Prim^\bullet X \;:=\; \ker\Lambda \;=\; \ker L^{n-\bullet+1}\lfloor_{\wedge^\bullet X} \;. $$

\medskip

Consider now the \emph{symplectic co-differential operator} $\de^\Lambda\colon \wedge^\bullet X\to \wedge^{\bullet-1}X$, defined, for every $k\in \N$, by
$$
 \de^\Lambda\lfloor_{\wedge^kX} \;:=\; (-1)^{k+1}\star_\omega \de \star_\omega \;;
$$
it has been introduced, in general for a Poisson manifold, by J.-L. Koszul, \cite{koszul}, and studied also by J.-L. Brylinski, \cite[\S1.2]{brylinski}.
The basic symplectic identity
$$ \left[\de,\,\Lambda\right]\;=\;\de^\Lambda $$
holds, see, e.g., \cite[Corollary 1.3]{yan}; by the graded-Jacobi identity, it follows that $\left[\de,\, \de^\Lambda\right] = \left[\de,\, \left[\de,\, \Lambda\right] \right] = \left[\de,\, \left[\Lambda,\, \de\right]\right] - \left[\Lambda,\, \left[\de,\, \de\right]\right] = -\left[\de,\, \de^\Lambda\right]$, since $\left[\de,\, \de\right]=0$ and $\left[\Lambda,\, \de\right]=-\left[\de,\, \Lambda\right]$, and hence, \cite[page 265]{koszul}, \cite[Theorem 1.3.1]{brylinski},
$$ \de\de^\Lambda +\de^\Lambda\de \;=\; 0 \;.$$
Hence, interpreting, as in \cite{brylinski}, $\de^\Lambda$ as the symplectic counterpart of the Riemannian co-differential operator $\de^*$ associated to a Riemannian metric $g$ on $X$, then the symplectic counterpart of the Laplacian operator $\Delta:=\de\de^*+\de^*\de$ vanishes.

We recall that, if $\left(J,\,\omega,\,g\right)$ is an almost-K\"ahler structure on $X$, then the symplectic-$\star$-operator $\star_\omega$ and the Hodge-$*$-operator $*_g$ are related by
$$ \star_\omega \;=\; J\,*_g \;,$$
\cite[Theorem 2.4.1]{brylinski}, and hence $\de^\Lambda$ and $\de^c:=J^{-1}\,\de\,J$ are related by
$$ \de^\Lambda \;=\; -\left(\de^c\right)^{*} \;.$$
The previous identity, together with the identity $\de\de^\Lambda+\de^\Lambda\de=0$, suggests that $\de^\Lambda$ should be interpreted as the symplectic counterpart of the operator $\de^c$ in Complex Geometry; this guess can be made more precise using Complex Generalized Geometry, \cite{gualtieri-phd, gualtieri, cavalcanti, cavalcanti-impa}.

\medskip

The symplectic co-differential operator satisfies $\left(\de^\Lambda\right)^2=0$, and hence it gives rise to a differential complex $\left(\wedge^\bullet X, \, \de^\Lambda\right)$. This complex has been introduced, more in general, on a Poisson manifold, with the name of \emph{canonical complex}, by J.-L. Koszul, \cite{koszul}, and studied also by J.-L. Brylinski, \cite[\S1]{brylinski}, and, more recently, by L.-S. Tseng and S.-T. Yau, \cite[\S3.1]{tseng-yau-1}. The homology of the complex $\left(\wedge^\bullet X, \, \de^\Lambda\right)$ is, in J.-L. Koszul's terminology, the \emph{canonical homology} of $X$,
$$ H^\bullet_{\de^\Lambda}(X;\R) \;:=\; \frac{\ker \de^\Lambda}{\imm\de^\Lambda} \;. $$
Note that, \cite[Corollary 2.2.2]{brylinski},
$$ \star_\omega\colon H^\bullet_{dR}(X;\R) \stackrel{\simeq}{\to} H^{2n-\bullet}_{\de^\Lambda}(X;\R) \;,$$
hence, for a compact symplectic manifold, the canonical homology groups and the de Rham cohomology groups are isomorphic.

\medskip

In \cite{tseng-yau-1}, L.-S. Tseng and S.-T. Yau introduced also the \emph{$\left(\de+\de^\Lambda\right)$-cohomology}, \cite[\S3.2]{tseng-yau-1},
$$ H^\bullet_{\de+\de^\Lambda}(X;\R) \;:=\; \frac{\ker\left(\de+\de^\Lambda\right)}{\imm\de\de^\Lambda} \;, $$
and the \emph{$\left(\de\de^\Lambda\right)$-cohomology}, \cite[\S3.3]{tseng-yau-1},
$$ H^\bullet_{\de\de^\Lambda}(X;\R) \;:=\; \frac{\ker \de\de^\Lambda}{\imm\de+\imm\de^\Lambda} \;; $$
such cohomologies are, in a sense, the symplectic counterpart of the Aeppli and Bott-Chern cohomologies of complex manifolds, see \cite[\S5]{tseng-yau-1} and \cite{tseng-yau-3} for further discussions.

Furthermore, they provided a Hodge theory for such cohomologies, proving the following result.

\begin{thm}[{\cite[Theorem 3.5, Corollary 3.6]{tseng-yau-1}}]
 Let $X$ be a compact manifold endowed with a symplectic structure $\omega$. Let $\left(J,\, \omega,\, g\right)$ be an almost-K\"ahler structure on $X$. For a fixed $\lambda>0$, the \kth{4} order self-adjoint differential operator
 \begin{eqnarray*}
  D_{\de+\de^\Lambda} &:=& \left(\de\de^\Lambda\right) \left(\de\de^\Lambda\right)^* + \left(\de\de^\Lambda\right)^* \left(\de\de^\Lambda\right) + \left(\de^*\de^\Lambda\right) \left(\de^*\de^\Lambda\right)^* + \left(\de^*\de^\Lambda\right)^* \left(\de^*\de^\Lambda\right) \\[5pt]
  && + \lambda \, \left(\de^*\de+\left(\de^\Lambda\right)^*\de^\Lambda\right) \;.
 \end{eqnarray*}
 is elliptic, with $\ker D_{\de+\de^\Lambda} = \ker\de\cap\ker\de^\Lambda\cap\ker\left(\de\de^\Lambda\right)^*$.

 Furthermore, there exist an orthogonal decomposition
 $$ \wedge^\bullet X \;=\; \ker D_{\de+\de^\Lambda} \oplus \de\de^\Lambda \wedge^\bullet X \oplus \left(\de^*\wedge^{\bullet+1}X+\left(\de^\Lambda\right)^*\wedge^{\bullet-1}X\right) $$
 and an isomorphism
 $$ H^\bullet_{\de+\de^\Lambda} (X;\R) \;\simeq\; \ker D_{\de+\de^\Lambda} \;.$$

 In particular, $\dim_\R H^\bullet_{\de+\de^\Lambda}(X;\R)<+\infty$.
\end{thm}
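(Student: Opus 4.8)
The plan is to prove the theorem by showing that $D_{\de+\de^\Lambda}$ is a self-adjoint elliptic operator whose kernel coincides with $\ker\de\cap\ker\de^\Lambda\cap\ker\left(\de\de^\Lambda\right)^*$, and then to invoke the standard Hodge-theoretic machinery for self-adjoint elliptic operators on a compact manifold. Since the statement to be proven is, verbatim, \cite[Theorem 3.5, Corollary 3.6]{tseng-yau-1} of Tseng and Yau, the role of a proof here is really to recall the argument, which runs in close parallel to the proof of the analogous results for the Bott-Chern cohomology (compare \cite[Th\'eor\`eme 2.2, Corollaire 2.3]{schweitzer}, recalled earlier in the excerpt).

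First I would establish the ellipticity and the identification of the kernel, which are purely local and algebraic computations. The inclusion $\ker\de\cap\ker\de^\Lambda\cap\ker\left(\de\de^\Lambda\right)^* \subseteq \ker D_{\de+\de^\Lambda}$ is immediate from the definition of $D_{\de+\de^\Lambda}$ as a sum of terms of the form $A^*A$ and $AA^*$ together with the positive multiple $\lambda\left(\de^*\de+\left(\de^\Lambda\right)^*\de^\Lambda\right)$. For the reverse inclusion, I would use the self-adjointness: if $D_{\de+\de^\Lambda}\alpha=0$, then $\left\langle D_{\de+\de^\Lambda}\alpha,\, \alpha\right\rangle=0$, and expanding this pairing into a sum of squared $\mathrm{L}^2$-norms forces each summand to vanish, whence $\de\alpha=0$, $\de^\Lambda\alpha=0$, and $\left(\de\de^\Lambda\right)^*\alpha=0$. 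Here I must use the almost-K\"ahler identity $\star_\omega=J\,*_g$ relating the symplectic-$\star$-operator to the Hodge-$*$-operator, so that the $g$-adjoints appearing in $D_{\de+\de^\Lambda}$ are genuinely adjoints of $\de$ and $\de^\Lambda$ with respect to the inner product induced by $g$. The ellipticity is then checked at the level of principal symbols: the leading-order part of $D_{\de+\de^\Lambda}$ agrees, up to the positive factor $\lambda$ and lower-order terms, with the symbol of a positive multiple of the square of the de Rham Laplacian, so the symbol is an isomorphism off the zero section.

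Once ellipticity and the kernel identification are in hand, the orthogonal decomposition follows from the general theory of self-adjoint elliptic operators on compact manifolds (see, e.g., the references cited in the excerpt, \cite[page 450]{kodaira}): one has $\wedge^\bullet X = \ker D_{\de+\de^\Lambda} \oplus \imm D_{\de+\de^\Lambda}$, and the image decomposes further into $\de\de^\Lambda\wedge^\bullet X \oplus \left(\de^*\wedge^{\bullet+1}X+\left(\de^\Lambda\right)^*\wedge^{\bullet-1}X\right)$ by a routine manipulation using the commutation relation $\de\de^\Lambda+\de^\Lambda\de=0$ and the mutual orthogonality of these subspaces. The isomorphism $H^\bullet_{\de+\de^\Lambda}(X;\R)\simeq\ker D_{\de+\de^\Lambda}$ is then deduced exactly as in the classical Hodge case: every class in $\frac{\ker\left(\de+\de^\Lambda\right)}{\imm\de\de^\Lambda}$ has a unique $D_{\de+\de^\Lambda}$-harmonic representative, where one must check that $\ker\left(\de+\de^\Lambda\right)$ on the relevant graded pieces coincides with $\ker\de\cap\ker\de^\Lambda$, using the bidegree considerations intrinsic to the Lefschetz decomposition. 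Finally, $\dim_\R H^\bullet_{\de+\de^\Lambda}(X;\R)<+\infty$ follows from the compactness of $X$ and the ellipticity of $D_{\de+\de^\Lambda}$, which guarantee that $\ker D_{\de+\de^\Lambda}$ is finite-dimensional.

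The main obstacle, I expect, will be the careful bookkeeping in verifying that $\ker D_{\de+\de^\Lambda}$ really equals the triple intersection $\ker\de\cap\ker\de^\Lambda\cap\ker\left(\de\de^\Lambda\right)^*$ rather than some larger or smaller space; this hinges delicately on the anticommutation $\de\de^\Lambda=-\de^\Lambda\de$, which makes the symplectic situation differ from the complex one, where $\del$ and $\delbar$ genuinely anticommute only after passing to the appropriate operators. In particular, the cross-terms $\left(\de^*\de^\Lambda\right)\left(\de^*\de^\Lambda\right)^*$ and their adjoints must be handled so that their contribution to $\left\langle D_{\de+\de^\Lambda}\alpha,\,\alpha\right\rangle$ combines correctly with the other summands; verifying that no cancellation occurs, so that $\left\langle D_{\de+\de^\Lambda}\alpha,\,\alpha\right\rangle$ is a genuine sum of nonnegative terms, is the technically subtle point. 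The presence of the free parameter $\lambda>0$ is precisely what ensures positivity of the zeroth-order coupling and must be tracked through the symbol computation.
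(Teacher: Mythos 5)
First, a point of record: the paper contains no proof of this statement. It is recalled verbatim, with the citation \cite[Theorem 3.5, Corollary 3.6]{tseng-yau-1}, in the background subsection on symplectic Hodge theory, so there is no internal argument to compare yours against; your sketch can only be measured against the standard proof of Tseng and Yau. Measured that way, your outline is correct and is essentially their argument, run in parallel to M. Schweitzer's treatment of $\tilde\Delta_{BC}$ exactly as the paper itself suggests: kernel identification by expanding $\left\langle D_{\de+\de^\Lambda}\alpha,\alpha\right\rangle$ into a sum of squared norms, ellipticity via the principal symbol, and then the general theory of self-adjoint elliptic operators on a compact manifold for the decomposition, the harmonic-representative isomorphism, and finite-dimensionality.

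Two of your remarks, however, mislocate the difficulties. First, the ``technically subtle point'' you flag --- possible cancellation among cross-terms in $\left\langle D_{\de+\de^\Lambda}\alpha,\alpha\right\rangle$ --- is vacuous: every summand of $D_{\de+\de^\Lambda}$ has the form $A^*A$ or $AA^*$, so the pairing is identically $\left\|\left(\de\de^\Lambda\right)^*\alpha\right\|^2+\left\|\de\de^\Lambda\alpha\right\|^2+\left\|\left(\de^*\de^\Lambda\right)^*\alpha\right\|^2+\left\|\de^*\de^\Lambda\alpha\right\|^2+\lambda\left(\left\|\de\alpha\right\|^2+\left\|\de^\Lambda\alpha\right\|^2\right)$, and no cancellation can occur; the anticommutation $\de\de^\Lambda=-\de^\Lambda\de$ plays no role there (it is needed instead to prove the orthogonality $\de\de^\Lambda\wedge^\bullet X\perp\left(\de^\Lambda\right)^*\wedge^{\bullet-1}X$ in the decomposition). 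Second, your account of $\lambda$ is confused: $\lambda\left(\de^*\de+\left(\de^\Lambda\right)^*\de^\Lambda\right)$ is a second-order term, hence invisible in the fourth-order principal symbol, so it cannot be ``tracked through the symbol computation'' and contributes nothing to ellipticity --- for the compatible metric $g$ the fourth-order symbol is already that of $\Delta^2$, namely $\left|\xi\right|^4\,\id$, so your ellipticity claim is right once $\lambda$ is removed from it. The sole purpose of the $\lambda$-term is the kernel identification: without it, $\ker D_{\de+\de^\Lambda}$ would be the a priori larger space $\ker\left(\de\de^\Lambda\right)\cap\ker\left(\de\de^\Lambda\right)^*\cap\ker\left(\de^*\de^\Lambda\right)\cap\ker\left(\de^*\de^\Lambda\right)^*$, and it is precisely the terms $\lambda\left\|\de\alpha\right\|^2$ and $\lambda\left\|\de^\Lambda\alpha\right\|^2$ that cut it down to $\ker\de\cap\ker\de^\Lambda\cap\ker\left(\de\de^\Lambda\right)^*$ (any $\lambda>0$ works, and the harmonic space is independent of $\lambda$). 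A last small point: the identification of $\ker\left(\de+\de^\Lambda\right)$ with $\ker\de\cap\ker\de^\Lambda$ on $k$-forms follows from the degree grading alone --- $\de$ raises the degree by one and $\de^\Lambda$ lowers it --- and has nothing to do with the Lefschetz decomposition into primitive components. With these adjustments, your sketch is the standard, correct proof.
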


An analogous statement holds for the $\left(\de\de^\Lambda\right)$-cohomology.

\begin{thm}[{\cite[Theorem 3.16, Corollary 3.17]{tseng-yau-1}}]
 Let $X$ be a compact manifold endowed with a symplectic structure $\omega$. Let $\left(J,\, \omega,\, g\right)$ be an almost-K\"ahler structure on $X$. For a fixed $\lambda>0$, the \kth{4} order self-adjoint differential operator
 \begin{eqnarray*}
  D_{\de\de^\Lambda} &:=& \left(\de\de^\Lambda\right) \left(\de\de^\Lambda\right)^* + \left(\de\de^\Lambda\right)^* \left(\de\de^\Lambda\right) + \left(\de\left(\de^\Lambda\right)^*\right) \left(\de\left(\de^\Lambda\right)^*\right)^* + \left(\de\left(\de^\Lambda\right)^*\right)^* \left(\de\left(\de^\Lambda\right)^*\right) \\[5pt]
  && + \lambda \, \left(\de\de^*+\de^\Lambda\left(\de^\Lambda\right)^*\right) \;.
 \end{eqnarray*}
 is elliptic, with $\ker D_{\de\de^\Lambda} = \ker\left(\de\de^\Lambda\right)\cap\ker\de^*\cap\ker\left(\de^\Lambda\right)^*$.

 Furthermore, there exist an orthogonal decomposition
 $$ \wedge^\bullet X \;=\; \ker D_{\de\de^\Lambda} \oplus \left(\de\wedge^{\bullet-1}X + \de^\Lambda\wedge^{\bullet+1}X \right) \oplus \left(\de\de^\Lambda\right)^* \wedge^\bullet X $$
 and an isomorphism
 $$ H^\bullet_{\de\de^\Lambda} (X;\R) \;\simeq\; \ker D_{\de\de^\Lambda} \;.$$

 In particular, $\dim_\R H^\bullet_{\de\de^\Lambda}(X;\R)<+\infty$.
\end{thm}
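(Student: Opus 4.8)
The plan is to mimic the Hodge-theoretic scheme used for the Bott--Chern and Aeppli cohomologies in Schweitzer's \cite[\S2]{schweitzer} (and, in the symplectic setting, for $D_{\de+\de^\Lambda}$ in the preceding theorem), treating $D_{\de\de^\Lambda}$ as the symplectic analogue of the Aeppli Laplacian $\tilde\Delta_A$. Self-adjointness is immediate, since $D_{\de\de^\Lambda}$ is a sum of operators of the shape $A\,A^* + A^*\,A$ together with $\lambda\,(\de\de^*+\de^\Lambda(\de^\Lambda)^*)$, each of which is manifestly self-adjoint. For the kernel characterization the inclusion $\ker(\de\de^\Lambda)\cap\ker\de^*\cap\ker(\de^\Lambda)^*\subseteq\ker D_{\de\de^\Lambda}$ is clear term by term; for the reverse inclusion I would pair $D_{\de\de^\Lambda}\alpha$ with $\alpha$ and integrate by parts, obtaining
$$ \langle D_{\de\de^\Lambda}\alpha,\, \alpha\rangle \;=\; \|\de\de^\Lambda\alpha\|^2 + \|(\de\de^\Lambda)^*\alpha\|^2 + \|\de(\de^\Lambda)^*\alpha\|^2 + \|\de^\Lambda\de^*\alpha\|^2 + \lambda\,\|\de^*\alpha\|^2 + \lambda\,\|(\de^\Lambda)^*\alpha\|^2 \;; $$
since $\lambda>0$, the vanishing of the left-hand side forces $\de^*\alpha=0$, $(\de^\Lambda)^*\alpha=0$ and $\de\de^\Lambda\alpha=0$, which is precisely the claimed description of $\ker D_{\de\de^\Lambda}$.

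The hard part will be ellipticity, i.e. that the principal symbol $\sigma(D_{\de\de^\Lambda})(\xi)$ is an isomorphism for every $\xi\in T^*X\setminus\{0\}$. The cleanest route I would take is to transfer ellipticity from the operator $D_{\de+\de^\Lambda}$ of the previous theorem, exploiting that $D_{\de\de^\Lambda}$ is obtained from $D_{\de+\de^\Lambda}$ by interchanging $\de$ with $\de^*$ and $\de^\Lambda$ with $(\de^\Lambda)^*$. Since, on the almost-K\"ahler manifold $\left(X,\,J,\,\omega,\,g\right)$, one has $\de^*=\pm *_g\,\de\,*_g$ and likewise $(\de^\Lambda)^*=\pm *_g\,\de^\Lambda\,*_g$ (the latter from $\de^\Lambda=-(\de^c)^*$), conjugation by the Hodge $*_g$-operator effects exactly this interchange and hence carries $D_{\de+\de^\Lambda}$ into $D_{\de\de^\Lambda}$, the signs cancelling because each operator appears paired with its adjoint. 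As $*_g$ is an isomorphism of $\wedge^\bullet X$, the symbol of $D_{\de\de^\Lambda}$ is conjugate to that of $D_{\de+\de^\Lambda}$, which is already an isomorphism; thus $D_{\de\de^\Lambda}$ is elliptic. Alternatively, one can compute the symbol directly, using that $\de$ has symbol the exterior multiplication $e(\xi)=\xi\wedge\sspace$ while $\de^\Lambda=[\de,\Lambda]$ has symbol the interior multiplication $\iota_v$ (with $v$ the symplectic dual of $\xi$, via the symplectic commutation relation $[\Lambda,\,e(\xi)]=\pm\iota_v$), and showing that the resulting nonnegative algebraic operator on $\wedge^\bullet T^*_xX$, weighted by $\lambda>0$, is injective. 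Being self-adjoint and elliptic on the compact manifold $X$, $D_{\de\de^\Lambda}$ then has finite-dimensional kernel consisting of smooth forms, and standard elliptic theory (see, e.g., \cite[page 450]{kodaira}) yields $\wedge^\bullet X=\ker D_{\de\de^\Lambda}\oplus\imm D_{\de\de^\Lambda}$.

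To refine this into the stated decomposition, I would first verify that the three summands $\ker D_{\de\de^\Lambda}$, $\de\wedge^{\bullet-1}X+\de^\Lambda\wedge^{\bullet+1}X$ and $(\de\de^\Lambda)^*\wedge^\bullet X$ are pairwise orthogonal: orthogonality of the kernel to the other two is immediate from $\de^*\alpha=(\de^\Lambda)^*\alpha=\de\de^\Lambda\alpha=0$ for $\alpha\in\ker D_{\de\de^\Lambda}$, while $\langle\de\beta,\,(\de\de^\Lambda)^*\delta\rangle=\langle\de\de^\Lambda\de\beta,\,\delta\rangle=0$ and $\langle\de^\Lambda\gamma,\,(\de\de^\Lambda)^*\delta\rangle=\langle\de\de^\Lambda\de^\Lambda\gamma,\,\delta\rangle=0$ follow from $\de^2=0$, $(\de^\Lambda)^2=0$ and $\de\de^\Lambda+\de^\Lambda\de=0$. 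Inspecting the six terms of $D_{\de\de^\Lambda}\alpha$ shows that $\imm D_{\de\de^\Lambda}$ is contained in the sum of the last two spaces; combined with the orthogonality to $\ker D_{\de\de^\Lambda}$ and with $\wedge^\bullet X=\ker D_{\de\de^\Lambda}\oplus\imm D_{\de\de^\Lambda}$, this forces the sum of the three spaces to exhaust $\wedge^\bullet X$ orthogonally.

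Finally, for the isomorphism $H^\bullet_{\de\de^\Lambda}(X;\R)\simeq\ker D_{\de\de^\Lambda}$, I would argue as for the Aeppli cohomology. Given $\alpha\in\ker(\de\de^\Lambda)$, decompose it along the three summands; applying $\de\de^\Lambda$ and using $\de\de^\Lambda\de=\de\de^\Lambda\de^\Lambda=0$ shows that the component $(\de\de^\Lambda)^*\delta$ satisfies $\de\de^\Lambda(\de\de^\Lambda)^*\delta=0$, whence $\|(\de\de^\Lambda)^*\delta\|^2=0$ and this component vanishes; thus $\alpha$ is cohomologous modulo $\imm\de+\imm\de^\Lambda$ to its harmonic part, giving surjectivity of $\ker D_{\de\de^\Lambda}\to H^\bullet_{\de\de^\Lambda}(X;\R)$. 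Injectivity follows since a harmonic $h$ of the form $\de\beta+\de^\Lambda\gamma$ satisfies $\|h\|^2=\langle\de^*h,\,\beta\rangle+\langle(\de^\Lambda)^*h,\,\gamma\rangle=0$. Finite-dimensionality of $H^\bullet_{\de\de^\Lambda}(X;\R)$ is then inherited from that of $\ker D_{\de\de^\Lambda}$. The only genuinely delicate point in this program is the ellipticity of $D_{\de\de^\Lambda}$; everything else is formal once that is in hand, and the symbol there is exactly where I expect the real work to lie should the $*_g$-conjugation argument require care with signs.
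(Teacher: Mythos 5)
Your proposal is correct, but note that the paper itself offers no proof of this statement: it is recalled verbatim from L.-S. Tseng and S.-T. Yau, \cite[Theorem 3.16, Corollary 3.17]{tseng-yau-1}, in a preliminary section, so there is no in-text argument to compare against. What you reconstruct is essentially the standard Tseng--Yau scheme: the kernel identification via the integration-by-parts identity (your six-term expansion of $\left\langle D_{\de\de^\Lambda}\alpha,\,\alpha\right\rangle$ is exact, and $\lambda>0$ is precisely what forces $\de^*\alpha=0$ and $\left(\de^\Lambda\right)^*\alpha=0$), the orthogonality checks using $\de^2=\left(\de^\Lambda\right)^2=0$ and $\de\de^\Lambda+\de^\Lambda\de=0$, the inclusion $\imm D_{\de\de^\Lambda}\subseteq\left(\de\wedge^{\bullet-1}X+\de^\Lambda\wedge^{\bullet+1}X\right)+\left(\de\de^\Lambda\right)^*\wedge^\bullet X$ read off term by term, and the cohomology isomorphism by killing the $\left(\de\de^\Lambda\right)^*$-component of a $\de\de^\Lambda$-closed form. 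Your main route to ellipticity, transferring it from $D_{\de+\de^\Lambda}$ via the intertwining $*_g\,D_{\de+\de^\Lambda}=D_{\de\de^\Lambda}\,*_g$, is sound and in fact \emph{is} the duality result the paper quotes immediately afterwards as \cite[Lemma 3.23, Proposition 3.24, Corollary 3.25]{tseng-yau-1}; since $*_g$ is an $\mathrm{L}^2$-isometry, conjugation by it commutes exactly with taking adjoints, so the degree-dependent signs in $\de^*=\pm *_g\,\de\,*_g$ and $\left(\de^\Lambda\right)^*=\pm *_g\,\de^\Lambda\,*_g$ cancel in each $A\,A^*$-shaped summand, as you anticipated. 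The only caution is that in Tseng--Yau's own logical order the intertwining appears \emph{after} Theorem 3.16, so to avoid circularity you must (as you implicitly do) establish it as a purely algebraic identity, independent of any ellipticity.

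One conceptual slip in your alternative, direct route: you propose to show that the symbol, ``weighted by $\lambda>0$,'' is injective. For a \kth{4} order operator the principal symbol consists of the fourth-order part alone; the terms $\lambda\left(\de\de^*+\de^\Lambda\left(\de^\Lambda\right)^*\right)$ are second order and contribute nothing to it, so no choice of $\lambda$ can repair a degenerate fourth-order symbol. What actually happens is that the fourth-order symbol, built from $e(\xi)=\xi\wedge\sspace$ and the interior product with the symplectic dual $v_\xi=\omega^{-1}\left(\xi\right)$ (whose $g$-dual is $\pm J\xi$, orthogonal to $\xi$ and of equal length), is already injective for $\xi\neq 0$ --- this is the content of the direct computation --- while $\lambda>0$ serves only to cut the kernel of the full operator down to $\ker\left(\de\de^\Lambda\right)\cap\ker\de^*\cap\ker\left(\de^\Lambda\right)^*$, exactly as your own pairing identity shows. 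With that correction, both of your routes close, and the rest of your argument is formal and complete.
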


As for the Bott-Chern and the Aeppli cohomologies, the $\left(\de+\de^\Lambda\right)$-cohomology and the $\left(\de\de^\Lambda\right)$-cohomology groups turn out to be isomorphic by means of the Hodge-$*$-operator associated to any Riemannian metric being compatible with $\omega$.

\begin{thm}[{\cite[Lemma 3.23, Proposition 3.24, Corollary 3.25]{tseng-yau-1}}]
 Let $X$ be a $2n$-dimensional compact manifold endowed with a symplectic structure $\omega$. Let $\left(J,\, \omega,\, g\right)$ be an almost-K\"ahler structure on $X$. The operators $D_{\de+\de^\Lambda}$ and $D_{\de\de^\Lambda}$ satisfy
$$ *_g\, D_{\de+\de^\Lambda} \;=\; D_{\de\de^\Lambda} \, *_g \;,$$
and hence $*_g$ induces an isomorphism
$$ *_g \colon H^\bullet_{\de+\de^\Lambda}(X;\R) \stackrel{\simeq}{\to} H^{2n-\bullet}_{\de\de^\Lambda}(X;\R) \;.$$
\end{thm}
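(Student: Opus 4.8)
The plan is to reduce the cohomological isomorphism to a single operator identity, namely the intertwining relation
$$ *_g\,D_{\de+\de^\Lambda} \;=\; D_{\de\de^\Lambda}\,*_g \;, $$
and then invoke the two Hodge theorems already recorded in the excerpt. Indeed, once this identity is established, since $*_g\colon\wedge^kX\to\wedge^{2n-k}X$ is a linear isomorphism, it restricts to an isomorphism $\ker D_{\de+\de^\Lambda}\lfloor_{\wedge^kX}\stackrel{\simeq}{\to}\ker D_{\de\de^\Lambda}\lfloor_{\wedge^{2n-k}X}$; composing with the isomorphisms $H^k_{\de+\de^\Lambda}(X;\R)\simeq\ker D_{\de+\de^\Lambda}$ and $H^{2n-k}_{\de\de^\Lambda}(X;\R)\simeq\ker D_{\de\de^\Lambda}$ coming from \cite[Corollary 3.6]{tseng-yau-1} and \cite[Corollary 3.17]{tseng-yau-1} then yields the desired $*_g\colon H^\bullet_{\de+\de^\Lambda}(X;\R)\stackrel{\simeq}{\to}H^{2n-\bullet}_{\de\de^\Lambda}(X;\R)$. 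Thus the whole content of the statement lies in the operator identity.

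The key step is to establish the conjugation relations of $*_g$ with the four first-order operators $\de$, $\de^\Lambda$, $\de^*$, $(\de^\Lambda)^*$. I expect these to read
$$ *_g\,\de\,*_g^{-1} \;=\; \pm\,\de^* \;,\qquad *_g\,\de^*\,*_g^{-1} \;=\; \pm\,\de \;, $$
$$ *_g\,\de^\Lambda\,*_g^{-1} \;=\; \pm\,(\de^\Lambda)^* \;,\qquad *_g\,(\de^\Lambda)^*\,*_g^{-1} \;=\; \pm\,\de^\Lambda \;, $$
with signs determined by degree. The first pair is the standard Riemannian fact $\de^*=-*_g\,\de\,*_g$ (valid in even dimension), which is degree-consistent since conjugation by $*_g$ reverses the degree shift. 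The second pair I would derive from $\de^\Lambda=-(\de^c)^*$ with $\de^c=J^{-1}\,\de\,J$, together with $\star_\omega=J\,*_g$ from \cite[Theorem 2.4.1]{brylinski} and the commutation of $J$ with $*_g$ on forms, keeping track of $*_g^2=(-1)^\bullet$ and $\star_\omega^2=\id$. First I would verify $*_g\,J=J\,*_g$ and the degree count, then push the $J$'s through to identify $*_g\,\de^\Lambda\,*_g^{-1}$ with $(\de^\Lambda)^*$ up to sign.

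With these relations in hand, the operator identity is a term-by-term check, since $*_g$ is (up to sign) orthogonal, so it sends the adjoint of an operator to the adjoint of its conjugate. The middle terms go through by $*_g\,(\de^*\de^\Lambda)\,*_g^{-1}=\pm\,\de\,(\de^\Lambda)^*$, matching the middle terms of $D_{\de\de^\Lambda}$; the $\lambda$-terms by $*_g\,(\de^*\de)\,*_g^{-1}=\pm\,\de\,\de^*$ and $*_g\,((\de^\Lambda)^*\de^\Lambda)\,*_g^{-1}=\pm\,\de^\Lambda\,(\de^\Lambda)^*$. The one point requiring care is the common first term $(\de\de^\Lambda)(\de\de^\Lambda)^*+(\de\de^\Lambda)^*(\de\de^\Lambda)$, which is identical in both operators and so must be \emph{preserved} by conjugation: writing $F:=\de\de^\Lambda$, one uses $\de\de^\Lambda=-\de^\Lambda\de$ (hence $\de^*(\de^\Lambda)^*=-(\de^\Lambda)^*\de^*$) to find $*_g\,F\,*_g^{-1}=\pm\,F^*$, whereupon $FF^*+F^*F$ is sent to $F^*F+FF^*$, i.e.\ to itself.

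The main obstacle is thus not conceptual but the clean derivation of the four conjugation relations with correct signs, in particular the interplay of $*_g$, $J$, and $\star_\omega$ in the almost-K\"ahler (non-integrable) setting, where $J$ need not commute with $\de$; the bookkeeping must rely only on $\star_\omega=J\,*_g$, on $\de^\Lambda=-(\de^c)^*$, and on the basic identity $\de\de^\Lambda+\de^\Lambda\de=0$ recalled in \S\ref{subsec:symplectic-hodge-theory}, rather than on any K\"ahler identity. Once the signs are pinned down the remainder is routine, and the passage from the operator identity to the cohomological isomorphism is immediate from ellipticity and the harmonic representative theorems already available.
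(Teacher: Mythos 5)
Your proposal is correct, and it is essentially the argument of the cited source: the paper itself states this theorem without proof, referring to L.-S. Tseng and S.-T. Yau's \cite[Lemma 3.23, Proposition 3.24, Corollary 3.25]{tseng-yau-1}, where Lemma 3.23 is precisely the conjugation relations $*_g\,\de\,*_g^{-1}=\pm\,\de^*$ and $*_g\,\de^\Lambda\,*_g^{-1}=\pm\left(\de^\Lambda\right)^*$ (obtained exactly from $\star_\omega=J\,*_g$, $\de^\Lambda=-\left(\de^c\right)^{*_g}$, and $J\,*_g=*_g\,J$), Proposition 3.24 is the term-by-term intertwining $*_g\,D_{\de+\de^\Lambda}=D_{\de\de^\Lambda}\,*_g$, and Corollary 3.25 is the passage to cohomology via the harmonic-representative isomorphisms. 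Your sign bookkeeping does close as you anticipate: each quadratic block $A\,A^*+A^*\,A$ is degree-preserving, so the degree-dependent signs occur squared, and in particular $F:=\de\de^\Lambda$ satisfies $*_g\,F\,*_g^{-1}=\pm F^*$ by $\de\de^\Lambda=-\de^\Lambda\de$, whence $F\,F^*+F^*\,F$ is preserved, exactly as you argue.
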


Moreover, the cohomology $H^\bullet_{\de+\de^\Lambda}(X;\R)$ is invariant under symplectomorphisms and Hamiltonian isotopies, \cite[Proposition 2.8]{tseng-yau-1}.

\medskip

One has the following commutation relations between the differential operators $\de$, $\de^\Lambda$, and $\de\de^\Lambda$, and the elements $L$, $\Lambda$, and $H$ of the $\mathfrak{sl}(2;\R)$-triple, see, e.g., \cite[Lemma 2.3]{tseng-yau-1}:
$$
\begin{array}{rclrclrcl}
\left[\de,\,L\right] &=& 0 \;,\qquad & \left[\de^\Lambda,\,L\right] &=& -\de \;,\qquad & \left[\de\de^\Lambda,\,L\right] &=& 0 \;, \\[5pt]
\left[\de,\,\Lambda\right] &=& \de^\Lambda \;,\qquad & \left[\de^\Lambda,\,\Lambda\right] &=& 0 \;,\qquad & \left[\de\de^\Lambda,\,\Lambda\right] &=& 0 \;,\\[5pt]
\left[\de,\,H\right] &=& \de \;,\qquad & \left[\de^\Lambda,\,H\right] &=& -\de^\Lambda \;,\qquad & \left[\de\de^\Lambda,\,H\right] &=& 0 \;.
\end{array}
$$
Hence, by setting
$$ \PH^\bullet_{\de+\de^\Lambda}(X;\R) \;:=\; \frac{\ker\de\cap\ker\de^\Lambda\cap\Prim^\bullet X}{\imm\de\de^\Lambda\cap\Prim^\bullet X} \;=\; \frac{\ker\de\cap\Prim^\bullet X}{\imm\de\de^\Lambda\lfloor_{\Prim^\bullet X}} $$
(where the second equality follows from \cite[Lemma 3.9]{tseng-yau-1}), one gets the following result.

\begin{thm}[{\cite[Theorem 3.11]{tseng-yau-1}}]
Let $X$ be a $2n$-dimensional compact manifold endowed with a symplectic structure $\omega$.
Then there exist a decomposition
$$ H^\bullet_{\de+\de^\Lambda}(X;\R) \;=\; \bigoplus_{r\in\N} L^r \, \PH^{\bullet-2r}_{\de+\de^\Lambda}(X;\R) $$
and, for every $k\in\N$, an isomorphism
$$ L^k\colon H^{n-k}_{\de+\de^\Lambda}(X;\R) \stackrel{\simeq}{\to} H^{n+k}_{\de+\de^\Lambda}(X;\R) \;,$$
\end{thm}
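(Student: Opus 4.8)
The plan is to establish the Lefschetz decomposition for $H^\bullet_{\de+\de^\Lambda}(X;\R)$ in exact parallel with the classical Lefschetz decomposition on differential forms, Theorem \cite[Corollary 2.6]{yan}, exploiting that the $\mathfrak{sl}(2;\R)$-representation governing the forms descends to cohomology. The crucial input is the set of commutation relations listed just before the statement, especially $\left[\de,\,L\right]=0$ and $\left[\de\de^\Lambda,\,L\right]=0$, which together show that the Lefschetz operator $L$ commutes with the differential $\de+\de^\Lambda$ (in the appropriate graded sense) and with $\de\de^\Lambda$, hence induces a well-defined map $L\colon H^\bullet_{\de+\de^\Lambda}(X;\R)\to H^{\bullet+2}_{\de+\de^\Lambda}(X;\R)$ in cohomology. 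First I would verify that $L$, $\Lambda$, and $H$ all descend to the $\left(\de+\de^\Lambda\right)$-cohomology: for $L$ and $H$ this is immediate from the relations above, while for $\Lambda$ one uses $\left[\de\de^\Lambda,\,\Lambda\right]=0$ together with $\left[\de,\,\Lambda\right]=\de^\Lambda$ and $\left[\de^\Lambda,\,\Lambda\right]=0$, checking that $\Lambda$ sends $\left(\de+\de^\Lambda\right)$-closed forms to $\left(\de+\de^\Lambda\right)$-closed forms modulo $\imm\de\de^\Lambda$.

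Next I would check that these induced operators still satisfy the $\mathfrak{sl}(2;\R)$-commutation relations $\left[L,\,H\right]=2L$, $\left[\Lambda,\,H\right]=-2\Lambda$, $\left[L,\,\Lambda\right]=H$ at the level of cohomology, which follows formally since they hold on forms and all three operators descend. This yields a finite-dimensional $\mathfrak{sl}(2;\R)$-representation on $H^\bullet_{\de+\de^\Lambda}(X;\R)$, finite-dimensionality being guaranteed by the Hodge theory of \cite[Theorem 3.5, Corollary 3.6]{tseng-yau-1}. The representation has finite $H$-spectrum, so by the standard structure theory of $\mathfrak{sl}(2;\R)$-representations (as for \cite[Corollary 2.6]{yan}), one obtains both the primitive decomposition
$$ H^\bullet_{\de+\de^\Lambda}(X;\R) \;=\; \bigoplus_{r\in\N} L^r \, \PH^{\bullet-2r}_{\de+\de^\Lambda}(X;\R) $$
and the hard-Lefschetz isomorphisms $L^k\colon H^{n-k}_{\de+\de^\Lambda}(X;\R) \stackrel{\simeq}{\to} H^{n+k}_{\de+\de^\Lambda}(X;\R)$, the latter being the statement that $L^k$ maps the weight-$k$ space isomorphically onto the weight-$(-k)$ space.

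The main obstacle, and the step requiring the most care, is the precise identification of the primitive cohomology $\PH^{\bullet-2r}_{\de+\de^\Lambda}(X;\R)$ appearing in the decomposition with the summands produced abstractly by the representation theory. The abstract $\mathfrak{sl}(2;\R)$-argument produces highest-weight (primitive) vectors in cohomology, but one must show these coincide with classes having genuinely primitive representatives, i.e.\ that the space $\frac{\ker\de\cap\Prim^\bullet X}{\imm\de\de^\Lambda\lfloor_{\Prim^\bullet X}}$ defined via primitive forms matches $\ker\left(\Lambda\colon H^\bullet_{\de+\de^\Lambda}\to H^{\bullet-2}_{\de+\de^\Lambda}\right)$. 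Here I would invoke \cite[Lemma 3.9]{tseng-yau-1} (already cited in the displayed definition of $\PH^\bullet_{\de+\de^\Lambda}$) to reconcile the two descriptions of primitivity, and I would lean on the harmonic representation $H^\bullet_{\de+\de^\Lambda}(X;\R)\simeq\ker D_{\de+\de^\Lambda}$ to choose representatives compatible with the Lefschetz decomposition on forms. The rest is a routine transcription of the Riemannian/K\"ahler Lefschetz argument into the symplectic $\left(\de+\de^\Lambda\right)$-setting.
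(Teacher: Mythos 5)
Your proof is correct and follows essentially the same route as the paper, which states this result without proof as a recalled theorem of Tseng and Yau, \cite[Theorem 3.11]{tseng-yau-1}: the argument there is precisely the $\mathfrak{sl}(2;\R)$-descent you outline, using the commutation relations to check that $L$, $\Lambda$, and $H$ preserve both $\ker\de\cap\ker\de^\Lambda=\ker\left(\de+\de^\Lambda\right)$ and $\imm\de\de^\Lambda$, and then invoking the finite-dimensionality supplied by the Hodge theory for $D_{\de+\de^\Lambda}$. One remark on the step you single out as delicate: it is in fact routine and needs no harmonic representatives, since the Lefschetz projections are universal polynomials in $L$, $\Lambda$, $H$, and $\de\de^\Lambda$ commutes with all three, so both the numerator and the denominator of $H^{\bullet}_{\de+\de^\Lambda}(X;\R)$ are preserved componentwise (in particular, if a primitive form equals $\de\de^\Lambda\gamma$, projecting onto the primitive component shows it equals $\de\de^\Lambda$ of a primitive form), which identifies $\ker\left(\Lambda\colon H^{\bullet}_{\de+\de^\Lambda}(X;\R)\to H^{\bullet-2}_{\de+\de^\Lambda}(X;\R)\right)$ with $\PH^{\bullet}_{\de+\de^\Lambda}(X;\R)$ directly, the equivalence of the two descriptions of $\PH^{\bullet}_{\de+\de^\Lambda}(X;\R)$ being exactly \cite[Lemma 3.9]{tseng-yau-1}.
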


Analogously, by setting
$$ \PH^\bullet_{\de\de^\Lambda}(X;\R) \;:=\; \frac{\ker\de\de^\Lambda\cap\Prim^\bullet X}{\left(\imm\de+\imm\de^\Lambda\right)\cap\Prim^\bullet X} \;=\; \frac{\ker\de\de^\Lambda\cap\Prim^\bullet X}{\imm\left.\left(\de+LH^{-1}\de^\Lambda\right)\right\lfloor_{\Prim^{\bullet-1}X}+\imm\left.\de^\Lambda\right\lfloor_{\Prim^{\bullet+1}X}} $$
(where the second equality follows from \cite[Lemma 3.20]{tseng-yau-1}), one gets the following result.

\begin{thm}[{\cite[Theorem 3.21]{tseng-yau-1}}]
Let $X$ be a $2n$-dimensional compact manifold endowed with a symplectic structure $\omega$.
Then there exist a decomposition
$$ H^\bullet_{\de\de^\Lambda}(X;\R) \;=\; \bigoplus_{r\in\N} L^r \, \PH^{\bullet-2r}_{\de\de^\Lambda}(X;\R) $$
and, for every $k\in\N$, an isomorphism
$$ L^k\colon H^{n-k}_{\de\de^\Lambda}(X;\R) \stackrel{\simeq}{\to} H^{n+k}_{\de\de^\Lambda}(X;\R) \;,$$
\end{thm}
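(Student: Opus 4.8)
The plan is to transport the Lefschetz structure from the level of forms to the level of cohomology, exploiting that $\de\de^\Lambda$ intertwines with the whole $\mathfrak{sl}(2;\R)$-triple $\left(L,\,\Lambda,\,H\right)$, and then to invoke the finite-dimensionality granted by the Hodge theory for $D_{\de\de^\Lambda}$, together with the representation theory of $\mathfrak{sl}(2;\R)$. In this way the decomposition and the isomorphisms will follow from the structure theory of finite-dimensional $\mathfrak{sl}(2;\R)$-modules, exactly as the decomposition on forms \cite[Corollary 2.6, Corollary 2.7]{yan} follows from the action on $\wedge^\bullet X$.

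First I would record, from the commutation relations in \cite[Lemma 2.3]{tseng-yau-1}, that $\left[\de\de^\Lambda,\,L\right]=\left[\de\de^\Lambda,\,\Lambda\right]=\left[\de\de^\Lambda,\,H\right]=0$, so that $L$, $\Lambda$, and $H$ all preserve $\ker\de\de^\Lambda$. A short computation with the same relations shows that they also preserve $\imm\de+\imm\de^\Lambda$: from $\left[\de,\,L\right]=0$ and $\left[\de^\Lambda,\,L\right]=-\de$ one gets $L(\imm\de)\subseteq\imm\de$ and $L(\imm\de^\Lambda)\subseteq\imm\de^\Lambda+\imm\de$; from $\left[\de,\,\Lambda\right]=\de^\Lambda$ and $\left[\de^\Lambda,\,\Lambda\right]=0$ one gets $\Lambda(\imm\de)\subseteq\imm\de+\imm\de^\Lambda$ and $\Lambda(\imm\de^\Lambda)\subseteq\imm\de^\Lambda$; and $\left[\de,\,H\right]=\de$, $\left[\de^\Lambda,\,H\right]=-\de^\Lambda$ show $H$ preserves each image separately. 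Hence both the numerator $\ker\de\de^\Lambda$ and the denominator $\imm\de+\imm\de^\Lambda$ defining $H^\bullet_{\de\de^\Lambda}(X;\R)$ are $\mathfrak{sl}(2;\R)$-submodules of $\wedge^\bullet X$, so $L$, $\Lambda$, $H$ descend to a representation of $\mathfrak{sl}(2;\R)$ on the quotient $H^\bullet_{\de\de^\Lambda}(X;\R)$, with $H$ acting by $n-\bullet$ on the degree-$\bullet$ part.

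Next, since $H^\bullet_{\de\de^\Lambda}(X;\R)$ is finite-dimensional by \cite[Corollary 3.17]{tseng-yau-1}, the induced representation has finite $H$-spectrum, and the representation theory of $\mathfrak{sl}(2;\R)$ applies verbatim. This yields at once the decomposition $H^\bullet_{\de\de^\Lambda}(X;\R)=\bigoplus_{r\in\N}L^r\left(\ker\Lambda\cap H^{\bullet-2r}_{\de\de^\Lambda}(X;\R)\right)$ and, for every $k\in\N$, the isomorphism $L^k\colon H^{n-k}_{\de\de^\Lambda}(X;\R)\stackrel{\simeq}{\to}H^{n+k}_{\de\de^\Lambda}(X;\R)$, where $\Lambda$ here denotes the operator induced on cohomology.

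The remaining and most delicate step is to identify the abstract primitive subspace $\ker\Lambda\cap H^{\bullet}_{\de\de^\Lambda}(X;\R)$ with the primitive cohomology $\PH^\bullet_{\de\de^\Lambda}(X;\R)$ defined through genuinely primitive forms. The map $\PH^\bullet_{\de\de^\Lambda}(X;\R)\to H^\bullet_{\de\de^\Lambda}(X;\R)$ induced by $\Prim^\bullet X\hookrightarrow\wedge^\bullet X$ has image in $\ker\Lambda$, and I would prove it is a bijection onto $\ker\Lambda$: for surjectivity one takes a class killed by the induced $\Lambda$, decomposes a representative by the Lefschetz decomposition on forms \cite[Corollary 2.6]{yan}, and shows its primitive component represents the same class; for injectivity one must show that a primitive $\de\de^\Lambda$-closed form lying in $\imm\de+\imm\de^\Lambda$ already lies in $\imm\left.\left(\de+LH^{-1}\de^\Lambda\right)\right\lfloor_{\Prim^{\bullet-1}X}+\imm\left.\de^\Lambda\right\lfloor_{\Prim^{\bullet+1}X}$. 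The key input is the refined description of $\PH^\bullet_{\de\de^\Lambda}(X;\R)$ in \cite[Lemma 3.20]{tseng-yau-1}, which makes explicit how $\de$ and $\de^\Lambda$ move between primitive components. The main obstacle is precisely controlling the primitive projection of an arbitrary element of $\imm\de+\imm\de^\Lambda$, since $\de$ does not preserve primitivity and one must correct by the $LH^{-1}\de^\Lambda$ term. As a consistency check, I would note that the whole statement is $*_g$-dual, via \cite[Lemma 3.23, Proposition 3.24]{tseng-yau-1}, to the already established Lefschetz decomposition \cite[Theorem 3.11]{tseng-yau-1} for $H^\bullet_{\de+\de^\Lambda}(X;\R)$, the Hodge-$*$-operator interchanging the roles of $L$ and $\Lambda$ up to the sign factors of the Weil identity; this duality could alternatively be used to deduce the decomposition directly, once the correspondence of the primitive pieces has been verified.
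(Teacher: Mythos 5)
Your proposal is correct, and it is the same route the paper takes: the paper recalls precisely these ingredients --- the commutation relations $\left[\de\de^\Lambda,\,L\right]=\left[\de\de^\Lambda,\,\Lambda\right]=\left[\de\de^\Lambda,\,H\right]=0$ from \cite[Lemma 2.3]{tseng-yau-1} and the description of the denominator of $\PH^\bullet_{\de\de^\Lambda}(X;\R)$ from \cite[Lemma 3.20]{tseng-yau-1} --- and then invokes Tseng and Yau's theorem, whose content is exactly the descent of the $\mathfrak{sl}(2;\R)$-triple to cohomology that you carry out.

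One remark worth making: the step you single out as the ``main obstacle'' is already settled by your own first paragraph, so the proof closes with no further delicacy. The projections onto the primitive components of the Lefschetz decomposition are universal noncommutative polynomials in $L$ and $\Lambda$ --- the explicit formula $B^{(k-2r)} = \left(\sum_{\ell\in\N}a_{r,\ell,(n,k)}\,\frac{1}{\ell!}\, L^\ell\, \Lambda^{r+\ell}\right) A^{(k)}$ is even recalled in the paper. Hence your computations showing that $L$, $\Lambda$, $H$ preserve both $\ker\de\de^\Lambda$ and $\imm\de+\imm\de^\Lambda$ say that these are graded $\mathfrak{sl}(2;\R)$-submodules of $\wedge^\bullet X$ with finite $H$-spectrum, so each splits as $\bigoplus_{r\in\N} L^r\left(\Prim^{\bullet-2r}X\cap\,\cdot\,\right)$ by Yan's argument, and in particular the primitive components of any element of $\imm\de+\imm\de^\Lambda$ remain in $\imm\de+\imm\de^\Lambda$; the decomposition of the quotient, i.e.\ the surjectivity of $\bigoplus_{r} L^r\,\PH^{\bullet-2r}_{\de\de^\Lambda}(X;\R)\to H^\bullet_{\de\de^\Lambda}(X;\R)$ onto the abstract $\ker\Lambda$-pieces, is then immediate, with no need to control the $LH^{-1}\de^\Lambda$ correction term. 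Injectivity requires no work at all: with the first form of the definition, the denominator of $\PH^\bullet_{\de\de^\Lambda}(X;\R)$ is $\left(\imm\de+\imm\de^\Lambda\right)\cap\Prim^\bullet X$, so a primitive $\de\de^\Lambda$-closed form that is trivial in $H^\bullet_{\de\de^\Lambda}(X;\R)$ is trivial in $\PH^\bullet_{\de\de^\Lambda}(X;\R)$ by definition; \cite[Lemma 3.20]{tseng-yau-1} is only needed to supply the alternative description of that denominator. Finally, note that the finite-dimensionality from \cite[Corollary 3.17]{tseng-yau-1} is not strictly needed: since $H$ acts by $n-k$ on the degree-$k$ part and the grading is bounded, the induced module automatically has finite $H$-spectrum, which suffices for both the decomposition and the isomorphisms $L^k\colon H^{n-k}_{\de\de^\Lambda}(X;\R)\to H^{n+k}_{\de\de^\Lambda}(X;\R)$, exactly as in \cite[Corollary 2.6, Corollary 2.7]{yan}.
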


\medskip

The identity map induces the following natural maps in cohomology:
$$
\xymatrix{
  & H_{\de+\de^\Lambda}^{\bullet}(X;\R) \ar[ld]\ar[rd] &   \\
 H_{dR}^{\bullet}(X;\R) \ar[rd] &  & H_{\de^\Lambda}^{\bullet}(X;\R) \ar[ld] \\
  & H_{\de\de^\Lambda}^{\bullet}(X;\R) &
}
$$

Recall that a symplectic manifold is said to satisfy the \emph{$\de\de^\Lambda$-Lemma} if every $\de$-exact $\de^\Lambda$-closed form is $\de\de^\Lambda$-exact, \cite{deligne-griffiths-morgan-sullivan}, namely, if $H_{\de+\de^\Lambda}^{\bullet}(X;\R)\to H_{dR}^{\bullet}(X;\R)$ is injective.

\begin{rem}
 Note that
 $$ \ker \de^\Lambda \cap \imm \de \;=\; \imm\de\de^\Lambda \qquad \text{ if and only if } \qquad \ker \de \cap \imm \de^\Lambda \;=\; \imm\de\de^\Lambda \;.$$
 Indeed, since $\star_\omega^2=\id_{\wedge^\bullet X}$, \cite[Lemma 2.1.2]{brylinski}, and $\de\de^\Lambda+\de^\Lambda\de=0$, \cite[Theorem 1.3.1]{brylinski}, one has
 $$ \star_\omega\ker\de \;=\; \ker \de^\Lambda\;, \qquad \star_\omega\imm\de \;=\; \imm\de^\Lambda\;, \qquad \star_\omega\imm\de\de^\Lambda\;=\; \imm\de\de^\Lambda \;.$$
\end{rem}

Another cohomological property that can be defined on a $2n$-dimensional compact manifold $X$ endowed with a symplectic form $\omega$ is the \emph{Hard Lefschetz Condition}, that is,
\begin{equation}\label{eq:hlc2}
\tag{HLC}
 \text{for every } k\in\N\;, \qquad L^k\colon H^{n-k}_{dR}(X;\R) \stackrel{\simeq}{\to} H^{n+k}_{dR}(X;\R) \;.
\end{equation}

In fact, the following result relates the $\de\de^\Lambda$-Lemma, the Hard Lefschetz Condition, and the existence of $\omega$-symplectically harmonic representatives in any de Rham cohomology class.

\begin{thm}[{\cite[Corollary 2]{mathieu}, \cite[Theorem 0.1]{yan}, \cite[Proposition 1.4]{merkulov}, \cite{guillemin}, \cite[Theorem 5.4]{cavalcanti}}]
 Let $X$ be a compact manifold endowed with a symplectic structure $\omega$. The following conditions are equivalent:
 \begin{enumerate}[\itshape (i)]
  \item every de Rham cohomology class admits a representative being both $\de$-closed and $\de^\Lambda$-closed (i.e., Brylinski's conjecture \cite[Conjecture 2.2.7]{brylinski} holds on $X$);
  \item the Hard Lefschetz Condition holds on $X$;
  \item the natural homomorphism $H_{\de+\de^\Lambda}^{\bullet}(X;\R)\to H_{dR}^{\bullet}(X;\R)$ induced by the identity is actually an isomorphism;
  \item $X$ satisfies the $\de\de^\Lambda$-Lemma.
 \end{enumerate}
\end{thm}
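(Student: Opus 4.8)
The plan is to organise the four conditions around the natural map $\iota\colon H^\bullet_{\de+\de^\Lambda}(X;\R)\to H^\bullet_{dR}(X;\R)$ induced by the identity, using the $\mathfrak{sl}(2;\R)$-representation on $\wedge^\bullet X$ generated by $L$, $\Lambda$, $H$ as the engine. First I would record three tautological reformulations. Condition \textit{(i)} says exactly that every de Rham class has a representative in $\ker\de\cap\ker\de^\Lambda=\ker\left(\de+\de^\Lambda\right)$, i.e. that $\iota$ is \emph{surjective}. Condition \textit{(iv)} is, by definition, that every $\de$-exact, $\de^\Lambda$-closed form is $\de\de^\Lambda$-exact; since a $\de$-exact form is automatically $\de$-closed, this is precisely the \emph{injectivity} of $\iota$. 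And condition \textit{(iii)} is the \emph{bijectivity} of $\iota$. Hence $\textit{(iii)}\Leftrightarrow\left[\textit{(i)}\text{ and }\textit{(iv)}\right]$ holds for free, and the whole theorem reduces to the two equivalences $\textit{(i)}\Leftrightarrow\textit{(ii)}$ (Mathieu and Yan) and $\textit{(ii)}\Leftrightarrow\textit{(iv)}$ (Merkulov and Guillemin); once these are in hand, all four conditions become equivalent to \textit{(ii)}.

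Next I would dispatch the easy structural implication $\textit{(ii)}\Rightarrow\textit{(i)}$ by a direct computation exhibiting the mechanism. Assuming the Hard Lefschetz Condition, the Lefschetz decomposition theorem descends to cohomology, so an arbitrary class is a sum $\sum_r\left[\omega\right]^r\cp\mathfrak{b}_r$ of Lefschetz images of primitive classes, each $\mathfrak{b}_r$ admitting a $\de$-closed primitive representative $\beta_r\in\Prim^\bullet X$. A $\de$-closed primitive form is automatically symplectically harmonic: from $\de^\Lambda=\left[\de,\Lambda\right]=\de\Lambda-\Lambda\de$ and $\Lambda\beta_r=0$ one gets $\de^\Lambda\beta_r=-\Lambda\de\beta_r=0$. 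Moreover, iterating $\left[\de^\Lambda,L\right]=-\de$ together with $\left[\de,L\right]=0$ yields $\de^\Lambda L^r=L^r\de^\Lambda-r\,L^{r-1}\de$, so each $L^r\beta_r=\omega^r\wedge\beta_r$ is both $\de$-closed and $\de^\Lambda$-closed; summing produces a symplectically harmonic representative, which is \textit{(i)}. As a cross-check using earlier material, the implication $\textit{(iii)}\Rightarrow\textit{(ii)}$ also follows quickly: by \cite[Theorem 3.11]{tseng-yau-1} the maps $L^k\colon H^{n-k}_{\de+\de^\Lambda}(X;\R)\to H^{n+k}_{\de+\de^\Lambda}(X;\R)$ are isomorphisms unconditionally, and since $\iota$ commutes with $L=\omega\wedge\sspace$, assuming \textit{(iii)} the vertical isomorphisms $\iota$ transport this Hard Lefschetz property onto $H^\bullet_{dR}(X;\R)$.

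The genuinely hard content, and the main obstacle, is the pair of ``positive'' implications $\textit{(i)}\Rightarrow\textit{(ii)}$ and $\textit{(ii)}\Rightarrow\textit{(iv)}$: the existence of symplectically harmonic representatives in every class must be shown to force surjectivity of all the Lefschetz maps on de Rham cohomology, and the Hard Lefschetz Condition must be upgraded to the stronger $\de\de^\Lambda$-Lemma. I would attack both through the $\mathfrak{sl}(2;\R)$-representation theory, decomposing forms and classes into primitive components and running a zig-zag argument modelled on the proof of \cite[Proposition 5.17]{deligne-griffiths-morgan-sullivan}; the delicate point there is that primitivity must be preserved while solving the successive $\de$- and $\de^\Lambda$-equations, which is exactly where the finiteness of the $H$-spectrum and the commutation relations are used in an essential, nonformal way. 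Alternatively, following \cite{cavalcanti}, one realises $\left(\wedge^\bullet X,\,\de,\,\de^\Lambda\right)$ as the complex of the generalized complex structure attached to $\omega$, under which $\de\de^\Lambda$ becomes the generalized $\del\delbar$-operator and the $\de\de^\Lambda$-Lemma becomes the generalized $\del\delbar$-Lemma; then the equivalence with spectral-sequence degeneration together with Hard Lefschetz data can be imported essentially verbatim from the complex case, via \cite{mathieu, yan, merkulov, guillemin}. Once $\textit{(i)}\Leftrightarrow\textit{(ii)}$ and $\textit{(ii)}\Leftrightarrow\textit{(iv)}$ are established, the tautology $\textit{(iii)}\Leftrightarrow\left[\textit{(i)}\text{ and }\textit{(iv)}\right]$ from the first paragraph closes the equivalences with no further work.
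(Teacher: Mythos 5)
The paper does not actually prove this theorem: it is recalled from the literature, with \textit{(i)}$\Leftrightarrow$\textit{(ii)} due to O.~Mathieu \cite{mathieu} and D.~Yan \cite{yan}, and \textit{(ii)}$\Leftrightarrow$\textit{(iv)} due to S.~A. Merkulov \cite{merkulov} and V.~Guillemin \cite{guillemin}, see also \cite{cavalcanti}. Since your proposal ultimately rests on exactly these references for the two hard equivalences, it is consistent with the paper's treatment, and the bookkeeping you add is correct and worthwhile: the identification of \textit{(i)}, \textit{(iv)}, \textit{(iii)} with surjectivity, injectivity, and bijectivity of the natural map $H^{\bullet}_{\de+\de^\Lambda}(X;\R)\to H^{\bullet}_{dR}(X;\R)$ (the paper itself makes the injectivity identification when defining the $\de\de^\Lambda$-Lemma); the computation proving \textit{(ii)}$\Rightarrow$\textit{(i)} from $\left[\de,\,\Lambda\right]=\de^\Lambda$ and $\left[\de^\Lambda,\,L\right]=-\de$; and the deduction of \textit{(iii)}$\Rightarrow$\textit{(ii)} from the unconditional isomorphisms $L^k\colon H^{n-k}_{\de+\de^\Lambda}(X;\R)\stackrel{\simeq}{\to}H^{n+k}_{\de+\de^\Lambda}(X;\R)$ of \cite[Theorem 3.11]{tseng-yau-1}, which is also how the paper argues in its remark on the decomposition $H^\bullet_{dR}(X;\R)=\bigoplus_r L^r\,H^{(0,\bullet-2r)}_\omega(X;\R)$.

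Two points in your sketch need repair. First, in \textit{(ii)}$\Rightarrow$\textit{(i)} you assert that each Lefschetz component $\mathfrak{b}_r$ admits a $\de$-closed representative which is primitive \emph{as a form}; a class annihilated by $L^{n-s+1}$ in cohomology need not a priori have such a representative, but the gap is filled in one line: if $\omega^{n-s+1}\wedge\beta=\de\gamma$, use the isomorphism $L^{n-s+1}\colon\wedge^{s-1}X\stackrel{\simeq}{\to}\wedge^{2n-s+1}X$, \cite[Corollary 2.7]{yan}, to write $\gamma=L^{n-s+1}\gamma'$, so that $\beta-\de\gamma'\in\Prim^sX$ is $\de$-closed and cohomologous to $\beta$. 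Second, your first alternative for the hard implications --- a zig-zag ``modelled on \cite[Proposition 5.17]{deligne-griffiths-morgan-sullivan}'' --- cannot be run as stated: $\de$ and $\de^\Lambda$ have degrees $+1$ and $-1$ on the singly-graded $\wedge^\bullet X$, so $\left(\wedge^\bullet X,\,\de,\,\de^\Lambda\right)$ is not a bounded bigraded double complex and the machinery of \cite{deligne-griffiths-morgan-sullivan} does not apply verbatim; it becomes one only after G.~R. Cavalcanti's generalized-geometry reinterpretation (or one argues, as O.~Mathieu does, via the decomposition of $\wedge^\bullet X$ into $\mathfrak{sl}(2;\R)$-isotypic components), which collapses your two proposed alternatives into the single route that the paper cites.
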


Note that, by the Lefschetz decomposition theorem, the compact K\"ahler manifolds satisfy the Hard Lefschetz Condition; in other terms, note that, given a K\"ahler structure $\left(J,\, \omega,\, g\right)$ on a compact manifold $X$, one has $\star_\omega = J\,*_g$, \cite[Theorem 2.4.1]{brylinski}, and hence every de Rham cohomology class admits an $\omega$-symplectically-harmonic representative.

\subsubsection{Primitive currents}
Let $X$ be a $2n$-dimensional compact manifold endowed with a symplectic structure $\omega$. Denote by $\correnti_\bullet X:=:\correnti^{2n-\bullet}X$ the space of currents, and consider the de Rham homology $H_\bullet^{dR}(X;\R):=H^\bullet\left(\correnti_\bullet X,\, \de\right)$. (See \S\ref{sec:currents} for definitions and results concerning currents and de Rham homology.)

\medskip

Following \cite[Definition 5.1]{lin}, set, by duality,
\begin{eqnarray*}
L\colon \correnti_\bullet X\to \correnti_{\bullet-2}X\;, \quad && S \mapsto S\left(L\, \sspace\right) \;,\\[5pt]
\Lambda\colon \correnti_\bullet X\to \correnti_{\bullet+2}X\;, \quad && S \mapsto S\left(\Lambda\, \sspace\right) \;,\\[5pt]
H\colon \correnti_\bullet X\to \correnti_\bullet X\;, \quad && S \mapsto S\left(-H\, \sspace\right) \;;
\end{eqnarray*}
note that
$$ \left[L,\, H\right] \;=\; 2\, L \;, \qquad \left[\Lambda,\, H\right] \;=\; -2\, \Lambda \;, \qquad  \left[L,\, \Lambda\right] \;=\; H \;. $$

A current $S\in \correnti^k X$ is said \emph{primitive} if $\Lambda S=0$, equivalently, if $L^{n-k+1}S=0$, see, e.g., \cite[Proposition 5.3]{lin}; denote by $\Primcorrenti^\bullet X:=:\Primcorrenti_{2n-\bullet} X$ the space of primitive currents on $X$.

In \cite{lin}, Y. Lin proved the following result.

\begin{thm}[{\cite[Lemma 5.2, Proposition 5.3, Lemma 5.12]{lin}}]
 Let $X$ be a compact manifold endowed with a symplectic structure $\omega$. Then $\left\langle L,\, \Lambda,\, H\right\rangle$ gives an $\mathfrak{sl}(2;\R)$-module structure on $\correnti^\bullet X$. In particular, one has the Lefschetz decomposition on the space of currents,
 $$ \correnti^\bullet X \;=\; \bigoplus_{r\in\N} L^r\, \Primcorrenti^{\bullet-2r}X \;:=:\; \bigoplus_{r\in\N} L^r\, \Primcorrenti_{2n-\bullet+2r}X \;.$$

 Furthermore, the space of flat currents is an $\mathfrak{sl}(2;\R)$-submodule of the space of currents.
\end{thm}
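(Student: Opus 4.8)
The plan is to transport the $\mathfrak{sl}(2;\R)$-structure from forms to currents by duality, to read off the Lefschetz decomposition from the resulting module structure, and finally to check stability of the space of flat currents.

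First I would make the duality precise. For a linear operator $A$ on $\wedge^\bullet X$, denote by $\trasposta{A}$ its transpose on $\correnti^\bullet X$, defined by $\left(\trasposta{A}\,S\right)(\alpha) := S(A\alpha)$. Transposition reverses composition, $\trasposta{(AB)} = \trasposta{B}\,\trasposta{A}$, so on commutators $\left[\trasposta{A},\,\trasposta{B}\right] = -\trasposta{\left[A,\,B\right]}$. By the definitions preceding the statement, the current operators are $L_c = \trasposta{L}$, $\Lambda_c = \trasposta{\Lambda}$, and $H_c = -\trasposta{H}$, the extra sign on $H_c$ being exactly what is needed to compensate the sign in $\left[\trasposta{A},\,\trasposta{B}\right] = -\trasposta{\left[A,\,B\right]}$. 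Feeding Yan's relations $\left[L,\,H\right]=2L$, $\left[\Lambda,\,H\right]=-2\Lambda$, $\left[L,\,\Lambda\right]=H$ on forms into this identity, I would verify $\left[L_c,\,H_c\right]=2L_c$, $\left[\Lambda_c,\,H_c\right]=-2\Lambda_c$, $\left[L_c,\,\Lambda_c\right]=H_c$; this establishes the $\mathfrak{sl}(2;\R)$-module structure on $\correnti^\bullet X$.

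Next I would record that the action has finite $H$-spectrum. Since $H$ acts on $\wedge^k X$ as multiplication by $n-k$, a short pairing computation shows $H_c$ acts on $\correnti^j X$ as multiplication by $n-j$, so its spectrum lies in $\left\{-n,\dots,n\right\}$; this also retroactively explains the sign in the definition of $H_c$. With a finite $H$-spectrum I can invoke the same representation theory of $\mathfrak{sl}(2;\R)$ used for forms in \cite[Corollary 2.6]{yan}. Since currents are not pointwise objects, rather than decomposing fibrewise I would transpose the explicit universal Lefschetz decomposition identities valid on forms — the operator formulas with rational coefficients $a_{r,\ell,(n,k)}$ recalled in the Preliminaries — to obtain $\correnti^\bullet X = \bigoplus_{r\in\N} L_c^r\,\Primcorrenti^{\bullet-2r}X$ with $\Primcorrenti^\bullet X = \ker\Lambda_c = \ker L_c^{n-\bullet+1}$; the second form of the decomposition in the statement is then just the reindexing $\correnti^\bullet X = \correnti_{2n-\bullet}X$.

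Finally, for the submodule claim I would recall that a flat current is one in the flat-norm closure of the currents with locally integrable coefficients. Since $L_c$ is the transpose of exterior multiplication by the smooth form $\omega$ and $\Lambda_c$ the transpose of contraction with the smooth bivector $\Pi=\omega^{-1}$, both are continuous for the flat topology and, being algebraic operations of order zero with smooth coefficients, carry currents with locally integrable coefficients to currents of the same type; hence they preserve flatness, and $H_c$ does so trivially. This yields that the flat currents form an $\mathfrak{sl}(2;\R)$-submodule. The main obstacle I anticipate is precisely this passage from the algebraic, fibrewise $\mathfrak{sl}(2;\R)$-theory to the space of currents: the decomposition must be produced from transposed universal operator identities rather than pointwise, and the flat-topology continuity of $L_c$ and $\Lambda_c$ must be argued carefully, since these are the points where the infinite-dimensionality and the analytic nature of currents genuinely enter.
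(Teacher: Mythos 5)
Your proposal is correct and follows essentially the route the paper itself sets up: the paper states this theorem without proof, citing Lin, but it defines the dual operators exactly as you do — including the sign $S\mapsto S(-H\,\sspace)$ — and records the resulting bracket relations, with the decomposition then coming from the finite $H$-spectrum (Yan's $\mathfrak{sl}(2;\R)$-module argument applies abstractly, so your transposition of the explicit $a_{r,\ell,(n,k)}$ identities is a valid, if optional, substitute) and the flatness claim from $L$ and $\Lambda$ being order-zero operators with smooth coefficients. Your sign verification via $\left[\trasposta{A},\,\trasposta{B}\right]=-\trasposta{\left[A,\,B\right]}$ and the computation that $H$ acts on $\correnti^{j}X$ as $(n-j)\,\id$ are exactly the needed checks.
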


Finally, if $j\colon Y\hookrightarrow X$ is a compact oriented submanifold of $X$ of codimension $k$ (possibly with non-empty boundary), then the {\em dual current} $[Y]\in\correnti_k X$ associated with $Y$ is defined, by setting, for every $\varphi\in\wedge^kX$,
$$
 \left[Y\right](\varphi) \;:=\; \int_Y j^*(\varphi) \:.
$$
If $Y$ is a closed oriented submanifold, then the dual current $\left[Y\right]$ is $\de$-closed. According to \cite[Lemma 4.1]{tseng-yau-1}, the dual current $\left[Y\right]$ is primitive if and only if $Y$ is co-isotropic.

\subsection{Symplectic subgroups of (co)homology}
In this section, we provide a symplectic counterpart to T.-J. Li and W. Zhang's theory on cohomology of almost-complex manifolds developed in \cite{li-zhang}.
More precisely, we define the subgroups $H^{(\bullet,\bullet)}_\omega(X;\R)$ of the de Rham cohomology $H^\bullet_{dR}(X;\R)$ of a symplectic manifold $\left(X,\, \omega\right)$, and, analogously, the subgroups $H_{(\bullet,\bullet)}^\omega(X;\R)$ of the de Rham homology $H_\bullet^{dR}(X;\R)$; then, we study some of their properties: in particular, we prove that, for every compact symplectic manifold, the decomposition $H^2_{dR}(X;\R)=H^{(1,0)}_{\omega}(X;\R)\oplus H^{(0,2)}_{\omega}(X;\R)$ holds, Theorem \ref{thm:sympl-decomp-H2}, which provides a symplectic counterpart of \cite[Theorem 2.3]{draghici-li-zhang}.

\medskip

Let $X$ be a $2n$-dimensional compact manifold endowed with a symplectic structure $\omega$. For any $r,s\in\N$, define
$$ H^{(r,s)}_\omega(X;\R) \;:=\; \left\{\left[L^r\,\beta^{(s)}\right]\in H^{2r+s}_{dR}(X;\R) \st \beta^{(s)}\in \Prim^sX \right\} \;\subseteq\; H^{2r+s}_{dR}(X;\R) \;.$$
Obviously, for every $k\in\N$, one has
$$ \sum_{2r+s=k} H^{(r,s)}_\omega(X;\R) \;\subseteq\; H^k_{dR}(X;\R) \;:$$
we are concerned with studying when the above inclusion is actually an equality, and when the sum is actually a direct sum.

\begin{rem}
We underline the relations between the above subgroups and the primitive cohomologies introduced by L.-S. Tseng and S.-T. Yau in \cite{tseng-yau-1}.
As regards L.-S. Tseng and S.-T. Yau's primitive $\left(\de+\de^\Lambda\right)$-cohomology $\PH^{\bullet}_{\de+\de^{\Lambda}}(X;\R)$, note that, for every $r,s\in\N$,
$$ \imm\left(L^r\,\PH^{s}_{\de+\de^{\Lambda}}(X;\R)\to H^\bullet_{dR}(X;\R)\right) \;=\; L^r\, H^{(0,s)}_\omega(X;\R) \;\subseteq\; H^{(r,s)}_\omega(X;\R) \;. $$
In \cite[\S4.1]{tseng-yau-1}, L.-S. Tseng and S.-T. Yau have introduced also the primitive cohomology groups
$$ \PH^s_{\de}(X;\R) \;:=\; \frac{\ker\de\cap\ker\de^\Lambda\cap\Prim^sX}{\imm\de\lfloor_{\Prim^{s-1}X\cap\ker\de^\Lambda}} \;, $$
where $s\in\N$, proving that the homology on co-isotropic chains is naturally dual to $\PH^{2n-\bullet}_{\de}(X;\R)$, see \cite[pages 40--41]{tseng-yau-1};
in \cite[Proposition 2.7]{lin}, Y. Lin proved that, if the Hard Lefschetz Condition holds on $X$, then
$$ H^{(0,\bullet)}_\omega(X;\R) \;=\; \PH^\bullet_{\de}(X;\R) \;. $$
\end{rem}

\begin{rem}
 In \cite{conti-tomassini}, D. Conti and A. Tomassini studied the notion of half-flat structure on a $6$-dimensional manifold $X$, see \cite{chiossi-salamon}. Namely, an \emph{$\mathrm{SU}(3)$-structure} $\left(\omega,\, \psi\right)$ on $X$, where $\omega$ is a non-degenerate real $2$-form and $\psi$ is a decomposable complex $3$-form such that $\psi \wedge \omega = 0$ and $\psi\wedge\bar\psi = -\frac{4\im}{3}\,\omega^ 3$, is called \emph{half-flat} if both $\omega\wedge\omega$ and $\Re \psi$ are $\de$-closed. Note in particular that, if $\left(\omega,\, \psi\right)$ is a symplectic half-flat structure on $X$, then $\left[\Re\psi\right]\in H^{(0,3)}_\omega(X;\R)$.
\end{rem}

\begin{rem}
A class of examples of compact symplectic manifolds $\left(X,\, \omega\right)$ satisfying the cohomology decomposition by means of the above subgroups $H^{\bullet,\bullet}_\omega(X;\R)$ (actually, satisfying an even stronger cohomology decomposition) is provided by the compact symplectic manifolds satisfying the $\de\de^\Lambda$-Lemma, equivalently, as already recalled, the Hard Lefschetz Condition, \cite[Proposition 1.4]{merkulov}, \cite{guillemin}, \cite[Theorem 5.4]{cavalcanti}.

More precisely, on a compact manifold $X$ endowed with a symplectic structure $\omega$, the following conditions are equivalent:
 \begin{itemize}
  \item $X$ satisfies the $\de\de^\Lambda$-Lemma;
  \item it holds the decomposition
    \begin{equation}\label{eq:dedelambda-lemma-dec}
      H^\bullet_{dR}(X;\R) \;=\; \bigoplus_{r\in\N} L^r\, H^{(0,\bullet-2r)}_\omega(X;\R) \;.
    \end{equation}
 \end{itemize}

Indeed, recall that the decomposition
$$ H^\bullet_{\de+\de^\Lambda}(X;\R) \;=\; \bigoplus_{r\in\N} L^r \, \PH^{\bullet-2r}_{\de+\de^\Lambda}(X;\R) $$
holds on any compact symplectic manifold, \cite[Theorem 3.11]{tseng-yau-1}; moreover, the $\de\de^\Lambda$-Lemma holds on a compact symplectic manifold if and only if the natural homomorphism
$$ H_{\de+\de^\Lambda}^{\bullet}(X;\R)\to H_{dR}^{\bullet}(X;\R) $$
induced by the identity is actually an isomorphism; recall also that
$$ \imm\left(L^r\,\PH^{s}_{\de+\de^{\Lambda}}(X;\R)\to H^\bullet_{dR}(X;\R)\right) = L^r\, H^{(0,s)}_\omega(X;\R) \;;$$
hence, if the $\de\de^\Lambda$-Lemma holds, then one has the decomposition \eqref{eq:dedelambda-lemma-dec}.
Conversely, if \eqref{eq:dedelambda-lemma-dec} holds, then, straightforwardly, $X$ satisfies the Hard Lefschetz condition, and hence also the $\de\de^\Lambda$-Lemma, \cite[Proposition 1.4]{merkulov}, \cite{guillemin}, \cite[Theorem 5.4]{cavalcanti}.
\end{rem}

\medskip

Analogously, considering the space $\correnti^\bullet X:=:\correnti_{2n-\bullet} X$ of currents and the de Rham homology $H_\bullet^{dR}(X;\R)$, for every $r,s\in\N$, define
$$ H_{(r,s)}^\omega(X;\R) \;:=\; \left\{\left[L^r\, B_{(s)}\right]\in H_{-2r+s}^{dR}(X;\R) \st B_{(s)}\in \Primcorrenti_sX \right\} \;\subseteq\; H_{-2r+s}^{dR}(X;\R) \;;$$
as previously, for every $k\in\N$, we have just the inclusion
$$ \sum_{-2r+s=k} H_{(r,s)}^\omega(X;\R) \;\subseteq\; H_k^{dR}(X;\R) \;,$$
but, in general, neither the sum is direct nor the inclusion is an equality.

\medskip

We prove that, fixed $k\in\N$, if the sum $\sum_{2r+s=2n-k}H^{(r,s)}_\omega(X;\R)$ gives the whole \kth{\left(2n-k\right)} de Rham cohomology group, then the sum of the subgroups of the \kth{k} de Rham cohomology group is direct, \cite[Proposition 2.4]{angella-tomassini-4} (this result should be compared with \cite[Proposition 2.30]{li-zhang} and Theorem \ref{thm:implicazioni} in the almost-complex case, and with Proposition \ref{prop:para-cpf-pf} in the \para-complex case).

\begin{prop}
\label{prop:full-k-pure-2n-k}
Let $X$ be a $2n$-dimensional compact manifold endowed with a symplectic structure $\omega$.
For every $k\in\N$, the following implications hold:
$$
\xymatrix{
 H^{k}_{dR}(X;\R) = \sum_{2r+s=k}H^{(r,s)}_\omega(X;\R) \ar@{=>}[r] \ar@{=>}[d] & \bigoplus_{-2r+s=k} H_{(r,s)}^\omega(X;\R) \;\subseteq\; H_k^{dR}(X;\R) \ar@{=>}[d] \\
 H_{2n-k}^{dR}(X;\R) = \sum_{-2r+s=2n-k}H_{(r,s)}^\omega(X;\R) \ar@{=>}[r] & \bigoplus_{2r+s=2n-k} H^{(r,s)}_\omega(X;\R) \;\subseteq\; H^{2n-k}_{dR}(X;\R) \;. \\
}
$$
\end{prop}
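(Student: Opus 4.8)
The plan is to establish the two horizontal implications by a duality argument between cohomology and homology via the pairing induced by integration, in complete analogy with the proof of Theorem \ref{thm:implicazioni} for the almost-complex case. The key structural facts I would exploit are: the nondegenerate duality pairing
\[
\left\langle \sspace,\,\ssspace \right\rangle\colon H^\bullet_{dR}(X;\R) \times H_\bullet^{dR}(X;\R) \to \R \;,
\]
together with the compatibility between the Lefschetz decomposition on forms and on currents. First I would check that the pairing respects the bi-graduation in the following sense: if $\left[L^r\,\beta^{(s)}\right]\in H^{(r,s)}_\omega(X;\R)$ with $\beta^{(s)}\in\Prim^sX$, and $\left[L^{r'}\,B_{(s')}\right]\in H_{(r',s')}^\omega(X;\R)$ with $B_{(s')}\in\Primcorrenti_{s'}X$, then the pairing $\left\langle \left[L^r\,\beta^{(s)}\right],\,\left[L^{r'}\,B_{(s')}\right] \right\rangle$ vanishes unless the bidegrees are ``complementary''. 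This orthogonality is the heart of the matter: it follows from the $\mathfrak{sl}(2;\R)$-representation theory, since $L$ and $\Lambda$ are adjoint (up to sign) with respect to the pairing, and primitive forms of different weights pair to zero. Concretely, I would show
\[
\ker \left\langle H^{(r,s)}_\omega(X;\R),\, \sspace \right\rangle \;\supseteq\; \sum_{(r',s')\neq(n-r-s,\,s)} H_{(r',s')}^\omega(X;\R) \;,
\]
which is exactly the ingredient needed to deduce directness of one sum from fullness of the complementary sum.

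For the top horizontal implication, I would assume $H^{k}_{dR}(X;\R) = \sum_{2r+s=k}H^{(r,s)}_\omega(X;\R)$ and prove $\bigoplus_{-2r+s=k} H_{(r,s)}^\omega(X;\R) \subseteq H_k^{dR}(X;\R)$, i.e. that the sum of the homological subgroups is direct. Suppose a nontrivial relation holds among classes $\left[L^{r_i}\,B_{(s_i)}\right]$ with $-2r_i+s_i=k$; I would pick out one nonzero summand, say of bidegree $(r_0,s_0)$, and pair the whole relation against a suitable class in $H^{(n-r_0-s_0,\,s_0)}_\omega(X;\R)$. By the orthogonality above, all other summands die, and by the fullness hypothesis together with nondegeneracy of the pairing I can find a testing cohomology class detecting the $(r_0,s_0)$-summand, forcing it to vanish — contradiction. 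The bottom horizontal implication is identical with the roles of homology and cohomology exchanged. The two vertical implications are formal: the quasi-isomorphism $T_{\sspace}\colon \wedge^\bullet X \to \correnti_{2n-\bullet}X$, $T_\varphi:=\int_X\varphi\wedge\sspace$, commutes with $L$ and $\Lambda$ (since $T_{L\varphi}=L\,T_\varphi$, because wedging with $\omega$ and then integrating is dual to the cap action of $L$), hence it sends primitive forms to primitive currents and induces injective maps $H^{(r,s)}_\omega(X;\R)\hookrightarrow H_{(r,s)}^\omega(X;\R)$ in each bidegree; this immediately upgrades a fullness/directness statement for cohomology to the same statement for homology, accounting for the downward arrows.

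The main obstacle I anticipate is pinning down precisely the index-matching under the pairing, i.e. determining which homological bidegree $(r',s')$ pairs nontrivially with a given cohomological bidegree $(r,s)$. One must track carefully the Weil-type formula for the action of $\star_\omega$ (or equivalently the $\mathfrak{sl}(2;\R)$-structure) on primitive components of complementary degree, using that $\int_X L^r\beta^{(s)}\wedge L^{r'}\gamma^{(s')}\neq 0$ forces $s=s'$ and $r+r'+s=n$, together with the nondegeneracy of the primitive pairing on $\Prim^sX$. Once this bookkeeping is done correctly — and it is precisely parallel to the degree accounting in the proof of Theorem \ref{thm:implicazioni}, with the single grading $(p,q)$ replaced by the symplectic bigrading $(r,s)$ — the separation argument via Hahn–Banach-type nondegeneracy goes through routinely. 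I would therefore front-load the verification of this orthogonality lemma and present it as a preliminary computation, after which both horizontal implications reduce to a one-line duality argument.
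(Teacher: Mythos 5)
Your proposal is correct in substance and follows the paper's proof exactly: the vertical implications come from the quasi-isomorphism $T_{\sspace}\colon\wedge^\bullet X\to\correnti^\bullet X$, $T_\varphi:=\int_X\varphi\wedge\sspace$, which satisfies $T_{L\,\sspace}=L\,T_{\sspace}$ and hence sends $L^r\,\Prim^sX$ into $L^r\,\Primcorrenti^sX$ and is injective on the bigraded subgroups; the horizontal implications come from the non-degenerate pairing $H^\bullet_{dR}(X;\R)\times H_\bullet^{dR}(X;\R)\to\R$ together with the orthogonality of the two Lefschetz bigradings, your separation argument being exactly the spelled-out version of the paper's ``this suffices'' step (itself modelled on the proof of Theorem \ref{thm:implicazioni}, as you anticipate). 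The one point needing correction is the homological bookkeeping, which you yourself flag as the delicate step: since the paper indexes primitive currents by dimension, $\Primcorrenti_sX:=:\Primcorrenti^{2n-s}X$, the map $T$ induces injections $H^{(r,s)}_\omega(X;\R)\hookrightarrow H_{(r,2n-s)}^\omega(X;\R)$, not into homological bidegree $(r,s)$, and the bidegree complementary to $H^{(r,s)}_\omega(X;\R)$ under the pairing is $(n-r-s,\,2n-s)$, not $(n-r-s,\,s)$. Indeed, with your stated convention $B_{(s')}\in\Primcorrenti_{s'}X$, a homological class of bidegree $(n-r-s,\,s)$ lives in degree $-2(n-r-s)+s=2r+3s-2n$, which equals the cohomological degree $2r+s$ only when $s=n$, so the detection pairing in your separation step would vanish for degree reasons. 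Your own consistency check --- that $\int_X L^r\beta^{(s)}\wedge L^{r'}\gamma^{(s')}\neq 0$ forces $s=s'$ and $r+r'+s=n$ --- is the right computation and fixes this automatically, once you remember that a primitive current of dimension $q$ has primitive weight $2n-q$; after that substitution your argument coincides with the paper's proof.
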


\begin{proof}
 Note that the quasi-isomorphism $T_\sspace\colon \wedge^\bullet X \ni \varphi \mapsto \int_X \varphi \wedge \sspace \in \correnti^\bullet X$ satisfies
 $$ T_{L\, \sspace} \;=\; L T_\sspace \;, $$
 and hence, in particular, it preserves the bi-graduation,
 $$ T \left(L^{\bullet_1}\, \Prim^{\bullet_2}X\right) \;\subseteq\; L^{\bullet_1}\, \Primcorrenti^{\bullet_2}X \;:=:\; L^{\bullet_1}\ \Primcorrenti_{2n-\bullet_2}X \;, $$
 and it induces, for every $r,s\in\N$, an injective map
 $$ H^{(r,s)}_\omega(X;\R) \hookrightarrow H_{(r,2n-s)}^\omega(X;\R) \;. $$
 Therefore the two vertical implications are proven.

 Consider now the non-degenerate duality pairing
 $$ \left\langle \sspace,\,\ssspace\right\rangle \colon H^{\bullet}_{dR}(X;\R) \times H_{\bullet}^{dR}(X;\R) \to \R\;, $$
 and note that, for every $r,s\in\N$,
 $$ \ker\left\langle H^{(r,s)}_{\omega}(X;\R),\;\sspace\right\rangle \;\supseteq\; \sum_{(p,q)\neq(n-r-s,2n-s)} H_{(p,q)}^{\omega}(X;\R) \;,$$
 and, analogously, for every $p,q\in\N$,
 $$ \ker\left\langle \sspace ,\; H_{(p,q)}^{\omega}(X;\R) \right\rangle \;\supseteq\; \sum_{(r,s)\neq(n-p-s,2n-q)} H^{(r,s)}_{\omega}(X;\R) \;;$$
 this suffices to prove the two horizontal implications.
\end{proof}

A straightforward consequence of \cite[Corollary 2]{mathieu}, or \cite[Theorem 0.1]{yan}, and Proposition \ref{prop:full-k-pure-2n-k} is the following result, \cite[Corollary 2.5]{angella-tomassini-4}, which should be compared with \cite[Theorem 2.16, Proposition 2.17]{draghici-li-zhang}.

\begin{cor}
\label{cor:sympl-dedelambda-dec}
 Let $X$ be a compact manifold endowed with a symplectic structure $\omega$. Suppose that the Hard Lefschetz Condition holds on $X$, equivalently, that $X$ satisfies the $\de\de^\Lambda$-Lemma. Then
 $$
    H^\bullet_{dR}(X;\R) \;=\; \bigoplus_{r\in\N} H^{(r,\bullet-2r)}_\omega(X;\R)
     \qquad \text{ and } \qquad
    H_\bullet^{dR}(X;\R) \;=\; \bigoplus_{r\in\N} H_{(r,\bullet+2r)}^\omega(X;\R) \;.
 $$
\end{cor}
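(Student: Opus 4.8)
The statement to prove is Corollary \ref{cor:sympl-dedelambda-dec}: assuming the Hard Lefschetz Condition (equivalently, the $\de\de^\Lambda$-Lemma) holds on a compact symplectic manifold $X$ of dimension $2n$, one has the direct-sum decompositions of $H^\bullet_{dR}(X;\R)$ and $H_\bullet^{dR}(X;\R)$ by means of the subgroups $H^{(r,s)}_\omega(X;\R)$ and $H_{(r,s)}^\omega(X;\R)$, respectively. The plan is to deduce this as a formal consequence of the results already established: the equivalence of the Hard Lefschetz Condition, the $\de\de^\Lambda$-Lemma, and the existence of $\omega$-symplectically-harmonic representatives (\cite[Corollary 2]{mathieu}, \cite[Theorem 0.1]{yan}), together with the implications proven in Proposition \ref{prop:full-k-pure-2n-k}.

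First I would prove the \emph{fullness} statement, namely that for every $k\in\N$,
$$ H^k_{dR}(X;\R) \;=\; \sum_{2r+s=k} H^{(r,s)}_\omega(X;\R) \;. $$
Under the Hard Lefschetz Condition, every de Rham cohomology class admits an $\omega$-symplectically-harmonic representative, that is, a $\de$-closed $\de^\Lambda$-closed form; by the Lefschetz decomposition on differential forms, \cite[Corollary 2.6]{yan}, such a representative decomposes as $\sum_r L^r\,\beta^{(\bullet-2r)}$ with each $\beta^{(\bullet-2r)}\in\Prim^{\bullet-2r}X$. Using the commutation relations $\left[\de,\,L\right]=0$ and $\left[\de^\Lambda,\,L\right]=-\de$ together with the $\mathfrak{sl}(2;\R)$-representation theory (finite $H$-spectrum), each primitive component $\beta^{(\bullet-2r)}$ is itself $\de$-closed, so that $\left[L^r\,\beta^{(\bullet-2r)}\right]\in H^{(r,\bullet-2r)}_\omega(X;\R)$; this yields the equality above. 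The same argument applied to currents, using the Lefschetz decomposition on the space of currents \cite[Lemma 5.2, Proposition 5.3]{lin} and the duality $\star_\omega\colon H^\bullet_{dR}(X;\R)\stackrel{\simeq}{\to}H^{2n-\bullet}_{\de^\Lambda}(X;\R)$, gives the homological fullness statement $H_{2n-k}^{dR}(X;\R)=\sum_{-2r+s=2n-k}H_{(r,s)}^\omega(X;\R)$ for every $k$.

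Next I would invoke Proposition \ref{prop:full-k-pure-2n-k} to upgrade fullness to a direct-sum decomposition. Having shown fullness in \emph{every} degree, I apply the proposition twice: the fullness of $H^{2n-k}_{dR}(X;\R)$ implies, by the left-to-right top horizontal implication read in complementary degree, that the sum $\bigoplus_{2r+s=k}H^{(r,s)}_\omega(X;\R)$ is direct inside $H^k_{dR}(X;\R)$; combining this directness with the fullness already established gives
$$ H^\bullet_{dR}(X;\R) \;=\; \bigoplus_{r\in\N} H^{(r,\bullet-2r)}_\omega(X;\R) \;. $$
Symmetrically, the fullness of $H^{2n-k}_{dR}(X;\R)$ feeds the vertical and horizontal implications of Proposition \ref{prop:full-k-pure-2n-k} to yield directness of $\bigoplus_{-2r+s=k}H_{(r,s)}^\omega(X;\R)$ in de Rham homology, and together with the homological fullness this produces the second decomposition $H_\bullet^{dR}(X;\R)=\bigoplus_{r\in\N}H_{(r,\bullet+2r)}^\omega(X;\R)$.

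I expect the main obstacle to be the careful bookkeeping in the fullness step: verifying that the primitive components of an $\omega$-symplectically-harmonic form are individually $\de$-closed, so that they genuinely represent classes in the $H^{(r,s)}_\omega$ subgroups, rather than merely assembling to a closed form. This is where the symplectic Hodge theory and the $\mathfrak{sl}(2;\R)$-structure must be used precisely; once this is in hand, the passage from fullness to the direct sum is purely formal via Proposition \ref{prop:full-k-pure-2n-k} and requires no further analysis. The homological versions should follow mechanically by $\star_\omega$-duality and the current-theoretic Lefschetz decomposition, with no new difficulties beyond transcribing the form-level argument to currents.
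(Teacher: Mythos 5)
Your proposal is correct and follows essentially the paper's own route: the corollary is derived exactly as you do, from Mathieu's and Yan's theorem (under the Hard Lefschetz Condition every class has a $\de$-closed $\de^\Lambda$-closed representative, whose primitive Lefschetz components are again $\de$-closed and $\de^\Lambda$-closed --- this is the bookkeeping you flag, and it works because $L$ and $\Lambda$ both preserve $\omega$-symplectically-harmonic forms, hence so do the Lefschetz projectors, which are polynomials in $L$ and $\Lambda$) combined with Proposition \ref{prop:full-k-pure-2n-k}. The only superfluous and slightly imprecise element is your separate current-theoretic argument for homological fullness: the vertical implication of Proposition \ref{prop:full-k-pure-2n-k} already transports cohomological fullness at degree $k$ to homological fullness at degree $2n-k$ via the quasi-isomorphism $T_{\sspace}$, whereas the $\star_\omega$-duality you invoke identifies $H^\bullet_{dR}(X;\R)$ with the canonical homology $H^{2n-\bullet}_{\de^\Lambda}(X;\R)$ of the complex of \emph{forms}, not directly with the de Rham homology of currents.
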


In particular, when $\dim X=4$ and taking $k=2$ in Proposition \ref{prop:full-k-pure-2n-k}, one gets that, if $H^{2}_{dR}(X;\R)=H^{(1,0)}_\omega(X;\R) + H^{(0,2)}_\omega(X;\R)$ holds, then actually $H^2_{dR}(X;\R) = H^{(1,0)}_\omega(X;\R) \oplus H^{(0,2)}_\omega(X;\R)$ holds. In fact, the following result states that $H^2_{dR}(X;\R)$ always decomposes as direct sum of $H^{(1,0)}_\omega(X;\R)$ and $H^{(0,2)}_\omega(X;\R)$, also in dimension higher than $4$, \cite[Theorem 2.6]{angella-tomassini-4}: this gives a symplectic counterpart to T. Dr\v{a}ghici, T.-J. Li and W. Zhang's decomposition theorem \cite[Theorem 2.3]{draghici-li-zhang} in the complex setting, in fact, without the restriction to dimension $4$.

\begin{thm}
\label{thm:sympl-decomp-H2}
 Let $X$ be a compact manifold endowed with a symplectic structure $\omega$. Then
 \begin{eqnarray*}
  H^2_{dR}(X;\R) &=& H^{(1,0)}_\omega(X;\R) \oplus H^{(0,2)}_\omega(X;\R) \;.
 \end{eqnarray*}

 In particular, if $\dim X=4$, then
 $$ 
  H^{\bullet}_{dR}(X;\R) \;=\; \bigoplus_{r\in\N} H^{(r,\bullet-2r)}_\omega(X;\R)
   \qquad \text{ and } \qquad
  H_{\bullet}^{dR}(X;\R) \;=\; \bigoplus_{r\in\N} H_{(r,\bullet+2r)}^\omega(X;\R) \;.$$
\end{thm}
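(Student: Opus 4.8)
The plan is to prove the decomposition $H^2_{dR}(X;\R)=H^{(1,0)}_\omega(X;\R)\oplus H^{(0,2)}_\omega(X;\R)$ for an arbitrary compact $2n$-dimensional symplectic manifold, without any Hard Lefschetz assumption. The key structural fact is the Lefschetz decomposition on differential forms, $\wedge^2 X = L\,\Prim^0 X \oplus \Prim^2 X$, which for a $2$-form $\alpha$ gives a pointwise splitting $\alpha = f\,\omega + \beta^{(2)}$ with $f\in\mathcal{C}^\infty(X;\R)$ and $\beta^{(2)}\in\Prim^2 X$. The subgroup $H^{(1,0)}_\omega(X;\R)$ is represented by classes of the form $[f\,\omega]$ with $f\,\omega$ closed, and $H^{(0,2)}_\omega(X;\R)$ by closed primitive $2$-forms; the content of the theorem is that every de Rham class of degree $2$ splits accordingly and that the two subgroups intersect trivially.

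First I would address \emph{fullness}, i.e. $H^2_{dR}(X;\R)=H^{(1,0)}_\omega(X;\R)+H^{(0,2)}_\omega(X;\R)$. Take a closed $2$-form $\alpha$ and write its pointwise Lefschetz decomposition $\alpha = f\,\omega + \beta^{(2)}$. The obstruction is that neither $f\,\omega$ nor $\beta^{(2)}$ need be closed separately. Applying $\de$ and using $\de\alpha=0$, I would exploit the commutation relations $[\de,L]=0$, $[\de^\Lambda,L]=-\de$, and the identity $\de^\Lambda=[\de,\Lambda]$ recorded earlier. The idea is to correct $\alpha$ within its cohomology class: I want to find $\gamma\in\wedge^1 X$ so that $\alpha-\de\gamma$ has a closed $L\,\Prim^0$-component and a closed primitive component. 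Concretely, $\de\alpha=0$ forces $\de\beta^{(2)} = -\de f\wedge\omega = -L(\de f)$, and since $\de f$ is primitive in degree $1$ (every $1$-form is primitive when $n\geq 1$) the term $L(\de f)$ is the Lefschetz image of a closed primitive $1$-form. The plan is to produce a primitive $1$-form $\theta$ with $\de^\Lambda$-type properties whose differential cancels the mixing: after subtracting an exact form $\de\theta$, both pure-type components become closed. This hinges on the surjectivity of $L\colon H^1_{dR}\to H^3_{dR}$ at the level of forms, which follows because $L\colon\wedge^{n-1}X\to\wedge^{n+1}X$ is an isomorphism together with the primitivity of low-degree forms; I would feed the relation $\de(f\omega)=L(\de f)$ through the $\mathfrak{sl}(2;\R)$-module structure to rewrite it as an exact contribution.

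Next I would establish \emph{directness}, $H^{(1,0)}_\omega(X;\R)\cap H^{(0,2)}_\omega(X;\R)=\{0\}$. Suppose a class is represented both by a closed $f\,\omega$ and by a closed primitive $\beta^{(2)}$; then $f\,\omega-\beta^{(2)}=\de\eta$ for some $\eta\in\wedge^1 X$. Applying the projection onto the $L\,\Prim^0$-component, or equivalently pairing with suitable structure currents, I would argue that the primitive part must be exact in the primitive sense and the $L\,\Prim^0$-part vanishes in cohomology. The cleanest route is via the non-degenerate pairing $\langle\sspace,\ssspace\rangle\colon H^2_{dR}(X;\R)\times H^{dR}_2(X;\R)\to\R$ together with the dual Lefschetz decomposition on currents from Y. Lin's results: the subgroups $H^{(1,0)}_\omega$ and $H^{(0,2)}_\omega$ annihilate complementary pieces $H_{(r,s)}^\omega$ of the homology, exactly as in the proof of Proposition \ref{prop:full-k-pure-2n-k}. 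This duality argument simultaneously shows the intersection is zero.

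The main obstacle I expect is the fullness step rather than directness: producing the explicit exact correction that closes up both Lefschetz components requires carefully managing the failure of $\de$ to preserve the bi-grading, and the computation must avoid circularity (one cannot invoke Hard Lefschetz, which would make the statement trivial). The safest strategy is to imitate the proof of T.~Dr\v{a}ghici, T.-J.~Li, and W.~Zhang's \cite[Theorem 2.3]{draghici-li-zhang}, translating their Hodge-theoretic argument for $J$-invariant and $J$-anti-invariant forms into the symplectic language via the symplectic-$\star$-operator $\star_\omega$ and the pairing $\int_X\alpha\wedge\star_\omega\beta$. Finally, the dimension-$4$ corollary follows by combining the degree-$2$ decomposition with Poincaré duality and the vertical/horizontal implications of Proposition \ref{prop:full-k-pure-2n-k}: in dimension $4$ the only nontrivial intermediate degree is $2$, and the decompositions in degrees $0,1,3,4$ are immediate from $\Prim^0X=\mathcal{C}^\infty(X;\R)$ and the isomorphism $L\colon\wedge^{n-k}X\to\wedge^{n+k}X$.
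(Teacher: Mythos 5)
There are genuine gaps in both halves of your proposal, though each is repairable. In the \emph{fullness} step, the justification you offer --- ``surjectivity of $L\colon H^1_{dR}(X;\R)\to H^3_{dR}(X;\R)$ at the level of forms'' --- fails under every reading once $\dim X\geq 6$: pointwise, $L\colon\wedge^1X\to\wedge^3X$ cannot be surjective since $2n<\binom{2n}{3}$, and cohomologically the map is not surjective even on the standard symplectic torus $\T^6$ (where $b_1=6<20=b_3$); moreover the isomorphism you quote, $L\colon\wedge^{n-1}X\to\wedge^{n+1}X$, coincides with what you need only when $n=2$. The correct tool is the other extreme of \cite[Corollary 2.7]{yan}, namely $L^{n-1}\colon\wedge^1X\stackrel{\simeq}{\to}\wedge^{2n-1}X$, and with it the paper's fullness argument is three lines, needing no correction PDE, no $\de^\Lambda$, and no imitation of \cite[Theorem 2.3]{draghici-li-zhang} (whose Hodge-theoretic proof leans on the self-dual/anti-self-dual splitting special to dimension $4$): given $[\alpha]\in H^2_{dR}(X;\R)$, since $H^{2n}_{dR}(X;\R)=\R\left\langle\left[\omega^n\right]\right\rangle$ one writes $L^{n-1}\alpha=\lambda\,\omega^n+\de\gamma_{2n-1}$, chooses $\gamma_1\in\wedge^1X$ with $L^{n-1}\gamma_1=\gamma_{2n-1}$, and, since $\left[\de,\,L^{n-1}\right]=0$, obtains $L^{n-1}\left(\alpha-\de\gamma_1-\lambda\,\omega\right)=0$, that is, $\alpha-\de\gamma_1-\lambda\,\omega\in\Prim^2X=\ker L^{n-1}\lfloor_{\wedge^2X}$; hence $[\alpha]=\lambda\,[\omega]+\left[\alpha-\de\gamma_1-\lambda\,\omega\right]$. (Your $\de^\Lambda$ sketch can be completed --- solve $\de^\Lambda\theta=-n\left(f-\lambda\right)$ with $\lambda$ the $\omega^n$-mean of $f$, which is possible since $\star_\omega$ is invertible and exactness in top degree is an integral condition --- but this is the same argument transported through $\star_\omega$, not an independent route.)

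Your \emph{directness} step also has a gap for $n>2$. The duality scheme of Proposition \ref{prop:full-k-pure-2n-k} converts fullness in one degree into pureness in the Poincar\'e-dual degree: a class $\mathfrak{c}\in H^{(1,0)}_\omega(X;\R)\cap H^{(0,2)}_\omega(X;\R)$ indeed annihilates every $H_{(p,q)}^\omega(X;\R)$, but to conclude $\mathfrak{c}=0$ you would need $\sum_{p,q}H_{(p,q)}^\omega(X;\R)=H_2^{dR}(X;\R)$, i.e., fullness in homology in degree $2$ --- which would follow from fullness in cohomology in degree $2n-2$, but neither is part of the theorem nor supplied by your degree-$2$ fullness step; your scheme closes only when $2n-2=2$, i.e., $\dim X=4$. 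The paper instead uses a quadratic positivity argument, which no linear pairing argument replaces: if $\mathfrak{c}=\left[f\,\omega\right]=\left[\beta^{(2)}\right]$ with $\beta^{(2)}\in\Prim^2X$, then, since $L^{n-1}\beta^{(2)}=0$ pointwise and the two closed representatives differ by an exact form (so Stokes applies), one gets $0=\int_X f\,L^{n-1}\beta^{(2)}=\int_X f\,\omega\wedge\beta^{(2)}\wedge\omega^{n-2}=\int_X f^2\,\omega^n$, whence $f=0$ and $\mathfrak{c}=0$. Your derivation of the dimension-$4$ ``in particular'' statement from the degree-$2$ decomposition, via $\Prim^0X=\mathcal{C}^\infty(X;\R)$, the isomorphism $L\colon\wedge^1X\to\wedge^3X$ valid for $n=2$, and Proposition \ref{prop:full-k-pure-2n-k}, is fine as sketched.
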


\begin{proof}
 Let $2n:=\dim X$.
 Firstly, we prove that $H^{(1,0)}_\omega(X;\R) \cap H^{(0,2)}_{\omega}(X;\R)=\{0\}$. Let
 $$
 \mathfrak{c}:=:\left[f\,\omega\right]:=:\left[\beta^{(2)}\right]\in H^{(1,0)}_\omega(X;\R) \cap H^{(0,2)}_{\omega}(X;\R)\,,
 $$ where $f\in\mathcal{C}^\infty(X;\R)$ and $\beta^{(2)}\in \Prim^{2}X$. Being $\Prim^{2}X=\ker L^{n-1}\lfloor_{\wedge^{2}X}$, one has
 $$ 0 \;=\; \int_X f\,L^{n-1} \beta^{(2)} \;=\; \int_X f\,\omega \wedge \beta^{(2)} \wedge \omega^{n-2} \;=\; \int_X f\,\omega \wedge f\,\omega \wedge \omega^{n-2} \;=\; \int_X f^2 \omega^n $$
 hence $f=0$, that is, $\mathfrak{c}=0$.

 Now, we prove that $H^{2}_{dR}(X;\R)=H^{(1,0)}_{\omega}(X;\R)+H^{(0,2)}_{\omega}(X;\R)$. Let $\mathfrak{a}:=:\left[\alpha\right]\in H^2_{dR}(X;\R)$. Then $L^{n-1}\mathfrak{a}\in H^{2n}_{dR}(X;\R) = \R\left\langle \left[\omega^n\right]\right\rangle$, that is, there exist $\lambda\in\R$ and $\gamma_{2n-1}\in\wedge^{2n-1}X$ such that $L^{n-1}\alpha=\lambda\,\omega^n+\de\gamma_{2n-1}$. Since $L^{n-1}\colon \wedge^1X\stackrel{\simeq}{\to} \wedge^{2n-1}X$ is an isomorphism, there exists $\gamma_1\in\wedge^1X$ such that $L^{n-1}\gamma_1=\gamma_{2n-1}$. Hence, since $\left[\de,\, L^{n-1}\right]=0$, we get that $L^{n-1}\left(\alpha-\de\gamma_1-\lambda\,\omega\right)=0$, that is, $\alpha-\de\gamma_1-\lambda\,\omega\in\Prim^2X$; therefore we get that
 $$ \mathfrak{a} \;:=:\; \left[\alpha\right] \;=\; \left[\alpha-\de\gamma_1\right] \;=\; \underbrace{\lambda\,\left[\omega\right]}_{\in \, H^{(1,0)}_\omega(X;\R)} \,+\, \underbrace{\left[\alpha-\de\gamma_1-\lambda\,\omega\right]}_{\in \, H^{(0,2)}_\omega(X;\R)} \;,$$
 concluding the proof.
\end{proof}

Part of the argument in the proof of Theorem \ref{thm:sympl-decomp-H2} can be generalized to prove the following result, \cite[Remark 2.7]{angella-tomassini-4}.

\begin{prop}
Let $X$ be a $2n$-dimensional compact manifold endowed with a symplectic structure $\omega$. For every $k\in\left\{1,\ldots,\left\lfloor\frac{n}{2}\right\rfloor\right\}$, it holds
$$ H^{(k,0)}_\omega(X;\R) \cap H^{(0,2k)}_{\omega}(X;\R) \;=\; \left\{0\right\} \;.$$
\end{prop}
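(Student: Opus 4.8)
The statement to prove is that for a compact symplectic manifold $\left(X,\,\omega\right)$ of dimension $2n$, and for every $k\in\left\{1,\ldots,\left\lfloor\frac{n}{2}\right\rfloor\right\}$, one has $H^{(k,0)}_\omega(X;\R) \cap H^{(0,2k)}_{\omega}(X;\R) = \left\{0\right\}$. The plan is to mimic the injectivity argument from the proof of Theorem \ref{thm:sympl-decomp-H2}, namely the part showing $H^{(1,0)}_\omega(X;\R) \cap H^{(0,2)}_\omega(X;\R)=\{0\}$, and to see that it generalizes verbatim once $k\leq n/2$ guarantees that the relevant power $L^{n-2k}$ lands in the right range. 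The key pairing identity is a purely computational one built from the Lefschetz structure.

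First I would take an arbitrary class $\mathfrak{c}$ in the intersection. By definition of $H^{(k,0)}_\omega(X;\R)$, the class $\mathfrak{c}$ has a representative of the form $L^k\,\beta^{(0)} = f\,\omega^k$ for some $f\in\mathcal{C}^\infty(X;\R)$ (since $\Prim^0X=\mathcal{C}^\infty(X;\R)$); by definition of $H^{(0,2k)}_\omega(X;\R)$, the same class $\mathfrak{c}$ has a representative $\beta^{(2k)}\in \Prim^{2k}X$. Because these are equal as de Rham classes, I can choose representatives satisfying $f\,\omega^k = \beta^{(2k)}$ as \emph{forms} (the natural choice), or more carefully work with the pairing at the level of cohomology; the cleanest route is to pick a closed representative that is simultaneously $f\,\omega^k$ and primitive, which is possible only if both describe the same class, and then evaluate an integral pairing that must vanish.

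The main computation is the following. Since $\beta^{(2k)}\in\Prim^{2k}X = \ker L^{n-2k+1}\lfloor_{\wedge^{2k}X}$ (using the characterization $\Prim^\bullet X=\ker L^{n-\bullet+1}$ recalled in the excerpt), and since $2k\leq n$ guarantees $n-2k\geq 0$ so that $L^{n-2k}$ is a meaningful nonnegative power, I would compute
\begin{eqnarray*}
0 &=& \int_X f\,\omega^{n-2k}\wedge \beta^{(2k)}\wedge\omega^{2k} \\
&=& \int_X f\,\omega^k \wedge \beta^{(2k)} \wedge \omega^{n-k} \\
&=& \int_X f\,\omega^k \wedge f\,\omega^{k} \wedge \omega^{n-2k} \\
&=& \int_X f^2\,\omega^n \;,
\end{eqnarray*}
where the first equality uses that $L^{n-2k+1}\beta^{(2k)}=0$ forces $\omega^{n-2k+1}\wedge\beta^{(2k)}=0$ (so inserting the extra factor $\omega$ kills the integrand), and the middle step substitutes $f\,\omega^k=\beta^{(2k)}$. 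Since $\omega^n$ is a volume form, $\int_X f^2\,\omega^n=0$ forces $f\equiv 0$, whence $\mathfrak{c}=0$. I would need to double-check that the hypothesis $k\leq\lfloor n/2\rfloor$ is exactly what makes $n-2k\geq 0$, so that all powers of $L$ appearing are genuine (nonnegative) powers and the degree bookkeeping closes; this is where the upper bound on $k$ is used and is the only subtle point.

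The main obstacle, and the step I would treat most carefully, is justifying that one may realize the equality of the two cohomology classes by a single representative satisfying $f\,\omega^k=\beta^{(2k)}$ as forms — or, to avoid that issue entirely, reformulating the vanishing integral as a pairing of de Rham classes so that it depends only on cohomology. The safest phrasing is to note that $\left[f\,\omega^k\right]=\left[\beta^{(2k)}\right]$ in $H^{2k}_{dR}(X;\R)$, and that the quantity $\int_X \alpha\wedge\omega^{n-2k}\wedge\omega^{2k}\big/\!\sim$ descends to a well-defined pairing on cohomology when one factor is primitive and closed; substituting the two equal representatives then yields the identity above. I expect the Lefschetz primitivity relation $L^{n-2k+1}\beta^{(2k)}=0$ to be the crucial ingredient that collapses the cross-terms, exactly as in the $k=1$ case already proven in Theorem \ref{thm:sympl-decomp-H2}.
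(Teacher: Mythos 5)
Your approach is exactly the paper's: take $\mathfrak{c}=\left[f\,\omega^k\right]=\left[\beta^{(2k)}\right]$ in the intersection, use the primitivity relation $\beta^{(2k)}\in\Prim^{2k}X=\ker L^{n-2k+1}\lfloor_{\wedge^{2k}X}$ to make a pairing against a suitable power of $\omega$ vanish, substitute the cohomologous representative $f\,\omega^k$ for $\beta^{(2k)}$, and conclude $\int_X f^2\,\omega^n=0$, hence $f\equiv 0$ and $\mathfrak{c}=0$. Your closing paragraph also correctly resolves the one genuine subtlety: one cannot in general choose a single representative that is simultaneously of the form $f\,\omega^k$ and primitive, so the substitution must be done at the level of the cohomology pairing; the honest justification (which the paper leaves implicit) is that $f\,\omega^k$ and $\omega$ are $\de$-closed, so by Stokes the integral $\int_X f\,\omega^k\wedge\left(\sspace\right)\wedge\omega^{n-2k}$ is unchanged upon replacing $\beta^{(2k)}$ by $f\,\omega^k$ — it is closedness of the other factors, not primitivity, that makes the pairing descend.

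There is, however, a degree-bookkeeping slip in your displayed chain: as written, the integrands $f\,\omega^{n-2k}\wedge\beta^{(2k)}\wedge\omega^{2k}$ and $f\,\omega^k\wedge\beta^{(2k)}\wedge\omega^{n-k}$ have degree $2n+2k$, not $2n$, so those forms vanish for trivial reasons and the claimed equality with the third line (which does have degree $2n$) does not follow from the stated substitution. The corrected chain, which is the paper's, reads
$$ 0 \;=\; \int_X f\,\left(\omega^{n-2k+1}\wedge\beta^{(2k)}\right)\wedge\omega^{k-1} \;=\; \int_X f\,\omega^k\wedge\beta^{(2k)}\wedge\omega^{n-2k} \;=\; \int_X f\,\omega^k\wedge f\,\omega^k\wedge\omega^{n-2k} \;=\; \int_X f^2\,\omega^n \;, $$
where the first expression vanishes by primitivity (here $k\geq 1$ guarantees $n-k\geq n-2k+1$, so $\omega^{n-2k+1}\wedge\beta^{(2k)}$ genuinely occurs as a factor of $\omega^{n-k}\wedge\beta^{(2k)}$), and $k\leq\left\lfloor\frac{n}{2}\right\rfloor$ guarantees $n-2k\geq 0$, exactly as you observed. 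With the exponents corrected, your argument coincides with the paper's proof of this proposition (which itself generalizes the $k=1$ injectivity step of Theorem \ref{thm:sympl-decomp-H2}).
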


\begin{proof}
 Let $\mathfrak{c}:=:\left[f\,\omega^{k}\right]:=:\left[\beta^{(2k)}\right]\in H^{(k,0)}_\omega(X;\R) \cap H^{(0,2k)}_{\omega}(X;\R)$, where $f\in\mathcal{C}^\infty(X;\R)$ and $\beta^{(2k)}\in \Prim^{2k}X$. Being $\Prim^{2k}X=\ker L^{n-2k+1}\lfloor_{\wedge^{2k}X}$, one has
 $$
  0 \;=\; \int_X f\,L^{n-2k+1} \beta^{(2k)} \wedge \omega^{k-1} \;=\; \int_X f\,\omega^k \wedge \beta^{(2k)} \wedge \omega^{n-2k}
    \;=\; \int_X f\,\omega^k \wedge f\,\omega^k \wedge \omega^{n-2k} \;=\; \int_X f^2 \omega^n
 $$
 hence $f=0$, that is, $\mathfrak{c}=0$.
\end{proof}

\medskip

In some cases, in studying $H^{(r,s)}_\omega(X;\R)$, one can reduce to study $H^{(0,s)}_\omega(X;\R)$: this is the matter of the following result, \cite[Proposition 2.8]{angella-tomassini-4}.

\begin{prop}
 Let $X$ be a $2n$-dimensional compact manifold endowed with a symplectic structure $\omega$. Then, for every $r,s\in\N$ such that $2r+s\leq n$, one has
$$ H^{(r,s)}_\omega(X;\R) \;=\; L^r H^{(0,s)}_\omega(X;\R) \;.$$
\end{prop}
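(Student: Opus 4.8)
For a compact $2n$-dimensional symplectic manifold $(X,\omega)$, and for every $r,s\in\N$ with $2r+s\leq n$, one has $H^{(r,s)}_\omega(X;\R) = L^r H^{(0,s)}_\omega(X;\R)$.

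The plan is to prove the two inclusions separately, the inclusion $\supseteq$ being essentially trivial from the definitions. Recall that $H^{(r,s)}_\omega(X;\R)$ consists of the classes $\left[L^r\beta^{(s)}\right]$ with $\beta^{(s)}\in\Prim^sX$, and that $L^r H^{(0,s)}_\omega(X;\R)$ consists of the classes $L^r\left[\gamma^{(s)}\right]$ where $\left[\gamma^{(s)}\right]\in H^{(0,s)}_\omega(X;\R)$, i.e.\ $\gamma^{(s)}\in\Prim^sX$ is $\de$-closed. First I would observe that $\supseteq$ is immediate: if $\left[\gamma^{(s)}\right]\in H^{(0,s)}_\omega(X;\R)$ with $\gamma^{(s)}\in\Prim^sX\cap\ker\de$, then $L^r\gamma^{(s)}$ is a primitive-times-power representative, so $L^r\left[\gamma^{(s)}\right]=\left[L^r\gamma^{(s)}\right]\in H^{(r,s)}_\omega(X;\R)$. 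The content is the reverse inclusion.

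For $\subseteq$, take a class $\mathfrak{a}=\left[L^r\beta^{(s)}\right]\in H^{(r,s)}_\omega(X;\R)$ with $\beta^{(s)}\in\Prim^sX$. The goal is to produce a \emph{$\de$-closed} primitive $s$-form $\gamma^{(s)}$ with $\left[L^r\beta^{(s)}\right]=L^r\left[\gamma^{(s)}\right]$. The key tool is that, since $\left[L,\de\right]=0$, the form $\de\left(L^r\beta^{(s)}\right)=L^r\,\de\beta^{(s)}$ is zero, so $L^r\,\de\beta^{(s)}=0$. Now $\de\beta^{(s)}$ has degree $s+1$, and here the degree hypothesis $2r+s\leq n$ enters decisively: I would invoke the injectivity of the Lefschetz map recalled in the excerpt, namely that $L^k\colon\wedge^{n-k}X\to\wedge^{n+k}X$ is an isomorphism (\cite[Corollary 2.7]{yan}) and, more to the point, that $L\lfloor_{\bigoplus_{k=-1}^{n-2}\wedge^{n-k-2}X}$ is injective (\cite[Corollary 2.8]{yan}). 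Concretely, because $s+1\leq n-2r+1\leq n$, the power $L^r$ restricted to $(s+1)$-forms is injective (this is the statement that $L^r\colon\wedge^{m}X\to\wedge^{m+2r}X$ is injective whenever $m+2r\leq n$, which follows from the $\mathfrak{sl}(2;\R)$-representation theory, i.e.\ from \cite[Corollary 2.8]{yan} applied iteratively, since $L^{n-m}\colon\wedge^mX\to\wedge^{2n-m}X$ is an isomorphism and $L^r$ for $r\leq n-m$ is its restriction composed appropriately). Hence $L^r\,\de\beta^{(s)}=0$ forces $\de\beta^{(s)}=0$, so $\beta^{(s)}$ is \emph{already} $\de$-closed. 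Setting $\gamma^{(s)}:=\beta^{(s)}$, we get $\left[\gamma^{(s)}\right]\in H^{(0,s)}_\omega(X;\R)$ and $\mathfrak{a}=\left[L^r\beta^{(s)}\right]=L^r\left[\beta^{(s)}\right]\in L^r H^{(0,s)}_\omega(X;\R)$, which closes the inclusion.

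The main obstacle, and the step I would write most carefully, is the injectivity of $L^r$ on $(s+1)$-forms under the numerical constraint $2r+s\leq n$. The cited results in the excerpt give injectivity of a single $L$ on the relevant range and isomorphism of the top power $L^{n-k}$; I would need to package these into the precise statement that $L^r\colon\wedge^{m}X\to\wedge^{m+2r}X$ is injective whenever $m+2r\leq n$ (here $m=s+1$, so $m+2r=s+2r+1\leq n+1$, and one checks the borderline case $s+2r+1=n+1$ separately, where $L^r$ on $(s+1)$-forms lands in degree $n+1$ and injectivity still holds by the $\mathfrak{sl}(2;\R)$-module structure since $s+1\leq n$). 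A clean way to argue is purely representation-theoretic: on the primitive component of bidegree controlled by $H$-weight, $L^r$ acts injectively as long as it does not exceed the top of the corresponding irreducible $\mathfrak{sl}(2;\R)$-string, and the inequality $2r+s\leq n$ guarantees this. This is elementary once set up, but it is the one place where the degree hypothesis is genuinely used, so it deserves the explicit verification rather than a mere appeal to the named corollaries.
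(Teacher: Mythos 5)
Your proof is correct and takes essentially the same route as the paper's: both use $\left[\de,\,L\right]=0$ to reduce the nontrivial inclusion to the injectivity of $L^r$ on $(s+1)$-forms under the hypothesis $2r+s\leq n$, which follows by iterating the injectivity of $L\colon\wedge^{j}X\to\wedge^{j+2}X$ for $j\leq n-1$ from \cite[Corollary 2.8]{yan}, forcing $\de\beta^{(s)}=0$. Your separate check at the borderline degree $s+2r+1=n+1$ is sound but unnecessary once the iterated criterion is stated correctly, namely that only the intermediate degrees $s+1,\,s+3,\,\ldots,\,s+2r-1$ need to be at most $n-1$, which is exactly the inequality $2r+s\leq n$.
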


\begin{proof}
 Since $L\colon\wedge^{j}X\to\wedge^{j+2}X$ is injective for $j\leq n-1$, \cite[Corollary 2.8]{yan}, (in fact, an isomorphism for $j=n-1$, \cite[Corollary 2.7]{yan},) and $\left[\de,\,L\right]=0$, we get that
 \begin{eqnarray*}
  H^{(r,s)}_{\omega}(X;\R) &=& \left\{ \left[\omega^r\,\beta^{(s)}\right]\in H^{2r+s}_{dR}(X;\R) \st \beta^{(s)}\in\wedge^{s}X\cap\ker\Lambda \text{ such that }L^r\de \beta^{(s)}=0\right\} \\[5pt]
  &=& \left\{ \left[\omega^r\right]\smile\left[\beta^{(s)}\right]\in H^{2r+2}_{dR}(X;\R) \st \beta^{(s)}\in\wedge^sX\cap\ker\Lambda \right\} \,,
 \end{eqnarray*}
 assumed that $2r+s\leq n$.
\end{proof}

In particular, for every $r\in\left\{1,\ldots,\left\lfloor\frac{n}{2}\right\rfloor\right\}$, the spaces $H^{(r,0)}_\omega(X;\R)$ are $1$-dimensional $\R$-vector spaces, more precisely, $H^{(r,0)}_{\omega}(X;\R)=\R\left\langle \left[\omega^r\right] \right\rangle$.

Furthermore, by the previous proposition, it follows that, for $k\leq \frac{1}{2}\,\dim X$, the condition 
$$
H^k_{dR}(X;\R) = \bigoplus_{r\in\N}H^{(r,k-2r)}_\omega(X;\R)
$$ 
is in fact equivalent to $H^k_{dR}(X;\R) = \bigoplus_{r\in\N} L^r\, H^{(0,k-2r)}_\omega(X;\R)$.

\subsection{Symplectic cohomological decomposition on solvmanifolds}\label{subsec:symplectic-solvmfds}
As shown in Corollary \ref{cor:sympl-dedelambda-dec}, whenever $X$ is a compact manifold endowed with a symplectic structure $\omega$ satisfying the Hard Lefschetz Condition, the de Rham cohomology $H^\bullet_{dR}(X;\R)$, respectively the de Rham homology $H_\bullet^{dR}(X;\R)$, decomposes as direct sum of the subgroups $H^{(\bullet,\bullet)}_{\omega}(X;\R)$, respectively $H_{(\bullet,\bullet)}^{\omega}(X;\R)$. Hence, it should be interesting to study cohomological properties for classes of symplectic manifolds not satisfying the Hard Lefschetz property, e.g., non-tori nilmanifolds, \cite[Theorem A]{benson-gordon-nilmanifolds}.

In this section, we study a Nomizu-type theorem for the subgroups $H^{(\bullet,\bullet)}_{\omega}(X;\R)$ on completely-solvable solvmanifolds endowed with left-invariant symplectic structures, Proposition \ref{prop:linear-cpf-invariant-cpf-omega}, providing explicit examples and studying their cohomological properties.
(As regards notations, definitions, and results concerning nilmanifolds and solvmanifolds, we refer to \S\ref{sec:solvmanifolds}.)

\subsubsection{Left-invariant symplectic structures on solvmanifolds}

Let $X=\left.\Gamma\right\backslash G$ be a completely-solvable solvmanifold endowed with a $G$-left-invariant symplectic structure $\omega$.

Recall that, by A. Hattori's theorem \cite[Corollary 4.2]{hattori}, the complex $\left(\wedge^\bullet\duale{\mathfrak{g}},\, \de\right)$, which is isomorphic to the sub-complex composed of the $G$-left-invariant forms of $\left(\wedge^\bullet X,\, \de\right)$, turns out to be quasi-isomorphic to the de Rham complex $\left(\wedge^\bullet X,\, \de\right)$, that is, $H_{dR}^\bullet\left(\mathfrak{g};\R\right) \simeq H^\bullet_{dR}(X;\R)$.

Since $\omega$ is $G$-left-invariant, $\left\langle L,\, \Lambda,\, H\right\rangle$ induces a $\mathfrak{sl}(2;\R)$-representation both on $\wedge^\bullet X$ and on
$\wedge^\bullet\duale{\mathfrak{g}}$. Hence, for any $r,s\in\N$, we can consider both the subgroup $H^{(r,s)}_\omega(X;\R)$ of $H^\bullet_{dR}(X;\R)$, and the subgroup
$$ H^{(r,s)}_\omega(\mathfrak{g};\R) \;:=\; \left\{ \left[L^r\,\beta^{(s)}\right]\in H^{\bullet}_{dR}\left(\mathfrak{g};\R\right) \st \Lambda \beta^{(s)}=0 \right\} $$
of $H^{\bullet}_{dR}\left(\mathfrak{g};\R\right) \simeq H^\bullet_{dR}(X;\R)$, namely, the subgroup constituted of the de Rham cohomology classes admitting $G$-left-invariant representatives in $L^r \, \Prim^sX$.

In this section, we are concerned with studying the linking between $H^{(\bullet,\bullet)}_\omega(X;\R)$ and $H^{(\bullet,\bullet)}_\omega(\mathfrak{g};\R)$. This will let us study explicit examples in \S\ref{subsubsec:ex-sympl-cohom}.

\medskip

In the following lemma, we adapt the F.~A. Belgun symmetrization trick, \cite[Theorem 7]{belgun}, to the symplectic case, \cite[Lemma 3.2]{angella-tomassini-4}.

\begin{lem}
\label{lemma:belgun-sympl}
Let $X=\left.\Gamma\right\backslash G$ be a solvmanifold, and denote the Lie algebra naturally associated to $G$ by $\mathfrak{g}$. Let $\omega$ be a $G$-left-invariant symplectic structure on $X$.
Let $\eta$ be a $G$-bi-invariant volume form on $G$, given by J. Milnor's Lemma \cite[Lemma 6.2]{milnor}, such that $\int_X\eta=1$. Up to identifying $G$-left-invariant forms on $X$ and linear forms over $\duale{\mathfrak{g}}$ through left-translations, consider the F.~A. Belgun symmetrization map, \cite[Theorem 7]{belgun},
$$ \mu\colon \wedge^\bullet X \to \wedge^\bullet \duale{\mathfrak{g}}\;,\qquad \mu(\alpha)\;:=\;\int_X \alpha\lfloor_m \, \eta(m) \;.$$
One has that
$$ \mu\lfloor_{\wedge^\bullet \duale{\mathfrak{g}}}\;=\;\id\lfloor_{\wedge^\bullet \duale{\mathfrak{g}}} $$
and that
$$ \de\left(\mu(\sspace)\right) \;=\; \mu\left(\de\sspace\right) \qquad \text{ and } \qquad L\left(\mu(\sspace)\right) \;=\; \mu\left(L\sspace\right) \;.$$
In particular, $\mu$ sends primitive forms to $G$-left-invariant primitive forms.
\end{lem}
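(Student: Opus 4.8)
The plan is to recall the statement of the F.~A. Belgun symmetrization trick, \cite[Theorem 7]{belgun}, which already furnishes the two crucial properties $\mu\lfloor_{\wedge^\bullet\duale{\mathfrak{g}}}=\id\lfloor_{\wedge^\bullet\duale{\mathfrak{g}}}$ and $\de\circ\mu=\mu\circ\de$ (see the version recorded in Section~\ref{sec:solvmanifolds}). Indeed, these two identities are exactly the content of the previously stated Belgun lemma, so the only genuinely new assertion to prove here is the commutation $L\circ\mu=\mu\circ L$, from which the statement about primitive forms will then follow almost immediately.

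First I would prove $L\circ\mu=\mu\circ L$. The key observation is that $\omega$ is $G$-left-invariant, so for every $g\in G$ one has $\pullback{g}\omega=\omega$, where $\pullback{g}$ denotes the pullback by left-translation. Since the Lefschetz operator is $L=\omega\wedge\sspace$ and pullback commutes with the wedge product, for any $\alpha\in\wedge^\bullet X$ one computes
$$ \pullback{g}\left(L\alpha\right) \;=\; \pullback{g}\left(\omega\wedge\alpha\right) \;=\; \pullback{g}\omega\wedge\pullback{g}\alpha \;=\; \omega\wedge\pullback{g}\alpha \;=\; L\left(\pullback{g}\alpha\right) \;. $$
In other words, $L$ commutes with every $\pullback{g}$. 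Because $\mu(\alpha)=\int_X \alpha\lfloor_m\,\eta(m)$ is, by \cite[Theorem 7]{belgun}, obtained by averaging the pullbacks of $\alpha$ against the bi-invariant volume form $\eta$, and $L$ is a pointwise ($\mathcal{C}^\infty(X;\R)$-linear, in fact $\R$-linear at each fiber) operator that commutes with all these pullbacks, it passes through the integral: $L\left(\mu(\alpha)\right)=\mu\left(L\alpha\right)$. The only care needed is to check that one may interchange $L$ with the integral defining $\mu$; this is routine since $\omega$ is $G$-left-invariant (hence constant under the averaging) and the integration is over the compact fibre direction, so $L$ acts as multiplication by a fixed invariant form.

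Next I would deduce that $\mu$ sends primitive forms to $G$-left-invariant primitive forms. By definition $\mu$ takes values in $\wedge^\bullet\duale{\mathfrak{g}}$, i.e. in the space of $G$-left-invariant forms, so it only remains to verify primitivity is preserved. Recall from Section~\ref{sec:symplectic} that a form $\beta\in\wedge^kX$ is primitive if and only if $L^{n-k+1}\beta=0$, where $2n=\dim X$. If $\beta\in\Prim^kX$, then applying the just-proven commutation relation $n-k+1$ times gives
$$ L^{n-k+1}\left(\mu(\beta)\right) \;=\; \mu\left(L^{n-k+1}\beta\right) \;=\; \mu(0) \;=\; 0 \;, $$
so $\mu(\beta)$ is again primitive, and it is $G$-left-invariant because $\mu$ lands in $\wedge^\bullet\duale{\mathfrak{g}}$. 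This completes the argument.

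I do not expect a serious obstacle here: the statement is essentially a formal consequence of the Belgun trick together with the $G$-left-invariance of $\omega$. The only point demanding a little attention is the justification of exchanging the pointwise operator $L$ with the averaging integral $\mu$; once one notes that $\omega$ is fixed under the averaging (being left-invariant) and that $L$ commutes with all the pullbacks $\pullback{g}$, this interchange is immediate, and the primitivity statement then reduces to iterating the commutation relation.
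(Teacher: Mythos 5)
Your proposal is correct and follows essentially the same route as the paper: the paper likewise observes that only the commutation $\mu\left(L\,\alpha\right)=L\,\mu\left(\alpha\right)$ needs proof, and establishes it by the one-line computation $\mu\left(L\,\alpha\right)=\int_X \omega\lfloor_m\wedge\alpha\lfloor_m\,\eta(m)=\omega\wedge\int_X\alpha\lfloor_m\,\eta(m)=L\,\mu\left(\alpha\right)$, where the $G$-left-invariance of $\omega$ makes $\omega\lfloor_m$ constant under the identification and hence lets it factor out of the integral --- exactly the interchange you justify via commutation with the pullbacks. Your deduction of the primitivity statement from $\Prim^kX=\ker L^{n-k+1}\lfloor_{\wedge^kX}$ by iterating the commutation is the same (implicit) argument the paper intends.
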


\begin{proof}
 It has to be shown just that $\mu\left(L\,\alpha\right)=L\,\mu\left(\alpha\right)$ for every $\alpha\in\wedge^\bullet X$. Note that, $\omega$ being a $G$-left-invariant form, one has $\mu\left(L\,\alpha\right)=\int_X \left(\omega\wedge\alpha\right)\lfloor_m \, \eta(m) =\int_X \omega\lfloor_m \wedge \alpha\lfloor_m \, \eta(m) = \omega \wedge \int_X \alpha\lfloor_m \, \eta(m) = L\,\mu\left(\alpha\right)$, for every $\alpha\in\wedge^\bullet X$.
\end{proof}

As a consequence of the previous lemma, we can prove the following result, which relates the subgroups $H^{(r,s)}_\omega(X;\R)$ with their $G$-left-invariant part $H^{(r,s)}_\omega(\mathfrak{g};\R)$, \cite[Proposition 3.3]{angella-tomassini-4} (compare with Proposition \ref{prop:linear-cpf-invariant-cpf-J}, and also with \cite[Theorem 3.4]{fino-tomassini}, for almost-complex structures, and with Proposition \ref{prop:linear-cpf-invariant-cpf-K} for almost-$\mathbf{D}$-complex structures in the sense of F.~R. Harvey and H.~B. Lawson).

\begin{prop}
\label{prop:linear-cpf-invariant-cpf-omega}
Let $X=\left.\Gamma\right\backslash G$ be a solvmanifold endowed with a $G$-left-invariant symplectic structure $\omega$, and denote the Lie algebra naturally associated to $G$ by $\mathfrak{g}$.
For every $r,s\in\N$, the map
$$ j\colon H^{(r,s)}_\omega(\mathfrak{g};\R) \to H^{(r,s)}_\omega(X;\R) $$
induced by left-translations is injective, and, if $H_{dR}^\bullet\left(\mathfrak{g};\R\right) \simeq H^\bullet_{dR}(X;\R)$ (for instance, if $X$ is a completely-solvable solvmanifold), then it is in fact an isomorphism.
\end{prop}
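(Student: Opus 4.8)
The plan is to mirror verbatim the argument given for the almost-complex case in Proposition \ref{prop:linear-cpf-invariant-cpf-J}, substituting the commutation of the symmetrization map with $J$ by its commutation with the Lefschetz operator $L$, which is exactly the extra content of Lemma \ref{lemma:belgun-sympl}. First I would check that $j$ is well defined: since $\omega$ is $G$-left-invariant, a $G$-left-invariant representative $L^r\,\beta^{(s)}$, with $\beta^{(s)}$ a primitive $G$-left-invariant $s$-form, determines via left-translations a $\de$-closed element of $L^r\,\Prim^sX$ on $X$ (primitivity being a pointwise algebraic condition), and hence a class in $H^{(r,s)}_\omega(X;\R)$; thus the inclusion of $G$-left-invariant forms induces $j\colon H^{(r,s)}_\omega(\mathfrak{g};\R) \to H^{(r,s)}_\omega(X;\R)$.

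Next I would exploit the symmetrization map $\mu$ of Lemma \ref{lemma:belgun-sympl}. Because $\mu$ commutes with $\de$, it descends to $\mu\colon H^\bullet_{dR}(X;\R) \to H^\bullet_{dR}(\mathfrak{g};\R)$; because it commutes with $L$, it sends primitive forms to $G$-left-invariant primitive forms. Indeed, using $\Prim^sX=\ker L^{n-s+1}\lfloor_{\wedge^sX}$, if $L^{n-s+1}\alpha=0$ then $L^{n-s+1}\mu(\alpha)=\mu\left(L^{n-s+1}\alpha\right)=0$, so $\mu(\alpha)$ is a $G$-left-invariant primitive form; consequently $\mu\left(L^r\,\Prim^sX\right)$ lands in the space of $G$-left-invariant forms of the shape $L^r\,\beta^{(s)}$ with $\beta^{(s)}$ primitive, and $\mu$ induces $\mu\colon H^{(r,s)}_\omega(X;\R) \to H^{(r,s)}_\omega(\mathfrak{g};\R)$.

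Then, since $\mu$ restricts to the identity on $\wedge^\bullet\duale{\mathfrak{g}}$ by Lemma \ref{lemma:belgun-sympl}, I would record the commutative triangle
$$
\xymatrix{
H^{(r,s)}_{\omega}(\mathfrak{g};\R) \ar[r]^{j} \ar@/_1.5pc/[rr]_{\id} & H^{(r,s)}_{\omega}(X;\R) \ar[r]^{\mu} & H^{(r,s)}_{\omega}(\mathfrak{g};\R)
}
$$
from which $j$ is injective and $\mu$ is surjective. Finally, when the inclusion $\left(\wedge^\bullet\duale{\mathfrak{g}},\,\de\right)\hookrightarrow\left(\wedge^\bullet X,\,\de\right)$ is a quasi-isomorphism --- in particular for completely-solvable solvmanifolds, by A.~Hattori's theorem \cite[Corollary 4.2]{hattori} --- the induced map $\mu\colon H^\bullet_{dR}(X;\R)\to H^\bullet_{dR}(\mathfrak{g};\R)$ is an isomorphism (inverse to $i$), hence injective; its restriction $\mu\colon H^{(r,s)}_\omega(X;\R)\to H^{(r,s)}_\omega(\mathfrak{g};\R)$ is then injective as well, and together with the surjectivity already established this makes $\mu$ bijective, so that $j$ is an isomorphism.

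I do not expect a genuine obstacle here, the proof being essentially formal once Lemma \ref{lemma:belgun-sympl} is in place; the only point that deserves care is verifying that $\mu$ respects the full \emph{Lefschetz} bi-graduation and not merely the de Rham degree, which is precisely what the commutation $L\,\mu=\mu\,L$ secures through the kernel description of primitive forms.
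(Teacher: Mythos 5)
Your proposal is correct and follows essentially the same route as the paper's own proof: the symmetrization map $\mu$ of Lemma \ref{lemma:belgun-sympl}, its commutation with $\de$ and with $L$, the commutative triangle giving $\mu\circ j=\id$ (whence injectivity of $j$ and surjectivity of $\mu$), and A.~Hattori's theorem to upgrade $j$ to an isomorphism. The only difference is cosmetic: your explicit check that $\mu$ preserves primitivity via $\Prim^sX=\ker L^{n-s+1}\lfloor_{\wedge^sX}$ re-derives precisely the final assertion of Lemma \ref{lemma:belgun-sympl}, which the paper invokes directly.
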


\begin{proof}
Left-translations induce the map $j\colon H^{(r,s)}_\omega(\mathfrak{g};\R) \to H^{(r,s)}_\omega(X;\R)$.
Consider the F.~A. Belgun's symmetrization map $\mu\colon\wedge^\bullet X \to \wedge^\bullet \duale{\mathfrak{g}}$: since it commutes with $\de$ by \cite[Theorem 7]{belgun}, it induces the map $\mu\colon H^\bullet_{dR}(X;\R) \to H_{dR}^\bullet\left(\mathfrak{g};\R\right)$, and, since it commutes with $L$ by Lemma \ref{lemma:belgun-sympl}, it induces the map $\mu\colon H^{(r,s)}_\omega(X;\R) \to H^{(r,s)}_\omega(\mathfrak{g};\R)$. Moreover, since $\mu$ is the identity on the space of $G$-left-invariant forms, we get the commutative diagram
$$
\xymatrix{
H^{(r,s)}_{\omega}(\mathfrak{g};\R) \ar[r]^{j} \ar@/_1.5pc/[rr]_{\id} & H^{(r,s)}_{\omega}(X;\R) \ar[r]^{\mu} & H^{(r,s)}_{\omega}(\mathfrak{g};\R) \;.
}
$$
Hence $j\colon H^{(r,s)}_\omega(\mathfrak{g};\R) \to H^{(r,s)}_\omega(X;\R)$ is injective, and $\mu\colon H^{(r,s)}_\omega(X;\R) \to H^{(r,s)}_\omega(\mathfrak{g};\R)$ is surjective.

Furthermore, when $H_{dR}^\bullet\left(\mathfrak{g};\R\right) \simeq H^\bullet_{dR}(X;\R)$ (for instance, when $X$ is a completely-solvable solvmanifold, by A. Hattori's theorem \cite[Theorem 4.2]{hattori}), since $\mu\lfloor_{\wedge^\bullet\duale{\mathfrak{g}}} = \id\lfloor_{\wedge^\bullet\duale{\mathfrak{g}}}$ by \cite[Theorem 7]{belgun}, we get that the map $\mu\colon H^\bullet_{dR}(X;\R) \to H_{dR}^\bullet\left(\mathfrak{g};\R\right)$ is the identity map, and hence $\mu\colon H^{(r,s)}_\omega(X;\R) \to H^{(r,s)}_\omega(\mathfrak{g};\R)$ is also injective, hence an isomorphism.
\end{proof}

\subsubsection{Symplectic (co)homology decomposition on solvmanifolds}\label{subsubsec:ex-sympl-cohom}
Proposition \ref{prop:linear-cpf-invariant-cpf-omega} is a useful tool to study explicit examples, \cite[Example 3.4, Example 3.5, Example 3.6]{angella-tomassini-4}.

\begin{ex}{\itshape A $6$-dimensional symplectic nilmanifold such that $H^{(0,3)}_\omega(X;\R)+H^{(1,1)}_\omega(X;\R)\subsetneq H^3_{dR}(X;\R)$ and $H^{(0,3)}_\omega(X;\R)\cap H^{(1,1)}_\omega(X;\R)\neq\{0\}$.}\\
Take a $6$-dimensional nilmanifold
$$ X \;=\; \left.\Gamma \right\backslash G \;:=\; \left(0^3,\; 12,\; 14-23,\; 15+34 \right) $$
endowed with the $G$-left-invariant symplectic structure
$$ \omega \;:=\; e^{16}+e^{35}+e^{24} \;.$$

By K. Nomizu's theorem \cite[Theorem 1]{nomizu}, one computes
\begin{eqnarray*}
H^1_{dR}(X;\R) &=& \underbrace{\R\left\langle e^1,\; e^2,\; e^3\right\rangle}_{=H^{(0,1)}_\omega(X;\R)} \;,\\[5pt]
H^2_{dR}(X;\R) &=& \underbrace{\R\left\langle e^{16}+e^{35}+e^{24} \right\rangle}_{=H^{(1,0)}_\omega(X;\R)} \oplus \underbrace{\R\left\langle e^{13},\; e^{14}+e^{23},\; 2\cdot e^{24}-e^{16}-e^{35}\right\rangle}_{=H^{(0,2)}_\omega(X;\R)} \;, \\[5pt]
H^3_{dR}(X;\R) &=& \R\left\langle e^{126}-e^{145}-2\cdot e^{235},\; e^{136},\; e^{146}+\frac{1}{2}\cdot e^{236}+\frac{1}{2}\cdot e^{345},\; e^{245}\right\rangle
\end{eqnarray*}
(where, as usual, we have listed the harmonic representatives with respect to the $G$-left-invariant metric $\sum_{j=1}^{6}e^j\odot e^j$ instead of their classes, and we have shortened, for example, $e^{hk}:=e^h\wedge e^k$).

Since the Lefschetz decompositions of the $g$-harmonic representatives of $H^3_{dR}(X;\R)$ are
\begin{eqnarray*}
 e^{126}-e^{145}-2\cdot e^{235} &=& \underbrace{\left(-\frac{1}{2}\cdot e^{126}-\frac{1}{2}\cdot e^{235}-e^{145}\right)}_{\in \Prim^3X} + \underbrace{\left(\frac{3}{2}\cdot e^{126}-\frac{3}{2}\cdot e^{235}\right)}_{=\; L\left(-\frac{3}{2}\cdot e^2\right)} \;, \\[5pt]
 e^{136} &=& \underbrace{\left(\frac{1}{2}\cdot e^{136}-\frac{1}{2}\cdot e^{234}\right)}_{\in \Prim^3X} + \underbrace{\left(\frac{1}{2}\cdot e^{136}+\frac{1}{2}\cdot e^{234}\right)}_{=\; L\left(-\frac{1}{2}\cdot e^3\right)} \;, \\[5pt]
 e^{146}+\frac{1}{2}\cdot e^{236}+\frac{1}{2}\cdot e^{345} &=& \underbrace{\left(\frac{1}{4}\cdot e^{146}-\frac{1}{4}\cdot e^{345}+\frac{1}{2}\cdot e^{236}\right)}_{\in \Prim^3X} + \underbrace{\left(\frac{3}{4}\cdot e^{146}+\frac{3}{4}\cdot e^{345}\right)}_{=\; L\left(-\frac{3}{4}\cdot e^4\right)} \;, \\[5pt]
 e^{245} &=& \underbrace{\left(\frac{1}{2}\cdot e^{156}+\frac{1}{2}\cdot e^{245}\right)}_{\in \Prim^3X} + \underbrace{\left(-\frac{1}{2}\cdot e^{156}+\frac{1}{2}\cdot e^{245}\right)}_{=\; L\left(\frac{1}{2}\cdot e^5\right)} \;,
\end{eqnarray*}
and since
$$ \de\wedge^2\duale{\mathfrak{g}} \;=\; \R\left\langle e^{123},\, e^{124},\; e^{125},\; e^{126}+e^{145},\; e^{134},\; e^{135},\; e^{146}-e^{236}-e^{345},\; e^{234} \right\rangle \;, $$
we get that
$$
 \left[e^{126}-e^{145}-2\cdot e^{235}\right] \;=\; \left[e^{126}-e^{145}-2\cdot e^{235}+\de e^{46}\right] \;=\; \left[2\cdot e^{126}-2\cdot e^{235}\right] \;=\; \left[L \left(-2\cdot e^2\right)\right] \;\in\; H^{(1,1)}_\omega(X;\R)
$$
and
\begin{eqnarray*}
 \left[e^{136}\right] &=& \left[e^{136}+\de\left(\frac{1}{2}\cdot e^{45}-\frac{1}{2}\cdot e^{26}\right)\right] \;=\; \left[e^{136} + e^{234}\right] \;=\; \left[L\left(-e^3\right)\right] \;\in\; H^{(1,1)}_\omega(X;\R) \;, \\[5pt]
 \left[e^{136}\right] &=& \left[e^{136}-\de\left(\frac{1}{2}\cdot e^{45}-\frac{1}{2}\cdot e^{26}\right)\right] \;=\; \left[e^{136} - e^{234}\right] \;\in\; H^{(0,3)}_\omega(X;\R) \;,
\end{eqnarray*}
while it is straightforward to check that
$$ \R\left\langle \left[ e^{146}+\frac{1}{2}\cdot e^{236}+\frac{1}{2}\cdot e^{345} \right],\; \left[ e^{245} \right] \right\rangle \cap \left(H^{(0,3)}_\omega(X;\R)+H^{(1,1)}_\omega(X;\R)\right) \;=\; \left\{0\right\} \;; $$
in particular, $H^{(0,3)}_\omega(X;\R)+H^{(1,1)}_\omega(X;\R)\subsetneq H^3_{dR}(X;\R)$ and $H^{(0,3)}_\omega(X;\R)\cap H^{(1,1)}_\omega(X;\R)\neq\{0\}$.
\end{ex}

\begin{ex}{\itshape A $6$-dimensional symplectic solvmanifold satisfying the decomposition
$$ H_{dR}^\bullet\left(X;\R\right) = \bigoplus_{r\in\N} L^r\, H^{(0,\bullet-2r)}_\omega\left(X;\R\right) \;. $$}
Take the $6$-dimensional solvable Lie algebra
$$ \mathfrak{g}_{3.4}^{-1}\oplus\mathfrak{g}_{3.5}^0 \;:=\; \left(-13,\; 23,\; 0,\; -56,\; 46,\; 0 \right) $$
endowed with the linear symplectic structure
$$ \omega \;:=\; e^{12}+e^{36}+e^{45} \;.$$
The corresponding connected simply-connected Lie group $G_{3.4}^{-1}\times G_{3.5}^0$ admits a compact quotient $X$, whose de Rham cohomology is the same as the cohomology of $\left(\wedge^\bullet\duale{\left(\mathfrak{g}_{3.4}^{-1}\oplus\mathfrak{g}_{3.5}^0\right)},\, \de\right)$, see \cite[Table 5]{bock}: indeed, note that $\dim_\R H^k_{dR}\left(X;\R\right)=\dim_\R H^k\left(\mathfrak{g}_{3.4}^{-1}\oplus\mathfrak{g}_{3.5}^0;\R\right)$ for every $k\in\N$.

It is straightforward to compute
\begin{eqnarray*}
H_{dR}^1\left(X;\R\right) &=& \underbrace{\R\left\langle e^3,\; e^6 \right\rangle}_{=\; H^{(0,1)}_\omega\left(X;\R\right)} \;, \\[5pt]
H_{dR}^2\left(X;\R\right) &=& \underbrace{\R\left\langle e^{12}+e^{36}+e^{45} \right\rangle}_{=\; H^{(1,0)}_\omega\left(X;\R\right)} \oplus \underbrace{\R\left\langle e^{12}-e^{36},\; e^{12}-e^{45} \right\rangle}_{=\; H^{(0,2)}_\omega\left(X;\R\right)} \;, \\[5pt]
H_{dR}^3\left(X;\R\right) &=& \underbrace{\R\left\langle e^{123}+e^{345},\; e^{126}+e^{456} \right\rangle}_{=\; H^{(1,0)}_\omega\left(X;\R\right) \;=\; L\,H^{(0,1)}_\omega\left(X;\R\right)} \oplus \underbrace{\R\left\langle e^{123}-e^{345},\; e^{126}-e^{456} \right\rangle}_{=\; H^{(0,3)}_\omega\left(X;\R\right)}  \;, \\[5pt]
H_{dR}^4\left(X;\R\right) &=& \underbrace{\R\left\langle e^{1236}+e^{1245}+e^{3456} \right\rangle}_{=\; H^{(2,0)}_\omega\left(X;\R\right)} \oplus \underbrace{\R\left\langle e^{1236}-e^{1245},\; e^{1236}-e^{3456} \right\rangle}_{=\; H^{(1,2)}_\omega\left(X;\R\right) \;=\; L\, H^{(0,2)}_\omega\left(X;\R\right)} \;, \\[5pt]
H_{dR}^5\left(X;\R\right) &=& \underbrace{\R\left\langle e^{12456},\; e^{12345} \right\rangle}_{=\; H^{(2,1)}_\omega\left(X;\R\right) \;=\; L^2\, H^{(0,1)}_\omega\left(X;\R\right)} \;, \\[5pt]
\end{eqnarray*}
hence we get a decomposition
$$ H_{dR}^\bullet\left(X;\R\right) \;=\; \bigoplus_{r\in\N} L^r\, H^{(0,\bullet-2r)}_\omega\left(X;\R\right) \;.$$
In particular, it follows that the Hard Lefschetz Condition holds on $\left(X,\, \omega\right)$.
\end{ex}

\begin{ex}{\itshape A $6$-dimensional completely-solvable solvmanifold such that $H^{(0,3)}_\omega\left(X;\R\right) + H^{(1,1)}_\omega\left(X;\R\right) \subsetneq H^3_{dR}(X;\R)$.}\\
Take a $6$-dimensional completely-solvable solvmanifold with Lie algebra $\mathfrak{g}_{3.1}\oplus \mathfrak{g}_{3.4}^{-1}$,
$$ X \;=\; \left. \Gamma \right\backslash G \;:=\; \left(-23,\; 0,\; 0,\; -46,\; 56,\; 0 \right) \;, $$
see \cite[Table 5]{bock}, endowed with the $G$-left-invariant symplectic structure
$$ \omega \;:=\; e^{12}+e^{36}+e^{45} \;.$$

By A. Hattori's theorem \cite[Corollary 4.2]{hattori}, one computes
\begin{eqnarray*}
H^1_{dR}(X;\R) &=& \underbrace{\R\left\langle e^2,\; e^3,\; e^6\right\rangle}_{=H^{(0,1)}_\omega(X;\R)} \;,\\[5pt]
H^2_{dR}(X;\R) &=& \underbrace{\R\left\langle e^{12}+e^{36}+e^{45} \right\rangle}_{=H^{(1,0)}_\omega(X;\R)} \oplus \underbrace{\R\left\langle e^{12}-e^{36},\; e^{12}-e^{45},\; e^{13},\; e^{26} \right\rangle}_{=H^{(0,2)}_\omega(X;\R)} \;, \\[5pt]
H^3_{dR}(X;\R) &=& \R\left\langle e^{123},\; e^{126},\; e^{136},\; e^{245},\; e^{345},\; e^{456} \right\rangle \;. \\[5pt]
\end{eqnarray*}

Note that $e^{123}-e^{345}$, $e^{126}-e^{456}$ and $e^{245}+\de e^{16}$ are primitive, and consequently
$$ H^{(0,3)}_\omega\left(X;\R\right) \;\supseteq\; \R\left\langle e^{123}-e^{345},\; e^{126}-e^{456},\; e^{245} \right\rangle \;; $$
since $e^{123}+e^{345}=L e^3$, $e^{126}+e^{456}=Le^6$, and $e^{245}-\de e^{16}=L\, e^2$, it follows that
$$ H^{(1,1)}_\omega\left(X;\R\right) \;=\; L\, H^{(0,3)}_\omega\left(X;\R\right) \;\supseteq \; \R\left\langle e^{123}+e^{345},\; e^{126}+e^{456},\; e^{245} \right\rangle \;; $$
since
$$ e^{136} \;=\; \underbrace{\frac{1}{2}\,\left(e^{136}+e^{145}\right)}_{\in L\,\Prim^1X} + \underbrace{\frac{1}{2}\,\left(e^{136}-e^{145}\right)}_{\in \Prim^3X} \;, $$
and
$$ \de\wedge^2\duale{\mathfrak{g}} \;=\; \R \left\langle e^{146}-e^{234},\; e^{156}+e^{235},\; e^{236},\; e^{246},\; e^{256},\; e^{346},\; e^{356} \right\rangle \:, $$
it follows that
$$ \left\langle e^{136} \right\rangle \not\in H^{(0,3)}_\omega\left(X;\R\right) + H^{(1,1)}_\omega\left(X;\R\right) \;,$$
and hence $H^{(0,3)}_\omega\left(X;\R\right) + H^{(1,1)}_\omega\left(X;\R\right) \subsetneq H^3_{dR}(X;\R)$.
\end{ex}

\medskip

Finally, we give explicit examples of dual currents on a compact symplectic half-flat manifold, \cite[Example 3.7]{angella-tomassini-4}.
\begin{ex}{\itshape Dual currents of oriented special Lagrangian submanifolds of the Nakamura manifold.}\\
Let $\C^3$ be endowed with the product $*$ defined by
$$
\left(w^1,\, w^2,\, w^3\right)
\,*\,
\left(z^1,\, z^2,\, z^3\right)
\;:=\;
\left(z^1+w^1,\, \esp^{-w^1}z^2+w^2,\, \esp^{w^1}z^3+w^3\right)
$$
for every $\left(w^1, \, w^2, \, w^3\right),\, \left(z^1, \, z^2, \, z^3\right)\in \C^3$.
Then $\left(\C^3,\,*\right)$ is a complex solvable (non-nilpotent) Lie group and, according to \cite{nakamura}, it admits a lattice
$\Gamma\subset\C^3$, such that $X:=\Gamma\backslash (\C^3,*)$ is a solvmanifold, which is called the \emph{Nakamura manifold}, see also \cite[\S3]{debartolomeis-tomassini}. Setting
$$
\varphi^1 \;:=\; \de z^1\;, \qquad \varphi^2 \;:=\; \esp^{z^1}\de z^2\;, \qquad \varphi^3 \;:=\; \esp^{-z^1}\de z^3\;,
$$
then $\left\{\varphi^1,\, \varphi^2,\, \varphi^3\right\}$ is a global complex co-frame on $X$ satisfying the following complex structure equations:
$$
\left\{
\begin{array}{lcl}
 \de\varphi^1 &=& 0 \\[5pt]
 \de\varphi^2 &=& \varphi^{12} \\[5pt]
 \de\varphi^3 &=& -\varphi^{13}
\end{array}
\right. \;.
$$
If we set $\varphi^j =: e^j+\im e^{3+j}$, for $j\in\{1,\, 2,\, 3\}$, then the last equations yield to
\begin{equation}\label{differenzialireali}
\left\{
\begin{array}{lcl}
\de e^{1} &=& 0\\[5pt]
\de e^{2} &=& e^{12}-e^{45}\\[5pt]
\de e^{3} &=& -e^{13}+e^{46}\\[5pt]
\de e^{4} &=& 0 \\[5pt]
\de e^{5} &=& e^{15}-e^{24}\\[5pt]
\de e^{6} &=& -e^{16}+e^{34}
\end{array}
\right. \;.
\end{equation}
Then, \cite[\S5]{debartolomeis-tomassini},
$$
\omega \;:=\; e^{14}+e^{35}+e^{62}\;,
$$
and
$$
\begin{array}{lll}
Je^1 \;:=\; -e^{4}\;, & \quad Je^{3} \;:=\; -e^{5}\;, & \quad Je^{6} \;:=\; -e^{2}\;,\\[5pt]
Je^4 \;:=\;  e^{1}\;, & \quad Je^{5} \;:=\;  e^{3}\;, & \quad Je^{2} \;:=\; e^{6}\;,
\end{array}
$$
and
$$
\psi \;:=\; \left(e^1+\im e^4\right) \wedge \left(e^3+\im e^5\right) \wedge \left(e^6+\im e^2\right)
$$
give rise to a symplectic half-flat structure on $X$, where 
$$
\Re\,\psi \;=\; e^{136} + e^{125} + e^{234} - e^{456} \;.
$$

Note that the Hard Lefschetz Condition holds on $\left(X,\, \omega\right)$, \cite[Theorem 5.1]{debartolomeis-tomassini}.

Setting $z^j=:x^j+\im y^j$, for $j\in\{1,\, 2,\, 3\}$, and denoting by $\pi\colon \C^3\to X$ the canonical projection, we easily check that 
$$
\begin{array}{l}
Y_1 \;:=\; \pi\left(\left\{\left(x^1,\, x^2,\, x^3,\, y^1,\, y^2,\, y^3\right) \in \C^3 \st x^2=y^4=y^5=0\right\}\right) \;,\\[5pt]
Y_2 \;:=\; \pi\left(\left\{\left(x^1,\, x^2,\, x^3,\, y^1,\, y^2,\, y^3\right) \in \C^3 \st x^3=y^4=y^6=0\right\}\right)
\end{array}
$$
are special Lagrangian submanifolds of $\left(X,\, \omega,\, \psi\right)$, namely, for $j\in\{1,\, 2\}$, it holds $\Re\,\psi\lfloor_{{Y_j}}=\Vol_{Y_j}$, and, consequently, the associated dual currents $\left[Y_j\right]$ are primitive.
\end{ex}

\section{Cohomology of \para-complex manifolds}\label{sec:paracomplex}

In this section, we provide some results obtained in a joint work with F.~A. Rossi, \cite{angella-rossi}, concerning the de Rham cohomology of almost-\para-complex manifolds. \para-complex Geometry is, in a sense, the ``hyperbolic analogue'' of Complex Geometry: an almost-\para-complex structure on a manifold $X$ is given by an endomorphism $K\in\End(TX)$ such that $K^2=\id_{TX}$ and the eigen-bundles of $TX$ with respect to the eigenvalues $1$ and $-1$ of $K\in\End(TX)$ have the same rank. Recently, \para-complex Geometry appeared to be related with many other problems and notions in Mathematics and Physics (in particular, with product structures, bi-Lagrangian geometry, and optimal transport theory).

It is natural to ask what properties from Complex Geometry can be translated in the \para-complex setting. We refer to the work by F.~A. Rossi, e.g., \cite{rossi-1, rossi-2}, for problems and results in this direction. Here, we are mainly concerned in cohomological properties. In fact, it turns out that the \para-complex counterpart of the Dolbeault cohomology is not well-behaved, not being finite-dimensional. This fact leads us to study some subgroups of the de Rham cohomology related to the almost-\para-complex structure, miming the theory introduced by T.-J. Li and W. Zhang in \cite{li-zhang} for almost-complex manifolds. More precisely, we study the subgroups of the de Rham cohomology of an almost-\para-complex manifold consisting of the classes admitting invariant, respectively anti-invariant, representatives with respect to the almost-\para-complex structure; in particular, we prove that, on a $4$-dimensional nilmanifold endowed with a left-invariant \para-complex structure, such subgroups provide a 
decomposition at the level of the real second de Rham cohomology group, Theorem \ref{thm:4-dim}; counterexamples without the hypothesis on dimension, respectively nilpotency, respectively integrability, are provided. Moreover, we consider deformations of \para-complex structures: in particular, we show that admitting a \para-K\"ahler structure is not a stable property under small deformations of the \para-complex structure, Theorem \ref{thm:para-kahler-deformations}, providing another strong difference with the complex case (indeed, recall that admitting a K\"ahler structure is a stable property under small deformations of the complex structure by K. Kodaira and D.~C. Spencer's theorem \cite[Theorem 15]{kodaira-spencer-3}).

\subsection{\para-complex structures on manifolds}

We start by recalling the basic definitions in \para-complex Geometry. We refer, e.g., to \cite{harvey-lawson, alekseevsky-medori-tomassini, cortes-mayer-mohuapt-saueressig-1, cortes-mayer-mohuapt-saueressig-2, cortes-mayer-mohuapt-saueressig-3, cruceanu-fortuny-gadea, kim-mccann-warren, andrada-barberis-dotti-ovando, andrada-salamon, krahe, rossi-1, rossi-2} and the references therein for more results about (almost-)\para-complex structures and for motivations for their study.

\medskip

Let $X$ be a $2n$-dimensional manifold. Consider $K\in\End(TX)$ such that $K^2=\lambda\,\id_{TX}$ where $\lambda\in\left\{-1,\,0,\,1\right\}$: if $\lambda=-1$, then by definition $K$ is an \emph{almost-complex} structure; if $\lambda=0$, then the structure $K$ is called an \emph{almost-subtangent} structure; if $\lambda=1$, then $K$ is said to be an \emph{almost-product} structure; according to \cite[\S1]{vaisman}, these three structures are called \emph{almost-c.p.s.} structures.

In the case $K^2=\id_{TX}$, one gets that $K$ has eigen-values $\left\{1,\,-1\right\}$ and hence there is a decomposition $TX=T^+X\oplus T^-X$ where $T^\pm X$ is given, point by point, by the eigen-space of $K$ corresponding to the eigen-value $\pm 1$, where $\pm\in\left\{+,\,-\right\}$.
By definition, an \emph{almost-\para-complex} structure (also called \emph{almost-para-complex} structure) on $X$ is an endomorphism $K\in\End(TX)$ such that
$$ K^2\;=\;\id_{TX} \qquad \text{ and } \qquad \rk T^+X\;=\;\rk T^-X\;=\;\frac{1}{2}\dim X \;;$$
a \emph{\para-holomorphic map} between two almost-\para-complex manifolds $\left(X_1,\, K_1\right)$ and $\left(X_2,\, K_2\right)$ is a smooth map $f\colon X_1 \to X_2$ such that $\de f \circ K_1 = K_2 \circ \de f$.

An almost-\para-complex structure is said to be \emph{integrable} (and hence it is called \emph{\para-complex}, or also \emph{para-complex}) if
$$ \left[T^+X,\,T^+X\right] \;\subseteq\; T^+X \qquad \text{ and } \qquad \left[T^-X,\,T^-X\right] \;\subseteq\; T^-X \;.$$

The integrability condition is, straightforwardly, equivalent to the vanishing of the \emph{Nijenhuis tensor} $N_K$ of $K$, where
$$ N_K(\sspace ,\, \ssspace) \;:=\; \left[\sspace ,\, \ssspace\right] + \left[K\sspace ,\, K \ssspace\right] - K\left[K\sspace ,\, \ssspace\right] - K\left[\sspace ,\, K\ssspace \right] \;; $$
furthermore, as in the complex case, one has that an almost-\para-complex structure on an $n$-dimensional manifold $X$ is integrable if and only if it is naturally associated to a structure on $X$ defined in terms of local homeomorphisms with open sets in $\mathbf{D}^n$ and \para-holomorphic changes of coordinates, see, e.g., \cite[Proposition 3]{cortes-mayer-mohuapt-saueressig-1}, where $\mathbf{D}^n:=\R^n+\tau\,\R^n$, with $\tau^2=1$, is the algebra of \emph{double numbers}.

We recall that, given a $2n$-dimensional manifold endowed with an almost-\para-complex structure $K$, a \emph{\para-Hermitian metric} on $X$ is a pseudo-Riemannian metric of signature $(n,n)$ such that $g(K\sspace,\, K\ssspace)=-g(\sspace,\, \ssspace)$.
A \emph{\para-K\"ahler} structure on a manifold $X$ is the datum of an integrable \para-complex structure $K$ and a \para-Hermitian metric $g$ such that its associated $K$-anti-invariant form $\omega:=g\left(K\,\sspace, \, \ssspace\right)$ is $\de$-closed, equivalently, the datum of a \emph{$K$-compatible} (that is, a $K$-anti-invariant) symplectic form on $X$, see, e.g., \cite[\S5.1]{alekseevsky-medori-tomassini}, \cite[Theorem 1]{cortes-mayer-mohuapt-saueressig-1}.

\medskip

The basic example of \para-complex structure is given on the product of two manifolds of the same dimension: given $X^+$ and $X^-$ two manifolds with $\dim X^+=\dim X^-$, the product $X^+\times X^-$ inherits a natural \para-complex structure $K$, given by the decomposition
$$ T\left(X^+\times X^-\right) \;=\; TX^+ \,\oplus\, TX^- \;,$$
where $K\lfloor_{TX^+}=\id_{T\left(X^+\times X^-\right)}$ and $K\lfloor_{TX^-}=-\id_{T\left(X^+\times X^-\right)}$.
Every \para-complex manifold is locally of this form, see, e.g., \cite[Proposition 2]{cortes-mayer-mohuapt-saueressig-1}.

\medskip

Starting from $K\in\End(TX)$ such that $K^2=\id_{TX}$, one can define, by duality, an endomorphism $K\in\End(T^*X)$ such that $K^2=\id_{T^*X}$, and hence one gets a natural decomposition $T^*X=\duale{\left(T^{+}X\right)}\oplus \duale{\left(T^{-}X\right)}$ into eigen-bundles. Extending $K\in\End(T^*X)$ to $K\in\End\left(\wedge^\bullet X\right)$, one gets the following decomposition on the bundle of differential $\ell$-forms, for $\ell\in\N$:
$$ \wedge^\ell X \;=\; \bigoplus_{p+q=\ell} \formepm{p}{q}X \qquad \text{ where } \qquad \formepm{p}{q}X \;:=\; \wedge^p\duale{\left(T^{+}X\right)}\otimes\wedge^q\duale{\left(T^{-}X\right)} \;;$$
note that, for any $p,q\in\N$, the structure $K$ acts on $\formepm{p}{q}X$ as $\left(+1\right)^p\left(-1\right)^q\id_{\formepm{p}{q}X}$.
In particular, for any $\ell\in\N$, one has
$$ \wedge^\ell X \;=\; \wedge^{\ell\,+}_KX \oplus \wedge^{\ell\,-}_KX $$
where
$$ \wedge^{\ell\,+}_KX \;:=\; \bigoplus_{p+q=\ell,\;q=0 \modulo 2} \formepm{p}{q}X \qquad \text{ and } \qquad \wedge^{\ell\,-}_KX \;:=\; \bigoplus_{p+q=\ell,\;q=1\modulo 2} \formepm{p}{q}X  \;; $$
note that $K\lfloor_{\wedge^{\ell\,+}_KX} = \id_{\wedge^{\ell\,+}_KX}$ and $K\lfloor_{\wedge^{\ell\,-}_KX} = -\id_{\wedge^{\ell\,-}_KX}$.

If a \para-complex structure $K$ is given, then the exterior differential splits as
$$ \de \;=\; \del_+ + \del_- $$
where
$$ \del_+\;:=\;\proj_{\formepm{p+1}{q}X} \circ \de \colon \formepm{p}{q}X\to\formepm{p+1}{q}X $$
and
$$ \del_-\;:=\;\proj_{\formepm{p}{q+1}X} \circ \de \colon \formepm{p}{q}X\to\formepm{p}{q+1}X $$
(where $\proj_{\formepm{r}{s}X}\colon \formepm{\bullet}{\bullet}X\to \formepm{r}{s}X$ denotes the natural projection onto $\formepm{r}{s}X$, for every $r,s\in\N$).
In particular, the condition $\de^2=0$ can be rewritten as
$$
\left\{
\begin{array}{rcl}
 \del_+^2 &=& 0 \\[5pt]
 \del_+\del_-+\del_-\del_+ &=& 0 \\[5pt]
 \del_-^2 &=& 0
\end{array}
\right.
$$
and hence one can define a \para-complex counterpart of the Dolbeault cohomology by considering the cohomology of the differential complex $\left(\formepm{\bullet}{q}X,\, \del_+\right)$ for every $q\in\N$, that is,
$$ H^{\bullet,\bullet}_{\del_+}(X;\R) \;:=\; \frac{\ker\del_+}{\imm\del_+} \;.$$
Unfortunately, one cannot hope to adjust the Hodge theory of the complex case to this non-elliptic context. For example, take $X^+$ and $X^-$ two manifolds having the same dimension and consider the natural \para-complex structure on $X^+\times X^-$; one has that
$$ H^{0,0}_{\del_+}\left(X^+\times X^-\right) \;\simeq\; \mathcal{C}^\infty\left(X^-\right) \;,$$
hence the space $H^{0,0}_{\del_+}\left(X^+\times X^-\right)$ of $\del_+$-closed functions on $X^+\times X^-$ is not finite-dimensional, even if $X^+$ and $X^-$ are compact.

\subsection{\para-complex subgroups of (co)homology}
In this section, we adapt T.-J. Li and W. Zhang's theory on cohomology of almost-complex manifolds, \cite{li-zhang}, to the almost-\para-complex case. More precisely, let $X$ be a $2n$-dimensional compact manifold endowed with an almost-\para-complex structure $K$; we are interested in studying when the decomposition
$$ \wedge^\bullet X \;=\; \bigoplus_{p,q} \formepm{p}{q}X \;=\; \wedge^{\bullet\,+}_KX \oplus \wedge^{\bullet\,-}_KX $$
gives rise to a cohomological decomposition.

\medskip

We start by giving some definitions.
For any $p,q\in\N$, we define the subgroup
$$ H^{(p,q)}_K\left(X;\R\right) \;:=\; \left\{\left[\alpha\right]\in H^{p+q}_{dR}\left(X;\R\right)\st \alpha\in\formepm{p}{q}X\right\} \;\subseteq\; H^\bullet_{dR}(X;\R) \;, $$
and, for any $\ell\in\N$ and for $\pm\in\{+,-\}$, the subgroup
$$
H^{\ell\,\pm}_{K}\left(X;\R\right) \;:=\; \left\{\left[\alpha\right]\in H^\ell_{dR}\left(X;\R\right) \st K\alpha=\pm\alpha\right\} \;=\; \left\{\left[\alpha\right]\in H^\ell_{dR}\left(X;\R\right) \st \alpha\in\wedge^{\ell\,\pm}_KX\right\} \;\subseteq\; H^\bullet_{dR}(X;\R) \;.
$$

Note, that, if $K$ is integrable, then, for any $\ell\in\N$,
$$ H^{\ell\,+}_{K} \;=\; \bigoplus_{p+q=\ell,\;q= 0 \modulo 2} H^{(p,q)}_K(X;\R) \qquad \text{ and } \qquad H^{\ell\,-}_{K} \;=\; \bigoplus_{p+q=\ell,\;q=1 \modulo 2} H^{(p,q)}_K(X;\R) \;. $$

As in \cite[Definition 2.2, Definition 2.3, Lemma 2.2]{li-zhang}, see \S\ref{subsec:cpf-almost-complex}, for almost-complex structures, we introduce the following definition, \cite[Definition 1.2]{angella-rossi}.

\begin{defi}
 For $\ell\in\N$, an almost-\para-complex structure $K$ on the manifold $X$ is said to be
\begin{itemize}
 \item \emph{\Cp\ at the \kth{\ell} stage} if
  $$ H^{\ell\,+}_K\left(X;\R\right)\,\cap\,H^{\ell\,-}_K\left(X;\R\right) \;=\; \left\{0\right\} \;;$$
 \item \emph{\Cf\ at the \kth{\ell} stage} if
  $$ H^{\ell\,+}_K\left(X;\R\right)\,+\,H^{\ell\,-}_K\left(X;\R\right) \;=\; H^\ell_{dR}\left(X;\R\right) \;;$$
 \item \emph{\Cpf\ at the \kth{\ell} stage} if it is both \Cp\ at the \kth{\ell} stage and \Cf\ at the \kth{\ell} stage, namely, if it satisfies the cohomological decomposition
  $$ H^\ell_{dR}\left(X;\R\right) \;=\; H^{\ell\,+}_K\left(X;\R\right)\,\oplus\,H^{\ell\,-}_K\left(X;\R\right) \;.$$
 \end{itemize}
\end{defi}

\medskip

Consider now the space $\correnti^\bullet X:=:\correnti_{2n-\bullet}X$ of currents on $X$ and the de Rham homology $H_\bullet^{dR}(X;\R)$ (we refer to \S\ref{sec:currents}, and references therein, for definitions and results concerning currents and de Rham homology).
The action of $K$ on $\wedge^\bullet X$ induces, by duality, an action, still denoted by $K$, on $\correnti_\bullet X$, and hence, for any $\ell\in\N$, a decomposition
$$ \correnti_\ell X \;=:\; \bigoplus_{p+q=\ell} \correntipm{p}{q}X \;.$$
For any $p,q\in\N$, note that the space $\correntipm{p}{q}X:=:\correntipmalto{n-p}{n-q}$ is the topological dual space of the topological subspace $\formepm{p}{q}X$ of $\wedge^\bullet X$, and that the quasi-isomorphism $T_\sspace\colon \wedge^\bullet X \ni \alpha\mapsto \int_X\alpha\wedge\sspace\in\correnti^\bullet X$ yields the inclusion $T_\sspace\colon\formepm{p}{q}\hookrightarrow\correntipmalto{p}{q}X$. As before, we have
$$ \correnti_\bullet X \;=\; \correnti_{\bullet\,+}^{K}X \,\oplus\, \correnti_{\bullet\,-}^{K}X $$
where
$$ \correnti_{\bullet\,+}^{K}X \;:=\; \bigoplus_{q=0 \modulo 2}\correntipm{\bullet}{q}X \qquad \text{ and } \qquad \correnti_{\bullet\,-}^{K}X \;:=\; \bigoplus_{q=1\modulo 2}\correntipm{\bullet}{q}X \;,$$
and $K\lfloor_{\correnti_{\bullet\,\pm}^{K}X}=\pm\id_{\correnti_{\bullet\,\pm}^{K}X}$ for $\pm\in\{+,\,-\}$.

For any $p,q\in\N$, we define the subgroup
$$ H_{(p,q)}^K\left(X;\R\right) \;:=\; \left\{\left[\alpha\right]\in H_{p+q}^{dR}\left(X;\R\right)\st \alpha\in\correntipm{p}{q}X\right\} \;\subseteq\; H_\bullet^{dR}(X;\R) \;, $$
and, for any $\ell\in\N$ and for $\pm\in\{+,-\}$, the subgroup
$$ H_{\ell\,\pm}^{K}\left(X;\R\right) \;:=\; \left\{\left[\alpha\right]\in H_\ell^{dR}\left(X;\R\right) \st K\alpha=\pm\alpha\right\} \;=\; \left\{\left[\alpha\right]\in H_\ell^{dR}\left(X;\R\right) \st \alpha\in\correnti_{\ell\,\pm}^KX\right\} \;\subseteq\; H_\bullet^{dR}(X;\R) \;.$$

We are particularly interested in the almost-\para-complex structures admitting a homological decomposition through the subgroups $H_{\bullet\,+}^{K}\left(X;\R\right)$ and $H_{\bullet\,-}^{K}\left(X;\R\right)$, \cite[Definition 1.3]{angella-rossi}.

\begin{defi}
 For $\ell\in\N$, an almost-\para-complex structure $K$ on the manifold $X$ is said to be
\begin{itemize}
 \item \emph{\p\ at the \kth{\ell} stage} if
  $$ H_{\ell\,+}^K\left(X;\R\right)\,\cap\,H_{\ell\,-}^K\left(X;\R\right) \;=\; \left\{0\right\} \;;$$
 \item \emph{\f\ at the \kth{\ell} stage} if
  $$ H_{\ell\,+}^K\left(X;\R\right)\,+\,H_{\ell\,-}^K\left(X;\R\right) \;=\; H_\ell\left(X;\R\right) \;;$$
 \item \emph{\pf\ at the \kth{\ell} stage} if it is both \p\ at the \kth{\ell} stage and \f\ at the \kth{\ell} stage, namely, if it satisfies the homological decomposition
  $$ H_\ell\left(X;\R\right) \;=\; H_{\ell\,+}^K\left(X;\R\right)\,\oplus\,H_{\ell\,-}^K\left(X;\R\right) \;.$$
 \end{itemize}
\end{defi}

\medskip

The introduced notions are not completely independent. Using the same argument as in Theorem \ref{thm:implicazioni}, see \cite[Proposition 2.5]{li-zhang}, and in Proposition \ref{prop:full-k-pure-2n-k}, we prove the following relations between \Cpf ness and \pf ness for almost-\para-complex structures, \cite[Proposition 1.4]{angella-rossi}.

\begin{prop}
\label{prop:para-cpf-pf}
 Let $K$ be an almost-\para-complex structure on a $2n$-dimensional compact manifold $X$. Then, for every $\ell\in\N$, the following implications hold:
$$
\xymatrix{
\text{\Cf\ at the \kth{\ell} stage}\ar@{=>}[r]\ar@{=>}[d] & \text{\p\ at the \kth{\ell} stage}\ar@{=>}[d] \\
\text{\f\ at the \kth{\left(2n-\ell\right)} stage}\ar@{=>}[r] & \text{\Cp\ at the \kth{\left(2n-\ell\right)} stage}
}
$$
\end{prop}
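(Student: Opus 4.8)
The plan is to transcribe, almost verbatim, the argument used in the almost-complex setting in the proof of Theorem \ref{thm:implicazioni} (and, on the symplectic side, in Proposition \ref{prop:full-k-pure-2n-k}). Two ingredients do all the work: the non-degenerate duality pairing $\langle\,\cdot\,,\,\cdot\,\rangle\colon H^\bullet_{dR}(X;\R)\times H_\bullet^{dR}(X;\R)\to\R$ will yield the two horizontal implications, and the quasi-isomorphism $T_\bullet\colon\wedge^\bullet X\to\correnti_{2n-\bullet}X$, $T_\varphi:=\int_X\varphi\wedge\cdot$, recalled in \S\ref{sec:currents}, will yield the two vertical ones. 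First I would record two compatibility facts. The pairing is \emph{$K$-orthogonal}: if $\alpha$ is a $+$-form and $T$ a $-$-current (or conversely), then $T(\alpha)=0$, since from the duality definition $(KT)(\alpha)=T(K\alpha)$ together with $K\alpha=\pm\alpha$ and $KT=\mp T$ one gets $T(\alpha)=-T(\alpha)$. Secondly, $T_\bullet$ is $K$-equivariant up to a sign depending on the parity of $n$: using that $K$ is a homomorphism for $\wedge$ (so $K(\alpha\wedge\beta)=K\alpha\wedge K\beta$) and that integration only retains the $\formepm{n}{n}X$-component, on which $K$ acts by $(-1)^n$, one finds $\int_X K\gamma=(-1)^n\int_X\gamma$ and hence $KT_\varphi=(-1)^nT_\varphi$ for $\varphi$ a $+$-form and $KT_\varphi=(-1)^{n+1}T_\varphi$ for $\varphi$ a $-$-form.

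For the horizontal implications I would argue as in Theorem \ref{thm:implicazioni}. To prove \Cf\ at the \kth{\ell} stage $\Rightarrow$ \p\ at the \kth{\ell} stage, take $\mathfrak{c}\in H_{\ell\,+}^K(X;\R)\cap H_{\ell\,-}^K(X;\R)$, which therefore admits both a closed $+$-current representative and a closed $-$-current representative. By the $K$-orthogonality of the pairing, $\mathfrak{c}$ annihilates both $H^{\ell\,+}_K(X;\R)$ (pairing the $+$-class against the $-$-representative) and $H^{\ell\,-}_K(X;\R)$; by \Cf\ these span $H^\ell_{dR}(X;\R)$, so $\mathfrak{c}$ annihilates everything and $\mathfrak{c}=0$ by non-degeneracy. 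The bottom implication \f\ at the \kth{(2n-\ell)} stage $\Rightarrow$ \Cp\ at the \kth{(2n-\ell)} stage is the identical computation at degree $2n-\ell$, with the roles of forms and currents exchanged.

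For the vertical implications I would use that $T_\bullet$ is a quasi-isomorphism, hence induces isomorphisms $H^\ell_{dR}(X;\R)\cong H_{2n-\ell}^{dR}(X;\R)$, and that by the second compatibility fact it carries $\wedge^{\bullet\,+}_KX$ and $\wedge^{\bullet\,-}_KX$ into $\correnti_{\bullet\,+}^KX$ and $\correnti_{\bullet\,-}^KX$ (possibly interchanging the two summands when $n$ is odd). To prove \Cf\ at the \kth{\ell} stage $\Rightarrow$ \f\ at the \kth{(2n-\ell)} stage, I would simply apply this isomorphism to $H^\ell_{dR}=H^{\ell\,+}_K+H^{\ell\,-}_K$, obtaining $H_{2n-\ell}^{dR}=H_{(2n-\ell)\,+}^K+H_{(2n-\ell)\,-}^K$. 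For \p\ at the \kth{\ell} stage $\Rightarrow$ \Cp\ at the \kth{(2n-\ell)} stage, I would take $\mathfrak{a}\in H^{(2n-\ell)\,+}_K(X;\R)\cap H^{(2n-\ell)\,-}_K(X;\R)$ with closed representatives $\alpha^+$ and $\alpha^-$ differing by $\de\gamma$; then $T_{\alpha^+}$ and $T_{\alpha^-}$ are homologous closed currents (since $T_{\alpha^+}-T_{\alpha^-}=\de T_\gamma$), and by the sign computation one of them is a $+$-current and the other a $-$-current regardless of the parity of $n$, so $T_\bullet(\mathfrak{a})\in H_{\ell\,+}^K\cap H_{\ell\,-}^K=\{0\}$; injectivity of $T_\bullet$ on cohomology gives $\mathfrak{a}=0$.

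I do not expect a genuine obstacle here, since the statement is the faithful \para-complex mirror of Theorem \ref{thm:implicazioni}; the only point demanding care is the parity-of-$n$ phenomenon, namely that $T_\bullet$ may swap the invariant and anti-invariant summands. The reason this causes no trouble is that all four assertions involve only the \emph{sum} $H^+\!+H^-$ (respectively $H_+\!+H_-$) or the \emph{intersection} $H^+\!\cap H^-$, and both are invariant under interchanging the two pieces; so I would phrase the vertical steps so that they reference the coarse splitting $\wedge^\bullet X=\wedge^{\bullet\,+}_KX\oplus\wedge^{\bullet\,-}_KX$ rather than the individual summands, after which the swap becomes immaterial.
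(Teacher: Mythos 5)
Your proof is correct and follows essentially the same route as the paper's: the horizontal implications via the non-degenerate duality pairing $\left\langle \sspace,\,\ssspace\right\rangle\colon H^\ell_{dR}(X;\R)\times H_\ell^{dR}(X;\R)\to\R$ together with the vanishing of a $+$-form paired against a $-$-current, and the vertical ones via the quasi-isomorphism $T_\sspace\colon\wedge^\bullet X\to\correnti_{2n-\bullet}X$. The only difference is presentational: the paper compresses your explicit parity analysis into the single bi-graded inclusion $H^{(p,q)}_K(X;\R)\hookrightarrow H_{(n-p,n-q)}^K(X;\R)$, which encodes exactly the possible swap of the $\pm$ summands for $n$ odd that you rightly observe is harmless since all four conditions depend only on the sum and the intersection of the two subgroups.
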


\begin{proof}
 We recall that the quasi-isomorphism $T_\sspace\colon \wedge^\bullet X \ni \alpha\mapsto \int_X\alpha\wedge\sspace\in\correnti_{2n-\bullet} X$ induces, for every $p,q\in\N$, the inclusion
 $$ H^{(p,q)}_K(X;\R) \hookrightarrow H_{(n-p,n-q)}^K(X;\R) \;:$$
 this fact proves the two vertical implications.

 To prove the horizontal implications, consider the duality paring $\left\langle \sspace,\,\ssspace\right\rangle\colon D_{\ell}X\times \wedge^{\ell}X\to\R$ and the induced non-degenerate pairing
 $$ \left\langle \sspace,\,\ssspace\right\rangle\colon H^\ell_{dR}(X;\R)\times H_\ell^{dR}(X;\R)\to \R \;.$$
 
 Suppose that $K$ is \Cf\ at the \kth{\ell} stage, that is, $H^\ell_{dR}(X;\R)=H^{\ell\,+}_K(X;\R)+H^{\ell\,-}_K(X;\R)$, and let $\mathfrak{c}=\left[\gamma_+\right]=\left[\gamma_-\right]\in H_{\ell\,+}^K(X;\R)\cap H_{\ell\,-}^K(X;\R)$ with $\gamma_+\in D_{\ell\,+}^KX$ and $\gamma_-\in D_{\ell\,-}^KX$; since
 $$ \left\langle H^\ell(X;\R),\, \mathfrak{c} \right\rangle \;=\; \left\langle H^{\ell\,+}_K(X;\R) + H^{\ell\,-}_K(X;\R) ,\, \mathfrak{c} \right\rangle \;=\; \left\langle H^{\ell\,+}_K(X;\R),\, \left[\gamma_-\right]\right\rangle + \left\langle H^{\ell\,-}_K(X;\R),\, \left[\gamma_+\right]\right\rangle \;=\; 0 \;, $$
 one has $\mathfrak{c}=0$ in $H^{dR}_\ell(X;\R)$; hence $K$ is \p\ at the \kth{\ell} stage.

 Similarly, since
 $$ \left\langle H^{\ell\,+}_K(X;\R) \cap H^{\ell\,-}_K(X;\R) ,\, H_{\ell\,+}^K(X;\R) + H_{\ell\,-}^K(X;\R) \right\rangle \;=\; 0 \;,$$
 we get that, if $K$ is \f\ at the \kth{\ell} stage, then it is  \Cp\ at the \kth{\ell} stage.
\end{proof}

In particular, by applying Proposition \ref{prop:para-cpf-pf} with $2n=4$ and $k=2$, one gets that, on a compact $4$-dimensional manifold endowed with an almost-\para-complex structure, being \Cf\ at the \kth{2} stage is stronger than being \Cp\ at the \kth{2} stage.

A straightforward consequence of Proposition \ref{prop:para-cpf-pf} is the following result, \cite[Corollary 1.5]{angella-rossi}.

\begin{cor}
\label{cor:cf-every-stage}
 Let $K$ be an almost-\para-complex structure on a compact manifold $X$.  If $K$ is \Cf\ at every stage, then it is also \Cpf\ at every stage and \pf\ at every stage.
\end{cor}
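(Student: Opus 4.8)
The plan is to chase the four implications of Proposition \ref{prop:para-cpf-pf} across all degrees. Since an almost-\para-complex structure forces the dimension of $X$ to be even, write $2n:=\dim X$; recall that ``at every stage'' means at the \kth{\ell} stage for every $\ell\in\{0,\ldots,2n\}$, and note that the involution $\ell\mapsto 2n-\ell$ permutes $\{0,\ldots,2n\}$, so it preserves the quantifier ``for every stage''.

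First I would deduce \pf ness at every stage. The top horizontal arrow of Proposition \ref{prop:para-cpf-pf} says that being \Cf\ at the \kth{\ell} stage implies being \p\ at the \kth{\ell} stage; since by hypothesis $K$ is \Cf\ at every stage, applying this for each $\ell$ shows that $K$ is \p\ at every stage. The left vertical arrow says that being \Cf\ at the \kth{\ell} stage implies being \f\ at the \kth{\left(2n-\ell\right)} stage; as $\ell\mapsto 2n-\ell$ runs over all of $\{0,\ldots,2n\}$, the hypothesis forces $K$ to be \f\ at every stage. Hence $K$ is simultaneously \p\ and \f\ at every stage, i.e.\ \pf\ at every stage.

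It then remains only to produce \Cp ness at every stage, since \Cf ness at every stage is the standing hypothesis. For this I would use the right vertical arrow: being \p\ at the \kth{\ell} stage implies being \Cp\ at the \kth{\left(2n-\ell\right)} stage. Feeding in the \p ness at every stage obtained above, and using once more that $\ell\mapsto 2n-\ell$ is a bijection of $\{0,\ldots,2n\}$, I conclude that $K$ is \Cp\ at every stage. (Alternatively one may apply the bottom horizontal arrow to the \f ness already established.) Combined with the hypothesis, this yields that $K$ is \Cpf\ at every stage.

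The main point to watch --- and essentially the only one, since the statement is a purely formal consequence of Proposition \ref{prop:para-cpf-pf} --- is that the degree-reversing implications mix the \kth{\ell} and \kth{\left(2n-\ell\right)} stages, so one genuinely needs the hypothesis to hold at \emph{every} stage (not merely at a fixed one) in order for these implications to propagate the conclusions to all degrees. No input beyond Proposition \ref{prop:para-cpf-pf} is required; in particular, no further appeal to compactness or to the nondegeneracy of the de Rham pairing is needed.
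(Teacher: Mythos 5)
Your proof is correct and is exactly the argument the paper intends: Corollary \ref{cor:cf-every-stage} is stated there as ``a straightforward consequence of Proposition \ref{prop:para-cpf-pf}'', and your diagram-chase --- top horizontal for \p ness, left vertical together with the involution $\ell\mapsto 2n-\ell$ for \f ness, and the right vertical (or bottom horizontal) for \Cp ness --- is precisely that consequence spelled out. Your closing remark is also the right one to make: the degree-reversing arrows are why the hypothesis must hold at \emph{every} stage, and nothing beyond the Proposition (whose own proof already absorbs compactness) is needed.
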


As an application of the K\"unneth formula, T. Dr\v{a}ghici, T.-J. Li, and W. Zhang noted that, given $X_1$, respectively $X_2$, a compact manifold endowed with a \Cpf\ almost-complex structure $J_1$, respectively $J_2$, and assuming that $b_1\left(X_1\right)=0$, or $b_1\left(X_2\right) = 0$, then the almost-complex structure $J_1+J_2$ on $X_1\times X_2$ is \Cpf, \cite[Proposition 2.6]{draghici-li-zhang-survey}. In the \para-complex case, we have the following, \cite[Theorem 1.6]{angella-rossi}.

\begin{thm}
\label{thm:products}
 Let $X^+$ and $X^-$ be two compact manifolds of the same dimension. Then the natural \para-complex structure on the product $X^+\times X^-$ is \Cpf\ at every stage and \pf\ at every stage.
\end{thm}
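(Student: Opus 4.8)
The plan is to reduce the statement to \Cf ness at every stage and then invoke Corollary \ref{cor:cf-every-stage}. The key geometric observation is that the natural \para-complex structure $K$ on $X:=X^+\times X^-$ satisfies $K\lfloor_{TX^+}=\id_{TX}$ and $K\lfloor_{TX^-}=-\id_{TX}$, so that the eigen-bundle decomposition $TX=T^+X\oplus T^-X$ is precisely the product splitting $TX^+\oplus TX^-$. Denoting by $\pi_+\colon X\to X^+$ and $\pi_-\colon X\to X^-$ the two projections, one has $T^*X = \pi_+^*\left(T^*X^+\right) \oplus \pi_-^*\left(T^*X^-\right) = \left(T^+X\right)^*\oplus\left(T^-X\right)^*$. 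Consequently, for $\alpha\in\wedge^p X^+$ and $\beta\in\wedge^q X^-$, the form $\pi_+^*\alpha\wedge\pi_-^*\beta$ lies in $\formepm{p}{q}X$, hence it is of pure type with respect to $K$, being $K$-invariant when $q$ is even and $K$-anti-invariant when $q$ is odd.

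First I would invoke the Künneth formula for the de Rham cohomology with real coefficients, which for compact manifolds gives the isomorphism
$$ H^\ell_{dR}(X;\R) \;\simeq\; \bigoplus_{p+q=\ell} H^p_{dR}\left(X^+;\R\right)\otimes_\R H^q_{dR}\left(X^-;\R\right) \;,$$
realized on the level of forms by $\left[\alpha\right]\otimes\left[\beta\right]\mapsto \left[\pi_+^*\alpha\wedge\pi_-^*\beta\right]$. Since $\de\left(\pi_+^*\alpha\wedge\pi_-^*\beta\right)=\pi_+^*\de\alpha\wedge\pi_-^*\beta + \left(-1\right)^p\,\pi_+^*\alpha\wedge\pi_-^*\de\beta$, a pair $(\alpha,\beta)$ of closed forms yields a $\de$-closed representative of pure type $(p,q)$; therefore the image of each Künneth summand lands inside $H^{(p,q)}_K(X;\R)$.

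It then follows that every class in $H^\ell_{dR}(X;\R)$ is a sum of classes admitting pure-type $(p,q)$ representatives with $p+q=\ell$. Grouping the summands with $q$ even, respectively $q$ odd, shows that each class decomposes as the sum of a $K$-invariant and a $K$-anti-invariant class, i.e.
$$ H^\ell_{dR}(X;\R) \;=\; H^{\ell\,+}_K(X;\R) + H^{\ell\,-}_K(X;\R) \;,$$
so that $K$ is \Cf\ at the \kth{\ell} stage, for every $\ell\in\N$. By Corollary \ref{cor:cf-every-stage}, being \Cf\ at every stage already implies that $K$ is \Cpf\ at every stage and \pf\ at every stage, which is the assertion.

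The argument is essentially a bookkeeping of the bi-grading induced by $K$ against the Künneth decomposition, so I do not expect a serious obstacle. The only point that needs genuine care is the verification that the product representatives $\pi_+^*\alpha\wedge\pi_-^*\beta$ are truly of pure type with respect to $K$, which rests on identifying the eigen-bundle splitting $TX=T^+X\oplus T^-X$ with the product splitting $TX^+\oplus TX^-$; granting this, the mere \emph{surjectivity} onto cohomology of the pure-type representatives suffices, via Corollary \ref{cor:cf-every-stage}, to obtain both the cohomological \Cpf\ decomposition and its homological \pf\ counterpart, with no separate dual computation on currents required.
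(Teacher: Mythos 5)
Your proposal is correct and follows essentially the same route as the paper: the K\"unneth formula realizes every class in $H^\ell_{dR}\left(X^+\times X^-;\R\right)$ by pure-type product representatives $\pi_+^*\alpha\wedge\pi_-^*\beta\in\formepm{p}{q}\left(X^+\times X^-\right)$, grouping by the parity of $q$ gives \Cf ness at every stage, and Corollary \ref{cor:cf-every-stage} then yields \Cpf ness and \pf ness at every stage. The only difference is that you spell out explicitly the identification of the eigen-bundle splitting with the product splitting and the closedness of the product representatives, which the paper leaves implicit.
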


\begin{proof}
For any $\ell\in\N$, using the K\"unneth formula, one gets
\begin{eqnarray*}
\lefteqn{H_{dR}^\ell\left(X^+\times X^-;\R\right) \;\simeq\; \bigoplus_{p+q=\ell} H_{dR}^p\left(X^+;\R\right)\,\otimes\, H_{dR}^q\left(X^-;\R\right)}  \\[10pt]
&=& \underbrace{\left(\bigoplus_{p+q=\ell,\;q=0\modulo 2} H_{dR}^p\left(X^+;\R\right)\,\otimes\, H_{dR}^q\left(X^-;\R\right)\right)}_{\subseteq H^{\ell\,+}_K\left(X^+\times X^-;\,\R\right)} \;\oplus\; \underbrace{\left(\bigoplus_{p+q=\ell,\;q=1\modulo 2} H_{dR}^p\left(X^+;\R\right)\,\otimes\, H_{dR}^q\left(X^-;\R\right)\right)}_{\subseteq H^{\ell\,-}_K\left(X^+\times X^-;\,\R\right)} \\[10pt]
&\subseteq& H^{\ell\,+}_K\left(X^+\times X^-;\R\right) \,+\, H^{\ell\,-}_K\left(X^+\times X^-;\R\right) \;;
\end{eqnarray*}
hence, by using Corollary \ref{cor:cf-every-stage}, one gets the theorem.
\end{proof}

\subsection{\para-complex cohomological decomposition on solvmanifolds}

In this section, we consider left-invariant \para-complex structures on solvmanifolds, as in \S\ref{subsec:cpf-solvmfds} for almost-complex structures, and in \S\ref{subsec:symplectic-solvmfds} for symplectic structures. We recall that, given a Lie algebra $\mathfrak{g}$, one has the differential operator $\de\colon \wedge^\bullet\duale{\g}\to\wedge^{\bullet+1}\duale{\g}$ naturally induced by the Lie bracket $\left[\sspace,\,\ssspace\right]$, and hence the cohomology $H_{dR}^\bullet(\mathfrak{g};\R) := H^\bullet\left(\wedge^\bullet\duale{\mathfrak{g}},\,\de\right)$. Hence, we are concerned with studying the linear counterpart of \para-complex structures on Lie algebras, and the corresponding decomposition problem for the cohomology $H_{dR}^\bullet(\mathfrak{g};\R)$. In particular, we prove a Nomizu-type result for the subgroups $H^{\ell\, \pm}_K(X;\R)$, Proposition \ref{prop:linear-cpf-invariant-cpf-K}; it will allow to explicitly study several examples of \para-complex solvmanifolds:
in \S\ref{subsubsec:examples-para}, we provide some examples of \para-complex structures on solvmanifolds, even admitting a \para-K\"ahler structure, that do not satisfy the cohomology decomposition by means of the subgroups $H^{\ell\, \pm}_K(X;\R)$; then, we prove that, for every left-invariant \para-complex structure on a $4$-dimensional nilmanifold, it holds $H^2_{dR}(X;\R)=H^{2\, +}_K(X;\R)\oplus H^{2\, -}_K(X;\R)$, Theorem \ref{thm:4-dim}, which provides a partial \para-complex counterpart of \cite[Theorem 2.3]{draghici-li-zhang}. (We refer to \S\ref{sec:solvmanifolds} for definitions and results concerning solvmanifolds.)

\medskip

We recall that a \emph{linear almost-\para-complex} structure $K$ on $\mathfrak{g}$ is an endomorphism $K\in\End(\mathfrak{g})$ such that
$$ K^2 \;=\; \id_\mathfrak{g} \quad  \text{ and } \quad \dim_\R\mathfrak{g}^+\;=\;\dim_\R\mathfrak{g}^-\;=\;\frac{1}{2}\,\dim_\R\mathfrak{g} \;,$$ where $\mathfrak{g}^\pm$ is the eigen-space of $K$ corresponding to the eigen-value $\pm1$, for $\pm\in\left\{+,\,-\right\}$.
A linear almost-\para-complex structure on $\mathfrak{g}$ is said to be \emph{integrable} (and hence it is called a \emph{linear \para-complex} structure on $\mathfrak{g}$) if $\mathfrak{g}^+$ and $\mathfrak{g}^-$ are Lie subalgebras of $\mathfrak{g}$, that is,
$$ \left[\mathfrak{g}^+,\,\mathfrak{g}^+\right] \;\subseteq\; \mathfrak{g}^+ \qquad \text{ and }\qquad \left[\mathfrak{g}^-,\,\mathfrak{g}^-\right] \;\subseteq\; \mathfrak{g}^- \;.$$

As a matter of notation, with respect to a given basis $\left\{e_j\right\}_{j\in\{1,\ldots,\dim_\R\mathfrak{g}\}}$ of $\mathfrak{g}$, in writing a(n almost-)\para-complex structure $K$, e.g., (suppose $\dim_\R\mathfrak{g}=6$,) as
$$ K \;:=\; \left(-\;+\;+\;-\;-\;+\right) $$
we mean that
$$ \mathfrak{g}^+ \;:=\; \R\left\langle e_2,\;e_3,\; e_6\right\rangle \qquad \text{ and }\qquad \mathfrak{g}^- \;:=\; \R\left\langle e_1,\;e_4,\; e_5\right\rangle \;. $$

\medskip

By considering the dual map $K\in\End\left(\duale{\mathfrak{g}}\right)$ of $K\in\End(\g)$ and by extending it to $K\in\End\left(\wedge^\bullet\duale{\g}\right)$, the splitting $\mathfrak{g}=\mathfrak{g}^+\oplus \mathfrak{g}^-$ into eigen-spaces given by the linear almost-\para-complex structure $K$ on $\mathfrak{g}$ induces also a splitting $\duale{\mathfrak{g}}=\duale{\left(\mathfrak{g}^+\right)}\oplus \duale{\left(\mathfrak{g}^-\right)}$, and hence, for every $\ell\in\N$, a splitting on the space of $\ell$-forms on $\duale{\g}$:
$$ \wedge^\ell\duale{\mathfrak{g}} \;=\; \bigoplus_{p+q=\ell}\formepm{p}{q}\duale{\mathfrak{g}} \qquad \text{ where } \qquad \formepm{p}{q}\duale{\mathfrak{g}} \;:=\; \bigoplus_{p+q=\ell}\wedge^{p}\duale{\left(\mathfrak{g}^+\right)}\otimes\wedge^{q}\duale{\left(\mathfrak{g}^-\right)} \;;$$
for any $p,q\in\N$, one has $K\lfloor_{\formepm{p}{q}\duale{\mathfrak{g}}}=(+1)^p\,(-1)^q\,\id_{\formepm{p}{q}\duale{\mathfrak{g}}}$. Consider also the splitting of the space of forms into its $K$-invariant and $K$-anti-invariant components:
$$ \wedge^\bullet\duale{\mathfrak{g}} \;=\; \wedge^{\bullet\,+}_K\duale{\mathfrak{g}}\,\oplus\,\wedge^{\bullet\,-}_K\duale{\mathfrak{g}} $$
where
$$ \wedge^{\bullet\,+}_K\duale{\mathfrak{g}} \;:=\; \bigoplus_{q=0\modulo 2}\formepm{\bullet}{q}\duale{\mathfrak{g}} \qquad \text{ and } \qquad \wedge^{\bullet\,-}_K\duale{\mathfrak{g}} \;:=\; \bigoplus_{q=1\modulo 2}\formepm{\bullet}{q}\duale{\mathfrak{g}} \;. $$

\medskip

As for manifolds, we define, for every $p,q\in\N$, the subgroup
$$ H^{(p,q)}_K\left({\mathfrak{g}};\R\right) \;:=\; \left\{\left[\alpha\right]\in H^{p+q}_{dR}\left(\mathfrak{g};\R\right) \st \alpha\in\formepm{p}{q}\duale{\mathfrak{g}}\right\} \;\subseteq\; H^{\bullet}_{dR}\left(\mathfrak{g};\R\right) \;, $$
and, for any $\ell\in\N$ and for $\pm\in\{+,-\}$, the subgroup
$$
H^{\ell\,\pm}_{K}\left(\mathfrak{g};\R\right) \;:=\; \left\{\left[\alpha\right]\in H^\ell\left({\mathfrak{g}};\R\right) \st K\alpha=\pm\alpha\right\}
\;=\; \left\{\left[\alpha\right]\in H^\ell\left({\mathfrak{g}};\R\right) \st \alpha\in\wedge^{\bullet\,\pm}_K\duale{\mathfrak{g}}\right\} \;\subseteq\; H^{\bullet}_{dR}\left(\mathfrak{g};\R\right) \;,
$$
and we give the following definition, \cite[Definition 2.1]{angella-rossi}.

\begin{defi}
 For $\ell\in\N$, a linear almost-\para-complex structure on the Lie algebra $\mathfrak{g}$ is said to be
\begin{itemize}
 \item \emph{linear-\Cp\ at the \kth{\ell} stage} if
  $$ H^{\ell\,+}_K\left(\mathfrak{g};\R\right)\,\cap\,H^{\ell\,-}_K\left(\mathfrak{g};\R\right) \;=\; \left\{0\right\} \;;$$
 \item \emph{linear-\Cf\ at the \kth{\ell} stage} if
  $$ H^{\ell\,+}_K\left(\mathfrak{g};\R\right)\,+\,H^{\ell\,-}_K\left(\mathfrak{g};\R\right) \;=\; H^\ell_{dR}\left(\mathfrak{g};\R\right) \;;$$
 \item \emph{linear-\Cpf\ at the \kth{\ell} stage} if it is both \Cp\ at the \kth{\ell} stage and \Cf\ at the \kth{\ell} stage, namely, if it satisfies the cohomological decomposition
  $$ H^\ell_{dR}\left(\mathfrak{g};\R\right) \;=\; H^{\ell\,+}_K\left(\mathfrak{g};\R\right)\,\oplus\,H^{\ell\,-}_K\left(\mathfrak{g};\R\right) \;.$$
 \end{itemize}
\end{defi}

\medskip

Given a $2n$-dimensional solvmanifold $X=\left.\Gamma\right\backslash G$, one can consider the associated Lie algebra $\mathfrak{g}$ to the Lie group $G$.
Note that a $G$-left-invariant almost-\para-complex structure on $X$ is uniquely determined, through left-translations on $G$, by a linear almost-\para-complex structure on $\mathfrak{g}$; furthermore, a $G$-left-invariant almost-\para-complex structure on $X$ is integrable if and only if the corresponding linear almost-\para-complex structure on $\mathfrak{g}$ is integrable. Hence, in the following we will confuse a $G$-left-invariant (almost-)\para-complex structure $K$ on the solvmanifold $X=\left.\Gamma\right\backslash G$ and the corresponding linear (almost-)\para-complex structure on the naturally associated Lie algebra $\mathfrak{g}$.

We recall that the left-translations induce an injective map in cohomology,
$$ H_{dR}^\bullet\left({\mathfrak{g}};\R\right) \;\hookrightarrow\; H^\bullet_{dR}(X;\R) \;, $$
where $H_{dR}^\bullet\left({\mathfrak{g}};\R\right)$ can be interpreted as the cohomology of the sub-complex composed of the $G$-left-invariant forms of $\left(\wedge^\bullet X,\, \de\right)$,
and that this map is actually an isomorphism if $G$ is nilpotent, respectively completely-solvable, by K. Nomizu's theorem \cite[Theorem 1]{nomizu}, respectively by A. Hattori's theorem \cite[Corollary 4.2]{hattori}.

Hence, given a $G$-left-invariant \para-complex structure on $X$, we may study \para-complex decomposition in cohomology both on $H_{dR}^\bullet\left({\mathfrak{g}};\R\right)$ and on $H^\bullet_{dR}(X;\R)$. The aim of this section is to make clear the connection between the \Cpf ness of a left-invariant almost-\para-complex structure on a completely-solvable solvmanifold and the linear-\Cpf ness of the corresponding linear almost-\para-complex structure on the associated Lie algebra.

\medskip

The following lemma adapt F.~A. Belgun's symmetrization trick, \cite[Theorem 7]{belgun}, to the \para-complex case, \cite[Lemma 2.3]{angella-rossi}.

\begin{lem}
\label{lemma:belgun-para-cplx}
Let $X=\left.\Gamma\right\backslash G$ be a solvmanifold, and denote the Lie algebra naturally associated to $G$ by $\mathfrak{g}$. Let $K$ be a $G$-left-invariant almost-\para-complex structure on $X$, or equivalently the associated linear almost-\para-complex structure on $\mathfrak{g}$.
Let $\eta$ be the $G$-bi-invariant volume form on $G$ given by J. Milnor's Lemma \cite[Lemma 6.2]{milnor}, and such that $\int_X\eta=1$. Up to identifying $G$-left-invariant forms on $X$ and linear forms over $\duale{\mathfrak{g}}$ through left-translations, consider the Belgun symmetrization map, \cite[Theorem 7]{belgun},
$$ \mu\colon \wedge^\bullet X \to \wedge^\bullet \duale{\mathfrak{g}}\;,\qquad \mu(\alpha)\;:=\;\int_X \alpha\lfloor_m \, \eta(m) \;.$$
One has that
$$ \mu\lfloor_{\wedge^\bullet \duale{\mathfrak{g}}}\;=\;\id\lfloor_{\wedge^\bullet \duale{\mathfrak{g}}} $$
and that
$$ \de\left(\mu(\sspace)\right) \;=\; \mu\left(\de\sspace\right) \qquad \text{ and } \qquad K\left(\mu(\sspace)\right) \;=\; \mu\left(K\sspace\right) \;.$$
\end{lem}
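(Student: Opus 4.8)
The statement to prove is F.~A. Belgun's symmetrization lemma adapted to the \para-complex setting (Lemma \ref{lemma:belgun-para-cplx}). The plan is to reduce everything to the original symmetrization result \cite[Theorem 7]{belgun}, which already provides the identity $\mu\lfloor_{\wedge^\bullet \duale{\mathfrak{g}}}=\id\lfloor_{\wedge^\bullet \duale{\mathfrak{g}}}$ and the commutation $\de\circ\mu=\mu\circ\de$. Thus the only genuinely new assertion is the compatibility of $\mu$ with the endomorphism $K\in\End\left(\wedge^\bullet X\right)$ extending the linear almost-\para-complex structure, namely $K\circ\mu=\mu\circ K$. The existence of the bi-invariant volume form $\eta$ with $\int_X\eta=1$ is guaranteed by J. Milnor's Lemma \cite[Lemma 6.2]{milnor}, so I may take $\eta$ as given.

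First I would recall precisely how $\mu$ and $K$ act. The map $\mu$ sends a differential form $\alpha\in\wedge^\bullet X$ to the $G$-left-invariant form $\mu(\alpha)=\int_X \alpha\lfloor_m\,\eta(m)$, where $\alpha\lfloor_m$ denotes the left-translate of $\alpha$ to the identity via the left-action of $m\in G$; the crucial point from \cite[Theorem 7]{belgun} is that this averaging is performed pointwise and linearly. The endomorphism $K\in\End\left(\wedge^\bullet X\right)$ is, by construction, $G$-left-invariant: it is induced by the linear endomorphism $K\in\End(\mathfrak{g})$ through left-translations, so $K$ commutes with the pull-back by any left-translation $L_m$ for $m\in G$, i.e. $L_m^*\circ K=K\circ L_m^*$ on forms.

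The heart of the argument is then the following interchange. Since $K$ is $G$-left-invariant, for every $\alpha\in\wedge^\bullet X$ and every $m\in G$ one has $\left(K\alpha\right)\lfloor_m=K\left(\alpha\lfloor_m\right)$, because left-translation intertwines the action of $K$ with itself. Integrating against the bi-invariant volume form $\eta$ and using that $K$, being a fixed endomorphism of the finite-dimensional space $\wedge^\bullet\duale{\mathfrak{g}}$, can be pulled out of the integral by linearity, I obtain
$$ \mu\left(K\alpha\right) \;=\; \int_X \left(K\alpha\right)\lfloor_m\,\eta(m) \;=\; \int_X K\left(\alpha\lfloor_m\right)\,\eta(m) \;=\; K\int_X \alpha\lfloor_m\,\eta(m) \;=\; K\left(\mu(\alpha)\right) \;, $$
which is exactly the desired commutation. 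The identities $\mu\lfloor_{\wedge^\bullet\duale{\mathfrak{g}}}=\id\lfloor_{\wedge^\bullet\duale{\mathfrak{g}}}$ and $\de\circ\mu=\mu\circ\de$ are then quoted verbatim from \cite[Theorem 7]{belgun}, since they involve only the exterior differential and the averaging procedure, neither of which depends on the \para-complex structure.

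The main (and essentially only) obstacle is the careful bookkeeping in the pointwise identity $\left(K\alpha\right)\lfloor_m=K\left(\alpha\lfloor_m\right)$: one must verify that the extension of the linear $K\in\End(\mathfrak{g})$ to left-invariant bundle endomorphisms genuinely commutes with left-translations at the level of the full exterior algebra $\wedge^\bullet$, and not merely on $1$-forms. This follows because $K$ on $\wedge^\bullet$ is defined as the multiplicative (algebra) extension of its action on $\duale{\mathfrak{g}}$, and left-translation pull-back is likewise an algebra homomorphism, so the two operations commute on decomposable forms and hence, by linearity, on all of $\wedge^\bullet X$. Once this is in place, the lemma follows immediately, and I expect no further analytic difficulties since the averaging is over the compact quotient $X$ against a finite, bi-invariant measure.
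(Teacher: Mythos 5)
Your proof is correct and takes essentially the same route as the paper: the identities $\mu\lfloor_{\wedge^\bullet \duale{\mathfrak{g}}}=\id$ and $\de\circ\mu=\mu\circ\de$ are quoted from \cite[Theorem 7]{belgun}, and the only new point, $K\circ\mu=\mu\circ K$, is obtained exactly as in the paper's proof of the symplectic analogue (Lemma \ref{lemma:belgun-sympl}), by using the $G$-left-invariance of the structure to commute it pointwise with left-translation and then pulling the fixed linear endomorphism $K\in\End\left(\wedge^\bullet\duale{\mathfrak{g}}\right)$ out of the integral by linearity. Your extra verification that the extension of $K$ to the full exterior algebra commutes with pull-backs (both being algebra homomorphisms, so it suffices to check on decomposables) is a sound and welcome piece of bookkeeping that the paper leaves implicit.
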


As a consequence, we get the following result, \cite[Proposition 2.4]{angella-rossi} (compare with Proposition \ref{prop:linear-cpf-invariant-cpf-J}, see also \cite[Theorem 3.4]{fino-tomassini}, in the almost-complex case, and with Proposition \ref{prop:linear-cpf-invariant-cpf-omega} in the symplectic case).

\begin{prop}
\label{prop:linear-cpf-invariant-cpf-K}
Let $X=\left.\Gamma\right\backslash G$ be a solvmanifold, and denote the Lie algebra naturally associated to $G$ by $\mathfrak{g}$. Suppose that $H_{dR}^\bullet\left(\mathfrak{g};\R\right) \;\simeq\; H^\bullet_{dR}(X;\R)$ (e.g., suppose that $X$ is a completely-solvable solvmanifold). Let $K$ be a $G$-left-invariant almost-\para-complex structure on $X$.
Then, for every $\ell\in\N$ and for $\pm\in\{+,\,-\}$, the injective map
$$ H^{\ell\,\pm}_K(\mathfrak{g};\R) \to H^{\ell\,\pm}_K(X;\R) $$
induced by left-translations is an isomorphism.
\end{prop}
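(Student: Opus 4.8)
The statement is the exact \para-complex analogue of Proposition \ref{prop:linear-cpf-invariant-cpf-J} and Proposition \ref{prop:linear-cpf-invariant-cpf-omega}, and the plan is to mimic their proofs verbatim, the only change being that the relevant grading operator is now the \para-complex structure $K$ acting on forms rather than an almost-complex structure or the Lefschetz operator $L$. The whole argument rests on F.~A. Belgun's symmetrization map $\mu\colon \wedge^\bullet X \to \wedge^\bullet\duale{\mathfrak{g}}$, whose properties in the \para-complex setting have already been recorded in Lemma \ref{lemma:belgun-para-cplx}: namely $\mu\lfloor_{\wedge^\bullet\duale{\mathfrak{g}}} = \id\lfloor_{\wedge^\bullet\duale{\mathfrak{g}}}$, and $\mu$ commutes with both $\de$ and $K$.

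\textbf{Key steps.} First I would observe that, since $K$ is $G$-left-invariant, left-translations send $K$-invariant (respectively $K$-anti-invariant) $G$-left-invariant forms to $K$-invariant (respectively $K$-anti-invariant) forms on $X$, and commute with $\de$; hence they induce a well-defined map $j\colon H^{\ell\,\pm}_K(\mathfrak{g};\R) \to H^{\ell\,\pm}_K(X;\R)$ for each $\ell\in\N$ and each $\pm\in\{+,-\}$. Next, by Lemma \ref{lemma:belgun-para-cplx}, $\mu$ commutes with $\de$, so it descends to a map in cohomology $\mu\colon H^\bullet_{dR}(X;\R)\to H^\bullet_{dR}(\mathfrak{g};\R)$; since $\mu$ also commutes with $K$, it preserves the splitting $\wedge^\bullet X = \wedge^{\bullet\,+}_KX\oplus\wedge^{\bullet\,-}_KX$, and therefore restricts to $\mu\colon H^{\ell\,\pm}_K(X;\R)\to H^{\ell\,\pm}_K(\mathfrak{g};\R)$. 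Then, using $\mu\lfloor_{\wedge^\bullet\duale{\mathfrak{g}}} = \id\lfloor_{\wedge^\bullet\duale{\mathfrak{g}}}$, I would produce the commutative diagram
$$
\xymatrix{
H^{\ell\,\pm}_{K}(\mathfrak{g};\R) \ar[r]^{j} \ar@/_1.5pc/[rr]_{\id} & H^{\ell\,\pm}_{K}(X;\R) \ar[r]^{\mu} & H^{\ell\,\pm}_{K}(\mathfrak{g};\R)
}
$$
which immediately shows that $j$ is injective (it has a left inverse) and that $\mu$ restricted to these subgroups is surjective. Finally, under the standing hypothesis $H^\bullet_{dR}(\mathfrak{g};\R)\simeq H^\bullet_{dR}(X;\R)$ (which holds for completely-solvable solvmanifolds by A. Hattori's theorem \cite[Corollary 4.2]{hattori}, and more generally whenever the inclusion of left-invariant forms is a quasi-isomorphism), the identity $\mu\lfloor_{\wedge^\bullet\duale{\mathfrak{g}}} = \id\lfloor_{\wedge^\bullet\duale{\mathfrak{g}}}$ forces the induced map $\mu\colon H^\bullet_{dR}(X;\R)\to H^\bullet_{dR}(\mathfrak{g};\R)$ to be the identity; restricting to $H^{\ell\,\pm}_K(X;\R)$ then makes $\mu$ injective on these subgroups too, hence an isomorphism, and consequently $j$ is an isomorphism.

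\textbf{Main obstacle.} There is essentially no hard analytic step here; the entire content has been isolated into Lemma \ref{lemma:belgun-para-cplx}, so the proof is purely formal diagram-chasing once one has the symmetrization map commuting with $K$. The only point requiring a moment of care is checking that the two restricted maps $j$ and $\mu$ are genuinely well-defined on the subgroups $H^{\ell\,\pm}_K$, i.e.\ that they really preserve the $K$-eigenvalue decomposition at the level of cohomology and not merely at the level of forms; this follows because $K$ commutes with $\de$ on invariant and anti-invariant forms (an even number of $K$-anti-invariant factors stays $K$-invariant under $\de$ since $\de$ raises the $\del_-$ degree, and the relations $\del_+^2=0$, $\del_+\del_-+\del_-\del_+=0$, $\del_-^2=0$ control the grading). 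I expect the verification to be entirely routine, so I would keep it brief and refer back to the parallel arguments already given for Proposition \ref{prop:linear-cpf-invariant-cpf-J} and Proposition \ref{prop:linear-cpf-invariant-cpf-omega}.
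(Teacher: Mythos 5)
Your proposal is correct and coincides with the paper's own proof: both rest entirely on Belgun's symmetrization map $\mu$ commuting with $\de$ and with $K$ (Lemma \ref{lemma:belgun-para-cplx}), the composition $\mu\circ j=\id$ giving injectivity of $j$, and the hypothesis $H^\bullet_{dR}\left(\mathfrak{g};\R\right)\simeq H^\bullet_{dR}(X;\R)$ forcing $\mu$ to be the identity in cohomology, whence $j$ is an isomorphism. One minor remark: the well-definedness worry in your final paragraph is vacuous, because $H^{\ell\,\pm}_K$ are by definition subgroups of the de Rham cohomology, so once $\mu$ preserves the eigen-space splitting at the level of forms and descends to $H^\bullet_{dR}$, it automatically restricts to these subgroups --- no commutation of $\de$ with $K$ is needed (and indeed $\de=\del_++\del_-$ does not preserve the splitting in general).
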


\begin{proof}
Consider the F.~A. Belgun symmetrization map $\mu\colon \wedge^\bullet X \to \wedge^\bullet \duale{\mathfrak{g}}$, \cite[Theorem 7]{belgun}. It is enough to observe the following three facts.
\begin{enumerate}[\itshape (i)]
\item Since $\de\left(\mu(\sspace)\right)=\mu\left(\de\sspace\right)$, \cite[Theorem 7]{belgun}, one has that $\mu$ sends $\de$-closed,  respectively $\de$-exact, forms to $\de$-closed, respectively $\de$-exact, $G$-left-invariant forms, and so it induces a map
$$ \mu\colon H^\bullet_{dR}(X;\R)\to H_{dR}^\bullet\left(\mathfrak{g};\R\right) \;.$$
\item Since $K\left(\mu(\sspace)\right)=\mu\left(K\sspace\right)$, Lemma \ref{lemma:belgun-para-cplx}, for $\pm\in\{+,\,-\}$, one has
$$ \mu\lfloor_{\wedge^{\bullet\,\pm}_KX} \colon \wedge^{\bullet\,\pm}_KX \to \wedge^{\bullet\,\pm}_K\duale{\mathfrak{g}} \;,$$
and hence
$$ \mu\lfloor_{H^{\bullet\,\pm}_K(X;\R)} \colon H^{\bullet\,\pm}_K(X;\R) \to H^{\bullet\,\pm}_K\left(\mathfrak{g};\R\right) \;.$$
\item Finally, if $H^\bullet_{dR}(X;\R)\simeq H_{dR}^\bullet\left(\mathfrak{g};\R\right)$ (e.g., if $X$ is a completely-solvable solvmanifold, \cite[Corollary 4.2]{hattori}), then the condition $\mu\lfloor_{\wedge^\bullet_{\text{inv}}X}=\id\lfloor_{\wedge^\bullet_{\text{inv}}X}$, \cite[Theorem 7]{belgun}, gives that $\mu$ is the identity in cohomology.\qedhere
\end{enumerate}
\end{proof}

As a straightforward corollary, we get the following result, \cite[Proposition 2.4]{angella-rossi} (compare with Corollary \ref{cor:linear-cpf-cpf} in the almost-complex case).

\begin{cor}\label{cor:linear-cpf-invariant-cpf-K}
 Let $X=\left.\Gamma\right\backslash G$ be a solvmanifold such that $H^\bullet_{dR}(X;\R)\simeq H_{dR}^\bullet\left(\mathfrak{g};\R\right)$ (e.g., a completely-solvable solvmanifold), and denote the Lie algebra naturally associated to $G$ by $\mathfrak{g}$. Let $K$ be a $G$-left-invariant almost-\para-complex structure on $X$.
 For every $\ell\in\N$, the associated linear almost-\para-complex structure $K\in\End(\mathfrak{g})$ is linear-\Cp\ (respectively, linear-\Cf, linear-\Cpf) at the \kth{\ell} stage if and only if the $G$-left-invariant almost-\para-complex structure $K\in\End(TX)$ is \Cp\ (respectively, \Cf, \Cpf) at the \kth{\ell} stage.
\end{cor}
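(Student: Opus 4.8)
The plan is to deduce Corollary~\ref{cor:linear-cpf-invariant-cpf-K} directly from Proposition~\ref{prop:linear-cpf-invariant-cpf-K}, exactly as Corollary~\ref{cor:linear-cpf-cpf} was deduced from Proposition~\ref{prop:linear-cpf-invariant-cpf-J} in the almost-complex case. The key observation is that Proposition~\ref{prop:linear-cpf-invariant-cpf-K} provides, for every $\ell\in\N$ and each sign $\pm\in\{+,-\}$, an isomorphism $j\colon H^{\ell\,\pm}_K(\mathfrak{g};\R)\stackrel{\simeq}{\to}H^{\ell\,\pm}_K(X;\R)$ induced by left-translations, under the standing hypothesis $H^\bullet_{dR}(X;\R)\simeq H^\bullet_{dR}(\mathfrak{g};\R)$ (which holds, in particular, for completely-solvable solvmanifolds by A.~Hattori's theorem \cite[Corollary 4.2]{hattori}). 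Since this same hypothesis also gives the isomorphism $H^\ell_{dR}(\mathfrak{g};\R)\stackrel{\simeq}{\to}H^\ell_{dR}(X;\R)$ in the full de Rham cohomology, and since all these isomorphisms are induced by one and the same inclusion of $G$-left-invariant forms (hence are compatible with inclusions and sums of the various subgroups), the two decomposition problems --- on the Lie algebra and on the solvmanifold --- become literally the same linear-algebra question.

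First I would fix $\ell\in\N$ and run the three equivalences in parallel. For linear-\Cp ness versus \Cp ness, the point is that left-translation sends $H^{\ell\,+}_K(\mathfrak{g};\R)\cap H^{\ell\,-}_K(\mathfrak{g};\R)$ isomorphically into $H^{\ell\,+}_K(X;\R)\cap H^{\ell\,-}_K(X;\R)$ and, because the maps $j$ are isomorphisms onto the corresponding invariant and anti-invariant subgroups of $H^\ell_{dR}(X;\R)$, this containment is in fact an equality; hence one intersection is trivial iff the other is. For linear-\Cf ness versus \Cf ness, the compatibility of $j$ with sums gives that $j$ carries $H^{\ell\,+}_K(\mathfrak{g};\R)+H^{\ell\,-}_K(\mathfrak{g};\R)$ onto $H^{\ell\,+}_K(X;\R)+H^{\ell\,-}_K(X;\R)$, while $H^\ell_{dR}(\mathfrak{g};\R)$ maps onto $H^\ell_{dR}(X;\R)$; thus one sum fills the whole $\ell$-th cohomology iff the other does. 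Linear-\Cpf ness versus \Cpf ness then follows by combining the two previous equivalences, since being \Cpf\ at the $\ell$-th stage is by definition being simultaneously \Cp\ and \Cf\ at that stage.

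Concretely, I would phrase the argument through the commutative square
\[
\xymatrix{
H^{\ell\,+}_K(\mathfrak{g};\R)\oplus H^{\ell\,-}_K(\mathfrak{g};\R) \ar[r] \ar[d]_{j\oplus j}^{\simeq} & H^{\ell}_{dR}(\mathfrak{g};\R) \ar[d]^{\simeq} \\
H^{\ell\,+}_K(X;\R)\oplus H^{\ell\,-}_K(X;\R) \ar[r] & H^{\ell}_{dR}(X;\R)
}
\]
in which both vertical maps are isomorphisms (the left one by Proposition~\ref{prop:linear-cpf-invariant-cpf-K}, the right one by the hypothesis on $H^\bullet_{dR}$), and the horizontal arrows are the natural sum maps induced by the identity. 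Injectivity of the top horizontal arrow (which is linear-\Cp ness) is then equivalent to injectivity of the bottom one (which is \Cp ness), and surjectivity of the top (linear-\Cf ness) is equivalent to surjectivity of the bottom (\Cf ness); bijectivity, i.e.\ the two \Cpf\ notions, follows at once. This reduces the entire statement to a diagram chase.

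I do not expect a genuine obstacle here: the corollary is a formal consequence of the preceding proposition, and the only thing requiring care is the bookkeeping that the isomorphisms $j$ in Proposition~\ref{prop:linear-cpf-invariant-cpf-K} are all restrictions of the \emph{same} left-translation map to the respective invariant and anti-invariant subgroups, so that they are genuinely compatible with the inclusions $H^{\ell\,\pm}_K\hookrightarrow H^\ell_{dR}$ and with forming sums and intersections. Once this compatibility is recorded --- which is immediate from the construction via the symmetrization map $\mu$ in Lemma~\ref{lemma:belgun-para-cplx}, since $\mu$ commutes with both $\de$ and $K$ and is the identity on $G$-left-invariant forms --- the equivalences are purely formal, and the proof can be stated in a few lines.
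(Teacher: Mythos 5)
Your proof is correct and is essentially the paper's own argument: the paper states Corollary \ref{cor:linear-cpf-invariant-cpf-K} as a straightforward consequence of Proposition \ref{prop:linear-cpf-invariant-cpf-K}, which is precisely the diagram chase you spell out, with the needed compatibility of the isomorphisms $j$ with the inclusions $H^{\ell\,\pm}_K\hookrightarrow H^{\ell}_{dR}$ guaranteed, as you note, by the fact that all maps are induced by the same inclusion of left-invariant forms (with left inverse the symmetrization $\mu$ of Lemma \ref{lemma:belgun-para-cplx}). Nothing is missing.
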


\subsubsection{Non-\Cpf\ (almost-)\para-complex nilmanifolds}\label{subsubsec:examples-para}
We provide here some explicit examples of left-invariant (almost-)\para-complex structures on nilmanifolds, studying the corresponding subgroups in cohomology, and providing differences between the \para-complex and the complex cases, Proposition \ref{prop:para-kahler-non-cpf}.

\medskip

More precisely, recall that every K\"ahler structure on a compact manifold is \Cpf, \cite[Lemma 2.15, Theorem 2.16]{draghici-li-zhang}, or \cite[Proposition 2.1]{li-zhang}, and that every almost-complex structure on a $4$-dimensional compact manifold is \Cpf, \cite[Theorem 2.3]{draghici-li-zhang}: we give instead an example of a \para-complex structure on a $6$-dimensional nilmanifold such that it is non-\Cf\ at the \kth{2} stage, \cite[Example 3.1]{angella-rossi}, respectively non-\Cp\ at the \kth{2} stage, \cite[Example 3.2]{angella-rossi}, despite it admits a \para-K\"ahler structure; furthermore, we provide a non-\Cpf\ at the \kth{2} stage almost-\para-complex structure on a $4$-dimensional manifold, proving that no almost-\para-complex counterpart of \cite[Theorem 2.3]{draghici-li-zhang} could exist.

\begin{ex}
\label{es 2.5}
{\itshape A \para-complex structure on a $6$-dimensional nilmanifold that is \Cp\ at the \kth{2} stage and non-\Cf\ at the \kth{2} stage and that admits a \para-K\"ahler structure.}\\
Consider a nilmanifold
 $$ X \;=\; \left.\Gamma\right\backslash G \;:=\; \left(0^4,\; 12,\; 13\right) $$
and define the $G$-left-invariant \para-complex structure $K$ by setting
$$ K \;:=\; \left(-\,+\,+\,-\,-\,+\right) \;.$$
By K. Nomizu's theorem \cite[Theorem 1]{nomizu}, the de Rham cohomology of $X$ is given by
$$
H^2_{dR}(X;\R) \;=\; \R\left\langle e^{14},\; e^{15},\; e^{16},\; e^{23},\; e^{24},\; e^{25},\; e^{34},\; e^{36},\; e^{26}+e^{35} \right\rangle
$$
(where, as usual, we list the harmonic representatives with respect to the $G$-left-invariant metric $\sum_{j=1}^{6} e^j\odot e^j$ instead of their classes, and we write, e.g., $e^{hk}$ to shorten $e^h\wedge e^k$).
Note that
$$ H^{2\,+}_K\left(\mathfrak{g};\R\right) \;=\; \R\left\langle e^{14},\;e^{15},\;e^{23},\;e^{36} \right\rangle \qquad \text{ and } \qquad H^{2\,-}_K\left(\mathfrak{g};\R\right) \;=\; \R\left\langle e^{16},\;e^{24},\;e^{25},\;e^{34} \right\rangle \;, $$
since the space of $G$-left-invariant $\de$-exact $2$-forms is $\R\left\langle e^{12},\, e^{13}\right\rangle$, and hence no $G$-left-invariant representative in the class $\left[e^{26}+e^{35}\right]$ is of pure type with respect to $K$. It follows that $K\in\End\left(\mathfrak{g}\right)$ is linear-\Cp\ at the \kth{2} stage and linear non-\Cf\ at the \kth{2} stage, and hence $K\in\End(TX)$ is \Cp\ at the \kth{2} stage and non-\Cf\ at the \kth{2} stage, by Corollary \ref{cor:linear-cpf-invariant-cpf-K}. (Note that, $K$ being Abelian, one can deduce the \Cp ness at the \kth{2} stage also by Corollary \ref{cor:abelian}.)

Moreover, we observe that
$$ \omega \;:=\; e^{16}+e^{25}+e^{34} $$
is a ($G$-left-invariant) symplectic form compatible with $K$, hence $\left(K,\,\omega\right)$ is a \para-K\"ahler structure on $X$.
\end{ex}

\begin{ex}
\label{es 2.6}
{\itshape A \para-complex structure on a $6$-dimensional nilmanifold that is non-\Cp\ at the \kth{2} stage, and hence non-\Cf\ at the \kth{4} stage, and that admits a \para-K\"ahler structure.}\\
Consider a nilmanifold
 $$ X \;=\; \left.\Gamma\right\backslash G \;:=\; \left(0^3,\;12,\;13+14,\;24\right) $$
and define the $G$-left-invariant \para-complex structure
 $$ K \;:=\; \left(+\;-\;+\;-\;+\;-\right) \;.$$
(Note that $\left[\mathfrak{g}^-,\,\mathfrak{g}^-\right]\neq\left\{0\right\}$, since $\left[e_2,e_4\right]=-e_6$, hence $K$ is not Abelian.)\\
We have
$$ H^{2\,+}_K\left(\mathfrak{g};\R\right) \;\ni\; \left[e^{13}\right] \;=\; \left[e^{13}-\de e^{5}\right] \;=\; -\left[e^{14}\right] \;\in\; H^{2\,-}_K\left(\mathfrak{g};\R\right) $$
and therefore $0 \neq \left[e^{13}\right] \in H^{2\,+}_K\left(\mathfrak{g};\R\right) \cap H^{2\,-}_K\left(\mathfrak{g};\R\right)$, namely, $K\in\End(\mathfrak{g})$ is not linear-\Cp\ at the \kth{2} stage, hence, by Corollary \ref{cor:linear-cpf-invariant-cpf-K}, $K\in\End(TX)$ is not \Cp\ at the \kth{2} stage; moreover, by Proposition \ref{prop:para-cpf-pf}, we have also that $K$ is not \Cf\ at the \kth{4} stage.

Furthermore,
$$ \omega \;:=\; e^{16}+e^{25}+e^{34} $$
is a ($G$-left-invariant) symplectic form compatible with $K$, hence $\left(K,\,\omega\right)$ is a \para-K\"ahler structure on $X$.
\end{ex}

It is straightforward to obtain higher-dimensional examples of \para-K\"ahler non-\Cf, respectively non-\Cp, at the \kth{2} stage structures, taking products with standard \para-complex tori.

The contents of the previous two examples are resumed in the following result, \cite[Proposition 3.3]{angella-rossi}, which gives a difference with the complex case, \cite[Proposition 2.1]{li-zhang}, or \cite[Lemma 2.15, Theorem 2.16]{draghici-li-zhang}.

\begin{prop}\label{prop:para-kahler-non-cpf}
 Admitting a \para-K\"ahler structure is not a sufficient condition for either being \Cp\ at the \kth{2} stage or being \Cf\ at the \kth{2} stage.
\end{prop}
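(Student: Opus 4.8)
The statement asserts that admitting a \para-K\"ahler structure forces neither \Cp ness nor \Cf ness at the \kth{2} stage. The plan is to prove this by exhibiting explicit counterexamples, exactly in the spirit of Example \ref{es 2.5} and Example \ref{es 2.6}. The two examples already furnished in the excerpt do precisely this: the first provides a \para-K\"ahler structure that is non-\Cf\ at the \kth{2} stage (while still being \Cp), and the second provides a \para-K\"ahler structure that is non-\Cp\ at the \kth{2} stage. Since a single counterexample suffices to deny each implication, the proposition follows at once by collecting these two constructions. I would therefore organize the proof as a short assembly of the two worked examples rather than as an independent argument.

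First I would recall the two nilmanifolds and their left-invariant \para-complex structures. For the failure of \Cf ness, take $X=\left.\Gamma\right\backslash G:=\left(0^4,\;12,\;13\right)$ with the Abelian \para-complex structure $K:=\left(-\,+\,+\,-\,-\,+\right)$, together with the $K$-compatible symplectic form $\omega:=e^{16}+e^{25}+e^{34}$, which yields a \para-K\"ahler structure. For the failure of \Cp ness, take $X=\left.\Gamma\right\backslash G:=\left(0^3,\;12,\;13+14,\;24\right)$ with $K:=\left(+\;-\;+\;-\;+\;-\right)$ and again $\omega:=e^{16}+e^{25}+e^{34}$, which is a $K$-compatible symplectic form. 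In each case the verification that $\omega$ is $\de$-closed, non-degenerate, and $K$-anti-invariant is a direct computation on the structure equations, so these are genuine \para-K\"ahler structures.

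Next I would carry out the cohomological computations that display the failures. For both nilmanifolds, K. Nomizu's theorem \cite[Theorem 1]{nomizu} reduces the de Rham cohomology to the cohomology of the invariant forms, and Corollary \ref{cor:linear-cpf-invariant-cpf-K} reduces the \Cp\ or \Cf\ property at the \kth{2} stage on $X$ to the corresponding linear property on $\mathfrak{g}$. In the first example one checks that the class $\left[e^{26}+e^{35}\right]\in H^2_{dR}(X;\R)$ admits no $G$-left-invariant pure-type representative with respect to $K$ (because the space of invariant $\de$-exact $2$-forms is only $\R\left\langle e^{12},\,e^{13}\right\rangle$), so $H^{2\,+}_K(\mathfrak{g};\R)+H^{2\,-}_K(\mathfrak{g};\R)\subsetneq H^2_{dR}(\mathfrak{g};\R)$, witnessing non-\Cf ness at the \kth{2} stage. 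In the second example one checks that $0\neq\left[e^{13}\right]=\left[e^{13}-\de e^5\right]=-\left[e^{14}\right]$, so that $\left[e^{13}\right]\in H^{2\,+}_K(\mathfrak{g};\R)\cap H^{2\,-}_K(\mathfrak{g};\R)$, witnessing non-\Cp ness at the \kth{2} stage.

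\textbf{Main obstacle.} Conceptually there is no deep obstacle here, since the result is a non-existence statement proved by examples; the work is entirely in the explicit verifications. The one point that requires genuine care — and which I regard as the crux — is confirming that the invariant cohomology really detects the failure, i.e.\ that no (possibly non-invariant) pure-type representative can fix things. This is exactly where Corollary \ref{cor:linear-cpf-invariant-cpf-K} (via the F.~A. Belgun symmetrization trick of Lemma \ref{lemma:belgun-para-cplx}) is indispensable: it guarantees that the linear computation on $\mathfrak{g}$ faithfully reflects the situation on $X$, so that the apparent obstruction at the invariant level is a true obstruction. Once this reduction is invoked, the remaining steps are routine linear algebra on the finite-dimensional complex $\left(\wedge^\bullet\duale{\mathfrak{g}},\,\de\right)$, and the proposition is immediate.
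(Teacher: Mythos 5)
Your proposal is correct and coincides with the paper's own proof: Proposition \ref{prop:para-kahler-non-cpf} is established there precisely by the two worked counterexamples you cite, namely the nilmanifold $\left(0^4,\,12,\,13\right)$ with the Abelian structure $K=\left(-\,+\,+\,-\,-\,+\right)$ and \para-K\"ahler form $\omega=e^{16}+e^{25}+e^{34}$ (non-\Cf\ at the \kth{2} stage, via the class $\left[e^{26}+e^{35}\right]$), and the nilmanifold $\left(0^3,\,12,\,13+14,\,24\right)$ with $K=\left(+\,-\,+\,-\,+\,-\right)$ (non-\Cp\ at the \kth{2} stage, via $\left[e^{13}\right]=-\left[e^{14}\right]$). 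You also correctly identify the essential reduction step — Corollary \ref{cor:linear-cpf-invariant-cpf-K}, resting on the Belgun symmetrization of Lemma \ref{lemma:belgun-para-cplx} together with Nomizu's theorem — which guarantees that the invariant-level obstructions are genuine, exactly as in the paper.
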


\medskip

We provide now a counterexample showing that T. Dr\v{a}ghici, T.-J. Li, and W. Zhang's decomposition theorem for compact $4$-dimensional almost-complex manifolds, \cite[Theorem 2.3]{draghici-li-zhang}, does not hold, in general, in the almost-\para-complex case, \cite[Example 3.4]{angella-rossi}.

\begin{ex}
\label{es 2.8}
{\itshape An almost-\para-complex structure on a $4$-dimensional nilmanifold that is non-\Cpf\ at the \kth{2} stage.}\\
Consider a nilmanifold
 $$ X \;=\; \left.\Gamma\right\backslash G \;:=\; \left(0,\; 0,\; 12,\; 0\right) $$
and define the $G$-left-invariant almost-\para-complex structure $K$ requiring that $K\lfloor_{\mathfrak{g}^+}=\id_{\mathfrak{g}^+}$ and $K\lfloor_{\mathfrak{g}^-}=-\id_{\mathfrak{g}^-}$ where
\begin{equation*}
\mathfrak{g}^+\;:=\;\R\left\langle e_1,\, e_4-e_2\right\rangle \quad \text{ and }\quad \mathfrak{g}^-\;:=\;\R\left\langle e_2,\, e_3\right\rangle.
\end{equation*}
Note that $K$ is not integrable, since $\left[\g^+,\,\g^+\right]\ni\left[e_1,e_4-e_2\right]=e_3\not\in\g^+$.

Note that we have
$$ H^{2\,+}_K\left(\mathfrak{g};\R\right)\ni\left[e^{14}\right]=\left[e^{14}+\de e^{3}\right]=\left[e^{14}+e^{12}\right]= \left[e^1\wedge(e^4+e^2)\right]\in H^{2\,-}_K\left(\mathfrak{g};\R\right) \;, $$
and therefore we get that $0 \neq \left[e^{14}\right] \in H^{2\,+}_K\left(\mathfrak{g};\R\right) \cap H^{2\,-}_K\left(\mathfrak{g};\R\right)$; then, $K$ is non-\Cp\ at the \kth{2} stage and non-\Cf\ at the \kth{2} stage, by Corollary \ref{cor:linear-cpf-invariant-cpf-K}, and Proposition \ref{prop:para-cpf-pf}.
\end{ex}

\subsubsection{\Cpf ness of low-dimensional \para-complex solvmanifolds}

In this section, we state and prove Theorem \ref{thm:4-dim}, providing a partial \para-complex counterpart of \cite[Theorem 2.3]{draghici-li-zhang} in the almost-complex case. We start by fixing some notations and by proving some preliminary results.

\medskip

Given a linear \para-complex structure $K$ on a Lie algebra $\mathfrak{g}$, consider the induced eigen-spaces decomposition $\mathfrak{g}=\mathfrak{g}^+\oplus\mathfrak{g}^-$, and consider the nilpotent steps
$$ s^+ \;:=\; s\left(\mathfrak{g}^+\right) \qquad \text{ and }\qquad s^- \;:=\; s\left(\mathfrak{g}^-\right) \;. $$

(As a matter of notation, recall that, given a Lie algebra $\left(\mathfrak{a},\,\left[\sspace,\ssspace\right]\right)$, the \emph{lower central series} $\left\{\mathfrak{a}^n\right\}_{n\in\N}$ is defined, by induction on $n\in\N$, as
$$
\left\{
\begin{array}{rcll}
 \mathfrak{a}^0 &:=& \mathfrak{a} & \\[5pt]
 \mathfrak{a}^{n+1} &:=& \left[\mathfrak{a}^n,\,\mathfrak{a}\right] & \text{ for }n\in\N
\end{array}
\right. \;;
$$
note that $\left\{\mathfrak{a}_n\right\}_{n\in\N}$ is a descending sequence of Lie algebras:
$$ \mathfrak{a}\;=\; \mathfrak{a}^0 \;\supseteq \mathfrak{a}^1 \;\supseteq\; \cdots \;\supseteq\; \mathfrak{a}^{j-1} \;\supseteq\; \mathfrak{a}^j \;\supseteq\; \cdots \;;$$
recall that the \emph{nilpotent step} of $\mathfrak{a}$ is defined as
$$ s\left(\mathfrak{a}\right) \;:=\; \inf\left\{m\in\N \st \mathfrak{a}^m=0\right\} \;; $$
in particular, if $s\left(\mathfrak{a}\right)<+\infty$, then, by definition, $\mathfrak{a}$ is nilpotent.)

Since $\mathfrak{g}^+\subset\mathfrak{g}$ and $\mathfrak{g}^-\subset\mathfrak{g}$, we have obviously that
$$ s^+\;\leq\; s\left(\mathfrak{g}\right) \qquad \text{ and } \qquad s^- \;\leq\; s\left(\mathfrak{g}\right)  \;. $$
In fact, we have the following lemma, \cite[Lemma 3.5]{angella-rossi}.

\begin{lem}
\label{lemma:s+-}
 Let $\mathfrak{g}$ be a $2n$-dimensional nilpotent Lie algebra, namely, $s\left(\mathfrak{g}\right)<+\infty$. Let $K$ be a linear \para-complex structure on $\mathfrak{g}$, inducing the eigen-spaces decomposition $\mathfrak{g}=\mathfrak{g}^+\oplus\mathfrak{g}^-$. Then, setting $s^\pm:=s\left(\mathfrak{g}^{\pm}\right)$ for $\pm\in\left\{+,\,-\right\}$, we have
$$ 1 \;\leq\; s^+ \;\leq\; n-1 \qquad \text{ and } \qquad 1 \;\leq\; s^- \;\leq\; n-1 \;.$$
\end{lem}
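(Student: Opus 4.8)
The statement involves two distinct inequalities: a lower bound $s^{\pm}\geq 1$ and an upper bound $s^{\pm}\leq n-1$. The lower bound is essentially trivial: since $\mathfrak{g}^{\pm}$ are $n$-dimensional Lie subalgebras of $\mathfrak{g}$ (by integrability of $K$), they are in particular nonzero Lie algebras, so $s\left(\mathfrak{g}^{\pm}\right)\geq 1$, with equality $s^{\pm}=1$ exactly when $\mathfrak{g}^{\pm}$ is abelian. The genuine content is the upper bound, which asserts that an $n$-dimensional nilpotent Lie algebra cannot have maximal nilpotency step when it arises as an eigen-subalgebra of the given $K$. My plan is to focus essentially all the effort on the inequality $s^{\pm}\leq n-1$.

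The first observation I would make is the general fact that any $m$-dimensional nilpotent Lie algebra $\mathfrak{a}$ has nilpotency step $s\left(\mathfrak{a}\right)\leq m-1$ (provided $\mathfrak{a}\neq\{0\}$), since the lower central series $\mathfrak{a}=\mathfrak{a}^0\supseteq\mathfrak{a}^1\supseteq\cdots$ is a strictly decreasing chain of subspaces until it reaches $\{0\}$, so it can drop dimension at most $m$ times and must include a strict drop at each step before terminating. Applied to $\mathfrak{g}^{\pm}$, which are $n$-dimensional, this gives immediately $s^{\pm}\leq n-1$. Thus the naive bound already suffices, and the real task is merely to verify that $\mathfrak{g}^{\pm}$ is genuinely a nilpotent Lie algebra of dimension exactly $n$, which follows from the integrability hypothesis $\left[\mathfrak{g}^{\pm},\,\mathfrak{g}^{\pm}\right]\subseteq\mathfrak{g}^{\pm}$ (so $\mathfrak{g}^{\pm}$ is a subalgebra) together with the rank condition $\dim_{\R}\mathfrak{g}^{\pm}=\frac{1}{2}\dim_{\R}\mathfrak{g}=n$ built into the definition of a \para-complex structure, and the fact that a subalgebra of a nilpotent Lie algebra is nilpotent.

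Let me carry this out in order. First I would record that $\mathfrak{g}^{+}$ and $\mathfrak{g}^{-}$ are Lie subalgebras of $\mathfrak{g}$, each of dimension $n$, by integrability and the balanced rank condition. Second, since $\mathfrak{g}$ is nilpotent ($s\left(\mathfrak{g}\right)<+\infty$) and nilpotency is inherited by subalgebras, both $\mathfrak{g}^{+}$ and $\mathfrak{g}^{-}$ are nilpotent, so $s^{+}$ and $s^{-}$ are finite. Third, I would invoke the elementary dimension bound for the lower central series of an $n$-dimensional nonzero nilpotent Lie algebra to conclude $s^{\pm}\leq n-1$. Finally, the lower bound $s^{\pm}\geq 1$ holds because $\mathfrak{g}^{\pm}\neq\{0\}$ (indeed $n\geq 1$). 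Combining gives $1\leq s^{\pm}\leq n-1$ as claimed.

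The main (mild) obstacle is simply making the elementary dimension-counting argument for the upper bound precise: one must argue that the chain $\mathfrak{g}^{\pm}=\left(\mathfrak{g}^{\pm}\right)^0\supsetneq\left(\mathfrak{g}^{\pm}\right)^1\supsetneq\cdots\supsetneq\left(\mathfrak{g}^{\pm}\right)^{s^{\pm}}=\{0\}$ is strictly decreasing up to the terminating step, so that $s^{\pm}$ steps force at least $s^{\pm}$ strict drops within an $n$-dimensional space, but the first drop already occurs starting from dimension $n$ and the last reaches dimension $0$, leaving room for at most $n-1$ further strict inclusions; the sharp bookkeeping yields $s^{\pm}\leq n-1$ rather than $s^{\pm}\leq n$. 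I expect no substantive difficulty beyond this, since everything reduces to standard facts about nilpotent Lie algebras together with the definitional properties of $K$ already recalled in the excerpt.
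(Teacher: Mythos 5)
Your overall route is the same as the paper's: integrability makes $\mathfrak{g}^{+}$ and $\mathfrak{g}^{-}$ into $n$-dimensional Lie subalgebras of $\mathfrak{g}$, nilpotency passes to subalgebras, and the bound reduces to a dimension count along the lower central series. But the one step carrying real content is exactly the one your bookkeeping gets wrong. Strict descent of the lower central series yields only $s^{\pm}\leq n$: a chain $\mathfrak{a}=\mathfrak{a}^0\supsetneq\mathfrak{a}^1\supsetneq\cdots\supsetneq\mathfrak{a}^{s}=\{0\}$ inside an $n$-dimensional space can perfectly well have $s=n$ steps, dropping dimension by exactly one at each stage; your sentence that ``the first drop already occurs starting from dimension $n$ and the last reaches dimension $0$, leaving room for at most $n-1$ further strict inclusions'' counts nothing beyond this, so as written your argument proves $s^{\pm}\leq n$, not $s^{\pm}\leq n-1$.

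The missing ingredient is the standard fact that for a \emph{non-abelian} nilpotent Lie algebra $\mathfrak{a}$ the derived subalgebra has codimension at least two: if $\dim_\R \mathfrak{a}\slash\left[\mathfrak{a},\,\mathfrak{a}\right]=1$, then, since in a nilpotent Lie algebra any lift of a generating set of $\mathfrak{a}\slash\left[\mathfrak{a},\,\mathfrak{a}\right]$ generates $\mathfrak{a}$, the algebra $\mathfrak{a}$ would be generated by a single element, hence abelian, a contradiction. With this, $\dim_\R\mathfrak{a}^1\leq \dim_\R\mathfrak{a}-2$, and strict descent from there gives $\dim_\R\mathfrak{a}^k\leq\max\left\{\dim_\R\mathfrak{a}-k-1,\,0\right\}$ for every $k\geq1$, whence $s\left(\mathfrak{a}\right)\leq\dim_\R\mathfrak{a}-1$ (the abelian case being trivial once $\dim_\R\mathfrak{a}\geq2$). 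Applied to $\mathfrak{g}^{\pm}$, this is precisely the inequality $\dim_\R\left(\mathfrak{g}^\pm\right)^k\leq\max\left\{n-k-1,\,0\right\}$ that the paper's proof records, so once you insert the codimension-two fact your argument coincides with the paper's. (Note incidentally that the bound genuinely requires $n\geq2$: a one-dimensional Lie algebra has $s=1>n-1$, so the lemma implicitly assumes $n\geq2$, which is automatic in the paper's applications.)
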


\begin{proof}
 It suffices to note that, for $\pm\in\left\{+,\;-\right\}$, we have
$$
\left\{
\begin{array}{rcll}
 \dim_\R \left(\mathfrak{g}^\pm\right)^0 &=& n & \\[5pt]
 \dim_\R \left(\mathfrak{g}^\pm\right)^k &\leq& \max\left\{n-k-1,\,0\right\} & \text{ for } k\in\N\setminus\{0\} \\[5pt]
\end{array}
\right. \;,
$$
as a consequence of the nilpotency and of the integrability properties.
\end{proof}

The following result, \cite[Proposition 3.6]{angella-rossi}, should be compared with Theorem \ref{thm:products}.

\begin{prop}
 Let $\mathfrak{g}$ be a Lie algebra. If $K$ is a linear \para-complex structure on $\mathfrak{g}$ with eigen-spaces $\mathfrak{g}^+$ and $\mathfrak{g}^-$ such that $\left[\mathfrak{g}^+,\,\mathfrak{g}^-\right]=\{0\}$, then $K$ is linear-\Cpf\ at every stage.
\end{prop}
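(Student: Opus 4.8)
The plan is to exploit the hypothesis $[\mathfrak{g}^+,\mathfrak{g}^-]=\{0\}$ to show that the differential $\de$ respects the $K$-bidegree, which immediately forces a cohomological splitting. First I would recall that, by integrability of $K$, we already have $[\mathfrak{g}^+,\mathfrak{g}^+]\subseteq\mathfrak{g}^+$ and $[\mathfrak{g}^-,\mathfrak{g}^-]\subseteq\mathfrak{g}^-$. Adding the extra assumption $[\mathfrak{g}^+,\mathfrak{g}^-]=\{0\}$, the Lie algebra decomposes as a direct sum of ideals $\mathfrak{g}=\mathfrak{g}^+\oplus\mathfrak{g}^-$, i.e.\ $\mathfrak{g}$ is a Lie algebra direct product. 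Dualizing, the differential $\de_{\mathfrak g}\colon \duale{\mathfrak g}\to\wedge^2\duale{\mathfrak g}$ then satisfies $\de_{\mathfrak g}\duale{(\mathfrak g^+)}\subseteq\wedge^2\duale{(\mathfrak g^+)}$ and $\de_{\mathfrak g}\duale{(\mathfrak g^-)}\subseteq\wedge^2\duale{(\mathfrak g^-)}$, since the component of $\de_{\mathfrak g}e^k$ dual to a bracket $[\mathfrak g^+,\mathfrak g^-]$ vanishes. By the Leibniz rule this propagates to all degrees: $\de$ preserves the $(p,q)$-bidegree, that is $\de\left(\formepm{p}{q}\duale{\mathfrak g}\right)\subseteq \formepm{p+1}{q}\duale{\mathfrak g}\oplus\formepm{p}{q+1}\duale{\mathfrak g}$, and in fact here $\del_-\lfloor_{\formepm{p}{q}}$ and $\del_+\lfloor_{\formepm{p}{q}}$ land in the expected pure pieces with no cross terms.

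The key consequence I would extract is that $\de$ commutes with $K\in\End(\wedge^\bullet\duale{\mathfrak g})$. Indeed, since $K$ acts on $\formepm{p}{q}\duale{\mathfrak g}$ as the scalar $(-1)^q$, and since $\de$ sends $\formepm{p}{q}$ into $\formepm{p+1}{q}\oplus\formepm{p}{q+1}$, a direct check on the two components shows $K\de=\de K$ would fail on the $\del_-$-part unless that part vanishes; therefore the crucial point is to verify that under $[\mathfrak g^+,\mathfrak g^-]=\{0\}$ the differential in fact preserves the $\pm$-eigenspace decomposition $\wedge^\bullet\duale{\mathfrak g}=\wedge^{\bullet\,+}_K\duale{\mathfrak g}\oplus\wedge^{\bullet\,-}_K\duale{\mathfrak g}$. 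This holds because $\de$ preserves the second grading $q$ modulo $2$: both $\del_+$ (which keeps $q$ fixed) and $\del_-$ behave compatibly once we observe that, with $\mathfrak g$ a direct product, $\del_-$ reduces to the intrinsic differential of the factor $\mathfrak g^-$ and hence keeps the $\mathfrak g^+$-degree fixed, so the parity of $q$ is controlled. Consequently $\de$ maps $\wedge^{\ell\,+}_K\duale{\mathfrak g}$ to $\wedge^{\ell+1\,+}_K\duale{\mathfrak g}$ and $\wedge^{\ell\,-}_K\duale{\mathfrak g}$ to $\wedge^{\ell+1\,-}_K\duale{\mathfrak g}$.

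Once $\de$ preserves the splitting $\wedge^{\bullet}\duale{\mathfrak g}=\wedge^{\bullet\,+}_K\duale{\mathfrak g}\oplus\wedge^{\bullet\,-}_K\duale{\mathfrak g}$ into subcomplexes, the cohomology splits accordingly: for every $\ell\in\N$,
$$ H^\ell_{dR}(\mathfrak g;\R) \;=\; H^{\ell\,+}_K(\mathfrak g;\R)\oplus H^{\ell\,-}_K(\mathfrak g;\R)\;, $$
because a $\de$-closed form decomposes into its $\pm$-components which are separately $\de$-closed, and a $\de$-exact $\pm$-form is the differential of a $\pm$-form. This is exactly linear-\Cpf ness at every stage. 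The most efficient way to present the final conclusion is to observe that $\mathfrak g=\mathfrak g^+\oplus\mathfrak g^-$ is a direct product of Lie algebras, so $\left(\wedge^\bullet\duale{\mathfrak g},\de\right)\cong\left(\wedge^\bullet\duale{(\mathfrak g^+)},\de\right)\otimes\left(\wedge^\bullet\duale{(\mathfrak g^-)},\de\right)$ and the K\"unneth formula gives the bigraded splitting, mirroring the proof of Theorem \ref{thm:products}; the $K$-invariant part collects the factors with $q\equiv 0\pmod 2$ and the $K$-anti-invariant part those with $q\equiv 1\pmod 2$.

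The main obstacle I anticipate is purely bookkeeping rather than conceptual: one must check carefully that the vanishing $[\mathfrak g^+,\mathfrak g^-]=\{0\}$ truly kills all mixed terms in $\de_{\mathfrak g}$ on the generators $\duale{(\mathfrak g^\pm)}$, and that no subtlety arises from $\del_+$ acting on $\formepm{p}{q}$ with $q$ odd. Since $\del_+$ fixes the $\mathfrak g^-$-degree $q$ and $\del_-$ fixes the $\mathfrak g^+$-degree $p$, both operators preserve the parity of $q$, so the argument is robust; the only care needed is to confirm the identification of $\del_-$ with the differential of the factor $\mathfrak g^-$, which is precisely where the product structure enters. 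With that verified, the statement follows at once.
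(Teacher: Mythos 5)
Your first paragraph is correct: under $\left[\mathfrak{g}^+,\,\mathfrak{g}^-\right]=\{0\}$ the two eigen-spaces are ideals, $\mathfrak{g}=\mathfrak{g}^+\times\mathfrak{g}^-$, and the Chevalley--Eilenberg differential is bihomogeneous, mapping $\formepm{p}{q}\duale{\mathfrak{g}}$ into $\formepm{p+1}{q}\duale{\mathfrak{g}}\oplus\formepm{p}{q+1}\duale{\mathfrak{g}}$. But your pivotal claim --- that $\de$ preserves the parity of $q$, so that $\wedge^{\bullet\,+}_K\duale{\mathfrak{g}}\oplus\wedge^{\bullet\,-}_K\duale{\mathfrak{g}}$ is a splitting into subcomplexes --- is false. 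Since $K$ acts on $\formepm{p}{q}\duale{\mathfrak{g}}$ as $\left(-1\right)^q\id$, one has $K\,\del_+=\del_+\,K$ but $K\,\del_-=-\,\del_-\,K$: the operator $\del_-$ raises $q$ by one and hence flips the $K$-eigenvalue, and the observation that it keeps the $\mathfrak{g}^+$-degree $p$ fixed is irrelevant to the parity of $q$. So $K$ commutes with $\de$ only when $\del_-=0$, i.e., only when $\mathfrak{g}^-$ is Abelian, which the hypothesis does not grant. Concretely, take $\mathfrak{g}=\R^3\times\h_3$ with $\mathfrak{g}^+=\R^3$ and $\mathfrak{g}^-=\h_3:=\left(0,\,0,\,12\right)$: then $e^3\in\formepm{0}{1}\duale{\mathfrak{g}}$ satisfies $K e^3=-e^3$, while $\de e^3=e^{12}\in\formepm{0}{2}\duale{\mathfrak{g}}$ satisfies $K\,\de e^3=\de e^3$. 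With the subcomplex property gone, your two downstream assertions also fail in general: the $\pm$-components of a $\de$-closed form need not be separately $\de$-closed, and a $\de$-exact form in $\wedge^{\ell\,\pm}_K\duale{\mathfrak{g}}$ need not be the differential of a form of pure parity. So the ``immediate'' cohomological splitting does not follow from your main line of argument.

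The correct proof is the one you relegate to a presentational remark at the end, and it is the paper's actual argument: since $\mathfrak{g}=\mathfrak{g}^+\times\mathfrak{g}^-$, the complex $\left(\wedge^\bullet\duale{\mathfrak{g}},\,\de\right)$ is the tensor product of the Chevalley--Eilenberg complexes of the factors, and the algebraic K\"unneth formula (valid for finite-dimensional complexes over $\R$, with no compactness or unimodularity needed) gives a canonical \emph{bigraded} isomorphism $H^\ell_{dR}\left(\mathfrak{g};\R\right)\simeq\bigoplus_{p+q=\ell}H^p_{dR}\left(\mathfrak{g}^+;\R\right)\otimes H^q_{dR}\left(\mathfrak{g}^-;\R\right)$, each summand being represented by closed forms in $\formepm{p}{q}\duale{\mathfrak{g}}$. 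Grouping by the parity of $q$ yields linear-\Cf ness at every stage, exactly as in the proof of Theorem \ref{thm:products}; and because the isomorphism respects the bigraduation, $H^{\ell\,+}_K\left(\mathfrak{g};\R\right)$ and $H^{\ell\,-}_K\left(\mathfrak{g};\R\right)$ are precisely the even-$q$ and odd-$q$ parts, which gives pureness directly. Note that in the linear setting one should extract pureness this way rather than by quoting Corollary \ref{cor:cf-every-stage} verbatim, since the vertical implications in Proposition \ref{prop:para-cpf-pf} rest on Poincar\'e duality, which at the Lie-algebra level requires unimodularity; the bigraded K\"unneth statement sidesteps this. In short: delete the subcomplex mechanism (it is not a bookkeeping issue but a genuinely false step whenever $\mathfrak{g}^-$ is non-Abelian) and promote your K\"unneth remark to the proof.
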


\begin{proof}
 Since $\left[\mathfrak{g}^+,\,\mathfrak{g}^-\right]=\{0\}$, one has $\mathfrak{g}=\mathfrak{g}^+ \times \mathfrak{g}^-$ and, using the K\"unneth formula, one gets the statement, as in the proof of Theorem \ref{thm:products}.
\end{proof}

Therefore, from Corollary \ref{cor:linear-cpf-invariant-cpf-K}, one gets the following corollary, \cite[Corollary 3.7]{angella-rossi}.

\begin{cor}
 Let $X=\left.\Gamma\right\backslash G$ be a completely-solvable solvmanifold endowed with a $G$-left-invariant \para-complex structure $K$, and denote the Lie algebra naturally associated to the Lie group $G$ by $\mathfrak{g}$. Consider the linear \para-complex structure $K\in\End(\mathfrak{g})$ induced by $K\in\End(TX)$. Suppose that the eigen-spaces $\mathfrak{g}^+$ and $\mathfrak{g}^-$ of $K\in\End(\mathfrak{g})$ satisfy $\left[\mathfrak{g}^+,\,\mathfrak{g}^-\right]=\{0\}$. Then $K$ is \Cpf\ at every stage and \pf\ at every stage.
\end{cor}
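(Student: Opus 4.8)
The final statement is Corollary 3.7, which deduces $\Cpf$-ness and $\pf$-ness at every stage for a $G$-left-invariant \para-complex structure $K$ on a completely-solvable solvmanifold $X=\left.\Gamma\right\backslash G$, under the hypothesis that the eigen-space decomposition $\mathfrak{g}=\mathfrak{g}^+\oplus\mathfrak{g}^-$ of the associated linear \para-complex structure satisfies $\left[\mathfrak{g}^+,\,\mathfrak{g}^-\right]=\{0\}$. The plan is to combine two ingredients that have already been established in the excerpt: Proposition 3.6, which states that such a $K\in\End(\mathfrak{g})$ is linear-$\Cpf$ at every stage, and Corollary 3.7's companion, Corollary \ref{cor:linear-cpf-invariant-cpf-K}, which transfers linear-$\Cpf$ness on $\mathfrak{g}$ to $\Cpf$ness on $X$ for completely-solvable solvmanifolds. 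So the proof is essentially a two-line invocation; I will just assemble it carefully, checking each hypothesis.

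\textbf{Key steps.} First I would observe that, since $X=\left.\Gamma\right\backslash G$ is a completely-solvable solvmanifold, A. Hattori's theorem (\cite[Corollary 4.2]{hattori}) gives the isomorphism $H_{dR}^\bullet\left(\mathfrak{g};\R\right)\simeq H^\bullet_{dR}(X;\R)$ induced by left-translations; this is exactly the standing hypothesis required by Corollary \ref{cor:linear-cpf-invariant-cpf-K}. Second, the hypothesis $\left[\mathfrak{g}^+,\,\mathfrak{g}^-\right]=\{0\}$ is precisely the assumption of Proposition 3.6, so that proposition applies verbatim to the linear \para-complex structure $K\in\End(\mathfrak{g})$ induced by $K\in\End(TX)$: I conclude that $K\in\End(\mathfrak{g})$ is linear-$\Cpf$ at the \kth{\ell} stage for every $\ell\in\N$. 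Third, I would invoke Corollary \ref{cor:linear-cpf-invariant-cpf-K}, which asserts the equivalence of linear-$\Cpf$ness of $K\in\End(\mathfrak{g})$ and $\Cpf$ness of $K\in\End(TX)$ at each stage $\ell$ (under the isomorphism above); applying it for all $\ell$ yields that $K\in\End(TX)$ is $\Cpf$ at every stage on $X$.

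\textbf{From $\Cpf$ at every stage to $\pf$ at every stage.} To obtain the $\pf$ conclusion, I would appeal to Corollary \ref{cor:cf-every-stage}, which states that an almost-\para-complex structure that is $\Cf$ at every stage is automatically $\Cpf$ at every stage \emph{and} $\pf$ at every stage; the proof of that corollary rests on Proposition \ref{prop:para-cpf-pf}, the duality between $H^{\ell\,\pm}_K(X;\R)$ and $H_{\ell\,\pm}^K(X;\R)$ via the pairing $\langle\sspace,\ssspace\rangle\colon H^\ell_{dR}(X;\R)\times H_\ell^{dR}(X;\R)\to\R$ induced by $T_\sspace$. Since $\Cpf$ at every stage implies in particular $\Cf$ at every stage, Corollary \ref{cor:cf-every-stage} delivers $\pf$ness at every stage as well. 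This completes the argument.

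\textbf{Expected obstacle.} There is no genuine analytic or computational difficulty here, since all the heavy lifting (the symmetrization trick of Lemma \ref{lemma:belgun-para-cplx}, the Nomizu/Hattori transfer, the K\"unneth splitting hidden in Proposition 3.6) has been carried out earlier in the excerpt. The only point requiring care is bookkeeping: I must verify that the hypothesis of Corollary \ref{cor:linear-cpf-invariant-cpf-K} (namely $H^\bullet_{dR}(X;\R)\simeq H_{dR}^\bullet\left(\mathfrak{g};\R\right)$) is furnished by complete solvability, and that Proposition 3.6 is stated at the level of the Lie algebra while the desired conclusion is at the level of the manifold, so the transfer result must genuinely be applied in between. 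The mildest subtlety is confirming that ``at every stage'' in Proposition 3.6 matches the per-stage equivalence in Corollary \ref{cor:linear-cpf-invariant-cpf-K}, so that quantifying over all $\ell$ is legitimate; this is immediate once the statements are lined up.
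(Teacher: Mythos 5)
Your proposal is correct and follows the paper's own route exactly: Proposition 3.6 gives linear-\Cpf ness at every stage from $\left[\mathfrak{g}^+,\,\mathfrak{g}^-\right]=\{0\}$ via the K\"unneth splitting, and Corollary \ref{cor:linear-cpf-invariant-cpf-K} (whose hypothesis $H_{dR}^\bullet\left(\mathfrak{g};\R\right)\simeq H^\bullet_{dR}(X;\R)$ is supplied by A. Hattori's theorem for completely-solvable solvmanifolds) transfers this to \Cpf ness on $X$. Your only addition is to make explicit the final passage to \pf ness at every stage via Corollary \ref{cor:cf-every-stage} (equivalently, Proposition \ref{prop:para-cpf-pf}), a step the paper leaves implicit in its one-line deduction; this is exactly the intended argument.
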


\medskip

We recall that a \para-complex structure on a manifold $X$ is said to be \emph{Abelian} if the induced eigen-bundle decomposition $TX=T^+X\oplus T^-X$ satisfies $\left[T^+X,\,T^+X\right]=\{0\}=\left[T^-X,\,T^-X\right]$; analogously, a linear \para-complex structure on a Lie algebra $\mathfrak{g}$ is said to be \emph{Abelian} if the induced decomposition $\mathfrak{g}=\mathfrak{g}^+\oplus\mathfrak{g}^-$ satisfies $\left[\mathfrak{g}^+,\,\mathfrak{g}^+\right]=\{0\}=\left[\mathfrak{g}^-,\,\mathfrak{g}^-\right]$, namely, $s\left(\g^+\right)=1=s\left(\g^-\right)$.
Obviously, if $X=\left.\Gamma\right\backslash G$ is a solvmanifold endowed with a $G$-left-invariant \para-complex structure, then $K\in\End\left(TX\right)$ is Abelian if and only if the associated linear \para-complex structure $K\in\End\left(\g\right)$ is Abelian.

\begin{rem}
\label{oss 2.11}
 Note that every linear \para-complex structure on a $4$-dimensional nilpotent Lie algebra is Abelian, as a consequence of Lemma \ref{lemma:s+-}.
\end{rem}

We prove that every linear Abelian \para-complex structure is linear-\Cp\ at the \kth{2} stage, \cite[Theorem 3.10]{angella-rossi}.

\begin{thm}
\label{thm 2.12}
 Let $\mathfrak{g}$ be a Lie algebra and $K$ be a linear Abelian \para-complex structure on $\mathfrak{g}$. Then $K$ is linear-\Cp\ at the \kth{2} stage.
\end{thm}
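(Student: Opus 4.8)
The plan is to reduce the statement to a bidegree count for the exterior differential and then exploit the Abelian hypothesis. First I would fix a basis $\{f^1,\dots,f^n\}$ of $\duale{(\g^+)}$ and a basis $\{h^1,\dots,h^n\}$ of $\duale{(\g^-)}$, inducing the bigrading $\wedge^\bullet\duale{\g}=\bigoplus_{p,q}\formepm{p}{q}\duale{\g}$. In degree two this gives the $K$-invariant part $\wedge^{2\,+}_K\duale{\g}=\formepm{2}{0}\duale{\g}\oplus\formepm{0}{2}\duale{\g}$, spanned by the $f^i\wedge f^j$ and the $h^i\wedge h^j$, and the $K$-anti-invariant part $\wedge^{2\,-}_K\duale{\g}=\formepm{1}{1}\duale{\g}$, spanned by the $f^i\wedge h^j$. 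Since these two subspaces are complementary in $\wedge^2\duale{\g}$, proving $H^{2\,+}_K(\g;\R)\cap H^{2\,-}_K(\g;\R)=\{0\}$ amounts to showing that any cohomology class admitting both a closed representative in $\formepm{2}{0}\duale{\g}\oplus\formepm{0}{2}\duale{\g}$ and a closed representative in $\formepm{1}{1}\duale{\g}$ must vanish.

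The key step will be the lemma that, for an Abelian structure, $\de\left(\duale{\g}\right)\subseteq\formepm{1}{1}\duale{\g}$, i.e. the differential of every $1$-form is $K$-anti-invariant. To prove it I would evaluate $\de\gamma(X,Y)=-\gamma([X,Y])$, for $\gamma\in\duale{\g}$, on pairs of basis vectors. If $X,Y\in\g^+$ then $[X,Y]=0$ by the Abelian condition $[\g^+,\g^+]=\{0\}$, so $\de\gamma$ has no $\formepm{2}{0}\duale{\g}$-component; if $X,Y\in\g^-$ then $[X,Y]=0$ by $[\g^-,\g^-]=\{0\}$, so $\de\gamma$ has no $\formepm{0}{2}\duale{\g}$-component. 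Only the mixed evaluations can survive, which is exactly the assertion $\de\gamma\in\formepm{1}{1}\duale{\g}$. (Note that only the two Abelian conditions, and not integrability, are used here.)

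Finally I would take $\mathfrak{c}\in H^{2\,+}_K(\g;\R)\cap H^{2\,-}_K(\g;\R)$ and choose closed representatives $\sigma\in\formepm{2}{0}\duale{\g}\oplus\formepm{0}{2}\duale{\g}$ and $\tau\in\formepm{1}{1}\duale{\g}$ with $[\sigma]=[\tau]=\mathfrak{c}$. Then $\sigma-\tau=\de\gamma$ for some $\gamma\in\duale{\g}$, so that $\sigma=\tau+\de\gamma$; by the lemma the right-hand side lies in $\formepm{1}{1}\duale{\g}$, while $\sigma$ lies in the complementary subspace $\formepm{2}{0}\duale{\g}\oplus\formepm{0}{2}\duale{\g}$, forcing $\sigma=0$ and hence $\mathfrak{c}=0$. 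The whole argument is essentially a bidegree count, so I expect no genuine analytic or algebraic difficulty; the one point deserving care is the lemma, namely checking that the Abelian hypothesis really annihilates \emph{both} the $\formepm{2}{0}$ and the $\formepm{0}{2}$ components of $\de$ on $1$-forms. This is precisely what fails in Example \ref{es 2.6}, where the non-Abelian bracket produces a nonzero $\formepm{2}{0}$-term in the differential of a $1$-form and thereby destroys purity, confirming that the Abelian condition is the essential ingredient.
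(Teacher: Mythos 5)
Your proposal is correct and follows essentially the same route as the paper's proof: the crucial point in both is that the Abelian conditions $\left[\mathfrak{g}^+,\,\mathfrak{g}^+\right]=\{0\}=\left[\mathfrak{g}^-,\,\mathfrak{g}^-\right]$ force $\de\left(\wedge^1\duale{\mathfrak{g}}\right)\subseteq\formepm{1}{1}\duale{\mathfrak{g}}$ (the paper phrases this as the vanishing of the projection of $\imm\de$ onto $\wedge^{2\,+}_K\duale{\mathfrak{g}}$), after which writing $\gamma^+=\gamma^-+\de\alpha$ and using the complementarity of $\wedge^{2\,+}_K\duale{\mathfrak{g}}$ and $\wedge^{2\,-}_K\duale{\mathfrak{g}}$ forces $\gamma^+=0$. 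Your bidegree bookkeeping and your remark that nothing beyond the two Abelian conditions is used coincide with the paper's argument for Theorem \ref{thm 2.12}.
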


\begin{proof}
 Denote by $\pi_{\wedge^{\bullet\,+}_K\duale{\mathfrak{g}}}\colon\wedge^\bullet\duale{\mathfrak{g}}\to\wedge^{\bullet\,+}_K\duale{\mathfrak{g}}$ the natural projection onto the space $\wedge^{\bullet\,+}_K\duale{\mathfrak{g}}$.
 Recall that $\de\eta:=-\eta\left(\left[\sspace,\,\ssspace\right]\right)$ for every $\eta\in\wedge^1\duale{\mathfrak{g}}$; therefore, since $\left[\mathfrak{g}^+,\,\mathfrak{g}^+\right]=0$ and $\left[\mathfrak{g}^-,\,\mathfrak{g}^-\right]=0$ by hypothesis, we have that $$ \pi_{\wedge^{\bullet\,+}_K\duale{\mathfrak{g}}} \left(\imm\left(\de\colon\wedge^1\duale{\mathfrak{g}}\to\wedge^2\duale{\mathfrak{g}}\right)\right)\;=\;\{0\} \;.$$
 Suppose that there exists $\left[\gamma^+\right]=\left[\gamma^-\right]\in H^{2\,+}_K\left(\mathfrak{g};\,\R\right)\cap H^{2\,-}_K\left(\mathfrak{g};\,\R\right)$, where $\gamma^+\in\wedge^{2\,+}_K\duale{\mathfrak{g}}$ and $\gamma^-\in\wedge^{2\,-}_K\duale{\mathfrak{g}}$, and $\gamma^+=\gamma^-+\de\alpha$ for some $\alpha\in\wedge^1\duale{\mathfrak{g}}$. Since $\pi_{\wedge^{\bullet\,+}_K\duale{\mathfrak{g}}}\left(\de\alpha\right)=0$, we have that $\gamma^+=0$ and hence $\left[\gamma^+\right]=0$, so $K$ is linear-\Cp\ at the \kth{2} stage.
\end{proof}

\begin{rem}
\label{rem:partial-abelian}
 We note that the condition of $K$ being Abelian in Theorem \ref{thm 2.12} cannot be dropped or weakened, in general. In fact, Example \ref{es 2.17} shows that the Abelian assumption just on $\mathfrak{g}^-$ is not sufficient to have \Cp ness at the \kth{2} stage. Another example of this fact, on a (non-unimodular) solvable Lie algebra, is given below, \cite[Example 3.12]{angella-rossi}.
\end{rem}

\begin{ex}
{\itshape A $4$-dimensional (non-unimodular) solvable Lie algebra with a non-Abelian \para-complex structure that is not linear-\Cp\ at the \kth{2} stage.}\\
Consider the $4$-dimensional solvable Lie algebra defined by
$$ \mathfrak{g} \;:=\; \left(0^3,\; 13+34\right) \;;$$
note that $\mathfrak{g}$ is not unimodular, since $\de e^{124}=e^{1234}$, see Lemma \ref{lemma:unimod}, \cite[\S III]{koszul-homologie}.

Set the linear \para-complex structure
$$ K\;:=\; \left( +\;+\;-\;- \right) \;;$$
$K$ is not Abelian, since $\left[\mathfrak{g}^+,\,\mathfrak{g}^+\right]=0$ but $\left[\mathfrak{g}^-,\,\mathfrak{g}^-\right]=\R\left\langle e_4\right\rangle\neq \{0\}$.

It is straightforward to check that $K$ is linear-\Cf\ at the \kth{2} stage: in fact,
$$ H^2_{dR}\left(\mathfrak{g};\R\right) \;=\; \underbrace{\R\left\langle e^{12},\,e^{34}\right\rangle}_{H^+_K\left(\mathfrak{g};\R\right)} \oplus \underbrace{\left\langle e^{23}\right\rangle}_{H^-_K\left(\mathfrak{g};\R\right)} \;; $$
on the other hand, $K$ is not linear-\Cp\ at the \kth{2} stage, since
$$ H^{2\,+}_K\left(\mathfrak{g};\R\right) \;\ni\; \left[e^{34}\right] \;=\; \left[e^{34}-\de e^{4}\right] \;=\; -\left[e^{13}\right] \;\in\; H^{2\,-}_K\left(\mathfrak{g};\R\right) $$
and $\left[e^{34}\right]\neq 0$.
\end{ex}

A direct consequence of Theorem \ref{thm 2.12} and Corollary \ref{cor:linear-cpf-invariant-cpf-K} is the following result, \cite[Corollary 3.13]{angella-rossi}.

\begin{cor}\label{cor:abelian-cp}
\label{cor:abelian}
 Let $X=\left.\Gamma\right\backslash G$ be a completely-solvable solvmanifold endowed with a $G$-left-invariant Abelian \para-complex structure $K$. Then $K$ is \Cp\ at the \kth{2} stage.
\end{cor}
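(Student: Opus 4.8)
The statement to prove is Corollary~\ref{cor:abelian}: on a completely-solvable solvmanifold $X=\left.\Gamma\right\backslash G$ endowed with a $G$-left-invariant Abelian \para-complex structure $K$, the structure $K$ is \Cp\ at the \kth{2} stage. The plan is to deduce this immediately as a consequence of the two results just established, namely Theorem~\ref{thm 2.12} (every linear Abelian \para-complex structure on a Lie algebra is linear-\Cp\ at the \kth{2} stage) and Corollary~\ref{cor:linear-cpf-invariant-cpf-K} (for a completely-solvable solvmanifold, the $G$-left-invariant structure $K\in\End(TX)$ is \Cp\ at the \kth{2} stage if and only if the associated linear structure $K\in\End(\mathfrak{g})$ is linear-\Cp\ at the \kth{2} stage).

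First I would pass from the manifold to its associated Lie algebra. Let $\mathfrak{g}$ denote the Lie algebra naturally associated to $G$, and let $K\in\End(\mathfrak{g})$ be the linear \para-complex structure induced by the $G$-left-invariant structure $K\in\End(TX)$ via left-translations. The key point to record is that the Abelian hypothesis transfers: $K\in\End(TX)$ being Abelian means the eigen-bundle decomposition $TX=T^+X\oplus T^-X$ satisfies $\left[T^+X,\,T^+X\right]=\{0\}=\left[T^-X,\,T^-X\right]$, and since left-invariant brackets are computed on $\mathfrak{g}$, this is equivalent to $\left[\mathfrak{g}^+,\,\mathfrak{g}^+\right]=\{0\}=\left[\mathfrak{g}^-,\,\mathfrak{g}^-\right]$, i.e.\ the linear \para-complex structure $K\in\End(\mathfrak{g})$ is Abelian in the sense defined before Remark~\ref{oss 2.11}.

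Next I would apply Theorem~\ref{thm 2.12} to $K\in\End(\mathfrak{g})$: since this linear \para-complex structure is Abelian, it is linear-\Cp\ at the \kth{2} stage. Finally, because $X$ is completely-solvable, A.~Hattori's theorem \cite[Corollary 4.2]{hattori} gives $H^\bullet_{dR}(\mathfrak{g};\R)\simeq H^\bullet_{dR}(X;\R)$, so the hypothesis of Corollary~\ref{cor:linear-cpf-invariant-cpf-K} is met. Applying that corollary with $\ell=2$, the linear-\Cp ness at the \kth{2} stage of $K\in\End(\mathfrak{g})$ is equivalent to the \Cp ness at the \kth{2} stage of $K\in\End(TX)$, which yields the claim.

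There is essentially no obstacle here, since the corollary is a formal consequence of two previously proved statements; the only thing requiring a word of care is the transfer of the Abelian condition between $TX$ and $\mathfrak{g}$, which is immediate from the identification of $G$-left-invariant objects with linear objects on $\mathfrak{g}$. I would therefore keep the proof to a single short paragraph chaining Theorem~\ref{thm 2.12} and Corollary~\ref{cor:linear-cpf-invariant-cpf-K}, exactly as the parenthetical remark in Example~\ref{es 2.5} already anticipates.
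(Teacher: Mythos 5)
Your proposal is correct and coincides with the paper's argument: the paper states this corollary as a direct consequence of Theorem~\ref{thm 2.12} and Corollary~\ref{cor:linear-cpf-invariant-cpf-K}, which is precisely the chain you describe, including the use of A.~Hattori's theorem to ensure $H^\bullet_{dR}(\mathfrak{g};\R)\simeq H^\bullet_{dR}(X;\R)$. Your extra remark on transferring the Abelian condition from $TX$ to $\mathfrak{g}$ via left-translations is the same standard identification the paper uses implicitly.
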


\begin{rem}
 We remark that, for a \para-complex structure on a compact manifold of dimension greater than $4$, being Abelian or being \Cp\ at the \kth{2} stage is not a sufficient condition to have \Cf ness at the \kth{2} stage. Indeed, Example \ref{es 2.5} provides a $G$-left-invariant \para-complex structure $K$ on a $6$-dimensional nilmanifold $X=\left.\Gamma\right\backslash G$ such that $K$ is Abelian, \Cp\ at the \kth{2} stage and non-\Cf\ at the \kth{2} stage.
\end{rem}

\medskip

As observed in Remark \ref{oss 2.11}, any left-invariant \para-complex structure on a $4$-dimensional nilmanifold is Abelian, and hence \Cp\ at the \kth{2} stage by Corollary \ref{cor:abelian-cp}. In general, a left-invariant Abelian \para-complex structure on a nilmanifold of dimension greater than $4$ may be non-\Cf\ at the \kth{2} stage, Example \ref{es 2.5}. Notwithstanding, we prove that every left-invariant \para-complex structure on a $4$-dimensional nilmanifold is \Cf, in fact \Cpf, at the \kth{2} stage, Theorem \ref{thm:4-dim}. To prove this fact, we need the following lemmata: the first one is a classical result, the second one is \cite[Lemma 3.16]{angella-rossi}.

\begin{lem}[{\cite[\S III]{koszul-homologie}}]\label{lemma:unimod}
 Let $\mathfrak{g}$ be a unimodular Lie algebra of dimension $n$. Then
$$ \de\lfloor_{\wedge^{n-1}\duale{\mathfrak{g}}} \;=\; 0 \;.$$
\end{lem}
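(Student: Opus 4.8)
The statement to prove is a classical fact about unimodular Lie algebras: if $\mathfrak{g}$ is unimodular of dimension $n$, then $\de$ vanishes on $\wedge^{n-1}\duale{\mathfrak{g}}$. The plan is to work directly with the Chevalley--Eilenberg differential and exploit the characterization of unimodularity in terms of the adjoint representation, namely that $\tr\ad X=0$ for all $X\in\mathfrak{g}$ (equivalently, $\tr\ad e_j=0$ for a basis $\{e_j\}_{j\in\{1,\ldots,n\}}$). Fix such a basis with dual basis $\{e^j\}_{j\in\{1,\ldots,n\}}$ of $\duale{\mathfrak{g}}$, and recall that $\de e^k = -\sum_{\ell<m}c^k_{\ell m}\, e^\ell\wedge e^m$, where the $c^k_{\ell m}$ are the structure constants determined by $[e_\ell,e_m]=\sum_k c^k_{\ell m}\, e_k$.

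First I would reduce the problem to a single generating computation. The space $\wedge^{n-1}\duale{\mathfrak{g}}$ is $n$-dimensional, with basis given by the forms $\hat e^{\,i}:=e^1\wedge\cdots\wedge\widehat{e^{\,i}}\wedge\cdots\wedge e^n$ (omit the factor $e^i$), for $i\in\{1,\ldots,n\}$. Since $\de$ is linear, it suffices to show $\de\hat e^{\,i}=0$ for each $i$. Each $\de\hat e^{\,i}$ lands in $\wedge^n\duale{\mathfrak{g}}=\R\,e^1\wedge\cdots\wedge e^n$, which is one-dimensional, so the whole computation reduces to extracting a single scalar coefficient for each $i$.

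The key step is the graded-Leibniz expansion of $\de\hat e^{\,i}$. Applying the Leibniz rule term by term, $\de\hat e^{\,i}=\sum_{j\neq i}(-1)^{\sigma(i,j)}\,e^1\wedge\cdots\wedge\de e^{\,j}\wedge\cdots\wedge\widehat{e^{\,i}}\wedge\cdots\wedge e^n$, where $\sigma(i,j)$ is the sign coming from moving $\de$ across the preceding factors. Substituting $\de e^j=-\sum_{\ell<m}c^j_{\ell m}\, e^\ell\wedge e^m$ and wedging with the remaining $n-2$ distinct basis covectors, only the term with $\{\ell,m\}=\{i,j\}$ survives, since any repeated factor kills the wedge. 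Carefully tracking the signs, one finds that the surviving contribution is proportional to $\sum_{j\neq i}c^j_{ij}$, that is, to the $(i,i)$-entry sum $\sum_{j}c^j_{ij}=\tr\ad e_i$ (the $j=i$ term vanishes as $c^i_{ii}=0$). Thus $\de\hat e^{\,i}=\pm\left(\tr\ad e_i\right)\, e^1\wedge\cdots\wedge e^n$, and unimodularity $\tr\ad e_i=0$ forces $\de\hat e^{\,i}=0$.

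The main obstacle I anticipate is the sign bookkeeping in the Leibniz expansion: correctly computing $\sigma(i,j)$ and verifying that after reordering $e^i\wedge e^j$ (or $e^j\wedge e^i$) into canonical position all the signs conspire to produce exactly $\tr\ad e_i$ rather than some index-dependent combination. I would handle this by fixing the convention once and checking the interior-product identity $\iota_{e_i}\de e^j$ explicitly, or equivalently by using the identity $\de\hat e^{\,i}=\sum_j (-1)^{\cdots}(\iota_{e_j}\de e^j)\,\hat e^{\,i}$-type reformulation, which packages the trace cleanly. Once the coefficient is identified as $\tr\ad e_i$, the conclusion is immediate from the definition of unimodularity, and the result follows for all of $\wedge^{n-1}\duale{\mathfrak{g}}$ by linearity. (This is the standard argument behind the reference \cite[\S III]{koszul-homologie}.)
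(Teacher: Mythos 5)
Your proof is correct, but note that the paper does not actually prove this lemma: it is stated with a citation to Koszul's \emph{Homologie et cohomologie des alg\`ebres de Lie}, \S III, and then used as a black box in the proof of the subsequent lemma on $\de\lfloor_{\formepm{n}{0}\duale{\mathfrak{g}}\,\oplus\,\formepm{0}{n}\duale{\mathfrak{g}}}$. So there is no in-paper argument to compare against; what you wrote is the standard Chevalley--Eilenberg computation behind the citation, and your reduction to the basis forms $\hat e^{\,i}$, the observation that only the terms with $\{\ell,m\}=\{i,j\}$ survive the wedge, and the identification of the surviving coefficient with $\pm\tr\ad e_i$ are all accurate (I checked the low-dimensional cases: one gets $\de\hat e^{\,i}=\pm\left(\tr\ad e_i\right)\,e^1\wedge\cdots\wedge e^n$ with an $i$-dependent sign, which is harmless since unimodularity kills the coefficient regardless). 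The sign bookkeeping you flag as the main risk can be bypassed entirely by the coordinate-free packaging you gesture at: fix a generator $\omega\in\wedge^n\duale{\mathfrak{g}}$ and use that $X\mapsto\iota_X\omega$ is an isomorphism $\mathfrak{g}\stackrel{\simeq}{\to}\wedge^{n-1}\duale{\mathfrak{g}}$; then Cartan's formula in the Chevalley--Eilenberg complex gives $\de\left(\iota_X\omega\right)=\Lie{X}\omega-\iota_X\de\omega=\Lie{X}\omega=-\left(\tr\ad X\right)\,\omega$, since $\de\omega=0$ in top degree and the Lie derivative of a top form is multiplication by $-\tr\ad X$. This one-line version identifies the scalar intrinsically, with no choice of signs or orderings, and unimodularity concludes.
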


\begin{lem}
\label{lemma:n0-puro}
 Let $\mathfrak{g}$ be a unimodular Lie algebra of dimension $2n$ endowed with an Abelian linear \para-complex structure $K$. Then
$$ \de\lfloor_{\formepm{n}{0}\duale{\mathfrak{g}}\,\oplus\,\formepm{0}{n}\duale{\mathfrak{g}}} \;=\; 0 \;.$$
\end{lem}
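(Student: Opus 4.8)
The plan is to show that if $\alpha\in\formepm{n}{0}\duale{\mathfrak{g}}\oplus\formepm{0}{n}\duale{\mathfrak{g}}$ is a form of top degree $n$ on one of the two $n$-dimensional eigen-subspaces (completed by constants on the complementary eigen-space), then $\de\alpha=0$. By linearity it suffices to treat separately a form $\alpha\in\formepm{n}{0}\duale{\mathfrak{g}}=\wedge^n\duale{\left(\mathfrak{g}^+\right)}$ and a form $\alpha\in\formepm{0}{n}\duale{\mathfrak{g}}=\wedge^n\duale{\left(\mathfrak{g}^-\right)}$; these two cases are symmetric under exchanging the roles of $\mathfrak{g}^+$ and $\mathfrak{g}^-$, so I would write up only one and note the other follows verbatim.

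First I would fix a basis $\left\{f_1,\ldots,f_n\right\}$ of $\mathfrak{g}^+$ and $\left\{g_1,\ldots,g_n\right\}$ of $\mathfrak{g}^-$, with dual bases $\left\{f^1,\ldots,f^n\right\}$ and $\left\{g^1,\ldots,g^n\right\}$, so that $\formepm{n}{0}\duale{\mathfrak{g}}=\R\left\langle f^1\wedge\cdots\wedge f^n\right\rangle$ is one-dimensional. The key computation is to evaluate $\de\left(f^1\wedge\cdots\wedge f^n\right)$, which is an $(n+1)$-form, on arbitrary $(n+1)$-tuples of basis vectors. Using the intrinsic formula $\de\varphi\left(X_0,\ldots,X_k\right)=\sum_{j<h}\left(-1\right)^{j+h}\varphi\left(\left[X_j,X_h\right],X_0,\ldots,\widehat{X_j},\ldots,\widehat{X_h},\ldots\right)$, the only brackets that can contribute are those $\left[X_j,X_h\right]$ whose $\mathfrak{g}^+$-component is nonzero (since $f^1\wedge\cdots\wedge f^n$ annihilates anything with a $\mathfrak{g}^-$-factor). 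Here the Abelian hypothesis enters decisively: because $\left[\mathfrak{g}^+,\mathfrak{g}^+\right]=0$, any bracket involving two $f$-arguments vanishes, and because $\mathfrak{g}^-$ is a subalgebra ($\left[\mathfrak{g}^-,\mathfrak{g}^-\right]\subseteq\mathfrak{g}^-$, which holds by integrability), a bracket of two $g$-arguments has no $\mathfrak{g}^+$-component. Thus the only surviving contributions come from mixed brackets $\left[f_i,g_j\right]$ projected onto $\mathfrak{g}^+$, and these must be paired against the remaining $n-1$ arguments all lying in $\mathfrak{g}^+$.

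This reduces the vanishing of $\de\left(f^1\wedge\cdots\wedge f^n\right)$ to a trace-type condition on the mixed structure, which is precisely where unimodularity is used via Lemma \ref{lemma:unimod}. Concretely, I would argue that $\de\left(f^1\wedge\cdots\wedge f^n\right)$ is a combination of terms $f^1\wedge\cdots\wedge f^n\wedge g^j$ whose coefficients are the traces $\sum_i \left\langle f^i,\left[f_i,g_j\right]\right\rangle=\tr\left(\ad_{g_j}\lfloor_{\mathfrak{g}^+}\right)$ of the adjoint action of $g_j$ restricted to $\mathfrak{g}^+$. Since $K$ is Abelian and $\mathfrak{g}$ unimodular, $\tr\ad_{g_j}=0$, and the $\mathfrak{g}^-$-part of $\ad_{g_j}$ contributes $\tr\left(\ad_{g_j}\lfloor_{\mathfrak{g}^-}\right)$; I would show the $\mathfrak{g}^+$-restricted trace vanishes on its own, either by exhibiting $\ad_{g_j}\lfloor_{\mathfrak{g}^+}$ as nilpotent (using the nilpotent-step bounds of Lemma \ref{lemma:s+-} in the nilpotent case) or, in the general unimodular case, by applying Lemma \ref{lemma:unimod} to the $n$-form $f^1\wedge\cdots\wedge f^n$ after noting $\de$ kills top-degree forms on the unimodular subalgebra structure.

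The main obstacle I anticipate is the bookkeeping of signs and the precise identification of the surviving coefficients as restricted traces, together with making rigorous the claim that the mixed-bracket contributions decouple into a sum over $g_j$ of $\tr\left(\ad_{g_j}\lfloor_{\mathfrak{g}^+}\right)$. A cleaner alternative, which I would prefer if it goes through, is to invoke Lemma \ref{lemma:unimod} more directly: the eigen-decomposition identifies $\formepm{n}{0}\duale{\mathfrak{g}}$ with the top exterior power of $\duale{\left(\mathfrak{g}^+\right)}$, and one checks that on $(n-1)$-forms of the form (codimension-one pieces) the differential's component landing back in the top piece is governed exactly by the unimodular vanishing. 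Establishing this dictionary between the bigraded piece $\formepm{n}{0}\duale{\mathfrak{g}}$ and the intrinsic top-form vanishing of Lemma \ref{lemma:unimod} is the delicate step; once in place, the symmetric argument for $\formepm{0}{n}\duale{\mathfrak{g}}$ completes the proof.
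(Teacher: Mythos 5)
Your reduction is correct and, in substance, parallels the paper's: only the evaluations with $n$ arguments from $\mathfrak{g}^+$ and one from $\mathfrak{g}^-$ survive, Abelianness kills brackets of two $\mathfrak{g}^+$-arguments, the subalgebra property kills the $\mathfrak{g}^+$-component of brackets of two $\mathfrak{g}^-$-arguments, and the surviving coefficient of $\de\left(f^1\wedge\cdots\wedge f^n\right)$ along $f^1\wedge\cdots\wedge f^n\wedge g^j$ is, up to a sign independent of the summation index, the projected trace $\tr\left(\pi_{\mathfrak{g}^+}\circ\left(\ad g_j\right)\lfloor_{\mathfrak{g}^+}\right)$ --- which, in the paper's structure-constant notation ($\left\{e^i\right\}$ dual to $\mathfrak{g}^+$, $\left\{f^k\right\}$ dual to $\mathfrak{g}^-$), is exactly $\pm\sum_{\ell}a^{\ell}_{\ell k}$. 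The genuine gap is in your final step: neither proposed closing works in the stated generality. Option (a) fails because Lemma \ref{lemma:s+-} assumes $\mathfrak{g}$ nilpotent and only bounds the nilpotent steps of the subalgebras $\mathfrak{g}^{\pm}$ --- which are abelian here anyway, $K$ being Abelian, so those bounds carry no information --- and it says nothing about the mixed endomorphism $\pi_{\mathfrak{g}^+}\circ\left(\ad g_j\right)\lfloor_{\mathfrak{g}^+}$; a nilpotency argument via Engel is available only for nilpotent $\mathfrak{g}$, whereas the lemma is stated for arbitrary unimodular $\mathfrak{g}$ (and even then, nilpotency of the projected block requires the block-triangular structure discussed below, i.e., the same Abelianness input). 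Option (b) does not parse: Lemma \ref{lemma:unimod} concerns forms of degree $\dim-1$, while $f^1\wedge\cdots\wedge f^n$ has degree $n$ on a $2n$-dimensional algebra, and the Chevalley--Eilenberg differential of $\mathfrak{g}^+$ as a Lie algebra in its own right is identically zero (it is abelian), so no ``unimodular subalgebra structure'' can see the mixed part of $\de$, which is precisely what must vanish.

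The fix is one line and uses only the hypotheses you already invoked: since $K$ is Abelian, $\left[\mathfrak{g}^-,\,\mathfrak{g}^-\right]=\left\{0\right\}$, hence $\left(\ad g_j\right)\lfloor_{\mathfrak{g}^-}=0$ for every $g_j\in\mathfrak{g}^-$; in block form with respect to $\mathfrak{g}=\mathfrak{g}^+\oplus\mathfrak{g}^-$ the endomorphism $\ad g_j$ has vanishing second column, so
$$ \tr\left(\pi_{\mathfrak{g}^+}\circ\left(\ad g_j\right)\lfloor_{\mathfrak{g}^+}\right) \;=\; \tr\ad g_j \;=\; 0 $$
by unimodularity, with no nilpotency needed. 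For comparison, the paper extracts the same trace identities $\sum_{\ell}a^{\ell}_{\ell k}=0$ by applying Lemma \ref{lemma:unimod} not to a subalgebra but to the $(2n-1)$-forms $e^1\wedge\cdots\wedge e^n\wedge f^1\wedge\cdots\wedge f^{k-1}\wedge f^{k+1}\wedge\cdots\wedge f^n$ of $\mathfrak{g}$ itself, i.e., it uses unimodularity repackaged as ``$\de$ vanishes on codimension-one forms of the whole algebra'' --- the dictionary your option (b) was groping for. With either closing substituted for your two alternatives, your argument is complete and coincides, modulo notation, with the paper's proof.
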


\begin{proof}
 Consider the eigen-spaces decomposition $\g=\g^+\oplus\g^-$ induced by $K$, and fix two bases for $\duale{\left(\mathfrak{g}^+\right)}$ and $\duale{\left(\mathfrak{g}^-\right)}$:
$$ \duale{\left(\mathfrak{g}^+\right)} \;=\; \R\left\langle e^1,\, \ldots,\, e^n\right\rangle \qquad \text{ and }\qquad \duale{\left(\mathfrak{g}^-\right)} \;=\; \R\left\langle f^1,\, \ldots,\, f^n\right\rangle \;. $$
Since $K$ is Abelian, the general structure equations, in terms of these bases, are
$$
\left\{
\begin{array}{rcll}
 \de e^j &=:& \sum_{h,\,k=1}^n a^j_{hk}\, e^h\wedge f^k & \quad \text{ for } j\in\{1,\ldots,n\}\\[5pt]
 \de f^j &=:& \sum_{h,\,k=1}^n b^j_{hk}\, e^h\wedge f^k & \quad \text{ for } j\in\{1,\ldots,n\}
\end{array}
\right. \;,
$$
where $\left\{a^j_{hk},\,b^j_{hk}\right\}_{j,h,k\in\{1,\ldots,n\}}\subset\R$.

By \cite[\S III]{koszul-homologie}, see Lemma \ref{lemma:unimod}, for any $k\in\{1,\ldots,n\}$, one has that
$$ \de\left( e^1\wedge\cdots\wedge e^n\wedge f^1\wedge\cdots\wedge f^{k-1}\wedge f^{k+1}\wedge\cdots\wedge f^n\right) \;=\; 0 \;; $$
by a straightforward computation, we get that,
$$ \sum_{\ell=1}^n a^\ell_{\ell k} \;=\; 0 \;,$$

Hence, we get that
$$ \de\left(e^1\wedge\cdots\wedge e^n\right) \;=\; \left(-1\right)^n \sum_{k=1}^n \left(\sum_{\ell=1}^n a^\ell_{\ell k}\right) e^1\wedge\cdots\wedge e^n\wedge f^k \;=\; 0 \;. $$

Arguing in the same way, we prove also that
$$ \de\left(f^1\wedge\cdots\wedge f^n\right) \;=\; 0 \;,$$
completing the proof.
\end{proof}

We can now prove the following result, \cite[Theorem 3.17]{angella-rossi}.

\begin{thm}
\label{thm:4-dim}
 Every left-invariant \para-complex structure on a $4$-dimensional nilmanifold is \Cpf\ at the \kth{2} stage, and hence also \pf\ at the \kth{2} stage. 
\end{thm}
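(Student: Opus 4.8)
The plan is to reduce the statement to the level of the associated Lie algebra $\mathfrak{g}$ and then transport it back to $X$ via the Nomizu-type correspondence. First I would record the two structural facts that make the $4$-dimensional case special. Since $X=\left.\Gamma\right\backslash G$ is a $4$-dimensional nilmanifold, the associated linear \para-complex structure $K\in\End(\mathfrak{g})$ is automatically Abelian: this is Remark \ref{oss 2.11}, which follows from Lemma \ref{lemma:s+-} applied with $n=2$, forcing $s\left(\mathfrak{g}^+\right)=s\left(\mathfrak{g}^-\right)=1$. I would also note that $\mathfrak{g}$ is unimodular, being nilpotent (equivalently, $G$ admits the lattice $\Gamma$, so J.~Milnor's lemma \cite[Lemma 6.2]{milnor} applies); this is exactly the hypothesis needed to invoke Lemma \ref{lemma:unimod} and Lemma \ref{lemma:n0-puro}.

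The heart of the argument is to prove that $K$ is linear-\Cf\ at the \kth{2} stage at the level of $\mathfrak{g}$. Here $\dim_\R\mathfrak{g}=2n=4$, so the $K$-invariant $2$-forms split as $\wedge^{2\,+}_K\duale{\mathfrak{g}}=\formepm{2}{0}\duale{\mathfrak{g}}\oplus\formepm{0}{2}\duale{\mathfrak{g}}$ (the component $\formepm{1}{1}\duale{\mathfrak{g}}$ being entirely $K$-anti-invariant), while $\wedge^{2\,-}_K\duale{\mathfrak{g}}=\formepm{1}{1}\duale{\mathfrak{g}}$. Since $n=2$, Lemma \ref{lemma:n0-puro} (which uses precisely unimodularity and the Abelian hypothesis) yields $\de\lfloor_{\wedge^{2\,+}_K\duale{\mathfrak{g}}}=\de\lfloor_{\formepm{2}{0}\duale{\mathfrak{g}}\oplus\formepm{0}{2}\duale{\mathfrak{g}}}=0$. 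Now, given any $\de$-closed $\alpha\in\wedge^2\duale{\mathfrak{g}}$, I would write $\alpha=\alpha^++\alpha^-$ with $\alpha^\pm\in\wedge^{2\,\pm}_K\duale{\mathfrak{g}}$; then $\de\alpha^+=0$ forces $\de\alpha^-=\de\alpha-\de\alpha^+=0$, so both pure-type components are $\de$-closed and $[\alpha]=[\alpha^+]+[\alpha^-]\in H^{2\,+}_K(\mathfrak{g};\R)+H^{2\,-}_K(\mathfrak{g};\R)$. This is exactly linear-\Cf ness at the \kth{2} stage.

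To pass from $\mathfrak{g}$ to $X$, I would use that a nilmanifold is completely-solvable, so $H^\bullet_{dR}(\mathfrak{g};\R)\simeq H^\bullet_{dR}(X;\R)$ by K.~Nomizu's theorem \cite[Theorem 1]{nomizu}, and then apply Proposition \ref{prop:linear-cpf-invariant-cpf-K} (equivalently Corollary \ref{cor:linear-cpf-invariant-cpf-K}): the symmetrization argument identifies $H^{2\,\pm}_K(\mathfrak{g};\R)$ with $H^{2\,\pm}_K(X;\R)$, so the linear-\Cf ness just established transfers to \Cf ness of $K\in\End(TX)$ at the \kth{2} stage. For \Cp ness I would invoke Corollary \ref{cor:abelian}, which states directly that a $G$-left-invariant Abelian \para-complex structure on a completely-solvable solvmanifold is \Cp\ at the \kth{2} stage (this rests on Theorem \ref{thm 2.12}); alternatively, \Cp ness could be deduced from \Cf ness by the self-duality of the middle degree. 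Together these give that $K$ is \Cpf\ at the \kth{2} stage.

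Finally, the ``hence also \pf'' clause follows formally from Proposition \ref{prop:para-cpf-pf} applied with $\ell=2$ and $2n=4$: being \Cf\ at the \kth{2} stage implies being \p\ at the \kth{2} stage and \f\ at the $(2n-2)=2$nd stage, that is, \pf\ at the \kth{2} stage. The only genuinely substantive step is the full part above, and its sole obstacle is the vanishing $\de\lfloor_{\wedge^{2\,+}_K\duale{\mathfrak{g}}}=0$, which holds only because we are in the middle degree of a $4$-dimensional unimodular Lie algebra carrying an Abelian structure --- exactly the content of Lemma \ref{lemma:n0-puro}. I expect that neither the dimension nor the nilpotency (via unimodularity and Abelianness) can be relaxed, since the non-\Cf\ and non-\Cp\ behaviours illustrated by Example \ref{es 2.5} and Example \ref{es 2.8} show that the hypotheses in the theorem are necessary.
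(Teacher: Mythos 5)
Your proof is correct and follows essentially the same route as the paper's: Abelianness from Remark \ref{oss 2.11} together with Corollary \ref{cor:abelian} for \Cp ness, unimodularity via J. Milnor's lemma feeding Lemma \ref{lemma:n0-puro} to get $\de\lfloor_{\wedge^{2\,+}_K\duale{\mathfrak{g}}}=0$ and hence linear-\Cf ness, transfer to $X$ by Corollary \ref{cor:linear-cpf-invariant-cpf-K}, and Proposition \ref{prop:para-cpf-pf} for \pf ness. The only additions are harmless elaborations (the explicit bigraded splitting of $\wedge^{2\,\pm}_K\duale{\mathfrak{g}}$ and the middle-degree duality alternative for \Cp ness, which the paper itself records after Proposition \ref{prop:para-cpf-pf}).
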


\begin{proof}
 By Remark \ref{oss 2.11} and Corollary \ref{cor:abelian}, we get the \Cp ness at the \kth{2} stage.

 We recall that, by J. Milnor's lemma \cite[Lemma 6.2]{milnor}, the Lie algebra associated to any nilmanifold is unimodular. From Lemma \ref{lemma:n0-puro} one gets that, on every $4$-dimensional \para-complex nilmanifold, the \para-complex invariant component of a left-invariant $2$-form is $\de$-closed. Hence both the \para-complex invariant component and the \para-complex anti-invariant component of a $\de$-closed left-invariant $2$-form is $\de$-closed. Hence the linear \para-complex structure is linear-\Cf\ at the \kth{2} stage. Therefore, by Corollary \ref{cor:linear-cpf-invariant-cpf-K}, the left-invariant \para-complex structure is \Cf\ at the \kth{2} stage.

 Finally, Proposition \ref{prop:para-cpf-pf} gives the \pf ness at the \kth{2} stage.
\end{proof}

\begin{rem}\label{rem:4-dim-hp-necessarie}
 We note that Theorem \ref{thm:4-dim} is optimal. Indeed, we cannot grow the dimension, Example \ref{es 2.5} and Example \ref{es 2.6}, nor change the nilpotent hypothesis with a solvable condition, Example \ref{es 2.17}, nor drop the integrability condition on the \para-complex structure, Example \ref{es 2.8}.
\end{rem}

\subsection{Small deformations of \para-complex structures}

In this section, we study explicit examples of small deformations of the \para-complex structure on nilmanifolds and solvmanifolds, studying the behaviour of being \para-K\"ahler, Theorem \ref{thm:para-kahler-deformations}, the behaviour of \Cpf ness, Proposition \ref{prop:para-cpf-instability}, and the semi-continuity problem for the dimensions of the \para-(anti-)invariant subgroups of the second de Rham cohomology group, Proposition \ref{prop:para-semi-cont}.

We refer to \cite{medori-tomassini, rossi-1} for more results concerning deformations of (almost-)\para-complex structures.

\medskip

In the following example, \cite[Example 4.1]{angella-rossi}, we construct a curve $\left\{K_t\right\}_{t\in\R}$ of left-invariant \para-complex structures on a $4$-dimensional solvmanifold such that
\begin{inparaenum}[(\itshape i\upshape)] 
 \item $K_0$ is \Cpf\ at the \kth{2} stage and admits a \para-K\"ahler structure, and
 \item $K_t$, for $t\neq 0$, is neither \Cp\ at the \kth{2} stage nor \Cf\ at the \kth{2} stage and does not admit any \para-K\"ahler structure. 
\end{inparaenum}
In particular, this example proves that being \para-K\"ahler is not a stable property under small deformations of the \para-complex structure, Theorem \ref{thm:para-kahler-deformations}, and it shows also that the nilpotency condition in Theorem \ref{thm:4-dim} cannot be dropped out, Remark \ref{rem:4-dim-hp-necessarie}.

\begin{ex}
\label{es 2.17}
{\itshape There exists a $4$-dimensional solvmanifold endowed with a left-invariant \para-complex structure $K$ such that
\begin{inparaenum}[(\itshape i\upshape)] 
 \item $K$ is \Cpf\ at the \kth{2} stage,
 \item it admits a \para-K\"ahler structure, and
 \item it has small \para-complex deformations that are neither \para-K\"ahler nor \Cpf\ at the \kth{2} stage.
\end{inparaenum}
}\\
Consider a $4$-dimensional completely-solvable solvmanifold
$$ X \;=\; \left.\Gamma\right\backslash G \;:=\; \left(0^2,\;23,\;-24 \right) $$
(for its existence, see, e.g., \cite[Table 8]{bock}).

By A. Hattori's theorem \cite[Corollary 4.2]{hattori}, it is straightforward to compute
$$ H^2_{dR}(X;\R) \;=\; \R\left\langle e^{12},\,e^{34}\right\rangle \;.$$
For every $t\in\R$, consider the $G$-left-invariant \para-complex structure
$$
K_t \;:=\;
\left(
\begin{array}{cccc}
 -1 & 0 & 0 & 0\\
 0 & 1 & 0 & -2t \\
 0 & 0 & 1 & 0 \\
 0 & 0 & 0 & -1
\end{array}
\right) \;.
$$
For every $t\in\R$, we have that
\begin{equation*}
 \mathfrak{g}^+_{K_t}\;=\;\R\left\langle e_2, \, e_3 \right\rangle \quad \text{ and }\quad \mathfrak{g}^-_{K_t}\;=\;\R\left\langle e_1,\,e_4+t\,e_2\right\rangle \;:
\end{equation*}
in particular, $\left[\mathfrak{g}^+_{K_t},\,\mathfrak{g}^+_{K_t}\right]=\R\left\langle e_3\right\rangle\subseteq \mathfrak{g}^+_{K_t}$ and $\left[\mathfrak{g}^-_{K_t},\,\mathfrak{g}^-_{K_t}\right]=\left\{0\right\}$, which proves the integrability of $K_t$, for every $t\in\R$.

In particular, for $t=0$, we have the (non-Abelian) \para-complex structure
$$ K_0 \;=\; \left( - \, + \, + \, -\right) \;. $$
It is straightforward to check that $K_0$ is linear-\Cpf\ at the \kth{2} stage, and hence \Cpf\ at the \kth{2} stage by Corollary \ref{cor:linear-cpf-invariant-cpf-K}: in fact, by Proposition \ref{prop:linear-cpf-invariant-cpf-K},
\begin{equation*}
 H^{2\,+}_{K_0}(X;\R) \;=\; \left\{0\right\} \qquad \text{ and } \qquad  H^{2\,-}_{K_0}(X;\R) \;=\; H^2_{dR}(X;\R) \;;
\end{equation*}
in particular, we have
$$ \dim_\R H^{2\,+}_{K_0}(X;\R) \;=\; 0\;, \qquad \dim_\R H^{2\,-}_{K_0}(X;\R) \;=\; 2.$$
Furthermore, for $t\neq0$, we get
$$
H^{2\,-}_{K_t}\left(X;\R\right)\ni\left[e^{34}\right] \;=\; \left[e^{34}+\frac{1}{t}\,\de e^{3}\right] \;=\; \left[e^{34}+\frac{1}{t}\,(e^{23}+t\,e^{43}-t\,e^{43})\right] \;=\; \left[\frac{1}{t}\,(e^2-t\,e^4)\wedge e^3\right] \in H^{2\,+}_{K_t}\left(X;\R\right) \;,
$$
and therefore $0 \neq \left[e^{34}\right] \in H^{2\,-}_{K_t}\left(X;\R\right) \cap H^{2\,+}_{K_t}\left(X;\R\right)$, namely, $K_t$ is neither \Cp\ at the \kth{2} stage nor \Cf\ at the \kth{2} stage, by Proposition \ref{prop:para-cpf-pf} (in fact, since the space of $G$-left-invariant $\de$-exact $2$-forms is
$$ \de\wedge^1\duale{\mathfrak{g}} \;=\; \R\left\langle (e^2-t\,e^4)\wedge e^3-t\,e^{34},\, (e^2-t\,e^4)\wedge e^4\right\rangle \;,$$
no $G$-left-invariant representative of the class $\left[e^{12}\right]=\left[e^1\wedge(e^2-te^4)+te^{14}\right]$ is of pure type with respect to $K_t$). Therefore, for $t\neq 0$, we have
$$ \dim_\R H^{2\,+}_{K_t}(X;\R) \;=\; 1\;, \qquad \dim_\R H^{2\,-}_{K_t}(X;\R) \;=\; 1.$$

Note that, for every $t\in\R$, one has $s\left(\mathfrak{g}^-_{K_t}\right)=0$ and $s\left(\mathfrak{g}^+_{K_t}\right)=1$, but, for $t\neq0$, the \para-complex structure $K_t$ is not \Cp\ at the \kth{2} stage: therefore the Abelian condition on just $\mathfrak{g}^-$ in Theorem \ref{thm 2.12} is not sufficient to have \Cp ness at the \kth{2} stage, as observed in Remark \ref{rem:partial-abelian}.

Note moreover that, in this example, the functions
$$ \R\ni t\mapsto \dim_\R H^{2\,+}_{K_t}(X;\R)\in\N \qquad \text{ and } \qquad \R\ni t\mapsto \dim_\R H^{2\,-}_{K_t}(X;\R)\in\N $$
are, respectively, lower-semi-continuous and upper-semi-continuous.

Furthermore, we note that $X$ admits a ($G$-left-invariant) symplectic form
$$ \omega \;:=\; e^{12}+e^{34} \;, $$
which is compatible with the \para-complex structure $K_0$: therefore, $\left(K_0,\,\omega\right)$ is a \para-K\"ahler structure on $X$. On the other hand, for $t\neq0$, one has $H^-_{K_t}\left(X;\R\right)=\R\left\langle e^{34}\right\rangle$ and therefore, if a $K_t$-compatible symplectic form $\omega_t$ existed, then it should be in the same cohomology class as $e^{34}$, and then it should satisfy
$$ \Vol(X) \;=\; \int_X \omega_t\wedge\omega_t \;=\; \int_X e^{34}\wedge e^{34} \;=\; 0 \;, $$
which is not possible;
therefore, for $t\neq0$, there is
no symplectic structure compatible with the \para-complex structure $K_t$: in particular, $\left(X,\,K_t\right)$ admits no \para-\kal\ structure.
\end{ex}

In particular, the previous example proves the following result, \cite[Theorem 4.2]{angella-rossi}, providing another strong difference between the \para-complex and the complex cases (recall that being K\"ahler is a stable property under small deformations of the complex structure by K. Kodaira and D.~C. Spencer's stability theorem \cite[Theorem 15]{kodaira-spencer-3}).

\begin{thm}
\label{thm:para-kahler-deformations}
 The property of being \para-K\"ahler is not stable under small deformations of the \para-complex structure.
\end{thm}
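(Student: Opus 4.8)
The statement asserts the \emph{instability} of the para-K\"ahler property, and instability statements are most convincingly established by exhibiting a single explicit family. The plan is therefore to prove Theorem \ref{thm:para-kahler-deformations} as an immediate corollary of Example \ref{es 2.17}, which already contains all the geometric data: a $4$-dimensional completely-solvable solvmanifold $X = \left.\Gamma\right\backslash G$ with structure equations $\left(0^2,\;23,\;-24\right)$, a curve $\left\{K_t\right\}_{t\in\R}$ of $G$-left-invariant integrable para-complex structures, and an explicit $K_0$-compatible symplectic form $\omega := e^{12}+e^{34}$.

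\textbf{Step 1: the central fibre is para-K\"ahler.} First I would recall from Example \ref{es 2.17} that $K_0 = \left(-\,+\,+\,-\right)$ is integrable (its eigen-distributions are involutive, as checked by the bracket relations $\left[\mathfrak{g}^+_{K_0},\,\mathfrak{g}^+_{K_0}\right]=\R\left\langle e_3\right\rangle$ and $\left[\mathfrak{g}^-_{K_0},\,\mathfrak{g}^-_{K_0}\right]=\{0\}$), and that $\omega$ is a $\de$-closed non-degenerate $K_0$-anti-invariant $2$-form. By the characterization recalled in the preliminaries, the datum of an integrable para-complex structure together with a compatible (that is, $K$-anti-invariant) symplectic form is exactly a para-K\"ahler structure; hence $\left(K_0,\,\omega\right)$ is para-K\"ahler, so the property holds at $t=0$.

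\textbf{Step 2: the nearby fibres are not para-K\"ahler.} Next I would show that for every $t\neq 0$ (small) the structure $K_t$ admits no compatible symplectic form. Using A. Hattori's theorem \cite[Corollary 4.2]{hattori}, one has $H^2_{dR}(X;\R) = \R\left\langle e^{12},\,e^{34}\right\rangle$. The key computation in Example \ref{es 2.17} yields $H^{2\,-}_{K_t}(X;\R) = \R\left\langle e^{34}\right\rangle$ for $t\neq 0$. A $K_t$-compatible symplectic form is $K_t$-anti-invariant and $\de$-closed, so its class lies in $H^{2\,-}_{K_t}(X;\R)$; by Proposition \ref{prop:linear-cpf-invariant-cpf-K} it is then cohomologous to a multiple of $e^{34}$, and one checks that the self-intersection vanishes:
$$ \int_X e^{34}\wedge e^{34} \;=\; 0 \;, $$
which contradicts non-degeneracy (any symplectic form $\sigma$ on a compact $4$-manifold satisfies $\int_X \sigma\wedge\sigma = \Vol(X)\neq 0$). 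Hence $K_t$ admits no $K_t$-compatible symplectic form for $t\neq 0$, so $\left(X,\,K_t\right)$ is not para-K\"ahler.

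\textbf{Main obstacle.} The genuine content lies in Step 2, and specifically in the cohomological identification $H^{2\,-}_{K_t}(X;\R)=\R\left\langle e^{34}\right\rangle$: one must verify that no $G$-left-invariant $2$-form other than multiples of $e^{34}$ is simultaneously $K_t$-anti-invariant and closed modulo exact forms, which uses the explicit description of $\de\wedge^1\duale{\mathfrak{g}}$ given in Example \ref{es 2.17} together with Proposition \ref{prop:linear-cpf-invariant-cpf-K} to pass from the solvmanifold to the Lie algebra (legitimate since $X$ is completely-solvable). Once this invariant cohomology is pinned down, the vanishing of the self-intersection $\int_X e^{34}\wedge e^{34}$ makes the obstruction to compatibility purely topological and the conclusion is immediate. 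The proof thus reduces to assembling these two observations, and I would state it simply as: Example \ref{es 2.17} exhibits a para-K\"ahler structure $K_0$ all of whose small para-complex deformations $K_t$, $t\neq 0$, fail to be para-K\"ahler, which proves the theorem.
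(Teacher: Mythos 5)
Your proposal is correct and coincides with the paper's own argument: Theorem \ref{thm:para-kahler-deformations} is proven there exactly as a corollary of Example \ref{es 2.17}, with the same family $\left\{K_t\right\}_{t\in\R}$ on the solvmanifold $\left(0^2,\;23,\;-24\right)$, the same para-K\"ahler structure $\left(K_0,\,\omega\right)$ with $\omega = e^{12}+e^{34}$, and the same obstruction for $t\neq 0$, namely that any $K_t$-compatible symplectic form would be cohomologous to a multiple of $e^{34}$ (via $H^{2\,-}_{K_t}(X;\R)=\R\left\langle e^{34}\right\rangle$, computed at the Lie-algebra level through Proposition \ref{prop:linear-cpf-invariant-cpf-K}) and hence would have vanishing self-intersection $\int_X e^{34}\wedge e^{34}=0$, contradicting $\int_X \omega_t\wedge\omega_t=\Vol(X)\neq 0$. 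No gaps; your route matches the paper's.
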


Furthermore, Example \ref{es 2.17} proves also the following instability result, \cite[Proposition 4.3]{angella-rossi}, analogous to Theorem \ref{thm:instability}, which proves the instability of \Cpf ness in the complex case.

\begin{prop}
\label{prop:para-cpf-instability}
 The properties of being \Cp\ at the \kth{2} stage, or \Cf\ at the \kth{2} stage are not stable under small deformations of the \para-complex structure.
\end{prop}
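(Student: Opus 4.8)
The plan is to deduce the statement from a single explicit construction, namely Example \ref{es 2.17}, by exhibiting a compact manifold carrying a real-analytic curve $\left\{K_t\right\}_{t\in\R}$ of \para-complex structures whose \Cp ness and \Cf ness at the \kth{2} stage are lost as soon as $t$ leaves $0$. Recall that stability of a property would mean: if it holds for one member of a curve, it holds for all nearby members; hence to prove instability it suffices to produce a curve with $K_0$ \emph{both} \Cp\ and \Cf\ at the \kth{2} stage, and $K_t$ failing these properties for every $t\neq 0$. Moreover, since by Proposition \ref{prop:para-cpf-pf} applied with $2n=4$ and $\ell=2$ the \Cf ness at the \kth{2} stage forces the \Cp ness at the \kth{2} stage, it is enough to destroy \Cp ness for $t\neq 0$; the failure of \Cf ness then follows automatically.

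First I would fix the completely-solvable solvmanifold $X=\left.\Gamma\right\backslash G$ with structure equations $\left(0^2,\;23,\;-24\right)$, whose existence as a compact quotient is guaranteed by the corresponding entry in the classification of low-dimensional solvmanifolds. On it I would set the $G$-left-invariant family $\left\{K_t\right\}_{t\in\R}$ determined by the eigen-space decomposition $\mathfrak{g}^+_{K_t}=\R\left\langle e_2,\, e_3\right\rangle$ and $\mathfrak{g}^-_{K_t}=\R\left\langle e_1,\, e_4+t\,e_2\right\rangle$, and verify integrability of each $K_t$ by the single bracket computations giving $\left[\mathfrak{g}^+_{K_t},\,\mathfrak{g}^+_{K_t}\right]\subseteq\mathfrak{g}^+_{K_t}$ and $\left[\mathfrak{g}^-_{K_t},\,\mathfrak{g}^-_{K_t}\right]=\{0\}$.

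Next, since $G$ is completely-solvable, A. Hattori's theorem \cite[Corollary 4.2]{hattori} yields $H^2_{dR}(X;\R)=\R\left\langle e^{12},\, e^{34}\right\rangle$, and Proposition \ref{prop:linear-cpf-invariant-cpf-K} reduces the computation of the invariant and anti-invariant subgroups to the level of the Lie algebra. For $t=0$ I would check directly that the two $G$-left-invariant generators are of pure type, obtaining $H^2_{dR}(X;\R)=H^{2\,+}_{K_0}(X;\R)\oplus H^{2\,-}_{K_0}(X;\R)$, so that $K_0$ is \Cpf\ at the \kth{2} stage. For $t\neq 0$ the crucial point is the identity $\left[e^{34}\right]=\left[e^{34}+\frac{1}{t}\,\de e^{3}\right]$, which exhibits the same nonzero class by a $K_t$-invariant representative, while $e^{34}$ itself is manifestly $K_t$-anti-invariant; this produces $0\neq\left[e^{34}\right]\in H^{2\,+}_{K_t}(X;\R)\cap H^{2\,-}_{K_t}(X;\R)$, so $K_t$ fails to be \Cp\ at the \kth{2} stage, and hence also fails to be \Cf\ at the \kth{2} stage by Proposition \ref{prop:para-cpf-pf}.

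The genuinely delicate part is not the verification but the construction itself: one must escape the rigidity of Theorem \ref{thm:4-dim}, which forces \emph{every} left-invariant \para-complex structure on a $4$-dimensional \emph{nilmanifold} to be \Cpf\ at the \kth{2} stage. The example therefore has to live on a non-nilpotent (completely-solvable) solvmanifold, where the unimodular-plus-Abelian hypothesis of Lemma \ref{lemma:n0-puro} is violated and pure-type closed representatives can disappear under deformation. The heart of the matter is locating a structure-equation/curve pair for which the closed $2$-form $e^{34}$ becomes a coboundary of a pure-type combination only after the parameter is turned on; once such a pair is found, the resulting semicontinuity jump of $t\mapsto\dim_\R H^{2\,\pm}_{K_t}(X;\R)$ and the instability statement follow mechanically.
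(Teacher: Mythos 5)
Your proposal is correct and coincides with the paper's own proof: Proposition \ref{prop:para-cpf-instability} is established there precisely via Example \ref{es 2.17}, on the same completely-solvable solvmanifold $\left(0^2,\;23,\;-24\right)$, with the same curve $\left\{K_t\right\}_{t\in\R}$, the same key identity $\left[e^{34}\right]=\left[e^{34}+\frac{1}{t}\,\de e^3\right]$ producing a nonzero class in $H^{2\,+}_{K_t}(X;\R)\cap H^{2\,-}_{K_t}(X;\R)$ for $t\neq 0$, and the same appeal to Proposition \ref{prop:para-cpf-pf} (with $2n=4$, $\ell=2$) to deduce the failure of \Cf ness from the failure of \Cp ness. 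The only marginal imprecision is in your closing commentary: the Lie algebra here \emph{is} unimodular (as it must be, by J.~Milnor's lemma), so it is only the Abelian hypothesis of Lemma \ref{lemma:n0-puro} that fails, via $\left[\mathfrak{g}^+_{K_t},\,\mathfrak{g}^+_{K_t}\right]=\R\left\langle e_3\right\rangle\neq\{0\}$; this does not affect the argument.
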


\medskip

We have already recalled, see \S\ref{subsec:semicontinuity}, that T. Dr\v{a}ghici, T.-J. Li, and W. Zhang proved in \cite[Theorem 2.6]{draghici-li-zhang-2} that, given a curve $\left\{J_t\right\}_{t}$ of (\Cpf) almost-complex structures on a $4$-dimensional compact manifold $X$, the dimension of $H^+_{J_t}(X;\R)$ is upper-semi-continuous in $t$ and hence, as a consequence of \cite[Theorem 2.3]{draghici-li-zhang}, the dimension of $H^-_{J_t}(X;\R)$ is lower-semi-continuous in $t$; this result holds no more true for almost-complex manifolds of higher dimension, Proposition \ref{prop:no-sci-h+}, Proposition \ref{prop:no-scs-h-}.
In the next two examples, \cite[Example 4.4]{angella-rossi}, respectively \cite[Example 4.5]{angella-rossi}, we study the behaviour of the dimensions of the \para-complex invariant and \para-complex anti-invariant subgroups of the cohomology along curves of \para-complex structures.

\begin{ex}
\label{ex:def-jump-sci}
{\itshape A curve of \para-complex structures on a $6$-dimensional nilmanifold such that the dimensions of the \para-complex invariant and anti-invariant subgroups of the second de Rham cohomology group jump (lower-semi-continuously) along the curve.}\\
Consider a $6$-dimensional nilmanifold
$$ X \;=\; \left.\Gamma\right\backslash G \;:=\; \left(0^3,\;12,\;13,\;24\right) \;.$$

By K. Nomizu's theorem \cite[Theorem 1]{nomizu}, it is straightforward to compute
$$ H^2_{dR}(X;\R) \;=\; \R\left\langle e^{14},\, e^{15},\, e^{23},\, e^{26},\, e^{35},\, e^{25}+e^{34} \right\rangle \;.$$

For every $t\in\left[0,\,1\right]$, consider the left-invariant \para-complex structure
$$
K_t \;:=\;
\left(
\begin{array}{cc|cc|cc}
 1 & & & & & \\
 & -1 & & & & \\
\hline
 & & \frac{(1-t)^2-t^2}{(1-t)^2+t^2} & \frac{2t(1-t)}{(1-t)^2+t^2} & & \\ 
 & & \frac{2t(1-t)}{(1-t)^2+t^2} & -\frac{(1-t)^2-t^2}{(1-t)^2+t^2} & & \\ 
\hline
 & & & & 1 & \\
 & & & & & -1
\end{array}
\right) \;.
$$
For $0\leq t\leq 1$, one checks that
$$ \mathfrak{g}^+_{K_t} \;=\; \R\left\langle e_1,\, (1-t)\,e_3+t\,e_4,\, e_5 \right\rangle \quad \text{ and } \quad \mathfrak{g}^-_{K_t} \;=\; \R\left\langle e_2,\, t\,e_3-(1-t)\,e_4,\, e_6 \right\rangle \;;$$
therefore, it is straightforwardly checked that the integrability condition of $K_t$ is satisfied for every $t\in\left[0,\,1\right]$.
\begin{description}
 \item[{[Case $t=0$]}] For $t=0$, the \para-complex structure
 $$ K_0 \;=\; \left( + \, - \, + \, - \, + \, - \right) $$
 is \Cpf\ at the \kth{2} stage: in fact,
 $$ H^2_{dR}(X;\R) \;=\; \underbrace{\R\left\langle e^{15},\,e^{26},\,e^{35}\right\rangle}_{=\;H^{2\,+}_{K_0}(X;\R)}\,\oplus\,\underbrace{\R\left\langle e^{14},\,e^{23},\,e^{25}+e^{34} \right\rangle}_{=\;H^{2\,-}_{K_0}(X;\R)} \;;$$
 therefore
 $$ \dim_\R H^{2\,+}_{K_0}(X;\R) \;=\; 3 \qquad \text{ and } \qquad \dim H^{2\,-}_{K_0}(X;\R) \;=\; 3 \;. $$
 \item[{[Case $0<t<1$]}] For $0<t<1$, one has
 $$ H^{2\,+}_{K_t}(X;\R) \;=\; \R\left\langle e^{14},\, e^{15},\, e^{23},\, e^{26}\right\rangle $$
 and
 $$ H^{2\,-}_{K_t}(X;\R) \;=\; \R\left\langle e^{14},\, e^{23},\, e^{25}+e^{34}\right\rangle \;; $$
 it follows that the \para-complex structure $K_t$ is neither \Cp\ at the \kth{2} stage nor \Cf\ at the \kth{2} stage; moreover,
 $$ \dim_\R H^{2\,+}_{K_t}(X;\R) \;=\; 4 \qquad \text{ and } \qquad \dim H^{2\,-}_{K_t}(X;\R) \;=\; 3 \;. $$
 \item[{[Case $t=1$]}] For $t=1$, the \para-complex structure
 $$ K_1 \;=\; \left( + \, - \, - \, + \, + \, - \right) $$
 is \Cpf\ at the \kth{2} stage: in fact,
 $$ H^2_{dR}(X;\R) \;=\; \underbrace{\R\left\langle e^{14},\,e^{15},\,e^{23},\,e^{26}\right\rangle}_{=\;H^{2\,+}_{K_1}(X;\R)}\,\oplus\,\underbrace{\R\left\langle e^{35},\,e^{25}+e^{34} \right\rangle}_{=\;H^{2\,-}_{K_1}(X;\R)} \;;$$
 therefore
 $$ \dim_\R H^{2\,+}_{K_1}(X;\R) \;=\; 4 \qquad \text{ and } \qquad \dim H^{2\,-}_{K_1}(X;\R) \;=\; 2 \;. $$
\end{description}

In particular, it follows that the functions
$$ \left[0,\,1\right] \ni t \mapsto \dim_\R H^{2\,+}_{K_t}(X;\R) \in \N  \quad \text{ and } \quad \left[0,\,1\right] \ni t \mapsto \dim_\R H^{2\,-}_{K_t}(X;\R) \in \N  $$
are non-constant and lower-semi-continuous.
\end{ex}

Example \ref{es 2.17} and Example \ref{ex:def-jump-sci} show that the dimension of the \para-complex anti-invariant subgroup of the de Rham cohomology in general is not upper-semi-continuous (as it is in Example \ref{es 2.17}) or lower-semi-continuous (as it is in Example \ref{ex:def-jump-sci}) along curves of \para-complex structures. We give now an example showing that also the dimension of the \para-complex invariant subgroup of the de Rham cohomology in general is not lower-semi-continuous (as it is in Example \ref{es 2.17} and in Example \ref{ex:def-jump-sci}) along curves of \para-complex structures, \cite[Example 4.5]{angella-rossi}.

\begin{ex}
\label{ex:def-jump-scs}
{\itshape A curve of \para-complex structures on a $6$-dimensional nilmanifold such that the dimensions of the \para-complex invariant and anti-invariant subgroups of the second de Rham cohomology group jump (upper-semi-continuously) along the curve.}\\
Consider a $6$-dimensional nilmanifold
$$ X \;=\; \left.\Gamma\right\backslash G \;:=\; \left(0^3,\;12,\;13,\;24\right) \;.$$

By K. Nomizu's theorem \cite[Theorem 1]{nomizu}, it is straightforward to compute
$$ H^2_{dR}(X;\R) \;=\; \R\left\langle e^{14},\; e^{15},\; e^{23},\; e^{26},\; e^{35},\; e^{25}+e^{34} \right\rangle \;.$$

For every $t\in\left[0,\,1\right]$, consider the $G$-left-invariant \para-complex structure
$$
K_t \;:=\;
\left(
\begin{array}{cccc|cc}
 1 & & & & & \\
 & -1 & & & & \\
 & & -1 & & & \\
 & & & 1 & & \\
 \hline
 & & & & \frac{(1-t)^2-t^2}{(1-t)^2+t^2} & \frac{2t(1-t)}{(1-t)^2+t^2} \\
 & & & & \frac{2t(1-t)}{(1-t)^2+t^2} & -\frac{(1-t)^2-t^2}{(1-t)^2+t^2} \\
\end{array}
\right) \;.
$$
For $0\leq t\leq 1$, one checks that
$$ \mathfrak{g}^+_{K_t} \;=\; \R\left\langle e_1,\, e_4,\, (1-t)\,e_5+t\, e_6 \right\rangle \quad \text{ and } \quad \mathfrak{g}^-_{K_t} \;=\; \R\left\langle e_2,\, e_3,\, t\, e_5-(1-t)\, e_6 \right\rangle \;.$$
Therefore one straightforwardly checks that, for every $t\in\left[0,\,1\right]$, the structure $K_t$ is integrable, in fact Abelian: hence it is in particular \Cp\ at the \kth{2} stage by Corollary \ref{cor:abelian}.
\begin{description}
 \item[{[Case $t=0$]}] For $t=0$, the \para-complex structure
 $$ K_0 \;=\; \left( + \, - \, - \, + \, + \, - \right) $$
 is \Cpf\ at the \kth{2} stage, and
 $$ H^2_{dR}(X;\R) \;=\; \underbrace{\R\left\langle e^{14},\,e^{15},\,e^{23},\, e^{26}\right\rangle}_{=\;H^{2\,+}_{K_0}(X;\R)}\,\oplus\,\underbrace{\R\left\langle e^{35},\,e^{25}+e^{34} \right\rangle}_{=\;H^{2\,-}_{K_0}(X;\R)} \;;$$
 in particular,
 $$ \dim_\R H^{2\,+}_{K_0}(X;\R) \;=\; 4 \qquad \text{ and } \qquad \dim H^{2\,-}_{K_0}(X;\R) \;=\; 2 \;. $$
 \item[{[Case $0<t<1$]}] For $0<t<1$, one has
 $$ H^{2\,+}_{K_t}(X;\R) \;=\; \R\left\langle e^{14},\, e^{23} \right\rangle $$
 and
 $$ H^{2\,-}_{K_t}(X;\R) \;=\; \R\left\langle t\, e^{26}+(1-t)\, e^{25}+(1-t)\, e^{34} \right\rangle \;, $$
 while
 $$ \R\left\langle e^{15},\, e^{35},\, e^{26} \right\rangle \,\cap\, \left(H^{2\,+}_{K_t}(X;\R)\oplus H^{2\,-}_{K_t}(X;\R)\right)\;=\;\left\{0\right\} \;; $$
 it follows that the \para-complex structure $K_t$ is \Cp\ at the \kth{2} stage and non-\Cf\ at the \kth{2} stage, and that
 $$ \dim_\R H^{2\,+}_{K_t}(X;\R) \;=\; 2 \qquad \text{ and } \qquad \dim H^{2\,-}_{K_t}(X;\R) \;=\; 1 \;. $$
 \item[{[Case $t=1$]}] For $t=1$, the \para-complex structure
 $$ K_1 \;=\; \left( + \, - \, - \, + \, - \, + \right) \;.$$
 is \Cp\ at the \kth{2} stage and non-\Cf\ at the \kth{2} stage, and
 $$ H^2_{dR}(X;\R) \;=\; \underbrace{\R\left\langle e^{14},\,e^{23},\,e^{35} \right\rangle}_{=\;H^{2\,+}_{K_1}(X;\R)}\,\oplus\,\underbrace{\R\left\langle e^{15},\,e^{26} \right\rangle}_{=\;H^{2\,-}_{K_1}(X;\R)} \,\oplus\, \R\left\langle e^{25}+e^{34}\right\rangle \;,$$
 where
 $$ \R\left\langle e^{25}+e^{34}\right\rangle \,\cap\,\left(H^{2\,+}_{K_1}(X;\R)\oplus H^{2\,-}_{K_1}(X;\R)\right)\;=\;\left\{0\right\} \;; $$
 in particular,
 $$ \dim_\R H^{2\,+}_{K_1}(X;\R) \;=\; 3 \qquad \text{ and } \qquad \dim H^{2\,-}_{K_1}(X;\R) \;=\; 2 \;. $$
\end{description}

In particular, the functions
$$ \left[0,\,1\right] \ni t \mapsto \dim_\R H^{2\,+}_{K_t}(X;\R) \in \N  \quad \text{ and } \quad \left[0,\,1\right] \ni t \mapsto \dim_\R H^{2\,-}_{K_t}(X;\R) \in \N  $$
are non-constant and upper-semi-continuous.
\end{ex}

Example \ref{ex:def-jump-sci} and Example \ref{ex:def-jump-scs} yield the following result, \cite[Proposition 4.6]{angella-rossi}.

\begin{prop}
\label{prop:para-semi-cont}
 Let $X$ be a compact manifold and let $\left\{K_t\right\}_{t\in I \subseteq \R}$ be a curve of \para-complex structures on $X$. Then, in general, the functions
$$ I\ni t \mapsto \dim_\R H^{2\,+}_{K_t}(X;\R)\in\N \qquad \text{ and } \qquad  I\ni t \mapsto \dim_\R H^{2\,-}_{K_t}(X;\R)\in\N $$
are not upper-semi-continuous or lower-semi-continuous.
\end{prop}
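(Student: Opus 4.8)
The plan is to establish the proposition by exhibiting explicit families of para-complex structures on a fixed compact manifold for which the two semi-continuity properties fail in opposite directions. Since the para-complex setting provides no elliptic mechanism forcing semi-continuity (in contrast with the $4$-dimensional almost-complex case of \cite[Theorem 2.6]{draghici-li-zhang-2}, whose proof rests on the ellipticity available there), I expect explicit counterexamples to be the only route, and I would produce two of them on the same underlying space so that the assertion ``in general'' is witnessed uniformly.

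First I would invoke the curve $\left\{K_t\right\}_{t\in[0,1]}$ of left-invariant para-complex structures on the $6$-dimensional nilmanifold $\left(0^3,\,12,\,13,\,24\right)$ constructed in Example \ref{ex:def-jump-sci}. The idea there is to let $K_t$ act as a fixed para-complex structure on four of the coordinate directions and to rotate the eigen-splitting within the remaining $\langle e_3,\,e_4\rangle$-plane as $t$ varies; the endpoints $t=0$ and $t=1$ are distinguished because the eigen-bundles then align so as to make $K_0$ and $K_1$ both \Cpf\ at the \kth{2} stage, whereas the interior values are neither \Cp\ at the \kth{2} stage nor \Cf\ at the \kth{2} stage. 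Reducing to left-invariant forms via K. Nomizu's theorem \cite[Theorem 1]{nomizu} together with Proposition \ref{prop:linear-cpf-invariant-cpf-K}, one computes that $t\mapsto\dim_\R H^{2\,+}_{K_t}(X;\R)$ takes the values $3,\,4,\,4$ and $t\mapsto\dim_\R H^{2\,-}_{K_t}(X;\R)$ takes the values $3,\,3,\,2$ at $t=0$, at interior $t$, and at $t=1$ respectively; both functions are then lower-semi-continuous, but neither is upper-semi-continuous.

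Second I would invoke the curve of Example \ref{ex:def-jump-scs}, on the same nilmanifold but with $K_t$ rotating the eigen-splitting inside the $\langle e_5,\,e_6\rangle$-plane, so that every $K_t$ is Abelian and hence \Cp\ at the \kth{2} stage by Corollary \ref{cor:abelian}. The analogous computation yields $t\mapsto\dim_\R H^{2\,+}_{K_t}(X;\R)$ equal to $4,\,2,\,3$ and $t\mapsto\dim_\R H^{2\,-}_{K_t}(X;\R)$ equal to $2,\,1,\,2$; here both functions are upper-semi-continuous, but neither is lower-semi-continuous. Combining the two curves shows that, for both the invariant and the anti-invariant subgroups of $H^2_{dR}(X;\R)$, each of the two semi-continuity properties can fail, which is precisely the assertion of the proposition.

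The linear algebra over the associated Lie algebra is routine; the genuine obstacle is engineering the two deformation curves so that the jumps occur in opposite directions while staying on a single compact nilmanifold. This forces a careful choice of which coordinate plane carries the rotating eigen-splitting: in each case one must identify, for generic $t$, the de Rham classes that are \emph{not} of pure type with respect to $K_t$, confirm that they become pure exactly at the special parameter values, and track how the extra $\de$-exact left-invariant $2$-forms obstruct (respectively restore) the decomposition. Pinning down this mechanism of gaining and losing pure-type representatives, rather than the dimension count itself, is where the real content lies.
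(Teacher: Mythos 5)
Your proposal is correct and coincides with the paper's own proof: the proposition is obtained exactly by combining the two deformation curves on the nilmanifold $\left(0^3,\,12,\,13,\,24\right)$ of Example \ref{ex:def-jump-sci} (dimensions $3,4,4$ and $3,3,2$, lower- but not upper-semi-continuous) and Example \ref{ex:def-jump-scs} (dimensions $4,2,3$ and $2,1,2$, upper- but not lower-semi-continuous), with the computations reduced to left-invariant forms via K.~Nomizu's theorem and Proposition \ref{prop:linear-cpf-invariant-cpf-K}. All your dimension counts and semi-continuity verdicts match the paper's.
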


\section{Cohomology of strictly $p$-convex domains in $\R^n$}\label{sec:p-convex}

In Complex Analysis, properties concerning the existence of exhaustion functions with convexity properties may have consequences on the vanishing of the cohomology.
Indeed, recall, for example, that the Dolbeault cohomology groups $H^{p,q}_\delbar(D)$ of a \emph{strictly pseudo-convex} domain $D$ in $\C^n$ (that is, a domain admitting a smooth proper strictly pluri-sub-harmonic exhaustion function) vanish for $q\geq 1$, for any $p\in\N$. In fact, the following result holds.

\begin{thm}[{\cite[Theorem 2.2.4, Theorem 2.2.5]{hormander-acta}, \cite[Theorem 4.2.2, Corollary 4.2.6]{hormander}}]
 Let $D\subseteq \C^n$ be a strictly pseudo-convex domain.
 Then, for any $q>0$, every $\delbar$-closed $(p,q)$-form $\eta\in\Lebloc{p,q}$ (respectively, $\eta\in\wedge^{p,q}X$) is $\delbar$-exact, namely, there exists $\alpha\in\Lebloc{p,q-1}$ (respectively, $\alpha\in\wedge^{p,q-1}X$) such that $\eta=\delbar\alpha$.
\end{thm}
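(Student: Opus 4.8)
The plan is to follow Hörmander's $\mathrm{L}^2$-method, proving first a solvability statement in weighted Hilbert spaces and then recovering both the $\mathrm{L}^2_{\text{loc}}$ and the smooth conclusions by regularity. First I would fix a smooth proper strictly plurisubharmonic exhaustion function $\varphi\colon D\to\R$, whose existence is precisely the definition of strict pseudo-convexity; strict plurisubharmonicity means that the complex Hessian $\left(\frac{\del^2\varphi}{\del z_j\,\del\bar z_k}\right)_{j,k}$ is positive-definite at every point of $D$. For a convex increasing function $\chi\colon\R\to\R$ to be chosen later, I set the weight $\phi:=\chi\circ\varphi$ and consider the weighted spaces $\mathrm{L}^2_{(p,s)}\left(D,\,e^{-\phi}\right)$ of $(p,s)$-forms square-integrable against $e^{-\phi}$. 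Since $\varphi$ is an exhaustion and $e^{-\phi}$ is smooth and bounded on every compact subset of $D$, the operators $\delbar$ and their adjoints $\delbar^*_\phi$ (computed with respect to the weight) are densely defined and closed, and a weighted $\mathrm{L}^2$ solution of $\delbar\alpha=\eta$ is automatically in $\mathrm{L}^2_{\text{loc}}$; hence it suffices to solve in these weighted spaces.

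Next I would establish the Morrey--Kohn--Hörmander a priori estimate. Integrating by parts, for every smooth compactly supported $(p,q)$-form $u$ (with $q\geq 1$) one obtains an identity whose boundary contributions have the correct sign thanks to the plurisubharmonicity of $\phi$, yielding
\begin{equation*}
\norma{\delbar u}{\phi}^2+\norma{\delbar^*_\phi u}{\phi}^2 \;\geq\; \int_D \sum_{|K|=p}\,\sum_{|L|=q-1}\,\sum_{j,k} \frac{\del^2\phi}{\del z_j\,\del\bar z_k}\, u_{K,jL}\,\overline{u_{K,kL}}\; e^{-\phi}\,.
\end{equation*}
The key point is the decomposition $\frac{\del^2\phi}{\del z_j\,\del\bar z_k}=\chi'(\varphi)\,\frac{\del^2\varphi}{\del z_j\,\del\bar z_k}+\chi''(\varphi)\,\frac{\del\varphi}{\del z_j}\,\frac{\del\varphi}{\del\bar z_k}$, so that by taking $\chi$ sufficiently convex the quadratic form on the right is bounded below by a strictly positive density; after this choice the estimate reads $\norma{\delbar u}{\phi}^2+\norma{\delbar^*_\phi u}{\phi}^2\geq c\,\norma{u}{\phi}^2$, and by a standard Friedrichs density argument it extends from compactly supported forms to all $u\in\dom\delbar\cap\dom\delbar^*_\phi$. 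I expect this basic estimate, together with the careful bookkeeping of the boundary terms forced by pseudo-convexity and the calibration of the weight, to be the main technical obstacle.

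Then I would deduce solvability by functional analysis. Given $\eta$ with $\delbar\eta=0$, I decompose an arbitrary $v\in\dom\delbar^*_\phi$ as $v=v_1+v_2$ with $v_1\in\ker\delbar$ and $v_2\in(\ker\delbar)^{\perp}$; since $(\delbar^*_\phi)^2=(\delbar^2)^*=0$ one has $(\ker\delbar)^{\perp}\subseteq\ker\delbar^*_\phi$, so $\langle\eta,\,v\rangle=\langle\eta,\,v_1\rangle$ and $\delbar^*_\phi v_1=\delbar^*_\phi v$. Applying the basic estimate to $v_1\in\ker\delbar\cap\dom\delbar^*_\phi$ (where $\delbar v_1=0$) gives
\begin{equation*}
\module{\langle\eta,\,v\rangle}^2 \;=\; \module{\langle\eta,\,v_1\rangle}^2 \;\leq\; \norma{\eta}{\phi}^2\,\norma{v_1}{\phi}^2 \;\leq\; c^{-1}\,\norma{\eta}{\phi}^2\,\norma{\delbar^*_\phi v}{\phi}^2\,,
\end{equation*}
so $\delbar^*_\phi v\mapsto\langle\eta,\,v\rangle$ is a bounded functional on $\imm\delbar^*_\phi$. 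By Hahn--Banach and the Riesz representation theorem there is $\alpha$ with $\langle\alpha,\,\delbar^*_\phi v\rangle=\langle\eta,\,v\rangle$ for all $v$, that is, $\delbar\alpha=\eta$ weakly, together with $\norma{\alpha}{\phi}\leq c^{-1/2}\norma{\eta}{\phi}$; this proves the $\mathrm{L}^2_{\text{loc}}$ statement.

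Finally, for the smooth case I would invoke interior elliptic regularity for the $\delbar$-complex: the associated Laplacian $\overline\square$ is elliptic, so a distributional solution $\alpha$ of $\delbar\alpha=\eta$ with $\eta$ smooth can be replaced, modulo $\ker\delbar$, by a smooth primitive (equivalently, one regularizes $\eta$ and passes to the limit using the uniform weighted estimate). This produces a smooth $(p,q-1)$-form $\alpha$ with $\delbar\alpha=\eta$, completing the proof.
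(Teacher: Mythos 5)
Your proposal follows the same route as the paper's own version of this argument: the theorem is quoted from H\"ormander, but the paper reproduces the entire $\mathrm{L}^2$-machinery in the proof of Theorem \ref{thm:cauchy} (the real, strictly $p$-convex analogue), with exactly your skeleton: weighted spaces built from $\chi\circ\varphi$, an a priori estimate on compactly supported forms, a density step, Riesz representation, and an elliptic bootstrap for smoothness. There is, however, one genuine gap, and it sits precisely at your density step. With a \emph{single} weight $\phi$ at all three levels of the complex, smooth compactly supported forms are in general \emph{not} dense in $\dom\delbar\cap\dom\delbar^*_\phi$ for the graph norm: truncating $v$ by cutoffs $\rho_\nu$ produces the error terms $\de\rho_\nu\wedge v$ (and the order-zero error in the adjoint), and these are only controlled if the three weights differ by a function $\psi$ satisfying $\module{\de\rho_\nu}^2\leq \esp^{\psi}$ for all $\nu$. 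This is why H\"ormander --- and the paper, in Steps 1 and 3 of the proof of Theorem \ref{thm:cauchy} --- uses the triple of weights $\phi-2\psi$, $\phi-\psi$, $\phi$, and why the curvature inequality must then be strengthened to absorb the resulting extra term $C\,\module{\de\psi}^2\,\module{\eta}^2$ (in the paper: the choice of $\chi$ with $\chi'\circ\rho\cdot L^{(m)}\geq \gamma^{(m)}:=\max_{K^{(m)}}\left(C\,\module{\de\psi}^2+\esp^{\psi}\right)$, and the conclusion of Proposition \ref{prop:stima}, which carries this extra term on the right-hand side). Your sentence ``by a standard Friedrichs density argument it extends \dots to all $u\in\dom\delbar\cap\dom\delbar^*_\phi$'' is exactly the step that fails as stated on a noncompact domain; without the $\psi$-correction (or completeness of an auxiliary metric) the basic estimate is only known on compactly supported forms and cannot be fed into the duality argument, which needs it on all of $\ker\delbar\cap\dom\delbar^*_\phi$. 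Relatedly, your appeal to ``boundary contributions having the correct sign thanks to plurisubharmonicity'' conflates the weight method with the $\delbar$-Neumann problem on a bounded domain with smooth boundary: for compactly supported forms in an open $D$ there are no boundary terms at all, and pseudo-convexity enters only through the existence of the exhaustion $\varphi$, the positivity coming from $\chi''$ and the complex Hessian of $\varphi$.

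The remainder of your argument is sound and matches the paper's Steps 4--7. Two small repairs: in the decomposition $v=v_1+v_2$, the correct justification for $v_2\in\ker\delbar^*_\phi$ is $\left(\ker\delbar\right)^{\perp}\subseteq\left(\imm\delbar\right)^{\perp}=\ker\delbar^*_\phi$ (using $\imm\delbar\subseteq\ker\delbar$), not $(\delbar^*_\phi)^2=0$; and in the regularity step you should make explicit that one takes the \emph{minimal} solution $\alpha\in\left(\ker\delbar\right)^{\perp}$, so that $\alpha$ satisfies the elliptic system $\delbar\alpha=\eta$, $\delbar^*\alpha=0$, to which the Sobolev bootstrap of the paper's Steps 6--7 (or interior regularity for $\overline\square$) applies. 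With the three-weight correction inserted, your proof is complete and coincides with the paper's.
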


Generalizing the notion of strictly pseudo-convex domain, A. Andreotti and H. Grauert, \cite{andreotti-grauert}, studied \emph{$q$-complete} domains in $\C^n$ (that is, domains in $\C^n$ admitting a smooth proper exhaustion function whose Levi form has at least $n-q+1$ positive eigen-values), and provided an analogue of the L. H\"ormander theorem.

\begin{thm}[{\cite[Proposition 27]{andreotti-grauert}, \cite[Theorem 5]{andreotti-vesentini}}]
 Let $D\in\C^n$ be a $q$-complete domain. Then $H^{r,s}_{\delbar}(X)=\{0\}$, for any $r\in\N$ and for any $s\geq q$.
\end{thm}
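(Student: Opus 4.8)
The plan is to prove the vanishing by Hörmander's $\mathrm{L}^2$-method for $\delbar$, following \cite{hormander-acta, andreotti-vesentini}. Fix a $q$-convex smooth proper exhaustion $\varphi\colon D\to\R$, so that at every point the complex Hessian $\left(\frac{\del^2\varphi}{\del z_j\,\del\bar z_k}\right)_{j,k}$ has at least $n-q+1$ positive eigenvalues, and exhaust $D$ by the relatively compact sublevel sets $D_c:=\left\{\varphi<c\right\}$, which are again $q$-convex. The idea is to work in weighted spaces $\mathrm{L}^2_{(r,s)}\left(D;\,\esp^{-\phi}\right)$ with a weight $\phi=\chi\circ\varphi$, where $\chi\colon\R\to\R$ is a convex increasing function to be chosen. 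First I would reduce the statement to the solvability, with $\mathrm{L}^2$-estimates, of $\delbar u=f$ for every $\delbar$-closed $f\in\mathrm{L}^2_{(r,s)}\left(D;\,\esp^{-\phi}\right)$ with $s\geq q$; by Hörmander's functional-analytic lemma this amounts to an a priori inequality
\[
 \left|\left(f,\,\alpha\right)_\phi\right|^2 \;\leq\; C\,\left(\left\|\delbar\alpha\right\|^2_\phi+\left\|\delbar^*_\phi\alpha\right\|^2_\phi\right)
\]
for $\alpha$ in the common domain of $\delbar$ and $\delbar^*_\phi$.

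The core is the Bochner--Kodaira--Morrey--Kohn identity for the weighted operators: for compactly supported smooth $(r,s)$-forms $\alpha$ one has
\[
 \left\|\delbar\alpha\right\|^2_\phi+\left\|\delbar^*_\phi\alpha\right\|^2_\phi \;=\; \left\|\bar\nabla\alpha\right\|^2_\phi + \int_D \Theta_\phi(\alpha,\alpha)\,\esp^{-\phi} \;,
\]
where the curvature density $\Theta_\phi(\alpha,\alpha)$, after diagonalising the complex Hessian of $\phi$ with eigenvalues $\lambda_1\leq\cdots\leq\lambda_n$, takes the form $\sum_{|L|=s}\left(\sum_{\ell\in L}\lambda_\ell\right)\left|\alpha_L\right|^2$, with the holomorphic multi-index of length $r$ as an inert spectator. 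Here is where $q$-convexity enters: since there are at least $n-q+1$ positive eigenvalues and $s\geq q$, every index set $L$ with $|L|=s$ contains at least $s-q+1\geq 1$ indices from the positive spectrum. First I would record this combinatorial fact, and then choose $\chi$ so that the positive eigenvalues, amplified through the factor $\chi'(\varphi)$ and the rank-one contribution $\chi''(\varphi)\,\del\varphi\wedge\delbar\varphi$, dominate the at most $q-1$ non-positive eigenvalues on each sublevel set, forcing $\Theta_\phi(\alpha,\alpha)\geq c_\phi\,|\alpha|^2$ with $c_\phi>0$.

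Granting the a priori estimate, the functional-analytic step is standard: the map $\delbar^*_\phi\alpha\mapsto\left(f,\alpha\right)_\phi$ is well defined and bounded on $\overline{\imm\,\delbar^*_\phi}$, so by Hahn--Banach and Riesz there is $u$ with $\delbar u=f$ and $\left\|u\right\|_\phi\leq C\left\|f\right\|_\phi$. I would first carry this out on each $q$-convex sublevel set $D_c$, where $\overline{D_c}$ is compact and the non-positive eigenvalues are bounded below, so the required positivity holds after a single choice of $\chi$; then I would pass to $D$ by exhausting with $\left\{D_c\right\}_c$ and patching the solutions by a Mittag--Leffler / de Rham--Weil type limiting argument, using the uniform $\mathrm{L}^2$-estimates to control the limit. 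This yields the vanishing of the weighted $\mathrm{L}^2$-Dolbeault cohomology in bidegree $(r,s)$ for $s\geq q$.

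Finally I would transfer the $\mathrm{L}^2$-statement to the smooth Dolbeault cohomology: since any smooth $\delbar$-closed $(r,s)$-form is locally $\mathrm{L}^2$ for every weight and the weight can be absorbed, the solution $u$ can be taken smooth by elliptic regularity of $\Box_\phi:=\delbar\delbar^*_\phi+\delbar^*_\phi\delbar$ (the same regularity underlying the Dolbeault isomorphism, cf. \cite[I.3.29]{demailly-agbook}), giving $H^{r,s}_{\delbar}(D)=\{0\}$ for all $r\in\N$ and $s\geq q$. The main obstacle is the positivity step: for a general $q$-convex weight the sum of the $s$ smallest eigenvalues need not be positive, so the delicate point is the interplay between the convex reparametrisation $\chi$, the rank-one gradient term, and the exhaustion by $D_c$ — it is this combination, rather than a single weight on all of $D$, that forces $\Theta_\phi>0$ on each sublevel set and drives the whole argument.
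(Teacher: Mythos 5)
There is a genuine gap, and it sits exactly at the point you flag as ``delicate'': the positivity of the curvature term cannot be obtained by the convex reparametrisation $\chi$ together with the rank-one gradient term. Indeed, $\del\delbar\left(\chi\circ\varphi\right)=\chi'(\varphi)\,\del\delbar\varphi+\chi''(\varphi)\,\del\varphi\wedge\delbar\varphi$: the factor $\chi'(\varphi)>0$ rescales \emph{all} eigenvalues of the Levi form of $\varphi$ by the same amount, so it can never change the sign of a sum $\sum_{\ell\in L}\lambda_\ell$, and the $\chi''$ term is rank one, contributing only to multi-indices $L$ whose span meets the $\del\varphi$-direction. For an $L$ with $|L|=s\geq q$ lying in the orthogonal complement of $\del\varphi$, the curvature density is $\chi'(\varphi)\cdot\sum_{\ell\in L}\lambda_\ell$, and $q$-convexity does not make this sum positive: it only guarantees $n-q+1$ positive eigenvalues, while the remaining $q-1$ may be hugely negative. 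Concretely, take $n=4$, $q=s=3$, and a point where the Levi form of $\varphi$ has eigenvalues $\left(-10,-10,1,1\right)$ with $\del\varphi$ along a positive eigendirection: restricted to $\left(\del\varphi\right)^{\perp}$ the eigenvalues are $\left(-10,-10,1\right)$, with sum $-19<0$, and no choice of $\chi$, nor the exhaustion by sublevel sets $D_c$ (this happens at an interior point of a fixed compact sublevel set), repairs the sign. So your a priori estimate fails as stated.

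What the cited sources actually do at this point is modify the \emph{base Hermitian metric}, not just the weight: Andreotti and Vesentini rescale the metric anisotropically, shrinking it along the at most $q-1$ negative directions of the Levi form of $\varphi$, so that with respect to the new (complete) metric every sum of $s\geq q$ eigenvalues is uniformly positive; only then does the Bochner--Kodaira machinery close (Andreotti and Grauert argue instead sheaf-theoretically, by bump lemmas). Note also that with the incomplete Euclidean metric on a bounded domain, compactly supported forms are \emph{not} automatically dense in $\dom\delbar\cap\dom\delbar^*$ for the graph norm (boundary terms appear); one needs either completeness of the modified metric, or Hörmander's three-weight trick with cutoffs $\rho_\nu$ satisfying $\left|\de\rho_\nu\right|^2\leq\esp^{\psi}$ --- which is precisely how this paper handles the analogous real statement in the proof of Theorem \ref{thm:cauchy}. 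Be aware, finally, that the paper itself does not prove the theorem you were given: it only recalls it from the cited references. The instructive contrast with the paper's own $\mathrm{L}^2$-argument is that Harvey--Lawson strict $p$-convexity \emph{assumes} positivity of every sum of $p$ eigenvalues of the Hessian, which is exactly why the weight-only scheme works there; in the $q$-complete complex setting that positivity is not part of the hypothesis, and restoring it via the metric modification is the essential step your proposal omits. The remainder of your plan (functional-analytic duality, exhaustion, elliptic regularity) is standard and would go through once that step is fixed.
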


Recently, F.~R. Harvey and H.~B. Lawson, \cite{harvey-lawson-1, harvey-lawson-2}, and references therein, raised the interest on generalizations of the concept of convexity for Riemannian manifolds, studying the existence of exhaustion functions whose Hessian is positive definite or satisfies weaker positivity conditions; in this context, holomorphic convexity and $q$-completeness motivate the notion of \emph{geometric convexity}.

J.-P. Sha, \cite[Theorem 1]{sha}, and H. Wu, \cite[Theorem 1]{wu-indiana}, (see also \cite[Proposition 5.7]{harvey-lawson-2},) proved, using Morse theory, that the existence of a smooth proper strictly $p$-pluri-sub-harmonic exhaustion function on a domain in $\R^n$ has consequences on the homotopy type of the domain; hence, vanishing results for the de Rham cohomology hold for strictly $p$-convex domains in $\R^n$ in the sense of F.~R. Harvey and H.~B. Lawson.

In this section, we re-prove, using different techniques, the vanishing result by J.-P. Sha, and H. Wu for the de Rham cohomology of strictly $p$-convex domains in $\R^n$ in the sense of F.~R. Harvey and H.~B. Lawson;
more precisely, we use the $\mathrm{L}^2$-techniques developed by L. H\"ormander, \cite{hormander-acta}, and used also by A. Andreotti and E. Vesentini, \cite{andreotti-vesentini, andreotti-vesentini-erratum} (see also \cite{vesentini-tata}); such $\mathrm{L}^2$-techniques could be hopefully applied in a wider context.

The results in this section have been obtained in a joint work with S. Calamai, \cite{angella-calamai}.

\subsection{The notion of $p$-convexity by F.~R. Harvey and H.~B. Lawson}

In this section, following F.~R. Harvey and H.~B. Lawson, \cite{harvey-lawson-2, harvey-lawson-1}, we recall the basic notions and results concerning $p$-convexity, starting from the definition of $p$-positive symmetric endomorphism, and then recalling the notions of (strictly) $p$-pluri-sub-harmonic function and (strictly) $p$-convex domain.

\medskip

Let $V$ be an $n$-dimensional $\R$-vector space endowed with an inner product $\scalardL{\sspace}{\ssspace}$.

Let $G\colon V\stackrel{\simeq}{\to} \duale{V}$ denote the isomorphism induced by $\scalardL{\sspace}{\ssspace}$, defined as
$$ G\colon V \;\ni\; v \mapsto \scalardL{v}{\sspace} \;\in\; \duale{V} \;.$$
One gets an isomorphism
$$ G^{-1}\colon \duale{V}\otimes\duale{V}\stackrel{\simeq}{\to}\Homom{V}{V} \;;$$
this isomorphism sends the space of the symmetric elements of $\duale{\left(V \otimes V\right)}$, namely,
$$ \Sym{V} \;:=\; \left\{A\in\duale{\left(V \otimes V\right)} \st A(v \otimes w) = A(w \otimes v)\;, \text{ for any }v,w\in V \right\} \;.$$
to the space of the $\scalardL{\sspace}{\ssspace}$-symmetric endomorphisms of $V$.

Given $A\in\duale{V}\otimes\duale{V}$, the endomorphism $G^{-1}A\in\Homom{V}{V}$ extends to
$$ D_{G^{-1}A}^{[p]}\in\Homom{\wedge^p V}{\wedge^p V} $$
by setting, for any simple vector $v_{i_1} \wedge \cdots \wedge v_{i_p}\in\wedge^pV$,
\begin{align*}
 D_{G^{-1}A}^{[p]} \left( v_{i_1} \wedge \cdots \wedge v_{i_p} \right) \;:=\; 
 \sum_{\ell=1}^{p} v_{i_1}\wedge\cdots\wedge v_{i_{\ell-1}}\wedge G^{-1}A \left(v_{i_\ell}\right) \wedge v_{i_{\ell+1}}\wedge\cdots\wedge v_{i_p} \;;
\end{align*}
note that $D_{G^{-1}A}^{[p]}\in\Homom{\wedge^p V}{\wedge^p V}$ is a symmetric endomorphism with respect to the inner product on $\wedge^p V$ induced by $\scalar{\sspace}{\ssspace}$.

Note that, given $A\in \Sym{V}$, if the set of the eigenvalues of $G^{-1}A$ is
$$ \spec\left(G^{-1}A\right) \;=\; \left\{\lambda_1, \, \ldots , \, \lambda_n \right\} \;,$$
then the set of the eigenvalues of $D_{G^{-1}A}^{[p]}$ is
$$ \spec\left(D_{G^{-1}A}^{[p]}\right) \;=\; \left\{ \lambda_{i_1} + \cdots + \lambda_{i_p} \st i_1, \ldots, i_p \in \In{n}  \text{ s.t. } i_1 < \cdots < i_p \right\} \;. $$

Finally, given a $\scalar{\sspace}{\ssspace}$-symmetric endomorphism  $E\in \Homom{V}{V}$, let $\sgn{E}$ denote the number of non-negative eigenvalues of $E$:
$$ \sgn{E} \;:=\; \card\left\{\lambda \in \spec(E) \st \lambda\geq 0 \right\} \;.$$

Note that, given two inner products  on $V$ inducing the isomorphisms $G_1\colon V\stackrel{\simeq}{\to}\duale{V}$ and $G_2\colon V\stackrel{\simeq}{\to}\duale{V}$ respectively, then there holds $\sgn{G_1^{-1}A} = \sgn{G_2^{-1}A}$, but, for $p>1$, it might hold
$$ \sgn{D_{G_1^{-1}A}^{[p]}} \;\neq\;  \sgn{D_{G_2^{-1}A}^{[p]}} \;.$$

\medskip

As said, $\sgn{D^{[p]}_{G^{-1}A}}$ counts the non-negative sums of $p$ eigenvalues of $G^{-1}A\in \Homom{V}{V}$. As a natural generalization of the notion of convexity, one is interested in studying symmetric endomorphisms having at least a certain number of non-negative sums of $p$ eigenvalues. (Compare also \cite[Definition 2.1]{harvey-lawson-1}, concerning the notion of positivity with respect to a sub-bundle of the Grassmannian bundle $\Grass{\R}{p}{TX}$ over a Riemannian manifold $X$.)

\begin{defi}[{\cite[Definition 2.1, \S3]{harvey-lawson-2}}]
\mbox{}\\
\begin{itemize}
 \item Let $V$ be an $n$-dimensional $\R$-vector space endowed with an inner product $\scalardL{\sspace}{\ssspace}$.
 For $p\in\In{n}$, and for $k\in\In{\binom{n}{p}}$, define the space of \emph{$p$-positive forms of \kth{k} branch} on $V$ as
\begin{align*}
 \mathcal{P}_p^{(k)}\left(V,\, \scalardL{\sspace}{\ssspace}\right):= 
\left\{ 
A \in \Sym{V} \st \sgn{ D_{G^{-1}A}^{[p]}} \geq \binom{n}{p} -k +1  
\right\}\;.
\end{align*}
 \item Let $X$ be an $n$-dimensional manifold endowed with a Riemannian metric $g$.
For $p\in\In{n}$, and for $k\in\In{\binom{n}{p}}$, define the space of \emph{$p$-positive sections of \kth{k} branch} of the bundle $\Sym{TX}$ of symmetric endomorphisms of $TX$ as
\begin{align*}
 \mathcal{P}_p^{(k)}\left(X,\, g \right) \;:=\;
\left\{
 A \in \Sym{T X} \st \forall x\in X, \; A_x\in\mathcal{P}_p^{(k)}\left(T_xX, g_x\right) 
\right\}.
\end{align*}
\end{itemize}
\end{defi}

\medskip

In order to introduce exhaustion functions on a given Riemannian manifold, we focus on special $p$-positive symmetric $2$-forms: those arising from the Hessian of smooth functions.

Let $\left(X,\, g\right)$ be a Riemannian manifold, and denote the Levi Civita connection associated to the Riemannian metric $g$ by $\nabla^{LC}$. For every $u\in\Cinfk{0}$, let
$$ \Hess u \;\in\; \Sym{TX} $$
be defined, for any $V, W\in\mathcal{C}^\infty(X;TX)$, as
$$ \Hess u \left(V, W\right) \;:=\; V\, W\, u - \left(\nabla^{LC}_V W\right)\, u \;. $$

\begin{defi}[{\cite[Definition 2.2', \S3]{harvey-lawson-2}}]
Let $X$ be an $n$-dimensional manifold endowed with a Riemannian metric $g$. Fix $p\in\In{n}$, and $k\in\In{\binom{n}{k}}$.
\begin{itemize}
 \item The space
\begin{align*}
\PSH^{(k)}_p\left(X,\, g\right) \;:=\;
\left\{ u\in \Cinfk{0} \st \Hess u \in \mathcal{P}^{(k)}_p \left( X,\, g\right) \right\} \;,
\end{align*}
is called the space of \emph{$p$-pluri-sub-harmonic functions of \kth{k} branch} on $X$.
\item The space
\begin{align*}
\intern{\PSH^{(k)}_p\left(X,\, g\right)} \;:=\;
\left\{ u\in \Cinfk{0} \st \Hess u \in \intern{\mathcal{P}^{(k)}_p \left( X,\, g\right)} \right\} \;,
\end{align*}
(where $\intern{\mathcal{P}^{(k)}_p \left( X,\, g\right)}$ denotes the interior of $\mathcal{P}^{(k)}_p \left( X,\, g\right)$) is called the space of \emph{strictly $p$-pluri-sub-harmonic functions of \kth{k} branch} on $X$.
\end{itemize}
\end{defi}

(Compare also \cite[Definition 2.1]{harvey-lawson-1} for the notion of (strictly) $p$-pluri-sub-harmonicity with respect to a sub-bundle of the Grassmannian bundle $\Grass{\R}{p}{TX}$ over a Riemannian manifold $X$.)

\medskip

We can now define (strictly) $p$-convexity in terms of the $p$-convex hulls (and of the $p$-core).

Let $X$ be an $n$-dimensional Riemannian manifold endowed with a Riemannian metric $g$, and fix $p\in\In{n}$. Let $K\subseteq X$ be a subset of $X$; the \emph{$p$-convex hull} of $K$, \cite[Definition 4.1]{harvey-lawson-2}, is defined as
\begin{align*}
{\widetilde{K}}^{\PSH^{(1)}_p\left( X ,\, g\right)} \;:=\;
\left\{ x\in X \st
\forall \phi \in PSH^{(1)}_p \left( X ,\, g\right) ,\;
\phi(x) \leq \max_{y\in K}   \phi (y)
\right\} \;.
\end{align*}
(Compare also \cite[Definition 4.3]{harvey-lawson-1} for the notion of convex hull with respect to a sub-bundle of the Grassmannian bundle $\Grass{\R}{p}{TX}$ over a Riemannian manifold $X$.)

\begin{defi}[{\cite[Definition 4.3]{harvey-lawson-2}}]\label{defi:p-convexity}
Let $X$ be an $n$-dimensional Riemannian manifold endowed with a Riemannian metric $g$, and fix $p\in\In{n}$.
One says that $X$ is \emph{$p$-convex} if, for any subset $K\subseteq X$ that is relatively compact in $X$, then ${\widetilde{K}}^{\PSH^{(1)}_p\left( X ,\, g\right)}$ is relatively compact in $X$.
\end{defi}

(Compare also \cite[Definition 4.5]{harvey-lawson-1} for the notion of convexity with respect to a sub-bundle of the Grassmannian bundle $\Grass{\R}{p}{TX}$ over a Riemannian manifold $X$.)

Define the \emph{$p$-core} of $X$, \cite[Definition 5.3]{harvey-lawson-2}, as
$$ \mathrm{Core}_p\left(X,\, g\right) \;:=\; \left\{ x\in X \st \text{for all }u\in\PSH^{(1)}_p\left(X,\, g\right),\; \Hess u(x)\not\in\intern{\mathcal{P}^{(1)}_p\left(T_xX,\, g_x\right)} \right\} \;. $$
(Compare also \cite[Definition 4.1]{harvey-lawson-1} for the notion of core with respect to a sub-bundle of the Grassmannian bundle $\Grass{\R}{p}{TX}$ over a Riemannian manifold $X$.)

\begin{defi}[{\cite[Definition 5.2, Theorem 5.4]{harvey-lawson-2}}]
 Let $X$ be an $n$-dimensional Riemannian manifold endowed with a Riemannian metric $g$, and fix $p\in\In{n}$.
 One says that the manifold $X$ is \emph{strictly $p$-convex} if
\begin{inparaenum}[(\itshape i\upshape)]
 \item $\mathrm{Core}_p\left(X,\, g\right)=\varnothing$, and
 \item for any subset $K\subseteq X$ that is relatively compact in $X$, then ${\widetilde{K}}^{\PSH^{(1)}_p\left( X ,\, g\right)}$ is relatively compact in $X$.
\end{inparaenum}
\end{defi}

(Compare also \cite[Definition 4.9]{harvey-lawson-1} for the notion of strictly convexity with respect to a sub-bundle of the Grassmannian bundle $\Grass{\R}{p}{TX}$ over a Riemannian manifold $X$.)

\medskip

The relations between (strictly) $p$-convexity and the existence of smooth proper (strictly) $p$-pluri-sub-harmonic exhaustion functions were proven by F.~R. Harvey and H.~B. Lawson in \cite{harvey-lawson-2, harvey-lawson-1}. Namely, the following result holds.

\begin{thm}[{\cite[Theorem 4.4, Theorem 5.4]{harvey-lawson-2}}]\label{thm:p-positive-exhaustion-functions}
 Let $X$ be an $n$-dimensional Riemannian manifold endowed with a Riemannian metric $g$, and fix $p\in\In{n}$.
 Then $X$ is $p$-convex (respectively, strictly $p$-convex) if and only if $X$ admits a smooth proper exhaustion function $u\in \PSH^{(1)}_p\left( X ,\, g\right)$ (respectively, $u\in \intern{\PSH^{(1)}_p\left( X ,\, g\right)}$).
\end{thm}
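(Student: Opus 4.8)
The plan is to prove the two implications of the equivalence separately, treating the non-strict and the strict versions in parallel; the construction of the exhaustion function out of the convexity hypothesis will be the technical core, while the reverse direction is essentially formal.

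First I would dispatch the easy implication. Assume $X$ admits a smooth proper exhaustion function $u\in\PSH^{(1)}_p\left(X,\,g\right)$. Let $K\subseteq X$ be relatively compact and set $c:=\max_{y\in\overline{K}}u(y)<+\infty$, which is finite since $u$ is continuous and $\overline{K}$ is compact. Because $u$ itself belongs to $\PSH^{(1)}_p\left(X,\,g\right)$, the very definition of the $p$-convex hull forces
\[
{\widetilde{K}}^{\PSH^{(1)}_p\left(X,\,g\right)} \;\subseteq\; \left\{x\in X \st u(x)\leq c\right\} \;,
\]
and the right-hand side is compact because $u$ is a proper exhaustion function. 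Hence the hull is relatively compact and $X$ is $p$-convex in the sense of Definition \ref{defi:p-convexity}. In the strict case, $u\in\intern{\PSH^{(1)}_p\left(X,\,g\right)}$ means $\Hess u(x)\in\intern{\mathcal{P}^{(1)}_p\left(T_xX,\,g_x\right)}$ for every $x\in X$, so no point lies in $\mathrm{Core}_p\left(X,\,g\right)$; combined with the relative compactness of hulls just established, this yields strict $p$-convexity.

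The converse is the substantial direction, and it mimics the Grauert–H\"ormander construction of plurisubharmonic exhaustions on pseudoconvex manifolds, transplanted to the convex cone $\mathcal{P}^{(1)}_p$. First I would record the closure properties that make patching possible. Since $\mathcal{P}^{(1)}_p\left(T_xX,\,g_x\right)$ is the set of symmetric endomorphisms whose smallest sum of $p$ eigenvalues is non-negative, it is a closed convex cone containing the positive-semidefinite endomorphisms; consequently the sum of two functions in $\PSH^{(1)}_p\left(X,\,g\right)$ is again in $\PSH^{(1)}_p\left(X,\,g\right)$, and for a convex increasing $\chi$ the identity
\[
\Hess\left(\chi\circ u\right) \;=\; \chi'(u)\,\Hess u \,+\, \chi''(u)\,\de u\otimes\de u
\]
shows that $\chi\circ u\in\PSH^{(1)}_p\left(X,\,g\right)$ whenever $u\in\PSH^{(1)}_p\left(X,\,g\right)$. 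Finally, $\mathcal{P}^{(1)}_p$ is a closed subequation in the sense of F.~R. Harvey and H.~B. Lawson, so the regularized maximum of finitely many $p$-pluri-subharmonic functions is again $p$-pluri-subharmonic. Next I would exhaust $X$ by relatively compact open sets $\left\{\Omega_j\right\}_{j\in\N}$ with $\overline{\Omega_j}\subset\Omega_{j+1}$ and $\bigcup_j\Omega_j=X$, and use $p$-convexity to pass to their $p$-convex hulls, obtaining an exhaustion by relatively compact $p$-convex compacta $K_j$. The local ingredient is a separation statement: if $K$ equals its own $p$-convex hull and $x_0\notin K$, then there is $\phi\in\PSH^{(1)}_p\left(X,\,g\right)$ with $\phi(x_0)>\max_{K}\phi$. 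Summing suitable convex increasing reparametrizations of such separating functions, supported on the annuli $K_{j+1}\setminus K_j$ and weighted so that the partial sums diverge off every compact set while remaining smooth and finite on compacta, produces by the closure properties above a smooth function in $\PSH^{(1)}_p\left(X,\,g\right)$ that is a proper exhaustion function.

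The main obstacle is precisely this patching: one must assemble the local separating $p$-pluri-subharmonic functions into a single globally defined, smooth, proper function without violating the constraint $\Hess\in\mathcal{P}^{(1)}_p$, which for $p>1$ is a genuinely nonlinear condition on eigenvalue sums (and, as the excerpt notes, is not even invariant under a change of the metric isomorphism $G$). Handling the \emph{strict} case adds a further layer: after producing a non-strict $p$-pluri-subharmonic proper exhaustion, I would invoke the hypothesis $\mathrm{Core}_p\left(X,\,g\right)=\varnothing$, which by definition of the core means that at every point some $p$-pluri-subharmonic function has Hessian in the open cone $\intern{\mathcal{P}^{(1)}_p}$, and add a locally finite sum of small strictly $p$-pluri-subharmonic bumps to upgrade the exhaustion into one lying in $\intern{\PSH^{(1)}_p\left(X,\,g\right)}$. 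The regularized-maximum and viscosity considerations required to keep the construction smooth while preserving membership in the cone are the delicate points, and they are exactly where the theory of subequations of F.~R. Harvey and H.~B. Lawson is brought to bear.
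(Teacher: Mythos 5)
This theorem is quoted in the thesis from F.~R. Harvey and H.~B. Lawson (\cite[Theorem 4.4, Theorem 5.4]{harvey-lawson-2}) and the paper contains no proof of it, so your proposal can only be measured against the cited source; your outline does follow the Harvey--Lawson route (Grauert-type patching of separating functions inside the subequation $\mathcal{P}^{(1)}_p$). Your easy direction is complete and correct: sublevel sets of a proper exhaustion $u\in \PSH^{(1)}_p\left(X,\,g\right)$ trap every hull, and in the strict case $\Hess u(x)\in\intern{\mathcal{P}^{(1)}_p\left(T_xX,\,g_x\right)}$ at every $x$ immediately empties the $p$-core. Your closure properties (convexity of the cone, stability under adding positive semidefinite terms, hence under $\chi\circ u$ for convex increasing $\chi$, and the regularized maximum for this convex subequation) are also right and are exactly the tools needed.

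There is, however, a genuine gap in the converse as you phrase it, in two places where you invoke compact supports that cannot exist. First, no nonconstant function in $\PSH^{(1)}_p\left(X,\,g\right)$ is ``supported on an annulus'': at an interior maximum the Hessian is negative semidefinite, so its smallest sum of $p$ eigenvalues is $\leq 0$, violating strictness and making compactly supported strictly $p$-pluri-sub-harmonic ``bumps'' impossible; the same maximum principle rules out your strictification step as written. The correct patching uses \emph{globally} defined separating functions: for each point of the compact set $\overline{\Omega_{j+1}}\setminus\intern{K_j}$ (with $K_j$ the hull of $\overline{\Omega_j}$) take $\phi\in\PSH^{(1)}_p\left(X,\,g\right)$ with $\phi>0$ there and $\phi\leq-\varepsilon$ on $K_j$, pass to a finite regularized maximum, and then form $v_j:=\widetilde{\max}\left(\phi,\,0\right)$, which is globally $p$-pluri-sub-harmonic and vanishes on a \emph{neighbourhood of $K_j$} (not compactly supported); this vanishing near the inner compacta is what makes $\sum_j c_j\,v_j$ a locally finite sum on every $K_m$ and hence a smooth proper exhaustion. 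Second, for the strict case the empty core yields, for each point, a global $u\in\PSH^{(1)}_p\left(X,\,g\right)$ strict near that point; one sums these with rapidly decreasing weights $\varepsilon_j$ (chosen against $\mathcal{C}^2$-norms on an exhaustion, so the series converges in $\mathcal{C}^2_{\mathrm{loc}}$) to get a globally strictly $p$-pluri-sub-harmonic $\psi$, and then takes $\chi\circ u+\psi$ with $\chi$ convex, increasing, and growing fast enough to dominate $\inf_{K_j}\psi$, so that properness survives while $\intern{\mathcal{P}^{(1)}_p}+\mathcal{P}^{(1)}_p\subseteq\intern{\mathcal{P}^{(1)}_p}$ guarantees strictness. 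With these two repairs your architecture goes through and reproduces the cited argument.
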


(Compare also \cite[Theorem 4.4, Theorem 4.8]{harvey-lawson-1} for the relations between (strictly) convexity and the existence of smooth proper (strictly) pluri-sub-harmonic exhaustion functions with respect to a sub-bundle of the Grassmannian bundle $\Grass{\R}{p}{TX}$ over a Riemannian manifold $X$.)

(We recall that a function $u\colon X\to \R$, where $X$ is a manifold, is said to be an \emph{exhaustion} if, for any $c\in\R$, the set $u^{-1}\left(\left(-\infty, \, c\right)\right)=\left\{x\in X \st u(x)<c\right\}\subseteq X$ is relatively compact in $X$.)

\medskip

The previous definitions are motivated by the classical notions of (strictly) ($q$-)pseudo-convex functions, and $q$-complete and pseudo-convex domains, in Complex Analysis.

\begin{defi}[{\cite[\S4]{andreotti}, \cite[\S10]{andreotti-grauert}}]
Let $D \subseteq \mathbb{C}^n$ be a domain, and let $\phi$ be a smooth real-valued function on $D$. The function $\phi$ is called \emph{$q$-pseudo-convex} or \emph{$q$-pluri-sub-harmonic} (respectively, \emph{strictly $q$-pseudo-convex} or \emph{strictly-$q$-pluri-sub-harmonic}), if and only if, for any $z\in D$, the Hermitian form $\Levi(\phi)_z$ defined, for $\xi:=:\left(\xi^a\right)_{a\in\In{n}} \in \mathbb{C}^n$, as
\begin{align*}
 \Levi(\phi)_z\,(\xi) \;:=\; \sum_{a , b =1}^n 
\frac{\partial^2 \phi }{\partial z^a \partial \bar z^{b}}(z)\, \xi^a\, \overline{\xi^{b}} \;,
\end{align*}
has, at least, $n-q+1$ non-negative (respectively, positive) eigenvalues. When $q=1$, (strictly) $1$-pseudo-convex functions are called \emph{(strictly) pseudo-convex}, or \emph{(strictly) pluri-sub-harmonic}.
\end{defi}

\begin{defi}[{\cite{rothstein}, \cite[\S16.c]{andreotti-grauert}}]
A domain $D \subseteq \mathbb{C}^n$ is called \emph{$q$-complete} if there exists a smooth proper strictly $q$-pseudo-convex exhaustion function. When $q=1$, the $1$-complete domains are called \emph{strictly pseudo-convex}.
\end{defi}

\medskip

A. Andreotti and H. Grauert, in \cite{andreotti-grauert}, proved a vanishing theorem for the higher-degree Dolbeault cohomology groups of $q$-complete domains; the same result was proven by A. Andreotti and E. Vesentini, in \cite{andreotti-vesentini}, (see also \cite[Theorem 4.2]{vesentini-tata},) extending the $\mathrm{L}^2$-techniques by L. H\"ormander, \cite{hormander-acta}. More precisely, \cite[Proposition 27]{andreotti-grauert}, and \cite[Theorem 5]{andreotti-vesentini}, state that, given a $q$-complete domain $D\in\C^n$, it holds $H^{r,s}_{\delbar}(X)=\{0\}$, for any $r\in\N$ and for any $s\geq q$.

\subsection{Vanishing of the de Rham cohomology for strictly $p$-convex domains}
In this section, motivated by A. Andreotti and H. Grauert's vanishing result for the Dolbeault cohomology of $q$-complete domains in $\C^n$, \cite{andreotti-grauert}, and by A. Andreotti and E. Vesentini's proof using $\mathrm{L}^2$-techniques, \cite{andreotti-vesentini}, we consider domains $X$ in $\R^n$ endowed with a proper exhaustion function $u\in\mathcal{C}^\infty(X;\R)$ whose Hessian is in $\intern{\mathcal{P}^{(1)}_p(X,g)}$, re-proving, with $\mathrm{L}^2$-techniques, the vanishing result for the higher-degree de Rham cohomology groups for strictly $p$-convex domains in the sense of F.~R. Harvey and H.~B. Lawson, Theorem \ref{thm:vanishing}, yet shown by J.-P. Sha, \cite{sha}, and by H. Wu, \cite[Theorem 1]{wu-indiana}, using Morse theory, as a consequence of results on the homotopy type of $X$.
Firstly, we recall some definitions and we set some notations; then, we prove some preliminary lemmata and estimates; finally we prove Theorem \ref{thm:cauchy}, stating that, on a strictly $p$-convex domain in $\R^n$, every $\de$-closed $k$-form with $k\geq p$ is $\de$-exact.

\medskip

Let $X$ be an oriented Riemannian manifold of dimension $n$, and denote its Riemannian metric by $g$ and its volume by $\vol$. The Riemannian metric $g$ induces, for every $x\in X$, a point-wise inner product $\left\langle \sspace \left| \ssspace \right. \right\rangle_{g_x}\colon \wedge^\bullet T^*_xX \times \wedge^\bullet T^*_xX \to \R$.

Fix $\phi\in\mathcal{C}^0(X;\R)$ a continuous function. For every $\varphi,\, \psi \in \Cinfkc{\bullet}$, let
$$ \scalarL{\varphi}{\psi}{\phi} \;:=\; \int_X \left\langle \varphi \left| \psi \right. \right\rangle_{g_x} \, \expp{-\phi} \vol \;\in\; \R \;, $$
and, for $k\in\N$, define $\Leb{k}{\phi}$ as the completion of the space $\Cinfkc{k}$ of smooth $k$-forms with compact support with respect to the metric induced by $\normaL{\sspace}{\phi}:=\scalarL{\sspace}{\sspace}{\phi}$.
Therefore, the space $\Leb{k}{\phi}$ is a Hilbert space, endowed with the inner product $\scalarL{\sspace}{\ssspace}{\phi}$, and $\Cinfkc{k}$ is dense in $\Leb{k}{\phi}$. For any $k\in\N$, let $\Lebloc{k}$ denote the space of $k$-forms $\varphi$ whose restriction $\varphi\lfloor_{K}$ to every compact set $K\subseteq X$ belongs to $\LebK{k}{}$.

\medskip

For every $\phi_1,\,\phi_2\in\mathcal{C}^0(X;\R)$, the operator
$$ \de\colon \Leb{\bullet}{\phi_1} \dashrightarrow \Leb{\bullet+1}{\phi_2} $$
is densely-defined and closed; denote by
$$ \destar{\phi_2}{\phi_1}\colon \Leb{\bullet+1}{\phi_2} \dashrightarrow \Leb{\bullet}{\phi_1} $$
its adjoint, which is a densely-defined closed operator, see, e.g., \cite[Theorem 7.55]{dellasala-saracco-simioniuc-tomassini}.

\medskip

Moreover, for a domain $X$ in $\R^n$, with set of coordinates $\left\{x^1, \ldots, x^n\right\}$, fixed $k\in\N$, $s\in\N$, and $\phi\in\Cinfk{0}$, one can consider the Sobolev space $\Sob{k}{\phi}{s}$, which is defined as the space of $k$-forms $\varphi:=:\ssum{|I|=k}\varphi_I\,\de x^I$ such that $\frac{\del^{\ell_1+\cdots+\ell_n} \varphi_I}{\del^{\ell_1} x^1\cdots \del^{\ell_n}x^n}\in\Leb{k}{\phi}$ for every multi-index $\left(\ell_1,\ldots,\ell_n\right)\in\N^n$ satisfying $\ell_1+\cdots+\ell_n\leq s$ and for every strictly increasing multi-index $I$ such that $|I|=k$. The space $\Sobloc{s}{k}$ is defined as the space of $k$-forms $\varphi$ whose restriction $\varphi\lfloor_K$ to every compact set $K\subseteq X$ belongs to $\SobK{k}{}{s}$.

\medskip

As a matter of notation, the symbol $\ssum{|I|=k}$ denotes the sum over the strictly increasing multi-indices $I:=:\left(i_1,\ldots,i_k\right)\in\N^k$ (that is, the multi-indices such that $0<i_1<\cdots<i_k$) of length $k$. We use $\left\{x^1, \ldots, x^n\right\}$ as a set of coordinates on $\R^n$, and, given a multi-index $I:=:\left(i_1,\ldots,i_k\right)\in\N^k$, we shorten $\de x^I:=\de x^{i_1}\wedge\cdots\wedge \de x^{i_k}$. Given $I_1$ and $I_2$ two multi-indices of length $k$, let $\sign{I_1}{I_2}$ be the sign of the permutation $\left(\begin{array}{c}I_1\\I_2\end{array}\right)$ if $I_1$ is a permutation of $I_2$, and zero otherwise.

\medskip

Let $X$ be a domain in $\R^n$, that is, an open connected subset of $\R^n$, endowed with the metric and the volume induced, respectively, by the Euclidean metric and the standard volume of $\R^n$.

For $\phi_1,\, \phi_2\in\Cinfk{0}$, consider $\de\colon \Leb{k-1}{\phi_1} \dashrightarrow \Leb{k}{\phi_2}$.
The following lemma gives an explicit expression of the adjoint $\destar{\phi_2}{\phi_1}\colon \Leb{k}{\phi_2} \dashrightarrow \Leb{k-1}{\phi_1}$, \cite[Lemma 2.1]{angella-calamai} (compare with, e.g., \cite[\S8.2.1]{dellasala-saracco-simioniuc-tomassini}, \cite[Lemma O.2]{gunning-1} in the complex case).

\begin{lem}
\label{lemma:d*}
 Let $X$ be a domain in $\R^n$. Let $\phi_1,\,\phi_2\in\Cinfk{0}$ and consider
 $$
 \xymatrix{
 \Leb{k-1}{\phi_1} \ar@{-->}@/^1pc/[r]^{\de} & \Leb{k}{\phi_2} \ar@{-->}@/^1pc/[l]^{\destar{\phi_2}{\phi_1}}
 } \;.
 $$
 Let
 $$v \;:=:\; \ssum{|I|=k} v_I\, \de x^I \in \Leb{k}{\phi_2} $$
 and suppose that $v\in\dom \destar{\phi_2}{\phi_1}$. Then
 \begin{eqnarray*}
 \destar{\phi_2}{\phi_1} v &=& \expp{\phi_1} \destar{0}{0}\left(\expp{-\phi_2} v\right) \\[5pt]
 &=& \ssum{|J|=k-1} \left(-\expp{\phi_1} \ssum{|I|=k}\sum_{\ell=1}^{n} \sign{\ell J}{I}\, \der{\left(v_I\expp{-\phi_2}\right)}{\ell} \right) \, \de x^J \;.
 \end{eqnarray*}
\end{lem}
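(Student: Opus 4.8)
The plan is to verify the stated formula directly from the definition of the Hilbert-space adjoint, reducing the problem to a pairing against compactly supported smooth test forms and then integrating by parts. Recall that, by definition, $v\in\dom\destar{\phi_2}{\phi_1}$ with $\destar{\phi_2}{\phi_1}v=w$ if and only if $\scalarL{\de\alpha}{v}{\phi_2}=\scalarL{\alpha}{w}{\phi_1}$ holds for every $\alpha$ in the domain of $\de\colon\Leb{k-1}{\phi_1}\dashrightarrow\Leb{k}{\phi_2}$. Since $\Cinfkc{k-1}$ is dense in that domain (for the graph norm) and contained in it, it suffices to test the identity against $\alpha\in\Cinfkc{k-1}$; the computation below produces a candidate $w$ as a distribution, and the hypothesis $v\in\dom\destar{\phi_2}{\phi_1}$ is exactly what guarantees that this distribution is represented by an honest element of $\Leb{k-1}{\phi_1}$, which must then coincide with $\destar{\phi_2}{\phi_1}v$.

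First I would fix $\alpha=:\ssum{|J|=k-1}\alpha_J\,\de x^J\in\Cinfkc{k-1}$ and write $\de\alpha=\ssum{|J|=k-1}\sum_{\ell=1}^{n}\der{\alpha_J}{\ell}\,\de x^\ell\wedge\de x^J$. Reordering $\de x^\ell\wedge\de x^J$ into strictly increasing multi-indices introduces the permutation signs, namely $\de x^\ell\wedge\de x^J=\ssum{|I|=k}\sign{\ell J}{I}\,\de x^I$, where $\sign{\ell J}{I}$ vanishes unless $\{\ell\}\cup J$ is a permutation of $I$; since the Euclidean metric is orthonormal on the basis $\{\de x^I\}$, the pointwise inner product $\langle\de\alpha\,|\,v\rangle$ then expands into a double sum over $I$ and $J$. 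Plugging this into $\scalarL{\de\alpha}{v}{\phi_2}=\int_X\langle\de\alpha\,|\,v\rangle\,\expp{-\phi_2}\,\vol$ and integrating by parts in $x^\ell$, where the compact support of $\alpha$ eliminates every boundary contribution, transfers the derivative onto $v_I\,\expp{-\phi_2}$ and yields
\[
\scalarL{\de\alpha}{v}{\phi_2} \;=\; -\int_X \ssum{|J|=k-1}\alpha_J\,\left(\ssum{|I|=k}\sum_{\ell=1}^{n}\sign{\ell J}{I}\,\der{\left(v_I\,\expp{-\phi_2}\right)}{\ell}\right)\,\vol \;.
\]
Comparing this with $\scalarL{\alpha}{w}{\phi_1}=\int_X \ssum{|J|=k-1}\alpha_J\,w_J\,\expp{-\phi_1}\,\vol$ and using the density of $\Cinfkc{k-1}$ forces $w_J\,\expp{-\phi_1}=-\ssum{|I|=k}\sum_{\ell=1}^{n}\sign{\ell J}{I}\,\der{(v_I\,\expp{-\phi_2})}{\ell}$, which is precisely the stated component expression. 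Specializing to $\phi_1=\phi_2=0$ identifies $\destar{0}{0}$, and the factorization $\destar{\phi_2}{\phi_1}v=\expp{\phi_1}\destar{0}{0}(\expp{-\phi_2}v)$ is then immediate by pulling the weights through the derivative.

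The computation itself is routine multilinear algebra followed by a single integration by parts, so I do not expect the analytic content to be the difficulty; rather, two points require care. The first is purely combinatorial: the sign bookkeeping for $\sign{\ell J}{I}$ must be handled so that the reordering of $\de x^\ell\wedge\de x^J$ and the subsequent relabelling of the double sum are consistent. The second, and the step I expect to be the main obstacle, is the passage from the formal (distributional) adjoint to the genuine Hilbert-space adjoint: the displayed identity only asserts that the distribution $w$ pairs correctly against all $\alpha\in\Cinfkc{k-1}$, and it is exactly the hypothesis $v\in\dom\destar{\phi_2}{\phi_1}$ that upgrades $w$ to an element of $\Leb{k-1}{\phi_1}$, so that $w=\destar{\phi_2}{\phi_1}v$. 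Making this reduction rigorous is where the abstract definition of the closed densely-defined adjoint meets the concrete differential formula, and it is the part I would write out most carefully.
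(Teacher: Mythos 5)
Your proposal is correct and follows essentially the same route as the paper: test against compactly supported smooth $(k-1)$-forms, expand $\de u$ via the permutation signs $\sign{\ell J}{I}$, integrate by parts (compact support killing boundary terms), and compare with the defining identity $\scalarL{\de u}{v}{\phi_2}=\scalarL{u}{\destar{\phi_2}{\phi_1}v}{\phi_1}$ to read off the components. If anything, your explicit remark that the hypothesis $v\in\dom\destar{\phi_2}{\phi_1}$ is what upgrades the distributional formula to an identity in $\Leb{k-1}{\phi_1}$ makes precise a point the paper's proof leaves implicit.
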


\begin{proof}
By definition of $\destar{\phi_2}{\phi_1}$, for every $u\in\dom \de$, one has $\scalarL{\de u}{v}{\phi_2} = \scalarL{u}{\destar{\phi_2}{\phi_1} v}{\phi_1}$.
Hence, consider
$$ u \;:=:\; \ssum{|J|=k-1} u_J\, \de x^J \;\in\; \Cinfkc{k-1} \;, $$
and compute
$$ \de u \;=\; \ssum{\substack{|J|=k-1\\ |I|=k}}\sum_{\ell=1}^{n} \sign{\ell J}{I} \der{u_J}{\ell} \de x^I \;. $$
The statement follows by computing
\begin{eqnarray*}
\scalarL{\de u}{v}{\phi_2} &=& \int_X \ssum{\substack{|J|=k-1\\|I|=k}} \sum_{\ell=1}^{n} \sign{\ell J}{I} \der{u_J}{\ell} v_I \, \expp{-\phi_2}\, \vol \\[5pt]
 &=& -\int_X \ssum{\substack{|J|=k-1\\|I|=k}} \sum_{\ell=1}^{n} \sign{\ell J}{I} \der{\left(v_I \, \expp{-\phi_2}\right)}{\ell} u_J\, \vol
\end{eqnarray*}
and
$$ \scalarL{u}{\destar{\phi_2}{\phi_1} v}{\phi_1} \;=\; \int_X \ssum{|J|=k-1} \left(\destar{\phi_2}{\phi_1} v\right)_J\, u_J\, \expp{-\phi_1}\, \vol \;, $$
where $\destar{\phi_2}{\phi_1} v=:\ssum{|J|=k-1} \left(\destar{\phi_2}{\phi_1} v\right)_J\,\de x^J$.
\end{proof}

\medskip

For any fixed $\phi\in\Cinfk{0}$ and for any $j\in\In{n}$, define the operator
$$ \delta^\phi_j \colon \Cinfk{0} \to \Cinfk{0} \;, $$
where
$$ \delta^\phi_j(f) \;:=\; -\expp{\phi}\,\der{\left(f\,\expp{-\phi}\right)}{j} \;=\; \der{\phi}{j}\cdot f-\der{f}{j} \;. $$
The following lemma states that $\delta^\phi_j$ is the adjoint of $\der{}{j}$ in $\Leb{0}{\phi}$, and computes the commutator between $\delta^\phi_j$ and $\der{}{k}$, \cite[Lemma 2.2]{angella-calamai} (compare with, e.g., \cite[pages 83--84]{hormander}).

\begin{lem}
\label{lemma:delta}
 Let $X$ be a domain in $\R^n$. Let $\phi\in\Cinfk{0}$ and $j\in\In{n}$, and consider the operator $\delta^\phi_j\colon \Cinfk{0} \to \Cinfk{0}$. Then:
\begin{itemize}
 \item for every $w_1,w_2\in\Cinfkc{0}$,
$$ \int_X w_1\cdot\der{w_2}{k}\expp{-\phi}\vol \;=\; \int_X\delta^\phi_k(w_1)\cdot w_2\,\expp{-\phi}\vol \;;$$
 \item for any $k\in\In{n}$, the following commutation formula holds in $\Endom{\Cinfkc{0}}$:
$$ \left[\delta^\phi_j,\, \der{}{k}\right] \;=\; -\frac{\del^2\phi}{\del x^j\del x^k}\cdot \;.$$
\end{itemize}
\end{lem}

\begin{proof}
 As regards the first item, one has
 \begin{eqnarray*}
  \int_X w_1\cdot\der{w_2}{k}\expp{-\phi}\vol &=& -\int_X w_2 \cdot \frac{\del}{\del x^k}\left(w_1\,\expp{-\phi}\right) \vol \\[5pt]
    &=& \int_X w_2\cdot \left(w_1\,\der{\phi}{k}-\der{w_1}{k} \right) \, \expp{-\phi}\vol \\[5pt]
    &=& \int_X\delta^\phi_k(w_1)\cdot w_2\,\expp{-\phi}\vol \;.
 \end{eqnarray*}

 As regards the second item, one has, for every $f\in\Cinfk{0}$,
 \begin{eqnarray*}
  \left[\delta^\phi_j,\, \der{}{k}\right](f) &=& \delta^\phi_j\left(\frac{\del f}{\del x^k}\right)-\frac{\del}{\del x^k}\left(\delta^\phi_j(f)\right) \\[5pt]
    &=& \der{\phi}{j} \cdot \der{f}{k} - \frac{\del^2 f}{\del x^j \del x^k} - \frac{\del}{\del x^k}\left(\der{\phi}{j}\cdot f - \der{f}{j}\right)     \\[5pt]
    &=& \der{\phi}{j} \cdot \der{f}{k} - \frac{\del^2 f}{\del x^j \del x^k} - \frac{\del^2 \phi}{\del x^k \del x^j}\cdot f - \der{\phi}{j}\cdot \der{f}{k} + \frac{\del^2 f}{\del x^k\del x^j} \\[5pt]
    &=& - \frac{\del^2 \phi}{\del x^k \del x^j}\cdot f \;,
 \end{eqnarray*}
 concluding the proof of the lemma.
\end{proof}

\medskip

Finally, we prove the following estimate, \cite[Proposition 2.3, Remark 2.4]{angella-calamai}, which will be used in the proof of Theorem \ref{thm:cauchy} (we refer to \cite[\S4.2]{hormander}, or, e.g., \cite[Lemma O.3]{gunning-1} and \cite[\S8.3.1]{dellasala-saracco-simioniuc-tomassini} for its complex counterpart).

\begin{prop}
\label{prop:stima}
 Let $X$ be a domain in $\R^n$ and $\phi,\,\psi\in\Cinfk{0}$. Consider
 $$
 \xymatrix{
 \Leb{k-1}{\phi-2\psi} \ar@{-->}@/^1pc/[r]^{\de} & \Leb{k}{\phi-\psi} \ar@{-->}@/^1pc/[r]^{\de} \ar@{-->}@/^1pc/[l]^{\destar{\phi-\psi}{\phi-2\psi}} & \Leb{k+1}{\phi} \ar@{-->}@/^1pc/[l]^{\destar{\phi}{\phi-\psi}} \;.
 }
 $$
 Then, for any $\eta :=: \ssum{|I|=k}\eta_I \, \de x^I \in \Cinfkc{k}$, one has
\begin{eqnarray*}
 \lefteqn{\int_X \ssum{\substack{|J|=k-1\\|I_1|=k\\|I_2|=k}}\sum_{\ell_1,\,\ell_2=1}^{n} \sign{\ell_1 J}{I_1}\sign{\ell_2 J}{I_2} \frac{\del^2\phi}{\del x^{\ell_1}\,\del x^{\ell_2}}\, \eta_{I_1}\, \eta_{I_2} \, \expp{-\phi} \vol} \\[5pt]
 &\leq& \;  \lefteqn{\int_X \left(\ssum{\substack{|J|=k-1\\|I_1|=k\\|I_2|=k}}\sum_{\ell_1,\,\ell_2=1}^{n} \sign{\ell_1 J}{I_1}\sign{\ell_2 J}{I_2} \frac{\del^2\phi}{\del x^{\ell_1}\,\del x^{\ell_2}}\, \eta_{I_1}\, \eta_{I_2}+\ssum{|I|=k}\sum_{\ell=1}^{n}\left|\frac{\del \eta_I}{\del x^\ell}\right|^2\right)} \\[5pt]
 && \lefteqn{\cdot \expp{-\phi} \vol} \\[5pt]
 &\leq& \; C \cdot \left( \normaL{\destar{\phi-\psi}{\phi-2\psi} \eta}{\phi-2\psi}^2 + \normaL{\de\eta}{\phi}^2 + \int_X \ssum{|I|=k}\sum_{\ell=1}^{n}\left|\frac{\del\psi}{\del x^\ell}\right|^2\,\left|\eta_I\right|^2\, \expp{-\phi} \vol \right) \;,
\end{eqnarray*}
where $C:=:C(k,n)\in\N$ is a constant depending just on $k$ and $n$.
\end{prop}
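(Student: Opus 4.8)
The inequality to be proven is a Bochner--Kodaira--Nakano--type estimate, adapted from the complex $\mathrm{L}^2$-theory of H\"ormander (as in \cite[\S4.2]{hormander}) to the real de Rham setting on a domain $X\subseteq\R^n$. The strategy is the standard one of integration by parts, using the explicit formula for $\destar{\phi-\psi}{\phi-2\psi}$ from Lemma \ref{lemma:d*} and the commutation relation from Lemma \ref{lemma:delta}. The first inequality in the statement is purely algebraic: the extra summand $\ssum{|I|=k}\sum_{\ell=1}^{n}\left|\frac{\del \eta_I}{\del x^\ell}\right|^2$ is manifestly non-negative, so adding it to the integrand only increases the integral. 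Hence the genuine content lies in the \emph{second} inequality, bounding the full expression by the $\mathrm{L}^2$-norms of $\de\eta$, of $\destar{\phi-\psi}{\phi-2\psi}\eta$, and the first-order term involving $\nabla\psi$.

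\textbf{Main steps.} First I would write out $\norma{\de\eta}{\phi}^2$ and $\norma{\destar{\phi-\psi}{\phi-2\psi}\eta}{\phi-2\psi}^2$ component-wise, using the expression for $\de\eta$ already computed in the proof of Lemma \ref{lemma:d*}, namely $\de\eta = \ssum{|I|=k,\,|K|=k+1}\sum_{\ell=1}^{n}\sign{\ell I}{K}\,\frac{\del \eta_I}{\del x^\ell}\,\de x^K$, and the formula $\destar{\phi-\psi}{\phi-2\psi}\eta = \ssum{|J|=k-1}\left(\ssum{|I|=k}\sum_{\ell=1}^{n}\sign{\ell J}{I}\,\delta^{\phi-\psi}_{\ell}(\eta_I)\right)\de x^J$ obtained by specializing Lemma \ref{lemma:d*} and rewriting via the operator $\delta^{\phi-\psi}_\ell$ of Lemma \ref{lemma:delta}. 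Expanding the squares produces, respectively, sums of $\frac{\del \eta_{I_1}}{\del x^{\ell_1}}\frac{\del \eta_{I_2}}{\del x^{\ell_2}}$ and of $\delta^{\phi-\psi}_{\ell_1}(\eta_{I_1})\,\delta^{\phi-\psi}_{\ell_2}(\eta_{I_2})$, weighted by products of permutation signs. The key combinatorial identity is the standard one relating the two Lefschetz-type sign contractions: for the terms where the ``missing'' index agrees one gets a diagonal contribution $\sum_{\ell}\left|\frac{\del \eta_I}{\del x^\ell}\right|^2$, and the off-diagonal terms assemble into the double sum over $J$.

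\textbf{The crucial integration by parts.} The heart of the argument is to integrate by parts in the adjoint term: writing $\norma{\destar{\phi-\psi}{\phi-2\psi}\eta}{\phi-2\psi}^2$ and applying the first item of Lemma \ref{lemma:delta} to move one $\delta$-operator across, one replaces a product $\delta^{\phi-\psi}_{\ell_1}(\eta_{I_1})\,\delta^{\phi-\psi}_{\ell_2}(\cdots)$ by a term in which the commutator $\left[\delta^{\phi-\psi}_{\ell_1},\,\frac{\del}{\del x^{\ell_2}}\right] = -\frac{\del^2(\phi-\psi)}{\del x^{\ell_1}\del x^{\ell_2}}\cdot$ appears, by the second item of Lemma \ref{lemma:delta}. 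This is exactly the mechanism that produces the Hessian $\frac{\del^2\phi}{\del x^{\ell_1}\del x^{\ell_2}}$ on the left-hand side of the desired inequality, together with an unwanted $\frac{\del^2\psi}{\del x^{\ell_1}\del x^{\ell_2}}$ term. Collecting all contributions, one obtains an identity of the form
\begin{eqnarray*}
\int_X \left(\ssum{\substack{|J|=k-1\\|I_1|=k\\|I_2|=k}}\sum_{\ell_1,\ell_2}\sign{\ell_1 J}{I_1}\sign{\ell_2 J}{I_2}\frac{\del^2\phi}{\del x^{\ell_1}\del x^{\ell_2}}\eta_{I_1}\eta_{I_2} + \ssum{|I|=k}\sum_{\ell}\left|\frac{\del\eta_I}{\del x^\ell}\right|^2\right)\expp{-\phi}\vol && \\
= \norma{\destar{\phi-\psi}{\phi-2\psi}\eta}{\phi-2\psi}^2 + \norma{\de\eta}{\phi}^2 + R_\psi &&
\end{eqnarray*}
where $R_\psi$ collects the second-derivative-of-$\psi$ terms. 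The final step is to dominate $R_\psi$: the $\del^2\psi$ contribution must be absorbed by integration by parts a second time, trading the second derivatives of $\psi$ for a first-order quantity $\sum_{I,\ell}\left|\frac{\del\psi}{\del x^\ell}\right|^2|\eta_I|^2$ via the elementary inequality $2ab\le a^2+b^2$ and the boundedness of the remaining $\del\eta$-terms. I expect the \emph{main obstacle} to be precisely the careful bookkeeping of the sign factors $\sign{\ell J}{I}$ through the double integration by parts, ensuring that the cross terms cancel correctly so that only the Hessian quadratic form and the non-negative gradient term survive; this is the real-variable analogue of the Bochner--Kodaira cancellation and requires a patient but routine index computation rather than any new idea. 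The absolute constant $C=C(k,n)$ arises solely from this combinatorics and from the applications of $2ab\le a^2+b^2$, so tracking it is straightforward once the identity is established.
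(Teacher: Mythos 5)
Your overall architecture (expand the two weighted norms componentwise via Lemma \ref{lemma:d*}, integrate by parts via Lemma \ref{lemma:delta}, use the sign-contraction identity relating $\left|\de\eta\right|^2$ to the $J$-contracted double sum, and finish with $2ab\le a^2+b^2$) is indeed the paper's, but your final step has a genuine gap, located exactly where you flag the "real obstacle". First, a bookkeeping slip that matters here: by Lemma \ref{lemma:d*} the adjoint is $\destar{\phi-\psi}{\phi-2\psi}\eta=\expp{-\psi}\,\ssum{|J|=k-1}\bigl(\ssum{|I|=k}\sum_{\ell=1}^n\sign{\ell J}{I}\,\delta^{\phi-\psi}_\ell\left(\eta_I\right)\bigr)\de x^J$; you dropped the factor $\expp{-\psi}$, and this factor is precisely what converts the weight $\expp{-\left(\phi-2\psi\right)}$ appearing in $\normaL{\destar{\phi-\psi}{\phi-2\psi}\eta}{\phi-2\psi}^2$ into the single weight $\expp{-\phi}$ under which all the integrations by parts must take place.

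Second, and more seriously: your plan commutes with $\delta^{\phi-\psi}$, producing the Hessian of $\phi-\psi$, and you propose to eliminate the resulting $\frac{\del^2\psi}{\del x^{\ell_1}\del x^{\ell_2}}$ remainder "by integration by parts a second time" plus $2ab\le a^2+b^2$. Performed against the weight $\expp{-\phi}$, that second integration by parts differentiates the weight and creates terms of the form $\int_X \der{\psi}{\ell_2}\,\der{\phi}{\ell_1}\,\eta_{I_1}\,\eta_{I_2}\,\expp{-\phi}\vol$, involving $\nabla\phi$; since the right-hand side of the Proposition contains only $\left|\der{\psi}{\ell}\right|^2\left|\eta_I\right|^2$ as first-order data and $\phi$ is arbitrary, these terms cannot be dominated, and the absorption you sketch fails. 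The paper's proof never generates $\del^2\psi$ at all: it writes $\delta^{\phi-\psi}_\ell=\delta^\phi_\ell-\der{\psi}{\ell}\,\cdot$ \emph{before} any commutation, so that for each $J$ one has $\ssum{|I|=k}\sum_{\ell=1}^n\sign{\ell J}{I}\,\delta^\phi_\ell\left(\eta_I\right)=\expp{\psi}\bigl(\destar{\phi-\psi}{\phi-2\psi}\eta\bigr)_J+\ssum{|I|=k}\sum_{\ell=1}^n\sign{\ell J}{I}\,\der{\psi}{\ell}\,\eta_I$; it applies $\left(a+b\right)^2\le 2\left(a^2+b^2\right)$ at this stage, obtaining $\int_X\ssum{|J|=k-1}\bigl|\ssum{|I|=k}\sum_{\ell}\sign{\ell J}{I}\delta^\phi_\ell\left(\eta_I\right)\bigr|^2\expp{-\phi}\vol\le C\bigl(\normaL{\destar{\phi-\psi}{\phi-2\psi}\eta}{\phi-2\psi}^2+\int_X\ssum{|I|=k}\sum_{\ell}\left|\der{\psi}{\ell}\right|^2\left|\eta_I\right|^2\expp{-\phi}\vol\bigr)$; and only then carries out the Bochner-type computation purely with the weight $\phi$, where Lemma \ref{lemma:delta} yields the \emph{exact} identity expressing the middle term of the Proposition as $\int_X\bigl(\ssum{|J|=k-1}\bigl|\ssum{|I|=k}\sum_{\ell}\sign{\ell J}{I}\delta^\phi_\ell\left(\eta_I\right)\bigr|^2+\left|\de\eta\right|^2\bigr)\expp{-\phi}\vol$. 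Hence only $\del^2\phi$ and first derivatives of $\psi$ ever occur. To repair your argument, reorder it accordingly: treat $\psi$ as a zeroth-order perturbation of the adjoint first, estimate it away, and commute afterwards --- otherwise the $\nabla\phi\cdot\nabla\psi$ remainder blocks the final absorption.
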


\begin{proof}
It is straightforward to compute
$$ \de \eta \;=\; \ssum{\substack{|I|=k\\|H|=k+1}}\sum_{\ell=1}^{n} \sign{\ell I}{H}\der{\eta_I}{\ell}\de x^H $$
and, using Lemma \ref{lemma:d*},
\begin{eqnarray*}
 \destar{\phi-\psi}{\phi-2\psi} \eta &=& -\expp{-\psi} \ssum{\substack{|J|=k-1\\|I|=k}}\sum_{\ell=1}^{n}\sign{\ell J}{I} \left(\der{\eta_I}{\ell}-\der{\left(\phi-\psi\right)}{\ell}\eta_I\right) \, \de x^J \\[5pt]
 &=& \expp{-\psi} \ssum{\substack{|J|=k-1\\|I|=k}}\sum_{\ell=1}^{n}\sign{\ell J}{I} \left(\delta^\phi_\ell\left(\eta_I\right)-\der{\psi}{\ell}\,\eta_I\right) \, \de x^J \;.
\end{eqnarray*}
For every $J$ such that $|J|=k-1$, the previous equality gives
$$
\ssum{|I|=k}\sum_{\ell=1}^{n}\sign{\ell J}{I} \delta^\phi_\ell\left(\eta_I\right)\;=\; \expp{\psi} \left(\destar{\phi-\psi}{\phi-2\psi} \eta\right)_J  + \ssum{|I|=k}\sum_{\ell=1}^{n}\sign{\ell J}{I} \der{\psi}{\ell}\,\eta_I \;,
$$
where $\destar{\phi-\psi}{\phi-2\psi}\eta =: \ssum{|J|=k-1}\left(\destar{\phi-\psi}{\phi-2\psi}\eta\right)_J\de x^J$.

By the inequality between the geometric mean and the arithmetic mean, one gets
\begin{eqnarray}
 \lefteqn{\int_X \ssum{|J|=k-1} \left| \ssum{|I|=k}\sum_{\ell=1}^{n}\sign{\ell J}{I} \delta^\phi_\ell\left(\eta_I\right) \right|^2 \, \expp{-\phi} \vol} \nonumber\\[5pt]
 && \leq\; 2\, \int_X \ssum{|J|=k-1}\left(\left|\left(\destar{\phi-\psi}{\phi-2\psi}\eta\right)_J\right|^2\, \expp{2\psi} + \left|\ssum{|I|=k}\sum_{\ell=1}^n\sign{\ell J}{I} \der{\psi}{\ell}\eta_I\right|^2 \right)\, \expp{-\phi} \vol \nonumber\\[5pt]
 && \leq\; C \, \left(\normaL{\destar{\phi-\psi}{\phi-2\psi}\eta}{\phi-2\psi}^2+\int_X\ssum{|I|=k}\sum_{\ell=1}^{n}\left|\der{\psi}{\ell}\right|^2\cdot\left|\eta_I\right|^2\, \expp{-\phi} \vol\right) \;,\label{eq:stima-delta}
\end{eqnarray}
where $C:=:C(k,n)\in\N$ depends on $k$ and $n$ only.

Now, using Lemma \ref{lemma:delta}, one computes
\begin{eqnarray}
 \lefteqn{\int_X \ssum{|J|=k-1} \left| \ssum{|I|=k}\sum_{\ell=1}^{n} \sign{\ell J}{I} \delta^\phi_\ell\left(\eta_I\right) \right|^2 \, \expp{-\phi} \vol} \nonumber\\[5pt]
 &=& \ssum{|J|=k-1} \ssum{\substack{|I_1|=k\\|I_2|=k}}\sum_{\ell_1,\,\ell_2=1}^{n} \sign{\ell_1 J}{I_1} \sign{\ell_2 J}{I_2} \int_X \delta^\phi_{\ell_1}\left(\eta_{I_1}\right) \cdot \delta^\phi_{\ell_2}\left(\eta_{I_2}\right) \expp{-\phi} \vol \nonumber\\[5pt]
 &=& \ssum{\substack{|J|=k-1\\|I_1|=k\\|I_2|=k}}\sum_{\ell_1,\,\ell_2=1}^{n} \sign{\ell_1 J}{I_1} \sign{\ell_2 J}{I_2} \nonumber\\[5pt]
 && \cdot \int_X \left(\der{\eta_{I_1}}{\ell_2}\,\der{\eta_{I_2}}{\ell_1}+\frac{\del^2\phi}{\del x^{\ell_1}\,\del x^{\ell_2}}\,\eta_{I_1}\,\eta_{I_2}\right) \, \expp{-\phi} \vol \;.\label{eq:normale-delta}
\end{eqnarray}

Now, note that
\begin{eqnarray}
 \left|\de\eta\right|^2 &=& \ssum{|H|=k+1} \left|\ssum{|I|=k}\sum_{\ell=1}^{n}\sign{\ell I}{H}\der{\eta_I}{\ell}\right|^2 \nonumber\\[5pt]
 &=& \ssum{|H|=k+1} \left(\ssum{\substack{|I_1|=k\\|I_2|=k}}\sum_{\ell_1,\,\ell_2=1}^{n}\sign{\ell_1 I_1}{H}\sign{\ell_2 I_2}{H}\der{\eta_{I_1}}{\ell_1}\der{\eta_{I_2}}{\ell_2}\right) \nonumber\\[5pt]
 &=& \ssum{\substack{|I_1|=k\\|I_2|=k}}\sum_{\ell_1,\,\ell_2=1}^{n}\sign{\ell_1 I_1}{\ell_2 I_2}\der{\eta_{I_1}}{\ell_1}\der{\eta_{I_2}}{\ell_2} \nonumber\\[5pt]
 &=& \ssum{|I|=k}\sum_{\ell=1}^{n} \left|\der{\eta_I}{\ell}\right|^2 - \ssum{\substack{|J|=k-1\\|I_1|=k\\|I_2|=k}} \sum_{\ell_1,\,\ell_2=1}^{n} \sign{\ell_1 J}{I_1} \sign{\ell_2 J}{I_2} \der{\eta_{I_1}}{\ell_2} \der{\eta_{I_2}}{\ell_1} \;.\label{eq:norma-de}
\end{eqnarray}

Hence, in view of \eqref{eq:norma-de}, \eqref{eq:normale-delta}, \eqref{eq:stima-delta}, we get
\begin{eqnarray*}
 \lefteqn{\int_X \ssum{\substack{|J|=k-1\\|I_1|=k\\|I_2|=k}}\sum_{\ell_1,\,\ell_2=1}^{n} \sign{\ell_1 J}{I_1}\sign{\ell_2 J}{I_2} \frac{\del^2\phi}{\del x^{\ell_1}\,\del x^{\ell_2}}\, \eta_{I_1}\, \eta_{I_2} \, \expp{-\phi} \vol} \\[5pt]
 && \leq\; \int_X \left(\ssum{\substack{|J|=k-1\\|I_1|=k\\|I_2|=k}}\sum_{\ell_1,\,\ell_2=1}^{n} \sign{\ell_1 J}{I_1}\sign{\ell_2 J}{I_2} \frac{\del^2\phi}{\del x^{\ell_1}\,\del x^{\ell_2}}\, \eta_{I_1}\, \eta_{I_2}+\ssum{|I|=k}\sum_{\ell=1}^{n}\left|\frac{\del \eta_I}{\del x^\ell}\right|^2\right) \, \expp{-\phi} \vol \\[5pt]
 && =\; \int_X \left(\ssum{|J|=k-1} \left| \ssum{|I|=k}\sum_{\ell=1}^{n}\sign{\ell J}{I} \delta^\phi_\ell\left(\eta_I\right) \right|^2 + \ssum{|H|=k+1}\left|\left(\de \eta\right)_H\right|^2\right) \, \expp{-\phi} \vol \\[5pt]
 && \leq \; C \cdot \left( \normaL{\destar{\phi-\psi}{\phi-2\psi}\eta}{\phi-2\psi}^2 + \normaL{\de\eta}{\phi}^2 + \int_X \ssum{|I|=k}\sum_{\ell=1}^{n}\left|\frac{\del\psi}{\del x^\ell}\right|^2\,\left|\eta_I\right|^2\, \expp{-\phi} \vol \right) \;,
\end{eqnarray*}
concluding the proof.
\end{proof}

\medskip

Using the previous result, we prove here the following theorem, \cite[Theorem 3.1]{angella-calamai}.

\begin{thm}
\label{thm:cauchy}
 Let $X$ be a strictly $p$-convex domain in $\R^n$, and fix $k\in\N$ such that $k\geq p$. Then, every $\de$-closed $k$-form $\eta\in\wedge^kX$ is $\de$-exact, namely, there exists $\alpha\in\wedge^{k-1}X$ such that $\eta=\de\alpha$.
\end{thm}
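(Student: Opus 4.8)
The plan is to solve the equation $\de\alpha=\eta$ by the weighted $\mathrm{L}^2$-method of L. H\"ormander and of A. Andreotti and E. Vesentini, using the estimate of Proposition \ref{prop:stima} together with the strict $p$-convexity of $X$. By Theorem \ref{thm:p-positive-exhaustion-functions}, the strict $p$-convexity of $X$ provides a smooth proper exhaustion function $u\in\intern{\PSH^{(1)}_p\left(X,\,g\right)}$, that is, at every point $x\in X$, every sum of $p$ distinct eigenvalues of $\Hess u$ is positive. I would take as weights $\phi:=\chi\circ u$, where $\chi$ is a rapidly increasing convex function to be fixed later, and an auxiliary weight $\psi$. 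Since adding the positive-semidefinite rank-one term $\chi''(u)\,\de u\otimes\de u$ to $\chi'(u)\,\Hess u$ cannot decrease any partial sum of eigenvalues (Weyl monotonicity), $\phi$ stays strictly $p$-pluri-sub-harmonic, with the smallest sum of $p$ eigenvalues of $\Hess\phi$ at least $\chi'(u)$ times that of $\Hess u$.

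The crucial point is the pointwise lower bound for the Hessian term on the left-hand side of Proposition \ref{prop:stima}. This quadratic form in the components $\eta_I$ is exactly $\left\langle D^{[k]}_{\Hess\phi}\,\eta,\,\eta\right\rangle$, the form associated to the symmetric endomorphism $D^{[k]}_{\Hess\phi}$ of $\wedge^kT^*_xX$ (in the sense of \S\ref{sec:p-convex}), whose eigenvalues are precisely the sums of $k$ distinct eigenvalues of $\Hess\phi$; hence it is coordinate-independent and bounded below by the smallest such sum times $\left|\eta\right|^2$. Here I would use the elementary observation that strict $p$-positivity forces the $p$-th smallest eigenvalue to be positive (otherwise the $p$ smallest eigenvalues would have non-positive sum), so that, for $k\geq p$, every sum of $k$ distinct eigenvalues dominates the smallest sum of $p$ eigenvalues, a strictly positive function $\mu(x)$. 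Using this bound in the first inequality of Proposition \ref{prop:stima} gives, for every $\eta\in\Cinfkc{k}$,
\begin{equation*}
 \int_X\left(\mu-C\sum_{\ell=1}^{n}\left|\der{\psi}{\ell}\right|^2\right)\left|\eta\right|^2\,\expp{-\phi}\,\vol \;\leq\; C\,\left(\normaL{\destar{\phi-\psi}{\phi-2\psi}\eta}{\phi-2\psi}^2+\normaL{\de\eta}{\phi}^2\right) \;.
\end{equation*}
Choosing first $\psi$ with controlled gradient and then $\chi$ increasing fast enough that $\mu-C\sum_\ell\left|\der{\psi}{\ell}\right|^2\geq\expp{\psi}$, the left-hand side becomes $\normaL{\eta}{\phi-\psi}^2$, yielding the fundamental a priori estimate
\begin{equation*}
 \normaL{\eta}{\phi-\psi}^2 \;\leq\; C\,\left(\normaL{\destar{\phi-\psi}{\phi-2\psi}\eta}{\phi-2\psi}^2+\normaL{\de\eta}{\phi}^2\right) \qquad\text{for all }\eta\in\Cinfkc{k} \;.
\end{equation*}

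With this estimate in hand, the conclusion would follow from the standard functional-analytic scheme. The three-weight device of A. Andreotti and E. Vesentini lets the a priori estimate on $\Cinfkc{k}$ pass to all $\beta$ in $\dom\de\cap\dom\destar{\phi-\psi}{\phi-2\psi}$, circumventing the incompleteness of the domain $X$; restricted to $\de$-closed forms it reads $\normaL{\beta}{\phi-\psi}^2\leq C\,\normaL{\destar{\phi-\psi}{\phi-2\psi}\beta}{\phi-2\psi}^2$. Taking $\chi$ also large enough that the given smooth closed $k$-form $\eta$ lies in $\Leb{k}{\phi-\psi}$, the linear functional $\destar{\phi-\psi}{\phi-2\psi}\beta\mapsto\scalarL{\beta}{\eta}{\phi-\psi}$ is then well-defined and bounded on $\destar{\phi-\psi}{\phi-2\psi}\left(\ker\de\cap\dom\destar{\phi-\psi}{\phi-2\psi}\right)$, so Hahn--Banach and Riesz representation produce $\alpha\in\Leb{k-1}{\phi-2\psi}$ with $\de\alpha=\eta$ in the weak sense. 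Finally, choosing $\alpha$ orthogonal to $\ker\de$ makes it also $\de^{*}$-closed, so that $\alpha$ solves the elliptic equation $\left(\de\,\de^{*}+\de^{*}\,\de\right)\alpha=\de^{*}\eta$ with smooth right-hand side; elliptic regularity then gives $\alpha\in\wedge^{k-1}X$, completing the proof. The main obstacle is the weight bookkeeping: one must choose $\psi$ and $\chi$ so that a single pair of weights simultaneously absorbs the gradient term, dominates the auxiliary exponential $\expp{\psi}$, and makes the prescribed smooth form square-integrable, while the three-weight structure handles the non-completeness of $X$. The genuinely new input, however, is the eigenvalue estimate converting strict $p$-positivity and $k\geq p$ into the pointwise positivity of the Hessian quadratic form.
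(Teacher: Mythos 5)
Your proposal is correct and follows essentially the same route as the paper's proof: the exhaustion function $\rho\in\intern{\PSH^{(1)}_p\left(X,\,g\right)}$ from Theorem \ref{thm:p-positive-exhaustion-functions}, the pointwise bound $\lambda_1+\cdots+\lambda_k\geq\lambda_1+\cdots+\lambda_p>0$ for the eigenvalues of $D^{[k]}_{g^{-1}\Hess\phi}$ with $\phi=\chi\circ\rho$ (the $\chi''$ rank-one term being non-negative), the three-weight estimate of Proposition \ref{prop:stima} with $\psi$ absorbing both the gradient term and the cut-off functions, the Andreotti--Vesentini density of $\Cinfkc{k}$ in $\dom\de\cap\dom\destar{\phi-\psi}{\phi-2\psi}$, and duality to solve $\de\alpha=\eta$ in $\Lebloc{k-1}$ --- and you even make explicit a point the paper glosses over, namely that $\chi$ must also grow fast enough for the prescribed $\eta$ to lie in $\Leb{k}{\phi-\psi}$. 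The only divergence is in the final regularity step, where you invoke elliptic regularity for the Hodge Laplacian on the minimal-norm solution (note the orthogonality is taken in the weighted space, so the codifferential annihilating $\alpha$ is the weighted one, which still gives an elliptic system with smooth coefficients), whereas the paper runs a manual Friedrichs-mollifier Sobolev bootstrap using Proposition \ref{prop:stima} with zero weights.
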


\begin{proof}
Let us split the proof in the following steps.

\paragrafod{1}{Preliminary definitions}
$X$ being a strictly $p$-convex domain in $\R^n$, by F.~R. Harvey and H.~B. Lawson's \cite[Theorem 4.8]{harvey-lawson-1} (see also \cite[Theorem 5.4]{harvey-lawson-2}), there exists a smooth proper strictly $p$-pluri-sub-harmonic exhaustion function
$$ \rho \;\in\; \intern{\PSH^{(1)}_p\left( X,\, g\right)} \cap \Cinfk{0} \;,$$
where $g$ is the metric on $X$ induced by the Euclidean metric on $\R^n$.

For every $m\in\N$, consider the compact set
$$ K^{(m)} \;:=\; \left\{x\in X \st \rho(x)\leq m \right\} \;,$$
and define
$$ L^{(m)} \;:=\; \min_{K^{(m)}} \lambda_{1}^{[k]} \;>\; 0 \;,$$
where, for every $x\in X$, the real numbers $\lambda_{1}^{[k]}(x)\leq\cdots\leq \lambda_{\binom{n}{k}}^{[k]}(x)$ are the ordered eigen-values of the endomorphism $D^{[k]}_{g^{-1}\Hess \rho(x)}\in\Homom{\wedge^kT_xX}{\wedge^kT_xX}$, and $\lambda_1(x)\leq\cdots\leq\lambda_n(x)$ are the ordered eigen-values of the endomorphism $g^{-1}\Hess \rho(x)\in\Homom{T_xX}{T_xX}$; indeed, note that, for every $x\in X$, since $\rho$ is strictly $p$-pluri-sub-harmonic,
$$ \lambda_{1}^{[k]}(x) \;=\; \lambda_1(x)+\cdots+\lambda_{k}(x) \;\geq\; \lambda_1(x)+\cdots+\lambda_p(x) \;>\; 0 \;, $$
and that the function $X\ni x\mapsto \lambda_{1}^{[k]}(x)\in\R$ is continuous.

Fix $\left\{\rho_\nu\right\}_{\nu\in\N}\subset \Cinfkc{0}$ such that
\begin{inparaenum}[(\itshape i\upshape)]
 \item $0\leq \rho_\nu \leq 1$ for every $\nu\in\N$, and
 \item for every compact set $K\subseteq X$, there exists $\nu_0:=:\nu_0(K)\in\N$ such that $\rho_\nu\lfloor_{K}=1$ for every $\nu\geq \nu_0$.
\end{inparaenum}

Then, we can choose $\psi\in\Cinfk{0}$ such that, for every $\nu\in\N$,
$$ \left|\de \rho_\nu\right|^2 \;\leq\; \expp{\psi} \;.$$

For every $m\in\N$, set
$$ \gamma^{(m)} \;:=\; \max_{K^{(m)}} \left(C\cdot \left|\de\psi\right|^2+\expp{\psi}\right) \;, $$
where $C:=:C(n,k)$ is the constant in Proposition \ref{prop:stima}.

Fix $\chi\in\mathcal{C}^\infty\left(\R;\R\right)$ such that 
\begin{inparaenum}[(\itshape i\upshape)]
 \item $\chi'>0$,
 \item $\chi''>0$, and
 \item $\chi'\lfloor_{\left(-\infty,\,m\right]}>\frac{\gamma^{(m)}}{L^{(m)}}$, for every $m\in\N$.
\end{inparaenum}
Define
$$ \phi \;:=\; \chi \circ \rho \;; $$
then, $\phi\in \intern{\PSH^{(1)}_p\left( X,\, g\right)} \cap \Cinfk{0}$; furthermore
$$ \frac{\del^2\phi}{\del x^{\ell_1}\del x^{\ell_2}} \;=\; \chi''\circ\rho \cdot \frac{\del\rho}{\del x^{\ell_1}} \cdot \frac{\del\rho}{\del x^{\ell_2}}+\chi'\circ\rho \cdot \frac{\del^2\rho}{\del x^{\ell_1}\del x^{\ell_2}} \;.$$

Choose $\mu\in\Cinfk{0}$ such that, for every $m\in\N$,
$$ \chi'\circ \rho\lfloor_{K^{(m)}}\cdot L^{(m)} \;\geq\; \mu\lfloor_{K^{(m)}} \;\geq\; \gamma^{(m)} \;.$$

\paragrafod{2}{For every $\eta\in\mathcal{C}^\infty_{\mathrm{c}}\left(X;\wedge^kT^*X\right)$, it holds $\normaL{\eta}{\phi-\psi}^2\leq C \cdot \left(\normaL{\destar{\phi-\psi}{\phi-2\psi}\eta}{\phi-2\psi}^2 + \normaL{\de\eta}{\phi}^2 \right)$}
Since
$$ D^{\left[k\right]}_{g^{-1}\Hess \rho} \;=\; \left(\ssum{|J|=k-1}\sum_{\ell_1,\, \ell_2=1}^{n} \sign{\ell_1 J}{I_1} \sign{\ell_2 J}{I_2} \frac{\del^2\rho}{\del x^{\ell_1}\del x^{\ell_2}}\right)_{I_1, I_2} \;\in\; \Homom{\wedge^{k}TX}{\wedge^{k}TX} \;,$$
then, by \texttt{Step 1}, one has the estimate
\begin{eqnarray*}
 \lefteqn{\ssum{\substack{|J|=k-1\\|I_1|=k\\|I_1|=k}}\sum_{\ell_1,\, \ell_2=1}^{n} \sign{\ell_1 J}{I_1} \sign{\ell_2 J}{I_2} \frac{\del^2\phi}{\del x^{\ell_1} \del x^{\ell_2}}\,\eta_{I_1}\,\eta_{I_2}} \\[5pt]
 &=& \ssum{\substack{|J|=k-1\\|I_1|=k\\|I_1|=k}}\sum_{\ell_1,\, \ell_2=1}^{n} \sign{\ell_1 J}{I_1} \sign{\ell_2 J}{I_2} \chi''\circ \rho \cdot \der{\rho}{\ell_1}\der{\rho}{\ell_2}\eta_{I_1}\,\eta_{I_2} \\[5pt]
 && +\; \ssum{\substack{|J|=k-1\\|I_1|=k\\|I_1|=k}}\sum_{\ell_1,\, \ell_2=1}^{n} \sign{\ell_1 J}{I_1} \sign{\ell_2 J}{I_2} \chi'\circ \rho\cdot \frac{\del^2\rho}{\del x^{\ell_1}\del x^{\ell_2}}\, \eta_{I_1}\, \eta_{I_2} \\[5pt]
 &=& \ssum{|J|=k-1}\chi''\circ \rho \cdot \left|\ssum{|I|=k}\sum_{\ell=1}^{n}\sign{\ell J}{I}\der{\rho}{\ell}\eta_I \right|^2 \\[5pt]
 && +\; \chi'\circ\rho \cdot \ssum{\substack{|J|=k-1\\|I_1|=k\\|I_1|=k}}\sum_{\ell_1,\, \ell_2=1}^{n} \sign{\ell_1 J}{I_1} \sign{\ell_2 J}{I_2} \frac{\del^2\rho}{\del x^{\ell_1}\del x^{\ell_2}}\, \eta_{I_1}\, \eta_{I_2} \\[5pt]
 &\geq& \chi'\circ \rho \cdot \lambda_{1}^{[k]}(x) \cdot \ssum{|I|=k}\left|\eta_I\right|^2 \\[5pt]
 &\geq& \mu \cdot \ssum{|I|=k}\left|\eta_I\right|^2 \;.
\end{eqnarray*}

Hence, using Proposition \ref{prop:stima}, we get that, for every $\eta\in\Cinfkc{k}$,
\begin{eqnarray*}
 \normaL{\eta}{\phi-\psi}^2 &=& \int_X \ssum{|I|=k}\left|\eta_I\right|^2\, \expp{-\left(\phi-\psi\right)}\vol \\[5pt]
 &\leq& \int_X \ssum{|I|=k} \left(\mu-C\cdot\sum_{\ell=1}^{n}\left|\frac{\del\psi}{\del x^\ell}\right|^2\right)\cdot \left|\eta_I\right|^2 \, \expp{-\phi} \vol \\[5pt]
 &\leq& \int_X \left(\ssum{\substack{|J|=k-1\\|I_1|=k\\|I_2|=k}}\sum_{\ell_1,\,\ell_2=1}^{n} \sign{\ell_1 J}{I_1}\sign{\ell_2 J}{I_2} \frac{\del^2\phi}{\del x^{\ell_1}\,\del x^{\ell_2}}\, \eta_{I_1}\, \eta_{I_2} \right. \\[5pt]
 && \left. - C\cdot \ssum{|I|=k}\sum_{\ell=1}^{n}\left|\frac{\del\psi}{\del x^\ell}\right|^2\,\left|\eta_I\right|^2\right)\, \expp{-\phi} \vol \\[5pt]
 &\leq&  C \cdot \left( \normaL{\destar{\phi-\psi}{\phi-2\psi}\eta}{\phi-2\psi}^2 + \normaL{\de\eta}{\phi}^2 \right) \;, 
\end{eqnarray*}
where $C:=:C(k,n)\in\N$ is the constant in Proposition \ref{prop:stima}, depending just on $k$ and $n$.

\paragrafod{3}{The space $\mathcal{C}^\infty_{\mathrm{c}}\left(X;\,\wedge^kT^*X\right)$ is dense in the space $\dom\de\cap\dom\destar{\phi-\psi}{\phi-2\psi}$ endowed with the norm $\normaL{\sspace}{\phi-\psi}+\normaL{\destar{\phi-\psi}{\phi-2\psi}\sspace}{\phi-2\psi}+\normaL{\de\sspace}{\phi}$}
Consider
$$
\xymatrix{
\Leb{k-1}{\phi-2\psi} \ar@{-->}@/^1pc/[r]^{\de} & \Leb{k}{\phi-\psi} \ar@{-->}@/^1pc/[r]^{\de} \ar@{-->}@/^1pc/[l]^{\destar{\phi-\psi}{\phi-2\psi}} & \Leb{k+1}{\phi} \ar@{-->}@/^1pc/[l]^{\destar{\phi-\psi}{\phi-2\psi}} \;.
}
$$
Fix $\eta\in\dom\de\cap\dom\destar{\phi-\psi}{\phi-2\psi}\subseteq\Leb{k}{\phi-\psi}$.

Firstly, we prove that $\left\{\rho_\nu\,\eta\right\}_{\nu\in\N}\subset\dom\de\cap\dom\destar{\phi-\psi}{\phi-2\psi}\subseteq\Leb{k}{\phi-\psi}$,  where $\left\{\rho_\nu\right\}_{\nu\in\N}\subset\Cinfkc{0}$ has been defined in \texttt{Step 1}, is a sequence of functions having compact support and converging to $\eta$ in the graph norm $\normaL{\sspace}{\phi-\psi}+\normaL{\destar{\phi-\psi}{\phi-2\psi}\sspace}{\phi-2\psi}+\normaL{\de\sspace}{\phi}$. Indeed,
\begin{eqnarray*}
 \left| \de\left(\rho_\nu\,\eta\right)-\rho_\nu\,\de\eta\right|^2 \,\expp{-\phi} &=& \left|\eta\right|^2\cdot\left|\de\rho_\nu\right|^2 \,\expp{-\phi} \\[5pt]
 &\leq& \left|\eta\right|^2\,\expp{-\left(\phi-\psi\right)} \;\in\; \Leb{k}{0} \;,
\end{eqnarray*}
hence, by the Lebesgue dominated convergence theorem, $\normaL{\de\left(\rho_\nu\,\eta\right)-\rho_\nu\,\de\eta}{\phi}\to 0$ as $\nu\to+\infty$. Furthermore, for every $\nu\in\N$, note that $\rho_\nu\,\eta\in\dom\destar{\phi-\psi}{\phi-2\psi}$: indeed, the map
$$ \Leb{k-1}{\phi-2\psi} \;\supseteq\; \dom \de \;\ni\; u \mapsto \scalarL{\rho_\nu\,\eta}{\de u}{\phi-\psi} \;\in\; \R $$
is continuous, being
\begin{eqnarray*}
 \scalarL{\rho_\nu\,\eta}{\de u}{\phi-\psi} &=& \scalarL{\eta}{\de\left(\rho_\nu\, u\right)}{\phi-\psi} - \scalarL{\eta}{\de\rho_\nu\wedge u}{\phi-\psi} \\[5pt]
 &=& \scalarL{\rho_\nu\,\destar{\phi-\psi}{\phi-2\psi}\eta}{u}{\phi-2\psi} - \scalarL{\eta}{\de\rho_\nu\wedge u}{\phi-\psi} \;,
\end{eqnarray*}
hence, by the Riesz representation theorem, there exists
$$ \tilde\eta \;=:\; \destar{\phi-\psi}{\phi-2\psi}\left(\rho_\nu\,\eta\right)\in \Leb{k-1}{\phi-2\psi} $$
such that, for every $u\in \dom\de \subseteq \Leb{k-1}{\phi-2\psi}$, it holds
$$ \scalarL{\rho_\nu\,\eta}{\de u}{\phi-\psi} \;=\; \scalarL{\tilde\eta}{u}{\phi-2\psi} \;.$$
Finally, note that, for every $u\in\dom\de\subseteq \Leb{k-1}{\phi-2\psi}$,
\begin{eqnarray*}
 \left|\scalarL{\destar{\phi-\psi}{\phi-2\psi}\left(\rho_\nu\,\eta\right)-\rho_\nu\,\destar{\phi-\psi}{\phi-2\psi}\,\eta}{u}{\phi-2\psi}\right| &=& \left|\scalarL{\rho_\nu\,\eta}{\de u}{\phi-\psi}-\scalarL{\destar{\phi-\psi}{\phi-2\psi}\eta}{\rho_\nu\,u}{\phi-2\psi}\right| \\[5pt]
 &=& \left|\scalarL{\eta}{\de\rho_\nu\wedge u}{\phi-\psi}\right| \\[5pt]
 &\leq& \normaL{\eta}{\phi-\psi}\cdot\normaL{\de\rho_\nu\wedge u}{\phi-\psi} \,,
\end{eqnarray*}
hence, by the Lebesgue dominated convergence theorem, $\normaL{\destar{\phi-\psi}{\phi-2\psi}\left(\rho_\nu\,\eta\right)-\rho_\nu\,\destar{\phi-\psi}{\phi-2\psi}\eta}{\phi-2\psi}\to 0$ as $\nu\to+\infty$. This shows that $\rho_\nu\,\eta\to \eta$ as $\nu\to+\infty$ with respect to the graph norm.

Hence, we may suppose that $\eta\in\dom\de\cap\dom\destar{\phi-\psi}{\phi-2\psi}\subseteq\Leb{k}{\phi-\psi}$ has compact support. Let $\left\{\Phi_\varepsilon\right\}_{\varepsilon\in\R\setminus\{0\}}\subseteq\mathcal{C}^\infty\left(\R^n;\R\right)$ be a family of positive mollifiers, that is, for every $\varepsilon\in\R\setminus\{0\}$,
$$ \Phi_\varepsilon \;:=\; \varepsilon^{-n}\,\Phi\left(\frac{\sspace}{\varepsilon}\right) \;\in\; \mathcal{C}^\infty\left(\R^n;\R\right) \;,$$
where
\begin{inparaenum}[(\itshape i\upshape)]
 \item $\Phi\in\mathcal{C}^\infty_{\textrm{c}}\left(\R^n;\R\right)$,
 \item $\int_{\R^n}\Phi \vol_{\R^n} =1$,
 \item $\lim_{\varepsilon\to 0}\Phi_\varepsilon=\delta$, where $\delta$ is the Dirac delta function, and
 \item $\Phi\geq 0$.
\end{inparaenum}
Consider the convolution
$$ \left\{\eta * \Phi_\varepsilon\right\}_{\varepsilon\in \R}\subset \Cinfkc{k} \;;$$
we prove that $\eta * \Phi_\varepsilon\to \eta$ as $\varepsilon\to 0$ with respect to the graph norm. Clearly,
$$ \lim_{\varepsilon\to 0} \normaL{\eta-\eta*\Phi_\varepsilon}{\phi-\psi} \;=\; 0 \;. $$
Since $\de\left(\eta*\Phi_\varepsilon\right)=\de\eta*\Phi_\varepsilon$, one has that
$$ \lim_{\varepsilon\to 0} \normaL{\de\left(\eta*\Phi_\varepsilon\right)-\de\eta}{\phi} \;=\; 0 \;. $$
Finally, write
$$ \destar{\phi-\psi}{\phi-2\psi} \;=:\; \expp{-\psi} \left(\destar{0}{0}+A_{\phi-\psi,\,\phi-2\psi}\right) \;,$$
where $\destar{0}{0}$
is a differential operator with constant coefficients, and $A_{\phi-\psi,\,\phi-2\psi}$ is a differential operator of order zero defined, for every $v\in \Leb{k}{\phi-\psi}$, as
$$ A_{\phi-\psi,\,\phi-2\psi}\left(v\right) \;:=\;  \ssum{\substack{|J|=k-1\\|I|=k}}\sum_{\ell=1}^{n}\sign{\ell J}{I} \der{\left(\phi-\psi\right)}{\ell}\cdot \eta\de x^J \;; $$
hence
\begin{eqnarray*}
\left(\destar{0}{0} +A_{\phi-\psi,\,\phi-2\psi}\right) \left(\eta*\Phi_\varepsilon\right) &=& \left(\left(\destar{0}{0}+A_{\phi-\psi,\,\phi-2\psi}\right) \left(\eta\right)\right) *\Phi_\varepsilon - \left(A_{\phi-\psi,\,\phi-2\psi}\eta\right)*\Phi_\varepsilon + A_{\phi-\psi,\,\phi-2\psi}\left(\eta*\Phi_\varepsilon\right) \\[5pt]
 &\to& \left(\destar{0}{0}+A_{\phi-\psi,\,\phi-2\psi}\right)\left(\eta\right)
\end{eqnarray*}
as $\varepsilon\to0$ in $\Leb{k-1}{\phi-2\psi}$; since $\eta$ has compact support, it follows that
$$ \destar{\phi-\psi}{\phi-2\psi}\left(\eta*\Phi_\varepsilon \right)\to \destar{\phi-\psi}{\phi-2\psi}\left(\eta\right) $$
as $\varepsilon\to 0$ in $\Leb{k-1}{\phi-2\psi}$, that is,
$$ \lim_{\varepsilon\to 0} \normaL{\destar{\phi-\psi}{\phi-2\psi}\left(\eta*\Phi_\varepsilon \right)-\destar{\phi-\psi}{\phi-2\psi}\left(\eta\right)}{\phi-2\psi} \;=\; 0 \;.$$

\paragrafod{4}{If $\normaL{\eta}{\phi-\psi}^2\leq C\cdot\left(\normaL{\destar{\phi-\psi}{\phi-2\psi}\eta}{\phi-2\psi}^2+\normaL{\de\eta}{\phi}^2\right)$ holds for every $\mathcal{C}^\infty_{\mathrm{c}}\left(X;\,\wedge^kT^*X\right)$, then it holds for every $\eta\in\dom\de\cap\dom\destar{\phi-\psi}{\phi-2\psi}$}
Let $\eta\in\dom\de\cap\dom\destar{\phi-\psi}{\phi-2\psi}$. By \texttt{Step 3}, take $\left\{\eta_j\right\}_{j\in\N}\subset \Cinfkc{k}$ such that $\eta_j\to \eta$ as $j\to+\infty$ in the graph norm. Since, for every $j\in\N$, one has
$$ \normaL{\eta_j}{\phi-\psi}^2 \;\leq\; C\cdot\left(\normaL{\destar{\phi-\psi}{\phi-2\psi}\eta_j}{\phi-2\psi}^2+\normaL{\de\eta_j}{\phi}^2\right) \;,$$
and since, for $j\to+\infty$,
$$ \normaL{\eta_j-\eta}{\phi-\psi}\to 0 \;, \qquad \normaL{\destar{\phi-\psi}{\phi-2\psi}\eta_j-\destar{\phi-\psi}{\phi-2\psi}\eta}{\phi-2\psi}\to 0 \;, \qquad \text{ and } \qquad \normaL{\de\eta_j-\de\eta}{\phi}\to 0 \;, $$
we get that also
$$ \normaL{\eta}{\phi-\psi}^2 \;\leq\; C\cdot\left(\normaL{\destar{\phi-\psi}{\phi-2\psi}\eta}{\phi-2\psi}^2+\normaL{\de\eta}{\phi}^2\right) \;.$$

\paragrafod{5}{Existence of a solution in $\mathrm{L}^2_{\mathrm{loc}}\left(X;\wedge^{k}T^*X\right)$}
We prove here that the operator
$$ \de\colon \Leb{k-1}{\phi-2\psi} \dashrightarrow \ker\left(\de\colon \Leb{k}{\phi-\psi}\dashrightarrow\Leb{k+1}{\phi}\right) $$
is surjective, hence, for every $\eta\in\ker\left(\de\colon \Leb{k}{\phi-\psi}\dashrightarrow\Leb{k+1}{\phi}\right)$, the equation $\de\alpha=\eta$ has a solution $\alpha$ in $\Leb{k-1}{\phi-\psi}\subseteq \Lebloc{k-1}$.

We recall, see, e.g., \cite[Lemma 4.1.1]{hormander}, that, given the Hilbert spaces $\left(H_1,\,\left\langle \sspace,\, \ssspace\right\rangle_{H_1}\right)$ and $\left(H_2,\,\left\langle \sspace,\, \ssspace\right\rangle_{H_2}\right)$, and a densely-defined closed operator $T\colon H_1 \dashrightarrow H_2$, whose adjoint is $T^*\colon H_2\dashrightarrow H_1$, if $F\subseteq H_2$ is a closed subspace such that $\imm T\subseteq F$, then the following conditions are equivalent:
\begin{enumerate}[(\itshape i\upshape)]
 \item $\imm T=F$;
 \item there exists $C>0$ such that, for every $y\in \dom T^* \cap F$,
 $$ \left\|y\right\|_{H_2} \;\leq\; C\cdot \left\|T^*y\right\|_{H_1} \;. $$
\end{enumerate}
Hence, consider
$$ \de\colon \Leb{k-1}{\phi-2\psi} \dashrightarrow \Leb{k}{\phi-\psi} $$
and
\begin{eqnarray*}
\Leb{k}{\phi-\psi} \;\supseteq\; F &:=& \ker\left(\de\colon \Leb{k}{\phi-\psi}\dashrightarrow\Leb{k+1}{\phi}\right) \\[5pt]
  &\supseteq& \imm \left(\de\colon \Leb{k-1}{\psi-2\psi}\dashrightarrow\Leb{k}{\phi-\psi}\right) \;.
\end{eqnarray*}
By \texttt{Step 4}, for every $\eta\in \dom\destar{\phi-\psi}{\phi-2\psi}\cap F \subseteq \dom\de\cap\dom\destar{\phi-\psi}{\phi-2\psi}$, it holds that
$$ \normaL{\eta}{\phi-\psi}^2 \;\leq\; C \, \normaL{\destar{\phi-\psi}{\phi-2\psi}\eta}{\phi-2\psi}^2 \;,$$
from which it follows that
$$ F \;=\; \imm \left(\de\colon \Leb{k-1}{\psi-2\psi}\dashrightarrow\Leb{k}{\phi-\psi}\right) \;.$$

\paragrafod{6}{Sobolev regularity of the solutions with compact support}
We prove that, for every $\alpha\in\Leb{k-1}{0}$ with compact support, if $\de\alpha\in\Leb{k}{0}$ and $\destar{0}{0}\alpha\in\Leb{k-2}{0}$, then $\alpha\in\Sob{k-1}{0}{1}$. Indeed, take $\left\{\Phi_\varepsilon\right\}_{\varepsilon\in\R}$ a family of positive mollifiers and, for every $\varepsilon\in\R$, consider $\alpha*\Phi_\varepsilon\in\Cinfkc{k-1}$; by Proposition \ref{prop:stima} with $\phi:=0$ and $\psi:=0$, we get that, for any multi-index $I$ such that $|I|=k-1$ and for any $\ell\in\In{n}$,
$$
\int_X\left|\frac{\del\left(\alpha_I*\Phi_\varepsilon\right)}{\del x^\ell}\right|^2\,\vol \;\leq\; C\cdot\left(\normazeroL{\destar{0}{0} \left(\alpha*\Phi_\varepsilon\right)}^2+\normazeroL{\de \left(\alpha*\Phi_\varepsilon\right)}^2\right) \;,
$$
where $C:=:C(k,n)$ is a constant depending just on $k$ and $n$; since, for every multi-index $I$ such that $|I|=k-1$, and for every $\ell\in\In{n}$, it holds that
$$ \lim_{\varepsilon\to0}\int_X\left|\frac{\del\left(\alpha_I*\Phi_\varepsilon\right)}{\del x^\ell} - \frac{\del \alpha_I}{\del x^\ell}\right|^2\,\vol \;=\; \lim_{\varepsilon\to0}\normazeroL{\destar{0}{0} \left(\alpha*\Phi_\varepsilon\right) - \destar{0}{0} \alpha} \;=\; \lim_{\varepsilon\to0}\normazeroL{\de \left(\alpha*\Phi_\varepsilon\right) - \de \alpha} \;=\; 0 \;,$$
we get that
$$
\int_X\left|\frac{\del\alpha_I}{\del x^\ell}\right|^2\,\vol \;\leq\; C\cdot\left(\normazeroL{\destar{0}{0} \alpha}^2+\normazeroL{\de \alpha}^2\right) \;,
$$
proving the claim.

\paragrafod{7}{Regularization of the solution} By \texttt{Step 5}, if $\eta\in\wedge^kX$ is such that $\de\eta=0$, then
the equation $\de\alpha=\eta$ has a solution $\alpha\in\Lebloc{k-1}$; we prove that actually $\alpha\in\wedge^{k-1}X$.

Note that we may suppose that the solution $\alpha\in\Lebloc{k-1}$ satisfies
$$ \alpha \;\in\; \left(\ker\de\right)^{\perp_{\Lebloc{k-1}}} \;=\; \overline{\imm\destar{0}{0}} \;=\; \imm\destar{0}{0} \;\subseteq\; \ker\destar{0}{0} \;;$$
hence, $\alpha$ satisfies the system of differential equation
$$
\left\{
\begin{array}{rcl}
 \de \alpha &=& \eta \\[5pt]
 \destar{0}{0} \alpha &=& 0
\end{array}
\right. \;.
$$

We prove, by induction on $s\in\N$, that $\alpha\in\Sobloc{s}{k-1}$ for every $s\in\N$. Indeed, we have by \texttt{Step 5} that $\alpha\in\Sobloc{0}{k-1}=\Lebloc{k-1}$. Suppose now that $\alpha\in\Sobloc{s}{k-1}$ and prove that $\alpha\in\Sobloc{s+1}{k-1}$. Clearly, $\eta\in\wedge^kX\subseteq\Sobloc{\sigma}{k}$ for every $\sigma\in\N$. Take $K$ a compact subset of $X$, and choose $\widehat\chi\in\Cinfkc{0}$ such that $\supp\widehat\chi\supset K$. For any multi-index $L:=:\left(\ell_1,\ldots,\ell_n\right)\in\N^n$ such that $\ell_1+\cdots+\ell_n=s$, being
$$ \de\left(\widehat\chi\cdot\frac{\del^{s}\alpha}{\del^{\ell_1} x^1\cdots\del^{\ell_n}x^n}\right) \;=\; \de\widehat\chi\wedge\frac{\del^{s}\alpha}{\del^{\ell_1} x^1\cdots\del^{\ell_n}x^n}+\widehat\chi \cdot \frac{\del^{s}\eta}{\del^{\ell_1} x^1\cdots\del^{\ell_n}x^n} \;\in\; \LebK{k}{0} $$
and
$$ \destar{0}{0}\left(\widehat\chi \cdot \frac{\del^{s}\alpha}{\del^{\ell_1} x^1\cdots\del^{\ell_n}x^n}\right) \;=\;
- \ssum{\substack{|J|=k-1\\|I|=k}}\sum_{\ell=1}^{n}\sign{\ell J}{I}\frac{\del \widehat\chi}{\del x^\ell}\cdot \frac{\del^{s}\alpha_I}{\del^{\ell_1} x^1\cdots\del^{\ell_n}x^n} \de x^J \;\in\; \LebK{k-2}{0} $$
we get that $\widehat\chi\cdot\frac{\del^{s}\alpha}{\del^{\ell_1} x^1\cdots\del^{\ell_n}x^n}\in\SobK{k-1}{0}{1}$, that is, $\alpha\in\SobK{k-1}{0}{s+1}$. Hence, we get that $\alpha\in\Sobloc{s+1}{k-1}$.

Since $\Sobloc{\sigma}{k-1} \hookrightarrow \mathcal{C}^m\left(X;\wedge^{k-1}T^*X\right)$ for every $0\leq m< \sigma-\frac{n}{2}$, see, e.g., \cite[Corollary 7.11]{gilbarg-trudinger}, we get that $\alpha\in\wedge^{k-1}X$, concluding the proof of the theorem.
\end{proof}

As a straightforward corollary, we get the following vanishing theorem for the higher-degree de Rham cohomology groups of a strictly $p$-convex domain in $\R^n$, \cite[Theorem 3.1]{angella-calamai}; for a different proof, involving Morse theory, compare \cite[Theorem 1]{sha} by J.-P. Sha, and \cite[Theorem 1]{wu-indiana} by H. Wu, see also \cite[Proposition 5.7]{harvey-lawson-2}.

\begin{thm}[{\cite[Theorem 3.1]{angella-calamai}, see \cite[Theorem 1]{sha}, \cite[Theorem 1]{wu-indiana}, \cite[Proposition 5.7]{harvey-lawson-2}}]\label{thm:vanishing}
 Let $X$ be a strictly $p$-convex domain in $\R^n$. Then $H^k_{dR}(X;\R)=\{0\}$ for every $k\geq p$.
\end{thm}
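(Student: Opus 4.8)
The statement is a direct cohomological reformulation of Theorem~\ref{thm:cauchy}, so the plan is simply to unwind the definition of de Rham cohomology and quote that result. Recall that, for every $k\in\N$, the \kth{k} de Rham cohomology group of $X$ is by definition
$$ H^k_{dR}(X;\R) \;=\; \frac{\ker\left(\de\colon \wedge^k X \to \wedge^{k+1}X\right)}{\imm\left(\de\colon \wedge^{k-1} X \to \wedge^{k}X\right)} \;. $$
Thus, to prove $H^k_{dR}(X;\R)=\{0\}$, it suffices to show that, in degree $k$, the numerator and the denominator of this quotient coincide, that is, every $\de$-closed $k$-form is $\de$-exact.

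First I would fix $k\geq p$ and take an arbitrary class $\mathfrak{a}=\left[\eta\right]\in H^k_{dR}(X;\R)$, represented by a $\de$-closed smooth $k$-form $\eta\in\wedge^k X$. Since $X$ is strictly $p$-convex and $k\geq p$, Theorem~\ref{thm:cauchy} applies and yields $\alpha\in\wedge^{k-1}X$ with $\eta=\de\alpha$; hence $\mathfrak{a}=\left[\eta\right]=0$ in $H^k_{dR}(X;\R)$. As $\mathfrak{a}$ was arbitrary, $H^k_{dR}(X;\R)=\{0\}$ for every $k\geq p$, which is the assertion.

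The genuine analytic content is entirely contained in Theorem~\ref{thm:cauchy}, and this is where the main obstacle lies, namely establishing, via the H\"ormander-type $\mathrm{L}^2$-estimate of Proposition~\ref{prop:stima} together with the smooth proper strictly $p$-pluri-sub-harmonic exhaustion function furnished by F.~R. Harvey and H.~B. Lawson's Theorem~\ref{thm:p-positive-exhaustion-functions}, both the existence of an $\mathrm{L}^2_{\mathrm{loc}}$-solution of the equation $\de\alpha=\eta$ and its smoothness by elliptic regularity. Granting that theorem, the deduction above is immediate. This reproves, by $\mathrm{L}^2$-methods rather than Morse theory, the vanishing already obtained by J.-P. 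Sha, \cite[Theorem 1]{sha}, and H. Wu, \cite[Theorem 1]{wu-indiana}, see also \cite[Proposition 5.7]{harvey-lawson-2}.
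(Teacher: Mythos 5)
Your proposal is correct and coincides with the paper's treatment: the paper presents Theorem~\ref{thm:vanishing} as a straightforward corollary of Theorem~\ref{thm:cauchy}, exactly as you do, since the vanishing of $H^k_{dR}(X;\R)$ for $k\geq p$ is just the statement that every $\de$-closed $k$-form is $\de$-exact. Your remark that all the analytic content (the H\"ormander-type estimate and the Harvey--Lawson exhaustion function) is concentrated in Theorem~\ref{thm:cauchy} matches the paper's structure as well.
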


\backmatter

\bibliographystyle{amsalpha}
\bibliography{./Bibliografia/bibliografia.bib}

\end{document}